\newtheorem{thm}{Theorem}[chapter]
\newtheorem{lem}[thm]{Lemma}
\newtheorem{prop}[thm]{Proposition}
\newtheorem{cor}[thm]{Corollary}
\theoremstyle{definition}
\newtheorem{dfn}[thm]{Definition}
\theoremstyle{remark}
\newtheorem{rmk}[thm]{Remark}
\numberwithin{section}{chapter}
\numberwithin{equation}{chapter}
\newcommand{\mc}{\mathcal}
\newcommand{\mb}{\mathbf}
\newcommand{\nm}[1]{\| #1 \|} 
\newcommand{\map}[3]{#1 \colon #2 \rightarrow #3}
\newcommand{\sm}{\setminus}
\newcommand{\td}{\tilde}
\newcommand{\wtd}{\widetilde}
\newcommand{\BMO}{\operatorname{BMO}}
\newcommand{\CR}{\operatorname{CR}}
\newcommand{\curl}{\operatorname{curl}}
\newcommand{\dist}{\operatorname{dist}}
\newcommand{\dv}{\operatorname{div}}
\newcommand{\id}{\operatorname{id}}
\newcommand{\Lip}{\operatorname{Lip}}
\newcommand{\loc}{\operatorname{loc}}
\newcommand{\re}{\operatorname{Re}}
\newcommand{\sgn}{\operatorname{sgn}}
\newcommand{\sgp}{\operatorname{sgp}}
\newcommand{\supp}{\operatorname{supp}}
\newcommand{\Tr}{\operatorname{Tr}}
\newcommand{\VMO}{\operatorname{VMO}}
\newcommand{\wellp}{\operatorname{WP}}
\newcommand{\ga}{\alpha}
\newcommand{\gb}{\beta}
\newcommand{\gc}{\chi}
\newcommand{\gd}{\delta}
\newcommand{\gf}{\varphi}
\newcommand{\gfv}{\phi}
\newcommand{\gh}{\eta}
\newcommand{\gi}{\iota}
\newcommand{\gk}{\kappa}
\newcommand{\gl}{\lambda}
\newcommand{\gm}{\mu}
\newcommand{\gn}{\nu}
\newcommand{\go}{\omega}
\newcommand{\gp}{\pi}
\newcommand{\gq}{\theta}
\newcommand{\gr}{\rho}
\newcommand{\gs}{\sigma}
\newcommand{\gt}{\tau}
\newcommand{\gx}{\xi}
\newcommand{\gy}{\psi}
\newcommand{\gz}{\zeta}
\newcommand{\gD}{\Delta}
\newcommand{\gF}{\Phi}
\newcommand{\gG}{\Gamma}
\newcommand{\tgG}{\widetilde\Gamma}
\newcommand{\gL}{\Lambda}
\newcommand{\gO}{\Omega}
\newcommand{\gY}{\Psi}
\newcommand{\bbB}{\mathbb{B}}
\newcommand{\bbC}{\mathbb{C}}
\newcommand{\bbD}{\mathbb{D}}
\newcommand{\bbH}{\mathbb{H}}
\newcommand{\bbN}{\mathbb{N}}
\newcommand{\bbP}{\mathbb{P}}
\newcommand{\bbQ}{\mathbb{Q}}
\newcommand{\bbR}{\mathbb{R}}
\newcommand{\bbS}{\mathbb{S}}
\newcommand{\bbX}{\mathbb{X}}
\newcommand{\bbZ}{\mathbb{Z}}
\newcommand{\bbVMO}{\mathbb{VMO}}
\def\barint_#1^#2{\mathchoice
            {\mathop{\vrule width 4.5pt
height 3 pt depth -2.5pt
                    \kern -7.4pt
\int_{#1}^{#2} \kern 0pt}}%
            {\mathop{\vrule width 5pt height
3 pt depth -2.6pt
                    \kern -6.5pt
\intop_{#1}^{#2} \kern -4pt}\nolimits_{#1}}%
            {\mathop{\vrule width 5pt height
3 pt depth -2.6pt
                    \kern -6pt
\intop_{#1}^{#2} \kern -4pt}\nolimits_{#1}}%
            {\mathop{\vrule width 5pt height
3 pt depth -2.6pt
          \kern -6pt \intop \kern -4pt}\nolimits_{#1}}}
\def\bariint_#1^#2{\mathchoice
            {\mathop{\vrule width 8pt
height 3 pt depth -2.5pt
                    \kern -10.5pt
\intop \kern -9pt\int_{#1}^{#2} \kern 0pt}}%
            {\mathop{\vrule width 9pt height
3 pt depth -2.6pt
                    \kern -10.5pt
\intop \kern -10pt\int_{#1}^{#2} \kern -4pt}}%
            {\mathop{\vrule width 9pt height
3 pt depth -2.6pt
                    \kern -10pt
\intop \kern -10pt\int_{#1}^{#2} \kern -4pt}}%
            {\mathop{\vrule width 9pt height
3 pt depth -2.6pt
          \kern -10pt \int_{#1}^{#2} \kern -10pt\intop \kern -4pt}
      }}
\newcommand{\dint}{\int \kern -9pt\int}
\begin{document}

\frontmatter

\title[Elliptic BVPs with Fractional Regularity Data]{Elliptic Boundary Value Problems with Fractional Regularity Data: The First Order Approach}

\author{Alex Amenta}
\address{Delft Institute of Applied Mathematics, Delft University of Technology, P.O. Box 5031, 2628 CD Delft, The Netherlands}
\email{amenta@fastmail.fm}

\author{Pascal Auscher}
\address{Laboratoire de Math\'{e}matiques d'Orsay, Univ.\ Paris-Sud, CNRS, Universit\'{e} Paris-Saclay, 91405 Orsay, France}
\email{pascal.auscher@math.u-psud.fr}


\subjclass[2010]{Primary 35J25, 42B35, 47A60; Secondary 35J57, 35J46, 35J47, 42B25, 42B30, 42B37, 47D06}

\keywords{Second-order elliptic systems, boundary value problems, tent spaces, Besov--Hardy--Sobolev spaces, bisectorial operators, functional calculus, off-diagonal estimates, interpolation, layer potentials}

\begin{abstract}
In this monograph our main goal is to study the well-posedness of boundary value problems of Dirichlet and Neumann type for elliptic systems $\operatorname{div} A \nabla u = 0$  on the upper half-space with coefficients  independent of the transversal variable, and with boundary data in fractional Hardy--Sobolev and Besov spaces.
Our approach uses minimal assumptions on the coefficients $A$, and in particular does not require De Giorgi--Nash--Moser estimates.
Our results are completely new for the Hardy--Sobolev case, and in the Besov case they extend corresponding results recently obtained by Barton and Mayboroda.

First we develop a theory of Besov--Hardy--Sobolev spaces adapted to operators which are bisectorial on $L^2$, with bounded $H^\infty$ functional calculus on their ranges, and which satisfy $L^2$ off-diagonal estimates.
In particular, this theory applies to perturbed Dirac operators $DB$.
We then prove that for a nontrivial range of exponents (the \emph{identification region}) the Besov--Hardy--Sobolev spaces adapted to $DB$ are equal to those adapted to the unperturbed Dirac operator $D$ (which correspond to classical Besov--Hardy--Sobolev spaces).

Our main result is the classification of solutions to the elliptic system $\operatorname{div} A \nabla u = 0$ within a certain \emph{classification region} of exponents, defined in terms of the identification regions for certain perturbed Dirac operators associated with $A$.
More precisely, we show that if the conormal gradient of a solution belongs to a weighted tent space (or one of their real interpolants) with exponent in the classification region, and in addition vanishes at infinity in a certain sense, then it has a trace in a Hardy--Sobolev (or Besov) space, and can be represented as a semigroup evolution of this trace in the transversal direction.
As a particular corollary, any such solution can be represented in terms of an abstract layer potential operator. 
	
Within the classification region, we show that well-posedness of one of the boundary value problems under consideration is equivalent to a certain boundary projection being an isomorphism.
We derive various consequences of this characterisation, including interpolation, extrapolation, and duality properties of well-posedness, as well as some results on stability of well-posedness under coefficient perturbation.
These consequences are illustrated in various situations, including in particular that of the Regularity problem for real equations.

\end{abstract}

\maketitle


\setcounter{page}{5}

\tableofcontents


\mainmatter
\chapter{Introduction}\label{chap:BHSintro}

\section{Introduction and context}

In this monograph we investigate well-posedness of boundary value problems associated with divergence-form elliptic equations\index{elliptic equation}
\begin{equation}\label{eqn:mainPDE}
	L_A u := \dv A \nabla u = 0,
\end{equation}
where the unknown $u$ is a $\bbC^m$-valued function on the upper half-space
\begin{equation*}
	\bbR^{1+n}_+ := \{(t,x) \in \bbR^{1+n} : t > 0\}.
\end{equation*}
We work in ambient dimension $1+n \geq 2$, with $m \geq 1$.
The equation \eqref{eqn:mainPDE} may be considered as system of $m$ scalar equations.
The value of $m$ is quite irrelevant to everything that we do, so for simplicity the reader may assume $m=1$ throughout.

We use the so-called `first order approach', following previous work by the second author with Hofmann, McIntosh, Mourgoglou, Ros\'en (Axelsson), and Stahlhut \cite{AAH08,AAM10,AAM10.2,AA11,AR12,AMM13,AM14,AM15,AS16}.
In these articles, boundary value problems are considered with (boundary) data of regularity $0$ (i.e. data in Lebesgue spaces $L^p$ and Hardy spaces $H^p$) or of regularity $-1$ (i.e. in Sobolev spaces $W^p_{-1}$, H\"older spaces $\dot{\gL}_{-1}^{\ga}$, or in $\dot{BMO}_{-1}$).
Here we address problems with \emph{fractional regularity data}: that is, data in Hardy--Sobolev spaces $\dot{H}^p_\gq$ or Besov spaces $\dot{B}^{p,p}_\gq$ with $\gq \in (-1,0)$.    
These were considered in the previous articles only when $p=2$.

Boundary value problems for the equation $L_A u = 0$ with data in Besov spaces have recently been studied by Barton and Mayboroda \cite{BM16}.
Under the additional assumptions that $m=1$ and that solutions to \eqref{eqn:mainPDE} satisfy De Giorgi--Nash--Moser estimates (see \eqref{eqn:DGNM}), they establish various well-posedness results via the method of layer potentials.
One novelty of their approach is that they can also consider inhomogeneous problems $L_A u = f$, which we do not address.
For homogeneous problems, however, our results are more general.
The first order approach requires neither the existence of fundamental solutions (which is implied by De Giorgi--Nash--Moser estimates when $m=1$, and which is crucial in setting up the method of layer potentials) nor the validity of trace theorems (which hold for Besov spaces, but not for Hardy--Sobolev spaces).

Our approach is based on an abstract framework of \emph{adapted Besov--Hardy--Sobolev spaces}.
For applications to boundary value problems with data of regularity $0$ and $-1$, a theory of adapted Hardy spaces had been sufficiently developed by the second author and Stahlhut \cite{AS16}.
We extend this theory by exploiting properties of weighted tent spaces $T^p_\gq$ and their real interpolants, the \emph{$Z$-spaces} $Z^p_\gq$.

\subsection{The elliptic equation}

Consider again the elliptic equation \eqref{eqn:mainPDE}.
The gradient operator $\nabla$ maps $\bbC^m$-valued functions in $1+n$ variables $f$ to $\bbC^{m(1+n)}$-valued functions $\nabla f$ by writing $f = (f^j)_{j=1}^m$ as an $m$-tuple of $\bbC$-valued functions and acting as the usual gradient operator on each component $f^j$.
The divergence operator $\dv$ is similarly defined in terms of the usual divergence operator, sending $\bbC^{m(1+n)}$-valued functions to $\bbC^m$-valued functions.
These differential operators are interpreted in the distributional sense.
Vectors $v \in \bbC^{m(1+n)}$ are split into \emph{transversal} and \emph{tangential} parts $v = (v_\perp, v_\parallel)$ according to the decomposition
\begin{equation}\label{eqn:ttsplit}
	\bbC^{m(1+n)} = \bbC^m \oplus \bbC^{mn},
\end{equation}
and likewise $\bbC^{m(1+n)}$-valued functions $f$ can be split into transversal and tangential parts $f = (f_\perp, f_\parallel)$, valued in $\bbC^m$ and $\bbC^{mn}$ respectively.
We write $\nabla_\parallel$ and $\dv_\parallel$ for the corresponding tangential restrictions of $\nabla$ and $\dv$.

Throughout the monograph we assume that the coefficient matrix
\begin{equation*}
	A \in L^\infty(\bbR^{n+1}_+ : \mc{L}(\bbC^{m(1+n)}))
\end{equation*}
is bounded, measurable, and \emph{$t$-independent}, meaning that $A(t,x) = A(x)$ for almost every $(t,x) \in \bbR^{1+n}_+$.
Thus we may consider $A$ as an element of $L^\infty(\bbR^n : \mc{L}(\bbC^{m(1+n)}))$.
We also assume that $A$ is \emph{strictly accretive on curl-free vector fields}, meaning that there exists $\gk > 0$ such that
\begin{equation}\label{eqn:accretivity}
	\operatorname{Re} \int_{\bbR^n} \left( A(x) f(x), f(x) \right) \, dx \geq \gk \nm{f}_2^2
\end{equation}
for all $f \in L^2(\bbR^n : \bbC^{m(1+n)})$ such that $\curl_\parallel(f_\parallel) = 0$.
The round bracket in the integrand above is the usual Hermitean inner product on $\bbC^{m(1+n)}$.
By $\curl_\parallel(f_\parallel) = 0$ we mean that
\begin{equation*}
  \partial_j f_k = \partial_k f_j \qquad  (1 \leq j,k \leq n, \quad j \neq k),
\end{equation*}
where $\partial_j$ is the distributional partial derivative in the $j$-th coordinate direction of $\bbR^n$, acting componentwise on $\bbC^m$-valued functions.
The strict accretivity condition \eqref{eqn:accretivity} is weaker than the usual notion of pointwise strict accretivity
\begin{equation*}
	\operatorname{Re} (A(x)v,v) \geq \gk |v|^2 \qquad (v \in \bbC^{m(1+n)}, \quad x \in \bbR^n)
\end{equation*}
unless $m=1$ and $A$ is real, in which case these two notions are equivalent \cite[\textsection 2]{AAM10}.

As we have not assumed any regularity of $A$, we must consider weak solutions to \eqref{eqn:mainPDE}: we say that a function $u \in W^2_{1,\loc}(\bbR^n : \bbC^m)$ solves \eqref{eqn:mainPDE} if for all $\gf \in C_0^\infty(\bbR^{1+n}_+ : \bbC^m)$ we have
\begin{equation*}
	\dint_{\bbR^{1+n}_+} ( A(x) \nabla u(t,x), \nabla \gf(t,x)) \, dx \, dt = 0.
\end{equation*}

\subsection{Formulation of boundary value problems}\label{ssec:fbvp}\index{boundary value problem}

Various boundary value problems for the equation $L_A u = 0$ have been studied.
Here we reformulate some of these problems, and introduce some new ones.
In Definition \ref{dfn:bvps} we concisely synthesise these problems into two categories, each parametrised by a set of exponents and two (related) families of spaces of boundary data.

First, for $1 < p < \infty$, following the groundbreaking work of Dahlberg on the Laplace equation on Lipschitz domains \cite{D77}, we formulate the \emph{$L^p$-Dirichlet problem} for $L_A$, denoted by $(D_H)_{0,A}^p$:\index{boundary value problem!Dirichlet}
\begin{equation*}
  (D_H)_{0,A}^p :
  \left\{
    \begin{array}{l}
      L_A u = 0 \quad \text{in $\bbR_+^{1+n}$}, \\
      \nm{N_* u}_{L^p} \lesssim \nm{f}_{L^p}, \\
      \lim_{t \to 0} u(t,\cdot) = f \in L^p(\bbR^n: \bbC^m).
    \end{array} \right.
\end{equation*}
This should be read: 
\begin{quote}
for all $f \in L^p(\bbR^n : \bbC^m)$, \\
there exists $u \in W_{1,\loc}^2(\bbR^{1+n}_+ : \bbC^m)$ solving $L_A u = 0$, \\
with $\nm{N_* u}_p \lesssim \nm{f}_p$ (the \emph{interior estimate}), \\
and $u(t,\cdot) \to f$ in $L^p$ as $t \to 0$ (the \emph{boundary condition}). 
\end{quote}
Here $N_*$ is the non-tangential maximal function
\begin{equation*}
	N_* u (x) := \sup_{\substack{(t,y) \in \bbR^{1+n}_+ \\ y \in B(x,t)}} |u(t,y)|.
\end{equation*}
We say that the problem $(D_H)_{0,A}^p$ is \emph{well-posed} if for all $f \in L^p(\bbR^n : \bbC^m)$ there exists a unique $u$ satisfying these conditions.\footnote{Use of the non-tangential maximal function $N_*$ is appropriate for $(D_H)_{0,A}^p$ provided that solutions to $L_A u = 0$ have pointwise values.
Otherwise, it is better to use the modified non-tangential maximal function $\wtd{N}_*$ defined in \eqref{eqn:KPMF}.
Also, the boundary condition $\lim_{t \to 0} u(t,\cdot) = f$ is traditionally imposed as non-tangential convergence almost everywhere rather than $L^p$ convergence; we shall return to this point later.
For now, observe that there is \emph{a priori} no relation between the interior estimate and the boundary condition, as there are no trace theorems for the space $\{u  : \nm{N_* u}_p<\infty\}$.}

Well-posedness is defined analogously for all boundary value problems that we consider: for all boundary data in the specified function space, there must exist a unique solution to \eqref{eqn:mainPDE}---up to an additive constant, for Regularity and Neumann problems---which satisfies the conditions of the boundary value problem.
Of course, a boundary value problem may or may not be well-posed.

For $n/(n+1) < p < \infty$, we formulate the \emph{$H^p$-Regularity problem}\index{boundary value problem!Regularity} for $L_A$:
\begin{equation*}
  (R_H)_{0,A}^p :
  \left\{
    \begin{array}{l} 	L_A u = 0 \quad \text{in $\bbR_+^{1+n}$}, \\
      \nm{ \wtd{N}_* (\nabla u)}_{L^p} \lesssim \nm{ \nabla_\parallel f }_{H^p}, \\
      \lim_{t \to 0} \nabla_\parallel u(t,\cdot) = \nabla_\parallel f \in H^p(\bbR^n : \bbC^{mn}), \\
    \end{array} \right.
\end{equation*}
where $\wtd{N}_* u$ is the modified non-tangential maximal function
\begin{equation}\label{eqn:KPMF}
	\wtd{N}_* u(x) := \sup_{\substack{(t,y) \in \bbR^{1+n}_+ \\ y \in B(x,t)}} \bigg( \bariint_{\substack{t/2 < \tau < 2t \\ \xi \in B(y,t)}}^{} |u(\gt,\gx)|^2 \, d\gt \, d\gx\bigg)^{1/2} \qquad (x \in \bbR^n),
\end{equation}
and where $H^p(\bbR^n : \bbC^{mn})$ is the ($\bbC^{mn}$-valued) real Hardy space, which may be identified with $L^p(\bbR^n : \bbC^{mn})$ when $p > 1$.
The restriction on $p$ arises because a gradient $\nabla_\parallel f$ need not be in $H^p$ for $p \leq n/(n+1)$, as it will only have one vanishing moment.

\begin{rmk}\label{rmk:d-r-eq}
If $f$ is a distribution with $\nabla_\parallel f \in H^p(\bbR^n : \bbC^{mn})$, then $f$ may be identified with an element of $\dot{H}^p_1(\bbR^n : \bbC^m)$---the $\bbC^m$-valued homogeneous Hardy--Sobolev space of order $1$, defined in Section \ref{sec:hss}---and the boundary condition $\lim_{t \to 0} \nabla_\parallel u(t,\cdot) = \nabla_\parallel f \in H^p(\bbR^n : \bbC^{mn})$ is equivalent to the condition
\begin{equation*}
	\lim_{t \to 0} u(t,\cdot) = f \in \dot{H}^p_1(\bbR^n : \bbC^{m}).
\end{equation*}
Therefore, by considering functions $u$ rather than tangential gradients $\nabla_\parallel u$, the $H^p$-Regularity problem can be seen as a kind of $\dot{H}_1^p$-Dirichlet problem.
Conversely, by considering tangential gradients $\nabla_\parallel u$, the $L^p$-Dirichlet problem $(D_H)_{0,A}^p$ can be seen as a kind of $\dot{H}_{-1}^p$-Regularity problem.
For technical reasons we generally consider Regularity problems rather than Dirichlet problems.
\end{rmk}

For $n/(n+1) < p < \infty$, we also formulate the \emph{$H^p$-Neumann problem}\index{boundary value problem!Neumann} for $L_A$:
\begin{equation*}
  (N_H)_{0,A}^p :
  \left\{
    \begin{array}{l} 	L_A u = 0 \quad \text{in $\bbR_+^{1+n}$}, \\
      \nm{ \wtd{N}_*(\nabla u) }_{L^p} \lesssim \nm{ \partial_{\gn_A} f }_{H^p}, \\
      \lim_{t \to 0} \partial_{\gn_A} u(t,\cdot) = \partial_{\gn_A} f \in H^p(\bbR^n : \bbC^{m}). \\							
    \end{array} \right.
\end{equation*}
The \emph{$A$-conormal derivative}\index{conormal derivative} $\partial_{\gn_A}$ of $u$ is defined by
\begin{equation}\label{eqn:conormalderiv}
	\partial_{\gn_A} u(t,\cdot) = e_0 \cdot A\nabla u(\cdot,t),
\end{equation}
where $e_0 = (1,0,\ldots,0)$ is the normal vector to $\partial \bbR^{1+n}_+$ `pointing in the $t$-direction'.\footnote{The Regularity and Neumann problems for Laplace's equation on domains do not require use of $\wtd{N}_*$; in this case $N_*$ works fine (see Dahlberg and Kenig \cite{DK87} and Brown \cite{B95}). The modified non-tangential maximal function $\wtd{N}_*$ is needed for more general operators $L_A$. This modification was introduced by Kenig and Pipher \cite{KP93}, who obtained the first results in this direction.}

The boundary value problems $(D_H)_{0,A}^p$, $(R_H)_{0,A}^p$, and $(N_H)_{0,A}^p$ are all problems of \emph{order zero}: in each of these problems boundary data are assumed to be in the Lebesgue space $L^p$ or the Hardy space $H^p$.
One can also formulate Regularity and Neumann problems of \emph{order $-1$}.
For $1 < p < \infty$, the \emph{$\dot{H}^p_{-1}$-Regularity problem}, which is similar to the $L^p$-Dirichlet problem but with a different interior estimate\footnote{It is known that $\nm{\wtd{N}_* u}_{L^p} \lesssim \nm{\nabla u}_{T_{-1}^p}$ in the range of $p$ that we shall deal with (see \cite{AS16}), so the two problems are related. The converse inequality is not known except for the case of real equations \cite[Theorem 1.7]{HKMP15}.} and a decay condition at infinity (see Remark \ref{rmk:d-r-eq}), is
\begin{equation*}
  (R_H)_{-1,A}^p : \left\{
    \begin{array}{l}
      L_A u = 0 \quad \text{in $\bbR_+^{1+n}$}, \\
      \nm{ \nabla u }_{T^p_{-1}} \lesssim \nm{ \nabla_\parallel f }_{\dot{H}_{-1}^p}, \\
      \lim_{t \to \infty} \nabla_\parallel u(t,\cdot) = 0 \quad \text{in $\mc{Z}^\prime(\bbR^n : \bbC^{mn})$}, \\
      \lim_{t \to 0} \nabla_\parallel u(t,\cdot) = \nabla_\parallel f \in \dot{H}_{-1}^p(\bbR^n : \bbC^{mn}). \\							
    \end{array} \right.
\end{equation*}
Here $\mc{Z}^\prime(\bbR^n : \bbC^{mn})$ is the space of $\bbC^{mn}$-valued tempered distributions modulo polynomials, in which all homogeneous Hardy--Sobolev and Besov spaces are embedded.
The space $T^p_{-1}$ is a particular instance of a \emph{weighted tent space}: for $0 < p < \infty$, $\theta \in \bbR$, and $\map{f}{\bbR^{1+n}_+}{\bbC^N}$ measurable with $N \in \bbN_+$ fixed, we have
\begin{equation*}\label{eqn:wts-norm-intro}
  \nm{f}_{T^p_{\theta}} := \bigg( \int_{\bbR^n} \bigg( \int_0^\infty \int_{B(x,t)} |t^{-\theta} f(t,y)|^2 \, \frac{dy \, dt}{t^{n+1}} \bigg)^{p/2} \, dx \bigg)^{1/p}.
\end{equation*}

We can also formulate boundary value problems with data in $\BMO$-type spaces and homogeneous H\"older spaces $\dot{\gL}_\ga$, thus increase the range of exponents to `$p \geq \infty$'.
For $0 < \ga < 1$ we define
\begin{equation*}
  (R_H)_{-1,A}^{(\infty,\ga)} : \left\{
    \begin{array}{l}
      L_A u = 0 \quad \text{in $\bbR_+^{1+n}$}, \\
      \nm{ \nabla u }_{T^\infty_{-1;\ga}} \lesssim \nm{ \nabla_\parallel f }_{\dot{\gL}_{\ga-1}}, \\
      \lim_{t \to \infty} \nabla_\parallel u(t,\cdot) = 0 \quad \text{in $\mc{Z}^\prime(\bbR^n : \bbC^{mn})$}, \\
      \lim_{t \to 0} \nabla_\parallel u(t,\cdot) = \nabla_\parallel f \in \dot{\gL}_{\ga - 1}(\bbR^n : \bbC^{mn}) \\		
    \end{array} \right.
\end{equation*}
and furthermore, with $\ga = 0$,
\begin{equation*}
  (R_H)_{-1,A}^{(\infty,0)} : \left\{
    \begin{array}{l}
      L_A u = 0 \quad \text{in $\bbR_+^{1+n}$}, \\
      \nm{ \nabla u }_{T^\infty_{-1;0}} \lesssim \nm{\nabla_\parallel f}_{\dot{\BMO}_{-1}}, \\
      \lim_{t \to \infty} \nabla_\parallel u(t,\cdot) = 0 \quad \text{in $\mc{Z}^\prime(\bbR^n : \bbC^{mn})$}, \\
      \lim_{t \to 0} \nabla_\parallel u(t,\cdot) = \nabla_\parallel f \in \dot{\BMO}_{-1}(\bbR^n : \bbC^{mn}). \\
    \end{array} \right.
\end{equation*}

The spaces $\dot{\BMO}_{-1}$ and $\dot{\gL}_{\ga - 1}$ are most conveniently represented as the homogeneous Triebel--Lizorkin space $\dot{F}^{\infty,2}_{-1}$ and Besov spaces $\dot{B}^{\infty,\infty}_{\ga - 1}$ respectively, as their negative orders prevent traditional characterisations in terms of smoothness.
In these problems the limit in the boundary condition is imposed in the weak-star topology, which is possible since $\dot{\gL}_{\ga - 1}$ and $\dot{\BMO}_{-1}$ are the Banach duals of $\dot{H}_1^{n/(n+\ga)}$ and $\dot{H}_1^1$ respectively.
The spaces $T^\infty_{-1;\ga}$ are again instances of weighted tent spaces: for $\gq \in \bbR$ and $\ga \geq 0$, and for measurable $\map{f}{\bbR^{1+n}_+}{\bbC^N}$, we have the Carleson-type norm
\begin{equation}\label{eqn:wts-norm-carl-intro}
  \nm{f}_{T^{\infty}_{\gq;\ga}} := \sup_{B \subset \bbR^n} \frac{1}{r_B^\alpha} \bigg( \frac{1}{r_B^n} \int_0^{r_B} \int_{B(c_B,r_B - t)} |t^{-\theta} f(t,y)|^2 \, dy \, \frac{dt}{t} \bigg)^{1/2} 
\end{equation}
where the supremum is taken over all open balls $B = B(c_B,r_B) \subset \bbR^n$.

For $p$ and $\ga$ as above, we can define \emph{order $-1$ Neumann problems} $(N_H)_{-1,A}^p$, $(N_H)_{-1,A}^{(\infty,\ga)}$, and $(N_H)_{-1,A}^{(\infty,0)}$ in the same way, with tangential gradients $\nabla_\parallel$ replaced by $A$-conormal derivatives $\partial_{\gn_A}$ in the boundary condition (keeping $\nabla_\parallel$ in the decay condition at infinity).

Note that in the `order $-1$' problems above, we impose a tent space estimate on $\nabla u$ rather than a nontangential maximal function estimate.
We also impose a decay condition on the tangential gradient $\nabla_\parallel u$ at infinity.
For $p$ sufficiently small the decay condition is implied by the other conditions, and if $L_A$ satisfies a De Giorgi--Nash--Moser condition (see \eqref{eqn:DGNM}) then it is implied for all $p < \infty$, and also for some range of $\ga \geq 0$. (see Lemma \ref{lem:decaylem}).

\begin{rmk}
  We have not imposed any non-tangential convergence of solutions in the problems above.
  This is because the classification theorems of the second author and Mourgoglou, in particular \cite[Corollaries 1.2 and 1.4]{AM15}, automatically yield almost everywhere (a.e.) non-tangential convergence of Whitney averages (of either the solution or its conormal gradient, whichever is relevant).
  When $L_A$ satisfies a De Giorgi--Nash--Moser condition this can be improved to a.e. non-tangential convergence without Whitney averages.
\end{rmk}

Let us summarise the problems we have introduced so far.
There are Dirichlet problems of order $0$ and $1$ (interpreting the $H^p$-Regularity problem as a $\dot{H}_1^p$-Dirichlet problem), Regularity problems of order $0$ and $-1$, and Neumann problems of order $0$ and $-1$.

In their recent memoir \cite{BM16}, Barton and Mayboroda consider problems of \emph{intermediate order}.
They formulate Dirichlet problems of order $\gq \in (0,1)$ and Neumann problems of order $\gq \in (-1,0)$ as follows.
For $0 < \gq < 1$ and $n/(n+\gq) < p \leq \infty$,
\begin{equation*}
	(D_B)_{\gq,A}^{p} : \left\{ \begin{array}{l} 	L_A u = 0 \quad \text{in $\bbR_+^{1+n}$}, \\
	\nm{ \nabla u }_{L(p,\gq,2)} \lesssim \nm{ f }_{\dot{B}_{\gq}^{p,p}}, \\
	\Tr u = f \in \dot{B}_\gq^{p,p}(\bbR^n : \bbC^m) \\
	\end{array} \right.
\end{equation*}
and
\begin{equation*}
	(N_B)_{\gq-1,A}^{p} : \left\{ \begin{array}{l} 	L_A u = 0 \quad \text{in $\bbR_+^{1+n}$}, \\
	\nm{ \nabla u }_{L(p,\gq,2)} \lesssim \nm{ \partial_{\gn_A} f }_{\dot{B}_{\gq-1}^{p,p}}, \\
	\partial_{\gn_A}u|_{\partial \bbR^{1+n}_+} = \partial_{\gn_A} f \in \dot{B}_{\gq-1}^{p,p}(\bbR^n : \bbC^m). \\
	\end{array} \right.
\end{equation*}
The spaces $L(p,\gq,2)$ are defined by the norms
\begin{align*}
	\nm{ F }_{L(p,\gq,2)} := \bigg( \dint_{\bbR^{1+n}_+} \bigg( \bariint_{\substack{t/2 < \tau < 2t \\ \xi \in B(x,t)}}^{} |\gt^{1-\gq} F(\gt,\gx)|^2 \, d\gx \, d\gt \bigg)^{p/2} \, dx \, \frac{dt}{t} \bigg)^{1/p}
\end{align*}
with the usual modification when $p = \infty$.
We refer to these spaces as \emph{$Z$-spaces} starting from Section \ref{sec:zs} (the letter $L$ already being overused), with an indexing convention such that $Z^p_\gq = L(p,\gq+1,2)$.
The boundary condition for $(D_B)_{\gq,A}^p$ is phrased in terms of the trace operator, which Barton and Mayboroda show to be bounded from $\{u : \nabla u \in L(p,\theta,2)\}$ to $\dot{B}_\gq^{p,p}$ when $p > n/(n+\gq)$ \cite[Theorem 3.9]{BM16}.
A similar argument is used to define the boundary conormal derivative $\partial_{\gn_A}u|_{\partial \bbR^{1+n}_+}$.

\begin{rmk}
  We warn the reader that our indexing convention for boundary value problems is different to that in \cite{BM16}: we index our problems by the order of the function space in which the boundary data is assumed to lie. Thus Barton and Mayboroda refer to $(N_B)_{\gq-1,A}^p$ as $(N)_{\gq,A}^p$.
\end{rmk}

As we stated earlier, for technical reasons we consider Regularity problems rather than Dirichlet problems.
Thus for $-1 < \gq < 0$ and $n/(n+\gq+1) < p \leq \infty$ we define
\begin{equation*}
	(R_B)_{\gq,A}^{p} : \left\{ \begin{array}{l} 	L_A u = 0 \quad \text{in $\bbR_+^{1+n}$}, \\
	\nm{ \nabla u }_{Z^p_\gq} \lesssim \nm{ \nabla_\parallel f }_{\dot{B}_{\gq}^{p,p}}, \\
	\lim_{t \to \infty} \nabla_\parallel u(t,\cdot) = 0 \quad \text{in $\mc{Z}^\prime(\bbR^n : \bbC^{mn})$}, \\
	\lim_{t \to 0} \nabla_\parallel u(t,\cdot) = \nabla_\parallel f \in \dot{B}_\gq^{p,p}(\bbR^n : \bbC^{mn}) \\
	\end{array} \right.
\end{equation*}
and
\begin{equation*}
	(N_B)_{\gq,A}^{p} : \left\{ \begin{array}{l} 	L_A u = 0 \quad \text{in $\bbR_+^{1+n}$}, \\
	\nm{ \nabla u }_{Z^p_\gq} \lesssim \nm{\partial_{\gn_A} f}_{\dot{B}_{\gq}^{p,p}}, \\
	\lim_{t \to \infty} \nabla_\parallel u(t,\cdot) = 0 \quad \text{in $\mc{Z}^\prime(\bbR^n : \bbC^{mn})$}, \\
	\lim_{t \to 0} \partial_{\gn_A} u(t,\cdot) = \partial_{\gn_A} f \in \dot{B}_\gq^{p,p}(\bbR^n : \bbC^m), \\
	\end{array} \right.
\end{equation*}
replacing the trace conditions with limiting conditions for consistency with the `endpoint order' problems that we have already defined, writing $Z^p_\gq$ instead of $L(p,\gq+1,2)$, and including a decay condition at infinity.
As before, when $p = \infty$ we impose the boundary condition in the weak-star topology, using that $\dot{B}^{\infty,\infty}_\gq$ is the dual of $\dot{B}^{1,1}_{-\gq}$.

\begin{rmk}
  If we omit the decay condition, the Regularity problem $(R_B)_{\gq,A}^p$ is equivalent to the Dirichlet problem $(D_B)_{\gq+1,A}^p$ defined above by an argument similar to that of Remark \ref{rmk:d-r-eq}, and the Neumann problem $(N_B)_{\gq,A}^p$ is simply a rewriting of the previously-defined Neumann problem.
  As stated earlier, the decay condition is often redundant.
  Furthermore, the trace theorem of Barton and Mayboroda \cite[Theorem 3.9]{BM16} implies that the boundary limiting condition and the trace condition are equivalent given that $\nabla u \in Z_\theta^p$.
\end{rmk}

The Besov spaces $\dot{B}^{p,p}_\gq$ with $\gq \in (-1,0)$ are not the only function spaces situated between $H^p_0$ and $\dot{H}^p_{-1}$.
One of our new contributions to this problem is that we also consider the Hardy--Sobolev spaces $\dot{H}^p_\gq$ with $\gq \in (-1,0)$.
These are defined in Section \ref{sec:hss}; they may be identified with the homogeneous Triebel--Lizorkin spaces $\dot{F}^{p,2}_\gq$, whereas the Besov spaces $\dot{B}^{p,p}_\gq$ may be identified with $\dot{F}^{p,p}_\gq$ when $p < \infty$.
We use Hardy--Sobolev spaces to formulate the following Regularity and Neumann problems, with $-1 < \gq < 0$ and $n/(n+\gq+1) < p < \infty$:
\begin{equation*}
  (R_H)_{\gq,A}^{p} : \left\{
    \begin{array}{l}
      L_A u = 0 \quad \text{in $\bbR_+^{1+n}$}, \\
      \nm{ \nabla u }_{T^p_\gq} \lesssim \nm{\nabla_\parallel f}_{\dot{H}_{\gq}^{p}}, \\
      \lim_{t \to \infty} \nabla_\parallel u(t,\cdot) = 0 \quad \text{in $\mc{Z}^\prime(\bbR^n : \bbC^{mn})$}, \\
      \lim_{t \to 0} \nabla_\parallel u(t,\cdot) = \nabla_\parallel f \in \dot{H}_\gq^{p}(\bbR^n : \bbC^{mn}) \\
    \end{array} \right.
\end{equation*}
and
\begin{equation*}
  (N_H)_{\gq,A}^{p} : \left\{
    \begin{array}{l}
      L_A u = 0 \quad \text{in $\bbR_+^{1+n}$}, \\
      \nm{ \nabla u }_{T^p_\gq} \lesssim \nm{ \partial_{\gn_A} f }_{\dot{H}_{\gq}^{p}}, \\
      \lim_{t \to \infty} \nabla_\parallel u(t,\cdot) = 0 \quad \text{in $\mc{Z}^\prime(\bbR^n : \bbC^{mn})$}, \\
      \lim_{t \to 0} \partial_{\gn_A} u(t,\cdot) = \partial_{\gn_A} f \in \dot{H}_\gq^{p}(\bbR^n : \bbC^m). \\
    \end{array} \right.
\end{equation*}
Recall that the $T^p_\gq$ (quasi-)norm is defined in \eqref{eqn:wts-norm-intro}.
Furthermore, for $-1 < \gq < 0$ we formulate `endpoint' problems $(R_H)_{\gq,A}^\infty$ and $(N_H)_{\gq,A}^\infty$ by replacing $\dot{H}_\gq^p$ with the homogeneous $\BMO$-Sobolev space $\dot{\BMO}_\gq$, which may be identified with the homogeneous Triebel--Lizorkin space $\dot{F}^{\infty,2}_\gq$, and replacing $T^p_\gq$ with the weighted tent space $T^\infty_{\gq;0}$ (defined in \eqref{eqn:wts-norm-carl-intro}).
In this case the boundary condition is imposed in the weak-star topology, using that $\dot{\BMO}_\gq$ is the dual of $\dot{H}_{-\gq}^1$.
Unlike the function space $\{u : \nabla u \in Z_\gq^p\}$, there is no trace theorem for the function space $\{u : \nabla u \in T_\gq^p\}$.

Let us briefly summarise the Regularity and Neumann problems that we have introduced.
\begin{itemize}
\item
  At order $0$ we have problems $(R_H)_{0,A}^p$ and $(N_H)_{0,A}^p$ for $n/(n+1) < p < \infty$, with boundary data in $H^p$ and a modified non-tangential maximal estimate on the interior.
\item
  At order $-1$ we have $(R_H)_{-1,A}^p$ and $(N_H)_{-1,A}^p$ for $1 < p < \infty$, with boundary data in $\dot{H}_{-1}^p$ and a $T^p_{-1}$ interior estimate.
  Furthermore, for $0 \leq \alpha < 1$, we have $(R_H)_{-1,A}^{(\infty,\ga)}$ and $(N_H)_{-1,A}^{(\infty,\ga)}$ with boundary data in $\dot{\gL}_{\ga - 1}$ (or $\dot{\BMO}_{-1}$ when $\ga = 0$) and a $T^\infty_{-1;\ga}$ interior estimate.
\item
In between, i.e. for order $\gq \in (-1,0)$, and with $n/(n+\gq+1) < p \leq \infty$, we have $(R_B)_{\gq,A}^p$ and $(N_B)_{\gq,A}^p$ with boundary data in $\dot{B}^{p,p}_\gq$, and $(R_H)_{\gq,A}^p$ and $(N_H)_{\gq,A}^p$ with boundary data in $\dot{H}^p_\gq$ ($\dot{\BMO}_\gq$ when $p = \infty$). 
Here the interior estimates are in $Z^p_\gq$ and $T^p_\gq$ ($T^\infty_{\gq;0}$ when $p = \infty$) respectively.
\end{itemize}
For all problems of negative order we also impose a decay condition on $\nabla_\parallel u(t,\cdot)$ as $t \to \infty$ in the space $\mc{Z}^\prime$ of tempered distributions modulo polynomials, which is redundant in many cases (see Lemma \ref{lem:decaylem}).
Note that for $\gq \in (-1,0)$, the problems $(R_H)_{\gq,A}^2$ and $(R_B)_{\gq,A}^2$ (and likewise for Neumann problems) coincide, since $\dot{H}^2_\gq = \dot{B}^{2,2}_\gq$ and $Z^2_\gq = T^2_\gq$.

Since we assume no regularity of our coefficients, and since we consider weak solutions without further regularity properties, the only Regularity and Neumann problems which are meaningful are those of order between $-1$ and $0$.
Thus, within this context, this list of boundary value problems is essentially complete.

\subsection[Perturbed Dirac operators and CR systems]{Perturbed Dirac operators and Cauchy--Riemann systems}\label{ssec:fop}

Let $D$ denote the differential operator on $\bbC^{m(1+n)}$-valued functions given by
\begin{equation*}
	D := \begin{bmatrix} 0 & \dv_\parallel \\ -\nabla_\parallel & 0 \end{bmatrix}
\end{equation*}
with respect to the transversal/tangential splitting \eqref{eqn:ttsplit} of $\bbC^{m(1+n)}$.
We refer to $D$ as a \emph{Dirac operator}\index{Dirac operator}, because $D^2$ acts as the Laplacian $\gD$ on the range of $D$.
When $B \in L^\infty(\bbR^n : \mc{L}(\bbC^{m(1+n)}))$ is a coefficient matrix satisfying the same assumptions as $A$, we refer to the composition $DB$ as a \emph{perturbed Dirac operator}.\index{Dirac operator!perturbed}

The \emph{Cauchy--Riemann system}\index{Cauchy--Riemann system} associated with $DB$ is the first-order partial differential system
\begin{equation*}
	(\CR)_{DB} : \left\{ \begin{array}{r} \partial_t F + DB F = 0 \\ \curl_\parallel F_\parallel = 0\end{array} \right. \quad \text{in $\bbR^{1+n}_+$}
\end{equation*}
interpreted in the weak ($L_\text{loc}^2$) sense: that is, we say that $F \in L_{\loc}^2(\bbR^{1+n}_+ : \bbC^{m(1+n)})$ solves $(\CR)_{DB}$ if for all test functions $\gf \in C_c^\infty(\bbR^{1+n}_+ : \bbC^{m(1+n)})$
\begin{equation*}
	\dint_{\bbR^{1+n}_+} \left( F(t,x), \partial_t \gf(t,x) \right) \, dx \, dt = \dint_{\bbR^{1+n}_+} \left( F(t,x), B^*(x) D\gf(t,x) \right) \, dx \, dt,
\end{equation*}  
and for all  $\gy \in C_c^\infty(\bbR^{1+n}_+ : \bbC^{m})$ and $1 \leq j,k \leq n$, $j \neq k$,
\begin{equation*}
	\dint_{\bbR^{1+n}_+} \left( F_k(t,x), \partial_j \gy(t,x) \right) \, dx \, dt = -\dint_{\bbR^{1+n}_+} \left( F_j(t,x), \partial_k \gy(t,x) \right) \, dx \, dt.
\end{equation*}
The condition $\curl_\parallel F_\parallel = 0$ in $(\CR)_{DB}$ is preserved for all limits, so the Cauchy--Riemann system may be considered as an evolution equation in a restricted space involving a differential structure.

The first-order approach to boundary value problems for elliptic equations $L_A u = 0$ exploits a correspondence between these elliptic equations and Cauchy--Riemann systems $(\CR)_{DB}$.
Recall that $A \in L^\infty(\bbR^n : \mc{L}(\bbC^{m(1+n)}))$.
Write $A$ in matrix form with respect to the transversal/tangential splitting \eqref{eqn:ttsplit} of $\bbC^{m(1+n)}$ as
\begin{equation}\label{eqn:A}
	A = \begin{bmatrix} A_{\perp\perp} & A_{\perp\parallel} \\ A_{\parallel\perp} & A_{\parallel\parallel} \end{bmatrix},
\end{equation}
and using this representation of $A$ define auxiliary matrices
\begin{equation*}
	 \overline{A} := \begin{bmatrix} A_{\perp\perp} & A_{\perp\parallel} \\ 0 & I\end{bmatrix} \quad \text{and} \quad \underline{A} := \begin{bmatrix} I & 0 \\ A_{\parallel\perp} & A_{\parallel\parallel} \end{bmatrix}
\end{equation*}
in $L^\infty(\bbR^n : \mc{L}(\bbC^{m(1+n)}))$.
Strict accretivity of $A$ implies that $A_{\perp\perp}$ is invertible in $L^\infty(\bbR^n : \mc{L}(\bbC^m))$, and so $\overline{A}$ is invertible in $L^\infty(\bbR^n : \mc{L}(\bbC^{m(1+n)}))$.
Thus we may define
\begin{equation*}
	\hat{A} := \underline{A} \overline{A}^{-1}.
\end{equation*}
The transformed coefficient matrix $\hat{A}$ satisfies the same assumptions as $A$, and $\hat{\hat{A}} = A$ \cite[Proposition 3.2]{AAM10}.

The correspondence between elliptic equations $L_A u = 0$ and Cauchy--Riemann systems $(\CR)_{DB}$ is given by the following theorem.
See \cite[\textsection 3]{AAM10}, \cite[Proposition 4.1]{AA11}, \cite[\textsection 2]{aR13}, and \cite[Lemma 7.1]{AM15} for proofs and discussions.

\begin{thm}[Auscher--Axelsson--McIntosh]\label{thm:AAM}\index{elliptic equation!equivalence with Cauchy--Riemann system}\index{Cauchy--Riemann system!equivalence with elliptic equation}
  Let $A$ be as above, and let $B = \hat{A}$.
  If $u$ solves $L_A u = 0$, then the $A$-conormal gradient $\nabla_A u$ solves the Cauchy--Riemann system $(\CR)_{DB}$.
  Conversely, if $F$ solves $(\CR)_{DB}$, then there exists a function $u$, unique up to an additive constant, such that $L_A u = 0$ and $F = \nabla_A u$.
\end{thm}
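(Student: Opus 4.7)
My plan is to split the theorem into its two implications and reduce each to a direct computation based on the block structure of $A$. The pivotal algebraic observation is that, with $v = \nabla u$ and $F = \overline{A} v = ((A\nabla u)_\perp, \nabla_\parallel u)$, we have $F = \nabla_A u$ and, since $B = \hat A = \underline{A}\,\overline{A}^{-1}$,
\[
BF \;=\; \underline{A}\,\overline{A}^{-1} \overline{A} v \;=\; \underline{A} v \;=\; (\partial_t u,\;(A\nabla u)_\parallel).
\]
For the forward implication, starting from $u$ with $L_A u = 0$ and with $F = \nabla_A u$, a formal computation based on the identity above yields
\[
\partial_t F + DBF \;=\; \bigl(\partial_t (A\nabla u)_\perp + \dv_\parallel (A\nabla u)_\parallel,\; \partial_t \nabla_\parallel u - \nabla_\parallel \partial_t u\bigr),
\]
whose normal component vanishes by $L_A u = 0$ and whose tangential component vanishes by commutation of distributional derivatives; the constraint $\curl_\parallel F_\parallel = \curl_\parallel \nabla_\parallel u = 0$ is automatic. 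To make this rigorous in the weak sense defined before the statement, I would test the CR identity against $\gf = (\gf_\perp, \gf_\parallel) \in C_c^\infty(\bbR^{1+n}_+ : \bbC^{m(1+n)})$ and integrate by parts: the $\gf_\perp$-terms regroup into $\dint_{\bbR^{1+n}_+}(A\nabla u, \nabla \gf_\perp)\,dx\,dt$, which is exactly the weak form of $L_A u = 0$ tested against the scalar $\gf_\perp$, while the $\gf_\parallel$-terms cancel via a symmetric integration by parts between $(\nabla_\parallel u, \partial_t \gf_\parallel)$ and $(\partial_t u, \dv_\parallel \gf_\parallel)$.

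For the converse, given $F$ solving $(\CR)_{DB}$, I would set
\[
g \;:=\; (\overline{A}^{-1} F)_\perp \;=\; A_{\perp\perp}^{-1}\bigl(F_\perp - A_{\perp\parallel} F_\parallel\bigr) \;\in\; L^2_{\loc}(\bbR^{1+n}_+ : \bbC^m),
\]
which is well defined since $A_{\perp\perp}$ is invertible in $L^\infty$. I would then verify that the $\bbC^{m(1+n)}$-valued vector field $(g, F_\parallel)$ is curl-free on $\bbR^{1+n}_+$ in the distributional sense: the purely tangential curl components vanish by the hypothesis $\curl_\parallel F_\parallel = 0$, while the mixed $t$--tangential components vanish because $(BF)_\perp = g$ and the tangential component of $\partial_t F + DBF = 0$ reads $\partial_t F_\parallel = \nabla_\parallel (BF)_\perp = \nabla_\parallel g$. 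The distributional Poincar\'e lemma on the convex, hence simply connected, open set $\bbR^{1+n}_+$ then produces $u \in W^2_{1,\loc}$, unique up to an additive constant, with $\nabla u = (g, F_\parallel)$. By construction $\overline{A} \nabla u = F$ and $\underline{A} \nabla u = BF$, hence $A \nabla u = (F_\perp, (BF)_\parallel)$, so $F = \nabla_A u$, and $L_A u = \partial_t F_\perp + \dv_\parallel (BF)_\parallel$ vanishes by the normal component of the CR equation.

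The main obstacle I anticipate is in the converse, specifically the invocation of the distributional Poincar\'e lemma: I only have $F \in L^2_{\loc}$ and the curl-free condition holds only in $\mc{D}'$, yet I must produce a $W^2_{1,\loc}$ primitive on the whole half-space. On a convex domain this is classical, but it should be executed carefully, for instance via mollification of $(g, F_\parallel)$ to obtain smooth curl-free fields, construction of smooth primitives via line integrals from a basepoint, and passage to the limit after subtracting the constants that arise; that $\bbR^{1+n}_+$ is star-shaped about each of its points makes this routine. Once $u$ is produced, every remaining verification is purely algebraic, relying only on the block identities $\overline{A}\nabla u = F$ and $\underline{A}\nabla u = BF$ flowing from the definition $B = \hat A$.
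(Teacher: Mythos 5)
The paper does not include its own proof of this theorem; it defers to the cited references ([AAM10, \S 3], [AA11, Prop.\ 4.1], [aR13, \S 2], [AM15, Lemma 7.1]). Your argument is a correct self-contained reproduction of the standard computation from those sources: the block identities $\overline{A}\nabla u = \nabla_A u$ and $\underline{A}\nabla u = B\nabla_A u$ convert the second-order equation into the first-order system, and the converse reduces to the distributional Poincar\'e lemma on the convex half-space exactly as you flag. One small point worth stating explicitly when writing this up: after the Poincar\'e lemma yields a distribution $u$ with $\nabla u = (g, F_\parallel) \in L^2_{\loc}$, you must also invoke the (standard but nontrivial) fact that a distribution on a connected open set with locally square-integrable gradient is itself locally square-integrable, so that $u \in W^2_{1,\loc}$ as required by the definition of a weak solution; your mollification-and-limit scheme does deliver this, but it is the kind of step that deserves a sentence rather than being left implicit.
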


The $A$-\emph{conormal gradient}\index{conormal gradient} $\nabla_A u$ of a function $\map{u}{\bbR^{1+n}_+}{\bbC^m}$ is defined by
\begin{equation}\label{eqn:conormalgrad}
	\nabla_A u = \begin{bmatrix} \partial_{\gn_A} u \\ \nabla_\parallel u \end{bmatrix},
\end{equation}
where the $A$-conormal derivative $\partial_{\gn_A}$ is defined in \eqref{eqn:conormalderiv}.
The components of $\nabla_A u$ appear in the boundary conditions of the Regularity and Neumann problems; hence our preference for Regularity problems over Dirichlet problems.

Thus in our consideration of elliptic equations we may focus instead on Cauchy--Riemann systems.
The principal advantage of Cauchy--Riemann systems over elliptic equations is that the Cauchy equation $\partial_t F + DB F = 0$ can be solved by semigroup methods.
We sketch how this is done, following \cite{AAM10} and \cite{AA11}.

Consider $D$ as an unbounded operator on $L^2 := L^2(\bbR^n : \bbC^{m(1+n)})$ with natural domain, and consider $B$ as a multiplication operator on $L^2$.
Then we have the following theorem \cite[Proposition 3.3 and Theorem 3.4]{AAM10}.
This is a very deep result: it is part of the framework developed by Axelsson, Keith, and McIntosh \cite{AKM06}, which encompasses the solution of the Kato square root problem \cite{AHLMT02}.

\begin{thm}[Axelsson--Keith--McIntosh]
  The perturbed Dirac operator $DB$ is bisectorial and has bounded $H^\infty$ functional calculus on the closure $\overline{\mc{R}(D)} = \overline{\mc{R}(DB)}$ of its range.
\end{thm}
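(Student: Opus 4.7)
The plan is to place $DB$ in the axiomatic framework of Axelsson--Keith--McIntosh \cite{AKM06}, in which bisectoriality and the $H^\infty$ calculus of perturbed first-order operators of the form $\Gamma + B_1\Gamma^*B_2$ are established whenever a short list of structural hypotheses holds. Here $\Gamma$ will essentially be the ``off-diagonal'' part of $D$ (or $D$ itself, after writing $B$ in block form), with the required non-negative self-adjoint operator $D$ playing the role of the unperturbed operator. The first task is therefore to check the hypotheses of that framework: $D$ should be self-adjoint with $D^2 = -\Delta_{\parallel}$ on $\overline{\mc{R}(D)}$, and $B$ should be a bounded, pointwise multiplication operator which is strictly accretive on $\overline{\mc{R}(D)}$ modulo the null space structure. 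Self-adjointness of $D$ is immediate from its block form, since $\dv_\parallel$ and $-\nabla_\parallel$ are formal adjoints; the accretivity of $B$ on $\overline{\mc{R}(D)}$ follows from the curl-free accretivity assumption~\eqref{eqn:accretivity}, because tangential components of elements in $\overline{\mc{R}(D)}$ are automatically curl-free.

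Next, I would use the structural fact that $D$ generates a Hodge-type decomposition $L^2 = \mc{N}(D) \oplus \overline{\mc{R}(D)}$ and that $DB$ preserves this splitting in the sense that $\overline{\mc{R}(D)} = \overline{\mc{R}(DB)}$ (by accretivity of $B$ on $\overline{\mc{R}(D)}$). From this, bisectoriality of $DB$ on $L^2$ reduces to invertibility of $DB \pm i\lambda$ on $\overline{\mc{R}(D)}$ for $\lambda \in \bbR \setminus \{0\}$, which one obtains by combining self-adjointness of $D$ with a Lax--Milgram-style argument using the accretivity of $B$. Resolvent bounds $\|(\lambda - DB)^{-1}\|_{L^2 \to L^2} \lesssim |\lambda|^{-1}$ outside a bisector fall out of the same estimate.

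By McIntosh's theorem, the bounded $H^\infty$ calculus of $DB$ on $\overline{\mc{R}(DB)}$ is equivalent to the square function equivalence
\begin{equation*}
\int_0^\infty \|tDB(I+t^2(DB)^2)^{-1} u\|_2^2 \, \frac{dt}{t} \sim \|u\|_2^2, \qquad u \in \overline{\mc{R}(DB)}.
\end{equation*}
So the proof comes down to proving the quadratic estimate ``$\lesssim$'' (the reverse inequality being standard for bisectorial operators once ``$\lesssim$'' is established, via duality applied to $B^*D$, which has the same structure). This is the genuinely hard step, and it is exactly where the Kato square root technology enters: splitting the integrand into a principal part $\gamma_t E_t u$ (where $E_t$ is an averaging operator and $\gamma_t$ a multiplication) plus an error term controlled by $L^2$ off-diagonal estimates for the resolvents $(I + it DB)^{-1}$, then proving Carleson-measure bounds for $|\gamma_t|^2 \, \frac{dx\,dt}{t}$ via a local $T(b)$ argument. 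The off-diagonal estimates themselves follow from Gaffney-type arguments using commutators of the resolvent with smooth cutoffs, exploiting the first-order nature of $DB$ and the boundedness of $B$.

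The main obstacle is the Carleson estimate for the principal part: this is the content of the Kato square root theorem in its first-order guise, and requires the stopping-time / sawtooth decomposition together with a suitable choice of accretive test functions (built from $(I + \epsilon DB)^{-1}$ applied to constants localized on dyadic cubes) to exploit the accretivity of $B$ on curl-free fields. Since the conclusion asserted is precisely a packaging of \cite[Proposition~3.3 and Theorem~3.4]{AAM10}, I would ultimately cite that work rather than redo the full $T(b)$ argument here, but the outline above is the route by which those results are established.
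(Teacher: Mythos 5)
Your proposal is correct and takes the same route as the paper, which does not prove this theorem but cites it to \cite[Proposition 3.3 and Theorem 3.4]{AAM10} (resting in turn on the framework of \cite{AKM06} and the Kato square root theorem \cite{AHLMT02}); your sketch of how those references establish the result — verifying the structural hypotheses, reducing the $H^\infty$ calculus to quadratic estimates via McIntosh's theorem, and invoking the $T(b)$/Carleson-measure machinery with $L^2$ off-diagonal estimates — is accurate. One small slip: $D$ is self-adjoint but \emph{not} non-negative (its spectrum is symmetric about the origin), so the phrase ``non-negative self-adjoint operator $D$'' is misplaced; the AKM framework requires only self-adjointness of the unperturbed operator.
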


These notions are properly discussed in Section \ref{section:bisectorial-hfc}.
Using the direct sum decomposition
\begin{equation*}
	L^2 = \mc{N}(DB) \oplus \overline{\mc{R}(DB)}
\end{equation*}
(which follows from bisectoriality of $DB$) and the bounded $H^\infty$ functional calculus of $DB$ on $\overline{\mc{R}(DB)}$, we obtain a decomposition
\begin{equation}\label{eqn:spectral-decomposition}
	L^2 = \mc{N}(DB) \oplus \overline{\mc{R}(DB)}^+ \oplus \overline{\mc{R}(DB)}^-.
\end{equation}
The \emph{positive and negative spectral subspaces} $\overline{\mc{R}(DB)}^\pm$ are the images of $\overline{\mc{R}(DB)}$ under the projections $\gc^\pm(DB)$, which are defined via the functions $\map{\gc^\pm}{\bbC \sm i\bbR}{\{0,1\}}$ given by
\begin{equation*}
	\gc^\pm(z) := \mb{1}_{z : \pm\operatorname{Re}(z) > 0}.
\end{equation*}
These are the characteristic functions of the right and left half-plane, restricted to $\bbC \sm i\bbR$.
They are bounded and holomorphic on every bisector, so they fall within the scope of the $H^\infty$ functional calculus.


By way of the $H^\infty$ functional calculus, we may define a generalised \emph{Cauchy operator} $\map{C_{DB}^+}{\overline{\mc{R}(DB)}}{L^\infty(\bbR_+ : \overline{\mc{R}(DB)})}$ by
\begin{equation*}
(C_{DB}^+ f)(t) = e^{-tDB} \gc^+(DB) f,
\end{equation*}
corresponding to the family of functions $(z \mapsto e^{-tz} \gc^+(z))$, which are bounded and holomorphic on every bisector.
When restricted to the positive spectral subspace $\overline{\mc{R}(DB)}^+$, the Cauchy operator $C_{DB}^+$ acts as a strongly continuous semigroup.
The Cauchy operator is used in the following classification theorem, which is a combination of parts of \cite[Theorem 2.3]{AAM10} and \cite[Corollary 8.4]{AA11}.

\begin{thm}[Auscher--Axelsson--McIntosh]
If $f \in \overline{\mc{R}(DB)}^+$, then the Cauchy extension $C_{DB}^+ f$ solves $(\CR)_{DB}$, with
\begin{equation*}
	\nm{ \wtd{N}_*(C_{DB}^+ f) }_2 \simeq \nm{ f }_2 \qquad \text{and} \qquad \lim_{t \to 0} (C_{DB}^+ f)(t,\cdot) = f \quad \text{in $L^2$}.
\end{equation*}
Conversely, if $F$ solves $(\CR)_{DB}$ and $\wtd{N}_*(F) \in L^2$, then $F = C_{DB}^+ f$ for a unique $f \in \overline{\mc{R}(DB)}^+$.
\end{thm}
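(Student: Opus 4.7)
The statement splits into the forward implication (existence and regularity of the Cauchy extension) and the converse (classification of arbitrary CR solutions). For the forward direction I first observe that for $f\in\ol{\mc{R}(DB)}^+$ the identity $\gc^+(DB)f = f$ reduces $(C_{DB}^+ f)(t,\cdot)$ to $e^{-tDB}f$, and that $t\mapsto e^{-tDB}$ restricted to $\ol{\mc{R}(DB)}^+$ is a strongly continuous semigroup with generator $-DB$. Consequently $\partial_t F + DBF=0$ holds in the strong $L^2$ sense for $f\in\mc{D}(DB)\cap\ol{\mc{R}(DB)}^+$, and by density in the weak sense required by $(\CR)_{DB}$ for general $f$. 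The curl-free condition $\curl_\parallel F_\parallel = 0$ is automatic: $F(t,\cdot)\in\ol{\mc{R}(DB)}=\ol{\mc{R}(D)}$ for every $t>0$, and any element of $\mc{R}(D)$ has tangential part $-\nabla_\parallel g_\perp$, which is curl-free; the property survives the $L^2$ closure distributionally.

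The equivalence $\nm{\wtd{N}_*(C_{DB}^+ f)}_2 \simeq \nm{f}_2$ is the analytic core of the forward direction. I would obtain the upper bound from the quadratic estimate
\[
	\int_0^\infty \nm{tDB\,e^{-tDB}g}_2^2 \, \frac{dt}{t} \simeq \nm{g}_2^2 \qquad (g\in\ol{\mc{R}(DB)}),
\]
which is available because $DB$ has bounded $H^\infty$ functional calculus on its range. Coupling this with the $L^2$ off-diagonal decay of the resolvents and semigroup of $DB$ (the Axelsson--Keith--McIntosh framework) and the standard Fubini-type argument that turns a Littlewood--Paley $g$-function into a non-tangential maximal function via tent spaces yields $\nm{\wtd{N}_*(C_{DB}^+f)}_2\lesssim\nm{f}_2$. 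The reverse inequality comes from Fatou together with strong $L^2$ continuity of the semigroup at $t=0$, which simultaneously produces the trace $\lim_{t\to 0}(C_{DB}^+f)(t,\cdot)=f$ in $L^2$.

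For the converse, suppose $F$ solves $(\CR)_{DB}$ with $\wtd{N}_*(F)\in L^2$. The NTM hypothesis together with the Cauchy equation yields uniform-in-$t$ control on the Whitney $L^2$ averages of $F$; standard interior-regularity considerations promote this to slice membership $F(t,\cdot)\in L^2$ for a.e.\ $t>0$, and the $\curl_\parallel$ condition places these slices in $\ol{\mc{R}(D)}=\ol{\mc{R}(DB)}$. Using the spectral decomposition \eqref{eqn:spectral-decomposition} I split $F(t,\cdot)=F^+(t)+F^-(t)$. The equation propagates these as $F^\pm(t)=e^{-(t-s)DB}F^\pm(s)$ for $0<s<t$; on the negative spectral subspace the propagator $e^{-(t-s)DB}$ grows exponentially, which contradicts the NTM control at large $t$ unless $F^-\equiv 0$. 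For $F^+$, a weak-$*$ compactness argument applied to $\{F^+(t,\cdot)\}_{t>0}$ in $L^2$ as $t\to 0$ extracts a limit $f\in\ol{\mc{R}(DB)}^+$ and the identity $F^+(t)=e^{-tDB}f$; hence $F=C_{DB}^+ f$, with $f$ unique by the boundary trace established in the forward direction.

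The main obstacle is the non-tangential maximal estimate in the forward implication: passing from the square-function bound on $\ol{\mc{R}(DB)}$ to pointwise NTM control requires delicate use of off-diagonal decay of $e^{-tDB}$ and tent-space embedding, and is where bounded $H^\infty$ calculus is truly exploited. A secondary difficulty in the converse is the rigidity step that extracts the boundary value of $F^+$ while simultaneously forcing $F^-$ to vanish from the NTM hypothesis alone, without any a priori estimate on slice norms.
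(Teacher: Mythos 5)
The paper does not supply its own proof of this theorem: it is stated as a combination of results from \cite{AAM10} and \cite{AA11}, so there is no in-paper argument to compare your proposal against line by line. That said, your sketch tracks the architecture of those references reasonably well, and the forward direction is sound in outline (though the passage from the square-function estimate to the non-tangential maximal estimate, which you attribute to a ``Fubini-type argument'', is in fact the technical heart of \cite[Theorem 2.3]{AAM10} and cannot be dispatched in one clause).

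There is a genuine gap in the converse direction. You write that the Cauchy--Riemann system ``propagates these as $F^\pm(t)=e^{-(t-s)DB}F^\pm(s)$ for $0<s<t$'' and then conclude $F^-\equiv 0$ because ``the propagator $e^{-(t-s)DB}$ grows exponentially''. As stated this does not make sense: $e^{-\tau DB}$ for $\tau>0$ is an \emph{unbounded} operator on $\overline{\mc{R}(DB)}^-$, so the right-hand side of your identity on the negative subspace is not defined unless $F^-(s)$ happens to lie in its (dense but proper) domain, which you have not established. The correct version propagates \emph{backward}: from $\partial_\tau F^- + DBF^- = 0$ and the uniqueness of strong solutions one gets $F^-(s)=e^{(t-s)DB}F^-(t)$ for $s<t$, and since $(e^{\tau DB})_{\tau>0}$ is the bounded decaying semigroup on $\overline{\mc{R}(DB)}^-$, the uniform Whitney-average bound on $\|F^-(t)\|_2$ forces $F^-(s)=0$ upon letting $t\to\infty$. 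Before you can even run this argument you must justify that the Cauchy--Riemann system holds strongly in $L^2$, i.e.\ that $F(t)\in\mc{D}(DB)$ and $\partial_t F(t)\in L^2$ for each $t$; that step rests on a Caccioppoli inequality and slice-space embeddings (compare Lemma~\ref{lem:smallpsoln} and Proposition~\ref{prop:wksolnprops} in the fractional-order classification), and your phrase ``standard interior-regularity considerations'' conceals this. Note also that the paper's own fractional classification (the proof of Lemma~\ref{lem:psmallplus}) avoids the semigroup-domain issue entirely by a duality testing argument with the functions $\mb{G}_{t_0,\gf}$, which is considerably more robust than the propagation route you sketch; if you intend to adapt that machinery, say so, because the two arguments are not interchangeable without work.
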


Combining this with Theorem \ref{thm:AAM} yields a characterisation of well-posedness of the boundary value problems $(R_H)_{0,A}^2$ and $(N_H)_{0,A}^2$.
Consider the $L^2$-Regularity problem $(R_H)_{0,A}^2$ and let $B = \hat{A}$.
A function $u$ solves $L_A u = 0$ with $\wtd{N}_*(\nabla u) \in L^2$ ($\nabla u$ and $\nabla_A u$ are interchangeable in this assumption) if and only if $\nabla_A u = C_{DB}^+ g$ for some $g \in \overline{\mc{R}(DB)}^+$, hence $\nabla_\parallel u(t,\cdot) = (C_{DB}^+ g)(t)_\parallel$ and $\lim_{t \to 0} \nabla_\parallel u(t,\cdot) = g_\parallel$.
Therefore $(R_H)_{0,A}^2$ is well-posed if and only if $g \mapsto g_\parallel$ is an isomorphism from $\overline{\mc{R}(DB)}^+$ to $L^2(\bbR^n : \bbC^{mn}) \cap \mc{N}(\curl_\parallel)$.
By the same argument, $(N_H)_{0,A}^2$ is well-posed if and only if $g \mapsto g_\perp$ is an isomorphism from $\overline{\mc{R}(DB)}^+$ to $L^2(\bbR^n : \bbC^{m})$.

By characterising solutions to $(\CR)_{DB}$ within various function spaces, we show that well-posedness of corresponding Regularity and Neumann problems is equivalent to the transversal and tangential projections being isomorphisms between certain `boundary function spaces'.
In this section we only described how to handle boundary value problems of order $0$ with $L^2$ boundary data.
We need to extend this technique to boundary value problems of more general order, and beyond $L^2$.
In the argument we just described, abstract semigroup theory (accessed via holomorphic functional calculus) did a lot of the work for us.
However, once we go beyond $L^2$, we cannot rely on abstract semigroup theory on general Banach spaces.
This would only classify solutions $F$ to $(\CR)_{DB}$ such that $F(t)$ is in a fixed function space for all $t > 0$, ruling out consideration of many tent spaces and $Z$-spaces.
Furthermore, abstract semigroup methods do not always allow us to move from initial value problems for $(\CR)_{DB}$ to boundary value problems for $L_A u = 0$.
On top of these defects we also need to consider quasi-Banach spaces, for which no semigroup theory seems to be available.
This makes things difficult.

\subsection{Adapted function spaces}

`Adapted' Hardy spaces $H^p_L$, with respect to which some operator $L$ has good properties (such as bounded $H^\infty$ functional calculus), have been developed in various contexts.
For example, Hardy spaces of differential forms on Riemannian manifolds were constructed by the second author with McIntosh and Russ \cite{AMR08} (these are adapted to the Hodge-Dirac operator $d + d^*$ on the de Rham complex); Hardy spaces adapted to non-negative self-adjoint operators satisfying Davies--Gaffney estimates on spaces of homogeneous type were studied by Hofmann, Lu, Mitrea, Mitrea, and Yan \cite{HLMMY11} (generalising the aforementioned example); Hardy spaces adapted to divergence-form elliptic operators on $\bbR^n$ were developed by Hofmann and Mayboroda \cite{HM09} and also McIntosh \cite{HMM11}.
Some further developments can be found, for example, in the work of Hyt\"onen, van Neerven, and Portal \cite{HvNP08}, Jiang and Yang \cite{JY10}, Anh and Li \cite{AL11}, and Duong and Li \cite{DL13}.
This is a very small sample of the work that has been done. 

Hardy spaces $\mb{H}^p_{DB}$ and Sobolev spaces $\mb{W}^p_{-1,DB}$ adapted to perturbed Dirac operators $DB$ were introduced by the second author and Stahlhut \cite{AS16} (See also Stahlhut's thesis \cite{sSthesis}, and for a different approach see Frey, McIntosh, and Portal \cite{FMP15}).
These spaces (defined along with more general spaces in Chapter \ref{chap:ahsb}) consist of $\bbC^{m(1+n)}$-valued functions (at least formally); the simplest case is
\begin{equation*}
	\mb{H}^2_D = \mb{H}^2_{DB} = \overline{\mc{R}(DB)} = \overline{\mc{R}(D)} \subset L^2(\bbR^n : \bbC^{m(1+n)}).
\end{equation*}
The bounded $H^\infty$ functional calculus of $DB$ on $\mb{H}^2_{DB}$ extends to $\mb{H}^p_{DB}$ and $\mb{W}_{-1,DB}^p$, yielding spectral decompositions
\begin{equation*}
	\mb{H}_{DB}^p = \mb{H}_{DB}^{p,+} \oplus \mb{H}_{DB}^{p,-}, \qquad \mb{W}_{-1,DB}^p = \mb{W}_{-1,DB}^{p,+} \oplus \mb{W}_{-1,DB}^{p,-}
\end{equation*}
analogous to \eqref{eqn:spectral-decomposition}.
Furthermore, the Cauchy operator $C_{DB}^+$ on $\overline{\mc{R}(DB)}$ extends to operators on $\mb{H}_{DB}^{p}$ and $\mb{W}_{-1,DB}^{p}$, both of which we denote by $\mb{C}_{DB}^+$.

The main application of these spaces, which incorporates results from work of the second author with both Stahlhut \cite{AS16} and Mourgoglou \cite{AM15}, is a classification of solutions to the Cauchy--Riemann system $(\CR)_{DB}$ with $L^p$-type interior estimates, for $p$ such that certain $DB$-adapted spaces may be identified with $D$-adapted spaces.
For simplicity we only state results for $1 < p < \infty$ in this introduction; corresponding results for $p \leq 1$ and `$p \geq \infty$' (i.e. for boundary data in $\BMO$-type and H\"older spaces) are also available, but these may not be stated so simply in terms of H\"older conjugate exponents.

\begin{thm}[Auscher--Mourgoglou--Stahlhut]\label{thm:AMS}\index{Cauchy--Riemann system!classification of solutions}
  Let $1 < p < \infty$.
  \begin{enumerate}[(i)]
  \item
    Assume $\mb{H}^p_{DB} \simeq \mb{H}_D^p$. If  $f \in \mb{H}^{p,+}_{DB}$, then $\mb{C}_{DB}^+ f$ solves $(\CR)_{DB}$, with 
    \begin{equation*}
      \nm{ \wtd{N}_* (\mb{C}_{DB}^+ f) }_p \simeq \nm{ f }_{H^p} \qquad \text{and} \qquad \lim_{t \to 0} \mb{C}_{DB}^+ f(t) = f \quad \text{in $H^p$}.
    \end{equation*}
    Conversely, if $F$ solves $(\CR)_{DB}$ and $\wtd{N}_* F \in L^p$, then $F = \mb{C}_{DB}^+ f$ for a unique $f \in \mb{H}_{DB}^{p,+}$.
  \item
    Assume   $\mb{H}^{p'}_{DB^*} \simeq \mb{H}_D^{p'}$.
    If  $f \in \mb{W}_{-1,DB}^{p,+}$, then $\mb{C}_{DB}^+ f$ solves $(\CR)_{DB}$, with
    \begin{equation*}
      \nm{ \mb{C}_{DB}^+ f }_{T^{p}_{-1}} \simeq \nm{ f }_{\dot{W}^{p}_{-1}} \qquad \text{and} \qquad \lim_{t \to 0} \mb{C}_{DB}^+ f(t) = f \quad \text{in $\dot{W}^{p}_{-1}$}.
    \end{equation*}
    Conversely, if $F \in T^{p}_{-1}$ solves $(\CR)_{DB}$ and $\lim_{t \to \infty} F(t)_\parallel = 0$ in $\mc{Z}^\prime(\bbR^n)$, then $F = \mb{C}_{DB}^+ f$ for a unique $f \in \mb{W}_{-1,DB}^{p,+}$.
  \end{enumerate}
\end{thm}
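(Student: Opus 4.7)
The plan is to leverage the fact that the adapted Hardy and Sobolev spaces $\mathbf{H}^p_{DB}$ and $\mathbf{W}^p_{-1,DB}$ already carry the bounded $H^\infty$ functional calculus of $DB$ (as asserted in Chapter \ref{chap:ahsb} and \cite{AS16}), and that under the hypotheses $\mathbf{H}^p_{DB} \simeq \mathbf{H}_D^p$ (resp.\ $\mathbf{H}^{p'}_{DB^*} \simeq \mathbf{H}_D^{p'}$) these adapted spaces coincide with the classical Hardy/Sobolev spaces on the range of $D$. This reduces everything to purely semigroup-theoretic arguments transferred along these isomorphisms.

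For the direct implication of (i), I would define $\mathbf{C}_{DB}^+ f$ through the $H^\infty$ calculus by the function $z \mapsto e^{-tz}\chi^+(z)$, which is bounded holomorphic on any bisector. By construction $\partial_t \mathbf{C}_{DB}^+ f + DB\, \mathbf{C}_{DB}^+ f = 0$ in the weak sense, and since $\mathbf{C}_{DB}^+ f(t) \in \overline{\mathcal{R}(DB)} = \overline{\mathcal{R}(D)}$ for each $t>0$, the curl-free condition $\curl_\parallel (\mathbf{C}_{DB}^+ f)_\parallel = 0$ holds automatically, so the pair solves $(\CR)_{DB}$. The norm equivalence $\|\widetilde{N}_*(\mathbf{C}_{DB}^+ f)\|_p \simeq \|f\|_{H^p}$ follows from the tent-space/NTM characterisation of $\mathbf{H}^p_{DB}$ — itself obtained by square-function estimates from the $H^\infty$ calculus on the range — combined with the identification $\mathbf{H}^p_{DB} \simeq \mathbf{H}^p_D$. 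Convergence $\mathbf{C}_{DB}^+ f(t) \to f$ in $H^p$ as $t \to 0$ is the strong continuity of the extended semigroup on $\mathbf{H}^{p,+}_{DB}$, which transfers from the $L^2$ case via a density argument on molecules together with off-diagonal decay of $e^{-tDB}\chi^+(DB)$.

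The main obstacle is the converse direction. Given $F$ solving $(\CR)_{DB}$ with $\widetilde{N}_* F \in L^p$, I would first show that for each $t>0$ the slice $F(t,\cdot)$ lies in $\mathbf{H}^{p,+}_{DB}$ and that $F(t) = e^{-sDB}F(t-s)$ for $0 < s < t$, so that the only remaining point is to produce the boundary trace as $t \to 0$. This uses a Calder\'on reproducing formula adapted to $DB$ — a version of $\mathrm{id} = c\int_0^\infty (\psi_t(DB))^2\, dt/t$ on $\overline{\mathcal{R}(DB)}$ with $\psi(z) = z\,e^{-z}\chi^+(z)$ — tested against $F$ on slabs $\{\varepsilon < t < \varepsilon^{-1}\}$. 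The crucial step is to show that the tent-space norm of $t \mapsto \psi_t(DB) F(t)$ (equivalently a square function or NTM bound on $F$) controls $\|f\|_{\mathbf{H}^{p,+}_{DB}}$ for the candidate trace $f$; this relies on tent-space duality and, for the Sobolev/Besov endpoints where duality is quasi-Banach, on the atomic/molecular decompositions of the adapted spaces together with $L^2$ off-diagonal estimates for $\psi_t(DB)$. Vanishing of $\chi^-(DB) F(t)$ (so that $F(t)$ is genuinely in the positive spectral subspace) is read off from the interior estimate by letting $t \to \infty$, using that elements of $\mathbf{H}^{p,-}_{DB}$ generated by the backwards Cauchy semigroup cannot satisfy the tent-space bound unless they vanish. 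Uniqueness of $f$ is immediate from injectivity of the Cauchy extension on the positive subspace.

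For part (ii), the scheme is identical after replacing the NTM/Hardy pair by the $T^p_{-1}$/Sobolev pair; the hypothesis on $DB^*$ appears because the appropriate tent-space duality $(T^{p'}_1)^* = T^p_{-1}$ and the identification of $\mathbf{W}^p_{-1,DB}$ via duality with $\mathbf{H}^{p'}_{DB^*}$ push the assumption onto the dual operator. The decay condition $\lim_{t\to\infty} F(t)_\parallel = 0$ in $\mathcal{Z}'$ is what rules out the ambiguity by polynomials intrinsic to homogeneous Sobolev spaces and makes the reconstructed trace unique in $\dot W^p_{-1}$. Once these two identifications are in hand, all estimates are obtained by repeating the Calder\'on reproducing argument above, now with the $T^p_{-1}$ norm replacing $\|\widetilde{N}_*\cdot\|_p$ on the tent-space side.
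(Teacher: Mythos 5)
Your overall plan—transfer from $L^2$ via the identifications $\mathbf{H}^p_{DB}\simeq\mathbf{H}^p_D$ (resp.\ the $\heartsuit$-dual assumption), prove the forward direction by functional calculus, and attack the converse by establishing a semigroup evolution law for the given solution $F$ and then reconstructing the trace—is precisely the approach of the cited works \cite{AS16,AM15} and of the fractional analogues proved in this monograph (Theorems \ref{thm:mainthm-leq2} and \ref{thm:mainthm-gtr2}). However, two points in the converse direction are glossed over in a way that hides where essentially all the difficulty lies.

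First, the step ``show that $F(t)\in\mathbf{H}^{p,+}_{DB}$ and $F(t_0+\tau)=e^{-\tau DB}F(t_0)$'' is not a preliminary reduction but the technical heart of the proof, and it does not follow from square-function duality or a Calder\'on reproducing formula applied to $F$ on slabs. The difficulty is that $F$ is only a \emph{weak} solution of $(\CR)_{DB}$, with no a priori spectral information, and the semigroup identity has to be extracted by an integration-by-parts argument against carefully chosen test functions of the form $\mathbf{G}_{t_0,\varphi}(t)=\sgn(t_0-t)\,e^{-[(t_0-t)B^*D]}\chi^{\sgn(t_0-t)}(B^*D)\,\bbP_{B^*D}\varphi$, combined with a commutator estimate to kill spatial cut-offs (Lemma \ref{lem:AMtesting} and Corollaries \ref{cor:spatial-limit}, \ref{cor:limit-in-time}). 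Ruling out the negative spectral part also comes from this testing mechanism—specifically by taking $\varepsilon\to 0$ in the resulting identities and using the decay of the integrand far from $t_0$—rather than from a direct appeal to $t\to\infty$ behaviour of $F$ alone. Second, the forward estimate $\nmfit{\widetilde N_*(\mathbf{C}^+_{DB}f)}_p\simeq\nm{f}_{H^p}$ in part (i) is not the tent-space characterisation of $\mathbf{H}^p_{DB}$: the adapted Hardy space is defined through a conical square function, and passing between that and the non-tangential maximal function for solutions of $(\CR)_{DB}$ is a separate nontrivial equivalence (it rests on Caccioppoli-type estimates and is a substantial theorem of \cite{AM15}, not a formal corollary of the $H^\infty$ calculus). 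Finally, the remark about ``quasi-Banach endpoints'' is moot under the standing restriction $1<p<\infty$.

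=== END OF REVIEW ===
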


Furthermore, it is shown that for every $B$ there exists an open interval containing $2$, which we denote $I_0(\mb{H},DB)$, such that $\mb{H}_{DB}^p \simeq \mb{H}_D^p$ for all $p \in I_0(\mb{H},DB)$ \cite[Theorem 5.1]{AS16}.
Thus there is a nontrivial range of exponents for which Theorem \ref{thm:AMS} applies.

\begin{rmk}
  The assumption in part (ii) of the theorem is given in terms of the conjugate exponent $p^\prime$ and the adjoint coefficients $B^*$.
As part of our theory we show---as a corollary of what will eventually be termed `$\heartsuit$-duality'---that this is equivalent to $\mb{W}_{-1,DB}^{p}\simeq\mb{W}_{-1,D}^{p}$, thus unifying the assumptions of parts (i) and (ii).
\end{rmk}

As we described in the case where $p = 2$, Theorem \ref{thm:AMS} implies a characterisation of well-posedness of various Regularity and Neumann problems, both of order $0$ and order $-1$, in terms of certain transversal and tangential projections being isomorphisms.
We will not explicitly state this characterisation now; instead, we state our extension of this result in Theorem \ref{thm:wpchar-intro}.

The technical heart of this monograph is an extension of Theorem \ref{thm:AMS} to fractional order $\gq \in (-1,0)$, incorporating both Hardy--Sobolev spaces and Besov spaces.
To this end, we introduce Hardy--Sobolev spaces $\mb{H}_{\gq,L}^p$ and Besov spaces $\mb{B}_{\gq,L}^p$ adapted to operators $L$ satisfying `Standard Assumptions', which are satisfied in particular by the perturbed Dirac operators $DB$ and $BD$.
We define extension operators
\begin{equation*}
	(\bbQ_{\gf,L}f)(t) = \gf(tL)f \qquad (t > 0, f \in \overline{\mc{R}(L)})
\end{equation*}
for appropriate holomorphic functions $\gf$, and the adapted Hardy--Sobolev and Besov norms are then, roughly speaking, defined by
\begin{equation*}
	\nm{ f }_{\mb{H}_{\gq,L}^p} := \nm{ \bbQ_{\gf,L}f }_{T^p_\gq}, \qquad \nm{ f }_{\mb{B}_{\gq,L}^p} := \nm{ \bbQ_{\gf,L} f }_{Z^p_\gq}.
\end{equation*}
These definitions are reminiscent of the $\gf$-transform characterisations of Triebel--Lizorkin and Besov spaces due to Frazier and Jawerth \cite{FJ90} (the letter $\gf$ has a different meaning there), with functional calculus and tent/$Z$-spaces replacing discretised Littlewood--Paley decompositions and sequence spaces respectively.

Chapters \ref{chap:otp} to \ref{chap:diffops} are occupied with the construction of a sufficiently rich general theory of adapted Besov--Hardy--Sobolev spaces.
The theory is relatively straightforward once enough preliminaries have been collected, but getting to this stage takes some time.
We emphasise in particular the amount of work needed to quantify independence on $\gf$ of the spaces $\mb{H}_{\gq,L}^p$ and $\mb{B}_{\gq,L}^p$ (essentially all of Sections \ref{sec:intops} and \ref{sec:eco}) and the care which must be taken in discussing completions (Section \ref{sec:completions}), which is necessary to discuss and fully exploit interpolation.

\subsection[Solutions to CR systems and well-posedness]{Classification of solutions to CR systems, and applications to well-posedness}\label{ssec:csCR}

Our main theorem is the following classification of solutions to the Cauchy--Riemann system $(\CR)_{DB}$.
In this statement we restrict ourselves to $1 < p < \infty$. 
As with our statement of Theorem \ref{thm:AMS}, this is a simplification of the full result (Theorems \ref{thm:mainthm-leq2} and \ref{thm:mainthm-gtr2}): our theorem also allows for $p \leq 1$ and `$p \geq \infty$', but the corresponding results are better stated in terms of the `exponent notation' that we introduce in Section \ref{sec:exponents}.

\begin{thm}\label{thm:main}
	Let $-1 < \gq < 0$ and $1 < p < \infty$.
	\begin{enumerate}[(i)]
	\item Suppose that $\mb{H}_{\gq,DB}^p \simeq \mb{H}_{\gq,D}^p$.
	If $f \in \mb{H}_{\gq,DB}^{p,+}$, then $\mb{C}_{DB}^+ f$ solves $(\CR)_{DB}$, with
	\begin{equation*}
		\nm{ \mb{C}_{DB}^+ f }_{T^p_\gq} \simeq \nm{ f }_{\dot{H}^p_\gq} \qquad \text{and} \qquad \lim_{t \to 0} \mb{C}_{DB}^+ f(t) = f \quad \text{in $\dot{H}_\gq^p$,}
	\end{equation*}
	and furthermore $\lim_{t \to \infty} \mb{C}_{DB}^+ f(t)_\parallel = 0$ in $\mc{Z}^\prime(\bbR^n)$.
	Conversely, if $F \in T^p_\gq$ solves $(\CR)_{DB}$ and $\lim_{t \to \infty} F(t)_\parallel = 0$ in $\mc{Z}^\prime(\bbR^n)$, then $F = \mb{C}_{DB}^+ f$ for a unique $f \in \mb{H}_{\gq,DB}^{p,+}$.
	\item
	Suppose that $\mb{B}_{\gq,DB}^p \simeq \mb{B}_{\gq,D}^p$.
	If $f \in \mb{B}_{\gq,DB}^{p,+}$, then $\mb{C}_{DB}^+ f$ solves $(\CR)_{DB}$, with
	\begin{equation*}
		\nm{ \mb{C}_{DB}^+ f }_{Z^p_\gq} \simeq \nm{ f }_{\dot{B}^{p,p}_\gq} \qquad \text{and} \qquad \lim_{t \to 0} \mb{C}_{DB}^+ f(t) = f \quad \text{in $\dot{B}_\gq^{p,p}$,}
	\end{equation*}
	and furthermore $\lim_{t \to \infty} \mb{C}_{DB}^+ f(t)_\parallel = 0$ in $\mc{Z}^\prime(\bbR^n)$.
	Conversely, if $F \in Z^p_\gq$ solves $(\CR)_{DB}$ and $\lim_{t \to \infty} F(t)_\parallel = 0$ in $\mc{Z}^\prime(\bbR^n)$, then $F = \mb{C}_{DB}^+ f$ for a unique $f \in \mb{B}_{\gq,DB}^{p,+}$.
	\end{enumerate}
\end{thm}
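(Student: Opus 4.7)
The plan is to mirror the structure of the proof of Theorem \ref{thm:AMS} (the Auscher--Mourgoglou--Stahlhut classification at order $0$ and $-1$), but with the adapted fractional Hardy--Sobolev and Besov spaces constructed in Chapters \ref{chap:otp}--\ref{chap:diffops} playing the role of $\mb{H}_{DB}^p$ and $\mb{W}_{-1,DB}^p$, and with weighted tent/$Z$-space estimates replacing the non-tangential maximal / $T^p_{-1}$ estimates. Throughout, the identification hypothesis $\mb{H}_{\gq,DB}^p \simeq \mb{H}_{\gq,D}^p$ (respectively $\mb{B}_{\gq,DB}^p \simeq \mb{B}_{\gq,D}^p$) is what lets classical function space theory transfer to the perturbed setting. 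Because parts (i) and (ii) differ only by the tent/$Z$-space quasi-norm in which the interior estimate is measured, I would treat them simultaneously, parametrising by the ambient space $X^p_\gq \in \{T^p_\gq, Z^p_\gq\}$ and the adapted boundary space $\mb{X}_{\gq,DB}^p \in \{\mb{H}_{\gq,DB}^p, \mb{B}_{\gq,DB}^p\}$.

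For the forward direction, given $f \in \mb{X}_{\gq,DB}^{p,+}$, the Cauchy extension is $(\mb{C}_{DB}^+ f)(t) = e^{-tDB} f$ since $f$ lies in the positive spectral subspace, so the holomorphic functional calculus on $\overline{\mc{R}(DB)}$ makes sense of $e^{-tDB} f$ at the Hilbert space level. The norm equivalence $\|\mb{C}_{DB}^+ f\|_{X^p_\gq} \simeq \|f\|_{\mb{X}_{\gq,DB}^p}$ is essentially the definition of the adapted norm via the extension operator $\bbQ_{\gf,DB}$ together with the $\gf$-independence results of Sections \ref{sec:intops} and \ref{sec:eco}: one takes $\gf(z) = e^{-[z]}\gc^+(z)$, or a suitable non-degenerate modification of it, which is admissible on every bisector and reproduces the semigroup on the positive spectral subspace. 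That $\mb{C}_{DB}^+ f$ solves $(\CR)_{DB}$ in the weak sense is then the usual semigroup computation $\partial_t e^{-tDB}f = -DB\, e^{-tDB}f$ combined with $\curl_\parallel ((\mb{C}_{DB}^+ f)(t))_\parallel = 0$, which follows from $\mc{R}(DB) = \mc{R}(D)$ and the block structure of $D$. The boundary limit $\mb{C}_{DB}^+ f(t) \to f$ in $\dot{H}^p_\gq$ (or $\dot{B}^{p,p}_\gq$) is strong continuity of the semigroup on the adapted space, which under the identification hypothesis reduces to the classical analogue for $e^{-tD}$. The decay $\mb{C}_{DB}^+ f(t)_\parallel \to 0$ in $\mc{Z}^\prime$ as $t \to \infty$ is obtained by a density argument: it is immediate for $f$ in a suitably dense subspace of $\mb{X}_{\gq,DB}^{p,+}$ (e.g.\ elements $\gy(DB)g$ with $\gy$ compactly supported away from $0$ and $\infty$), and then transfers by continuity using the interior estimate.

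For the converse direction, assume $F \in X^p_\gq$ solves $(\CR)_{DB}$ and $F(t)_\parallel \to 0$ in $\mc{Z}^\prime$. Fixing $s > 0$ and any $\gr > 0$, a Calder\'on reproducing type identity for the adapted calculus gives
\begin{equation*}
	F(s+\gr) = e^{-sDB} F(\gr),
\end{equation*}
because both sides satisfy $(\CR)_{DB}$ and agree at $s=0$ by semigroup theory on $\overline{\mc{R}(DB)}$ (here one must first project $F(\gr)$ onto $\overline{\mc{R}(DB)}$, and use the curl-free condition to show that the $\mc{N}(DB)$ component of $F(\gr)$ must vanish once combined with the decay at infinity). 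Sending $\gr \to 0$, one recovers an abstract boundary trace $f := \lim_{\gr \to 0} F(\gr)$, where the limit is taken in $\mb{X}_{\gq,DB}^p$; this is the main technical step. To carry it out, I would first apply the extension operator $\bbQ_{\gf,DB}$ to $F(\gr)$ and use the fact that $F$, being a solution to $(\CR)_{DB}$, satisfies an off-diagonal-type reproducing formula that expresses $\bbQ_{\gf,DB}(F(\gr))$ in terms of translates of $F$ itself in the upper half-space. The interior estimate $F \in X^p_\gq$ then yields a uniform bound for $\bbQ_{\gf,DB}(F(\gr))$ in $X^p_\gq$, and the decay condition together with the spectral projector $\gc^+(DB)$ isolates the positive spectral piece; thus the $\mb{X}_{\gq,DB}^{p,+}$ norm of $F(\gr)$ is controlled by $\|F\|_{X^p_\gq}$. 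Once the boundary trace $f$ is extracted, uniqueness is automatic from the injectivity of $\mb{C}_{DB}^+$ on $\mb{X}_{\gq,DB}^{p,+}$, and the identity $F = \mb{C}_{DB}^+ f$ follows from the reproducing formula above.

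The main obstacle is precisely this converse step: in the $L^2$ setting one has the Kato-type square function equivalence and non-tangential maximal control at one's disposal, whereas here we only have tent/$Z$-space control of $F$ in the interior, and no \emph{a priori} trace theorem (especially in the Hardy--Sobolev case). The resolution relies on the semigroup identity $F(s+\gr) = e^{-sDB}F(\gr)$ holding in $\overline{\mc{R}(DB)}$ for $\gr > 0$, combined with the spectral decomposition and the fact that the identification $\mb{X}_{\gq,DB}^p \simeq \mb{X}_{\gq,D}^p$ lifts the classical convergence of the Poisson-type semigroup $e^{-tD}$ to the perturbed operator. The decay condition $F(t)_\parallel \to 0$ in $\mc{Z}^\prime$ is essential precisely to eliminate the negative spectral part, which would otherwise blow up as $t \to \infty$ and spoil the reproducing formula.
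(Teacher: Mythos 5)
Your high-level picture—recovering a boundary trace by showing the semigroup identity $F(s+\gr)=e^{-sDB}F(\gr)$ and then letting $\gr\to 0$—does reflect the spirit of the argument, and the forward direction is essentially right (the estimate $\|\mb C_{DB}^+ f\|_{X^{\mb{p}}}\lesssim\|f\|_{\mb X_{DB}^{\mb{p}}}$ is the content of Theorems \ref{thm:sgpnorm-abstract} and \ref{thm:sgpnorm-concrete}, and the limiting behaviour is Proposition \ref{prop:sgp-continuity}). However, the converse as you have sketched it has a gap serious enough that it cannot be repaired without a different strategy.

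The key problem is the step where you write ``$F(s+\gr)=e^{-sDB}F(\gr)$ ... by semigroup theory on $\overline{\mc R(DB)}$''. For this to even make sense you need $F(\gr)\in\overline{\mc R(DB)}\subset L^2$. When $p\leq 2$ this follows from the slice space embeddings ($E^{p}\hookrightarrow L^2$) and the interior estimate via Proposition \ref{prop:wksolnprops} and Lemma \ref{lem:solution-decay}; but when $p>2$ one only knows $F(\gr)\in E^{p}$, which is strictly larger than $L^2$, and the slice $F(\gr)$ need not lie in $\overline{\mc R(DB)}$ at all. So your reproducing formula is not available for $p>2$, and this is exactly where the paper's second classification theorem (Theorem \ref{thm:mainthm-gtr2}) requires an entirely different, much more elaborate mechanism: one tests against the class $\bbD^{\mb{p}}(X)$ of functions $\gf\in\mc D(D)$ with $\gc^{\pm}(DB^*)D\gf\in E^{\mb{p}^\heartsuit}$, establishes only a \emph{weak} semigroup equation in slice-space duality (Lemma \ref{lem:dual-semigp-test}), builds an auxiliary solution $\td F$ from the higher $t$-derivatives $\partial_t^N F$ via the contraction operator $\bbS_{\gz,DB}$, proves $\partial_t F=\partial_t\td F$ in $\mc Z'$ by a reproducing identity (Corollary \ref{cor:F-reproduction}), and finally uses the decay condition to kill the constant-in-$t$ discrepancy (Lemma \ref{lem:final-lem}). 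None of this is a direct semigroup argument, and interpolation does not help: the result is known to fail at $\theta=0$, so one cannot fill the gap by interpolating between the $\theta\in\{-1,0\}$ endpoints.

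Two further points. First, even in the range $p\leq 2$ the semigroup identity and the fact that $F(t_0)$ lies in the \emph{positive} spectral subspace do not follow by ``agreeing at $s=0$''—there is no uniqueness theorem for $(\CR)_{DB}$ initial value problems to invoke here, and establishing it is essentially the theorem one is trying to prove. The paper instead derives both facts by testing $F$ against explicitly constructed dual solutions $\mb G_{t_0,\gf}$ of the anti-Cauchy--Riemann system and passing to a limit in the transversal variable (Lemma \ref{lem:AMtesting}, Corollary \ref{cor:limit-in-time}, Lemma \ref{lem:psmallplus}); the boundedness of the trace is then obtained from the uniform boundedness of the downward shift operators $S_r$ (Proposition \ref{prop:s12shift}) combined with the regularity shift of Corollary \ref{cor:regbump}. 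Your proposal to extract the trace by applying $\bbQ_{\gf,DB}$ to $F(\gr)$ directly and ``using an off-diagonal reproducing formula'' would require exactly these ingredients to make rigorous, and one needs $F(\gr)$ in the right space first. Second, the role you assign to the decay condition is off: for $p\leq 2$ and $\theta<0$ the decay is automatic (Lemma \ref{lem:solution-decay}), and the negative spectral part is eliminated by the testing argument, not the decay; the decay condition only becomes genuinely necessary for $p>2$, where it is used to rule out solutions differing from $\td F$ by an additive constant.
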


Parts (i) and (ii) of this theorem are essentially identical, the only modifications being the replacement of (adapted) Hardy--Sobolev spaces with (adapted) Besov spaces, and of tent spaces with $Z$-spaces.
In fact, our arguments apply equally well to both parts, so we prove them simultaneously.
Although the theorem can be thought of as `intermediate to' Theorem \ref{thm:AMS}, it does not follow by any interpolation argument.
It is proven similarly, but the underlying techniques must be generalised, and this takes a considerable amount of work.
Neither direction is easy, but the `converse' direction---finding a `trace' $f$ given a solution $F$---is certainly more difficult.

A consequence of this theorem (and of the abstract theory that we construct) is that certain solution spaces for the equation $L_A u = 0$ form interpolation scales (Section \ref{sec:solspace-interpolation}).
This provides another viewpoint on the structure of the solution spaces, although it is weaker than the explicit description that we obtain.
Another consequence is a representation theorem for solutions $u$ of $L_A u = 0$, rather than for their conormal gradients $\nabla_{A}u$ (which solve the Cauchy-Riemann system $(\CR)_{DB}$).
This result is much more complicated to state; we refer the reader to Theorem \ref{thm:secondrepresentation}.

Identifying regions of exponents where Theorem \ref{thm:main} applies is important.
These are regions in which we can identify $DB$-adapted spaces with $D$-adapted spaces, so we call them \emph{identification regions}.
Starting from information on the intervals $I_0(\mb{H},DB), I_0(\mb{H},DB^*) \ni 2$ (as given by the second author and Stahlhut), a procedure of `$\heartsuit$-duality' and interpolation yields non-trivial regions of exponents $(p,\gq)$ for which Theorem \ref{thm:main} applies (Section \ref{sec:rel}). 

With Theorem \ref{thm:main} as a springboard, we extend the characterisation of well-posedness of Regularity and Neumann problems---described for $p = 2$ after the statement of Theorem \ref{thm:AAM} and then extended to $p \neq 2$ and $\gq \in \{-1,0\}$ by the second author with Mourgoglou and Stahlhut---as follows.
For all exponents $(p,\gq)$, we show that $\mb{H}_{\gq,D}^p$ is equal to the set of those $f \in \dot{H}_\gq^{p}(\bbR^n : \bbC^{m(1+n)})$ with $\curl_\parallel f_\parallel = 0$.
Let $N_\perp$ and $N_\parallel$ denote the projections from $\mb{H}_{\gq,D}^p$ onto $\mb{H}_{\gq,\perp}^p := \dot{H}_\gq^p(\bbR^n : \bbC^m)$ and $\mb{H}_{\gq,\parallel}^p := \dot{H}_\gq^p(\bbR^n : \bbC^{mn}) \cap \mc{N}(\curl_\parallel)$ respectively.
For $(p,\gq)$ as in Theorem \ref{thm:main} we have an identification of $\mb{H}^{p,+}_{\gq,DB}$ as a subset of $\mb{H}^p_{\gq,D}$, and so we can use $N_\perp$ and $N_\parallel$ to define
\begin{equation*}
\map{N_{H,DB,\parallel}^{(p,\gq)}}{\mb{H}^{p,+}_{\gq,DB}}{\mb{H}_{\gq,\parallel}^p} \qquad \text{and} \qquad \map{N_{H,DB,\perp}^{(p,\gq)}}{\mb{H}^{p,+}_{\gq,DB}}{\mb{H}_{\gq,\perp}^p}.
\end{equation*}
Corresponding definitions of $N_{B,DB,\parallel}^{(p,\gq)}$ and $N_{B,DB,\perp}^{(p,\gq)}$ are also made for Besov spaces.
It is important to understand that we can define these restrictions on $DB$-adapted spaces only after we have identified them with $D$-adapted spaces, on which the projections are initially defined (the only exception to this is when $B$ has a block structure).
These operators carry the well-posedness of Regularity and Neumann problems, as shown by the following theorem.
Again, this is a simplication of the full result (Theorem \ref{thm:WP-char}).
The $\gq \in \{-1,0\}$ endpoints follow from Theorem \ref{thm:AMS}.

\begin{thm}\label{thm:wpchar-intro}
	Let $B = \hat{A}$, $-1 \leq \gq \leq 0$, and $1 < p < \infty$.
	Suppose that $\mb{H}_{\gq,DB}^p \simeq \mb{H}_{\gq,D}^p$.
	Then $(R_H)_{\gq,A}^p$ (resp. $(N_H)_{\gq,A}^p$) is well-posed if and only if $N_{H,DB,\parallel}^{(p,\gq)}$ (resp. $N_{H,DB,\perp}^{(p,\gq)}$) is an isomorphism.
	The same results hold \emph{mutatis mutandis} for problems with Besov boundary data.
\end{thm}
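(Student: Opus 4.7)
The plan is to assemble three equivalences into a chain whose final link is bijectivity of the projection $N_{H,DB,\parallel}^{(p,\gq)}$ (for Regularity) or $N_{H,DB,\perp}^{(p,\gq)}$ (for Neumann). The links are: (a) Theorem \ref{thm:AAM}, which identifies solutions $u$ of $L_A u = 0$ (modulo additive constants) with solutions $F$ of $(\CR)_{DB}$ via $F = \nabla_A u$; (b) Theorem \ref{thm:main} on the open interval $\gq \in (-1,0)$, and Theorem \ref{thm:AMS} at the endpoints $\gq \in \{-1,0\}$, classifying solutions $F$ of $(\CR)_{DB}$ with the correct interior estimate and decay at infinity as Cauchy extensions $F = \mb{C}_{DB}^+ f$ of a unique $f \in \mb{H}_{\gq,DB}^{p,+}$; and (c) the hypothesis $\mb{H}^p_{\gq,DB} \simeq \mb{H}^p_{\gq,D}$, which views $f$ inside $\mb{H}^p_{\gq,D} \subset \dot{H}^p_\gq(\bbR^n : \bbC^{m(1+n)})$ and lets us read off $N_\parallel f$ and $N_\perp f$ in $\mb{H}^p_{\gq,\parallel}$ and $\mb{H}^p_{\gq,\perp}$.

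Carrying this out for $(R_H)^p_{\gq,A}$, with $\gq \in (-1,0)$: a function $u$ meets the conditions of the problem if and only if $F := \nabla_A u$ satisfies the analogous conditions for $(\CR)_{DB}$. The interior norms $\nm{\nabla u}_{T^p_\gq}$ and $\nm{\nabla_A u}_{T^p_\gq}$ are comparable via the bounded invertible block structure encoded by $A$; the decay condition on $\nabla_\parallel u = F_\parallel$ is unchanged; and the trace condition $\lim_{t \to 0} \nabla_\parallel u(t,\cdot) = \nabla_\parallel \td{f}$ in $\dot{H}^p_\gq$ is precisely the tangential component of $\lim_{t \to 0} F(t) = f$ in $\mb{H}^p_{\gq,DB}$. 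Theorem \ref{thm:main}(i) then expresses $F$ uniquely as $\mb{C}_{DB}^+ f$ for some $f \in \mb{H}_{\gq,DB}^{p,+}$, and the Regularity problem reduces to: for every $g := \nabla_\parallel \td{f} \in \mb{H}^p_{\gq,\parallel}$, find a unique $f \in \mb{H}_{\gq,DB}^{p,+}$ with $N_{H,DB,\parallel}^{(p,\gq)} f = g$. This is bijectivity of $N_{H,DB,\parallel}^{(p,\gq)}$, and by the open mapping theorem between the Banach spaces $\mb{H}_{\gq,DB}^{p,+}$ and $\mb{H}^p_{\gq,\parallel}$ it is equivalent to $N_{H,DB,\parallel}^{(p,\gq)}$ being an isomorphism. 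The Neumann case is identical after substituting $\perp$ for $\parallel$ and using $(\nabla_A u)_\perp = \partial_{\gn_A} u$. The Besov variants follow verbatim from Theorem \ref{thm:main}(ii) with $Z^p_\gq$ replacing $T^p_\gq$ and $\mb{B}$-spaces replacing $\mb{H}$-spaces; the endpoints $\gq \in \{-1,0\}$ use Theorem \ref{thm:AMS} in the same manner.

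The main obstacle is ensuring that the boundary limit provided by Theorem \ref{thm:main} in the abstract $\mb{H}^p_{\gq,DB}$-norm delivers the classical $\dot{H}^p_\gq$-convergence of tangential or transversal components required by the problem statement. This is exactly what the identification $\mb{H}^p_{\gq,DB} \simeq \mb{H}^p_{\gq,D}$ provides: under it the $\mb{H}^p_{\gq,DB}$-norm is equivalent to the $\dot{H}^p_\gq$-norm on $\bbC^{m(1+n)}$-valued data satisfying $\curl_\parallel f_\parallel = 0$, so the projections $N_\parallel$, $N_\perp$ are bounded and commute with limits. The uniqueness ``up to an additive constant'' clause in the Regularity and Neumann problems corresponds precisely to the kernel of $u \mapsto \nabla_A u$ in Theorem \ref{thm:AAM}, so no additional uniqueness verification is needed beyond that already built into link (b).
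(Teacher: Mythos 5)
Your proposal is correct and follows essentially the same route as the paper's own proof of Theorem \ref{thm:WP-char}: use the Auscher--Axelsson--McIntosh correspondence (Theorem \ref{thm:AAM}) to pass from $u$ to $F = \nabla_A u$, apply the classification theorems (Theorem \ref{thm:main} for $\gq \in (-1,0)$, the Auscher--Mourgoglou--Stahlhut results at $\gq \in \{-1,0\}$) to represent $F = \mb{C}_{DB}^+ f$ with $f \in \mb{H}_{\gq,DB}^{p,+}$, read off the boundary condition as $N_\parallel f$ or $N_\perp f$, and conclude via the open mapping theorem that bijectivity of the restricted projection is the same as isomorphism. The paper phrases the second step through Theorem \ref{thm:firstrepresentation} (which packages the classification together with the passage to $u$), but the content is identical.
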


The notion of well-posedness can be refined when considering boundary value problems with different exponents.
Consider $-1 \leq \gq_0,\gq_1 \leq 0$ and $1 < p_0, p_1 < \infty$.
We say that the problems $(R_H)_{\gq_0,A}^{p_0}$ and $(R_H)_{\gq_1,A}^{p_1}$ are \emph{mutually well-posed} if they are both well-posed, and if for all $\nabla_\parallel f \in \dot{H}_{\gq_0}^{p_0} \cap \dot{H}_{\gq_1}^{p_1}$ the unique solutions to $(R_H)_{\gq_0,A}^{p_0}$ and $(R_H)_{\gq_1,A}^{p_1}$ with boundary data $\nabla_\parallel f$ are equal.
This definition simply extends to all the boundary value problems that we consider.
Two well-posed boundary value problems need not be mutually well-posed: this phenomenon was first observed by Axelsson \cite{aA10}.
The concept of mutual well-posedness extends the notion of compatible well-posedness introduced by Barton and Mayboroda \cite[\textsection 2.4]{BM16}.
More precisely, mutual well-posedness defines an equivalence relation on the set of exponents $(p,\gq)$ for which a problem (e.g. $(R_H)_{\gq,A}^p$) is well-posed, and compatible well-posedness corresponds to the equivalence class of the `energy exponent' $(2,-1/2)$.
The problems $(R_H)_{-1/2,A}^2$ and $(N_H)_{-1/2,A}^2$ are always well-posed \cite[Theorems 3.2 and 3.3]{AMM13}.

By Theorem \ref{thm:wpchar-intro}, $(R_H)_{\gq_0,A}^{p_0}$ and $(R_H)_{\gq_1,A}^{p_1}$ are mutually well-posed if and only if $N_{H,DB,\parallel}^{(p_0,\gq_0)}$ and $N_{H,DB,\parallel}^{(p_1,\gq_1)}$ are isomorphisms whose inverses are equal on the intersection $\mb{H}_{\gq_0,\parallel}^{p_0} \cap \mb{H}_{\gq_1,\parallel}^{p_1}$ (and likewise for Neumann problems, and with Besov boundary data).
This allows us to interpolate mutual well-posedness as a straightforward corollary of Theorem \ref{thm:wpchar-intro}.
Furthermore, by using real interpolation instead of complex interpolation, we can deduce mutual well-posedness of boundary value problems with Besov boundary data from that of those with Hardy--Sobolev boundary data.
The following theorem makes this precise.
The full result is Theorem \ref{thm:cwp-int}.

\begin{thm}\label{thm:cwpint-intro}
	Suppose $-1 \leq \gq_0,\gq_1 \leq 0$, $1 < p_0,p_1 < \infty$, and $\ga \in (0,1)$, and let
	\begin{equation*}
		\frac{1}{p} = \frac{1-\ga}{p_0} + \frac{\ga}{p_1} \qquad \text{and} \qquad \gq = (1-\ga)\gq_0 + \ga \gq_1. 
	\end{equation*}
	\begin{enumerate}[(i)]
		\item
			If $\mb{H}_{\gq_j,DB}^{p_j} \simeq \mb{H}_{\gq_j,D}^{p_j}$ for $j \in \{0,1\}$, and if $(R_H)_{\gq_0,A}^{p_0}$ and $(R_H)_{\gq_1,A}^{p_1}$ are mutually well-posed, then $(R_H)_{\gq,A}^p$ is mutually well-posed with both $(R_H)_{\gq_0,A}^{p_0}$ and $(R_H)_{\gq_1,A}^{p_1}$, and furthermore if $\gq_0 \neq \gq_1$ then $(R_B)_{\gq,A}^p$ is mutually well-posed with both $(R_H)_{\gq_0,A}^{p_0}$ and $(R_H)_{\gq_1,A}^{p_1}$.
		\item If $\mb{B}_{\gq_j,DB}^{p_j} \simeq \mb{B}_{\gq_j,D}^{p_j}$ for $j \in \{0,1\}$, and if $(R_B)_{\gq_0,A}^{p_0}$ and $(R_B)_{\gq_1,A}^{p_1}$ are mutually well-posed, then $(R_B)_{\gq,A}^p$ is mutually well-posed with both $(R_B)_{\gq_0,A}^{p_0}$ and $(R_B)_{\gq_1,A}^{p_1}$.
	\end{enumerate}
	Corresponding results are also true for Neumann problems.
\end{thm}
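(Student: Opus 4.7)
The plan is to leverage Theorem~\ref{thm:wpchar-intro} to convert each instance of mutual well-posedness into a statement about boundary projections $N_{H,DB,\parallel}^{(p_j,\gq_j)} \colon \mb{H}_{\gq_j,DB}^{p_j,+} \to \mb{H}_{\gq_j,\parallel}^{p_j}$ (and the $\perp$ analogue, and the Besov analogues) being isomorphisms. Mutual well-posedness at the two endpoints then says that the solution operators $S_j := (N_{H,DB,\parallel}^{(p_j,\gq_j)})^{-1}$ agree on the intersection $\mb{H}_{\gq_0,\parallel}^{p_0} \cap \mb{H}_{\gq_1,\parallel}^{p_1}$, so that $\{S_0,S_1\}$ is a compatible family over the Banach couple $(\mb{H}_{\gq_0,\parallel}^{p_0},\mb{H}_{\gq_1,\parallel}^{p_1})$. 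The task is then to transfer the isomorphism property to an interpolated exponent $(p,\gq)$.

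For part (i), I would apply complex interpolation on both sides. The identification regions interpolate: the isomorphism $\mb{H}_{\gq_j,DB}^{p_j} \simeq \mb{H}_{\gq_j,D}^{p_j}$ at both endpoints, together with the general interpolation theory for adapted Hardy--Sobolev spaces developed in Chapter~\ref{chap:ahsb}, yields $\mb{H}_{\gq,DB}^p \simeq \mb{H}_{\gq,D}^p$ at the interpolated exponent; this in turn delivers a spectral splitting $\mb{H}_{\gq,DB}^{p,+}$ obtained by complex interpolation of $\mb{H}_{\gq_0,DB}^{p_0,+}$ and $\mb{H}_{\gq_1,DB}^{p_1,+}$ (the spectral projections $\gc^\pm(DB)$ being bounded at each endpoint). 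The classical spaces $\mb{H}_{\gq,\parallel}^p$ likewise interpolate complex-analytically. Therefore $N_{H,DB,\parallel}^{(p,\gq)}$ is bounded at the interpolated exponent, and by complex interpolation of the compatible family $\{S_0,S_1\}$ one obtains a bounded $S \colon \mb{H}_{\gq,\parallel}^p \to \mb{H}_{\gq,DB}^{p,+}$. The identities $SN_{H,DB,\parallel}^{(p,\gq)} = \id$ and $N_{H,DB,\parallel}^{(p,\gq)}S = \id$ then hold on the dense intersection and extend by continuity, giving the isomorphism. Since this $S$ restricts to $S_j$ on each endpoint space, mutual well-posedness with both endpoints is automatic. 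The Besov half of (i) is identical except that one invokes real interpolation with parameter $q = p$: when $\gq_0 \neq \gq_1$ the real interpolation identities built into the $Z$-space/tent-space machinery yield $\mb{B}_{\gq,DB}^{p,+}$ as the real interpolant of the Hardy--Sobolev spectral subspaces and $\mb{B}_{\gq,\parallel}^p$ as that of the boundary Hardy--Sobolev spaces, after which the same gluing argument runs verbatim. Part (ii) is the Besov-to-Besov case and uses complex interpolation within the Besov scale.

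The main obstacle is not the operator-level argument, which is standard interpolation of an invertible compatible family, but rather the spadework needed to guarantee that every object involved actually interpolates to the same space: the identification $\mb{H}_{\gq,DB}^p \simeq \mb{H}_{\gq,D}^p$, the spectral splitting induced by $\gc^\pm(DB)$, and (for the Besov assertion in (i)) the real interpolation identity relating Hardy--Sobolev and Besov adapted spaces in the diagonal $\bbB^{p,p}_\gq$ case with varying $p$. These are exactly the interpolation properties collected in the abstract theory of adapted Besov--Hardy--Sobolev spaces, so once those are in hand the proof reduces to the bookkeeping outlined above; the Neumann variants are handled identically with $N_\parallel$ replaced by $N_\perp$.
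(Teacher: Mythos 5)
Your proposal follows essentially the same route as the paper's proof of Theorem~\ref{thm:cwp-int}: invoke Theorem~\ref{thm:WP-char} to reduce mutual well-posedness to the compatible family of inverse boundary maps agreeing on the (dense, by finiteness of the exponents) intersection, interpolate this family complexly to obtain boundedness of the interpolated inverse, check the two-sided identities on the dense subspace, and then use real interpolation with $\gq_0 \neq \gq_1$ for the Besov assertion. The supporting ingredients you flag as "spadework" — interpolation of the identification region (Theorem~\ref{thm:inclusion-interpolation}), boundedness of $\gc^\pm(DB)$, and the interpolation identities for the adapted spaces from Chapter~\ref{chap:ahsb} — are exactly the ones the paper invokes, so this is a faithful reconstruction rather than a genuinely different argument.
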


Since invertibility is stable in complex interpolation scales, well-posedness of our boundary value problems is also stable in a way that preserves mutuality.
This is made precise in the following simplification of Theorem \ref{thm:wp-ext}.

\begin{thm}\label{thm:wp-extrap-intro}
  Let $-1 < \gq < 0$ and $1 < p < \infty$, and suppose that $\mb{H}_{\gq,DB}^{p} = \mb{H}_{\gq,D}^{p}$.
  Suppose also that $(R_H)_{\gq,A}^p$ is well-posed.
  Then $(R_H)_{\gq_1,A}^{p_1}$ and $(R_H)_{\gq,A}^{p}$ are mutually well-posed for all $(p_1,\gq_1)$ in some neighbourhood of $(p,\gq)$.
  Similar results hold for Neumann problems, and for problems with Besov boundary data.
\end{thm}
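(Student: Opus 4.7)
The plan is to reduce the statement via Theorem \ref{thm:wpchar-intro} to a question of stability of invertibility for the projection $N_{H,DB,\parallel}^{(p,\gq)}$, and then to exploit the interpolation structure of both the source and target spaces. First, one must verify that the hypothesis $\mb{H}_{\gq_1,DB}^{p_1} \simeq \mb{H}_{\gq_1,D}^{p_1}$ continues to hold for all $(p_1,\gq_1)$ in some neighbourhood of $(p,\gq)$. This is the openness of the identification region, which should follow directly from the $\heartsuit$-duality plus (complex) interpolation construction of these regions from the base intervals $I_0(\mb{H},DB)$ and $I_0(\mb{H},DB^*)$ (Section \ref{sec:rel}); the starting assumption $\mb{H}_{\gq,DB}^p = \mb{H}_{\gq,D}^p$ can always be incorporated as an additional interpolation endpoint to enlarge the region.

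Next, by Theorem \ref{thm:wpchar-intro}, well-posedness of $(R_H)_{\gq,A}^p$ is equivalent to $N_{H,DB,\parallel}^{(p,\gq)}$ being an isomorphism from $\mb{H}_{\gq,DB}^{p,+}$ onto $\mb{H}_{\gq,\parallel}^p$. Both families of spaces form complex interpolation scales as $(p_1,\gq_1)$ varies in the identification region: the codomain scale $(\mb{H}_{\gq_1,\parallel}^{p_1})$ is a scale of classical Hardy--Sobolev-type spaces, and the domain scale $(\mb{H}_{\gq_1,DB}^{p_1,+})$ is built from the adapted spaces via the spectral projection $\gc^+(DB)$, which is bounded across the scale by the general theory developed in Chapters \ref{chap:otp}--\ref{chap:diffops}. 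Moreover, the operators $N_{H,DB,\parallel}^{(p_1,\gq_1)}$ are all restrictions of the single tangential projection $N_\parallel$ defined consistently on $\mb{H}_{\gq_1,D}^{p_1}$, so the whole family is compatible with the interpolation structure.

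Applying the stability-of-invertibility theorem for complex interpolation scales (as invoked in Section \ref{sec:solspace-interpolation}), the set of $(p_1,\gq_1)$ for which $N_{H,DB,\parallel}^{(p_1,\gq_1)}$ is an isomorphism is open, and on this set the inverses are automatically compatible—they are restrictions of a single `abstract' inverse living on the sum of the spaces. By Theorem \ref{thm:wpchar-intro} applied at $(p_1,\gq_1)$, this yields well-posedness of $(R_H)_{\gq_1,A}^{p_1}$, and the compatibility of inverses on intersections gives exactly mutual well-posedness with $(R_H)_{\gq,A}^p$ in the sense defined before Theorem \ref{thm:cwpint-intro}. The Neumann case is identical with $N_\parallel$ replaced by $N_\perp$. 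For Besov boundary data one repeats the argument using $\mb{B}_{\gq,DB}^{p,+}$ in place of $\mb{H}_{\gq,DB}^{p,+}$ and the real interpolation scale of $Z$-spaces; nothing structural changes.

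The main obstacle I expect is verifying that the stability-of-invertibility result truly applies in this mixed Hardy--Sobolev/Besov setting in the full neighbourhood sense: one needs that the family $(N_{H,DB,\parallel}^{(p_1,\gq_1)})$ together with its inverses respects the interpolation structure of two scales simultaneously (the $DB$-adapted spectral subspaces and the classical tangential Hardy--Sobolev spaces). This rests on the identification $\mb{H}_{\gq_1,DB}^{p_1} = \mb{H}_{\gq_1,D}^{p_1}$ being preserved along the scale (handled by the openness step above) and on the spectral projections $\gc^\pm(DB)$ being uniformly bounded and consistent across the scale—both being consequences of the abstract theory but requiring careful bookkeeping of interpolation parameters to turn the pointwise hypothesis into a two-dimensional neighbourhood result.
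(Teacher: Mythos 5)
Your overall strategy --- reduce the statement to invertibility of $N_{H,DB,\parallel}^{(p,\gq)}$ via Theorem \ref{thm:wpchar-intro}, first establish the identification on a neighbourhood, then extrapolate invertibility using complex interpolation --- coincides with the paper's, which proves Theorem \ref{thm:wp-ext} via \v{S}ne\u{\i}berg's extrapolation theorem and the argument of Theorem \ref{thm:identn-openness}. However, your first paragraph contains a genuine gap: the openness of the identification region does \emph{not} follow ``directly'' from the $\heartsuit$-duality plus interpolation construction of the regions, nor from ``incorporating the starting assumption as an additional interpolation endpoint.'' Adding the single exponent $(p,\gq)$ to the collection of known identification exponents, then taking complex interpolation scales and $\heartsuit$-duals with the base intervals $I_0(\mb{H},DB)$ and $I_{-1}(\mb{H},DB)$, yields only a fan of line segments emanating from $(p,\gq)$ --- a convex region with $(p,\gq)$ at the vertex, which therefore does not contain an open neighbourhood of $(p,\gq)$. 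Unless $(p,\gq)$ already lies in the interior of the fixed polygon $I_{\text{min}}$ (which is not assumed), no amount of closure under interpolation and $\heartsuit$-duality can enlarge the region to contain a two-dimensional neighbourhood of $(p,\gq)$.

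The missing ingredient is exactly Theorem \ref{thm:identn-openness}, which establishes the openness of $I(\mb{X},DB) \cap \{\mb{p} \in I_{\text{max}} : \gq(\mb{p}) \in (-1,0)\}$ via a \v{S}ne\u{\i}berg argument applied to the comparison operators $\mb{Q}_{\gf,D}\mb{S}_{\gy,DB}$ on one-parameter interpolation subscales passing through $(p,\gq)$. In other words, \v{S}ne\u{\i}berg's theorem (in its quasi-Banach Kalton--Mitrea form) is required \emph{twice}: once to open up the identification region, and once more --- as you correctly observe in your final step --- to extrapolate invertibility of $N_{H,DB,\parallel}^{(p,\gq)}$ along the scale and obtain compatible inverses on intersections. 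With this citation replacing your claim of ``directly,'' your argument is complete and matches the paper's intended route.
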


This theorem does not apply when $\gq \in \{-1,0\}$.
However, the result is true if $\gq \in \{-1,0\}$ and $\gq_1 = \gq$, with $p_1$ in a neighbourhood of $p$.
This is implicit in the work of the second author with Mourgoglou and Stahlhut, and the proof of Theorem \ref{thm:wp-extrap-intro} still applies in this case.

Finally, we have a duality result for well-posedness (this is a simplification of Theorem \ref{thm:wp-duality}).

\begin{thm}
	Let $-1 \leq \gq_0,\gq_1 \leq 0$ and $1 < p_0,p_1 < \infty$, and suppose that $\mb{H}_{\gq_0,DB}^{p_0} \simeq \mb{H}_{\gq_0,D}^{p_0}$ and $\mb{H}_{\gq_1,DB}^{p_1} \simeq \mb{H}_{\gq_1,D}^{p_1}$.
	Then $(R_H)_{\gq_0,A}^{p_0}$ and $(R_H)_{\gq_1,A}^{p_1}$ are mutually well-posed if and only if $(R_H)_{-\gq_0-1, A^*}^{p_0^\prime}$ and $(R_H)_{-\gq_1-1, A^*}^{p_1^\prime}$ are mutually well-posed.
	Similar results hold for Neumann problems and with Besov spaces.
\end{thm}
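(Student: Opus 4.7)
The plan is to reduce well-posedness to invertibility of projections via Theorem \ref{thm:wpchar-intro}, and then to realise the required duality as a consequence of the $\heartsuit$-duality already announced in the discussion following Theorem \ref{thm:AMS}. Specifically, for each $j \in \{0,1\}$, the hypothesis $\mb{H}_{\gq_j,DB}^{p_j} \simeq \mb{H}_{\gq_j,D}^{p_j}$, combined with $\heartsuit$-duality (which at fractional order trades $(p,\gq,B)$ for $(p^\prime,-\gq-1,B^*)$), yields the identification $\mb{H}_{-\gq_j-1,DB^*}^{p_j^\prime} \simeq \mb{H}_{-\gq_j-1,D}^{p_j^\prime}$, so Theorem \ref{thm:wpchar-intro} applies on both sides of the duality and reduces both mutual well-posedness statements to the invertibility of tangential projections, together with a compatibility condition for the inverses on intersections of boundary data spaces.

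The main input is that $\heartsuit$-duality extends the $L^2$ inner product to an honest duality pairing $\mb{H}^{p,+}_{\gq,DB} \times \mb{H}^{p^\prime,-}_{-\gq-1,DB^*} \to \bbC$ (the sign flip on the spectral subspaces coming from $\gc^+(DB)^* = \gc^-((DB)^*)$), and preserves the tangential/normal decomposition of the unperturbed spaces $\mb{H}^p_{\gq,D}$, so that $(\mb{H}^p_{\gq,\parallel})^* \simeq \mb{H}^{p^\prime}_{-\gq-1,\parallel}$ and the tangential projection on $\mb{H}^p_{\gq,D}$ is self-dual. Using $\hat{A^*} = (\hat{A})^* = B^*$ (see \cite[Proposition 3.2]{AAM10}), the dual problem associated with $A^*$ corresponds, via Theorem \ref{thm:AAM}, precisely to the Cauchy--Riemann system for $DB^*$. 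Thus the adjoint of $N^{(p_j,\gq_j)}_{H,DB,\parallel}$ under the $\heartsuit$-pairing is identified (up to the irrelevant spectral sign) with $N^{(p_j^\prime,-\gq_j-1)}_{H,DB^*,\parallel}$, and by the standard principle that an operator between Banach spaces is an isomorphism if and only if its adjoint is, this yields the biconditional for individual well-posedness at each $(p_j,\gq_j)$.

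For mutuality one must follow the compatibility of the inverses through the pairing: the agreement of $(N^{(p_0,\gq_0)}_{H,DB,\parallel})^{-1}$ and $(N^{(p_1,\gq_1)}_{H,DB,\parallel})^{-1}$ on $\mb{H}^{p_0}_{\gq_0,\parallel} \cap \mb{H}^{p_1}_{\gq_1,\parallel}$ translates, via the $\heartsuit$-pairing and the fact that $\mb{C}_{DB}^+$ and $\mb{C}_{DB^*}^+$ provide compatible Cauchy extensions, into the analogous agreement of the dual inverses on $\mb{H}^{p_0^\prime}_{-\gq_0-1,\parallel} \cap \mb{H}^{p_1^\prime}_{-\gq_1-1,\parallel}$. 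The Neumann statement follows by the same argument with tangential projections replaced by normal ones, and the Besov variant by replacing the tent-space pairing with the analogous $Z$-space pairing $(Z^p_\gq)^* \simeq Z^{p^\prime}_{-\gq-1}$. The hard part is verifying that the $\heartsuit$-pairing really does extend the $L^2$ duality to the full adapted scale with the spectral and tangential/normal structures intact; this is carried out using the $\bbQ_{\gf,L}$-extensions that define the adapted spaces together with the bounded $H^\infty$ calculus on the ranges, which lets one transfer duality from tent or $Z$-spaces on $\bbR^{1+n}_+$ back to adapted spaces on $\bbR^n$ in a way that commutes with the projections $\gc^\pm(DB)$ and with the transversal/tangential splitting.
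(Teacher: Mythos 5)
Your overall strategy—reduce well-posedness to invertibility of the boundary projections via the characterisation theorem, then pass to adjoints under a $\heartsuit$-duality pairing—matches the paper's (Theorem \ref{thm:wp-duality} and its proof sketch). But two of the supporting claims you rely on are false, and they are precisely the points that the paper's pairing is designed to handle.

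First, you assert $\hat{A^*} = (\hat{A})^* = B^*$, citing \cite[Proposition 3.2]{AAM10}; that result gives $\hat{\hat{A}} = A$, not your claim, and the claim itself is wrong. The paper is explicit (Remark \ref{rmk:Bduals}, repeated in Section \ref{sec:regproblem}) that $\hat{A^*} = NB^*N =: \td{B}$ with $N = \mathrm{diag}(I,-I)$, and that $\td{B} \neq B^*$. Consequently the conormal gradient of a solution to $L_{A^*}u = 0$ solves $(\CR)_{D\td{B}}$, not $(\CR)_{DB^*}$, and the projection whose invertibility characterises the adjoint problem is formed with respect to $D\td{B}$. The spectral sign flip you want is real, but it comes from the similarity $DB^* = -N(D\td{B})N$ (whence $\bbX_{DB^*}^{\mb{p},\pm} = \bbX_{D\td{B}}^{\mb{p},\mp}$), not from your stated identity $\gc^+(DB)^* = \gc^-((DB)^*)$: since $\gc^+$ is a real-valued function, $\gc^+(DB)^* = \gc^+(B^*D)$, which is not $\gc^-(B^*D)$.

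Second, your claim that the pairing ``preserves the tangential/normal decomposition'' and yields $(\mb{H}^p_{\gq,\parallel})^\prime \simeq \mb{H}^{p^\prime}_{-\gq-1,\parallel}$ with a self-dual tangential projection is also incorrect. The $\heartsuit$-pairing used in the paper is $\langle f,g\rangle^\heartsuit = \langle f, ND^{-1}g\rangle$; the $D^{-1}$ both shifts regularity (turning $\mb{p}^\prime$ into $\mb{p}^\heartsuit$, via Proposition \ref{prop:D-mapping}) and, being off-diagonal in the $\perp/\parallel$ splitting, swaps transversal and tangential components. The paper accordingly obtains $\mb{X}_\perp^{\mb{p}^\heartsuit}$ as the dual of $\mb{X}_\parallel^{\mb{p}}$ and $\mb{X}_\parallel^{\mb{p}^\heartsuit}$ as the dual of $\mb{X}_\perp^{\mb{p}}$—transversal dual to tangential and vice versa. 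The factors $N$ and $D^{-1}$ are exactly what make Regularity dual to Regularity (and Neumann to Neumann) rather than to the opposite problem. Because your proposed pairing omits both, the adjoint identification $\big(N^{(p_j,\gq_j)}_{H,DB,\parallel}\big)^* \simeq N^{(p_j^\prime,-\gq_j-1)}_{H,DB^*,\parallel}$ does not hold as stated, and the argument does not close.
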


We may take $(p_0,\gq_0) = (p_1,\gq_1)$ in this result.
The mapping $(p,\gq) \mapsto (p^\prime, -\gq-1)$ is the reflection about the point $(1/2,-1/2)$ in the $(1/p,\gq)$-plane, which corresponds to the aforementioned `energy exponent'.
This reflection describes what we call `$\heartsuit$-duality of exponents'.

These theorems can be used to derive new well-posedness results for Regularity problems $(R_H)_{\gq,A}^p$ with fractional order $\gq \in (-1,0)$, and also to derive known results for $(R_B)_{\gq,A}^p$ which were recently obtained by different methods by Barton and Mayboroda \cite{BM16} under the De Giorgi-Nash-Moser assumption.
For details see Section \ref{sec:regproblem}.

\begin{rmk}
All of our results can be reformulated with the lower half-space replacing the upper half-space.
The main difference is that in this case positive spectral subspaces must be replaced with negative spectral subspaces.
\end{rmk}

\section{Summary of the monograph}

In Chapter \ref{chap:fs-prelims} we discuss two types of function spaces.
First, the `ambient spaces': tent spaces, $Z$-spaces, and slice spaces.
Many of the results here are new, or have not been used in this context.
We then consider the `smoothness spaces': Hardy--Sobolev spaces, Besov spaces, and so on.
After a quick review of these spaces, we characterise them in terms of tent spaces and $Z$-spaces (Theorem \ref{thm:BHS-Xspace}).
We also introduce a new system of notation for exponents: these are written as boldface letters, typically $\mb{p}$ and $\mb{q}$, and encode both integrability and regularity information.
This is not strictly necessary, but it truly cleans up the exposition of later parts of the monograph and makes the flow of ideas more transparent.

In Chapter \ref{chap:otp} we discuss basic operator theoretic notions.
The operators that we use in applications (i.e. the perturbed Dirac operators $DB$ and $BD$) are bisectorial, with bounded $H^\infty$ functional calculi on their ranges, and satisfy certain off-diagonal estimates.
Most of our abstract theory applies to any operator $A$ satisfying these `Standard Assumptions', so we work with such operators until we are forced to use more specific properties of perturbed Dirac operators.
We establish the boundedness of certain integral operators between tent spaces and $Z$-spaces.
Particular examples of these operators are given in terms of `extension' and `contraction' operators $\bbQ_{\gf,A}$ and $\bbS_{\gy,A}$, which we discuss.
This chapter culminates in Theorem \ref{thm:QS-compn-bddness}, which quantifies when operators of the form $\bbQ_{\gy,A} \gh(A) \bbS_{\gf,A}$ are bounded between different tent/$Z$-spaces, where $\gh$ is a holomorphic function (not necessarily bounded) on an appropriate bisector.

In Chapter \ref{chap:ahsb} we consider Besov--Hardy--Sobolev spaces adapted to an operator $A$ satisfying the aforementioned Standard Assumptions.
In Section \ref{sec:hsbbasic} we introduce `pre'-Besov--Hardy--Sobolev spaces $\bbH_{A}^\mb{p}$ and $\bbB_{A}^\mb{p}$ and establish their basic properties.
Mapping properties of the holomorphic functional calculus between these spaces, including boundedness for $H^\infty$ functions of $A$ and `regularity shifting' estimates for operators such as powers of $A$, are collected in Section \ref{sec:Xhfc-props}.
These all follow from Theorem \ref{thm:QS-compn-bddness}.
In Section \ref{sec:completions} we discuss completions.
This issue is more subtle than it initially seems.
We define `canonical completions' $\gy \mb{H}_A^\mb{p}$ and $\gy \mb{B}_A^\mb{p}$ in terms of auxiliary functions $\gy$, and show how these can be used to formulate satisfactory duality and interpolation results (Proposition \ref{prop:completion-duality} and Theorem \ref{thm:completed-interpolation}).
We also introduce `inclusion regions' of exponents $\mb{p}$ such that $\bbH_{A_0}^\mb{p} \hookrightarrow \bbH_{A_1}^\mb{p}$ (likewise for Besov spaces) for two operators $A_0$, $A_1$ satisfying the Standard Assumptions with $\overline{\mc{R}(A_0)} = \overline{\mc{R}(A_1)}$.
An interpolation result for these regions (Theorem \ref{thm:inclusion-interpolation}) is proven.
Finally, in Section \ref{sec:sgp-abstract} we show that the extended Cauchy operator $\mb{C}_A$ produces strong solutions of the Cauchy problem for $A$ with initial data in any completion of any adapted pre-Besov--Hardy--Sobolev space, and we also show the quasinorm equivalence
\begin{equation}\label{eqn:cauchybd}
	\nm{ f }_{\bbH_A^\mb{p}} \simeq \nm{ C_A f }_{T^\mb{p}} \qquad (f \in \bbH_A^{\mb{p},\pm})
\end{equation}
when $\mb{p} =  (p,s)$ with $p \leq 2$ and $s < 0$, and likewise for Besov spaces and $Z$-spaces (Corollary \ref{cor:cauchy-abstract}).

Up until this point, we work with $\bbC^N$-valued functions for an arbitrary $N \in \bbN$, as in this abstract setting we gain nothing from the transversal/tangential structure of $\bbC^{m(1+n)}$.

In Chapter \ref{chap:diffops} we consider the case when $A$ is a perturbed Dirac operator of the form $DB$ or $BD$ (and so we finally specialise to $\bbC^{m(1+n)}$-valued functions).
We show that for all exponents $\mb{p}$ the spaces $\bbH^\mb{p}_D$ and $\bbB^\mb{p}_D$ are equal to projections of classical smoothness spaces intersected with $L^2$ (Theorem \ref{thm:class-space-identn-new}), and so we may take projections of these classical smoothness spaces (without intersecting with $L^2$) as completions.
We denote the resulting spaces by $\mb{H}^\mb{p}_D$ and $\mb{B}^\mb{p}_D$.
Then we define `identification regions' $I(\mb{H},DB)$ and $I(\mb{B},DB)$, consisting of exponents $\mb{p}$ for which we can identify $\mb{H}^\mb{p}_D$ and $\mb{B}^\mb{p}_D$ as completions of $\bbH_{DB}^\mb{p}$ and $\bbB_{DB}^\mb{p}$ respectively.
These regions turn out to be open (with some minor restrictions; see Theorem \ref{thm:identn-openness}) and stable under interpolation and $\heartsuit$-duality (in a sense which interchanges $B$ and $B^*$; Corollary \ref{cor:dual-interval-identn}).
Finally, in Theorem \ref{thm:sgpnorm-concrete} we show that for $\mb{p} = (p,s) \in I(\mb{H},DB)$ with $p > 2$ and $s < 0$ we have boundedness of the Cauchy operator $C_{DB}^+$ from $\bbH_{DB}^\mb{p}$ to $T^\mb{p}$, extending the `abstract' estimate \eqref{eqn:cauchybd} (and likewise for Besov spaces and $Z$-spaces).
This is a long argument which requires various ad-hoc estimates.
The result is known to fail for $s=0$, so it does not follow by interpolation.

In Chapter \ref{chap:BVPCR} we turn our attention to differential equations. 
After presenting some basic properties of gradients of solutions to $L_A u = 0$ (or equivalently solutions of $(\CR)_{DB}$) we prove Theorems \ref{thm:mainthm-leq2} and \ref{thm:mainthm-gtr2}, which classify solutions to $(\CR)_{DB}$ in tent/$Z$-spaces with a decay condition at infinity (the decay condition is removed for certain exponents in Section \ref{sec:dos}).
This leads us to a range of exponents, related to the identification region, called the \emph{classification region}.
The argument is quite long, particularly for exponents $\mb{p} = (p,s)$ with $p > 2$, and uses all of the preceding material.
We have been (perhaps excessively) pedantic in citing dependence on previous results, so it should be possible to treat certain technical lemmas as `black boxes' in initial readings.
Although these results are `intermediate to' Theorem \ref{thm:AMS} and proven by similar arguments, they do not follow by any interpolation procedure.
The results must be reproven manually.
As a corollary of Theorems \ref{thm:mainthm-leq2} and \ref{thm:mainthm-gtr2} we prove that certain solution spaces for the equation $L_A u = 0$ are interpolation scales (Theorem \ref{thm:solspace-interpolation}), and that the Whitney averages of such solutions have non-tangential boundary limits (Theorem \ref{thm:Wavgnt}).

In Chapter \ref{chap:wp} we present applications to boundary value problems.
Most of these have already been summarised in the introduction (Subsection \ref{ssec:csCR}).
In particular, we derive ranges of well-posedness for Regularity and Neumann problems for various classes of coefficients in Section \ref{sec:regproblem}.
Our results for real coefficient scalar equations with boundary data in Hardy--Sobolev spaces are new.
In Section \ref{sec:wp-pert} we show that well-posedness of a boundary value problem is stable under perturbation of the coefficients, given certain \emph{a priori} assumptions which are known to hold in some cases.
Finally, we show the relationship between our approach and the method of layer potentials in Section \ref{sec:lps}.
For exponents in the classification region, all solutions to boundary value problems with gradients in the corresponding tent/$Z$-space and with the appropriate decay condition are given by (generalised) layer potentials.

\section{Notation}\label{section:notation}

	The following notation will be used throughout the monograph.
	
	We let $\bbN := \{0,1,2, \ldots\}$ denote the natural numbers (including $0$), and $\bbN_+ := \{1,2,\ldots\}$ denote the positive natural numbers.
	
	For $a,b \in \bbR$ and $t > 0$ we write
		\begin{equation*}
		m_a^b(t) := \left\{ \begin{array}{ll} t^a & (t \leq 1) \\ t^{-b} & (t \geq 1).\end{array} \right.
	\end{equation*}
	For $0 < p,q \leq \infty$, we define the number
	\begin{equation*}
		\gd_{p,q} := \frac{1}{q} - \frac{1}{p},
	\end{equation*}
	with the interpretation $1/\infty = 0$.
	
	We write the Euclidean distance on $\bbR^n$ as $d(x,y) = d(y,x) := |x-y|$, the open  ball with centre $x \in \bbR^n$ and radius $r > 0$ by $B(x,r) := \{y \in \bbR^n : d(x,y) < r\}$, and the (half closed, half open) annulus with centre $x \in \bbR^n$, inner radius $r_0 > 0$, and outer radius $r_1 > r_0$ by
	\begin{equation*}
		A(x,r_0,r_1) := B(x,r_1) \sm B(x,r_0) = \{y \in \bbR^n : r_0 \leq d(x,y) < r_1\}.
	\end{equation*}
	For subsets $E,F \subset \bbR^n$ we write
	\begin{equation*}
		d(E,F) := \dist(E,F) = \inf \{d(x,y) : x \in E, y \in F\}.
	\end{equation*}

	We let $L^0(\gO : E)$ denote the set of strongly measurable functions from a measure space $\gO$ to a Banach space $E$.
	As usual, we identify two functions if they agree almost everywhere.
	
	We write quasinorms as either $\nm{f}_X$ or $\nm{f \mid X}$ according to typographical need.
	For two quasinormed spaces $X$ and $Y$, we write $X \hookrightarrow  Y$ to mean that $X \subset Y$ (possibly after some identification has been made) and that the identity map is bounded.
	We write $X \simeq Y$ to mean that $X \hookrightarrow Y$ and $Y \hookrightarrow X$, i.e. to mean that $X$ and $Y$ are isomorphic, and $X = Y$ to mean that the sets $X$ and $Y$ are equal and that the associated quasinorms are equivalent.
	Often we refer to norms as `quasinorms' even though they are actually norms; for example, we refer to the $L^p$ quasinorm when $p \in (0,\infty]$, even though this is a norm when $p \geq 1$.
	For a quick introduction to quasi-Banach spaces the reader can consult the early sections of \cite{nK03}.
	When necessary, we label dual pairings by the space on the left: for example, by $\langle f,g \rangle_{L^p}$, we mean the usual duality pairing between $L^p$ and $L^{p^\prime}$, with $f \in L^p$ and $g \in L^{p^\prime}$.
	
	When $a$ and $b$ are real numbers, we write $a \lesssim b$ to mean that $a \leq Cb$ for some $C > 0$ which is independent of $a$ and $b$, and which may vary from line to line.
	If $C$ depends on some other quantities $c_1,c_2,\ldots$, we write $a \lesssim_{c_1,c_2,\ldots} b$.
	We write $a \simeq b$ to mean that $a \lesssim b$ and $b \lesssim a$.
	Context prevents us from confusing this meaning of the symbol $\simeq$ and that introduced in the previous paragraph (representing isomorphism of quasinormed spaces).
	
	We use a new and extensive notation for exponents, which appear in boldface $\mb{p}$.
	This is described in Section \ref{sec:exponents}.

\section*{Acknowledgements}

The first author acknowledges financial support from the Australian Research Council Discovery Grant DP120103692, the VIDI subsidy 639.032.427 of the Netherlands organisation for Scientific Research (NWO), and an Australian Mathematical Society Lift-off Fellowship.
Both authors were partially supported by the ANR project ``Harmonic analysis at its boundaries'' ANR-12-BS01-0013.
The majority of this work was completed while the first author was a doctoral student at the Australian National University and Universit\'e Paris-Sud.
Both authors thank Moritz Egert for valuable discussions on and around this topic.

\chapter{Function Space Preliminaries}\label{chap:fs-prelims}

Throughout this chapter we consider $\bbC^N$-valued functions for some fixed $N \in \bbN$, but since nothing really changes whether we choose $N = 1$ or $N \neq 1$ (this is justified in Remark \ref{rmk:pedantry}) we do not refer to $\bbC^N$ in the notation.
So we write $L^2(\bbR^n) = L^2(\bbR^n : \bbC^N)$, $T^\mb{p}(\bbR^n) = T^\mb{p}(\bbR^n : \bbC^N)$, and so on.
For $z \in \bbC^N$ we write $|z|$ in place of $\nm{z}_{\bbC^N}$.

\section{Exponents}\label{sec:exponents}\index{exponent}

Our theory makes heavy use of relationships between different function spaces, which may be seen as relationships between the exponents used in their parametrisation.
The most efficient way to keep track of these relationships, balancing economy of notation and clarity of ideas, is to introduce an enriched notation for exponents right at the beginning, and to work with it consistently.

Our exponent notation depends implicitly on a fixed dimension $n \in \bbN_+$.
The set of \emph{exponents} is defined to be the disjoint union
\begin{equation*}
  \mb{E} :=  \mb{E}_\text{fin} \sqcup \mb{E}_\infty,
\end{equation*}
where $\mb{E}_\text{fin} := \{(p,s) : p \in (0,\infty), s \in \bbR\}$ and $\mb{E}_\infty :=  \{(\infty,s;\ga) : s \in \bbR, \ga \geq 0\}$.
We say that an exponent is \emph{finite}\index{exponent!finite} if it is in $\mb{E}_\text{fin}$, and \emph{infinite}\index{exponent!infinite} if it is in $\mb{E}_\infty$.

We define two functions $\map{i}{\mb{E}}{(0,\infty]}$, $\map{r}{\mb{E}}{\bbR}$, representing \emph{integrability} and a kind of \emph{regularity},  by
\begin{align*}
	i(p,s) &:= p, & i(\infty,s;\ga) &:= \infty, \\
	r(p,s) &:= s, & r(\infty,s;\ga) &:= s + \ga.
\end{align*}
We also define functions $\map{j,\gq}{\mb{E}}{\bbR}$ by
\begin{align*}
	j(p,s) &:= 1/p, & j(\infty,s;\ga) &:= -\ga/n \\
	\gq(p,s) &:= s, & \gq(\infty,s;\ga) &:= s.
\end{align*}

Note that $\mb{p}$ is finite if and only if $j(\mb{p})$ is positive, and furthermore every exponent $\mb{p}$ is uniquely determined by the pair $(j(\mb{p}), \gq(\mb{p}))$.
Thus we may consider the set of exponents as being parametrised by the points in the $(j,\theta)$-plane, as pictured in Figure \ref{fig:exponents}.

For $r \in \bbR$ and $\mb{p} \in \mb{E}$, define $\mb{p} + r \in \mb{E}$ to be the unique exponent satisfying
\begin{equation*}
	j(\mb{p}+r) = j(\mb{p}) \quad \text{and} \quad \gq(\mb{p} + r) = \gq(\mb{p}) + r.
\end{equation*}
We similarly define $\mb{p} - r$.

For every exponent $\mb{p}$, we define the \emph{dual exponent}\index{duality!of exponents} $\mb{p}^\prime \in \mb{E}$ to be the unique exponent satisfying $j(\mb{p}^\prime) + j(\mb{p}) = 1$ and $\gq(\mb{p}^\prime) + \gq(\mb{p}) = 0$.
For finite exponents we have
\begin{equation*}
  (p,s)^\prime := \left\{
    \begin{array}{ll}
      (p^\prime,-s) & (p > 1) \\
      \big(\infty, -s; n(\frac{1}{p} - 1)\big) & (p \leq 1),
    \end{array}
  \right.
\end{equation*}
where $p^\prime$ is the usual H\"older conjugate of $p$.
Clearly $\mb{p}^{\prime\prime} = \mb{p}$.
We also define the \emph{$\heartsuit$-dual\index{duality!heart@$\heartsuit$-} exponent}
\begin{equation*}
  \mb{p}^\heartsuit := \mb{p}^\prime - 1,
\end{equation*}
and a quick computation shows that $\mb{p}^{\heartsuit\heartsuit} = \mb{p}$.
			
For two exponents $\mb{p}, \mb{q} \in \mb{E}$, we write $\mb{p} \hookrightarrow \mb{q}$\index{embedding!of exponents} to mean that
\begin{equation*}
  \gq(\mb{p}) \geq \gq(\mb{q}) \quad \text{and} \quad \gq(\mb{q}) - \gq(\mb{p}) = n(j(\mb{q}) - j(\mb{p})).
\end{equation*}
We always have $\mb{p} \hookrightarrow \mb{p}$.
Observe that $\mb{p} \hookrightarrow \mb{q}$ and $\mb{q} \hookrightarrow \mb{r}$ implies $\mb{p} \hookrightarrow \mb{r}$, and $\mb{p} \hookrightarrow \mb{q}$ if and only if $\mb{q}^\prime \hookrightarrow \mb{p}^\prime$.
This notation reflects embedding properties of function spaces.
	
For $\gh \in \bbR$, define $[\mb{p},\mb{q}]_\gh \in \mb{E}$\index{interpolation!of exponents} to be the unique exponent satisfying
\begin{align*}
  j([\mb{p},\mb{q}]_\gh) &= (1-\gh)j(\mb{p}) + \gh j(\mb{q}), \\
  \gq([\mb{p},\mb{q}]_\gh) &= (1-\gh)\gq(\mb{p}) + \gh \gq(\mb{q}).
\end{align*}
Note that $[\mb{p},\mb{q}]_0 = \mb{p}$ and $[\mb{p},\mb{q}]_1 = \mb{q}$.
This notation is particularly useful for interpolation results.
	
\begin{lem}\label{lem:expext}
  Suppose $\mb{p}$ and $\mb{q}$ are exponents with $\mb{p} \hookrightarrow \mb{q}$.
  Then $[\mb{p},\mb{q}]_{\gh_0} \hookrightarrow [\mb{p},\mb{q}]_{\gh_1}$ whenever $\gh_0 \leq \gh_1$.
\end{lem}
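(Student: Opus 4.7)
The plan is to verify the two defining conditions of $\hookrightarrow$ directly from the definitions, since everything is affine in $\gh$ and the hypothesis provides the exact linear relation between $j$ and $\gq$ needed.

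First I would write out the two conditions that must be checked. Setting $\mb{r}_i := [\mb{p},\mb{q}]_{\gh_i}$ for $i=0,1$, I need to show
\begin{equation*}
  \gq(\mb{r}_0) \geq \gq(\mb{r}_1) \qquad \text{and} \qquad \gq(\mb{r}_1) - \gq(\mb{r}_0) = n(j(\mb{r}_1) - j(\mb{r}_0)).
\end{equation*}
By the definition of $[\mb{p},\mb{q}]_\gh$, both $j(\mb{r}_i)$ and $\gq(\mb{r}_i)$ are affine functions of $\gh_i$, so computing the relevant differences gives
\begin{equation*}
  \gq(\mb{r}_1) - \gq(\mb{r}_0) = (\gh_1 - \gh_0)\bigl(\gq(\mb{q}) - \gq(\mb{p})\bigr), \quad j(\mb{r}_1) - j(\mb{r}_0) = (\gh_1 - \gh_0)\bigl(j(\mb{q}) - j(\mb{p})\bigr).
\end{equation*}

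For the inequality $\gq(\mb{r}_0) \geq \gq(\mb{r}_1)$, I use that $\gh_1 - \gh_0 \geq 0$ and the first clause of the hypothesis $\mb{p} \hookrightarrow \mb{q}$, namely $\gq(\mb{p}) \geq \gq(\mb{q})$, which makes $\gq(\mb{q}) - \gq(\mb{p}) \leq 0$ and hence the displayed difference nonpositive. For the linear identity, I factor out $(\gh_1 - \gh_0)$ and apply the second clause of the hypothesis $\gq(\mb{q}) - \gq(\mb{p}) = n(j(\mb{q}) - j(\mb{p}))$, which transfers verbatim through the common scalar $(\gh_1 - \gh_0)$.

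There is no real obstacle here, just bookkeeping: the hypothesis $\mb{p} \hookrightarrow \mb{q}$ is a linear condition in the $(j,\gq)$-coordinates, and the operation $\mb{p},\mb{q} \mapsto [\mb{p},\mb{q}]_\gh$ is affine in these same coordinates, so the arrow is automatically preserved along the segment. The only mild care needed is the sign check $\gh_1 - \gh_0 \geq 0$ to make sure the regularity inequality points the right way; the equality of slopes part is direction-agnostic.
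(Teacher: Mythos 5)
Your proof is correct and is essentially the paper's own argument: both compute the differences $\gq(\mb{r}_1)-\gq(\mb{r}_0)$ and $j(\mb{r}_1)-j(\mb{r}_0)$ as $(\gh_1-\gh_0)$ times the corresponding differences for $\mb{p},\mb{q}$, then invoke the two clauses of $\mb{p}\hookrightarrow\mb{q}$ together with $\gh_1-\gh_0\geq 0$.
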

	
\begin{proof}
  Write
  \begin{align}
    \gq([\mb{p},\mb{q}]_{\gh_1}) - \gq([\mb{p},\mb{q}]_{\gh_0})
    &= \left((1-{\gh_1})\gq(\mb{p}) + {\gh_1}\gq(\mb{q})\right) - \left( (1-{\gh_0})\gq(\mb{p}) + {\gh_0}\gq(\mb{q}) \right) \nonumber \\
    &= (\gh_1 - \gh_0) (\gq(\mb{q}) - \gq(\mb{p})) \label{line:pos} \\
    &= n(\gh_1 - \gh_0) (j(\mb{q}) - j(\mb{p})) \label{line:embcon} \\
    &= n \left(\left( (1-{\gh_1})j(\mb{p}) + {\gh_1}j(\mb{q}) \right) - \left( (1-{\gh_0})j(\mb{p}) + {\gh_0}j(\mb{q}) \right)\right) \nonumber \\
    &= n(j([\mb{p},\mb{q}]_{\gh_1}) - j([\mb{p},\mb{q}]_{\gh_0})) \nonumber.
  \end{align}
  Line \eqref{line:embcon} follows from $\mb{p} \hookrightarrow \mb{q}$.
  Furthermore, line \eqref{line:pos}, $\gh_1 - \gh_0 \geq 0$, and $\gq(\mb{p}) \geq \gq(\mb{q})$ imply that $\gq([\mb{p},\mb{q}]_{\gh_0}) \geq \gq ([\mb{p},\mb{q}]_{\gh_1})$.
  Thus $[\mb{p},\mb{q}]_{\gh_0} \hookrightarrow [\mb{p},\mb{q}]_{\gh_1}$.
\end{proof}

A straightforward computation shows the following lemma.

\begin{lem}\label{lem:extid}
  Suppose $\mb{p}, \mb{q} \in \mb{E}$ and $\gh_0, \gh_1,\gl \in \bbR$.
  Then
  \begin{equation*}
    [[\mb{p},\mb{q}]_{\gh_0}, [\mb{p},\mb{q}]_{\gh_1}]_{\gl} = [\mb{p},\mb{q}]_{(1-\gl)\gh_0 + \gl \gh_1}.
  \end{equation*}
  In particular this implies
  \begin{align*}
    \mb{p} &= [[\mb{p},\mb{q}]_{-1}, \mb{q}]_{1/2} \\
    \mb{q} &= [\mb{p}, [\mb{p},\mb{q}]_2]_{1/2}.
  \end{align*}
\end{lem}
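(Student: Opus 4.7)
The plan is to reduce the identity to an elementary fact about iterated affine interpolation on real numbers, exploiting the fact that an exponent $\mb{r} \in \mb{E}$ is completely determined by the pair $(j(\mb{r}), \gq(\mb{r}))$ and that both $j$ and $\gq$ behave linearly under the bracket operation by the very definition of $[\cdot,\cdot]_\gh$.

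First, I would observe that to prove two exponents coincide it suffices to show they agree under $j$ and under $\gq$. Applying $j$ to the left-hand side and unfolding the definition twice gives
\begin{align*}
j\bigl([[\mb{p},\mb{q}]_{\gh_0}, [\mb{p},\mb{q}]_{\gh_1}]_\gl\bigr)
&= (1-\gl) j([\mb{p},\mb{q}]_{\gh_0}) + \gl\, j([\mb{p},\mb{q}]_{\gh_1}) \\
&= (1-\gl)\bigl((1-\gh_0)j(\mb{p}) + \gh_0 j(\mb{q})\bigr) + \gl\bigl((1-\gh_1)j(\mb{p}) + \gh_1 j(\mb{q})\bigr).
\end{align*}
Collecting the coefficients of $j(\mb{p})$ and $j(\mb{q})$ and noting that $(1-\gl)(1-\gh_0) + \gl(1-\gh_1) = 1 - ((1-\gl)\gh_0 + \gl \gh_1)$ and $(1-\gl)\gh_0 + \gl \gh_1$ respectively, this equals $j([\mb{p},\mb{q}]_{(1-\gl)\gh_0 + \gl \gh_1})$. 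The computation for $\gq$ is word-for-word identical, so both sides of the claimed identity have the same $(j,\gq)$-image and are therefore equal.

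For the two consequences, I would plug in explicit values. Taking $\gh_0 = -1$, $\gh_1 = 1$, $\gl = 1/2$ gives $(1-\gl)\gh_0 + \gl \gh_1 = 0$, so the general identity yields $[[\mb{p},\mb{q}]_{-1}, [\mb{p},\mb{q}]_1]_{1/2} = [\mb{p},\mb{q}]_0 = \mb{p}$, which is the first specialisation since $[\mb{p},\mb{q}]_1 = \mb{q}$. Taking $\gh_0 = 0$, $\gh_1 = 2$, $\gl = 1/2$ gives $(1-\gl)\gh_0 + \gl \gh_1 = 1$, so $[[\mb{p},\mb{q}]_0, [\mb{p},\mb{q}]_2]_{1/2} = [\mb{p},\mb{q}]_1 = \mb{q}$, which is the second specialisation since $[\mb{p},\mb{q}]_0 = \mb{p}$.

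There is no real obstacle here: the argument is a bookkeeping exercise relying only on the definition of $[\cdot,\cdot]_\gh$ and the injectivity of $\mb{r} \mapsto (j(\mb{r}), \gq(\mb{r}))$. The only thing to be careful about is that the identity uses \emph{no} constraint between $\mb{p}$ and $\mb{q}$ (in particular no embedding $\mb{p} \hookrightarrow \mb{q}$), so the proof must and does work for arbitrary $\mb{p}, \mb{q} \in \mb{E}$.
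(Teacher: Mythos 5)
Your proof is correct and is precisely the "straightforward computation" the paper alludes to without writing out: you reduce the identity to the affine-in-$\gh$ behaviour of $j$ and $\gq$ under the bracket, invoke the injectivity of $\mb{r}\mapsto(j(\mb{r}),\gq(\mb{r}))$, and then specialise with the right parameters. Nothing is missing.
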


The most convenient way of visualising exponents is by identifying them with points in the $(j,\gq)$ plane.
In Figure \ref{fig:exponents} we show two exponents $\mb{p}$ and $\mb{q}$ with $\mb{p} \hookrightarrow \mb{q}$, their dual exponents, their $\heartsuit$-duals, and various other exponents which may be constructed from them.
The operations $\mb{p} \mapsto \mb{p}^\prime$ and $\mb{p} \mapsto \mb{p}^\heartsuit$ are given by reflection about the marked points at $(1/2,0)$ and $(1/2,-1/2)$ respectively.
The exponent $(1/2,-1/2)$ is special: in Section \ref{sec:wpc} we introduce it as the `energy exponent'. Certain boundary value problems associated with this exponent are automatically well-posed.
Observe that $\mb{p} \hookrightarrow \mb{q}$ if and only if the line segment from $(j(\mb{p}),\gq(\mb{p}))$ to $(j(\mb{q}),\gq(\mb{q}))$ is parallel to that from $((n+1)/n,0)$ to $(1,-1)$, with the same orientation.
	
\begin{figure}
\caption{Various exponents in the $(j,\theta)$ plane.}\label{fig:exponents}
\begin{center}
\begin{tikzpicture}[scale=2]

	
	\draw [thick,->] (-0.25,0) -- (3,0); 
	\draw [thick,->] (0,-2) -- (0,1.5); 

	\draw [thin,dashed] (1,1.5) -- (1,-2); 
	\draw [thin,dashed] (0,-1) -- (3,-1); 
	\draw [thin,dotted] (2,1.5) -- (2,-2); 
	\draw [thin,dotted] (0,-2) -- (3,-2); 
	\draw [thin,dotted] (0,1) -- (3,1); 
	\draw [thin,dotted] (3,1) -- (2,-2); 
	
	\draw [fill=white] (1,0) circle [radius = 1pt]; 
	
	\draw [fill=white] (1,-1) circle [radius = 1pt]; 
	
	\draw [fill=black] (1.75,-0.5) circle [radius = 1pt]; 
	\node [right] at (1.75,-0.5) {$\mb{p}$};
	
	\draw [fill=black] (1.5,-1.25) circle [radius = 1pt]; 
	\node [right] at (1.5,-1.25) {$\mb{q}$};
	
	\draw [fill=black] (0.25,0.5) circle [radius = 0.75pt]; 
	\node [right] at (0.25,0.5) {$\mb{p}^\prime$};
	
	\draw [fill=black] (0.5,1.25) circle [radius = 0.75pt]; 
	\node [right] at (0.5,1.25) {$\mb{q}^\prime$};
	
	\draw [fill=black] (0.25,-1.5) circle [radius = 0.75pt]; 
	\node [right] at (0.25,-1.5) {$\mb{p}^\heartsuit$};
	
	\draw [fill=black] (0.5,-0.75) circle [radius = 0.75pt]; 
	\node [right] at (0.5,-0.75) {$\mb{q}^\heartsuit$};
	
	\draw [fill=black] (2,0.25) circle [radius = 0.75pt]; 
	\node [right] at (2,0.25) {$[\mb{p},\mb{q}]_{-1}$};
	
	\draw [fill=black] (1.375,-1.625) circle [radius = 0.75pt]; 
	\node [right] at (1.375,-1.625) {$[\mb{p},\mb{q}]_{\frac{3}{2}}$};
	
	\draw [left hook-latex] (1.73,-0.58) -- (1.52,-1.17); 
	\draw [left hook-latex] (0.48,1.17) -- (0.27,0.58); 
	\draw [left hook-latex] (0.48,-0.83) -- (0.27,-1.42); 
	
	\node [above] at (0,1.5) {$\theta$};
	\node [left] at (0,1) {$\frac{1}{2}$};
	\node [left] at (0,-1) {$-\frac{1}{2}$};
	\node [left] at (0,-2) {$-1$};
	\node [below] at (1.1,0) {$\frac{1}{2}$};
	\node [below] at (2.1,0) {$1$};
	\node [below] at (2.66,0) {$\frac{n+1}{n}$};
	\node [right] at (3,0) {$j$};
	
\end{tikzpicture}
\end{center}
\end{figure}
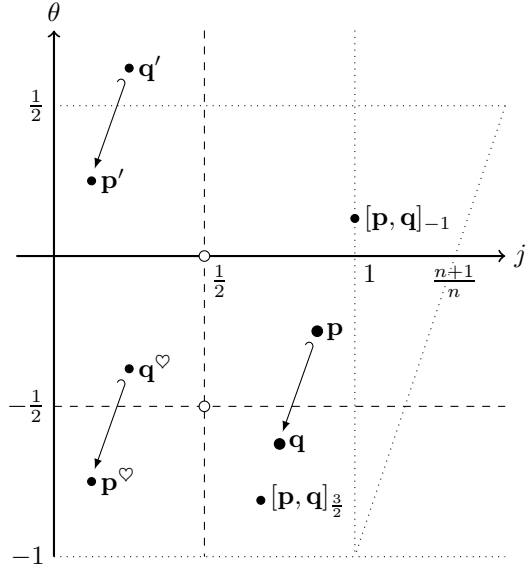

In Subsection \ref{sec:exponent-space-table} we provide a convenient table detailing the function spaces associated with different exponents.
	
\section{Tent spaces}\label{sec:ts}\index{space!tent}

The most fundamental function spaces in this monograph are the tent spaces.
These were first introduced by Coifman, Meyer, and Stein \cite{CMS83, CMS85}, and they have since proven their worth in harmonic analysis and PDE.
The other `ambient spaces' that we use---$Z$-spaces and slice spaces---are closely related to tent spaces, so a solid knowledge of tent spaces will be useful.

For $x \in \bbR^n$ we define the cone with vertex $x$ by
\begin{equation*}
  \gG(x) := \{(t,y) \in \bbR^{1+n}_+ : y \in B(x, t)\},
\end{equation*}
and for each open ball $B \subset X$ we define the \emph{tent with base $B$} by
\begin{equation*}
  T(B) := \bbR^{1+n}_+ \sm \left( \bigcup_{x \notin B} \gG(x) \right).
\end{equation*}
Equivalently, $T(B)$ is the set of points $(y,t) \in \bbR^{1+n}_+$ such that $B(y,t) \subset B$.

The tent space quasinorms are defined in terms of the \emph{Lusin operator}\index{Lusin operator} $\mc{A}$ and \emph{Carleson operators}\index{Carleson operator} $\mc{C}_\ga$, which are defined as follows: for all $\ga \geq 0$ and $f \in L^0(\bbR^{1+n}_+)$, 
\begin{equation}\label{eqn:lusin-q}
  \mc{A} f(x) := \bigg( \dint_{\gG(x)} |f(t,y)|^2 \, \frac{dy \, dt}{t^{n+1}} \bigg)^{1/2} \qquad (x \in \bbR^n)
\end{equation}
and
\begin{equation*}
  \mc{C}_\ga f(x) := \sup_{B \ni x} \frac{1}{r_B^{\ga}} \bigg( \frac{1}{r_B^n} \dint_{T(B)} |f(t,y)|^2 \, dy \, \frac{dt}{t} \bigg)^{1/2} \qquad (x \in \bbR^n).
\end{equation*}
For each $s \in \bbR$ we define an operator $\gk^s$ on $L^0(\bbR^{1+n}_+)$ by
\begin{equation*}
  (\gk^s f)(t,x) := t^{s} f(t,x) \qquad ((t,x) \in \bbR^{1+n}_+).
\end{equation*}
This operator is used to define `weighted' spaces.
	
In the following definition, and indeed throughout the whole monograph from this point, we use the exponent notation from Section \ref{sec:exponents}.

\begin{dfn}
  For a finite exponent $\mb{p} = (p,s)$, the \emph{tent space} $T^\mb{p} = T^\mb{p}(\bbR^n)$ is the set
  \begin{equation*}
    T^\mb{p} = T^p_s := \{f \in L^0(\bbR^{1+n}_+) : \mc{A} (\gk^{-s} f) \in L^p(\bbR^n)\}
  \end{equation*}
  equipped with the quasinorm
  \begin{equation*}
    \nm{f}_{T^p_s} := \nm{\mc{A} (\gk^{-s} f)}_{L^p(\bbR^n)}.
  \end{equation*}
  For an infinite exponent $\mb{p} = (\infty,s;\ga)$ we define $T^\mb{p}$ by
  \begin{equation*}
    T^\mb{p} = T^{\infty}_{s;\ga} := \{f \in L^0(\bbR^{1+n}_+) : \mc{C}_\ga (\gk^{-s} f) \in L^\infty(\bbR^n)\}
  \end{equation*}
  with its natural norm.
\end{dfn}
	
\begin{rmk}
  The spaces $T^p_s$ agree with those defined by Hofmann, Mayboroda, and McIntosh \cite[\textsection 8.3]{HMM11}, and with the spaces $T^{p,2}_{2,s}$ of Huang \cite{yH16}.
  Our spaces $T^\infty_{s;0}$ agree with Huang's spaces $T^{\infty,2}_{2,s}$.
\end{rmk}

All tent spaces are quasi-Banach spaces (Banach when $i(\mb{p}) \geq 1$).
For all finite exponents $\mb{p}$, the subspace $T^{\mb{p};c} \subset T^\mb{p}$ of compactly supported functions is dense in $T^\mb{p}$, and $L_c^2(\bbR^{1+n}_+)$ is densely contained in $T^\mb{p}$.
	
\begin{dfn}\label{dfn:ts-atom}
  Let $\mb{p}$ be an exponent with $i(\mb{p}) \leq 1$, and suppose $B \subset \bbR^n$ is a ball.
  We say that a function $a \in L^0(\bbR^{1+n}_+)$ is a \emph{$T^\mb{p}$-atom\index{atom!Tent space} (associated with $B$)} if $a$ is essentially supported in $T(B)$ and if
  \begin{equation*}
    \nm{a}_{T^2_s} \leq |B|^{\gd_{p,2}},
  \end{equation*}
  where $\gd_{p,2} = \frac{1}{2} - \frac{1}{p}$ (as defined in Section \ref{section:notation}).
\end{dfn}

An atomic decomposition for our tent spaces follows immediately from the original tent space atomic decomposition theorem \cite[Theorem 1c]{CMS85}.

\begin{thm}[Atomic decomposition]\index{atomic decomposition!of tent spaces}
  Let $\mb{p}$ be an exponent with $i(\mb{p}) \leq 1$.
  Then a function $f \in L^0(\bbR^{1+n}_+)$ is in $T^\mb{p}$ if and only if there exists a sequence $(a_k)_{k \in \bbN}$ of $T^\mb{p}$-atoms and a sequence $\gl \in \ell^{p}(\bbN)$ such that
  \begin{equation}\label{eqn:atomic-decomposition}
    f = \sum_{k \in \bbN} \gl_k a_k
  \end{equation}
  with convergence in $T^\mb{p}$.
  Furthermore we have
  \begin{equation*}
    \nm{f}_{T^\mb{p}} \simeq \inf \nm{\gl}_{\ell^{p}(\bbN)},
  \end{equation*}
  where the infimum is taken over all such decompositions.
\end{thm}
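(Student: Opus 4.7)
The plan is to reduce the statement to the classical Coifman--Meyer--Stein atomic decomposition for the unweighted tent space $T^p = T^p_0$ via the weight operator $\gk^s$.

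First I would observe that for any $s \in \bbR$ the map $\gk^{-s}$ is an isometric isomorphism from $T^p_s$ onto the classical tent space $T^p$: by definition,
\begin{equation*}
\nm{ f }_{T^p_s} = \nm{ \mc{A}(\gk^{-s} f) }_{L^p} = \nm{ \gk^{-s} f }_{T^p_0},
\end{equation*}
and $\gk^s$ provides the inverse. This isometry is the crucial structural fact that lets everything else follow from the classical result with no further analytic input.

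Next I would invoke the Coifman--Meyer--Stein atomic decomposition \cite[Theorem 1c]{CMS85}: since $i(\mb{p}) = p \leq 1$, any $g \in T^p$ admits a representation $g = \sum_{k} \gl_k b_k$ convergent in $T^p$, where each $b_k$ is a classical $T^p$-atom (essentially supported in $T(B_k)$ with $\nm{ b_k }_{T^2} \leq |B_k|^{\gd_{p,2}}$), and $\nm{ g }_{T^p} \simeq \inf \nm{ \gl }_{\ell^p}$. Applying this to $g := \gk^{-s} f$ and then multiplying through by $\gk^s$ yields
\begin{equation*}
f = \sum_{k} \gl_k a_k, \qquad a_k := \gk^s b_k,
\end{equation*}
with convergence in $T^p_s$ (because $\gk^s$ is an isometry $T^p \to T^p_s$).

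The only thing left to check, and the step most deserving of care, is that each $a_k = \gk^s b_k$ is a $T^\mb{p}$-atom in the sense of Definition \ref{dfn:ts-atom}. The support condition is immediate since multiplication by the scalar weight $t^s$ preserves essential supports, so $a_k$ is still essentially supported in $T(B_k)$. The size condition follows from the same isometry identity applied at the level $p = 2$:
\begin{equation*}
\nm{ a_k }_{T^2_s} = \nm{ \mc{A}(\gk^{-s} \gk^s b_k) }_{L^2} = \nm{ b_k }_{T^2} \leq |B_k|^{\gd_{p,2}}.
\end{equation*}
Conversely, if $f = \sum_k \gl_k a_k$ is a $T^\mb{p}$-atomic representation of $f$, then $\gk^{-s} f = \sum_k \gl_k (\gk^{-s} a_k)$ is a classical $T^p$-atomic representation (by the same computation in reverse), so $\nm{ f }_{T^p_s} = \nm{ \gk^{-s} f }_{T^p} \lesssim \nm{ \gl }_{\ell^p}$, which combined with the forward direction gives $\nm{ f }_{T^\mb{p}} \simeq \inf \nm{ \gl }_{\ell^p}$. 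There is no genuine obstacle here; the whole argument is simply the transport of the classical decomposition through the isometry $\gk^s$.
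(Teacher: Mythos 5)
Your proof is correct and is exactly the reduction the paper has in mind when it says the result ``follows immediately from the original tent space atomic decomposition theorem'' of Coifman--Meyer--Stein: the isometry $\gk^{-s}\colon T^p_s \to T^p_0$ transports atoms to atoms (support is preserved, and the $T^2_s$-size of $\gk^s b$ equals the $T^2_0$-size of $b$), so both the decomposition and the norm equivalence carry over verbatim. Nothing is missing.
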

	
The following duality theorem includes all finite exponents; our exponent notation allows us to state the result without needing to separate the cases $i(\mb{p}) \leq 1$ and $i(\mb{p}) > 1$.

\begin{thm}[Duality]\label{thm:ts-duality}\index{duality!of tent spaces}
  Suppose that $\mb{p}$ is a finite exponent.
  Then for all $f,g \in L^0(\bbR^{1+n}_+)$ we have
  \begin{equation}\label{eqn:L2ip}
    \dint_{\bbR^{1+n}_+} |(f(t,x),g(t,x))| \, dx \, \frac{dt}{t} \lesssim \nm{f}_{T^\mb{p}} \nm{g}_{T^{\mb{p}^\prime}},
  \end{equation}
  and the pairing 
  \begin{equation}\label{eqn:l2pair}
    \langle f,g \rangle := \dint_{\bbR^{1+n}_+} (f(t,x),g(t,x)) \, dx \, \frac{dt}{t}
  \end{equation}
  identifies the Banach space dual of $T^\mb{p}$ with $T^{\mb{p}^\prime}$.
\end{thm}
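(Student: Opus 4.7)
The plan is to leverage the classical Coifman--Meyer--Stein duality for the unweighted tent spaces $T^{p,2}$ (and its $p \leq 1$ analogue with a Carleson-type dual) by reducing the weighted situation to it. First, observe that the map $\gk^s \colon f \mapsto (t,x) \mapsto t^s f(t,x)$ is a bijective isometry from $T^p_s$ onto the unweighted space $T^p_0$, and analogously $\gk^{-s}$ is an isometry from $T^{\mb{p}'}$ onto $T^{p',0}$ (resp. $T^{\infty}_{0;\ga}$ with $\ga = n(1/p-1)$ when $p \le 1$). A direct substitution shows
\begin{equation*}
	\langle f,g\rangle = \dint_{\bbR^{1+n}_+} (\gk^{-s}f,\gk^s g)\,\frac{dx\,dt}{t} = \langle \gk^{-s}f,\gk^s g\rangle,
\end{equation*}
so it suffices to prove the theorem for $s=0$.

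Next I would prove \eqref{eqn:L2ip} in the $s=0$ case. Using that $|B(y,t)| \simeq t^n$ and Fubini,
\begin{equation*}
	\dint_{\bbR^{1+n}_+} |(f,g)|\,\frac{dy\,dt}{t} \simeq \int_{\bbR^n}\dint_{\gG(x)} |(f,g)|\,\frac{dy\,dt}{t^{n+1}}\,dx \leq \int_{\bbR^n} \mc{A}f(x)\mc{A}g(x)\,dx
\end{equation*}
by Cauchy--Schwarz on each cone. For $1<p<\infty$, H\"older's inequality closes the estimate at once. For $p\leq 1$, I would use the atomic decomposition: if $a$ is a $T^p$-atom associated with $B$, then Cauchy--Schwarz on $T(B)$ gives
\begin{equation*}
	|\langle a,g\rangle| \leq \nm{a}_{T^2}\,\nm{g\,\mb{1}_{T(B)}}_{L^2(dy\,dt/t)} \leq |B|^{\gd_{p,2}}\cdot r_B^{\ga+n/2}\nm{g}_{T^{\infty}_{0;\ga}},
\end{equation*}
and the choice $\ga = n(1/p - 1)$ makes the geometric factors cancel so that $|\langle a,g\rangle|\lesssim \nm{g}_{T^{\mb{p}'}}$. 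Summing an atomic decomposition $f=\sum_k \gl_k a_k$ and using $\sum|\gl_k|\le(\sum|\gl_k|^p)^{1/p}\simeq \nm{f}_{T^\mb{p}}$ yields \eqref{eqn:L2ip}.

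For the dual identification, the pairing $\langle\cdot,\cdot\rangle$ embeds $T^{\mb{p}'}$ isometrically (up to constants) into $(T^\mb{p})^*$ by \eqref{eqn:L2ip}. For the converse, fix $\gL\in(T^\mb{p})^*$. When $1<p<\infty$, $L^2_c(\bbR^{1+n}_+)\subset T^\mb{p}\cap T^{\mb{p}'}$ densely on both sides, so $\gL$ restricted to $L^2_c$ is continuous for the $L^2(dx\,dt/t)$ inner product on each truncated region $\{t\in[\gev,\gev^{-1}],\,|x|\le\gev^{-1}\}$; Hilbert-space Riesz representation produces a locally $L^2$ function $g$ representing $\gL$, and testing $\gL$ against $L^2_c$ functions supported in tents and applying the already-proven inequality \eqref{eqn:L2ip} with $\mb{p}$ and $\mb{p}'$ swapped shows $g\in T^{\mb{p}'}$ with $\nm{g}_{T^{\mb{p}'}}\lesssim\nm{\gL}$. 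When $p\leq1$, the same construction gives a locally $L^2$ representative $g$; to see $g\in T^{\infty}_{-s;\ga}$ one tests $\gL$ on normalised $T^\mb{p}$-atoms supported in an arbitrary tent $T(B)$ and reads off the defining Carleson condition from the bound $|\gL(a)|\leq\nm{\gL}$, then reinverts the $\gk^s$-reduction.

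The main obstacle is the $p\leq 1$ direction of the dual identification, where one must turn a bound on $\gL$ over individual atoms into a uniform Carleson-type estimate on the candidate $g$; this requires carefully choosing saturating ``test atoms'' supported in $T(B)$ (for example, $L^2$-normalised restrictions of $g$ itself, with a suitable cutoff to enforce the atomic size condition) so that $\nm{\gL}$ controls $r_B^{-\ga-n/2}\nm{g\mb{1}_{T(B)}}_{L^2(dx\,dt/t)}$ uniformly in $B$.
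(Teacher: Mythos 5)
The paper does not reprove this theorem---it states it as a known result, attributing the underlying duality to Coifman--Meyer--Stein \cite{CMS85} and its weighted extensions (e.g.\ \cite{HMM11, aA16}); your argument is exactly the classical CMS route, reduced to the unweighted case. The content of your proof is sound: the isometric reduction via the weight operator, Cauchy--Schwarz on cones plus H\"older for $1<p<\infty$, the atomic estimate matched to the Carleson dual norm for $p\le1$, and the two-sided dual identification with the acknowledged technical core (saturating test atoms) for the hard direction. One small slip to correct in the write-up: as defined in the paper, $T^{\mb{p}}=T^p_s$ has norm $\nm{\mc{A}(\gk^{-s}f)}_{L^p}$, so the isometry $T^p_s\to T^p_0$ is $\gk^{-s}$, not $\gk^s$, and correspondingly $\gk^{s}$ is the isometry $T^{\mb{p}'}\to T^{p',0}$ (resp.\ $T^\infty_{0;\ga}$); the display $\langle f,g\rangle=\langle \gk^{-s}f,\gk^s g\rangle$ is already written with the correct signs, so the argument goes through as intended once the preceding two sentences are fixed.
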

	
Note in particular that the integral in \eqref{eqn:L2ip} converges absolutely.
	
\begin{rmk}
  Throughout this monograph we refer to the pairing in \eqref{eqn:l2pair} as the \emph{$L^2$ duality pairing}.\index{duality pairing!L2@$L^2$-}
\end{rmk}

When $\mb{p}$ is finite and $i(\mb{p}) \geq 2$, $T^\mb{p}$ may also be characterised in terms of the Carleson operator $\mc{C}_0$.\index{Carleson operator}
This is a straightforward extension of \cite[Theorem 3]{CMS85}.

\begin{thm}[Carleson characterisation of $T^\mb{p}$]\label{thm:tent-AC}
  Suppose $\mb{p}$ is a finite exponent with $i(\mb{p}) > 2$.
  Then for all $f \in L^0(\bbR^{1+n}_+)$ we have
  \begin{equation*}
    \nm{f}_{T^{\mb{p}}} \simeq \nm{\mc{C}_0 (\gk^{-\gq(\mb{p})} f)}_{L^p(\bbR^n)}.
  \end{equation*}
\end{thm}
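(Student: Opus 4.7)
The plan is to reduce the claim to the classical Coifman--Meyer--Stein characterisation of $T^p$ via the Carleson functional (Theorem~3 of \cite{CMS85}), which states that for $2 < p < \infty$ and $g \in L^0(\bbR^{1+n}_+)$,
\begin{equation*}
\nm{\mc{A} g}_{L^p(\bbR^n)} \simeq \nm{\mc{C}_0 g}_{L^p(\bbR^n)},
\end{equation*}
with the usual convention that one side is finite iff the other is. Since the paper calls the result a ``straightforward extension'', the whole argument should amount to absorbing the $t^{-s}$ weight inside the integrand and appealing to the unweighted theorem.

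Concretely, given a finite exponent $\mb{p} = (p,s)$ with $p > 2$, I would set $g := \gk^{-s} f$, so that $g(t,y) = t^{-s} f(t,y)$ is again in $L^0(\bbR^{1+n}_+)$. By the very definitions of $\mc{A}$ and the tent space norm, $\nm{f}_{T^{\mb{p}}} = \nm{\mc{A}(\gk^{-s} f)}_{L^p} = \nm{\mc{A} g}_{L^p}$, while $\nm{\mc{C}_0(\gk^{-s} f)}_{L^p} = \nm{\mc{C}_0 g}_{L^p}$. Applying the classical equivalence to $g$ then yields exactly the desired $\nm{f}_{T^{\mb{p}}} \simeq \nm{\mc{C}_0(\gk^{-\gq(\mb{p})} f)}_{L^p}$.

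There is essentially no obstacle: the weight $\gk^{-s}$ commutes trivially with both $\mc{A}$ and $\mc{C}_0$ at the level of the $L^2$ integrand (both operators integrate $|\cdot|^2$ pointwise in $(t,y)$), so no regularity on $f$ or $s$ is needed. The only mild point is checking that the range $p > 2$ matches the hypothesis under which the classical equivalence is proved; this is precisely the condition $i(\mb{p}) > 2$ assumed here. (For $p = 2$ one has the easier identity $\nm{f}_{T^2_s}^2 = c_n \dint t^{-2s}|f|^2 \, dy \, dt/t$ coming from Fubini, but this endpoint is not part of the statement.)
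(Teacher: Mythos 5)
Your argument is correct and matches the paper's implicit proof exactly: the paper gives no proof beyond citing \cite[Theorem 3]{CMS85} as a ``straightforward extension,'' and the extension is precisely the observation you make, that replacing $f$ by $g = \gk^{-s}f$ reduces the weighted claim to the unweighted one since the weight is absorbed into the integrand identically for both $\mc{A}$ and $\mc{C}_0$. The range $p>2$ is indeed the hypothesis of the classical theorem, so there is nothing further to check.
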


The following `change of aperture' theorem was proven by Coifman, Meyer, and Stein for $T_0^p$ \cite[Proposition 4]{CMS85}, and the extension to the more general tent spaces here is immediate.

\begin{thm}[Change of aperture]
  For $\gb \in (0,\infty)$ and $x \in \bbR^n$ define
  \begin{equation*}
    \gG_\gb(x) := \{(t,y) \in \bbR^{1+n}_+ : y \in B(x,\gb t)\},
  \end{equation*}
  and for $f \in L^0(\bbR^{1+n}_+)$ define $\mc{A}_\gb f(x)$ as in \eqref{eqn:lusin-q}, with $\gG_\gb(x)$ in place of $\gG(x)$.
  Then for $\gb \in (0,\infty)$ and each finite exponent $\mb{p}$ we have an equivalence of quasinorms
  \begin{equation*}
    \nm{f}_{T^\mb{p}} \simeq \nm{\mc{A}_\gb (\gk^{-\gq(\mb{p})} f)}_{L^{i(\mb{p})}(\bbR^n)}.
  \end{equation*}
\end{thm}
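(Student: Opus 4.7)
The plan is to reduce the statement to the classical unweighted change-of-aperture theorem of Coifman, Meyer, and Stein by absorbing the weight. The key observation is that the multiplication operator $\gk^{-\gq(\mb{p})}$ depends only on the transversal variable $t$, not on the tangential variable $y$, so it commutes with the passage from the cone $\gG(x)$ to its widened version $\gG_\gb(x)$.

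Concretely, I would set $g := \gk^{-\gq(\mb{p})} f \in L^0(\bbR^{1+n}_+)$. Directly from the definitions,
\begin{equation*}
\nm{f}_{T^\mb{p}} = \nm{\mc{A} g}_{L^{i(\mb{p})}(\bbR^n)} \qquad \text{and} \qquad \nm{\mc{A}_\gb (\gk^{-\gq(\mb{p})} f)}_{L^{i(\mb{p})}(\bbR^n)} = \nm{\mc{A}_\gb g}_{L^{i(\mb{p})}(\bbR^n)},
\end{equation*}
since in both Lusin-type expressions the factor $t^{-\gq(\mb{p})}$ is unchanged and only the ball $B(x,t)$ versus $B(x,\gb t)$ in the inner integration is modified. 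Hence it suffices to prove the equivalence $\nm{\mc{A}_\gb g}_{L^{p}} \simeq_{\gb,n,p} \nm{\mc{A} g}_{L^{p}}$ with $p = i(\mb{p}) \in (0,\infty)$, for arbitrary $g \in L^0(\bbR^{1+n}_+)$.

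This last equivalence is precisely the original Coifman--Meyer--Stein change-of-aperture result for $T^p_0$, established in \cite[Proposition 4]{CMS85} by a good-$\gl$ inequality which is valid for all $0 < p < \infty$ (and hence handles the quasi-Banach range $p < 1$ as well as the Banach range). Applying that result to $g$ and translating back through the definition of $T^\mb{p}$ yields the desired equivalence. No substantive obstacle arises: the weight $\gk^{-\gq(\mb{p})}$ is transparent to the argument because it is independent of $y$, and the integrability parameter $p$ enters only through the outer $L^p$-norm, which is exactly the situation covered by the classical theorem.
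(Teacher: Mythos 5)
Your reduction is exactly what the paper has in mind: the weight $\gk^{-\gq(\mb{p})}$ depends only on $t$, so substituting $g = \gk^{-\gq(\mb{p})} f$ turns the claim into the unweighted equivalence $\nm{\mc{A}_\gb g}_{L^p} \simeq \nm{\mc{A} g}_{L^p}$, which is precisely Coifman--Meyer--Stein's change-of-aperture result for $T^p_0$. The paper states this reduction is ``immediate'' and cites the same reference, so your argument matches it.
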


The following embedding theorem, which can be seen as a tent space analogue of the Hardy--Littlewood--Sobolev embedding theorem, was proven by the first author \cite[Theorem 2.19]{aA16}.
The case where $\mb{p}$ and $\mb{q}$ are both infinite is not explicitly shown there, but it follows by the same argument.

\begin{thm}[Embeddings]\label{thm:ts-embeddings}\index{embedding!of tent spaces}
  Let $\mb{p}$ and $\mb{q}$ be exponents with $\mb{p} \hookrightarrow \mb{q}$.
  Then we have the embedding $T^\mb{p} \hookrightarrow T^\mb{q}$.
\end{thm}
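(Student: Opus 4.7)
The relation $\mb{p} \hookrightarrow \mb{q}$ encodes exactly the scale-invariant Hardy--Littlewood--Sobolev identity $\gq(\mb{p}) - \gq(\mb{q}) = n(j(\mb{q}) - j(\mb{p}))$ with $\gq(\mb{p}) \geq \gq(\mb{q})$, so the plan is to mimic the proof of the classical HLS embedding, atom-by-atom where possible, and extend by duality or interpolation. I would split into three cases: (a) both $\mb{p}$ and $\mb{q}$ finite; (b) $\mb{p}$ finite, $\mb{q}$ infinite; (c) both infinite.

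I would start with case (c), which is essentially free. Writing $\mb{p} = (\infty, s_\mb{p};\ga_\mb{p})$ and $\mb{q} = (\infty, s_\mb{q};\ga_\mb{q})$, the hypothesis forces $s_\mb{p} + \ga_\mb{p} = s_\mb{q} + \ga_\mb{q}$ and $\ga_\mb{p} \geq \ga_\mb{q}$. On any tent $T(B)$ we have $t \leq r_B$, so the factor $t^{-s_\mb{q}}$ inside the Carleson quasinorm for $\mb{q}$ can be replaced by $r_B^{s_\mb{p}-s_\mb{q}} t^{-s_\mb{p}}$ at the cost of a power $r_B^{s_\mb{p}-s_\mb{q}} = r_B^{\ga_\mb{q}-\ga_\mb{p}}$, which cancels exactly against the $r_B^{-\ga_\mb{q}}$ prefactor; the resulting bound is the defining Carleson quasinorm for $\mb{p}$ (up to the constant $r_B^{-\ga_\mb{p}}$). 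This gives $T^\mb{p} \hookrightarrow T^\mb{q}$ pointwise in $B$.

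Next I would handle cases (a) and (b) when $i(\mb{p}) \leq 1$ via atomic decomposition. Let $a$ be a $T^\mb{p}$-atom on $T(B)$. Since $(t,y) \in \gG(x) \cap T(B)$ forces $x \in B$, the Lusin function $\mc{A}(\gk^{-\gq(\mb{q})} a)$ is supported in $B$ and, using $t \leq r_B$ together with $\gq(\mb{p}) \geq \gq(\mb{q})$ and the atomic bound $\nm{a}_{T^2_{\gq(\mb{p})}} \leq |B|^{\gd_{i(\mb{p}),2}}$, satisfies $\nm{a}_{T^2_{\gq(\mb{q})}} \lesssim r_B^{\gq(\mb{p})-\gq(\mb{q})} |B|^{\gd_{i(\mb{p}),2}}$. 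For case (a) with $i(\mb{q}) \leq 2$, Hölder's inequality on $B$ then gives
\begin{equation*}
\nm{a}_{T^\mb{q}} \lesssim |B|^{1/i(\mb{q})-1/2} r_B^{\gq(\mb{p})-\gq(\mb{q})} |B|^{\gd_{i(\mb{p}),2}} = |B|^{1/i(\mb{q})-1/i(\mb{p})+(\gq(\mb{p})-\gq(\mb{q}))/n} = 1
\end{equation*}
by the Sobolev identity in the definition of $\hookrightarrow$; summing an atomic decomposition yields $T^\mb{p} \hookrightarrow T^\mb{q}$. For case (b) I would instead estimate the Carleson quasinorm $\mc{C}_{\ga_\mb{q}}(\gk^{-s_\mb{q}} a)$ directly on an arbitrary ball $B'$, splitting into the subcases $B \subset cB'$ and $B' \subset cB$ and using the same $L^2$-bound for $a$ together with the Sobolev identity.

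Finally, to remove the restriction $i(\mb{p}) \leq 1$ in cases (a) and (b), I would extend by a two-step procedure: dualise using Theorem \ref{thm:ts-duality} (the map $\mb{p} \mapsto \mb{p}^\prime$ reverses $\hookrightarrow$ and interchanges $i(\mb{p}) \leq 1$ with $i(\mb{q}^\prime) \geq $ some finite value), and complex-interpolate between the endpoint embeddings already established, using the Carleson characterisation (Theorem \ref{thm:tent-AC}) when $i(\mb{q}) > 2$ to reach the upper endpoint. The main obstacle I anticipate is the last step: covering the range $i(\mb{q}) > 2$, where direct atomic estimates fail because the Hölder step above only works for $i(\mb{q}) \leq 2$. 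Here the right move is to combine duality (to reduce to $i(\mb{p}^\prime) < 2$) with the atomic case already proven, taking care that the duality pairing in Theorem \ref{thm:ts-duality} is preserved under the embeddings being constructed.
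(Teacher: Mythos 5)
The paper does not actually prove this theorem in the text: it cites \cite[Theorem 2.19]{aA16} for the finite exponents and remarks that the both-infinite case ``follows by the same argument.'' So I can only assess your proposal on its own merits, against the strategy one would expect from that reference (which is atomic decomposition plus duality and interpolation). Your overall skeleton is correct and in that spirit: a direct Carleson computation for case (c), an atomic decomposition for small $i(\mb{p})$, and duality/interpolation to fill in the rest.

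Two specific points, one minor and one more substantive. First, in case (c) you assert $\ga_\mb{p}\geq \ga_\mb{q}$, but the hypotheses $\gq(\mb{p})\geq\gq(\mb{q})$ and $s_\mb{p}+\ga_\mb{p}=s_\mb{q}+\ga_\mb{q}$ force $\ga_\mb{p}\leq\ga_\mb{q}$. This is only a typo in the summary; the computation that follows uses $s_\mb{p}\geq s_\mb{q}$ (which is what is actually needed to bound $t^{s_\mb{p}-s_\mb{q}}\leq r_B^{s_\mb{p}-s_\mb{q}}$ on a tent) and is correct.

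Second, the final paragraph is confused about what duality buys you. Dualising ``$i(\mb{p})\leq 1\Rightarrow T^\mb{p}\hookrightarrow T^\mb{q}$'' produces an embedding with the \emph{infinite} exponent $\mb{p}^\prime$ on the right, not a finite exponent with index below $2$, because the dual of $(p,s)$ with $p\leq 1$ is $(\infty,-s;n(1/p-1))$. Consequently ``reduce to $i(\mb{p}^\prime)<2$ and combine with the atomic case'' does not close the loop: the atomic case needs $i\leq 1$, not $i<2$, and those two conditions do not meet up. The way to actually finish is one of the following. (i) Observe that the atomic estimate extends to $i(\mb{q})>2$ finite by replacing Hölder on the ball $B$ with the Carleson characterisation (Theorem~\ref{thm:tent-AC}): bounding $\mc{C}_0(\gk^{-\gq(\mb{q})}a)(x)\lesssim d(x,B)^{-n/2}\nm{a}_{T^2_{\gq(\mb{q})}}$ off $2B$ and integrating, the tail converges precisely because $i(\mb{q})>2$, and the Sobolev identity again gives a uniform constant. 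This handles all finite $\mb{q}$ with $i(\mb{p})\leq 1$ directly, and then interpolating this against the trivial embedding $T^\mb{q}\hookrightarrow T^\mb{q}$ (complex interpolation between $T^{\mb{p}_0}$ for $\mb{p}_0=[\mb{p},\mb{q}]_{-M}$ with $M$ large and $T^\mb{q}$, which keeps $j\geq 0$ as Theorem~\ref{thm-wts-c-interpolation} requires) covers $1<i(\mb{p})<i(\mb{q})<\infty$; the finite-$\mb{p}$/infinite-$\mb{q}$ case then follows by duality. (ii) Alternatively, if one insists on only the Hölder atomic case $i(\mb{q})\leq 2$: interpolate against the trivial embedding to cover $0<i(\mb{p})<i(\mb{q})\leq 2$, dualise that to cover $2\leq i(\mb{a})<i(\mb{b})<\infty$, and then compose through the intermediate exponent $\mb{m}=[\mb{p},\mb{q}]_\gh$ with $i(\mb{m})=2$ to join the two ranges. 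Either route is correct; the point is that ``duality alone'' in your final step is not enough and the interpolation has to be organised to respect the $j\geq 0$ hypothesis of the tent-space complex interpolation theorem.
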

	
The following complex interpolation theorem was proven by Hofmann, Mayboroda, and McIntosh for finite exponents \cite[Lemma 8.23]{HMM11}, and the extension to one infinite exponent follows by duality \cite[Theorem 2.1]{aA16}.
	
\begin{thm}[Complex interpolation]\label{thm-wts-c-interpolation}\index{interpolation!of tent spaces, complex}
  Suppose $\mb{p}$ and $\mb{q}$ are exponents with $j(\mb{p}), j(\mb{q}) \geq 0$ (with equality for at most one exponent), and let $0 < \gh < 1$.
  Then
  \begin{equation*}
    [T^\mb{p}, T^\mb{q}]_\gh = T^{[\mb{p},\mb{q}]_\gh}.
  \end{equation*}
\end{thm}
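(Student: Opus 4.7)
The plan is to prove the theorem in two stages: first for two finite exponents, then extend to the case of one infinite exponent by duality.

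\textbf{Stage 1 (both exponents finite).} I would follow the Hofmann--Mayboroda--McIntosh strategy: realise $T^\mb{p}$ as a retract of a weighted vector-valued Lebesgue space. Concretely, via the Lusin operator, the map $f \mapsto F$ given by $F(x)(t,y) = t^{-\gq(\mb{p})} f(t,y)|_{\gG(x)}$ embeds $T^\mb{p}$ isometrically into $L^{i(\mb{p})}(\bbR^n; L^2(\gG, \tfrac{dy\, dt}{t^{n+1}}))$ (after translating $\gG(x)$ to a common reference cone). A bounded retraction is obtained by averaging over the cones containing each point. Complex interpolation of such $L^p(H)$ spaces with Hilbert target is classical (see Bergh--L\"ofstr\"om), and interpolation commutes with retracts. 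Since $\gq(\mb{p}),\gq(\mb{q})$ and $i(\mb{p}),i(\mb{q})$ interpolate linearly to yield $\gq([\mb{p},\mb{q}]_\gh)$ and $i([\mb{p},\mb{q}]_\gh)$, the formula follows. Some care is needed to handle $i(\mb{p}) \leq 1$ via the Calder\'on construction for quasi-Banach couples and the atomic decomposition.

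\textbf{Stage 2 (one exponent infinite).} Without loss of generality $\mb{q}$ is infinite and $\mb{p}$ is finite, so $j(\mb{p})>0$, and by hypothesis $j([\mb{p},\mb{q}]_\gh)=(1-\gh)j(\mb{p})+\gh j(\mb{q})>0$, making $[\mb{p},\mb{q}]_\gh$ finite. Since $\mb{q}$ is infinite, $\mb{q}'$ is finite, and Theorem \ref{thm:ts-duality} gives $T^\mb{q} = (T^{\mb{q}'})^*$. A direct computation with the definitions of the duality and interpolation operations on exponents shows that $[\mb{p}', \mb{q}']_{1-\gh} = ([\mb{p},\mb{q}]_\gh)'$. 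Applying Stage 1 to the finite pair $(\mb{p}',\mb{q}')$ and then Calder\'on's duality theorem for complex interpolation yields
\[
[T^\mb{p}, T^\mb{q}]_\gh = [T^\mb{p}, (T^{\mb{q}'})^*]_\gh = \bigl([T^{\mb{p}'}, T^{\mb{q}'}]_{1-\gh}\bigr)^{\!*} = \bigl(T^{([\mb{p},\mb{q}]_\gh)'}\bigr)^{\!*} = T^{[\mb{p},\mb{q}]_\gh},
\]
where the last step uses Theorem \ref{thm:ts-duality} once more for the finite exponent $[\mb{p},\mb{q}]_\gh$.

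\textbf{Main obstacle.} The delicate step is the use of Calder\'on's duality theorem in Stage 2, which requires both density of $T^\mb{p} \cap T^{\mb{q}}$ in each factor (arrangeable via $L^2_c$ functions, which sit densely in any $T^\mb{p}$ with finite exponent, and for which suitable approximation in the infinite case uses the Carleson characterisation of Theorem \ref{thm:tent-AC}) and reflexivity of at least one factor. Since $T^\mb{q}$ with $\mb{q}$ infinite is never reflexive, reflexivity must come from $T^\mb{p}$, which works cleanly for $1 < i(\mb{p}) < \infty$. The boundary cases $i(\mb{p}) = 1$ (or $i(\mb{p}) < 1$, where we operate in the quasi-Banach category) need a direct verification: one decomposes elements via atoms (Definition \ref{dfn:ts-atom}) and tests against $T^{\mb{q}'}$-functions, controlling the bilinear pairing with a weighted $L^p$--Carleson duality to match the $[\mb{p},\mb{q}]_\gh$ scaling. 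This is where most of the technical effort would be spent.
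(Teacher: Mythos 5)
Your two-stage strategy follows exactly the route the paper itself describes: it cites Hofmann--Mayboroda--McIntosh for the finite-exponent case (your Stage 1, via retraction onto a vector-valued Lebesgue space) and the first author's paper [aA16, Theorem 2.1] for the extension to one infinite exponent by duality (your Stage 2). So the overall plan is correct and aligned with the paper.

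There is a concrete error in Stage 2. The exponent identity you invoke, $[\mb{p}',\mb{q}']_{1-\gh} = ([\mb{p},\mb{q}]_\gh)'$, is false in general: computing with $j(\mb{r}')=1-j(\mb{r})$, $\gq(\mb{r}')=-\gq(\mb{r})$, and the affine definition of $[\cdot,\cdot]_\gh$, one finds it holds only when $\gh=\tfrac12$ or $\gq(\mb{p})=\gq(\mb{q})$. The correct identity is $[\mb{p}',\mb{q}']_\gh = ([\mb{p},\mb{q}]_\gh)'$, with $\gh$ unchanged. You compensate by also writing Calder\'on duality with index $1-\gh$, i.e.\ $[(T^{\mb{p}'})^*,(T^{\mb{q}'})^*]_\gh = ([T^{\mb{p}'},T^{\mb{q}'}]_{1-\gh})^*$, which is likewise wrong (the standard theorem preserves $\gh$). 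The two errors cancel, so your final formula is right, but both intermediate equalities in the displayed chain are individually false; replace $1-\gh$ by $\gh$ in both places and the argument is correct for $1 < i(\mb{p}) < \infty$. Finally, you correctly identify that $i(\mb{p}) \leq 1$ (where $T^\mb{p}$ is not reflexive, $\mb{p}'$ is itself infinite, and Stage 1 cannot be applied to the dual pair) is where the real work is, but you leave it unresolved; this is precisely the content that the paper delegates to the cited reference.
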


\begin{rmk}
  Here, and throughout the monograph, by `complex interpolation' we mean the Kalton--Mitrea complex interpolation method, introduced in \cite[\textsection 3]{KM98}.
  This agrees with the usual (Calder\'on) complex interpolation method on couples of Banach spaces, and is well-defined for all quasi-Banach couples.
\end{rmk}
	
\begin{rmk}\label{rmk:pedantry}
  In contrast with the earlier work of the first author \cite{aA16}, we define the operator $\gk^s$ in terms of powers of $t$ rather than powers of volumes of balls, so our tent spaces $T_{s}^{p}(\bbR^n)$ correspond to his tent spaces $T_{s/n}^{p,2}(\bbR^n)$.
  We also use $\bbC^N$-valued functions instead of $\bbC$-valued functions.
  This does not change the validity of previous results, as one can always split $T^{p}_s(\bbR^n:\bbC^N) \simeq \oplus_{j=1}^N T^{p}_s(\bbR^n:\bbC)$ and apply the results to each summand individually.
  This reduction would fail if we were to replace $\bbC^N$ with a general Banach space, but thankfully we have no need for such generality.
\end{rmk}

\section{$Z$-spaces}\label{sec:zs}\index{space!Z@$Z$-}

We now introduce a class of function spaces, called \emph{$Z$-spaces}, which are related to tent spaces by real interpolation.
The $Z$-spaces play the role for Besov spaces $\dot{B}^{p,p}_s$ that the tent spaces play for Hardy--Sobolev spaces $\dot{H}^p_s$.
	
\begin{dfn}\label{defn:Zpqs}
  We refer to a pair
  \begin{equation*}
    c = (c_0,c_1) \in (0,\infty) \times (3/2,\infty)
  \end{equation*}
  as a \emph{Whitney parameter}.\index{Whitney parameter}
  To each Whitney parameter $c$ and each $(t,x) \in \bbR^{1+n}_+$ we associate the \emph{Whitney region}\index{Whitney region}
  \begin{equation*}
    \gO_{c}(t,x) := (c_1^{-1} t, c_1 t) \times B(x,c_0 t) \subset \bbR^{1+n}_+,	
  \end{equation*}
  and for $f \in L^0(\bbR^{1+n}_+)$ we define the $L^2$-\emph{Whitney averages}\index{Whitney average}
  \begin{equation*}
    \mc{W}_{c} f (t,x) := \bigg( \bariint_{\gO_{c}(t,x)}^{} |f(\gt,\gx)|^2 \, d\gx \, d\gt\bigg)^{1/2} \qquad ((t,x) \in \bbR^{1+n}_+).
  \end{equation*}
  For an exponent $\mb{p}$ and a Whitney parameter $c$, and for all $f \in L^0(\bbR^{1+n}_+)$, we define the quasinorm
  \begin{equation}\label{eqn:zqn}
    \nm{f}_{Z^\mb{p}_c} := \nm{\mc{W}_{c} (\gk^{-r(\mb{p})} f)}_{L^{i(\mb{p})}(\bbR^{1+n}_+)}
  \end{equation}
  and a corresponding function space
  \begin{equation*}
    Z^\mb{p}_c = Z^\mb{p}_c(\bbR^n) := \{ f \in L^0(\bbR^{1+n}_+) : \nm{f}_{Z^\mb{p}_c} < \infty\}.
  \end{equation*}
\end{dfn}
	
We write $\gO(t,x) := \gO_{(1,2)}(t,x)$ when a particular Whitney parameter is not needed.
	
\begin{rmk}
  The spaces $Z^\mb{p}_c$ coincide with the spaces $L(i(\mb{p}), r(\mb{p})+1,2)$ introduced by Barton and Mayboroda \cite{BM16}.
  We use these spaces for the same purpose: as an ambient space for the gradient of a solution to an elliptic BVP with boundary data in a Besov space.
  The connection with tent spaces presented here extends that established by the first author \cite{aA16}.
\end{rmk}
	
\begin{rmk}
  The restriction $c_1 > 3/2$ is for technical reasons.
  The first time that it is actually needed is in our proof of the atomic decomposition theorem.
  It is possible to allow for $c_1 > 1$ by a straightforward covering argument, but this would take extra work, and $c_1 > 3/2$ is sufficient for our applications.
\end{rmk}
	
The following real interpolation theorem appears in \cite[Theorem 2.9]{aA16}.
In Theorem \ref{thm:ts-rint-full} we extend it to infinite exponents.

\begin{thm}[Real interpolation for tent spaces with finite exponents]\label{thm:ts-rint}\index{interpolation!of tent spaces, real}
  Suppose that $\mb{p}$ and $\mb{q}$ are finite exponents with $\gq(\mb{p}) \neq \gq(\mb{q})$, and suppose $0 < \gh < 1$.
  Then for all Whitney parameters $c$ we have
  \begin{equation*}
    (T^\mb{p}, T^\mb{q})_{\gh,p_\gh} = Z_c^{[\mb{p},\mb{q}]_\gh},
  \end{equation*}
  where $p_\gh = i([\mb{p},\mb{q}]_\gh)$.
\end{thm}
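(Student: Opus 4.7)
The strategy is to prove the two continuous inclusions $Z_c^{[\mb{p},\mb{q}]_\gh} \hookrightarrow (T^\mb{p}, T^\mb{q})_{\gh,p_\gh}$ and the reverse separately, following the blueprint of the original Coifman--Meyer--Stein interpolation theory for tent spaces, together with a Calderón--Zygmund-style stopping-time decomposition adapted to Whitney regions. A preliminary reduction shows that different choices of Whitney parameter produce equivalent $Z^\mb{p}_c$-quasinorms (by a standard Fubini and covering argument using that the Whitney regions $\gO_c(t,x)$ form a locally-finite cover of $\bbR^{1+n}_+$ whose geometry is comparable for all admissible $c$), so I may fix $c = (1,2)$.

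For $Z_c^{[\mb{p},\mb{q}]_\gh} \hookrightarrow (T^\mb{p}, T^\mb{q})_{\gh,p_\gh}$, I would estimate the $K$-functional directly. Given $f \in Z_c^{[\mb{p},\mb{q}]_\gh}$ and $\gl > 0$, set
\[
  E_\gl := \{(t,x) \in \bbR^{1+n}_+ : \mc{W}_{c^*}(\gk^{-\gq([\mb{p},\mb{q}]_\gh)} f)(t,x) > \gl\}
\]
for a slightly enlarged Whitney parameter $c^* > c$, and decompose $f = f_\gl^0 + f_\gl^1$ where $f_\gl^0 := f \mb{1}_{E_\gl^c}$. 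On $E_\gl^c$ the Whitney average of the weighted function is $\leq \gl$ pointwise, which by the change-of-aperture theorem translates into an $L^\infty$-type control of the Lusin functional of $f_\gl^0$ with respect to the weight of $\mb{p}$; this yields a $T^\mb{p}$-bound. The companion piece $f_\gl^1$ is supported in $E_\gl$, whose measure is controlled by a distributional computation of $\nm{f}_{Z^{[\mb{p},\mb{q}]_\gh}}$, giving the needed $T^\mb{q}$-bound after another application of the change of aperture. Inserting into the real interpolation norm and using the hypothesis $\gq(\mb{p}) \neq \gq(\mb{q})$ (which makes the weights on $\gl$ in the $T^\mb{p}$ and $T^\mb{q}$ estimates genuinely different, and so the resulting $\gl$-integral convergent) yields the claim.

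For the reverse inclusion I would prefer duality whenever applicable: by Theorem~\ref{thm:ts-duality}, the $L^2$ pairing identifies the dual of $T^\mb{r}$ with $T^{\mb{r}^\prime}$, and by the duality of the real method $(T^\mb{p},T^\mb{q})_{\gh,p_\gh}^{*}$ matches with the interpolation space for the dual pair (at least when $p_\gh \geq 1$). The remaining cases, including $i([\mb{p},\mb{q}]_\gh) \leq 1$, are handled by a symmetric argument: one expands a representation $f = f_\gl^0 + f_\gl^1$ realising the $K$-functional, and bounds $\mc{W}_c(\gk^{-\gq([\mb{p},\mb{q}]_\gh)} f)(t,x)$ pointwise by quantities involving the Whitney averages of each piece, to which the previous analysis applies in reverse.

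The main obstacle is the Calderón--Zygmund-type selection of the bad set $E_\gl$ and its enlargement: the enlargement must swallow all Whitney neighbourhoods of points in $\{\mc{W}_c(\gk^{-\gq([\mb{p},\mb{q}]_\gh)} f) > \gl\}$ so that $f_\gl^0$ actually satisfies a pointwise Whitney-average bound, yet be small enough in measure for the interpolation integral in $\gl$ to recover the $Z$-norm. Equivalently, one must reconcile the two different lattice structures at play --- the ``vertical cone'' structure encoded by the Lusin area functional defining $T^\mb{p}$, and the ``pointwise Whitney'' structure defining $Z^\mb{p}_c$ --- and the bridge between them is precisely the change-of-aperture theorem together with the hypothesis $\gq(\mb{p}) \neq \gq(\mb{q})$.
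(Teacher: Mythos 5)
The paper itself does not prove this theorem; it cites \cite[Theorem 2.9]{aA16}, so your proposal cannot be compared to an in-paper argument, and I evaluate it on its own. The fatal gap is the weight mismatch in your good-set estimate. You define the exceptional set by
\[
  E_\gl = \{(t,x) : \mc{W}_{c^*}(\gk^{-\gq([\mb{p},\mb{q}]_\gh)} f)(t,x) > \gl\}
\]
and then claim that on $E_\gl^c$ the pointwise Whitney bound $\mc{W}_{c^*}(\gk^{-\gq_\gh} f)\le\gl$ yields, ``via change of aperture,'' an $L^\infty$-type control on the Lusin functional $\mc{A}(\gk^{-\gq(\mb{p})} f_\gl^0)$. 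This cannot hold as stated. Writing $t^{-\gq(\mb{p})} = t^{-\gq_\gh}\,t^{\gq_\gh-\gq(\mb{p})}$ and decomposing the cone $\gG(x)$ into Whitney pieces, the pointwise bound on $E_\gl^c$ gives at best
\[
  \mc{A}(\gk^{-\gq(\mb{p})} f_\gl^0)(x)^2 \lesssim \gl^2 \int_0^\infty t^{2(\gq_\gh - \gq(\mb{p}))}\,\frac{dt}{t},
\]
and the right-hand side is infinite whenever $\gq(\mb{p})\ne\gq_\gh$, which is precisely the regime you are in since $\gq(\mb{p})\ne\gq(\mb{q})$. The change-of-aperture theorem compares Lusin functionals at different cone apertures but at a \emph{fixed} weight; it does not relate Whitney averages to Lusin functionals, and it does not absorb a weight discrepancy. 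Similarly, your assertion that the $T^{\mb{q}}$ bound on $f_\gl^1$ ``follows from a distributional computation'' and ``the measure of $E_\gl$'' cannot work: the Lusin functional of $f_\gl^1$ sees the profile of $f$ on $E_\gl$ and the weight $t^{-\gq(\mb{q})}$, not just the size of $E_\gl$.

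More structurally, the hypothesis $\gq(\mb{p})\ne\gq(\mb{q})$ is not there merely to make ``the $\gl$-integral convergent.'' It is what makes the interpolation space a $Z$-space rather than a tent space (with equal weights, real interpolation of tent spaces returns a tent space). The way this enters any direct $K$-functional argument is that a near-optimal decomposition must truncate in the vertical variable at a $\gl$-dependent height $\gt(\gl)$ (roughly $f = f\mb{1}_{\{t<\gt(\gl)\}} + f\mb{1}_{\{t>\gt(\gl)\}}$, possibly combined with a level-set split), so that on each piece only a bounded range of $t$ contributes and the weight factor $t^{\gq_\gh - \gq(\mb{p})}$ stays under control. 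It is this vertical truncation, absent from your decomposition, that replaces integration over full cones by local Whitney averaging and produces the $Z$-space structure. As written, your proposal delegates this essential mechanism to theorems that do not perform it.
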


Thus for finite $\mb{p}$ the $Z$-spaces $Z^\mb{p}_c$ are complete and independent of $c$ (up to equivalence of quasinorms); we extend this to infinite exponents later.
Hence we write $Z^\mb{p}$ in place of $Z_c^\mb{p}$.

We establish further properties of the $Z$-spaces `by hand' rather than arguing by interpolation, because this yields stronger results.
In particular, it yields absolute convergence of $L^2$ duality pairings, while interpolation would only prove this on dense subspaces.
This is important in applications.
Our main tool is a dyadic characterisation of the $Z^\mb{p}$-quasinorm, as stated and used by Barton and Mayboroda \cite[Proof of Theorem 4.13]{BM16} but without proof.
To establish this characterisation we need some notation and a preliminary counting lemma.

Let $\mc{Q}(\bbR^n)$ be a system of (open) dyadic cubes in $\bbR^n$.
For every cube $Q \in \mc{Q}(\bbR^n)$ and for $k \in \bbZ$, define the \emph{Whitney cube}\index{Whitney cube}
\begin{equation*}
  \overline{Q}^k := (2^k \ell(Q), 2^{k+1} \ell(Q)) \times Q,
\end{equation*}
and the \emph{Whitney grid}\index{Whitney grid}
\begin{equation*}
  \mc{G}^k := \{\overline{Q}^k : Q \in \mc{Q}(\bbR^n)\}.
\end{equation*}
For each $k \in \bbZ$, $\mc{G}^k$ is a partition of $\bbR^{1+n}_+$ up to a set of measure zero.

For each Whitney parameter $c$, each $k \in \bbZ$, and each Whitney cube $\overline{Q}^k \in \mc{G}^k$, we define
\begin{equation*}
  \mc{G}_{c}(\overline{Q}^k) := \{\overline{R}^k \in \mc{G}^k : \text{$\overline{R}^k \cap \gO_{c}(t,x) \neq \varnothing$ for some $(t,x) \in \overline{Q}^k$}\}.
\end{equation*}

\begin{lem}\label{lem:W-cubes}
  Let $c$ be a Whitney parameter and $k \in \bbN$.
  Then for all $\overline{Q}^k \in \mc{G}^k$ we have
  \begin{equation*}
    |\mc{G}_{c}(\overline{Q}^k)| \lesssim_{c,k,n} 1
  \end{equation*}
  (where $| \cdot |$ denotes cardinality).
\end{lem}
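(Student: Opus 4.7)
The plan is a direct counting argument based on the explicit forms of the Whitney cubes and Whitney regions. An arbitrary cube $\overline{R}^k = (2^k\ell(R),2^{k+1}\ell(R)) \times R$ lies in $\mc{G}_c(\overline{Q}^k)$ exactly when both its time interval and its spatial cube meet the corresponding factors of $\gO_c(t,x) = (c_1^{-1}t, c_1 t) \times B(x, c_0 t)$ for some $(t,x) \in \overline{Q}^k$. I would separate the time and space constraints and estimate each.

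First I would handle the temporal constraint. Taking the union of $(c_1^{-1}t, c_1 t)$ over $t \in (2^k\ell(Q), 2^{k+1}\ell(Q))$ gives the interval $(c_1^{-1} 2^k \ell(Q), c_1 2^{k+1}\ell(Q))$, so the requirement
\begin{equation*}
  (2^k\ell(R), 2^{k+1}\ell(R)) \cap (c_1^{-1} 2^k \ell(Q), c_1 2^{k+1}\ell(Q)) \neq \varnothing
\end{equation*}
forces $(2c_1)^{-1}\ell(Q) < \ell(R) < 2c_1 \ell(Q)$. Since $\ell(R)$ ranges only over dyadic multiples of $\ell(Q)$, the number of admissible values of $\ell(R)$ is at most $\lceil \log_2(4c_1^2)\rceil$, a quantity depending only on $c_1$.

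Next I would handle the spatial constraint. For $\overline{R}^k$ to meet $\gO_c(t,x)$ for some $(t,x) \in \overline{Q}^k$, the cube $R$ must intersect $B(x,c_0 t)$ for some $x \in Q$ and $t < 2^{k+1}\ell(Q)$; hence $R$ is contained in the $(c_0 2^{k+1} + \sqrt{n})\ell(Q)$-neighbourhood of $Q$. For each of the finitely many admissible scales $\ell(R) \simeq_{c_1} \ell(Q)$, the number of dyadic cubes of that scale contained in this neighbourhood is bounded by $(c_0 2^{k+1} + \sqrt{n} + 2)^n / ((2c_1)^{-1})^n$, a constant depending only on $c_0$, $c_1$, $k$, and $n$. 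Multiplying the two counts gives the required bound $|\mc{G}_c(\overline{Q}^k)| \lesssim_{c,k,n} 1$.

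There is no real obstacle here; the proof is elementary. The only care required is in tracking the dependence on $c_1$ in the temporal step and on $c_0$, $k$, $n$ in the spatial step, and confirming that the product bound indeed depends only on $c$, $k$, and $n$.
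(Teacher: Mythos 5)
Your proof is correct and follows essentially the same counting approach as the paper; the paper simply collapses your two explicit estimates (finitely many admissible scales $\ell(R)$, finitely many admissible positions at each scale) into a single rescaling-and-translation observation, whereas you carry out the count directly. Both are valid and elementary.
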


\begin{proof}
  The condition $\overline{R}^k \cap \gO_{c}(t,x) \neq \varnothing$ may be rewritten as
  \begin{equation*}
    \ell(R) \in [t/2^{k+1}c_1, 2^{-k}c_1 t] \quad \text{and} \quad \dist(R,x) < c_0 t.
  \end{equation*}
  By rescaling and translating, the number of $R \in \mc{Q}(\bbR^n)$ such that this condition is satisfied is equal to the number of $R \in \mc{Q}(\bbR^n)$ such that
  \begin{equation*}
    \ell(R) \in (1/2^{k+1} c_1, 2^{-k} c_1) \quad \text{and} \quad \dist(R,0) < c_0, 
  \end{equation*}
  which is finite and depends only on $c$, $k$, and $n$.
\end{proof}

\begin{prop}[Dyadic characterisation]\label{prop:Zpqs-seq-equivalence}\index{dyadic characterisation!of Z-space@of $Z$-space}
  Let $\mb{p}$ be a finite exponent, $c$ a Whitney parameter, and $k \in \bbZ$.
  Then
  \begin{equation*}
    \nm{f}_{Z^{\mb{p}}_{c}} \simeq_{c,k,\mb{p}} \left\| \ell(Q)^{-r(\mb{p})} [|f|^2]_{\overline{Q}^k}^{1/2} \right\|_{\ell^p(\mc{G}^k, \ell(Q)^n)},
  \end{equation*}
  where $[|f|^2]^{1/2}_{\overline{Q}^k} = \nm{ f \mid L^2(\overline{Q}^k,d\gt d\gx / \gt^{1+n}) }$.
\end{prop}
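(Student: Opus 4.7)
The plan is to reduce the global estimate to a per-cube estimate by decomposing $f = \sum_{Q \in \mc{Q}(\bbR^n)} f_Q$ with $f_Q := f\mathbf{1}_{\overline{Q}^k}$. Since the $f_Q$ have pairwise essentially disjoint supports, $|\gt^{-r(\mb{p})}f|^2 = \sum_Q |\gt^{-r(\mb{p})}f_Q|^2$ pointwise, and setting $W_Q := \mc{W}_c(\gk^{-r(\mb{p})}f_Q)$ gives
\begin{equation*}
\mc{W}_c(\gk^{-r(\mb{p})}f)(t,x)^2 = \sum_Q W_Q(t,x)^2 \qquad ((t,x) \in \bbR^{1+n}_+).
\end{equation*}
For fixed $(t,x) \in \overline{R}^k$, only the $Q$ with $\gO_c(t,x) \cap \overline{Q}^k \ne \varnothing$ can contribute, and by Lemma \ref{lem:W-cubes} there are at most $|\mc{G}_c(\overline{R}^k)| \lesssim_{c,k} 1$ such $Q$. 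Combined with the elementary inequality $N^{-1}\sum_i a_i^{p/2} \le (\sum_i a_i)^{p/2} \le N^{p/2}\sum_i a_i^{p/2}$ (valid for $p>0$ and any $N$ nonnegative reals), this yields $\mc{W}_c(\gk^{-r(\mb{p})}f)^p \simeq_{c,k,p} \sum_Q W_Q^p$ pointwise. Integrating against $dx\,dt/t$ and writing $a_Q := \ell(Q)^{-r(\mb{p})}[|f|^2]_{\overline{Q}^k}^{1/2}$, the proposition reduces to the per-cube claim
\begin{equation*}
\int_{\bbR^{1+n}_+} W_Q^p \, \frac{dx\,dt}{t} \simeq_{c,k,\mb{p}} \ell(Q)^n\, a_Q^p \qquad (Q \in \mc{Q}(\bbR^n)).
\end{equation*}

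The per-cube analysis rests on three ingredients. First, a pointwise bound $W_Q \lesssim_{c,k,\mb{p}} a_Q$ on $\supp W_Q$: the support is contained in the \emph{reverse Whitney region} $\gO_c^{\ast}(\overline{Q}^k) := \{(t,x) : \gO_c(t,x) \cap \overline{Q}^k \ne \varnothing\}$, on which $t \simeq_c 2^k\ell(Q)$ and hence $|\gO_c(t,x)| \simeq_c (2^k\ell(Q))^{1+n}$; enlarging the domain of integration in $W_Q^2$ from $\gO_c(t,x) \cap \overline{Q}^k$ to $\overline{Q}^k$ and using $\int_{\overline{Q}^k}|\gt^{-r(\mb{p})}f|^2 \simeq_{k,\mb{p}} (2^k\ell(Q))^{1+n}\,a_Q^2$ then produces the bound. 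Second, a measure estimate $|\supp W_Q|_{dxdt/t} \simeq_{c,k} \ell(Q)^n$ whenever $a_Q > 0$: the support contains $\gO_c^{\ast}(\gt_0,\gx_0)$ for any $(\gt_0,\gx_0) \in \overline{Q}^k$ at which $f$ is nonzero, and is contained in $\gO_c^{\ast}(\overline{Q}^k)$, and both of these sets have $dx\,dt/t$-measure $\simeq_{c,k} \ell(Q)^n$ by a direct computation using $t \simeq 2^k\ell(Q)$. Third, the $L^2$ identity $\int W_Q^2 \, dx\,dt/t \simeq_{c,k,\mb{p}} \ell(Q)^n\, a_Q^2$, proved by exchanging the $(t,x)$ and $(\gt,\gx)$ integrals via Fubini: this produces the reverse-Whitney weight
\begin{equation*}
\int_{\gO_c^{\ast}(\gt,\gx)} \frac{dx\,dt}{t\,|\gO_c(t,x)|} \simeq_c \frac{1}{\gt},
\end{equation*}
which combined with $\gt \simeq 2^k\ell(Q)$ on $\overline{Q}^k$ and the definition of $[|f|^2]_{\overline{Q}^k}$ reproduces exactly $\ell(Q)^n\, a_Q^2$ up to $c,k,\mb{p}$-constants.

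Given these three ingredients, the $L^p$ bound follows for all $p > 0$ by standard $L^2$--$L^\infty$ interpolation. The upper bound is immediate: $\int W_Q^p \, dx\,dt/t \le \nm{W_Q}_{\infty}^p \cdot |\supp W_Q|_{dxdt/t} \lesssim a_Q^p \ell(Q)^n$. For the lower bound, when $p \ge 2$ Jensen's inequality on the probability measure $(dx\,dt/t)/|\supp W_Q|_{dxdt/t}$ applied to the convex function $x \mapsto x^{p/2}$ gives $\int W_Q^p \ge |\supp W_Q|_{dxdt/t}^{1-p/2}\,(\int W_Q^2)^{p/2} \simeq \ell(Q)^n\, a_Q^p$; when $0 < p \le 2$, H\"older's inequality in the form $\int W_Q^2 \le \nm{W_Q}_{\infty}^{2-p}\int W_Q^p \lesssim a_Q^{2-p}\int W_Q^p$ yields the same lower bound after rearrangement. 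The main substantive step is the Fubini computation producing the reverse-Whitney weight identity, which encodes the self-duality between $\gO_c$ and $\gO_c^{\ast}$; everything else is either combinatorial (the counting from Lemma \ref{lem:W-cubes}) or elementary, and the decomposition trick at the start is what allows a unified, case-distinction-free treatment of all $p>0$.
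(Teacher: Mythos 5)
Your proof is correct, but it is genuinely different from the paper's. The paper's argument decomposes the \emph{base domain} $\bbR^{1+n}_+$ into the Whitney cubes $\overline{Q}^k$, pulls out the weight, covers each Whitney region $\gO_c(t,x)$ by finitely many Whitney cubes $\overline{R}^k$ (via Lemma~\ref{lem:W-cubes}), and counts; for the reverse inequality it produces an enlarged Whitney parameter $\wtd{c}$ with $\overline{R}^k \subset \gO_{\wtd{c}}(t,x)$ and then explicitly invokes the independence of $Z^\mb{p}_c$ on $c$, which at that point in the paper is only known through the real interpolation theorem (Theorem~\ref{thm:ts-rint}) and so is not elementary. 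Your argument instead decomposes the \emph{function} $f$ over the partition $\mc{G}^k$, exploits the pairwise disjointness of the pieces to get the exact pointwise identity $\mc{W}_c(\gk^{-r(\mb{p})}f)^2 = \sum_Q W_Q^2$, uses the finite-overlap count (same Lemma~\ref{lem:W-cubes}) to turn $(\sum_Q W_Q^2)^{p/2}$ into $\sum_Q W_Q^p$ up to $c,k,p$-constants, and then reduces everything to a per-cube claim which you handle for all $p>0$ at once by $L^2$--$L^\infty$ interpolation, with the only substantive analytic input being the Fubini computation producing the reverse-Whitney weight $\int_{\gO_c^*(\gt,\gx)} \tfrac{dx\,dt}{t\,|\gO_c(t,x)|}\simeq_c \gt^{-1}$. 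This is a bit longer to set up but buys two real things: it works directly with a fixed $c$ throughout and does not presuppose the $c$-independence of $Z^\mb{p}_c$ (which then drops out as a corollary, since the discrete side is manifestly $c$-free), and it avoids the case split into ``cover the Whitney region'' versus ``enlarge the Whitney parameter'' by packaging both directions into a single $L^2$ identity plus $L^\infty$ and support bounds. One small remark: in your second ingredient the lower bound $|\supp W_Q|_{dx\,dt/t}\gtrsim \ell(Q)^n$ is never actually used in the interpolation step (the Jensen bound for $p\geq 2$ needs only the \emph{upper} bound on the measure), so you could drop the density-point argument entirely and state only the upper bound, making the proof cleaner still.
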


\begin{proof}
  Write $\mb{p} = (p,s)$ and estimate
  \begin{align}
    \nm{f}_{Z^{\mb{p}}_c}^p
    &= \sum_{\overline{Q}^k \in \mc{G}^k} \dint_{\overline{Q}^k} \mc{W}_{c} (\gk^{-s} f)(t,x)^p \, \frac{dt}{t} \, dx \nonumber \\
    &\simeq \sum_{\overline{Q}^k \in \mc{G}^k} (2^k \ell(Q))^{-ps} \dint_{\overline{Q}^k} \nm{f \mid L^2(\gO_{c}(t,x), d\gt d\gx/\gt^{1+n})}^p \, \frac{dt}{t} \, dx \label{line:Zpqs-size} \\
    &\lesssim \sum_{\overline{Q}^k \in \mc{G}^k} (2^k \ell(Q))^{-ps} \dint_{\overline{Q}^k} \sum_{\overline{R}^k \in \mc{G}_{c}(\overline{Q}^k)} \nm{f \mid L^2(\overline{R}^k,d\gt d\gx/\gt^{1+n})}^p \, \frac{dt}{t} \, dx \label{line:Zpqs-cubes} \\
    &\simeq_{k,p,s} \sum_{\substack{\overline{Q}^k \in \mc{G}^k \\ \overline{R}^k \in \mc{G}_{c}(\overline{Q}^k)}} \ell(R)^{n-ps} \nm{f \mid L^2(\overline{R}^k,d\gt d\gx/\gt^{1+n})}^p \label{line:Zpqs-RQ} \\
    &\simeq \sum_{\overline{R}^k \in \mc{G}^k} \ell(R)^{n} \bigg(\ell(R)^{-s} \nm{f \mid L^2(\overline{R}^k,d\gt d\gx/\gt^{1+n})}\bigg)^p \label{line:Zpqs-end}.
  \end{align}
  The equivalence \eqref{line:Zpqs-size} comes from the fact that $\gt \simeq 2^k \ell(Q)$ when $(\gt,\gx) \in \gO_{c}(t,x)$ and $(t,x) \in \overline{Q}^k$.
  The upper bound \eqref{line:Zpqs-cubes} comes from covering $\gO_{c}(t,x)$ with the Whitney cubes $\overline{R}^k \in \mc{G}_{c}(\overline{Q})$, of which there are boundedly many by Lemma \ref{lem:W-cubes}.
  The equivalence \eqref{line:Zpqs-RQ} comes from noting that $\ell(R) \simeq \ell(Q)$ when $\overline{R}^k \in \mc{G}_{c}(\overline{Q}^k)$.
  Finally, \eqref{line:Zpqs-end} follows from the fact that every cube $\overline{R}^k \in \mc{G}^k$ appears at least once, and at most a bounded number of times, in the multiset $\{\overline{R}^k \in \mc{G}_{c}(\overline{Q}^k) : \overline{Q}^k \in \mc{G}\}$.
  
  To prove the converse statement, we need only prove the converse direction of \eqref{line:Zpqs-cubes}.
  This follows from the existence of a Whitney parameter $\wtd{c}$ such that whenever $\overline{R}^k \in \mc{G}_{c}(\overline{Q}^k)$ and $(t,x) \in \overline{Q}^k$, we have $\overline{R}^k \subset \gO_{\wtd{c}}(t,x)$.
  To this end one can take $\wtd{c}_0 = 2(c_0 + 2^{-k}\sqrt{n}(c_1 + 1))$ and $\wtd{c}_1 = 4c_1$.	
  This, along with the independence of $Z^\mb{p}_{c}$ on $c$, completes the proof.
\end{proof}

\begin{rmk}
  The same proof works for infinite exponents once we show that the corresponding $Z$-space norms are independent of $c$.
\end{rmk}

The dyadic characterisation of the $Z$-space quasinorm can be used to prove a duality theorem when $i(\mb{p}) > 1$.
As with the corresponding result for tent spaces, this is not just an abstract identification of dual spaces (which could be deduced by real interpolation), but also includes absolute convergence of the $L^2$ duality pairing.

\begin{prop}[Duality: reflexive range]\label{prop:Z-duality-reflexive}\index{duality!of Z-spaces@of $Z$-spaces}
  Suppose $i(\mb{p}) \in (1,\infty)$.
  Then for all $f,g \in L^0(\bbR^{1+n}_+)$ we have
  \begin{equation}\label{eqn:Z-duality-reflexive}
    \dint_{\bbR^{1+n}_+} |(f(t,x),g(t,x))| \, dx \, \frac{dt}{t} \lesssim \nm{f}_{Z^\mb{p}} \nm{g}_{Z^{\mb{p}^\prime}},
  \end{equation}
  and the $L^2$ duality pairing identifies the Banach space dual of $Z^\mb{p}$ with $Z^{\mb{p}^\prime}$.
\end{prop}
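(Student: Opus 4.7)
The plan is to reduce everything to duality of weighted vector-valued $\ell^p$-spaces via the dyadic characterisation of Proposition \ref{prop:Zpqs-seq-equivalence}. Fix any $k \in \bbZ$ and write $\mb{p} = (p,s)$; since $p \in (1,\infty)$ the dual exponent is the finite exponent $\mb{p}^\prime = (p^\prime, -s)$. For the absolute convergence bound \eqref{eqn:Z-duality-reflexive} I would partition $\bbR^{1+n}_+$ (up to a null set) into Whitney cubes $\{\overline{Q}^k : Q \in \mc{Q}(\bbR^n)\}$ and apply Cauchy--Schwarz on each $\overline{Q}^k$ with respect to $d\tau\,d\xi/\tau$; combined with $\tau \simeq_k \ell(Q)$ there, this yields
\[
\dint_{\overline{Q}^k} |(f(\tau,\xi),g(\tau,\xi))| \, \frac{d\xi\,d\tau}{\tau} \lesssim_k \ell(Q)^n \, [|f|^2]_{\overline{Q}^k}^{1/2} \, [|g|^2]_{\overline{Q}^k}^{1/2}.
\]
Summing over $Q$, writing
\[
\ell(Q)^n [|f|^2]^{1/2}_{\overline{Q}^k} [|g|^2]^{1/2}_{\overline{Q}^k} = \bigl(\ell(Q)^{-s}[|f|^2]^{1/2}_{\overline{Q}^k}\bigr)\bigl(\ell(Q)^{s}[|g|^2]^{1/2}_{\overline{Q}^k}\bigr) \cdot \ell(Q)^n,
\]
and applying H\"older's inequality in the weighted sequence spaces $\ell^p(\mc{G}^k,\ell(Q)^n)$ and $\ell^{p^\prime}(\mc{G}^k,\ell(Q)^n)$, the dyadic characterisation delivers the desired bound $\dint_{\bbR^{1+n}_+}|(f,g)|\,dxdt/t \lesssim \nm{f}_{Z^\mb{p}}\nm{g}_{Z^{\mb{p}^\prime}}$.

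For the identification of the dual, the same dyadic characterisation shows that $J \colon f \mapsto \bigl(\ell(Q)^{n/p-s} f|_{\overline{Q}^k}\bigr)_{Q}$ is a Banach space isomorphism from $Z^\mb{p}$ onto the unweighted $\ell^p$-Bochner space
\[
X^\mb{p} := \ell^p\bigl( \mc{Q}(\bbR^n);\, L^2(\overline{Q}^k, d\tau\,d\xi/\tau^{1+n}) \bigr).
\]
Since $p \in (1,\infty)$ and each $L^2$ fibre is reflexive, $X^\mb{p}$ is reflexive with dual canonically $X^{\mb{p}^\prime}$ under the coordinatewise $L^2$ pairing. Given $\gF \in (Z^\mb{p})^*$, representing $\gF \circ J^{-1}$ by a sequence $(G_Q) \in X^{\mb{p}^\prime}$ and defining $g|_{\overline{Q}^k} := \ell(Q)^{n/p-s}\tau^{-n} G_Q$, a direct computation using $\tau \simeq_k \ell(Q)$ and the identity $1 + p^\prime/p - p^\prime = 0$ shows $\nm{g}_{Z^{\mb{p}^\prime}} \simeq \nm{(G_Q)}_{X^{\mb{p}^\prime}} \simeq \nm{\gF}$ and $\gF(f) = \langle f,g\rangle$ in the sense of \eqref{eqn:l2pair}. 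Uniqueness of $g$ then follows from the first step by testing against compactly supported $L^2$ functions, which lie in $Z^\mb{p}$ for every finite $\mb{p}$.

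The main obstacle is the bookkeeping of weights in this second step: the formula for $g$ is engineered so that the factor $\tau^{-n}$ converts the intrinsic fibre measure $d\tau d\xi/\tau^{1+n}$ into the $d\tau d\xi/\tau$ appearing in \eqref{eqn:l2pair}, the weight $\ell(Q)^{n/p-s}$ is absorbed into $J$, and the complementary weight $\ell(Q)^{n/p^\prime+s}$ on the $g$-side matches the $r(\mb{p}^\prime) = -s$ weight defining the $Z^{\mb{p}^\prime}$-norm. The algebra only closes because $1/p + 1/p^\prime = 1$, which is exactly the reason the reflexive range is required.
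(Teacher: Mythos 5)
Your argument is correct and follows essentially the same route as the paper's: both reduce, via the dyadic characterisation of Proposition~\ref{prop:Zpqs-seq-equivalence}, to duality of $\ell^p$ sequence spaces with $L^2$ fibres over Whitney cubes, and the identity $1/p + 1/p' = 1$ makes the weight algebra close. The only cosmetic difference is that the paper affinely reparametrises each fibre to a fixed reference cube $\overline{Q}_0$ and retains the $\ell(Q)^n$ weight in the sequence space, whereas you absorb the weight into the isomorphism $J$ and work with $Q$-dependent fibres.
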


\begin{proof}
  Let $\overline{Q}_0 = (1,2) \times (0,1)^n$ be equipped with the measure $dx \, dt / t^{1+n}$, and for each function $f \in L^0(\bbR^{1+n}_+)$ and each cube $\overline{Q} \in \mc{G}$, let $f_{\overline{Q}}$ be the function on $\overline{Q}_0$ which is the affine reparametrisation of $\mb{1}_{\overline{Q}} f$, so that $[|f|^2]_{\overline{Q}}^{1/2} = \nm{f_{\overline{Q}}}_{L^2(\overline{Q}_0)}$.
  Then by Proposition \ref{prop:Zpqs-seq-equivalence}, writing $\mb{p} = (p,s)$, we have
  \begin{align*}
    \nm{f}_{Z^\mb{p}} &\simeq \nm{\ell(Q)^{-s} [|f|^2]_{\overline{Q}}^{1/2}}_{\ell^p(\mc{G}, \ell(Q)^n)} \\
                      &\simeq \nm{\ell(Q)^{-s} f_{\overline{Q}} }_{\ell^p(\mc{G},\ell(Q)^n : L^2(\overline{Q}_0))} \\
                      &=: \nm{f_{\overline{Q}}}_{\ell_s^p(\mc{G},\ell(Q)^n : L^2(\overline{Q}_0))}.
  \end{align*}
  Evidently the map $f \mapsto (f_{\overline{Q}})_{\overline{Q} \in \mc{G}}$ is an isomorphism between $Z^\mb{p}$ and $\ell_s^p(\mc{G},\ell(Q)^n : L^2(\overline{Q}_0))$.
  Furthermore, for all $f,g \in L^0(\bbR^{1+n}_+)$ we have
  \begin{equation*}
    \int_{\bbR^{1+n}_+} |(f(t,x),g(t,x))| \, dx \, \frac{dt}{t}
    \simeq \sum_{\overline{Q} \in \mc{G}} \ell(Q)^n \dint_{\overline{Q}_0} |(f_{\overline{Q}}(t,x),g_{\overline{Q}}(t,x))| \, \frac{dx\,dt}{t^{1+n}},
  \end{equation*}
  so $f \mapsto (f_{\overline{Q}})_{\overline{Q} \in \mc{G}}$ identifies the $L^2(\bbR^{1+n}_+)$ and $\ell^2(\mc{G},\ell(Q)^n : L^2(\overline{Q}_0))$ duality pairings (up to a constant).
  
  Since we have
  \begin{equation*}
    \sum_{\overline{Q} \in \mc{G}} \ell(Q)^n |(f_{\overline{Q}},g_{\overline{Q}})_{L^2(\overline{Q}_0)}|
    \lesssim \nm{f_{\overline{Q}}}_{\ell_s^p(\mc{G},\ell(Q)^n:L^2(\overline{Q}_0))} \nm{g_{\overline{Q}}}_{\ell_{-s}^{p^\prime}(\mc{G},\ell(Q)^n:L^2(\overline{Q}_0))}
  \end{equation*}
  and since the $\ell^2(\mc{G},\ell(Q)^n : L^2(Q_0))$ duality pairing identifies $\ell_s^{p^\prime}(\mc{G},\ell(Q)^n:L^2(Q_0))$ as the dual of $\ell_{-s}^{p}(\mc{G},\ell(Q)^n:L^2(Q_0))$, the corresponding results for $Z^\mb{p}$ follow.
\end{proof}

The dyadic characterisation may also be used to prove an atomic decomposition theorem for $Z$-spaces.

\begin{dfn}
  Let $\mb{p} = (p,s)$ be a finite exponent and $c$ a Whitney parameter.
  We say that a function $a \in L^0(\bbR^{1+n}_+)$ is a \emph{$Z^\mb{p}_{c}$-atom\index{atom!Z-space@$Z$-space} associated with the point $(t,x) \in \bbR^{1+n}_+$} if $a$ is essentially supported in $\gO_{c}(t,x)$ and if
  \begin{equation*}
    \nm{\gk^{-s} a}_{L^2(\gO_{c}(t,x), dx\, dt/t)} \leq t^{n\gd_{p,2}}.
  \end{equation*}
  (recall that $\gd_{p,2} = \frac{1}{2} - \frac{1}{p}$ is defined in Section \ref{section:notation}).
\end{dfn}

\begin{lem}\label{lem:Z-atom-est}
  Let $\mb{p}$ be a finite exponent and suppose $a$ is a $Z_c^\mb{p}$-atom associated with $(t_0,x_0) \in \bbR^{1+n}_+$.
  Then 
  \begin{equation*}
    \nm{a}_{Z^\mb{p}} \lesssim_{c,\mb{p}} 1.
  \end{equation*}
\end{lem}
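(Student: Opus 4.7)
The plan is to estimate $\nm{a}_{Z^\mb{p}}$ directly from the definition, exploiting that $a$ is concentrated in a single Whitney region so that $\mc{W}_c(\kappa^{-s}a)$ has both a small support and a controllable pointwise bound. Write $\mb{p}=(p,s)$, so that $r(\mb{p})=s$, and recall that by Definition \ref{defn:Zpqs}
\begin{equation*}
  \nm{a}_{Z^\mb{p}}^p \;=\; \int_{\bbR^{1+n}_+} \mc{W}_c(\kappa^{-s} a)(t,x)^p \, \frac{dx\,dt}{t}.
\end{equation*}

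First I would locate the support of $\mc{W}_c(\kappa^{-s} a)$. Since $a$ is essentially supported in $\gO_c(t_0,x_0)$, the value $\mc{W}_c(\kappa^{-s}a)(t,x)$ can be nonzero only when $\gO_c(t,x)\cap\gO_c(t_0,x_0)\neq\varnothing$. By the definition of the Whitney region this forces $t\in(c_1^{-2}t_0,c_1^{2}t_0)$ and $x\in B(x_0,C_c t_0)$ for some constant $C_c$ depending only on $c$. Hence the support $E$ of $\mc{W}_c(\kappa^{-s}a)$ satisfies
\begin{equation*}
  \int_E \frac{dx\,dt}{t} \;\lesssim_c\; t_0^{\,n}\cdot \log(c_1^{4}) \;\lesssim_c\; t_0^{\,n}.
\end{equation*}

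Next I would obtain a pointwise bound on $\mc{W}_c(\kappa^{-s}a)(t,x)$ for $(t,x)\in E$. Using $|\gO_c(t,x)|\simeq_c t^{1+n}$ and the fact that $\tau\simeq_c t\simeq_c t_0$ throughout the Whitney region, one has
\begin{equation*}
  \mc{W}_c(\kappa^{-s}a)(t,x)^2 \;\simeq_c\; t_0^{-n}\!\!\int_{\gO_c(t,x)\cap\gO_c(t_0,x_0)}|\tau^{-s}a(\tau,\xi)|^2\,\frac{d\xi\,d\tau}{\tau}.
\end{equation*}
Bounding the domain of integration by $\gO_c(t_0,x_0)$ and invoking the atomic size estimate $\nm{\kappa^{-s}a}_{L^2(\gO_c(t_0,x_0),\,dx\,dt/t)}\leq t_0^{n\gd_{p,2}}$ then yields
\begin{equation*}
  \mc{W}_c(\kappa^{-s}a)(t,x) \;\lesssim_c\; t_0^{-n/2}\cdot t_0^{n\gd_{p,2}} \;=\; t_0^{-n/p}.
\end{equation*}

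Combining the two bounds gives
\begin{equation*}
  \nm{a}_{Z^\mb{p}}^p \;\lesssim_{c,\mb{p}}\; t_0^{-n}\cdot t_0^{\,n} \;=\; 1,
\end{equation*}
which is the claim. The only genuinely geometric point — and the place where one has to be slightly careful with the Whitney parameter $c$ — is the first step, where one must check that the set of admissible vertices $(t,x)$ has the right $dx\,dt/t$-size; everything else is a direct, constant-controlled computation, and no interpolation or duality is needed.
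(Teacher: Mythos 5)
Your argument is correct and is essentially the paper's own: both proofs localize the support of $\mc{W}_c(\gk^{-s}a)$ to a fixed-size Whitney region around $(t_0,x_0)$ (the paper names it $\gO_{\widetilde c}(t_0,x_0)$ with $\widetilde c_0 = c_0(1+c_1^2)$, $\widetilde c_1 = c_1^2$), then use the atom's size bound and $\tau\simeq t\simeq t_0$ to get a pointwise bound $\lesssim t_0^{-n/p}$, and integrate $dx\,dt/t$ over that region of mass $\simeq_c t_0^n$. Your two-step layout (support size, then pointwise bound) is just a more explicit presentation of the paper's single chain of estimates.
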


\begin{proof}
  A reasonably quick computation shows that
  \begin{equation*}
    \{(t,x) \in \bbR^{1+n}_+ : \gO_{c}(t,x) \cap \gO_{c}(t_0,x_0) \neq \varnothing\} \subset \gO_{\wtd{c}}(t_0,x_0)
  \end{equation*}
  where $\wtd{c_0} = c_0(1+c_1^2)$ and $\wtd{c_1} = c_1^2$.
  Hence we can estimate, using the support and size conditions for $a$ and writing $\mb{p} = (p,s)$,
  \begin{align*}
    \nm{a}_{Z_{c}^\mb{p}}
    &\leq \bigg( \dint_{\gO_{\wtd{c}}(t_0,x_0)} \bigg( \frac{1}{t^{1+n}} \dint_{\gO_{c}(t_0,x_0)} \gt |\gt^{-s} a(\gt,\gx)|^2 \, d\gx \, \frac{d\gt}{\gt} \bigg)^{p/2} \, dx \, \frac{dt}{t} \bigg)^{1/p} \\
    &\lesssim_{c} t_0^{n\gd_{p,2}} \bigg( \dint_{\gO_{\wtd{c}}(t_0,x_0)} t_0^{-np/2} \, dx \, \frac{dt}{t} \bigg)^{1/p}
      \simeq_{c,p} t_0^{n\gd_{p,2} - \frac{n}{2} + \frac{n}{p}}
      = 1
  \end{align*}
  as required.
\end{proof}

\begin{thm}[Atomic decomposition of $Z$-spaces]\label{thm:Z-atomic-decompn}\index{atomic decomposition!of Z-spaces@of $Z$-spaces}
  Suppose $\mb{p} = (p,s)$ with $p \leq 1$, and let $c$ be a Whitney parameter.
  Then a function $f \in L^0(\bbR^{1+n}_+)$ is in $Z^\mb{p}$ if and only if there exists a sequence $(a_k)_k \in \bbN$ of $Z_{c}^\mb{p}$-atoms and a sequence $\gl \in \ell^p(\bbN)$ such that
  \begin{equation*}
    \sum_{k \in \bbN} \gl_k a_k = f
  \end{equation*}
  with convergence in $Z^\mb{p}$.
  Furthermore we have
  \begin{equation*}
    \nm{f}_{Z^\mb{p}} \simeq \inf \nm{\gl}_{\ell^p(\bbN)},
  \end{equation*} 
  where the infimum is taken over all such decompositions.
\end{thm}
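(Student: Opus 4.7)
The plan is to prove the two implications of the equivalence separately, with the ``if'' direction following easily from the previous lemma and the ``only if'' direction built from the dyadic characterisation of $Z$-spaces.

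For the ``if'' direction, suppose $f = \sum_k \gl_k a_k$ with $a_k$ a sequence of $Z^\mb{p}_c$-atoms and $\gl \in \ell^p(\bbN)$. Since $\mb{p} = (p,s)$ with $p \leq 1$, the outer quasinorm $L^p$ satisfies $\nm{u+v}_{L^p}^p \leq \nm{u}_{L^p}^p + \nm{v}_{L^p}^p$; combined with the pointwise sublinearity $\mc{W}_c(f+g) \leq \mc{W}_c(f) + \mc{W}_c(g)$ and $p \leq 1$, this gives the $p$-subadditivity $\nm{\cdot}_{Z^\mb{p}}^p$. Hence by Lemma \ref{lem:Z-atom-est},
\begin{equation*}
  \nm{f}_{Z^\mb{p}}^p \leq \sum_k |\gl_k|^p \nm{a_k}_{Z^\mb{p}}^p \lesssim_{c,\mb{p}} \nm{\gl}_{\ell^p}^p,
\end{equation*}
and an infimum over all decompositions yields one direction of the norm equivalence.

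For the ``only if'' direction, I would exploit Proposition \ref{prop:Zpqs-seq-equivalence}. Fix a Whitney parameter $c$ with $c_1 > 3/2$. First, choose an integer $k \in \bbN$ large enough (depending on $n$ and $c_0$) so that for every cube $Q \in \mc{Q}(\bbR^n)$ there exists $(t_Q,x_Q) \in \bbR^{1+n}_+$ with $t_Q \simeq 2^k \ell(Q)$ and $\overline{Q}^k \subset \gO_c(t_Q,x_Q)$; this is possible because the required spatial inclusion $Q \subset B(x_Q, c_0 t_Q)$ needs $c_0 \cdot 2^k \gtrsim \sqrt{n}$, and the time inclusion $(2^k\ell(Q), 2^{k+1}\ell(Q)) \subset (c_1^{-1} t_Q, c_1 t_Q)$ needs $c_1^2 \geq 2$, which is guaranteed by $c_1 > 3/2$. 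Writing $f_Q := f\mb{1}_{\overline{Q}^k}$ and $f = \sum_Q f_Q$ (valid up to a null set since $\mc{G}^k$ partitions $\bbR^{1+n}_+$), I would set
\begin{equation*}
  \gl_Q := t_Q^{-n\gd_{p,2}} \nm{ \gk^{-s} f_Q }_{L^2(\gO_c(t_Q,x_Q),\, dx\,dt/t)}, \qquad a_Q := \gl_Q^{-1} f_Q,
\end{equation*}
so that $a_Q$ is by construction a $Z^\mb{p}_c$-atom associated with $(t_Q,x_Q)$. A direct computation comparing the weighted $L^2$ norm on $\overline{Q}^k$ with the measure $d\tau\,d\xi/\tau^{1+n}$ appearing in $[|f|^2]_{\overline{Q}^k}^{1/2}$ yields $\gl_Q \simeq_{c,k} \ell(Q)^{n/p - s} [|f|^2]_{\overline{Q}^k}^{1/2}$. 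Raising to the $p$-th power and summing, Proposition \ref{prop:Zpqs-seq-equivalence} gives $\sum_Q |\gl_Q|^p \simeq \nm{f}_{Z^\mb{p}}^p$, which establishes both membership of $\gl$ in $\ell^p$ and the desired norm bound. Convergence of $\sum_Q \gl_Q a_Q$ to $f$ in $Z^\mb{p}$ follows from the $p$-subadditivity applied to tails, since these tails are controlled by $\sum_{Q \notin F} |\gl_Q|^p \to 0$ as $F$ exhausts $\mc{Q}(\bbR^n)$.

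The main obstacle is the geometric step in the ``only if'' direction: ensuring that every dyadic Whitney cube $\overline{Q}^k$ fits inside some Whitney region $\gO_c(t_Q,x_Q)$ with $t_Q \simeq \ell(Q)$ (up to harmless constants). This is precisely the place in the argument where the restriction $c_1 > 3/2$ from Definition \ref{defn:Zpqs} is used, as flagged in the accompanying remark; with $c_1$ only slightly larger than $1$, even the time direction would not fit, and one would need a substantially more delicate covering argument to reduce to this situation.
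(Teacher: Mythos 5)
Your proposal is correct and follows essentially the same route as the paper: the ``if'' direction via $p$-subadditivity together with Lemma~\ref{lem:Z-atom-est}, and the ``only if'' direction by cutting $f$ along the Whitney grid $\mc{G}^k$ for $k$ large enough that each $\overline{Q}^k$ fits inside a Whitney region, then computing $\sum_Q|\gl_Q|^p \simeq \nm{f}_{Z^\mb{p}}^p$ from Proposition~\ref{prop:Zpqs-seq-equivalence}. The only cosmetic differences are your choice of $t_Q$ near the geometric mean (so $c_1 > \sqrt{2}$ would already suffice, whereas the paper fixes $t_Q$ at the arithmetic midpoint and checks $c_1 > 3/2$) and your use of $p$-subadditivity on tails rather than dominated convergence to justify $Z^\mb{p}$-convergence of the series.
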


\begin{proof}
  Given such a decomposition of $f$, we have
  \begin{equation*}
    \nm{f}_{Z^\mb{p}}^{p}
    =	\bigg\| \sum_{k \in \bbN} \gl_k a_k \bigg\|_{Z^\mb{p}}^{p}
    \lesssim \nm{\gl}_{\ell^p(\bbN)}^{p}
  \end{equation*}
  by Lemma \ref{lem:Z-atom-est}, and so $\nm{f}_{Z^\mb{p}} \lesssim \inf \nm{\gl}_{\ell^p(\bbN)}.$
  It remains to prove the reverse estimate.
  For each $k \in \bbZ$ we can write
  \begin{equation}\label{eqn:z-series}
    f = \sum_{\overline{Q}^k \in \mc{G}^k} f_{\overline{Q}^k}
  \end{equation}
  where $f_{\overline{Q}^k} = \mb{1}_{\overline{Q}^k} f$ (note that this notation differs from that in the proof of Proposition \ref{prop:Z-duality-reflexive}).
  By Proposition \ref{prop:Zpqs-seq-equivalence}, and using dominated convergence, this sum converges in $Z^\mb{p}$.
  If $k \geq \log_2(c_0^{-1} \sqrt{n}/3) + 1$ and if $c_1 > 3/2$ (this is the first place where we actually use this assumption), then
  \begin{equation*}
    \overline{Q}^k \subset \gO_{c}(c_Q,t_Q)
  \end{equation*}
  for all $Q \in \mc{Q}(\bbR^n)$, where $c_Q$ is the center of $Q$ and $t_Q$ is the midpoint of $2^k \ell(Q)$ and $2^{k+1} \ell(Q)$.
  Therefore, under this condition on $k$, each $f_{\overline{Q}^k}$ satisfies the support condition required of a $Z_{c}^\mb{p}$-atom.
  By Proposition \ref{prop:Zpqs-seq-equivalence} the norms
  \begin{equation*}
    \nm{\gk^{-s} f_{\overline{Q}^k}}_{L^2(\gO_{c}(c_{\overline{Q}}, t_{\overline{Q}}),dx \, dt/t)}
  \end{equation*}
  are all finite, so we can define
  \begin{equation*}
    \gl_{\overline{Q}^k} := t_{\overline{Q}^k}^{-n\gd_{p,2}} \nm{\gk^{-s}  f_{\overline{Q}^k}}_{L^2(\gO_{c}(c_{\overline{Q}}, t_{\overline{Q}}),dx \, dt/t)}
  \end{equation*}
  and
  \begin{equation*}
    a_{\overline{Q}^k} := \left\{ 
      \begin{array}{ll} 
        \gl_{\overline{Q}^k}^{-1} f_{\overline{Q}^k} & (f_{\overline{Q}^k} \neq 0) \\ 0 & (f_{\overline{Q}^k} = 0). \end{array} 
    \right.
  \end{equation*}
  Each $a_{\overline{Q}^k}$ is a $Z_{c}^\mb{p}$-atom and
  \begin{equation*}
    f = \sum_{\overline{Q}^k \in \mc{G}^k} \gl_{\overline{Q}^k} a_{\overline{Q}^k}
  \end{equation*}
  with convergence in $Z^\mb{p}$, and furthermore
  \begin{align*}
    \nm{(\gl_{\overline{Q}^k})}_{\ell^p(\mc{G}^k)}
    &= \left\| t_{\overline{Q}^k}^{-n\gd_{p,2}} \nm{\gk^{-s} f_{\overline{Q}}^k}_{L^2(\gO_{c}(c_{\overline{Q}^k},t_{\overline{Q}^k}),dx \, dt/t)} \right\|_{\ell^p(\mc{G}^k)} \\
    &\simeq \left\| (2^k \ell(Q))^{-n\gd_{p,2} - s} |Q|^{1/2} [|f|^2]_{\overline{Q}^k}^{1/2} \right\|_{\ell^p(\mc{G}^k)} \\
    &\simeq \left\| \ell(Q)^{(n/p)-s} [|f|^2]_{\overline{Q}^k}^{1/2} \right\|_{\ell^p(\mc{G}^k)} \\
    &\simeq \nm{f}_{Z^\mb{p}}
  \end{align*}
  again using Proposition \ref{prop:Zpqs-seq-equivalence}.
\end{proof}

In contrast with tent spaces, it is very easy to construct atomic decompositions of functions $f \in Z^\mb{p}$: as in the proof of the theorem, simply decompose $f$ via the Whitney grid $\mc{G}^k$ for sufficiently large $k$.
This works for all finite $\mb{p}$, even if $i(\mb{p}) > 1$.
Abstract decompositions are useful in the proof of $Z^\mb{p}$-$Z^{\mb{p}^\prime}$ duality when $i(\mb{p}) \leq 1$, which we now build towards.

\begin{lem}
  For all Whitney parameters $c$ and all $f \in L^0(\bbR^{1+n}_+)$, the function $\mc{W}_c f$ is lower semicontinuous.
\end{lem}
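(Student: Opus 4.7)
The plan is to verify the equivalent sequential formulation: for every $(t,x) \in \bbR^{1+n}_+$ and every sequence $(t_k,x_k) \to (t,x)$, one has
\[
\mc{W}_c f(t,x) \;\leq\; \liminf_{k\to\infty} \mc{W}_c f(t_k,x_k),
\]
which immediately gives openness of every superlevel set $\{\mc{W}_c f > \gl\}$.

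First I would unwind the denominator. Because $\gO_c(t,x) = (c_1^{-1}t,c_1 t) \times B(x,c_0 t)$ is a product of a one-dimensional interval of length $(c_1 - c_1^{-1})t$ with an $n$-ball of radius $c_0 t$, its Lebesgue measure is $|\gO_c(t,x)| = C_{c,n}\, t^{n+1}$ for a constant depending only on $c$ and $n$. Thus
\[
\mc{W}_c f(t,x)^2 \;=\; \frac{1}{C_{c,n}\, t^{n+1}} \dint_{\bbR^{1+n}_+} |f(\gt,\gx)|^2\, \mb{1}_{\gO_c(t,x)}(\gt,\gx) \, d\gx\, d\gt,
\]
with the convention that the integral may be $+\infty$.

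Second, I would show that $\mb{1}_{\gO_c(t_k,x_k)} \to \mb{1}_{\gO_c(t,x)}$ pointwise at every $(\gt,\gx) \notin \partial \gO_c(t,x)$. This is immediate from the explicit product form: if $(\gt,\gx)$ lies in the open set $\gO_c(t,x)$, then the strict inequalities $c_1^{-1}t < \gt < c_1 t$ and $|\gx - x| < c_0 t$ persist for all $(t_k,x_k)$ sufficiently close to $(t,x)$; symmetrically, if $(\gt,\gx)$ lies outside the closure of $\gO_c(t,x)$, at least one reverse strict inequality holds and persists. Since $\partial \gO_c(t,x)$ has Lebesgue measure zero, the convergence holds almost everywhere on $\bbR^{1+n}_+$.

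Third, I would apply Fatou's lemma to the nonnegative integrands $|f|^2\, \mb{1}_{\gO_c(t_k,x_k)}$, obtaining
\[
\liminf_{k\to\infty} \dint |f|^2\, \mb{1}_{\gO_c(t_k,x_k)} \;\geq\; \dint |f|^2\, \mb{1}_{\gO_c(t,x)}.
\]
Dividing by $C_{c,n} t_k^{n+1}$ and using the elementary limit $t_k^{n+1} \to t^{n+1} > 0$ then yields $\liminf_k \mc{W}_c f(t_k,x_k)^2 \geq \mc{W}_c f(t,x)^2$, uniformly covering the cases $\mc{W}_c f(t,x) < \infty$ and $\mc{W}_c f(t,x) = \infty$.

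There is no real obstacle here; the only care needed is to ensure that the pointwise convergence of indicator functions holds a.e., which is handled by the product structure of $\gO_c(t,x)$ and the fact that the topological boundary of an open rectangle$\times$ball is Lebesgue null. No integrability hypothesis on $f$ is required, because Fatou accommodates the value $+\infty$ on either side.
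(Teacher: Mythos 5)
Your argument is correct, and it takes a genuinely different route from the paper's. You prove sequential lower semicontinuity directly: you rewrite $\mc{W}_c f(t,x)^2$ as $(C_{c,n}t^{n+1})^{-1}\int |f|^2\,\mb{1}_{\gO_c(t,x)}$, note that the indicators $\mb{1}_{\gO_c(t_k,x_k)}$ converge pointwise a.e.\ to $\mb{1}_{\gO_c(t,x)}$ (the boundary of an open interval times an open ball is Lebesgue null, and the defining strict inequalities persist under small perturbations of $(t,x)$), and then apply Fatou and the convergence $t_k^{n+1}\to t^{n+1}>0$. The paper instead argues via geometric containment: given $\mc{W}_c f(t,x) > M$, it shrinks the Whitney parameter to $(c_0-\varepsilon, c_1-\varepsilon)$ while keeping the average above $M$, then observes that $\gO_{c}(\td{t},\td{x}) \supset \gO_{(c_0-\varepsilon,c_1-\varepsilon)}(t,x)$ for $(\td{t},\td{x})$ close to $(t,x)$ and concludes openness of the superlevel set $\{\mc{W}_c f > M\}$. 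A subtle point in the paper's approach, worth noting, is that containment of the regions does not by itself imply monotonicity of the \emph{averages}: one picks up the volume ratio $|\gO_{(c_0-\varepsilon,c_1-\varepsilon)}(t,x)|/|\gO_c(\td t,\td x)|$, which is strictly less than one. This requires the intermediate threshold $M'>M$ and the choice of $\varepsilon$ and the neighbourhood small enough that the ratio exceeds $(M/M')^2$ --- details the paper elides. Your Fatou-based argument sidesteps this entirely: there is no need to compare averages over nested but distinct regions, since the normalising volume is tracked exactly and converges. The trade-off is that the paper's argument is purely geometric and avoids invoking measure-theoretic convergence theorems, but your version is more transparent and requires less bookkeeping.
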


\begin{proof}
  Fix $M > 0$ and suppose that $\mc{W}_c f(t,x) > M$.
  Then there exists a small $\varepsilon > 0$ such that $\mc{W}_{(c_0 - \varepsilon, c_1 - \varepsilon)} f(t,x) > M$ also.
  A short computation shows that if $\td{x} \in B(x,\varepsilon t/2)$ and if $|\td{t} - t| < (c_1/(c_1 - \varepsilon) - 1)t$, then $\gO_{c}(\td{t},\td{x})$ contains $\gO_{(c_0 - \varepsilon, c_1 - \varepsilon)}(t,x)$, so for all such $(\td{t},\td{x})$ we have
  \begin{equation*}
    \mc{W}_c f(\td{t},\td{x}) \geq \mc{W}_{(c_0 - \varepsilon,c_1 - \varepsilon)} f(t,x) > M.
  \end{equation*}
  Therefore the set $\{(t,x) \in \bbR^{1+n}_+ : \mc{W}_c f(t,x) > M\}$ is open.
\end{proof}

\begin{cor}\label{cor:Zinfty-sup}
  Let $\mb{p}$ be an infinite exponent.
  Then
  \begin{equation*}
    \nm{f}_{Z_c^\mb{p}} = \sup_{(t,x) \in \bbR^{1+n}_+} \mc{W}_c (\gk^{-r(\mb{p})} f)(t,x),
  \end{equation*}
  i.e. the essential supremum in the definition of the $Z^\mb{p}_c$-norm can be replaced with a supremum.
\end{cor}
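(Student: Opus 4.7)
The plan is to combine the preceding lower semicontinuity lemma with the elementary fact that, for lower semicontinuous nonnegative functions on $\bbR^{1+n}_+$, the essential supremum and the pointwise supremum coincide.

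First I would unpack the definition: when $\mb{p}$ is infinite, the quasinorm in \eqref{eqn:zqn} is the $L^\infty(\bbR^{1+n}_+)$ norm of $\mc{W}_c(\gk^{-r(\mb{p})} f)$, which by definition is the essential supremum. Applying the preceding lemma to the function $\gk^{-r(\mb{p})} f \in L^0(\bbR^{1+n}_+)$ shows that $\mc{W}_c(\gk^{-r(\mb{p})} f)$ is lower semicontinuous on $\bbR^{1+n}_+$.

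Next I would verify the abstract principle. Set $g := \mc{W}_c(\gk^{-r(\mb{p})} f)$ and $M := \esssup_{(t,x) \in \bbR^{1+n}_+} g(t,x)$. If $M = \infty$ then trivially $\sup g = \esssup g$, so assume $M < \infty$. For every $\varepsilon > 0$ the superlevel set $\{(t,x) : g(t,x) > M + \varepsilon\}$ has Lebesgue measure zero by definition of the essential supremum, but it is also open by lower semicontinuity of $g$. An open subset of $\bbR^{1+n}_+$ of Lebesgue measure zero is necessarily empty, so $g(t,x) \leq M + \varepsilon$ for every $(t,x) \in \bbR^{1+n}_+$. Letting $\varepsilon \to 0$ gives $\sup g \leq M$, while the reverse inequality $\esssup g \leq \sup g$ is immediate.

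Combining these two observations yields
\[
\nm{f}_{Z_c^\mb{p}} = \esssup_{(t,x) \in \bbR^{1+n}_+} \mc{W}_c(\gk^{-r(\mb{p})} f)(t,x) = \sup_{(t,x) \in \bbR^{1+n}_+} \mc{W}_c(\gk^{-r(\mb{p})} f)(t,x),
\]
as claimed. There is no genuine obstacle here; the only point requiring care is to invoke the preceding lemma with the correct input (namely $\gk^{-r(\mb{p})} f$ rather than $f$ itself), and to note that the argument passing from ``open null set'' to ``empty set'' uses the fact that $\bbR^{1+n}_+$ is endowed with Lebesgue measure, which assigns positive measure to every nonempty open set.
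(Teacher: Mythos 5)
Your proof is correct and follows essentially the same approach as the paper: both rely on the preceding lower semicontinuity lemma and the observation that for a lower semicontinuous function, any value exceeded at one point is exceeded on a set of positive measure. You spell out the abstract principle (open superlevel sets, open null sets are empty) slightly more explicitly, but the underlying idea is identical.
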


\begin{proof}
  Lower semicontinuity of $\mc{W}_c (\gk^{-r(\mb{p})} f)$ implies that if
  \begin{equation*}
    \mc{W}_c (\gk^{-r(\mb{p})} f)(t,x) > M
  \end{equation*}
  for some $M < \infty$ at one point $(t,x)$, then it continues to hold in an open neighbourhood of $(t,x)$, and in particular on a set of positive measure.
\end{proof}

We can finally prove the duality theorem for $Z^\mb{p}$ with $i(\mb{p}) \leq 1$.
As with the other duality results so far, this includes absolute convergence of the $L^2$ duality pairing.

\begin{thm}[Duality: non-reflexive range]\index{duality!of Z-spaces@of $Z$-spaces}
  Suppose $i(\mb{p}) \leq 1$ and let $c$ be a Whitney parameter.
  Then for all $f,g \in L^0(\bbR^{1+n}_+)$ we have
  \begin{equation}\label{eqn:Z-pairing-1}
    \dint_{\bbR^{1+n}_+} |(f(t,x),g(t,x))| \, dx\,\frac{dt}{t} \lesssim \nm{f}_{Z_c^{\mb{p}}} \nm{g}_{Z_c^{\mb{p}^\prime}},
  \end{equation}
  and the $L^2$ duality pairing identifies the Banach space dual of $Z_c^\mb{p}$ with $Z_c^{\mb{p}^\prime}$.
\end{thm}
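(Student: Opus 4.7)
The plan is to mirror the atomic strategy used for tent spaces in Theorem~\ref{thm:ts-duality}: first prove the pairing bound \eqref{eqn:Z-pairing-1} by reducing to a uniform estimate on single $Z^{\mb{p}}_c$-atoms (via Theorem~\ref{thm:Z-atomic-decompn}), then identify $(Z^{\mb{p}}_c)^{\ast}$ with $Z^{\mb{p}^\prime}_c$ by constructing a local representing function through Riesz representation on Whitney cubes. Throughout, write $\mb{p} = (p,s)$, so that $\mb{p}^\prime = (\infty,-s; n(1/p-1))$ and $r(\mb{p}^\prime) = -s + n(1/p - 1)$.

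First, I would establish the single-atom bound. Let $a$ be a $Z^{\mb{p}}_c$-atom associated with $(t_0,x_0)$, so $\supp a \subset \gO_c(t_0,x_0)$ and $\nm{\gk^{-s} a}_{L^2(\gO_c, dx\,dt/t)} \leq t_0^{n\gd_{p,2}}$. Splitting $a\,\overline{g} = (\gk^{-s} a)(\gk^{s} \overline g)$ and applying Cauchy--Schwarz on $\gO_c(t_0,x_0)$ with the measure $dx\,dt/t$, the first factor is controlled by the atomic size condition; since $t \simeq t_0$ on $\gO_c(t_0,x_0)$, the second factor is comparable to $t_0^{s+n/2} \mc{W}_c(g)(t_0,x_0) \simeq t_0^{s+n/2+r(\mb{p}^\prime)} \mc{W}_c(\gk^{-r(\mb{p}^\prime)} g)(t_0,x_0) \leq t_0^{s+n/2+r(\mb{p}^\prime)} \nm{g}_{Z^{\mb{p}^\prime}_c}$ by Corollary~\ref{cor:Zinfty-sup}. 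With $r(\mb{p}^\prime) = -s + n(1/p-1)$ the combined power of $t_0$, namely $n\gd_{p,2} + s + n/2 + r(\mb{p}^\prime)$, collapses to zero, yielding
\begin{equation*}
\dint_{\bbR^{1+n}_+} |(a,g)| \, \frac{dx\,dt}{t} \lesssim \nm{g}_{Z^{\mb{p}^\prime}_c}
\end{equation*}
uniformly in atoms. For general $f$, I would expand $f = \sum_k \gl_k a_k$ via Theorem~\ref{thm:Z-atomic-decompn} with $\nm{\gl}_{\ell^p} \lesssim \nm{f}_{Z^{\mb{p}}_c}$, apply Tonelli to exchange summation and integration, and use $\sum_k |\gl_k| \leq (\sum_k |\gl_k|^p)^{1/p} = \nm{\gl}_{\ell^p}$ for $p \leq 1$ to deduce \eqref{eqn:Z-pairing-1}.

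Next, I would identify the dual. The pairing inequality already gives a continuous injection $Z^{\mb{p}^\prime}_c \hookrightarrow (Z^{\mb{p}}_c)^{\ast}$. Conversely, fix $\gL \in (Z^{\mb{p}}_c)^{\ast}$. By Proposition~\ref{prop:Zpqs-seq-equivalence}, for each Whitney cube $\overline Q \in \mc G$ any function in $L^2(\overline Q, dx\,dt/t)$ is (after a controlled renormalisation depending only on $\mb{p}$, $c$ and $\overline Q$) a $Z^{\mb{p}}_c$-atom, so $L^2(\overline Q, dx\,dt/t) \hookrightarrow Z^{\mb{p}}_c$ is continuous. Riesz representation on each such cube produces a local representative, and by uniqueness these are consistent and glue to a single $g \in L^2_{\loc}(\bbR^{1+n}_+)$ with $\gL(h) = \langle h, g\rangle$ for all $h \in L^2_c(\bbR^{1+n}_+)$. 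To estimate $\nm{g}_{Z^{\mb{p}^\prime}_c}$, I would fix $(t_0,x_0)$ and test $\gL$ against
\begin{equation*}
a := \alpha\, \overline g\, \mb{1}_{\gO_c(t_0,x_0)}, \qquad \alpha := C\, t_0^{s + n\gd_{p,2}} \big/ \nm{g}_{L^2(\gO_c(t_0,x_0),\,dx\,dt/t)},
\end{equation*}
which is a $Z^{\mb{p}}_c$-atom for a suitable absolute constant $C$. Then $\gL(a) = \alpha \nm{g}^2_{L^2(\gO_c, dx\,dt/t)}$, while $|\gL(a)| \leq \nm{\gL}\,\nm{a}_{Z^{\mb{p}}_c} \lesssim \nm{\gL}$ by Lemma~\ref{lem:Z-atom-est}. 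Rearranging and invoking the same exponent cancellation as above converts this into the pointwise bound $\mc{W}_c(\gk^{-r(\mb{p}^\prime)} g)(t_0,x_0) \lesssim \nm{\gL}$, uniformly in $(t_0,x_0)$; Corollary~\ref{cor:Zinfty-sup} then yields $\nm{g}_{Z^{\mb{p}^\prime}_c} \lesssim \nm{\gL}$, completing the identification.

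The main technical obstacle will be handling convergence in the quasi-Banach regime $p < 1$: the atomic decomposition of $f$ converges only in $Z^{\mb{p}}_c$-quasinorm, not pointwise a priori, so upgrading the single-atom bound to a bound for $f$ requires care. I expect this to be routine via $p$-subadditivity of the quasinorm together with Fatou's lemma applied to the absolutely convergent majorant $\sum_k |\gl_k|\,|(a_k,g)|$ (which is finite by the single-atom estimate), passing through a.e.\ convergent subsequences of atomic partial sums. The other subtle point is verifying continuity of the embedding $L^2(\overline Q, dx\,dt/t) \hookrightarrow Z^{\mb{p}}_c$ with a normalisation that behaves well enough for the local Riesz representatives to glue into a single $L^2_{\loc}$ function; this is precisely what the dyadic characterisation of Proposition~\ref{prop:Zpqs-seq-equivalence} is engineered to provide.
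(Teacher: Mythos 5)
Your proposal is correct and follows essentially the same route as the paper: the pairing bound is proved by the same Cauchy--Schwarz-on-atoms estimate (with the same exponent cancellation) followed by summing the atomic decomposition of $f$ using $p$-subadditivity, and the converse identification is the same argument presented in a slightly different dress --- you produce the representing function by local Riesz representation on Whitney cubes and glue, whereas the paper invokes the dyadic isomorphism onto $\ell_s^p(\mc{G},\ell(Q)^n:L^2(Q_0))$ and uses $(\ell^p)' = \ell^\infty$, and you bound $\nm{g}_{Z_c^{\mb{p}^\prime}}$ by testing against the explicit extremizing atom $\alpha\,\overline{g}\,\mb{1}_{\gO_c(t_0,x_0)}$ whereas the paper writes the same quantity as a supremum over normalised $L^2$-test functions and recognises them as atoms via Lemma~\ref{lem:Z-atom-est}. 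The convergence worry you flag in the last paragraph is in fact vacuous here, since the $Z$-space atomic decomposition of Theorem~\ref{thm:Z-atomic-decompn} is disjointly supported and equals $f$ pointwise a.e., so Tonelli applies directly without any Fatou/subsequence argument.
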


\begin{proof}
  Write $\mb{p} = (p,s)$, so that $\mb{p}^\prime = (\infty,-s,n\gd_{p,1})$.
  First suppose $a$ is a $Z_{c}^\mb{p}$-atom associated with a point $(t_0,x_0) \in \bbR^{1+n}_+$.
  Then we have
  \begin{align*}
    \dint_{\bbR^{1+n}_+} |(a(t,x),g(t,x))| \, dx\,\frac{dt}{t}
    &\leq \nm{\gk^{-s} a}_{L^2(\bbR^{1+n}_+, dx\,dt/t)} \nm{\gk^s g}_{L^2(\gO_{c}(t_0,x_0),dx\,dt/t)} \\
    &\lesssim t_0^{n\gd_{p,2} + n\gd_{1,p} + (n/2)} \nm{\gk^{s-n\gd_{1,p}} g}_{L^2(\gO_{c}(t_0,x_0), dx\,dt / t^{1+n})} \\
    &\leq \nm{g}_{Z_c^\mb{p}}
  \end{align*}
  by Corollary \ref{cor:Zinfty-sup}.
  For general $f \in Z^\mb{p}$, write $f$ as a sum of $Z_c^\mb{p}$-atoms as in Theorem \ref{thm:Z-atomic-decompn}, so that
  \begin{align*}
    \dint_{\bbR^{1+n}_+} |(f(t,x),g(t,x))| \, dx\,\frac{dt}{t}
    &\leq \sum_{k \in \bbN} |\gl_k| \dint_{\bbR^{1+n}_+} |(a_k(t,x),g(t,x))| \, dx\,\frac{dt}{t} \\
    &\lesssim \nm{g}_{Z_c^{\mb{p}^\prime}}\nm{\gl}_{\ell^p(\bbN)}
  \end{align*}
  since $p \leq 1$.
  Taking the infimum over all atomic decompositions of $f$ proves \eqref{eqn:Z-pairing-1}.
  
  Now suppose that $\gfv \in (Z_{c}^\mb{p})^\prime$.
  By the same technique as in the proof of Proposition \ref{prop:Z-duality-reflexive}, we find that there exists a sequence $(g_{\overline{Q}}) \in \ell_{-s}^\infty(\mc{G} : L^2(Q_0))$ corresponding to the induced action of $\gfv$ on $\ell_s^p(\mc{G},\ell(Q)^n : L^2(Q_0))$ (since $\ell^p(\bbN)^\prime = \ell^\infty(\bbN)$ for $p \leq 1$).
  Hence there exists a function $G_\gfv \in L^0(\bbR^{1+n}_+)$ corresponding to the action of $\gfv$ on $Z_{c}^\mb{p}$.
  We need to show that $G_\gfv$ is in $Z_{c}^{\mb{p}^\prime}$.
  
  Suppose $(t,x) \in \bbR_+^{1+n}$.
  Then we can estimate
  \begin{align*}
    \mc{W}_c(\gk^{s-n\gd_{1,p}} G_\gfv)(t,x)
    &\simeq t^{-n\gd_{1,p}} \nm{G_\gfv}_{L_{-s}^2(\gO_c(t,x),d\gx\,d\gt/\gt^{1+n})} \\
    &= t^{-n\gd_{1,p}} \sup_{\substack{F \in L_s^2(\gO_c(t,x),d\gx\,d\gt/\gt^{1+n}) \\ \nm{F}\leq 1}} \left| (F,G_\gfv) \right| \\
    &\lesssim t^{-n\gd_{1,p} -(n/2) - n\gd_{p,2}}\nm{\gfv}_{(Z_s^p)^\prime}  \sup_{\substack{F \in L_s^2(\gO_c(t,x),d\gx\,d\gt/\gt) \\ \nm{F} \leq t^{n\gd_{p,2}}}} \nm{F}_{Z_s^p} \\
    &\leq \nm{\gfv}_{(Z^\mb{p})^\prime},
  \end{align*}
  using $n\gd_{1,p} + (n/2) + n\gd_{p,2} = 0$, the fact that the condition in the final supremum implies that $F$ is a $Z_{c}^\mb{p}$-atom, and Lemma \ref{lem:Z-atom-est}.
  Therefore we have
  \begin{equation*}
    \nm{G_\gfv}_{Z_{c}^{\mb{p}^\prime}}
    = \sup_{(t,x) \in \bbR^{1+n}_+} \mc{W}_c(\gk^{s-n\gd_{1,p}}G_\gfv)(t,x)
    \lesssim \nm{\gfv}_{(Z_c^\mb{p})^\prime}
  \end{equation*}
  as desired.
\end{proof}

\begin{cor}
  For all infinite exponents $\mb{p}$ and all Whitney parameters $c$, the $Z_c^\mb{p}$ norms are mutually equivalent.
  Hence for all exponents $\mb{p}$ we write $Z^\mb{p}$ in place of $Z_{c}^\mb{p}$.
\end{cor}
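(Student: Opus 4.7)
The plan is to exploit the duality theorem just proved, combined with the already-established independence of $c$ for finite exponents (Theorem \ref{thm:ts-rint}).

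First, let $\mb{q}$ be an infinite exponent. By inspection of the definition, we can write $\mb{q} = \mb{p}^\prime$ where $\mb{p}$ is the unique finite exponent with $i(\mb{p}) \leq 1$ and $j(\mb{p}) + j(\mb{q}) = 1$, $\gq(\mb{p}) + \gq(\mb{q}) = 0$. For finite $\mb{p}$, Theorem \ref{thm:ts-rint} (real interpolation) gives us that $Z_c^\mb{p}$ is independent of $c$ up to equivalence of quasinorms. That is, for Whitney parameters $c$ and $c'$, there is a constant so that
\begin{equation*}
\nm{f}_{Z_c^\mb{p}} \simeq \nm{f}_{Z_{c'}^\mb{p}}
\end{equation*}
for all $f \in L^0(\bbR^{1+n}_+)$.

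Next, apply the duality theorem just proved. Since $i(\mb{p}) \leq 1$, the $L^2$ duality pairing identifies the Banach dual of $Z_c^\mb{p}$ with $Z_c^{\mb{p}^\prime} = Z_c^\mb{q}$, and likewise for $c'$. Crucially, the $L^2$ pairing itself does not depend on $c$. Thus for $g \in L^0(\bbR^{1+n}_+)$,
\begin{equation*}
\nm{g}_{Z_c^\mb{q}} \simeq \sup_{\nm{f}_{Z_c^\mb{p}} \leq 1} \left| \dint_{\bbR^{1+n}_+} (f(t,x),g(t,x)) \, dx\, \frac{dt}{t} \right|,
\end{equation*}
and similarly for $c'$. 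The equivalence $\nm{f}_{Z_c^\mb{p}} \simeq \nm{f}_{Z_{c'}^\mb{p}}$ shows that the two suprema differ by a bounded factor, hence $\nm{g}_{Z_c^\mb{q}} \simeq \nm{g}_{Z_{c'}^\mb{q}}$.

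I don't anticipate any real obstacle here; the corollary is essentially a formal consequence of duality plus the fact that the pairing is $c$-independent. The only point requiring care is to ensure that the duality theorem really gives norm equivalence (not merely a bounded identification in one direction), which is clear from its statement. No step involves infinite exponents being paired against each other, so no circularity arises.
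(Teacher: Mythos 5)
Your approach is correct and is indeed the argument the paper has in mind; the corollary follows from the non-reflexive duality theorem exactly as you describe, using that for an infinite $\mb{q}$ the dual exponent $\mb{q}^\prime$ is finite with $i(\mb{q}^\prime) \leq 1$, and the $c$-independence for finite exponents already established via real interpolation.

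The one point worth tightening is the transition from the duality theorem to the formula $\nm{g}_{Z_c^\mb{q}} \simeq \sup_{\nm{f}_{Z_c^\mb{p}} \leq 1} |\langle f,g \rangle|$ as an identity valid for \emph{all} $g \in L^0(\bbR^{1+n}_+)$, not merely for $g$ already known to lie in $Z_c^\mb{q}$. When you pass from $c'$ to $c$, you implicitly use that if $g \in Z_{c'}^\mb{q}$ then the bounded functional $f \mapsto \langle f,g \rangle$ on $Z_c^\mb{p}$ is represented (via the duality theorem for $c$) by some $G \in Z_c^\mb{q}$, and that $G = g$ a.e. This last identification holds because $L^2_c(\bbR^{1+n}_+)$ is densely contained in $Z_c^\mb{p}$ (finite exponent) and the $L^2$ pairing is faithful there, so the functional uniquely determines the representative. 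This is implicit in the paper's proof of the duality theorem — $G_\gfv$ is built dyadic-cube by dyadic-cube from $\gfv$, so if $\gfv = \gfv_g$ then $G_\gfv = g$ — and is a routine detail, but it is the only point at which your write-up skips a step rather than stating a fact you have already established.
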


Having identified the duals of all $Z^\mb{p}$ spaces for finite $\mb{p}$, we can give a full interpolation theorem.

\begin{thm}[Real interpolation of tent spaces: full range]\label{thm:ts-rint-full}\index{interpolation!of tent spaces, real}
  Suppose that $\mb{p}$ and $\mb{q}$ are exponents with $\gq(\mb{p}) \neq \gq(\mb{q})$, and $0 < \gh < 1$.
  Then
  \begin{equation*}
    (T^\mb{p}, T^\mb{q})_{\gh,p_\gh} = Z^{[\mb{p},\mb{q}]_\gh},
  \end{equation*}
  where $p_\gh = i([\mb{p},\mb{q}]_\gh)$.
\end{thm}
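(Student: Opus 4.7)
The finite-exponent case is already Theorem~\ref{thm:ts-rint}, so the task is to extend the identity to configurations in which at least one of $\mb{p}, \mb{q}$ is infinite. The strategy combines real-interpolation duality (which handles the bulk of new cases) with reiteration (for the residual configurations where $p_\gh = \infty$).

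The first step applies the classical duality theorem $(X_0, X_1)_{\gh,p}' = (X_0', X_1')_{\gh, p'}$, valid for $1 \le p < \infty$ when $X_0 \cap X_1$ is dense in each endpoint and in the interpolation space, to the finite-exponent identity. Taking finite $\td{\mb{p}}, \td{\mb{q}}$ with $\gq(\td{\mb{p}}) \neq \gq(\td{\mb{q}})$, the duality identifications $(T^\mb{r})' = T^{\mb{r}'}$ (Theorem~\ref{thm:ts-duality}) and $(Z^\mb{r})' = Z^{\mb{r}'}$ (Proposition~\ref{prop:Z-duality-reflexive} together with its non-reflexive counterpart), combined with the elementary identity $[\td{\mb{p}},\td{\mb{q}}]_\gh' = [\td{\mb{p}}', \td{\mb{q}}']_\gh$, yield
$$(T^{\td{\mb{p}}'},T^{\td{\mb{q}}'})_{\gh, \td{p}_\gh'} = Z^{[\td{\mb{p}}',\td{\mb{q}}']_\gh},$$
the density hypothesis being supplied by $L_c^2(\bbR^{1+n}_+)$. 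Since every infinite exponent is the dual of a (unique) finite exponent with integrability at most $1$, and since finite exponents with integrability greater than $1$ have finite duals, this transfer covers every pair $(\mb{p},\mb{q})$ for which the interpolant $[\mb{p},\mb{q}]_\gh$ has integrability $p_\gh \in [1,\infty)$, including configurations with one or both endpoints infinite. For the residual cases $p_\gh = \infty$ (which forces $j(\mb{p}) \le 0$ and $j(\mb{q}) \le 0$, so that both endpoints are infinite), duality does not directly apply; here I would invoke reiteration, picking auxiliary exponents $\mb{p}_1, \mb{p}_2$ on the line through $\mb{p}$ and $\mb{q}$ in the $(j,\gq)$-plane with $[\mb{p}_1, \mb{p}_2]_\lambda = [\mb{p},\mb{q}]_\gh$ for some $\lambda \in (0,1)$, chosen so that the identity $(T^{\mb{p}_1},T^{\mb{p}_2})_{\mu,p_\mu} = Z^{[\mb{p}_1,\mb{p}_2]_\mu}$ is already established, and applying the reiteration theorem to conclude.

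The \emph{main obstacle} is the degenerate subcase $j(\mb{p}) = j(\mb{q}) < 0$, in which the line through $\mb{p}$ and $\mb{q}$ in the $(j,\gq)$-plane is vertical and never enters the finite region $\{j > 0\}$, so that reiteration through a finite intermediate is unavailable. In this configuration I would argue directly via a Peetre $K$-functional computation: both endpoint norms and the target $Z^{[\mb{p},\mb{q}]_\gh}$-norm admit supremum-type characterizations over Whitney regions (by Proposition~\ref{prop:Zpqs-seq-equivalence} suitably extended to the infinite regime, together with Corollary~\ref{cor:Zinfty-sup}), reducing the problem to real interpolation of weighted $\ell^\infty(L^2)$ spaces; a direct estimate of $K(t,f;T^\mb{p},T^\mb{q})$ via a dyadic decomposition then identifies the $(\gh,\infty)$-interpolant quasinorm with the $Z^{[\mb{p},\mb{q}]_\gh}$ quasinorm.
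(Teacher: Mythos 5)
Your skeleton (finite case, then duality, then reiteration) matches the paper's approach, but the bookkeeping of which cases each step covers is off, and one genuine case is left unaddressed. The duality step, applied to the finite-exponent identity for $\td{\mb{p}},\td{\mb{q}}$, yields the result precisely for pairs $(\mb{p},\mb{q})$ that are both duals of finite exponents, i.e.\ pairs with $j(\mb{p}), j(\mb{q}) < 1$ (equivalently $1 < i(\mb{p}), i(\mb{q}) \leq \infty$, which is what the paper states). This is \emph{not} the condition $p_\gh \in [1,\infty)$: duality does reach some pairs with $p_\gh = \infty$ (when both $j$-values lie below $1$ but the interpolant lands at $j \leq 0$), and fails to reach certain pairs with $p_\gh \in (1,\infty)$. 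Concretely, take $\mb{p}$ finite with $i(\mb{p}) = 1/2$, so $j(\mb{p})=2$ and $\mb{p}'$ is infinite, and take $\mb{q} = (\infty,s;0)$; for $\gh$ close to $1$ one has $j([\mb{p},\mb{q}]_\gh) \in (0,1)$, hence $p_\gh \in (1,\infty)$, yet there is no finite $\td{\mb{p}}$ with $\td{\mb{p}}'=\mb{p}$. Likewise $p_\gh=\infty$ does not force both endpoints infinite (take $j(\mb{p})=2$, $j(\mb{q})=-1$, $\gh=4/5$). Finally, the subcase $j(\mb{p})=j(\mb{q})<0$ that you flag as the main obstacle satisfies $j(\mb{p}),j(\mb{q})<1$, so it is already inside the duality-covered range; the proposed $K$-functional detour is unnecessary (and also shaky, because the $T^\infty_{s;\ga}$-norm is a Carleson norm over tents, not a Whitney-average norm, so Proposition~\ref{prop:Zpqs-seq-equivalence} and Corollary~\ref{cor:Zinfty-sup} do not supply the claimed supremum-over-Whitney-regions characterization for the \emph{endpoint} $T$-spaces).

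The case genuinely left over --- one exponent finite with $i \leq 1$ (so $j \geq 1$) paired with an infinite exponent (so $j \leq 0$) --- is not reached by your argument at all. Duality is unavailable there, and your proposed reiteration does not apply as stated: the standard reiteration theorem requires that the intermediate spaces be already identified as interpolation spaces of the couple $(T^\mb{p}, T^\mb{q})$, which is precisely what is to be proved. The paper closes this gap by invoking Wolff's reiteration theorem \cite[Theorem~1]{tW82}, whose whole point is to glue a chain of overlapping interpolation identities whose endpoints are not yet known to sit inside a single scale. That is the missing ingredient in your proposal.
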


\begin{proof}
  For finite exponents this is Theorem \ref{thm:ts-rint}.
  If $1 < i(\mb{p}), i(\mb{q}) \leq \infty$, this follows by writing
  \begin{equation*}
    (T^{\mb{p}}, T^{\mb{q}})_{\gh,p_\gh} = ((T^{\mb{p}^\prime})^\prime, (T^{\mb{q}^\prime})^\prime)_{\gh,p_\gh} = (T^{\mb{p}^\prime}, T^{\mb{q}^\prime})_{\gh,p_\gh^\prime}^\prime
  \end{equation*}
  via the duality theorem for real interpolation \cite[Theorem 3.7.1]{BL76}, using that $T^{\mb{p}^\prime} \cap T^{\mb{q}^\prime}$ is dense in both $T^{\mb{p}^\prime}$ and $T^{\mb{q}^\prime}$, and then noting that
  \begin{equation*}
    p_\gh^\prime = i([\mb{p},\mb{q}]_\gh)^\prime = i([\mb{p}^\prime, \mb{q}^\prime]_\gh).
  \end{equation*}
  The full result follows by Wolff reiteration \cite[Theorem 1]{tW82}.
\end{proof}

As an immediate consequence we can prove a real interpolation theorem for $Z$-spaces.
This follows from Theorem \ref{thm:ts-rint-full} along with the reiteration theorem for real interpolation \cite[Theorem 5.2.4]{BL76}.

\begin{prop}[Real interpolation of $Z$-spaces]\label{prop:Z-rint-full}\index{interpolation!of Z-spaces@of $Z$-spaces, real}
	Let $\mb{p}$ and $\mb{q}$ be exponents which are not both infinite and with $\gq(\mb{p}) \neq \gq(\mb{q})$, and let $\gh \in (0,1)$.
	Then
	\begin{equation*}
		(Z^\mb{p}, Z^\mb{q})_{\gh,p_\gh} = Z^{[\mb{p},\mb{q}]_\gh},
	\end{equation*}
	where $p_\gh = i([\mb{p}, \mb{q}]_\gh)$.
\end{prop}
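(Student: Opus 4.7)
The plan is to reduce the identity to Theorem \ref{thm:ts-rint-full} by expressing both $Z^\mb{p}$ and $Z^\mb{q}$ as real interpolation spaces on a common tent-space couple, and then invoking the reiteration theorem for real interpolation.

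First I would introduce auxiliary exponents $\mb{p}_0 := [\mb{p}, \mb{q}]_{-1}$ and $\mb{p}_1 := [\mb{p}, \mb{q}]_{2}$. Lemma \ref{lem:extid} gives the key identity
\begin{equation*}
[\mb{p}_0, \mb{p}_1]_\gl = [\mb{p}, \mb{q}]_{3\gl - 1} \qquad (\gl \in \bbR),
\end{equation*}
so in particular $\mb{p} = [\mb{p}_0, \mb{p}_1]_{1/3}$ and $\mb{q} = [\mb{p}_0, \mb{p}_1]_{2/3}$. Moreover $\gq(\mb{p}_1) - \gq(\mb{p}_0) = 3(\gq(\mb{q}) - \gq(\mb{p})) \neq 0$ by hypothesis, so Theorem \ref{thm:ts-rint-full} applies to the couple $(T^{\mb{p}_0}, T^{\mb{p}_1})$ and yields
\begin{equation*}
Z^\mb{p} = (T^{\mb{p}_0}, T^{\mb{p}_1})_{1/3,\, i(\mb{p})}, \qquad Z^\mb{q} = (T^{\mb{p}_0}, T^{\mb{p}_1})_{2/3,\, i(\mb{q})}.
\end{equation*}

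Next I would apply the reiteration theorem \cite[Theorem 5.2.4]{BL76} to the compatible couple $(T^{\mb{p}_0}, T^{\mb{p}_1})$; since $1/3 \neq 2/3$, this produces
\begin{equation*}
(Z^\mb{p}, Z^\mb{q})_{\gh, p_\gh} = (T^{\mb{p}_0}, T^{\mb{p}_1})_{\gl, p_\gh}, \qquad \gl = \tfrac{1-\gh}{3} + \tfrac{2\gh}{3} = \tfrac{1+\gh}{3}.
\end{equation*}
A final application of Theorem \ref{thm:ts-rint-full}, combined with the computation $[\mb{p}_0, \mb{p}_1]_\gl = [\mb{p},\mb{q}]_{3\gl - 1} = [\mb{p}, \mb{q}]_\gh$, identifies the right-hand side with $Z^{[\mb{p},\mb{q}]_\gh}$; the second interpolation index matches automatically, because $p_\gh = i([\mb{p},\mb{q}]_\gh)$ by definition.

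The main point requiring comment is that reiteration is being applied in the quasi-Banach setting, which is a routine extension of \cite[Theorem 5.2.4]{BL76}. The hypothesis that $\mb{p}$ and $\mb{q}$ are not both infinite is a safety condition guaranteeing that the tent-space interpolation identities used at both ends remain cleanly available; beyond this, the argument reduces to the mechanical exponent arithmetic of Lemma \ref{lem:extid}, so I do not anticipate any substantial obstacle.
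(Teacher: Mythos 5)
Your proposal is correct and fills in, with careful exponent arithmetic, precisely the argument that the paper states in a single line (``follows from Theorem \ref{thm:ts-rint-full} along with the reiteration theorem \cite[Theorem 5.2.4]{BL76}''). The choice of the enlarged couple $(T^{\mb{p}_0}, T^{\mb{p}_1})$ with $\mb{p}_0 = [\mb{p},\mb{q}]_{-1}$, $\mb{p}_1 = [\mb{p},\mb{q}]_{2}$, the verification that $\gq(\mb{p}_0) \neq \gq(\mb{p}_1)$, and the computation $[\mb{p}_0,\mb{p}_1]_{(1+\gh)/3} = [\mb{p},\mb{q}]_\gh$ are all accurate, so this is the same route as the paper's, spelled out in full.
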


We can also establish a complex interpolation theorem for $Z$-spaces.
Within the Banach range, this has already been done by Barton and Mayboroda \cite[Theorem 4.13]{BM16}.
However, in the quasi-Banach range---that is, for exponents $\mb{p}$ with $i(\mb{p}) < 1$---one must argue differently.
We remind the reader that we perform complex interpolation using the Kalton--Mitrea method from \cite[\textsection 3]{KM98}, which is well-defined for all quasi-Banach couples.

\begin{prop}[Complex interpolation of $Z$-spaces]\label{prop:Z-cint}\index{interpolation!of Z-spaces@of $Z$-spaces, complex}
  Let $\mb{p}$ and $\mb{q}$ be exponents which are not both infinite, and let $\gh \in (0,1)$.
  Then
  \begin{equation*}
    [Z^\mb{p}, Z^\mb{q}]_{\gh} = Z^{[\mb{p},\mb{q}]_\gh}.
  \end{equation*}
\end{prop}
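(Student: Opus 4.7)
The plan is to exploit the dyadic characterisation of the $Z$-space quasinorm (Proposition \ref{prop:Zpqs-seq-equivalence}, with the extension to infinite exponents noted in the remark following it) to identify each $Z^\mb{p}$ with a weighted $\ell^{i(\mb{p})}$-valued $L^2$ sequence space via an isomorphism that is independent of $\mb{p}$, and then apply a standard complex interpolation formula for such sequence spaces.

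Concretely, fix a Whitney parameter $c$ and an integer $k$, and for $f \in Z^\mb{p}$ let $(f_{\overline{Q}^k})_Q$ denote the sequence of restrictions $\mb{1}_{\overline{Q}^k} f$ viewed inside the local space $L^2(\overline{Q}^k, d\gt\,d\gx/\gt^{n+1})$. The dyadic characterisation gives an isomorphism of quasi-Banach spaces
\begin{equation*}
Z^\mb{p} \simeq \ell^{i(\mb{p})}(\mc{G}^k, w_\mb{p} : L^2),
\end{equation*}
where the inside weight $w_\mb{p}(Q) := \ell(Q)^{n j(\mb{p}) - \gq(\mb{p})}$ is a unified formula for both finite and infinite $\mb{p}$ (with the supremum formulation of Corollary \ref{cor:Zinfty-sup} governing the $\ell^\infty$ case). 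Crucially, the underlying correspondence $f \mapsto (f_{\overline{Q}^k})_Q$ does not depend on $\mb{p}$, so it is an isomorphism of quasi-Banach couples and therefore commutes with complex interpolation.

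Applying the classical complex interpolation formula for weighted vector-valued sequence spaces,
\begin{equation*}
[\ell^{p_0}(w_0 : L^2), \ell^{p_1}(w_1 : L^2)]_\gh = \ell^{p_\gh}(w_0^{1-\gh} w_1^\gh : L^2),
\end{equation*}
with $1/p_\gh = (1-\gh)/p_0 + \gh/p_1$, and using the linearity of $j$ and $\gq$ in the $[\cdot,\cdot]_\gh$ operation to compute
\begin{equation*}
w_\mb{p}^{1-\gh} w_\mb{q}^\gh = \ell(Q)^{n j([\mb{p},\mb{q}]_\gh) - \gq([\mb{p},\mb{q}]_\gh)} = w_{[\mb{p},\mb{q}]_\gh},
\end{equation*}
then yields $Z^{[\mb{p},\mb{q}]_\gh}$ on the right-hand side.

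The hard part will be establishing the weighted $\ell^p(L^2)$ complex interpolation identity in the quasi-Banach range; in the Banach range it is the classical Calderon--Stein--Weiss theorem already used by Barton and Mayboroda. For exponents with $i(\mb{p}) < 1$ or $i(\mb{q}) < 1$ we invoke the Kalton--Mitrea complex interpolation method: since $L^2$ is a Hilbert space it is analytically convex, weighted $\ell^p(L^2)$ spaces inherit analytic convexity, and the standard Calderon-type argument via the holomorphic family $z \mapsto (w_0^{1-z} w_1^z) F(z)$ on the strip converts the weighted couple to the unweighted one at each boundary. When one exponent is infinite, the hypothesis that $\mb{p}$ and $\mb{q}$ are not both infinite ensures both a dense intersection at the finite endpoint and a finite output exponent $i([\mb{p},\mb{q}]_\gh)$, so density obstructions at $\ell^\infty$ in the Kalton--Mitrea construction do not arise.
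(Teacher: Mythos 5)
Your proposal takes a genuinely different route from the paper: you retract $Z^{\mb{p}}$ onto a weighted vector-valued sequence space $\ell^{i(\mb{p})}(\mc{G}^k : L^2(\overline{Q}_0))$ (this is exactly the isomorphism the paper already uses in the proof of Proposition~\ref{prop:Z-duality-reflexive}) and then interpolate there, whereas the paper works directly with the function spaces $Z^{p,q}_s$ on $\bbR^{1+n}_+$, invokes the Kalton--Mitrea Calder\'on product formula (Theorem~\ref{thm:KM}), and proves explicit multiplication and factorisation lemmas (Propositions~\ref{prop:mult} and~\ref{prop:factorisation}). The two viewpoints are isomorphic --- your $\ell^p(L^2)$ is the image of $Z^{p,2}_s$, and more generally $\ell^p(L^q)$ corresponds to $Z^{p,q}_s$ --- so the difference is where the work is hidden.

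That is exactly where your proposal has a gap. Theorem~\ref{thm:KM} applies to \emph{scalar} quasi-Banach function spaces, so to use it you must view $\ell^p(\mc{G}^k : L^2(\overline{Q}_0))$ as a scalar mixed-norm space on the product measure space $\mc{G}^k\times\overline{Q}_0$. But then the factorisation half of the Calder\'on product identity --- writing a general $|h|\le|f|^{1-\eta}|g|^\eta$ with the right norm control --- does not stay within $q=2$: the natural construction produces intermediate factors $F\in L^{p_0/(1-\eta)}(L^{2/(1-\eta)})$ and $G\in L^{p_1/\eta}(L^{2/\eta})$ before taking powers $f=|F|^{1/(1-\eta)}$, $g=|G|^{1/\eta}$, and for $\eta\in(0,1)$ the inner exponents $2/(1-\eta)$ and $2/\eta$ are not $2$. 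This is precisely why the paper introduces the full $Z^{p,q}_s$ scale with $q\ne 2$ before proving anything: the factorisation argument cannot close inside the $q=2$ slice. Your sketch never leaves $q=2$, so the factorisation step is not actually established. Your remark about analytic convexity of $L^2$ and hence of $\ell^p(L^2)$ ensures the Kalton--Mitrea interpolation functor behaves well, but it does not identify the interpolation space. To rescue the approach you would either have to redo the paper's $q$-scale factorisation in your sequence-space coordinates (at which point you have the paper's proof in disguise), or cite an existing vector-valued interpolation result such as \cite[Theorem~8.2]{KMM07} directly rather than deriving it from Theorem~\ref{thm:KM}.

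A secondary, smaller issue: your weight-reduction step via the holomorphic family $z\mapsto (w_0^{1-z}w_1^z)F(z)$ handles the reduction of weighted to unweighted sequence spaces, but it does not, as you suggest, do any of the work of identifying $[\ell^{p_0}(L^2),\ell^{p_1}(L^2)]_\eta$; that is the whole problem. And the dense-intersection remark about the infinite endpoint is heuristically reasonable but does not interact well with Theorem~\ref{thm:KM}, which requires separability at \emph{both} endpoints --- this is a genuine subtlety (and is also not fully addressed in the paper, where Theorem~\ref{thm:appendix-Z-Cint} is only stated for finite exponents).
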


We defer the proof to Section \ref{sec:Z-space-appendix}, as it requires the introduction of spaces $Z^{p,q}_s$ with $q \neq 2$.

\begin{rmk}
  The $Z$-spaces can be seen as Wiener amalgam spaces\index{space!Wiener amalgam} associated with the semidirect product $\bbR_+ \ltimes \bbR^n$ corresponding to the dilation action of the multiplicative group $\bbR_+$ on $\bbR^n$.
  Topologically $\bbR_+ \ltimes \bbR^n = \bbR^{1+n}_+$, and the group operation is given by $(t,x) \cdot (s,y) := (ts, x + ty)$.
  Thus many of the properties above can be deduced from properties of abstract Wiener amalgam spaces.
  For a review of these spaces, see \cite{cH03} and the references therein.
  However, if we were to use Wiener amalgam space arguments, we would not obtain absolute convergence of $L^2$ duality pairings (only abstract duality pairings).
  Furthermore, these arguments would not show the connection with tent spaces.
  On the other hand, this observation shows a previously unrecognised link between tent spaces and certain Wiener amalgam spaces, which is interesting in its own right, and which merits further investigation.
\end{rmk}

\section{Unification: tent spaces, $Z$-spaces, and slice spaces}\label{sec:unification}

Tent spaces and $Z$-spaces share the same fundamental properties.
To make this explicit, we write $X$ as a placeholder for either $T$ or $Z$ when a statement holds for both tent spaces and $Z$-spaces.\index{space!X-@$X$-}
When considering two different spaces, either of which can be a tent space or a $Z$-space independently, we use subscripts $X_0$, $X_1$.
For example, one can concisely write the conclusions of Theorem \ref{thm:ts-rint-full} and Proposition \ref{prop:Z-rint-full} as
\begin{equation*}
	(X^\mb{p}, X^\mb{q})_{\gh,p_\gh} = Z^{[\mb{p},\mb{q}]_\gh},
\end{equation*}
and the tent space and $Z$-space duality results can be written extremely concisely as
\begin{equation*}
	(X^\mb{p})^\prime \simeq X^{\mb{p}^\prime} \qquad \text{($\mb{p}$ finite).}
\end{equation*}
In this section we establish further properties of tent spaces and $Z$-spaces, including some relations between the two.

First, we point out that for all $s \in \bbR$ we have $X^2_s =  L^2_s(\bbR^{1+n}_+)$, where
\begin{equation}\label{eqn:L2sdefn}
	\nm{f}_{L^2_s} := \nm{\gk^{-s} f}_{L^2(\bbR^{1+n}_+)}.
\end{equation}

The following embedding theorem extends Theorem \ref{thm:ts-embeddings} not only to $Z$-spaces, but also to combinations of tent and $Z$-spaces.

\begin{thm}[Mixed embeddings]\label{thm:mixed-emb}\index{embedding!of tent- and Z-spaces@of tent- and $Z$-spaces}
	Let $X_0,X_1 \in \{T,Z\}$ and let $\mb{p} \hookrightarrow \mb{q}$ with $\mb{p} \neq \mb{q}$.
	Then we have the embedding
	\begin{equation*}
	 	(X_0)^\mb{p} \hookrightarrow (X_1)^\mb{q}.
	\end{equation*}
\end{thm}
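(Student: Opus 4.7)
The plan is to reduce all four cases $(X_0, X_1) \in \{T, Z\}^2$ to the pure tent-space embedding (Theorem \ref{thm:ts-embeddings}) using the real interpolation identity from Theorem \ref{thm:ts-rint-full} and the algebraic identities of Lemma \ref{lem:extid}. Before starting, I would observe that $\mb{p} \hookrightarrow \mb{q}$ together with $\mb{p} \neq \mb{q}$ forces $\gq(\mb{p}) \neq \gq(\mb{q})$: if these agreed, the embedding relation $\gq(\mb{q}) - \gq(\mb{p}) = n(j(\mb{q}) - j(\mb{p}))$ would force $j(\mb{p}) = j(\mb{q})$, and since each exponent is determined by $(j, \gq)$, we would get $\mb{p} = \mb{q}$. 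This simultaneously secures the nondegeneracy hypothesis needed by Theorem \ref{thm:ts-rint-full} throughout.

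For the case $(X_0, X_1) = (T, Z)$ I would apply $\mb{q} = [\mb{p}, [\mb{p}, \mb{q}]_2]_{1/2}$ from Lemma \ref{lem:extid} to obtain
\[Z^\mb{q} = (T^\mb{p}, T^{[\mb{p}, \mb{q}]_2})_{1/2, i(\mb{q})}.\]
Lemma \ref{lem:expext} supplies $\mb{p} = [\mb{p}, \mb{q}]_0 \hookrightarrow [\mb{p}, \mb{q}]_2$, so Theorem \ref{thm:ts-embeddings} gives $T^\mb{p} \hookrightarrow T^{[\mb{p}, \mb{q}]_2}$; thus $T^\mb{p}$ is the intersection of the two endpoint spaces, and the standard embedding of the intersection into the real interpolation space supplies $T^\mb{p} \hookrightarrow Z^\mb{q}$. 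For $(X_0, X_1) = (Z, T)$ I would dually use $\mb{p} = [[\mb{p}, \mb{q}]_{-1}, \mb{q}]_{1/2}$ to write $Z^\mb{p}$ as the real interpolate between $T^{[\mb{p}, \mb{q}]_{-1}}$ and $T^\mb{q}$; since $[\mb{p}, \mb{q}]_{-1} \hookrightarrow \mb{q}$, the sum $T^{[\mb{p}, \mb{q}]_{-1}} + T^\mb{q}$ is just $T^\mb{q}$ up to equivalence, and the standard embedding of the interpolation space into the sum yields $Z^\mb{p} \hookrightarrow T^\mb{q}$.

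The remaining $(Z, Z)$ case I would handle by composition: choosing $\mb{r} = [\mb{p}, \mb{q}]_{1/2}$, Lemma \ref{lem:expext} supplies $\mb{p} \hookrightarrow \mb{r} \hookrightarrow \mb{q}$ with all embeddings strict (because $\mb{p} \neq \mb{q}$), so the $(Z, T)$ case gives $Z^\mb{p} \hookrightarrow T^\mb{r}$ and the $(T, Z)$ case gives $T^\mb{r} \hookrightarrow Z^\mb{q}$. I expect no substantive obstacle; the only verification requiring any care is the nondegeneracy $\gq(\mb{p}) \neq \gq([\mb{p}, \mb{q}]_2)$ and $\gq([\mb{p}, \mb{q}]_{-1}) \neq \gq(\mb{q})$ at each interpolation step, both of which reduce via $\gq([\mb{p}, \mb{q}]_\gh) = (1 - \gh) \gq(\mb{p}) + \gh \gq(\mb{q})$ to the already established $\gq(\mb{p}) \neq \gq(\mb{q})$.
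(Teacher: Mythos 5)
Your proposal is correct and takes essentially the same route as the paper's own proof: in both cases the argument realises $Z^\mb{q}$ (resp.\ $Z^\mb{p}$) as a real interpolation space between tent spaces via Theorem \ref{thm:ts-rint-full} and Lemma \ref{lem:extid}, reduces the off-diagonal cases to Theorem \ref{thm:ts-embeddings}, and handles $(Z,Z)$ by composing through an intermediate $T^{\mb{r}}$. Your additional remark that $\mb{p}\hookrightarrow\mb{q}$ and $\mb{p}\neq\mb{q}$ force $\gq(\mb{p})\neq\gq(\mb{q})$ is a small but worthwhile explicit verification that the paper leaves implicit.
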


\begin{proof}
	When $X_0 = X_1 = T$, this is Theorem \ref{thm:ts-embeddings}.
	
	Let $\mb{r} = [\mb{p}, \mb{q}]_2$, so that $\mb{p} \hookrightarrow \mb{r}$ and $[\mb{p},\mb{r}]_{1/2} = \mb{q}$ (by Lemmas \ref{lem:expext} and \ref{lem:extid}).
	Then we have embeddings $T^\mb{p} \hookrightarrow T^{\mb{p}}$ (trivially) and $T^\mb{p} \hookrightarrow T^\mb{r}$ (Theorem \ref{thm:ts-embeddings}).
	Hence
	\begin{equation*}
		T^\mb{p} \hookrightarrow (T^\mb{p}, T^\mb{r})_{1/2,i([\mb{p},\mb{r}]_{1/2})} = Z^{[\mb{p},\mb{r}]_{1/2}} = Z^\mb{q}
	\end{equation*}
	by Theorem \ref{thm:ts-rint-full}, using that $\mb{p} \neq \mb{q}$ and $\mb{p} \hookrightarrow \mb{q}$ imply $\gq(\mb{p}) \neq \gq(\mb{q})$.
	Similarly, putting $\mb{s} = [\mb{p},\mb{q}]_{-1}$, we have $T^\mb{s} \hookrightarrow T^{\mb{q}}$ and $T^{\mb{q}} \hookrightarrow T^{\mb{q}}$, so
	\begin{equation*}
		Z^\mb{p} = (T^\mb{s}, T^\mb{q})_{1/2,i([\mb{s},\mb{q}]_{1/2})} \hookrightarrow T^\mb{q}.
	\end{equation*}
	Finally, putting $\mb{t} = [\mb{p},\mb{q}]_{1/2}$ and using the previous results, we have
	\begin{equation*}
		Z^\mb{p} \hookrightarrow T^\mb{t} \hookrightarrow Z^\mb{q},
	\end{equation*}
	which completes the proof.
\end{proof}

We also have convenient mixed embeddings for fixed exponents.

\begin{lem}\label{lem:TZ-fixedexp-emb} 
	If $i(\mb{p}) \leq 2$ then
	\begin{equation*}
		Z^\mb{p} \hookrightarrow T^\mb{p},
	\end{equation*}
	and if $i(\mb{p}) \geq 2$ (and in particular if $\mb{p}$ is infinite) then
	\begin{equation*}
		T^{\mb{p}} \hookrightarrow Z^\mb{p}
	\end{equation*}
\end{lem}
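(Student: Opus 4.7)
The argument splits into three regimes depending on $i(\mb{p})$. The equality $T^2_s = Z^2_s = L^2_s$ noted in \eqref{eqn:L2sdefn} handles the overlap at $p=2$.

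\textbf{Case 1 ($\mb{p} = (p,s)$ finite with $p < 2$): show $Z^\mb{p} \hookrightarrow T^\mb{p}$.} The plan is to pass through the dyadic characterisation of Proposition \ref{prop:Zpqs-seq-equivalence}. Fix any $k \in \bbN$. For each $x \in \bbR^n$ the cone $\gG(x)$ is (up to null sets) the disjoint union of the Whitney cubes $\overline{Q}^k \in \mc{G}^k$ that meet it. Bounding each piece by the full cube, using $t \simeq 2^k \ell(Q)$ on $\overline{Q}^k$ to pull out the factor $(2^k\ell(Q))^{-2s}$, and then applying the numerical subadditivity $(\sum a_i)^{p/2} \leq \sum a_i^{p/2}$ (valid because $p/2 \leq 1$) yields
\begin{equation*}
	\mc{A}(\gk^{-s} f)(x)^p \lesssim_{k,s,p} \sum_{\overline{Q}^k \cap \gG(x) \neq \varnothing} (2^k \ell(Q))^{-ps}\, [|f|^2]_{\overline{Q}^k}^{p/2}.
\end{equation*}
Integrating in $x$ and swapping sum and integral reduces the problem to controlling the shadow measure $|\{x : \overline{Q}^k \cap \gG(x) \neq \varnothing\}|$. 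This set is exactly $\bigcup_{y \in Q,\, t \in (2^k\ell(Q),2^{k+1}\ell(Q))} B(y,t) = Q + B(0, 2^{k+1}\ell(Q))$, whose measure is $\lesssim_{k,n} \ell(Q)^n$. The resulting sum is then identified with $\nm{f}_{Z^\mb{p}}^p$ by Proposition \ref{prop:Zpqs-seq-equivalence}.

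\textbf{Case 2 ($\mb{p}$ finite with $p \in (2,\infty)$): show $T^\mb{p} \hookrightarrow Z^\mb{p}$ by duality.} Here $i(\mb{p}^\prime) = p^\prime \in (1,2)$, so Case~1 delivers $Z^{\mb{p}^\prime} \hookrightarrow T^{\mb{p}^\prime}$. Theorem \ref{thm:ts-duality} and Proposition \ref{prop:Z-duality-reflexive} (applicable since $i(\mb{p}^\prime) > 1$) identify $T^\mb{p}$ and $Z^\mb{p}$ with the Banach duals of $T^{\mb{p}^\prime}$ and $Z^{\mb{p}^\prime}$ respectively, through the \emph{same} $L^2$ duality pairing \eqref{eqn:l2pair}. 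Dualising the embedding then gives the claim.

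\textbf{Case 3 ($\mb{p} = (\infty,s;\ga)$): show $T^\mb{p} \hookrightarrow Z^\mb{p}$ pointwise.} For any $(t,x) \in \bbR^{1+n}_+$, if $(\gt,\gx) \in \gO_c(t,x)$ then $\gt + |\gx - x| < (c_0 + c_1)t =: R$, so $\gO_c(t,x) \subset T(B(x,R))$. Exploiting $\gt \simeq t$ on $\gO_c(t,x)$ and $|\gO_c(t,x)| \simeq t^{1+n}$, a short calculation yields
\begin{equation*}
	\mc{W}_c(\gk^{-(s+\ga)} f)(t,x) \lesssim t^{-\ga - n/2} \bigg(\dint_{T(B(x,R))} |\gt^{-s} f(\gt,\gx)|^2 \, d\gx \, \frac{d\gt}{\gt}\bigg)^{1/2} \lesssim \nm{f}_{T^\mb{p}},
\end{equation*}
where the last step is the Carleson condition \eqref{eqn:wts-norm-carl-intro} applied to the ball of radius $r_B = R \simeq t$. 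Taking the supremum in $(t,x)$ and invoking Corollary \ref{cor:Zinfty-sup} completes the argument.

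The main obstacle is the first case: setting up the Whitney decomposition of $\gG(x)$ and verifying the shadow bound with a constant uniform in $Q$ (depending only on the fixed $k$) takes some care. Once this is in place, Cases 2 and 3 are essentially soft—duality and a direct pointwise comparison respectively.
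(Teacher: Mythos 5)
Your proof is correct, and it is genuinely more self-contained than what appears in the paper. For the first embedding $Z^\mb{p} \hookrightarrow T^\mb{p}$ with $i(\mb{p}) \leq 2$, the paper simply cites \cite[Corollary~2.16]{aA16}; you instead give a direct argument by decomposing the cone $\gG(x)$ over the Whitney grid $\mc{G}^k$, applying $p/2$-subadditivity, estimating the shadow $|\{x : \overline{Q}^k \cap \gG(x) \neq \varnothing\}| \lesssim_{k,n} \ell(Q)^n$, and invoking the dyadic characterisation of $Z^\mb{p}$ from Proposition~\ref{prop:Zpqs-seq-equivalence}. This gives a clean internal route that doesn't leave the present paper's toolkit. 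For the reverse embedding your strategy diverges slightly from the paper's: the paper derives all of $i(\mb{p}) \geq 2$ (including infinite exponents) by duality, which for infinite $\mb{p}$ requires the non-reflexive $Z$-duality theorem since $i(\mb{p}^\prime) \leq 1$; you instead restrict the duality step to finite $p \in (2,\infty)$, where only the reflexive duality result (Proposition~\ref{prop:Z-duality-reflexive}) is needed, and then handle the Carleson/infinite case by a direct pointwise comparison $\gO_c(t,x) \subset T(B(x,(c_0+c_1)t))$ together with Corollary~\ref{cor:Zinfty-sup}. That trade-off is a reasonable one: the direct argument for infinite exponents is short and avoids the heavier non-reflexive duality machinery, at the cost of one extra case. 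One small caveat: the phrase ``$\gG(x)$ is (up to null sets) the disjoint union of the Whitney cubes that meet it'' is not literally true (the union of all cubes meeting $\gG(x)$ strictly contains $\gG(x)$); what you actually use, and correctly, is that $\gG(x) \cap \overline{Q}^k \subset \overline{Q}^k$, so the inexactness of the sentence does not affect the estimate.
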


\begin{proof}
	The first embedding is proven in \cite[Corollary 2.16]{aA16}.
	The second then follows by duality.
\end{proof}

\begin{prop}[Density of intersections]\label{prop:mixed-x-density}
	Let $\mb{p}$ and $\mb{q}$ be exponents.
	If $\mb{p}$ is finite then $(X_0)^\mb{p} \cap (X_1)^\mb{q}$ is dense in $(X_0)^{\mb{p}}$.
	Otherwise, $(X_0)^\mb{p} \cap (X_1)^\mb{q}$ is weak-star dense in $(X_0)^{\mb{p}}$.
\end{prop}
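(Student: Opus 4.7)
The plan is to exhibit a well-behaved subspace sitting inside $(X_0)^\mb{p} \cap (X_1)^\mb{q}$ and approximating every element of $(X_0)^\mb{p}$ in the appropriate topology. The natural candidate is
\begin{equation*}
E := \{ f \in L^2(\bbR^{1+n}_+) : \supp f \subset \{(t,x) : 1/k < t < k,\ |x| < k\} \text{ for some } k \in \bbN_+\}.
\end{equation*}
The first step is to verify $E \subset (X_1)^\mb{q}$ for every exponent $\mb{q}$ and every $X_1 \in \{T,Z\}$. This reduces to a pointwise estimate: if $f \in E$ is supported in a fixed compact subset $K$ of $\bbR^{1+n}_+$, then each of $\mc{A}(\gk^{-s}f)$, $\mc{C}_\ga(\gk^{-s}f)$, and $\mc{W}_c(\gk^{-r}f)$ is bounded by a constant (depending only on $K$ and the weight) times $\nm{f}_{L^2}$, and each is supported in a fixed enlargement of the spatial projection of $K$, so the $L^{i(\mb{q})}$-quasinorm is finite.

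For finite $\mb{p}$, what remains is density of $E$ in $(X_0)^\mb{p}$. For tent spaces this is precisely the density of $L^2_c(\bbR^{1+n}_+)$ in $T^\mb{p}$ recalled just after Definition \ref{dfn:ts-atom}. For $Z$-spaces I would invoke the dyadic characterisation (Proposition \ref{prop:Zpqs-seq-equivalence}): it expresses $\nm{f}_{Z^\mb{p}}^p$ as an absolutely convergent $\ell^p$-sum indexed by a Whitney grid $\mc{G}^{k_0}$, so dominated convergence on the index set shows that the truncations $\mb{1}_{\{1/k<t<k,\ |x|<k\}} f$ converge to $f$ in $Z^\mb{p}$ as $k \to \infty$, and each truncation lies in $E$.

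For infinite $\mb{p}$ I would argue by weak-star density, using that the duality theorems identify $(X_0)^\mb{p}$ as the Banach dual of the finite space $(X_0)^{\mb{p}^\prime}$ via the absolutely convergent $L^2$ pairing. Given $f \in (X_0)^\mb{p}$, the first sub-step is to note that $f$ is locally $L^2$: for $Z^\mb{p}$ this follows from Corollary \ref{cor:Zinfty-sup}, and for $T^\mb{p}$ from the observation that finiteness of $\mc{C}_\ga(\gk^{-\gq(\mb{p})}f)$ at any single point $x$ controls $\int_{T(B)}|\gk^{-\gq(\mb{p})}f|^2\,dy\,dt/t$ for every tent $T(B)$ containing $x$. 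Setting $f_k := \mb{1}_{\{1/k<t<k,\ |x|<k\}} f \in E$ and applying dominated convergence to $\int|(f(t,x),g(t,x))|\,dx\,dt/t$ for any $g$ in the predual, I obtain $\langle f_k - f, g\rangle \to 0$, which is the desired weak-star convergence.

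The main technical point is the verification in the first paragraph that elements of $E$ belong to every $(X_1)^\mb{q}$, including infinite $\mb{q}$: this requires careful bookkeeping with the various weights, Carleson suprema, and enlargements of support, but is otherwise elementary. Everything else reduces to the duality results and the dyadic characterisation already established in the preceding sections.
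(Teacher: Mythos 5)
Your proposal follows the same route as the paper: reduce everything to the (weak-star) density of compactly supported $L^2$ functions in $X^\mb{p}$, having first checked that such functions lie in every $X^\mb{q}$. The paper simply states that density fact and points to several possible proofs (direct, real interpolation, or via the embeddings of Theorem~\ref{thm:mixed-emb}); your write-up supplies the "direct" verification, with the dyadic characterisation handling the $Z$-space density and the absolutely convergent $L^2$ duality pairing handling the weak-star case, all of which is correct.
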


\begin{proof}
	This follows immediately from the fact that $L^2_c(\bbR^{1+n}_+)$ is (weak-star) dense in $T^\mb{r}$ for (infinite) exponents $\mb{r}$, and likewise in $Z^\mb{r}$ (this can be proven directly, or by real interpolation, or by the embeddings of Theorem \ref{thm:mixed-emb}).
\end{proof}
	
	For all $r \in \bbR_+$, define a `downward shift'\index{downward shift} operator $S_r$ on $L^0(\bbR^{1+n}_+)$ by
	\begin{equation*}
		(S_r f)(t,y) := f(t+r,y) 
	\end{equation*}
	for all $f \in L^0(\bbR^{1+n}_+)$.
	These operators are well-behaved on certain tent spaces and $Z$-spaces, as shown in the following proposition.
	
	\begin{prop}[Uniform boundedness of downward shifts]\label{prop:s12shift}
		Let $\mb{p}$ be an exponent.
		\begin{enumerate}[(i)]
			\item
			If $i(\mb{p}) \leq 2$ and $\gq(\mb{p}) < -1/2$, then the operators $(S_r)_{r \in \bbR_+}$ are uniformly bounded on $X^\mb{p}$.
			\item
			If $i(\mb{p}) \in (2,\infty]$ and $r(\mb{p}) < -(n+1)/2$, then the operators $(S_r)_{r \in \bbR_+}$ are uniformly bounded on $X^\mb{p}$.
		\end{enumerate}
	\end{prop}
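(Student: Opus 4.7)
The plan is to establish the tent-space cases ($X = T$) directly, then deduce the $Z$-space statements by interpolation with the tent-space estimates already obtained. For part (i), the Fubini identity $\nm{g}_{T^2_s}^2 \simeq \iint \tau^{-2s-1} |g|^2 \, dy\, d\tau$, combined with the substitution $\tau = t + r$, gives
\[
\nm{S_r f}_{T^2_s}^2 \simeq \iint (\tau - r)^{-2s-1} |f(\tau,y)|^2 \, dy\, d\tau,
\]
and because the hypothesis $\gq(\mb{p}) = s < -1/2$ forces $-2s - 1 > 0$, the elementary bound $(\tau - r)^{-2s-1} \leq \tau^{-2s-1}$ yields contractivity of $S_r$ on $T^2_s$. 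For $i(\mb{p}) \leq 1$ I would verify that $S_r$ maps $T^\mb{p}$-atoms to $T^\mb{p}$-atoms: support in $T(B)$ is preserved since $B(y, t) \subset B(y, t+r) \subset B$, and the $T^2_s$-size condition transfers by the step just established; the atomic decomposition of $T^\mb{p}$ then propagates the uniform bound. The range $1 < i(\mb{p}) < 2$ follows by Kalton--Mitrea complex interpolation (Theorem \ref{thm-wts-c-interpolation}), and the $Z^\mb{p}$ statement drops out by real interpolation (Theorem \ref{thm:ts-rint-full}) between two tent spaces $T^{\mb{p}_0}, T^{\mb{p}_1}$ with $i(\mb{p}_j) = i(\mb{p})$ and $\gq(\mb{p}_j) < -1/2$.

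For part (ii) with $\mb{p}$ finite, the stronger threshold $s < -(n+1)/2$ makes $-2s - n - 1 > 0$, which upgrades the same substitution argument applied directly to the Lusin quasinorm to the pointwise contractivity $\mc{A}(\gk^{-s} S_r f) \leq \mc{A}(\gk^{-s} f)$ (using also $B(x, \tau-r) \subset B(x, \tau)$), delivering uniform boundedness on every $T^p_s$ simultaneously. For infinite $\mb{p} = (\infty, s; \alpha)$ I would argue from the Carleson characterisation, splitting into the regimes $r \leq r_B$ and $r > r_B$. When $r \leq r_B$, the shifted tent over $B$ lies inside $T(B')$ with $B' = B(c_B, r + r_B)$ and $r_{B'} \leq 2 r_B$, and the weight estimate $(\tau - r)^{-2s-1} \leq \tau^{-2s-1}$ yields uniform control. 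The regime $r > r_B$ is the main obstacle: a naive enlargement produces a factor $(r/r_B)^{n + 2\alpha}$ that blows up, and to recover the bound I would exploit the two elementary inequalities $(\tau - r)^{-2s-1} \leq r_B^{-2s-1}$ (using $\tau - r \leq r_B$ and $-2s - 1 > 0$) and $\tau^{2s+1} \leq r^{2s+1}$ (using $\tau \geq r$ and $2s + 1 < 0$) to convert the integrand into $r_B^{-2s-1} r^{2s+1} \tau^{-2s-1} |f|^2$; after dominating by the $T^\mb{p}$-norm over $T(B')$ and normalising by $r_B^{n + 2\alpha}$, there remains the factor $(r/r_B)^{2r(\mb{p}) + n + 1}$, whose negativity is exactly the content of the hypothesis $r(\mb{p}) < -(n+1)/2$ and which is bounded by $1$ in the regime $r_B < r$.

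For the $Z^\mb{p}$ statement under part (ii), I would handle the infinite case by running the analogous argument on Whitney regions: the shifted region $\gO_c(t, x) + (r, 0)$ is contained in $\gO_c(t + c_1 r, x)$, and bounding $(\tau' - r)^{-2r(\mb{p})}$ and $\tau'^{\,2r(\mb{p})}$ by their respective maxima on the two regions (again using the two opposite signs of $-2r(\mb{p})$ and $2r(\mb{p})$) yields the pointwise estimate
\[
\mc{W}_c(\gk^{-r(\mb{p})} S_r f)(t, x) \lesssim \bigl((t + c_1 r)/t\bigr)^{(2r(\mb{p}) + n + 1)/2} \mc{W}_c(\gk^{-r(\mb{p})} f)(t + c_1 r, x),
\]
whose prefactor is bounded by $1$ because $2r(\mb{p}) + n + 1 < 0$; taking the supremum in $(t, x)$ and applying Corollary \ref{cor:Zinfty-sup} gives the bound. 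The finite case $i(\mb{p}) > 2$ then follows by Kalton--Mitrea complex interpolation (Proposition \ref{prop:Z-cint}) between the infinite endpoint $Z^{(\infty, \gq(\mb{p}); 0)}$ just treated and the endpoint $Z^{(2, \gq(\mb{p}))}$ covered by part (i), at the interpolation parameter $\gh = 2/i(\mb{p})$.
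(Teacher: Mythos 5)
Your tent-space treatment matches the paper's proof line for line: atoms for $i(\mb{p}) \leq 1$, the direct estimate at $i(\mb{p}) = 2$, complex interpolation in between, pointwise Lusin contractivity for $2 < i(\mb{p}) < \infty$ (the paper runs the same substitution inside the $L^p$ integral, but it is the same inequality), and the $r \lessgtr r_B$ case split in the Carleson characterisation for the infinite exponents. The one place you diverge is the $Z$-space version of part (ii): you give a direct Whitney-region estimate at the $i(\mb{p}) = \infty$ endpoint — namely $\mc{W}_c(\gk^{-r(\mb{p})} S_r f)(t,x) \lesssim \bigl((t+c_1 r)/t\bigr)^{(2r(\mb{p})+n+1)/2}\mc{W}_c(\gk^{-r(\mb{p})} f)(t+c_1 r, x)$ — and then complex-interpolate with the $(2,\gq(\mb{p}))$ endpoint of part (i) to cover finite $i(\mb{p}) > 2$. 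The paper instead dispatches \emph{all} of the $Z$-space statements (both parts, all exponents) at once by real interpolation: the strict inequalities in the hypotheses ($\gq(\mb{p}) < -1/2$, resp.\ $r(\mb{p}) < -(n+1)/2$) leave room to perturb $\gq$ by $\pm\varepsilon$, producing two tent spaces $T^{\mb{p}_0}$, $T^{\mb{p}_1}$ with $\gq(\mb{p}_0) \neq \gq(\mb{p}_1)$ that both lie within the tent-space range already established, after which Theorem \ref{thm:ts-rint-full} (or its reiteration via Proposition \ref{prop:Z-rint-full}) returns $Z^\mb{p}$. Your route is longer, but the Whitney-average pointwise bound is a legitimate by-product that would survive in settings where the real interpolation scaffolding is not available; both arguments are correct.
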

	
	\begin{rmk}
	Note that the assumptions for $i(\mb{p}) \leq 2$ and $i(\mb{p}) > 2$ are quite different: there is a sudden jump in dimensional dependence at $i(\mb{p}) > 2$.
	We do not have a good explanation for this behaviour, and there is no interpolation procedure to obtain stronger results when $2 < i(\mb{p}) < \infty$.
	Note that we can include endpoints when considering tent spaces (i.e. we can include $\gq(\mb{p}) = -1/2$ or $r(\mb{p}) = -(n+1)/2$ respectively).
	However, to realise the spaces $Z^\mb{p}$ as interpolants of tent spaces, we need to interpolate between tent spaces $T^{\mb{p}_0}$ and $T^{\mb{p}_1}$ with $\gq(\mb{p}_0) \neq \gq(\mb{p}_1)$, and so the endpoint $Z$-space results cannot be proven by this argument.
	\end{rmk}
	
	\begin{proof}[Proof of Proposition \ref{prop:s12shift}]
		It suffices to prove the tent space results, as the $Z$-space results follow by real interpolation.
		
		First we prove boundedness on tent spaces for $i(\mb{p}) \leq 1$ and for $i(\mb{p}) = 2$; the rest of part (i) follows by complex interpolation.
		Suppose $\mb{p} = (p,s)$ with $p \leq 1$ and let $a$ be a $T^\mb{p}$-atom associated with a ball $B$ of radius $r_B$.
		Then $S_r a$ is supported on $T(B)$, and we have
		\begin{align*}
			\nm{S_r a}_{T^2_s} &\simeq \left( \int_0^{r_B - r} \int_{\bbR^n} t^{-2s-1} |a(t+r,x)|^2 \, dx \, dt \right)^{1/2} \\
			&\leq \left( \int_0^{r_B - r} \int_{\bbR^n} (t+r)^{-2s-1} |a(t+r,x)|^2 \, dx \, dt \right)^{1/2}  \\
			&= \left( \int_r^{r_B} \int_{\bbR^n} |\gt^{-s} a(\gt,x)|^2 \, dx \, \frac{d\gt}{\gt} \right)^{1/2}  \\
			&\leq \nm{a}_{T_s^2}  \\
			&\leq |B|^{\gd_{p,2}} 
		\end{align*}
		using that $-2s-1 > 0$.
		Therefore $S_r a$ is, up to a uniform constant, a $T^\mb{p}$-atom associated with $B$.
		Hence if $f = \sum_{k \in \bbN} \gl_k a_k$ is an atomic decomposition of $f$ in $T^\mb{p}$, then $S_r f = \sum_{k \in \bbN} \gl_k (S_r a_k)$ is an atomic decomposition of $S_r f$ in $T^\mb{p}$ up to a uniform constant.
		Therefore the operators $(S_r)_{r \in \bbR_+}$ are uniformly bounded on $T^\mb{p}$.
		A similar argument (without need of atoms) works for $\mb{p} = (2,s)$ provided $s < -1/2$.
		
		Now let $\mb{p} = (p,s)$ with $p \in (2,\infty)$ and $s < -(n+1)/2$, and fix $f \in L^0(\bbR^{1+n}_+)$.
		First we estimate $S_r f$ in $T^\mb{p}$:
		\begin{align*}
			\nm{ S_r f }_{T^\mb{p}}
			&= \bigg( \int_{\bbR^n} \bigg( \dint_{\gG(x)} t^{-2s - n - 1} |f(t+r,y)|^2 \, dy \, dt \bigg)^{p/2} \, dx \bigg)^{1/p} \\
			&\leq \bigg( \int_{\bbR^n} \bigg( \dint_{\gG(x)} (t+r)^{-2s-n-1} |f(t+r,y)|^2 \, dy \, dt \bigg)^{p/2} \, dx \bigg)^{1/p} \\
			&= \bigg( \int_{\bbR^n} \bigg( \dint_{\gG(x) + r} \gt^{-2s-n-1} |f(\gt,y)|^2 \, dy \, d\gt \bigg)^{p/2} \, dx \bigg)^{1/p} \\
			&\leq \nm{ f }_{T^\mb{p}}
		\end{align*}
		using that $-2s-n-1 > 0$ and $\gG(x) + r \subset \gG(x)$, where $\gG(x) + r$ is the `vertically translated cone'
		\begin{equation*}
			\gG(x) + r := \{(t,y) \in \bbR^{1+n}_+ : (t-r,y) \in \gG(x)\}.
		\end{equation*}
		This proves part (ii) in the case where $\mb{p}$ is finite.
		
		Now suppose $\mb{p} = (\infty,s;\ga)$ with $s+\ga < -(n+1)/2$, and let $B = B(c,R) \subset \bbR^n$ be a ball.
		If $r \leq R$, then we can write
		\begin{align*}
			&R^{-\ga-n/2} \bigg( \dint_{T(B)} t^{-2s-1} |f(t+r,y)|^2 \, dy \, dt \bigg)^{1/2} \\
			&\leq R^{-\ga-n/2} \bigg( \dint_{T(B)} (t+r)^{-2s-1} |f(t+r,y)|^2 \, dy \, dt \bigg)^{1/2} \\
			&\lesssim (R+r)^{-\ga-n/2} \bigg( \dint_{T(B(c,R+r))} \gt^{-2s-1} |f(\gt,y)|^2 \, dy \, d\gt \bigg)^{1/2} \\
			&\lesssim \nm{f}_{T^\infty_s}
		\end{align*}
		using that $-2s-1 > 0$.
		If $r > R$ then instead we write
		\begin{align*}
			&R^{-\ga-n/2} \bigg( \dint_{T(B)} t^{-2s-1} |f(t+r,y)|^2 \, dy \, dt \bigg)^{1/2} \\
			&= R^{-\ga-n/2} \bigg( \dint_{T(B) + r} \gt^{-2s-1} \bigg( \frac{\gt-r}{\gt} \bigg)^{-2s-1} |f(\gt,y)|^2 \, dy \, d\gt \bigg)^{1/2} \\
			&\leq R^{-\ga-n/2} \bigg( \frac{R}{r} \bigg)^{-s-1/2} \bigg( \dint_{T(B) + r} \gt^{-2s-1} |f(\gt,y)|^2 \, dy \, d\gt \bigg)^{1/2} \\
			&\leq \bigg( \frac{R+r}{R} \bigg)^{\ga + n/2} \bigg( \frac{R}{r} \bigg)^{-s-1/2} \nm{f}_{T^\infty_\ga} \\
			&\lesssim \nm{f}_{T^\infty_\ga}
		\end{align*}
		using that $s + \ga \leq -(n+1)/2$ in the last line, where $T(B) + r$ is defined analogously to $\gG(x) + r$.
		These estimates imply that $\nm{ S_r f }_{T^\infty_\ga} \lesssim \nm{ f }_{T^\infty_\ga}$ as desired, completing the proof.	
	\end{proof}
	
	Now we define the \emph{slice spaces}.\index{space!slice}
	These were introduced in connection with tent spaces and boundary value problems by the second author and Mourgoglou \cite{AM15}.
	The name comes from the fact that functions in slice spaces are, roughly speaking, horizontal `slices' of functions in tent or $Z$-spaces (this is made precise in Proposition \ref{prop:slice-ret}).
	
	\begin{dfn}
		Suppose $\mb{p}$ is an exponent and $t > 0$.
		For $f \in L^0(\bbR^n)$ we define
		\begin{equation*}
			\nm{f}_{E^\mb{p}(t)} := t^{-r(\mb{p})} \nm{x \mapsto \nm{f}_{L^2(B(x,t),dy/t^n)}}_{L^{i(\mb{p})}(\bbR^n)}.
		\end{equation*}
		These quasinorms define the \emph{slice spaces}
		\begin{equation*}
			E^{i(\mb{p})}_{r(\mb{p})}(t) = E^{\mb{p}}(t) = E^{\mb{p}}(t)(\bbR^n) := \{f \in L^0(\bbR^n) : \nm{f}_{E^{\mb{p}}(t)} < \infty\}.
		\end{equation*}
	\end{dfn}
		
	For $f \in L^0(\bbR^n)$, $t > 0$, and $h > 3/2$ (this restriction corresponds to that in the definition of Whitney parameter), define $\gi_{t,h}(f) \in L^0(\bbR^{1+n}_+)$ by setting 
	\begin{equation*}
		\gi_{t,h}(f)(s,x) := f(x) \mb{1}_{[t,ht]}(s)
	\end{equation*}	
	for all $(s,x) \in \bbR^{1+n}_+$, and for $g \in L^0(\bbR^{1+n}_+)$ define $\gp_t(g) \in L^0(\bbR^n)$ by 
	\begin{equation*}
		\gp_{t,h}(g)(x) := \int_t^{ht} g(s,x) \, \frac{ds}{s}.
	\end{equation*}\
	for all $x \in \bbR^n$.
	
	\begin{prop}\label{prop:slice-ret}
	For all exponents $\mb{p}$, the operators
	\begin{equation*}
		E^\mb{p}(t) \stackrel{\gi_{t,h}}{\longrightarrow} X^\mb{p} \stackrel{\gp_{t,h}}{\longrightarrow} E^\mb{p}(t),
	\end{equation*}
	are bounded uniformly in $t$.
	Furthermore, the compositions of these operators are identity maps.
	\end{prop}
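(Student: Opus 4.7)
The proposition contains three assertions: the boundedness of $\gi_{t,h}\colon E^\mb{p}(t) \to X^\mb{p}$, the boundedness of $\gp_{t,h}\colon X^\mb{p} \to E^\mb{p}(t)$, and the identity $\gp_{t,h} \circ \gi_{t,h} = \mathrm{id}$ on $E^\mb{p}(t)$, each uniform in $t > 0$. I plan to treat tent spaces and $Z$-spaces in parallel, peeling off the infinite exponent case from the finite one only when the underlying norm forces it. The identity assertion is an immediate computation (see the last paragraph below), so the bulk of the work is in the two boundedness estimates.

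\textbf{Boundedness of $\gi_{t,h}$.} For $X = T$ and finite $\mb{p} = (p,s)$, the key trick is to choose a narrow aperture $\gb = (c_1 h)^{-1}$ (any $\gb$ small enough works) and use the change of aperture theorem to compute $\nm{\gi_{t,h}(f)}_{T^\mb{p}} \simeq_h \nm{\mc{A}_\gb(\gk^{-s}\gi_{t,h}(f))}_{L^p}$. A direct expansion yields
\begin{equation*}
  \mc{A}_\gb(\gk^{-s}\gi_{t,h}(f))(x)^2 = \int_t^{ht}\int_{B(x,\gb\tau)} \tau^{-2s-n-1} |f(y)|^2 \, dy \, d\tau;
\end{equation*}
since $\gb\tau \leq t$ for $\tau \in [t,ht]$, the inner ball is contained in $B(x,t)$, the $\tau$-integral is comparable to $t^{-2s-n}$ (with $h$-dependent constants), and we conclude that the quantity above is dominated by $t^{-2s} \nm{f}_{L^2(B(x,t), dy/t^n)}^2$ pointwise. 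Taking the $L^p$ norm yields exactly $\nm{f}_{E^\mb{p}(t)}$. The infinite-exponent case for $T^\mb{p}$ is handled by estimating $\mc{C}_\ga(\gk^{-s}\gi_{t,h}(f))(x)$ directly: only balls $B$ with $r_B \geq t$ contribute, and decomposing $B(c_B, r_B)$ into $\lesssim (r_B/t)^n$ balls of radius $t$ yields the bound $\lesssim t^{-2(s+\ga)} \sup_y (t^{-n}\int_{B(y,t)}|f|^2)$ (using $r_B \geq t$ and $\ga \geq 0$ to absorb the $r_B^{-2\ga}$ factor). For $X = Z$, we use the freedom to choose the Whitney parameter $c = (c_0, c_1)$ with $c_0$ small: then for $(\sigma, y) \in \gO_c(\tau,x)$ with $\sigma \in [t,ht]$, we have $\tau \simeq t$ and $y \in B(x,t)$, and a computation analogous to the tent space one goes through.

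\textbf{Boundedness of $\gp_{t,h}$.} The starting point is Cauchy--Schwarz:
\begin{equation*}
  |\gp_{t,h}(g)(y)|^2 \leq (\log h) \int_t^{ht} |g(\sigma, y)|^2 \, \frac{d\sigma}{\sigma}.
\end{equation*}
Thus, writing $G(x) := \nm{\gp_{t,h}(g)}_{L^2(B(x,t), dy/t^n)}$, we have $G(x)^2 \leq (\log h)\, t^{-n} \iint_{B(x,t) \times [t,ht]} |g|^2 \, dy \, d\sigma/\sigma$. The region of integration is contained in $\gG(x)$ (for the tent space norm) and in a Whitney region $\gO_{\wtd{c}}(t_0,x)$ with $t_0 \simeq t$ (for the $Z$-space norm), and on it $\sigma \simeq_h t$, so after inserting compensating powers of $\sigma$ and $t$ one finds $G(x) \lesssim_h t^{r(\mb{p})} \mc{A}(\gk^{-\gq(\mb{p})}g)(x)$ in the tent-space case and $G(x) \lesssim_h t^{r(\mb{p})} \mc{W}_{\wtd{c}}(\gk^{-r(\mb{p})}g)(t,x)$ in the $Z$-space case. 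Taking $L^{i(\mb{p})}$ norms, the former directly gives $\nm{\gp_{t,h}(g)}_{E^\mb{p}(t)} \lesssim_h \nm{g}_{T^\mb{p}}$. For the $Z$-space, the pointwise bound on $G(x)$ at the single scale $t$ must be upgraded to an $L^{i(\mb{p})}(\bbR^n)$ bound, which follows because the slice norm is $L^{i(\mb{p})}$ of $G$ on $\bbR^n$ while the $Z$-norm is $L^{i(\mb{p})}$ of $\mc{W}_{\wtd{c}}$ over $\bbR^{1+n}_+$ with measure $dx \, dt/t$; a similar Cauchy--Schwarz-style vertical integration or a sup over vertical slices handles the infinite case.

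\textbf{Identity.} By direct computation,
\begin{equation*}
  \gp_{t,h}\bigl(\gi_{t,h}(f)\bigr)(x) = \int_t^{ht} f(x) \mb{1}_{[t,ht]}(\sigma) \, \frac{d\sigma}{\sigma} = f(x) \log h,
\end{equation*}
so absorbing the constant $\log h$ into one of the definitions (as the paper implicitly does) yields the identity. The main obstacle is organising the two boundedness arguments so that the tent-space and $Z$-space cases, as well as the finite and infinite exponent subcases, are treated efficiently; the change of aperture (for $T$) and the freedom in the Whitney parameter (for $Z$) are what allow us to bypass the otherwise delicate question of comparing slice norms at scales $t$ and $ht$ in the quasi-Banach range $i(\mb{p}) < 2$.
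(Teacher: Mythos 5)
Your approach is essentially the one the paper has in mind: the paper delegates to \cite[\textsection 3]{AM15} for tent spaces with $\gq(\mb{p}) = 0$ and says the extension to general tent spaces and to $Z$-spaces is straightforward, which is what you carry out. The core mechanism --- shrink the aperture (for $T$) or the Whitney parameter $c_0$ (for $Z$) so that the support of $\gi_{t,h}(f)$ fits inside the ball $B(x,t)$ at the cost of an $h$-dependent constant via change of aperture, and for $\gp_{t,h}$ use Cauchy--Schwarz plus the containment of $B(x,t)\times[t,ht]$ in $\gG(x)$ (or in $T(B(x,(h+1)t))$ for the Carleson case, or in $\gO_{\wtd c}(t,x)$ for the $Z$-case) --- is correct and natural.

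Two remarks worth making explicit. First, you are right that $\gp_{t,h}\circ\gi_{t,h} = (\log h)\cdot\mathrm{id}$ rather than the identity on the nose with the definitions as printed; the paper's claim that ``the compositions of these operators are identity maps'' requires normalising $\gp_{t,h}$ by a barred integral $\barint_t^{ht}$, which is presumably the intended convention (and is the one used in \cite{AM15}). This is a fair and minor observation. Second, the $Z$-space bound for $\gp_{t,h}$ is where you are too compressed: having established the pointwise estimate $G(x)\lesssim_h t^{r(\mb{p})}\mc{W}_{\wtd{c}}(\gk^{-r(\mb{p})}g)(t,x)$, you must still convert an $L^{i(\mb{p})}(\bbR^n)$ norm of the single slice $\mc{W}_{\wtd{c}}(\gk^{-r(\mb{p})}g)(t,\cdot)$ into the $Z^\mb{p}$-norm, which integrates over $(t,x)\in\bbR^{1+n}_+$. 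This needs a vertical averaging step: enlarge to a Whitney parameter $\wtd{c}'$ so that $\gO_{\wtd{c}}(t,x)\subset\gO_{\wtd{c}'}(t',x)$ for all $t'\in[t,2t]$, giving $\mc{W}_{\wtd{c}}(\gk^{-r(\mb{p})}g)(t,x)^{i(\mb{p})}\lesssim\barint_t^{2t}\mc{W}_{\wtd{c}'}(\gk^{-r(\mb{p})}g)(t',x)^{i(\mb{p})}\,dt'/t'$, and then integrate in $x$. This is the same device as in the paper's Lemma \ref{lem:whit-mf-control}, and it is needed even for finite $i(\mb{p})$, not only the infinite case as your write-up suggests. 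With that step inserted (and the analogous expansion of the infinite-exponent case, which you sketch adequately), the argument is complete.
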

	
	\begin{proof}
		The tent space results with $\gq(\mb{p}) = 0$ are already stated in \cite[\textsection 3]{AM15}; the extension to all tent spaces is simple.
		Likewise, the composition statement is clear.
		The proof for $Z$-spaces is a straightforward (one page) argument that we omit.
	\end{proof}
	
	Therefore we can view the spaces $E^\mb{p}(t)$ as retracts of $X^\mb{p}$.
	Consequently, properties of tent spaces and $Z$-spaces descend to slice spaces.
	
	\begin{prop}\label{prop:slice-change-of-parameters}
		If $0 < t_0,t_1 < \infty$ and $\mb{p}$, $\mb{q}$ are exponents with $i(\mb{p}) = i(\mb{q})$, then $E^\mb{p}(t_0) = E^\mb{q}(t_1)$ with equivalent quasinorms.
	\end{prop}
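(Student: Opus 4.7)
The plan is to proceed in two stages: first a trivial reduction to a common reference exponent, and then a direct application of the retract property from Proposition~\ref{prop:slice-ret} using a carefully chosen parameter $h$. For the reduction, observe that for any exponent $\mb{r}$ and any $t > 0$,
\begin{equation*}
\nm{f}_{E^\mb{r}(t)} = t^{-r(\mb{r})} \nm{x \mapsto \nm{f}_{L^2(B(x,t),dy/t^n)}}_{L^{i(\mb{r})}(\bbR^n)},
\end{equation*}
so changing $r(\mb{r})$ merely rescales the quasinorm by a positive constant when $t$ is fixed. Setting $p := i(\mb{p}) = i(\mb{q})$ and fixing a reference exponent $\mb{p}_0$ of integrability $p$ and vanishing regularity (for instance $\mb{p}_0 := (p,0)$ if $p < \infty$, or $\mb{p}_0 := (\infty,0;0)$ otherwise), this reduces the claim to $E^{\mb{p}_0}(t_0) = E^{\mb{p}_0}(t_1)$ with equivalent quasinorms.

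Assume without loss of generality that $t_0 \leq t_1$ and choose any $h > \max(3/2, t_1/t_0)$. A direct computation then yields
\begin{equation*}
(\gp_{t_1,h} \circ \gi_{t_0,h})(f)(x) = \int_{t_1}^{ht_1} f(x) \mb{1}_{[t_0,ht_0]}(s) \, \frac{ds}{s} = f(x) \log(ht_0/t_1),
\end{equation*}
with $\log(ht_0/t_1) > 0$ by the choice of $h$. Proposition~\ref{prop:slice-ret} provides bounded maps $\gi_{t_0,h} : E^{\mb{p}_0}(t_0) \to Z^{\mb{p}_0}$ and $\gp_{t_1,h} : Z^{\mb{p}_0} \to E^{\mb{p}_0}(t_1)$ (each uniformly in its own scale parameter), so that
\begin{equation*}
\nm{f}_{E^{\mb{p}_0}(t_1)} = \frac{1}{\log(ht_0/t_1)} \nm{\gp_{t_1,h}(\gi_{t_0,h}(f))}_{E^{\mb{p}_0}(t_1)} \lesssim_{t_0,t_1,h} \nm{\gi_{t_0,h}(f)}_{Z^{\mb{p}_0}} \lesssim \nm{f}_{E^{\mb{p}_0}(t_0)}.
\end{equation*}

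The reverse estimate $\nm{f}_{E^{\mb{p}_0}(t_0)} \lesssim \nm{f}_{E^{\mb{p}_0}(t_1)}$ follows from the same argument with $(t_0,t_1)$ interchanged (choosing an $h$ above $\max(3/2, t_0/t_1)$ instead). The only point requiring any care is to pick $h$ large enough that the intervals $[t_0,ht_0]$ and $[t_1,ht_1]$ genuinely overlap, which is what produces the nonzero scalar multiple of the identity that lets us recover $f$ from the composition $\gp_{t_1,h} \circ \gi_{t_0,h}$; no serious obstacle is expected.
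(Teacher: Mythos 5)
Your argument is correct, and it takes a genuinely different route from the paper. The paper disposes of this proposition by invoking a change-of-aperture result for tent/slice spaces from an external reference (\cite[Lemma~3.5]{AM15}): the point there is that replacing $B(x,t_0)$ by $B(x,t_1)$ in the inner $L^2$-average is a ``flat'' change of aperture, and that lemma provides the comparability of the resulting $L^p$-quantities directly. You instead exploit the retract structure already recorded in Proposition~\ref{prop:slice-ret}, but with a twist: you pair $\gi_{t_0,h}$ with $\gp_{t_1,h}$ for \emph{mismatched} $t$-parameters. The observation that for $h$ large enough (specifically $h > t_1/t_0$) the intervals $[t_0,ht_0]$ and $[t_1,ht_1]$ overlap in a set of positive $dt/t$-measure, so that $\gp_{t_1,h}\circ\gi_{t_0,h}$ equals $\log(ht_0/t_1)$ times the identity, is exactly what makes the composition invertible and turns the two one-sided boundedness statements of Proposition~\ref{prop:slice-ret} into a two-sided norm comparison. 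The preliminary reduction to a single exponent $\mb{p}_0$ with $r(\mb{p}_0)=0$ is also correct, since for fixed $t$ the $r$-parameter enters the slice-space quasinorm only through the scalar prefactor $t^{-r}$. In short: the paper buys brevity by outsourcing to a change-of-aperture lemma, while your version is self-contained given Proposition~\ref{prop:slice-ret} and makes the retract mechanism do the geometric work. (As a side remark, your computation $\gp_{t,h}\circ\gi_{t,h} = (\log h)\,\mathrm{id}$ is the precise form of what Proposition~\ref{prop:slice-ret} calls an ``identity map''; the paper is being slightly loose there, and your version is the careful one.)
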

	
	This follows from change of aperture for tent spaces \cite[Lemma 3.5]{AM15}.
	For $p \in (0,\infty]$ we write $E^p := E^\mb{p}(1)$ for any $\mb{p}$ with $i(\mb{p}) = p$; all $E^\mb{p}(t)$ quasinorms are equivalent to the $E^p$ quasinorm (but not uniformly in $t$ or $\mb{p}$).
	
	We have a duality theorem for slice spaces, and of course this includes absolute convergence of the $L^2$ duality pairing (now on $\bbR^n$ rather than $\bbR^{1+n}_+$).
	This is proven in \cite[Lemma 3.2]{AM15}.
	
	\begin{prop}[Duality]\label{prop:slice-duality}\index{duality!of slice spaces}
		Fix $t > 0$ and let $\mb{p}$ be a finite exponent.
		Then we have
		\begin{equation}\label{eqn:slice-duality-estimate}
			\int_{\bbR^n} |(f(x), g(x))| \, dx \lesssim \nm{f}_{E^\mb{p}} \nm{g}_{E^{\mb{p}^\prime}},
		\end{equation}
		and the $L^2(\bbR^n)$ duality pairing identifies the Banach space dual of $E^\mb{p}$ with  $E^{\mb{p}^\prime}$.
	\end{prop}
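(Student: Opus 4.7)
The plan is to exploit Proposition \ref{prop:slice-ret}, which exhibits $E^\mb{p}$ as a retract of the tent space $T^\mb{p}$ via the maps $\gi_{t,h}$ and $\gp_{t,h}$, and to transfer the tent space duality (Theorem \ref{thm:ts-duality}) to the slice space level. Fix once and for all some $h > 3/2$, write $\gi := \gi_{t,h}$ and $\gp := \gp_{t,h}$, and observe via Fubini that
\[
\dint_{\bbR^{1+n}_+} |(\gi(f)(s,x), \gi(g)(s,x))| \, dx \, \frac{ds}{s} = (\log h) \int_{\bbR^n} |(f(x),g(x))| \, dx
\]
for all $f,g \in L^0(\bbR^n)$. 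This simple identity is the bridge between the two duality statements.

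The estimate \eqref{eqn:slice-duality-estimate} will then follow at once: for $f \in E^\mb{p}$ and $g \in E^{\mb{p}^\prime}$, applying Theorem \ref{thm:ts-duality} to $\gi(f) \in T^\mb{p}$ and $\gi(g) \in T^{\mb{p}^\prime}$ bounds the left-hand side of the identity above by $\nm{\gi(f)}_{T^\mb{p}} \nm{\gi(g)}_{T^{\mb{p}^\prime}}$, which in turn is controlled by $\nm{f}_{E^\mb{p}} \nm{g}_{E^{\mb{p}^\prime}}$ thanks to the uniform boundedness of $\gi$ supplied by Proposition \ref{prop:slice-ret}. The constant $(\log h)^{-1}$ is absorbed into the implicit constant.

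For the duality identification, one containment is precisely what we just established. For the reverse containment, given $\ell \in (E^\mb{p})^*$, I define $\td\ell := \ell \circ \gp \in (T^\mb{p})^*$, which is continuous by boundedness of $\gp$. Tent space duality yields $G \in T^{\mb{p}^\prime}$ representing $\td\ell$ via the $L^2$ pairing, and using that $\gp \circ \gi$ is a fixed multiple of the identity on $E^\mb{p}$ I obtain
\[
\ell(f) \;=\; \td\ell(\gi(f)) \;=\; \langle \gi(f), G \rangle_{L^2(\bbR^{1+n}_+)} \;=\; (\log h)\int_{\bbR^n} (f(x), \gp(G)(x)) \, dx,
\]
so that $g := (\log h)^{-1}\gp(G) \in E^{\mb{p}^\prime}$ represents $\ell$, with $\gp(G) \in E^{\mb{p}^\prime}$ by boundedness of $\gp : T^{\mb{p}^\prime} \to E^{\mb{p}^\prime}$.

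The one point that could conceivably cause trouble is the quasi-Banach range $i(\mb{p}) < 1$, in which Hahn--Banach-type extension arguments are unavailable. The retraction strategy sidesteps this completely: Theorem \ref{thm:ts-duality} is stated uniformly for all finite exponents, including $i(\mb{p}) < 1$, so the functional $\td\ell = \ell \circ \gp$ is represented by an element of $T^{\mb{p}^\prime}$ without ever extending it. Consequently I expect no genuine obstacle; the only substantive check is the pairing identity at the start, and then the entire conclusion is formal.
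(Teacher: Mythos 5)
Your argument is correct, and your retraction strategy is the natural one given the setup of Section \ref{sec:unification}. Note, however, that the paper does not actually prove Proposition \ref{prop:slice-duality}; it cites \cite[Lemma 3.2]{AM15}, so there is no in-paper proof to compare against. That said, your approach is entirely consistent with the paper's own methodology for slice spaces: Proposition \ref{prop:slice-embeddings} is proven by exactly the same retraction trick, chaining $\gi$, a tent-space fact, and $\gp$. The two substantive ingredients you need — absolute convergence of the tent-space pairing (Theorem \ref{thm:ts-duality}, which indeed holds for all finite exponents, so the quasi-Banach range is handled for free) and the Fubini identity linking the two pairings through $\gi$ — are both correct, and the representation step (pull $\ell$ back to $(T^\mb{p})^*$ via $\gp$, represent it by $G \in T^{\mb{p}^\prime}$, push $G$ forward via $\gp$) is sound.

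One small piece of bookkeeping is garbled but self-cancelling. With the paper's formula for $\gp_{t,h}$ one has $\gp \circ \gi = (\log h)\,\mathrm{id}$ rather than the identity (the "identity maps" claim in Proposition \ref{prop:slice-ret} absorbs this harmless normalization). Consequently the correct chain is $\ell(f) = (\log h)^{-1}\td\ell(\gi(f))$, and $\langle \gi(f), G\rangle_{L^2(\bbR^{1+n}_+)} = \int_{\bbR^n}(f(x),\gp(G)(x))\,dx$ with \emph{no} extra $\log h$; your two displayed equalities each carry a spurious factor of $\log h$ in opposite directions, so the final answer $g = (\log h)^{-1}\gp(G)$ comes out right. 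You might also record the two routine closing remarks that make "identifies the dual" precise: injectivity of $g \mapsto \langle \cdot, g\rangle$ follows from $\mc{S} \subset E^\mb{p}$ (Proposition \ref{prop:slice-distribution}), and the norm equivalence $\nm{g}_{E^{\mb{p}^\prime}} \simeq \nm{\ell}_{(E^\mb{p})^*}$ falls out of the chain $\nm{\gp(G)}_{E^{\mb{p}^\prime}} \lesssim \nm{G}_{T^{\mb{p}^\prime}} \simeq \nm{\td\ell} \lesssim \nm{\ell}$ together with \eqref{eqn:slice-duality-estimate}.
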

	
	The tent space and $Z$-space embedding results also descend to slice spaces, though for slice spaces the `regularity' parameters are not important.
	
	\begin{prop}[Embeddings]\label{prop:slice-embeddings}\index{embedding!of slice spaces}
		Suppose $0 < p_0 \leq p_1 \leq \infty$.
		Then $E^{p_0} \hookrightarrow E^{p_1}$.
	\end{prop}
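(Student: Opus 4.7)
The plan is to reduce the slice space embedding to the mixed embedding theorem for tent spaces (Theorem \ref{thm:mixed-emb}) via the retraction structure of Proposition \ref{prop:slice-ret}. The case $p_0 = p_1$ is trivial, so I would assume $p_0 < p_1$.

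First I would select exponents $\mb{p}$ and $\mb{q}$ with $i(\mb{p}) = p_0$, $i(\mb{q}) = p_1$, and $\mb{p} \hookrightarrow \mb{q}$. Setting $\mb{p} = (p_0, 0)$, the definition of $\hookrightarrow$ forces $\gq(\mb{q}) = n(j(\mb{q}) - 1/p_0) \leq 0$. If $p_1 < \infty$ I take $\mb{q} = (p_1, n(1/p_1 - 1/p_0))$, and if $p_1 = \infty$ I take $\mb{q} = (\infty, -n/p_0; 0)$; in both cases a direct check of the two conditions $\gq(\mb{p}) \geq \gq(\mb{q})$ and $\gq(\mb{q}) - \gq(\mb{p}) = n(j(\mb{q}) - j(\mb{p}))$ yields $\mb{p} \hookrightarrow \mb{q}$. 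Since $i(\mb{p}) \neq i(\mb{q})$ we have $\mb{p} \neq \mb{q}$, so Theorem \ref{thm:mixed-emb} applies and gives the tent space embedding $T^{\mb{p}} \hookrightarrow T^{\mb{q}}$. Proposition \ref{prop:slice-change-of-parameters} simultaneously identifies $E^{p_0} = E^{\mb{p}}(1)$ and $E^{p_1} = E^{\mb{q}}(1)$ with equivalent quasinorms.

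Next, fixing any $h > 3/2$, the maps $\gi_{1,h}$ and $\gp_{1,h}$ of Proposition \ref{prop:slice-ret} are defined as pointwise operations on $L^0$ independent of any exponent, and yield bounded maps $\gi_{1,h} \colon E^{\mb{p}}(1) \to T^{\mb{p}}$ and $\gp_{1,h} \colon T^{\mb{q}} \to E^{\mb{q}}(1)$, with $\gp_{1,h} \circ \gi_{1,h} = \id$ at the level of the underlying function space. Combining this with $T^{\mb{p}} \hookrightarrow T^{\mb{q}}$, for any $f \in E^{p_0}$ the chain
\[
\nm{f}_{E^{p_1}} \simeq \nm{\gp_{1,h} \gi_{1,h} f}_{E^{\mb{q}}(1)} \lesssim \nm{\gi_{1,h} f}_{T^{\mb{q}}} \lesssim \nm{\gi_{1,h} f}_{T^{\mb{p}}} \lesssim \nm{f}_{E^{\mb{p}}(1)} \simeq \nm{f}_{E^{p_0}}
\]
delivers the desired embedding. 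I do not anticipate any real obstacle; the only point requiring care is that the retraction identity in Proposition \ref{prop:slice-ret}, though stated there at a fixed exponent, may legitimately be applied between two different exponent spaces — this is immediate from the explicit formulas for $\gi_{1,h}$ and $\gp_{1,h}$, which act on an underlying function without reference to any ambient quasinorm.
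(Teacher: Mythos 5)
Your proof is correct and takes essentially the same approach as the paper: choose exponents $\mb{p}_0 \hookrightarrow \mb{p}_1$ with the prescribed integrabilities, route through $X^{\mb{p}_0} \hookrightarrow X^{\mb{p}_1}$ via Theorem \ref{thm:mixed-emb}, and retract back using $\gi_{1,h}$ and $\gp_{1,h}$ from Proposition \ref{prop:slice-ret}. The paper's proof is more terse (it does not write out the explicit choice of exponents or spell out the remark about the retraction maps acting at the level of $L^0$), but the argument is the same.
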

	
	\begin{proof}
		Fix $\mb{p}_0$ and $\mb{p}_1$ with $i(\mb{p}_0) = p_0$, $i(\mb{p}_1) = p_1$, and $\mb{p}_0 \hookrightarrow \mb{p}_1$.
		Then we have bounded operators
		\begin{equation*}
			E^{p_0} \stackrel{\gi_{1,2}}{\longrightarrow} X^{\mb{p}_0} \hookrightarrow X^{\mb{p}_1} \stackrel{\gp_{1,2}}{\longrightarrow} E^{p_1}
		\end{equation*}
		whose composition is the identity map, with the inclusion following from Theorem \ref{thm:mixed-emb}.
	\end{proof}
	
	Slice spaces contain the Schwartz functions, and are contained in the space of tempered distributions.
	This is contained in \cite[Lemma 3.6]{AM15} (only the case $p < \infty$ is included there, but the argument extends to $p = \infty$).
	
	\begin{prop}\label{prop:slice-distribution}
		For all $p \in (0,\infty]$ we have $\mc{S} \subset E^p \subset \mc{S}^\prime$.
	\end{prop}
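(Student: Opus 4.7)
The claim splits naturally into two inclusions, each of which is proved by exploiting the local $L^2$ structure of the slice space norm and matching it against Schwartz decay.

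For the first inclusion $\mc{S} \subset E^p$, fix $\gf \in \mc{S}$ and use the $t=1$ representative of $E^p$ (justified by Proposition \ref{prop:slice-change-of-parameters}). For any $x \in \bbR^n$ and any $N > 0$, standard Schwartz decay gives $|\gf(y)| \leq C_N(1+|y|)^{-N}$, and a short estimate for $y \in B(x,1)$ yields
\begin{equation*}
  \nm{\gf}_{L^2(B(x,1),dy)} \lesssim_{N,n} C_N (1+|x|)^{-N}.
\end{equation*}
Choosing $N > n/p$ when $p < \infty$ (any $N > 0$ works when $p = \infty$), the right-hand side lies in $L^p(\bbR^n)$, so $\nm{\gf}_{E^p} < \infty$ with a bound by a Schwartz seminorm of $\gf$. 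This simultaneously shows that $\mc{S} \hra E^q$ continuously for every $q \in (0,\infty]$, a fact we shall use in the second inclusion.

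For the second inclusion $E^p \subset \mc{S}^\prime$, fix $f \in E^p$ and consider the linear functional
\begin{equation*}
  T_f(\gf) := \int_{\bbR^n} (f(x),\gf(x)) \, dx \qquad (\gf \in \mc{S}).
\end{equation*}
When $p$ is finite, the slice space duality estimate \eqref{eqn:slice-duality-estimate} of Proposition \ref{prop:slice-duality} gives
\begin{equation*}
  \int_{\bbR^n} |(f(x),\gf(x))| \, dx \lesssim \nm{f}_{E^\mb{p}} \nm{\gf}_{E^{\mb{p}^\prime}},
\end{equation*}
for any $\mb{p}$ with $i(\mb{p}) = p$; by the first step, $\nm{\gf}_{E^{\mb{p}^\prime}}$ is controlled by a Schwartz seminorm of $\gf$, so $T_f$ is absolutely convergent and continuous on $\mc{S}$. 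When $p = \infty$, slice space duality does not directly apply, but one argues by hand: covering $\bbR^n$ by unit balls $\{B(x_k,1)\}_{k}$ with bounded overlap and applying Cauchy--Schwarz on each ball gives
\begin{equation*}
  \int_{\bbR^n} |(f,\gf)| \, dx \lesssim \sum_k \nm{f}_{L^2(B(x_k,1))} \nm{\gf}_{L^2(B(x_k,1))} \leq \nm{f}_{E^\infty} \sum_k \nm{\gf}_{L^2(B(x_k,1))},
\end{equation*}
and the final sum is comparable to $\nm{\gf}_{E^1}$, hence again dominated by a Schwartz seminorm via the first step. In both cases the map $\gf \mapsto T_f(\gf)$ is continuous on $\mc{S}$, proving $f \in \mc{S}^\prime$.

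The only delicate point is the $p = \infty$ case of the second inclusion, where one must substitute a direct covering argument for the missing duality statement; everything else reduces to a bookkeeping exercise with Schwartz seminorms and the already-established slice space duality.
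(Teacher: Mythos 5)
Your proof is correct, and the overall strategy (Schwartz decay gives local $L^2$ smallness away from the origin, hence $\mc{S} \hookrightarrow E^p$ with Schwartz-seminorm control; then duality converts this into $E^p \subset \mc{S}^\prime$) is the natural one. The paper itself does not give a proof but cites \cite[Lemma 3.6]{AM15} for the finite-$p$ case with a remark that the argument extends to $p=\infty$, so there is nothing in the text to compare line by line; your reconstruction is the expected argument.

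One small simplification for the $p=\infty$ case of the second inclusion: the covering argument you give is not strictly needed. Proposition \ref{prop:slice-duality} is stated for finite exponents $\mb{p}$, but its conclusion pairs $E^{\mb{p}}$ against $E^{\mb{p}^\prime}$, and if you apply it with any finite $\mb{p}$ satisfying $i(\mb{p}) = 1$ (say $\mb{p} = (1,0)$), the dual exponent $\mb{p}^\prime$ is infinite, so \eqref{eqn:slice-duality-estimate} already gives
\begin{equation*}
  \int_{\bbR^n} |(f,\gf)| \, dx \lesssim \nm{\gf}_{E^1} \nm{f}_{E^\infty},
\end{equation*}
which is exactly what your covering argument establishes by hand. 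So the proposition as stated does cover the $p = \infty$ pairing, provided one reads it with $\gf$ in the finite-exponent slot.
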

	
	We also have a straightforward integration by parts formula for functions in slice spaces.
	This is part of \cite[Lemma 3.8]{AM15}.
	
	\begin{prop}[Integration by parts in slice spaces]\label{prop:slice-IBP}
		Let $\mb{p}$ be a finite exponent and suppose that $\partial$ is a first-order differential operator with constant coefficients, and let $\partial^*$ be the adjoint operator for the $L^2$ duality pairing on $\bbR^n$.
		If $f, \partial f \in E^\mb{p}$ and $g, \partial^* g \in E^{\mb{p}^\prime}$, then
		\begin{equation*}
			\int_{\bbR^n} ( \partial f(x),  g(x) ) \, dx = \int_{\bbR^n} ( f(x), \partial^* g(x) ) \, dx.
		\end{equation*}
	\end{prop}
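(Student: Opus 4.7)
The plan is to reduce to the classical identity for $C_c^\infty$ functions via mollification and a smooth cutoff, exploiting throughout that $\mb{p}$ being finite (i.e.\ $i(\mb{p})<\infty$) allows dominated convergence arguments inside $E^\mb{p}$. Fix a nonnegative mollifier $\gf\in C_c^\infty(B(0,1):\bbR)$ with $\int\gf=1$ and set $\gf_\gev(x):=\gev^{-n}\gf(x/\gev)$; fix a cutoff $\gy\in C_c^\infty(\bbR^n:\bbR)$ with $\gy\equiv 1$ on $B(0,1)$ and $\supp\gy\subset B(0,2)$, and set $\gy_R(x):=\gy(x/R)$. Then $f_{\gev,R}:=(\gf_\gev*f)\gy_R\in C_c^\infty(\bbR^n:\bbC^N)\subset\mc{S}$.

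Write $\partial=\sum_j A_j\partial_j+B$ for constant matrix coefficients. Because $g\in E^{\mb{p}'}\subset\mc{S}'$ (by Proposition~\ref{prop:slice-distribution}) and $\partial^*g\in E^{\mb{p}'}$ is, by hypothesis, the distributional derivative of $g$, applying this to the Schwartz test function $f_{\gev,R}$ yields
\[
\int_{\bbR^n}(\partial f_{\gev,R},g)\,dx \;=\; \int_{\bbR^n}(f_{\gev,R},\partial^*g)\,dx.
\]
Leibniz, together with $\partial_j(\gf_\gev*f)=\gf_\gev*\partial_j f$, gives
\[
\partial f_{\gev,R} \;=\; \gy_R(\gf_\gev*\partial f) \;+\; \sum_j A_j(\gf_\gev*f)\,\partial_j\gy_R.
\]
Sending $R\to\infty$ with $\gev$ fixed, the duality estimate \eqref{eqn:slice-duality-estimate} places $(\gf_\gev*\partial f,g)\in L^1(\bbR^n)$ and $(\gf_\gev*f,\partial^*g)\in L^1(\bbR^n)$, so dominated convergence gives $\int(\gy_R(\gf_\gev*\partial f),g)\,dx\to\int(\gf_\gev*\partial f,g)\,dx$ and analogously on the right. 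The cross term is controlled by
\[
\left|\int_{\bbR^n}(A_j(\gf_\gev*f)\partial_j\gy_R,g)\,dx\right| \;\lesssim\; \nm{\mb{1}_{A_R}(\gf_\gev*f)}_{E^\mb{p}}\nm{g}_{E^{\mb{p}'}},
\]
with $A_R:=B(0,2R)\setminus B(0,R)$, again using \eqref{eqn:slice-duality-estimate} and $\nm{\nabla\gy_R}_\infty\lesssim 1/R$. Since $i(\mb{p})<\infty$, dominated convergence inside the $L^{i(\mb{p})}$-integral defining $E^\mb{p}$ drives $\nm{\mb{1}_{A_R}(\gf_\gev*f)}_{E^\mb{p}}\to 0$. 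This yields $\int(\gf_\gev*\partial f,g)\,dx=\int(\gf_\gev*f,\partial^*g)\,dx$. Letting $\gev\to 0$ and using the mollifier convergence $\gf_\gev*h\to h$ in $E^\mb{p}$ for $h\in\{f,\partial f\}$, together once more with \eqref{eqn:slice-duality-estimate}, delivers the desired identity.

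The main technical point is the mollifier convergence $\gf_\gev*h\to h$ in $E^\mb{p}$, which is not quite as immediate as the $L^p$ version. It follows by a dominated convergence argument: for each $h\in E^\mb{p}$, the function $x\mapsto\nm{\gf_\gev*h-h}_{L^2(B(x,1))}$ tends to $0$ pointwise a.e.\ as $\gev\to 0$ (by local $L^2$ mollifier convergence), and is dominated for $\gev\le 1$ by a constant times $\nm{h}_{L^2(B(x,2))}$, whose $L^{i(\mb{p})}$-norm is equivalent to $\nm{h}_{E^\mb{p}}$ by Proposition~\ref{prop:slice-change-of-parameters}. Because $i(\mb{p})<\infty$, dominated convergence in $L^{i(\mb{p})}$ delivers $\nm{\gf_\gev*h-h}_{E^\mb{p}}\to 0$. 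The same finiteness of $i(\mb{p})$ is what forces the spatial cutoff error to vanish; these are the only two places where the hypothesis that $\mb{p}$ is finite is used.
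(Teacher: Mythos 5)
Your proof is correct, and the two places where finiteness of $\mb{p}$ enters (mollifier convergence in $E^{\mb{p}}$ and vanishing of the annular cutoff term, both via dominated convergence in the $L^{i(\mb{p})}$-norm defining the slice space) are exactly the right pressure points. The simultaneous mollify-and-cutoff argument handles cleanly the fact that one cannot approximate $f$ in $E^{\mb{p}}$ and $\partial f$ in $E^{\mb{p}}$ by cutoff alone, and you correctly pass the $\gev\to 0$ limit only on the $f$-side, which avoids any need for mollifier convergence in $E^{\mb{p}'}$ (which would fail when $i(\mb{p})\le 1$, since then $i(\mb{p}')=\infty$). One small point worth making explicit: the identity $\varphi_\gev\ast(\partial f)=\partial(\varphi_\gev\ast f)$ holds in the distributional sense without needing the individual coordinate derivatives $\partial_j f$ to lie in $E^{\mb{p}}$ --- only the combination $\partial f$ is assumed to be a function, and the distributional computation never requires more. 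The proof in \cite{AM15} (which this monograph cites rather than reproduces) is of the same flavour, so there is no meaningful divergence in method.
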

	
	Finally, we have an equivalent dyadic quasinorm for the slice spaces.
	This follows from the dyadic characterisation of $Z$-spaces (Proposition \ref{prop:Zpqs-seq-equivalence} and the remark following it) and Proposition \ref{prop:slice-ret}.
	
	\begin{prop}[Dyadic characterisation]\label{prop:slice-discrete}\index{dyadic characterisation!of slice space}
		For all $p \in (0,\infty]$ and $k \in \bbZ$ we have
		\begin{equation*}
			\nm{f}_{E^p} \simeq \nm{(\nm{f}_{L^2(Q)})_{Q \in \mc{D}_k}}_{\ell^p(\mc{D}_k)} 
		\end{equation*}
		where $\mc{D}_k$ is any grid of dyadic cubes in $\bbR^n$ with sidelength $2^k$.
	\end{prop}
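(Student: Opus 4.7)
The plan is to realise the slice space $E^p$ as a retract of a $Z$-space tailored to the grid $\mc{D}_k$, and then read off the characterisation from the $Z$-space dyadic description of Proposition \ref{prop:Zpqs-seq-equivalence}. Concretely, I would fix an exponent $\mb{p}$ with $i(\mb{p}) = p$ and $r(\mb{p}) = 0$ (so $\mb{p} = (p,0)$ for finite $p$, and $\mb{p} = (\infty,0;0)$ otherwise), and work with the slice space $E^\mb{p}(2^k)$, which coincides with $E^p$ up to $k$-dependent constants by Proposition \ref{prop:slice-change-of-parameters}.

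The first key step is to use the injection $\gi_{2^k,2}$ from Proposition \ref{prop:slice-ret}: since $\gp_{2^k,2} \circ \gi_{2^k,2} = \id$ and both maps are bounded, one has the equivalence
\begin{equation*}
  \nm{f}_{E^\mb{p}(2^k)} \simeq \nm{\gi_{2^k,2}(f)}_{Z^\mb{p}}.
\end{equation*}
The function $\gi_{2^k,2}(f)(s,x) = f(x) \mb{1}_{[2^k,2^{k+1}]}(s)$ is supported exactly in the union of the Whitney cubes $\overline{Q}^0 = (\ell(Q), 2\ell(Q)) \times Q$ with $\ell(Q) = 2^k$, i.e.\ over the cubes $Q \in \mc{D}_k$; on any other Whitney cube in $\mc{G}^0$ it vanishes identically, because the time interval is disjoint from $[2^k,2^{k+1}]$.

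The second step is to apply the dyadic characterisation of $Z$-spaces (Proposition \ref{prop:Zpqs-seq-equivalence} together with the remark extending it to infinite exponents) at Whitney level $k=0$. A direct computation against the measure $d\gt\,d\gx/\gt^{1+n}$ gives
\begin{equation*}
  [|\gi_{2^k,2}(f)|^2]_{\overline{Q}^0}^{1/2} = c_n\, 2^{-kn/2}\, \nm{f}_{L^2(Q)} \qquad (Q \in \mc{D}_k),
\end{equation*}
where $c_n$ is a purely dimensional constant. Plugging this into the $\ell^p(\mc{G}^0,\ell(Q)^n)$-norm, using $r(\mb{p}) = 0$ and $\ell(Q) = 2^k$ to pull the volume weights out of the sum (or supremum, when $p=\infty$), yields
\begin{equation*}
  \nm{\gi_{2^k,2}(f)}_{Z^\mb{p}} \simeq_{k,n,p} \nm{(\nm{f}_{L^2(Q)})_{Q \in \mc{D}_k}}_{\ell^p(\mc{D}_k)}.
\end{equation*}
Combining this with the two earlier equivalences finishes the proof. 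There is no genuine obstacle; the only care required is in tracking the dimensional constants $c_n$ and the powers of $2^k$ produced by the $\ell(Q)^n$ weights and by the $d\gt/\gt^{1+n}$ measure, but these are harmless since the statement is an equivalence at fixed $k$, $n$, $p$.
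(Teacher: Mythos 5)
Your proof is correct and takes exactly the route the paper intends: lift $f$ into a $Z$-space via $\gi_{2^k,2}$ from Proposition \ref{prop:slice-ret}, note the lift is supported on the Whitney cubes over $\mc{D}_k$, and read off the discrete norm from Proposition \ref{prop:Zpqs-seq-equivalence} (and its remark for $p=\infty$). The paper's own proof is just the one-line citation of these same two results, so you have simply made the routine details explicit.
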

	
	\begin{rmk}\label{rmk:e-w}
		The slice spaces $E^p$ are equal to the Wiener amalgam spaces\index{space!Wiener amalgam} $W(L^2, L^{p})(\bbR^n)$ when $p \geq 1$ (see \cite{cH03} and the references therein).
		Therefore, as with $Z$-spaces, many properties of slice spaces can be deduced from properties of Wiener amalgam spaces, but this would not emphasise the connection with tent- and $Z$-spaces.
	\end{rmk}

\section{Homogeneous smoothness spaces}\label{sec:hss}

We only give a quick definition of these, and state specific properties that we will need.
These definitions are special cases of the Littlewood--Paley definitions of Triebel--Lizorkin and Besov spaces, which we do not need in full generality.
For a quick introduction the reader can consult Grafakos \cite[Chapter 6]{lGMFA}; the less time-pressed reader may consult Triebel's oeuvre (for example \cite[\textsection 5]{hT83}).

Let $\mc{Z}(\bbR^n) \subset \mc{S}(\bbR^n)$ be the set of Schwartz functions $f$ such that $D^\ga f(0) = 0$ for every multi-index $\ga$, and let $\mc{Z}^\prime(\bbR^n)$ be the topological dual of $\mc{Z}(\bbR^n)$. The space $\mc{Z}^\prime(\bbR^n)$ can be identified with the quotient space $\mc{S}^\prime(\bbR^n) / \mc{P}(\bbR^n)$, where $\mc{P}(\bbR^n)$ is the space of polynomials on $\bbR^n$.\index{tempered distributions modulo polynomials}
In this section all our spaces will be of objects defined on $\bbR^n$, so we omit this from our notation, writing $\mc{Z}$, $\mc{Z}^\prime$, and so on.
We denote the Fourier transform and inverse Fourier transform of a function $\gf$ by $\hat{\gf}$ and $\check{\gf}$ respectively.

\begin{dfn}\label{dfn:classicalHSB}
Let $\gY \in \mc{S}$ be a radial bump function with
	\begin{equation*}
		\hat{\gY} \geq 0, \quad \supp \hat{\gY} \subset A(0,6/7,2), \quad \text{and} \quad\hat{\gY}|_{A(0,1,12/7)} = 1
	\end{equation*}
(of course these precise parameters are not so important),
and for $j \in \bbZ$ let $\gD_j$ denote the associated Littlewood--Paley operators $\gD_j(f) := f \ast (2^{jn} \gY(2^j \cdot))$. 
	
For $f \in \mc{Z}^\prime$, $\ga \in \bbR$, and $0 < p < \infty$ define
\begin{equation*}
	\nm{f}_{\dot{H}^p_\ga} := \nm{ \nm{j \mapsto 2^{j\ga} (\gD_j f)(\cdot)}_{\ell^2(\bbZ)} }_{L^p},
\end{equation*}	
and for $0 < p \leq \infty$ define
\begin{equation*}
	\nm{f}_{\dot{B}^{p,p}_\ga} := \nm{j \mapsto 2^{j\ga} \nm{\gD_j(f)}_{L^p}}_{\ell^p(\bbZ)}.
\end{equation*}
The homogeneous \emph{Hardy--Sobolev spaces}\index{space!Hardy--Sobolev} $\dot{H}_\ga^p = \dot{F}_\ga^{p,2}$ and \emph{Besov spaces}\index{space!Besov} $\dot{B}_\ga^{p,p}$ are then the sets of those $f \in \mc{Z}^\prime$ for which the corresponding quasinorms are finite.

These quasinorms are independent of the choice of $\gY$ (up to equivalence), and $\dot{H}_\ga^p$ and $\dot{B}_\ga^{p.p}$ are Banach spaces (quasi-Banach when $p < 1$).
\end{dfn}

The Hardy--Sobolev spaces $\dot{H}_\ga^p$ may be characterised by Fourier multipliers in tent spaces: this characterisation can be found in \cite[page 180]{hT88}.

\begin{thm}\label{thm:Hp-tent}
	Suppose $p \in (0,\infty)$ and $\ga \in \bbR$.
	Fix $a > n/\min(p,2)$ and $s_0,s_1 \in \bbR$ with $s_0 + a < s < s_1$.
	Fix also two Schwartz functions $h,H \in \mc{S}$ with
	\begin{equation*}
		\begin{array}{ll}
		\supp h \subset \{x \in \bbR^n : |x| \leq 2\}, & \text{$h(x) = 1$ if $|x| \leq 1$,} \\
		\supp H \subset \{x \in \bbR^n : 1/4 \leq |x| \leq 4\}, & \text{$H(x) = 1$ if $1/2 \leq |x| \leq 2$.}
		\end{array}
	\end{equation*}
	Let $\map{\gy}{\bbR^n \sm \{0\}}{\bbC}$ be a measurable function satisfying the conditions
	\begin{align}
		|\gy(x)| > 0 \qquad \text{if $1/2 \leq |x| \leq 2$,}& \label{line:triebel-1}\\
		\int_{\bbR^n} \left| \left( \frac{\gy(\cdot) h(\cdot)}{|\cdot|^{s_1}} \right)^{\vee}(y) \right| (1 + |y|)^a \, dy &< \infty, \label{line:triebel-2}\\
		\sup_{m = 1,2,\ldots} 2^{-ms_0} \int_{\bbR^n} \left| (\gy(2^m \cdot)H(\cdot))^{\vee}(y) \right| (1 + |y|)^a \, dy &< \infty. \label{line:triebel-3}
	\end{align}
	Then for all $f \in \mc{Z}^\prime$,
	\begin{equation*}
		\nm{t \mapsto (\gy(t\cdot)\hat{f})^\vee }_{T^p_\ga} \simeq \nm{f}_{\dot{H}_\ga^p}.
	\end{equation*}
\end{thm}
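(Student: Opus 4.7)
The plan is to establish this as a continuous Calderón-type reproducing formula that reduces to the dyadic Littlewood--Paley characterisation of $\dot{H}^p_\ga$ from Definition \ref{dfn:classicalHSB}. The tent space quasinorm $\nm{ t \mapsto (\gy(t\cdot)\hat f)^\vee }_{T^p_\ga}$ is, via the Lusin functional, a mixed $L^p(L^2(dt/t))$-type quasinorm, which is the natural continuous analogue of the $L^p(\ell^2(\bbZ))$ structure used to define $\dot{H}^p_\ga$. I would split the work into the two inequalities and handle them by comparing the continuous $\gy$-transform of $f$ to the standard Littlewood--Paley pieces $\gD_j f$.

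For the upper bound $\nm{t \mapsto (\gy(t\cdot)\hat f)^\vee}_{T^p_\ga} \lesssim \nm{f}_{\dot{H}^p_\ga}$, I would first decompose the continuous scale variable dyadically, writing $(0,\infty) = \bigcup_j [2^{-j-1},2^{-j}]$, and expand $\hat f = \sum_k \widehat{\gD_k f}$ by the dyadic Littlewood--Paley decomposition. For fixed $t \simeq 2^{-j}$, the product $\gy(t\cdot)\widehat{\gD_k f}$ is essentially nontrivial only when $k$ is close to $j$, with the low-frequency tail ($k \ll j$) controlled by condition \eqref{line:triebel-2} (and the constraint $\ga<s_1$), and the high-frequency tail ($k \gg j$) controlled by condition \eqref{line:triebel-3} (and $s_0 + a < \ga$). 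On each scale, pointwise $|(\gy(t\cdot)\widehat{\gD_k f})^\vee(x)|$ is bounded by a Peetre-type maximal function of $\gD_k f$, which is bounded by $M(|\gD_k f|^r)^{1/r}(x)$ for $r < \min(p,2)$ since $a > n/\min(p,2)$. Fefferman--Stein vector-valued maximal inequalities and summation of the geometric tails in $|j-k|$ then produce the $\dot H^p_\ga$-norm on the right.

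For the lower bound, the non-degeneracy condition \eqref{line:triebel-1} permits the construction of a companion multiplier $\wtd\gy \in C_c^\infty(\bbR^n\sm\{0\})$ supported in a slightly enlarged annulus such that
\begin{equation*}
  \int_0^\infty \ol{\wtd\gy(t\gx)}\,\gy(t\gx)\,\frac{dt}{t} = 1 \qquad (\gx \in \bbR^n\sm\{0\}),
\end{equation*}
yielding a Calderón reproducing formula $f = \int_0^\infty \wtd\gy(tD)[\gy(tD)f]\,dt/t$ in $\mc Z'$. Applying this inside the Littlewood--Paley quasinorm of $\dot{H}^p_\ga$ reduces matters to showing that the `synthesis' operator $F \mapsto \int_0^\infty \wtd\gy(tD) F(t,\cdot)\,dt/t$ is bounded from $T^p_\ga$ to $\dot H^p_\ga$. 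Since $\wtd\gy$ is smooth and compactly supported away from the origin, only finitely many $\gD_j$ interact nontrivially with $\wtd\gy(tD)$ at each scale $t$, and one obtains this by essentially the same dyadic-tail / Peetre / maximal inequality argument as in the upper bound.

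The main obstacle is condition \eqref{line:triebel-3}, which permits $\gy$ to \emph{grow} like $2^{ms_0}$ at infinity rather than decay as in the Schwartz setting. Propagating this growth through the dyadic decomposition while preserving summability requires carefully exploiting the sharp inequality $s_0 + a < \ga$ and the extra weighted $L^1$-control on $(\gy(2^m\cdot)H)^\vee$ to produce the needed geometric factors $2^{-|j-k|\gev}$; the symmetric bookkeeping near the origin, governed by \eqref{line:triebel-2} and $\ga < s_1$, is the other delicate point. Once this is done for both the analysis and synthesis steps, combining them gives the claimed equivalence.
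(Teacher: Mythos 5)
The paper does not prove this theorem; it is quoted from Triebel \cite[p.~180]{hT88}, so there is no internal proof to compare against. Your outline captures the standard argument for such characterisations: discretise the continuous scale $t$, Littlewood--Paley decompose $f$, bound the near-diagonal interactions via Peetre maximal functions (where $a > n/\min(p,2)$ and the Fefferman--Stein vector-valued maximal inequality are used), and sum the off-diagonal tails using conditions \eqref{line:triebel-2} and \eqref{line:triebel-3} together with $\ga < s_1$ for low frequencies and $s_0 + a < \ga$ for high frequencies. Your allocation of which hypothesis governs which frequency regime is correct, and the upper bound works as you describe.

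The gap is in the lower bound, in the claim that a companion multiplier $\wtd\gy$ can be taken in $C_c^\infty(\bbR^n\setminus\{0\})$. The theorem only assumes $\gy$ measurable; conditions \eqref{line:triebel-2}, \eqref{line:triebel-3} give it some finite-order regularity on the annulus, but not smoothness. Any companion satisfying $\int_0^\infty \ol{\wtd\gy(t\gx)}\,\gy(t\gx)\,dt/t \equiv 1$ must pointwise invert $\gy$ (up to a smooth radial factor), so it inherits at best $\gy$'s regularity and cannot be taken smooth. Boundedness and compact support of $\wtd\gy$ do not by themselves place $(\hat\gY(2^{-j}\cdot)\,\wtd\gy(t\cdot))^\vee$ in the weighted $L^1$ class that the Peetre step of the synthesis bound requires, so running ``essentially the same dyadic-tail / Peetre / maximal inequality argument as in the upper bound'' does not close as stated. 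The resolution used in the literature (Triebel; see also Bui--Paluszy\'nski--Taibleson, and Rychkov's \emph{On a theorem of Bui, Paluszy\'nski, and Taibleson}) avoids a smooth companion altogether: one works with the Peetre maximal function $\gy^*_t f$ throughout, first establishing $\nm{t\mapsto\gy^*_t f}_{T^p_\ga}\simeq\nm{t\mapsto(\gy(t\cdot)\hat f)^\vee}_{T^p_\ga}$ from \eqref{line:triebel-2}--\eqref{line:triebel-3}, and then transferring to the maximal function of a fixed smooth reference multiplier using only the Tauberian condition \eqref{line:triebel-1} and a sub-multiplicative iteration of the maximal-function estimates, never inverting $\gy$ pointwise. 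Aside from this step your plan is sound.
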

	
Conditions \eqref{line:triebel-2} and \eqref{line:triebel-3} are a bit complicated, but if $\check{\gy} \in \mc{Z}$ then they are automatically satisfied for any choice of $a$, $s_0$, $s_1$, $h$, and $H$.
In the following corollary we rephrase Theorem \ref{thm:Hp-tent} in convolution form with this simplification.
This is a consequence of the identity $\gf \ast f = (\hat{\gf} \hat{f})^\vee$.
	
\begin{cor}\label{cor:Hp-tent-easy}
	Suppose $p \in (0,\infty)$ and $\ga \in \bbR$.
	Let $\gf \in \mc{Z}(\bbR^n)$ be such that $\hat{\gf}(\gx) > 0$ if $1/2 \leq |\gx| \leq 2$.
	Then for all $f \in \mc{Z}^\prime$,
	\begin{equation*}
		\nm{t \mapsto t^{-n} \gf(t^{-1} \cdot) \ast f}_{T^p_\ga} \simeq \nm{f}_{\dot{H}_\ga^p}.
	\end{equation*}
\end{cor}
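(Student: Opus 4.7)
The plan is to reduce the corollary to Theorem \ref{thm:Hp-tent} applied with $\gy := \hat{\gf}$. The only identification needed is between the convolution form in the corollary and the Fourier multiplier form in the theorem: using $(\gf\ast f)^\wedge = \hat{\gf}\hat{f}$ together with $(t^{-n}\gf(t^{-1}\cdot))^\wedge(\gx) = \hat{\gf}(t\gx)$, one obtains $t^{-n}\gf(t^{-1}\cdot)\ast f = (\gy(t\cdot)\hat{f})^\vee$, so the left-hand side of the corollary is exactly the tent-space quasinorm appearing in Theorem \ref{thm:Hp-tent}.

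It then remains to verify that $\gy = \hat{\gf}$ satisfies conditions \eqref{line:triebel-1}--\eqref{line:triebel-3} for some admissible choice of $a$, $s_0$, $s_1$, $h$, $H$. Condition \eqref{line:triebel-1} is exactly the positivity hypothesis. For the other two, the essential input is that $\check{\gy} = \gf \in \mc{Z}$: this means $\gf$ is Schwartz with all derivatives vanishing at $0$, which by Fourier duality forces $\hat{\gf}$ to be Schwartz and to vanish to infinite order at the origin.

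For \eqref{line:triebel-2}, the infinite-order vanishing of $\hat{\gf}$ at $0$ allows $\hat{\gf}(x)/|x|^{s_1}$ to be extended smoothly across the origin, and together with Schwartz decay this makes $\gy(\cdot)h(\cdot)/|\cdot|^{s_1}$ a Schwartz function, so its inverse Fourier transform is Schwartz and the integral converges. For \eqref{line:triebel-3}, on the support of $H$ we have $|x|\geq 1/4$, so Schwartz decay of $\hat{\gf}$ at infinity yields $|D^\ga\gy(2^m x)| \lesssim_{\ga,N} 2^{-mN}$ uniformly in $x$ for any $N\in\bbN$; hence the Schwartz seminorms of $\gy(2^m\cdot)H$, and so those of its inverse Fourier transform, decay faster than any polynomial in $m$. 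This absorbs the factor $2^{-ms_0}$ and yields a finite supremum.

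The main obstacle, such as it is, is the verification of \eqref{line:triebel-2}: one must appreciate that the smoothness of $\gy/|\cdot|^{s_1}$ at the origin is a genuine use of the $\mc{Z}$-membership of $\gf$ and not merely of Schwartz regularity. Once this is noted, the corollary follows by a direct appeal to Theorem \ref{thm:Hp-tent}.
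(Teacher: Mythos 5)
Your proof follows exactly the same route as the paper: apply Theorem \ref{thm:Hp-tent} with $\gy = \hat{\gf}$, noting that $\check{\gy} = \gf \in \mc{Z}$ renders conditions \eqref{line:triebel-2} and \eqref{line:triebel-3} automatic, and identify the convolution form with the Fourier multiplier form via $\gf \ast f = (\hat{\gf}\hat{f})^\vee$. The paper states the verification of \eqref{line:triebel-2}--\eqref{line:triebel-3} without detail; your filled-in argument (infinite-order vanishing of $\hat{\gf}$ at the origin for \eqref{line:triebel-2}, Schwartz decay at infinity on $\supp H$ for \eqref{line:triebel-3}) is correct.
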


We extend this to more general smoothness spaces in Theorem \ref{thm:BHS-Xspace}.
Before that, we will go through some other equivalent norms.

For $f \in \mc{Z}^\prime$ and $\ga \in \bbR$, the \emph{Riesz potential}\index{Riesz potential} $I_\ga f \in \mc{Z}^\prime$, given by
\begin{equation*}
	I_\ga f(x) := (|\cdot|^{-\alpha} \hat{f}(\cdot))^{\vee}(x) \qquad (x \in \bbR^n),
\end{equation*}
is well-defined.
These can be used to characterise Hardy--Sobolev spaces.

\begin{thm}\label{thm:HS-rieszpot} 
	Suppose $p \in (0,\infty)$, $\ga \in \bbR$, and $f \in \mc{Z}^\prime$.
	Then $f \in \dot{H}_\ga^p$ if and only if $I_{-\ga} f \in L^p$, and $f \mapsto \nm{I_{-\ga} f}_{L^p}$ is an equivalent norm on $\dot{H}_\ga^p$.
	Furthermore, for all $s \in \bbR$, $I_\ga$ is an isomorphism from $\dot{H}_s^p$ to $\dot{H}_{s+\ga}^p$.
\end{thm}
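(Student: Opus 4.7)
The plan is to prove the second assertion (the isomorphism) first and then deduce the first from it by specialising to $s=0$. For the isomorphism, I note that $I_\alpha$ is the Fourier multiplier with symbol $|\xi|^{-\alpha}$, and since this symbol is smooth on $\bbR^n\sm\{0\}$ and elements of $\mc{Z}$ vanish to infinite order at the origin, $I_\alpha$ maps $\mc{Z}\to\mc{Z}$ and dually $\mc{Z}^\prime\to\mc{Z}^\prime$, with two-sided inverse $I_{-\alpha}$. Consequently it suffices to prove that $I_\alpha$ is bounded from $\dot{H}^p_s$ to $\dot{H}^p_{s+\alpha}$ for each $s,\alpha \in \bbR$.

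The key computation is to apply the Littlewood--Paley operator $\gD_j$ of Definition \ref{dfn:classicalHSB} to $I_\alpha f$: in Fourier space,
\begin{equation*}
\widehat{\gD_j (I_\alpha f)}(\xi) \;=\; \hat{\gY}(2^{-j}\xi)\,|\xi|^{-\alpha}\,\hat f(\xi) \;=\; 2^{-j\alpha}\,\hat{\wtd{\gY}}(2^{-j}\xi)\,\hat f(\xi),
\end{equation*}
where $\hat{\wtd{\gY}}(\eta):=|\eta|^{-\alpha}\hat{\gY}(\eta)$. Because $\hat{\gY}$ is supported in the annulus $A(0,6/7,2)$, which is bounded away from the origin, the factor $|\eta|^{-\alpha}$ is smooth on this support and $\hat{\wtd{\gY}}$ is therefore a smooth nonnegative bump function supported in the same annulus. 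Writing $\wtd{\gD}_j f := f \ast (2^{jn}\wtd{\gY}(2^j\cdot))$, we obtain the pointwise identity $\gD_j(I_\alpha f) = 2^{-j\alpha}\wtd{\gD}_j f$, and hence
\begin{equation*}
\|I_\alpha f\|_{\dot{H}^p_{s+\alpha}}
\;=\;\bigl\| \bigl\| j\mapsto 2^{js}\wtd{\gD}_j f\bigr\|_{\ell^2(\bbZ)}\bigr\|_{L^p}.
\end{equation*}
It remains to see that this right-hand side is equivalent to $\|f\|_{\dot{H}^p_s}$, i.e.\ that the Hardy--Sobolev norm is independent of the particular admissible Littlewood--Paley bump used to define it. This can be read off from Corollary \ref{cor:Hp-tent-easy}: writing $\wtd{\gD}_j f(x)$ as a sampled dilation of a convolution $t^{-n}\wtd{\gY}(t^{-1}\cdot)\ast f$ at $t = 2^{-j}$, and using the equivalence between the $\ell^2(\bbZ)$-dyadic quasinorm and its continuous tent-space analogue, yields $\|\|j\mapsto 2^{js}\wtd{\gD}_j f\|_{\ell^2}\|_{L^p}\simeq \|t\mapsto t^{-n}\wtd{\gY}(t^{-1}\cdot)\ast f\|_{T^p_s}\simeq \|f\|_{\dot{H}^p_s}$, where the last step invokes Corollary \ref{cor:Hp-tent-easy} applied to $\wtd{\gY}$ (noting that $\hat{\wtd{\gY}}$ is strictly positive on $\{1/2\leq|\eta|\leq 2\}$). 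This gives the isomorphism $I_\alpha:\dot{H}^p_s\to\dot{H}^p_{s+\alpha}$.

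For the first assertion, take $s=0$ in the isomorphism so that $\|f\|_{\dot{H}^p_\alpha} \simeq \|I_{-\alpha}f\|_{\dot{H}^p_0}$. In the range $p\in(1,\infty)$ one has $\dot{H}^p_0 = L^p$ with equivalent norms (the classical Littlewood--Paley characterisation of $L^p$), and for $p\leq 1$ the same identification holds with the real Hardy space $H^p$ in place of $L^p$; in either case the stated equivalent norm follows at once.

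The main technical obstacle is the bump-function independence used to pass from $\wtd{\gD}_j$ back to the original definition; this is standard and harmless in the Banach range but genuinely delicate in the quasi-Banach range $p\leq 1$, where vector-valued duality is unavailable. Our reduction to Corollary \ref{cor:Hp-tent-easy} absorbs all of this difficulty into a result already in hand, so the remaining work is the elementary smoothness check for $|\eta|^{-\alpha}$ on the support of $\hat{\gY}$.
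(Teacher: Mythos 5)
The paper itself gives no proof of this theorem; it is recorded as a standard fact from the Triebel--Lizorkin literature. Your overall strategy --- apply $\Delta_j$ to $I_\alpha f$, observe that the Riesz potential shifts the dyadic scale factor, and conclude via bump-function independence --- is the standard one, so there is nothing to compare against in the paper. However, the final reduction step is broken as written.

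The gap is in the claim that $\hat{\wtd{\gY}}$ ``is strictly positive on $\{1/2\le|\eta|\le 2\}$,'' which you use to invoke Corollary \ref{cor:Hp-tent-easy}. By Definition \ref{dfn:classicalHSB}, $\supp\hat{\gY}\subset A(0,6/7,2)=\{6/7\le|\xi|<2\}$, so $\hat{\gY}$ (and hence $\hat{\wtd{\gY}}=|\cdot|^{-\alpha}\hat{\gY}$) vanishes identically on $\{1/2\le|\xi|<6/7\}$ and at $|\xi|=2$. The hypothesis of Corollary \ref{cor:Hp-tent-easy} fails, so the citation does not yield $\|t\mapsto t^{-n}\wtd{\gY}(t^{-1}\cdot)\ast f\|_{T^p_s}\simeq\|f\|_{\dot H^p_s}$ as you assert. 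Independently, the preceding step --- passing between the $\ell^2(\bbZ)$-dyadic vertical square function of $\wtd{\gD}_j f$ and the continuous conical $T^p_s$ quasinorm --- is stated as though it were a bookkeeping identity, but it bundles together the discrete-to-continuous reduction and the vertical-to-conical comparison, neither of which is immediate in the quasi-Banach range (and neither is contained in Corollary \ref{cor:Hp-tent-easy}, which only characterizes $\dot H^p_\alpha$ via the continuous conical norm for a single admissible $\gf$).

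The fix is to replace the appeal to Corollary \ref{cor:Hp-tent-easy} by the genuine bump-function-independence theorem for homogeneous Triebel--Lizorkin spaces (Frazier--Jawerth $\gf$-transform theory, or \cite[\textsection 5.2]{hT83}): if $\gy\in\mc S$ has $\hat\gy$ compactly supported in an annulus away from the origin with $\sum_{j\in\bbZ}|\hat\gy(2^{-j}\xi)|>0$ for all $\xi\ne 0$, then $\bigl\|\|j\mapsto 2^{js}\gy_j\ast f\|_{\ell^2}\bigr\|_{L^p}\simeq\|f\|_{\dot H^p_s}$. Your $\wtd{\gY}$ does satisfy this: since $\hat{\gY}\equiv 1$ on $A(0,1,12/7)$ one has $\sum_j\hat{\gY}(2^{-j}\xi)\ge 1$ for $\xi\ne 0$, hence $\sum_j\hat{\wtd{\gY}}(2^{-j}\xi)>0$. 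With that citation in place of Corollary \ref{cor:Hp-tent-easy}, your argument closes. Your remark about $\dot H^p_0$ being $L^p$ only for $p>1$ (and $H^p$ otherwise) is correct and worth keeping; the theorem's statement is using ``$L^p$'' loosely in the range $p\le 1$.
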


We can also characterise Hardy--Sobolev and Besov spaces by integrals of differences.
For all $p \in [1,\infty]$, $g \in L^0$, and $s \in \bbR$, define
\begin{equation*}
	\mc{D}_s^p g(x) := \left( \int_{\bbR^n} \frac{|g(x+y) - g(x)|^p}{|y|^{n+ps}} \, dy \right)^{1/p} \qquad (x \in \bbR^n).
\end{equation*}

\begin{thm}\label{thm:stein}
	Suppose $\ga \in (0,1)$ and $p \in (2n/(n+\ga), \infty)$.
	Then for all $f \in L^2$,
	\begin{equation}\label{eqn:stlem}
		\nm{f}_{\dot{H}_\ga^p} \simeq \nm{\mc{D}_\ga^2 f}_{L^p}.
	\end{equation}
\end{thm}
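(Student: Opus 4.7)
The plan is to prove the two inequalities in \eqref{eqn:stlem} separately. The case $p = 2$ follows directly from Fubini and Plancherel: integrating in $x$ first,
\begin{equation*}
  \nm{\mc{D}_\ga^2 f}_{L^2}^2 = \int_{\bbR^n} \frac{\nm{f(\cdot + y) - f}_{L^2}^2}{|y|^{n+2\ga}} \, dy = \int_{\bbR^n} |\hat{f}(\xi)|^2 \bigg( \int_{\bbR^n} \frac{|e^{i y \cdot \xi} - 1|^2}{|y|^{n+2\ga}} \, dy \bigg) d\xi = c_{n,\ga} \nm{f}_{\dot{H}_\ga^2}^2,
\end{equation*}
where the inner $y$-integral equals $c_{n,\ga} |\xi|^{2\ga}$ by homogeneity, with $c_{n,\ga} \in (0,\infty)$ precisely because $\ga \in (0,1)$ ensures integrability both at $y = 0$ (via $|e^{iy\cdot\xi} - 1|^2 \lesssim (|y||\xi|)^2$) and at infinity.

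For general $p$ I would reduce to an $L^p$-estimate via Riesz potentials: by Theorem \ref{thm:HS-rieszpot}, writing $f = I_\ga g$ with $g = I_{-\ga} f$ and $\nm{f}_{\dot{H}_\ga^p} \simeq \nm{g}_{L^p}$, the statement becomes $\nm{\mc{D}_\ga^2 I_\ga g}_{L^p} \simeq \nm{g}_{L^p}$. Define the vector-valued convolution operator
\begin{equation*}
  Sg(x,y) := \frac{I_\ga g(x+y) - I_\ga g(x)}{|y|^{n/2+\ga}},
\end{equation*}
sending $g$ to an $L^2(\bbR^n, dy)$-valued function of $x$, so that $\nm{Sg}_{L^p(L^2_y)} = \nm{\mc{D}_\ga^2 f}_{L^p}$. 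The kernel $K(u,y) := c_n (|u + y|^{\ga - n} - |u|^{\ga - n})/|y|^{n/2 + \ga}$ satisfies the scaling identity $K(\gl u, \gl y) = \gl^{-3n/2} K(u,y)$, from which the size bound $\nm{K(u,\cdot)}_{L^2_y} \lesssim |u|^{-n}$ follows once one verifies $\nm{K(e,\cdot)}_{L^2_y} < \infty$ for any unit vector $e$; a parallel computation for $\nabla_u K$ yields the H\"ormander-type smoothness $\nm{\nabla_u K(u,\cdot)}_{L^2_y} \lesssim |u|^{-n-1}$. Together with the $L^2$-boundedness of $S$ furnished by the previous paragraph, vector-valued Calder\'on--Zygmund theory then extends $S$ to a bounded operator $L^p \to L^p(L^2_y)$ for all $1 < p < \infty$, yielding the upper bound $\nm{\mc{D}_\ga^2 f}_{L^p} \lesssim \nm{f}_{\dot{H}_\ga^p}$.

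For the reverse inequality I would polarize the $L^2$ identity to obtain $S^* S = c_{n,\ga} I$ on $L^2$. By duality, since $S : L^{p^\prime} \to L^{p^\prime}(L^2_y)$ is bounded (previous paragraph), so is $S^* : L^p(L^2_y) \to L^p$; hence $S^* S$ is a bounded endomorphism of $L^p$ which coincides with $c_{n,\ga} I$ on the dense subspace $L^2 \cap L^p$ and therefore on all of $L^p$. Consequently
\begin{equation*}
  \nm{g}_{L^p} = c_{n,\ga}^{-1} \nm{S^* Sg}_{L^p} \lesssim \nm{Sg}_{L^p(L^2_y)} = \nm{\mc{D}_\ga^2 f}_{L^p},
\end{equation*}
proving $\nm{f}_{\dot{H}_\ga^p} \lesssim \nm{\mc{D}_\ga^2 f}_{L^p}$. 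The main technical obstacle lies in verifying the vector-valued kernel estimates: the function $(|u + y|^{\ga - n} - |u|^{\ga - n})^2/|y|^{n + 2\ga}$ behaves qualitatively differently in the regimes $|y| \ll |u|$ (where the difference quotient is $O(|y| |u|^{\ga - n - 1})$), $|y| \sim |u|$, and $|y| \gg |u|$ (where both kernel terms are separately of size $|y|^{\ga - n}$), and integrability on each regime at both $y = 0$ and $y = \infty$ depends crucially on the range $\ga \in (0,1)$ — which is where the strict bounds on $\ga$ finally enter.
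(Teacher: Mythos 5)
Your proposal takes a self-contained route via vector-valued Calder\'on--Zygmund theory, whereas the paper's proof is a one-line citation of Stein \cite[Lemma 1]{eS61} followed by a density argument; these are genuinely different. Unfortunately, the step you flag as "the main technical obstacle" and then wave past is in fact fatal. The $L^2_y$-valued kernel $K(u,y) = c_n(|u+y|^{\ga - n} - |u|^{\ga - n})/|y|^{n/2 + \ga}$ has, besides the expected singularity at $y=0$ and the decay at $|y| \to \infty$ that you analyse, a \emph{nonlocal} pole at $y = -u$, inherited from the Riesz kernel $|u+y|^{\ga - n}$. Fixing $|u| = 1$, near that pole $|K(u,y)|^2 \sim |u+y|^{2(\ga - n)}$ while $|y| \simeq 1$, so
\begin{equation*}
  \int_{|u+y| < 1/2} |K(u,y)|^2 \, dy \simeq \int_{|z| < 1/2} |z|^{2\ga - 2n} \, dz,
\end{equation*}
which converges only when $\ga > n/2$. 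Since $\ga < 1 \leq n/2$ for all $n \geq 2$ (and for $n = 1$ when $\ga \leq 1/2$), the claim $\nm{K(e,\cdot)}_{L^2_y} < \infty$ is false in essentially all of the theorem's parameter range, and so the size bound $\nm{K(u,\cdot)}_{L^2_y} \lesssim |u|^{-n}$ is vacuous. The gradient estimate fails \emph{a fortiori} — differentiating in $u$ sharpens the pole to $|u+y|^{2(\ga - n - 1)}$ — and the H\"ormander condition is also not rescued by cancellation, because $K(u-u',\cdot) - K(u,\cdot)$ still has an unintegrable pole in $y$ at $y = u'-u$ whenever $u' \neq 0$. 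So $S$ is simply not a vector-valued Calder\'on--Zygmund operator in the naive sense, and the upper bound $\nm{\mc{D}_\ga^2 f}_{L^p} \lesssim \nm{f}_{\dot{H}_\ga^p}$ does not follow. Since your lower bound via $S^*S = c_{n,\ga}I$ relies on boundedness of $S$ at exponent $p'$, that half is left conditional as well.

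This secondary pole of the Riesz kernel is precisely what makes Stein's theorem delicate and why the paper outsources it to \cite{eS61}. Stein's argument does not work with the raw kernel $(|u+y|^{\ga-n} - |u|^{\ga-n})$ directly but passes through Poisson/harmonic extensions and Littlewood--Paley $g$-functions, which mollify the Riesz kernel at the scale $|y| \sim |u|$ before any $L^2_y$-norm is taken. If you want to keep the CZ skeleton, a workable fix is to first decompose $f(\cdot + y) - f = \sum_j \left[\gD_j f(\cdot + y) - \gD_j f\right]$ with the Littlewood--Paley pieces $\gD_j$ from Definition \ref{dfn:classicalHSB}, treat the scales $2^{-j} \lesssim |y|$ and $2^{-j} \gtrsim |y|$ separately (using cancellation and smoothness of $\gD_j f$, respectively), and only then take the $L^2_y$-norm; the band-limitation destroys the $y = -u$ singularity at the price of summing a geometric series. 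Without some smoothing of this kind, the tensorial kernel $K$ is not in $L^2_y$ off the diagonal and the argument cannot proceed.
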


\begin{proof}
	Whenever $f = I_\ga \gf$ for some $\gf \in C_c^\infty$, the estimate \eqref{eqn:stlem} follows from a lemma of Stein \cite[Lemma 1]{eS61} combined with the Riesz potential characterisation of $\dot{H}_\ga^p$ (Theorem \ref{thm:HS-rieszpot}).
	A density argument, using the fact that elements of $\dot{H}_\ga^p$ may be represented as $L^2_\text{loc}$ functions when $\ga \in (0,1)$, completes the proof.
\end{proof}

The corresponding characterisation for Besov spaces is in \cite[Theorem 5.2.3.2]{hT83}.

\begin{thm}\label{thm:besov-difference}
	Suppose $\ga \in (0,1)$ and $p \in [1, \infty)$.
	Then for all $f \in L^2$,
	\begin{equation*}
		\nm{f}_{\dot{B}^{p,p}_\ga} \simeq \nm{\mc{D}^p_\ga f}_{L^p}.
	\end{equation*}
\end{thm}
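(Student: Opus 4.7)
The plan is to transfer between the two characterisations using the Littlewood--Paley decomposition $\Psi$ from Definition \ref{dfn:classicalHSB}, writing $\Delta_j f = \Psi_j \ast f$ with $\Psi_j(y) = 2^{jn}\Psi(2^j y)$. A key observation is that $\hat{\Psi}(0) = 0$ since $\supp \hat{\Psi} \subset A(0,6/7,2)$, so $\int \Psi_j = 0$ for every $j$. Throughout I will use Bernstein's inequality $\nm{\nabla \Delta_j f}_p \lesssim 2^j \nm{\Delta_j f}_p$ and the fact that $\Psi$ is Schwartz with all moments vanishing; the hypothesis $f \in L^2$ ensures that the identities and representations below are valid (at least in $\mc{S}^\prime$).

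For the lower bound $\nm{f}_{\dot{B}^{p,p}_\ga} \lesssim \nm{\mc{D}^p_\ga f}_{L^p}$, I would use the vanishing-mean property to write
\begin{equation*}
  \Delta_j f(x) = \int_{\bbR^n} \Psi_j(y)\bigl(f(x-y) - f(x)\bigr)\, dy,
\end{equation*}
apply Hölder's inequality with the probability measure $|\Psi_j|/\nm{\Psi_j}_1$ (noting $\nm{\Psi_j}_1 = \nm{\Psi}_1$), integrate in $x$, and sum against $2^{jp\ga}$ to obtain
\begin{equation*}
  \sum_j 2^{jp\ga} \nm{\Delta_j f}_p^p \lesssim \int_{\bbR^n} \nm{f(\cdot-y)-f}_p^p \sum_j 2^{jp\ga} |\Psi_j(y)|\, dy.
\end{equation*}
The inner sum splits at $2^j|y| \simeq 1$: the low frequencies contribute $\sum_{2^j \leq 1/|y|} 2^{j(p\ga+n)} \simeq |y|^{-n-p\ga}$, while the high frequencies contribute $\sum_{2^j > 1/|y|} 2^{j(p\ga+n)}(1+2^j|y|)^{-M} \simeq |y|^{-n-p\ga}$ provided we pick $M > n+p\ga$ in the Schwartz decay estimate $|\Psi(z)| \leq C_M(1+|z|)^{-M}$. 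This bounds the inner sum by $|y|^{-n-p\ga}$ and produces exactly $\nm{\mc{D}^p_\ga f}_{L^p}^p$ by Fubini.

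For the upper bound $\nm{\mc{D}^p_\ga f}_{L^p} \lesssim \nm{f}_{\dot{B}^{p,p}_\ga}$, for each $y$ I would bound $\nm{f(\cdot+y)-f}_p \leq \sum_j a_j(y) c_j$, where $c_j := \nm{\Delta_j f}_p$ and $a_j(y) := \min(2^j|y|, 2)$. Here the small-$|y|$ half of $a_j(y)$ comes from the mean value estimate $\nm{\Delta_j f(\cdot+y)-\Delta_j f}_p \leq |y|\,\nm{\nabla \Delta_j f}_p \lesssim 2^j|y|\nm{\Delta_j f}_p$ (Bernstein), and the other half is the trivial bound. Integrating,
\begin{equation*}
  \nm{\mc{D}^p_\ga f}_{L^p}^p
  \lesssim \int_0^\infty u^{-1-p\ga} \Bigl(\sum_j \min(2^j u,2)\,c_j\Bigr)^p \, du.
\end{equation*}
Discretising $u = 2^v$, substituting $m = j+v$, and setting $a_j := 2^{j\ga} c_j$, this sum becomes (up to constants) $\sum_k |(\psi \ast a)(k)|^p$, where the kernel $\psi(m) = \min(2^m,2)\, 2^{-m\ga}$ satisfies $\psi(m) \simeq 2^{m(1-\ga)}$ for $m \leq 0$ and $\psi(m) \simeq 2^{1-m\ga}$ for $m > 0$. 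Because $\ga \in (0,1)$, both tails are summable, so $\psi \in \ell^1(\bbZ)$ and Young's inequality yields $\nm{\psi \ast a}_{\ell^p} \lesssim \nm{a}_{\ell^p} = \nm{f}_{\dot{B}^{p,p}_\ga}$.

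The main obstacle is the upper bound: a naive application of Minkowski's inequality in $L^p(dy/|y|^{n+p\ga})$ on the decomposition $\nm{f(\cdot+y)-f}_p \leq \sum_j \nm{\Delta_j f(\cdot+y) - \Delta_j f}_p$ would only produce the smaller Besov norm $\nm{f}_{\dot{B}^{p,1}_\ga}$. Avoiding this loss forces one to carry out the summation over $j$ and the integration over $y$ together, which is precisely where the $\ell^1(\bbZ)$ property of the kernel $\psi$ (equivalent to $\ga \in (0,1)$) enters. This is also where the endpoint cases $\ga = 0$ and $\ga = 1$ break down, showing that the range of $\ga$ in the statement is sharp.
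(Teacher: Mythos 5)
The paper's ``proof'' is purely a citation to Triebel \cite[Theorem~5.2.3.2]{hT83}, so there is no argument in the paper to compare against; your proposal fills this in with what is essentially the standard textbook proof of the first-difference characterisation of homogeneous Besov spaces, and it is correct. The structure is the usual one: for the lower bound, exploit the vanishing mean of $\Psi_j$ together with Jensen's inequality against the probability measure $|\Psi_j|/\nm{\Psi_j}_1$ and sum the resulting kernel $\sum_j 2^{jp\ga}|\Psi_j(y)|$ dyadically to land on $|y|^{-n-p\ga}$; for the upper bound, combine the two estimates $\nm{\gD_j f(\cdot+y)-\gD_j f}_p \lesssim \min(2^j|y|,1)\nm{\gD_j f}_p$ (Bernstein on the small-$|y|$ side, triangle inequality on the other) into a discrete convolution with the kernel $\psi(m)=\min(2^m,2)2^{-m\ga}$, and observe $\psi\in\ell^1(\bbZ)$ precisely when $\ga\in(0,1)$. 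Your closing remark about why Minkowski alone produces only $\dot B^{p,1}_\ga$ and why the endpoints fail is accurate and worth retaining; it is the genuine content of the theorem.

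One small point deserves a footnote in your write-up. The upper bound implicitly uses the reproducing identity $f=\sum_j\gD_j f$ (needed to write $f(\cdot+y)-f$ as a sum of $\gD_j f(\cdot+y)-\gD_j f$). With the $\gY$ from Definition~\ref{dfn:classicalHSB}, the multipliers $\hat\gY(\cdot/2^j)$ need \emph{not} sum to $1$ (they only satisfy $\hat\gY\ge 0$, $\supp\hat\gY\subset A(0,6/7,2)$, $\hat\gY|_{A(0,1,12/7)}=1$, so the sum is merely comparable to $1$). The fix is standard: since the $\dot B^{p,p}_\ga$ quasinorm is independent, up to equivalence, of the choice of Littlewood--Paley family, replace $\gY$ by one satisfying $\sum_j\hat\gY(\xi/2^j)=1$ for $\xi\ne0$, run your argument there, and transfer back. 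Alternatively one can keep the given $\gY$ and replace the identity with a Calder\'on reproducing formula $f=\sum_j\wtd\gD_j\gD_j f$, but that is more work for no gain. You should also say a word about why the hypothesis $f\in L^2$ matters: it guarantees that $\sum_j\gD_j f$ converges to $f$ in $L^2$ (not just modulo polynomials), which is what makes the pointwise difference $f(\cdot+y)-f$ accessible to the dyadic decomposition.
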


For $\alpha \in (0,1)$, the Besov space $\dot{B}_\ga^{\infty,\infty}$ may be identified with the more familiar homogeneous \emph{H\"older--Lipschitz space}\index{space!H\"older--Lipschitz} $\dot{\gL}_\ga$: this is the space of functions $f$ on $\bbR^n$ such that
\begin{equation*}
	\nm{f}_{\dot{\gL}_\ga} := \sup_{x,y \in \bbR^n} \frac{|f(x) - f(y)|}{|x-y|^\ga} < \infty,
\end{equation*}
modulo constants.
Such functions are continuous.

\begin{rmk}
  We do not obtain any well-posedness results for Besov spaces $\dot{B}_\alpha^{\infty,\infty}$ with $\alpha \notin (0,1)$, but nonetheless these spaces fit into our abstract framework.
\end{rmk}

We must also consider the Triebel--Lizorkin spaces $\dot{F}^{\infty,2}_\ga$ for $\ga \in \bbR$, which are the subspaces of $\mc{Z}^\prime$ determined by the quasinorms
\begin{equation*}
	\nm{f}_{\dot{F}_\ga^{\infty,2}} := \inf \nm{ \nm{j \mapsto 2^{j\ga} |f_j(\cdot)|}_{\ell^2(\bbZ)}}_{L^\infty},
\end{equation*}
where the infima are taken over all decompositions 
\begin{equation*}
	f = \sum_{j \in \bbZ} \gD_j f_j \qquad \text{in $\mc{Z}^\prime$}
\end{equation*}
with each $f_j \in L^\infty$, where $\gD_j$ are Littlewood--Paley operators as in Definition \ref{dfn:classicalHSB}. 
For all $\ga \in \bbR$ we refer to these spaces as homogeneous \emph{$\BMO$-Sobolev spaces}\index{space!BMO-Sobolev@$\BMO$-Sobolev} $\dot{\BMO}_\ga := \dot{F}_\ga^{\infty,2}$.
When $\ga \geq 0$, $\dot{\BMO}_\ga$ is usually defined as the image of $\BMO$ under the Riesz potential $I_\ga$ defined above, as a subspace of $\mc{Z}^\prime$, with a corresponding norm; this definition agrees with the Triebel--Lizorkin definition that we just made.
Of course $\dot{\BMO}_0 = \BMO$.
See Strichartz \cite{rS80} and Triebel \cite[\textsection 5.1.4]{hT83} for further information.
In particular, we have the following characterisation of $\dot{\BMO}_\ga$ for $\ga \in (0,1)$ \cite[Theorem 3.3]{rS80}.

\begin{thm}\label{thm:strichartz-bmo}
	Suppose $\ga \in (0,1)$.
	Then for all $f \in L^2$ we have
	\begin{equation*}
		\nm{f}_{\dot{\BMO}_\ga} \simeq \sup_Q \left( \frac{1}{|Q|} \int_Q \int_Q \frac{|f(x)-f(y)|^2}{|x-y|^{n+2\ga}} \, dy \, dx \right)^{1/2},
	\end{equation*}
	where the supremum may be taken over all cubes or all balls.
\end{thm}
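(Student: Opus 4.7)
The equivalence is essentially a Campanato--type characterization of the Triebel--Lizorkin space $\dot{F}^{\infty,2}_\ga = \dot{\BMO}_\ga$, valid in the fractional range $\ga \in (0,1)$. The motivation is the global Plancherel identity
\begin{equation*}
\int_{\bbR^n} \int_{\bbR^n} \frac{|g(x)-g(y)|^2}{|x-y|^{n+2\ga}}\,dy\,dx \simeq \nm{g}_{\dot{H}^2_\ga}^2,
\end{equation*}
which is the $L^2$ analogue of the claim (and which specialises the equivalence $\dot{H}^2_\ga = \dot{B}^{2,2}_\ga$ together with Theorem \ref{thm:besov-difference}). The $\BMO$ analogue replaces the global square integration by a supremum of cube averages, just as the classical John--Nirenberg characterization $\nm{g}_{\BMO} \simeq \sup_Q \inf_c (|Q|^{-1} \int_Q |g-c|^2)^{1/2}$ does at order zero.

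The plan is to reduce both sides to a common Littlewood--Paley/Carleson quantity. First, since $f \in L^2$, I would regularize by convolving with a smooth approximate identity so that the manipulations below are legitimate, then pass to the limit by Fatou. Next, I would invoke (or derive from Theorem \ref{thm:Hp-tent} together with tent-space duality Theorem \ref{thm:ts-duality}, or from Theorem \ref{thm:tent-AC}) the Carleson characterization
\begin{equation*}
\nm{f}_{\dot{\BMO}_\ga}^2 \simeq \sup_Q \frac{1}{|Q|} \dint_{T(Q)} |t^{-\ga}(\gf_t \ast f)(y)|^2 \,\frac{dy\,dt}{t},
\end{equation*}
where $\gf_t(x) = t^{-n}\gf(t^{-1}x)$ for a suitable $\gf \in \mc{Z}$. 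This is exactly the $T^\infty_{\ga;0}$ quasinorm of $t \mapsto \gf_t \ast f$, and it is the natural bridge between the Triebel--Lizorkin definition and geometric quantities over cubes.

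Now fix a cube $Q$ of side length $r$ and split $f = f_1 + f_2$, with $f_1 = \sum_{2^{-j} \geq r} \gD_j f$ carrying the low frequencies (relative to $Q$) and $f_2 = \sum_{2^{-j} < r} \gD_j f$ the high ones. For the direction Campanato $\lesssim \dot{\BMO}_\ga$, I would estimate the $f_2$ contribution by approximate orthogonality of the $\gD_j$'s together with the scale-$2^{-j}$ Plancherel bound
\begin{equation*}
\int_Q \int_Q \frac{|\gD_j f(x) - \gD_j f(y)|^2}{|x-y|^{n+2\ga}}\,dy\,dx \lesssim 2^{2j\ga} \int_{CQ} |\gD_j f|^2,
\end{equation*}
and then sum in $j$ against the Carleson quantity above. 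For $f_1$, the mean-value inequality for the smooth low-frequency pieces gives pointwise $|f_1(x) - f_1(y)| \lesssim |x-y|\, \sum_{2^{-j} \geq r} 2^j \cdot(\text{suitable average of }|\gD_j f|\text{ on }CQ)$; combined with $\int_Q\int_Q |x-y|^{2-n-2\ga}\,dy\,dx \lesssim r^{n+2(1-\ga)}$ and the Carleson control of low frequencies at scales $\geq r$, this yields the bound. For the reverse direction $\dot{\BMO}_\ga \lesssim$ Campanato, I would reconstruct $\gf_t \ast f$ from differences via a Calder\'on reproducing formula adapted so that $\gf_t \ast f(y) = c \int K_t(y,z)(f(z) - f(z'))\,dz$ with $|z-z'| \sim t$ and $K_t$ having Gaussian-type decay, then Cauchy--Schwarz against a Carleson measure on $T(Q)$.

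The main obstacle is the high-frequency analysis for $f_2$: one must sum a double integral over a cube that mixes all scales $2^{-j} < r$, while only the frequency content at scale $\sim 2^{-j}$ contributes effectively at distance $\sim 2^{-j}$. This is exactly where the restriction $\ga \in (0,1)$ enters, ensuring both that the factor $2^{j(1-\ga)}$ arising after Taylor expansion at low frequencies is summable and that the factor $2^{-j\ga}$ dominates high-frequency Riesz-potential errors. The cleanest execution is via the Strichartz argument in \cite{rS80}, which organises these scale interactions through a tent-space duality and avoids any molecular machinery.
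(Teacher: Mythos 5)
The paper does not prove this theorem: it simply states it and cites Strichartz \cite[Theorem 3.3]{rS80}, which you yourself flag at the end as the cleanest source. Your outline follows the same broad route as Strichartz---pass to a Carleson/tent-space quantity over Littlewood--Paley pieces, split into low and high frequencies relative to the cube $Q$ of side $r$, and use Plancherel at each high-frequency scale together with a mean-value estimate for the low-frequency tail. The reverse direction, via a reproducing formula that writes $\gf_t\ast f$ as an integral of differences $f(z)-f(z')$ with $|z-z'|\sim t$, is also the right idea.

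The one step in your sketch that would fail as written is the low-frequency estimate. The Carleson condition controls $\int_{Q'}|\gD_j f|^2$ only over cubes $Q'$ of side $\geq 2^{-j}$; for scales $2^{-j}\geq r$ it says nothing directly about $\int_{CQ}|\gD_j f|^2$ on the fixed cube $Q$ of side $r$. If one tries to use the big-cube bound $\int_{Q_j}|\gD_j f|^2\lesssim 2^{-2j\ga}|Q_j|$ (with $Q_j$ of side $2^{-j}=2^k r$) and then sum, the resulting geometric series carries a factor $2^{k(n-2(1-\ga))}$, which diverges as soon as $n\geq 2(1-\ga)$. The rescue is the smoothness of $\gD_j f$ at scale $2^{-j}$: combining the Carleson condition on $Q_j$ with the fact that $|\gD_j f|$ is essentially constant on $2^{-j}$-cubes yields the \emph{pointwise} bound $\sup_{CQ}|\gD_j f|\lesssim 2^{-j\ga}\nm{f}_{\dot{\BMO}_\ga}$, whence $\sup_{CQ}|\nabla f_1|\lesssim\sum_{2^{-j}\geq r}2^{j(1-\ga)}\simeq r^{\ga-1}$ since $\ga<1$, and then $\int_Q\int_Q|x-y|^{2-n-2\ga}\,dy\,dx\lesssim r^{n+2(1-\ga)}$ gives the desired $r^n$. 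Your phrase ``suitable average of $|\gD_j f|$ on $CQ$'' gestures at this but should be made precise as the $L^\infty$ bound at scale $2^{-j}$ (from the Carleson condition at that larger scale), not an $L^2$ average over $CQ$. With that correction the sketch, including the role of $\ga\in(0,1)$ in making both geometric series converge, faithfully reproduces Strichartz's argument.
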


We introduce some unconventional but useful notation for these spaces.
For a finite exponent $\mb{p} = (p,s)$, define
\begin{equation*}
	\mb{H}^\mb{p} := \dot{H}^p_s = \dot{F}^{p,2}_s \quad \text{and} \quad \mb{B}^\mb{p} := \dot{B}^{p,p}_s.
\end{equation*}
For $\mb{p} = (\infty,s;0)$, define
\begin{equation*}
	\mb{H}^\mb{p} := \dot{F}^{\infty,2}_s = \dot{\BMO}_s \quad \text{and} \quad \mb{B}^\mb{p} := \dot{B}^{\infty,\infty}_s.
      \end{equation*}
      recalling that $\dot{B}^{\infty,\infty}_s = \dot{\gL}_s$ when $s \in (0,1)$.
Finally, for $\mb{p} = (\infty,s;\ga)$ with $\ga > 0$, define
\begin{equation*}
	\mb{H}^\mb{p} := \mb{B}^\mb{p} := \dot{B}^{\infty,\infty}_{s + \ga}.
\end{equation*}
As a consequence of these definitions and the various duality identifications for classical smoothness spaces, for all finite exponents $\mb{p}$ we have
\begin{equation*}
	(\mb{X}^\mb{p})^\prime \simeq \mb{X}^{\mb{p}^\prime}
\end{equation*}
whenever $\mb{X}$ denotes either $\mb{H}$ or $\mb{B}$.\index{space!smoothness}

We also have the following interpolation theorem.
This is a combination of standard results (see for example Mendez and Mitrea \cite[Theorem 11]{MM00}, Triebel \cite[Theorems 8.1.3 and 8.3.3a]{hT73}), and Bergh and L\"ofstr\"om \cite[Theorem 6.4.5]{BL76}).\footnote{The results cited in \cite{hT73} and \cite{BL76} are for inhomogeneous spaces. As always, the same technique proves the result for homogeneous spaces. To obtain the stated results for Besov spaces with $\gq(\mb{p}) = \gq(\mb{q})$, write $\dot{B}^{p,p}_\gq = \dot{F}^{p,p}_\gq$ and use the interpolation results for Triebel-Lizorkin spaces.}

\begin{thm}\label{thm:int-clas}\index{interpolation!of smoothness spaces}
	Let $\mb{p}$ and $\mb{q}$ be finite exponents, and suppose $\gh \in (0,1)$ and $p_\gh := i([\mb{p},\mb{q}]_\gh)$.
	Then we have
	\begin{equation*}
		[\mb{H}^\mb{p}, \mb{H}^\mb{q}]_\gh = \mb{H}^{[\mb{p},\mb{q}]_\gh}
	\end{equation*}
	and (also allowing infinite exponents)
	\begin{equation*}
		[\mb{B}^\mb{p}, \mb{B}^\mb{q}]_\gh = \mb{B}^{[\mb{p},\mb{q}]_\gh} \qquad \text{and} \qquad (\mb{B}^\mb{p}, \mb{B}^\mb{q})_{\gh,p_\gh} = \mb{B}^{[\mb{p},\mb{q}]_\gh}.
	\end{equation*}
	Furthermore if $\gq(\mb{p}) \neq \gq(\mb{q})$, then we have
	\begin{equation*}
		 (\mb{H}^\mb{p}, \mb{H}^\mb{q})_{\gh,p_\gh} = \mb{B}^{[\mb{p},\mb{q}]_\gh}.
	\end{equation*}
\end{thm}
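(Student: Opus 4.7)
The plan is to unpack the boldface exponent notation into the classical parameters of the Triebel--Lizorkin and Besov scales, and then invoke the standard interpolation theorems cited. Recall that for finite $\mb{p} = (p,s)$ one has $\mb{H}^\mb{p} = \dot{F}^{p,2}_s$ and $\mb{B}^\mb{p} = \dot{B}^{p,p}_s = \dot{F}^{p,p}_s$; for infinite $\mb{p} = (\infty,s;0)$ one has $\mb{H}^\mb{p} = \dot{F}^{\infty,2}_s$ and $\mb{B}^\mb{p} = \dot{B}^{\infty,\infty}_s$; for $\mb{p} = (\infty,s;\ga)$ with $\ga > 0$, $\mb{H}^\mb{p} = \mb{B}^\mb{p} = \dot{B}^{\infty,\infty}_{r(\mb{p})}$. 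A direct computation from the definition of $[\mb{p},\mb{q}]_\gh$ shows that $1/i([\mb{p},\mb{q}]_\gh)$, $\gq([\mb{p},\mb{q}]_\gh)$, and, whenever $[\mb{p},\mb{q}]_\gh$ is itself infinite, $r([\mb{p},\mb{q}]_\gh)$, all depend affinely on $\gh$ in the way required by the classical interpolation formulas.

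For the complex interpolation identities, the finite-exponent case of $[\mb{H}^\mb{p},\mb{H}^\mb{q}]_\gh = \mb{H}^{[\mb{p},\mb{q}]_\gh}$ is the classical complex interpolation theorem for homogeneous Triebel--Lizorkin spaces, and the Besov identity $[\mb{B}^\mb{p},\mb{B}^\mb{q}]_\gh = \mb{B}^{[\mb{p},\mb{q}]_\gh}$ for finite exponents is its Besov analogue. When one or both of $\mb{p},\mb{q}$ is infinite in the Besov identity, the endpoint spaces are identified with $\dot{B}^{\infty,\infty}_{r(\cdot)}$ and the same classical formula applies, possibly after conjugating by a Riesz potential $I_\ga$, which is an isomorphism between Besov spaces shifted in smoothness.

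For the real interpolation claims, the Besov statement $(\mb{B}^\mb{p},\mb{B}^\mb{q})_{\gh,p_\gh} = \mb{B}^{[\mb{p},\mb{q}]_\gh}$ follows from the standard homogeneous Besov real interpolation theorem when $\gq(\mb{p}) \neq \gq(\mb{q})$; the remaining case $\gq(\mb{p}) = \gq(\mb{q})$ (which forces $i(\mb{p}) \neq i(\mb{q})$ for nontriviality) is reduced via $\dot{B}^{p,p}_s = \dot{F}^{p,p}_s$ and a Riesz potential shift to real interpolation of $L^p$--scales. For the final assertion, with $\gq(\mb{p}) \neq \gq(\mb{q})$, I would apply the classical mixed identity
\[
(\dot{F}^{p_0,2}_{s_0}, \dot{F}^{p_1,2}_{s_1})_{\gh, p_\gh} = \dot{B}^{p_\gh, p_\gh}_{s_\gh} \qquad (s_0 \neq s_1),
\]
which exchanges the Triebel--Lizorkin scale for the Besov scale precisely when the fine index of real interpolation equals the interpolated integrability exponent; this gives $(\mb{H}^\mb{p},\mb{H}^\mb{q})_{\gh,p_\gh} = \mb{B}^{[\mb{p},\mb{q}]_\gh}$.

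The main difficulty is not analytical but organizational: one must carefully match up every subcase of the boldface exponent notation---especially infinite exponents with $\ga > 0$, where the regularity $r(\mb{p}) = s + \ga$ rather than $s$ enters the classical Besov smoothness index---with the correct classical interpolation theorem, and handle the $\gq(\mb{p}) = \gq(\mb{q})$ degeneracy of the Besov real interpolation separately via the $\dot{B}^{p,p}_s = \dot{F}^{p,p}_s$ identification.
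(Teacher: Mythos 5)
Your proposal takes essentially the same route as the paper: unwind the boldface notation, invoke the classical (homogeneous) Triebel--Lizorkin and Besov complex and real interpolation theorems, and dispose of the degenerate case $\gq(\mb{p}) = \gq(\mb{q})$ in the Besov real interpolation by writing $\dot{B}^{p,p}_s = \dot{F}^{p,p}_s$. This is precisely the paper's argument, which is just a citation to Mendez--Mitrea, Triebel, and Bergh--L\"ofstr\"om together with a footnote making the same $\dot{F}^{p,p}$ observation. For finite exponents, and for infinite exponents with $\ga = 0$, your bookkeeping is correct and the cited classical results apply directly.

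However, the one place where you go beyond the paper --- the treatment of infinite exponents with $\ga > 0$ --- is where your argument breaks. The difficulty is that the map
\begin{equation*}
  \mb{p} \longmapsto \big(1/i(\mb{p}),\, r(\mb{p})\big) =
  \left\{
  \begin{array}{ll}
    \big(j(\mb{p}),\, \gq(\mb{p})\big) & j(\mb{p}) > 0,\\[2pt]
    \big(0,\, \gq(\mb{p}) - n\,j(\mb{p})\big) & j(\mb{p}) \leq 0,
  \end{array}\right.
\end{equation*}
from the boldface $(j,\gq)$--plane to the classical Besov parameters $(1/p, s)$ is only \emph{piecewise} affine, with a kink along $j = 0$. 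Since $[\mb{p},\mb{q}]_\gh$ is affine in $(j,\gq)$ but the classical interpolation formula interpolates affinely in $(1/p,s)$, the two do not agree once the interpolation segment crosses $j = 0$, i.e., once one of $\mb{p}, \mb{q}$ has $\ga > 0$ and the other is finite. Concretely, take $n=1$, $\mb{p} = (\infty,0;1)$ so $\mb{B}^\mb{p} = \dot{B}^{\infty,\infty}_1$, and $\mb{q} = (1,0)$ so $\mb{B}^\mb{q} = \dot{B}^{1,1}_0$. The classical complex interpolation gives $[\dot{B}^{\infty,\infty}_1,\dot{B}^{1,1}_0]_{1/2} = \dot{B}^{2,2}_{1/2}$, whereas $[\mb{p},\mb{q}]_{1/2} = (\infty,0;0)$ and $\mb{B}^{[\mb{p},\mb{q}]_{1/2}} = \dot{B}^{\infty,\infty}_0$, which is a strictly larger space. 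No conjugation by a Riesz potential $I_\gc$ can repair this: $I_\gc$ shifts only the smoothness index, whereas the discrepancy here is simultaneous in the integrability index (by $(1-\gh)\ga/n$) and in smoothness (by $(1-\gh)\ga$), linked by the Sobolev slope $n$, so $I_\gc$ moves both candidate spaces together and never identifies them. The honest conclusion is that the Besov interpolation identities, as written, hold when both endpoints are finite, both are infinite, or one is infinite with $\ga = 0$; that is the scope covered by the cited references and the only scope in which the paper actually uses the theorem. Your claim to handle $\ga > 0$ by a Riesz potential should be deleted, or the statement should be restricted to $\ga = 0$.
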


Combining these duality and interpolation results with Corollary \ref{cor:Hp-tent-easy}, and using our unconventional notation, we obtain a typographically slick characterisation of all the smoothness spaces that we consider.
This theorem and its proof foreshadow our approach to adapted Besov--Hardy--Sobolev spaces.

\begin{thm}\label{thm:BHS-Xspace}
	Let $\mb{p}$ be an exponent, and let $\gf \in \mc{Z}$ be such that $\hat{\gf}(\gx) > 0$ if $1/2 \leq |\gx| \leq 2$.
	Then for all $f \in \mc{Z}^\prime$,
	\begin{equation*}
		\nm{t \mapsto t^{-n} \gf(t^{-1} \cdot) \ast f}_{X^\mb{p}} \simeq \nm{f}_{\mb{X}^\mb{p}},
	\end{equation*}
	where $X = T$ if $\mb{X} = \mb{H}$, and $X = Z$ if $\mb{X} = \mb{B}$.
\end{thm}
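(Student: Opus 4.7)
The plan is to reduce everything to the finite Hardy case (Corollary \ref{cor:Hp-tent-easy}) and then extend by real interpolation and duality. Write $T_\gf f := ((t,x) \mapsto t^{-n}\gf(t^{-1}\cdot) \ast f(x))$. For finite $\mb{p}$ with $\mb{X} = \mb{H}$, the identification $\mb{H}^\mb{p} = \dot{H}^{i(\mb{p})}_{\gq(\mb{p})}$ makes $\nm{T_\gf f}_{T^\mb{p}} \simeq \nm{f}_{\mb{H}^\mb{p}}$ exactly Corollary \ref{cor:Hp-tent-easy}.

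For finite $\mb{p}$ with $\mb{X} = \mb{B}$, I would pick finite $\mb{p}_0,\mb{p}_1$ with $\gq(\mb{p}_0)\neq\gq(\mb{p}_1)$, $i(\mb{p}_0)=i(\mb{p}_1)=i(\mb{p})$, and $\mb{p}=[\mb{p}_0,\mb{p}_1]_{1/2}$ (for instance $\mb{p}_j = \mb{p}+(-1)^j$). Theorems \ref{thm:ts-rint-full} and \ref{thm:int-clas} then give
\begin{equation*}
Z^\mb{p} = (T^{\mb{p}_0},T^{\mb{p}_1})_{1/2,i(\mb{p})}, \qquad \mb{B}^\mb{p} = (\mb{H}^{\mb{p}_0},\mb{H}^{\mb{p}_1})_{1/2,i(\mb{p})}.
\end{equation*}
Since $T_\gf : \mb{H}^{\mb{p}_j}\to T^{\mb{p}_j}$ is bounded for $j=0,1$ by the first step, real interpolation yields $\nm{T_\gf f}_{Z^\mb{p}}\lesssim\nm{f}_{\mb{B}^\mb{p}}$. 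For the converse I would produce a bounded left inverse: by positivity of $\hat\gf$ on $\{1/2\leq|\gx|\leq 2\}$, there exists $\gy\in\mc{Z}$ with $\hat\gy$ supported in this annulus and $\int_0^\infty \hat\gy(t\gx)\hat\gf(t\gx)\,dt/t=1$ for every $\gx\neq 0$; then $S_\gy g := \int_0^\infty t^{-n}\gy(t^{-1}\cdot)\ast g(t,\cdot)\,dt/t$ satisfies $S_\gy T_\gf = \id$ in $\mc{Z}'$. Granted that $S_\gy : T^{\mb{p}_j}\to\mb{H}^{\mb{p}_j}$ is bounded---the Calder\'on direction of the Littlewood--Paley characterization, obtainable from the general form of Theorem \ref{thm:Hp-tent} applied to $\hat\gy$---real interpolation gives $S_\gy : Z^\mb{p}\to\mb{B}^\mb{p}$ bounded, whence
\begin{equation*}
\nm{f}_{\mb{B}^\mb{p}} = \nm{S_\gy T_\gf f}_{\mb{B}^\mb{p}} \lesssim \nm{T_\gf f}_{Z^\mb{p}}.
\end{equation*}

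For infinite $\mb{p}$ the dual exponent $\mb{p}'$ is finite, and each of $X^\mb{p}$ and $\mb{X}^\mb{p}$ is the Banach dual of its finite counterpart via Theorem \ref{thm:ts-duality}, the $Z$-space duality results, and the classical dualities of $\dot\BMO_s$ and $\dot B^{\infty,\infty}_s$. The Calder\'on identity yields, on a dense subclass,
\begin{equation*}
\int_{\bbR^n} \langle f,g \rangle\,dx = \dint_{\bbR^{1+n}_+} \langle T_\gf f(t,x), T_{\bar\gy} g(t,x) \rangle\,dx\,\frac{dt}{t},
\end{equation*}
so pairing against $g$ in the unit ball of $\mb{X}^{\mb{p}'}$ and using the finite case for $T_{\bar\gy}$ gives $\nm{f}_{\mb{X}^\mb{p}} \lesssim \nm{T_\gf f}_{X^\mb{p}}$; the reverse inequality follows symmetrically by pairing $T_\gf f$ against $X^{\mb{p}'}$-unit vectors and invoking boundedness of $S_\gy$ at the finite exponent $\mb{p}'$. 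The main obstacle is thus boundedness of $S_\gy : T^\mb{p}\to\mb{H}^\mb{p}$ for all finite $\mb{p}$---the nontrivial direction of the $\gf$-transform characterization of $\dot F^{i(\mb{p}),2}_{\gq(\mb{p})}$, delicate in the quasi-Banach regime $i(\mb{p})<1$---but this is classical, and once it is in place the remaining steps are routine.
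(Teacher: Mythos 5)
Your proposal is correct and runs along essentially the same lines as the paper's proof: Corollary~\ref{cor:Hp-tent-easy} for the finite Hardy--Sobolev case, real interpolation of the analysis and synthesis operators to pass to the Besov/$Z$-scale, a Calder\'on sibling $\gy$ furnishing the left inverse $S_\gy T_\gf = \id$, and duality against the finite exponent $\mb{p}^\prime$ for the infinite case. One point to correct: Theorem~\ref{thm:Hp-tent} characterises the \emph{analysis} operator (your $T_\gy$, the paper's $Q_\gy$) and does not, by itself, give boundedness of the \emph{synthesis} operator $S_\gy \colon T^\mb{p} \to \mb{H}^\mb{p}$, so the parenthetical ``obtainable from the general form of Theorem~\ref{thm:Hp-tent}'' is inaccurate; you do, however, correctly single this out in your final sentence as the nontrivial classical ingredient (the paper obtains it by duality when $i(\mb{p})>1$ and, for $i(\mb{p})\leq 1$, from \cite[Theorem~6]{CMS85} in the $\gq(\mb{p})=0$ case together with a Riesz-potential regularity shift $(-\gD)^{-s}S_\gf = t^{-2s}S_{(-\gD)^{-s}\gf}$).
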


\begin{proof}
	Let $Q_\gf$ be the operator $Q_\gf f(t) := t^{-n} \gf(t^{-1} \cdot) \ast f$.
	Corollary \ref{cor:Hp-tent-easy} says that
	\begin{equation}\label{eqn:Q-classic}
		\nm{Q_\gf f}_{T^\mb{p}} \simeq \nm{f}_{\mb{H}^\mb{p}}
	\end{equation}
	when $\mb{p}$ is finite, so in this case $Q_\gf$ is bounded from $\mb{H}^\mb{p}$ to $T^\mb{p}$.
	A quick computation shows that the adjoint $\map{(Q_\gf)^*}{T^{\mb{p}^\prime}}{\mb{H}^{\mb{p}^\prime}}$ with respect to the $L^2(\bbR^{1+n}_+)$ and $L^2(\bbR^n)$ duality pairings is given by
	\begin{equation*}
		((Q_\gf)^*G)(x) = S_{\td{\gf}} G(x) = \int_0^\infty (t^{-n} \td{\gf}(t^{-1} \cdot) \ast G(t,\cdot))(x) \, \frac{dt}{t} \qquad (G \in T^{\mb{p}^\prime})
	\end{equation*}
	where $\td{\gf}(y) := \overline{\gf(-y)}$, and where the integral converges in the weak-star topology on $\mb{H}^{\mb{p}^\prime}$.
	Since $\map{Q_{\td{\gf}}}{\mb{H}^\mb{p}}{T^\mb{p}}$ is bounded for all finite $\mb{p}$, we find that $\map{S_\gf}{T^{\mb{q}}}{\mb{H}^{\mb{q}}}$ is bounded for all $\mb{q}$ with $i(\mb{q}) > 1$.
	One can extend this to all finite $\mb{q}$: this is done by Coifman, Meyer, and Stein for $\gq(\mb{q}) = 0$ \cite[Theorem 6]{CMS85},\footnote{This is stated for compactly supported $\gf$ there, but the argument extends to $\gf \in \mc{Z}$.} and the reduction to this case for general $\mb{q}$ is done by writing
	\begin{equation*}
		(-\gD)^{-s} S_\gf G = t^{-2s} S_{\gy} G
	\end{equation*}
	with $\gy = (-\gD)^{-s} \gf$.
	
	By interpolation, for all $\mb{p}$, $Q_\gf$ is bounded from $\mb{B}^\mb{p}$ to $Z^\mb{p}$ and $S_\gf$ is bounded from $Z^\mb{p}$ to $\mb{B}^\mb{p}$.
	The Calder\'on reproducing formula says that there exists a function $\gy \in \mc{Z}$, also with $\hat{\gy}(\gx) > 0$ when $1/2 \leq |\gx| \leq 2$, such that
	\begin{equation*}
		S_\gy Q_\gf f = f
	\end{equation*}
	for all $f \in \mc{Z}^\prime$, and hence also for all $f \in \mb{H}^\mb{p}$ or $f \in \mb{B}^\mb{p}$.
	It follows that $\mb{X}^\mb{p}$ may be identified with $Q_\gf S_\gy X^\mb{p}$ for all $\mb{p}$, and by the retraction/coretraction interpolation theorem \cite[\textsection 1.2.4]{hT78} we get
	\begin{equation*}
		\nm{Q_\gf f}_{X^\mb{p}} \simeq \nm{f}_{\mb{X}^\mb{p}}
	\end{equation*}
	for all $\mb{p}$ and all $f \in \mc{Z}^\prime$, which completes the proof.
\end{proof}

\begin{rmk}
	We were a bit sketchy in the above proof, as we will carry out the argument in much greater detail, and in an abstract setting, in the following chapters.
	See in particular Section \ref{sec:eco} and Chapter \ref{chap:ahsb}.
      \end{rmk}

      \section{Factorisation and interpolation of $Z$-spaces}\label{sec:Z-space-appendix}

      In this section we prove the complex interpolation result for $Z$-spaces, including the quasi-Banach range (Proposition \ref{prop:Z-cint}).
      We argue by factorisation, exploiting the following `Calder\'on product formula', which is a special case of \cite[Theorem 3.4]{KM98}.
  
      \begin{thm}[Kalton--Mitrea]\label{thm:KM}
        Let $X_0$, $X_1$ be a pair of quasi-Banach function spaces on $\bbR^{1+n}_+$.
        Suppose that both $X_0$ and $X_1$ are A-convex and separable.
        Then
        \begin{equation*}
          [X_0,X_1]_\theta = X_0^{1-\theta} X_1^\theta,
        \end{equation*}
        where $X_0^{1-\theta} X_1^\theta$ is the quasi-Banach function space consisting of those $h \in L^0(\bbR^{1+n}_+)$ such that the quasinorm
        \begin{equation*}
          \nm{h}_{X_0^{1-\theta} X_1^\theta} := \inf\{ \nm{f}_{X_0}^{1-\theta} \nm{g}_{X_1}^\theta : |h| \leq |f|^{1-\theta} |g|^\theta, \, f \in X_0, \, g \in X_1\}
        \end{equation*}
        is finite, with the usual convention that the infimum of an empty set is $+\infty$.
      \end{thm}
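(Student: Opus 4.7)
The plan is to establish the set equality with matching quasinorms by proving both inclusions separately, with the factorisation direction $X_0^{1-\theta}X_1^\theta \hookrightarrow [X_0,X_1]_\theta$ being standard and the reverse direction requiring essential use of A-convexity.

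For the forward inclusion, given $h \in X_0^{1-\theta}X_1^\theta$ with $|h| \leq |f|^{1-\theta}|g|^\theta$, write $h = u\,|f|^{1-\theta}|g|^\theta$ where $u$ is measurable with $|u| \leq 1$, and for fixed $\delta > 0$ define the $X_0+X_1$-valued function on the closed strip $\overline{S} = \{0 \leq \Re z \leq 1\}$ by
\begin{equation*}
F(z) := e^{\delta(z-\theta)^2}\, u\, |f|^{1-z}\,|g|^z.
\end{equation*}
Analyticity of $F$ on $S$ follows from pointwise analyticity together with the pointwise majorant $|f|+|g| \in X_0+X_1$, and the Gaussian factor provides decay as $|\Im z| \to \infty$. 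On the boundary $\Re z = 0$ one has $|F(it)| \leq e^{-\delta \theta^2}|f|$, so $F(it) \in X_0$ with norm at most $\|f\|_{X_0}$, and analogously $\|F(1+it)\|_{X_1} \leq \|g\|_{X_1}$. Since $F(\theta) = h$, the definition of the Kalton--Mitrea interpolation norm gives $\|h\|_{[X_0,X_1]_\theta} \leq \|f\|_{X_0}^{1-\theta}\|g\|_{X_1}^\theta$, and taking the infimum over factorisations yields the embedding.

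For the reverse inclusion, suppose $h = F(\theta)$ for some admissible $F$ with boundary norms at most $M_0, M_1$. The strategy is to build a factorisation $|h| \leq |f_0|^{1-\theta}|f_1|^\theta$ via a Poisson-type integral construction: at almost every $x$, the function $z \mapsto \log|F(z)(x)|$ is subharmonic on $S$, so Jensen's inequality yields
\begin{equation*}
\log|h(x)| \leq (1-\theta)\!\int_{\mathbb{R}} P_0(\theta,t)\log|F(it)(x)|\,dt + \theta\!\int_{\mathbb{R}} P_1(\theta,t)\log|F(1+it)(x)|\,dt
\end{equation*}
for appropriate Poisson kernels $P_0, P_1$ on $S$. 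Exponentiating and applying Jensen's inequality in the reverse direction produces natural candidate factors $f_0, f_1$ given by weighted geometric means of the boundary traces $|F(it)|$ and $|F(1+it)|$; these satisfy the pointwise bound $|h| \leq |f_0|^{1-\theta}|f_1|^\theta$ by construction.

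The main obstacle is showing that $f_0 \in X_0$ with $\|f_0\|_{X_0} \lesssim M_0$, and similarly for $f_1$; the classical Banach lattice proof handles this via the Lozanovski\u{\i} factorisation theorem and duality, neither of which is available in the quasi-Banach setting. This is precisely where the hypotheses enter: A-convexity provides an equivalent lattice quasinorm on $X_i$ that is plurisubharmonic, upgrading the pointwise Jensen estimate to the required quasinorm estimate, while separability permits the Poisson integrals defining $f_0,f_1$ to be approximated by finite Riemann sums of boundary values so the norm bounds can be transferred rigorously. Once these norm bounds are in hand, optimising over admissible $F$ gives $\|h\|_{X_0^{1-\theta}X_1^\theta} \lesssim \|h\|_{[X_0,X_1]_\theta}$, completing the proof.
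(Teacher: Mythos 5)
First, a point of comparison: the paper does not prove this statement at all — it is invoked as a special case of Kalton--Mitrea \cite[Theorem 3.4]{KM98} — so there is no internal proof to measure your argument against, and what you are attempting is a re-derivation of that cited theorem. Your outline follows the classical Calder\'on strategy, but at the two places where the quasi-Banach case is genuinely harder than the Banach-lattice case the argument is an assertion rather than a proof. In the reverse inclusion, the claim that $z \mapsto \log|F(z)(x)|$ is subharmonic for almost every $x$ does not follow from analyticity of $F$ as a map into $X_0+X_1$: the values $F(z)$ are equivalence classes of measurable functions, and producing representatives for which $z \mapsto F(z)(x)$ is analytic for a.e.\ $x$ (so that the Poisson--Jensen inequality can be applied pointwise) requires a genuine argument exploiting the continuous embedding of $X_0+X_1$ into $L^0$ and the separability hypothesis; this is one of the delicate points of the quasi-Banach theory and cannot be taken for granted.

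Second, and more seriously, the decisive quantitative step is missing. Even granting the pointwise inequality and your definition of $f_0,f_1$ as Poisson-weighted geometric means of the boundary traces, the statement that A-convexity ``provides an equivalent plurisubharmonic quasinorm, upgrading the pointwise Jensen estimate to the required quasinorm estimate'' is not an argument: a plurisubharmonic quasinorm controls values of \emph{analytic $X_j$-valued functions}, whereas $f_0$ is not exhibited as such a value — it is a continuous lattice geometric mean of the family $(|F(it)|)_{t \in \bbR}$. What is actually needed is the lattice logarithmic-convexity inequality $\nm{|u|^{1-\theta}|v|^{\theta}}_{X_j} \lesssim \nm{u}_{X_j}^{1-\theta}\nm{v}_{X_j}^{\theta}$ — this is where A-convexity (equivalently, lattice $r$-convexity for some $r>0$) genuinely enters, and it can fail without it — extended from two factors to continuous averages by a Riemann-sum limiting argument, which in turn needs an order-continuity or Fatou-type property extracted from separability. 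Since this is the entire content of the hard inclusion, the proof is missing precisely there. The forward inclusion also has a gap as written: the difference quotients of $z \mapsto u\,|f|^{1-z}|g|^{z}$ involve the factor $\log(|g|/|f|)$ and so are \emph{not} dominated by $|f|+|g|$, and norm continuity of $t \mapsto F(j+it)$ into $X_j$ requires dominated convergence in quasinorm; both points force you either to invoke order continuity (derived from separability) or to argue first for a dense class of factorisations (e.g.\ simple functions bounded away from $0$ on their supports) and then pass to the limit. Given these gaps, the appropriate course in the monograph's context is what the paper does: cite \cite[Theorem 3.4]{KM98} rather than reprove it.
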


A quasi-Banach function space $X$ is A-convex if and only if it is lattice $r$-convex for some $r > 0$ (see \cite[Theorem 2.2]{nK84} and \cite[Theorem 4.4]{nK86}), which means that for all finite collections $f_1,\ldots,f_k \in X$
\begin{equation*}
  \bigg\| \bigg( \sum_{i=1}^k |f_i|^r \bigg)^{1/r} \bigg\|_X \lesssim \bigg( \sum_{i=1}^k \nm{f_i}_X^r \bigg)^{1/r}.
\end{equation*}
See \cite{KM98} for further discussion of A-convexity.

We will use the following extension of the $Z$-space scale.

\begin{dfn}
  For $p,q \in (0,\infty]$ and $s \in \bbR$, we define $Z^{p,q}_s$ to be the set of all $f \in L^0(\bbR^{1+n}_+)$ such that the quasinorm
  \begin{align*}
    \nm{f}_{Z^{p,q}_s} &:= \bigg\| \ell(Q)^{-s} [|f|^q]^{1/q}_{\overline{Q}} \bigg\|_{\ell^p(\mc{G}, \ell(Q)^n)} \\
                         &= \bigg( \sum_{\overline{Q} \in \mc{G}} \ell(Q)^n \big|  \ell(Q)^{-s} [|f|^q]^{1/q}_{\overline{Q}} \big|^p \bigg)^{1/p}
  \end{align*}
  is finite, with the usual modifications when $p$ or $q$ is infinite.
\end{dfn}

When $q=2$, $Z^{p,q}_s$ is just the $Z$-space $Z^{\mb{p}}$ seen through the dyadic characterisation (Proposition \ref{prop:Zpqs-seq-equivalence}), with $\mb{p} = (p,s)$ when $p < \infty$ and $\mb{p} = (\infty,s;0)$ when $p = \infty$.
It is straightforward to prove that $Z^{p,q}_s$ is a quasi-Banach function space (Banach when $p,q \geq 1$), and that $Z^{p,q}_s$ is $\min(p,q)$-convex, hence A-convex.
Furthermore, $Z^{p,q}_s$ is separable when $p,q < \infty$.

We will prove the following interpolation theorem, from which Proposition \ref{prop:Z-cint} follows by taking $q_0 = q_1 = 2$. 
\begin{thm}\label{thm:appendix-Z-Cint}
  	Suppose $s_0,s_1 \in \bbR$, $p_0,p_1,q_0,q_1 \in (0,\infty)$, and $\theta \in (0,1)$.	
	Then
	\begin{equation*}
		[Z^{p_0,q_0}_{s_0}, Z^{p_1,q_1}_{s_1}]_\theta = Z^{p_\theta,q_\theta}_{s_\theta},
	\end{equation*}
	where $1/p_\theta = (1-\theta)/p_0 + \theta/p_1$, $1/q_\theta = (1-\theta)/q_0 + \theta/q_1$, and $s_\theta = (1-\theta)s_0 + \theta s_1$.
\end{thm}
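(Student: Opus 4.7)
The plan is to deduce the theorem from the Kalton--Mitrea Calder\'on product formula (Theorem \ref{thm:KM}) by identifying the Calder\'on product of the two $Z$-spaces with $Z^{p_\theta,q_\theta}_{s_\theta}$. First I would verify the hypotheses of Theorem \ref{thm:KM} for $X_i := Z^{p_i,q_i}_{s_i}$: separability is clear from $p_i, q_i < \infty$ via the dyadic representation, and A-convexity follows from the remark already noted in the text that $Z^{p,q}_s$ is lattice $\min(p,q)$-convex (indeed, the inner $L^q$-average and outer $\ell^p$-sum are both lattice norms of the required type). This reduces the theorem to the Calder\'on product identification
\begin{equation*}
X_0^{1-\theta} X_1^{\theta} = Z^{p_\theta,q_\theta}_{s_\theta}.
\end{equation*}

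For the embedding $X_0^{1-\theta} X_1^\theta \hookrightarrow Z^{p_\theta,q_\theta}_{s_\theta}$, I would take $f \in X_0$, $g \in X_1$ and any $h \in L^0(\bbR^{1+n}_+)$ with $|h| \leq |f|^{1-\theta} |g|^\theta$. On each Whitney cube $\overline{Q} \in \mc{G}$, H\"older's inequality in $L^q(\overline{Q}, d\tau d\xi/\tau^{1+n})$ with exponents $q_0/((1-\theta)q_\theta)$ and $q_1/(\theta q_\theta)$ gives
\begin{equation*}
[|h|^{q_\theta}]_{\overline{Q}}^{1/q_\theta} \leq [|f|^{q_0}]_{\overline{Q}}^{(1-\theta)/q_0} [|g|^{q_1}]_{\overline{Q}}^{\theta/q_1}.
\end{equation*}
Multiplying by $\ell(Q)^{-s_\theta} = \ell(Q)^{-(1-\theta)s_0}\ell(Q)^{-\theta s_1}$ and applying H\"older's inequality in $\ell^{p_\theta}(\mc{G},\ell(Q)^n)$ with exponents $p_0/((1-\theta)p_\theta)$, $p_1/(\theta p_\theta)$ yields $\nm{h}_{Z^{p_\theta,q_\theta}_{s_\theta}} \leq \nm{f}_{X_0}^{1-\theta} \nm{g}_{X_1}^{\theta}$, as required.

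For the reverse inclusion, the key step is an explicit factorisation: given $h \in Z^{p_\theta,q_\theta}_{s_\theta}$, set $u_Q := \ell(Q)^{-s_\theta}[|h|^{q_\theta}]_{\overline{Q}}^{1/q_\theta}$ and define
\begin{equation*}
f := \sum_{\overline{Q} \in \mc{G}} \alpha_Q \,|h|^{q_\theta/q_0}\, \mathbf{1}_{\overline{Q}}, \qquad g := \sum_{\overline{Q} \in \mc{G}} \beta_Q \,|h|^{q_\theta/q_1}\, \mathbf{1}_{\overline{Q}},
\end{equation*}
with
\begin{equation*}
\alpha_Q := \ell(Q)^{s_0 - s_\theta p_\theta/p_0}[|h|^{q_\theta}]_{\overline{Q}}^{p_\theta/(q_\theta p_0) - 1/q_0}, \quad \beta_Q := \ell(Q)^{s_1 - s_\theta p_\theta/p_1}[|h|^{q_\theta}]_{\overline{Q}}^{p_\theta/(q_\theta p_1) - 1/q_1}
\end{equation*}
(and $f = g = 0$ on cubes where $[|h|^{q_\theta}]_{\overline{Q}} = 0$). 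A direct calculation using the defining relations $(1-\theta)/p_0 + \theta/p_1 = 1/p_\theta$, $(1-\theta)/q_0 + \theta/q_1 = 1/q_\theta$, $(1-\theta)s_0 + \theta s_1 = s_\theta$ shows that the $\ell(Q)$-exponent and $[|h|^{q_\theta}]_{\overline{Q}}$-exponent in $\alpha_Q^{1-\theta}\beta_Q^\theta$ both vanish, so $\alpha_Q^{1-\theta}\beta_Q^\theta = 1$ and therefore $|f|^{1-\theta}|g|^\theta = |h|$ pointwise. A matching computation gives $\nm{f}_{X_0} = \nm{h}^{p_\theta/p_0}_{Z^{p_\theta,q_\theta}_{s_\theta}}$ and $\nm{g}_{X_1} = \nm{h}^{p_\theta/p_1}_{Z^{p_\theta,q_\theta}_{s_\theta}}$, hence $\nm{f}_{X_0}^{1-\theta}\nm{g}_{X_1}^\theta = \nm{h}_{Z^{p_\theta,q_\theta}_{s_\theta}}$.

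The main obstacle is less conceptual than bookkeeping: one must choose the correct splitting on each Whitney cube that simultaneously factorises both the inner $L^{q_\theta}$-average and the outer $\ell^{p_\theta}$-sum (with the $\ell(Q)^{-s_\theta}$ weight). The two-level splitting given by $\alpha_Q,\beta_Q$ handles this, but arriving at the formula requires balancing four exponent identities at once, which is why I would derive the exponents by solving the system rather than guess them.
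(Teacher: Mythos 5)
Your proposal is correct and follows the same overall strategy as the paper: verify A-convexity and separability, invoke the Kalton--Mitrea formula (Theorem~\ref{thm:KM}), prove the multiplication inequality by two applications of H\"older's inequality, and then supply an explicit factorisation. The only substantive difference is in how the factorisation is obtained. The paper proceeds modularly: it first factors $Z^{p,q}_s \to Z^{\infty,q}_{s_0} \cdot Z^{p,\infty}_{s_1}$ via the simple split $F = \sum_Q \mb{1}_{\overline{Q}}\ell(Q)^{s_0} h / [|h|^q]^{1/q}_{\overline{Q}}$, $G = \sum_Q \mb{1}_{\overline{Q}}\ell(Q)^{-s_0} [|h|^q]^{1/q}_{\overline{Q}}$, then reduces each factor further using a separate ``single-exponent factorisation'' lemma, and finally passes from the multiplicative exponent convention $1/p = 1/p_0 + 1/p_1$ to the interpolation convention by raising $F$ and $G$ to the powers $1/(1-\theta)$ and $1/\theta$. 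Your proof collapses this entire chain into a single closed-form factorisation $f = \sum_Q \alpha_Q |h|^{q_\theta/q_0}\mb{1}_{\overline{Q}}$, $g = \sum_Q \beta_Q |h|^{q_\theta/q_1}\mb{1}_{\overline{Q}}$ with explicitly chosen $\alpha_Q,\beta_Q$; I checked the four exponent identities and the norm computations, and they all hold as claimed (and your convention for cubes with $[|h|^{q_\theta}]_{\overline{Q}}=0$ avoids the $0\cdot\infty$ issue). The tradeoff is that the paper's approach isolates two very simple reusable sub-lemmas (multiplication, and factorisation against an $\infty$-exponent), whereas yours produces a more compact but less transparent formula whose exponents must be discovered by solving a small linear system, as you note. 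Both are complete proofs.
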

  This was conjectured by Barton and Mayboroda, who proved the result when $p_0,p_1 \geq 1$ and $q_0 = q_1 \geq 1$ \cite[Theorem 4.13]{BM16}.
  To deduce this from Theorem \ref{thm:KM} we need to establish multiplication and factorisation results.

	\begin{dfn}
		Let $s_0,s_1 \in \bbR$ and $p_0,p_1,q_0,q_1 \in (0,\infty]$.
		We write
		\begin{equation*}
		Z^{p,q}_s \leftrightarrow Z^{p_0,q_0}_{s_0} \cdot Z^{p_1,q_1}_{s_1}
		\end{equation*}
		to mean that the following \emph{multiplication} and \emph{factorisation} properties hold:
		\begin{itemize}
			\item  if $f \in Z^{p_0,q_0}_{s_0}$ and $g \in Z^{p_1,q_1}_{s_1}$, then $fg \in Z^{p,q}_s$ with
				\begin{equation}\label{eqn:mult}
				\nm{fg}_{Z^{p,q}_s} \lesssim \nm{f}_{Z^{p_0,q_0}_{s_0}} \nm{g}_{Z^{p_1,q_1}_{s_1}},
				\end{equation}
			\item 
				if $h \in Z^{p,q}_s$ then there exist $F \in Z^{p_0,q_0}_{s_0}$ and $G \in Z^{p_1,q_1}_{s_1}$ such that $h = FG$ and
				\begin{equation}\label{eqn:fact}
				\nm{F}_{Z^{p_0,q_0}_{s_0}}\nm{G}_{Z^{p_1,q_1}_{s_1}} \lesssim \nm{h}_{Z^{p,q}_s}.
				\end{equation}
		\end{itemize}
		We abbreviate the multiplication and factorisation properties as
		\begin{equation*}
			Z^{p,q}_s \leftarrow Z^{p_0,q_0}_{s_0} \cdot Z^{p_1,q_1}_{s_1} \quad \text{and} \quad Z^{p,q}_s \rightarrow Z^{p_0,q_0}_{s_0} \cdot Z^{p_1,q_1}_{s_1}
		\end{equation*}
		respectively.
              This notation is easily extended to more than two factors, and formal computations involving commutativity and associativity (as in the proof of Proposition \ref{prop:factorisation}) are valid.
              \end{dfn}
		
		\begin{rmk}
                  In all the cases that we cover here, the implicit constants in \eqref{eqn:mult} and \eqref{eqn:fact} can be taken to be $1$.
                  This is because we use the dyadic definition of the $Z^{p,q}_s$-quasinorm, and it need not be true for other equivalent quasinorms.
		\end{rmk}

	\begin{prop}[Multiplication]\label{prop:mult}
		Suppose $s_0,s_1 \in \bbR$ and $p_0,p_1,q_0,q_1 \in (0,\infty]$, and let
	\begin{equation*}
		\frac{1}{p} := \frac{1}{p_0} + \frac{1}{p_1}, \quad \frac{1}{q} := \frac{1}{q_0} + \frac{1}{q_1}, \quad s := s_0 + s_1.
	\end{equation*}
	Then $Z^{p,q}_s \leftarrow Z^{p_0,q_0}_{s_0} \cdot Z^{p_1,q_1}_{s_1}$.
	\end{prop}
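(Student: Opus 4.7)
The plan is to use the dyadic characterisation of the $Z^{p,q}_s$-quasinorm directly, and reduce the claim to two successive applications of Hölder's inequality: one ``inner'' (on each Whitney cube $\overline{Q}$, where the quasinorm $[\,\cdot\,]_{\overline{Q}}^{1/q}$ is an $L^q$-norm) and one ``outer'' (on the weighted counting-measure space $(\mc{G},\ell(Q)^n)$).

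First I would fix $f \in Z^{p_0,q_0}_{s_0}$ and $g \in Z^{p_1,q_1}_{s_1}$ and pointwise-on-cubes apply Hölder with exponents $q_0/q$ and $q_1/q$ (which are conjugate since $q/q_0+q/q_1=1$). This yields, for every $\overline{Q} \in \mc{G}$,
\begin{equation*}
	[|fg|^q]_{\overline{Q}}^{1/q} \leq [|f|^{q_0}]_{\overline{Q}}^{1/q_0} \, [|g|^{q_1}]_{\overline{Q}}^{1/q_1}.
\end{equation*}
Multiplying by $\ell(Q)^{-s} = \ell(Q)^{-s_0}\ell(Q)^{-s_1}$ and distributing the weight gives
\begin{equation*}
	\ell(Q)^{-s}[|fg|^q]_{\overline{Q}}^{1/q} \leq \bigl(\ell(Q)^{-s_0}[|f|^{q_0}]_{\overline{Q}}^{1/q_0}\bigr)\bigl(\ell(Q)^{-s_1}[|g|^{q_1}]_{\overline{Q}}^{1/q_1}\bigr).
\end{equation*}

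Next, I would raise this inequality to the $p$-th power, split the volume weight $\ell(Q)^n = \ell(Q)^{np/p_0}\cdot \ell(Q)^{np/p_1}$ (valid since $p/p_0+p/p_1=1$), and apply Hölder on $\ell^1(\mc{G})$ with conjugate exponents $p_0/p$ and $p_1/p$. The result is
\begin{equation*}
	\nm{fg}_{Z^{p,q}_s}^p \leq \nm{f}_{Z^{p_0,q_0}_{s_0}}^p \, \nm{g}_{Z^{p_1,q_1}_{s_1}}^p,
\end{equation*}
which is the desired estimate (with implicit constant $1$).

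The endpoint cases when one or more of $p_i, q_i$ is $\infty$ are handled by making the obvious adjustments to Hölder's inequality (replacing the relevant $\ell^{p_i}$ or $L^{q_i}$ by the corresponding essential-supremum norm); no new idea is needed. I do not anticipate any genuine obstacle here, as the dyadic characterisation has already reduced the problem to classical inequalities on a measure space and on each individual Whitney cube; the entire content of the proposition is that the exponent arithmetic of $Z^{p,q}_s$ matches that of Hölder's inequality applied at the two scales.
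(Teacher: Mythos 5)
Your proposal is correct and takes essentially the same approach as the paper: the paper's proof is exactly two successive applications of H\"older's inequality, the first applied cube-by-cube in $L^{q}$ and the second in the weighted sequence space $\ell^{p}(\mc{G},\ell(Q)^n)$. The only difference is that you have unpacked the weighted $\ell^{p}$ H\"older step (raising to the $p$-th power, splitting $\ell(Q)^n$, and applying H\"older on $\ell^1$) rather than citing it directly, which is a matter of presentation rather than substance.
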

	
	\begin{proof}
	Let $f \in Z^{p_0,q_0}_{s_0}$ and $g \in Z^{p_1,q_1}_{s_1}$.
	Using H\"older's inequality twice and the assumptions on the exponents we have
	\begin{align*}
		&\nm{ \ell(Q)^{-s} [|fg|^q]^{1/q}_{\overline{Q}} }_{\ell^p(\mc{G}, \ell(Q)^n)} \\
		&\leq \nm{ \ell(Q)^{-s_0} [|f|^{q_0}]^{1/q_0}_{\overline{Q}} \ell(Q)^{-s_1} [|g|^{q_1}]^{1/q_1}_{\overline{Q}} }_{\ell^p(\mc{G}, \ell(Q)^n)} \\
		&\leq \nm{ \ell(Q)^{-s_0} [|f|^{q_0}|^{1/q_0}_{\overline{Q}} }_{\ell^{p_0}(\mc{G},\ell(Q)^n)} \nm{ \ell(Q)^{-s_1} [|f|^{q_1}|^{1/q_1}_{\overline{Q}} }_{\ell^{p_1}(\mc{G},\ell(Q)^n)} \\
		&= \nm{f}_{Z^{p_0,q_0}_{s_0}} \nm{g}_{Z^{p_1,q_1}_{s_1}},
	\end{align*}
	proving the multiplication property.
      \end{proof}

      \begin{lem}[Single-exponent factorisation]\label{lem:intermed-factor}
		Suppose $p_0,p_1 \in (0,\infty]$ and $s_0,s_1\in \bbR$, and let
		\begin{equation*}
			\frac{1}{p} := \frac{1}{p_0} + \frac{1}{p_1} \quad \text{and} \quad s := s_0 + s_1.
		\end{equation*}
		Then
		\begin{equation*}
			Z^{p,\infty}_s \rightarrow Z^{p_0,\infty}_{s_0} \cdot Z^{p_1,\infty}_{s_1}
		\quad \text{and} \quad
			Z^{\infty,p}_s \rightarrow Z^{\infty,p_0}_{s_0} \cdot Z^{\infty,p_1}_{s_1}.
		\end{equation*}
	\end{lem}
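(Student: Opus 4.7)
The plan is to construct the factorisation $h = FG$ explicitly, cube by cube on the Whitney grid $\mc{G}$, and read off the norm bounds directly from the dyadic definition of the $Z^{p,q}_s$-quasinorm given just before the statement. The key algebraic observation is that the exponents $\alpha := p/p_0$ and $\beta := p/p_1$ satisfy $\alpha + \beta = 1$ (since $1/p_0 + 1/p_1 = 1/p$), so the pointwise splitting $|h| = |h|^{\alpha} \cdot |h|^{\beta}$ distributes the integrability exponent $p$ correctly between $p_0$ and $p_1$. The regularity index $s$ is then distributed by multiplying the two factors by complementary powers $\ell(Q)^{\pm a}$ of the Whitney sidelength; matching $\ell(Q)$-powers in the resulting sum forces $a := s_0 - s\alpha$, at which point $-a - s_1 = -s\beta$ follows automatically from $s_0 + s_1 = s$.

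For the first factorisation, given $h \in Z^{p,\infty}_s$, I would set $F$ to be constant on each cube $\overline{Q} \in \mc{G}$ with value $\ell(Q)^{s_0 - s\alpha}\nm{h}_{L^\infty(\overline{Q})}^{\alpha}$, and put $G := h/F$ (with the convention $0/0 := 0$). The $L^\infty$ bound $|G| \leq \ell(Q)^{-s_0 + s\alpha}\nm{h}_{L^\infty(\overline{Q})}^{\beta}$ on $\overline{Q}$ together with the dyadic formula then yields $\nm{F}_{Z^{p_0,\infty}_{s_0}} = \nm{h}_{Z^{p,\infty}_s}^{\alpha}$ and $\nm{G}_{Z^{p_1,\infty}_{s_1}} \leq \nm{h}_{Z^{p,\infty}_s}^{\beta}$, and multiplying proves the first case. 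For the second factorisation, given $h \in Z^{\infty,p}_s$, I would instead define on each $\overline{Q} \in \mc{G}$
\begin{equation*}
F := \ell(Q)^{s_0 - s\alpha}\sgn(h)|h|^{\alpha}, \qquad G := \ell(Q)^{-s_0 + s\alpha}|h|^{\beta},
\end{equation*}
so that $FG = h$ pointwise. The $\overline{Q}$-averages factor cleanly as $[|F|^{p_0}]_{\overline{Q}}^{1/p_0} = \ell(Q)^{s_0 - s\alpha}[|h|^p]_{\overline{Q}}^{\alpha/p}$ and $[|G|^{p_1}]_{\overline{Q}}^{1/p_1} = \ell(Q)^{-s_0 + s\alpha}[|h|^p]_{\overline{Q}}^{\beta/p}$; since $\alpha,\beta \geq 0$, taking the supremum over $Q$ commutes with raising to the $\alpha$-th and $\beta$-th powers, giving $\nm{F}_{Z^{\infty,p_0}_{s_0}}\nm{G}_{Z^{\infty,p_1}_{s_1}} = \nm{h}_{Z^{\infty,p}_s}^{\alpha+\beta} = \nm{h}_{Z^{\infty,p}_s}$.

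The lemma poses no real obstacle: it reduces to exponent bookkeeping once the correct candidate factorisation is written down, and in both cases the implicit multiplicative constant is in fact $1$. The only minor points to note are the convention $0/0 := 0$ for handling the zero set of $h$, and (for $\bbC^N$-valued data) the preliminary reduction to the scalar-valued case via the orthogonal splitting $Z^{p,q}_s(\bbR^n : \bbC^N) \cong \bigoplus_{j=1}^N Z^{p,q}_s(\bbR^n : \bbC)$ indicated in Remark \ref{rmk:pedantry}, so that fractional powers and pointwise quotients of $h$ make sense.
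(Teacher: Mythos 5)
Your proof is correct and follows essentially the same route as the paper's: construct the factorisation cube by cube on the Whitney grid, distributing the integrability exponent via $|h|^{\alpha}\cdot|h|^{\beta}$ with $\alpha+\beta=1$ and the regularity index via complementary powers $\ell(Q)^{\pm(s_0-s\alpha)}$, then read off the norm equalities from the dyadic quasinorm. The only cosmetic difference is in the first factorisation: the paper takes $F=\ell(Q)^{s_0-s\alpha}h^{\alpha}$ on each cube (so both factors vary pointwise), whereas you take $F$ cube-wise constant equal to $\ell(Q)^{s_0-s\alpha}\nm{h}_{L^\infty(\overline{Q})}^{\alpha}$; since only the essential supremum on each cube enters the $Z^{p,\infty}_s$ quasinorm, the two choices are interchangeable, and your second factorisation is formula-for-formula the same as the paper's (which reduces to nonnegative $h$ in place of carrying $\sgn(h)$).
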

	
	\begin{proof}	
		First suppose $f \in Z^{p,\infty}_s$; we may assume without loss of generality that $f$ is nonnegative.
		Let
		\begin{equation}\label{eqn:decomp}
			F := \sum_{\overline{Q} \in \mc{G}}  \mb{1}_{\overline{Q}} \ell(Q)^{-s\frac{p}{p_0} + s_0} f^{p/p_0}
		\quad \text{and} \quad
			G := \sum_{\overline{Q} \in \mc{G}} \mb{1}_{\overline{Q}} \ell(Q)^{s\frac{p}{p_0} - s_0} f^{1 - p/p_0},
		\end{equation}
		so that $FG = f$.
		A straightforward estimate gives
		\begin{equation*}
			\nm{F}_{Z^{p_0,\infty}_{s_0}} \leq \nm{f}_{Z^{p,\infty}_s}^{p/p_0}
			\quad \text{and} \quad
			\nm{G}_{Z^{p_1,\infty}_{s_1}} \leq \nm{f}_{Z^{p,\infty}_s}^{p/p_1},
		\end{equation*}
		and so
		\begin{equation*}
			\nm{F}_{Z^{p_0,\infty}_{s_0}} \nm{G}_{Z^{p_1,\infty}_{s_1}} \leq \nm{f}_{Z^{p,\infty}_s}.
		\end{equation*}
		Now assume $g \in Z^{\infty,p}_s$ is nonnegative and define $F^\prime$ and $G^\prime$ as in \eqref{eqn:decomp}, but with $f$ replaced by $g$.
		The same argument as before yields
		\begin{equation*}
			\nm{F^\prime}_{Z^{\infty,p_0}_{s_0}} \nm{G^\prime}_{Z^{\infty,p_1}_{s_1}} \leq \nm{g}_{Z^{\infty,p}_s},
		\end{equation*}
		completing the proof.
              \end{proof}

                \begin{prop}[Factorisation]\label{prop:factorisation}
  	Suppose $s_0,s_1 \in \bbR$ and $p_0,p_1,q_0,q_1 \in (0,\infty]$, and let
	\begin{equation*}
		\frac{1}{p} := \frac{1}{p_0} + \frac{1}{p_1}, \quad \frac{1}{q} := \frac{1}{q_0} + \frac{1}{q_1}, \quad s := s_0 + s_1.
	\end{equation*}
	Then
	\begin{equation*}
		Z^{p,q}_s \rightarrow Z^{p_0,q_0}_{s_0} \cdot Z^{p_1,q_1}_{s_1}.
	\end{equation*}
  \end{prop}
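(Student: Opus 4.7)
The plan is to reduce the general factorisation to the two special cases already handled in Lemma \ref{lem:intermed-factor}, bridging them with the multiplication property of Proposition \ref{prop:mult}. The key observation is that factorisation in $Z^{p,q}_s$ decouples into a factorisation in the ``cube-averaged'' variable (governed by $p$ and $s$) and one in the ``intra-cube'' variable (governed by $q$), corresponding respectively to the two cases in Lemma \ref{lem:intermed-factor}.

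The first step would be to prove the auxiliary factorisation
\begin{equation*}
	Z^{p,q}_s \rightarrow Z^{p,\infty}_s \cdot Z^{\infty,q}_0.
\end{equation*}
Given a nonnegative $h \in Z^{p,q}_s$, set $A_Q := [|h|^q]_{\overline{Q}}^{1/q}$ (which is finite since the weighted sequence $(\ell(Q)^{-s} A_Q)$ lies in $\ell^p$) and define
\begin{equation*}
	F := \sum_{\overline{Q} \in \mc{G}} \mb{1}_{\overline{Q}} A_Q, \qquad G := \sum_{\overline{Q} \in \mc{G}} \mb{1}_{\overline{Q}} A_Q^{-1} h,
\end{equation*}
with the convention that $A_Q^{-1} h \equiv 0$ on $\overline{Q}$ whenever $A_Q = 0$ (on such cubes $h$ vanishes a.e.). Then $FG = h$, and $\nm{F}_{Z^{p,\infty}_s} = \nm{h}_{Z^{p,q}_s}$ because $F$ is constant equal to $A_Q$ on each $\overline{Q}$, while $[|G|^q]_{\overline{Q}}^{1/q} \leq 1$ for every cube, so $\nm{G}_{Z^{\infty,q}_0} \leq 1$.

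Next I would apply Lemma \ref{lem:intermed-factor} twice: first to $F \in Z^{p,\infty}_s$ to obtain $F = F_0 F_1$ with $F_i \in Z^{p_i,\infty}_{s_i}$ and
\begin{equation*}
	\nm{F_0}_{Z^{p_0,\infty}_{s_0}} \nm{F_1}_{Z^{p_1,\infty}_{s_1}} \lesssim \nm{F}_{Z^{p,\infty}_s};
\end{equation*}
and then to $G \in Z^{\infty,q}_0$ to obtain $G = G_0 G_1$ with $G_i \in Z^{\infty,q_i}_0$ and an analogous bound. Regrouping, $h = (F_0 G_0)(F_1 G_1)$; applying Proposition \ref{prop:mult} to each factor (noting that $1/p_i + 1/\infty = 1/p_i$, $1/\infty + 1/q_i = 1/q_i$, and $s_i + 0 = s_i$) yields $F_i G_i \in Z^{p_i,q_i}_{s_i}$ with
\begin{equation*}
	\nm{F_i G_i}_{Z^{p_i,q_i}_{s_i}} \lesssim \nm{F_i}_{Z^{p_i,\infty}_{s_i}} \nm{G_i}_{Z^{\infty,q_i}_0}.
\end{equation*}
Chaining the four inequalities gives $\nm{F_0 G_0}_{Z^{p_0,q_0}_{s_0}} \nm{F_1 G_1}_{Z^{p_1,q_1}_{s_1}} \lesssim \nm{h}_{Z^{p,q}_s}$, which is the required factorisation.

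The argument is essentially formal once the preliminary factorisation is in place, so the point requiring the most care is the construction of $F$ and $G$ in the first step and the convention adopted on cubes where $A_Q$ vanishes; this is a minor bookkeeping issue rather than a genuine obstacle. No deeper difficulty is anticipated, and the implicit constants in each step can in fact be taken to be $1$, matching the remark following the definition of the product notation.
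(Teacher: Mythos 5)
Your proof is correct and follows essentially the same route as the paper: it first isolates a ``decoupling'' factorisation $Z^{p,q}_s \rightarrow Z^{p,\infty}_s \cdot Z^{\infty,q}_0$ (the paper proves the slightly more general $Z^{p,q}_s \rightarrow Z^{\infty,q}_{s_0} \cdot Z^{p,\infty}_{s_1}$ with $s_0 + s_1 = s$, then takes $s_0 = s_1 = s/2$, whereas you take $s_0 = 0$), then applies Lemma \ref{lem:intermed-factor} to each factor and recombines via Proposition \ref{prop:mult}. The difference in how the regularity weight $s$ is distributed between the two factors of the auxiliary decomposition is purely cosmetic.
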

  
  \begin{proof}
	It suffices to show that
	\begin{equation}\label{eqn:infty-factor}
		Z^{p,q}_s \rightarrow Z^{\infty,q}_{s_0} \cdot Z^{p,\infty}_{s_1},
	\end{equation}
	because by Proposition \ref{prop:mult} and Lemma \ref{lem:intermed-factor} along with \eqref{eqn:infty-factor} we have
	\begin{align*}
		Z^{p,q}_s &\rightarrow Z_{s/2}^{p,\infty} \cdot Z_{s/2}^{\infty,q}
		\\ &\rightarrow Z_{s_0/2}^{p_0,\infty} \cdot Z_{s_1/2}^{p_1,\infty} \cdot Z_{s_0/2}^{\infty,q_0} \cdot Z_{s_0/2}^{\infty,q_1}
		\\ &= Z_{s_0/2}^{p_0,\infty} \cdot Z_{s_0/2}^{\infty,q_0} \cdot Z_{s_1/2}^{p_1,\infty} \cdot Z_{s_1/2}^{\infty,q_1}
		\\ &\rightarrow Z_{s_0}^{p_0,q_0} \cdot Z_{s_1}^{p_1,q_1}.
	\end{align*}
	
	We now prove \eqref{eqn:infty-factor}.	
	Given $h \in Z^{p,q}_s$, let
	\begin{equation*}
		F := \sum_{\overline{Q} \in \mc{G}} \mb{1}_{\overline{Q}} \ell(Q)^{s_0} h / [|h|^q]^{1/q}_{\overline{Q}}
		\quad \text{and} \quad
		 G := \sum_{\overline{Q} \in \mc{G}} \mb{1}_{\overline{Q}} \ell(Q)^{-s_0} [|h|^q]^{1/q}_{\overline{Q}},
	\end{equation*}
	so that $FG = h$.
	We immediately have
	\begin{align*}
		\nm{F}_{Z^{\infty,q}_{s_0}} &= \sup_{\overline{Q} \in \mc{G}} \ell(Q)^{-s_0} \ell(Q)^{s_0} = 1
	\end{align*}
	and
	\begin{align*}
		\nm{G}_{Z^{p,\infty}_{s_1}} &= \nm{\ell(Q)^{-s_1} \ell(Q)^{-s_0} [|h|^q]^{1/q}_{\overline{Q}}}_{\ell^p(\mc{G},\ell(Q)^n)} \\
		&= \nm{\ell(Q)^{-s} [|h|^q]^{1/q}_{\overline{Q}} }_{\ell^p(\mc{G},\ell(Q)^n)} \\
		&= \nm{h}_{Z^{p,q}_s},
	\end{align*}
	proving \eqref{eqn:infty-factor}.
	\end{proof}
  
  	Now we can prove Theorem \ref{thm:appendix-Z-Cint}.
	
	\begin{proof}[Proof of Theorem \ref{thm:appendix-Z-Cint}]
		Since $Z^{p_0,q_0}_{s_0}$ and $Z^{p_1,q_1}_{s_1}$ are A-convex and separable, by Theorem \ref{thm:KM} we have
		\begin{equation*}
			[Z^{p_0,q_0}_{s_0},Z^{p_1,q_1}_{s_1}]_\theta = (Z^{p_0,q_0}_{s_0})^{1-\theta} (Z^{p_1,q_1}_{s_1})^\theta.
		\end{equation*}
		We will show that
		\begin{equation}\label{eqn:prodident}
			(Z^{p_0,q_0}_{s_0})^{1-\theta} (Z^{p_1,q_1}_{s_1})^\theta = Z^{p_\theta,q_\theta}_{s_\theta}.
		\end{equation}
		This follows from Propositions \ref{prop:mult} and \ref{prop:factorisation} in a standard way, but we include the argument for completeness.
		Suppose that $h \in L^0(\bbR^{1+n}_+)$ with $|h| \leq |f|^{1-\theta} |g|^\theta$ for some $f \in Z^{p_0,q_0}_{s_0}$, $g \in Z^{p_1,q_1}_{s_1}$.
		Then by Proposition \ref{prop:mult} we have
		\begin{align*}
			\nm{h}_{Z^{p_\theta,q_\theta}_{s_\theta}} &\leq \nm{|f|^{1-\theta} |g|^\theta}_{Z^{p_\theta,q_\theta}_{s_\theta}} \\
			&\leq \nm{|f|^{1-\theta}}_{Z^{p_0/(1-\theta), q_0/(1-\theta)}_{s_0(1-\theta)}} \nm{|g|^\theta}_{Z^{p_1/\theta, q_1/\theta}_{s_1 \theta}} \\
			&= \nm{f}_{Z^{p_0,q_0}_{s_0}}^{1-\theta} \nm{g}_{Z^{p_1,q_1}_{s_1}}^{\theta},
		\end{align*}
		so by taking the infimum over all such $f$ and $g$ we get
		\begin{equation*}
			\nm{h}_{Z^{p_\theta,q_\theta}_{s_\theta}} \leq \nm{h}_{ (Z^{p_0,q_0}_{s_0})^{1-\theta} (Z^{p_1,q_1}_{s_1})^\theta}.
		\end{equation*}
		Conversely, suppose that $h \in Z^{p_\theta,q_\theta}_{s_\theta}$.
		Then by Proposition \ref{prop:factorisation} there exist $F \in Z^{p_0/(1-\theta), q_0/(1-\theta)}_{s_0(1-\theta)}$ and $G \in Z^{p_1/\theta, q_1/\theta}_{s_1 \theta}$ such that $h = FG$ with
		\begin{align*}
			\nm{h}_{Z^{p_\theta,q_\theta}_{s_\theta}} &\geq \nm{F}_{Z^{p_0/(1-\theta), q_0/(1-\theta)}_{s_0(1-\theta)}} \nm{G}_{Z^{p_1/\theta, q_1/\theta}_{s_1 \theta}} \\
			&= \nm{|F|^{1/(1-\theta)}}_{Z^{p_0,q_0}_{s_0}}^{1-\theta} \nm{|G|^{1/\theta}}_{Z^{p_1,q_1}_{s_1}}^{\theta}.
		\end{align*}
		Setting $f = |F|^{1/(1-\theta)}$ and $g = |G|^{1/\theta}$ we see that $f^{1-\theta}g^\theta = |h|$, $f \in Z^{p_0,q_0}_{s_0}$, and $g \in Z^{p_1,q_1}_{s_1}$, so we find that
		\begin{equation*}
			\nm{h}_{Z^{p_\theta,q_\theta}_{s_\theta}} \geq \nm{h}_{(Z^{p_0,q_0}_{s_0})^{1-\theta} (Z^{p_1,q_1}_{s_1})^\theta}.
		\end{equation*}
		This completes the proof of \eqref{eqn:prodident}, and hence also that of Theorem \ref{thm:appendix-Z-Cint}.
              \end{proof}

              \newpage
      \section{Table of exponents and function spaces}\label{sec:exponent-space-table}

      Here we provide a small `dictionary' detailing precisely what is meant by the data $(X^{\mb{p}}, \mb{X}^{\mb{p}})$.
      Here $\mb{p}$ is an exponent, and $(X,\mb{X})$ is either $(T,\mb{H})$ or $(Z, \mb{B})$.
      The definition of the spaces $T^\mb{p}$, $Z^\mb{p}$, $\mb{H}^{\mb{p}}$, and $\mb{B}^{\mb{p}}$ change depending on whether $\mb{p}$ is finite or infinite, with a special case when $j(\mb{p}) = 0$ (i.e. $\mb{p} = (\infty,s;0)$).
      We advise the reader to keep a copy of this table at hand while reading the monograph.
      In the table we assume that $\alpha > 0$.

      \vspace{1cm}
      
      \begin{center}
        \begingroup
        \renewcommand*{\arraystretch}{1.4}
        \begin{tabular}{|c||c|c|}\hline
         \backslashbox{$\mb{p}$}{$(X,\mb{X})$}
            & $(T,\mb{H})$ & $(Z,\mb{B})$ \\
          \hline\hline
          $(p,s)$ & $(T^p_s, \dot{H}^p_s)$ & $(Z^p_s, \dot{B}^{p,p}_s)$ \\
          \hline
          $(\infty,s;0)$  & $(T^\infty_{s;0},\dot{\BMO}_s)$ & $(Z^\infty_s, \dot{B}_s^{\infty,\infty})$\\
          \hline
          $(\infty,s;\alpha)$ & $(T^\infty_{s;\alpha}, \dot{B}_{s + \alpha}^{\infty,\infty})$ & $(Z^\infty_{s + \alpha} , \dot{B}_{s+\alpha}^{\infty,\infty})$\\
          \hline
        \end{tabular}
        \endgroup
      \end{center}

      \vspace{1cm}

      In Section \ref{chap:diffops} we will define $\mb{X}^{\mb{p}}_D$ to be the image of $\mb{X}^{\mb{p}}$ under the projection $\bbP_D$ onto the closure of the range of the Dirac operator $D$, $\overline{\mc{R}(D)}$, which may be characterised by a tangential curl-free condition.
      Thus $\mb{X}^\mb{p}_D$ may be thought of as the set of tangential curl-free functions in $\mb{X}^\mb{p}$.

\chapter{Operator Theoretic Preliminaries}\label{chap:otp}

\section[Bisectorial operators and holomorphic functional calculus]{Bisectorial operators and holomorphic functional calculus}\label{section:bisectorial-hfc}

The material of this section is not new, but we present it here to fix notation.
Useful standard references are \cite{aM86,MY90,ADM95,mH06}, and a particularly nice recent exposition which focuses on bisectorial operators on reflexive Banach spaces is contained in Egert's thesis \cite[Chapter 3]{mE15}.

Let $0 < \go < \gp/2$.
The open bisector of angle $\go$ is the set
\begin{equation*}
	S_\go := \{z \in \bbC \sm \{0\} : \text{$|\arg(z)| < \go$ or $|\arg(-z)| < \go$}\} \subset \bbC,
\end{equation*}
where the argument $\arg(z)$ takes values in $(-\gp,\gp]$.
The closed bisector of angle $\go$ is the topological closure $\overline{S_\go}$ of $S_\go$ in $\bbC$.

Throughout this section we write $L^2 = L^2(\bbR^n)$.

\begin{dfn}
Let $0 \leq \go < \gp/2$.
A closed linear operator $A$ on $L^2$ is called \emph{bisectorial of angle $\go$}\index{bisectorial operator} if $\gs(A) \subset \overline{S_\go}$, and if for all $\gm \in (\go,\gp/2)$ and all $z \in \bbC \sm \overline{S_\gm}$ we have the resolvent bound
\begin{equation}\label{bsres}
	\nm{(z - A)^{-1}}_{\mc{L}(L^2)} \lesssim_\gm |z|^{-1}.
\end{equation}
\end{dfn}

Closedness of $A$ is included in this definition; this is not standard, but it is convenient.
Generally the precise angle $\go$ is not important, so we may simply refer to $A$ as \emph{bisectorial}.

The following proposition is proven in \cite[Proposition 3.2.2]{mE15} (except for the adjoint statement, which is a simple computation).

\begin{prop}\label{prop:bisectorialfacts}
	Let $A$ be a bisectorial operator on $L^2$.
	Then $A$ is densely-defined, and we have a topological (not necessarily orthogonal) splitting
		\begin{equation}\label{eqn:NR-decomposition}
			L^2 = \mc{N}(A) \oplus \overline{\mc{R}(A)}.
		\end{equation}
	Furthermore, $A^*$ is also bisectorial.
\end{prop}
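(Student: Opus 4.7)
The proof is a standard consequence of the resolvent bound \eqref{bsres} together with the reflexivity of $L^2$; I would handle the three claims in the order density, splitting, adjoint, since the adjoint statement becomes an easy corollary of the first two. For density of $\mc{D}(A)$, the key is an approximate-identity argument built from resolvents. Consider the family
\[
  J_t := -it(-it - A)^{-1}, \qquad t > 0,
\]
which by \eqref{bsres} applied at $\lambda = -it \in i\bbR \setminus \overline{S_\go}$ is uniformly bounded and has range in $\mc{D}(A)$. Fix $x \in L^2$: by reflexivity the bounded family $\{J_t x\}_{t > 0}$ has a weakly convergent subsequence $J_{t_n} x \rightharpoonup y$ with $t_n \to \infty$, and applying $(-is - A)^{-1}$ for fixed $s > 0$ and using the resolvent identity yields
\[
  (-is - A)^{-1} J_t x = \frac{-t}{s - t}\,(-is - A)^{-1} x + \frac{i}{s - t}\, J_t x,
\]
whose weak limit along $t_n \to \infty$ is $(-is - A)^{-1} x$ (the first term converges strongly since the scalar tends to $-1$, and the second vanishes since the scalar tends to $0$ against the bounded $J_t x$). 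Hence $(-is - A)^{-1}(x - y) = 0$, and injectivity of the resolvent forces $y = x$, placing $x$ in the weak, and therefore norm, closure of $\mc{D}(A)$.

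For the splitting I would run the same family with $t \to 0^+$. A direct computation gives $J_t x = x$ for $x \in \mc{N}(A)$, while for $w \in \mc{D}(A)$ a short manipulation shows $J_t(Aw) = -t^2(-it - A)^{-1} w + it\, w \to 0$ as $t \to 0^+$, so by uniform boundedness $J_t \to 0$ strongly on $\overline{\mc{R}(A)}$. For general $x$, reflexivity produces a weak subsequential limit $y$ of $J_{t_n} x$ as $t_n \to 0^+$, and the same identity above (now with $t_n \to 0^+$) forces $(-is - A)^{-1} y = (i/s)\, y$ for every $s > 0$, hence $y \in \mc{D}(A)$ with $A y = 0$, i.e.\ $y \in \mc{N}(A)$. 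Simultaneously, $(I - J_{t_n}) x = -A(-it_n - A)^{-1} x \in \mc{R}(A)$ lies in the norm-closed (hence weakly-closed) convex set $\overline{\mc{R}(A)}$, so its weak limit $x - y$ along the same subsequence belongs to $\overline{\mc{R}(A)}$. Thus every $x$ decomposes as a sum of an element of $\mc{N}(A)$ and an element of $\overline{\mc{R}(A)}$; uniqueness (equivalently, $\mc{N}(A) \cap \overline{\mc{R}(A)} = \{0\}$) follows because any $v$ in the intersection would satisfy $J_t v = v$ and $J_t v \to 0$ simultaneously, forcing $v = 0$. Boundedness of the resulting projections then follows from the closed graph theorem.

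Finally, once $\mc{D}(A)$ is dense, $A^*$ is a well-defined closed operator, and the Banach-space adjoint relation $((\lambda - A)^{-1})^* = (\bar\lambda - A^*)^{-1}$, combined with the invariance of $\overline{S_\go}$ under complex conjugation and the preservation of operator norms under adjoint, transfers \eqref{bsres} from $A$ to $A^*$ at the same angle $\omega$. The main obstacle I anticipate throughout is the passage from weak subsequential convergence (obtained only from boundedness and reflexivity) to identification of the limit: this is exactly the role played by injectivity of the resolvents in both the density and splitting arguments, and reflexivity of $L^2$ is essential, since for non-reflexive Banach spaces bisectorial operators need not have dense domain.
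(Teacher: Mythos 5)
The paper does not prove this proposition itself; it cites Egert's thesis \cite[Proposition 3.2.2]{mE15} for the density and splitting, and calls the adjoint statement a simple computation, so there is no in-paper argument to compare yours against directly. Your proof is correct and is the standard one: the Abel-type approximate identity $J_t = -it(-it-A)^{-1}$, which is uniformly bounded by \eqref{bsres}, combined with reflexivity to extract weak subsequential limits, and then identification of the limit via injectivity of a fixed resolvent. The decomposition of a general $x$ into $y\in\mc{N}(A)$ and $x-y\in\overline{\mc{R}(A)}$, the uniqueness via $\mc{N}(A)\cap\overline{\mc{R}(A)}=\{0\}$ (forced by $J_t v=v$ versus $J_t v\to 0$), and the closed graph theorem for boundedness of the induced projections all go through as written, as does the transfer of \eqref{bsres} to $A^*$ via $\big((\lambda-A)^{-1}\big)^*=(\bar\lambda-A^*)^{-1}$ and conjugation-invariance of $\overline{S_\omega}$.

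One small slip: in the density step the scalar $\tfrac{-t}{s-t}$ tends to $+1$, not $-1$, as $t\to\infty$; the conclusion $(-is-A)^{-1}(x-y)=0$ that you actually draw is the one consistent with the correct sign, so nothing downstream is affected. Your closing observation about reflexivity is also on point: it is exactly what lets one upgrade boundedness of $\{J_t x\}$ to the needed weak compactness, and on non-reflexive spaces dense definedness of bisectorial operators can indeed fail.
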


We let $\bbP_A$ denote the projection onto $\overline{\mc{R}(A)}$ along $\mc{N}(A)$.

The procedure of constructing an operator $\gf(A)$ from a given bisectorial operator $A$ and holomorphic function $\gf$ on an appropriate bisector, known as \emph{holomorphic functional calculus},\index{functional calculus!holomorphic} plays a central role in our theory.
In order to introduce holomorphic functional calculus properly, we must first define some classes of holomorphic functions.

For an angle $\gm \in (0,\gp/2)$, the set of holomorphic functions $\map{\gf}{S_\gm}{\bbC}$ is denoted by $H(S_\gm)$.
For $\gs,\gt \in \bbR$ and $\gf \in H(S_\gm)$ we define
\begin{equation*}
	\nm{\gf}_{\gY_\gs^\gt(S_\gm)} = \nm{z \mapsto \gf(z)/m_{\gs}^{\gt}(|z|)}_{L^\infty(S_\gm)}
\end{equation*}
(the function $m_\gs^\gt$ is defined in Section \ref{section:notation}) and
\begin{equation*}
	\gY_\gs^\gt(S_\gm) := \{\gf \in H(S_\gm) : \nm{\gf}_{\gY_\gs^\gt(S_\gm)} < \infty\}.
\end{equation*}
Each $\gY_\gs^\gt(S_\gm)$ is a Banach space when normed by $\nm{\cdot}_{\gY_{\gs}^{\gt}(S_\gm)}$, consisting of all holomorphic functions on $S_\gm$ which decay of order $\gs$ at $0$ and of order $\gt$ at $\infty$, interpreting growth as decay of negative order.
An important special case is $\gY_0^0(S_\gm) = H^\infty(S_\gm)$, the set of bounded holomorphic functions on $S_\gm$.
We surpress reference to $S_\gm$ in this notation when the relevant bisector is clear from context.

The spaces $\gY_\gs^\gt$ are decreasing in $\gs$ and $\gt$, in the sense that if $\gs < \gs^\prime$ and $\gt < \gt^\prime$, then $\gY_{\gs^\prime}^{\gt^\prime} \hookrightarrow \gY_\gs^\gt$.
For $\gs,\gt \in \bbR$ we define the set
\begin{equation*}
	\gY_\gs^{\gt+} := \bigcup_{\gt^\prime > \gt} \gY_\gs^{\gt^\prime},
\end{equation*}
and we define the sets $\gY_{\gs+}^\gt$ and $\gY_{\gs+}^{\gt+}$ analogously.
The set $\gY_+^+ := \gY_{0+}^{0+}$ is particularly important: it is the set of holomorphic functions (on the relevant bisector) with polynomial decay of some positive order at $0$ and $\infty$.
We also define
\begin{equation*}
	\gY_\gs^{\infty} := \bigcap_{\gt} \gY_\gs^\gt,
\end{equation*}
the set of functions with polynomial decay of arbitrarily large order at $\infty$.
Similarly we define $\gY_{\infty}^\gt$, $\gY_\infty^\infty$, $\gY_{\gs +}^\infty$, and so on.

There are a few holomorphic functions which we use extensively.
First define $\gc^+,\gc^- \in H^\infty(S_\mu)$ by
\begin{equation}\label{eqn:chidefn}
	\gc^+(z) := \mb{1}_{z : \re(z) > 0}(z) \quad \text{and} \quad \gc^-(z) := \mb{1}_{z : \re(z) < 0}(z) \qquad (z \in S_\gm).
\end{equation}
These are the indicator functions of the two halves of the bisector $S_\gm$.
Define also
\begin{align*}
	[z] := \left\{ \begin{array}{rl} z & (\re(z) > 0) \\ -z & (\re(z) < 0) \end{array} \right. = (\gc^+(z) - \gc^-(z))z.
\end{align*}
This lets us define a bounded version of the exponential map,
\begin{equation}\label{eqn:semigp}
	\operatorname{sgp} := [z \mapsto e^{-[z]}] \in \gY_0^\infty.
\end{equation}
For $\gl \in \bbR \sm \{0\}$ we define the power function
\begin{equation*}
	[z \mapsto z^\gl] \in \gY_{\gl}^{-\gl}
\end{equation*}
via a branch cut on the half-line $i(-\infty,0] \subset \bbC$.

We say that a function $\gf \in H(S_\gm)$ is \emph{nondegenerate}\index{nondegenerate!holomorphic function} if it does not vanish on any open subset of $S_\gm$.
All the holomorphic functions defined above are nondegenerate except for $\gc^+$ and $\gc^-$.

Let us introduce some useful operations on holomorphic functions.
Let $\gf \in H(S_\gm)$.
There is a natural involution $\gf \mapsto \td{\gf}$ on $H(S_\gm)$ defined by
\begin{equation*}
	\td{\gf}(z) := \overline{\gf(\overline{z})} \quad (z \in S_\gm).
\end{equation*}
This involution is isometric on $\gY_\gs^\gt$ for all $\gs,\gt \in \bbR$.
For $t > 0$ we define the dilation $\gf_t \in H(S_\gm)$ by
\begin{equation*}
	\gf_t(z) := \gf(tz).
\end{equation*}
The following lemma is a simple consequence of the above definitions.

\begin{lem}\label{lem:dilation-norm-control}
	Let $\gs \in \bbR$.
	Then for all $t > 0$ we have
	\begin{equation*}
		\nm{\gf_t}_{\gY_\gs^{-\gs}} = t^\gs \nm{\gf}_{\gY_\gs^{-\gs}}.
	\end{equation*}
\end{lem}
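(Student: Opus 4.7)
The plan is to unpack the definitions and reduce to a routine change of variables; the key observation is that the weight $m_\gs^{-\gs}$ is actually a pure power, not a truly piecewise function.

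First I would note that by the definition of $m_a^b$ in Section \ref{section:notation}, for every $r > 0$ we have
\begin{equation*}
    m_\gs^{-\gs}(r) = \begin{cases} r^\gs & (r \leq 1) \\ r^{-(-\gs)} = r^\gs & (r \geq 1), \end{cases}
\end{equation*}
so that $m_\gs^{-\gs}(r) = r^\gs$ uniformly on $(0,\infty)$. Consequently the $\gY_\gs^{-\gs}(S_\gm)$-norm simplifies to
\begin{equation*}
    \nm{\gf}_{\gY_\gs^{-\gs}(S_\gm)} = \sup_{z \in S_\gm} \frac{|\gf(z)|}{|z|^\gs}.
\end{equation*}

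Next I would apply this formula to the dilate $\gf_t(z) = \gf(tz)$ and perform the substitution $w = tz$. Because $S_\gm$ is a cone (invariant under positive dilations), the map $z \mapsto tz$ is a bijection of $S_\gm$ onto itself, so
\begin{equation*}
    \nm{\gf_t}_{\gY_\gs^{-\gs}} = \sup_{z \in S_\gm} \frac{|\gf(tz)|}{|z|^\gs} = \sup_{w \in S_\gm} \frac{|\gf(w)|}{|w/t|^\gs} = t^\gs \sup_{w \in S_\gm} \frac{|\gf(w)|}{|w|^\gs} = t^\gs \nm{\gf}_{\gY_\gs^{-\gs}},
\end{equation*}
which is the desired identity. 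There is no genuine obstacle here; the lemma is a direct consequence of the scale invariance of the bisector together with the fact that the weight in the definition of $\gY_\gs^{-\gs}$ collapses to a single power of $|z|$.
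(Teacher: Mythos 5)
Your proof is correct and is precisely the argument the paper has in mind (the paper omits the proof, merely noting the lemma is a simple consequence of the definitions). The two key observations — that $m_\sigma^{-\sigma}(r) = r^\sigma$ collapses to a single power, and that $S_\mu$ is invariant under positive dilations so the substitution $w = tz$ is legitimate — are exactly right.
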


Fix an angle $\go \in [0,\gp/2)$ and let $A$ be an $\go$-bisectorial operator on $L^2$.
If $\gm \in (\go, \gp/2)$ and $\gf \in \gY_+^+(S_\gm)$, then we can define an operator $\gf(A)$ on $L^2$ by
\begin{equation}\label{dunford}
	\gf(A)f := \frac{1}{2\gp i} \int_{\partial S_\gn} \gf(z)(z-A)^{-1} f \, dz \qquad (f \in L^2)
\end{equation}
for any choice of $\gn \in (\go,\gm)$, where $\partial S_\gn$ is oriented counterclockwise.
The integral \eqref{dunford} is well-defined and independent of the choice of $\gn$, and we have
	\begin{equation*}
		\nm{\gf(A)}_{\mc{L}(L^2)} \lesssim_{A,\gs,\gt,\gm} \nm{\gf}_{\gY_\gs^\gt(S_\gm)}.
	\end{equation*} 

The proof is straightforward.
The following $\ast$-homomorphism property holds: for all $\gf, \gy \in \gY_
+^+$, we have $(\gf \gy)(A) = \gf(A) \gy(A)$, and $\gy(A)^* = \td{\gy}(A^*)$.

Often it is convenient to assume that the operator $A$ is injective and has dense range.
In our applications this does not hold, but the splitting $L^2 = \mc{N}(A) \oplus \overline{\mc{R}(A)}$ from Proposition \ref{prop:bisectorialfacts} shows that the restriction $A|_{\overline{\mc{R}(A)}}$, acting on the Hilbert space $\overline{\mc{R}(A)}$ (with inner product induced by that of $L^2$), is injective and has dense range.
One can also show that $A|_{\overline{\mc{R}(A)}}$ is bisectorial.
Thus we bypass the problem by working on $\overline{\mc{R}(A)}$ instead of the whole space $L^2$.

The integral in \eqref{dunford} converges whenever $\gf \in \gY_+^+$, but if $\gf \in H^\infty$ is merely bounded, convergence is not guaranteed.
Nevertheless, we are able to construct operators $\gf(A)$ when $\gf \in H^\infty$ for certain $A$.

\begin{dfn}
Let $A$ be a bisectorial operator on $L^2$.
We say that $A$ has \emph{bounded $H^\infty$ functional calculus on $\overline{\mc{R}(A)}$}\index{functional calculus!bounded H-infinity@bounded $H^\infty$} if for all $\gf \in \gY_+^+$ and all $f \in \overline{\mc{R}(A)}$,
\begin{equation*}
	\nm{\gf(A)f}_{2} \lesssim \nm{\gf}_\infty \nm{f}_{2}.
\end{equation*}
\end{dfn}

The property of having bounded $H^\infty$ functional calculus on $\overline{\mc{R}(A)}$ is equivalent to certain quadratic estimates: this is an important theorem due to McIntosh (see \cite[\textsection 7 and \textsection 8]{aM86} and the other references at the start of this section).

\begin{thm}[McIntosh]\label{thm:fc-quadest}
	Let $A$ be a bisectorial operator on $L^2$.
	Then $A$ has bounded $H^\infty$ functional calculus on $\overline{\mc{R}(A)}$ if and only if the quadratic estimate
	\begin{equation}\label{eqn:quadratic-estimate}
		\nm{f}_{2} \simeq_\gf \left( \int_0^\infty \nm{\gf_t(A) f}_{2}^2 \, \frac{dt}{t} \right)^{1/2}
	\end{equation}
	holds for all $f \in \overline{\mc{R}(A)}$ and some (equivalently, all) nondegenerate $\gf \in \gY_+^+$.
\end{thm}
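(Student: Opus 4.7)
By Proposition \ref{prop:bisectorialfacts} we may restrict attention to $\overline{\mc{R}(A)}$, on which $A$ acts as a bisectorial operator with dense range and trivial kernel. The plan rests on two standard tools. First, a Calder\'on reproducing formula: for any nondegenerate $\gf \in \gY_+^+$, a symmetrization argument produces $\gy \in \gY_+^+$ with $\gf\gy \in \gY_+^+$ and $\int_0^\infty \gf(tz)\gy(tz)\,\frac{dt}{t} = 1$ on $S_\gm$, whose image under the Dunford calculus \eqref{dunford} is the strong operator identity $f = \int_0^\infty \gf_t(A)\gy_t(A) f\,\frac{dt}{t}$ on a dense subspace of $\overline{\mc{R}(A)}$. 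Second, a Schur-type almost orthogonality estimate: for $\gf,\gy \in \gY_+^+$, inserting the resolvent bound \eqref{bsres} into the Dunford integral for $(\gf_t\gy_s)(A) = \gf_t(A)\gy_s(A)$ and optimizing the contour yields
\begin{equation*}
  \nm{\gf_t(A)\gy_s(A)}_{\mc{L}(L^2)} \lesssim \min\bigl((t/s)^\ga,(s/t)^\gb\bigr)
\end{equation*}
for some $\ga,\gb > 0$ determined by the decay orders of $\gf$ and $\gy$, a kernel which is Schur-admissible on $L^2(\bbR_+,ds/s)$.

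The equivalence of the quadratic estimate across nondegenerate $\gf \in \gY_+^+$ is then immediate: the reproducing formula for a reference $\gf_0$ expresses $\gf_t(A) f = \int_0^\infty (\gf_t\gf_{0,s})(A)\gy_{0,s}(A) f\,\frac{ds}{s}$, and Schur's lemma transfers the quadratic estimate from $\gf_0$ to $\gf$. For the direction \emph{quadratic estimate $\Rightarrow$ bounded $H^\infty$ calculus}, fix $\gh \in \gY_+^+$ and insert the reproducing formula to obtain $\gh(A) f = \int_0^\infty (\gh\gf_t)(A)\gy_t(A) f\,\frac{dt}{t}$. The Schur bound, refined to incorporate the $H^\infty$ factor $\gh$ pointwise on the Dunford contour, gives
\begin{equation*}
  \nm{(\gh\gf_t)(A)\gf_{0,s}(A)}_{\mc{L}(L^2)}\lesssim \nm{\gh}_\infty\min\bigl((t/s)^\ga,(s/t)^\gb\bigr),
\end{equation*}
and pairing Cauchy--Schwarz with the quadratic estimate yields $\nm{\gh(A) f}_2 \lesssim \nm{\gh}_\infty\nm{f}_2$. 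The extension from $\gh \in \gY_+^+$ to all $\gh \in H^\infty(S_\gm)$ is the standard McIntosh convergence lemma via uniformly bounded approximants such as $\gh_n(z) := \gh(z)\cdot nz/\bigl((1+nz)(n+z)\bigr) \in \gY_+^+$ converging pointwise to $\gh$.

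For the converse, the lower bound $\nm{f}_2 \lesssim \bigl(\int_0^\infty\nm{\gf_t(A) f}_2^2\,\frac{dt}{t}\bigr)^{1/2}$ follows by dualizing the reproducing formula:
\begin{equation*}
  |\langle f,g\rangle| \leq \Bigl(\int_0^\infty\nm{\gf_t(A)f}_2^2\,\tfrac{dt}{t}\Bigr)^{1/2}\Bigl(\int_0^\infty\nm{\td{\gy}_t(A^*)g}_2^2\,\tfrac{dt}{t}\Bigr)^{1/2},
\end{equation*}
which reduces the lower bound for $A$ to the upper quadratic estimate for $A^*$; since bounded $H^\infty$ calculus on a Hilbert space is preserved under taking adjoints (as $\gf(A)^* = \td\gf(A^*)$ and $\nm{\td\gf}_\infty = \nm{\gf}_\infty$), it suffices to prove the upper quadratic estimate for any bisectorial $A$ with bounded $H^\infty$ calculus. \textbf{This is the main obstacle.} The naive approach---rewriting the integral as $\bigl\langle\int_0^\infty\td\gf_t(A^*)\gf_t(A) f\,\frac{dt}{t},f\bigr\rangle$ and applying the functional calculus---fails because $\td\gf_t(A^*)\gf_t(A) \neq (\td\gf_t\cdot\gf_t)(A)$ unless $A$ is normal, so the contour estimate of the first paragraph does not apply to the mixed composition. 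The standard remedy is a randomization argument: form the Rademacher sum $\sum_{k\in\bbZ}\varepsilon_k \gf(2^k z)$, observe that the polynomial decay of $\gf$ at $0$ and $\infty$ makes $\bigl\|\sum_k\varepsilon_k\gf(2^k\cdot)\bigr\|_{H^\infty(S_\gm)}\lesssim 1$ uniformly in the signs $\varepsilon_k$, so that the bounded $H^\infty$ calculus gives $\bigl\|\sum_k\varepsilon_k\gf(2^k A)f\bigr\|_2\lesssim\nm{f}_2$ for every choice of signs; averaging via Khintchine's inequality then yields the discrete dyadic square function bound $\sum_k\nm{\gf(2^k A)f}_2^2\lesssim\nm{f}_2^2$, and a final application of the Schur bound of the first paragraph (between dyadic dilates and continuous dilates) passes to the continuous quadratic integral.
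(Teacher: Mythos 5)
The paper does not give a proof of Theorem~\ref{thm:fc-quadest}; it states the result as due to McIntosh and refers the reader to \cite{aM86} (Sections~7--8) and the other references listed at the start of Section~\ref{section:bisectorial-hfc}. Your proposal is a correct reconstruction of the standard argument found there: the Calder\'on reproducing formula and the Schur-type almost-orthogonality estimate extracted from the Dunford integral drive both the equivalence across nondegenerate $\gf$ and the direction ``quadratic estimate $\Rightarrow$ bounded $H^\infty$ calculus,'' and you correctly identify the converse as the essential obstacle and dispatch it with the Rademacher randomization (for which, in a Hilbert space, the Khintchine step is in fact an exact orthogonality identity).

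Two places deserve tightening if this were to be turned into a full proof. First, the step you call ``pairing Cauchy--Schwarz with the quadratic estimate'' is really a two-sided Schur argument: apply the \emph{lower} quadratic estimate to get $\|\gh(A)f\|^2 \lesssim \int_0^\infty\|\gf_{0,s}(A)\gh(A)f\|^2\,\tfrac{ds}{s}$, then insert the reproducing formula $\gh(A)=\int_0^\infty(\gh\gf_t)(A)\gy_t(A)\,\tfrac{dt}{t}$, apply Schur's lemma on $L^2(\bbR_+,\tfrac{dt}{t})$ with kernel $\|(\gf_{0,s}\gh\gf_t)(A)\|\lesssim\|\gh\|_\infty\min\bigl((t/s)^\ga,(s/t)^\gb\bigr)$, and close with the \emph{upper} quadratic estimate. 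Written as a single duality pairing against $g$, the argument would need the upper quadratic estimate for $A^*$, which is not among the hypotheses for this implication. Second, the passage from the dyadic sum $\sum_k\|\gf(2^kA)f\|^2$ to the continuous integral $\int_0^\infty\|\gf_t(A)f\|^2\,\tfrac{dt}{t}$ requires the dyadic bound to hold uniformly over the rescaled family $(\gf_t)_{t\in[1,2]}$; this is true because the constant in the Rademacher argument depends only on the decay orders of $\gf$ and its $\gY_\gs^\gt$-norm (and $\|\gf_t\|_{\gY_\gs^\gt}\simeq\|\gf\|_{\gY_\gs^\gt}$ for $t\in[1,2]$), but the point should be made explicit.
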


Note that the quadratic estimate \eqref{eqn:quadratic-estimate} need not hold for $\gf \in H^\infty$.

If $A$ has bounded $H^\infty$ functional calculus on $\overline{\mc{R}(A)}$, then for all $\gf \in H^\infty$ we can define a bounded operator $\gf(A)$ on $\overline{\mc{R}(A)}$ by
\begin{equation}\label{eqn:mcintosh}
	\gf(A)f := \lim_\ga \gf_\ga(A) f \qquad (f \in \overline{\mc{R}(A)}),
\end{equation}
where $(\gf_\ga)$ is a net in $\gY_+^+$ which converges to $\gf$ in $H^\infty$.
We then have
\begin{equation*}
	\nm{\gf(A)}_{\mc{L}(\overline{\mc{R}(A)})} \lesssim \nm{\gf}_{H^\infty},
\end{equation*}
and for $\gf,\gy \in H^\infty$ we have the homomorphism property $\gf(A) \gy(A) = (\gf\gy)(A)$.
For further details see \cite{aM86, MY90}.
Thus we may define bounded operators $\gc^\pm(A)$ and $e^{-t[A]} = \sgp_t(A)$ (for all $t > 0$) on $\overline{\mc{R}(A)}$, using the corresponding $H^\infty$ functions defined in \eqref{eqn:chidefn} and \eqref{eqn:semigp}.

If $\gf \in \gY_+^0$, then we can extend $\gf(A)$ from $\overline{\mc{R}(A)}$ to all of $L^2$ by 
\begin{equation*}
	\gf(A)f := \gf(A)\bbP_{A}f.
\end{equation*}
The operator $\gf(A)$ then maps $L^2$ into $\overline{\mc{R}(A)}$.
We have
\begin{equation*}
	\nm{\gf(A)}_{\mc{L}(L^2)} \lesssim \nm{\bbP_{A}}_{\mc{L}(L^2)} \nm{\gf(A)}_{\mc{L}(\overline{\mc{R}(A)})},
\end{equation*}
and the homomorphism property $\gy(A) \gf(A) = (\gy\gf)(A)$ persists for all $\gy \in H^\infty$.

We refer to the operators $\gc^+(A)$ and $\gc^-(A)$ on $\overline{\mc{R}(A)}$ as the \emph{positive} and \emph{negative spectral projections}\index{spectral projection} associated with $A$.
These are complementary projections.
We define the \emph{positive} and \emph{negative spectral subspaces}\index{spectral subspaces} as their images
\begin{equation*}
	\overline{\mc{R}(A)}^\pm := \gc^\pm(A)\overline{\mc{R}(A)},
\end{equation*}
and we have a topological direct sum decomposition
\begin{equation*}
	\overline{\mc{R}(A)} = \overline{\mc{R}(A)}^+ \oplus \overline{\mc{R}(A)}^-.
\end{equation*}

We define the \emph{Cauchy operators}\index{Cauchy operator!on RA@on $\overline{\mc{R}(A)}$} $\map{C_A}{\overline{\mc{R}(A)}}{L^\infty(\bbR \sm \{0\} : \overline{\mc{R}(A)}})$ by
\begin{equation}\label{eqn:cauchyoperator}
	C_A f(t) := e^{-tA}\gc^{\sgn(t)}(A)f = e^{-|t|[A]}\gc^{\sgn(t)}(A)f = \sgp_{|t|}(A) \gc^{\sgn(t)}(A) f,
\end{equation}
where we write $\gc^{\pm1}(A) = \gc^\pm(A)$.
Write $C_A^+$ and $C_A^-$ for the restrictions of $C_A$ to $\bbR_+$ and $\bbR_-$ respectively.
These are solution operators for the Cauchy problems associated with $A$ on the upper and lower half-spaces \cite{AAM10}.

\begin{prop}\label{prop:sgp-cauchy}
	Suppose that $A$ has bounded $H^\infty$ functional calculus on $\overline{\mc{R}(A)}$.
	If $f \in \overline{\mc{R}(A)}$, then
	\begin{equation*}
	F := C_A f \in C^\infty(\bbR \sm \{0\} : \overline{\mc{R}(A)})
	\end{equation*}
	solves the Cauchy problem
	\begin{equation*}
		\partial_t F(t) + AF(t) = 0, \quad \lim_{t \to 0^\pm} F(t) = \gc^\pm(A) f, \quad \lim_{t \to \pm \infty} F(t) = 0
	\end{equation*}
	with limits taken in $\overline{\mc{R}(A)}$.
\end{prop}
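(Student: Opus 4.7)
The plan is to treat each of the three conclusions (smoothness plus the Cauchy equation, the boundary limit at $0$, and the decay at infinity) by exploiting that for every $t>0$ the function
\[
	z \mapsto e^{-t[z]} \gc^{\pm}(z) = \sgp_t(z) \gc^\pm(z)
\]
belongs to $\gY_0^{\infty}(S_\gm)$ on the bisector, even though $\sgp_t$ alone only lies in $H^\infty$. The presence of $\gc^\pm$ forces exponential decay on the contour $\partial S_\nu$, so $F(t) = \sgp_{|t|}(A) \gc^{\sgn(t)}(A) f$ is given by an absolutely convergent Dunford integral
\[
	F(t) = \frac{1}{2\gp i} \int_{\partial S_\nu^{\sgn(t)}} e^{-|t|[z]} (z-A)^{-1} \gc^{\sgn(t)}(A) f \, dz,
\]
where $\partial S_\nu^{\pm}$ denotes the component of $\partial S_\nu$ in the corresponding half-plane. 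Since $\gc^{\pm}(A) f \in \overline{\mc{R}(A)}$ and the functional calculus preserves $\overline{\mc{R}(A)}$, $F(t) \in \overline{\mc{R}(A)}$.

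For smoothness and the Cauchy equation, I would differentiate the Dunford integral under the integral sign; this is justified because each $t$-derivative produces an extra polynomial factor in $z$ which is absorbed by the exponential decay of $e^{-|t|[z]}$ on $\partial S_\nu^{\pm}$. The first derivative is
\[
	\partial_t F(t) = -\frac{1}{2\gp i} \int_{\partial S_\nu^{\sgn(t)}} [z] \, e^{-|t|[z]} (z-A)^{-1} \gc^{\sgn(t)}(A) f \, dz.
\]
On $\partial S_\nu^{\pm}$ we have $[z] = \pm z$. Applying the resolvent identity $A(z-A)^{-1} = -I + z(z-A)^{-1}$ inside
\[
	A F(t) = \frac{1}{2\gp i} \int_{\partial S_\nu^{\sgn(t)}} e^{-|t|[z]} A(z-A)^{-1} \gc^{\sgn(t)}(A) f \, dz,
\]
the $-I$ term integrates to zero by closing the contour in the half-plane where $e^{-|t|[z]}$ decays, and the remaining piece equals $-\partial_t F(t)$, establishing $\partial_t F + AF = 0$. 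Iterating gives $F \in C^\infty(\bbR\sm\{0\}:\overline{\mc{R}(A)})$.

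For the limit as $t \to 0^{\pm}$, I would invoke McIntosh's convergence lemma: $\sgp_t \to 1$ pointwise on $S_\gm$ with $\nm{\sgp_t}_{H^\infty} \leq 1$, so $\sgp_t(A) g \to \bbP_A g = g$ for every $g \in \overline{\mc{R}(A)}$; apply this to $g = \gc^\pm(A) f$.

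The main obstacle, as usual for bounded $H^\infty$ calculus arguments, is the strong decay at $\pm\infty$, since $\sgp_t \to 0$ only pointwise. My plan is a two-step density argument on $\overline{\mc{R}(A)}^\pm$. Since $A|_{\overline{\mc{R}(A)}}$ is densely defined with dense range and commutes with $\gc^\pm(A)$, the subspace $A\bigl(\mc{D}(A) \cap \overline{\mc{R}(A)}^\pm\bigr)$ is dense in $\overline{\mc{R}(A)}^\pm$. For $g = Ay$ in this subspace, write
\[
	\sgp_{|t|}(A) g = A \sgp_{|t|}(A) y = \tfrac{1}{|t|} \gf(|t|A) y, \qquad \gf(w) := w\,\sgp(w)\gc^\pm(w),
\]
where $\gf \in H^\infty(S_\gm)$; the bounded $H^\infty$ functional calculus yields $\nm{\gf(|t|A) y}_2 \lesssim \nm{y}_2$, so $\nm{F(t)}_2 \lesssim |t|^{-1} \nm{y}_2 \to 0$. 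Uniform boundedness of $\sgp_{|t|}(A)\gc^\pm(A)$ in $t$ (again from the bounded $H^\infty$ calculus) together with density concludes that $F(t) \to 0$ in $\overline{\mc{R}(A)}$ as $t \to \pm\infty$.
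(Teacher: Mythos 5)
Your arguments for the two boundary limits are essentially the standard ones and are correct: McIntosh's convergence lemma (uniformly bounded, pointwise convergent nets) handles $t \to 0^\pm$, and the density argument via $A\bigl(\mc{D}(A) \cap \overline{\mc{R}(A)}^\pm\bigr)$ together with the factorisation $\sgp_{|t|}(A) A y = |t|^{-1}\gf(|t|A)y$ (with $\gf \in \gY_1^\infty$) handles $t \to \pm\infty$. The density claim itself is justified as you say, since $\gc^\pm(A)$ commutes with $A$ and $A|_{\overline{\mc{R}(A)}}$ has dense range.

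The gap is at the very start: the Dunford integral you write for $F(t)$ is \emph{not} absolutely convergent. The cutoff $\gc^{\sgn(t)}$ only restricts the contour to $\partial S_\nu^{\sgn(t)}$, where $e^{-|t|[z]}$ supplies decay at $\infty$; it does nothing at $z=0$, where $e^{-|t|[z]} \to 1$ and the resolvent bound $\|(z-A)^{-1}\| \lesssim 1/|z|$ produces a logarithmic divergence. Put another way, $\sgp_{|t|} \in \gY_0^\infty$, not $\gY_+^+$, so $\sgp_{|t|}(A)$ on $\overline{\mc{R}(A)}$ is defined only via McIntosh's limiting procedure, not by a convergent Cauchy integral. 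Consequently, you cannot simply ``differentiate under the integral sign'' starting from a formula that is not itself a Bochner integral. The fix is standard and close in spirit to what you already do: either form the difference quotient first, whose representing function $h^{-1}(e^{-(t+h)[z]} - e^{-|t|[z]})\gc^{\sgn(t)}(z)$ vanishes at $0$ and hence \emph{does} admit an absolutely convergent Dunford integral, and pass to the limit; or argue as in the proof of Proposition \ref{prop:sgp-continuity}, where the map $t \mapsto [z \mapsto e^{-tz}\gc^+(z)]$ is shown to be Fr\'echet smooth $\bbR_+ \to H^\infty$, and smoothness of $t \mapsto F(t)$ then follows from composing with the bounded linear map $\gO_A \colon H^\infty \to \mc{L}(\overline{\mc{R}(A)})$. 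Once differentiability is established correctly, your contour-integral manipulation (resolvent identity, vanishing of the $-I$ term by contour deformation, $[z]=\pm z$ on the respective half) does give $\partial_t F + AF = 0$.
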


Finally we discuss unbounded operators arising from holomorphic functional calculus, and some situations in which their compositions are bounded.
Suppose that $\gf \in \gY_\gs^\gt$ with $\min(\gs,\gt) \leq 0$, so that the integral \eqref{dunford} need not be absolutely convergent.
We can define an unbounded operator $\gf(A)$ on $\overline{\mc{R}(A)}$ as follows.
Fix $\gd > \max(-\gs,-\gt) \geq 0$ and define $\gh^\gd \in \gY_\gd^\gd$ by
\begin{equation*}
	\gh^\gd(z) := \left( \frac{z}{(i+z)^2} \right)^\gd.
\end{equation*}
Then $\gh^\gd \gf \in \gY_+^+$, so $(\gh^\gd \gf)(A)$ is defined by \eqref{dunford}.
We also have $\gh^\gd \in \gY_+^+$, so $\gh^\gd(A)$ is also defined by \eqref{dunford}, and since $A$ is injective with dense range on $\overline{\mc{R}(A)}$, so is $\gh^\gd(A)$.
Therefore the unbounded operator $\gh^\gd(A)^{-1}$ is defined, with $\mc{D}(\gh^\gd(A)^{-1}) := \mc{R}(\gh^\gd(A))$.
We then define the unbounded operator
\begin{equation}\label{eqn:unbdd-fc}
	\gf(A) := \gh^\gd(A)^{-1} (\gh^\gd \gf)(A)
\end{equation}
with domain
\begin{equation*}
	\mc{D}(\gf(A)) := \{f \in \overline{\mc{R}(A)} : (\gh^\gd \gf)(A)f \in \mc{D}(\gh^\gd(A)^{-1})\}.
\end{equation*}
The operator $\gf(A)$ is closed, densely-defined, and independent of the choice of $\gd$.
Of course, if $\min(\gs,\gt) > 0$, then we can take $\gd = 0$ in the definition \eqref{eqn:unbdd-fc} and recover the original definition of $\gf(A)$ by the Cauchy integral \eqref{dunford}.

Now suppose $\gy \in \gY_{\gs_1}^{\gt_1}$ and $\gf \in \gY_{\gs_2}^{\gt_2}$.
Then a quick computation shows that $\gf(A) \gy(A) \subseteq (\gf \gy)(A)$.
If $\gs_1 + \gs_2 > 0$ and $\gt_1 + \gt_2 > 0$, then the operator $(\gf \gy)(A)$ is bounded and given by \eqref{dunford}, while the operator $\gf(A) \gy(A)$ is not \emph{a priori} given by such a representation.
This observation is convenient in what follows.

\section{Off-diagonal estimates and the Standard Assumptions}\label{sec:ode}

For $x \in \bbR$, write $\langle x \rangle := \max\{1,|x|\}$.
We continue to write $L^2 = L^2(\bbR^n)$.

\begin{dfn}\label{dfn:ODEs}
	Suppose $\gO \subset \bbC \sm \{0\}$, and let $(S_z)_{z \in \gO}$ be a family of operators in $\mc{L}(L^2)$.
	Let $M \geq 0$.
	We say that \emph{$(S_z)$ satisfies off-diagonal estimates of order $M$}\index{off-diagonal estimates} if for all Borel subsets $E,F \subset \bbR^n$, all $z \in \gO$, and all $f \in L^2$,
	\begin{equation}\label{off-diagonal-estimates}
		\nm{\mb{1}_F S_z(\mb{1}_E f)}_2 \lesssim \left\langle \frac{d(E,F)}{|z|} \right\rangle^{-M} \nm{\mb{1}_E f}_2.
	\end{equation}
\end{dfn}

Many families of operators constructed from first-order differential operators (in particular, certain families of resolvents) satisfy off-diagonal estimates of some order.
The following theorem shows that certain families constructed via holomorphic functional calculus of a bisectorial operator $A$ satisfy off-diagonal estimates, under the assumption that a certain resolvent family satisfies off-diagonal estimates.
This is a slight extension of \cite[Proposition 2.7.1]{sSthesis}

\begin{thm}[Off-diagonal estimates for families constructed by functional calculus]\label{thm-fcode}
	Fix $0 \leq \go < \gn < \gm < \gp/2$, $M \geq 0$, and $\gs,\gt > 0$.
	Let $A$ be an $\go$-bisectorial operator on $L^2$ with bounded $H^\infty$ functional calculus on $\overline{\mc{R}(A)}$, such that $((I + \gl A)^{-1})_{\gl \in \bbC \sm S_\gn}$ satisfies off-diagonal estimates of order $M$.
	Suppose that $(\gh(t))_{t > 0}$ is a continuous family of functions in $H^\infty(S_\gm)$ which is uniformly bounded.\footnote{Continuity isn't really needed here. We only assume it to avoid thinking about measurability.}
	If $\gy \in \gY_\gs^\gt(S_\gm)$, then the family of operators $(\gh(t)(A) \gy_t(A))_{t > 0}$ satisfies off-diagonal estimates of order $\min\{\gs,M\}$, with constants depending linearly on $\nm{\gy}_{\gY_\gs^\gt} \nm{\gh}$, where $\nm{\gh} := \sup_{t > 0} \nm{\gh(t)}_\infty$, and also depending on $A$, $M$, $\gs$, and $\gt$.
\end{thm}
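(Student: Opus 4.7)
The plan is to reduce the operator $\gh(t)(A)\gy_t(A)$ to a single Cauchy contour integral, apply the resolvent off-diagonal hypothesis pointwise on the contour, and reduce matters to an elementary scalar integral. Since $\gy \in \gY_\gs^\gt(S_\gm)$ with $\gs,\gt > 0$, the product $\gh(t)\gy_t$ lies in $\gY_\gs^\gt(S_\gm) \subseteq \gY_+^+(S_\gm)$, with the pointwise control $|\gh(t)(z)\gy(tz)| \leq \nm{\gh}\nm{\gy}_{\gY_\gs^\gt} \min\{(t|z|)^\gs, (t|z|)^{-\gt}\}$. First I would invoke the homomorphism property of the $H^\infty$ functional calculus on $\overline{\mc{R}(A)}$ together with the vanishing of $\gy_t(A)$ on $\mc{N}(A)$ to identify $\gh(t)(A)\gy_t(A) = (\gh(t)\gy_t)(A)$ as bounded operators on $L^2$. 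Fixing $\gn^\prime \in (\gn,\gm)$, this common value is then represented by the absolutely convergent Cauchy integral
\begin{equation*}
	\gh(t)(A)\gy_t(A) = \frac{1}{2\gp i} \int_{\partial S_{\gn^\prime}} \gh(t)(z)\gy(tz)(z - A)^{-1} \, dz.
\end{equation*}

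Next I would apply the resolvent off-diagonal hypothesis: setting $\gl = -1/z$ gives $(z - A)^{-1} = z^{-1}(I + \gl A)^{-1}$ with $|\gl| = |z|^{-1}$ and $\gl \in \bbC \sm S_\gn$, so for Borel sets $E,F \subset \bbR^n$ with $d := d(E,F)$,
\begin{equation*}
	\nm{\mb{1}_F (z - A)^{-1} \mb{1}_E g}_2 \lesssim |z|^{-1} \langle d|z|\rangle^{-M} \nm{\mb{1}_E g}_2.
\end{equation*}
Inserting this into the contour representation, parametrising each of the four rays of $\partial S_{\gn^\prime}$ by $r = |z|$, and then substituting $u = tr$, the operator estimate reduces to
\begin{equation*}
	\nm{\mb{1}_F \gh(t)(A)\gy_t(A) \mb{1}_E f}_2 \lesssim \nm{\gh}\nm{\gy}_{\gY_\gs^\gt} \, I(d/t) \, \nm{\mb{1}_E f}_2,
\end{equation*}
where
\begin{equation*}
	I(\gb) := \int_0^\infty \min\{u^\gs, u^{-\gt}\} \langle \gb u\rangle^{-M} \, \frac{du}{u}.
\end{equation*}

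The concluding step is the elementary scalar bound $I(\gb) \lesssim \langle \gb\rangle^{-\min\{\gs, M\}}$, which I would obtain by splitting $(0,\infty)$ at $u = 1/\gb$ and $u = 1$: on $(0,1/\gb)$ one integrates $u^{\gs - 1}$ against the factor $\langle \gb u\rangle^{-M} \leq 1$; on $(1,\infty)$ the product $u^{-\gt - M - 1}\gb^{-M}$ is doubly decaying; on $(1/\gb,1)$ one weighs $u^{\gs - M - 1}$ against $\gb^{-M}$ to extract the decisive exponent $\min\{\gs,M\}$. The borderline case $\gs = M$ is the main analytic obstacle, as the middle range then produces a logarithmic factor; I would sidestep this by first lowering the order from $M$ to some $M^\prime < M$ in the hypothesis (the resolvent estimates being monotone in order) so that $\gs \neq M^\prime$, absorbing the loss into the constants depending on $A$, $M$, $\gs$, $\gt$ that the statement already allows. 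Combining these three steps yields the claimed off-diagonal estimates of order $\min\{\gs,M\}$ with the linear dependence on $\nm{\gh}\nm{\gy}_{\gY_\gs^\gt}$.
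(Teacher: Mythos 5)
Your proof is correct and follows essentially the same route as the paper's: you write $\gh(t)(A)\gy_t(A) = (\gh(t)\gy_t)(A)$ as a Cauchy integral over $\partial S_{\gn^\prime}$, apply the resolvent off-diagonal hypothesis pointwise on the contour, and reduce to the scalar integral that you then split at $1/\gb$ and at $1$, which is precisely the paper's splitting of $\mb{I}_0$ and $\mb{I}_\infty$ (with your $\langle\cdot\rangle$ convention absorbing the paper's separate treatment of $d(E,F) \le t$). One point worth flagging in your favour: you explicitly detect the logarithmic factor at the borderline $\gs = M$, whereas the paper's own estimate $\int_{t/d}^1 \gl^{\gs - M}\,d\gl/\gl \lesssim_{\gs,M} (t/d)^{\gs - M}$ silently fails there (the left side is $\log(d/t)$, the right side is constant); your workaround of lowering the resolvent order to some $M^\prime < \gs$ yields a marginally weaker conclusion at that single exponent, but this is harmless since the Standard Assumptions later furnish off-diagonal estimates of arbitrarily large order.
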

	
\begin{proof}
	Fix Borel sets $E,F \subset \bbR^n$.
	Because $\gy_t(A)$ maps into $\overline{\mc{R}(A)}$ for each $t$, we can apply $\gh(t)(A)$ to $\gy_t(A)(\mb{1}_E f)$ for each $t >0$.
	We must prove the estimate
	\begin{equation*}
		\nm{\mb{1}_F \gh(t)(A) \gy_t(A) (\mb{1}_E f)}_2 \lesssim_{A,M,\gs,\gt} \nm{\gh} \nm{\gy}_{\gY_\gs^\gt (S_\gm)} \left\langle \frac{d(E,F)}{t} \right\rangle^{-\min\{\gs,M\}} \nm{\mb{1}_E f}_2
	\end{equation*}
	for all $f \in L^2$.
	Fix $\gn^\prime \in (\gn,\gm)$ throughout the proof.
	
	If $d(E,F) \leq t$, then $\langle d(E,F)/t \rangle \simeq 1$, and so we have
	\begin{align}
		\nm{\mb{1}_F \gh(t)(A) \gy_t(A) (\mb{1}_E f)}_2
		&\leq \int_{\partial S_{\gn^\prime}} |\gh(t)(z)||\gy(tz)| \nm{\mb{1}_F (z-A)^{-1} (\mb{1}_E f)}_2 \, |dz| \label{s1}\\
		&\lesssim_A \nm{\gh} \nm{\gy}_{\gY_\gs^\gt} \nm{\mb{1}_E f}_2 \int_{\partial S_{\gn^\prime}} m_{\gs}^{\gt}(|z|) \, \frac{|dz|}{|z|} \label{bsuse}\\
		&\simeq_{\gs,\gt,M} \nm{\gh} \nm{\gy}_{\gY_\gs^\gt} \bigg\langle \frac{d(E,F)}{t} \bigg\rangle^{-\min\{\gs,M\}} \nm{\mb{1}_E f}_2, \nonumber
	\end{align}
	using the resolvent bound coming from bisectoriality of $A$ in \eqref{bsuse}.
	
	Now suppose that $d(E,F) > t$.
	Rearranging \eqref{s1} and using the assumed off-diagonal estimates for $((I + \gl A)^{-1})_{\gl \in \bbC \sm S_\gn}$, we have
	\begin{align}
		&\nm{\mb{1}_F \gh(t)(A) \gy_t(A) (\mb{1}_E f)}_2 \nonumber \\
		&\lesssim_A \nm{\gh} \nm{\gy}_{\gY_\gs^\gt} \nm{\mb{1}_E f}_2 \int_{\partial S_{\gn^\prime}} m_{\gs}^{\gt}(|z|) \bigg\langle \frac{d(E,F)}{t/|z|} \bigg\rangle^{-M} \, \frac{|dz|}{|z|} \nonumber \\
		&\lesssim \nm{\gh} \nm{\gy}_{\gY_\gs^\gt} \nm{\mb{1}_E f}_2 (\mb{I}_0 + \mb{I}_\infty), \label{rn}
	\end{align}
	where
	\begin{equation*}
		\mb{I}_0 := \int_0^{t/d(E,F)} m_{\gs}^{\gt}(\gl) \, \frac{d\gl}{\gl}
	\end{equation*}
	and
	\begin{equation*}
		\mb{I}_\infty := \int_{t/d(E,F)}^\infty m_{\gs}^{\gt}(\gl) \bigg( \frac{\gl}{t} d(E,F)\bigg)^{-M} \, \frac{d\gl}{\gl}.
	\end{equation*}
	The integral $\mb{I}_0$ is estimated by
	\begin{equation}\label{i0est}
		\mb{I}_0 \leq \int_0^{t/d(E,F)} \gl^\gs \, \frac{d\gl}{\gl} 
		\simeq_\gs \bigg(\frac{t}{d(E,F)}\bigg)^\gs \lesssim_M \bigg\langle \frac{d(E,F)}{t} \bigg\rangle^{-\min(\gs,M)}.
	\end{equation}
	To estimate $\mb{I}_\infty$, we use that $t/d(E,F) \leq 1$ to write
	\begin{equation*}
		\mb{I}_\infty \simeq_M \bigg\langle \frac{d(E,F)}{t} \bigg\rangle^{-M} \bigg( \int_{t/d(E,F)}^1 \gl^{\gs-M} \, \frac{d\gl}{\gl} + C(\gt,M) \bigg)
	\end{equation*}
	where
	\begin{equation*}
		C(\gt,M) = \int_1^\infty \gl^{-\gt-M} \, \frac{d\gl}{\gl}.
	\end{equation*}
	If $\gs \leq M$, then we have
	\begin{equation*}
		\int_{t/d(E,F)}^1 \gl^{\gs - M} \, \frac{d\gl}{\gl} \lesssim_{\gs,M} \bigg( \frac{t}{d(E,F)}\bigg)^{\gs - M},
	\end{equation*}
	and in this case
	\begin{align}
		\mb{I}_\infty &\lesssim_{\gs,M} \bigg\langle \frac{d(E,F)}{t} \bigg\rangle^{-M} \bigg( \bigg\langle \frac{d(E,F)}{t} \bigg\rangle^{M - \gs} + C(\gt,M) \bigg) \nonumber \\
		&\lesssim_{\gt,M} \bigg\langle \frac{d(E,F)}{t} \bigg\rangle^{-\min\{\gs,M\}} \label{iinfest}.
	\end{align}
	Otherwise, we have
	\begin{equation*}
		\int_{t/d(E,F)}^1 \gl^{\gs - M} \, \frac{d\gl}{\gl} \lesssim_{\gs,M} 1,
	\end{equation*}
	and this also yields \eqref{iinfest}.
	Putting the estimates \eqref{i0est} and \eqref{iinfest} into \eqref{rn} completes the proof.
\end{proof}

Off-diagonal estimates can also be used to deduce uniform boundedness and convergence results for families of operators on slice spaces.
These propositions were proven by the second author and Mourgoglou \cite[\textsection 4]{AM15}.

\begin{prop}[Uniform boundedness of families on slice spaces]\label{prop:slice-bddness}
	Let $p \in (0,\infty]$.
	If $(T_s)_{s > 0}$ is a family of operators on $L^2$ satisfying   off-diagonal estimates of order greater than $n\max(|\gd_{p,2}|, 1/2)$, then $T_s$ extends to a bounded operator on $E^p_{0}(t)$ uniformly in $0 < s \leq t$. 
\end{prop}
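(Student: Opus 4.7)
The strategy is to reduce the boundedness on $E^p(t)$ to a discrete sequence-space estimate via the dyadic characterisation (Proposition~\ref{prop:slice-discrete}), and to exploit the $L^2$ off-diagonal decay through a shell decomposition.

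Fix a grid $\mc{D}$ of dyadic cubes in $\bbR^n$ of sidelength $t$. Proposition~\ref{prop:slice-discrete} gives $\nm{f}_{E^p(t)} \simeq \nm{(\nm{f}_{L^2(Q)})_{Q \in \mc{D}}}_{\ell^p(\mc{D})}$, so it suffices to show that the sequence $b_R := \nm{T_s f}_{L^2(R)}$ has $\ell^p$-quasinorm bounded by that of $a_Q := \nm{f}_{L^2(Q)}$, uniformly in $0 < s \leq t$. For each target cube $R$ partition $\mc{D}$ into the local block $\mc{D}_0(R) := \{Q : d(Q,R) \leq t\}$ (bounded cardinality) and exponential shells $\mc{D}_k(R) := \{Q : d(Q,R) \in [2^{k-1}t, 2^k t)\}$ for $k \geq 1$, so that $|\mc{D}_k(R)| \lesssim 2^{kn}$. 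The off-diagonal hypothesis applied with $F = R$ and $E = \bigcup_{Q \in \mc{D}_k(R)} Q$, combined with $s \leq t$, yields
\begin{equation*}
  b_R \lesssim \sum_{k \geq 0} 2^{-kM} S_k(a)_R, \qquad S_k(a)_R := \biggl(\sum_{Q \in \mc{D}_k(R)} a_Q^2\biggr)^{1/2},
\end{equation*}
where the local ($k=0$) term is controlled by the $L^2$-boundedness of $T_s$, which is built into the off-diagonal hypothesis by taking $E = F = \bbR^n$.

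The case $p = 2$ is immediate since $E^2(t) \simeq L^2(\bbR^n)$. For $p \in [2, \infty]$, the H\"older-type inequality $(\sum_{Q \in \mc{D}_k(R)} a_Q^2)^{1/2} \leq 2^{kn(1/2 - 1/p)} (\sum_{Q \in \mc{D}_k(R)} a_Q^p)^{1/p}$ combined with the overlap count $|\{R : Q \in \mc{D}_k(R)\}| \lesssim 2^{kn}$ gives $\nm{S_k(a)}_{\ell^p} \lesssim 2^{kn/2} \nm{a}_{\ell^p}$; Minkowski's inequality then yields $\nm{b}_{\ell^p} \lesssim (\sum_k 2^{k(n/2 - M)}) \nm{a}_{\ell^p}$, which is finite precisely when $M > n/2$. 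The range $p \in [1,2)$ follows by duality against the adjoint family $(T_s^*)$---which satisfies the same off-diagonal estimates by taking adjoints in Definition~\ref{dfn:ODEs}---via the $L^2$ duality pairing of Proposition~\ref{prop:slice-duality}.

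The main obstacle is the quasi-Banach range $p \in (0,1)$, where the stated threshold $M > n(1/p - 1/2)$ is larger than $n/2$ but strictly smaller than $n/p$ (which is what the direct shell argument combined with $p$-subadditivity of the $\ell^p$-quasinorm naively yields). In this regime one first applies $p$-subadditivity atom-by-atom to single cube contributions $T_s(\mb{1}_Q f)$, and then complex-interpolates between a small-$p$ endpoint and the trivial $p = 2$ case, using the retract $Z^p \rightleftarrows E^p(t)$ from Proposition~\ref{prop:slice-ret} together with the complex interpolation of $Z$-spaces (Proposition~\ref{prop:Z-cint}). The bookkeeping of the interpolation parameters is the delicate part, and gives precisely the threshold $M > n\max(|\gd_{p,2}|, 1/2)$ across the whole range $p \in (0,\infty]$.
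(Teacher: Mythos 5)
Your treatment of $p \geq 2$ (source-side orthogonality plus the H\"older and overlap counts) and, by duality, $p \in (1,2)$ is essentially correct, and the case $p=2$ is indeed trivial. The genuine gap is in the range $p \in (0,1)$: the proposed interpolation cannot deliver the stated threshold. Concretely, for a fixed off-diagonal order $M$ the direct shell argument plus $p$-subadditivity proves boundedness on $E^{p_0}$ only when $p_0 > n/M$, and complex interpolation between $E^{p_0}$ and the trivial endpoint $E^2$ yields boundedness on $E^p$ only for $p \in [p_0,2]$, i.e.\ still only for $p > n/M$. But the proposition asserts boundedness for all $p > 2n/(2M+n)$, and $n/M > 2n/(2M+n)$ for every $n > 0$, so the range reachable by interpolating from your naive endpoint is strictly smaller. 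Since the best small-$p$ endpoint the direct argument reaches is $n/M$, no bookkeeping of the interpolation parameters can push past it, and the argument is circular in the sense that it would need the very conclusion as an input endpoint.

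The missing idea is that for $p < 1$ the $\ell^2$-orthogonality to exploit is on the \emph{target} side rather than the source side---the mirror image of your $p \geq 2$ argument---and then no interpolation is needed at all. Fix a source cube $Q$ and write $a_Q := \nm{\mb{1}_Q f}_2$. Apply Definition~\ref{dfn:ODEs} with $E = Q$ a single cube and $F = E_k(Q)$ the union of the $\sim 2^{kn}$ cubes $R$ with $d(Q,R) \sim 2^k t$; since $s \leq t$ this gives $\nm{\mb{1}_{E_k(Q)} T_s(\mb{1}_Q f)}_2 \lesssim 2^{-kM} a_Q$. Because the restrictions $\mb{1}_R T_s(\mb{1}_Q f)$ for $R$ in the $k$-th shell are disjointly supported, this is an $\ell^2$ bound over the shell, and converting to $\ell^p$ for $p < 2$ via H\"older on the $\sim 2^{kn}$ terms costs exactly the factor $2^{kn(1/p - 1/2)}$. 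Thus the $k$-th shell contributes $\lesssim 2^{-k(M - n(1/p - 1/2))} a_Q$ to $\nm{T_s(\mb{1}_Q f)}_{E^p}$, which sums geometrically over $k$ precisely when $M > n(1/p - 1/2)$. Finally, $p$-subadditivity of the $E^p$ quasinorm for $p \leq 1$ gives $\nm{T_s f}_{E^p}^p \lesssim \sum_Q \nm{T_s(\mb{1}_Q f)}_{E^p}^p \lesssim \sum_Q a_Q^p \simeq \nm{f}_{E^p}^p$, uniformly in $0 < s \leq t$. This also handles the endpoint $p = 1$ directly, sidestepping the subtlety that $(E^\infty)^*$ is strictly larger than $E^1$ in the duality step.
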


\begin{prop}[Strong convergence in slice spaces]\label{prop:slice-convergence}
Let $p \in (0,\infty)$.
Suppose $(T_s)_{s > 0}$ is a family of operators on $L^2$ satisfying off-diagonal estimates of order greater than $n\max(1/p, 1/2)$, and such that $\lim_{s \to 0} T_s = I$ strongly in $L^2$.
Then $\lim_{s \to 0} T_s = I$ strongly in $E^p$.
\end{prop}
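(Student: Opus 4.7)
The plan is to exploit the uniform boundedness from Proposition~\ref{prop:slice-bddness} and reduce the convergence claim to a dense subset, then prove it directly on nice test functions using the off-diagonal estimates.

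First I would note that, since the order of off-diagonal decay $M > n\max(1/p,1/2) \geq n\max(|\gd_{p,2}|,1/2)$, Proposition~\ref{prop:slice-bddness} applies (with $t=1$) and yields a uniform bound $\nm{T_s - I}_{\mc{L}(E^p)} \lesssim 1$ for $s \in (0,1]$. Thus it suffices to show $(T_s - I)f \to 0$ in $E^p$ for $f$ belonging to some dense subset of $E^p$. Using the dyadic characterisation of the slice quasinorm (Proposition~\ref{prop:slice-discrete}) together with $p < \infty$, the set of compactly supported $L^2$ functions is dense in $E^p$: given $f \in E^p$, the sum $\sum_{Q \in \mc{D}_0}\nm{f}_{L^2(Q)}^p$ converges, so truncating $f$ to a large ball gives an approximation in $E^p$ by an $L^2$ function with compact support.

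Next, fix such an $f$ with $\supp f \subset B(0,R)$, and split
\[
  \nm{(T_s - I)f}_{E^p}^p = \int_{|x| \leq R+2} \nm{(T_s - I)f}_{L^2(B(x,1))}^p \, dx + \int_{|x| > R+2} \nm{(T_s - I)f}_{L^2(B(x,1))}^p \, dx.
\]
For the near part, I would use $\nm{(T_s-I)f}_{L^2(B(x,1))} \leq \nm{(T_s - I)f}_{L^2}$ and the assumed $L^2$ strong convergence, so the first integral is bounded by $|B(0,R+2)|\,\nm{(T_s-I)f}_{L^2}^p \to 0$. For the far part, observe that $B(x,1)$ is disjoint from $\supp f$ once $|x| > R+1$, so $\nm{(T_s-I)f}_{L^2(B(x,1))} = \nm{T_s(\mb{1}_{\supp f}f)}_{L^2(B(x,1))}$, and the off-diagonal estimates give
\[
  \nm{T_s f}_{L^2(B(x,1))} \lesssim \bigg\langle \frac{|x|-R-1}{s} \bigg\rangle^{-M} \nm{f}_{L^2}.
\]
Since we only need $s \in (0,1]$ and $\langle y/s\rangle \geq \langle y\rangle$ for $s \leq 1$ and $y \geq 0$, this is dominated uniformly in $s$ by $(1 + |x|-R-1)^{-M}\nm{f}_{L^2}$, whose $p$-th power is integrable in $x$ over $\{|x| > R+1\}$ because $Mp > n$.

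The main obstacle, which is the only genuinely delicate point, is that the far part does not go to zero pointwise in $s$ in any obvious way. One must argue by a standard $\varepsilon$-split: given $\varepsilon > 0$, first choose a larger radius $R' \geq R+2$ such that $\int_{|x|>R'}(1+|x|-R-1)^{-Mp}\,dx \cdot \nm{f}_{L^2}^p < \varepsilon$, which is possible thanks to the uniform-in-$s$ domination; then on $\{|x|\leq R'\}$ use the $L^2$ convergence exactly as in the near part to bound the integral by $|B(0,R')|\,\nm{(T_s-I)f}_{L^2}^p$, which is less than $\varepsilon$ for all sufficiently small $s$. Combining the two estimates gives $\nm{(T_s - I)f}_{E^p}^p \lesssim \varepsilon$, and hence convergence on the dense subset, completing the proof.
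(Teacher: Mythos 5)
Your proposal is correct, and since the paper does not reproduce a proof of this proposition (it cites \cite{AM15}, \S 4), there is no in-text argument to compare against; your route — uniform boundedness of $(T_s)_{s\in(0,1]}$ on $E^p$ from Proposition~\ref{prop:slice-bddness}, density of compactly supported $L^2$ functions (via the dyadic characterisation and $p<\infty$), then a near/far split with strong $L^2$ convergence governing the near region and off-diagonal decay together with $Mp>n$ giving an integrable uniform-in-$s$ majorant on the far region — is the natural argument and the same in spirit as the one in \cite{AM15}.

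One small correction to your framing: the far part \emph{does} go to zero pointwise in $x$ as $s\to 0$, in two independent ways. First, for fixed $|x|>R+1$ the off-diagonal factor $\langle (|x|-R-1)/s\rangle^{-M}$ tends to $0$ as $s\to 0$; second, $\nm{T_s f}_{L^2(B(x,1))} \to \nm{f}_{L^2(B(x,1))}=0$ by the assumed $L^2$ strong convergence, since $f$ vanishes on $B(x,1)$. Thus the "delicate point" you identify is not actually an obstruction, and the final step can be handled by a direct appeal to dominated convergence, with your majorant $(1+|x|-R-1)^{-Mp}\nm{f}_{L^2}^p$. Your explicit $\varepsilon$-split is a correct hand-rolled version of this, so nothing is wrong — it is just more elaborate than necessary.
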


Throughout the `abstract' part of this work, the following assumptions will be sufficient.
They can be a bit of a mouthful if stated in full, so we give them a name.

\begin{dfn}\label{dfn:std-assns}
	We say that an operator $A$ satisfies the \emph{Standard Assumptions}\index{Standard Assumptions} if 
	\begin{itemize}
		\item $A$ is a $\go$-bisectorial operator on $L^2$ for some $\go \in [0,\gp/2)$, 
		\item $A$ has bounded $H^\infty$ functional calculus on $\overline{\mc{R}(A)}$, and
		\item for all $\gn \in (\go,\gp/2)$ the family $((I + \gl A)^{-1})_{\gl \in \bbC \sm S_\gn}$ satisfies off-diagonal estimates of arbitrarily large order.
	\end{itemize}
\end{dfn}

The main examples we have in mind are perturbed Dirac operators.

\begin{thm}\label{thm:DB-fundamental-props}\index{Dirac operator!perturbed!satisfies Standard Assumptions}
	Suppose $D$ and $B$ are as in Subsection \ref{ssec:fop} of the introduction.
	Then the perturbed Dirac operators $DB$ and $BD$ satisfy the Standard Assumptions (see Definition \ref{dfn:std-assns}).
\end{thm}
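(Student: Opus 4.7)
The plan is to verify that $DB$ (and, by a symmetric argument, $BD$) fits into the framework of Axelsson--Keith--McIntosh \cite{AKM06} and then invoke the main results of that paper together with their refinements in \cite{AAM10}. First I would check the AKM hypotheses. The unperturbed operator $D$ is self-adjoint on $L^2(\bbR^n:\bbC^{m(1+n)})$ with its natural domain, since it is a constant-coefficient first-order operator satisfying $\dv_\parallel^* = -\nabla_\parallel$, so that $D^* = D$; hence $D$ is $0$-bisectorial with bounded $H^\infty$ functional calculus on $\overline{\mc{R}(D)}$. A short direct computation shows that $\overline{\mc{R}(D)}$ consists of those $f \in L^2$ with $f_\perp \in \overline{\mc{R}(\dv_\parallel)}$ and $f_\parallel \in \overline{\mc{R}(\nabla_\parallel)}$; in particular every element of $\overline{\mc{R}(D)}$ is tangentially curl-free, so the accretivity hypothesis \eqref{eqn:accretivity} translates exactly into strict accretivity of $B$ on $\overline{\mc{R}(D)}$.

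Second, with these inputs in place, bisectoriality of $DB$ on $L^2$ together with bounded $H^\infty$ functional calculus on $\overline{\mc{R}(DB)}$ is the conclusion of the Axelsson--Keith--McIntosh theorem quoted in the introduction, which itself assembles \cite[Proposition 3.3 and Theorem 3.4]{AAM10}. This supplies the first two clauses of Definition \ref{dfn:std-assns}. The operator $BD$ is handled analogously, since the AKM framework is formulated symmetrically in the ordering of the first-order differential factor and the accretive multiplication factor; alternatively, one may pass from $DB$ to $BD$ using the intertwining relation $B(DB) = (BD)B$ on $\overline{\mc{R}(D)}$.

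Third, I would establish the resolvent off-diagonal estimates of arbitrarily large order. The order-one bound
\begin{equation*}
\nm{\mb{1}_F (I + \gl DB)^{-1}(\mb{1}_E f)}_2 \lesssim \frac{|\gl|}{d(E,F)}\, \nm{\mb{1}_E f}_2
\end{equation*}
(valid when $|\gl| \leq d(E,F)$, and otherwise replaced by the uniform $L^2$ bound on the resolvent) is a standard Caccioppoli-type estimate: test the resolvent equation $(I + \gl DB)u = \mb{1}_E f$ against $\gh^2 Bu$, with $\gh$ a Lipschitz cutoff supported away from $E$ and equal to $1$ on $F$ satisfying $\nm{\nabla \gh}_\infty \lesssim 1/d(E,F)$, and use the commutator identity $[D,\gh] = (D\gh)\,\cdot$, which is a bounded multiplier because $D$ is first order and $\gh$ is Lipschitz, together with strict accretivity of $B$ on $\overline{\mc{R}(D)}$. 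Decay of arbitrary order $M$ then follows by induction: one either iterates the order-one estimate along a telescoping chain of concentric enlargements of $F$ at the scale $|\gl|$, or equivalently replaces $\gh$ by $\gh^M$ and bootstraps. This procedure is carried out in \cite[Proposition 5.2]{AKM06} and revisited in \cite[Proposition 2.7.1]{sSthesis}.

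The main ``obstacle'' here is essentially bookkeeping: all the genuinely hard analytic content has been absorbed into \cite{AKM06}, whose bounded $H^\infty$ calculus conclusion is of the same depth as the Kato square root problem \cite{AHLMT02}. The only subtlety in the present statement is that the off-diagonal estimates are required of \emph{arbitrarily} large, rather than some fixed, order, but this is a standard induction and not a new estimate.
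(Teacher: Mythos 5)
Your overall route is the paper's: Theorem~\ref{thm:DB-fundamental-props} is proven there by citation to \cite[Proposition 2.1]{AS14.1} (bisectoriality and bounded $H^\infty$ calculus, ultimately resting on the Axelsson--Keith--McIntosh theorem \cite{AKM06}) and to \cite[Lemma 2.3, Propositions 3.1 and 3.2]{AS16} for the off-diagonal estimates, and your reductions --- self-adjointness of $D$, the identification of $\overline{\mc{R}(D)}$ with tangentially curl-free fields so that \eqref{eqn:accretivity} becomes accretivity of $B$ on $\overline{\mc{R}(D)}$ --- are exactly what is needed to enter that framework.

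One step in your sketch of the order-one off-diagonal estimate would not close as written. Testing $(I + \gl DB)u = \mb{1}_E f$ against $\gh^2 Bu$ and then invoking strict accretivity of $B$ on $\overline{\mc{R}(D)}$ fails, because multiplication by the cutoff $\gh$ does not preserve $\overline{\mc{R}(D)}$: even if $u$ were tangentially curl-free, $\gh u$ generally is not, so accretivity cannot be applied to $\langle B(\gh u), \gh u\rangle$. The mechanism that actually works (and that underlies the sources you cite) uses at this stage the uniform resolvent bound coming from bisectoriality, which is already in hand, rather than accretivity. With $\gh$ a scalar Lipschitz cutoff equal to one on $F$ and vanishing near $E$, the commutator $[D,\gh]$ is multiplication by a matrix with $\nm{[D,\gh]}_{\mc{L}(L^2)} \lesssim 1/d(E,F)$, and since $[B,\gh] = 0$ one has
\begin{equation*}
(I + \gl DB)(\gh u) = \gh\, \mb{1}_E f + \gl\,[D,\gh]Bu = \gl\,[D,\gh]Bu,
\end{equation*}
whence $\nm{\gh u}_2 \lesssim |\gl|\, \nm{[D,\gh]}_{\mc{L}(L^2)}\,\nm{u}_2 \lesssim (|\gl|/d(E,F))\,\nm{\mb{1}_E f}_2$. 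After this correction, the bootstrap to arbitrary order $M$ proceeds as you describe.
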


This was proven by the second author and Stahlhut (\cite[Proposition 2.1]{AS14.1} and \cite[Lemma 2.3, Propositions 3.1 and 3.2]{AS16}).
The off-diagonal estimates stated there are in a different but equivalent form.
We mention once more that establishing bounded $H^\infty$ functional calculus is a deep result due to Axelsson, Keith, and McIntosh \cite{AKM06}.

\section{Integral operators on tent spaces}\label{sec:intops}

Let $(S_{t,\gt})_{t,\gt > 0}$ be a continuous two-parameter family of bounded operators on $L^2 = L^2(\bbR^n)$, and for all $f \in L_c^2(\bbR_+ : L^2)$ define $Sf \in L^0(\bbR_+ : L^2)$ by 
\begin{equation}\label{kernop}
	Sf(t) := \int_0^\infty S_{t,\gt} f(\gt) \, \frac{d\gt}{\gt}.
\end{equation}
Since $f$ is compactly supported in $\bbR_+$, the Cauchy--Schwarz inequality shows that this integral is absolutely convergent.
We write $S \sim (S_{t,\gt})_{t,\gt > 0}$ to say that $S$ is given by the kernel $(S_{t,\gt})_{t,\gt > 0}$.

We would like to know when $S$ can be extended from $L_c^2(\bbR_+ : L^2)$ to an operator between various tent spaces and $Z$-spaces.
A first step is given by the following Schur-type lemma.
Recall that $L^2_s$ is defined in \eqref{eqn:L2sdefn} and coincides with $X_s^2$.

\begin{lem}\label{lem:schur-lem}
	Let $s,\gd \in \bbR$, and consider $S \sim (S_{t,\gt})_{t,\gt > 0}$ on $L^2$ as above.
	Suppose that there exists $\gamma \in L^1(\bbR_+: \bbR)$ (where $\bbR_+$ is equipped with the Haar measure $dt/t$) such that for all $t,\gt > 0$,
	\begin{equation*}
		\nm{\gt^{-\gd} S_{t,\gt}}_{\mc{L}(L^2)} \leq \gamma(t/\gt) (t/\gt)^{s + \gd}.
	\end{equation*}
	Then for all $f \in L^2_c(\bbR_+ : L^2)$,
	\begin{equation}\label{eqn:schur-est}
		\nm{Sf}_{L^2_{s + \gd}} \lesssim \nm{\gamma}_{L^1(\bbR_+)} \nm{f}_{L^2_{s}}.
	\end{equation}
\end{lem}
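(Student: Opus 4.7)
The plan is to reduce the weighted estimate to Young's convolution inequality on the multiplicative group $(\bbR_+, dt/t)$. The weights $t^{s+\gd}$ and $\gt^s$ are tailored precisely so that, after dividing out, the kernel bound depends only on the ratio $t/\gt$, which is the group-invariant variable; this should make the convolution structure jump out immediately.

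First I would pass to the pointwise (in $t$) estimate
\begin{equation*}
	\nm{Sf(t)}_{L^2} \leq \int_0^\infty \nm{S_{t,\gt} f(\gt)}_{L^2} \, \frac{d\gt}{\gt},
\end{equation*}
which is justified in the absolute sense by the compact support of $f$ in $\bbR_+$ (and can be taken as a Bochner integral in $L^2$). The hypothesis $\nm{\gt^{-\gd} S_{t,\gt}}_{\mc{L}(L^2)} \leq \gamma(t/\gt)(t/\gt)^{s+\gd}$ then gives
\begin{equation*}
	\nm{S_{t,\gt} f(\gt)}_{L^2} \leq \gt^\gd \gamma(t/\gt) (t/\gt)^{s+\gd} \nm{f(\gt)}_{L^2}.
\end{equation*}

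Next, set $g(\gt) := \gt^{-s} \nm{f(\gt)}_{L^2}$, so that $\nm{f}_{L^2_s}^2 = \int_0^\infty |g(\gt)|^2 \, d\gt/\gt$. Substituting above and pulling out the factor $t^{s+\gd}$ yields
\begin{equation*}
	t^{-(s+\gd)} \nm{Sf(t)}_{L^2} \leq \int_0^\infty \gamma(t/\gt) g(\gt) \, \frac{d\gt}{\gt} = (\gamma \ast g)(t),
\end{equation*}
where the convolution is the one associated to the locally compact abelian group $(\bbR_+, \cdot)$ with Haar measure $d\gt/\gt$.

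Finally I would apply Young's inequality $L^1(\bbR_+) \ast L^2(\bbR_+) \hookrightarrow L^2(\bbR_+)$ on this group to obtain
\begin{equation*}
	\nm{Sf}_{L^2_{s+\gd}}^2 = \int_0^\infty |t^{-(s+\gd)} \nm{Sf(t)}_{L^2}|^2 \, \frac{dt}{t} \leq \nm{\gamma}_{L^1(\bbR_+)}^2 \nm{g}_{L^2(\bbR_+, d\gt/\gt)}^2 = \nm{\gamma}_{L^1(\bbR_+)}^2 \nm{f}_{L^2_s}^2,
\end{equation*}
which is precisely \eqref{eqn:schur-est}. There is no real obstacle here; the only point that requires a moment's attention is the careful tracking of the powers of $t$ and $\gt$ so that the kernel bound genuinely depends only on $t/\gt$ after normalisation—this is what allows the reduction to a multiplicative convolution rather than a more cumbersome Schur test on $(0,\infty)$ with Lebesgue measure.
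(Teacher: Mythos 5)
Your argument is correct. It differs in mechanism from the paper's: the paper proves the lemma by duality, bounding $|\langle Sf, g\rangle|$ for $g \in L^2_{-(s+\gd)}$ and applying Cauchy--Schwarz twice (once in each variable), which is the classical duality form of the Schur test; you instead take a pointwise-in-$t$ norm estimate and recognize the resulting kernel bound as a multiplicative convolution $\gamma \ast g$ on $(\bbR_+, dt/t)$, then invoke Young's inequality $L^1 \ast L^2 \hookrightarrow L^2$. Both are standard variants of the same Schur-type reduction, with the same hypotheses and the same output, and the technical content (exploiting that the weighted kernel bound depends only on $t/\gt$) is identical; your route is arguably slightly cleaner because it avoids introducing a dual test function and makes the group-invariance of the kernel explicit, whereas the paper's route generalizes more readily to settings where a clean convolution structure is unavailable. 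One minor bookkeeping point worth being aware of: the absolute convergence of the Bochner integral defining $Sf(t)$ (and hence the legitimacy of passing the $L^2$ norm inside) is guaranteed here by the compact support of $f$ in $\bbR_+$ together with the kernel bound, which is exactly why the lemma is stated for $f \in L^2_c(\bbR_+:L^2)$; you flag this appropriately.
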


\begin{proof}
	We argue by duality.
	For all $g \in L^{2}_{-(s+\gd)}$, using the assumed estimate on the operators $\tau^{-\delta} S_{t,\tau}$, we can estimate
	\begin{align*}
		\left| \langle Sf, g \rangle \right|
		&\leq \int_0^\infty \int_0^\infty \nm{S_{t,\gt} f(\gt)}_2 \nm{g(t)}_2 \, \frac{d\gt}{\gt} \, \frac{dt}{t} \\
		&\leq \int_0^\infty \int_0^\infty \gamma(t/\gt) \nm{\gt^{-s} f(\gt)}_2 \nm{t^{s+\gd} g(t)}_2 \, \frac{d\gt}{\gt} \, \frac{dt}{t} \\
		&\leq \bigg( \int_0^\infty \int_0^\infty \gamma(t/\gt) \, \frac{dt}{t} \nm{\gt^{-s} f(\gt)}_2^2 \, \frac{d\gt}{\gt} \bigg)^{1/2}  \\
		&\qquad \cdot \bigg( \int_0^\infty \int_0^\infty \gamma(t/\gt) \, \frac{d\gt}{\gt} \nm{t^{s+\gd} g(t)}_2^2 \, \frac{dt}{t} \bigg)^{1/2} \\
		&\leq \nm{\gamma}_{L^1(\bbR_+)} \nm{f}_{L^2_{s}} \nm{g}_{L^2_{-s-\gd}},
	\end{align*}
	which implies \eqref{eqn:schur-est}.
\end{proof}

For certain kernels $(S_{t,\gt})$, assuming an $L^2_{s} \to L^2_{s + \gd}$ estimate such as \eqref{eqn:schur-est} and some off-diagonal estimates, we can prove boundedness of $S$ from $T^{\mb{p}}$ to $T^{\mb{p} + \gd}$ for some exponents $\mb{p}$ with $i(\mb{p}) \in (0,1]$.
This is a generalisation of an argument of the second author, McIntosh, and Russ \cite{AMR08} (see also \cite{HMM11}).

\begin{thm}[Extrapolation of $L^2$ boundedness to tent spaces]\label{thm:AMR}
	Let $\mb{p} = (p,s)$ be an exponent with $p \leq 1$, let $\gd \in \bbR$, and let $(S_{t,\gt})_{t,\gt > 0}$ be a continuous two-parameter family of bounded operators on $L^2$ such that for all $t_0,\gt_0 > 0$ the one-parameter families $(t^{-\gd} S_{t,\gt_0})_{t \in (\gt_0,\infty)}$ and $(\gt^{-\gd} S_{t_0,\gt})_{\gt \in (t_0,\infty)}$ both satisfy off-diagonal estimates of order $M$, with implicit constant $K$ uniform in $\gt_0$ and $t_0$ respectively.
	Suppose $a,b \in \bbR$, and let $\mb{S} \sim (m_a^b(t/\gt) S_{t,\gt})_{t,\gt > 0}$.
	
	If
	\begin{equation}\label{eqn:AMR-apriori}
		\nm{\mb{S}f}_{L^2_{s + \gd}} \lesssim \nm{f}_{L^2_{s}}
	\end{equation}
	holds for all $f \in L_c^2(\bbR_+ : L^2)$, with
	\begin{equation}\label{AMRparassn}
		-n\gd_{p,2} < b + s < M \quad \text{and} \quad a > s + \gd,
	\end{equation}
	then
	\begin{equation*}
		\nm{\mb{S}f}_{T^{\mb{p}+\gd}} \lesssim \nm{f}_{T^\mb{p}}
	\end{equation*}
	for all $f \in L_c^2(\bbR_+ : L^2)$, and the implicit constant is a linear combination of $K$ and $\nm{\mb{S}} := \nm{\mb{S}}_{L_{s}^2 \to L_{s + \gd}^2}$.
\end{thm}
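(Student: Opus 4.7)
\textbf{Proof plan for Theorem \ref{thm:AMR}.}

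The strategy follows the atomic extrapolation template introduced by the second author with McIntosh and Russ in \cite{AMR08}. First, since $p \leq 1$, by the atomic decomposition of $T^\mb{p}$ and the $p$-subadditivity of $\nm{\cdot}_{T^{\mb{p}+\gd}}^{p}$, it suffices to prove the single-atom estimate
\begin{equation*}
\nm{\mb{S}a}_{T^{\mb{p}+\gd}} \lesssim 1
\end{equation*}
uniformly over $T^\mb{p}$-atoms $a$. So fix such an atom $a$ associated with a ball $B = B(c_B,r_B)$: $\supp a \subset T(B)$ and $\nm{a}_{T^2_{s}} \leq |B|^{\gd_{p,2}}$.

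Next, decompose $\bbR^n = 4B \sqcup \bigsqcup_{j \geq 2} C_j(B)$ where $C_j(B) = 2^{j+1}B \sm 2^j B$, and estimate
\begin{equation*}
\nm{\mb{S}a}_{T^{\mb{p}+\gd}}^{p} \leq \int_{4B}\mc{A}(\gk^{-s-\gd}\mb{S}a)(x)^{p}\,dx + \sum_{j \geq 2}\int_{C_j(B)}\mc{A}(\gk^{-s-\gd}\mb{S}a)(x)^{p}\,dx.
\end{equation*}
For the local piece, H\"older's inequality with exponent $2/p \geq 1$ gives
\begin{equation*}
\int_{4B}\mc{A}(\gk^{-s-\gd}\mb{S}a)^{p} \leq |4B|^{1-p/2}\nm{\mb{S}a}_{T^{2}_{s+\gd}}^{p} = |4B|^{1-p/2}\nm{\mb{S}a}_{L^{2}_{s+\gd}}^{p},
\end{equation*}
and by the hypothesis \eqref{eqn:AMR-apriori} and the atom size bound this is $\lesssim |B|^{1-p/2}|B|^{p\gd_{p,2}} = 1$.

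The main work is the far piece. Fix $j \geq 2$ and $x \in C_j(B)$. Since $\supp a \subset T(B)$ forces the $\gt$-integral in $\mb{S}a(t,y) = \int_{0}^{r_B} m_{a}^{b}(t/\gt) S_{t,\gt}a(\gt,\cdot)(y)\,d\gt/\gt$ to run over $\gt \leq r_B$, we split the cone $\gG(x)$ at height $t_j := 2^{j-2}r_B$. For the low part $t \leq t_j$, every $y \in B(x,t)$ satisfies $d(y,B) \gtrsim 2^{j}r_B$, and we apply the assumed off-diagonal estimates: separately for $t \leq \gt$ (using the second family, with scale $\gt$) and $t > \gt$ (using the first family, with scale $t$). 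In both regimes the off-diagonal factor is bounded by $(2^{j}r_B/\max(t,\gt))^{-M}$, which combines with $m_{a}^{b}(t/\gt)$ and $t^{-\gd}$ or $\gt^{-\gd}$ to produce an absolutely convergent Schur-type bound via Lemma \ref{lem:schur-lem} applied on the truncated cone; the assumption $a > s + \gd$ controls the small-$t/\gt$ regime while $M > b+s$ yields a factor $2^{-j(M-b-s)}$ (times the atom size $|B|^{\gd_{p,2}}$). For the high part $t > t_j$, one uses the direct $L^{2}_{s}\to L^{2}_{s+\gd}$ bound on the truncated operator together with $m_{a}^{b}(t/\gt) \lesssim (\gt/t)^{b} \lesssim 2^{-jb}$; after H\"older in $x$ over $C_j(B)$ (with volume $|C_j(B)| \simeq 2^{jn}|B|$), the contribution is $\lesssim 2^{-j(b+s+n\gd_{p,2})\,p}$. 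Summing over $j$, the parameter condition $b+s > -n\gd_{p,2}$ ensures geometric convergence in the high regime, while $M > b+s$ ensures it in the low regime.

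The main obstacle will be the bookkeeping of step four: carefully organising the four sub-regimes $\{t \lessgtr \gt\} \times \{t \lessgtr t_j\}$, ensuring that for each the correct off-diagonal family applies and that the weight $m_a^b(t/\gt)$ is absorbed in a way that leaves the surviving integrals controlled by the two parameter conditions in \eqref{AMRparassn}. In particular, the asymmetry between the two families of off-diagonal estimates (each holding only when the moving parameter exceeds the fixed one) forces the dichotomy at the diagonal $t = \gt$, and the bound $a > s+\gd$ is what guarantees integrability of the weight as $t/\gt \to 0$ on the low-scale side.
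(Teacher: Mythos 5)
Your approach differs structurally from the paper's. The paper decomposes the \emph{function} $\mb{S}a$ into pieces $F_k = \mb{1}_{T_k}\mb{S}a$ supported in tent-shells $T_k = T(2^{k+1}B)\setminus T(2^kB)$ in $\bbR^{1+n}_+$, shows each $F_k$ is a constant multiple $\gl_k$ of a $T^{\mb{p}+\gd}$-atom, and concludes via the atomic characterisation of the target space. You instead keep $\mb{S}a$ intact and split the \emph{base} $\bbR^n$ into annuli $C_j(B)$, directly estimating $\int_{C_j(B)}\mc{A}(\gk^{-s-\gd}\mb{S}a)^p\,dx$. Both are classical templates for single-atom estimates; the paper's route lets one quote the atomic decomposition theorem once and for all, while yours inserts an extra H\"older step on each annulus. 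They give the same decay rates, and your local piece computation is correct.

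However, your treatment of the high part $t > t_j$ contains a genuine error. You propose to ``use the direct $L^2_s \to L^2_{s+\gd}$ bound on the truncated operator together with $m_a^b(t/\gt) \lesssim (\gt/t)^b \lesssim 2^{-jb}$''. This cannot work as stated: the weight $m_a^b(t/\gt)$ is an inseparable part of the kernel of $\mb{S}$, and the hypothesis \eqref{eqn:AMR-apriori} is a bound on $\mb{S}$ with the weight already inside. You cannot invoke \eqref{eqn:AMR-apriori} and \emph{also} extract the factor $2^{-jb}$ from the weight; doing both double-counts. Without the extracted decay, the high-part contribution is $\lesssim 2^{jn(1-p/2)}$, which diverges. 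The correct tool for this regime is not \eqref{eqn:AMR-apriori} at all, but the uniform operator bound $\nm{t^{-\gd}S_{t,\gt}}_{\mc{L}(L^2)} \lesssim K$ (the off-diagonal estimate at zero separation), followed by Minkowski's inequality in $\gt$, Cauchy--Schwarz in $\gt$ exploiting $b+s > 0$ (which is forced by $b+s > -n\gd_{p,2} > 0$), and explicit integration in $t$; this is exactly what the paper does for its region $R_3$, and it recovers your claimed contribution $2^{-j(b+s+n\gd_{p,2})p}$. The hypothesis \eqref{eqn:AMR-apriori} is needed only for the local piece $4B$. Finally, the plan should address the extension from compactly supported atoms to general $f \in T^\mb{p} \cap L^2_c(\bbR_+ : L^2)$, which requires showing that a suitable decomposition into compactly supported atoms converges both in $T^\mb{p}$ and in $L^2_s$; this step is elided in your proposal but is nontrivial (it is the paper's Step 2).
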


\begin{proof}
	\textbf{Step 1: an estimate for compactly supported atoms.}
		Suppose that $f$ is a compactly supported $T^\mb{p}$-atom associated with a ball $B = B(c,r) \subset \bbR^n$ (see Definition \ref{dfn:ts-atom}).
		Then $f \in L^2_c$, and so $\mb{S}f$ is defined.
		We will show that $\mb{S}f$ is in $T^{\mb{p} + \gd} = T^p_{s + \gd}$ with quasinorm bounded independently of $f$.
		To do this we will exhibit an atomic decomposition of $\mb{S}f$, and we will estimate $\nm{\mb{S}f}_{T^{\mb{p} + \gd}}$ using the coefficients of this decomposition.
		
		Let $T_1 := T(4B)$ and $T_k := T(2^{k+1}B) \sm T(2^k B)$ for all integers $k \geq 2$.
		Then define $F_k := \mb{1}_{T_k} \mb{S}f$ for all $k \in \bbN$, so that $\mb{S}f = \sum_{k=1}^\infty F_k$ almost everywhere.
		For each $k \in \bbN$ the function $F_k$ is supported in a tent, so we can renormalise by writing $F_k = \gl_k f_k$ for some $\gl_k \in \bbC$ and some $T^{\mb{p} + \gd}$-atom $f_k$.
		We need only estimate the coefficients $\gl_k$.
		
		\textbf{Estimate for the local part.}
		For $k = 1$, since $F_1$ is supported in $T(4B)$, we must estimate $\nm{F_1}_{L^2_{s + \gd}}$ in terms of $|4B|^{\gd_{p,2}}$.
		It follows from \eqref{eqn:AMR-apriori} and the fact that $f$ is a $T^\mb{p}$-atom that
		\begin{equation*}
			\nm{F_1}_{L^2_{s + \gd}} \leq \nm{\mb{S}} |B|^{\gd_{p,2}}
			\simeq_{n,p} \nm{\mb{S}} |4B|^{\gd_{p,2}},
		\end{equation*}
		and so we can set $\gl_1 \simeq_{n,p} \nm{\mb{S}}$.
		
		\textbf{Estimate for the global parts.}
		Suppose $k \geq 2$.
		Since $F_k$ is supported in the tent $T(2^{k+1}B)$, we must estimate $\nm{F_k}_{L_{s+\gd}^2}$ in terms of $|2^{k+1} B|^{\gd_{p,2}}$.
		We use Minkowski's integral inequality to estimate
		\begin{align*}
			&\nm{F_k}_{L_{s+\gd}^2} \\
			&= \bigg( \int_0^{2^{k+1}r} \bigg\| t^{-(s + \gd)} \mb{1}_{B(c,2^{k+1} r - t)} \int_0^r m_a^b(t/\gt) S_{t,\gt} f(\gt) \, \frac{d\gt}{\gt} \bigg\|_2^2  \, \frac{dt}{t} \bigg)^{1/2} \\
			&\leq \bigg( \int_0^{2^{k+1}r} \bigg( \int_0^r \, t^{-(s + \gd)} m_a^b(t/\gt) \| \mb{1}_{A(c,2^k r - t, 2^{k+1} r - t)} S_{t,\gt} f(\gt) \|_2 \frac{d\gt}{\gt} \bigg)^2 \, \frac{dt}{t} \bigg)^{1/2} \\
			&\leq \int_0^r \bigg( \int_0^{2^{k+1}r} t^{-2(s + \gd)} m_a^b(t/\gt)^2 \| \mb{1}_{A(c,2^k r - t, 2^{k+1} r - t)} S_{t,\gt} f(\gt) \|_2^2 \, \frac{dt}{t} \bigg)^{1/2} \, \frac{d\gt}{\gt},
		\end{align*}
		where
                \begin{equation*}
                  A(c,2^k r - t, 2^{k-1}r - t) = B(c,2^{k-1}r - t) \setminus B(c,2^k r - t)
                \end{equation*}
                as defined in Section \ref{section:notation}.
		Note that $f(\gt)$ is supported in $B(c,r - \gt)$.
		We have
		\begin{align*}
			d(\supp f(\gt), A(c,2^k r - t, 2^{k+1}r - t)) &\geq d(B(c,r), \bbR^n \sm B(c,2^k r - t)) \\
			&= ((2^k - 1)r - t)_+,
		\end{align*}
		so we split the region of integration $(0,2^{k+1}r) \times (0,r)$ into three subregions,
		\begin{align*}
			R_1 &:= \{(t,\gt) : t < \gt < r\} \\
			R_2 &:= \left\{(t,\gt) : \gt < t < \frac{2^k-1}{2} r\right\} \\
			R_3 &:= \left\{(t,\gt) : t > \frac{2^k - 1}{2} r\right\},
		\end{align*}
		and denote the corresponding integrals by $\mb{I}_1$, $\mb{I}_2$, and $\mb{I}_3$.
		
		On $R_3$, where $t > \gt$ and where there is no spatial separation, we have
		\begin{align}
		\mb{I}_3 &\lesssim K\int_0^r \bigg( \int_{\frac{2^k-1}{2} r}^{2^{k+1}r} t^{-2(s + \gd)} (t/\gt)^{-2b} t^{2\gd}\nm{f(\gt)}_{2}^2 \, \frac{dt}{t} \bigg)^{1/2}  \, \frac{d\gt}{\gt} \nonumber \\
		&\lesssim_{b,s} K\int_0^r \nm{\gt^{-s} f(\gt)}_{2} \gt^{b + s} (2^k r)^{-(b+s)} \, \frac{d\gt}{\gt} \nonumber \\
		&\leq K2^{-k(b+s)} \bigg( \int_0^r \nm{\gt^{-s} f(\gt)}_{2}^2 \, \frac{d\gt}{\gt} \bigg)^{1/2} \bigg( \int_0^r ( \gt/r )^{2(b+s)} \, \frac{d\gt}{\gt} \bigg)^{1/2} \nonumber \\
		&\simeq_{b,s} K2^{-k(b+s)} \nm{f}_{L_{s}^2} \label{sb0} \\
		&\leq K2^{-k(b+s)} |B|^{\gd_{p,2}} \nonumber \\
		&= K2^{-k(b+s)} |2^{k+1}B|^{\gd_{p,2}} \bigg( \frac{|2^{k+1}B|}{|B|} \bigg)^{-\gd_{p,2}} \nonumber \\
		&\simeq K2^{-k(b+s+n\gd_{p,2})} |2^{k+1}B|^{\gd_{p,2}}, \nonumber
		\end{align}
		where we used $b + s > -n\gd_{p,2} > 0$ in \eqref{sb0}.
		
		On $R_1$, where $t < \gt$ and where the off-diagonal estimates for $(\gt^{-\gd}S_{t,\gt})_{\gt \in (t,\infty)}$ involve spatial separation, 	
		\begin{align*}
			\mb{I}_1 &\lesssim K
			\int_0^r \bigg( \int_0^\gt t^{-2(s + \gd)} (t/\gt)^{2a} \bigg( \bigg( \frac{(2^k - 1)r - t}{\gt} \bigg)^{-M} \nm{f(\gt)}_{2} \bigg)^2 \, \frac{dt}{t} \bigg)^{1/2} \, \gt^{\gd} \frac{d\gt}{\gt}  \\
			&\simeq_{a,s,\gd} K 2^{-kM} \int_0^r \left( \frac{\gt}{r} \right)^M  \nm{\gt^{-s} f(\gt)}_{2}  \, \frac{d\gt}{\gt}  \\ 
			&\leq K 2^{-kM} \nm{f}_{L_{s}^2} \left( \int_0^r \left( \frac{\gt}{r} \right)^{2 M} \, \frac{d\gt}{\gt} \right)^{1/2}  \\
			&\lesssim_{M} K2^{-k(M+n\gd_{p,2})} |2^{k+1}B|^{\gd_{p,2}} 
		\end{align*}
		 using that $a > s + \gd$ in the second line, and then arguing as for $\mb{I}_3$.
		
		On $R_2$, we have $t > \gt$ and the off-diagonal estimates for $(t^{-\gd} S_{t,\gt})_{t \in (\gt,\infty)}$ again involve spatial separation, and the restrictions on $t$ imply $(2^k - 1)r - t > \frac{2^k - 1}{2} r \gtrsim 2^k r$.
		Therefore
		\begin{align*}
			\mb{I}_2 &\lesssim K \int_0^r \bigg( \int_{\gt}^{\frac{2^k - 1}{2} r} t^{-2s} (t/\gt)^{-2b} \bigg( \frac{(2^k - 1)r - t}{t} \bigg)^{-2M} \nm{f(\gt)}_{2}^2 \, \frac{dt}{t} \bigg)^{1/2} \, \frac{d\gt}{\gt} \\
			&\lesssim K \int_0^r \nm{\gt^{-s} f(\gt)}_{2} \gt^{b+s} \bigg( \int_\gt^{\frac{2^k - 1}{2} r} t^{-2(b + s)} \bigg( \frac{2^k r}{t} \bigg)^{-2M} \, \frac{dt}{t} \bigg)^{1/2} \, \frac{d\gt}{\gt}  \\
			&\lesssim_{b,s,M} K2^{-kM} \int_0^r \nm{\gt^{-s} f(\gt)}_{2} \gt^{b+s} r^{-M} (2^k r)^{-(b+s-M)} \, \frac{d\gt}{\gt} \\ 
			&\lesssim_{b,s} K2^{-k(b+s+n\gd_{p,2})} |2^{k+1}B|^{\gd_{p,2}} 
		\end{align*}
		using that $M > s +b$ and arguing as before.
		
		Summing up, we have
		\begin{align*}
			\nm{F_k}_{L_{s + \gd}^2}
			&\leq \mb{I}_1 + \mb{I}_2 + \mb{I}_3 \\
			&\lesssim_{a,b,M,p,s,\gd} K \big(2^{-k(M + n\gd_{p,2})} + 2^{-k(b+s+n\gd_{p,2})} \big) |2^{k+1}B|^{\gd_{p,2}},
		\end{align*}
		so for $k \geq 2$ we can set
		\begin{equation*}
			\gl_k \simeq K \big( 2^{-k(M + n\gd_{p,2})} + 2^{-k(b+s+n\gd_{p,2})} \big).
		\end{equation*}
		This yields
		\begin{align*}
			\nm{(\gl_k)}_{\ell^p(\bbN)}^p
			&\simeq \nm{\mb{S}}^p + K^p \sum_{k=2}^\infty \big( 2^{-k(M+n\gd_{p,2})} + 2^{-k(b+s+n\gd_{p,2})}\big)^p \\
			&\leq \nm{\mb{S}}^p + K^p \sum_{k=2}^\infty 2^{-kp(M+n\gd_{p,2})} + 2^{-kp(b+s+n\gd_{p,2})},
		\end{align*}
		which is finite because of the assumption \eqref{AMRparassn}.
		The implicit constants do not depend on the atom $f$.
		Therefore $\mb{S}f$ is in $T^p_{s + \gd}$, with quasinorm bounded independently of $f$ and controlled by a linear combination of $\nm{\mb{S}}$ and $K$.
	
	\textbf{Step 2: from compactly supported atoms to $T^p_s \cap L_c^2(\bbR_+ : L^2)$.}
	This final part of the argument exactly follows \cite[Proof of Theorem 4.9, Step 3]{AMR08}.
	One must show that every function in $T_{s}^p \cap L_c^2(\bbR_+ : L^2)$ may be decomposed into a sum of compactly supported atoms, and that such decompositions converge in both $T_{s}^p$ (which is automatic) and in $L_{s}^2$.
	We omit further details.
\end{proof}

\section{Extension and contraction operators}\label{sec:eco}

Throughout this section we assume that $A$ satisfies the Standard Assumptions (see Definition \ref{dfn:std-assns}).

\begin{dfn}
  For all $\gy \in H^\infty$ define the \emph{extension operator}\index{extension operator}
  \begin{equation*}
    \map{\bbQ_{\gy,A}}{\overline{\mc{R}(A)}}{L^\infty(\bbR_+: L^2)}
  \end{equation*}
  by $(\bbQ_{\gy,A} f)(t) := \gy_t(A)f$ for $f \in \overline{\mc{R}(A)}$ and $t \in \bbR_+$.
  If in addition $\gy \in \gY_+^+$, then $\bbQ_{\gy,A}$ is defined on all of $L^2$, and by Theorem \ref{thm:fc-quadest} we have boundedness $\map{\bbQ_{\gy,A}}{L^2}{L^2(\bbR^{1+n}_+) = L^2(\bbR_+ : L^2)}$ (recall that the measure $dt/t$ is always used on $\bbR_+$).
  For $\gf \in \gY_+^+$, this allows us to define the \emph{contraction operator}\index{contraction operator}
  \begin{equation*}
    \map{\bbS_{\gf,A} := (\bbQ_{\td{\gf},A^*})^*}{L^2(\bbR^{1+n}_+)}{\overline{\mc{R}(A)}}.
  \end{equation*}
  Note that $\bbQ_{\gy,A} = (\bbS_{\td{\gy},A^*})^*$ when $\gy \in \gY_+^+$.
\end{dfn}

A quick computation yields an integral representation of $\bbS_{\gf,A}$.

\begin{prop}\label{prop:Srep}
	Suppose $\gf \in \gY_+^+$.
	Then for all $f \in L_c^2(\bbR_+:L^2)$ we have
	\begin{equation}\label{eqn:S-intrep}
		\bbS_{\gf,A} f := \int_0^\infty \gf_t(A) f(t) \, \frac{dt}{t}.
	\end{equation}
\end{prop}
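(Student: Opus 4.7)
The plan is to compute $\bbS_{\gf,A} f$ by testing against an arbitrary $g \in L^2$ and recognising the resulting pairing as the one defining the adjoint of $\bbQ_{\td\gf, A^*}$. The key points are: (i) the candidate integral $If := \int_0^\infty \gf_t(A) f(t) \, dt/t$ converges absolutely in $L^2$, and (ii) Fubini together with the identity $(\gf_t(A))^* = \td\gf_t(A^*)$ lets us swap the integral past the inner product.

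First I would check that $If$ is well-defined. Since $\gf \in \gY_+^+ \subset H^\infty$ and $A$ has bounded $H^\infty$ functional calculus on $\overline{\mc{R}(A)}$, extended by $\bbP_A$ to all of $L^2$, the operators $\gf_t(A) = \gf(tA)$ are uniformly bounded in $\mc{L}(L^2)$ with norm at most $C\nm{\gf}_\infty$. Because $f \in L^2_c(\bbR_+ : L^2)$ is supported in some $[a,b] \subset (0,\infty)$, the map $t \mapsto \gf_t(A) f(t)$ is strongly measurable and Bochner integrable against $dt/t$ on $[a,b]$, so $If \in L^2$ is defined as a Bochner integral (and in fact lies in $\overline{\mc{R}(A)}$, since each $\gf_t(A)$ takes values there).

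Next I would pass to the dual identification. For any $g \in L^2$, standard properties of Bochner integrals give
\begin{equation*}
\langle If, g \rangle_{L^2} = \int_0^\infty \langle \gf_t(A) f(t), g \rangle_{L^2} \, \frac{dt}{t}.
\end{equation*}
Since $\gf \in \gY_+^+$, the operator $\gf_t(A)$ is obtained from the absolutely convergent Cauchy integral \eqref{dunford}, and a direct computation with that representation (or the $\ast$-homomorphism property recorded after \eqref{dunford}) shows $(\gf_t(A))^* = \td\gf_t(A^*)$ on $L^2$. Substituting this, the right-hand side becomes
\begin{equation*}
\int_0^\infty \langle f(t), \td\gf_t(A^*) g \rangle_{L^2} \, \frac{dt}{t} = \int_0^\infty \langle f(t), (\bbQ_{\td\gf, A^*} g)(t) \rangle_{L^2} \, \frac{dt}{t} = \langle f, \bbQ_{\td\gf, A^*} g \rangle_{L^2(\bbR^{1+n}_+)},
\end{equation*}
the interchange being justified by compact support of $f$ in $\bbR_+$ and uniform boundedness of $\td\gf_t(A^*)$. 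By the definition $\bbS_{\gf,A} := (\bbQ_{\td\gf, A^*})^*$, this pairing equals $\langle \bbS_{\gf,A} f, g \rangle_{L^2}$. Since $g \in L^2$ was arbitrary, $If = \bbS_{\gf,A} f$, as required.

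There is no real obstacle here; the only points demanding a moment of care are the identification $(\gf_t(A))^* = \td\gf_t(A^*)$ (which is immediate from the Cauchy integral definition together with the fact that complex conjugation of the kernel corresponds to the involution $\gf \mapsto \td\gf$) and the application of Fubini, which is painless because $f$ is compactly supported in $(0,\infty)$ and $\gf_t(A)$ is uniformly bounded.
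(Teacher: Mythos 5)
Your proof is correct and is precisely the ``quick computation'' the paper alludes to: test the Bochner integral against an arbitrary $g\in L^2$, use the $\ast$-homomorphism identity $(\gf_t(A))^* = \td\gf_t(A^*)$ and Fubini (justified by compact support in $\bbR_+$ and uniform operator bounds), and recognise the result as $\langle f, \bbQ_{\td\gf,A^*}g\rangle_{L^2(\bbR^{1+n}_+)} = \langle \bbS_{\gf,A}f, g\rangle$ by the definition $\bbS_{\gf,A} := (\bbQ_{\td\gf,A^*})^*$. Nothing to add.
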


The integral in \eqref{eqn:S-intrep} converges absolutely since $f \in L^1(\bbR_+ : L^2)$. However, if one only assumes $f\in L^2(\bbR_+ : L^2)$ (dropping the requirement of compact support in $\bbR_+$), then the integral converges weakly in $L^2$.

Compositions of the form $\bbQ_{\gy,A} \gh(A) \bbS_{\gf,A}$, with $\gh$ a (possibly unbounded) holomorphic function, are important in what follows, so we begin investigation of their properties.
Fix $\gd \in \bbR$, and suppose $\gh \in \gY_{-\gd}^\gd$, $\gy \in H^\infty$, and $\gf \in \gY_{\gd}^{-\gd} \cap \gY_+^+$.
Then for all $f \in L_c^2(\bbR_+ : L^2)$ we have $\bbS_{\gf,A}f \in \mc{D}(\gh(A))$ and the integral representation
\begin{equation}\label{eqn:comp-repn}
	(\bbQ_{\gy,A} \gh(A) \bbS_{\gf,A} f)(t) = \int_0^\infty (\gy_t(A) \gh(A) \gf_\gt(A))f(\gt) \, \frac{d\gt}{\gt}.
\end{equation}
Therefore we can write
\begin{equation*}
	\bbQ_{\gy,A} \gh(A) \bbS_{\gf,A} \sim ((\gy_t\gh\gf_\gt)(A))_{t,\gt > 0}.
\end{equation*}
Our task now is to show when the results of Section \ref{sec:intops} apply to these operators.
We draw some conclusions even when $\gh$ is not bounded, provided that $\gy_t \gh \gf_\gt \in \gY_+^+$.
This will ultimately lead to Theorem \ref{thm:fcprops}.

\begin{lem}\label{lem:AMR-kernel-lem}
	Suppose $\gs + \gt \geq 0$ and $\gd \in \bbR$.
	Let $\gy \in \gY_{\gs}^{\gt}$, $\gf \in \gY_{\gt + \gd}^{\gs - \gd}$, and $\gh \in \gY_{-\gd}^{\gd}$, and define the operator
	\begin{equation}\label{eqn:kerlem-Str}
		\td{S}_{t,r} := m_\gs^{\gt + \gd}(t/r)^{-1} (\gy_t \gh \gf_r)(A).
	\end{equation}
	Then for all $t_0,r_0 > 0$ the operator families $(t^{-\gd} \td{S}_{t,r_0})_{t \in (r_0,\infty)}$ and $(r^{-\gd} \td{S}_{t_0,r})_{r \in (t_0,\infty)}$ satisfy   off-diagonal estimates of order $\gs + \gt$, uniformly in $r_0$ and $t_0$ respectively.
	The implicit constants in these off-diagonal estimates depend linearly on $\nm{\gh}_{\gY_{-\gd}^\gd}$.
\end{lem}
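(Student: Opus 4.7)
The plan is to prove both off-diagonal estimates directly by splitting the analysis according to whether $d(E,F)$ is below or above the outer scale (namely $t$ for the first family and $r$ for the second). A direct application of Theorem \ref{thm-fcode} would only yield order $\min(\gs, M)$, whereas the sharper order $\gs + \gt$ is required; the boundary case $\gs + \gt = 0$ asks for order zero, i.e.\ just uniform $L^2$ boundedness, and so will be an immediate byproduct of the near-regime estimate.

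For the near regime, I would produce a uniform $L^2$ bound. Since $\gs + \gt \geq 0$, the product $\gy_t \gh \gf_r$ lies in $\gY_{\gs+\gt}^{\gs+\gt} \subset H^\infty$, and bounded $H^\infty$ functional calculus on $\overline{\mc{R}(A)}$ combined with the boundedness of $\bbP_A$ yields $\nm{(\gy_t \gh \gf_r)(A)}_{\mc{L}(L^2)} \lesssim \nm{\gy_t \gh \gf_r}_\infty$, linear in each of $\nm{\gy}_{\gY_\gs^\gt}$, $\nm{\gh}_{\gY_{-\gd}^\gd}$, and $\nm{\gf}_{\gY_{\gt+\gd}^{\gs-\gd}}$. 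A short plateau analysis of $z \mapsto \gy(tz)\gh(z)\gf(rz)$ gives $\nm{\gy_t \gh \gf_r}_\infty \simeq t^{-\gt} r^{\gt+\gd}$ when $t \geq r$, and $\simeq t^\gs r^{\gd-\gs}$ when $r \geq t$. In both cases the prefactor $m_\gs^{\gt+\gd}(t/r)^{-1}$ combined with $t^{-\gd}$ (resp.\ $r^{-\gd}$) cancels this exactly, producing the uniform bounds $\nm{t^{-\gd} \td{S}_{t,r_0}}_{\mc{L}(L^2)} \lesssim 1$ and $\nm{r^{-\gd} \td{S}_{t_0,r}}_{\mc{L}(L^2)} \lesssim 1$. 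Since $\langle d(E,F)/t \rangle^{-(\gs+\gt)} = 1$ whenever $d(E,F) \leq t$ (and analogously for the second family), these already match the target in this regime.

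For the complementary far regime, say $d(E,F) > t$ for the first family, I would use the absolutely convergent Cauchy integral representation
\begin{equation*}
(\gy_t \gh \gf_{r_0})(A) = \frac{1}{2 \gp i} \int_{\partial S_\gn} \gy(tz) \gh(z) \gf(r_0 z) (z-A)^{-1} \, dz
\end{equation*}
together with the resolvent off-diagonal bound $\nm{\mb{1}_F (z-A)^{-1} \mb{1}_E}_{\mc{L}(L^2)} \lesssim |z|^{-1} \langle d(E,F)|z| \rangle^{-M}$ for some $M > \gs+\gt$, permitted by the Standard Assumptions. Parametrising the contour by $\gl = |z|$ and using $m_{-\gd}^\gd(\gl) = \gl^{-\gd}$ for all $\gl > 0$, the norm estimate reduces to controlling
\begin{equation*}
\int_0^\infty m_\gs^\gt(t\gl) \, \gl^{-\gd} \, m_{\gt+\gd}^{\gs-\gd}(r_0 \gl) \, \langle d(E,F) \gl \rangle^{-M} \, \frac{d\gl}{\gl},
\end{equation*}
whose linear dependence on $\nm{\gh}_{\gY_{-\gd}^\gd}$ is inherited from the $\gl^{-\gd}$ factor.

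The main obstacle is the careful evaluation of this integral. After substituting $\gm = t\gl$ and setting $s = r_0/t \leq 1$ and $T = t/d(E,F) < 1$, the integrand breaks at the three breakpoints $\gm = T, 1, 1/s$, and on each of the four resulting subintervals the $m$-functions reduce to pure powers. One checks that each piece is $\lesssim s^{\gt+\gd} T^{\gs+\gt}$, using $T \leq 1$ and $M > \gs + \gt$ to dominate the $\langle \gm/T \rangle^{-M}$ contribution of $T^M$ by $T^{\gs+\gt}$. Summing the four pieces and combining with the prefactor $s^{-(\gt+\gd)} t^{-\gd}$ from the normalisation of $\td{S}_{t,r_0}$ yields precisely $\langle d(E,F)/t \rangle^{-(\gs+\gt)}$. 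The second family is treated identically via the substitution $\gm = r\gl$ with $u = t_0/r \leq 1$. The subtle point is that the same integral-based argument, if applied in the near regime, generates spurious logarithmic factors (because $\langle \gm/T \rangle^{-M}$ is then trivial); these are harmlessly sidestepped by relying on the uniform $L^2$ bound there.
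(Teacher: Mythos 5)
Your argument is correct, but it takes a genuinely different route from the paper's. The paper does not apply Theorem \ref{thm-fcode} directly to the two-parameter family (which, as you rightly note, would only give order $\min(\gs,M)$); instead it \emph{factorises} $r^{-\gd}\td{S}_{t_0,r} = \gamma(t_0)(A)\,\gq_r(A)$, where $\gamma(t_0)(z) := (t_0 z)^{-\gs}\gy_{t_0}(z)\gh^\gd(z)$ is uniformly bounded in $H^\infty$ with $\gh^\gd(z) := z^\gd \gh(z)$ (this is exactly where $\gs+\gt \geq 0$ is used) and $\gq(z) := z^{\gs-\gd}\gf(z) \in \gY_{\gs+\gt}^{0}$, and then invokes Theorem \ref{thm-fcode} on the one-parameter dilation family $(\gq_r(A))_{r>0}$ to read off order $\min(\gs+\gt, M)$; the first family is treated symmetrically using $\gh^{-\gd}$ in place of $\gh^{\gd}$. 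This is the one conceptual step you did not anticipate. Your route re-derives the estimate by hand from the Cauchy integral, which is longer, but buys two things: it is self-contained, and the $H^\infty$ functional-calculus bound you deploy in the near regime (rather than the $\int m_\gs^\gt(|z|)\,|dz|/|z|$ estimate in the near-regime part of the proof of Theorem \ref{thm-fcode}) is precisely what covers the case where the factored function $\gq$ has no decay at $\infty$, as well as the boundary case $\gs+\gt=0$ which you correctly note demands only uniform $L^2$ boundedness. Carrying out your far-regime integral, each of the four pieces over $\mu \in (0,T)$, $(T,1)$, $(1,1/s)$, $(1/s,\infty)$ is indeed $\lesssim s^{\gt+\gd}T^{\gs+\gt}$ provided $M > \gs+\gt$, and the prefactor cancellation is as you describe; the first piece diverges at $\gs+\gt=0$, consistent with your decision to restrict the integral argument to the strict case. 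Both proofs are valid; the paper's is shorter because it reuses Theorem \ref{thm-fcode}, while yours is more explicitly computational.
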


The proof is a variation of \cite[Lemma 3.7]{AMR08}.

\begin{proof}
	If $t_0 \leq r$ we can write
	\begin{align*}
		r^{-\gd} \td{S}_{t_0,r} &= r^{-\gd} (t_0 / r)^{-\gs} [\gy_{t_0}(z) \gh(z) \gf_r(z)](A) \\
		&= [(t_0 z)^{-\gs} \gy_{t_0}(z) \gh^\gd(z) (r z)^{\gs-\gd} \gf_\gt(z)](A)
	\end{align*}
	where $\gh^\gd \in H^\infty$ is defined by $\gh^\gd(z) := z^\gd \gh(z)$.
	Note that $\nm{\gh^\gd}_\infty = \nm{\gh}_{\gY_{-\gd}^{\gd}}$.
	Since $\gy \in \gY_\gs^\gt$ and $\gs + \gt \geq 0$, the function
	\begin{equation*}
	\gamma(t_0) : z \mapsto (t_0 z)^{-\gs} \gy_{t_0}(z) \gh^\gd(z)
	\end{equation*}
	is in $H^\infty$ with bound uniform in $t_0$, linear in $\nm{\gh}_{\gY_{-\gd}^\gd}$, and clearly independent of $r$.
	Furthermore, the function $\gq : z \mapsto z^{\gs-\gd} \gf(z)$ is in $\gY_{\gs + \gt}^{0}$, and so we can write
	\begin{equation*}
		r^{-\gd} \td{S}_{t_0,r} = \gamma(t_0)(A) \gq_{\gt}(A)
	\end{equation*}
	where $\gamma(t)$ is uniformly in $H^\infty$ and $\gq \in \gY_{\gs+\gt}^{0}$. 
	Theorem \ref{thm-fcode} then implies that the family $(\td{S}_{t_0,r})_{r \in (t_0,\infty)}$ satisfies off-diagonal estimates of order $\gs + \gt$ uniformly in $t_0 > 0$, with implicit constants linear in $\nm{\gh}_\infty$.
	
	Likewise, if $r_0 \leq t$ we can write
	\begin{align*}
		t^{-\gd} \td{S}_{t,r_0} &= t^{-\gd} (t / r_0)^{\gt + \gd} [\gy_t(z) \gh(z) \gf_{r_0}(z)](A) \\
		&= [(r_0 z)^{-(\gt + \gd)} \gf_{r_0}(z) \gh^\gd(z) (tz)^{\gt} \gy_t(z)](A)
	\end{align*}
	and proceed in the same way, the consequence being that $(t^{-\gd} \td{S}_{t,r_0})_{t \in (r_0,\infty)}$ satisfies off-diagonal estimates of order $\gs + \gt$ uniformly in $\gt_0 > 0$, with implicit constants linear in $\nm{\gh}_{\gY_{-\gd}^{\gd}}$.
\end{proof}

\begin{lem}\label{lem:schur-hyp-verifn}
  Fix $s,\gd \in \bbR$.
  Suppose $\gy \in \gY_{(s+\gd)+}^{ -(s+\gd)+}$, $\gf \in \gY_{-s+}^{s+}$, and $\gh \in \gY_{-\gd}^{\gd}$.
  Then the operator $S \sim ((\gy_t \gh \gf_r)(A))_{t,r > 0}$ extends to a bounded operator $L_{s}^2 \to L_{s + \gd}^2$.
  Moreover, this extension is the weak limit (in $L_{s + \gd}^2$) of the integral \eqref{eqn:comp-repn}.
\end{lem}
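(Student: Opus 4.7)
The strategy is to apply the Schur-type Lemma~\ref{lem:schur-lem}, for which the decisive ingredient is the operator-norm estimate
\[
\bigl\| r^{-\gd}(\gy_t \gh \gf_r)(A) \bigr\|_{\mc{L}(L^2)} \lesssim (t/r)^{s+\gd} \, m_{\gev}^{\gev^\prime}(t/r)
\]
for suitable $\gev, \gev^\prime > 0$. This bound will come from the bounded $H^\infty$ functional calculus after a careful factorisation of the symbol $\gy_t \gh \gf_r$.

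First I would reduce to $\gY_+^+$ functions. Since $\gh \in \gY_{-\gd}^{\gd}$, the rescaled function $\gh_0(z) := z^\gd \gh(z)$ lies in $H^\infty$ with $\nm{\gh_0}_\infty = \nm{\gh}_{\gY_{-\gd}^{\gd}}$. Setting $\wtd\gf(w) := w^{-\gd} \gf(w)$, which belongs to $\gY_{(-s-\gd)+}^{(s+\gd)+}$, elementary algebra gives
\[
r^{-\gd}(\gy_t \gh \gf_r)(z) = \gh_0(z) \, \gy_t(z) \, \wtd\gf_r(z).
\]
Writing $\ga := s+\gd$ and further factoring $\gy(z) = z^\ga \gy^*(z)$, $\wtd\gf(z) = z^{-\ga} \wtd\gf^*(z)$, the hypotheses are precisely what is needed to ensure $\gy^*, \wtd\gf^* \in \gY_+^+$: if $\gy \in \gY_{\gs_\gy}^{\gt_\gy}$ with $\gs_\gy > \ga$ and $\gt_\gy > -\ga$, then $\gy^* \in \gY_{\gs_\gy - \ga}^{\gt_\gy + \ga} \subset \gY_+^+$, and likewise $\wtd\gf^* \in \gY_{\gs_\gf + s}^{\gt_\gf - s} \subset \gY_+^+$. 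This produces the key identity
\[
r^{-\gd}(\gy_t \gh \gf_r)(z) = (t/r)^\ga \, \gh_0(z) \, \gy^*_t(z) \, \wtd\gf^*_r(z).
\]

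The heart of the argument is the sup-norm estimate $\nm{\gy^*_t \wtd\gf^*_r}_{H^\infty} \lesssim m_\gev^{\gev^\prime}(t/r)$. For $t \leq r$, substituting $u = rz$ and splitting into the three regimes $\{|u| \leq 1\}$, $\{1 \leq |u| \leq r/t\}$, $\{|u| \geq r/t\}$, using in each case the corresponding decay bound for $\gy^*$ and $\wtd\gf^*$, yields a bound of order $(t/r)^{\min(\gs_\gy - \ga,\, \gt_\gf - s)}$; the case $t \geq r$ is symmetric and gives $(r/t)^{\min(\gs_\gf + s,\, \gt_\gy + \ga)}$. Taking $\gev$ and $\gev^\prime$ to be these two strictly positive minima finishes the sup-norm bound, and bounded $H^\infty$ calculus of $A$ on $\overline{\mc{R}(A)}$ promotes it to the operator-norm estimate asserted at the outset, with implicit constant linear in $\nm{\gh}_{\gY_{-\gd}^{\gd}}$.

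Finally, since $m_\gev^{\gev^\prime} \in L^1(\bbR_+, dt/t)$, Lemma~\ref{lem:schur-lem} gives $\nm{Sf}_{L^2_{s+\gd}} \lesssim \nm{f}_{L^2_s}$ on the dense subspace $L_c^2(\bbR_+ : L^2)$, hence $S$ extends boundedly to all of $L^2_s$. The weak-limit claim is already contained in the Schur argument: the bilinear form
\[
(f,g) \mapsto \int_0^\infty \int_0^\infty \langle (\gy_t \gh \gf_r)(A) f(r), g(t) \rangle \, \frac{dr}{r} \, \frac{dt}{t}
\]
is absolutely convergent on $L^2_s \times L^2_{-(s+\gd)}$, so testing the truncated integrals \eqref{eqn:comp-repn} against any $g \in L^2_{-(s+\gd)}$ and passing to the limit identifies $Sf$ with the weak limit in $L^2_{s+\gd}$ of those truncations. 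The only genuine work is the regime analysis for $\nm{\gy^*_t \wtd\gf^*_r}_{H^\infty}$; notably, no off-diagonal estimates enter at this stage, in contrast with Lemma~\ref{lem:AMR-kernel-lem}.
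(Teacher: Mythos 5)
Your proposal is correct, and it reaches the same Schur-type bound $\nm{Sf}_{L^2_{s+\gd}} \lesssim \nm{f}_{L^2_s}$ via Lemma~\ref{lem:schur-lem}, but the mechanism for the operator-norm estimate on the kernel differs from the paper's in a way worth noting. The paper never factors $\gy_t\gh\gf_r$ into a product of $\gY_+^+$ functions and an $H^\infty$ function; instead it bounds $\nm{(\gy_t\gh\gf_r)(A)}_{\mc{L}(L^2)}$ directly via the Dunford integral and the resolvent bound $\nm{(z-A)^{-1}}\lesssim|z|^{-1}$, obtaining an integral of $m$-functions over $\partial S_{\gn'}$. This produces a logarithmic correction $\gk^{\gev}(2+\log(1/\gk))$ in the intermediate regime, which is nevertheless in $L^1(\bbR_+,d\gk/\gk)$, so the Schur hypothesis is still met. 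You instead peel off the power weights $z^{\ga}$, $z^{-\ga}$ so that the residual symbol $\gy^*_t\gh_0\wtd\gf^*_r$ lies in $\gY_+^+$, bound its supremum pointwise (your three-regime analysis is correct, giving $\gk^{\min(a,d)}$ with no log), and then invoke the bounded $H^\infty$ functional calculus of $A$ on $\overline{\mc{R}(A)}$ to convert the sup-norm bound into an operator-norm bound. Your route is slightly cleaner (no logarithm) but uses a stronger hypothesis: the paper's argument at this point needs only bisectoriality, whereas yours genuinely uses $H^\infty$ boundedness. Since both are part of the Standard Assumptions in force throughout Section~\ref{sec:eco}, this costs nothing in the present context, but it is worth being aware that the paper's version would survive if the $H^\infty$ calculus were dropped. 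One small housekeeping point you elide: the $H^\infty$ bound is stated on $\overline{\mc{R}(A)}$; the extension to all of $L^2$ passes through the projection $\bbP_A$, which works because the residual symbol lies in $\gY_+^0$ — so the estimate picks up a harmless $\nm{\bbP_A}$ factor. The weak-limit argument you give, via absolute convergence of the Schur bilinear form, matches what the paper invokes from the discussion after Proposition~\ref{prop:Srep}.
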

      
\begin{proof}
	Fix $\varepsilon > 0$ such that $\gy \in \gY_{\varepsilon + s + \gd }^{\varepsilon - (s + \gd)}$ and $\gf \in \gY_{\varepsilon - s}^{\varepsilon + s}$.
	First note that $\gy_t \gh \gf_r \in \gY_+^+$, so the operators $S_{t,r} := (\gy_t \gh \gf_r)(A)$ are all bounded and defined by the  integral \eqref{dunford} on $L^2$.
	We will make use of Lemma \ref{lem:schur-lem}, so we write $r = \gk t$ and begin by estimating
	\begin{align}
		\nm{r^{-\gd} S_{t,r}}_{\mc{L}(L^2(\bbR^n))}
		&\lesssim_{\gy,\gf} (\gk t)^{-\gd} \nm{\gh_{1/t}}_{\gY_{-\gd}^{\gd}} \int_0^\infty m_{\varepsilon + s}^{\varepsilon - s}(t\gl) m_{\varepsilon - s}^{\varepsilon + s} (\gk t \gl) \, \frac{d\gl}{\gl} \nonumber \\
		&\leq \gk^{-\gd} \nm{\gh}_{\gY_{-\gd}^{\gd}} \int_0^\infty m_{\varepsilon + s}^{\varepsilon - s}(\gl) m_{\varepsilon - s}^{\varepsilon + s} (\gk \gl) \, \frac{d\gl}{\gl} \label{eqn:dilat}
	\end{align}
	using Lemma \ref{lem:dilation-norm-control} to eliminate the powers of $t$ in \eqref{eqn:dilat}.
	If $\gk \leq 1$, we have
	\begin{align*}
		&\gk^{-\gd} \int_0^\infty m_{\varepsilon + s}^{\varepsilon - s}(\gl) m_{\varepsilon - s}^{\varepsilon + s}(\gk\gl) \, \frac{d\gl}{\gl} \\
		&= \gk^{-\gd} \bigg( \gk^{\varepsilon - s} \int_0^1 \gl^{2\varepsilon} \, \frac{d\gl}{\gl} + \gk^{\varepsilon - s} \int_1^{1/\gk} \, \frac{d\gl}{\gl} + \gk^{-\varepsilon - s} \int_{1/\gk}^\infty \gl^{-2\varepsilon} \, \frac{d\gl}{\gl} \bigg) \\
		&\lesssim  \gk^{\varepsilon - s - \gd}(2 + \log(1/\gk)).
	\end{align*}
	If $\gk \geq 1$, then by the same argument we have
	\begin{equation*}
		\gk^{-\gd} \int_0^\infty m_{\varepsilon + s}^{\varepsilon - s}(\gl) m_{\varepsilon - s}^{\varepsilon + s}(\gk\gl) \, \frac{d\gl}{\gl} \lesssim \gk^{-\varepsilon - s - \gd}(2 + \log(\gk)).
	\end{equation*}
	Since the function
	\begin{equation*}
		\gamma(\gk) := \left\{ \begin{array}{ll} \gk^{\varepsilon}(2 + \log(1/\gk)) & (\gk \leq 1) \\ \gk^{-\varepsilon}(2 + \log(\gk)) & (\gk \geq 1) \end{array} \right.
	\end{equation*}
	is in $L^1(\bbR_+)$, Lemma \ref{lem:schur-lem} completes the proof of boundedness on the dense subspace $L_c^2(\bbR_+ : L^2)$.
	The extension to all of $L_s^2$ as a weak integral follows by the comment after Proposition \ref{prop:Srep}.
\end{proof}

 The following theorem is the basis of Chapter \ref{chap:ahsb}.
From the viewpoint of applications, the most important part is the decay condition on $\gy$ at $0$.
We would particularly like to take $\gy = \sgp \in \gY_0^\infty$ and $\gd = 0$, which is possible provided that $i(\mb{p}) \leq 2$ and $\gq(\mb{p}) < 0$.

\begin{thm}[Boundedness of contraction/extension compositions]\label{thm:QS-compn-bddness}
	Suppose $\mb{p}$ is an exponent, $\gd \in \bbR$ and $\gh \in \gY_{-\gd}^{\gd}$.
	Suppose that either
	\begin{itemize}
		\item
		 $i(\mb{p}) \leq 2$ and 
		\begin{equation}\label{eqn:QS-thm-assn}
			\gy \in \gY_{(\gq(\mb{p})+\gd)+}^{(-(\gq(\mb{p})+\gd) + n|\frac{1}{2} - j(\mb{p})|)+} \cap H^\infty \quad \text{and} \quad \gf \in \gY_{(-\gq(\mb{p}) + n|\frac{1}{2} - j(\mb{p})|)+}^{\gq(\mb{p})+} \cap \gY_+^+,
		\end{equation}
		or
		\item
		$i(\mb{p}) \geq 2$ and
		\begin{equation*}
			\gy \in \gY_{(\gq(\mb{p}) + n|\frac{1}{2} - j(\mb{p})|)+}^{(-\gq(\mb{p}))+} \cap \gY_+^+ \quad \text{and} \quad \gf \in \gY_{(-\gq(\mb{p}) + \gd)+}^{(\gq(\mb{p}) - \gd + n|\frac{1}{2} - j(\mb{p})|)+} \cap \gY_+^+.
		\end{equation*}
		\end{itemize}
		then $\bbQ_{\gy,A} \gh(A) \bbS_{\gf,A}$ extends to a bounded operator $X^{\mb{p}} \to X^{\mb{p} + \gd}$ (by density when $i(\mb{p}) \leq 2$ and by duality when $i(\mb{p}) > 2$), with bounds linear in $\nm{\gh}_{\gY_{-\gd}^\gd}$.
\end{thm}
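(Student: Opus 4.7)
The strategy is to reduce the theorem to two base cases, $i(\mb{p}) = 2$ and $i(\mb{p}) \leq 1$, and then extend to the full range of exponents via complex interpolation, duality, and (for the $Z$-space statements) real interpolation. For $i(\mb{p}) = 2$ one has $T^{\mb{p}} = L^2_{\gq(\mb{p})}$ and, since $|1/2 - j(\mb{p})| = 0$, both branches of the hypothesis collapse to the assumptions of Lemma~\ref{lem:schur-hyp-verifn} with $s = \gq(\mb{p})$. That lemma then gives the bound $L^2_s \to L^2_{s+\gd}$ with constant linear in $\|\gh\|_{\gY_{-\gd}^{\gd}}$. For $i(\mb{p}) \leq 1$, I would apply Theorem~\ref{thm:AMR} to the kernel $(\gy_t \gh \gf_r)(A)$. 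Using the $(\cdot)+$ slack in the $p \leq 2$ hypothesis, pick $\gs > \gq(\mb{p}) + \gd$ and $\gt > -\gq(\mb{p}) - \gd + n(1/p - 1/2)$ with $\gy \in \gY_\gs^\gt$ and $\gf \in \gY_{\gt + \gd}^{\gs - \gd}$. Lemma~\ref{lem:AMR-kernel-lem} then writes the kernel as $m_\gs^{\gt+\gd}(t/r)\, \td{S}_{t,r}$ with $\td{S}_{t,r}$ satisfying off-diagonal estimates of order $\gs + \gt$, uniformly and linearly in $\|\gh\|_{\gY_{-\gd}^\gd}$. With $a = \gs$, $b = \gt + \gd$, $M = \gs + \gt$, the hypotheses \eqref{AMRparassn} reduce to the two strict inequalities chosen above, and the required $L^2_s \to L^2_{s+\gd}$ bound is supplied by the base case. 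Thus Theorem~\ref{thm:AMR} yields $T^\mb{p} \to T^{\mb{p}+\gd}$ with the desired linear dependence.

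For finite $\mb{p}$ with $1 < i(\mb{p}) < 2$, I would apply complex interpolation (Theorem~\ref{thm-wts-c-interpolation}) between endpoints $\mb{p}_0$ with $i(\mb{p}_0) \leq 1$ and $\mb{p}_1 = (2, \gq(\mb{p}_1))$; choosing $\mb{p}_0, \mb{p}_1$ along an appropriate interpolation line through $\mb{p}$ and close enough to it, the $(\cdot)+$ slack in the hypothesis at $\mb{p}$ forces strict inequalities at both endpoints, so the already-established tent-space statement applies at $\mb{p}_0$ and $\mb{p}_1$. For $i(\mb{p}) > 2$ (finite or infinite) I would instead use duality: the $L^2$-adjoint of $S = \bbQ_{\gy,A}\gh(A)\bbS_{\gf,A}$ is $S^* = \bbQ_{\td{\gf}, A^*}\td{\gh}(A^*)\bbS_{\td{\gy}, A^*}$, and by Proposition~\ref{prop:bisectorialfacts} the adjoint $A^*$ again satisfies the Standard Assumptions. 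The dual operator $S^*$ maps $T^{\mb{p}^\prime - \gd} \to T^{\mb{p}^\prime}$, and for infinite $\mb{p}$ we have $i(\mb{p}^\prime) \leq 1$ while otherwise $i(\mb{p}^\prime) < 2$. The asymmetric form of the hypothesis in the two branches of the theorem is designed precisely so that the $p \geq 2$ conditions on $(\gy,\gf)$ translate, under $(\gy, \gf, \mb{p}) \mapsto (\td\gf, \td\gy, \mb{p}^\prime - \gd)$, into (at least) the $p \leq 2$ conditions needed for $S^*$. For infinite $\mb{p}$ one defines the action of $S$ on $T^{\mb{p}}$ as the Banach-space adjoint of $S^*$ on the predual, using the duality identification $(T^{\mb{p}^\prime})^\prime \simeq T^\mb{p}$ from Theorem~\ref{thm:ts-duality}.

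Finally, the $Z$-space statements follow by real interpolation of tent spaces (Theorem~\ref{thm:ts-rint-full}): pick exponents $\mb{p}_0, \mb{p}_1$ with $\gq(\mb{p}_0) \neq \gq(\mb{p}_1)$ and $[\mb{p}_0, \mb{p}_1]_\gh = \mb{p}$, chosen so that the hypothesis (and hence the already-proven tent-space bound) holds for both endpoints with the same $\gy, \gf, \gh$; real interpolation between $(T^{\mb{p}_0}, T^{\mb{p}_1})_{\gh,i(\mb{p})} = Z^\mb{p}$ and the analogous identification for $Z^{\mb{p}+\gd}$ yields the $Z^\mb{p} \to Z^{\mb{p}+\gd}$ bound, with linear dependence on $\|\gh\|_{\gY_{-\gd}^\gd}$ preserved throughout. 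The main technical obstacle will be the hypothesis-matching in the interpolation and duality steps: at each such step one must verify that the open (strict inequality, $(\cdot)+$) nature of the assumptions at $\mb{p}$ allows a small perturbation of $\mb{p}$ to endpoints at which the theorem is already known, and in the duality step one must check that the precise placement of the $\gd$-shift on $\gy$ versus $\gf$ in the two branches of the hypothesis is compatible with the translation $(\gy, \gf, \mb{p}) \mapsto (\td\gf, \td\gy, \mb{p}^\prime - \gd)$.
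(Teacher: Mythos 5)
Your overall strategy mirrors the paper's: establish the base cases $i(\mb{p}) = 2$ (via Lemma~\ref{lem:schur-hyp-verifn}) and $i(\mb{p}) \leq 1$ (via Lemma~\ref{lem:AMR-kernel-lem} and Theorem~\ref{thm:AMR}), pass to $i(\mb{p}) > 2$ by duality with $S^* = \bbQ_{\td{\gf},A^*}\td{\gh}(A^*)\bbS_{\td{\gy},A^*}$, and obtain the $Z$-space statement by real interpolation. All of that is sound. However, the step $1 < i(\mb{p}) < 2$ cannot be closed by ordinary complex interpolation of the \emph{fixed} operator between endpoints $\mb{p}_0$ (with $i(\mb{p}_0) \leq 1$) and $\mb{p}_1$ (with $i(\mb{p}_1) = 2$). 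The decay thresholds in \eqref{eqn:QS-thm-assn} involve the quantity $n|\frac12 - j(\mb{p})|$, which is \emph{minimal} at $j = \frac12$ and \emph{increases} as $j$ moves away from $\frac12$. An endpoint $\mb{p}_0$ with $i(\mb{p}_0) \leq 1$ has $j(\mb{p}_0) \geq 1$, so the decay required on $\gy$ at $\infty$ and on $\gf$ at $0$ at $\mb{p}_0$ exceeds what is assumed at $\mb{p}$ by a fixed positive amount of order $n(1 - j(\mb{p}))$; even optimising the choice of $\gq(\mb{p}_0)$ along the interpolation line only halves this deficit. The $(\cdot)+$ slack at $\mb{p}$ provides an arbitrarily small $\varepsilon$, not a fixed one, so for generic $\gy, \gf$ satisfying the hypothesis at $\mb{p}$, the Step~1 bound at $\mb{p}_0$ simply does not hold, and there is no second endpoint to interpolate against. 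The claim that ``close enough'' endpoints suffice cannot be realised when one endpoint is forced to have $j \geq 1$.

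The paper's fix, following Stahlhut's thesis, is Stein interpolation for an \emph{analytic family}. One sets
\begin{equation*}
\gy^\gl(z) := \bigg(\frac{[z]}{1+[z]}\bigg)^{\gl}\gy(z), \qquad \gf^\gl(z) := \bigg(\frac{1}{1+[z]}\bigg)^{\gl}\gf(z),
\end{equation*}
which equal $\gy,\gf$ at $\gl = 0$ and gain decay of order $\re\gl$ (at $0$ for $\gy^\gl$, at $\infty$ for $\gf^\gl$). For $\re\gl \geq n(1 - j(\mb{p}))$ the Step~1 hypotheses are satisfied at the exponent $(1,\gq(\mb{p}))$, while for $\re\gl \geq -n|\frac12 - j(\mb{p})|$ the $L^2$ base case applies at $(2,\gq(\mb{p}))$. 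Stein interpolation in tent spaces (via \cite[Proof of Lemma 3.4]{AHM12}) then gives boundedness from $T^{i(\mb{p})}_{\gq(\mb{p})}$ to $T^{i(\mb{p})}_{\gq(\mb{p})+\gd}$ at $\re\gl = 0$, and specialising $\gl = 0$ recovers the original composition. This analytic-family mechanism, which trades the extra decay needed at the $p = 1$ endpoint against the complex parameter and then evaluates at $\gl = 0$, is the missing ingredient in your proof; the remaining steps are correct and match the paper.
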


\begin{proof}
	We only prove the result for tent spaces.
	The $Z$-space result can be deduced by real interpolation, or alternatively by the dyadic characterisation of Proposition \ref{prop:Zpqs-seq-equivalence}.
	Furthermore, the result for $i(\mb{p}) \geq 2$ follows from that for $i(\mb{p}) \leq 2$ by duality, so we need only consider $i(\mb{p})  \leq 2$.
	Note that \eqref{eqn:comp-repn} and the assumptions on $\gy$ and $\gf$ imply that $\bbQ_{\gy,A} \gh(A) \bbS_{\gf,A}$ contains the integral operator with kernel $((\gy_t \gh \gf_\gt)(A))_{t,\gt > 0}$, so it suffices to work with this operator.
	Furthermore, the assumptions \eqref{eqn:QS-thm-assn} and Lemma \ref{lem:schur-hyp-verifn} imply that this operator (extended as a weak limit) is bounded from $T_{\gq(\mb{p})}^2$ to $T_{\gq(\mb{p}) + \gd}^2$, which yields the result when $i(\mb{p}) = 2$.
	
	\textbf{Step 1: $i(\mb{p}) \leq 1$.} 	
	The assumptions \eqref{eqn:QS-thm-assn} imply that there exists $\varepsilon > 0$ such that
	\begin{equation*}
		\gy \in \gY_{\gs + \varepsilon}^{\gt + \varepsilon} \quad \text{and} \quad \gf \in \gY_{(\gt + \varepsilon) + \gd}^{(\gs + \varepsilon) - \gd},
	\end{equation*}
	where $\gs := \gq(\mb{p}) + \gd$ and $\gt := -\gq(\mb{p}+\gd) + n|(1/2) - j(\mb{p})|$.
	Therefore by Lemma \ref{lem:AMR-kernel-lem}, the operator families $(t^{-\gd} \td{S}_{t,r_0})_{t \in (r_0, \infty)}$ and $(r^{-\gd} \td{S}_{t_0,r})_{r \in (t_0,\infty)}$, where $\td{S}_{t,r}$ is defined as in \eqref{eqn:kerlem-Str}, satisfy   off-diagonal estimates of order $n|(1/2) - j(\mb{p})| + 2\varepsilon$.
	Theorem \ref{thm:AMR} then applies with $a = \gs + \varepsilon$, $b = \gt + \varepsilon + \gd$, and $M = 2\varepsilon + n|(1/2) - j(\mb{p})|$, and we can conclude that $\bbQ_{\gy,A} \gh(A) \bbS_{\gf,A}$ is bounded from $T^{\mb{p}}$ to $T^{\mb{p} + \gd}$.
	
		\textbf{Step 2: $i(\mb{p}) \in (1,2)$.}
		The following argument originates from Stahlhut's thesis \cite[Proof of Lemma 3.2.6, Step 4]{sSthesis}.
		For $\gl \in \bbC$, define functions $\gy^\gl$ and $\gf^\gl$ by
		\begin{equation*}
			\gy^\gl(z) := \bigg( \frac{[z]}{1+[z]} \bigg)^\gl \gy(z), \quad \gf^\gl(z) := \bigg( \frac{1}{1 + [z]} \bigg)^\gl \gf(z).
		\end{equation*}
		If $\re \gl \geq n(1 - j(\mb{p}))$, then Step 1 applies with exponent $(1,\gq(\mb{p}))$, and we find that $\bbQ_{\gy^\gl, A} \gh(A) \bbS_{\gf^\gl,A}$ is bounded from $T^1_{\gq(\mb{p})}$ to $T^1_{\gq(\mb{p}) + \gd}$.
		Furthermore, if $\re \gl \geq -n|(1/2) - j(\mb{p})|$, then the discussion of the first paragraph of the proof applies, and we find that $\bbQ_{\gy^\gl, A} \gh(A) \bbS_{\gf^\gl,A}$ is bounded from $T^2_{\gq(\mb{p})}$ to $T^2_{\gq(\mb{p}) + \gd}$.
		By Stein interpolation in tent spaces (see \cite[Proof of Lemma 3.4]{AHM12}), when $\re \gl = 0$, we have that $\bbQ_{\gy^\gl,A} \gh(A) \bbS_{\gf^\gl,A}$ is bounded from $T^p_{\gq(\mb{p})}$ to $T^p_{\gq(\mb{p})+\gd}$ when $p \in (1,2)$ and $\gq \in (0,1)$ satisfy
		\begin{equation*}
			\frac{1}{p} = (1-\gq) + \frac{\gq}{2}, \quad 0 = (1-\gq)(1 - j(\mb{p})) + \gq\bigg(\frac{1}{2} - j(\mb{p})\bigg).
		\end{equation*}
		This occurs when $p = i(\mb{p})$.
		Applying this with $\gl = 0$ yields boundedness of $\bbQ_{\gy,A} \gh(A) \bbS_{\gf,A}$ from $T^\mb{p}$ to $T^{\mb{p} + \gd}$.
\end{proof}

	Finally, we present an abstract Calder\'on reproducing formula, which is ubiquitous in the study of abstract Hardy spaces.
	Whenever $\gy \in \gY_+^+$ and $\gf \in H^\infty$, we can define a bounded holomorphic function
	\begin{equation*}
		\gF_{\gy,\gf}(z) := \int_0^\infty \gy_t(z) \gf_t(z) \, \frac{dt}{t}, \quad z \in S_\gm.
	\end{equation*}
	The integral converges absolutely because $\gy\gf \in \gY_+^+$.
	It is not hard to show that
	\begin{equation*}
		\bbS_{\gy,A} \bbQ_{\gf,A} \subseteq \gF_{\gy,\gf}(A)
	\end{equation*}
	as operators on $\overline{\mc{R}(A)}$, with equality when $\gf \in \gY_+^+$.
	
	In \cite[Proposition 4.2]{AS16} it is shown that if $\gf \in H^\infty$ is nondegenerate, then there exists $\gy \in \gY_\infty^\infty$ such that $\gF_{\gf,\gy} \equiv 1$.
	This implies the following abstract Calder\'on reproducing formula.\index{Calder\'on reproducing formula}
	
	\begin{thm}\label{thm:CRF}
		Suppose $\gf \in H^\infty$ is nondegenerate.
		Then there exists $\gy \in \gY_\infty^\infty$ such that
		\begin{equation}\label{eqn:crf}
			\bbS_{\gy,A}\bbQ_{\gf,A} \subseteq I_{\overline{\mc{R}(A)}}
		\end{equation}
		as operators on $\overline{\mc{R}(A)}$, with equality when $\gf \in \gY_+^+$.
		Furthermore, if $\gf \in \gY_+^+$, then the operator $\bbQ_{\gy,A} \bbS_{\gf,A}$ is a projection from $L^2(\bbR^{1+n}_+)$ onto $\bbQ_{\gy,A} \overline{\mc{R}(A)}$.
	\end{thm}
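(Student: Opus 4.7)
The plan is to bootstrap from the Auscher--Stahlhut result cited in the paragraph preceding the theorem: for any nondegenerate $\gf \in H^\infty$, one can find $\gy \in \gY_\infty^\infty$ with $\gF_{\gy,\gf} \equiv 1$ on the relevant bisector $S_\gm$. Once $\gy$ is fixed, the two reproducing identities and the idempotency statement all reduce to manipulations with the holomorphic functional calculus of $A|_{\overline{\mc R(A)}}$ combined with the containment $\bbS_{\gy,A}\bbQ_{\gf,A} \subseteq \gF_{\gy,\gf}(A)$ recorded immediately above the theorem statement.

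First I would apply that containment directly: for any $f$ in the domain of $\bbS_{\gy,A}\bbQ_{\gf,A}$ we have $\bbS_{\gy,A}\bbQ_{\gf,A} f = \gF_{\gy,\gf}(A)f = f$, since $\gF_{\gy,\gf} \equiv 1$ and $A$ acts as the identity on $\overline{\mc R(A)}$ under constant functions in its $H^\infty$ calculus. This already gives $\bbS_{\gy,A}\bbQ_{\gf,A} \subseteq I_{\overline{\mc R(A)}}$. When $\gf \in \gY_+^+$, Theorem \ref{thm:fc-quadest} implies $\bbQ_{\gf,A} f \in L^2(\bbR^{1+n}_+)$ for every $f \in \overline{\mc R(A)}$, so $\bbS_{\gy,A}\bbQ_{\gf,A}$ is everywhere defined and the inclusion upgrades to equality.

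For the projection statement, assume $\gf \in \gY_+^+$. The symmetric computation yields $\bbS_{\gf,A}\bbQ_{\gy,A} = \gF_{\gf,\gy}(A) = I_{\overline{\mc R(A)}}$, since $\gF_{\gf,\gy} = \gF_{\gy,\gf} \equiv 1$ and $\gy \in \gY_\infty^\infty \subset \gY_+^+$ so $\bbS_{\gf,A}\bbQ_{\gy,A}$ is everywhere defined on $\overline{\mc R(A)}$. Then
\begin{equation*}
(\bbQ_{\gy,A}\bbS_{\gf,A})^2 = \bbQ_{\gy,A}(\bbS_{\gf,A}\bbQ_{\gy,A})\bbS_{\gf,A} = \bbQ_{\gy,A}\bbS_{\gf,A},
\end{equation*}
so $\bbQ_{\gy,A}\bbS_{\gf,A}$ is idempotent. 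Its image is contained in $\bbQ_{\gy,A}\overline{\mc R(A)}$ because $\bbS_{\gf,A}$ takes values in $\overline{\mc R(A)}$; conversely, if $g = \bbQ_{\gy,A} h$ with $h \in \overline{\mc R(A)}$, then $\bbQ_{\gy,A}\bbS_{\gf,A} g = \bbQ_{\gy,A}(\bbS_{\gf,A}\bbQ_{\gy,A})h = \bbQ_{\gy,A} h = g$, identifying the image as $\bbQ_{\gy,A}\overline{\mc R(A)}$ and completing the proof.

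The one step I would spend the most care on is verifying the asserted containment $\bbS_{\gy,A}\bbQ_{\gf,A} \subseteq \gF_{\gy,\gf}(A)$ when $\gf \in H^\infty$ but $\gf \notin \gY_+^+$, so that $\bbQ_{\gf,A}f$ need not lie in $L^2(\bbR^{1+n}_+)$. The point is that $\gy \in \gY_\infty^\infty$ makes the Bochner integral $\int_0^\infty \gy_t(A)\gf_t(A)f\,\tfrac{dt}{t}$ absolutely convergent for every $f \in \overline{\mc R(A)}$ (uniform boundedness of $\gf_t(A)$ via the $H^\infty$ calculus against arbitrary-order polynomial decay of $\gy$), and then Fubini applied to the Dunford representation \eqref{dunford} of $(\gy_t\gf_t)(A)$ (legal because $\gy_t \gf_t \in \gY_+^+$) interchanges the $t$-integral with the contour integral, reducing the identity to $\int_0^\infty \gy_t(z)\gf_t(z)\,\tfrac{dt}{t} = 1$ — precisely the property of $\gy$ supplied by \cite{AS16}.
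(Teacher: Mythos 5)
Your main argument is correct and is, as far as I can see, precisely the implication the paper leaves to the reader: the containment $\bbS_{\gy,A}\bbQ_{\gf,A} \subseteq \gF_{\gy,\gf}(A)$ with equality when $\gf \in \gY_+^+$ is asserted in the paragraph immediately preceding Theorem~\ref{thm:CRF}, and you combine it with \cite[Proposition 4.2]{AS16} exactly as the paper intends. The idempotency computation is right, including the tacit use of $\bbS_{\gf,A}(L^2(\bbR^{1+n}_+)) \subset \overline{\mc{R}(A)}$ to justify $\bbS_{\gf,A}\bbQ_{\gy,A}\bbS_{\gf,A} = \bbS_{\gf,A}$, and the identification of the image of $\bbQ_{\gy,A}\bbS_{\gf,A}$ is clean.

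The closing paragraph, however, contains a genuine error. The Bochner integral $\int_0^\infty \gy_t(A)\gf_t(A)f\,\frac{dt}{t}$ is \emph{not} absolutely convergent for every $f \in \overline{\mc{R}(A)}$. The operator norm $\nm{\gy_t(A)}_{\mc{L}(L^2)}$ is bounded but has no decay in $t$, because the Dunford estimate $\int_{\partial S_\nu}|\gy(tz)|\,|dz|/|z|$ is scale-invariant; and for unbounded $A$ there is no pointwise decay either. Concretely, take $A = -i\partial_x$ on $L^2(\bbR)$ and $\hat f(\xi) = |\xi|^{-1/2}(\log|\xi|)^{-1}\mb 1_{|\xi|>2}$: then $\nm{(\gy\gf)_t(A)f}_2 \simeq (\log(1/t))^{-1}$ as $t \to 0$, so $\int_0^1 \nm{(\gy\gf)_t(A)f}_2\,\frac{dt}{t} = \infty$. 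The actual mechanism behind the containment is \emph{weak} (conditional) convergence on the domain $\{f : \bbQ_{\gf,A}f \in L^2(\bbR^{1+n}_+)\}$: test against $g \in L^2(\bbR^n)$, write $\langle \gy_t(A)\gf_t(A)f, g\rangle = \langle \gf_t(A)f, \td{\gy}_t(A^*)g\rangle$, and apply Cauchy--Schwarz together with the two quadratic estimates from Theorem~\ref{thm:fc-quadest} to see that $\int_0^\infty |\langle \gf_t(A)f, \td{\gy}_t(A^*)g\rangle|\,\frac{dt}{t}$ is finite precisely when $\bbQ_{\gf,A}f \in L^2(\bbR^{1+n}_+)$ — that is, on the stated domain, and only there. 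The Fubini with the Dunford contour is then carried out inside this scalar integral. It is also worth noting that for nondegenerate $\gf \in H^\infty \setminus \gY_+^+$ (e.g.\ $\gf = \sgp$) the domain $\{f : \bbQ_{\gf,A}f \in L^2(\bbR^{1+n}_+)\}$ is typically trivial, so the strict containment in the theorem carries substantive content only when $\gf \in \gY_+^+$, which your main argument already handles without any need for the flawed convergence claim.
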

	
	We refer to a pair $(\gf,\gy)$, with $\gf \in H^\infty$, $\gy \in \gY_+^+$ and satisfying \eqref{eqn:crf}, as \emph{Calder\'on siblings}.\index{Calder\'on siblings}
	Here is a simple example of the use of the abstract Calder\'on reproducing formula.
	
	\begin{cor}\label{cor:Q-injectivity}
		Suppose $\gf \in H^\infty$ is nondegenerate.
		Then the extension operator $\map{\bbQ_{\gf,A}}{\overline{\mc{R}(A)}}{L^\infty(\bbR_+ : \overline{\mc{R}(A)})}$ is injective.
	\end{cor}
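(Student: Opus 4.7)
The plan is to deduce injectivity directly from the abstract Calder\'on reproducing formula (Theorem \ref{thm:CRF}). The heuristic is transparent: if the extension $\bbQ_{\gf,A}$ has a left inverse, it must be injective, and Theorem \ref{thm:CRF} supplies such a left inverse, namely $\bbS_{\gy,A}$ for a suitable $\gy \in \gY_{\infty}^{\infty}$.

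More precisely, given $\gf \in H^\infty$ nondegenerate, I would first invoke Theorem \ref{thm:CRF} to produce $\gy \in \gY_{\infty}^{\infty}$ with
\[
	\bbS_{\gy,A}\,\bbQ_{\gf,A} \subseteq I_{\overline{\mc{R}(A)}}.
\]
Then I would take $f \in \overline{\mc{R}(A)}$ with $\bbQ_{\gf,A} f = 0$. Since the zero function trivially lies in $L^2(\bbR^{1+n}_+)$, and $\gy \in \gY_{\infty}^{\infty} \subset \gY_+^+$ so that $\bbS_{\gy,A}$ is defined on $L^2(\bbR^{1+n}_+)$, applying $\bbS_{\gy,A}$ yields $\bbS_{\gy,A}\,\bbQ_{\gf,A} f = \bbS_{\gy,A} 0 = 0$. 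Combined with the inclusion above, this forces $f = 0$, establishing injectivity.

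The only subtlety, and the point I would want to address carefully, is the meaning of the inclusion of unbounded operators $\bbS_{\gy,A}\,\bbQ_{\gf,A} \subseteq I_{\overline{\mc{R}(A)}}$: \emph{a priori}, for general $f \in \overline{\mc{R}(A)}$, the function $\bbQ_{\gf,A} f$ lives only in $L^\infty(\bbR_+ : \overline{\mc{R}(A)})$ and need not lie in the natural domain $L^2(\bbR^{1+n}_+)$ of $\bbS_{\gy,A}$, so the composition does not act on all of $\overline{\mc{R}(A)}$. This is not an obstacle for the present argument, however, because when $\bbQ_{\gf,A} f = 0$ the image certainly lies in $\mc{D}(\bbS_{\gy,A})$, so the identity $\bbS_{\gy,A}\,\bbQ_{\gf,A} f = f$ is valid on this element. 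Thus the argument is essentially a one-line consequence of Theorem \ref{thm:CRF}, with no further computation required.
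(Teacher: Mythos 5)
Your argument is exactly the one the paper gives: invoke the Calder\'on reproducing formula (Theorem~\ref{thm:CRF}) to produce a sibling $\gy$, observe that when $\bbQ_{\gf,A}f=0$ the image lies in the domain of $\bbS_{\gy,A}$, and conclude $f=\bbS_{\gy,A}\bbQ_{\gf,A}f=0$. The domain subtlety you flag is precisely the point the paper's proof also addresses, so the two proofs coincide.
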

	
	\begin{proof}
		Let $\gy \in \gY_+^+$ be a Calder\'on sibling of $\gf$, and suppose $f \in \overline{\mc{R}(A)}$ with $\bbQ_{\gf,A}f = 0$.
		Then $\bbQ_{\gf,A}f \in L^2(\bbR_+ : \overline{\mc{R}(A)})$, so $\bbS_{\gy,A} \bbQ_{\gf,A} f$ is defined, and by \eqref{eqn:crf} we have
		\begin{equation*}
			f = \bbS_{\gy,A} \bbQ_{\gf,A} f = 0,
		\end{equation*}
		proving injectivity of $\bbQ_{\gf,A}$.
	\end{proof}

\chapter{Adapted Besov--Hardy--Sobolev Spaces}\label{chap:ahsb}

Throughout this chapter we fix an operator $A$ satisfying the Standard Assumptions (see Definition \ref{dfn:std-assns}).
As in the previous chapter, we implicitly work with $\bbC^N$-valued functions without referencing this in the notation.

\section{Initial definitions, equivalent norms, and duality}\label{sec:hsbbasic}

The adapted Hardy--Sobolev and Besov spaces are defined, roughly speaking, by measuring extensions by $\bbQ_{\gy,A}$ in tent spaces and $Z$-spaces respectively.
This mimics the characterisation of classical Hardy--Sobolev and Besov spaces given in Theorem \ref{thm:BHS-Xspace}.
We will soon show that the resulting function spaces are independent of $\gy$, provided that $\gy$ has sufficient decay at $0$ and $\infty$.

\begin{dfn}
	Let $\gy \in H^\infty$ and let $\mb{p}$ be an exponent.
	We define the sets
	\begin{align*}
		{\bbH}^\mb{p}_{\gy,A} &:= \{f \in \overline{\mc{R}(A)} : \bbQ_{\gy,A} f \in T^\mb{p} \cap L^2(\bbR^{1+n}_+)\}, \\
		{\bbB}^\mb{p}_{\gy,A} &:= \{f \in \overline{\mc{R}(A)} : \bbQ_{\gy,A} f \in Z^\mb{p} \cap L^2(\bbR^{1+n}_+)\},
	\end{align*}
	equipped with what we will show to be quasinorms (Proposition \ref{prop:quasinorm-verification})
	\begin{equation*}
		\nm{f \mid {\bbH}^\mb{p}_{\gy,A}} := \nm{\bbQ_{\gy,A} f}_{T^\mb{p}}, \qquad
		\nm{f \mid {\bbB}^\mb{p}_{\gy,A}} := \nm{\bbQ_{\gy,A} f}_{Z^\mb{p}}.
	\end{equation*}
	We call these spaces \emph{pre-Hardy--Sobolev and pre-Besov spaces adapted to $A$} (respectively), and we call $\gy$ an \emph{auxiliary function}.
\end{dfn}

Generally we want to refer to the pre-Hardy--Sobolev and pre-Besov spaces simultaneously.
Thus we write
\begin{equation*}
	{\bbX}_{\gy,A}^\mb{p} := \{f \in \overline{\mc{R}(A)} : \bbQ_{\gy,A} f \in X^\mb{p} \cap L^2(\bbR^{1+n}_+)\},
\end{equation*}
where the pair $(X,\bbX)$ is either $(T,\bbH)$ or $(Z,\bbB)$, following the convention initiated in Section \ref{sec:unification}, and speak of \emph{pre-Besov--Hardy--Sobolev spaces},

\begin{rmk}\label{rmk:comp-definedness}
The condition $\bbQ_{\gy,A} f \in L^2(\bbR^{1+n}_+)$ is automatically satisfied when $\gy \in \gY_+^+$.
We impose it so that $\bbS_{\gf,A} \bbQ_{\gy,A} f$ is defined for all $f$ in ${\bbX}^\mb{p}_{\gy,A}$ when $\gy \in H^\infty \sm \gY_+^+$.
\end{rmk}

\begin{prop}\label{prop:quasinorm-verification}
	Let $\gy \in H^\infty$ and let $\mb{p}$ be an exponent.
	Then $\nm{\cdot \mid {\bbX}_{\gy,A}^\mb{p}}$ is a quasinorm on ${\bbX}_{\gy,A}^\mb{p}$.
\end{prop}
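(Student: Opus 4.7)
The plan is to verify the three defining properties of a quasinorm: absolute homogeneity, the quasi-triangle inequality, and definiteness.

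First I would address absolute homogeneity and the quasi-triangle inequality, both of which follow immediately by transport from the ambient quasinorm on $X^\mb{p}$, using linearity of $\bbQ_{\gy,A}$. Concretely, for $\gl \in \bbC$ and $f \in \bbX_{\gy,A}^\mb{p}$,
\begin{equation*}
\nm{\gl f \mid \bbX_{\gy,A}^\mb{p}} = \nm{\bbQ_{\gy,A}(\gl f)}_{X^\mb{p}} = |\gl|\, \nm{\bbQ_{\gy,A} f}_{X^\mb{p}} = |\gl|\, \nm{f \mid \bbX_{\gy,A}^\mb{p}},
\end{equation*}
and for $f,g \in \bbX_{\gy,A}^\mb{p}$, linearity together with the quasi-triangle inequality of $X^\mb{p}$ (with constant $K_{X^\mb{p}}$) gives
\begin{equation*}
\nm{f+g \mid \bbX_{\gy,A}^\mb{p}} \leq K_{X^\mb{p}}\bigl( \nm{f \mid \bbX_{\gy,A}^\mb{p}} + \nm{g \mid \bbX_{\gy,A}^\mb{p}} \bigr).
\end{equation*}
Along the way one records that $f + g \in \bbX_{\gy,A}^\mb{p}$, since both $L^2(\bbR^{1+n}_+)$ and $X^\mb{p}$ are vector spaces and $\bbQ_{\gy,A}$ is linear.

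The hard part will be definiteness: showing that $\nm{f \mid \bbX_{\gy,A}^\mb{p}} = 0$ forces $f = 0$. From $\nm{\bbQ_{\gy,A} f}_{X^\mb{p}} = 0$ one obtains $\bbQ_{\gy,A} f = 0$ almost everywhere on $\bbR^{1+n}_+$, since the $T^\mb{p}$ and $Z^\mb{p}$ quasinorms vanish only on the null function. To upgrade this to pointwise vanishing I would use that, for $f \in \overline{\mc{R}(A)}$, the map $t \mapsto \gy_t(A) f$ is continuous from $\bbR_+$ to $L^2$ by the $H^\infty$ functional calculus of $A$ on $\overline{\mc{R}(A)}$ together with dominated convergence applied to the defining Cauchy-type representation (or a standard approximation argument through $\gY_+^+$). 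Hence $\gy_t(A) f = 0$ for every $t > 0$, i.e.\ $\bbQ_{\gy,A} f \equiv 0$.

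Finally, invoking Corollary \ref{cor:Q-injectivity}, which gives injectivity of $\map{\bbQ_{\gy,A}}{\overline{\mc{R}(A)}}{L^\infty(\bbR_+ : \overline{\mc{R}(A)})}$ when $\gy$ is nondegenerate in $H^\infty$, we conclude $f = 0$ in $\overline{\mc{R}(A)}$. (Implicit here is that the statement is understood for nondegenerate $\gy$; if $\gy$ were degenerate — vanishing on a connected component of the bisector — then $\gy_t(A)$ would annihilate one of the spectral subspaces $\overline{\mc{R}(A)}^{\pm}$ and $\nm{\cdot \mid \bbX_{\gy,A}^\mb{p}}$ would only be a seminorm.) This completes the verification of all three axioms.
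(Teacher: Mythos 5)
Your proof is essentially the paper's: homogeneity and the quasi-triangle inequality are immediate from linearity of $\bbQ_{\gy,A}$ and the corresponding properties of $X^\mb{p}$, and definiteness is reduced to the injectivity of $\bbQ_{\gy,A}$ from Corollary \ref{cor:Q-injectivity}. The intermediate step where you upgrade almost-everywhere vanishing of $\bbQ_{\gy,A} f$ to vanishing for \emph{every} $t$ via continuity of $t \mapsto \gy_t(A)f$ in $L^2$ is not needed and not what the paper does: Corollary \ref{cor:Q-injectivity} is formulated with $\bbQ_{\gy,A}$ viewed as a map into $L^\infty(\bbR_+ : \overline{\mc{R}(A)})$, and once $\bbQ_{\gy,A} f = 0$ as an element of $X^\mb{p}$ (hence as an element of $L^\infty(\bbR_+ : \overline{\mc{R}(A)})$), the reproducing formula $f = \bbS_{\gn,A}\bbQ_{\gy,A} f$ inside the proof of that corollary yields $f=0$ directly, with no pointwise-in-$t$ considerations.

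Your closing observation, however, points at a genuine imprecision in the proposition as stated. It says only $\gy \in H^\infty$, but Corollary \ref{cor:Q-injectivity} requires $\gy$ to be \emph{nondegenerate}, and without it the conclusion fails: for $\gy = \gc^+$ one has $\gy_t(A) = \gc^+(A)$ for all $t>0$, so $\bbQ_{\gc^+,A} f = 0$ for every $f \in \overline{\mc{R}(A)}^-$, and $\nm{\cdot \mid \bbX_{\gc^+,A}^\mb{p}}$ is only a seminorm. The paper's own proof tacitly assumes nondegeneracy by citing Corollary \ref{cor:Q-injectivity}, and indeed elsewhere (e.g.\ Proposition \ref{prop:equivalent-quasinorms} and the definition of $\gY(\bbX_A^\mb{p})$) nondegeneracy is imposed explicitly on auxiliary functions. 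So your caveat is well taken: the hypothesis should read ``$\gy \in H^\infty$ nondegenerate,'' and with that understood, your argument and the paper's agree.
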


\begin{proof}
	The only quasinorm property which does not follow directly from linearity of $\bbQ_{\gy,A}$ and the corresponding quasinorm properties of $X^\mb{p}$ is positive definiteness.
	To show this, suppose $f \in {\bbX}_{\gy,A}^\mb{p}$ and $\nm{f \mid {\bbX}_{\gy,A}^\mb{p}} = 0$.
	Then $\bbQ_{\gy,A} f = 0$ in $X^\mb{p}$, and hence also in $L^\infty(\bbR_+ : \overline{\mc{R}(A)})$.
	By injectivity of $\map{\bbQ_{\gy,A}}{\overline{\mc{R}(A)}}{L^\infty(\bbR_+ : \overline{\mc{R}(A)})}$ (Corollary \ref{cor:Q-injectivity}), we conclude that $f = 0$.
\end{proof}

The following proposition quantifies the amount of decay needed on the auxiliary function $\gy$ in order to ensure that the $\bbX_{\gy,A}^\mb{p}$ quasinorm is equivalent to the $\bbX_{\gf,A}^\mb{p}$ quasinorm whenever $\gf$ has decay of arbitrarily high order at $0$ and $\infty$.

\begin{prop}[Independence on auxiliary function]\label{prop:equivalent-quasinorms}
	Let $\gf \in \gY_\infty^\infty$ and $\gy \in H^\infty$ be nondegenerate, let $\mb{p}$ be an exponent, and suppose that either
	\begin{itemize}
		\item $i(\mb{p}) \leq 2$ and $\gy \in \gY_{\gq(\mb{p})+}^{(-\gq(\mb{p}) + n|\frac{1}{2} - j(\mb{p})|)+}$, or
		\item $i(\mb{p}) \geq 2$ and $\gy \in \gY_{(\gq(\mb{p}) + n|\frac{1}{2} - j(\mb{p})|)+}^{-\gq(\mb{p})+} \cap \gY_+^+$.
	\end{itemize}
	Then ${\bbX}_{\gy,A}^\mb{p} = {\bbX}_{\gf,A}^\mb{p}$ with equivalent quasinorms.
\end{prop}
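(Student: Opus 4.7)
My plan is to use the abstract Calder\'on reproducing formula (Theorem \ref{thm:CRF}) to rewrite $\bbQ_{\gy,A} f$ in terms of $\bbQ_{\gf,A} f$, and then apply the boundedness of extension/contraction compositions (Theorem \ref{thm:QS-compn-bddness}) with $\gh \equiv 1$. First I would invoke Theorem \ref{thm:CRF} for each of the nondegenerate functions $\gf$ and $\gy$ to produce Calder\'on siblings $\gf^\dagger, \gy^\dagger \in \gY_\infty^\infty$ with $\bbS_{\gf^\dagger,A} \bbQ_{\gf,A} = I$ on $\overline{\mc{R}(A)}$ (equality holds since $\gf \in \gY_+^+$) and, on any $f$ for which $\bbQ_{\gy,A} f \in L^2(\bbR^{1+n}_+)$, $\bbS_{\gy^\dagger,A} \bbQ_{\gy,A} f = f$. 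This $L^2$ requirement is exactly the one built into the definition of $\bbX_{\gy,A}^\mb{p}$, so the reproducing identity is available on both sides.

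Next, for $f \in \bbX_{\gf,A}^\mb{p}$ I would use $f = \bbS_{\gf^\dagger,A} \bbQ_{\gf,A} f$ to obtain
\begin{equation*}
\bbQ_{\gy,A} f = \bigl(\bbQ_{\gy,A} \bbS_{\gf^\dagger,A}\bigr)\bigl(\bbQ_{\gf,A} f\bigr),
\end{equation*}
and apply Theorem \ref{thm:QS-compn-bddness} with $\gh \equiv 1 \in H^\infty = \gY_0^0$ and $\gd = 0$. The decay hypotheses imposed on $\gy$ by that theorem are precisely those of the present proposition, while the conditions on $\gf^\dagger$ are trivially satisfied because $\gf^\dagger \in \gY_\infty^\infty \subset \gY_+^+$ decays faster than any polynomial at $0$ and $\infty$. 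This yields boundedness of $\bbQ_{\gy,A} \bbS_{\gf^\dagger,A}$ from $X^\mb{p}$ to $X^\mb{p}$ and hence $\nm{\bbQ_{\gy,A} f}_{X^\mb{p}} \lesssim \nm{f \mid \bbX_{\gf,A}^\mb{p}}$. The reverse inequality follows by the symmetric argument on $f \in \bbX_{\gy,A}^\mb{p}$ with $\gf, \gy$ swapped: the decay conditions required of $\gf$ are automatic since $\gf \in \gY_\infty^\infty$, and those on $\gy^\dagger$ are likewise automatic.

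The main difficulty I anticipate is twofold. First, the bounded extensions of $\bbQ_{\gy,A} \bbS_{\gf^\dagger,A}$ given by Theorem \ref{thm:QS-compn-bddness} are obtained by density for $i(\mb{p}) \leq 2$ and by duality for $i(\mb{p}) > 2$, so I must verify they coincide with the honest composition when applied to $\bbQ_{\gf,A} f \in L^2(\bbR^{1+n}_+) \cap X^\mb{p}$. This should follow from the integral representation \eqref{eqn:comp-repn} together with Proposition \ref{prop:Srep} and the fact that each $\gy_t(A)$ is bounded on $L^2$, which allows $\gy_t(A)$ to be passed through the weak-$L^2$ integral defining $\bbS_{\gf^\dagger,A}$. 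Second, the definition of $\bbX_{\gy,A}^\mb{p}$ also requires $\bbQ_{\gy,A} f \in L^2(\bbR^{1+n}_+)$; this is automatic in the case $i(\mb{p}) \geq 2$ where $\gy \in \gY_+^+$, but in the remaining case $i(\mb{p}) \leq 2$ (where only $\gy \in H^\infty$ is assumed) I would need a separate Schur-type argument, in the spirit of Lemma \ref{lem:schur-hyp-verifn}, showing that $\bbQ_{\gy,A} \bbS_{\gf^\dagger,A}$ is bounded on $L^2(\bbR^{1+n}_+)$ by exploiting the positive decay of $\gy$ at one end together with the overwhelming decay of $\gf^\dagger$ at both ends.
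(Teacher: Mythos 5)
Your core argument is the paper's proof, essentially verbatim: produce Calder\'on siblings via Theorem \ref{thm:CRF}, rewrite $\bbQ_{\gy,A} f$ as a composition, and invoke Theorem \ref{thm:QS-compn-bddness} with $\gh \equiv 1$ and $\gd = 0$. The paper organizes the two inequalities in the opposite order (it proves $\nm{\bbQ_{\gf,A} f}_{X^{\mb{p}}} \lesssim \nm{\bbQ_{\gy,A} f}_{X^{\mb{p}}}$ first, using a sibling of $\gy$) but the substance is identical. Your first anticipated difficulty --- consistency of the bounded extension of $\bbQ_{\gy,A}\bbS_{\gn,A}$ from Theorem \ref{thm:QS-compn-bddness} with the honest composition on $L^2(\bbR^{1+n}_+)\cap X^{\mb{p}}$ inputs --- is real and is resolved exactly as you suggest.

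Your second anticipated difficulty cannot, however, be closed by the Schur-type argument you sketch. Note first that it only arises when $i(\mb{p}) \leq 2$ \emph{and} $\gq(\mb{p}) < 0$: if $\gq(\mb{p}) \geq 0$ the stated hypothesis already forces $\gy \in \gY_+^+$ and the $L^2$ membership is automatic. In the genuinely problematic regime $\gq(\mb{p}) < 0$, the hypothesis (intersected with $H^\infty$) imposes positive decay on $\gy$ only at $\infty$, and imposes nothing at $0$ beyond boundedness; the model example is $\gy = \sgp \in \gY_0^\infty$. Taking a sibling $\gn \in \gY_\infty^\infty$, the operator norms $\nm{(\gy_t \gn_r)(A)}_{\mc{L}(L^2(\bbR^n))}$ are controlled only by $\min\{1, (t/r)^{-\varepsilon}\}$: there is decay as $t/r \to \infty$ coming from the decay of $\gy$ at $\infty$, but as $t/r \to 0$ the bound saturates at a constant, because the factor $\gy(tz)$ contributes nothing more than $\nm{\gy}_\infty$. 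This function of $t/r$ fails the integrability required in Lemma \ref{lem:schur-lem}, so no $L^2(\bbR^{1+n}_+)$-boundedness of $\bbQ_{\gy,A}\bbS_{\gn,A}$ follows from that lemma. Worse, for such $\gy$ and any $f \in \overline{\mc{R}(A)} \setminus \{0\}$ the membership $\bbQ_{\gy,A} f \in L^2(\bbR^{1+n}_+)$ simply fails: $\nm{\gy_t(A) f}_{2} \to \nm{f}_{2} > 0$ as $t \to 0$ by strong continuity, while $\int_0^1 dt/t$ diverges. The $L^2$ condition in the definition of $\bbX_{\gy,A}^{\mb{p}}$ is therefore not something one proves from $X^{\mb{p}}$-membership. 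What actually makes the reproducing step legitimate --- and what the paper relies on tacitly --- is that for $\gn \in \gY_\infty^\infty$ and $\gy \in H^\infty$ one has $\gn\gy \in \gY_\infty^\infty$, so the expression $\bbS_{\gn,A}\bbQ_{\gy,A} f = \int_0^\infty (\gn\gy)_t(A) f \, dt/t$ converges absolutely for every $f \in \overline{\mc{R}(A)}$ and equals $\gF_{\gn,\gy}(A) f = f$. You should invoke this absolute convergence directly in place of the Schur bound you propose, which does not hold in the case it is most needed.
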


Observe that if $\gy \in \gY_\infty^\infty$ then the requirements above are satisfied for all $\mb{p}$.

\begin{proof}
	First let $\gn \in \gY_\infty^\infty$ be a Calder\'on sibling of $\gy$.
	Then for $f \in {\bbX}_{\gy,A}^\mb{p}$ we have
        \begin{equation*}
		\nm{f \mid {\bbX}_{\gf,A}^\mb{p}}
		= \nm{\bbQ_{\gf,A} f}_{X^\mb{p}}
		= \nm{\bbQ_{\gf,A} \bbS_{\gn,A} \bbQ_{\gy,A} f}_{X^\mb{p}} 
		\lesssim \nm{\bbQ_{\gy,A} f}_{X^\mb{p}}
		= \nm{f \mid {\bbX}_{\gy,A}^\mb{p}}.
	\end{equation*}
	In the second equality we use the Calder\'on reproducing formula \eqref{eqn:crf} along with the fact that $\bbS_{\gn,A} \bbQ_{\gy,A} f$ is defined for $f \in \bbX_{\gy,A}^\mb{p}$ (see Remark \ref{rmk:comp-definedness}).
	The quasinorm estimate follows from Theorem \ref{thm:QS-compn-bddness}, by the Standard Assumptions along with $\gf,\gn \in \gY_\infty^\infty$.
	
	Now let $\gn \in \gY_\infty^\infty$ be a Calder\'on sibling of $\gf$ and repeat the previous argument with the roles of $\gf$ and $\gy$ reversed, using the additional assumptions on $\gy$ to apply Theorem \ref{thm:QS-compn-bddness}.
	This leads to the reverse estimate
	\begin{equation*}
		\nm{f \mid {\bbX}_{\gy,A}^\mb{p}} \lesssim \nm{f \mid {\bbX}_{\gf,A}^\mb{p}},
	\end{equation*}
	which completes the proof.
\end{proof}

\begin{dfn}\index{space!adapted Besov--Hardy--Sobolev}
	For all exponents $\mb{p}$ we define
	\begin{equation*}
		{\bbX}_{A}^\mb{p} := {\bbX}_{\gy,A}^\mb{p},
	\end{equation*}
	where any auxiliary function $\gy \in \gY_\infty^\infty$ may be used to define the space and its corresponding quasinorm.
	We also define $\gY({\bbX}_{A}^\mb{p})$ to be the set of all nondegenerate $\gf \in H^\infty$ such that ${\bbX}_{\gf,A}^\mb{p} = {\bbX}_{A}^\mb{p}$ with equivalent quasinorms.
With this notation at hand, Proposition \ref{prop:equivalent-quasinorms} tells us that
\begin{align*}
	\gY_{\gq(\mb{p})+}^{(-\gq(\mb{p}) + n|\frac{1}{2} - j(\mb{p})|)+} &\cap H^\infty \subset \gY({\bbX}_{A}^\mb{p})  &(i(\mb{p}) \leq 2), \\
	\gY_{(\gq(\mb{p}) + n|\frac{1}{2} - j(\mb{p})|)+}^{-\gq(\mb{p})+} &\cap \gY_+^+ \subset \gY({\bbX}_{A}^\mb{p}) &(i(\mb{p}) > 2).
\end{align*}
\end{dfn}

In Section \ref{section:bisectorial-hfc} we considered the positive and negative spectral subspaces
\begin{equation*}
	\overline{\mc{R}(A)}^\pm := \gc^\pm(A) \overline{\mc{R}(A)}.
\end{equation*}
These may be used to define corresponding positive and negative spectral subspaces of $\bbX_A^\mb{p}$.

\begin{dfn}\index{space!adapted Besov--Hardy--Sobolev!positive and negative}
	Let $\mb{p}$ be an exponent.
	We define the \emph{positive and negative pre-Besov--Hardy--Sobolev spaces} by
	\begin{equation*}
		{\bbX}_{A}^{\mb{p},\pm} := {\bbX}_{A}^\mb{p} \cap \overline{\mc{R}(A)}^\pm,
	\end{equation*}
	equipped with any of the equivalent ${\bbX}_{A}^\mb{p}$ quasinorms.
	Often we just refer to these as \emph{spectral subspaces}.
        In Corollary \ref{cor:Xpm-charn} we characterise the spectral subspaces as images of the spectral projections $\gc^\pm(A)$.
\end{dfn}

The spaces ${\bbX}_{A}^\mb{p}$ may also be characterised in terms of the contraction maps $\bbS_{\gy,A}$ for any $\gy \in \gY_+^+$.
Recall that $X^2 = T^2 = Z^2 = L^2(\bbR^{1+n}_+)$.

\begin{prop}[Characterisation by contraction maps]\label{prop:Xss-charn}\index{space!adapted Besov--Hardy--Sobolev!characterisation by contraction maps}
	Let $\mb{p}$ be an exponent and let $\gy \in \gY_+^+$ be nondegenerate.
	Suppose furthermore that
	\begin{equation*}
		\gy \in \left\{ \begin{array}{ll} \gY_{(-\gq(\mb{p}) + n|\frac{1}{2} - j(\mb{p})|)+}^{\gq(\mb{p})+} & (i(\mb{p}) \leq 2) \\ \gY_{(-\gq(\mb{p}))+}^{(\gq(\mb{p}) + n| \frac{1}{2} - j(\mb{p})| )+} & (i(\mb{p}) \geq 2) \end{array} \right.
	\end{equation*}
	(note that $\gy \in \gY_\infty^\infty$ always satisfies this assumption).
	Then we have
	\begin{equation}\label{eqn:S-map-characterisation}
		{\bbX}_{A}^\mb{p} = \bbS_{\gy,A} ( X^2 \cap X^\mb{p} ),
	\end{equation}
	and the mapping
	\begin{equation*}
		f \mapsto \inf\{\nm{F}_{X^\mb{p}} : F \in X^2 \cap X^\mb{p}, \; \bbS_{\gy,A} F = f\}
	\end{equation*}
	is an equivalent quasinorm on ${\bbX}_{A}^\mb{p}$.
\end{prop}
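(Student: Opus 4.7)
The plan is to prove the two inclusions of \eqref{eqn:S-map-characterisation} separately, using Theorem \ref{thm:QS-compn-bddness} for one direction and the abstract Calder\'on reproducing formula (Theorem \ref{thm:CRF}) for the other. The quasinorm equivalence will be an immediate by-product of these two arguments.

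First I would show the inclusion $\bbS_{\gy,A}(X^2 \cap X^\mb{p}) \subset {\bbX}_A^\mb{p}$. Let $F \in X^2 \cap X^\mb{p}$ and set $f := \bbS_{\gy,A} F$; since $\bbS_{\gy,A}$ maps into $\overline{\mc{R}(A)}$, we have $f \in \overline{\mc{R}(A)}$. To test membership in ${\bbX}_A^\mb{p}$, fix any $\gf \in \gY_\infty^\infty$ and compute $\bbQ_{\gf,A} f = \bbQ_{\gf,A} \bbS_{\gy,A} F$. Applied with $\gh \equiv 1 \in \gY_0^0$ and $\gd = 0$, Theorem \ref{thm:QS-compn-bddness} gives boundedness $\bbQ_{\gf,A} \bbS_{\gy,A} : X^\mb{p} \to X^\mb{p}$, since the hypothesis on $\gy$ in the present proposition is precisely what the theorem asks of its contraction-side auxiliary function, and $\gf \in \gY_\infty^\infty$ comfortably meets the condition on the extension side. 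The analogous assertion on $X^2$ (which is just the basic $L^2$ quadratic estimate) guarantees $\bbQ_{\gf,A} f \in L^2(\bbR^{1+n}_+)$ from $F \in X^2$. Hence $f \in \bbX_{\gf,A}^\mb{p} = \bbX_A^\mb{p}$, with
\begin{equation*}
\nm{f \mid \bbX_A^\mb{p}} = \nm{\bbQ_{\gf,A} \bbS_{\gy,A} F}_{X^\mb{p}} \lesssim \nm{F}_{X^\mb{p}}.
\end{equation*}

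Next I would prove the reverse inclusion $\bbX_A^\mb{p} \subset \bbS_{\gy,A}(X^2 \cap X^\mb{p})$. Given $f \in \bbX_A^\mb{p}$, invoke Theorem \ref{thm:CRF}: since $\gy \in \gY_+^+$ is nondegenerate, there exists a Calder\'on sibling $\gn \in \gY_\infty^\infty$ with $\bbS_{\gy,A}\bbQ_{\gn,A} = I$ on $\overline{\mc{R}(A)}$. Setting $F := \bbQ_{\gn,A} f$, membership $\gn \in \gY_\infty^\infty \subset \gY(\bbX_A^\mb{p})$ gives $F \in X^\mb{p}$ with $\nm{F}_{X^\mb{p}} = \nm{f \mid \bbX_{\gn,A}^\mb{p}} \simeq \nm{f \mid \bbX_A^\mb{p}}$; furthermore $\gn \in \gY_+^+$ ensures $F = \bbQ_{\gn,A} f \in L^2(\bbR^{1+n}_+) = X^2$, and by construction $\bbS_{\gy,A} F = f$. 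This realises $f$ as an element of $\bbS_{\gy,A}(X^2 \cap X^\mb{p})$ and supplies a competitor $F$ for the infimum with $\nm{F}_{X^\mb{p}} \lesssim \nm{f \mid \bbX_A^\mb{p}}$.

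Combining the two steps yields the quasinorm equivalence: the previous paragraph gives $\inf\nm{F}_{X^\mb{p}} \lesssim \nm{f \mid \bbX_A^\mb{p}}$, while the first paragraph applied to any admissible $F$ and then taking the infimum gives the opposite estimate $\nm{f \mid \bbX_A^\mb{p}} \lesssim \inf\nm{F}_{X^\mb{p}}$. The only delicate point I anticipate is lining up the polynomial decay exponents: one must verify that the decay prescribed on $\gy$ in the proposition matches the requirement on the contraction-side function $\gf_{\text{thm}}$ in Theorem \ref{thm:QS-compn-bddness} with $\gd = 0$ and the extension-side function taken in $\gY_\infty^\infty$, both for $i(\mb{p}) \leq 2$ and $i(\mb{p}) \geq 2$. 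This is a bookkeeping exercise rather than a genuine obstacle, but it is where I would be most careful.
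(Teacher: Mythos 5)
Your proof is correct and follows essentially the same route as the paper's: one inclusion via Theorem \ref{thm:QS-compn-bddness} with $\gh\equiv 1$ and $\gd=0$ (where, as you anticipate, the decay conditions imposed on $\gy$ in the proposition line up exactly with the contraction-side hypotheses of that theorem for both $i(\mb{p})\le 2$ and $i(\mb{p})\ge 2$), the other via a Calder\'on sibling $\gn\in\gY_\infty^\infty$ from Theorem \ref{thm:CRF}, and the quasinorm equivalence as a by-product of the two norm estimates. The only cosmetic difference is that the paper fixes a single sibling $\gf$ and threads it through both inclusions, whereas you treat the two directions slightly more independently; the mathematical content is identical.
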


\begin{proof}
	Fix a Calder\'on sibling $\gf \in \gY_\infty^\infty$ of $\gy$.
	First we show the equality \eqref{eqn:S-map-characterisation}.
	Suppose $f \in {\bbX}_{A}^\mb{p}$.
	Then $\bbQ_{\gf,A} f \in X^2 \cap X^\mb{p}$, and by Theorem \ref{eqn:crf} we have $f = \bbS_{\gy,A} (\bbQ_{\gf,A} f)$.
	Conversely, suppose that $f = \bbS_{\gy,A} F$ for some $F \in X^2 \cap X^\mb{p}$.
	Then $f \in \overline{\mc{R}(A)}$, and $\bbQ_{\gf,A} f = \bbQ_{\gf,A} \bbS_{\gy,A} F$.
	Theorem \ref{thm:QS-compn-bddness} then implies that $\bbQ_{\gf,A} f \in X^\mb{p}$, which shows that $f \in {\bbX}_{A}^\mb{p}$.
	This proves \eqref{eqn:S-map-characterisation}.
	
	Now for the quasinorm equivalence.
	Suppose $f \in {\bbX}_{A}^\mb{p}$.
	Then $f = \bbS_{\gy,A} \bbQ_{\gf,A} f$ with $\bbQ_{\gf,A} f \in X^2 \cap X^\mb{p}$, and so
	\begin{equation*}
		\inf\{ \nm{F}_{X^\mb{p}} : F \in X^2 \cap X^\mb{p}, \bbS_{\gy,A} F = f\}
		\leq \nm{\bbQ_{\gf,A} f}_{X^\mb{p}} \simeq \nm{f}_{{\bbX}_{A}^\mb{p}}.
	\end{equation*}
	Conversely, suppose $F \in X^2 \cap X^\mb{p}$ and $\bbS_{\gy,A} F = f$.
	Then
	\begin{equation*}
		\nm{F}_{X^\mb{p}} \gtrsim \nm{\bbQ_{\gf,A} \bbS_{\gy,A} F}_{X^\mb{p}} 
                = \nm{\bbQ_{\gf,A} f}_{X^\mb{p}} 
		\simeq \nm{f}_{{\bbX}_{A}^\mb{p}}
	\end{equation*}
	using Theorem \ref{thm:QS-compn-bddness} again in the first line, and thus completing the proof.
\end{proof}

\begin{rmk}
	In Propositions \ref{prop:equivalent-quasinorms} and \ref{prop:Xss-charn} we quantify precisely how much decay (depending on $\mb{p}$) is sufficient for an auxiliary function $\gf$ to characterise $\bbX_A^\mb{p}$ (either via $\bbQ_{\gf,A}$ or $\bbS_{\gf,A}$).
	In both cases having $\gf \in \gY_\infty^\infty$ is sufficient, so in deducing abstract properties of the spaces $\bbX_A^\mb{p}$ we need only consider such functions.
	However, for applications to boundary value problems we must consider specific auxiliary functions without such decay, so this degree of precision is necessary.
\end{rmk}

\begin{cor}[Density of intersections]\label{cor:adapted-space-int-dens}
	Let $\mb{p}$ and $\mb{q}$ be exponents, and suppose $\bbX_1,\bbX_2 \in \{\bbH,\bbB\}$.
	If $\mb{p}$ is finite then $(\bbX_1)_A^\mb{p} \cap (\bbX_2)_A^{\mb{q}}$ is dense in $(\bbX_1)_A^{\mb{p}}$.
	Otherwise, $(\bbX_1)_A^\mb{p} \cap (\bbX_2)_A^{\mb{q}}$ is weak-star dense in $(\bbX_1)_A^{\mb{p}}$.
\end{cor}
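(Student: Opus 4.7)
The plan is to reduce the density statement for adapted spaces to the corresponding one for the ambient spaces (Proposition \ref{prop:mixed-x-density}) by means of the Calder\'on reproducing formula. Fix Calder\'on siblings $\gf,\gy \in \gY_\infty^\infty$, so that $\bbS_{\gy,A} \bbQ_{\gf,A} = I$ on $\overline{\mc{R}(A)}$ (Theorem \ref{thm:CRF}). Let $X_i \in \{T,Z\}$ denote the ambient space associated with $\bbX_i$. Since $\gy \in \gY_\infty^\infty$, Proposition \ref{prop:Xss-charn} applies for every exponent, so whenever $G \in L^2(\bbR^{1+n}_+) \cap X_1^\mb{p} \cap X_2^\mb{q}$ we have $\bbS_{\gy,A} G \in (\bbX_1)_A^\mb{p} \cap (\bbX_2)_A^\mb{q}$ with
\begin{equation*}
  \nm{\bbS_{\gy,A} G \mid (\bbX_i)_A^\mb{r}} \lesssim \nm{G}_{X_i^\mb{r}} \qquad ((i,\mb{r}) \in \{(1,\mb{p}),(2,\mb{q})\}).
\end{equation*}
In particular $L^2_c(\bbR^{1+n}_+)$ is contained in $L^2 \cap X_1^\mb{p} \cap X_2^\mb{q}$.

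Given $f \in (\bbX_1)_A^\mb{p}$, set $F := \bbQ_{\gf,A} f$, which lies in $L^2 \cap X_1^\mb{p}$, and note that $f = \bbS_{\gy,A} F$. By (the proof of) Proposition \ref{prop:mixed-x-density}, $L^2_c$ is dense in $X_1^\mb{p}$ when $\mb{p}$ is finite and weak-star dense when $\mb{p}$ is infinite, so there exists a sequence (resp.\ net) $(G_n) \subset L^2_c$ with $G_n \to F$ in the appropriate topology. Set $f_n := \bbS_{\gy,A} G_n$; by the discussion above, each $f_n$ lies in $(\bbX_1)_A^\mb{p} \cap (\bbX_2)_A^\mb{q}$.

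For the finite case, the bound above directly gives
\begin{equation*}
  \nm{f_n - f \mid (\bbX_1)_A^\mb{p}} \lesssim \nm{G_n - F}_{X_1^\mb{p}} \to 0,
\end{equation*}
which yields norm density. For the infinite case, we need $f_n \to f$ in the weak-star topology on $(\bbX_1)_A^\mb{p}$. Since this topology is transported from $X_1^\mb{p}$ via the embedding $h \mapsto \bbQ_{\gf,A} h$, it suffices to show that the projection $T := \bbQ_{\gf,A}\bbS_{\gy,A}$ is weak-star continuous on $X_1^\mb{p}$; then $\bbQ_{\gf,A} f_n = T G_n \to T F = \bbQ_{\gf,A} f$ in the weak-star topology of $X_1^\mb{p}$. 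Weak-star continuity of $T$ will be obtained by identifying its $L^2$-adjoint as $T^* = \bbQ_{\td{\gf},A^*}\bbS_{\td{\gy},A^*}$ and invoking Theorem \ref{thm:QS-compn-bddness} for $A^*$ (which also satisfies the Standard Assumptions) to see that $T^*$ is bounded on $X_1^{\mb{p}'}$.

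The main obstacle is bookkeeping in the infinite case: one must be careful that the weak-star topology on $(\bbX_1)_A^\mb{p}$ is precisely the one induced from $X_1^\mb{p}$ via $\bbQ_{\gf,A}$, so that weak-star continuity of $T$ indeed transfers to weak-star convergence of $(f_n)$. Once this is in place, every step is a direct consequence of the Calder\'on reproducing formula, Proposition \ref{prop:Xss-charn}, and the ambient density result.
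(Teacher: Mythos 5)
Your proof follows the same route as the paper's: represent $f = \bbS_{\gy,A} F$ with $F \in X^2 \cap X_1^\mb{p}$, approximate $F$ in the ambient space by elements of the triple intersection, and push forward by $\bbS_{\gy,A}$ using Proposition~\ref{prop:Xss-charn}. The only difference is that you flesh out the weak-star case (which the paper dispatches with ``the same argument works\dots''), and you have a small typo in the adjoint: $T^* = (\bbQ_{\gf,A}\bbS_{\gy,A})^* = \bbQ_{\td{\gy},A^*}\bbS_{\td{\gf},A^*}$, not $\bbQ_{\td{\gf},A^*}\bbS_{\td{\gy},A^*}$; this is harmless since both are of the form covered by Theorem~\ref{thm:QS-compn-bddness}.
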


\begin{proof}
	We suppose that $\mb{p}$ is finite; the same argument works for infinite $\mb{p}$, replacing limits with weak-star limits and norms with appropriate duality pairings.
	
	Suppose $f \in (\bbX_1)_A^\mb{p}$, and fix $\gy \in \gY_\infty^\infty$.
	By Proposition \ref{prop:Xss-charn} we have $f = \bbS_{\gy,A} F$ for some $F \in T^2 \cap (X_1)^\mb{p}$, and by Proposition \ref{prop:mixed-x-density} we have $F = \lim_{k \to \infty} F_k$ (limit in $(X_1)^\mb{p}$) for some sequence $(F_k)_{k \in \bbN}$ in $T^2 \cap (X_1)^\mb{p} \cap (X_2)^\mb{q}$.
	For all $k \in \bbN$ define
	\begin{equation*}
		f_k := \bbS_{\gy,A} F_k \in (\bbX_1)_A^\mb{p} \cap (\bbX_2)_A^{\mb{q}}.
	\end{equation*}
	Then we have, again using Proposition \ref{prop:Xss-charn},
	\begin{equation*}
		\lim_{k \to \infty} \nm{f - f_k}_{(\bbX_1)_A^{\mb{p}}}
		\lesssim \lim_{k \to \infty} \nm{F - F_k}_{(X_1)^\mb{p}} = 0.
	\end{equation*}
	This proves the claimed density.
\end{proof}

The pre-Besov--Hardy--Sobolev spaces inherit a duality pairing from $\overline{\mc{R}(A)} \subset L^2(\bbR^n)$.
However, we cannot identify $\bbX_{A^*}^{\mb{p}^\prime}$ with the dual of $\bbX_{A}^{\mb{p}}$, because in general these spaces are incomplete, while the dual of a quasinormed space is always complete.
We deal with completions in Section \ref{sec:completions}.

\begin{prop}[Duality estimate]\index{duality!of adapted BHS spaces}
	Let $\mb{p}$ be an exponent.
	Then for all $f \in {\bbX}_{A}^\mb{p}$ and $g \in {\bbX}_{A^*}^{\mb{p}^\prime}$ we have
	\begin{equation}\label{eqn:pre-duality}
		|\langle f,g \rangle| \lesssim \nm{f}_{{\bbX}_{A}^\mb{p}} \nm{g}_{{\bbX}_{A^*}^{\mb{p}^\prime}},
	\end{equation}
	where $\langle \cdot,\cdot \rangle$ is the inner product on $L^2(\bbR^n)$.
\end{prop}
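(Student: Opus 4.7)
The plan is to reduce the pairing on $\overline{\mc{R}(A)}$ to a pairing of tent/$Z$-space functions via the abstract Calder\'on reproducing formula, and then apply the duality results of Chapter \ref{chap:fs-prelims}.

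First, I would fix a nondegenerate $\gf \in \gY_\infty^\infty$ and, using Theorem \ref{thm:CRF}, select a Calder\'on sibling $\gy \in \gY_\infty^\infty$ satisfying $\bbS_{\gy,A}\bbQ_{\gf,A} = I$ on $\overline{\mc{R}(A)}$ (equality, not mere inclusion, since $\gf \in \gY_+^+$). Since $\gf \in \gY_\infty^\infty \subset \gY(\bbX_A^\mb{p})$ and $\td{\gy} \in \gY_\infty^\infty \subset \gY(\bbX_{A^*}^{\mb{p}^\prime})$, Proposition \ref{prop:equivalent-quasinorms} gives
\begin{equation*}
\nm{\bbQ_{\gf,A} f}_{X^\mb{p}} \simeq \nm{f}_{\bbX_A^\mb{p}}, \qquad \nm{\bbQ_{\td{\gy},A^*} g}_{X^{\mb{p}^\prime}} \simeq \nm{g}_{\bbX_{A^*}^{\mb{p}^\prime}},
\end{equation*}
and moreover both extensions lie in $L^2(\bbR^{1+n}_+) \cap X^{\mb{p}}$ (resp.\ $\cap X^{\mb{p}^\prime}$) by definition of the adapted spaces.

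Second, using the Calder\'on reproducing formula to write $f = \bbS_{\gy,A}\bbQ_{\gf,A} f$ and then the defining adjoint relation $\bbS_{\gy,A} = (\bbQ_{\td{\gy},A^*})^*$ (with respect to the $L^2(\bbR^n)$ and $L^2(\bbR^{1+n}_+)$ inner products), I would compute
\begin{equation*}
\langle f, g \rangle_{L^2(\bbR^n)} = \langle \bbS_{\gy,A} \bbQ_{\gf,A} f, g \rangle_{L^2(\bbR^n)} = \langle \bbQ_{\gf,A} f, \bbQ_{\td{\gy},A^*} g \rangle_{L^2(\bbR^{1+n}_+)}.
\end{equation*}
This identity is legitimate because both $\bbQ_{\gf,A}f$ and $\bbQ_{\td{\gy},A^*}g$ belong to $L^2(\bbR^{1+n}_+)$, so the adjoint identity applies in its $L^2$-bounded form.

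Third, I would apply the appropriate $L^2$-duality estimate from Chapter \ref{chap:fs-prelims}: Theorem \ref{thm:ts-duality} in the tent-space case and Proposition \ref{prop:Z-duality-reflexive} (when $i(\mb{p}) \in (1,\infty)$) or the non-reflexive $Z$-space duality (when $i(\mb{p}) \leq 1$ or $\mb{p}$ is infinite) in the $Z$-space case. All of these results yield absolute convergence of the pairing and the estimate
\begin{equation*}
\big|\langle \bbQ_{\gf,A} f, \bbQ_{\td{\gy},A^*} g \rangle\big| \lesssim \nm{\bbQ_{\gf,A}f}_{X^\mb{p}} \nm{\bbQ_{\td{\gy},A^*}g}_{X^{\mb{p}^\prime}},
\end{equation*}
which combined with the quasinorm equivalences above gives \eqref{eqn:pre-duality}.

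The only subtle point is that either $\mb{p}$ or $\mb{p}^\prime$ may be infinite, so the duality theorems must be applied with the finite exponent on the appropriate side; this is harmless since $\mb{p}$ and $\mb{p}^\prime$ are never both infinite, and every duality result in Chapter \ref{chap:fs-prelims} delivers absolute convergence of the $L^2$ pairing regardless of which side is finite. No interpolation or density arguments are needed, since the pairing \eqref{eqn:pre-duality} is already pointwise-defined on $\bbX_A^\mb{p} \times \bbX_{A^*}^{\mb{p}^\prime}$.
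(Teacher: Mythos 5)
Your proof is correct, and it is a slightly more direct variant of the paper's argument. The paper uses Proposition \ref{prop:Xss-charn} to write $f = \bbS_{\gf,A}F$ and $g = \bbS_{\gy,A^*}G$ for $\varepsilon$-near-optimal $F \in X^2\cap X^\mb{p}$ and $G \in X^2\cap X^{\mb{p}^\prime}$, moves $\bbS_{\gf,A}$ across the pairing to obtain $\langle F, \bbQ_{\td{\gf},A^*}\bbS_{\gy,A^*}G\rangle_{L^2(\bbR^{1+n}_+)}$, bounds this by $X$-space duality, and then uses Theorem \ref{thm:QS-compn-bddness} to control $\nm{\bbQ_{\td{\gf},A^*}\bbS_{\gy,A^*}G}_{X^{\mb{p}^\prime}} \lesssim \nm{G}_{X^{\mb{p}^\prime}}$ before letting $\varepsilon \to 0$. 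You instead take $F = \bbQ_{\gf,A}f$ directly via the Calder\'on reproducing formula, producing the exact identity $\langle f,g\rangle = \langle \bbQ_{\gf,A}f, \bbQ_{\td{\gy},A^*}g\rangle_{L^2(\bbR^{1+n}_+)}$ and then applying $X$-space duality and Proposition \ref{prop:equivalent-quasinorms}; this removes the need for the $\varepsilon$-argument and the explicit use of Theorem \ref{thm:QS-compn-bddness} (which is of course still implicitly present, as it underlies the norm equivalence in Proposition \ref{prop:equivalent-quasinorms}). Both routes rest on the same three ingredients — the adjoint relation $\bbS_{\gy,A} = (\bbQ_{\td{\gy},A^*})^*$, the Calder\'on reproducing formula, and absolute convergence of the $L^2(\bbR^{1+n}_+)$ duality pairing on $X^\mb{p}\times X^{\mb{p}^\prime}$ — so the approaches are the same in substance. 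Your observation that one of $\mb{p}, \mb{p}^\prime$ is always finite, so the duality estimates of Chapter \ref{chap:fs-prelims} always apply, correctly handles the infinite-exponent case.
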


\begin{proof}
	Let $\gf,\gy \in \gY_\infty^\infty$ be nondegenerate and suppose $\varepsilon > 0$.
	By Proposition \ref{prop:Xss-charn} there exist $F \in X^2 \cap X^\mb{p}$ and $G \in X^2 \cap X^{\mb{p}^\prime}$ such that $\bbS_{\gf,A} F = f$ and $\bbS_{\gy,A^*} G = g$, with
	\begin{equation*}
		\nm{F}_{X^\mb{p}} \lesssim (1+\varepsilon) \nm{f}_{{\bbX}_{A}^\mb{p}} \quad \text{and} \quad \nm{G}_{X^{\mb{p}^\prime}} \lesssim (1+\varepsilon) \nm{g}_{{\bbX}_{A^*}^{\mb{p}^\prime}}.
	\end{equation*}
	Since $\bbS_{\gf,A}^* = \bbQ_{\td{\gf},A^*}$, and using that the $L^2(\bbR^{1+n}_+)$ inner product is a duality pairing for tent and $Z$-spaces, we thus have
	\begin{align}
		|\langle f,g \rangle|
		&= |\langle F, \bbQ_{\td{\gf},A^*} \bbS_{\gy,A^*} G\rangle_{L^2(\bbR^{1+n}_+)}| \nonumber\\
		&\lesssim \nm{F}_{X^\mb{p}} \nm{\bbQ_{\td{\gf},A^*} \bbS_{\gy,A^*} G}_{X^{\mb{p}^\prime}} \nonumber\\
		&\lesssim \nm{F}_{X^\mb{p}} \nm{G}_{X^{\mb{p}^\prime}} \label{line:QSuse} \\
		&\lesssim (1+\varepsilon)^2 \nm{f}_{{\bbX}_{A}^\mb{p}} \nm{g}_{{\bbX}_{A^*}^{\mb{p}^\prime}}, \nonumber
	\end{align}
	where \eqref{line:QSuse} follows from Theorem \ref{thm:QS-compn-bddness}.
	Since $\varepsilon > 0$ was arbitrary we obtain \eqref{eqn:pre-duality}.
\end{proof}

The tent space and $Z$-space embeddings of Chapter \ref{chap:fs-prelims} immediately yield corresponding embeddings of pre-Besov--Hardy--Sobolev spaces.

\begin{prop}[Mixed embeddings]\label{prop:adapted-space-embeddings}\index{embedding!of adapted BHS spaces}
	Let $\mb{p}$ and $\mb{q}$ be exponents with $\mb{p} \neq \mb{q}$ and $\mb{p} \hookrightarrow \mb{q}$.
	Then we have the embedding
	\begin{equation*}
		(\bbX_0)_{A}^{\mb{p}} \hookrightarrow (\bbX_1)_{A}^{\mb{q}},
	\end{equation*}
	where $\bbX_0,\bbX_1 \in \{\bbH,\bbB\}$, and the corresponding embedding holds for spectral subspaces.
\end{prop}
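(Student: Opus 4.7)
The plan is to reduce the proposition directly to the mixed embedding theorem for tent spaces and $Z$-spaces (Theorem \ref{thm:mixed-emb}), passing through the definition of the adapted spaces via a single auxiliary function. The key observation is that $\gY_\infty^\infty \subset \gY(\bbX_A^\mb{r})$ for every exponent $\mb{r}$ (by Proposition \ref{prop:equivalent-quasinorms}), so we can use a fixed $\gy \in \gY_\infty^\infty$ to define both $(\bbX_0)_A^\mb{p}$ and $(\bbX_1)_A^\mb{q}$ simultaneously.

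Concretely, I would fix a nondegenerate $\gy \in \gY_\infty^\infty$. For $f \in (\bbX_0)_A^\mb{p}$, by the definition of the pre-space we have $\bbQ_{\gy,A} f \in (X_0)^\mb{p} \cap L^2(\bbR^{1+n}_+)$ together with
\[
\nm{f \mid (\bbX_0)_A^\mb{p}} \simeq \nm{\bbQ_{\gy,A} f}_{(X_0)^\mb{p}}.
\]
Since $\mb{p} \hookrightarrow \mb{q}$ with $\mb{p} \neq \mb{q}$, Theorem \ref{thm:mixed-emb} gives the continuous inclusion $(X_0)^\mb{p} \hookrightarrow (X_1)^\mb{q}$, so $\bbQ_{\gy,A} f \in (X_1)^\mb{q} \cap L^2(\bbR^{1+n}_+)$ with
\[
\nm{\bbQ_{\gy,A} f}_{(X_1)^\mb{q}} \lesssim \nm{\bbQ_{\gy,A} f}_{(X_0)^\mb{p}}.
\]
This shows $f \in (\bbX_1)_A^\mb{q}$ and, again using Proposition \ref{prop:equivalent-quasinorms},
\[
\nm{f \mid (\bbX_1)_A^\mb{q}} \simeq \nm{\bbQ_{\gy,A} f}_{(X_1)^\mb{q}} \lesssim \nm{f \mid (\bbX_0)_A^\mb{p}},
\]
which is the desired embedding.

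The spectral subspace statement is then immediate: the spaces $(\bbX_i)_A^{\mb{r},\pm}$ are by definition $(\bbX_i)_A^{\mb{r}} \cap \overline{\mc{R}(A)}^\pm$, and intersecting both sides of the embedding $(\bbX_0)_A^\mb{p} \hookrightarrow (\bbX_1)_A^\mb{q}$ with the closed subspace $\overline{\mc{R}(A)}^\pm$ preserves the boundedness constant. There is no genuine obstacle here: all the work has already been done in establishing Theorem \ref{thm:mixed-emb} and the independence on auxiliary function (Proposition \ref{prop:equivalent-quasinorms}), and the proposition is essentially a translation of the ambient embedding to the adapted setting via the extension operator $\bbQ_{\gy,A}$.
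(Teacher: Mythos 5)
Your proof is correct and is the same argument the paper gives (only more explicitly spelled out): the paper's proof consists of citing the definition of $\bbX_A^\mb{p}$ together with Theorem \ref{thm:mixed-emb}, and intersecting with $\overline{\mc{R}(A)}^\pm$ for the spectral subspaces. Your unpacking via a fixed $\gy \in \gY_\infty^\infty$ and Proposition \ref{prop:equivalent-quasinorms} is exactly how that shorthand should be read.
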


\begin{proof}
	This follows directly from the definition of the spaces ${\bbX}_{A}^\mb{p}$ and from Theorem \ref{thm:mixed-emb}.
	Spectral subspace versions follow by intersecting with $\overline{\mc{R}(A)}^\pm$.
\end{proof}

\begin{rmk}
	For $\mb{p} = (p,s)$ we sometimes write $\bbX_A^\mb{p} = \bbX_{s,A}^p$, and for $\mb{p} = (\infty,s;\ga)$ we may write $\bbX_A^\mb{p} = \bbX_{s;\ga,A}^\infty$.
	This notation is a bit heavy, so we avoid it whenever possible, except in the case of $\bbX_{0,A}^2$, which we abbreviate as $\bbX_A^2$ (as is standard).
      \end{rmk}

      \begin{rmk}
        There are other ways to define adapted Hardy--Sobolev (or more generally, Triebel--Lizorkin) and Besov spaces.
        See in particular the work of Kunstmann and Ullmann \cite{KU14} and Ullmann's thesis \cite{aUthesis}, in which Triebel--Lizorkin spaces adapted to `$\mc{R}_s$-sectorial' operators are constructed via a vertical $s$-power function norm (a vertical square function when $s=2$).
        We have not investigated possible links between our adapted function spaces and theirs.
      \end{rmk}

\section{Mapping properties of the holomorphic functional calculus}\label{sec:Xhfc-props}

Just as we proved independence on auxiliary functions in the previous section, we can prove various mapping properties (including boundedness) of the holomorphic functional calculus between pre-Besov--Hardy--Sobolev spaces.\index{functional calculus!on adapted BHS spaces}
The first result says heuristically that an operator of homogeneity $\gd$ decreases regularity by $\gd$. 

\begin{thm}\label{thm:fcprops}
	Let $\mb{p}$ be an exponent and $\gd \in \bbR$.
	Suppose $\gh \in \gY_{-\gd}^{\gd}$.
	Then for all $f \in \mc{D}(\gh(A)) \cap \bbX_{A}^\mb{p}$,
	\begin{equation*}
		\nm{\gh(A)f}_{\bbX_{A}^{\mb{p}+\gd}} \lesssim \nm{\gh}_{\gY_{-\gd}^{\gd}} \nm{f}_{\bbX_{A}^\mb{p}},
	\end{equation*}
	and likewise for spectral subspaces.
\end{thm}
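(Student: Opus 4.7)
The plan is to reduce the boundedness statement to Theorem \ref{thm:QS-compn-bddness} (the mapping properties of contraction/extension compositions) via the abstract Calder\'on reproducing formula (Theorem \ref{thm:CRF}). The strategy mirrors the proof of Proposition \ref{prop:equivalent-quasinorms}, but with a nontrivial holomorphic function $\gh(A)$ sandwiched between the extension and contraction.

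First I would fix two auxiliary functions. Choose a nondegenerate $\gf \in \gY_\infty^\infty$ and let $\gn \in \gY_\infty^\infty$ be a Calder\'on sibling as in Theorem \ref{thm:CRF}, so that $\bbS_{\gn,A}\bbQ_{\gf,A} = I$ on $\overline{\mc{R}(A)}$. Choose also some $\gy \in \gY_\infty^\infty$, which in particular characterises $\bbX_A^{\mb{p}+\gd}$ by Proposition \ref{prop:equivalent-quasinorms}. For $f \in \mc{D}(\gh(A)) \cap \bbX_A^\mb{p}$ we have $f \in \overline{\mc{R}(A)}$ and hence $f = \bbS_{\gn,A} F$ with $F = \bbQ_{\gf,A} f \in X^\mb{p} \cap L^2(\bbR^{1+n}_+)$, and $\nm{F}_{X^\mb{p}} \simeq \nm{f}_{\bbX_A^\mb{p}}$. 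Applying $\gh(A)$ (which is permissible since $f \in \mc{D}(\gh(A))$) and then $\bbQ_{\gy,A}$ yields the identity
\begin{equation*}
  \bbQ_{\gy,A}\gh(A) f = \bbQ_{\gy,A}\gh(A)\bbS_{\gn,A} F
\end{equation*}
which has to be justified on $\mc{D}(\gh(A))$: using the composition rule $\gh(A)\gn_\gt(A) \supseteq (\gh\gn_\gt)(A)$ from the end of Section \ref{section:bisectorial-hfc}, the fact that $\gh \gn_\gt \in \gY_+^+$ (so $(\gh\gn_\gt)(A)$ is bounded and given by a Dunford integral), and similarly for the outer $\gy_t(A)$, one obtains the kernel representation
\begin{equation*}
  \bbQ_{\gy,A}\gh(A)\bbS_{\gn,A} \sim \left( (\gy_t \gh \gn_\gt)(A) \right)_{t,\gt > 0}
\end{equation*}
exactly as in \eqref{eqn:comp-repn}.

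Now Theorem \ref{thm:QS-compn-bddness} applies: since $\gy, \gn \in \gY_\infty^\infty$, the decay conditions on the auxiliary functions are trivially satisfied for any exponent $\mb{p}$ and any $\gd$, and we obtain the bound
\begin{equation*}
  \nm{\bbQ_{\gy,A}\gh(A)\bbS_{\gn,A} F}_{X^{\mb{p}+\gd}} \lesssim \nm{\gh}_{\gY_{-\gd}^\gd} \nm{F}_{X^\mb{p}}.
\end{equation*}
Combining this with $\nm{F}_{X^\mb{p}} \simeq \nm{f}_{\bbX_A^\mb{p}}$ and noting that $\bbQ_{\gy,A}\gh(A) f$ automatically lies in $L^2(\bbR^{1+n}_+)$ (since $\gy \in \gY_+^+$ makes $\bbQ_{\gy,A}$ bounded into $L^2(\bbR^{1+n}_+)$ and $\gh(A) f \in \overline{\mc{R}(A)} \subset L^2$), we conclude that $\gh(A) f \in \bbX_A^{\mb{p}+\gd}$ with the required norm bound. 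The spectral subspace version is immediate because $\gc^\pm(A)$ commutes with $\gh(A)$ in the functional calculus sense, so $\gh(A)$ preserves $\overline{\mc{R}(A)}^\pm$ on its domain.

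The main obstacle is the careful domain bookkeeping in step two: $\gh(A)$ is in general an unbounded operator (its definition via \eqref{eqn:unbdd-fc} involves inverting a regulariser), so one must verify that the factorisation $\bbQ_{\gy,A}\gh(A) f = \bbQ_{\gy,A}\gh(A)\bbS_{\gn,A}\bbQ_{\gf,A} f$ is a genuine equality of elements of $L^2(\bbR^{1+n}_+)$, rather than a formal manipulation. The resolution is that even though $\gh(A)$ may be unbounded, the \emph{products} $\gy_t(A)\gh(A)\gn_\gt(A)$ coincide with the bounded operators $(\gy_t \gh \gn_\gt)(A)$ on all of $L^2$ by the subset inclusion discussed at the end of Section \ref{section:bisectorial-hfc}, which is what allows the integral kernel representation to survive unbounded $\gh$.
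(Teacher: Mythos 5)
Your proof is correct and follows essentially the same route as the paper's: sandwich $\gh(A)$ between extension and contraction operators via the Calder\'on reproducing formula, reduce to Theorem \ref{thm:QS-compn-bddness} (taking all auxiliary functions in $\gY_\infty^\infty$ so the decay hypotheses hold trivially), and handle spectral subspaces by noting that $\gh(A)$ commutes with $\gc^\pm(A)$. The paper's version is terser and leaves the domain bookkeeping (that the kernel representation survives unbounded $\gh$ because $\gy_t(A)\gh(A)\gn_\gt(A) \subseteq (\gy_t\gh\gn_\gt)(A)$ with the latter bounded) implicit in the appeal to Theorem \ref{thm:QS-compn-bddness}, whereas you spell it out explicitly, which is a reasonable addition.
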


\begin{proof}
	Let $\gf,\gy \in \gY_\infty^\infty$ and let $\gn \in \gY_\infty^\infty$ be a Calder\'on sibling of $\gy$.
	Then for all $f \in \mc{D}(\gh(A)) \cap {\bbX}_{A}^\mb{p}$ we have
        \begin{equation*}
		\nm{\gh(A)f}_{{\bbX}_{A}^{\mb{p}+\gd}}
		\simeq \nm{\bbQ_{\gf,A} \gh(A) \bbS_{\gn,A} \bbQ_{\gy,A} f}_{X^{\mb{p}+\gd}}
		\lesssim \nm{\gh}_{\gY_{-\gd}^{\gd}} \nm{\bbQ_{\gy}f}_{X^\mb{p}}
		\simeq \nm{\gh}_{\gY_{-\gd}^{\gd}} \nm{f}_{{\bbX}_{A}^\mb{p}}
	\end{equation*}
	where the quasinorm estimate follows from Theorem \ref{thm:QS-compn-bddness}.
	To incorporate spectral subspaces in this argument, write $f \in \mc{D}(\gh(A)) \cap {\bbX}_{A}^{\mb{p},\pm}$ as $f = \gc^\pm(A) f$ and
	\begin{equation*}
		\gh(A) f = \gh(A) \gc^\pm(A) f = \gc^\pm(A) \gh(A) f,
	\end{equation*}
	and note that this shows that $\gh(A)$ maps $\mc{D}(\gh(A)) \cap\overline{\mc{R}(A)}^\pm$ into $\overline{\mc{R}(A)}^\pm$.
\end{proof}

Because the spaces ${\bbX}_{A}^\mb{p}$ may be incomplete, we cannot extend the operators $\gh(A)$ by boundedness without introducing completions.
This is done in Section \ref{sec:completions}.
Of course, when $\gh \in H^\infty$ we have $\mc{D}(\gh(A)) = \overline{\mc{R}(A)}$, and so we obtain bounded holomorphic functional calculus in the following sense.

\begin{cor}\label{cor:Xbhfc}
	Let $\mb{p}$ be an exponent and $\gh \in H^\infty$.
	Then for all $f \in {\bbX}_{A}^\mb{p}$,
	\begin{equation*}
		\nm{\gh(A)f}_{{\bbX}_{A}^\mb{p}} \lesssim \nm{\gh}_\infty \nm{f}_{{\bbX}_{A}^\mb{p}},
	\end{equation*}
	and likewise for spectral subspaces.
\end{cor}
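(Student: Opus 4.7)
The plan is to deduce this immediately from Theorem \ref{thm:fcprops} applied with $\gd = 0$. Since $H^\infty = \gY_0^0$, every $\gh \in H^\infty$ lies trivially in $\gY_{-\gd}^{\gd}$ for $\gd = 0$, with associated norm $\nm{\gh}_{\gY_0^0} = \nm{\gh}_\infty$. Applying Theorem \ref{thm:fcprops} (with $\mb{p}+\gd = \mb{p}$) would then yield
\begin{equation*}
  \nm{\gh(A)f}_{\bbX_A^\mb{p}} \lesssim \nm{\gh}_\infty \nm{f}_{\bbX_A^\mb{p}}
\end{equation*}
for every $f \in \mc{D}(\gh(A)) \cap \bbX_A^\mb{p}$.

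The only point requiring verification is that the domain restriction is vacuous. Because $A$ has bounded $H^\infty$ functional calculus on $\overline{\mc{R}(A)}$, the discussion in Section \ref{section:bisectorial-hfc} (see in particular \eqref{eqn:mcintosh}) tells us that for $\gh \in H^\infty$ the operator $\gh(A)$ is defined and bounded on all of $\overline{\mc{R}(A)}$, so $\mc{D}(\gh(A)) = \overline{\mc{R}(A)}$. Since $\bbX_A^\mb{p} \subset \overline{\mc{R}(A)}$ by definition, we have $\mc{D}(\gh(A)) \cap \bbX_A^\mb{p} = \bbX_A^\mb{p}$, and the bound from Theorem \ref{thm:fcprops} holds on all of $\bbX_A^\mb{p}$ as required.

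The spectral subspace statement is then a soft consequence. Given $f \in \bbX_A^{\mb{p},\pm} = \bbX_A^\mb{p} \cap \overline{\mc{R}(A)}^\pm$, the homomorphism property of the $H^\infty$ functional calculus gives
\begin{equation*}
  \gh(A) f = \gh(A) \gc^\pm(A) f = \gc^\pm(A) \gh(A) f \in \overline{\mc{R}(A)}^\pm,
\end{equation*}
so that $\gh(A) f \in \bbX_A^{\mb{p},\pm}$, and the quasinorm bound is inherited from the one already established on $\bbX_A^\mb{p}$. There is no genuine obstacle here: all of the analytic work is carried by Theorem \ref{thm:QS-compn-bddness} (via Theorem \ref{thm:fcprops}), and this corollary is simply the observation that its hypotheses become vacuous in the bounded case.
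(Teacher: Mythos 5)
Your proposal is correct and takes essentially the same route as the paper: the paper's proof consists precisely of applying Theorem~\ref{thm:fcprops} with $\gd=0$ (so $\gY_{-\gd}^{\gd} = \gY_0^0 = H^\infty$) and noting, as you do, that for $\gh\in H^\infty$ the operator $\gh(A)$ is everywhere defined on $\overline{\mc{R}(A)}$, so the intersection with $\mc{D}(\gh(A))$ is vacuous; your spectral-subspace argument is also the one used inside the proof of Theorem~\ref{thm:fcprops}.
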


This allows us to characterise the spectral subspaces $\bbX_A^{\mb{p}, \pm}$ as images of spectral projections.

\begin{cor}\label{cor:Xpm-charn}\index{space!adapted Besov--Hardy--Sobolev!positive and negative}
	Let $\mb{p}$ be an exponent.
	Then we have
	\begin{equation*}
		{\bbX}_{A}^{\mb{p},\pm} = \gc^\pm(A) {\bbX}_{A}^\mb{p}.
	\end{equation*}
\end{cor}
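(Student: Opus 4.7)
The plan is to prove the two inclusions separately, both following quickly from the fact that $\gc^\pm \in H^\infty$ together with the $H^\infty$ calculus boundedness from Corollary \ref{cor:Xbhfc}.

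First I would establish $\gc^\pm(A)\bbX_{A}^\mb{p} \subseteq \bbX_{A}^{\mb{p},\pm}$. Since $\gc^\pm \in H^\infty$, Corollary \ref{cor:Xbhfc} gives $\nm{\gc^\pm(A) f}_{\bbX_{A}^\mb{p}} \lesssim \nm{\gc^\pm}_{\infty} \nm{f}_{\bbX_{A}^\mb{p}}$ for every $f \in \bbX_{A}^\mb{p}$, so $\gc^\pm(A) f \in \bbX_{A}^\mb{p}$. On the other hand, by construction $\gc^\pm(A) f \in \gc^\pm(A)\overline{\mc{R}(A)} = \overline{\mc{R}(A)}^\pm$, and intersecting gives membership in $\bbX_{A}^{\mb{p},\pm}$.

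For the reverse inclusion $\bbX_{A}^{\mb{p},\pm} \subseteq \gc^\pm(A)\bbX_{A}^\mb{p}$, the key observation is that $\gc^\pm(A)$ acts as the identity on $\overline{\mc{R}(A)}^\pm$. This follows from the $\ast$-homomorphism property of the $H^\infty$ calculus on $\overline{\mc{R}(A)}$ recalled in Section \ref{section:bisectorial-hfc}: $\gc^\pm$ is idempotent as an element of $H^\infty$, so $\gc^\pm(A)$ is a projection on $\overline{\mc{R}(A)}$ with range $\overline{\mc{R}(A)}^\pm$. Hence for $f \in \bbX_{A}^{\mb{p},\pm} = \bbX_{A}^\mb{p} \cap \overline{\mc{R}(A)}^\pm$, we may write $f = \gc^\pm(A) g$ for some $g \in \overline{\mc{R}(A)}$, and then $\gc^\pm(A) f = (\gc^\pm)^2(A) g = \gc^\pm(A) g = f$. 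Since $f \in \bbX_{A}^\mb{p}$ already, this exhibits $f$ as $\gc^\pm(A)$ applied to an element of $\bbX_{A}^\mb{p}$, completing the proof.

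There is no real obstacle here; the corollary is essentially a bookkeeping consequence of Corollary \ref{cor:Xbhfc} (to put $\gc^\pm(A)$ into $\mc{L}(\bbX_{A}^\mb{p})$) and of the projection identity $\gc^\pm(A)|_{\overline{\mc{R}(A)}^\pm} = I$. The only thing worth flagging is that one must verify the computation $\gc^\pm(A) f = f$ uses only the functional calculus on $\overline{\mc{R}(A)}$, which is legitimate since $\bbX_{A}^\mb{p} \subseteq \overline{\mc{R}(A)}$ by construction of the adapted spaces.
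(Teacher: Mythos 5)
Your proof is correct and follows essentially the same route as the paper: both inclusions rest on Corollary~\ref{cor:Xbhfc} (to keep $\gc^\pm(A)$ acting boundedly on $\bbX_A^\mb{p}$) and on the fact that $\gc^\pm(A)$ restricts to the identity on $\overline{\mc{R}(A)}^\pm$. The paper simply states $f = \gc^\pm(A)f$ ``by definition'' where you spell out the idempotence via the $\ast$-homomorphism property, but the underlying argument is identical.
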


\begin{proof}
	If $f \in {\bbX}_{A}^{\mb{p},\pm}$ then by definition we have $f = \gc^\pm(A) f \in \gc^\pm(A) {\bbX}_{A}^\mb{p}$.
	Conversely, if $f \in \gc^\pm(A) {\bbX}_{A}^\mb{p}$, then $f$ is in $\overline{\mc{R}(A)}^\pm$, and by Corollary \ref{cor:Xbhfc} we have $f \in {\bbX}_{A}^\mb{p}$.
	Therefore $f \in \overline{\mc{R}(A)}^\pm \cap {\bbX}_{A}^\mb{p} = {\bbX}_{A}^{\mb{p},\pm}$.
\end{proof}

The power functions $A^\gl := [z \mapsto z^\gl](A)$ for $\gl \in \bbR \sm \{0\}$ (see Section \ref{section:bisectorial-hfc}) are generally unbounded on $\overline{\mc{R}(A)}$, but they do map between our adapted spaces with a shift in regularity (when we intersect with the domain).
This is a direct consequence of Theorem \ref{thm:fcprops} since $[z \mapsto z^\gl] \in \gY_{\gl}^{-\gl}$; the norm equivalence is obtained by applying Theorem \ref{thm:fcprops} with both $\gl$ and $-\gl$.

\begin{cor}\label{cor:regbump}
	Let $\mb{p}$ be an exponent and $\gl \in \bbR \sm \{0\}$.
	Then for all $f \in \mc{D}(A^\gl) \cap {\bbX}_{A}^\mb{p}$,
	\begin{equation*}
		\nm{A^\gl f}_{{\bbX}_{A}^{\mb{p} - \gl}} \simeq \nm{f}_{{\bbX}_{A}^\mb{p}}.
	\end{equation*}
\end{cor}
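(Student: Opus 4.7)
The plan is to invoke Theorem \ref{thm:fcprops} twice, once in each direction, using the symmetric pair of power functions $[z \mapsto z^{\gl}]$ and $[z \mapsto z^{-\gl}]$. First I would verify that $[z \mapsto z^\gl] \in \gY_\gl^{-\gl}$ (and symmetrically $[z \mapsto z^{-\gl}] \in \gY_{-\gl}^{\gl}$), which follows directly from the definition of $\gY_\gs^\gt$ in Section \ref{section:bisectorial-hfc} combined with the inequality $|z^\gl| = |z|^\gl$ and the two-sided branch convention adopted there.

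To obtain the upper estimate, I would apply Theorem \ref{thm:fcprops} with $\gh(z) = z^\gl$ and $\gd = -\gl$, so that $\gh \in \gY_{-\gd}^{\gd} = \gY_{\gl}^{-\gl}$ and $\mb{p} + \gd = \mb{p} - \gl$. This gives
\begin{equation*}
\nm{A^\gl f}_{\bbX_A^{\mb{p} - \gl}} \lesssim \nm{f}_{\bbX_A^\mb{p}}
\end{equation*}
for every $f \in \mc{D}(A^\gl) \cap \bbX_A^\mb{p}$. For the reverse estimate, I would apply Theorem \ref{thm:fcprops} with $\gh(z) = z^{-\gl}$ and $\gd = \gl$, noting that $\mc{D}(A^{-\gl}) = \mc{R}(A^\gl)$ on $\overline{\mc{R}(A)}$ (since $A^\gl$ is injective with dense range on $\overline{\mc{R}(A)}$ and $A^{-\gl}$ is its inverse), so for $f \in \mc{D}(A^\gl)$ the element $g := A^\gl f$ lies in $\mc{D}(A^{-\gl})$ with $A^{-\gl} g = f$. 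Combined with the first estimate, $g \in \mc{D}(A^{-\gl}) \cap \bbX_A^{\mb{p} - \gl}$, and the theorem yields
\begin{equation*}
\nm{f}_{\bbX_A^\mb{p}} = \nm{A^{-\gl} g}_{\bbX_A^{(\mb{p} - \gl) + \gl}} \lesssim \nm{g}_{\bbX_A^{\mb{p} - \gl}} = \nm{A^\gl f}_{\bbX_A^{\mb{p} - \gl}}.
\end{equation*}

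There is no real obstacle here; the only point requiring a moment of care is the domain bookkeeping, namely checking that $A^\gl f \in \mc{D}(A^{-\gl})$ so that the second application of Theorem \ref{thm:fcprops} is legitimate. This is handled by the general fact recorded in Section \ref{section:bisectorial-hfc} that the $H^\infty$-calculus on $\overline{\mc{R}(A)}$ produces inverse pairs $A^\gl$ and $A^{-\gl}$ whose composition agrees with the identity on the appropriate domain, so the two estimates concatenate cleanly into the claimed equivalence.
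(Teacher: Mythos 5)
Your proposal is correct and follows exactly the argument the paper intends: the text immediately preceding the corollary says the norm equivalence is obtained by applying Theorem \ref{thm:fcprops} with both $\gl$ and $-\gl$, which is precisely your two applications with $\gh(z)=z^{\pm\gl}$ and $\gd=\mp\gl$. Your domain bookkeeping (that $A^\gl f\in\mc{D}(A^{-\gl})$ on $\overline{\mc{R}(A)}$ because $A^\gl$ is injective with dense range there) correctly fills in the detail the paper leaves implicit.
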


Since the operators $A^\gl$ are all densely defined in $\overline{\mc{R}(A)}$, and since $A^{\gl_0} A^{\gl_1} = A^{\gl_0 + \gl_1}$ whenever this is meaningful, we have almost proven that $A^\gl$ is an isomorphism from ${\bbX}_{A}^\mb{p}$ to ${\bbX}_{A}^{\mb{p} - \gl}$.
We must extend everything by boundedness to make this rigorous.
As previously mentioned, this requires the introduction of completions.

\section{Completions, interpolation, and inclusions}\label{sec:completions}

\index{completion!of adapted BHS spaces}

The spaces ${\bbX}_{A}^\mb{p}$ defined in the previous section are called \emph{pre}-Besov--Hardy--Sobolev spaces because, with the exception of ${\bbX}_{A}^2 = \overline{\mc{R}(A)}$, they need not be complete.
One could try to solve this problem by taking arbitrary completions $({\bbX}_{A}^\mb{p})^c$ of ${\bbX}_{A}^\mb{p}$ and declaring these to be the Besov--Hardy--Sobolev spaces adapted to $A$.
However, if we take this approach, then for different exponents $\mb{p}_1$ and $\mb{p}_2$, there may not exist a natural topological vector space in which the completions $({\bbX}_{A}^{\mb{p}_i})^c$ both embed.
This presents difficulties when discussing interpolants of these completions.
The impact of this problem on abstract Hardy space theory seems to have first been discussed by the second author, McIntosh, and Morris \cite{AMM15}.
We avoid this issue by introducing certain \emph{canonical completions} within tent and $Z$-spaces (and hence within $L^0(\bbR^{1+n}_+)$).
If another completion is possible---for example, within the space $\mc{Z}^\prime(\bbR^n)$ of distributions modulo polynomials, in which the classical smoothness spaces are embedded---then we are free to identify this with our canonical completion.

By a \emph{completion} of a quasinormed space\index{completion!of a quasinormed space} $Q$ we mean a continuous injective map $\map{\gi}{Q}{\td{Q}}$, where $\td{Q}$ is a complete quasinormed space and $\gi(Q)$ is dense in $\td{Q}$.
By a \emph{weak-star completion}\index{completion!weak-star} of $Q$, we mean $\map{\gi}{Q}{\td{Q}}$ as above, where $\td{Q}$ is a dual space and where $\gi(Q)$ is weak-star dense in $\td{Q}$.
We may refer to $\td{Q}$ as the completion, with the inclusion $\gi$ being implicit.

In this section, whenever $\mb{p}$ is infinite, we interpret `completion' to mean `weak-star completion'.

\begin{dfn}\label{dfn:canonical-completion}
	For an exponent $\mb{p}$ and an auxiliary function $\gy \in \gY({\bbX}_{A}^\mb{p})$, define the \emph{canonical completion}\index{completion!canonical}\index{completion!of adapted BHS spaces}
	\begin{equation*}
		\gy{\mb{X}}_{A}^\mb{p} := \overline{\bbQ_{\gy,A} {\bbX}_{A}^\mb{p}} \subset X^\mb{p}
	\end{equation*}
	and likewise
	\begin{equation*}
		\gy{\mb{X}}_{A}^{\mb{p},\pm} := \overline{\bbQ_{\gy,A} {\bbX}_{A}^{\mb{p},\pm}} \subset \gy{\mb{X}}_{A}^\mb{p}
	\end{equation*}
	where the closures are taken in the $X^\mb{p}$ quasinorm when $\mb{p}$ is finite, and in the weak-star topology on $X^\mb{p}$ when $\mb{p}$ is infinite.
We equip $\gy\mb{X}_A^\mb{p}$ and $\gy\mb{X}_A^{\mb{p},\pm}$ with the $X^\mb{p}$ quasinorm, so that $\gy{\mb{X}}_{A}^\mb{p}$ and $\gy{\mb{X}}_{A}^{\mb{p},\pm}$ are quasi-Banach spaces.
\end{dfn}

\begin{rmk}
Heuristically, what we are doing here is shifting viewpoint from \emph{functions} to \emph{$A$-adapted $\gy$-expansions of functions}, with each choice of $\gy$ giving a different image of the space $\bbX_A^\mb{p}$.
This is like considering Fourier transforms of functions instead of functions themselves, except in this case we do not have a fixed `Fourier transform'; we only have a collection of `expanded Fourier multipliers' (the operators $\bbQ_{\gy,A}$).
This corresponds to the redundancy in the choice of auxiliary function used to define $\bbX_A^\mb{p}$.
The advantage of this approach is that the space $\bbQ_{\gy,A} \bbX_A^\mb{p}$ of $A$-adapted $\gy$-expansions of $\bbX_A^\mb{p}$-functions has a natural completion, while (at least it seems that) the space $\bbX_A^\mb{p}$ does not.
\end{rmk}

\begin{prop}\label{prop:completion}
	Fix $\mb{p}$ and $\gy$ as in Definition \ref{dfn:canonical-completion}.
	Then $\map{\bbQ_{\gy,A}}{{\bbX}_{A}^\mb{p}}{\gy{\mb{X}}_{A}^\mb{p}}$ and $\map{\bbQ_{\gy,A}}{{\bbX}_{A}^{\mb{p},\pm}}{\gy{\mb{X}}_{A}^{\mb{p},\pm}}$ are completions of ${\bbX}_{A}^\mb{p}$ and ${\bbX}_{A}^{\mb{p},\pm}$.
\end{prop}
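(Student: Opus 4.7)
The plan is to verify the four defining conditions of a (weak-star) completion for the map $\bbQ_{\gy,A}$: continuity, injectivity, density of image, and completeness of the target, treating the finite and infinite cases in parallel. Because the quasinorm on $\gy\mb{X}_A^\mb{p}$ is inherited from $X^\mb{p}$, the condition $\gy \in \gY(\bbX_A^\mb{p})$ immediately gives, for every $f \in \bbX_A^\mb{p}$, the equivalence
\begin{equation*}
\nm{\bbQ_{\gy,A} f}_{\gy\mb{X}_A^\mb{p}} = \nm{\bbQ_{\gy,A} f}_{X^\mb{p}} \simeq \nm{f}_{\bbX_A^\mb{p}},
\end{equation*}
so $\bbQ_{\gy,A}$ is not merely continuous but a quasi-isometric injection onto its image. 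Injectivity itself needs a separate appeal: since $\gy$ is assumed nondegenerate (being a member of $\gY(\bbX_A^\mb{p})$), Corollary \ref{cor:Q-injectivity} applies, and this is what prevents pathological elements of $\overline{\mc{R}(A)}$ from being sent to zero.

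Density is essentially tautological by the way $\gy\mb{X}_A^\mb{p}$ was defined: when $\mb{p}$ is finite, $\gy\mb{X}_A^\mb{p}$ is the norm closure of $\bbQ_{\gy,A}\bbX_A^\mb{p}$ in $X^\mb{p}$, and when $\mb{p}$ is infinite, it is the weak-star closure in $X^\mb{p}$ (viewed as the dual of $X^{\mb{p}^\prime}$ via Theorem \ref{thm:ts-duality} or Proposition \ref{prop:Z-duality-reflexive}). So the only remaining task is to check that the target is a genuine Banach (or weak-star) object: in the finite case, $\gy\mb{X}_A^\mb{p}$ is a closed subspace of the quasi-Banach space $X^\mb{p}$ and hence itself quasi-Banach; in the infinite case, it is a weak-star closed subspace of a dual space, hence itself a dual space, so the `weak-star completion' requirement is met.

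For the spectral subspace statement, the argument is identical once we know $\bbQ_{\gy,A}\bbX_A^{\mb{p},\pm}$ sits inside the closure defining $\gy\mb{X}_A^{\mb{p},\pm}$, which it does by construction; injectivity, continuity and completeness all inherit directly from the ambient statement.

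I expect no single step to be a serious obstacle here — the proposition is really a bookkeeping check rather than a theorem of substance. The one place where care is needed is the infinite-exponent case: one must be careful that `completion' in Definition \ref{dfn:canonical-completion} is interpreted as weak-star completion (as flagged in the running convention of the section), so that the weak-star closure really produces a dual space, and that the injectivity statement from Corollary \ref{cor:Q-injectivity} is still applicable because $\gy$ is required to be nondegenerate as an element of $\gY(\bbX_A^\mb{p})$. Everything else is a direct unwinding of definitions.
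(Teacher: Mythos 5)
Your proposal is correct and proceeds exactly as the paper does: completeness and density hold by construction of the canonical completion, continuity follows from $\gy \in \gY(\bbX_A^\mb{p})$, and injectivity is Corollary \ref{cor:Q-injectivity}; the spectral subspace case inherits all of this. The only minor extra you add is the observation that $\bbQ_{\gy,A}$ is in fact a quasi-isometry onto its image, which is true but not needed.
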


\begin{proof}
	By construction the spaces $\gy{\mb{X}}_{s,A}^p$ and $\gy{\mb{X}}_{s,A}^{p,\pm}$ are complete and contain $\bbQ_{\gy,A} {\bbX}_{A}^\mb{p}$ and $\bbQ_{\gy,A} {\bbX}_{A}^{\mb{p},\pm}$ respectively as dense subspaces (in the weak-star topology when $\mb{p}$ is infinite).
	The map $\map{\bbQ_{\gy,A}}{{\bbX}_{A}^\mb{p}}{\gy{\mb{X}}_{A}^\mb{p}}$ is continuous since $\gy \in \gY({\bbX}_{A}^\mb{p})$, and injective by Corollary \ref{cor:Q-injectivity}.
	These properties automatically continue to hold for the restrictions of $\bbQ_{\gy,A}$ to the spectral subspaces ${\bbX}_{A}^{\mb{p},\pm}$.
\end{proof}

Completions are unique up to isomorphism, so any completion may be identified with any canonical completion.
It is useful to make this identification precise.

\begin{prop}[Identification of completions]\label{prop:completion-identification}
	Fix $\mb{p}$ and $\gy$ as in Definition \ref{dfn:canonical-completion} and suppose that $\map{\gi}{{\bbX}_{A}^\mb{p}}{\mb{X}}$ is a completion of ${\bbX}_{A}^\mb{p}$.
	Then there exists a unique map $\map{\mb{Q}_{\gy,A}^\gi}{\mb{X}}{X^\mb{p}}$ such that the triangle
	\begin{equation*}
		\xymatrix{
			\mb{X} \ar[dr]^{\mb{Q}_{\gy,A}^\gi} & \\
			{\bbX}_{A}^\mb{p} \ar[u]^{\gi} \ar[r]_{\bbQ_{\gy,A}} & X^\mb{p}
		}
	\end{equation*}
	commutes, and this map is an isomorphism between $\mb{X}$ and $\gy{\mb{X}}_{A}^\mb{p}$.
	Its inverse is the map $\map{\mb{S}_{\gy,A}^\gi}{X^\mb{p}}{\mb{X}}$ which is the unique continuous extension of the map $\map{\gi \circ \bbS_{\gn,A}}{X^2 \cap X^\mb{p}}{\mb{X}}$ (using the weak-star topology on $X^\mb{p}$ when $\mb{p}$ is infinite) for any $\gn \in \gY_\infty^\infty$ which is a Calder\'on sibling of $\gy$.
	The same results hold if we replace all spaces with corresponding spectral subspaces.
\end{prop}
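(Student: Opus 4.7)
The plan is to construct $\mb{Q}_{\gy,A}^\gi$ and its inverse on its image separately by density arguments, then verify the two are mutually inverse via the Calder\'on reproducing formula (Theorem \ref{thm:CRF}). I would first treat the finite-exponent case, where all density and continuity statements are taken in the norm topology; for infinite $\mb{p}$ one replaces norm density and norm continuity by the corresponding weak-star notions (invoking Corollary \ref{cor:adapted-space-int-dens} and Proposition \ref{prop:mixed-x-density} in their weak-star forms), but no essentially new ideas are needed.

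First I would define $\mb{Q}_{\gy,A}^\gi$ on the dense subspace $\gi(\bbX_A^\mb{p}) \subset \mb{X}$ by the forced formula $\mb{Q}_{\gy,A}^\gi(\gi(f)) := \bbQ_{\gy,A}(f)$, which is well-defined and linear by injectivity of $\gi$, and automatically satisfies the commutation $\mb{Q}_{\gy,A}^\gi \circ \gi = \bbQ_{\gy,A}$. Since $\gy \in \gY(\bbX_A^\mb{p})$ gives $\nm{\bbQ_{\gy,A}(f)}_{X^\mb{p}} \simeq \nm{f \mid \bbX_A^\mb{p}}$, and since a completion identifies $\bbX_A^\mb{p}$ isomorphically with its image $\gi(\bbX_A^\mb{p}) \subset \mb{X}$, the map is continuous on $\gi(\bbX_A^\mb{p})$. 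Completeness of $X^\mb{p}$ then yields a unique continuous extension to all of $\mb{X}$, and its image lies in the closure of $\bbQ_{\gy,A}(\bbX_A^\mb{p})$, which by Definition \ref{dfn:canonical-completion} is exactly $\gy\mb{X}_A^\mb{p}$.

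Next, fix any Calder\'on sibling $\gn \in \gY_\infty^\infty$ of $\gy$, which exists by Theorem \ref{thm:CRF}. By Proposition \ref{prop:Xss-charn}, the contraction $\bbS_{\gn,A}$ sends $X^2 \cap X^\mb{p}$ into $\bbX_A^\mb{p}$ continuously, with $\nm{\bbS_{\gn,A}F \mid \bbX_A^\mb{p}} \lesssim \nm{F}_{X^\mb{p}}$. Composing with $\gi$ gives a continuous map $\gi \circ \bbS_{\gn,A}: X^2 \cap X^\mb{p} \to \mb{X}$. Since $X^2 \cap X^\mb{p}$ is dense in $X^\mb{p}$ (Proposition \ref{prop:mixed-x-density}) and $\mb{X}$ is complete, this extends uniquely by continuity to a map $\mb{S}_{\gy,A}^\gi: X^\mb{p} \to \mb{X}$.

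Finally, I would verify that the two maps are mutually inverse on the appropriate domains. The reproducing formula gives $\bbS_{\gn,A}\bbQ_{\gy,A}f = f$ for all $f \in \bbX_A^\mb{p} \subset \overline{\mc{R}(A)}$, so
\begin{equation*}
(\mb{S}_{\gy,A}^\gi \circ \mb{Q}_{\gy,A}^\gi)(\gi(f)) = \gi(\bbS_{\gn,A}\bbQ_{\gy,A}f) = \gi(f)
\end{equation*}
on $\gi(\bbX_A^\mb{p})$; by density and continuity this forces $\mb{S}_{\gy,A}^\gi \circ \mb{Q}_{\gy,A}^\gi = I_\mb{X}$. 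Conversely, on the dense subspace $\bbQ_{\gy,A}(\bbX_A^\mb{p}) \subset \gy\mb{X}_A^\mb{p}$, the composition $\mb{Q}_{\gy,A}^\gi \circ \mb{S}_{\gy,A}^\gi$ sends $\bbQ_{\gy,A}f \mapsto \bbQ_{\gy,A}(\bbS_{\gn,A}\bbQ_{\gy,A}f) = \bbQ_{\gy,A}f$, so by continuity it is the identity on $\gy\mb{X}_A^\mb{p}$. Hence $\mb{Q}_{\gy,A}^\gi$ is an isomorphism onto $\gy\mb{X}_A^\mb{p}$ with inverse given by the restriction of $\mb{S}_{\gy,A}^\gi$. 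The spectral subspace version follows by restricting everything to $\overline{\mc{R}(A)}^\pm$, using that $\gc^\pm(A)$ commutes with both $\bbQ_{\gy,A}$ and $\bbS_{\gn,A}$ and that Corollary \ref{cor:Xpm-charn} identifies $\bbX_A^{\mb{p},\pm}$ via the spectral projections. The main obstacle is the infinite-exponent case, where norm-density, continuity, and uniqueness arguments must each be replaced by their weak-star counterparts; this requires checking that $\bbQ_{\gy,A}$ and $\bbS_{\gn,A}$ are weak-star continuous on the relevant subspaces, which follows from the duality set-up of Section \ref{sec:eco} together with the weak-star density statement of Corollary \ref{cor:adapted-space-int-dens}.
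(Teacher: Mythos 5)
Your proof is correct and takes essentially the same approach as the paper, merely unpacking the universal-property appeal into an explicit density-and-extension argument: you construct $\mb{Q}_{\gy,A}^\gi$ and $\mb{S}_{\gy,A}^\gi$ directly by the forced formulas on the relevant dense subspaces and extend by continuity, where the paper instead invokes the universal property of completions to produce the same maps, after which both proofs verify mutual inverseness via the Calder\'on reproducing formula on dense subspaces. The one caveat you correctly flag --- that in the infinite-exponent case one needs weak-star continuity of $\bbQ_{\gy,A}$ and $\bbS_{\gn,A}$, which follows from realising them as adjoints --- is also left implicit in the paper's appeal to ``weak-star completions,'' so the two arguments carry the same content.
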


\begin{proof}
	Since $\map{\bbQ_{\gy,A}}{{\bbX}_{A}^\mb{p}}{\gy{\mb{X}}_{A}^\mb{p}}$ is a completion of ${\bbX}_{A}^\mb{p}$, by the universal property of completions there exists a unique map $\map{\wtd{\mb{Q}}_{\gy,A}^\gi}{\mb{X}}{\gy{\mb{X}}_{A}^\mb{p}}$ such that the triangle
	\begin{equation*}
		\xymatrix{
			\mb{X} \ar[dr]^{\wtd{\mb{Q}}_{\gy,A}^\gi} & \\
			{\bbX}_{A}^\mb{p} \ar[u]^{\gi} \ar[r]_{\bbQ_{\gy,A}} & \gy{\mb{X}}_{A}^\mb{p}
		}
	\end{equation*}
	commutes.
	Hence we have a commutative diagram
	\begin{equation*}
		\xymatrix{
		\mb{X} \ar[dr]^{\wtd{\mb{Q}}_{\gy,A}^\gi} \ar[r]^{\mb{Q}_{\gy,A}^\gi} & X^\mb{p} \\
		{\bbX}_{A}^\mb{p} \ar[u]_{\gi} \ar[r]_{\bbQ_{\gy,A}} & \gy{\mb{X}}_{A}^\mb{p} \ar[u]_{\text{id}} .
		}
	\end{equation*}
	Since
	\begin{equation*}
		(\id \circ \wtd{\mb{Q}}_{\gy,A}^\gi) \circ \gi = \bbQ_{\gy,A} = \mb{Q}_{\gy,A}^\gi \circ \gi,
	\end{equation*}
	by uniqueness we must have $\mb{Q}_{\gy,A}^\gi = \text{id} \circ \wtd{\mb{Q}}_{\gy,A}^\gi = \wtd{\mb{Q}}_{\gy,A}^\gi$.
	Therefore it suffices to show that $\wtd{\mb{Q}}_{\gy,A}^\gi$ satisfies the desired properties.
	
	To show that $\mb{S}_{\gy,A}^\gi \wtd{\mb{Q}}_{\gy,A}^\gi = \id_{\mb{X}}$, observe that we have a commutative diagram
	\begin{equation}\label{eqn:big-diagram}
		\xymatrix{
		\mb{X} \ar[r]^(.4){\wtd{\mb{Q}}_{\gy,A}^\gi} & \gy{\mb{X}}_{A}^\mb{p} \ar@{^{(}->}[r] & X^\mb{p} \ar[r]^{\mb{S}_{\gy,A}^\gi } & \mb{X} \ar[r]^(.4){\wtd{\mb{Q}}_{\gy,A}^\gi} & \gy{\mb{X}}_{A}^\mb{p} \\
		{\bbX}_{A}^\mb{p} \ar[u]^{\gi} \ar[r]^(.4){\bbQ_{\gy,A}} \ar@/_2pc/[rrr]_{\id} & \gy({\bbX}_{A}^\mb{p}) \ar@{^{(}->}[u] \ar@{^{(}->}[r] \ar@/_2pc/[rrr]_{\id} & X^2 \cap X^\mb{p} \ar@{^{(}->}[u] \ar[r]^(.6){\bbS_{\gn,A}} & {\bbX}_{A}^\mb{p} \ar[u]_{\gi} \ar[r]^(.4){\bbQ_{\gy,A}} & \gy({\bbX}_{A}^\mb{p}). \ar@{^{(}->}[u]
		}
	\end{equation}
	Thus we have
	\begin{equation*}
	\mb{S}_{\gy,A}^\gi \wtd{\mb{Q}}_{\gy,A}^\gi \gi = \gi \circ \id = \gi
	\end{equation*}
	and
	\begin{equation*}
		\wtd{\mb{Q}}_{\gy,A}^\gi \mb{S}_{\gy,A}^\gi|_{\gy({\bbX}_{A}^\mb{p})} = \id, 
	\end{equation*}
	so by uniqueness of extensions we must have that $\wtd{\mb{Q}}_{\gy,A}^\gi$ and $\mb{S}_{\gy,A}^\gi$ are mutual inverses.
	The proof for spectral subspaces is identical.
\end{proof}

As a corollary of this argument we can show that $\gy \mb{X}_A^\mb{p}$ is a retract of $X^\mb{p}$.
This is crucial in identifying interpolants.

\begin{cor}\label{cor:QSproj}
	Fix $\mb{p}$, $\gy$, and $\gn$ as in Proposition \ref{prop:completion-identification}, and let $\map{\gi}{{\bbX}_{A}^\mb{p}}{\mb{X}}$ be a completion of ${\bbX}_{A}^\mb{p}$.
	Then the map $\map{\mb{Q}_{\gy,A}^\gi \mb{S}_{\gn,A}^\gi}{X^\mb{p}}{\gy{\mb{X}}_{A}^\mb{p}}$ is the extension of the projection $\map{\bbQ_{\gy, A} \bbS_{\gn, A} }{X^2 \cap X^\mb{p}}{\bbQ_{\gy,A} {\bbX}_{A}^\mb{p}}$ in the appropriate topology (hence independent of $\gi$), and it is a projection onto $\gy{\mb{X}}_{A}^\mb{p}$.
	The same statements hold for spectral subspaces.
\end{cor}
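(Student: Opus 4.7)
The plan is to reduce everything to the concrete identity $\bbQ_{\gy,A} \bbS_{\gn,A}$ on the dense subspace $X^2 \cap X^{\mb{p}}$ and then extend by (weak-star, when $\mb{p}$ is infinite) continuity. I will treat the main and spectral-subspace statements together, since the arguments are identical after replacing every space by its spectral counterpart.

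First I would verify agreement on the dense subspace. Fix $F \in X^2 \cap X^{\mb{p}}$. By Proposition \ref{prop:Xss-charn}, $\bbS_{\gn,A} F \in {\bbX}_{A}^{\mb{p}}$, so Proposition \ref{prop:completion-identification} gives $\mb{S}_{\gn,A}^{\gi} F = \gi(\bbS_{\gn,A} F)$ on this subspace. Postcomposing with the defining relation $\mb{Q}_{\gy,A}^{\gi} \circ \gi = \bbQ_{\gy,A}$ yields
\begin{equation*}
\mb{Q}_{\gy,A}^{\gi} \mb{S}_{\gn,A}^{\gi} F = \mb{Q}_{\gy,A}^{\gi} \gi (\bbS_{\gn,A} F) = \bbQ_{\gy,A} \bbS_{\gn,A} F.
\end{equation*}
Since $X^2 \cap X^{\mb{p}}$ is (weak-star) dense in $X^{\mb{p}}$ by Proposition \ref{prop:mixed-x-density}, and since continuous (respectively weak-star continuous) extensions from a dense subspace are unique, the operator $\mb{Q}_{\gy,A}^{\gi} \mb{S}_{\gn,A}^{\gi}$ is determined by this restriction. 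The right-hand side depends only on $\gy$, $\gn$, and $A$, so this forces $\gi$-independence.

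For the projection property, I would invoke Theorem \ref{thm:CRF}: since $\gn \in \gY_\infty^\infty \subset \gY_+^+$ is a Calder\'on sibling of $\gy$, the map $\bbQ_{\gy,A} \bbS_{\gn,A}$ is a projection on $L^2(\bbR^{1+n}_+) = X^2$ onto $\bbQ_{\gy,A} \overline{\mc{R}(A)}$. In particular, the Calder\'on reproducing formula $\bbS_{\gn,A} \bbQ_{\gy,A} = I$ on ${\bbX}_{A}^{\mb{p}}$ shows that $\bbQ_{\gy,A} \bbS_{\gn,A}$ acts as the identity on $\bbQ_{\gy,A}{\bbX}_{A}^{\mb{p}}$, and maps $X^2 \cap X^{\mb{p}}$ into this set. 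Passing to the (weak-star) closure and using the $\gi$-independent extension established above, $\mb{Q}_{\gy,A}^{\gi} \mb{S}_{\gn,A}^{\gi}$ sends $X^{\mb{p}}$ into $\gy{\mb{X}}_{A}^{\mb{p}}$ and restricts to the identity on $\gy{\mb{X}}_{A}^{\mb{p}}$. This is precisely the statement that it is a projection onto $\gy{\mb{X}}_{A}^{\mb{p}}$.

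I do not anticipate a substantial obstacle: the corollary is essentially a bookkeeping exercise built on Proposition \ref{prop:completion-identification} and Theorem \ref{thm:CRF}. The only mild subtlety is maintaining the correct topology (norm versus weak-star) when invoking continuity and density, but both ingredients are already set up uniformly across the finite and infinite cases in the preceding density and completion statements, so no extra work is required.
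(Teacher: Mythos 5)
Your proof is correct and follows essentially the same route the paper intends, namely: reduce to the commutative triangle from Proposition~\ref{prop:completion-identification} on the dense (or weak-star dense) subspace $X^2 \cap X^{\mb{p}}$, use uniqueness of the (weak-star) continuous extension to obtain $\gi$-independence, and derive idempotency from the Calder\'on reproducing formula $\bbS_{\gn,A}\bbQ_{\gy,A} = I$ on ${\bbX}_A^{\mb{p}}$. The only small imprecision is your appeal to Theorem~\ref{thm:CRF} for the projection property: as written, that theorem gives (with roles matched to your notation) that $\bbQ_{\gn,A}\bbS_{\gy,A}$ is an $L^2$-projection when $\gy \in \gY_+^+$, not directly that $\bbQ_{\gy,A}\bbS_{\gn,A}$ is. Fortunately the corollary's projection claim is the latter, and you do supply the right justification in the following sentence: idempotency follows immediately from $\bbS_{\gn,A}\bbQ_{\gy,A} = I$ on ${\bbX}_A^{\mb{p}}$, which holds for any $\gy \in \gY({\bbX}_A^{\mb{p}})$ (including $\gy \in H^\infty \setminus \gY_+^+$) thanks to the $L^2$-valuedness of $\bbQ_{\gy,A}f$ for $f \in {\bbX}_A^{\mb{p}}$. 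If you tightened the write-up, you would either restrict the citation of Theorem~\ref{thm:CRF} to the reproducing identity alone, or make the idempotency computation $(\bbQ_{\gy,A}\bbS_{\gn,A})^2 = \bbQ_{\gy,A}(\bbS_{\gn,A}\bbQ_{\gy,A})\bbS_{\gn,A} = \bbQ_{\gy,A}\bbS_{\gn,A}$ explicit; this is the step the paper regards as ``immediate from the diagram'' since $\mb{Q}_{\gy,A}^\gi$ and $\mb{S}_{\gn,A}^\gi$ are mutual inverses on $\mb{X} \leftrightarrow \gy\mb{X}_A^\mb{p}$, so $\mb{Q}_{\gy,A}^\gi\mb{S}_{\gn,A}^\gi$ restricts to the identity on $\gy\mb{X}_A^\mb{p}$.
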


We write $\mb{Q}_{\gy,A} \mb{S}_{\gn,A} := \mb{Q}_{\gy,A}^\gi \mb{S}_{\gn,A}^\gi$ to denote the extension considered above.
Now that we have thought hard enough about completions, we can extend the duality and boundedness results of the previous sections.

\begin{prop}[Duality]\label{prop:completion-duality}\index{duality!of canonical completions}\index{duality!of adapted BHS spaces}
	Let $\mb{p}$ be a finite exponent, and let $\gy, \gn \in \gY_\infty^\infty$ be Calder\'on siblings.
	Then the $L^2$ duality pairing identifies $\td{\gn} {\mb{X}}_{A^*}^{\mb{p}^\prime}$ as the Banach space dual of $\gy {\mb{X}}_{A}^\mb{p}$, and also identifies $\td{\gn} {\mb{X}}_{A^*}^{\mb{p}^\prime,\pm}$ as the Banach space dual of $\gy {\mb{X}}_{A}^{\mb{p},\pm}$.
\end{prop}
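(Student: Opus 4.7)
The plan is to realise $\gy\mb{X}_A^\mb{p}$ as the image of a bounded projection $\mb{P}$ on the tent/$Z$-space $X^\mb{p}$, to show that the $L^2(\bbR^{1+n}_+)$-adjoint of $\mb{P}$ is another bounded projection whose range is $\td{\gn}\mb{X}_{A^*}^{\mb{p}^\prime}$, and then to apply the standard Banach space identification of the dual of a complemented subspace. The preparatory step is to check that $(\td{\gn},\td{\gy})$ are Calder\'on siblings for $A^*$: using $\td{\gf}_t(z) = \overline{\gf_t(\overline z)}$ one computes
\begin{equation*}
  \gF_{\td{\gn},\td{\gy}}(z) = \overline{\gF_{\gy,\gn}(\overline z)} = \overline{1} = 1
\end{equation*}
on every bisector, so $\bbS_{\td{\gy},A^*}\bbQ_{\td{\gn},A^*} \subseteq I$ on $\overline{\mc{R}(A^*)}$; since $\td{\gn},\td{\gy}\in\gY_\infty^\infty$ the completion $\td{\gn}\mb{X}_{A^*}^{\mb{p}^\prime}$ is well defined.

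Next, Corollary \ref{cor:QSproj} applied to $(A,\gy,\gn)$ and to $(A^*,\td{\gn},\td{\gy})$ furnishes bounded projections
\begin{equation*}
  \mb{P} := \mb{Q}_{\gy,A}\mb{S}_{\gn,A} \colon X^\mb{p} \to X^\mb{p}, \qquad \mb{R} := \mb{Q}_{\td{\gn},A^*}\mb{S}_{\td{\gy},A^*} \colon X^{\mb{p}^\prime} \to X^{\mb{p}^\prime}
\end{equation*}
with ranges $\gy\mb{X}_A^\mb{p}$ and $\td{\gn}\mb{X}_{A^*}^{\mb{p}^\prime}$ respectively. Starting from the integral formula of Proposition \ref{prop:Srep}, a direct computation gives $\bbS_{\gn,A}^* = \bbQ_{\td{\gn},A^*}$ and $\bbQ_{\gy,A}^* = \bbS_{\td{\gy},A^*}$ on the natural dense subspace $L^2_c(\bbR_+:L^2)$; extending by density then shows that $\mb{R}$ is the $L^2(\bbR^{1+n}_+)$-adjoint of $\mb{P}$ with respect to the tent/$Z$-space dualities of Theorem \ref{thm:ts-duality}, Proposition \ref{prop:Z-duality-reflexive}, and their non-reflexive counterparts. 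Now I would apply the standard fact: if $\mb{P}$ is a bounded projection on a Banach space $X$ with range $Y$, then the restriction of the $\langle X,X^*\rangle$ pairing identifies the dual of $Y$ with $\mathrm{range}(\mb{P}^*)$. Concretely, any $\phi\in(\gy\mb{X}_A^\mb{p})^*$ extends by Hahn--Banach to a functional on $X^\mb{p}$, represented by some $H\in X^{\mb{p}^\prime}$, and then $\phi(F) = \langle \mb{P}F,H\rangle = \langle F, \mb{R}H\rangle$ for all $F\in\gy\mb{X}_A^\mb{p}$, with $\mb{R}H\in\td{\gn}\mb{X}_{A^*}^{\mb{p}^\prime}$. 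Uniqueness of the representative follows from nondegeneracy of the $L^2(\bbR^{1+n}_+)$ pairing on $\gy\mb{X}_A^\mb{p}\times\td{\gn}\mb{X}_{A^*}^{\mb{p}^\prime}$, which holds because $\mb{R}G = G$ whenever $G\in\td{\gn}\mb{X}_{A^*}^{\mb{p}^\prime}$.

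To transfer the pairing from $L^2(\bbR^{1+n}_+)$ on completions to $L^2(\bbR^n)$ on the pre-spaces, I would observe that for $f\in\bbX_A^\mb{p}$ and $g\in\bbX_{A^*}^{\mb{p}^\prime}$ the Calder\'on reproducing formula gives
\begin{equation*}
  \langle f,g\rangle_{L^2(\bbR^n)} = \langle \bbS_{\gn,A}\bbQ_{\gy,A}f,g\rangle_{L^2(\bbR^n)} = \langle \bbQ_{\gy,A}f,\bbQ_{\td{\gn},A^*}g\rangle_{L^2(\bbR^{1+n}_+)},
\end{equation*}
so the two pairings are intertwined by the defining embeddings $\bbX\hookrightarrow\gy\mb{X}$ and extend consistently by density (or weak-star density). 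For the spectral subspaces, Corollary \ref{cor:Xbhfc} says $\gc^\pm(A)$ is bounded on $\bbX_A^\mb{p}$ and therefore extends to a bounded projection on $\gy\mb{X}_A^\mb{p}$ with range $\gy\mb{X}_A^{\mb{p},\pm}$; a direct computation on the bisector gives $\td{\gc}^\pm = \gc^\pm$, so $\gc^\pm(A)^* = \gc^\pm(A^*)$, and the whole argument above repeats verbatim with $\mb{P}$ replaced by $\mb{Q}_{\gy,A}\gc^\pm(A)\mb{S}_{\gn,A}$ and $\mb{R}$ replaced by $\mb{Q}_{\td{\gn},A^*}\gc^\pm(A^*)\mb{S}_{\td{\gy},A^*}$.

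The main obstacle will be the case $i(\mb{p})\leq 1$, where $\mb{p}^\prime$ is infinite: here the canonical completion $\td{\gn}\mb{X}_{A^*}^{\mb{p}^\prime}$ is a weak-star closure and $\mb{R}$ must be built as a weak-star to weak-star continuous operator on $X^{\mb{p}^\prime}$. The key tool is that the non-reflexive duality theorems for tent and $Z$-spaces both provide \emph{absolutely} convergent $L^2$ pairings, which ensures that $\mb{R}$ can be defined through the adjoint integral identity and inherits the correct continuity; after this verification the algebraic identifications described above go through unchanged.
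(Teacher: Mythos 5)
Your proof is correct and takes essentially the same approach as the paper: both realise $\gy\mb{X}_A^\mb{p}$ as the range of the bounded projection $\mb{Q}_{\gy,A}\mb{S}_{\gn,A}$ on $X^\mb{p}$, identify its adjoint with $\mb{Q}_{\td{\gn},A^*}\mb{S}_{\td{\gy},A^*}$ on $X^{\mb{p}^\prime}$, and lift the tent/$Z$-space duality through the projection. The paper writes out the particular extension $\gF := \gf\circ\mb{Q}_{\gy,A}\mb{S}_{\gn,A}$ directly rather than invoking Hahn--Banach and the abstract complemented-subspace framing, but the mechanism is identical; your concern about weak-star continuity for infinite $\mb{p}^\prime$ is legitimate and is handled in the paper implicitly through Corollary \ref{cor:QSproj}.
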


\begin{proof}
	If $f \in \gy {\mb{X}}_{A}^\mb{p}$ and $g \in \td{\gn} {\mb{X}}_{A^*}^{\mb{p}^\prime}$, then we immediately have
	\begin{equation*}
		|\langle f,g \rangle| \leq \nm{f}_{X^\mb{p}} \nm{g}_{X^{\mb{p}^\prime}} = \nm{f}_{\gy {\mb{X}}_{A}^\mb{p}} \nm{g}_{\td{\gn} {\mb{X}}_{A^*}^{\mb{p}^\prime}},	
	\end{equation*}
	so every $g \in \td{\gn} {\mb{X}}_{A^*}^{\mb{p}^\prime}$ induces a bounded linear functional on ${\mb{X}}_{A}^\mb{p}$.
	Conversely, suppose $\gf \in (\gy{\mb{X}}_{A}^\mb{p})^\prime$.
	Then we can define a bounded linear functional $\gF \in (X^\mb{p})^\prime$ by
	\begin{equation*}
		\gF(F) := \gf(\mb{Q}_{\gy,A} \mb{S}_{\gn,A} F)
	\end{equation*}
	for all $F \in X^\mb{p}$.
	By $X$-space duality, there exists a function $G_\gF \in X^{\mb{p}^\prime}$ such that
	\begin{equation*}
		\langle F,G_\gF \rangle = \gF(F)
	\end{equation*}
	for all $F \in X^\mb{p}$, satisfying
	\begin{equation*}
		\nm{G_\gF}_{X^{\mb{p}^\prime}} \simeq \nm{\gF}_{(X^\mb{p})^\prime} \lesssim \nm{\gf}_{(\gy{\mb{X}}_{A}^\mb{p})^\prime}.
	\end{equation*}
	Hence for all $f \in \gy{\mb{X}}_{A}^\mb{p}$ we have
	\begin{equation*}
		\langle f,(\mb{Q}_{\gy,A} \mb{S}_{\gn,A})^* G_\gF \rangle
		= \langle f,G_\gF \rangle
		= \gF(f)
		= \gf(f)
	\end{equation*}
	since $f = \mb{Q}_{\gy,A} \mb{S}_{\gn,A} f$.
	Since $\mb{Q}_{\gy,A} \mb{S}_{\gn,A}$ is the continuous extension of $\bbQ_{\gy,A} \bbS_{\gn,A}$ from $X^2 \cap X^\mb{p}$ to $X^\mb{p}$, and since $(\bbQ_{\gy,A} \bbS_{\gn,A})^* = \bbQ_{\td{\gn},A^*} \bbS_{\td{\gy},A^*}$ on $X^2$, we find that $(\mb{Q}_{\gy,A} \mb{S}_{\gn,A})^* = \mb{Q}_{\td{\gn},A^*} \mb{S}_{\td{\gy},A^*}$.
	Therefore
	\begin{equation*}
		\gf(f) = \langle f, G_\gf \rangle
	\end{equation*}
	for all $f \in \gy {\mb{X}}_{A}^\mb{p}$, where $G_\gf = \mb{Q}_{\td{\gn},A^*} \mb{S}_{\td{\gy},A^*} G_\gF \in \td{\gn} {\mb{X}}_{A^*}^{\mb{p}^\prime}$.
	Furthermore we have
	\begin{equation*}
		\nm{G_\gf}_{\td{\gn}{\mb{X}}_{A^*}^{\mb{p}^\prime}}
		= \nm{\mb{Q}_{\td{\gn},A^*} \mb{S}_{\td{\gy},A^*} G_\gF}_{X^{\mb{p}^\prime}}
		\lesssim \nm{\gf}_{(\gy{\mb{X}}_{A}^\mb{p})^\prime}.
	\end{equation*}
	As with every other result in this section, the same proof works for spectral subspaces.
	\end{proof}

\begin{prop}[Boundedness of functional calculus]\label{prop:comp-fc}\index{functional calculus!on canonical completions}\index{functional calculus!on adapted BHS spaces}
	Let $\mb{p}$ be an exponent, $\gd \in \bbR$, and $\gh \in \gY_{-\gd}^{\gd}$.
	Suppose $\map{\gi_1}{\bbX_A^\mb{p}}{\mb{X}}$ and $\map{\gi_2}{\bbX_A^{\mb{p} + \gd}}{\mb{Y}}$ are completions.
	Then $\gh(A)$ extends to a bounded operator $\map{\wtd{\gh(A)}}{\mb{X}}{\mb{Y}}$, in the sense that the diagram
	\begin{equation*}
		\xymatrix{
			\mc{D}(\gh(A)) \cap \bbX_A^\mb{p} \ar[d]_{\gi_1} \ar[rr]^{\gh(A)} & &\bbX_A^{\mb{p} + \gd} \ar[d]^{\gi_2} \\
			\mb{X} \ar[rr]_{\wtd{\gh(A)}} & & \mb{Y} 
		}
	\end{equation*}
	commutes, and that
	\begin{equation}\label{eqn:completed-fc}
		\nm{\wtd{\gh(A)} f}_{\mb{Y}} \lesssim \nm{\gh}_{\gY_{-\gd}^\gd} \nm{f}_{\mb{X}}.
	\end{equation}
	for all $f \in \mb{X}$.
	Similar results hold for spectral subspaces.
\end{prop}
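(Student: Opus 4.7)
The plan is to first establish the result for canonical completions, where we can work concretely with the tent/$Z$-space realisations, and then transfer to arbitrary completions via the isomorphisms provided by Proposition \ref{prop:completion-identification}.

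First I would fix auxiliary functions $\gy_1, \gy_2 \in \gY_\infty^\infty$, together with their Calder\'on siblings $\gn_1, \gn_2 \in \gY_\infty^\infty$, so that the canonical completions $\gy_1\mb{X}_A^\mb{p}$ and $\gy_2\mb{X}_A^{\mb{p}+\gd}$ make sense and Proposition \ref{prop:completion-identification} applies to both $\gi_1$ and $\gi_2$. The key observation is that $\bbQ_{\gy_2,A}\,\gh(A)\,\bbS_{\gn_1,A}$ falls precisely within the scope of Theorem \ref{thm:QS-compn-bddness}: since $\gy_2, \gn_1 \in \gY_\infty^\infty$, the decay requirements in \eqref{eqn:QS-thm-assn} are satisfied for the exponent $\mb{p}$ with shift $\gd$, so this composition extends to a bounded operator
\begin{equation*}
  T_\gh := \bbQ_{\gy_2,A}\,\gh(A)\,\bbS_{\gn_1,A} : X^\mb{p} \longrightarrow X^{\mb{p}+\gd}
\end{equation*}
with operator norm controlled by $\nm{\gh}_{\gY_{-\gd}^\gd}$.

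Next I would define the candidate extension on canonical completions as $T_\gh$ restricted to $\gy_1\mb{X}_A^\mb{p}$, and verify that its image lies in $\gy_2\mb{X}_A^{\mb{p}+\gd}$ by checking on the dense subspace $\bbQ_{\gy_1,A}\bbX_A^\mb{p}$: for $f \in \mc{D}(\gh(A)) \cap \bbX_A^\mb{p}$, the Calder\'on reproducing formula (Theorem \ref{thm:CRF}) gives $\bbS_{\gn_1,A}\bbQ_{\gy_1,A} f = f$, whence
\begin{equation*}
  T_\gh(\bbQ_{\gy_1,A} f) = \bbQ_{\gy_2,A}\,\gh(A) f \in \bbQ_{\gy_2,A}\bbX_A^{\mb{p}+\gd},
\end{equation*}
which is dense in $\gy_2\mb{X}_A^{\mb{p}+\gd}$ and witnesses commutativity of the diagram at the canonical level. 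Transporting $T_\gh$ via the isomorphisms $\mb{Q}_{\gy_1,A}^{\gi_1} : \mb{X} \to \gy_1\mb{X}_A^\mb{p}$ and $\mb{S}_{\gn_2,A}^{\gi_2} : \gy_2\mb{X}_A^{\mb{p}+\gd} \to \mb{Y}$ (both from Proposition \ref{prop:completion-identification}) produces the sought extension
\begin{equation*}
  \wtd{\gh(A)} := \mb{S}_{\gn_2,A}^{\gi_2} \circ T_\gh \circ \mb{Q}_{\gy_1,A}^{\gi_1} : \mb{X} \longrightarrow \mb{Y},
\end{equation*}
and the bound \eqref{eqn:completed-fc} drops out of Theorem \ref{thm:QS-compn-bddness} together with the boundedness of these isomorphisms.

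Commutativity of the stated diagram for general $\gi_1, \gi_2$ follows from the identity established on canonical completions together with the defining property of $\mb{Q}_{\gy_j,A}^{\gi_j}$ (namely that they intertwine $\gi_j$ with $\bbQ_{\gy_j,A}$). Independence of $\wtd{\gh(A)}$ from the choice of $\gy_1, \gy_2, \gn_1, \gn_2$ is then a consequence of uniqueness of continuous extensions from a dense subspace. The spectral subspace version is identical: one restricts at every stage to $\overline{\mc{R}(A)}^\pm$, using that $\gh(A)$ commutes with $\gc^\pm(A)$ and that Theorem \ref{thm:QS-compn-bddness} applies verbatim. The main technical point to handle carefully will be the verification that $\wtd{\gh(A)}$ is genuinely well-defined independently of the intermediate auxiliary functions --- this is not hard but requires two applications of the Calder\'on reproducing formula to match up different expansions --- but no new ideas beyond those already encapsulated in Theorem \ref{thm:QS-compn-bddness} and Corollary \ref{cor:QSproj} are required.
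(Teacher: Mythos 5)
Your proof is correct, but it takes a genuinely different route from the paper's. The paper's proof is a two-liner: it first shows that $\mc{D}(\gh(A)) \cap \bbX_A^\mb{p}$ is dense in $\bbX_A^\mb{p}$ (invoking Corollary~\ref{cor:adapted-space-int-dens} and density of $\mc{D}(\gh(A))$ in $\overline{\mc{R}(A)}$, with a weak-star modification for infinite exponents), and then combines the already-established estimate $\nm{\gh(A)f}_{\bbX_A^{\mb{p}+\gd}} \lesssim \nm{\gh}_{\gY_{-\gd}^\gd}\nm{f}_{\bbX_A^\mb{p}}$ from Theorem~\ref{thm:fcprops} with the universal property of completions. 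You, instead, bypass Theorem~\ref{thm:fcprops} entirely and rebuild the key operator $T_\gh = \bbQ_{\gy_2,A}\,\gh(A)\,\bbS_{\gn_1,A}$ directly from Theorem~\ref{thm:QS-compn-bddness}, realising the extension concretely on the canonical completions $\gy_j\mb{X}_A^\mb{p}$ and then transporting it to arbitrary completions via the isomorphisms of Proposition~\ref{prop:completion-identification}. Both approaches are sound; the paper's is shorter because it has Theorem~\ref{thm:fcprops} on hand and abstracts everything through the universal property, whereas yours is more self-contained within the canonical-completion framework and makes the extension operator explicit rather than merely asserting its existence. Your route does effectively reprove the estimate of Theorem~\ref{thm:fcprops} as a subroutine, which is a harmless redundancy given that you correctly trace its dependence on Theorem~\ref{thm:QS-compn-bddness}; the only point you leave implicit, and the paper must also address, is the weak-star modification needed when $\mb{p}$ or $\mb{p}+\gd$ is infinite, but this is handled automatically by Theorem~\ref{thm:QS-compn-bddness} and Proposition~\ref{prop:completion-identification} as you set things up.
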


\begin{proof}
	Since $\mc{D}(\gh(A))$ is dense in $\bbX_A^2 = \overline{\mc{R}(A)}$ and since $\bbX_A^2 \cap \bbX_A^\mb{p}$ is dense in $\bbX_A^\mb{p}$ (Corollary \ref{cor:adapted-space-int-dens} for finite exponents, duality for infinite exponents using the weak-star topology), we have that $\mc{D}(\gh(A)) \cap \bbX_A^\mb{p}$ is dense in $\bbX_A^\mb{p}$.
	The result then follows from Theorem \ref{thm:fcprops} and the universal property of completions.
\end{proof}

\begin{rmk}
	Evidently `completed' versions of Corollaries \ref{cor:Xbhfc}, \ref{cor:Xpm-charn},  and \ref{cor:regbump} can be formulated.
\end{rmk}

\begin{rmk}
	In the situation of Proposition \ref{prop:comp-fc} we will write $\gh(A)$ to denote both the original operator $\mc{D}(\gh(A)) \cap \bbX_A^\mb{p} \to \bbX_A^{\mb{p}+\gd}$ and its extension to completions $\mb{X} \to \mb{Y}$.
	This will not cause any ambiguity, but one should be careful.
\end{rmk}

\begin{rmk}
	If $\mb{X}$ is a completion of $\bbX_A^\mb{p}$, then the spectral projections $\gc^\pm(A)$ extend to projections on $\mb{X}$ by Proposition \ref{prop:comp-fc}, so we get a spectral decomposition
	\begin{equation*}
		\mb{X} = \mb{X}^+ \oplus \mb{X}^-.
	\end{equation*}
	It is immediate that $\mb{X}^+$ and $\mb{X}^-$ are completions of $\bbX_A^{\mb{p},+}$ and $\bbX_A^{\mb{p},-}$ respectively.
	That is, every completion of $\bbX_A^\mb{p}$ induces natural completions of the spectral subspaces $\bbX_A^{\mb{p},\pm}$.
\end{rmk}

Finally we can present the interpolation theorem for canonical completions of pre-Besov--Hardy--Sobolev spaces.
Having established so much abstract theory, this is now a simple consequence of the interpolation results for tent spaces and $Z$-spaces.

\begin{thm}[Interpolation of completions]\label{thm:completed-interpolation}\index{interpolation!of canonical completions}\index{interpolation!of adapted BHS spaces}
	Fix $0 < \gh < 1$ and $\gy \in \gY_\infty^\infty$.
	Let $\mb{p}_0$ and $\mb{p}_1$ be exponents, and set $\mb{p}_\gh := [\mb{p}_0,\mb{p}_1]_\gh$.
	\begin{enumerate}[(i)]
		\item
		Suppose $j(\mb{p}_0), j(\mb{p}_1) \geq 0$, with equality for at most one exponent.
		Then we have the identification
		\begin{equation*}
			[\gy \mb{H}_A^{\mb{p}_0}, \gy \mb{H}_A^{\mb{p}_1}]_\gh = \gy\mb{H}_A^{\mb{p}_\gh}.
		\end{equation*}
		
		\item
		Suppose at least one of $\mb{p}_0$ and $\mb{p}_1$ is finite.
		Then we have the identification
		\begin{equation*}
			[\gy \mb{B}_A^{\mb{p}_0}, \gy \mb{B}_A^{\mb{p}_1}]_\gh = \gy\mb{B}_A^{\mb{p}_\gh}.
		\end{equation*}
		
		\item
		Suppose $\gq(\mb{p}_0) \neq \gq(\mb{p}_1)$.
		Then we have the identification
		\begin{equation*}
			(\gy \mb{X}_A^{\mb{p}_0}, \gy \mb{X}_A^{\mb{p}_1})_{\gh,i(\mb{p}_\gh)} = \gy\mb{B}_A^{\mb{p}_\gh}.
		\end{equation*}
	\end{enumerate}
	All of these statements have analogues for spectral subspaces.
\end{thm}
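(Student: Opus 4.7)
The plan is to reduce everything to the corresponding interpolation results for tent spaces and $Z$-spaces via the retraction/coretraction formalism. By Proposition \ref{prop:completion-identification} and Corollary \ref{cor:QSproj}, for any $\gy \in \gY_\infty^\infty$ and any Calder\'on sibling $\gn \in \gY_\infty^\infty$ of $\gy$, the canonical completion $\gy\mb{X}_A^\mb{p}$ sits inside $X^\mb{p}$ as the image of the projection $\mb{Q}_{\gy,A}\mb{S}_{\gn,A}$, which is continuous from $X^\mb{p}$ onto $\gy\mb{X}_A^\mb{p}$, and the inclusion $\map{\iota}{\gy\mb{X}_A^\mb{p}}{X^\mb{p}}$ satisfies $\mb{Q}_{\gy,A}\mb{S}_{\gn,A} \circ \iota = \id$. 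Crucially, because $\gy,\gn \in \gY_\infty^\infty$, these maps are simultaneously bounded for \emph{every} exponent and (by Proposition \ref{prop:comp-fc}) the resulting bounds are uniform in the sense needed for interpolation.

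Once this retract structure is in place, I would apply the standard retraction/coretraction principle \cite[\S 1.2.4]{hT78}: if $E_i$ is a retract of $F_i$ via a common coretraction $\iota$ and retraction $\rho$ compatible across $i=0,1$, then for any interpolation functor $\mathcal{F}$ one has $\mathcal{F}(E_0,E_1) = \rho\,\mathcal{F}(F_0,F_1)$, with the quasinorm on the left equivalent to the subspace quasinorm induced from the right. For part (i), take $F_i = T^{\mb{p}_i}$ and apply Theorem \ref{thm-wts-c-interpolation} (complex interpolation of tent spaces) to obtain $[T^{\mb{p}_0},T^{\mb{p}_1}]_\gh = T^{\mb{p}_\gh}$, then retract by $\mb{Q}_{\gy,A}\mb{S}_{\gn,A}$ to conclude $[\gy\mb{H}_A^{\mb{p}_0},\gy\mb{H}_A^{\mb{p}_1}]_\gh = \gy\mb{H}_A^{\mb{p}_\gh}$. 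Part (ii) is identical using Proposition \ref{prop:Z-cint} in place of Theorem \ref{thm-wts-c-interpolation}. Part (iii) likewise uses Theorem \ref{thm:ts-rint-full} (if $\mb{X}=\mb{H}$) or Proposition \ref{prop:Z-rint-full} (if $\mb{X}=\mb{B}$), whose common conclusion $(X^{\mb{p}_0},X^{\mb{p}_1})_{\gh,i(\mb{p}_\gh)} = Z^{\mb{p}_\gh}$ retracts to $\gy\mb{B}_A^{\mb{p}_\gh}$.

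The main obstacle I anticipate is checking compatibility of the retract maps across the two endpoint spaces when they have different integrability or regularity, particularly in the infinite-exponent case where the closures defining the canonical completions are taken in the weak-star topology rather than the norm topology. Here one must verify that $\iota$ and $\mb{Q}_{\gy,A}\mb{S}_{\gn,A}$ really do act as a single pair of maps, independent of the endpoint, which is exactly why we chose $\gy \in \gY_\infty^\infty$ (a single auxiliary function admissible for every $\mb{p}$, by Proposition \ref{prop:equivalent-quasinorms}); combined with the density statement of Corollary \ref{cor:adapted-space-int-dens} this ensures the extensions agree. A secondary technical point is that the Kalton--Mitrea complex method used throughout is well-defined for quasi-Banach couples, so part (ii) goes through even when $i(\mb{p}_0) < 1$.

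Finally, the spectral subspace analogues follow immediately by observing that $\gc^\pm(A)$ extends (by Proposition \ref{prop:comp-fc} applied to $\gc^\pm \in H^\infty$) to bounded projections on every canonical completion $\gy\mb{X}_A^\mb{p}$ that commute with $\mb{Q}_{\gy,A}\mb{S}_{\gn,A}$. Thus $\gy\mb{X}_A^{\mb{p},\pm}$ is the range of a bounded projection on $\gy\mb{X}_A^\mb{p}$ which is compatible across the interpolation couple, so interpolating intertwines with applying $\gc^\pm(A)$, yielding the spectral versions of (i), (ii), (iii) at no additional cost.
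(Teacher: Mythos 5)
Your proposal is correct and follows essentially the same route as the paper: set up $\mb{Q}_{\gy,A}\mb{S}_{\gn,A}$ as a common retraction from the couple $(X^{\mb{p}_0}, X^{\mb{p}_1})$ onto the couple of canonical completions, invoke the retraction/coretraction interpolation theorem, and then cite the tent- and $Z$-space interpolation results. The extra detail you give about the spectral subspace case (commutation of $\gc^\pm(A)$ with the retraction) and about the weak-star issue for infinite exponents is accurate but is only implicit in the paper's terser proof.
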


\begin{proof}
	Fix a Calder\'on sibling $\gn \in \gY_\infty^\infty$ of $\gy$.
	By Corollary \ref{cor:QSproj} the map $\bbQ_{\gy,A} \bbS_{\gn,A}$ extends to a map $\map{\mb{Q}_{\gy,A} \mb{S}_{\gn,A}}{X^{\mb{p}_0} + X^{\mb{p}_1}}{\gy \mb{X}_A^{\mb{p}_0} + \gy \mb{X}_A^{\mb{p}_1}}$ which restricts to projections $X^{\mb{p}_0} \to \gy \mb{X}_A^{\mb{p}_0}$ and $X^{\mb{p}_1} \to \gy \mb{X}_A^{\mb{p}_1}$.
	Therefore by the retraction/coretraction interpolation theorem (see \cite[\textsection 1.2.4]{hT78}),\footnote{This is only stated for Banach spaces in the given reference. The only property specific to Banach spaces which is needed is the validity of the closed graph theorem, which also holds for quasi-Banach spaces \cite[\textsection 2]{nK03}.} for all interpolation functors $\mc{F}$ we have
	\begin{equation*}
		\mc{F}(\mb{X}_A^{\mb{p}_0}, \mb{X}_A^{\mb{p}_1}) = \mb{Q}_{\gy,A} \mb{S}_{\gn,A} \mc{F}(X^{\mb{p}_0}, X^{\mb{p}_1}).
	\end{equation*}
	The results then follow from Corollary \ref{cor:QSproj} and the interpolation theorems for tent- and $Z$-spaces (Theorems \ref{thm-wts-c-interpolation} and \ref{thm:ts-rint-full}, and Propositions \ref{prop:Z-rint-full} and \ref{prop:Z-cint}).
\end{proof}

\begin{rmk}\label{rmk:interpolation-extn}
	The assumption $\gy \in \gY_\infty^\infty$ is only for convenience.
	The argument still holds provided that $\gy \in \gY(\bbX_A^\mb{p})$ for all the spaces $\bbX_A^\mb{p}$ involved.
	Furthermore, when interpolating between (e.g. positive) spectral subspaces, it suffices to have  $\nm{f}_{\bbX_A^{\mb{p},+}} \simeq \nm{\bbQ_{\gy,A} f}_{X^\mb{p}}$ for all $f \in \bbX_A^{\mb{p},+}$.
	We will use this observation in Section \ref{sec:solspace-interpolation}.
\end{rmk}

We now present an application of this interpolation theorem.
For applications to boundary value problems, it will be absolutely crucial to know when we have $\bbX_{DB}^\mb{p} = \bbX_D^\mb{p}$, where $D$ is the Dirac operator and $B$ is a multiplier as in Subsection \ref{ssec:fop}.
As a preliminary step, we consider inclusions of the form $\bbX_{A_0}^\mb{p} \hookrightarrow \bbX_{A_1}^\mb{p}$ for operators $A_0$ and $A_1$ satisfying the Standard Assumptions and with equal range closures (as is the case with $D$ and $DB$).

\begin{dfn}\label{dfn:inclusion-regions}
	Suppose $A_0$ and $A_1$ are operators satisfying the Standard Assumptions, and additionally assume that $\overline{\mc{R}(A_0)} = \overline{\mc{R}(A_1)} =: \overline{\mc{R}}$.
	Define the \emph{inclusion region}\index{region!inclusion}
	\begin{equation*}
		i(\mb{X},A_0,A_1) := \{\text{$\mb{p} \in \mb{E}_\text{fin} : \nm{f}_{\bbX_{A_1}^\mb{p}} \lesssim \nm{f}_{\bbX_{A_0}^\mb{p}}$ for all $f \in \overline{\mc{R}}$} \}.
	\end{equation*}
\end{dfn}

It is technically simpler for us to restrict to finite exponents in this definition, and this is good enough for our applications, but it is not essential.

\begin{lem}\label{lem:incl-QS}
	Suppose that $\mb{p}$ is a finite exponent, and let $A_0$ and $A_1$ be as in Definition \ref{dfn:inclusion-regions}.
	Then $\mb{p} \in i(\mb{X},A_0,A_1)$ if and only if for all Calder\'on siblings $\gf,\gy \in \gY_\infty^\infty$ the composition
	\begin{equation*}
		\bbQ_{\gf,A_0} \bbX_{A_0}^\mb{p} \stackrel{ \bbS_{\gy,A_0} }{ \longrightarrow } \overline{\mc{R}} \stackrel{ \bbQ_{\gf,A_1} }{ \longrightarrow} L^2(\bbR_+ : L^2)
	\end{equation*}
	is bounded from $\bbQ_{\gf,A_0} \bbX_{A_0}^\mb{p} $ to $\bbQ_{\gf,A_1} \bbX_{A_1}^\mb{p}$.
\end{lem}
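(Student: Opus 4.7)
The plan is to unpack both directions of the equivalence by combining two ingredients: the norm identification $\nm{f}_{\bbX_{A_i}^\mb{p}} \simeq \nm{\bbQ_{\gf, A_i} f}_{X^\mb{p}}$, valid for $i \in \{0,1\}$ because $\gf \in \gY_\infty^\infty \subset \gY(\bbX_{A_i}^\mb{p})$ (Proposition \ref{prop:equivalent-quasinorms}), and the Calder\'on reproducing formula $\bbS_{\gy, A_0} \bbQ_{\gf, A_0} = I$ on $\overline{\mc{R}}$ (Theorem \ref{thm:CRF}), which holds by the Calder\'on sibling hypothesis on $(\gf,\gy)$. Injectivity of $\bbQ_{\gf, A_1}$ on $\overline{\mc{R}(A_1)} = \overline{\mc{R}}$ (Corollary \ref{cor:Q-injectivity}) will pin down the unique preimage when we need it.

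For the forward direction, I would assume $\mb{p} \in i(\mb{X}, A_0, A_1)$ and take an arbitrary element $F$ of $\bbQ_{\gf, A_0} \bbX_{A_0}^\mb{p}$, written as $F = \bbQ_{\gf, A_0} f$ for the unique $f \in \bbX_{A_0}^\mb{p}$. Applying $\bbS_{\gy, A_0}$ returns $f$ by the Calder\'on formula, so $\bbQ_{\gf, A_1} \bbS_{\gy, A_0} F = \bbQ_{\gf, A_1} f$. The inclusion hypothesis gives $f \in \bbX_{A_1}^\mb{p}$ with $\nm{f}_{\bbX_{A_1}^\mb{p}} \lesssim \nm{f}_{\bbX_{A_0}^\mb{p}}$, and invoking the equivalence $\nm{\cdot}_{\bbX_{A_i}^\mb{p}} \simeq \nm{\bbQ_{\gf, A_i} \cdot}_{X^\mb{p}}$ on both sides translates this into $\nm{\bbQ_{\gf, A_1} f}_{X^\mb{p}} \lesssim \nm{F}_{X^\mb{p}}$, which is exactly the required boundedness of the composition into $\bbQ_{\gf, A_1} \bbX_{A_1}^\mb{p}$.

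For the converse, I would fix $f \in \bbX_{A_0}^\mb{p}$ and set $F := \bbQ_{\gf, A_0} f \in \bbQ_{\gf, A_0} \bbX_{A_0}^\mb{p}$, with $\nm{F}_{X^\mb{p}} \simeq \nm{f}_{\bbX_{A_0}^\mb{p}}$. The Calder\'on formula again gives $\bbQ_{\gf, A_1} \bbS_{\gy, A_0} F = \bbQ_{\gf, A_1} f$, and the assumed boundedness places this in $\bbQ_{\gf, A_1} \bbX_{A_1}^\mb{p}$. Injectivity of $\bbQ_{\gf, A_1}$ on $\overline{\mc{R}}$ then forces $f \in \bbX_{A_1}^\mb{p}$, and transferring the norm estimate for the composition through Proposition \ref{prop:equivalent-quasinorms} yields $\nm{f}_{\bbX_{A_1}^\mb{p}} \lesssim \nm{f}_{\bbX_{A_0}^\mb{p}}$, so $\mb{p} \in i(\mb{X}, A_0, A_1)$.

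There is no real obstacle: the whole argument is a short bookkeeping exercise showing that, under the Calder\'on reproducing formula, the set-theoretic inclusion of adapted spaces $\bbX_{A_0}^\mb{p} \hookrightarrow \bbX_{A_1}^\mb{p}$ is nothing other than the boundedness of the natural `change-of-operator' intertwiner $\bbQ_{\gf, A_1} \bbS_{\gy, A_0}$ between the $\gf$-expansions. The only mildly subtle point is to be sure that the boundedness statement is interpreted with the $X^\mb{p}$-quasinorm on both $\bbQ_{\gf, A_i} \bbX_{A_i}^\mb{p}$, as is the case by construction of these images, so that the norm identifications from Proposition \ref{prop:equivalent-quasinorms} apply cleanly.
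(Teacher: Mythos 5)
Your proof is correct and matches the paper's approach: both arguments rest on the equivalence $\nm{f}_{\bbX_{A_i}^\mb{p}} \simeq \nm{\bbQ_{\gf,A_i} f}_{X^\mb{p}}$ from Proposition \ref{prop:equivalent-quasinorms} together with the Calder\'on reproducing formula, the only cosmetic difference being that the paper writes the argument as a single chain of ``if and only if'' statements whereas you unpack the two implications separately.
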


\begin{proof}
	For any pair of Calder\'on siblings $\gf,\gy \in \gY_\infty^\infty$, we have
	\begin{equation*}
		\nm{f}_{\bbX_{A_1}^\mb{p}} \lesssim \nm{f}_{\bbX_{A_0}^{\mb{p}}} \qquad \text{for all $f \in \overline{\mc{R}}$}
	\end{equation*}
	if and only if
	\begin{equation}\label{eqn:controleqn}
		\nm{ \bbQ_{\gf,A_1} f }_{X^\mb{p}} \lesssim \nm{\bbQ_{\gf,A_0} f}_{X^\mb{p}} \qquad \text{for all $f \in \overline{\mc{R}}$}
	\end{equation}
	if and only if
	\begin{equation*}
		\nm {\bbQ_{\gf,A_1} \bbS_{\gf,A_0} F }_{X^\mb{p}} \lesssim \nm{ F }_{X^\mb{p}} \qquad \text{for all $F \in \bbQ_{\gf,A_0} \bbX_{A_0}^\mb{p}$,}
	\end{equation*}
	using that the right hand side of \eqref{eqn:controleqn} is finite if and only if $f = \bbS_{\gy,A_0} F$ for some $F \in \bbQ_{\gf,A_0} \bbX_{A_0}^\mb{p}$, and the Calder\'on reproducing formula (combining Proposition \ref{prop:Xss-charn} and Theorem \ref{thm:CRF}).
\end{proof}

\begin{thm}[Interpolation of inclusions of adapted spaces]\label{thm:inclusion-interpolation}\index{interpolation!of inclusions of adapted BHS spaces}
	Let $A_0$ and $A_1$ be as in Definition \ref{dfn:inclusion-regions}.
	Let $\mb{p}_0$ and $\mb{p}_1$ be finite exponents, let $\gh \in (0,1)$, and set $\mb{p}_\gh := [\mb{p}_0,\mb{p}_1]_\gh$.
	\begin{enumerate}[(i)]
		\item
			If $\mb{p}_0,\mb{p}_1 \in i(\mb{H},A_0,A_1)$, then $\mb{p}_\gh \in i(\mb{H},A_0,A_1)$.
		\item
			If $\mb{p}_0,\mb{p}_1 \in i(\mb{B},A_0,A_1)$, then $\mb{p}_\gh \in i(\mb{B},A_0,A_1)$.
		\item
			If $\gq(\mb{p}_0) \neq \gq(\mb{p}_1)$ and $\mb{p}_0,\mb{p}_1 \in i(\mb{X},A_0,A_1)$, then $\mb{p}_\gh \in i(\mb{B},A_0,A_1)$.
	\end{enumerate}
\end{thm}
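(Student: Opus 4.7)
The strategy is to use Lemma \ref{lem:incl-QS} to reformulate membership in $i(\mb{X}, A_0, A_1)$ as a boundedness statement for a single composition operator between canonical completions, interpolate that statement via Theorem \ref{thm:completed-interpolation}, and then translate back. Fix once and for all a pair of Calder\'on siblings $\gy, \gf \in \gY_\infty^\infty$ and set
\[
T := \bbQ_{\gy, A_1} \bbS_{\gf, A_0},
\]
initially defined on $L^2_c(\bbR_+ : L^2)$ via the absolutely convergent integral representation of Proposition \ref{prop:Srep}. By Lemma \ref{lem:incl-QS}, for each $j \in \{0,1\}$ the hypothesis $\mb{p}_j \in i(\mb{X}, A_0, A_1)$ is equivalent to the extension of $T$ to a bounded operator $T_j : \gy \mb{X}_{A_0}^{\mb{p}_j} \to \gy \mb{X}_{A_1}^{\mb{p}_j}$, regarded as a map between the canonical completions sitting as subspaces of $X^{\mb{p}_j}$.

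Since $T_0$ and $T_1$ are norm-continuous extensions of the same densely-defined operator $T$, and since $L^2_c(\bbR_+ : L^2)$ is dense in both $X^{\mb{p}_0}$ and $X^{\mb{p}_1}$, a routine density argument shows that $T_0$ and $T_1$ agree on the intersection $\gy \mb{X}_{A_0}^{\mb{p}_0} \cap \gy \mb{X}_{A_0}^{\mb{p}_1}$ (taken inside the sum $X^{\mb{p}_0} + X^{\mb{p}_1}$). Hence $T_0$ and $T_1$ assemble into a single operator between the interpolation couples $(\gy \mb{X}_{A_0}^{\mb{p}_0}, \gy \mb{X}_{A_0}^{\mb{p}_1})$ and $(\gy \mb{X}_{A_1}^{\mb{p}_0}, \gy \mb{X}_{A_1}^{\mb{p}_1})$. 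Applying Theorem \ref{thm:completed-interpolation} to source and target independently then identifies the relevant interpolants: complex interpolation of Hardy--Sobolev completions for part (i) (noting $j(\mb{p}_j) > 0$ for finite exponents), complex interpolation of Besov completions for part (ii), and real interpolation (using the assumption $\gq(\mb{p}_0) \neq \gq(\mb{p}_1)$) for part (iii). In all three cases the standard interpolation-of-operators principle for quasi-Banach couples yields bounded extensions $\gy \mb{H}_{A_0}^{\mb{p}_\gh} \to \gy \mb{H}_{A_1}^{\mb{p}_\gh}$ for (i) and $\gy \mb{B}_{A_0}^{\mb{p}_\gh} \to \gy \mb{B}_{A_1}^{\mb{p}_\gh}$ for (ii) and (iii).

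Invoking Lemma \ref{lem:incl-QS} once more, this time in the reverse direction at $\mb{p}_\gh$, translates these boundedness statements into the desired inclusion-region memberships, completing the proof in each case. The only genuinely subtle point is the compatibility of $T_0$ and $T_1$ on the intersection of completions: one must verify that both extensions are uniquely determined by the same densely-defined operator $T$ in the ambient $X^{\mb{p}_j}$-topologies, which is standard but must be made explicit to justify treating $T$ as a morphism of interpolation couples. Everything else is a direct invocation of results already established in this chapter, namely Lemma \ref{lem:incl-QS} and Theorem \ref{thm:completed-interpolation}, combined with the abstract interpolation principle.
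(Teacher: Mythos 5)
Your proof is correct and follows the same approach as the paper's: reformulate membership in $i(\mb{X},A_0,A_1)$ via Lemma \ref{lem:incl-QS} as boundedness of a composition of extension and contraction operators between canonical completions, interpolate the resulting bounded map via Theorem \ref{thm:completed-interpolation}, and translate back with a second invocation of Lemma \ref{lem:incl-QS}. The only difference is that you explicitly address the compatibility of $T_0$ and $T_1$ on the intersection of completions (which is needed to treat $T$ as a morphism of interpolation couples), whereas the paper leaves this step implicit; this is justified by Corollary \ref{cor:QSproj} together with the density of intersections from Corollary \ref{cor:adapted-space-int-dens}, so your extra care is warranted but does not constitute a deviation from the paper's strategy.
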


\begin{proof}
	Fix Calder\'on siblings $\gf,\gy \in \gY_\infty^\infty$.
	By Lemma \ref{lem:incl-QS}, we have $\mb{p}_0,\mb{p}_1 \in i(\mb{X};A_0,A_1)$ if and only if $\bbQ_{\gf,A_1} \bbS_{\gy,A_0}$ is bounded from $\bbQ_{\gf,A_0} \bbX_{A_0}^{\mb{p}_i} $ to $\bbQ_{\gf,A_1} \bbX_{A_1}^{\mb{p}_i}$ for $i \in \{1,2\}$, in which case $\bbQ_{\gf,A_1} \bbS_{\gy,A_0}$ extends to a bounded operator $\gf\mb{X}_{A_0}^{\mb{p}_i} \to \gf\mb{X}_{A_1}^{\mb{p}_i}$ for $i \in \{1,2\}$.
	The theorem then follows from the identification of interpolation spaces from Theorem \ref{thm:completed-interpolation} and a second application of Lemma \ref{lem:incl-QS}.
\end{proof}

\section{The Cauchy operator on general adapted spaces}\label{sec:sgp-abstract}

Recall the Cauchy extension operator, defined in \eqref{eqn:cauchyoperator} by
\begin{equation*}
	C_A f(t) := e^{-tA}\gc^{\sgn(t)}(A)f = e^{-|t|[A]}\gc^{\sgn(t)}(A)f = \sgp_{|t|}(A) \gc^{\sgn(t)}(A) f.
\end{equation*}
By Proposition \ref{prop:comp-fc}, for any $\gf \in H^\infty$, the operator $\map{\gf(A)}{\bbX_A^\mb{p}}{\bbX_A^\mb{p}}$ extends to a bounded map on any completion $\mb{X}$ of $\bbX_A^\mb{p}$.
Thus we can extend the Cauchy operator to a map $\map{\mb{C}_A}{\mb{X}}{L^\infty(\bbR \sm \{0\} : \mb{X})}$.\index{Cauchy operator!on adapted BHS spaces}

\begin{prop}[Properties of Cauchy extensions]\label{prop:sgp-continuity}
	Let $\mb{p}$ be an exponent, and fix a completion $\mb{X}$ of ${\bbX}_A^{\mb{p}}$.\footnote{Recall that we mean a weak-star completion when $\mb{p}$ is infinite, and in this case we use the weak-star topology on $\mb{X}$.}
	Then for all $f \in \mb{X}$ the Cauchy extension $\mb{C}_A f$ is in $C^\infty(\bbR \sm \{0\} : \mb{X})$, and solves the Cauchy equation
	\begin{equation*}
		\partial_t \mb{C}_A f + A\mb{C}_A f = 0
	\end{equation*}
	strongly in $C^\infty(\bbR \sm \{0\} : \mb{X})$.
	Furthermore we have the limits
	\begin{equation}\label{eqn:sgp-limits}
		\lim_{t \to 0^\pm} \mb{C}_A f(t) = \gc^\pm(A) f \quad \text{and} \quad \lim_{t \to \pm\infty} \mb{C}_{A} f(t) = 0
	\end{equation}
	in $\mb{X}$.
\end{prop}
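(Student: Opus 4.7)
By symmetry I focus on $t > 0$ and set $g := \gc^+(A) f \in \mb{X}^+ := \gc^+(A) \mb{X}$; the case $t < 0$ is handled identically with $\gc^-(A)$ and $\mb{X}^-$. All the functional-calculus operators appearing below ($\gc^\pm(A)$, $\sgp_t(A)$, and $(-A)^k \sgp_t(A)$ for $t > 0$, $k \in \bbN$) extend to bounded operators on $\mb{X}$ by Proposition \ref{prop:comp-fc}, since on the bisector the function $z \mapsto (-z)^k e^{-tz}\gc^+(z)$ is in $H^\infty$ with norm $\lesssim_k t^{-k}$. The identity $\partial_t e^{-t[z]} = -[z] e^{-t[z]}$ and the mean value theorem show that $t \mapsto (-z)^k e^{-tz}\gc^+(z)$ is $C^\infty$ from $\bbR_+$ into $H^\infty(S_\mu)$; transferring this through Proposition \ref{prop:comp-fc} yields $\mb{C}_A f \in C^\infty(\bbR_+ : \mb{X})$ with $\partial_t^k \mb{C}_A f(t) = (-A)^k \sgp_t(A) g$, and the case $k = 1$ is the Cauchy equation.

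\textbf{Limit at $0^+$.} The uniform bound $\sup_{t > 0} \|\sgp_t(A)\|_{\mc{L}(\mb{X})} < \infty$ follows from $\|\sgp_t\|_\infty \leq 1$ and Corollary \ref{cor:Xbhfc}, so a $3\varepsilon$-argument reduces matters to proving $\sgp_t(A) g \to g$ in $\mb{X}$ on some dense subset of $\mb{X}^+$. I would take $D := \bbX_A^{\mb{p}+1,+} \cap \bbX_A^{\mb{p},+}$, which is dense in $\bbX_A^{\mb{p},+}$ (hence in $\mb{X}^+$): Corollary \ref{cor:adapted-space-int-dens} gives density of $\bbX_A^{\mb{p}+1} \cap \bbX_A^{\mb{p}}$ in $\bbX_A^\mb{p}$, and applying the bounded projection $\gc^+(A)$ preserves this density. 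For $g \in D$, Corollary \ref{cor:regbump} places $Ag \in \bbX_A^{\mb{p},+}$, the fundamental theorem of calculus in $L^2$ gives
\begin{equation*}
\sgp_t(A) g - g = -\int_0^t \sgp_s(A)(Ag) \, ds,
\end{equation*}
and this identity promotes to a Bochner integral in $\mb{X}^+$ by norm-continuity of $s \mapsto \sgp_s(A)(Ag)$ together with uniform boundedness, so that $\|\sgp_t(A) g - g\|_\mb{X} \lesssim t \|Ag\|_\mb{X} \to 0$.

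\textbf{Limit at $+\infty$.} Here I would exploit the scaling identity
\begin{equation*}
A \sgp_t(A) = t^{-1} \eta_t(A), \qquad \eta(z) := [z] e^{-[z]} \in \gY_1^\infty \subset H^\infty,
\end{equation*}
whose dilations $\eta_t$ all share the same $H^\infty$-norm; Corollary \ref{cor:Xbhfc} then yields $\sup_t \|\eta_t(A)\|_{\mc{L}(\mb{X})} < \infty$. For $g = A h$ with $h \in D$, this gives
\begin{equation*}
\|\sgp_t(A) g\|_\mb{X} = \|A\sgp_t(A) h\|_\mb{X} = t^{-1} \|\eta_t(A) h\|_\mb{X} \lesssim t^{-1} \|h\|_\mb{X} \to 0.
\end{equation*}
A final $3\varepsilon$-argument extends convergence to all of $\mb{X}^+$, provided $A(D)$ is dense in $\mb{X}^+$.

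\textbf{Main obstacle and infinite exponents.} The crux is precisely this density: at the pre-space level $A$ need not surject $\bbX_A^{\mb{p}+1,+}$ onto $\bbX_A^{\mb{p},+}$, so the argument must pass to completions. The completed form of Corollary \ref{cor:regbump} (obtained via Proposition \ref{prop:comp-fc}) identifies $A$ as an isomorphism from the canonical completion of $\bbX_A^{\mb{p}+1,+}$ onto $\mb{X}^+$, and the density of $D$ in the former transfers to density of $A(D)$ in $\mb{X}^+$. When $\mb{p}$ is infinite, convergence at the endpoints is in the weak-star topology; the preceding arguments then adapt by testing against predual elements (identified via Proposition \ref{prop:completion-duality} applied to the adjoint operator $A^*$), reducing the endpoint limits to the finite-exponent strong convergence statements already established.
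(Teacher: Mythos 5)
Your smoothness argument via Fr\'echet differentiability of $t \mapsto [z \mapsto e^{-tz}\gc^+(z)]$ into $H^\infty$ followed by Proposition \ref{prop:comp-fc} is essentially the paper's argument, and it is correct. Your treatment of the infinite-exponent case by dualising against $\bbX_{A^*}^{\mb{p}^\prime}$ also matches the paper.

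Where you diverge is in the core limit argument. The paper does not give a self-contained proof of the limits at $0^+$ and $+\infty$: it reduces the Hardy--Sobolev case to the tent-space-based arguments of \cite[Propositions~4.5 and~4.6]{AS16} (roughly: dominated convergence in $T^\mb{p}$ after applying the Calder\'on reproducing formula), and then handles the Besov case by embedding $\bbH_A^\mb{q}\hookrightarrow\bbB_A^\mb{p}$ for suitable $\mb{q}\hookrightarrow\mb{p}$ and using density (Corollary~\ref{cor:adapted-space-int-dens}). You instead give a direct, semigroup-theoretic proof on a dense subspace $D$ built from a regularity shift, which is conceptually cleaner and does not require the embedding step. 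Your limit at $+\infty$ via the scaling identity $A\sgp_t(A) = t^{-1}\gh_t(A)$ is correct, since Corollary~\ref{cor:Xbhfc} controls $\gh_t(A)$ uniformly in $t$ and the argument never touches an integral.

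However, your limit at $0^+$ has a genuine gap when $i(\mb{p})<1$. The step
\begin{equation*}
\bigg\| \int_0^t \sgp_s(A)(Ag)\,ds \bigg\|_\mb{X} \lesssim t\,\|Ag\|_\mb{X}
\end{equation*}
rests on the Bochner-integral triangle inequality $\nm{\int F} \leq \int \nm{F}$, which fails in quasi-Banach spaces. The adapted spaces $\bbX_A^\mb{p}$ for $i(\mb{p})<1$ are genuinely quasi-Banach (this range is essential to the monograph), and there is no version of the Bochner triangle inequality there: the $r$-triangle inequality $\nm{a+b}^r\le\nm{a}^r+\nm{b}^r$ gives a Riemann-sum bound of the form $n(t/n)^r\sup\nm{\sgp_s(A)(Ag)}^r$, which diverges as the partition refines when $r<1$. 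So the fundamental-theorem-of-calculus identity, which holds in $L^2$, cannot be "promoted" to an $\mb{X}$-estimate by integrating norms. To repair this you would need a quasi-Banach-compatible argument, for instance passing through $\bbQ_{\gy,A}$ and proving $\nm{\bbQ_{\gy,A}((\sgp_t(A)-I)g)}_{X^\mb{p}}\to 0$ directly by dominated convergence in the tent/$Z$-space (which works because the $X^\mb{p}$ quasinorm is an $L^{i(\mb{p})}$-norm of a square function, and $L^{i(\mb{p})}$ dominated convergence is available for all $i(\mb{p})>0$). This is exactly what the cited argument in \cite{AS16} does, and it is not an optional shortcut.

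A smaller point: your dense set $D=\bbX_A^{\mb{p}+1,+}\cap\bbX_A^{\mb{p},+}$ should also be intersected with $\mc{D}(A)$ to make $Ag$ unambiguous as an element of $\bbX_A^{\mb{p},+}$ via Corollary~\ref{cor:regbump}; this is easily fixed by regularising with a resolvent, but as written the passage from $g\in D$ to $Ag\in\bbX_A^{\mb{p},+}$ skips a domain condition. Your handling of density of $A(D)$ in $\mb{X}^+$ via the completed Corollary~\ref{cor:regbump} is correct.
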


\begin{proof}
	First we prove the limit results, for which it suffices to consider $t > 0$ and the semigroup extension $e^{-|t|[A]}$ in place of $\mb{C}_A$.
	Furthermore, it suffices to consider finite exponents $\mb{p}$, as otherwise we can deduce the limits \eqref{eqn:sgp-limits} by testing against $\bbX_{A^*}^{\mb{p}^\prime}$.
	By density, it suffices to prove the limits
	\begin{equation*}
		\lim_{t \to 0} e^{-t[A]}f = f \quad \text{and} \quad \lim_{t \to \infty} e^{-t[A]}f = 0
	\end{equation*}
	for $f \in \bbX_A^\mb{p}$.
	
	For $f \in {\bbH}_A^{\mb{p}}$, these follow from arguments almost identical to those of \cite[Propositions 4.5 and 4.6]{AS16}, the only difference being the presence of the weight $\gk^{-\gq(\mb{p})}$, which does not fundamentally change the argument.
	Now fix an exponent $\mb{q} \neq \mb{p}$ such that $\mb{q} \hookrightarrow \mb{p}$, so that ${\bbH}_A^\mb{q} \hookrightarrow {\bbB}_A^\mb{p}$ (Proposition \ref{prop:adapted-space-embeddings}).
	For $f \in {\bbH}_A^{\mb{q}}$ we then have
	\begin{equation*}
		\lim_{t \to 0} \nm{e^{-t[A]}f - f}_{{\bbB}_A^{\mb{p}}} \lesssim \lim_{t \to 0} \nm{e^{-t[A]}f - f}_{{\bbH}_A^{\mb{q}}} = 0
	\end{equation*}
	and
	\begin{equation*}
		\lim_{t \to \infty} \nm{e^{-t[A]}f}_{{\bbB}_A^{\mb{p}}} \lesssim \lim_{t \to \infty} \nm{e^{-t[A]}f}_{{\bbH}_A^{\mb{q}}} = 0.
	\end{equation*}
	Since ${\bbH}_A^\mb{q}$ is dense in ${\bbB}_A^{\mb{p}}$ (Corollary \ref{cor:adapted-space-int-dens}), these limits hold for all $f \in {\bbB}_A^\mb{p}$.
	
	Now we prove the smoothness result, and establish the Cauchy equation.
	Again it suffices to consider $t > 0$, as the result for $t < 0$ uses the same argument.
	First observe that the function $\map{\gF}{\bbR_+}{H^\infty}$ defined by $\gF(t) = [z \mapsto e^{-tz}\gc^+(z)]$ is smooth with Fr\'echet derivative $D_t \gF \colon \bbR \to H^\infty$ given by $D_t \gF(\gt) = [z \mapsto -\gt z e^{-t z} \gc^+(z)]$.
	Next, note that the map $\gO_A \colon H^\infty \to \mc{L}(\mb{X})$ with $\gO_A(f) = f(A)$ is linear and bounded in the strong topology (Proposition \ref{prop:comp-fc}).
	By the chain rule, the composition of these maps is smooth, with Fr\'echet derivative
	\begin{equation*}
		D_t (\gO_A \circ \gF)(\gt) = \gO_A \circ D_t \gF(\gt) = -\gt A e^{-tA} \gc^+(A).
	\end{equation*}
	We can then write
	\begin{equation*}
		\partial_t \mb{C}_A f(t) = D_t(\gO_A \circ \gF)(1)f = -Ae^{-tA}\gc^+(A)f = -A\mb{C}_A f(t),
	\end{equation*}
	which completes the proof.
\end{proof}

Now we must address whether $C_A|_{\bbR_+}$ maps $\bbX_A^{\mb{p},+}$ into $X^\mb{p}$.
This would imply that one could construct $C^\infty(\bbR_+ : \bbX_A^\mb{p})$ solutions to Cauchy problems which are in $X^\mb{p}$ with initial data in $\bbX_A^{\mb{p},+}$.
It turns out that this is only reasonable when $\gq(\mb{p}) < 0$.
For $i(\mb{p}) \leq 2$ we already know everything we need to prove this, but for $i(\mb{p}) > 2$ we need more information (see Section \ref{sec:sgp-classical}).
In fact, for $i(\mb{p}) \leq 2$, we can work with the semigroup extension $\bbQ_{\sgp,A}$ instead of the Cauchy extension; that is, we need not incorporate spectral projections into the argument.

\begin{thm}[Semigroup characterisation, $i(\mb{p}) \leq 2$]\label{thm:sgpnorm-abstract}
	Let $\mb{p}$ be an exponent with $i(\mb{p}) \leq 2$ and $\gq(\mb{p}) < 0$.
	Then for all $f \in \overline{\mc{R}(A)}$,
	\begin{equation*}
		\nm{f}_{{\bbX}_A^\mb{p}} \simeq \nm{\bbQ_{\sgp,A} f}_{X^\mb{p}}.
	\end{equation*}
\end{thm}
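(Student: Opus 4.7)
The plan is to prove the two quasinorm inequalities separately. The auxiliary function $\sgp \in \gY_0^\infty$ just fails to have the positive decay at the origin required by Proposition \ref{prop:equivalent-quasinorms}, but the hypothesis $\gq(\mb{p}) < 0$ precisely compensates for this: since $\gq(\mb{p}) + \varepsilon < 0$ for small enough $\varepsilon > 0$, the trivial bound $|\sgp(z)| \leq 1$ gives $\sgp \in \gY_{\gq(\mb{p})+}^\infty$, which is exactly the decay class at zero demanded by Theorem \ref{thm:QS-compn-bddness} for $i(\mb{p}) \leq 2$. This is the structural key to both directions.

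For $\nm{\bbQ_{\sgp,A} f}_{X^\mb{p}} \lesssim \nm{f}_{\bbX_A^\mb{p}}$, I would fix $\gf \in \gY_\infty^\infty$ together with a Calder\'on sibling $\gn \in \gY_\infty^\infty$ (Theorem \ref{thm:CRF}), so that $f = \bbS_{\gn,A}\bbQ_{\gf,A} f$ for $f \in \bbX_A^\mb{p}$. Writing $\bbQ_{\sgp,A} f = (\bbQ_{\sgp,A}\bbS_{\gn,A})\bbQ_{\gf,A} f$ and applying Theorem \ref{thm:QS-compn-bddness} with $\gh \equiv 1$ and $\gd = 0$ yields boundedness $X^\mb{p} \to X^\mb{p}$ of $\bbQ_{\sgp,A}\bbS_{\gn,A}$ (the hypotheses on $\sgp$ hold by the observation above; those on $\gn$ hold trivially), whence the desired estimate.

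The reverse direction is harder because $\bbQ_{\sgp,A} f$ need only lie in $X^\mb{p}$, not in $L^2(\bbR^{1+n}_+)$, so the Calder\'on reproducing identity $\bbS_{\gy,A}\bbQ_{\sgp,A} \subseteq I_{\overline{\mc{R}(A)}}$ (with $\gy \in \gY_\infty^\infty$ a Calder\'on sibling of $\sgp$, provided again by Theorem \ref{thm:CRF}) cannot be invoked directly on $\bbQ_{\sgp,A} f$. I would instead truncate: for $0 < \varepsilon < M < \infty$ set
\begin{equation*}
g_{\varepsilon,M} := \bbS_{\gy,A}\bigl(\mb{1}_{(\varepsilon,M)}\bbQ_{\sgp,A} f\bigr) \in \overline{\mc{R}(A)},
\end{equation*}
which is well-defined since $\mb{1}_{(\varepsilon,M)}\bbQ_{\sgp,A} f \in L^2_c(\bbR_+ : L^2)$. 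By Proposition \ref{prop:Srep} and Fubini, $g_{\varepsilon,M} = \gF_{\gy,\sgp}^{\varepsilon,M}(A) f$, where $\gF_{\gy,\sgp}^{\varepsilon,M}(z) := \int_\varepsilon^M \gy_t(z)\sgp_t(z)\, dt/t$ is uniformly bounded on each bisector and converges pointwise to $\gF_{\gy,\sgp} \equiv 1$. Strong convergence of the bounded $H^\infty$ functional calculus on $\overline{\mc{R}(A)}$ under uniformly bounded pointwise limits gives $g_{\varepsilon,M} \to f$ in $L^2$, hence $\bbQ_{\gf,A} g_{\varepsilon,M} \to \bbQ_{\gf,A} f$ in $L^2(\bbR^{1+n}_+)$ and, along a subsequence, a.e. On the other hand, a second application of Theorem \ref{thm:QS-compn-bddness} to $\bbQ_{\gf,A}\bbS_{\gy,A}$ (both auxiliary functions in $\gY_\infty^\infty$) yields
\begin{equation*}
\nm{\bbQ_{\gf,A} g_{\varepsilon,M}}_{X^\mb{p}} \lesssim \nm{\mb{1}_{(\varepsilon,M)}\bbQ_{\sgp,A} f}_{X^\mb{p}} \leq \nm{\bbQ_{\sgp,A} f}_{X^\mb{p}}
\end{equation*}
uniformly in $\varepsilon$ and $M$. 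Applying Fatou's lemma first to $\mc{A}(\gk^{-\gq(\mb{p})}\cdot)$ or $\mc{W}_c(\gk^{-r(\mb{p})}\cdot)$, and then to the outer $L^{i(\mb{p})}$ norm (using Corollary \ref{cor:Zinfty-sup} to realise the infinite-exponent quasinorms as genuine suprema), transfers this uniform bound to $\bbQ_{\gf,A} f$, yielding $\nm{f}_{\bbX_A^\mb{p}} \lesssim \nm{\bbQ_{\sgp,A} f}_{X^\mb{p}}$.

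The main obstacle is this truncation-and-limit step: one must rigorously justify the identification $g_{\varepsilon,M} = \gF_{\gy,\sgp}^{\varepsilon,M}(A) f$ (an interchange of two integrals) and the subsequent $L^2$-strong convergence $g_{\varepsilon,M} \to f$ (invoking the standard convergence property of bounded $H^\infty$ functional calculi under uniformly bounded pointwise limits). No interpolation shortcut is available, because the statement fails at $\gq(\mb{p}) = 0$; this is also why the case $i(\mb{p}) > 2$ requires the separate treatment deferred to Section \ref{sec:sgp-classical}, where spectral projections $\gc^\pm(A)$ must be brought into the argument.
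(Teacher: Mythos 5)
Your proof is correct and rests on the same key ingredients as the paper's (Theorem \ref{thm:QS-compn-bddness} plus the Calder\'on reproducing formula), but you take a substantially more careful route through the reverse inequality, and your caution is well placed. The paper's own proof is a one-liner: $\sgp \in \gY_0^\infty$ satisfies the decay hypothesis of Proposition \ref{prop:equivalent-quasinorms} once $\gq(\mb{p}) < 0$, so $\sgp \in \gY(\bbX_A^\mb{p})$. But as you note, $\sgp \notin \gY_+^+$, and for every nonzero $f \in \overline{\mc{R}(A)}$ one has $\bbQ_{\sgp,A}f \notin L^2(\bbR^{1+n}_+)$: the integral $\int_0^\infty \nm{e^{-t[A]}f}_2^2 \, dt/t$ diverges logarithmically at $t = 0$ because $e^{-t[A]}f \to f$ strongly in $L^2$. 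So the $L^2$ side condition built into the definition of $\bbX_{\sgp,A}^\mb{p}$ fails identically, and the step in the proof of Proposition \ref{prop:equivalent-quasinorms} that applies $\bbS_{\gn,A}$ directly to $\bbQ_{\gy,A}f$ (using Remark \ref{rmk:comp-definedness}) is vacuous when $\gy = \sgp$. Your truncation argument — setting $g_{\varepsilon,M} := \bbS_{\gy,A}(\mb{1}_{(\varepsilon,M)}\bbQ_{\sgp,A}f)$, identifying it via Proposition \ref{prop:Srep} and Fubini with $\gF_{\gy,\sgp}^{\varepsilon,M}(A)f$, invoking the standard $H^\infty$ convergence lemma to obtain $g_{\varepsilon,M} \to f$ in $L^2$, extracting a uniform $X^\mb{p}$ bound from Theorem \ref{thm:QS-compn-bddness}, and closing with Fatou applied to the Lusin/Whitney functional and then the outer $L^{i(\mb{p})}$ norm — is exactly the limiting procedure needed to make this direction rigorous without ever requiring $\bbQ_{\sgp,A}f \in L^2$. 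Two minor remarks: since $i(\mb{p}) \leq 2$ implies $\mb{p}$ is finite, the appeal to Corollary \ref{cor:Zinfty-sup} for the infinite-exponent supremum is superfluous; and one should note (as the paper does implicitly throughout) that the $X^\mb{p} \to X^\mb{p}$ extension of $\bbQ_{\sgp,A}\bbS_{\gn,A}$ produced by Theorem \ref{thm:QS-compn-bddness} agrees with its pointwise definition on inputs in $X^2 \cap X^\mb{p}$.
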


\begin{proof}
	By Proposition \ref{prop:equivalent-quasinorms}, we have
	\begin{equation*}
		\sgp  \in \gY_0^\infty \subset \gY_{\gq(\mb{p})+}^{(-\gq(\mb{p}) + n|\frac{1}{2} - j(\mb{p})|)+}\cap H^\infty \subset \gY(\bbX_A^\mb{p}),
	\end{equation*}
	which yields the result. 
\end{proof}

\begin{rmk}\label{rmk:sgp-abstract}
	The estimate
	\begin{equation*}
		\nm{f}_{\bbX_A^\mb{p}} \lesssim \nm{\bbQ_{\sgp,A} f}_{X^\mb{p}}
	\end{equation*}
	holds for all $\mb{p}$, as can be shown by a Calder\'on reproducing argument as in the proof of Proposition \ref{prop:equivalent-quasinorms}.
	The reverse estimate need not hold in general.
\end{rmk}

\begin{cor}[Cauchy characterisation of positive adapted spaces, $i(\mb{p}) \leq 2$]\label{cor:cauchy-abstract}\index{space!adapted Besov--Hardy--Sobolev!characterisation by Cauchy operator}
	Let $\mb{p}$ be an exponent with $i(\mb{p}) \leq 2$ and $\gq(\mb{p}) < 0$.
	Then for all $f \in \overline{\mc{R}(A)}^+$,
	\begin{equation*}
		\nm{f}_{{\bbX}_A^\mb{p}} \simeq \nm{C_A f}_{X^\mb{p}}.
	\end{equation*}
\end{cor}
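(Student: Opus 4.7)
The plan is to observe that the positivity assumption collapses the Cauchy operator onto the semigroup extension, reducing the corollary to the previous theorem. Since $f \in \overline{\mc{R}(A)}^+ = \gc^+(A)\overline{\mc{R}(A)}$ and $\gc^+(A)$ is a bounded projection, we have $\gc^+(A)f = f$. Hence for every $t > 0$,
\begin{equation*}
  (C_A f)(t) = e^{-tA} \gc^+(A) f = \sgp_t(A)\gc^+(A) f = \sgp_t(A) f = (\bbQ_{\sgp,A} f)(t),
\end{equation*}
where in the second equality I use that $e^{-tz}\gc^+(z) = \sgp_t(z)\gc^+(z)$ on $S_\gm$ (since $[z] = z$ on the positive half), together with the $H^\infty$ homomorphism property on $\overline{\mc{R}(A)}$. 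Because the $X^\mb{p}$-quasinorm is only evaluated on $\bbR^{1+n}_+$, this means $\nm{C_A f}_{X^\mb{p}} = \nm{\bbQ_{\sgp,A} f}_{X^\mb{p}}$.

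Applying Theorem \ref{thm:sgpnorm-abstract}, which is precisely the abstract semigroup characterisation valid under the hypotheses $i(\mb{p}) \leq 2$ and $\gq(\mb{p}) < 0$, yields
\begin{equation*}
  \nm{C_A f}_{X^\mb{p}} = \nm{\bbQ_{\sgp,A} f}_{X^\mb{p}} \simeq \nm{f}_{\bbX_A^\mb{p}},
\end{equation*}
which is the desired conclusion. There is no real obstacle: the work has already been done in verifying that $\sgp \in \gY_0^\infty$ satisfies the decay hypothesis $\sgp \in \gY_{\gq(\mb{p})+}^{(-\gq(\mb{p}) + n|\frac{1}{2}-j(\mb{p})|)+} \cap H^\infty$ from Proposition \ref{prop:equivalent-quasinorms}, which uses exactly $\gq(\mb{p}) < 0$ (to cover decay at $0$) and $i(\mb{p}) \leq 2$ (which is what permits the use of Proposition \ref{prop:equivalent-quasinorms} in the lower range without further assumptions at infinity).
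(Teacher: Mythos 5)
Your proposal is correct and coincides with the paper's (implicit) reasoning: the paper states Corollary \ref{cor:cauchy-abstract} without a separate proof, treating it as immediate from Theorem \ref{thm:sgpnorm-abstract} via precisely the identification $C_A f|_{\bbR_+} = \bbQ_{\sgp,A}f$ on $\overline{\mc{R}(A)}^+$ that you spell out. Nothing is missing.
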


\begin{rmk}
We have only considered the positive spectral subspace here, but of course everything works just as well for the negative spectral subspace.
\end{rmk}

The following technical results will be needed in Section \ref{sec:class}.

\begin{lem}\label{lem:sgp-sector-repn}
	For every $M > 0$, there exist functions $\gf^+, \gf^- \in H^\infty$ such that $(\gf^\pm_s(A))_{s > 0}$ satisfies   off-diagonal estimates of order $M$, $\gf_s^\pm(A) = e^{-s[A]}$ on the corresponding spectral subspace $\bbX_A^{2,\pm}$, and $\lim_{s \to 0} \gf_s^\pm (A) = I$ in the $L^2$-strong operator topology.
\end{lem}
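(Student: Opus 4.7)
The plan is to construct $\gf^\pm$ explicitly in $H^\infty(S_\mu)$, reducing the analysis to operators that already satisfy off-diagonal estimates by virtue of the Standard Assumptions. I focus on $\gf^+$; the construction of $\gf^-$ is symmetric, swapping the roles of the two halves of the bisector.

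First I would set
\[
\gf^+(z) := e^{-z}\gc^+(z) + R(z)\gc^-(z),
\]
where $R \in H^\infty(S_\mu)$ is a specific rational function of $z$ satisfying $R(0)=1$. Since $\gc^+$ and $\gc^-$ are the indicator functions of the two open components of $S_\mu$ and the two pieces $e^{-z}$ and $R(z)$ are independently holomorphic and bounded on each component, $\gf^+$ lies in $H^\infty(S_\mu)$. Moreover $\gf^+$ has continuous extension $\gf^+(0)=1$ from either half. The functional calculus identity on $\bbX_A^{2,+}$ is then immediate: for $f \in \bbX_A^{2,+}$, $\gc^+(A)f=f$ and $\gc^-(A)f=0$, so $\gf^+_s(A)f = e^{-sA}f = e^{-s[A]}f$. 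I would then extend $\gf^+_s(A)$ to $L^2$ by declaring it to be the identity on $\mc{N}(A)$ (which is the natural extension corresponding to $\gf^+(0)=1$, so that $\gf^+_s(A) = (e^{-s\cdot}\gc^+ + R_s\gc^-)(A)\bbP_A + (I-\bbP_A)$).

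For the strong convergence $\gf^+_s(A) \to I$ in $L^2$, I would argue on the three pieces of the decomposition $L^2 = \mc{N}(A) \oplus \overline{\mc{R}(A)}^+ \oplus \overline{\mc{R}(A)}^-$: the restriction to $\mc{N}(A)$ is identically $I$; on $\overline{\mc{R}(A)}^+$, $\gf^+_s(A) = e^{-sA}$ converges to $I$ strongly by the standard semigroup theorem using the bounded $H^\infty$ calculus (Theorem~\ref{thm:fc-quadest}); on $\overline{\mc{R}(A)}^-$, $\gf^+_s(A) = R_s(A)$, and I would choose $R$ so that $R(0)=1$ together with the resolvent structure forces $R_s(A)\to I$ strongly as $s\to 0$ (e.g., $R(z) = \prod_k(1+\ga_k z)^{-1}$ with $\sum_k \ga_k \cdot 0 = 0$ and continuity at $0$).

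The hard part will be the off-diagonal estimates of order $M$, and this is where the choice of $R$ matters. The plan is to pick $R$ of the form $R(z)=\prod_{k=1}^{N}(1+\ga_k z)^{-1}$, a finite product of resolvent factors, where the parameters $\ga_k \in \bbC$ are chosen so that (i) each $-1/\ga_k$ lies in $\bbC\sm S_\gn$ (so that the Standard Assumptions furnish off-diagonal estimates of arbitrary order for each $(I+\ga_k sA)^{-1}$, and hence for the composition $R_s(A)$) and (ii) the Taylor expansions of $R(z)$ and $e^{-z}$ agree to order $M$ at the origin. The latter matching is a linear algebra problem in the coefficients $\ga_k$; taking $N$ sufficiently large (and $N > M$) gives enough free parameters, while confining $\ga_k$ to a small complex neighbourhood of the imaginary axis keeps $-1/\ga_k$ safely in $\bbC\sm S_\gn$. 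With such $R$ in hand, I would decompose
\[
\gf^+_s(A) = R_s(A) + \gr_s(A)\gc^+(A)\bbP_A,
\]
where $\gr(z) := (e^{-z}-R(z))\gc^+(z)$. The function $\gr$ is then in $\gY_M^{+}(S_\mu)$ (order $M$ zero at $0$ on the right half, exponential decay at infinity on the right half, identically zero on the left half), so Theorem~\ref{thm-fcode} yields off-diagonal estimates of order $M$ for the family $(\gr_s(A))_{s>0}$. The $R_s(A)$ piece has off-diagonal estimates of arbitrary order directly from the Standard Assumptions applied to each resolvent factor, so the sum has off-diagonal estimates of order $M$, and this extends to the $L^2$-extended operator since adding $I-\bbP_A$ only contributes $\mathbf{1}_F\mathbf{1}_E f = 0$ when $E\cap F = \varnothing$. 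The main technical obstacle is verifying the matching of Taylor coefficients with the angular constraint on the $\ga_k$, but this is an elementary explicit construction.
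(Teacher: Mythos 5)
Your overall strategy---glue $e^{-z}$ on the right half of the bisector to a bounded holomorphic function on the left half, arrange $\gf^\pm(0)=1$ for the strong continuity, and isolate a resolvent-type part to carry the off-diagonal estimates---is correct, and is in the same spirit as the construction in \cite[Lemma 15.1]{AM15} to which the paper defers. The gap is in the choice of correction $R(z)=\prod_{k=1}^N(1+\ga_k z)^{-1}$. Matching the Taylor expansion of $R$ with that of $e^{-z}$ at the origin to order $M$ is \emph{not} a linear algebra problem in the $\ga_k$: taking logarithms shows it is equivalent to prescribing the power sums $p_j=\sum_k\ga_k^j$ to satisfy $p_1=1$ and $p_2=\cdots=p_M=0$, a nonlinear system. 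Worse, the constraint $\ga_k\in\bbC\sm S_\gn$ (needed so that the family $(1+\ga_k s A)^{-1}$ is covered by the resolvent off-diagonal estimates of the Standard Assumptions) reads $|\im\ga_k|\geq\tan(\gn)\,|\re\ga_k|$. As soon as $\gn>\pi/4$ this forces $\re(\ga_k^2)=(\re\ga_k)^2-(\im\ga_k)^2<0$ for every $k$, hence $\re(p_2)<0$, so $p_2=0$ is provably impossible. Since one must take $\gn>\go$ and the bisectoriality angle $\go$ of $DB$ can exceed $\pi/4$ for poorly elliptic $B$, your construction cannot work in the full generality of the Standard Assumptions; and even when $\go<\pi/4$ you have not shown the nonlinear system is solvable within the admissible sector.

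The fix, which is what \cite{AM15} does (and which is recalled in Step~1 of the proof of Theorem~\ref{thm:sgpnorm-concrete} of this paper), is to replace your product by a finite \emph{sum} of squared resolvents with poles on the imaginary axis, $\gr(z)=\sum_{m=1}^N c_m(1+imz)^{-2}$. Its Taylor coefficient of $z^j$ is $(-1)^j(j+1)\sum_m c_m(im)^j$, so matching $e^{-z}$ to order $M$ becomes a Vandermonde system that is genuinely \emph{linear} in the $c_m$, always solvable with $N\geq M$. Here the relevant parameter values $\gl=ims$ lie on the imaginary axis, which belongs to $\bbC\sm S_\gn$ for \emph{every} $\gn<\pi/2$, so there is no angular obstruction whatsoever. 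One then writes $e^{-z}=\gr(z)+\gy(z)$ on the right half of $S_\gm$ with $\gy\in\gY_M^2$, sets $\gf^+:=\gr+\gy\gc^+$, and your remaining arguments go through: $\gf^+=e^{-z}$ on the right half of the bisector; $\gr(0)=\sum_m c_m=1$ and $\gy\gc^+\in\gY_+^+$ give strong convergence $\gf_s^+(A)\to I$ on the three pieces $\mc{N}(A)\oplus\overline{\mc{R}(A)}^+\oplus\overline{\mc{R}(A)}^-$; and off-diagonal estimates of order $M$ follow for $\gr_s(A)$ directly from the Standard Assumptions and for $(\gy\gc^+)_s(A)$ from Theorem~\ref{thm-fcode}.
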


For a proof, see \cite[Lemma 15.1]{AM15}, noting that $\bbH_A^{2,\pm} = \bbB_A^{2,\pm}$.
Although this result is stated for $A \in \{DB,BD\}$ there, the proof only uses the Standard Assumptions.

\begin{cor}\label{cor:sgp-slice-bddness}
	Let $p \in (0,\infty]$.
	Suppose $f \in \bbX_A^{2,+} \cap E^p$.
	Then $C_A f(t) \in E^p$ for each $t \in \bbR_+$, and if $p < \infty$ then $\lim_{t \to 0^+} C_A f(t) = f$ in $E^p$.
	Similarly for $f \in \bbX_A^{2,-} \cap E^p$ and $t \in \bbR_-$.
\end{cor}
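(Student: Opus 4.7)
The plan is to transfer the $L^2$ theory of the Cauchy operator to the slice space $E^p$ via the off-diagonal machinery already assembled in Section \ref{sec:ode}. The starting point is Lemma \ref{lem:sgp-sector-repn}, which for any prescribed order $M$ furnishes $\gf^+ \in H^\infty$ such that $(\gf_s^+(A))_{s>0}$ satisfies $L^2$ off-diagonal estimates of order $M$, coincides with $e^{-s[A]}$ on the spectral subspace $\bbX_A^{2,+}$, and converges strongly to $I$ on $L^2$ as $s \to 0$. This lets me realise the Cauchy extension on $\bbX_A^{2,+}$ as $C_A f(t) = e^{-t[A]}\gc^+(A) f = e^{-t[A]} f = \gf_t^+(A) f$ for $t > 0$, so the only remaining task is to extend the action of $\gf_t^+(A)$ from $L^2$ to $E^p$ and verify the compatibility.

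For the $E^p$-membership claim, I would fix $M > n \max(|\gd_{p,2}|, 1/2)$ at the outset and invoke Proposition \ref{prop:slice-bddness}, which promotes $\gf_t^+(A)$ to a bounded operator on $E^p$ for each fixed $t > 0$. Since this extension is built so as to agree with the ambient $L^2$ action on the intersection $L^2 \cap E^p$, and since $f$ lies in this intersection, the element $C_A f(t) = \gf_t^+(A) f$ computed in $L^2$ is the same as the one computed in $E^p$, and hence is in $E^p$. For the boundary limit when $p < \infty$, I would additionally enforce $M > n \max(1/p, 1/2)$ and appeal to Proposition \ref{prop:slice-convergence} to upgrade $\lim_{s \to 0} \gf_s^+(A) = I$ from strong convergence in $L^2$ to strong convergence in $E^p$; applied to $f \in E^p$ this yields
\[
\lim_{t \to 0^+} C_A f(t) = \lim_{t \to 0^+} \gf_t^+(A) f = f \quad \text{in } E^p.
\]
The case $f \in \bbX_A^{2,-}$, $t \in \bbR_-$ is handled symmetrically using $\gf^-$ in place of $\gf^+$.

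I do not anticipate any substantial obstacle, since the structural work has already been done in Lemma \ref{lem:sgp-sector-repn} and Propositions \ref{prop:slice-bddness}--\ref{prop:slice-convergence}. The one point worth writing out carefully is the compatibility between the $L^2$-defined operator $\gf_t^+(A)$ and its $E^p$-extension: one needs to confirm that applying either yields the same element of $E^p$ when the input is in $L^2 \cap E^p$. For $p < \infty$ this follows immediately by density, and for $p = \infty$ it is baked into the statement of Proposition \ref{prop:slice-bddness}, so that the identity $C_A f(t) = \gf_t^+(A) f$ transfers cleanly from $L^2$ to $E^p$ in every case.
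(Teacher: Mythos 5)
Your proof follows exactly the same route as the paper's: invoke Lemma \ref{lem:sgp-sector-repn} to replace $e^{-t[A]}$ by $\gf_t^+(A)$ on $\bbX_A^{2,+}$, then apply Proposition \ref{prop:slice-bddness} for boundedness on $E^p$ and Proposition \ref{prop:slice-convergence} for the limit. Your extra care about the compatibility of the $L^2$ and $E^p$ actions on $L^2 \cap E^p$ is a sound addition but not a different argument.
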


\begin{proof}
	Choose functions $\gf^+$ as in Lemma \ref{lem:sgp-sector-repn} such that $(\gf^+_s(A))_{s > 0}$ satisfies off-diagonal estimates of large order.
	By Proposition \ref{prop:slice-bddness}, the operators $\gf^+_t (A)$ are bounded on $E^p$.
	Since $\gf_t^+ (A) = e^{-tA}$ on $\bbX_A^{2,+}$, we have $C_A f(t) = \gf^+_t(A) f \in E^p$ for all $t \in \bbR_{+}$.
	The limit statement follows from Lemma \ref{lem:sgp-sector-repn} (which gives strong convergence in $L^2$) and Proposition \ref{prop:slice-convergence} (which improves this to $E^p$).
	The proof for the negative spectral subspace is identical.
\end{proof}

\chapter{Spaces Adapted to Perturbed Dirac Operators}\label{chap:diffops}

In this chapter we consider the Dirac operator\index{Dirac operator}\index{Dirac operator!perturbed}
\begin{equation*}
	D = \begin{bmatrix} 0 & \dv_\parallel \\ -\nabla_\parallel & 0 \end{bmatrix}
\end{equation*}
and its perturbations $DB$ and $BD$, where $B$ is a bounded elliptic multiplier (see Subsection \ref{ssec:fop}).
These operators act on $\bbC^{m(1+n)}$-valued functions, so from now on we take $N = m(1+n)$ without further mention.

The operator $D$, initially defined distributionally, may be defined as an unbounded operator on $L^2=L^2(\bbR^n : \bbC^{m(1+n)})$ with maximal domain
\begin{equation*}
	\mc{D}(D) = \{f \in L^2 : Df\in L^2\},
\end{equation*} 
and generally it is better to use this interpretation of $D$.
The nullspace and the closure of the range of $D$ may be simply characterised by means of the transversal/tangential notation: we have $f\in\mc{N}(D)$ if and only if $f_{\perp}=0$ and $\dv_\parallel f_{\parallel}=0$, and $f\in \overline{\mc{R}(D)}$ if and only if $\curl_\parallel f_{\parallel}=0$.
Note that $\nabla_\parallel f_\perp = 0$ implies that $f_\perp$ is constant, which then forces $f_\perp = 0$ since $f \in L^2$.

The perturbed Dirac operators $DB$ and $BD$ satisfy the Standard Assumptions (Theorem \ref{thm:DB-fundamental-props}), so pre-Besov--Hardy--Sobolev spaces $\bbX_{DB}^\mb{p}$ and $\bbX_{BD}^\mb{p}$ are defined.
In this chapter we investigate these spaces, and in particular their connection with classical smoothness spaces.

Recall from Proposition \ref{prop:bisectorialfacts} that every bisectorial operator $A$ on $L^2$ induces a topological splitting
\begin{equation*}
	L^2 = \mc{N}(A) \oplus \overline{\mc{R}(A)},
\end{equation*}
and that $\bbP_A$ denotes the orthogonal projection of $L^2$ onto $\overline{\mc{R}(A)}$ along $\mc{N}(A)$.
The operators $\bbP_D$, $\bbP_{DB}$, and $\bbP_{BD}$ will appear often in our analysis.

\begin{rmk}
  The operator $D$ may instead be one of the first order differential operators considered by Hyt\"onen, McIntosh, and Portal \cite{HMP08,HM10,HMP11}; see \cite[\textsection 2.3]{AS16}.
  Our results apply to such generalisations.
  However, we shall stick with this choice of $D$, as this operator in particular is relevant to elliptic equations.
\end{rmk}

\section{$D$-adapted spaces}\label{sec:Dadapted}\index{space!adapted Besov--Hardy--Sobolev!D-adapted@$D$-adapted}

The $D$-adapted pre-Besov--Hardy--Sobolev spaces $\bbX_D^\mb{p}$ play a central role in our theory. 
For all exponents $\mb{p}$, they may be identified as projections of classical smoothness spaces (Theorem \ref{thm:class-space-identn-new}).
Recall that we use the notation $\mb{X}^\mb{p}$ to denote classical smoothness spaces, as in Section \ref{sec:hss}.
These spaces are all embedded in $\mc{Z}^\prime = \mc{Z}^\prime(\bbR^n)$, the space of tempered distributions modulo polynomials.\footnote{We use the same notation to refer to $\bbC^N$-valued tempered distributions modulo polynomials for different values of $N$. No ambiguity is caused by this abuse of notation.}

The projection $\bbP_D$ extends boundedly to $\mb{X}^\mb{p}$ for all finite exponents $\mb{p}$ by virtue of being a Fourier multiplier within the scope of the Mikhlin multiplier theorem (see \cite[Theorem 5.2.2]{hT83} and \cite[Proposition 4.4]{HMP11}).
When $\mb{p}$ is infinite, $\bbP_D$ extends by duality to $\mb{X}^\mb{p}$.
Note also that we can write this projection as
\begin{equation*}
	\bbP_D = -\gD^{-1} D^2,
\end{equation*}
and that the transversal part of $\bbP_D$ is the identity operator.

\begin{dfn}\label{dfn:boldXD}
  For all exponents $\mb{p}$, define $\mb{X}_D^\mb{p} := \bbP_D \mb{X}^\mb{p}$.
  Note that $\mb{X}_D^\mb{p}$ is a closed subspace of $\mb{X}^\mb{p}$, hence also of $\mc{Z}^\prime$.
\end{dfn}

\begin{thm}[Identification of $D$-adapted spaces]\label{thm:class-space-identn-new}
	Let $\mb{p}$ be an exponent.
	Then $\bbX_D^\mb{p} = \bbP_D(\mb{X}^\mb{p} \cap L^2)$, with norm estimates
	\begin{align*}
		\nm{\bbP_D f}_{\bbX_D^\mb{p}} &\lesssim  \nm{f}_{\mb{X}^\mb{p}} \qquad (f \in \mb{X}^\mb{p} \cap L^2), \\
		\nm{h}_{\mb{X}^\mb{p}} &\simeq \nm{h}_{\bbX_D^\mb{p}} \qquad (h \in \bbX_D^\mb{p}).
	\end{align*}
	In particular, $\bbX_D^\mb{p} \hookrightarrow \mb{X}_D^\mb{p}$ with dense image.
\end{thm}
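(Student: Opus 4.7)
The plan is to find a single auxiliary function $\gy_0 \in \gY_\infty^\infty \subset \gY(\bbX_D^\mb{p})$ whose action on $\overline{\mc{R}(D)}$ coincides with a classical Littlewood--Paley convolution, so that Theorem \ref{thm:Hp-tent} (together with the duality and interpolation arguments from the proof of Theorem \ref{thm:BHS-Xspace}) immediately yields $\nm{\bbQ_{\gy_0,D}f}_{X^\mb{p}} \simeq \nm{f}_{\mb{X}^\mb{p}}$ for $f \in \overline{\mc{R}(D)} \cap \mb{X}^\mb{p}$. First I would set $\gy_0(z) := z^{2k} e^{-z^2}$ for a large even integer $k$ to be fixed in terms of $\mb{p}$. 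Since $D^2 = -\gD$ on $\overline{\mc{R}(D)}$ (equivalently, $\hat{D}(\gx)^2 = |\gx|^2 I$ on $\bbP_D(\gx)\bbC^{m(1+n)}$), for $f \in \overline{\mc{R}(D)}$ one has
\begin{equation*}
	\gy_0(tD)f = (tD)^{2k} e^{-(tD)^2} f = t^{2k}(-\gD)^k e^{t^2 \gD} f = \tilde{\gf}_t \ast f,
\end{equation*}
where $\tilde{\gf} \in \mc{S}$ is the scalar Schwartz function with $\hat{\tilde{\gf}}(\gx) = |\gx|^{2k} e^{-|\gx|^2}$ and $\tilde{\gf}_t(x) := t^{-n} \tilde{\gf}(x/t)$, the convolution acting componentwise on $\bbC^{m(1+n)}$-valued $f$.

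Given a finite exponent $\mb{p} = (p,s)$ and $\mb{X} = \mb{H}$, I would pick any $a > n/\min(p,2)$, any $s_0 < s - a$, and then $k$ large enough that $s_1 := 2k > s$, so that the parameter constraints of Theorem \ref{thm:Hp-tent} are satisfied. The first condition $|\hat{\tilde{\gf}}(\gx)| > 0$ on $\{1/2 \leq |\gx| \leq 2\}$ is immediate; the second condition on $\hat{\tilde{\gf}}(\cdot) h(\cdot)/|\cdot|^{s_1}$ holds because the product equals $e^{-|\gx|^2} h(\gx)$, a compactly supported smooth function whose inverse Fourier transform is Schwartz; and the supremum condition at infinity is trivial by the super-Gaussian decay of $\hat{\tilde{\gf}}(2^m \cdot) H(\cdot)$ in $m$. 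Applying Theorem \ref{thm:Hp-tent} componentwise gives $\nm{t \mapsto \tilde{\gf}_t \ast f}_{T^\mb{p}} \simeq \nm{f}_{\mb{H}^\mb{p}}$ for $f \in \mc{Z}^\prime$, hence $\nm{\bbQ_{\gy_0,D}f}_{T^\mb{p}} \simeq \nm{f}_{\mb{H}^\mb{p}}$ for $f \in \overline{\mc{R}(D)} \cap \mb{H}^\mb{p}$. The remaining cases (finite exponents with $\mb{X} = \mb{B}$, and all infinite exponents) are obtained by mimicking the proof of Theorem \ref{thm:BHS-Xspace}: taking the $L^2(\bbR^{1+n}_+)$--$L^2(\bbR^n)$ adjoint yields boundedness of $\bbS_{\gy_0,D} \colon T^\mb{q} \to \mb{H}^\mb{q}$ first for $i(\mb{q}) > 1$ and then (by the Coifman--Meyer--Stein style argument cited there, combined with Corollary \ref{cor:regbump}) for all finite $\mb{q}$; real interpolation (Theorems \ref{thm:ts-rint-full} and \ref{thm:int-clas}) upgrades these to statements involving $Z$-spaces and Besov spaces; and the abstract Calder\'on reproducing formula (Theorem \ref{thm:CRF}) combined with retraction/coretraction interpolation yields $\nm{\bbQ_{\gy_0,D}f}_{X^\mb{p}} \simeq \nm{f}_{\mb{X}^\mb{p}}$ for every exponent and every choice of $\mb{X}$.

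Combined with Proposition \ref{prop:equivalent-quasinorms}, this gives $\nm{f}_{\bbX_D^\mb{p}} \simeq \nm{f}_{\mb{X}^\mb{p}}$ for all $f \in \overline{\mc{R}(D)} \cap \mb{X}^\mb{p}$, and in particular $\bbX_D^\mb{p} = \overline{\mc{R}(D)} \cap \mb{X}^\mb{p}$. To finish, for $f \in \mb{X}^\mb{p} \cap L^2$, $\bbP_D f$ lies in $\overline{\mc{R}(D)} \cap L^2$, and by boundedness of $\bbP_D$ on $\mb{X}^\mb{p}$ (Mikhlin for finite exponents, duality for infinite ones, as indicated just above Definition \ref{dfn:boldXD}), $\bbP_D f \in \mb{X}^\mb{p}$ with $\nm{\bbP_D f}_{\mb{X}^\mb{p}} \lesssim \nm{f}_{\mb{X}^\mb{p}}$, so $\bbP_D f \in \bbX_D^\mb{p}$ with the claimed bound. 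Conversely, any $h \in \bbX_D^\mb{p} \subset \overline{\mc{R}(D)} \cap L^2$ satisfies $h = \bbP_D h \in \bbP_D(\mb{X}^\mb{p} \cap L^2)$, with $\nm{h}_{\mb{X}^\mb{p}} \simeq \nm{h}_{\bbX_D^\mb{p}}$. The (weak-star) dense embedding $\bbX_D^\mb{p} \hookrightarrow \mb{X}_D^\mb{p} = \bbP_D \mb{X}^\mb{p}$ then follows from the (weak-star) density of $\mb{X}^\mb{p} \cap L^2$ (which contains $\mc{Z}(\bbR^n)$) in $\mb{X}^\mb{p}$ together with the (weak-star) continuity of $\bbP_D$. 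The main obstacle is the verification of the hypotheses of Theorem \ref{thm:Hp-tent} with uniform control, and then faithfully porting the duality/interpolation machinery behind Theorem \ref{thm:BHS-Xspace} to the present $D$-adapted setting, since neither step can be bypassed by a soft argument.
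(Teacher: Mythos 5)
Your proof uses the same central mechanism as the paper — exploiting the identity $\gy(tD)f = \gF(t^2 D^2)f = \gF(-t^2\gD)f = \tilde\gf_t \ast f$ on $\overline{\mc{R}(D)}$ to reduce the adapted quasinorm to a classical Littlewood--Paley convolution, then invoking boundedness of $\bbP_D$ to pass between $\mb{X}^\mb{p}$ and $\mb{X}_D^\mb{p}$. Where you diverge is in the choice of auxiliary function and, consequently, in which black box you can use. The paper takes a \emph{generic} nondegenerate even $\gy \in \gY_\infty^\infty(S_\gm)$; since $\gF \in \gY_\infty^\infty(S_{2\gm}^+)$ decays to all orders at $0$, the kernel $\gf := (\gF(|\cdot|^2))^\vee$ lands in $\mc{Z}$ automatically, so Theorem~\ref{thm:BHS-Xspace} applies in one line for every exponent $\mb{p}$ and both $\mb{X}=\mb{H},\mb{B}$. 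You instead commit to a concrete $\gy_0(z)=z^{2k}e^{-z^2}$; the corresponding $\tilde\gf$ has $\hat{\tilde\gf}(\gx) = |\gx|^{2k}e^{-|\gx|^2}$, which vanishes only to finite order $2k$ at $0$, so $\tilde\gf \notin \mc{Z}$ and Theorem~\ref{thm:BHS-Xspace} cannot be quoted directly. This forces you back to Theorem~\ref{thm:Hp-tent} and then to a re-derivation of the duality/interpolation/Calder\'on machinery built inside the proof of Theorem~\ref{thm:BHS-Xspace}. That re-derivation is not free: the Coifman--Meyer--Stein argument the paper's footnote cites is for compactly supported $\gf$ (extended there to $\gf\in\mc{Z}$, i.e.\ infinitely many vanishing moments), and the $(-\gD)^{-s}$ reduction both require a check that $2k$ moments suffice for the specific $\mb{q}$ under consideration. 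That check works, but it ties the admissible range of $\mb{q}$ to $k$, whereas the paper's generic choice disentangles these.

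There is one small but real error: the claim that $\gy_0 \in \gY_\infty^\infty$ is false. Since $|\gy_0(z)| \simeq |z|^{2k}$ near $0$, you only have $\gy_0 \in \gY_{2k}^\infty$. This does not sink the argument, because what you actually need is $\gy_0 \in \gY(\bbX_D^\mb{p})$, and Proposition~\ref{prop:equivalent-quasinorms} grants that membership once $2k$ exceeds the finite order thresholds recorded there (all of which depend only on $\mb{p}$ and $n$); you already say $k$ is ``to be fixed in terms of $\mb{p}$.'' You should correct the inclusion claim and state explicitly that $k$ must be taken large enough that (i) $\gy_0$ satisfies the decay thresholds of Proposition~\ref{prop:equivalent-quasinorms} for the exponent $\mb{p}$ at hand, and (ii) $\tilde\gf$ has enough vanishing moments for the CMS-type extension required in the $\bbS_{\gy_0,D}$ duality step. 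With those two adjustments the proposal is sound, but it is a more laborious route to the same place; the paper's choice of a generic $\gY_\infty^\infty$ auxiliary function is precisely what lets it cite Theorem~\ref{thm:BHS-Xspace} without reopening it.
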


\begin{proof}
	Let $\gy \in \gY_\infty^\infty(S_\gm)$ be a nondegenerate even function, where $\gm \in (0,\gp/2)$.
	Then there exists a function $\gF \in \gY_\infty^\infty(S_{2\gm}^+)$ such that $\gy(z) = \gF(z^2)$ for all $z \in S_\gm$, where
	\begin{equation*}
		S_{2\gm}^+ := \{z \in \bbC \sm \{0\} : |\arg(z)| < 2\gm\}
	\end{equation*}
	is the sector of angle $2\gm$, and where $\gY_\infty^\infty(S_{2\gm}^+)$ is defined analogously to $\gY_\infty^\infty(S_\gm)$.
	We then have
	\begin{equation*}
		\gF(-t^2 \gD)f = t^{-n}\gf(t^{-1}\cdot) \ast f
	\end{equation*}
	for all $f \in L^2$, where $\gf = \hat{\gy}$.
	Note that $\gf \in \mc{Z}$, and that we may choose $\gy$ such that $\hat{\gf}(\gx) > 0$ when $1/2 \leq |\gx| \leq 2$, so we may apply Theorem \ref{thm:BHS-Xspace}, which yields
	\begin{equation}\label{eqn:Phicharn}
		\nm{f}_{\mb{X}^\mb{p}} \simeq \nm{t \mapsto \gF(-t^2 \gD)f}_{X^\mb{p}}.
	\end{equation}
	
	Now we prove the claimed properties of $\bbP_D$.
	First let $f \in \mb{X}^\mb{p} \cap L^2$.
	For all $t > 0$, note that
	\begin{equation*}
		\gy(tD)\bbP_D f = \gF(t^2 D^2) \bbP_D f = \gF(-t^2 \gD)\bbP_D f.
	\end{equation*}
	Thus
	\begin{equation*}
		\nm{\bbP_D f}_{\bbX_D^\mb{p}}
		\simeq \nm{\bbQ_{\gy,D} \bbP_D f}_{X^\mb{p}}
		= \nm{t \mapsto \gF(-t^2 \gD)\bbP_D f}_{X^\mb{p}}
		\simeq \nm{\bbP_D f}_{\mb{X}^\mb{p}}
		\lesssim \nm{f}_{\mb{X}^\mb{p}}
	\end{equation*}
	by \eqref{eqn:Phicharn} and boundedness of $\bbP_D$ on $\mb{X}^\mb{p}$.
	On the other hand, suppose $h \in \bbX_D^{\mb{p}}$.
	We will show that $h \in \mb{X}^\mb{p}$; since $h = \bbP_D h \in L^2$, this will complete the proof.
	As before, for every $t > 0$ we have
	\begin{equation*}
		\gF(-t^2 \gD)h = \gF(-t^2 \gD)\bbP_D h = \gF(t^2 D^2)\bbP_D h = \gy(tD)h,
	\end{equation*}
	and hence
	\begin{equation*}
		\nm{h}_{\mb{X}^\mb{p}} \simeq \nm{t \mapsto \gF(-t^2 \gD)h}_{X^\mb{p}} = \nm{\bbQ_{\gy,D}h}_{X^\mb{p}} \simeq \nm{h}_{\bbX_D^\mb{p}}.
	\end{equation*}
	Therefore $h \in \mb{X}^\mb{p}$.
\end{proof}

\begin{rmk}
	In \cite{AS16} this result is proven for Hardy spaces $\mb{H}^{(p,0)}$ with $p > n/(n+1)$.
	This restriction arises because the argument there uses the atomic Hardy space $\bbH_{D,\text{ato},1}^p$.
	To get the full result by an atomic argument one should use atoms with a higher order of $D$-cancellation (as with classical Hardy spaces: for $p$ small, the atomic decomposition requires atoms of higher cancellation).
\end{rmk}

It follows from Theorem \ref{thm:class-space-identn-new} that $\mb{X}_D^\mb{p}$ is a completion of $\bbX_D^\mb{p}$ for all exponents $\mb{p}$.\index{completion!of $D$-adapted spaces}
This is the most natural completion of $\bbX_D^\mb{p}$ (we also have the `canonical completions' $\gf \mb{X}_D^\mb{p}$ from Section \ref{sec:completions}, which are not so canonical in this case).

\section{Similarity of functional calculus: going between $DB$ and $BD$}\label{sec:similarity}\index{functional calculus!similarity}

The operators $DB$ and $BD$ are, unsurprisingly, related.
However, again unsurprisingly, moving between the two operators introduces technicalities.
The main fact to notice is that $\mc{R}(DB) = \mc{R}(D)$, and that the restrictions $DB|_{\overline{\mc{R}(DB)}}$ and $BD|_{\overline{\mc{R}(BD)}}$ are similar under conjugation by $B|_{\overline{\mc{R}(DB)}}$ \cite[Proposition 2.1]{AS14.1}.
Consequently, whenever $f \in \mc{D}(D) \cap \overline{\mc{R}(BD)}$ and $\gf \in H^\infty$ we have
\begin{equation*}
	D\gf(BD)f = \gf(DB)Df.
\end{equation*}
We refer to this principle as \emph{similarity of functional calculi}.
This will eventually imply that we have an identification like $D\mb{X}_{BD}^{\mb{p}+1} = \mb{X}_{DB}^{\mb{p}}$ (this is just an informal statement; it is made precise in Corollary \ref{cor:similarity}).

We need a local coercivity property of $B$, which is proven in \cite[Lemma 5.14]{AS16}.

\begin{lem}\label{lem:local-coercivity}
	For any $u \in L_\text{loc}^2$ with $Du \in L_\text{loc}^2$ and any ball $B(x,t) \in \bbR^n$,
	\begin{equation*}
		\int_{B(x,t)} |Du|^2  \lesssim_{B,n,m} \int_{B(x,2t)} |BDu|^2 + t^{-2} \int_{B(x,2t)} |u|^2.
	\end{equation*}
\end{lem}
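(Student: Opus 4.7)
The plan is to combine the strict accretivity of $B$ on curl-free vector fields (condition \eqref{eqn:accretivity}) with a standard cutoff localization, producing a commutator error controlled by the $t^{-2}\int |u|^2$ term. Since $u$ and $Du$ are only locally $L^2$, I cannot apply global accretivity to $Du$ directly; instead I apply it to $D(\gh u)$ for a suitable cutoff $\gh$ and pay for the error coming from the first-order commutator $[D,\gh]$.

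First, I would fix a smooth cutoff $\gh \in C_c^\infty(B(x,2t))$ with $\gh \equiv 1$ on $B(x,t)$, $0 \le \gh \le 1$, and $|\nabla \gh| \lesssim t^{-1}$. A direct computation of $D$ on $\gh u$ gives the Leibniz-type identity
\[
  D(\gh u) = \gh\, Du + R_\gh u, \qquad R_\gh u := \begin{bmatrix} \nabla_\parallel \gh \cdot u_\parallel \\ -u_\perp \, \nabla_\parallel \gh \end{bmatrix},
\]
with $|R_\gh u| \lesssim t^{-1} |u|\, \mb{1}_{B(x,2t)}$ pointwise. Because $\gh$ is compactly supported and $u,Du \in L^2_{\loc}$, both $\gh u$ and $D(\gh u)$ lie in $L^2(\bbR^n)$.

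Next, $D(\gh u) \in \mc{R}(D)$, so its tangential component $-\nabla_\parallel(\gh u_\perp)$ is a distributional gradient and hence curl-free. Therefore the strict accretivity hypothesis \eqref{eqn:accretivity} applies to $f = D(\gh u)$, yielding
\[
  \gk \nm{D(\gh u)}_2^2 \le \re \langle B D(\gh u), D(\gh u)\rangle \le \nm{B D(\gh u)}_2 \nm{D(\gh u)}_2,
\]
so $\nm{D(\gh u)}_2 \lesssim \nm{B D(\gh u)}_2$. Applying the Leibniz identity a second time gives $B D(\gh u) = \gh\, BDu + B R_\gh u$, whence
\[
  \nm{B D(\gh u)}_2 \lesssim \nm{BDu}_{L^2(B(x,2t))} + t^{-1} \nm{u}_{L^2(B(x,2t))},
\]
with implicit constant depending on $\nm{B}_\infty$.

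Finally, since $\gh \equiv 1$ on $B(x,t)$,
\[
  \int_{B(x,t)} |Du|^2 \le \nm{\gh\, Du}_2^2 \le 2 \nm{D(\gh u)}_2^2 + 2 \nm{R_\gh u}_2^2,
\]
and combining with the previous two displays yields the claimed inequality. The only genuinely non-trivial point is the applicability of \eqref{eqn:accretivity} to $D(\gh u)$, which reduces to verifying the curl-free condition on the tangential part; this is automatic because that component is a distributional gradient. I do not foresee any serious obstacles beyond keeping track of constants.
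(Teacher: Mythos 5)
Your proof is correct. Since the paper does not reprove this lemma but instead cites \cite[Lemma 5.14]{AS16}, there is no in-text argument to compare against; however, the route you take (cutoff localization via $\gh$, Leibniz commutator $D(\gh u) = \gh\, Du + R_\gh u$, and the observation that the tangential component of $D(\gh u)$ is a distributional gradient and hence curl-free so that the global accretivity of $B$ applies) is precisely the standard mechanism and is, to the best of my recollection, the argument used in the cited reference. The only points worth flagging as genuinely load-bearing are the ones you already isolate: that $D(\gh u) \in L^2(\bbR^n)$ (which follows because $\gh$ is compactly supported and $u, Du \in L^2_{\loc}$), that the accretivity inequality \eqref{eqn:accretivity} applies to $D(\gh u)$ because its tangential part $-\nabla_\parallel(\gh u_\perp)$ is a distributional gradient, and that $B$ commutes with multiplication by the scalar $\gh$, so the second ``Leibniz'' step is just linearity of the multiplier. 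All of these check out.
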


\begin{prop}[Intertwining and regularity shift]\label{prop:D-mapping}
	Let $\mb{p}$ be an exponent. Then $\map{D}{ {\bbX}_{BD}^{\mb{p}+1} \cap \mc{D}(D)}{{\bbX}_{DB}^{\mb{p}}\cap \mc{R}(D)}$ is bijective, and for all $f\in  {\bbX}_{BD}^{\mb{p}+1} \cap \mc{D}(D)$,
	\begin{equation*}
		\nm{Df}_{{\bbX}_{DB}^{\mb{p}}} \simeq \nm{f}_{{\bbX}_{BD}^{\mb{p}+1}}.
	\end{equation*}
	
\end{prop}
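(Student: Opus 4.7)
The plan is to exchange the two operators via the similarity identity $D\phi(BD)g = \phi(DB)Dg$, valid for $\phi \in H^\infty$ and $g \in \mc{D}(D)$, using that $\phi(BD)$ maps $L^2$ into $\mc{D}(BD) = \mc{D}(D)$ since $B$ is a bounded multiplier. Fix a nondegenerate $\tilde{\gy} \in \gY_\infty^\infty$ and set $\gy(z) := z\tilde{\gy}(z)$, which also lies in $\gY_\infty^\infty$; in particular $\tilde{\gy} \in \gY({\bbX}_{DB}^{\mb{p}})$ and both $\gy, \tilde{\gy}$ are in $\gY({\bbX}_{BD}^{\mb{p}+1})$. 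The identity $BD\tilde{\gy}_t(BD) = t^{-1}\gy_t(BD)$ will be used repeatedly.

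For the forward estimate $\nm{f}_{\bbX_{BD}^{\mb{p}+1}} \lesssim \nm{Df}_{\bbX_{DB}^\mb{p}}$, apply similarity to obtain
\begin{equation*}
	\gy_t(BD)f = t\,BD\,\tilde{\gy}_t(BD)f = t\,B\,D\tilde{\gy}_t(BD)f = t\,B\,\tilde{\gy}_t(DB)Df
\end{equation*}
for $f \in \bbX_{BD}^{\mb{p}+1} \cap \mc{D}(D)$. Since $B$ is a pointwise $L^\infty$-multiplier and the weight shift from $X^{\mb{p}+1}$ to $X^{\mb{p}}$ absorbs exactly the factor of $t$, this gives
\begin{equation*}
	\nm{f}_{\bbX_{BD}^{\mb{p}+1}} \simeq \nm{\bbQ_{\gy,BD}f}_{X^{\mb{p}+1}} \lesssim \nm{B}_\infty \nm{\bbQ_{\tilde{\gy},DB}Df}_{X^{\mb{p}}} \simeq \nm{Df}_{\bbX_{DB}^{\mb{p}}}.
\end{equation*}

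For the reverse estimate, apply similarity the other way, $\tilde{\gy}_t(DB)Df = D\tilde{\gy}_t(BD)f$, and then invoke local coercivity (Lemma~\ref{lem:local-coercivity}) to $u = \tilde{\gy}_t(BD)f$ at scale $t$ centered at $x \in \bbR^n$:
\begin{equation*}
	\int_{B(x,t)} |\tilde{\gy}_t(DB)Df|^2 \lesssim \int_{B(x,2t)} |BD\tilde{\gy}_t(BD)f|^2 + t^{-2}\int_{B(x,2t)} |\tilde{\gy}_t(BD)f|^2.
\end{equation*}
Substituting $BD\tilde{\gy}_t(BD) = t^{-1}\gy_t(BD)$, the right-hand side equals $t^{-2}\int_{B(x,2t)}(|\gy_t(BD)f|^2 + |\tilde{\gy}_t(BD)f|^2)$. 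For tent spaces, feeding this pointwise (in the spatial parameter) bound into the Lusin functional and using change of aperture produces
\begin{equation*}
	\nm{Df}_{\bbH_{DB}^{\mb{p}}} \simeq \nm{\bbQ_{\tilde{\gy},DB}Df}_{T^{\mb{p}}} \lesssim \nm{\bbQ_{\gy,BD}f}_{T^{\mb{p}+1}} + \nm{\bbQ_{\tilde{\gy},BD}f}_{T^{\mb{p}+1}} \simeq \nm{f}_{\bbH_{BD}^{\mb{p}+1}},
\end{equation*}
the extra $t^{-1}$ being absorbed by the regularity shift. For $Z$-spaces one integrates the local coercivity inequality in $\tau \in (t/2,2t)$ and covers $B(x, c_0 t)$ by balls of radius $\tau$ with bounded overlap, producing a pointwise majorant of $\mc{W}_c(\bbQ_{\tilde{\gy},DB}Df)(t,x)$ by $t^{-1}(\mc{W}_{c'}\bbQ_{\gy,BD}f + \mc{W}_{c'}\bbQ_{\tilde{\gy},BD}f)(t,x)$ for a slightly enlarged Whitney parameter $c'$; the $c$-independence of the $Z^{\mb{p}}$-quasinorm then closes the estimate.

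For bijectivity, injectivity of $D$ on ${\bbX}_{BD}^{\mb{p}+1} \cap \mc{D}(D)$ follows from ${\bbX}_{BD}^{\mb{p}+1} \subset \overline{\mc{R}(BD)}$ together with $\mc{N}(BD) = \mc{N}(D)$ (a consequence of accretivity of $B$ on $\overline{\mc{R}(D)}$) and the direct sum decomposition $L^2 = \mc{N}(BD) \oplus \overline{\mc{R}(BD)}$. For surjectivity, given $g \in {\bbX}_{DB}^{\mb{p}} \cap \mc{R}(D)$, choose $u \in \mc{D}(D)$ with $Du = g$ and replace it by $\bbP_{BD}u \in \mc{D}(D) \cap \overline{\mc{R}(BD)}$, which still satisfies $D\bbP_{BD}u = g$ because $\mc{N}(BD) = \mc{N}(D)$; the norm estimate already established then forces $\bbP_{BD}u \in \bbX_{BD}^{\mb{p}+1}$. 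The main technical obstacle is the reverse inequality: coupling local coercivity with the tent/$Z$-space geometry requires the careful transfer from the enlarged balls $B(x,2t)$ to cones or Whitney regions, which is routine in the tent-space case via change of aperture but in the $Z$-space case hinges on the Whitney-parameter flexibility of Proposition~\ref{prop:Zpqs-seq-equivalence}.
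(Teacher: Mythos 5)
Your proof is correct and follows essentially the same strategy as the paper's: the similarity identity $D\phi(BD) = \phi(DB)D$ on $\mc{D}(D)$ in one direction combined with boundedness of $B$ and a regularity shift, and local coercivity (Lemma~\ref{lem:local-coercivity}) applied to $\tilde{\gy}_t(BD)f$ in the other, followed by the $\bbP_{BD}$ projection argument for surjectivity. Minor cosmetic differences: you organize the easier inequality by writing $\gy_t(BD)f = tB\,\tilde{\gy}_t(DB)Df$ directly, whereas the paper routes through $BDf$ and Corollary~\ref{cor:regbump}; you supply an explicit injectivity observation via $\mc{N}(BD)=\mc{N}(D)$ where the paper lets this follow implicitly from the norm estimate; and you sketch the $Z$-space covering argument that the paper leaves as ``the same argument.''
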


\begin{proof}
	We only consider $T^\mb{p}$ with $\mb{p} = (p,s)$ finite; all other cases are proven by the same argument.
	Let $\gy \in \gY_\infty^\infty$ be nondegenerate and define $\td{\gy} \in \gY_\infty^\infty$ by $\td{\gy}(z) = z{\gy}(z)$.
	Then $\td{\gy}(DB)$ maps $\overline{\mc{R}(DB)}$ into $\mc{D}((DB)^{-1})$.
	Since $f \in \mc{D}(D)$ we have $Df \in \mc{R}(D) = \mc{R}(DB)$.
	Using similarity of functional calculi, write
	\begin{align*}
		\nm{Df}_{\bbH^{\mb{p}}_{DB}}
		&\simeq \nm{t \mapsto \gy(tDB)Df}_{T^p_{s}} \\ 
		&= \nm{t \mapsto D\gy(tBD) f}_{T^p_{s}} \\
		&= \nm{t \mapsto D(BD)^{-1} \td{\gy}(tBD) f}_{T^p_{s+1}}.		
	\end{align*}
	For all $t > 0$ we have
	\begin{equation*}
		(BD)^{-1} \td{\gy}(tBD)f \in L^2,  D(BD)^{-1} \td{\gy}(tBD)f = \gy(tDB)Df \in L^2,
	\end{equation*}
	so we can apply Lemma \ref{lem:local-coercivity} for each $t > 0$ with $u = (BD)^{-1} \td{\gy}(tBD)f$ as follows: for all $x \in \bbR^n$,
	\begin{align*}
		&\mc{A}(t \mapsto t^{-s} D(BD)^{-1} \td{\gy}(tBD) f)(x) \\
		&= \bigg( \int_0^\infty t^{-2s} \int_{B(x,t)} |(D(BD)^{-1} \td{\gy}(tBD)f)(y)|^2 \, dy \, \frac{dt}{t^{n+1}} \bigg)^{1/2} \\
		&\lesssim \bigg( \int_0^\infty t^{-2s} \bigg[ \int_{B(x,2t)} |(\td{\gy}(tBD)f)(y)|^2 \, dy  \\
		&\qquad \qquad \qquad \qquad + \int_{B(x,2t)} |(\gy(tBD)f)(y)|^2 \, dy \bigg] \, \frac{dt}{t^{n+1}} \bigg)^{1/2} \\
		&\lesssim \mc{A}(\gk^{-s} \bbQ_{\td{\gy},BD}f)(x) + \mc{A}(\gk^{-s} \bbQ_{\gy,BD} f)(x).
	\end{align*}
	Therefore
	\begin{equation*}
		\nm{Df}_{\bbH^{\mb{p}}_{DB}}
		\lesssim \nm{\bbQ_{\td{\gy},BD} f}_{T^{\mb{p}+1}} + \nm{\bbQ_{\gy,BD} f}_{T^{\mb{p}+1}} \simeq \nm{f}_{\bbH^{\mb{p}+1}_{BD}}.
	\end{equation*}
	To prove the reverse estimate, using that $f \in \mc{D}(D) = \mc{D}(BD)$ and $f \in \overline{\mc{R}(BD)}$, write
	\begin{align*}
		\nm{f}_{\bbH_{BD}^{\mb{p}+1}}
		&= \nm{(BD)^{-1} BD f}_{\bbH_{BD}^{\mb{p}+1}} \\
		&\lesssim \nm{BD f}_{\bbH_{BD}^{\mb{p} }} \\
		&\simeq\nm{t \mapsto \gy(tBD)BDf}_{T^{\mb{p} }} \\
		&= \nm{t \mapsto B\gy(tDB)Df}_{T^{\mb{p} }} \\
		&\lesssim \nm{\bbQ_{\gy,DB}Df}_{T^{\mb{p} }} \\
		&\simeq \nm{Df}_{\bbH_{DB}^{\mb{p}}}
	\end{align*}
	using \eqref{cor:regbump}, boundedness of $B$, and similarity of functional calculi. 
	We next prove surjectivity. Let $g\in {\bbH}_{DB}^{\mb{p}}\cap \mc{R}(D)$. Since $g \in \mc{R}(D)$ there exists $h \in \mc{D}(D)$ such that $g = Dh$.
	Set $f=\bbP_{BD}h$. By Lemma \ref{lem:proj}, we have $f\in \mc{D}(D) \cap \overline{\mc{R}(BD)}$ and $Df=D\bbP_{BD}h= Dh= g$. The calculation above shows that $f\in \bbH_{BD}^{\mb{p}+1}$. 
\end{proof}

\begin{rmk}\label{rmk:prop:D-mapping}
The argument above proves more: it says that if $f\in  \mc{D}(D) \cap \overline{\mc{R}(BD)}$ with $Df\in {\bbX}_{DB}^{\mb{p}}$, then $f\in \bbX_{BD}^{\mb{p}+1}$. 
This argument is in particular valid when $B=I$ (although it is simpler in this case). 
\end{rmk}

Let us continue with a density lemma for adapted spaces. 

\begin{lem}\label{lem:density} Let $\mb{p}$ be an exponent.
Then ${\bbX}_{BD}^\mb{p} \cap \mc{D}(D)$ and ${\bbX}_{DB}^\mb{p} \cap \mc{R}(D)$ are dense in 
$\bbX_{BD}^\mb{p}$ and 
$\bbX_{DB}^\mb{p}$ respectively.\footnote{As usual, we mean weak-star density for infinite exponents.}
\end{lem}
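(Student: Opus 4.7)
The plan is to use the Calder\'on reproducing formula to expand any $f$ in the ambient space, truncate the $t$-integration to a compact subinterval $(\varepsilon,R) \subset \bbR_+$, and contract back to obtain an approximant that lies in the desired subspace. Fix nondegenerate Calder\'on siblings $\gf,\gy \in \gY_\infty^\infty$. Given $f \in {\bbX}_A^\mb{p}$ (with $A \in \{BD,DB\}$), set $F := \bbQ_{\gf,A}f \in X^\mb{p} \cap L^2(\bbR^{1+n}_+)$ and, for $0 < \varepsilon < R < \infty$,
\begin{equation*}
F_{\varepsilon,R} := \mb{1}_{(\varepsilon,R)} F, \qquad f_{\varepsilon,R} := \bbS_{\gy,A} F_{\varepsilon,R}.
\end{equation*}
Since $F_{\varepsilon,R} \in L^2_c(\bbR_+:L^2)$, Proposition \ref{prop:Srep} gives the absolutely convergent representation $f_{\varepsilon,R} = \int_\varepsilon^R \gy_t(A) F(t) \, dt/t$ in $L^2$, and clearly $f_{\varepsilon,R} \in {\bbX}_A^\mb{p}$.

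The main step is to show that $f_{\varepsilon,R}$ lies in the required subspace. For $A = BD$, since $\gy \in \gY_\infty^\infty$, each $\gy_t(BD)F(t)$ lies in $\mc{D}(BD) = \mc{D}(D)$. Writing $\gh(z) := z\gy(z) \in \gY_\infty^\infty$, bounded $H^\infty$-calculus gives $\|BD\gy_t(BD)F(t)\|_2 = t^{-1}\|\gh_t(BD)F(t)\|_2 \lesssim t^{-1}\|F(t)\|_2$. Since $D\gy_t(BD)F(t) \in \mc{R}(D)$ has curl-free tangential part, the strict accretivity of $B$ on curl-free fields (used via the standard one-line estimate from \eqref{eqn:accretivity}) yields
\begin{equation*}
\|D\gy_t(BD)F(t)\|_2 \lesssim \|BD\gy_t(BD)F(t)\|_2 \lesssim t^{-1} \|F(t)\|_2.
\end{equation*}
Combined with Cauchy--Schwarz and the fact that $F \in L^2(\bbR^{1+n}_+)$, this gives $\int_\varepsilon^R \|D\gy_t(BD)F(t)\|_2 \, dt/t < \infty$, and Hille's theorem (closedness of $D$) places $f_{\varepsilon,R}$ in $\mc{D}(D)$. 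For $A = DB$, we instead factor $\gy_t(z) = z \, \td{\gy}_t(z)$ with $\td{\gy} := \gy/z \in \gY_\infty^\infty$. Then $\td{\gy}_t(DB)F(t) \in \mc{D}(DB)$, so $B\td{\gy}_t(DB)F(t) \in \mc{D}(D)$, and
\begin{equation*}
\gy_t(DB)F(t) = D\bigl(B\td{\gy}_t(DB)F(t)\bigr) \in \mc{R}(D).
\end{equation*}
The bounds $\|B\td{\gy}_t(DB)F(t)\|_2 \lesssim \|F(t)\|_2$ and $\|\gy_t(DB)F(t)\|_2 \lesssim \|F(t)\|_2$ combined with Cauchy--Schwarz make both integrands absolutely integrable over $(\varepsilon,R)$, so Hille's theorem gives $f_{\varepsilon,R} = D\bigl(\int_\varepsilon^R tB\td{\gy}_t(DB)F(t) \, dt/t\bigr) \in \mc{R}(D)$.

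It remains to show $f_{\varepsilon,R} \to f$ in ${\bbX}_A^\mb{p}$ (weak-star for infinite $\mb{p}$) as $\varepsilon \to 0$, $R \to \infty$. For finite $\mb{p}$, dominated convergence in the Lusin or Whitney-average representation of $X^\mb{p}$ gives $F_{\varepsilon,R} \to F$ in $X^\mb{p}$; Proposition \ref{prop:Xss-charn} then yields
\begin{equation*}
\|f - f_{\varepsilon,R}\|_{{\bbX}_A^\mb{p}} = \|\bbS_{\gy,A}(F - F_{\varepsilon,R})\|_{{\bbX}_A^\mb{p}} \lesssim \|F - F_{\varepsilon,R}\|_{X^\mb{p}} \to 0.
\end{equation*}
For infinite $\mb{p}$, the analogous fact $F_{\varepsilon,R} \to F$ weak-star in $X^\mb{p}$ (tested against $X^{\mb{p}'}$ via dominated convergence on $\bbR^{1+n}_+$) together with weak-star continuity of the bounded operator $\bbQ_{\gf,A}\bbS_{\gy,A}$ on $X^\mb{p}$ (its predual action being $\bbQ_{\td{\gy},A^*}\bbS_{\td{\gf},A^*}$, which is bounded on $X^{\mb{p}'}$ by Theorem \ref{thm:QS-compn-bddness}) gives $\bbQ_{\gf,A}f_{\varepsilon,R} \to F$ weak-star, which is exactly weak-star convergence in ${\bbX}_A^\mb{p}$. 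The main technical obstacle is the $BD$ case of the second paragraph: $D$ does not commute directly with $\gy_t(BD)$, so one must exploit the coercivity of $B$ on curl-free fields to transfer the bound on $BD\gy_t(BD)F(t)$ into a bound on $D\gy_t(BD)F(t)$. The $DB$ case is cleaner because the factorisation $\gy_t = z\td{\gy}_t$ directly exposes the factor of $D$.
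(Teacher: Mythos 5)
Your proof is correct, but it takes a genuinely different route from the paper's. The paper disposes of the $BD$ case by a pointer to the proof of Proposition~\ref{prop:comp-fc} (which uses essentially your truncated--Calder\'on device), and for the $DB$ case it uses a cleaner semigroup approximation: it sets $f_\varepsilon := e^{-\varepsilon[DB]}f - e^{-(1/\varepsilon)[DB]}f$, observes convergence to $f$ in $\bbX_{DB}^\mb{p}$ via the argument of Proposition~\ref{prop:sgp-continuity}, and then reads membership in $\mc{R}(D)$ directly off the representation $f_\varepsilon = -\int_\varepsilon^{1/\varepsilon}[DB]e^{-s[DB]}f\,ds$ together with $\mc{R}(DB)=\mc{R}(D)$. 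Your treatment instead runs both cases through the same truncated-Calder\'on approximant $f_{\varepsilon,R}=\bbS_{\gy,A}(\mb{1}_{(\varepsilon,R)}\bbQ_{\gf,A}f)$, using accretivity of $B$ on curl-free fields to get the $\mc{D}(D)$-bound in the $BD$ case and the factorisation $\gy_t(z)=(tz)\,\td{\gy}_t(z)$ with $\td{\gy}\in\gY_\infty^\infty$ to exhibit the factor of $D$ in the $DB$ case. Your approach is more uniform and more self-contained---it gives an actual argument where the paper only gives a cross-reference for $BD$---at the modest cost of invoking accretivity and Hille's theorem where the paper's $DB$ argument needs neither. One small thing worth flagging, though not a gap: when you claim $F_{\varepsilon,R}\to F$ in $X^\mb{p}$ by ``dominated convergence in the Lusin or Whitney-average representation'', this is fine since the pointwise majorant $\mc{A}(\gk^{-\gq(\mb{p})}F)$ (or $\mc{W}_c(\gk^{-r(\mb{p})}F)$) already lies in $L^{i(\mb{p})}$ and the truncated quantities decrease to it monotonically; but you should also note that the weak-star claim for infinite $\mb{p}$ uses the \emph{absolute} convergence of the $L^2$ duality pairing proved in Theorem~\ref{thm:ts-duality} and the $Z$-space duality results, not just the abstract pairing, to justify dominated convergence of $\langle F-F_{\varepsilon,R},G\rangle$.
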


\begin{proof} The result for $\bbX_{BD}^{\mb{p}} \cap \mc{D}(D)$ already appears in the proof of Proposition \ref{prop:comp-fc}, since $\mc{D}(D) = \mc{D}(BD)$.
Let $f\in {\bbX}_{DB}^\mb{p}$. 
By the argument in Proposition \ref{prop:sgp-continuity}, we have that
\begin{equation*}
	f_{\varepsilon} := e^{-\varepsilon[DB]}f -  e^{-(1/\varepsilon)[DB]}f
\end{equation*}
 belongs to ${\bbX}_{DB}^\mb{p}$ and converges to $f$ there as $\varepsilon \to 0$.
Using that
\begin{equation*}
	f_{\varepsilon}= -\int_{\varepsilon}^{1/\varepsilon} [DB]e^{-s[DB]}f \, ds
\end{equation*}
 and using the spectral subspace decomposition, we see that $f_{\varepsilon}$ is in $\mc{R}(D)$. 
\end{proof}

The following is an immediate corollary of Proposition \ref{prop:D-mapping} and Lemma \ref{lem:density}.

\begin{cor}\label{cor:similarity} Assume that $\mb{p}$ is an exponent. Let $\mb{X}$ and $\mb{Y}$ be completions of $\bbX_{BD}^{\mb{p}+1}$ and $\bbX_{DB}^{\mb{p}}$ respectively.
Then $D$ extends to an isomorphism $\mb{X} \to \mb{Y}$ (which also holds for spectral subspaces), and 
\begin{equation}\label{eqn:simgeneral}
	D\gf(BD)= \gf(DB)D
\end{equation}
where we write $\gf(BD)$ and $\gf(DB)$ to denote the extended operators.
In particular, for Cauchy extensions we have
\begin{equation}\label{eqn:simcauchy}
	DC_{BD}= C_{DB}D.
\end{equation}
\end{cor}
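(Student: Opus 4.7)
The plan is to lift everything from the pre-spaces to their completions using density plus continuity, once the key bijection has been set up on the pre-space level.

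\textbf{Extension of $D$ as an isomorphism of completions.} Proposition \ref{prop:D-mapping} gives that the restriction $\map{D}{\bbX_{BD}^{\mb{p}+1}\cap \mc{D}(D)}{\bbX_{DB}^{\mb{p}}\cap \mc{R}(D)}$ is a bijection with two-sided quasinorm equivalence, and Lemma \ref{lem:density} gives that its domain and codomain are (weak-star) dense in the pre-spaces $\bbX_{BD}^{\mb{p}+1}$ and $\bbX_{DB}^{\mb{p}}$ respectively, hence in any completions $\mb{X}$ and $\mb{Y}$. The universal property of completions then yields a unique continuous extension $\map{D}{\mb{X}}{\mb{Y}}$, and the two-sided norm equivalence together with surjectivity onto a dense subspace force this extension to be an isomorphism. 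For spectral subspaces, similarity of functional calculi gives $D\gc^{\pm}(BD)g = \gc^{\pm}(DB)Dg$ for every $g\in \mc{D}(D)\cap \overline{\mc{R}(BD)}$, so $D$ maps $\bbX_{BD}^{\mb{p}+1,\pm}\cap \mc{D}(D)$ bijectively onto $\bbX_{DB}^{\mb{p},\pm}\cap \mc{R}(D)$; the same density-and-extension argument then provides isomorphisms $\mb{X}^\pm \to \mb{Y}^\pm$ between the spectral-subspace completions supplied by Proposition \ref{prop:comp-fc}.

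\textbf{The intertwining identity \eqref{eqn:simgeneral}.} For $\gf \in H^\infty$ both compositions $D\gf(BD)$ and $\gf(DB)D$ are well-defined bounded operators $\mb{X}\to\mb{Y}$: $\gf(BD)$ extends boundedly to $\mb{X}$ and $\gf(DB)$ to $\mb{Y}$ by Proposition \ref{prop:comp-fc}, and $\map{D}{\mb{X}}{\mb{Y}}$ by the previous step. It therefore suffices to check the identity on a dense subspace of $\mb{X}$. I would take
\begin{equation*}
  \mc{G} := \{e^{-s[BD]}h : h \in \bbX_{BD}^{\mb{p}+1},\ s > 0\}.
\end{equation*}
Every $g=e^{-s[BD]}h \in \mc{G}$ lies in $\bbX_{BD}^{\mb{p}+1}\cap \mc{D}(BD) \subset \bbX_{BD}^{\mb{p}+1}\cap \mc{D}(D)$, and since $\gf(BD)$ commutes with $e^{-s[BD]}$ by the $H^\infty$ functional calculus, one also has $\gf(BD)g = e^{-s[BD]}\gf(BD)h \in \mc{D}(D)$. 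The scalar similarity identity then applies to $g$ in the pre-space sense, yielding $D\gf(BD)g = \gf(DB)Dg$ as an identity in $L^2\cap \bbX_{DB}^{\mb{p}}$, which is exactly the desired identity interpreted in $\mb{Y}$. Density of $\mc{G}$ in $\mb{X}$ follows from density of $\bbX_{BD}^{\mb{p}+1}$ in $\mb{X}$ together with Proposition \ref{prop:sgp-continuity}, which, after decomposing $h = h^+ + h^-$ into spectral parts, gives $e^{-s[BD]}h^\pm \to h^\pm$ in $\mb{X}$ (weak-star when $\mb{p}$ is infinite) as $s\to 0^+$.

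\textbf{The Cauchy identity and the main obstacle.} For each fixed $t\in\bbR\setminus\{0\}$, specialising \eqref{eqn:simgeneral} to the bounded holomorphic function $\gf(z) = \sgp_{|t|}(z)\,\gc^{\sgn(t)}(z) \in H^\infty$ yields $DC_{BD}f(t) = C_{DB}Df(t)$ in $\mb{Y}$ for every $f\in\mb{X}$, which is \eqref{eqn:simcauchy}. The main obstacle will be in the second step: the dense subspace on which the pre-space similarity identity can be asserted must simultaneously lie in $\mc{D}(D)$ \emph{and} be mapped into $\mc{D}(D)$ by $\gf(BD)$. The semigroup-smoothed family $\mc{G}$ is chosen precisely to secure both properties via commutation of $e^{-s[BD]}$ with the functional calculus; the remaining technical care is in matching this construction with the correct density statement in each completion (norm vs.\ weak-star) and in verifying that the completed operators $\gf(BD)$ on $\mb{X}$ and $\gf(DB)$ on $\mb{Y}$ really do agree with the pre-space operators on the intermediate smoothed elements used in the approximation.
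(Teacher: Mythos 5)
Your proposal is correct, and it takes the same route as the paper: Proposition \ref{prop:D-mapping} plus Lemma \ref{lem:density} give a bijection with two-sided quasinorm equivalence between dense subspaces, which the universal property of completions upgrades to an isomorphism $\mb{X}\to\mb{Y}$; the intertwining identity then passes to the completions by density and continuity. The paper records the corollary as "immediate," so what you have done is supply the details it elides.

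The one place your argument is more elaborate than it needs to be is the introduction of the smoothed class $\mc{G}=\{e^{-s[BD]}h : h \in \bbX_{BD}^{\mb{p}+1},\ s>0\}$ to handle the concern that $\gf(BD)$ must map the test class into $\mc{D}(D)$. The similarity-of-functional-calculi statement in Section \ref{sec:similarity} already records, for $f \in \mc{D}(D)\cap\overline{\mc{R}(BD)}$ and $\gf\in H^\infty$, the identity $D\gf(BD)f = \gf(DB)Df$, and this implicitly asserts $\gf(BD)f \in \mc{D}(D)$. (Indeed $\gf(BD) = B\gf(DB)B^{-1}$ on $\overline{\mc{R}(BD)}$, where $B$ is an isomorphism $\overline{\mc{R}(DB)}\to\overline{\mc{R}(BD)}$, and one checks from this that $\gf(BD)$ preserves $\mc{D}(D)\cap\overline{\mc{R}(BD)}$.) So you may test \eqref{eqn:simgeneral} directly on $\bbX_{BD}^{\mb{p}+1}\cap\mc{D}(D)$, which Lemma \ref{lem:density} already declares dense, without the semigroup mollification. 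Your version is not wrong — $\mc{G}$ is a legitimate dense subclass of that intersection, and the limit $e^{-s[BD]}h \to h$ is indeed supplied by Proposition \ref{prop:sgp-continuity} (which, as its proof shows, handles the two-sided semigroup directly, so even the spectral decomposition $h=h^++h^-$ is optional) — it simply reproves a domain-preservation fact the paper already has in hand.
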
 

\section{Inclusions and identifications of $DB$- and $D$-adapted spaces}\label{sec:rel}

Now we discuss spaces adapted to $DB$, and the range of exponents for which they may be identified with spaces adapted to $D$.
As shown in the introduction, the following `identification region' plays a central role in the theorems of Section \ref{sec:class}.
The sets $i(\mb{X},A_0,A_1)$ were defined in Definition \ref{dfn:inclusion-regions}.

\begin{dfn}\index{region!identification}
	We define the \emph{identification region} for $DB$ as
	\begin{align}\label{eqn:I-dfn}
		I(\mb{X},DB) &:= i(\mb{X},D,DB) \cap i(\mb{X},DB,D) \\
		&=\big\{\mb{p} \in \mb{E}_\text{fin} : \text{$\nm{f}_{{\bbX}_{DB}^\mb{p}} \simeq \nm{f}_{{\bbX}_{D}^\mb{p}}$ for all $f \in \overline{\mc{R}(DB)} = \overline{\mc{R}(D)}$}\big\} \nonumber,
	\end{align}
	and for $s \in \bbR$,
	\begin{equation*}
		I_s(\mb{X},DB) := \big\{ i(\mb{p}) : \mb{p} \in I(\mb{X},DB)\  \mathrm{with}\  \gq(\mb{p}) = s \big\} \subset (0,\infty).
	\end{equation*}
\end{dfn}

Note that $I(\mb{X},DB)$ is defined to be a set of finite exponents.
We could include infinite exponents in this definition, but it is technically more convenient to restrict ourselves to finite exponents.
Whenever we deal with infinite exponents in applications, we always consider the space under consideration as a dual space, and the exponent corresponding to its predual will be in $I(\mb{X},DB^*)$ (see for example Theorem \ref{thm:mainthm-gtr2}).

We recall some key results of the second author and Stahlhut, which follow from \cite[Proposition 4.17, Proposition 4.18, Theorem 5.1 and Remark 5.2]{AS16}.
An alternative proof of the second result has been given by Frey, McIntosh, and Portal \cite[Corollary 3.2(3)]{FMP15}.

\begin{thm}\label{thm:AS-identn}\
	\begin{enumerate}[(i)]
	\item
		For $p \in (n/(n+1),2]$ we have $(p,0) \in i(\mb{H},DB,D)$, and for $p \in [2,\infty)$ we have $(p,0) \in i(\mb{H},D,DB)$.
	\item
          There exist $\varepsilon, \varepsilon^\prime > 0$ (depending on $B$) such that
          \begin{equation*}
            (2n/(n+2) - \varepsilon, 2+\varepsilon^\prime) \subset I_0(\mb{H},DB).
          \end{equation*}
	\item
		If $n=1$, then $I_0(\mb{H},DB) = (1/2, \infty)$.
	\item
		The set $I_0(\mb{H},DB)$ is an open interval.
	\end{enumerate}
\end{thm}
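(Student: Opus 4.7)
The four parts have quite different flavours, so I would handle them separately: parts (i) and (iv) rely on the abstract machinery already developed, while parts (ii) and (iii) require the deep functional calculus input from Axelsson--Keith--McIntosh combined with extrapolation arguments.

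For part (i), I would use Lemma \ref{lem:incl-QS}: the inclusion $(p,0) \in i(\mb{H},DB,D)$ is equivalent to boundedness of $\bbQ_{\gf,D} \bbS_{\gy,DB}$ on $T^p_0$ for Calder\'on siblings $\gf,\gy \in \gY_\infty^\infty$. The Schwartz kernel $(\gf_t(D)\gy_\tau(DB))_{t,\tau > 0}$ is $L^2$-bounded by Schur-type estimates (Lemmas \ref{lem:schur-lem}, \ref{lem:schur-hyp-verifn}), and both factors satisfy off-diagonal estimates of arbitrarily high order by Theorem \ref{thm-fcode}. These combine to give off-diagonal estimates for the composition, and Theorem \ref{thm:AMR} extrapolates the $L^2$ bound to $T^p_0$ for $p \in (n/(n+1),1]$; the lower bound $n/(n+1)$ is forced by the condition $b + s > -n\gd_{p,2}$ in that theorem at $s = 0$. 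Stein interpolation as in Step 2 of the proof of Theorem \ref{thm:QS-compn-bddness} fills in $p \in (1,2]$. The statement for $p \in [2,\infty)$ then follows by transposition: by Proposition \ref{prop:completion-duality}, $(p,0) \in i(\mb{H},D,DB)$ holds if and only if $(p^\prime,0) \in i(\mb{H},DB^*,D)$, reducing to the previous case applied to the perturbed operator $DB^*$.

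For part (iv), that $I_0(\mb{H},DB)$ is an interval is immediate from Theorem \ref{thm:inclusion-interpolation}: if $p_0,p_1 \in I_0$, then $(p_i,0)$ lies in both $i(\mb{H},D,DB)$ and $i(\mb{H},DB,D)$, so for $\eta \in (0,1)$ the interpolated exponent $[(p_0,0),(p_1,0)]_\eta = (p_\eta,0)$ (with $1/p_\eta = (1-\eta)/p_0 + \eta/p_1$) sits in both inclusion regions as well. Openness requires a stability statement: once the composition $\bbQ_{\gf,DB} \bbS_{\gy,D}$ is bounded on $T^{p_*}_0$, I would invoke Sneiberg's lemma applied to the complex interpolation scale of tent spaces (Theorem \ref{thm-wts-c-interpolation}) to propagate invertibility in the relevant diagram to a neighbourhood of $p_*$.

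The main obstacle is part (ii), which is the deep analytic content and cannot be reduced to the abstract framework. The starting point is the theorem cited in the introduction that $DB$ has bounded $H^\infty$ calculus on $\overline{\mc{R}(DB)}\subset L^2$, so that $p=2 \in I_0$. I would extrapolate in two directions: downward to $p < 2$ via a Calder\'on--Zygmund decomposition adapted to $DB$, replacing pointwise kernel bounds by the resolvent off-diagonal estimates of the Standard Assumptions and then running the AMR scheme applied directly to $\bbQ_{\gy,DB}$ rather than to a composition. The sharp endpoint $2n/(n+2)$ is the Sobolev threshold at which the cancellation of atoms (of regularity $0$) just controls the tail of the resolvents. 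Upward to $p > 2$ one needs a good-$\gl$ inequality paired with a Cacciopoli/reverse-H\"older estimate for solutions, whose range is again governed by the order of off-diagonal decay and the ellipticity of $B$; this is why both $\varepsilon$ and $\varepsilon^\prime$ depend on $B$. For part (iii), in dimension $n=1$ the tangential structure collapses to a single derivative, so the reverse-H\"older step holds with exponent arbitrarily close to infinity and the downward extrapolation saturates at the atomic endpoint $n/(n+1)=1/2$; the work here is to check that the various constants remain uniform so that the extrapolation of part (ii) reaches its natural maximal range.
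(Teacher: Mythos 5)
The paper does not prove this theorem at all: it is imported from Auscher--Stahlhut \cite{AS16} (Propositions 4.17--4.18, Theorem 5.1, Remark 5.2), with \cite{FMP15} cited for an alternative proof of (ii), precisely because it cannot be extracted from the abstract machinery of Chapters \ref{chap:otp}--\ref{chap:ahsb}. Your part (i) attempts exactly that, and this is where the genuine gap lies. Theorem \ref{thm:AMR} needs a kernel of the form $m_a^b(t/\tau)S_{t,\tau}$, i.e.\ decay in $t/\tau$ in both regimes \emph{together with} uniform off-diagonal bounds; Theorem \ref{thm-fcode} and Lemma \ref{lem:AMR-kernel-lem} deliver this only for functions of a single operator, where $\gy_t\gh\gf_\tau$ is one holomorphic symbol. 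For the two-operator kernel $\gf_t(D)\gn_\tau(DB)$, the regime $t\geq\tau$ forces you to move negative powers of $tD$ onto $\gn_\tau(DB)$, which produces the nonlocal projection $\bbP_D$ (Riesz-transform-type operators), and these do \emph{not} satisfy off-diagonal estimates; so the hypotheses of Theorem \ref{thm:AMR} are simply not available by soft means, and composing off-diagonal bounds for the two factors separately gives no $t/\tau$ decay. A sanity check: your argument uses nothing beyond the Standard Assumptions and equal range closures and is symmetric in $D\leftrightarrow DB$, so it would yield $\bbH^{(p,0)}_{A_0}=\bbH^{(p,0)}_{A_1}$ for all $p\in(n/(n+1),\infty)$ and all such pairs, i.e.\ $I_0(\mb{H},DB)=(n/(n+1),\infty)$ for every $B$ --- contradicting the possible finiteness of the upper endpoint and rendering (ii)--(iv) and the identification-region apparatus vacuous. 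In \cite{AS16} the case $p\leq 1$ of (i) is proved by a molecular argument: a $DB$-molecule is $D$ of a function with decay, and it is this single degree of cancellation, not the constraint $b+s>-n\gd_{p,2}$ in Theorem \ref{thm:AMR} (which permits any $p$ once $b$ is large), that produces the threshold $n/(n+1)$.

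Two further problems. Your duality step for $p\in[2,\infty)$ is incorrect as stated: the $L^2$ pairing dualises $\bbH^{(p,0)}_{DB}$ against the $(DB)^*=B^*D$-adapted space (Proposition \ref{prop:completion-duality}), not the $DB^*$-adapted one; passing to $DB^*$ costs the regularity shift of Proposition \ref{prop:dual-interval-incl} and lands at $(p,0)^\heartsuit=(p^\prime,-1)$, not $(p^\prime,0)$. And parts (ii)--(iii) are exactly the deep content: your sketch (a Calder\'on--Zygmund decomposition adapted to $DB$, good-$\gl$ plus reverse H\"older for solutions) is a programme rather than a proof; in \cite{AS16} these statements come from $L^p$--$L^2$ off-diagonal estimates and $L^p$ boundedness of the $H^\infty$ calculus of $DB$ on an interval $(p_-(DB),p_+(DB))\ni 2$, with $2n/(n+2)-\varepsilon$ arising as a Sobolev-conjugate improvement of $p_-(DB)<2$ and the $n=1$ statement from the degeneration of these exponents in one dimension. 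Part (iv) is the closest to workable (intervalness via Theorem \ref{thm:inclusion-interpolation}, openness via {\v{S}}ne{\u\i}berg as in Theorem \ref{thm:identn-openness}), but even there you must verify that the extrapolated inverse agrees with $\bbQ_{\gf,DB}\bbS_{\gy,D}$, via the Calder\'on reproducing formula, before concluding membership in $I_0(\mb{H},DB)$.
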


\begin{rmk}
	The number $2 + \varepsilon^\prime$ is the supremum of those $p$ for which $DB$ has bounded $H^\infty$ functional calculus in $L^p$, and $\frac{2n}{n+2} - \varepsilon$ is at most the infimum of $\frac{2p}{p+n}$ over all $p$ such that $DB$ has bounded $H^\infty$ functional calculus in $L^p$.
	The numbers $\varepsilon$ and $\varepsilon^\prime$ depend only on $n$, $m$, and the ellipticity constants.
\end{rmk}

Following Theorem \ref{thm:AS-identn}, we define the interval
\begin{equation*}
	I_0^{\text{min}}(\mb{H},DB) := \left\{ \begin{array}{ll} (\frac{2n}{n+2} - \varepsilon, 2+\varepsilon^\prime) & (n \geq 2) \\ (\frac12 ,\infty) & (n=1), \end{array} \right.
\end{equation*}
where $\varepsilon$ and $\varepsilon^\prime$ are the numbers appearing in Theorem \ref{thm:AS-identn}, and the region of exponents
\begin{equation*}
	\wtd{I}_0^{\text{min}}(\mb{H},DB) := \left\{ \mb{p} \in \mb{E} : \gq(\mb{p}) = 0, i(\mb{p}) \in I_0^{\text{min}}(\mb{H},DB) \right\}.
      \end{equation*}
      
We now extend these results to allow for more general exponents of order $\gq(\mb{p}) \in [-1,0]$, and also to incorporate Besov spaces.
The $\heartsuit$-duality operation on exponents provides a link between the inclusion regions $i(\mb{X},D,DB)$ and $i(\mb{X},DB^*,D)$ (Proposition \ref{prop:dual-interval-incl}), hence also between the identification regions $I(\mb{X},DB)$ and $I(\mb{X},DB^*)$ (Corollary \ref{cor:dual-interval-identn}).

\begin{prop}[$\heartsuit$-duality of inclusion regions]\label{prop:dual-interval-incl}\index{duality!of inclusion regions}
	If $\mb{p} \in i(\mb{X},DB,D)$, then $\nm{f}_{{\bbX}_{DB^*}^{\mb{p}^\heartsuit}} \lesssim \nm{f}_{{\bbX}_{D}^{\mb{p}^\heartsuit}}$ for all $f \in \overline{\mc{R}(D)}$.
	In particular, if $\mb{p}^\heartsuit$ is also finite, then $\mb{p}^\heartsuit \in i(\mb{X},D,DB^*)$.
	Likewise, if $\mb{p} \in i(\mb{X},D,DB)$, then $\nm{f}_{{\bbX}_{DB^*}^{\mb{p}^\heartsuit}} \gtrsim \nm{f}_{{\bbX}_{D}^{\mb{p}^\heartsuit}}$ for all $f \in \overline{\mc{R}(D)}$, and if $\mb{p}^\heartsuit$ is also finite then $\mb{p}^\heartsuit \in i(\mb{X},DB^*,D)$.

\end{prop}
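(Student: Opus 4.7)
The plan is to establish the first inequality; the ``likewise'' claim then follows from an identical argument with the roles of the two adapted spaces (and hence the inequalities) reversed. The idea is to combine the $L^2$-duality between canonical completions from Proposition~\ref{prop:completion-duality} with the similarity between $B^*D$ and $DB^*$ given by Corollary~\ref{cor:similarity} (applied with $B$ replaced by $B^*$). The key observations are that $D$ is self-adjoint on $L^2$, so $(DB)^* = B^*D$, and that the operation $\mb{p} \mapsto \mb{p}^\heartsuit = \mb{p}^\prime - 1$ is exactly dualisation followed by the regularity shift of $-1$ induced by the isomorphism $D \colon \bbX_{B^*D}^{\mb{p}^\prime} \cap \mc{D}(D) \to \bbX_{DB^*}^{\mb{p}^\heartsuit} \cap \mc{R}(D)$.

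First I will reformulate $\mb{p} \in i(\mb{X}, DB, D)$ via Lemma~\ref{lem:incl-QS}: fixing Calder\'on siblings $\gf, \gy \in \gY_\infty^\infty$, the operator $\bbQ_{\gf,D}\bbS_{\gy,DB}$ extends to a bounded map $T \colon \gf\mb{X}_{DB}^\mb{p} \to \gf\mb{X}_D^\mb{p}$ between canonical completions. Since $\gF_{\td{\gy},\td{\gf}} = \td{\gF_{\gy,\gf}} = 1$, the pair $(\td{\gy},\td{\gf})$ is also a Calder\'on sibling pair, and the $L^2(\bbR^{1+n}_+)$-adjoint of $T$ is $T^* = \bbQ_{\td{\gy}, B^*D}\bbS_{\td{\gf}, D}$. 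By Proposition~\ref{prop:completion-duality} (using $D^* = D$ and $(DB)^* = B^*D$), the operator $T^*$ corresponds, under the $L^2(\bbR^n)$-identification of duals, to a bounded operator $\td{\gy}\mb{X}_D^{\mb{p}^\prime} \to \td{\gy}\mb{X}_{B^*D}^{\mb{p}^\prime}$. A second application of Lemma~\ref{lem:incl-QS} then yields $\nm{g \mid \bbX_{B^*D}^{\mb{p}^\prime}} \lesssim \nm{g \mid \bbX_D^{\mb{p}^\prime}}$ for all $g \in \overline{\mc{R}(D)}$.

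Next I will apply Proposition~\ref{prop:D-mapping} twice at the exponent $\mb{q}$ satisfying $\mb{q}+1 = \mb{p}^\prime$, namely $\mb{q} = \mb{p}^\heartsuit$: once with $B$ replaced by $B^*$, giving the norm equivalence $\nm{Dh \mid \bbX_{DB^*}^{\mb{p}^\heartsuit}} \simeq \nm{h \mid \bbX_{B^*D}^{\mb{p}^\prime}}$ for $h \in \bbX_{B^*D}^{\mb{p}^\prime} \cap \mc{D}(D)$, and once with $B = I$, giving $\nm{Dh \mid \bbX_D^{\mb{p}^\heartsuit}} \simeq \nm{h \mid \bbX_D^{\mb{p}^\prime}}$. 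Chaining these equivalences with the inequality from the previous paragraph yields $\nm{f \mid \bbX_{DB^*}^{\mb{p}^\heartsuit}} \lesssim \nm{f \mid \bbX_D^{\mb{p}^\heartsuit}}$ for every $f \in \bbX_D^{\mb{p}^\heartsuit} \cap \mc{R}(D)$. To extend this to arbitrary $f \in \overline{\mc{R}(D)}$ (with finite right-hand side), I approximate $f$ by $f_\varepsilon = e^{-\varepsilon[D]}f - e^{-(1/\varepsilon)[D]}f$ as in the proof of Lemma~\ref{lem:density}; these lie in $\bbX_D^{\mb{p}^\heartsuit} \cap \mc{R}(D)$ and converge to $f$ both in $\bbX_D^{\mb{p}^\heartsuit}$ and in $L^2$. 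Since $\bbQ_{\gy,DB^*}$ is $L^2$-bounded, $\bbQ_{\gy,DB^*} f_\varepsilon \to \bbQ_{\gy,DB^*}f$ in $L^2(\bbR^{1+n}_+)$, hence pointwise almost everywhere along a subsequence, and a Fatou-type lower semicontinuity of the tent/$Z$-space quasinorm delivers the inequality for $f$.

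The main obstacle will be the duality bookkeeping in the second paragraph: verifying that the $L^2(\bbR^{1+n}_+)$-adjoint $T^*$ of the ``inclusion-type'' operator $T$ implementing $\bbX_{DB}^\mb{p} \hookrightarrow \bbX_D^\mb{p}$ corresponds, under the $L^2(\bbR^n)$-pairing identification of duals in Proposition~\ref{prop:completion-duality}, to the analogous inclusion $\bbX_D^{\mb{p}^\prime} \hookrightarrow \bbX_{B^*D}^{\mb{p}^\prime}$ at the dual exponent, with $B^*D$ appearing precisely because $(DB)^* = B^*D$. This amounts to checking compatibility between the two natural duality pairings (on $\bbR^n$ and on $\bbR^{1+n}_+$) via the operators $\bbQ$ and $\bbS$. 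Once this compatibility is verified, the similarity step and the density extension are essentially mechanical applications of the stated results.
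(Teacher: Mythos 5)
Your strategy---dualise the inclusion via the $L^2(\bbR^{1+n}_+)$-adjoint (sending $\mb p \to \mb p'$) and then shift $\mb p' \to \mb p^\heartsuit$ via the $D$-bijection of Proposition~\ref{prop:D-mapping}---is close in spirit to the paper's argument, which performs the dualisation and the $D$-shift simultaneously in one chain of equivalences via the supremum characterisation of the dual quasinorm. But the intermediate claim you extract from the adjoint has a genuine gap. The statement ``$\nm{g\mid\bbX_{B^*D}^{\mb p'}}\lesssim\nm{g\mid\bbX_D^{\mb p'}}$ for all $g\in\overline{\mc R(D)}$'' does not type-check: $\bbX_{B^*D}^{\mb p'}$ lives inside $\overline{\mc R(B^*D)}=B^*\overline{\mc R(D)}$, which is in general a \emph{different} closed subspace of $L^2$ from $\overline{\mc R(D)}$ (the curl-free condition characterising $\overline{\mc R(D)}$ is not preserved under multiplication by $B^*$). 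For the same reason Lemma~\ref{lem:incl-QS} cannot be invoked here: it is stated only under the hypothesis $\overline{\mc R(A_0)}=\overline{\mc R(A_1)}$, and only for finite exponents, while $\mb p'$ is infinite whenever $i(\mb p)\leq 1$. If you actually compute the $L^2$-adjoint $T^*=\bbQ_{\td\gy,B^*D}\bbS_{\td\gf,D}$ and track what it does, you find $T^*\bbQ_{\td\gy,D}g=\bbQ_{\td\gy,B^*D}\bbP_{B^*D}g$, so the correct dual inclusion reads $\nm{\bbP_{B^*D}g\mid\bbX_{B^*D}^{\mb p'}}\lesssim\nm{g\mid\bbX_D^{\mb p'}}$ for $g\in\bbX_D^{\mb p'}$; the projection $\bbP_{B^*D}$ is unavoidable precisely because the range closures differ.

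This propagates to your second paragraph: the preimage of $f$ under $D$ produced by Proposition~\ref{prop:D-mapping} with $B=I$ lies in $\overline{\mc R(D)}$, but the one needed for the application with $B$ replaced by $B^*$ must lie in $\overline{\mc R(B^*D)}$. These are two \emph{different} preimages (differing by an element of $\mc N(D)$), and you must pass from one to the other via $\bbP_{B^*D}$, invoking Lemma~\ref{lem:proj} (which gives $D\bbP_{B^*D}h=Dh$ and that $\bbP_{B^*D}$ preserves $\mc D(D)$). With these two modifications your chain does close. For comparison, the paper avoids the issue entirely by never isolating the step at exponent $\mb p'$: it expands $\nm{Dg}_{\bbX_{DB^*}^{\mb p^\heartsuit}}$ directly as $\sup|\langle Dg,h\rangle|$ over $h$ normalised in $\bbX_{BD}^{\mb p+1}$, moves $D$ across the pairing, and uses the $D$-bijection together with the hypothesis $\mb p\in i(\mb X,DB,D)$ in a single pass; the $L^2(\bbR^n)$-pairing is indifferent to the fact that $g$ and $h$ live in different range closures, so the projections never become visible.
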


\begin{proof}
	We only prove the first statement.
	The proof of the second statement is identical, with the $\lesssim$ replaced by a $\gtrsim$.
	Suppose $g \in \mc{D}(D) \cap \bbX_{B^* D}^{\mb{p}^\prime}$.
	Arguing by `$\heartsuit$-duality',
	\begin{align}
		\nm{Dg}_{\bbX_{DB^*}^{\mb{p}^\heartsuit}}
		&\simeq \sup_{h \in \mc{D}(D) \cap \bbX_{B D}^{(\mb{p}^\heartsuit)^\prime}} |\langle Dg, h \rangle| \nonumber \\
		&= \sup_{h \in \mc{D}(D) \cap \bbX_{B D}^{\mb{p} + 1}} |\langle g, Dh \rangle| \nonumber \\
		&\simeq \sup_{h \in \bbX_{DB}^{\mb{p}}} |\langle g, h \rangle| \label{line:i}\\
		&\lesssim \sup_{h \in \bbX_D^\mb{p}} |\langle g, h \rangle| \label{line:a} \\
		&\simeq \nm{g}_{\bbX_D^{\mb{p}^\prime}} \nonumber \\
		&\simeq \nm{Dg}_{\bbX_D^{\mb{p}^\heartsuit}} \label{line:d}
	\end{align}
	with all suprema taken over appropriately normalised elements.
	The equivalence \eqref{line:i} uses Proposition \ref{prop:D-mapping}, and then \eqref{line:a} uses that $\bbX_{DB}^\mb{p} \subset \bbX_D^\mb{p}$, i.e. that $\mb{p} \in i(\mb{X},DB,D)$.
	The final equivalence \eqref{line:d} uses Corollary \ref{cor:regbump}.
	Since $D(\mc{D}(D) \cap \bbX_{B^* D}^{\mb{p}^\prime})$ is dense in $\bbX_{DB^*}^{\mb{p}^\heartsuit}$ by Lemma \ref{lem:density} (weak-star dense when $\mb{p}^\prime$ is infinite), we are done.
\end{proof}

\begin{cor}\label{cor:dual-interval-identn}\index{duality!of identification regions}
	If $\mb{p} \in I(\mb{X},DB)$, then $\nm{f}_{{\bbX}_{DB^*}^{\mb{p}^\heartsuit}} \simeq \nm{f}_{{\bbX}_{D}^{\mb{p}^\heartsuit}}$ for all $f \in \overline{\mc{R}(D)}$.
	In particular, if $\mb{p}^\heartsuit$ is also finite, then $\mb{p}^\heartsuit \in I(\mb{X},DB^*)$.
\end{cor}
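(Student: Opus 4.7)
The plan is to deduce this corollary directly from Proposition \ref{prop:dual-interval-incl}, essentially by unpacking the definition of the identification region as the intersection of two inclusion regions.

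First I would recall that, by definition,
\begin{equation*}
I(\mb{X},DB) = i(\mb{X},D,DB) \cap i(\mb{X},DB,D).
\end{equation*}
So if $\mb{p} \in I(\mb{X},DB)$ then $\mb{p}$ lies in both inclusion regions. Applying the first half of Proposition \ref{prop:dual-interval-incl} (using $\mb{p} \in i(\mb{X},DB,D)$) gives
\begin{equation*}
\nm{f}_{\bbX_{DB^*}^{\mb{p}^\heartsuit}} \lesssim \nm{f}_{\bbX_D^{\mb{p}^\heartsuit}} \qquad (f \in \overline{\mc{R}(D)}),
\end{equation*}
and applying the second half (using $\mb{p} \in i(\mb{X},D,DB)$) gives the reverse inequality. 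Combining these yields the claimed equivalence.

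For the final sentence, assume in addition that $\mb{p}^\heartsuit$ is finite. The first inequality above then says precisely that $\mb{p}^\heartsuit \in i(\mb{X},D,DB^*)$, while the reverse inequality says $\mb{p}^\heartsuit \in i(\mb{X},DB^*,D)$. Intersecting these memberships and again using the definition of the identification region gives $\mb{p}^\heartsuit \in I(\mb{X},DB^*)$.

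Since the work is already done in Proposition \ref{prop:dual-interval-incl}, there is no real obstacle here; the only thing to be mildly careful about is that the finiteness hypothesis on $\mb{p}^\heartsuit$ is genuinely needed for the last sentence, because $I(\mb{X},DB^*)$ was defined as a set of \emph{finite} exponents, even though the quasinorm equivalence itself makes sense (and holds) without this restriction.
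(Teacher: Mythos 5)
Your proof is correct and matches the paper's intent: the corollary is stated without a separate proof precisely because it follows immediately from Proposition \ref{prop:dual-interval-incl} by intersecting the two inclusion-region memberships in the definition of $I(\mb{X},DB)$, just as you spell out. Your remark about the finiteness hypothesis being needed only for the last sentence (because $I(\mb{X},DB^*)$ is defined as a set of finite exponents) is also accurate.
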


The following results then follow from Theorem \ref{thm:AS-identn}, Proposition \ref{prop:dual-interval-incl}, and Corollary \ref{cor:dual-interval-identn}.

\begin{cor}\label{cor:AScor}\
	\begin{enumerate}[(i)]
	\item
		For $p \in (1,2]$ we have $(p,-1) \in i(\mb{H},DB,D)$, for $p \in [2,\infty)$ we have $(p,-1) \in i(\mb{H},D,DB)$, and for $\ga \in [0,-1)$ we have $(\infty,-1;\ga) \in i(\mb{H},D,DB)$.
		
	\item
		There exist $\varepsilon, \varepsilon^\prime > 0$ such that
	\begin{equation*}
		I_{-1}(\mb{H},DB) \supset \left\{ \begin{array}{ll} (1,\infty) & (n=1) \\ (2-\varepsilon,\infty) & (n = 2) \\ {(2-\varepsilon,2n/(n-2) + \varepsilon^\prime)} & (n \geq 3). \end{array}
		\right.
	\end{equation*}
	\item
		$I_{-1}(\mb{H},DB)$ is an open interval.
	\end{enumerate}
\end{cor}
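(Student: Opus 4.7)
The plan is to deduce all three statements by applying Theorem \ref{thm:AS-identn} to the adjoint coefficient matrix $B^*$ (which satisfies the same standing assumptions as $B$) and then transferring the resulting information from regularity $0$ to regularity $-1$ via the $\heartsuit$-duality of inclusion and identification regions developed in Proposition \ref{prop:dual-interval-incl} and Corollary \ref{cor:dual-interval-identn}. The key computational fact is that for $p > 1$ one has $(p,0)^\heartsuit = (p^\prime,-1)$, and $(n/(n+\alpha),0)^\heartsuit = (\infty,-1;\alpha)$ for $\alpha \geq 0$, so the $\heartsuit$-involution reflects the line $\gq = 0$ to $\gq = -1$ via the (generalised) conjugate-exponent map $p \mapsto p^\prime$.

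For part (i), I would apply Theorem \ref{thm:AS-identn}(i) with $B$ replaced by $B^*$, obtaining $(q,0) \in i(\mb{H},DB^*,D)$ for $q \in (n/(n+1),2]$ and $(q,0) \in i(\mb{H},D,DB^*)$ for $q \in [2,\infty)$. Setting $q = p^\prime$ with $p > 1$, the two directions of Proposition \ref{prop:dual-interval-incl} (noting $B^{**} = B$) immediately yield $(p,-1) \in i(\mb{H},DB,D)$ for $p \in (1,2]$ and $(p,-1) \in i(\mb{H},D,DB)$ for $p \in [2,\infty)$. For $q \in (n/(n+1),1]$, the $\heartsuit$-dual of $(q,0)$ is the infinite exponent $(\infty,-1;\alpha)$ with $\alpha = n(1/q-1) \in [0,1)$. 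Although Proposition \ref{prop:dual-interval-incl} does not conclude that an infinite exponent lies in an inclusion region (since $i(\cdot,\cdot,\cdot)$ was defined for finite exponents only), the norm-estimate portion still applies, giving the claimed inequality $\nm{f \mid \bbH_{DB}^{(\infty,-1;\alpha)}} \lesssim \nm{f \mid \bbH_D^{(\infty,-1;\alpha)}}$ on $\overline{\mc{R}(D)}$.

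For part (ii), Theorem \ref{thm:AS-identn}(ii) applied to $B^*$ produces $\varepsilon,\varepsilon^\prime > 0$ with $(2n/(n+2)-\varepsilon,\, 2+\varepsilon^\prime) \subset I_0(\mb{H},DB^*)$. Corollary \ref{cor:dual-interval-identn} then gives that for $q$ in this interval with $q > 1$, the $\heartsuit$-dual $(q^\prime,-1)$ lies in $I(\mb{H},DB)$. The three cases in the statement are extracted by computing the image of the above interval (intersected with $(1,\infty)$) under the decreasing involution $q \mapsto q^\prime$: for $n=1$ the interval $(1/2,\infty) \cap (1,\infty) = (1,\infty)$ maps onto $(1,\infty)$; for $n=2$ the intersection $(1,3)$ maps to $(3/2,\infty)$; and for $n \geq 3$ the interval sits entirely inside $(1,\infty)$ and maps to an open interval with endpoints tending to $2$ and $2n/(n-2)$ as $\varepsilon,\varepsilon^\prime \to 0$. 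Renaming the constants absorbs the small discrepancies and yields the stated ranges.

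For part (iii), since $I_0(\mb{H},DB^*)$ is an open interval by Theorem \ref{thm:AS-identn}(iv) and $q \mapsto q^\prime$ is a decreasing homeomorphism of $(1,\infty)$, the correspondence above shows that $I_{-1}(\mb{H},DB) \cap (1,\infty)$ is an open interval in $(1,\infty)$. Convexity of $I_{-1}(\mb{H},DB)$ itself then follows from Theorem \ref{thm:inclusion-interpolation} applied separately to $i(\mb{H},D,DB)$ and $i(\mb{H},DB,D)$, since interpolating two exponents at regularity $-1$ produces another exponent at regularity $-1$. The main obstacle is to understand the behaviour of $I_{-1}(\mb{H},DB)$ for $p \leq 1$, where $\heartsuit$-duality sends us out of the class of finite exponents. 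The expected resolution is that $I_{-1}(\mb{H},DB) \subset (1,\infty)$: this can be obtained either by direct quasi-Banach $L^p$ obstructions at the low-integrability endpoint, or by running the openness/extrapolation machinery underlying Theorem \ref{thm:AS-identn}(iv) directly at regularity $-1$ and combining it with the convexity already established.
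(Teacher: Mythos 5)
Your treatment of parts (i) and (ii) is correct and matches the paper's intended route precisely: apply Theorem \ref{thm:AS-identn} with $B$ replaced by $B^*$, then push the information from regularity $0$ to regularity $-1$ through the $\heartsuit$-duality of Proposition \ref{prop:dual-interval-incl} and Corollary \ref{cor:dual-interval-identn}, using $(p,0)^\heartsuit = (p',-1)$ for $p > 1$ and $(n/(n+\alpha),0)^\heartsuit = (\infty,-1;\alpha)$ for $\alpha \geq 0$. Your computation of the resulting intervals in the three dimensional cases, and your observation that the infinite-exponent claim in (i) requires the norm-estimate form of Proposition \ref{prop:dual-interval-incl} rather than set membership in $i(\cdot,\cdot,\cdot)$, are both correct and careful.

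For part (iii) you have correctly located the genuine gap: $\heartsuit$-duality, combined with $I_0(\mb{H},DB^*)$ being an open interval, pins down $I_{-1}(\mb{H},DB) \cap (1,\infty)$ as an open subinterval of $(1,\infty)$, and interpolation (Theorem \ref{thm:inclusion-interpolation}) gives convexity of $I_{-1}(\mb{H},DB)$, but neither of these controls the behaviour at exponents $p \leq 1$, which are mapped by $\heartsuit$ to infinite exponents falling outside the definition of the inclusion regions. However, your proposed resolution --- that $I_{-1}(\mb{H},DB) \subset (1,\infty)$ --- is false. Taking $B = I$ gives $DB = D$, so $\bbH_{DB}^{\mb{p}} = \bbH_D^{\mb{p}}$ for \emph{all} finite $\mb{p}$ and hence $I_{-1}(\mb{H},D) = (0,\infty)$, which is not contained in $(1,\infty)$. (Note that this does not contradict the stated corollary: $(0,\infty)$ is still an open interval.) Your suggestion that there is a ``quasi-Banach obstruction'' ruling out small $p$ is therefore not correct. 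The fallback you mention --- running the {\v{S}}ne{\u\i}berg/openness machinery directly at regularity $-1$, i.e.\ in the adapted Sobolev spaces $\mb{W}^p_{-1,DB}$ of \cite{AS16} rather than deducing it by reflection from regularity $0$ --- is indeed the right way to close the gap, and it is what the cited results of \cite{AS16} (Theorem 5.1 and Remark 5.2 there, which cover both $\mb{H}^p_{DB}$ and $\mb{W}^p_{-1,DB}$) actually supply. So you should discard the inclusion $I_{-1}(\mb{H},DB) \subset (1,\infty)$ and instead cite the regularity-$-1$ openness from \cite{AS16} directly, combining it with the convexity from interpolation and the characterisation of $I_{-1}(\mb{H},DB) \cap (1,\infty)$ by $\heartsuit$-duality.
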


We can derive non-trivial regions of exponents within which we have embeddings $\bbX_{DB}^\mb{p} \hookrightarrow \bbX_{D}^\mb{p}$ (or the reverse embedding), or identifications $\bbX_{DB}^\mb{p} = \bbX_D^\mb{p}$, by combining Theorem \ref{thm:AS-identn} and Corollary \ref{cor:AScor} with Theorem \ref{thm:inclusion-interpolation}.

\begin{dfn}
  Define the exponent region $I_\text{max}$ \index{I-max@$I_{\text{max}}$} by 
  \begin{equation*}
    I_{\text{max}} := \bigg\{\mb{p} \in \mb{E} : \gq(\mb{p}) \in [-1,0], j(\mb{p}) \in \bigg(\frac{-\gq(\mb{p})}{n},\frac{n+1-\gq(\mb{p})}{n} \bigg) \bigg\}.
  \end{equation*}
  The region $I_\text{max}$ is pictured in Figure \ref{fig:Imax-diagram}.
  Note that $I_\text{max}$ is the convex hull of the union of lines
  \begin{equation*}
    \left\{\mb{p} \in \mb{E} : \gq(\mb{p}) = 0, j(\mb{p}) \in \left(0,\frac{n+1}{n} \right) \right\} \cup \left\{\mb{p} \in \mb{E} : \gq(\mb{p}) = -1, j(\mb{p}) \in \left(\frac{-1}{n},1 \right) \right\}
  \end{equation*}
  in the $(j,\gq)$-plane.
  We also define a region $I_\text{min}$ pictorially in Figure \ref{fig:Imin}.
  A formal definition can be made in terms of the numbers $\varepsilon, \varepsilon^\prime$ from Theorem \ref{thm:AS-identn} and Corollary \ref{cor:AScor}, but it is a bit awkward to do it this way. This diagram shows the case $n \geq 3$; the region $I_{-1}(\mb{H},DB)$ is generally larger when $n \leq 2$, as shown in Corollary \ref{cor:AScor}. \index{I-min@$I_{\text{min}}$}
\end{dfn}

\begin{figure}
\caption{The region $I_\text{max}$.}\label{fig:Imax-diagram}
\begin{center}
\begin{tikzpicture}[scale=2.5]

	\newcommand*{\reg}{0.6}

	\coordinate (P00) at (1,2);
	\coordinate (P01) at (3.35,2);
	\coordinate (P10) at (0.65,0);
	\coordinate (P11) at (3,0);
	
	\coordinate (S0) at (${1-\reg}*(P00) + \reg*(P10)$);
	\coordinate (S1) at (${1-\reg}*(P01) + \reg*(P11)$);  

	\draw [thin] (0.5,2) -- (3.5,2); 
	\draw [thin] (0.5,0) -- (3.5,0); 
	
	\draw [thick,->] (0.5,-0.5) -- (3.5,-0.5);
	\draw [fill=black] (2,-0.5) circle [radius = .5pt];
	\node [below] at (2,-0.5) {$\frac{1}{2}$};
	\draw [fill=black] (3,-0.5) circle [radius = .5pt];
	\node [below] at (3,-0.5) {$1$};
	\draw [fill=black] (3.35,-0.5) circle [radius = .5pt];
	\node [below] at (3.35,-0.5) {$\frac{n+1}{n}$};
	\node [right] at (3.5,-0.5) {$j(\mb{p})$};
	
	\draw [fill=black] (0.65,-0.5) circle [radius = .5pt];
	\draw [fill=black] (1,-0.5) circle [radius = .5pt];
	\node [below] at (1,-0.5) {$0$};
	\node [below] at (0.65,-0.5) {$\frac{-1}{n}$};
	
	\draw [thick,->] (0,-0.2) -- (0,2.2);
	\node [above] at (0,2.2) {$\gq(\mb{p})$};
	\draw [fill=black] (0,2) circle [radius = .5pt];
	\node [left] at (0,2) {$0$};
	\draw [fill=black] (0,0) circle [radius = .5pt];
	\node [left] at (0,0) {$-1$};

	\draw [thin,dotted] (0.65,2.1) -- (0.65,-0.1); 
	\draw [thin,dotted] (1,2.1) -- (1,-0.1); 
	\draw [thin,dotted] (2,2.1) -- (2,-0.1); 
	\draw [thin,dotted] (3,2.1) -- (3,-0.1); 
	\draw [thin,dotted] (3.35,2.1) -- (3.35,-0.1); 

	\draw [dotted] (P00) -- (P10);
	\draw [dotted] (P01) -- (P11);
	
	\path [fill=lightgray, opacity = 0.5] (P00)--(P10)--(1,0)--(P00);
	\path [fill=lightgray, opacity = 0.5] (P00)--(3,2)--(P11)--(1,0)--(P00);
	\path [fill=lightgray, opacity = 0.5] (3,2)--(P11)--(P01)--(3,2);
	
	\draw [thick] (P00) -- (P01);
	\draw [fill=white] (P00) circle [radius = .75pt];
	\draw [fill=white] (P01) circle [radius = .75pt];
	
	\draw [thick] (P10) -- (P11);
	\draw [fill=white] (P10) circle [radius = .75pt];
	\draw [fill=white] (P11) circle [radius = .75pt];
	
	
\end{tikzpicture}
\end{center}
\end{figure}
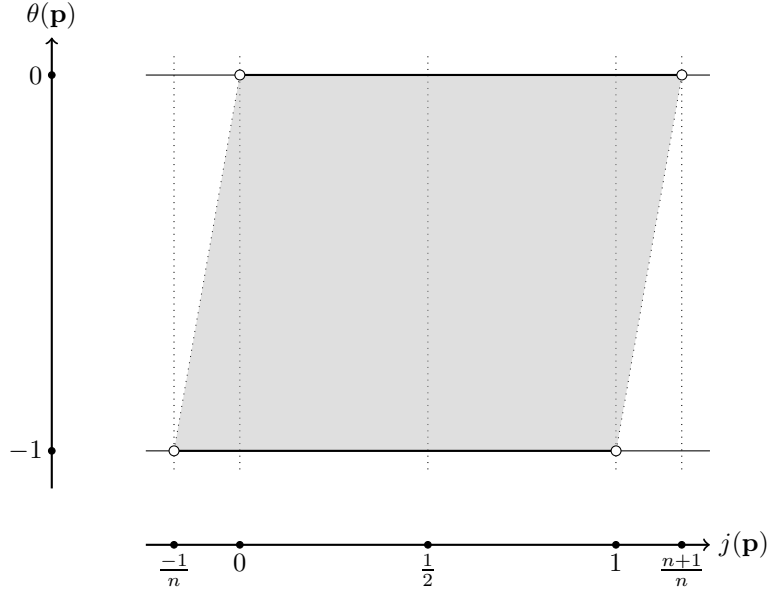
\begin{figure}
\caption{The region $I_\text{min} \subset I(\mb{H},DB)$.}\label{fig:Imin}
\begin{center}
\begin{tikzpicture}[scale=2.5]

	\newcommand*{\reg}{0.6}

	\coordinate (P00) at (1.9,2);
	\coordinate (P01) at (2.37,2);
	\coordinate (P10) at (1.63,0);
	\coordinate (P11) at (2.1,0);
	
	\coordinate (S0) at (${1-\reg}*(P00) + \reg*(P10)$);
	\coordinate (S1) at (${1-\reg}*(P01) + \reg*(P11)$);  

	\draw [thin] (0.5,2) -- (3.5,2); 
	\draw [thin] (0.5,0) -- (3.5,0); 
	
	\draw [thick,->] (0.5,-0.5) -- (3.5,-0.5);
	\draw [fill=black] (2,-0.5) circle [radius = .5pt];
	\node [below] at (2,-0.5) {$\frac{1}{2}$};
	\draw [fill=black] (2.35,-0.5) circle [radius = .5pt];
	\node [below] at (2.35,-0.5) {$\frac{n+2}{2n}$};
	\draw [fill=black] (3,-0.5) circle [radius = .5pt];
	\node [below] at (3,-0.5) {$1$};
	\draw [fill=black] (3.35,-0.5) circle [radius = .5pt];
	\node [below] at (3.35,-0.5) {$\frac{n+1}{n}$};
	\node [right] at (3.5,-0.5) {$j(\mb{p})$};
	
	\draw [fill=black] (0.65,-0.5) circle [radius = .5pt];
	\draw [fill=black] (1,-0.5) circle [radius = .5pt];
	\node [below] at (1,-0.5) {$0$};
	\node [below] at (0.65,-0.5) {$\frac{-1}{n}$};
	
	\draw [thick,->] (0,-0.2) -- (0,2.2);
	\node [above] at (0,2.2) {$\gq(\mb{p})$};
	\draw [fill=black] (0,2) circle [radius = .5pt];
	\node [left] at (0,2) {$0$};
	\draw [fill=black] (0,0) circle [radius = .5pt];
	\node [left] at (0,0) {$-1$};

	\draw [thin,dotted] (0.65,2.1) -- (0.65,-0.1); 
	\draw [thin,dotted] (1,2.1) -- (1,-0.1); 
	\draw [thin,dotted] (2,2.1) -- (2,-0.1); 
	\draw [thin,dotted] (2.35,2.1) -- (2.35,-0.1); 
	\draw [thin,dotted] (3,2.1) -- (3,-0.1); 
	\draw [thin,dotted] (3.35,2.1) -- (3.35,-0.1); 

	\draw [dotted] (P00) -- (P10);
	\draw [dotted] (P01) -- (P11);
	
	\path [fill=lightgray, opacity = 0.6] (P00)--(P01)--(P11)--(P10)--(P00);
	
	\draw [thick] (P00) -- (P01);
	\draw [fill=white] (P00) circle [radius = .75pt];
	\draw [fill=white] (P01) circle [radius = .75pt];
	\node [above] at ($1/2*(P00) + 1/2*(P01)$) {$\wtd{I}_0^{\text{min}}(\mb{H},DB)$};
	
	\draw [thick] (P10) -- (P11);
	\draw [fill=white] (P10) circle [radius = .75pt];
	\draw [fill=white] (P11) circle [radius = .75pt];
	\node [below] at ($1/2*(P10) + 1/2*(P11)$) {$\wtd{I}_{0}^{\text{min}}(\mb{H},DB^*)^\heartsuit$};
	
	
\end{tikzpicture}
\end{center}
\end{figure}

\begin{thm}\label{thm:identn-etc}\
	\begin{enumerate}[(i)]
	\item
	Suppose that $\mb{p} \in I_\text{max}$.
	If $i(\mb{p}) \leq 2$, then $\bbH_{DB}^\mb{p} \hookrightarrow \bbH_D^\mb{p}$, and if $i(\mb{p}) \geq 2$, then $\bbH_D^\mb{p} \hookrightarrow \bbH_{DB}^\mb{p}$.
	Furthermore, if $\gq(\mb{p}) \in (-1,0)$, then the same results hold with $\mb{H}$ replaced by $\mb{B}$.
	\item
	The region $I_\text{min}$ is contained in $I(\mb{H},DB)$, and its interior is contained in $I(\mb{B},DB)$.
	\end{enumerate}
\end{thm}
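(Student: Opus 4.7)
The plan is to deduce both parts from Theorem \ref{thm:inclusion-interpolation} by interpolating the inclusion/identification information already known on the two horizontal edges $\gq(\mb{p})=0$ and $\gq(\mb{p})=-1$ of the parallelogram $I_\text{max}$, and to handle infinite exponents via the $\heartsuit$-duality of Proposition \ref{prop:dual-interval-incl} and Corollary \ref{cor:dual-interval-identn}. First I would record the endpoint inclusions: Theorem \ref{thm:AS-identn}(i) places $(p,0)$ in $i(\mb{H},DB,D)$ for $j(\mb{p})\in[1/2,(n+1)/n)$ and in $i(\mb{H},D,DB)$ for $j(\mb{p})\in(0,1/2]$, while Corollary \ref{cor:AScor}(i) does the analogous thing along $\gq(\mb{p})=-1$, with finite exponents covering $j\in(0,1)$ and the infinite exponents $(\infty,-1;\ga)$ for $\ga\in[0,1)$ covering $j\in(-1/n,0]$ on the $i(\mb{H},D,DB)$ side.

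For Part (i), the line $j=1/2$ splits $I_\text{max}$ into two sub-parallelograms. The right half $\{j\geq 1/2\}$ is the convex hull of $\{(1/2,0),((n+1)/n,0)\}$ and $\{(1/2,-1),(1,-1)\}$, so every finite $\mb{p}\in I_\text{max}$ with $j(\mb{p})\geq 1/2$ can be written as $[\mb{p}_0,\mb{p}_1]_\gh$ with $\mb{p}_i$ on the two edges, both in $i(\mb{H},DB,D)$; Theorem \ref{thm:inclusion-interpolation}(i) then places $\mb{p}$ in $i(\mb{H},DB,D)$, i.e.\ gives $\bbH_{DB}^\mb{p}\hookrightarrow\bbH_D^\mb{p}$. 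The left half is treated symmetrically with $i(\mb{H},D,DB)$. For the infinite exponents in $I_\text{max}$ (necessarily $j(\mb{p})\leq 0$, $i(\mb{p})=\infty$), I would pass to $\mb{p}^\heartsuit$: reflection about $(1/2,-1/2)$ leaves $I_\text{max}$ invariant and sends $\mb{p}$ to a finite exponent with $j(\mb{p}^\heartsuit)\geq 1>1/2$, so the previous step (applied to $B^*$) yields $\mb{p}^\heartsuit\in i(\mb{H},DB^*,D)$, and then Proposition \ref{prop:dual-interval-incl} gives $\nm{f}_{\bbH_{DB}^\mb{p}}\lesssim\nm{f}_{\bbH_D^\mb{p}}$ on $\overline{\mc{R}(D)}$. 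Finally, when $\gq(\mb{p})\in(-1,0)$, the two endpoints used in the interpolation differ in $\gq$, so Theorem \ref{thm:inclusion-interpolation}(iii) immediately upgrades the Hardy--Sobolev inclusions at the endpoints to the corresponding Besov inclusions at $\mb{p}$.

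For Part (ii), $I_\text{min}$ is by construction the convex hull of $\wtd{I}_0^\text{min}(\mb{H},DB)\subset I_0(\mb{H},DB)$ at $\gq=0$ (Theorem \ref{thm:AS-identn}(ii)) and of $\wtd{I}_0^\text{min}(\mb{H},DB^*)^\heartsuit$ at $\gq=-1$, which by Corollary \ref{cor:dual-interval-identn} lies in $I(\mb{H},DB)$. Applying Theorem \ref{thm:inclusion-interpolation}(i) separately to the two inclusion regions $i(\mb{H},DB,D)$ and $i(\mb{H},D,DB)$ whose intersection is $I(\mb{H},DB)$ then gives $I_\text{min}\subset I(\mb{H},DB)$. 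The Besov part follows from Theorem \ref{thm:inclusion-interpolation}(iii): any interior point of $I_\text{min}$ has representations $[\mb{p}_0,\mb{p}_1]_\gh$ with endpoints on the two horizontal edges (hence with $\gq(\mb{p}_0)\neq\gq(\mb{p}_1)$), and real interpolation of the Hardy--Sobolev identifications at the endpoints yields Besov identifications at $\mb{p}$, placing it in $I(\mb{B},DB)$. The main obstacle is purely geometric/bookkeeping: making sure that every $\mb{p}$ in the target region admits an interpolation representation whose two endpoints genuinely lie in the precise sub-intervals furnished by the endpoint results, and in particular arranging the $\heartsuit$-duality detour needed for the infinite exponents in Part (i), since Theorem \ref{thm:inclusion-interpolation} itself is stated only for finite exponents.
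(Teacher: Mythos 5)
Your proposal is correct and is exactly the argument the paper intends: Theorem \ref{thm:identn-etc} is stated without proof, preceded only by the remark that it is obtained ``by combining Theorem \ref{thm:AS-identn} and Corollary \ref{cor:AScor} with Theorem \ref{thm:inclusion-interpolation}.'' You have supplied the geometric bookkeeping (splitting $I_{\text{max}}$ along $j=1/2$, verifying that each sub-parallelogram is the convex hull of the corresponding portions of the $\gq=0$ and $\gq=-1$ edges), the use of Theorem \ref{thm:inclusion-interpolation}(i)/(iii) for the finite exponents, and the $\heartsuit$-duality detour via Proposition \ref{prop:dual-interval-incl} and Corollary \ref{cor:dual-interval-identn} for the infinite ones, which is needed precisely because Theorem \ref{thm:inclusion-interpolation} is stated only for finite exponents. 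This matches the paper's approach; no genuine gap.
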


Heuristically, $I_{\max}$ is the largest region of exponents $\mb{p}$ for which we expect to have $\mb{p} \in I(\mb{X},DB)$ (it is certainly the largest region one can obtain by interpolation between the results of the second author and Stahlhut).
Of course, we may have $\mb{p} \in I(\mb{X},DB)$ for some $\mb{p} \notin I_\text{max}$, as in the case $B = I$.
The second part of Theorem \ref{thm:identn-etc} tells us that there is a \emph{fixed} open region, $I_\text{min}$, of exponents $\mb{p}$ for which $\bbX_{DB}^\mb{p} = \bbX_D^\mb{p}$ for all $B$.
Generally this will be true for a larger range of exponents depending on $B$.

We can exploit the first part of Theorem \ref{thm:identn-etc} and the extrapolation theorem of {\v{S}}ne{\u\i}berg \cite{iS74} to prove an openness result.
{\v{S}}ne{\u\i}berg's extrapolation theorem was extended to quasi-Banach spaces by Kalton and Mitrea \cite[Theorem 2.7]{KM98}, and elaborated upon by Kalton, Mayboroda, and Mitrea \cite[Theorem 8.1]{KMM07}.

\begin{thm}\label{thm:identn-openness}\index{region!identification!openness}\index{extrapolation!of identification region}
	The exponent regions $I(\mb{X},DB) \cap \{\mb{p} \in I_{\text{max}} : \gq(\mb{p}) \in (-1,0)\}$ are open.
\end{thm}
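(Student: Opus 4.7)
\emph{Approach.} The plan is to apply {\v{S}}ne{\u\i}berg's extrapolation theorem (in the Kalton--Mitrea quasi-Banach version) to the complex interpolation scale of canonical completions, combined with the one-sided inclusions of Theorem \ref{thm:identn-etc}(i) to supply boundedness at the endpoints of the scale.

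\emph{Step 1: reformulate as invertibility between completions.} Fix $\mb{p}_0 \in I(\mb{X},DB) \cap I_\text{max}$ with $\gq(\mb{p}_0) \in (-1,0)$, and fix Calder\'on siblings $\gf, \gy \in \gY_\infty^\infty$. By Lemma \ref{lem:incl-QS} together with Corollary \ref{cor:QSproj}, the conditions $\mb{p} \in i(\mb{X},DB,D)$ and $\mb{p} \in i(\mb{X},D,DB)$ translate into boundedness of two operators
\begin{equation*}
V_\mb{p} \colon \gf\mb{X}_{DB}^\mb{p} \to \gf\mb{X}_D^\mb{p}, \qquad U_\mb{p} \colon \gf\mb{X}_D^\mb{p} \to \gf\mb{X}_{DB}^\mb{p},
\end{equation*}
obtained by extending $\bbQ_{\gf,D}\bbS_{\gy,DB}$ and $\bbQ_{\gf,DB}\bbS_{\gy,D}$ to the canonical completions. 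When $\mb{p} \in I(\mb{X},DB)$ these two maps are mutual inverses, so $\mb{p} \in I(\mb{X},DB)$ is equivalent to $V_\mb{p}$ being a topological isomorphism; in particular $V_{\mb{p}_0}$ is an isomorphism.

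\emph{Step 2: 1D extrapolation via {\v{S}}ne{\u\i}berg.} By Theorem \ref{thm:completed-interpolation}, $\{\gf\mb{X}_A^\mb{p}\}$ is a complex interpolation scale in $\mb{p}$ for each $A \in \{D, DB\}$. I would choose a 1D scale $\mb{p}_\gh = [\mb{q}_0, \mb{q}_1]_\gh$ through $\mb{p}_0 = \mb{p}_{\gh_0}$, with both endpoints in $I_\text{max} \cap \{\gq \in (-1,0)\}$ and on the same side of $i = 2$ as $\mb{p}_0$. Theorem \ref{thm:identn-etc}(i) then supplies boundedness of $V_{\mb{p}_\gh}$ (if $i(\mb{p}_0) \leq 2$) or of $U_{\mb{p}_\gh}$ (if $i(\mb{p}_0) \geq 2$) at both endpoints, and {\v{S}}ne{\u\i}berg's theorem extends the isomorphism at $\gh_0$ to an open sub-interval of $(0,1)$ containing $\gh_0$, showing $\mb{p}_\gh \in I(\mb{X},DB)$ there.

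\emph{Step 3: upgrade to 2D openness and the main obstacle.} Step 2 gives openness of $I(\mb{X},DB)$ along every fixed 1D direction through $\mb{p}_0$; to conclude genuine 2D openness I would iterate. Fixing a first scale yields an open 1D arc $J_1 \subset I(\mb{X},DB)$ through $\mb{p}_0$, and for each $\mb{p}' \in J_1$ close to $\mb{p}_0$ one repeats the argument along a transverse 1D scale through $\mb{p}'$. The main obstacle is to show that the length of the {\v{S}}ne{\u\i}berg interval around $\mb{p}'$ is bounded below uniformly in $\mb{p}'$, so that these arcs sweep out a genuine 2D neighborhood; this follows from the quantitative form of the Kalton--Mitrea theorem combined with continuous dependence of the operator norms $\|V_{\mb{p}'}\|$ and $\|V_{\mb{p}'}^{-1}\|$ on the interpolation parameter along the first scale. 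The degenerate case $i(\mb{p}_0) = 2$ is handled by applying Step 2 separately in the two sub-regions $\{i < 2\}$ and $\{i > 2\}$ and patching along $i = 2$, using that $\mb{p}_0 \in I(\mb{X},DB)$ on the dividing line.
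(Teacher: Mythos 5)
Your reformulation via $\bbQ_{\gf,D}\bbS_{\gy,DB}$ (Lemma \ref{lem:incl-QS}) and the use of {\v{S}}ne{\u\i}berg on the interpolation scale of canonical completions, with endpoint boundedness from Theorem \ref{thm:identn-etc}(i), is the same engine the paper uses. Where you diverge is the upgrade from one-dimensional to two-dimensional openness. You fix a single direction, get an arc $J_1$, then iterate transversally, which forces you to establish a uniform lower bound for the second {\v{S}}ne{\u\i}berg radius; this is correct in principle and does follow from the quantitative form of the Kalton--Mitrea theorem, but it requires two iterations plus an explicit continuity argument for the norms of the endpoint inclusions on the transversal scales (which you only gesture at). The paper instead keeps $\mb{p}$ as the \emph{midpoint} of every scale, i.e. it takes $\mb{q}_0 = \mb{q}$, $\mb{q}_1 = [\mb{p},\mb{q}]_{-1}$ with $\mb{q}$ varying over a ball $B_\mb{p}$, and observes that the {\v{S}}ne{\u\i}berg radius $\varepsilon$ is uniform in the direction $\mb{q}$ (Remark \ref{rmk:eps-indeps}). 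Since $\mb{p}_\gn = \mb{p} + 2\gn(\mb{p}-\mb{q})$, varying both $\mb{q} \in B_\mb{p}$ and $\gn \in (-\varepsilon,\varepsilon)$ fills a two-dimensional neighbourhood of $\mb{p}$ in a single pass, so there is no iteration and no need to control the behaviour of $\|V_{\mb{p}'}^{-1}\|$ along the first arc.

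There is a genuine gap in your treatment of the case $i(\mb{p}_0)=2$. The one-sided inclusions of Theorem \ref{thm:identn-etc}(i) reverse direction across $i=2$: on $\{i \leq 2\}$ one has $\bbH_{DB}^\mb{p}\hookrightarrow\bbH_D^\mb{p}$ (boundedness of $V_\mb{p}$), while on $\{i \geq 2\}$ one has the reverse inclusion (boundedness of $U_\mb{p}$). Thus no single operator is a priori bounded on a complex interpolation scale crossing $i=2$, and {\v{S}}ne{\u\i}berg cannot be applied on such a scale; nor can it be applied on a scale entirely in $\{i\leq 2\}$ or $\{i\geq 2\}$ with $\mb{p}_0$ as an endpoint, since it requires the isomorphism at an \emph{interior} point. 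Consequently your proposed ``patching'' does not produce a neighbourhood of a point with $i(\mb{p}_0)=2$. The paper dispatches this case before invoking {\v{S}}ne{\u\i}berg at all, by noting that $I_\text{min}$ --- the open region established via Theorem \ref{thm:AS-identn}(ii), Corollary \ref{cor:AScor}, $\heartsuit$-duality, and interpolation --- is already a neighbourhood of $\{\mb{p}\in I_\text{max}: i(\mb{p})=2\}$ contained in $I(\mb{X},DB)$, so one may assume $i(\mb{p})\neq 2$ from the outset. You need to import that observation (or some equivalent a priori two-sided openness at $i=2$) for your argument to close.
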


\begin{proof}
	We only prove the result for $I(\mb{H},DB)$, as that for $I(\mb{B},DB)$ follows by the same argument.
	Suppose $\mb{p} \in I(\mb{H},DB) \cap \{\mb{p} \in I_{\text{max}} : \gq(\mb{p}) \in (-1,0)\}$.
	Since $I_\text{min} \subset I(\mb{H},DB)$ is an open neighbourhood of $\{\mb{p} \in I_\text{max} : i(\mb{p}) = 2\}$ (relatively to $I_\text{max}$), we can assume $i(\mb{p}) \neq 2$.
	We will further assume $i(\mb{p}) < 2$, as the argument for $i(\mb{p}) > 2$ is the same.
	
	There exists a ball $B_\mb{p}$ in the $(j,\gq)$-plane such that for all $\mb{q} \in B_\mb{p}$ we have $i(\mb{q}),i([\mb{p},\mb{q}]_{-1}) < 2$, $\gq(\mb{p}) \in (-1,0)$, and $\mb{q} \in I_{\text{max}}$.
	For all such $\mb{q}$ write $\mb{q}_0 := \mb{q}$ and $\mb{q}_1 := [\mb{p},\mb{q}]_{-1}$; we then have $\mb{p} = [\mb{q}_0,\mb{q}_1]_{1/2}$.
	
	Fix $\mb{q} \in B_\mb{p}$.
	By part (i) of Theorem \ref{thm:identn-etc}, we have $\mb{q}_0, \mb{q}_1 \in I(\mb{H},DB,D)$.
	Let $\gf,\gy \in \gY_\infty^\infty$ be Calder\'on siblings.
	Then by Lemma \ref{lem:incl-QS} the composition $\bbQ_{\gf,D}\bbS_{\gy,DB}$ is bounded from $\bbQ_{\gf,DB} \bbH_{DB}^{\mb{q}_i}$ to $\bbQ_{\gf,D} \bbH_D^{\mb{q}_i}$ for $i \in \{0,1\}$, and thus its extension, which we call $\mathbf{Q}_{\gf,D}\mathbf{S}_{\gy,DB}$, is bounded between the completions $\gf \mb{H}_{DB}^{\mb{q}_i} \to \gf \mb{H}_{D}^{\mb{q}_i}$.
	Furthermore, the assumption on $\mb{p}$ implies that $\mathbf{Q}_{\gf,D} \mathbf{S}_{\gy,DB}$ is an isomorphism from $\gf \mb{H}_{DB}^{\mb{p}}$ to $\gf \mb{H}_{D}^{\mb{p}}$.
	For $\gn \in (-1/2,1/2)$ define $\mb{p}_\gn := [\mb{q}_0, \mb{q}_1]_{1/2 + \gn}$.
	Since
	\begin{equation*}
		\gf\mb{H}_A^{\mb{p}_\gn} = [\gf\mb{H}_A^{\mb{q}_0}, \gf\mb{H}_A^{\mb{q}_1}]_{1/2 + \gn}
	\end{equation*}
	(Theorem \ref{thm:completed-interpolation}), {\v{S}}ne{\u\i}berg's extrapolation theorem implies that there exists $\varepsilon > 0$ independent of $\mb{q} \in B_\mb{p}$ (see Remark \ref{rmk:eps-indeps} below) such that for all $\gn \in (-\varepsilon,\varepsilon)$,
	\begin{equation*}
		\map{\mathbf{Q}_{\gf,D} \mathbf{S}_{\gy,DB}}{\gf \mb{H}_{DB}^{\mb{p}_\gn}}{\gf \mb{H}_{D}^{\mb{p}_\gn}}
	\end{equation*}
	is an isomorphism.
	Since for each $\gn \in (-\varepsilon,\varepsilon)$ the map $\mathbf{Q}_{\gf,D} \mathbf{S}_{\gy,DB}$
	 extends $\bbQ_{\gf,D} \bbS_{\gy,DB}$ and since by the Calder\'on reproducing formula (Theorem \ref{thm:CRF}) its inverse (if it exists) extends $\bbQ_{\gf,DB} \bbS_{\gy,D}$, this implies that $\mb{p}_\gn \in I(\mb{H},DB)$, and thus there is a neighbourhood $O_{\mb{p}}$ of $\mb{p}$ such that
	 \begin{equation*}
	 	O_{\mb{p}} \subset B_{\mb{p}} \subset I(\mb{H},DB) \cap \{\mb{p} \in I_{\text{max}} : \gq(\mb{p}) \in (-1,0)\}.
	\end{equation*}
	This completes the proof.
      \end{proof}

      \begin{rmk}\label{rmk:eps-indeps}
        The fact that $\varepsilon$ is independent of $\mb{q}$ in the proof above is not obvious. Inspection of the Kalton--Mitrea proof of {\v{S}}ne{\u\i}berg's theorem \cite[Theorem 2.7]{KM98} shows that $\varepsilon$ depends only on the norms of $\map{\mb{Q}_{\gf,D} \mb{S}_{\gy,DB}}{\gf \mb{H}_{DB}^{\mb{q}_i}}{\gf \mb{H}_{D}^{\mb{q}_i}}$ and the constants in the norm equivalences $\gf\mb{H}_A^{[\mb{q}_0,\mb{q}_1]_\gq} = [\gf\mb{H}_A^{\mb{q}_0}, \gf\mb{H}_A^{\mb{q}_1}]_{\gq}$. Having fixed $\gf$ and $\gy$, these constants are bounded independently of $\mb{q}$, provided that we restrict $\mb{q}$ to a small ball $B_\mb{p}$, as we have done.
      \end{rmk}

\begin{rmk}
If $\mb{p} \in I(\mb{X},DB)$, then we identify $\mb{X}^\mb{p}_D = \bbP_D \mb{X}^\mb{p}$ (see Definition \ref{dfn:boldXD}) as a completion of ${\bbX}_{DB}^\mb{p}$ via the extension of the identity map ${\bbX}_{DB}^\mb{p} \to {\bbX}_{D}^\mb{p}$. 
If $\mb{p}$ is infinite and $\mb{p}^\heartsuit \in I(\mb{X},DB^*)$, then by Proposition \ref{prop:dual-interval-incl} we may identify $\mb{X}_D^\mb{p}$ as a weak-star completion of $\bbX_{DB}^\mb{p}$.
We write $\mb{X}_{DB}^\mb{p}$ for these completions.

Note that we do not have equality of $\mb{X}_{DB}^{\mb{p},+}$ and $\mb{X}_{D}^{\mb{p},+}$.
The first of these spaces is defined via the spectral projection $\gc^+(DB)$, while the second is defined via $\gc^+(D)$.
However, we do of course have $\mb{X}_{DB}^{\mb{p},+} \subset \mb{X}_D^\mb{p}$.
This will be important in applications to boundary value problems.
\end{rmk}

\begin{rmk}\label{rmk:Bduals}
	For a coefficient matrix $A$ as in the introduction, if $B = \hat{A}$, then $\widehat{A^*} = \td{B} := NB^* N$, where
	\begin{equation*}
		N := \begin{bmatrix} I & 0 \\ 0 & -I \end{bmatrix}.
	\end{equation*}
	Since $DN = -ND$ and $N$ acts on $\overline{\mc{R}(D)}$, the operators $DB^*$ and $-D\td{B}$ are similar on $\overline{\mc{R}(D)} = \overline{\mc{R}(DB^*)} = \overline{\mc{R}(D\td{B})}$.
	Thus all functional calculus properties of $DB^*$ can be transferred to $D\td{B}$, and vice versa.
	This gives natural isomorphisms between $\bbX_{DB^*}^\mb{p}$ and $\bbX_{D\td{B}}^\mb{p}$, and in particular we have $I(\mb{X},DB^*) = I(\mb{X},D\td{B})$.
	However, note that this isomorphism interchanges spectral subspaces: $\bbX_{DB^*}^{\mb{p},\pm} = \bbX_{D\td{B}}^{\mb{p},\mp}$.
	For further details see \cite[\textsection 12.2]{AS16}.
\end{rmk}

\section{Projections and $BD$-adapted spaces}\label{sec:proj-BD}

In the previous section we discussed exponents $\mb{p}$ for which $\bbX_{DB}^\mb{p} = \bbX_D^\mb{p}$.
For such exponents, we show that the projections $\bbP_{D}$ and $\bbP_{BD}$ can be used to move between $\bbX_D^{\mb{p}+1}$ and the $BD$-adapted space $\bbX_{BD}^{\mb{p}+1}$.
Note that we must now use the exponent $\mb{p} + 1$.
We begin with a basic lemma.

\begin{lem}\label{lem:proj}\
\begin{enumerate}[(i)]
	\item For all $h\in L^2$,  we have $D\bbP_{BD}h = Dh$ in the distributional sense.
	Consequently the projection $\bbP_{BD}$ preserves $\mc{D}(D)$, and $\nm{D\bbP_{BD}h}_{2}= \nm{Dh}_{2}$ (note that this holds in particular for $B=I$). 

	\item The restrictions $\map{\bbP_{BD}}{\overline{\mc{R}(D)}}{ \overline{\mc{R}(BD)}}$ and $\map{\bbP_{D}}{\overline{\mc{R}(BD)}}{ \overline{\mc{R}(D)}}$ are bounded isomorphisms and mutually inverse. 

	\item The restrictions
          \begin{align*}
            \map{\bbP_{BD}}{&\mc{D}(D) \cap \overline{\mc{R}(D)}}{ \mc{D}(D) \cap \overline{\mc{R}(BD)}} \\
            \map{\bbP_{D}}{&\mc{D}(D) \cap \overline{\mc{R}(BD)}}{ \mc{D}(D) \cap \overline{\mc{R}(D)}}
          \end{align*}
          are mutually inverse.
        \end{enumerate}
\end{lem}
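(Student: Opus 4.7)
The plan is to first establish the crucial observation that $\mc{N}(BD) = \mc{N}(D)$. The inclusion $\mc{N}(D) \subset \mc{N}(BD)$ is trivial. For the reverse, recall from the opening of the chapter that $\mc{R}(D) \subset \overline{\mc{R}(D)} = \{f \in L^2 : \curl_\parallel f_\parallel = 0\}$; since $B$ satisfies the same hypotheses as $A$, the strict accretivity condition \eqref{eqn:accretivity} applied on the curl-free subspace yields $\|Bf\|_2 \geq \gk\|f\|_2$ for all $f \in \overline{\mc{R}(D)}$, so $B$ is injective on $\mc{R}(D)$. Hence $BDh = 0$ forces $Dh = 0$. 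Combined with Proposition \ref{prop:bisectorialfacts}, this gives two topological decompositions of $L^2$ sharing a common kernel:
\begin{equation*}
L^2 = \mc{N}(D) \oplus \overline{\mc{R}(D)} = \mc{N}(D) \oplus \overline{\mc{R}(BD)}.
\end{equation*}
Thus $\bbP_D$ and $\bbP_{BD}$ are both projections along $\mc{N}(D)$, onto $\overline{\mc{R}(D)}$ and $\overline{\mc{R}(BD)}$ respectively.

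For (i), given $h \in L^2$, write $h = h_0 + h_1$ according to the second splitting, with $h_0 \in \mc{N}(D)$ and $h_1 = \bbP_{BD}h \in \overline{\mc{R}(BD)}$. Then $Dh_0 = 0$ distributionally (since $D$ is a first-order differential operator and $h_0 \in \mc{N}(D)$ in the $L^2$ sense), so $D\bbP_{BD}h = Dh_1 = D(h - h_0) = Dh$ in $\mc{D}'$. Consequently $\bbP_{BD}h \in \mc{D}(D)$ whenever $h \in \mc{D}(D)$, and the norm equality is immediate. The identity $D\bbP_D h = Dh$ follows from the same argument applied with $B$ replaced by the identity.

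For (ii), both projections are bounded as restrictions of bounded operators on $L^2$, so only the mutual inversion must be checked. Let $h \in \overline{\mc{R}(D)}$, so that $\bbP_D h = h$, and decompose $h = h_0' + h_1'$ with $h_0' \in \mc{N}(D)$ and $h_1' = \bbP_{BD}h \in \overline{\mc{R}(BD)}$; then $\bbP_D\bbP_{BD}h = \bbP_D(h - h_0') = h - 0 = h$, since $h_0' \in \mc{N}(D) = \mc{N}(\bbP_D)$. The symmetric calculation for $g \in \overline{\mc{R}(BD)}$ uses $\mc{N}(\bbP_{BD}) = \mc{N}(BD) = \mc{N}(D)$ to obtain $\bbP_{BD}\bbP_D g = g$—this is precisely the step where the shared-kernel identity is essential.

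Part (iii) is then immediate: by (i) applied to $B$ and to the identity, both $\bbP_{BD}$ and $\bbP_{D}$ preserve $\mc{D}(D)$, so the mutually inverse isomorphisms of (ii) restrict to mutually inverse bijections between $\mc{D}(D) \cap \overline{\mc{R}(D)}$ and $\mc{D}(D) \cap \overline{\mc{R}(BD)}$. The only real obstacle is the preliminary kernel identity $\mc{N}(BD) = \mc{N}(D)$; once this is in hand, the rest is bookkeeping with two direct-sum decompositions that share a common complementary subspace.
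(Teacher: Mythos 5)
Your proof is correct and follows essentially the same strategy as the paper: everything rests on the shared-kernel identity $\mc{N}(BD)=\mc{N}(D)$, after which parts (i)--(iii) are bookkeeping with the two decompositions $L^2 = \mc{N}(D)\oplus\overline{\mc{R}(D)} = \mc{N}(D)\oplus\overline{\mc{R}(BD)}$. The only difference is that you supply a proof of $\mc{N}(BD)=\mc{N}(D)$ (via accretivity of $B$ on the curl-free subspace $\overline{\mc{R}(D)}$), whereas the paper cites this as known and calls the resulting mutual-inverse statement ``classical.''
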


\begin{proof} To see the first statement, use that $\bbP_{BD}$ is a projection along $\mc{N}(BD) = \mc{N}(D)$ to write $D(h-\bbP_{BD}h)=0$. 
As $Dh$ and $D\bbP_{BD}h$ can be computed distributionally, the equality follows.
The second statement is classical, and holds because the projections $\bbP_{BD}$ and $\bbP_D$ have same nullspace.
The third follows by combining the first and second.
\end{proof}

\begin{prop}\label{prop:commdiag}
  Assume that $\mb{p}$ is an exponent for which ${\bbX}_{DB}^{\mb{p}} = {\bbX}_{D}^{\mb{p}}$.
  Then we have commutative diagrams of bounded bijective maps
\begin{equation*}
		\xymatrix{
		 {\bbX}_{BD}^{\mb{p}+1} \cap \mc{D}(D) \ar[d]_{\bbP_{D}} \ar[r]^{D}   & {\bbX}_{DB}^{\mb{p}}\cap \mc{R}(D)  \ar[d]^{\id}
		 \\
		{\bbX}_{D}^{\mb{p}+1} \cap \mc{D}(D)  \ar[r]_{D}  & {\bbX}_{D}^{\mb{p}}\cap \mc{R}(D) 		} \qquad
		 \xymatrix{
		 {\bbX}_{BD}^{\mb{p}+1} \cap \mc{D}(D)  \ar[r]^{D}   & {\bbX}_{DB}^{\mb{p}}\cap \mc{R}(D)  
		 \\
		{\bbX}_{D}^{\mb{p}+1} \cap \mc{D}(D) \ar[u]^{\bbP_{BD}} \ar[r]_{D}  & {\bbX}_{D}^{\mb{p}}\cap \mc{R}(D). \ar[u]_{\id}  
		}
\end{equation*}
In particular, the projections are mutually inverse. 
\end{prop}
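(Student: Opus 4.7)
The plan is to assemble the diagrams by combining three already-established ingredients: the bijectivity of $D$ as a regularity-shifting isomorphism (Proposition \ref{prop:D-mapping} for the top row and Remark \ref{rmk:prop:D-mapping} for the bottom row), the bijectivity of the identity map on the right column (given by the standing hypothesis $\bbX_{DB}^\mb{p} = \bbX_D^\mb{p}$ with equivalent quasinorms), and the mutual invertibility of $\bbP_{BD}$ and $\bbP_D$ on $\mc{D}(D) \cap \overline{\mc{R}(D)}$ versus $\mc{D}(D) \cap \overline{\mc{R}(BD)}$ from Lemma \ref{lem:proj}(iii). The diagrams will then be commutative as soon as we verify that $D \bbP_{BD}h = Dh$ and $D \bbP_D h = Dh$ hold on the appropriate domains, both of which are instances of Lemma \ref{lem:proj}(i) (the latter is Lemma \ref{lem:proj}(i) applied with $B$ replaced by the identity).

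First I would verify that the left vertical arrows land in the claimed spaces. Given $f \in \bbX_{BD}^{\mb{p}+1} \cap \mc{D}(D)$, Lemma \ref{lem:proj}(iii) gives $\bbP_D f \in \mc{D}(D) \cap \overline{\mc{R}(D)}$ with $D \bbP_D f = D f$. By Proposition \ref{prop:D-mapping} one has $Df \in \bbX_{DB}^{\mb{p}} \cap \mc{R}(D)$, and the hypothesis $\bbX_{DB}^\mb{p} = \bbX_D^\mb{p}$ (with equivalent quasinorms) then yields $D \bbP_D f \in \bbX_D^{\mb{p}} \cap \mc{R}(D)$. Invoking Remark \ref{rmk:prop:D-mapping} (with $B = I$), we deduce $\bbP_D f \in \bbX_D^{\mb{p}+1} \cap \mc{D}(D)$, together with the quasinorm estimate
\begin{equation*}
  \nm{\bbP_D f}_{\bbX_D^{\mb{p}+1}} \simeq \nm{D\bbP_D f}_{\bbX_D^{\mb{p}}} = \nm{Df}_{\bbX_D^{\mb{p}}} \simeq \nm{Df}_{\bbX_{DB}^{\mb{p}}} \simeq \nm{f}_{\bbX_{BD}^{\mb{p}+1}}.
\end{equation*}
This gives boundedness of $\bbP_D$ between the claimed spaces, and establishes commutativity of the first diagram. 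The argument for the second diagram is symmetric: given $g \in \bbX_D^{\mb{p}+1} \cap \mc{D}(D)$, Lemma \ref{lem:proj}(i) yields $\bbP_{BD} g \in \mc{D}(D) \cap \overline{\mc{R}(BD)}$ with $D \bbP_{BD} g = D g \in \bbX_D^{\mb{p}} \cap \mc{R}(D) = \bbX_{DB}^{\mb{p}} \cap \mc{R}(D)$, so Proposition \ref{prop:D-mapping} places $\bbP_{BD} g$ in $\bbX_{BD}^{\mb{p}+1} \cap \mc{D}(D)$ with the corresponding quasinorm control.

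Bijectivity of the vertical arrows is then automatic: the horizontal $D$ arrows are bijections by Proposition \ref{prop:D-mapping} and Remark \ref{rmk:prop:D-mapping}, the right vertical arrow is a bijection by the standing hypothesis, and the commutativity relations just proven allow us to read off $\bbP_D$ as the composition of three bijections (and likewise for $\bbP_{BD}$). The final claim that the two projections are mutually inverse on these particular spaces is inherited directly from Lemma \ref{lem:proj}(iii), since the spaces $\bbX_{BD}^{\mb{p}+1} \cap \mc{D}(D)$ and $\bbX_D^{\mb{p}+1} \cap \mc{D}(D)$ are contained in $\mc{D}(D) \cap \overline{\mc{R}(BD)}$ and $\mc{D}(D) \cap \overline{\mc{R}(D)}$ respectively. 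There is no real obstacle here; the only mild subtlety is the bookkeeping that ensures each application of Lemma \ref{lem:proj} is on the correct domain, which is why I would present the two diagrams separately rather than trying to invoke abstract nonsense on a single diagram.
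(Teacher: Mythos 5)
Your argument is essentially the paper's own: commutativity from Lemma \ref{lem:proj}, the horizontal isomorphisms from Proposition \ref{prop:D-mapping} together with Remark \ref{rmk:prop:D-mapping}, the right vertical arrows from the standing hypothesis, and boundedness of the projections via the quasinorm chain $\nm{D\bbP_{BD}f}_{\bbX_{DB}^{\mb{p}}} = \nm{Df}_{\bbX_{DB}^{\mb{p}}} \simeq \nm{Df}_{\bbX_D^{\mb{p}}} \simeq \nm{f}_{\bbX_D^{\mb{p}+1}}$ (and symmetrically for $\bbP_D$). The only difference is cosmetic (order of the two computations), so this is a correct proof along the paper's lines.
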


\begin{proof}
Commutativity of the diagrams comes from Lemma \ref{lem:proj}.
All we need to show is boundedness of both projections. 
First consider $f\in {\bbX}_{D}^{\mb{p}+1} \cap \mc{D}(D)$.
Then $\bbP_{BD}f\in \overline{\mc{R}(BD)}$ and   $D\bbP_{BD}f=Df$, hence
\begin{equation*}
\nm{D\bbP_{BD}f}_{{\bbX}_{DB}^{\mb{p}}}= \nm{Df}_{{\bbX}_{DB}^{\mb{p}}} \simeq  \nm{Df}_{{\bbX}_{D}^{\mb{p}}} \simeq  \nm{f}_{{\bbX}_{D}^{\mb{p}+1}}
\end{equation*}
where we used the assumption in the first equivalence and then Proposition \ref{prop:D-mapping} for $D$ in the second. This shows that $\bbP_{BD}f\in {\bbX}_{BD}^{\mb{p}+1}$ with the desired estimate.
Now consider $f\in {\bbX}_{BD}^{\mb{p}+1} \cap \mc{D}(D)$.
Then we have $D\bbP_{D}f=Df$, so
\begin{equation*}
\nm{D\bbP_{D}f}_{{\bbX}_{D}^{\mb{p}}}= \nm{Df}_{{\bbX}_{D}^{\mb{p}}} \simeq  \nm{Df}_{{\bbX}_{DB}^{\mb{p}}} \simeq  \nm{f}_{{\bbX}_{BD}^{\mb{p}+1}}
\end{equation*}
by the same argument as before. 
\end{proof}

The following corollaries are then immediate.

\begin{cor}\label{cor:similarity2} Assume that $\mb{p}$ is an exponent for which ${\bbX}_{DB}^{\mb{p}}= {\bbX}_{D}^{\mb{p}}$.
Then for $\gf \in H^\infty$ we have 
\begin{equation}\label{eqn:simgeneralpbd}
	D\gf(BD)\bbP_{BD} = \gf(DB)D
\end{equation}
where $\gf(BD)$ and $\gf(DB)$ are extensions to (any) completions.
In particular, for Cauchy extensions, 
\begin{equation}\label{eqn:simcauchy2}
	DC_{BD}\bbP_{BD}= C_{DB}D. 
\end{equation}
\end{cor}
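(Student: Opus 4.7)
The plan is to reduce \eqref{eqn:simgeneralpbd} to the intertwining identity \eqref{eqn:simgeneral} already proved in Corollary \ref{cor:similarity}, exploiting the key fact from Lemma \ref{lem:proj}(i) that $D\bbP_{BD}h = Dh$ for every $h \in \mc{D}(D)$.

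First I would verify \eqref{eqn:simgeneralpbd} on the dense subspace $\bbX_D^{\mb{p}+1} \cap \mc{D}(D)$. Fix $f$ there. Proposition \ref{prop:commdiag} (which uses the hypothesis $\bbX_{DB}^\mb{p} = \bbX_D^\mb{p}$) yields $\bbP_{BD} f \in \bbX_{BD}^{\mb{p}+1} \cap \mc{D}(D)$, so the intertwining identity \eqref{eqn:simgeneral} applied to $\bbP_{BD} f$ gives
\begin{equation*}
D\gf(BD)\bbP_{BD}f = \gf(DB) D \bbP_{BD} f = \gf(DB) Df,
\end{equation*}
where the last equality is Lemma \ref{lem:proj}(i).

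Next I would extend by density (weak-star density when $\mb{p}$ is infinite). Under the hypothesis, every operator appearing in \eqref{eqn:simgeneralpbd} is bounded between the appropriate completions: $\bbP_{BD}$ extends to a bounded map $\mb{X}_D^{\mb{p}+1} \to \mb{X}_{BD}^{\mb{p}+1}$ by Proposition \ref{prop:commdiag}; $\gf(BD)$ extends boundedly on $\mb{X}_{BD}^{\mb{p}+1}$ by Proposition \ref{prop:comp-fc}; and $D$ extends to an isomorphism $\mb{X}_{BD}^{\mb{p}+1} \to \mb{X}_{DB}^{\mb{p}}$ by Corollary \ref{cor:similarity}. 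On the right-hand side, $D$ extends to an isomorphism $\mb{X}_D^{\mb{p}+1} \to \mb{X}_D^{\mb{p}}$ (Corollary \ref{cor:similarity} with $B = I$), and $\gf(DB)$ extends boundedly on $\mb{X}_{DB}^{\mb{p}}$. Since $\bbX_{DB}^\mb{p} = \bbX_D^\mb{p}$, the two target completions coincide, so both sides land in the same space. Lemma \ref{lem:density} supplies the required density of $\bbX_D^{\mb{p}+1} \cap \mc{D}(D)$ in any completion of $\bbX_D^{\mb{p}+1}$, and \eqref{eqn:simgeneralpbd} passes to the completions.

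Finally, \eqref{eqn:simcauchy2} follows by applying \eqref{eqn:simgeneralpbd} pointwise in $t \in \bbR \setminus \{0\}$ with the $H^\infty$ function $z \mapsto e^{-|t|[z]} \gc^{\sgn(t)}(z)$, which by definition produces the Cauchy operators $C_{BD}(t)$ and $C_{DB}(t)$. The only real obstacle is the bookkeeping around completions: one must check that each operator in the composition is interpreted as the correct extension and that both sides of the identity take values in the common completion $\mb{X}_{DB}^{\mb{p}} = \mb{X}_D^{\mb{p}}$ — a mild issue given the careful framework already set up in Chapter \ref{chap:ahsb}.
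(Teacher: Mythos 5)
Your proof is correct and follows essentially the same route the paper intends: it combines Proposition \ref{prop:commdiag} (to get $\bbP_{BD}f \in \bbX_{BD}^{\mb{p}+1}\cap\mc{D}(D)$), the intertwining identity of Corollary \ref{cor:similarity}, Lemma \ref{lem:proj}(i) (to replace $D\bbP_{BD}f$ by $Df$), and a density extension, which is exactly what lies behind the paper's ``the following corollaries are then immediate.''
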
 

\begin{cor}\label{cor:perpequal} Let $\mb{p}$ be an exponent for which ${\bbX}_{DB}^{\mb{p}} = {\bbX}_{D}^{\mb{p}}$. 
Then for any completion $\mb{Y}$ of $\bbX_{BD}^{\mb{p}+1}$ we have 
$\bbP_{D}\mb{Y}= \mb{X}_{D}^{\mb{p}+1}$ and $\bbP_{BD}\mb{X}_{D}^{\mb{p}+1}= \mb{Y}$. In particular, taking the transversal component, we have
\begin{equation}
\label{eq:rect}
(\bbP_{D}\mb{Y})_{\perp} = (\mb{X}_{D}^{\mb{p}+1})_{\perp} = \mb{X}_\perp^{\mb{p}+1}.
\end{equation} 
\end{cor}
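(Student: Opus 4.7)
The plan is to lift the bounded bijections established at the pre-space level in Proposition \ref{prop:commdiag} to the completions by density and universality, then read off the transversal claim from the fact that $\bbP_D$ acts as the identity in the transversal component.

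First I would recall that by Proposition \ref{prop:commdiag} the restrictions
\begin{equation*}
	\map{\bbP_D}{\bbX_{BD}^{\mb{p}+1}\cap\mc{D}(D)}{\bbX_D^{\mb{p}+1}\cap\mc{D}(D)} \quad \text{and} \quad \map{\bbP_{BD}}{\bbX_D^{\mb{p}+1}\cap\mc{D}(D)}{\bbX_{BD}^{\mb{p}+1}\cap\mc{D}(D)}
\end{equation*}
are bounded and mutually inverse (here the norm on the $D$-side is the $\bbX_D^{\mb{p}+1}$-norm, equivalent to the $\mb{X}^{\mb{p}+1}$-norm by Theorem \ref{thm:class-space-identn-new}). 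By Lemma \ref{lem:density} these domains are dense (weak-star dense when $\mb{p}+1$ is infinite) in $\bbX_{BD}^{\mb{p}+1}$ and $\bbX_D^{\mb{p}+1}$ respectively, and hence in the completions $\mb{Y}$ and $\mb{X}_D^{\mb{p}+1}$. The universal property of completions (in its appropriate weak-star formulation for infinite exponents, as used throughout Section \ref{sec:completions}) yields unique bounded extensions $\bbP_D\colon \mb{Y}\to\mb{X}_D^{\mb{p}+1}$ and $\bbP_{BD}\colon \mb{X}_D^{\mb{p}+1}\to\mb{Y}$; since the compositions $\bbP_{BD}\bbP_D$ and $\bbP_D\bbP_{BD}$ agree with the identity on the respective dense subspaces, the extensions are mutual inverses, giving $\bbP_D\mb{Y}=\mb{X}_D^{\mb{p}+1}$ and $\bbP_{BD}\mb{X}_D^{\mb{p}+1}=\mb{Y}$.

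For the transversal claim, I would use that the transversal part of $\bbP_D$ is the identity operator (as noted after Definition \ref{dfn:boldXD}), so the identity $(\bbP_D f)_\perp = f_\perp$ holds on $L^2$, extends to $\mb{X}^{\mb{p}+1}$ by continuity of the transversal projection and density of $L^2\cap\mb{X}^{\mb{p}+1}$, and likewise passes to $\mb{Y}$. Combining with $\mb{X}_D^{\mb{p}+1}=\bbP_D\mb{X}^{\mb{p}+1}$ gives $(\mb{X}_D^{\mb{p}+1})_\perp=(\mb{X}^{\mb{p}+1})_\perp=\mb{X}_\perp^{\mb{p}+1}$, and the first equality in \eqref{eq:rect} then follows from $\bbP_D\mb{Y}=\mb{X}_D^{\mb{p}+1}$.

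The only point requiring care is the handling of the weak-star topology when $\mb{p}+1$ is infinite: one must ensure that the extension procedure from Proposition \ref{prop:completion-identification} applies symmetrically to both $\bbP_D$ and $\bbP_{BD}$, and that the identification of $(\mb{X}^{\mb{p}+1})_\perp$ with $\mb{X}_\perp^{\mb{p}+1}$ is valid at the level of weak-star completions. This is not a serious obstacle, since the transversal projection is continuous between the relevant dual spaces, but it is the one bookkeeping step that must be done explicitly.
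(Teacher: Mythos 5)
Your proof is correct and follows essentially the same route as the paper: lift the mutually inverse bijections of Proposition \ref{prop:commdiag} to the completions via the density of Lemma \ref{lem:density}, then deduce \eqref{eq:rect} from $\bbP_D\mb{Y}=\mb{X}_D^{\mb{p}+1}$ together with the fact that $\bbP_D$ acts as the identity on the transversal component. The paper compresses the first part into the phrase that the corollary is ``immediate'' from Proposition \ref{prop:commdiag} and spells out only the two-line argument for \eqref{eq:rect}, but the underlying reasoning is the same.
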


\begin{proof}
	The first equality in \eqref{eq:rect} follows from the equalities in the statement.
	The second is because the projection $\bbP_D$ is the identity on the $\perp$ component.
\end{proof}

\section{The Cauchy operator on $DB$-adapted spaces}\label{sec:sgp-classical}\index{Cauchy operator!on $DB$-adapted BHS spaces}

This section is devoted to the proof of the following theorem.

\begin{thm}[Cauchy characterisation of adapted Besov--Hardy--Sobolev spaces, $i(\mb{p}) > 2$]\label{thm:sgpnorm-concrete}\index{space!adapted Besov--Hardy--Sobolev!characterisation by Cauchy operator}
	Let $\mb{p}$ be such that $i(\mb{p}) > 2$, $\gq(\mb{p}) \in (-1,0)$ and $\mb{p}^\heartsuit \in I(\mb{X},DB^*)$.
	Then for all $f \in \overline{\mc{R}(DB)}$,
	\begin{equation*}
		 \nm{C_{DB}^+ f}_{X^\mb{p}} \lesssim \nm{f}_{{\mb{X}}^\mb{p}_{D}}.
	\end{equation*}
\end{thm}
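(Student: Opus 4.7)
The plan is to argue by $\heartsuit$-duality. Since $i(\mb{p}) > 2$, the duality of tent and $Z$-spaces (Theorem \ref{thm:ts-duality} and Proposition \ref{prop:Z-duality-reflexive}, with the weak-star topology when $\mb{p}$ is infinite) reduces the claim to the estimate
\begin{equation*}
  |\langle f, h\rangle_{L^2(\bbR^n)}| \lesssim \nm{f}_{\mb{X}_D^{\mb{p}}}\,\nm{G}_{X^{\mb{p}'}}
\end{equation*}
for all $G$ in a (weak-star) dense subspace of $X^{\mb{p}'}$, where
\begin{equation*}
  h := \int_0^\infty \gc^+(B^*D)\,e^{-tB^*D}\,G(t)\,\frac{dt}{t} \in \overline{\mc{R}(B^*D)}^+
\end{equation*}
is obtained by moving $C_{DB}^+$ across the pairing, using $(DB)^* = B^*D$ and $\td\gc^+ = \gc^+$. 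Since $f \in \overline{\mc{R}(D)}$ and $\bbP_D$ is self-adjoint on $L^2$, we may replace $h$ by $\bbP_D h$ here. Under the hypothesis $\mb{p}^\heartsuit \in I(\mb{X},DB^*)$, which places us in the range $i(\mb{p}^\heartsuit) = i(\mb{p}') \in [1,2)$ and $\gq(\mb{p}^\heartsuit) = \gq(\mb{p}')-1 \in (-1,0)$, Corollary \ref{cor:dual-interval-identn} gives $\mb{X}_{DB^*}^{\mb{p}^\heartsuit} = \mb{X}_D^{\mb{p}^\heartsuit}$, and Corollary \ref{cor:perpequal} applied with $B^*$ in place of $B$ then identifies any completion $\mb{X}_{B^*D}^{\mb{p}'}$ of $\bbX_{B^*D}^{\mb{p}'}$ with $\mb{X}_D^{\mb{p}'}$ via $\bbP_D$, with equivalent norms. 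Combined with the classical duality pairing between $\mb{X}_D^{\mb{p}}$ and $\mb{X}_D^{\mb{p}'}$ (inherited from that between $\mb{X}^{\mb{p}}$ and $\mb{X}^{\mb{p}'}$), this reduces the task to the operator-theoretic estimate
\begin{equation*}
  \nm{h}_{\mb{X}_{B^*D}^{\mb{p}'}} \lesssim \nm{G}_{X^{\mb{p}'}}.
\end{equation*}

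To bound this, fix $\gy_0 \in \gY_\infty^\infty$, which by Proposition \ref{prop:equivalent-quasinorms} lies in $\gY(\bbX_{B^*D}^{\mb{p}'})$, so that $\nm{h}_{\mb{X}_{B^*D}^{\mb{p}'}} \simeq \nm{\bbQ_{\gy_0, B^*D}\,h}_{X^{\mb{p}'}}$. Because $\sgp\gc^+ \notin \gY_+^+$ (it fails to decay at $0$ on the right half of the bisector), the formal identity $h = \bbS_{\sgp\gc^+, B^*D}\,G$ must be made rigorous via the factorisation $\sgp(z)\gc^+(z) = z^{-N}\gf_N(z)$ with $\gf_N(z) := z^N\sgp(z)\gc^+(z) \in \gY_N^\infty \cap \gY_+^+$ for a large $N \in \bbN_+$. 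Setting $\tilde G_N(t) := t^{-N}G(t) \in X^{\mb{p}'-N}$, with $\nm{\tilde G_N}_{X^{\mb{p}'-N}} = \nm{G}_{X^{\mb{p}'}}$, Corollary \ref{cor:regbump} extends the formal identity $h = (B^*D)^{-N}\,\bbS_{\gf_N, B^*D}\,\tilde G_N$ to the appropriate completion, yielding
\begin{equation*}
  \bbQ_{\gy_0, B^*D}\,h \;=\; \bbQ_{\gy_0, B^*D}\,\gh(B^*D)\,\bbS_{\gf_N, B^*D}\,\tilde G_N, \qquad \gh(z) = z^{-N} \in \gY_{-N}^{N}.
\end{equation*}
Theorem \ref{thm:QS-compn-bddness}, applied in the $i(\mb{p}) \leq 2$ branch with source exponent $\mb{p}'-N$, target $\mb{p}'$ and gain $\gd = N$, is the natural engine for controlling this composition from $X^{\mb{p}'-N}$ into $X^{\mb{p}'}$.

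The hard part will be verifying the decay hypotheses on $\gf_N$ demanded by Theorem \ref{thm:QS-compn-bddness}: they reduce to a Sobolev-type inequality roughly of the form $\gq(\mb{p}') > n\,|\tfrac12 - j(\mb{p}')|$, and this cannot be cured by enlarging $N$ because the required decay order at $0$ and the decay supplied by $\gf_N \in \gY_N^\infty$ both shift in parallel with $N$. For $\mb{p}$ near the edges of $I_\text{max}$ this constraint genuinely fails, so the abstract machinery of Chapter \ref{chap:ahsb} is insufficient and ad hoc refinements are needed: one would leverage the off-diagonal estimates for $(e^{-tB^*D}\gc^+(B^*D))_{t>0}$ furnished by the Standard Assumptions and Theorem \ref{thm-fcode}, split the integral defining $\bbQ_{\gy_0,B^*D}\,h$ according to the relative size of $t$ and the auxiliary scale, and apply the tent-space extrapolation Theorem \ref{thm:AMR} directly to the resulting critical piece, perhaps followed by a Stein-type analytic interpolation as in the proof of Theorem \ref{thm:QS-compn-bddness}. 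The infinite-$\mb{p}$ case is then handled by weak-star duality and Corollary \ref{cor:adapted-space-int-dens}. The standing assumption $\gq(\mb{p}) \in (-1,0)$ enters at the very start through $\gq(\mb{p}^\heartsuit) < 0$, which is exactly what makes the dual-side semigroup characterisation of Theorem \ref{thm:sgpnorm-abstract} available; the endpoint $\gq(\mb{p}) = 0$ is known to fail because this characterisation breaks down there, which also explains why the result cannot be obtained by interpolating from the known endpoints.
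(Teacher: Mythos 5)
Your duality reduction is sound and the diagnosis is accurate — but the proof stops precisely where the work begins, and the route you set up does not actually make progress.

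First, what is correct: moving $C_{DB}^+$ across the $L^2(\bbR^{1+n}_+)$ pairing to obtain $h = \bbS_{\sgp\gc^+, B^*D}G$, using $(DB)^* = B^*D$, $\widetilde\sgp = \sgp$, self-adjointness of $\bbP_D$, the identification of completions via Corollary \ref{cor:dual-interval-identn} and Corollary \ref{cor:perpequal}, and the factorisation $h = (B^*D)^{-N}\bbS_{\gf_N,B^*D}\widetilde G_N$ with $\gf_N = [z\mapsto z^N\sgp(z)\gc^+(z)] \in \gY_N^\infty \cap \gY_+^+$ are all legitimate. Your reading of the hypotheses of Theorem \ref{thm:QS-compn-bddness} is also correct: with source $\mb{p}'-N$, gain $\gd=N$, and $\gf_N\in\gY_N^\infty$, the required decay at the origin is $N > -\gq(\mb{p}') + N + n|\tfrac12 - j(\mb{p}')|$, i.e. $\gq(\mb{p}') > n|\tfrac12 - j(\mb{p}')|$, and the $N$ genuinely cancels.

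The gap is twofold. First, the $\heartsuit$-duality move is a lateral step: writing $\gq(\mb{p}') = -\gq(\mb{p})$ and $j(\mb{p}') = 1-j(\mb{p})$, the dual-side constraint $\gq(\mb{p}') > n|\tfrac12 - j(\mb{p}')|$ unwinds to exactly $\gq(\mb{p}) < nj(\mb{p}) - \tfrac{n}{2}$, which is precisely the constraint under which Remark \ref{rmk:sgp-abstract} would already have resolved the primal estimate via Proposition \ref{prop:equivalent-quasinorms} with $\gy = \sgp$. So you have not strengthened your position; you have re-derived, in dual clothing, the obstruction you started with. Second, the phrase ``ad hoc refinements are needed'' plus a list of possibly relevant tools (off-diagonal estimates, Theorem \ref{thm:AMR}, Stein interpolation) is not a proof of the hard cases, and none of those tools alone closes the constraint: Theorem \ref{thm:AMR} encodes the same decay thresholds through its conditions on $a$, $b$, $M$, and Stein interpolation between the $p \leq 1$ and $p=2$ endpoints gives back exactly the range where the constraint already holds.

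What the paper actually does is quite different and intrinsically concrete. It does not dualise. Instead it splits $\sgp(z) = \gr(z) + \gy(z)$ where $\gr$ is a finite linear combination of $(1+imz)^{-2}$ and $\gy \in \gY_N^2$ is nondegenerate; the $\gy$-part is absorbed by the abstract machinery (since $\gY_N^2 \subset \gY(\bbX_{DB}^\mb{p})$ for $N$ large), reducing the whole theorem to the single resolvent estimate $\nm{t\mapsto(I+itDB)^{-2}f}_{X^\mb{p}} \lesssim \nm{f}_{\bbX_D^\mb{p}}$. That estimate is then proved by writing $f = Dg$ with $g \in \mc{D}(D)$, decomposing $g$ into annular pieces around the base of each tent (Lemma \ref{lem:C-D-lem}), exploiting the local coercivity Lemma \ref{lem:local-coercivity} and off-diagonal estimates for $(I+itBD)^{-1}$, and closing via the difference-quotient characterisations of $\dot H^p_\gq$ (Stein's lemma, Theorem \ref{thm:stein}), $\dot{\BMO}_\gq$ (Strichartz, Theorem \ref{thm:strichartz-bmo}), $\dot\gL_\gq$, and $\dot B^{p,p}_\gq$ (Theorem \ref{thm:besov-difference}). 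It is these explicit characterisations of the classical smoothness spaces — not available for general operators satisfying the Standard Assumptions, and never mentioned in your proposal — that make up for the failure of the abstract decay conditions. Without the representation $f=Dg$ and the difference-quotient characterisations you have no handle on the critical piece, so the proposal as written does not deliver the theorem.
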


\begin{rmk}
	The condition $\mb{p}^\heartsuit \in I(\mb{X},DB^*)$ is equivalent to $\mb{p} \in I(\mb{X},DB)$ when $\mb{p}$ is finite (Proposition \ref{prop:dual-interval-incl}).
	Note that the norm on the right hand side is that of $\mb{X}_D^\mb{p}$, which is a completion of $\bbX_{DB}^\mb{p}$ from the hypothesis and Corollary \ref{cor:dual-interval-identn} (changing $B$ to $B^*$ and $\mb{p}$ to $\mb{p}^\heartsuit$).
\end{rmk}

\begin{rmk}\label{rmk:cauchy-reverse}
The reverse estimate
\begin{equation*}
	\nm{f}_{\mb{X}_A^\mb{p}} \lesssim \nm{C_A^+ f}_{X^\mb{p}}
\end{equation*}
holds for all operators $A$ satisfying the Standard Assumptions, all $\mb{p}$, and all $f \in \mb{X}_A^{\mb{p},+}$ (see Remark \ref{rmk:sgp-abstract}).
However, we do not know whether Theorem \ref{thm:sgpnorm-concrete} holds with $DB$ replaced by $A$ and without the assumption on $\mb{p}^\heartsuit$.
\end{rmk}

In contrast with Theorem \ref{thm:sgpnorm-abstract}, the proof of this theorem is quite long, and requires concrete arguments.
First we establish a technical lemma.
Recall that
\begin{equation*}
  A(x,r_0,r_1) := B(x,r_1) \setminus B(x,r_0)
\end{equation*}
was defined in Section \ref{section:notation}.

\begin{lem}\label{lem:C-D-lem}
	Suppose $\gq \in (-1,0)$, $g \in \mc{D}(D)$, and $f = Dg$.
	Then for all $\gx \in \bbR^n$, $\gt > 0$, and $M \in \bbN$, we have
	\begin{align*}
		&\dint_{T(B(\gx,\gt))} |t^{-\gq} (I + itDB)^{-2} f(x)|^2 \, \frac{dx \, dt}{t} \\
		&\lesssim_M \int_{B(\gx,4\gt)} \bigg( \int_{B(\gx, 4\gt)} + \sum_{j=2}^\infty 2^{-2j(M - \frac{n}{2} - (1+\gq))} \int_{A(\gx, 2^{j-1} \gt, 2^{j+2} \gt)} \bigg) G(x,y) \, dx \, dy,
	\end{align*}
	where
	\begin{equation*}
		G(x,y) := \frac{|g(x)-g(y)|^2}{|x-y|^{n + 2(1+\gq)}}.
	\end{equation*}
      \end{lem}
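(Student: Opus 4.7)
The plan is to transfer the resolvent of $DB$ to one of $BD$ via similarity of functional calculi, exploit the $L^2$ off-diagonal decay and constant-annihilation of the resulting operator, and decompose the action in $y$ into a local piece on $B(\gx, 4\gt)$ and dyadic annuli around $\gx$. First, since $g \in \mc{D}(D)$, the similarity of functional calculi (Section \ref{sec:similarity}, applied to $\gf_t(z) := (1+itz)^{-2} \in H^\infty$) gives
\begin{equation*}
	(I+itDB)^{-2} f = (I+itDB)^{-2} Dg = D(I+itBD)^{-2} g =: T_t g.
\end{equation*}
Setting $\eta(w) := w(1+iw)^{-2}$, which lies in $\gY_1^1$ on every bisector, we have $t T_t = \eta(tBD)$. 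Combined with the Standard Assumptions on $BD$ (Theorem \ref{thm:DB-fundamental-props}), Theorem \ref{thm-fcode} then yields $L^2$ off-diagonal estimates of arbitrarily large order $M$ for the family $(tT_t)_{t > 0}$. Moreover $T_t$ annihilates constants: any $c \in \bbC^m$ lies in $\mc{N}(D) = \mc{N}(BD)$, so $(I + itBD)^{-2}c = c$ and $Dc = 0$.

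Next, the constant-annihilation permits the substitution $T_t g(x) = T_t[g - g(x)](x)$, and I would split the $y$-support of $g - g(x)$ into the local piece on $B(\gx, 4\gt)$ and the dyadic annuli $A_j := A(\gx, 2^{j-1}\gt, 2^{j+2}\gt)$ for $j \geq 2$. For the local piece, combining the $L^2$-boundedness of $tT_t$ (the diagonal case of the off-diagonal estimate) with the Poincar\'e-type identity $\nm{g - \bar g_B}_{L^2(B)}^2 \simeq |B|^{-1}\int_B\int_B|g(x)-g(y)|^2\,dx\,dy$ and a careful integration in $t$ converts this contribution into $\int_{B(\gx, 4\gt)} \int_{B(\gx, 4\gt)} G(x, y) \, dx \, dy$, the weight $|x-y|^{-n-2(1+\gq)}$ emerging as the scale $t$ is restricted down to $|x - y|$. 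For the $j$th far piece, the distance from $B(\gx, \gt - t)$ to $A_j$ is $\sim 2^j \gt \gg t$, so the $M$th-order off-diagonal estimate supplies a factor $(t/2^j\gt)^M$ in operator norm; Cauchy--Schwarz in $y$ against the weight $|x-y|^{-n-2(1+\gq)} \sim (2^j\gt)^{-n-2(1+\gq)}$ on $A_j$, combined with the $y$-volume $|A_j| \sim (2^j\gt)^n$ and the remaining $(t,x)$-integrations, produces the stated factor $2^{-2j(M - n/2 - (1+\gq))}$.

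The main obstacle will be carrying out the cancellation using only $L^2$ off-diagonal estimates rather than pointwise kernel bounds, and relating these estimates to double integrals of $G$ on the right. This is most delicate on the local piece, where off-diagonal decay provides no gain and the control must come entirely from cancellation combined with the $L^2$-boundedness of $tT_t$; I would handle it by a Whitney-type localisation of $B(\gx, 4\gt)$ into sub-balls on each of which $g(x)$ is replaced by a local average of $g$, followed by a Hardy-type integration in the scale variable that reconstitutes the kernel $|x-y|^{-n-2(1+\gq)}$. The hypothesis $\gq \in (-1, 0)$ is used on both ends: to make the inner $t$-integration $\int_0^\gt t^{-2\gq-1}\,\cdots\, dt$ converge at $t = 0$ (requiring $\gq < 0$) and to produce the fractional-regularity weight $|x-y|^{-n-2(1+\gq)}$ with $1 + \gq > 0$ on the right-hand side.
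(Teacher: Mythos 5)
Your high-level plan---convert the $DB$-resolvent acting on $Dg$ into a $BD$-resolvent acting on $g$, exploit constant-annihilation, and decompose $g$ spatially into a local part plus dyadic annuli---is the same as the paper's, and your exponent bookkeeping for the far pieces is correct. However, there are two concrete gaps that would stop the proof as written, plus one vague point.

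The most serious error is the claim $tT_t = \eta(tBD)$ with $\eta(w) = w(1+iw)^{-2}$. Unwinding, $tT_t = tD(I+itBD)^{-2}$, whereas $\eta(tBD) = tBD(I+itBD)^{-2}$. These are different operators: $D$ is \emph{not} a function of $BD$ (they do not even commute), so no $H^\infty$ function $\eta$ can recover the map $u \mapsto tDu$ from the calculus of $BD$. Consequently Theorem \ref{thm-fcode} does not give you off-diagonal estimates for $(tT_t)_{t>0}$. What is missing is the local coercivity estimate (Lemma \ref{lem:local-coercivity}), $\int_{B(x,t)}|Du|^2 \lesssim \int_{B(x,2t)}|BDu|^2 + t^{-2}\int_{B(x,2t)}|u|^2$, which the paper applies to $u = (I+itBD)^{-2}(g-c)\gc_j$. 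This converts the $D$ on the outside into a $BD$ plus a lower-order term, each of which \emph{is} a holomorphic function of $BD$ and inherits off-diagonal decay from resolvents via the Standard Assumptions. You cannot dispense with this step.

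The second gap is the pointwise constant-annihilation $T_t g(x) = T_t\big[g - g(x)\mathbf{1}\big](x)$. This identity is correct pointwise, but it produces, on the $j$-th annulus, the input $(g-g(x))\mathbf{1}_{A_j}$, which depends on the evaluation point $x$. Off-diagonal estimates are \emph{$L^2$-to-$L^2$} bounds for the operator applied to a \emph{fixed} function; once the input depends on the output variable you have effectively assumed pointwise kernel bounds, which are not available here. The second term $g(x)T_t[\mathbf{1}_{A_j}](x)$ also requires a bound on $g$ in $L^\infty$ to close in $L^2$, which you do not have. The paper avoids all of this by subtracting the single fixed constant $c := \barint_{B(\gx,\gt/2)} g$ before the spatial decomposition; each piece $(g-c)\gc_j$ is then a genuine $L^2$ function, and the combination $|g(y) - c| \leq \barint_{B(\gx,\gt/2)}|g(y)-g(z)|\,dz$ plus Cauchy--Schwarz is what recovers the double integral of $G$.

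Finally, your treatment of the local piece is too vague to verify. The paper's argument is not a Whitney decomposition at all: it enlarges the tent integral to the whole upper half-space, identifies the result (via $(2,\gq) \in I_{\min} \subset I(\mb{H},DB)$ and the quadratic estimate of Theorem \ref{thm:fc-quadest}) with $\|(g-c)\gc_1\|^2_{\dot{H}^2_{\gq+1}}$, and then invokes Stein's lemma (Theorem \ref{thm:stein}) to convert that Hardy--Sobolev norm into the double integral of $G$ up to controllable commutator errors from the cutoff. This is the step where the fractional weight $|x-y|^{-n-2(1+\gq)}$ actually enters; without some concrete substitute for this use of the adapted $L^2$-Hardy--Sobolev identification, your ``Hardy-type integration in the scale variable'' remains a plan rather than a proof.
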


\begin{proof}
	Fix $\gc_1, \gc \in C_c^\infty(\bbR^n)$ such that
	\begin{align*}
		\supp \gc_1 &\subset B(\gx,4\gt), &\gc_1|_{B(\gx,2\gt)} \equiv \text{const}, \\
		\supp \gc &\subset A(\gx,\gt/2,4\gt) &\gc|_{A(\gx,\gt,2\gt)} \equiv \text{const.}
	\end{align*}
	For all $j \geq 2$ define $\gc_j(x) := \gc(2^{-j} x)$, so that $\supp \gc_j \subset A(\gx,2^{j-1}\gt, 2^{j+2}\gt)$ and $\gc_j|_{A(\gx,2^j \gt, 2^{j+1} \gt)} \equiv 1$.
	We can choose the functions $\gc_1$ and $\gc$ such that $\sum_{j=1}^\infty \gc_j \equiv 1$.
	Let
	\begin{equation*}
		c := \barint_{B(\gx,\gt/2)}^{} g.
	\end{equation*}
	Then we have
	\begin{equation*}
		f = D(g-c) = \sum_{j=1}^\infty D(g-c)\gc_j.
	\end{equation*}
	Here, and throughout the proof, we write $D(g-c)\gc_j$ to mean $D[(g-c)\gc_j]$.
	
	First we prove that
	\begin{align}
		\dint_{T(B(\gx,\gt))} &|t^{-\gq}(I+itDB)^{-2} D(g-c)\gc_1(x)|^2 \, \frac{dx \, dt}{t} \nonumber \\
		&\lesssim \int_{B(\gx,4\gt)} \int_{B(\gx,4\gt)} G(x,y) \, dx \, dy \label{eqn:local-est-res}.
	\end{align}
	Since $\gY_0^2 \in \gY_{\gq+}^{-\gq} \cap H^\infty \subset \gY(\bbX^{(2,\gq)}_{DB})$ and since $(2,\gq) \in I(\mb{H},DB)$, we have
	\begin{align*}
		\dint_{T(B(\gx,\gt))} &|t^{-\gq}(I+itDB)^{-2} D(g-c)\gc_1(x)|^2 \, \frac{dx \, dt}{t} \\
		&\leq \dint_{\bbR^{1+n}_+} |t^{-\gq}(I+itDB)^{-2} D(g-c)\gc_1(x)|^2 \, \frac{dx \, dt}{t} \\
		&\simeq \nm{D(g-c)\gc_1}_{\bbH^{(2,\gq)}_{DB}}^2 \\
		&\simeq \nm{(g-c)\gc_1}_{\dot{H}^2_{\gq + 1}}^2 \\
		&\simeq \int_{\bbR^n} |\mc{D}_{\gq + 1}^2(g-c)\gc_1(x)|^2 \, dx
	\end{align*}
	using Theorem \ref{thm:stein} (which is valid since $2 > 2n/(n+1+\gq)$) in the last line.
	
	We claim that
	\begin{align}
          \int_{\bbR^n} \int_{\bbR^n} &\frac{|(g-c)\gc_1(z) - (g-c)\gc_1(y)|^2}{|z-y|^{n + 2(\gq + 1)}} \, dz \, dy \nonumber \\
          &\lesssim \dint_{B(\gx,4\gt)^2} \frac{|g(z) - g(y)|^2}{|z-y|^{n+2(\gq+1)}} \, dz \, dy, \label{eqn:difference-claim}
	\end{align}
	from which estimate \eqref{eqn:local-est-res} will follow.
	First observe that if $y \in B(\gx,\gt)$ and $z \in B(\gx,\gt/2)$, then $\gc_1(z) = \gc_1(y) = 1$ and the estimate \eqref{eqn:difference-claim} (restricted to such $y$ and $z$) follows immediately.
	Next, we can estimate
	\begin{align*}
		&\int_{B(\gx,\gt)^c} \int_{B(\gx,\gt/2)} \frac{|(g-c)\gc_1(z) - (g-c)\gc_1(x)|^2}{|z-x|^{n + 2(\gq+1)}} \, dz \, dx \\
		&\lesssim \int_{A(\gx,\gt,4\gt)} \int_{B(\gx,\gt/2)} \frac{|(g-c)(z)(1 - \gc_1(x))|^2}{|z-x|^{n+2(\gq+1)}} \, dz \, dx + A \\
		&\lesssim_\gc \int_{A(\gx,\gt,4\gt)} \int_{B(\gx,\gt/2)} |z-x|^{-n-2(\gq+1)} \bigg( \barint_{B(\gx,\gt/2)}^{} |g(z) - g(y)| \, dy \bigg)^2 \, dz \, dx + A \\
		&\lesssim r^n \int_{B(\gx,\gt/2)} |z-y|^{-n-2(\gq+1)} \bigg( \barint_{B(\gx,\gt/2)}^{} |g(z) - g(y)| \, dy \bigg)^2 \, dz + A \\
		&\lesssim r^n \barint_{B(\gx,\gt/2)}^{} \int_{B(\gx,\gt/2)} \frac{|g(z) - g(y)|^2}{|z-y|^{n + 2(\gq + 1)}} \, dz \, dy + A\\
		&\lesssim \dint_{B(\gx,4\gt)^2} \frac{|g(z) - g(y)|^2}{|z-y|^{n+2(\gq+1)}} \, dz \, dy,
	\end{align*}
	where
	\begin{equation*}
		A \lesssim \dint_{B(\gx,4\gt)^2} \frac{|g(z) - g(y)|^2}{|z-y|^{n+2(\gq+1)}} \, dz \, dy,
	\end{equation*}
	and where we used $|z-x| \gtrsim |z-y|$ on the region of integration.

	Finally, we estimate
	\begin{align*}
		&\int_{\bbR^n} \int_{B(\gx,\gt/2)^c} \frac{|(g-c)\gc_1(z) - (g-c)\gc_1(x)|^2}{|z-x|^{n + 2(\gq+1)}} \, dz \, dx \\
		&\lesssim \int_{B(\gx,4\gt)} \int_{A(\gx,\gt/2,4\gt)} \frac{|(g-c)(x)(\gc_1 - 1)(z) - (g-c)(x)(\gc_1 - 1)(x)|^2}{|z-x|^{n+2(\gq+1)}} \, dz \, dx + A \\
		&\lesssim_\gc \int_{B(\gx,4\gt)} \int_{A(\gx,\gt/2,4\gt)} |z-x|^{-n-2\gq} \bigg( \barint_{B(\gx,\gt/2)}^{} |g(x) - g(y)| \, dy \bigg)^2 \, dz \, dx + A \\
		&\lesssim \int_{B(\gx,4\gt)} r^{-2\gq} \bigg( \barint_{B(\gx,\gt/2)}^{} |g(x) - g(y)| \, dy \bigg)^2 \, dx + A \\
		&\lesssim \dint_{B(\gx,4\gt)^2} \frac{|g(x) - g(y)|^2}{|x-y|^{n+2(\gq+1)}} \, dx \, dy  
	\end{align*}
	with $A$ as before, using $r \gtrsim |x-y|$ and $|x-y|^{n + 2\gq} \gtrsim |x-y|^{n + 2(\gq + 1)}$ on the region of integration.
	
	Now we handle the remaining $\gc_j$ terms.
	For $j \geq 2$, by local coercivity (Lemma \ref{lem:local-coercivity}), the equality $BD(I + itBD)^{-1} = (I - (I+itBD)^{-1})/it$, and off-diagonal estimates of the families $(I + itBD)^{-1}$ and $(I + itBD)^{-2}$ of order $M$, we can estimate
	\begin{align*}
		\dint_{T(B(\gx,\gt))} &|t^{-\gq} (I+itDB)^{-2} D(g-c)\gc_j(x)|^2 \, \frac{dx \, dt}{t} \\
		&\lesssim \int_0^\gt t^{-2\gq} \int_{B(\gx,\gt)} |D(I + itBD)^{-2}(g-c)\gc_j(x)|^2 \, dx \, \frac{dt}{t} \\
		&\lesssim \int_0^\gt t^{-2\gq} \bigg[ \int_{B(\gx,2\gt)} |(BD(I+itBD)^{-2}(g-c)\gc_j(x)|^2 \, dx \\
		&\qquad \qquad + \gt^{-2} \int_{B(\gx,2\gt)} |(I + itBD)^{-2}(g-c)\gc_j(x)|^2 \, dx \bigg] \, \frac{dt}{t} \\
		&\lesssim \int_0^\gt t^{-2\gq} \bigg( \frac{2^j \gt}{t} \bigg)^{-2M} ( t^{-2} + \gt^{-2} ) \nm{(g-c)\gc_j}_2^2  \, \frac{dt}{t} \\
		&\lesssim \frac{2^{-2jM}}{\gt^{2(1+\gq)}} \nm{(g-c)\gc_j}_2^2.
	\end{align*}
	Furthermore, for each $j \geq 2$ we have
	\begin{align*}
		\nm{(g-c)\gc_j}_2^2
		&\leq \int_{A(\gx,2^{j-1} \gt, 2^{j+2} \gt)} \bigg( \barint_{B(\gx,\gt/2)}^{} |g(x) - g(y)| \, dy \bigg)^2 \, dx \\
		&\leq \gt^{-n} \int_{B(\gx,\gt/2)} \int_{A(\gx,2^{j-1}\gt, 2^{j+2}\gt)} |g(x) - g(y)|^2 \, dx \, dy \\
		&\lesssim \gt^{2(1-\gq)} 2^{j(n+2(1+\gq))} \int_{B(\gx,4\gt)} \int_{A(\gx,2^{j-1}\gt,2^{j+2}\gt)} \frac{|g(x) - g(y)|^2}{|x-y|^{n+2(1+\gq)}} \, dx \, dy.
	\end{align*}
	Putting these estimates together completes the proof of the lemma.
\end{proof}

\begin{proof}[Proof of Theorem \ref{thm:sgpnorm-concrete}]\

\textbf{Step 1: Reduction to a resolvent estimate.}

As stated in \cite[Proof of Lemma 15.1]{AM15}, there exists $\gr \in H^\infty$ of the form
	\begin{equation*}
		\gr(z) = \sum_{m=1}^N c_m (1 + imz)^{-2}
	\end{equation*}
	for some scalars $c_1,\ldots,c_N \in \bbC$, and $\gy \in \gY_N^2$ nondegenerate, such that
	\begin{equation*}
		e^{-z} = \gr(z) + \gy(z) \quad \text{for all $z \in S_\gm^+$}.
	\end{equation*}
	Therefore we have
	\begin{equation}\label{eqn:sgp-red}
		\nm{C_{DB}^+ f}_{X^\mb{p}}
		\lesssim_N \nm{t \mapsto (I + itDB)^{-2}\gc^+(DB)f}_{X^\mb{p}} + \nm{\bbQ_{\gy,DB}\gc^+(DB)f}_{X^\mb{p}}.  
	\end{equation}
	For $N$ sufficiently large we have
	\begin{equation*}
		\gy \in \gY_N^2 \subset \gY_{(\gq(\mb{p}) + n|\frac{1}{2} - j(\mb{p})|)+}^{-\gq(\mb{p})+} \cap \gY_+^+ \subset \gY(\bbX_{DB}^\mb{p}),
	\end{equation*}
	and so
	\begin{equation*}
		\nm{\bbQ_{\gy,DB} \gc^+(DB)f}_{X^\mb{p}} \lesssim \nm{f}_{\bbX_{DB}^\mb{p}} \simeq \nm{f}_{\bbX_D^\mb{p}}
	\end{equation*}
	by Proposition \ref{prop:dual-interval-incl} and using that $\mb{p}^\heartsuit \in I(\mb{X},DB^*)$.
	Therefore it suffices to prove the estimate
	\begin{equation}\label{eqn:resgoal}
		\nm{t \mapsto (I + itDB)^{-2} f}_{X^\mb{p}} \lesssim \nm{f}_{\bbX_{D}^\mb{p}}
	\end{equation}
	for all $f \in \overline{\mc{R}(DB)}$.
	Applying this inequality to $\gc^+(DB)f$ and invoking the boundedness of $\gc^+(DB)$ on $\bbX_{DB}^\mb{p}=\bbX_{D}^\mb{p}$ will yield
	\begin{equation*}
		\nm{C_{DB}^+ f}_{X^\mb{p}}
		\lesssim \nm{ \gc^+(DB) }_{\bbX_{D}^\mb{p}}
		\simeq \nm{ f }_{\bbX_D^\mb{p}}.
	\end{equation*}
	To prove \eqref{eqn:resgoal}, by density (Corollary \ref{cor:adapted-space-int-dens} and density of $\mc{R}(D)$ in $\bbX_D^2$),\footnote{When $\mb{p}$ is infinite, we use weak-star density.} it suffices to consider $f = Dg$ for $g \in \mc{D}(D) \cap \overline{\mc{R}(D)}$ such that
\begin{equation*}
	\nm{f}_{\bbX_D^\mb{p}} \simeq \nm{f}_{\mb{X}^\mb{p}} \simeq \nm{g}_{\mb{X}^{\mb{p}+1}}.
\end{equation*}
	
	We will now prove \eqref{eqn:resgoal} by splitting cases depending on the function space under consideration.

\textbf{Step 2a: Completing the proof for Hardy--Sobolev spaces.}

Suppose $i(\mb{p}) < \infty$ and $(X,\bbX) = (T,\bbH)$.
Lemma \ref{lem:C-D-lem} and a crude estimate give
\begin{align*}
	\dint_{T(B(\gx,\gt))} &|t^{-\gq(\mb{p})} (I + itDB)^{-2} f(x)|^2 \, \frac{dx \, dt}{t} \\
	&\lesssim_M \bigg( 1 + \sum_{j=2}^\infty 2^{-2j(M-\frac{n}{2} - (1+\gq(\mb{p})))} \bigg) \int_{B(\gx,4\gt)} |\mc{D}^2_{1+\gq(\mb{p})} g(x)|^2 \, dx,
\end{align*}
 so by taking $M > \frac{n}{2} - (1+\gq(\mb{p}))$ we get
\begin{equation*}
	\dint_{T(B(\gx,\gt))} |t^{-\gq(\mb{p})} (I + itDB)^{-2} f(x)|^2 \, \frac{dx \, dt}{t} \lesssim \int_{B(\gx,4\gt)} |\mc{D}_{1+\gq(\mb{p})}^2 g(x)|^2 \, dx.
\end{equation*}
Hence for all $\gx \in \bbR^n$ we have
\begin{equation*}
	\mc{C}_0(t \mapsto t^{-\gq(\mb{p})} (I + itDB)^{-2} f)(\gx)^2 \lesssim \sup_{\gt > 0} \barint_{B(\gx,4\gt)}^{} |\mc{D}_{1 + \gq(\mb{p})}^2 g|^2 = \mc{M}_2(\mc{D}^2_{1 + \gq(\mb{p})} g)(\gx)^2,
      \end{equation*}
      where $\mc{M}_2(f) := \mc{M}(|f|^2)^{1/2}$, with $\mc{M}$ the Hardy--Littlewood maximal operator.
	By Theorem \ref{thm:tent-AC}, boundedness of $\mc{M}_2$ on $L^{i(\mb{p})}$ (since $i(\mb{p}) > 2$), Theorem \ref{thm:stein} (using $1+\gq(\mb{p}) \in (0,1)$), $Dg = f$, and $\mb{p} \in I(\mb{H},DB)$, we get
	\begin{align*}
		\nm{t \mapsto (I + itDB)^{-2} f}_{T^{\mb{p}}}
		\lesssim \nm{\mc{D}^2_{1+\gq(\mb{p})} g}_{L^{i(\mb{p})}} 
		\simeq \nm{g}_{\dot{H}^{\mb{p}+1}} 
		\simeq \nm{f}_{\bbH_D^\mb{p}},
	\end{align*}
	which completes the proof in the Hardy--Sobolev case.

\textbf{Step 2b: Completing the proof for $BMO$-Sobolev spaces.}
Suppose $\mb{p} = (\infty,\gq;0)$ and $(X,\bbX) = (T,\bbH)$.
For all $(t,x) \in \bbR^{1+n}_+$, Lemma \ref{lem:C-D-lem} and Strichartz's characterisation of $\dot{BMO}_{1+\gq}$ (Theorem \ref{thm:strichartz-bmo}) yield
\begin{align*}
	&\bigg( t^{-n} \dint_{T(B(x,t))} |\gt^{-\gq} (I + i\gt DB)^{-2} f(\gx)|^2 \, d\gx \, \frac{d\gt}{\gt} \bigg)^{1/2} \\
	&\lesssim_M t^{-n/2} \bigg( t^n \nm{g}_{\dot{\BMO}_{1+\gq}}^2 + \sum_{j=2}^\infty 2^{-2j(M - \frac{n}{2} - (1+\gq))} (2^{j+2} t)^n \nm{g}_{\dot{\BMO}_{1+\gq}}^2 \bigg)^{1/2} \\
	&\simeq \nm{g}_{\dot{\BMO}_{1+\gq}} \bigg( 1 + \sum_{j=2}^\infty 2^{-2j(M - n - (1+\gq))} \bigg)^{1/2} \\
	&\simeq \nm{g}_{\dot{\BMO}_{1+\gq}}
\end{align*}
provided that $M$ is sufficiently large.
Therefore as in the previous step we have
\begin{equation*}
	\nm{\gt \mapsto (I + i \gt DB)^{-2} f}_{T^\mb{p}}
	\lesssim {\nm{g}_{\mb{H}^{\mb{p} + 1}}} \\
	\simeq \nm{f}_{\bbH_{D}^\mb{p}},
\end{equation*}
which completes the proof in the $\BMO$-Sobolev case.

\textbf{Step 2c: Completing the proof for H\"older spaces.}
Let $\mb{p} = (\infty,\gq;\ga)$.
First we prove the result for $X = T$.
By the definition of the H\"older norm we have
\begin{equation*}
	G(x,y) \leq \nm{g}_{\dot{\gL}_{1+\gq+\ga}}^2 |x-y|^{2\ga - n},
\end{equation*}
so by Lemma \ref{lem:C-D-lem},
\begin{align*}
	&t^{-\ga} \bigg( t^{-n} \dint_{T(B(x,t))} |\gt^{-\gq} (I + i\gt DB)^{-2} f(\gx)|^2 \, d\gx \, \frac{d\gt}{\gt} \bigg)^{1/2} \\
	&\lesssim_M t^{-\ga - \frac{n}{2}} \nm{g}_{\dot{\gL}_{1 + \gq + \ga}} \bigg( \dint_{B(x,4t)^2} \, \frac{d\gx \, d\gh}{|\gx-\gh|^{n-2\ga}} \\
	&\qquad +\sum_{j=2}^\infty 2^{-2j(M - \frac{n}{2} - (1+\gq))} \int_{B(x,4t)} \int_{A(x,2^{j-1} t, 2^{j+2}t)} \, \frac{d\gx \, d\gh}{|\gx-\gh|^{n-2\ga}} \bigg)^{1/2} \\
	&\lesssim t^{-\ga-\frac{n}{2}} \nm{g}_{\dot{\gL}_{1 + \gq + \ga}} \bigg( t^{n+2\ga} + \sum_{j=2}^\infty 2^{-2j(M-\frac{n}{2} - (1+\gq))} 2^{-j(n-2\ga)} t^{n+2\ga} \bigg)^{1/2} \\
	&= \nm{g}_{\dot{\gL}_{1 + \gq + \ga}}
\end{align*}
for $M$ sufficiently large.
Therefore, by the same concluding argument as in the previous steps,
\begin{equation*}
	\nm{\gt \mapsto (I + itDB)^{-2} f}_{T^\mb{p}} \lesssim \nm{f}_{\dot{\gL}^{\gq + \ga}}.
\end{equation*}

In the case that $X = Z$, since $\mb{p}$ is infinite, Lemma \ref{lem:TZ-fixedexp-emb} yields $T^\mb{p} \hookrightarrow Z^\mb{p}$, and so by previous estimate we have \begin{equation*}
	\nm{\gt \mapsto (I + itDB)^{-2} f}_{Z^\mb{p}} \lesssim \nm{\gt \mapsto (I + itDB)^{-2} f}_{T^\mb{p}} \lesssim \nm{f}_{\dot{\gL}^{\gq + \ga}}.
\end{equation*}
This completes the proof in the H\"older space case.

\textbf{Step 2d: Completing the proof for Besov spaces.}

Let $\mb{p} = (p,\gq)$.
We use a slightly different argument here.
Fix cutoff functions $\gc_1, \gc \in C_c^\infty(\bbR^n)$ with
	\begin{align*}
		\supp \gc_1 &\subset B(0,4), &\gc_1|_{B(0,2)} \equiv \text{const}, \\
		\supp \gc &\subset A(0,1/2,4) &\gc|_{A(0,1,2)} \equiv \text{const},
	\end{align*}
for all integers $j \geq 2$ define $\gc_j(x) := \gc(2^{-j} x)$, and for all $j \geq 1$ define
\begin{equation*}
	\gh_j(t,x,\gx) := \gc_j\left(\frac{x-\gx}{t}\right) \qquad ((t,x) \in \bbR^{1+n}_+, \gx \in \bbR^n).
\end{equation*}
As before, these functions can be chosen such that $\sum_{j=1}^\infty \gh_j = 1$.
Also define
\begin{equation*}
	\td{g}(t,x,\gx) := g(\gx) - \barint_{B(x,t)}^{} g(\gz) \, d\gz \qquad ((t,x) \in \bbR^{1+n}_+, \gx \in \bbR^n)).
\end{equation*}
By the triangle inequality we have
\begin{align}
	&\nm{ t \mapsto (I + itDB)^{-2} f }_{Z^p_\gq}^p \nonumber \\
	&\lesssim \sum_{j=1}^\infty \dint_{\bbR^{1+n}_+} \left( \bariint_{\gO(t,x)}^{} |\gt^{-\gq} (I + i\gt DB)^{-2}D(\td{g}\gh_j)(t,x,\gx) |^2 \, d\gx \, d\gt \right)^{p/2} \, dx \, \frac{dt}{t}, \label{eqn:besov-terms}
\end{align}
where the operators involving $D$ and $B$ act in the $\gx$ variable.
By using local coercivity (Lemma \ref{lem:local-coercivity}) as in the proof of Lemma \ref{lem:C-D-lem}, the $j$-th term in \eqref{eqn:besov-terms} can be estimated by
\begin{align*}
	&\dint_{\bbR^{1+n}_+} \bigg( \bariint_{\gO(t,x)}^{} |\gt^{-\gq} (I + i\gt DB)^{-2}D(\td{g}\gh_j)(t,x,\gx) |^2 \, d\gx \, d\gt \bigg)^{p/2} \, dx \, \frac{dt}{t} \\
	&\lesssim \dint_{\bbR^{1+n}_+} \bigg( \bariint_{\gO_{c}(t,x)}^{} |\gt^{-\gq-1} (I + i\gt BD)^{-1} (\td{g}\gh_j)(t,x,\gx)|^2 \, d\gx \, d\gt \bigg)^{p/2} \, dx \, \frac{dt}{t} \\
	&+ \dint_{\bbR^{1+n}_+} \bigg( \bariint_{\gO_c(t,x)}^{} |\gt^{-\gq-1}(I + i\gt BD)^{-2} (\td{g}\gh_j)(t,x,\gx)|^2 \, d\gx \, d\gt \bigg)^{p/2} \, dx \, \frac{dt}{t}
\end{align*}
with Whitney parameter $c = (2,2)$.
The two terms in this sum differ only in the power of the resolvent.
The resolvent families $(I + i\gt DB)^{-1}$ and $(I + i\gt DB)^{-2}$ both satisfy off-diagonal estimates of arbitrarily large order $M$ (as off-diagonal estimates may be composed); we will use this to estimate the terms above, making reference only to $(I + i\gt DB)^{-1}$.

For $j=1$ we estimate
\begin{align*}
	&\dint_{\bbR^{1+n}_+} \bigg( \bariint_{\gO_{c}(t,x)}^{} |\gt^{-\gq-1} (I + i\gt BD)^{-1} (\td{g}\gh_1)(t,x,\gx)|^2 \, d\gx \, d\gt \bigg)^{p/2} \, dx \, \frac{dt}{t} \\
	&\lesssim \dint_{\bbR^{1+n}_+} \bigg( t^{-2\gq - 2 - n} \int_{B(x,4t)} |\td{g}(t,x,\gx)|^2 \, d\gx \bigg)^{p/2} \, dx \, \frac{dt}{t} \\
	&\lesssim \dint_{\bbR^{1+n}_+} \bigg( \barint_{B(x,4t)}^{} \barint_{B(x,t)}^{} \bigg| \frac{g(\gx) - g(\gz)}{t^{\gq + 1}} \bigg|^2 \, d\gz \, d\gx \bigg)^{p/2} \, dx \, \frac{dt}{t} \\
	&\leq \dint_{\bbR^{1+n}_+} \barint_{B(x,4t)}^{} \barint_{B(x,t)}^{} \bigg| \frac{g(\gx) - g(\gz)}{t^{\gq + 1}} \bigg|^p \, d\gz \, d\gx \, dx \, \frac{dt}{t} \\
	&= \int_{\bbR^n} \int_{\bbR^n} \int_0^\infty \int_{B(\gx,4t) \cap B(\gz,t)} \, dx \, \frac{1}{t^{2n+p(\gq+1)}} \, \frac{dt}{t} |g(\gx) - g(\gz)|^p \, d\gz \, d\gx \\
	&\leq \int_{\bbR^n} \int_{\bbR^n} \int_{|\gz - \gx|/5}^\infty \frac{1}{t^{n+p(\gq+1)}} \, \frac{dt}{t} \, |g(\gx) - g(\gz)|^p \, d\gz \, d\gx \\
	&\simeq \int_{\bbR^n} \int_{\bbR^n} \frac{|g(\gx) - g(\gz)|^p}{|\gz - \gx|^{n + p(\gq+1)}} \, d\gz \, d\gx \\
	&\simeq \nm{g}_{\dot{B}^{p,p}_{\gq+1}} \\
	&\simeq \nm{f}_{\dot{B}^{p,p}_\gq},
\end{align*}
using that $\gh_1(t,x,\cdot)$ is supported in $B(x,4t)$, that $p/2 > 1$,  that $B(\gx,4t) \cap B(\gz,t)$ is nonempty only if $t > |\gz-\gx|/5$, and the Besov norm characterisation from Theorem \ref{thm:besov-difference}.

For $j \geq 2$ we have, using off-diagonal estimates,
\begin{align*}
	&\dint_{\bbR^{1+n}_+} \bigg( \bariint_{\gO_{c}(t,x)}^{} |\gt^{-\gq-1} (I + i\gt BD)^{-1} (\td{g}\gh_j)(t,x,\gx)|^2 \, d\gx \, d\gt \bigg)^{p/2} \, dx \, \frac{dt}{t} \\
	&\lesssim 2^{-j(Mp-(np)/2)} \\
	&\qquad \cdot \dint_{\bbR^{1+n}_+} \bigg( t^{-2\gq - 2} \barint_{A(x,2^{j-1}t, 2^{j+2}t)}^{} \barint_{B(x,t)}^{} |g(\gx) - g(\gz)|^2 \, d\gz \, d\gx \bigg)^{p/2} \, dx \, \frac{dt}{t} \\
	&\leq 2^{-j(Mp-(np)/2)} \dint_{\bbR^{1+n}_+} \barint_{A(x,2^{j-1}t, 2^{j+2}t)}^{} \barint_{B(x,t)}^{} \bigg| \frac{g(\gx) - g(\gz)}{t^{\gq+1}} \bigg|^p \, d\gz \, d\gx \, dx \, \frac{dt}{t} \\
	&= 2^{-j(Mp - (np)/2 + n)} \\
	&\qquad \cdot \int_{\bbR^n} \int_{\bbR^n} \int_0^\infty \frac{1}{t^{2n + p(\gq+1)}} \int_{B(\gz,t) \cap A(\gx,2^{j-1}t, 2^{j+1}t)} \, dx \, \frac{dt}{t} \, |g(\gx) - g(\gz)|^p \, d\gx \, d\gz \\
	&\lesssim 2^{-j(Mp - (np)/2 + 2n)} \int_{\bbR^n} \int_{\bbR^n} \int_{2^{-j}|\gz - \gx|}^\infty \frac{1}{t^{n + p(\gq+1)}} \, \frac{dt}{t} |g(\gx) - g(\gz)|^p \, d\gx \, d\gz \\
	&\simeq 2^{-j(pM - (np)/2 + n - p(\gq+1))} \nm{f}_{\dot{B}^{p,p}_\gq}
\end{align*}
arguing similarly to before.
For $M$ sufficiently large, we can thus estimate \eqref{eqn:besov-terms} by summing a geometric series, yielding
\begin{equation*}
	\nm{ t \mapsto (I + itDB)^{-2} f }_{Z^p_\gq} \lesssim \nm{ f }_{\dot{B}^{p,p}_\gq}
\end{equation*}
as required.
This completes the proof.
\end{proof}

\section{Technicalities involving extensions and projections}\label{sec:further}

In previous sections we were careful to keep track of which completions (of spaces and operators) we used.
However, in previous work---in particular in \cite{AS16}---such care was not taken, and so there are some ambiguities which we must address.
Here we tie up these loose ends.
This section may be skipped on first and subsequent readings.

Here, for various holomorphic functions $\gf$, we will define operators $\wtd{\gf(BD)}$, $\overline{\gf(BD)}$, and $\gf(\bbP_D BD)$, which are (related to) extensions of $\gf(BD)$ to various completions.
In this section we take an arbitrary completion $\dot{\mc{D}}(D)$ of $\mc{D}(D)$ with respect to the seminorm $\nm{Dh}_2 \simeq \nm{BD h}_2$.
Note that $\bbH_{BD}^{(2,1)} = \mc{D}(D) \cap \overline{\mc{R}(BD)}$, and thus we can pick a completion $\mb{H}_{BD}^{(2,1)}$ of $\bbH_{BD}^{(2,1)}$ within $\dot{\mc{D}}(D)$.
This applies in particular when $B = I$.
By Lemma \ref{lem:proj}, $\bbP_D$ extends to an isomorphism from $\mb{H}_{BD}^{(2,1)}$ into $\mb{H}_D^{(2,1)}$ whose inverse is the extension of $\bbP_{BD} f$.
 
\begin{prop}
Let $\gf \in H^\infty$. 
Then there exists a bounded operator
\begin{equation*}
	\map{\wtd{\gf(BD)}}{\dot {\mc{D}}(D)}{\mb{H}_{BD}^{(2,1)}}
\end{equation*}
such that $D\wtd{\gf(BD)}= {\gf(BD)}D$.
Moreover, for $h\in \mc{D}(D)\cap \overline{\mc{R}(BD)}$, we have $\wtd{\gf(BD)}h= {\gf(BD)}h$ and for $h\in \mc{D}(D) \cap \overline{\mc{R}(D)}$, we have $ \wtd{\gf(BD)}h= {\gf(BD)}\bbP_{BD}h$.
\end{prop}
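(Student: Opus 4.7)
The natural plan is to define $\wtd{\gf(BD)}$ first on the dense subspace $\mc{D}(D) \subset \dot{\mc{D}}(D)$ by the formula
\[
\wtd{\gf(BD)}h := \gf(BD)\bbP_{BD}h,
\]
and then extend by continuity. First I would verify that this formula does land in $\bbH_{BD}^{(2,1)} = \mc{D}(D) \cap \overline{\mc{R}(BD)}$ as required. Membership in $\overline{\mc{R}(BD)}$ is automatic since $\gf(BD)$ maps into this subspace. For membership in $\mc{D}(D)$, I would apply Lemma \ref{lem:proj}(i) to get $D\bbP_{BD}h = Dh$, and then similarity of functional calculi to obtain
\[
D\gf(BD)\bbP_{BD}h = \gf(DB)D\bbP_{BD}h = \gf(DB)Dh,
\]
which lies in $L^2$ because $Dh \in \overline{\mc{R}(D)} = \overline{\mc{R}(DB)}$ and $\gf(DB)$ is bounded on this subspace.

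The same computation gives the boundedness estimate for free:
\[
\nm{\wtd{\gf(BD)}h}_{\mb{H}_{BD}^{(2,1)}} = \nm{D\gf(BD)\bbP_{BD}h}_2 = \nm{\gf(DB)Dh}_2 \lesssim \nm{\gf}_\infty \nm{Dh}_2,
\]
so $\wtd{\gf(BD)}$ is bounded from $\mc{D}(D)$ (with seminorm $\nm{Dh}_2$) into the Banach space $\mb{H}_{BD}^{(2,1)} \subset \dot{\mc{D}}(D)$. Since $\mc{D}(D)$ is by construction dense in $\dot{\mc{D}}(D)$ and $\mb{H}_{BD}^{(2,1)}$ is complete, the operator extends uniquely to a bounded map $\wtd{\gf(BD)} \colon \dot{\mc{D}}(D) \to \mb{H}_{BD}^{(2,1)}$.

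The intertwining identity $D\wtd{\gf(BD)} = \gf(BD)D$ holds on the dense subspace $\mc{D}(D)$ by the computation above (with $\gf(BD)D$ read via similarity as the operator $\gf(DB)D$ acting on $\overline{\mc{R}(D)} = \overline{\mc{R}(DB)}$); it then extends by continuity using that $D \colon \mb{H}_{BD}^{(2,1)} \to L^2$ is bounded. The two compatibility statements are immediate: if $h \in \mc{D}(D) \cap \overline{\mc{R}(BD)}$, then $\bbP_{BD}h = h$, giving $\wtd{\gf(BD)}h = \gf(BD)h$; and for $h \in \mc{D}(D) \cap \overline{\mc{R}(D)}$, the formula $\wtd{\gf(BD)}h = \gf(BD)\bbP_{BD}h$ is literally the definition. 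The main delicate point is really bookkeeping: one must insert $\bbP_{BD}$ before applying $\gf(BD)$ (because $\gf(BD)$ is only defined on $\overline{\mc{R}(BD)}$), and then use Lemma \ref{lem:proj} together with similarity of functional calculi to ensure that the projection is invisible at the level of $Dh$ and that boundedness is measured purely by $\nm{Dh}_2$.
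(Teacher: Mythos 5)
Your proof is correct and reaches the same operator as the paper's, but by a slightly different route. The paper, given $h \in \mc{D}(D)$, defines $\wtd{\gf(BD)}h$ indirectly: it chooses $g \in \mb{H}_D^{(2,1)}$ with $Dg = \gf(DB)Dh$ (using that the extended $D$ is an isomorphism onto $\overline{\mc{R}(D)}$) and then sets $\wtd{\gf(BD)}h := \bbP_{BD}g$, where $\bbP_{BD}$ is the extended projection of Lemma \ref{lem:proj}. You instead give the closed formula $\gf(BD)\bbP_{BD}h$, which stays inside $L^2$ on the dense subspace. The two definitions agree since both have $D$-image $\gf(DB)Dh$ and the extended $D$ is injective on $\mb{H}_{BD}^{(2,1)}$. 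Your route has the advantage that the two compatibility assertions become immediate from the definition, whereas the paper has to derive them afterwards (via Corollary \ref{cor:similarity} for the first and a short chain of equalities for the second). Two small remarks: the first equality in your boundedness estimate should be $\simeq$ rather than $=$, since the $\mb{H}_{BD}^{(2,1)}$-norm is only equivalent to $\nm{Dh}_2$; and, as you implicitly noticed, what one actually establishes is $D\wtd{\gf(BD)} = \gf(DB)D$, which is evidently what the displayed identity in the proposition is meant to say.
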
 

\begin{proof}
We argue by density.
Consider $h\in \mc{D}(D)$.
As $\gf(DB)Dh \in \overline{\mc{R}(D)}$, there exists $g\in \mb{H}_{D}^{(2,1)}$ such that $Dg=\gf(DB)Dh$.
Set $\wtd{\gf(BD)}h= \bbP_{BD}g \in \mb{H}_{BD}^{(2,1)}$.
We then have $D\wtd{\gf(BD)}h=D\bbP_{BD}g=Dg= \gf(DB)Dh$.
Taking extensions shows the first claim. 

Next, let $h\in \mc{D}(D)\cap \overline{\mc{R}(BD)}$.
Then
\begin{equation*}
	D\wtd{\gf(BD)}h = \gf(DB)Dh = D \gf(BD)h,
\end{equation*}
where the second equality follows from Corollary \ref{cor:similarity}.
As both $\wtd{\gf(BD)}h$ and $\gf(DB)h$ belong to $\mb{H}_{BD}^{(2,1)}$, we deduce their equality. 
 
Finally, let $h\in \mc{D}(D) \cap \overline{\mc{R}(D)}$.
Then
\begin{equation*}
	D\wtd{\gf(BD)}h = \gf(DB)Dh = \gf(DB)D\bbP_{BD}h = D\gf(BD)\bbP_{BD}h,
\end{equation*}
where the last equality is from the one above applied to $\bbP_{BD}h$.
Again we deduce $\wtd{\gf(BD)}h=\gf(BD)\bbP_{BD}h$. 
\end{proof}
 
In the next proposition we consider an extension of $\gf(BD)$ from $\overline{\mc{R}(BD)}$ to all of $L^2$ given by imposing $\gf(BD)|_{\mc{N}(BD)} = 0$, which is reasonable for certain $\gf$.
 
\begin{prop}\label{prop:secondext} Let $\gf \in H^\infty$ be such that the limit $\lim_{z\to 0, z\in S_{\theta}} \gf(z)$ either vanishes or does not exist.
In either case, set $\gf(0)=0$, and define
$\map{\overline{\gf(BD)}}{L^2}{ \overline{\mc{R}(BD)}}$ by
\begin{equation*}
	\overline{\gf(BD)}h= {\gf(BD)}\bbP_{BD}h.
\end{equation*}
Then $\overline{\gf(BD)}$ is bounded and its restriction to $\mc{D}(D)$ agrees with 
$\wtd{\gf(BD)}$, and thus extends boundedly from $\mb{H}_{D}^{(2,1)}$ into $\mb{H}_{BD}^{(2,1)}$.
\end{prop}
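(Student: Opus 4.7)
First I would establish boundedness of $\overline{\gf(BD)}$ on $L^2$. Since $BD$ is bisectorial (Theorem~\ref{thm:DB-fundamental-props}), the projection $\bbP_{BD}\colon L^2\to\overline{\mc{R}(BD)}$ is bounded by Proposition~\ref{prop:bisectorialfacts}; the bounded $H^\infty$ functional calculus on $\overline{\mc{R}(BD)}$ (part of the Standard Assumptions) makes $\gf(BD)$ bounded on $\overline{\mc{R}(BD)}$, and the composition $\gf(BD)\bbP_{BD}$ is therefore bounded $L^2\to\overline{\mc{R}(BD)}$. The hypothesis $\gf(0)=0$ is simply the natural convention making this extension of $\gf(BD)$ from $\overline{\mc{R}(BD)}$ to $L^2$ compatible with the functional calculus on the nullspace.

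The main step is to check $\wtd{\gf(BD)}h=\overline{\gf(BD)}h$ for every $h\in\mc{D}(D)$. I would decompose $h=h_1+h_2$, where $h_1:=\bbP_Dh\in\mc{D}(D)\cap\overline{\mc{R}(D)}$ (using that $\bbP_D$ preserves $\mc{D}(D)$, Lemma~\ref{lem:proj}(i) with $B=I$) and $h_2:=h-h_1\in\mc{N}(D)=\mc{N}(BD)$. The preceding proposition applies directly to $h_1$ and gives $\wtd{\gf(BD)}h_1=\gf(BD)\bbP_{BD}h_1$. For $h_2$, one has $Dh_2=0$, so in the definition of $\wtd{\gf(BD)}h_2$ the element $g\in\mb{H}_D^{(2,1)}$ must satisfy $Dg=\gf(DB)Dh_2=0$; injectivity of $D$ on $\mb{H}_D^{(2,1)}$, built into the construction of this completion under the seminorm $\nm{D\cdot}_2$, forces $g=0$, whence $\wtd{\gf(BD)}h_2=\bbP_{BD}g=0$. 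On the other hand $\bbP_{BD}h_2=0$ since $h_2\in\mc{N}(BD)$, so $\overline{\gf(BD)}h_2=0$ as well. Using $\bbP_{BD}h=\bbP_{BD}h_1$ and summing the two pieces yields the desired equality.

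For the final assertion, $\bbH_D^{(2,1)}=\mc{D}(D)\cap\overline{\mc{R}(D)}$ is dense in $\mb{H}_D^{(2,1)}\subset\dot{\mc{D}}(D)$ by construction of the completion, and $\wtd{\gf(BD)}$ is bounded from $\dot{\mc{D}}(D)$ into $\mb{H}_{BD}^{(2,1)}$ by the preceding proposition. Restricting $\wtd{\gf(BD)}$ to $\mb{H}_D^{(2,1)}$ thus furnishes a bounded map into $\mb{H}_{BD}^{(2,1)}$ which, on the dense subspace $\bbH_D^{(2,1)}$, coincides with $\overline{\gf(BD)}$ by the previous step; this is the required extension. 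The whole argument is essentially bookkeeping in which completion each object lives; the only mildly delicate point will be the use of injectivity of $D$ on $\mb{H}_D^{(2,1)}$ to conclude that $\wtd{\gf(BD)}$ annihilates $\mc{D}(D)\cap\mc{N}(D)$, which reduces the identity to the case already handled by the preceding proposition.
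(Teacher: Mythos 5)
Your proof is correct and follows essentially the same route as the paper's (very terse) argument: boundedness comes from the splitting $L^2 = \overline{\mc{R}(BD)} \oplus \mc{N}(BD)$ and the bounded $H^\infty$ functional calculus, and agreement with $\wtd{\gf(BD)}$ on $\mc{D}(D)$ is reduced, via $h = \bbP_D h + (I - \bbP_D)h$ and the observation that both operators annihilate $\mc{D}(D) \cap \mc{N}(D)$, to the last case of the preceding proposition.

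One small conceptual remark: your reading of the hypothesis $\gf(0) = 0$ is a little off. As your own argument shows, the identity $\wtd{\gf(BD)}|_{\mc{D}(D)} = \gf(BD)\bbP_{BD}|_{\mc{D}(D)}$ holds for \emph{every} $\gf \in H^\infty$ regardless of its behaviour at $0$, because both sides kill $\mc{N}(D)$. The hypothesis is not needed for that calculation; it is there to guarantee that $\gf(BD)\bbP_{BD}$ coincides with the \emph{conventional} extension of $\gf(BD)$ to all of $L^2$. For a function such as $(1+iz)^{-1}$, which has a nonzero limit at $0$, the standard extension $(I+iBD)^{-1}$ acts as the identity on $\mc{N}(BD)$, and that operator would \emph{not} agree with $\wtd{\gf(BD)}$ on $\mc{D}(D) \cap \mc{N}(D)$. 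This is exactly the point made in the paper's remark immediately following the proposition.
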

  
\begin{proof}
Well-definedness and boundedness of $\overline{\gf(BD)}$ on $L^2$ follow from the splitting $L^2= \overline{\mc{R}(BD)}\oplus {\mc{N}(BD)}$.
The construction in the previous result shows that $\overline{\gf(BD)}$ agrees with $\wtd{\gf(BD)}$ on $\mc{D}(D)$.
\end{proof}

\begin{rmk}
The assumption on $\gf$ is necessary to avoid functions such as the resolvent $(1+iz)^{-1}$, which naturally extends to 1 by continuity at 0.
In that case we would not have agreement with $\wtd{\gf(BD)}$ on the domain of $D$.
This proposition is adapted to characteristic functions $\gf=\gc^\pm$ and products involving characteristic functions.
\end{rmk}

One can consider the operator $\bbP_{D}BD$ on $\overline{\mc{R}(D)}$ and then establish an adapted $L^2$-Sobolev space theory.
This was done in the work of the second author with McIntosh and Mourgoglou \cite{AMM13} and also pursued in \cite[Section 11]{AS16}.

\begin{prop}
Consider the operator $\bbP_{D}BD$ defined on $\overline{\mc{R}(D)}$ with domain $\mc{D}(D)\cap \overline{\mc{R}(D)}$.
Then for all $\gf\in H^\infty$ we have the similarity of operator extensions
\begin{equation*}
\gf(\bbP_{D}BD)= \bbP_{D}\gf(BD) \bbP_{BD}
\end{equation*}
on $\mb{H}_D^{(2,1)}$.
\end{prop}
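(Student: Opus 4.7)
My plan is to identify $\bbP_D BD$ on $\overline{\mc{R}(D)}$ as an operator similar to $BD|_{\overline{\mc{R}(BD)}}$ via conjugation by the projection isomorphisms from Lemma \ref{lem:proj}, then transfer the bounded $H^\infty$ functional calculus and its representation through this similarity, and finally extend to the completion $\mb{H}_D^{(2,1)}$.

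\textbf{Step 1: Similarity on the natural domain.} By Lemma \ref{lem:proj}(ii)--(iii), the restrictions
\begin{equation*}
	J := \bbP_{BD}|_{\overline{\mc{R}(D)}} \colon \overline{\mc{R}(D)} \to \overline{\mc{R}(BD)}, \qquad J^{-1} = \bbP_{D}|_{\overline{\mc{R}(BD)}}
\end{equation*}
are mutually inverse bounded isomorphisms, and they restrict to mutually inverse isomorphisms between $\mc{D}(D) \cap \overline{\mc{R}(D)}$ and $\mc{D}(D) \cap \overline{\mc{R}(BD)}$. For $h \in \mc{D}(D) \cap \overline{\mc{R}(D)}$, Lemma \ref{lem:proj}(i) gives $D\bbP_{BD}h = Dh$, hence $BD\bbP_{BD}h = BDh$. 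Applying $\bbP_D$ yields
\begin{equation*}
	\bbP_D BD\, h = \bbP_D (BD) \bbP_{BD}\, h = J^{-1} (BD|_{\overline{\mc{R}(BD)}}) J\, h,
\end{equation*}
so $\bbP_D BD = J^{-1}(BD)J$ as unbounded operators on $\overline{\mc{R}(D)}$ with domain $\mc{D}(D) \cap \overline{\mc{R}(D)}$.

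\textbf{Step 2: Transfer of functional calculus.} By Theorem \ref{thm:DB-fundamental-props}, $BD$ is bisectorial on $L^2$ with bounded $H^\infty$ functional calculus on $\overline{\mc{R}(BD)}$. Similarity under the bounded isomorphism $J$ preserves bisectoriality: for $z$ off an appropriate bisector,
\begin{equation*}
	(z - \bbP_D BD)^{-1} = J^{-1}(z - BD)^{-1}J
\end{equation*}
on $\overline{\mc{R}(D)}$, which gives the required resolvent bounds. Inserting this into the Dunford integral \eqref{dunford} and then passing to the McIntosh limit \eqref{eqn:mcintosh} shows that $\bbP_D BD$ has bounded $H^\infty$ functional calculus on $\overline{\mc{R}(D)}$ and
\begin{equation}\label{eqn:RDBsim}
	\gf(\bbP_D BD) = J^{-1} \gf(BD) J = \bbP_D\, \gf(BD)\, \bbP_{BD} \qquad \text{on } \overline{\mc{R}(D)}
\end{equation}
for every $\gf \in H^\infty$.

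\textbf{Step 3: Extension to $\mb{H}_D^{(2,1)}$.} Restrict \eqref{eqn:RDBsim} to $\bbH_D^{(2,1)} = \mc{D}(D) \cap \overline{\mc{R}(D)}$. On the right-hand side, $\bbP_{BD}$ maps $\bbH_D^{(2,1)}$ into $\bbH_{BD}^{(2,1)}$, then $\gf(BD)$ acts on $\bbH_{BD}^{(2,1)}$ through the extension $\wtd{\gf(BD)}$ (which by Proposition \ref{prop:secondext} agrees with $\gf(BD)\bbP_{BD}$ on $\mc{D}(D)$ for the relevant $\gf$, and in any case preserves $\bbH_{BD}^{(2,1)}$), and finally $\bbP_D$ maps back into $\bbH_D^{(2,1)}$; each of these is bounded in the $\dot{\mc{D}}(D)$-seminorm by the paragraph preceding the proposition. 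On the left, the similarity from Step 1 shows that $\gf(\bbP_D BD)$ preserves the subspace $\mc{D}(D) \cap \overline{\mc{R}(D)}$ and is bounded there in the induced seminorm, since $J$ and $J^{-1}$ are bounded in that seminorm and $\gf(BD)$ preserves $\mc{D}(D) \cap \overline{\mc{R}(BD)}$ boundedly. Both sides of \eqref{eqn:RDBsim} therefore extend to bounded operators on $\mb{H}_D^{(2,1)}$ that agree on the dense subspace $\bbH_D^{(2,1)}$, hence agree on all of $\mb{H}_D^{(2,1)}$.

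\textbf{Main obstacle.} The only real care is in Step 3: one must verify that both sides of \eqref{eqn:RDBsim} are well-defined bounded operators on the seminormed completion $\mb{H}_D^{(2,1)}$ and not merely on $\overline{\mc{R}(D)}$. This is essentially bookkeeping once the similarity is in place, but it requires matching the ad hoc extensions $\wtd{\gf(BD)}$ of Proposition \ref{prop:secondext} with the abstract similarity picture; the identity $D\bbP_{BD} = D$ from Lemma \ref{lem:proj}(i) is what makes these definitions compatible.
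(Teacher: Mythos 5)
Your argument is correct and follows the same core approach as the paper's proof: exhibit $\bbP_D BD$ as $J^{-1}(BD|_{\overline{\mc{R}(BD)}})J$ with $J = \bbP_{BD}$ (using Lemma \ref{lem:proj} to show $BD\bbP_{BD}h = BDh$ on $\mc{D}(D) \cap \overline{\mc{R}(D)}$), transfer the functional calculus by similarity, and extend to the completion. The only nit: in Step 3 the identity $\wtd{\gf(BD)}h = \gf(BD)\bbP_{BD}h$ for $h \in \mc{D}(D) \cap \overline{\mc{R}(D)}$ and all $\gf \in H^\infty$ comes from the unnumbered proposition preceding Proposition \ref{prop:secondext}, not from Proposition \ref{prop:secondext} itself (which imposes an extra vanishing condition on $\gf$); your hedge ``and in any case preserves $\bbH_{BD}^{(2,1)}$'' is what actually carries the argument, since $\nm{D\gf(BD)h}_2 \simeq \nm{\gf(BD)BDh}_2 \lesssim \nm{\gf}_\infty \nm{BDh}_2 \simeq \nm{\gf}_\infty \nm{Dh}_2$.
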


\begin{proof} 
First note that $\bbP_D BD$ coincides with $\bbP_D BD \bbP_{BD}$.
Thus $\bbP_D BD$ is similar to $BD$, restricted to $\mc{D}(D) \cap \overline{\mc{R}(BD)}$.
Lemma \ref{lem:proj} yields the equality $\bbP_{D}BD=\bbP_{D}BD\bbP_{BD}$ on $\mc{D}(D)\cap \overline{\mc{R}(D)}$, as $\bbP_{BD}$ restricted to $\mc{D}(D)\cap \overline{\mc{R}(D)}$ is the inverse of $\bbP_{D}$ restricted to $\mc{D}(D)\cap \overline{\mc{R}(BD)}$.
The extension in the statement follows.
\end{proof}

We remark that as $\bbP_D BD$ does not satisfy the Standard Assumptions (because of the presence of $\bbP_D$) there is no available Besov--Hardy--Sobolev theory and only the adapted Sobolev spaces $\mc{H}^s_{\bbP_D BD}$ of \cite[\textsection 11]{AS16} can be developed.
These are similar to the spaces $\mb{H}_{BD}^{(2,s)}$ when $0 \leq s \leq 1$.
Thus the operator $\bbP_D BD$ is of limited interest in our context.

The conclusion of this section is that the operators $\gf(BD)$ of \cite[\textsection 12.3]{AS16} should be replaced by $\gf(BD) \bbP_{BD}$ or even by $\bbP_D \gf(BD) \bbP_{BD}$ to be fully defined on various extensions.
But as long as they are applied to $L^2$ functions, this ambiguity is resolved by using $\overline{\gf(BD)}$ instead.
All results in \cite[\textsection 12.3]{AS16} are nevertheless correct.

\chapter[Classification of Solutions to CR systems and Elliptic Equations]{Classification of Solutions to Cauchy--Riemann Systems and Elliptic Equations}\label{chap:BVPCR}

In this chapter we implicitly work with a fixed $m \in \bbN$, meaning that we consider the equation $L_A u = 0$ where $u$ is a $\bbC^m$-valued function.
All of our arguments are independent of $m$.
Most of the function spaces in this chapter are spaces of $\bbC^{m(1+n)}$-valued functions, but we do not reference this in the notation (as the notation is complicated enough already).
Therefore we write $L^2(\bbR^n)$ in place of $L^2(\bbR^n : \bbC^{m(1+n)})$, and so on.
As in the previous chapter, we fix the Dirac operator $D$ and multipliers $B$ from Subsection \ref{ssec:fop}.

\section{Basic properties of solutions}\index{elliptic equation!basic properties of solutions}

We use the following properties of solutions to $L_A u = 0$.

\begin{prop}\label{prop:wksolnprops}
	Suppose that $u$ solves $L_A u = 0$.
	Then the following are true.
	\begin{enumerate}[(1)]
		\item\label{itm:td} The transversal derivative $\partial_t u$ solves $L_A(\partial_t u) = 0$.
		
		\item\label{itm:slicelem} The function $t \mapsto \nabla_A u(t,\cdot)$ is in $C^\infty(\bbR_+ : L_\text{loc}^2(\bbR^n))$, and for all Whitney parameters $c = (c_0,c_1)$ and $t \in \bbR_+$ we have
		\begin{equation*}
			\barint_{B(x,c_0 t)}^{} |\nabla_A u(t,x)|^2 \, dx \lesssim \bariint_{\gO_c(t,x)}^{} |\nabla_A u(s,y)|^2 \, ds \, dy.
		\end{equation*}
		
		\item\label{itm:derivests} For all exponents $\mb{p}$, all $k \in \bbN$, and all $C \geq 1$ we have
		\begin{align*}
			\sup_{\substack{t,t^\prime \in \bbR_+ \\ C^{-1} \leq t/t^\prime \leq C}} \nm{\partial_t^k \nabla_A u(t,\cdot)}_{E^{\mb{p}-k}(t^\prime)} &\lesssim_C \nm{\partial_t^k \nabla_A u}_{X^{\mb{p} - k}} \\
			&= \nm{ \nabla_A \partial_t^k u}_{X^{\mb{p} - k}} \\
			&\lesssim \nm{\nabla_A u}_{X^{\mb{p}}}.
		\end{align*}
		In particular, if $\nabla_A u$ is in $X^\mb{p}$, then the function $t \mapsto \nabla_A u(t,\cdot)$ is in $C^\infty(\bbR_+ : E^\mb{p})$
	\end{enumerate}
\end{prop}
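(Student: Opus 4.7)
The plan is to prove (1), (2), (3) in turn, relying at each stage on the $t$-independence of $A$ and on standard Caccioppoli / reverse-H\"older estimates for weak solutions of $L_A u = 0$.

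For (1), I would exploit translations in $t$: since $A(t,x)=A(x)$, the shifted function $u_h(t,x):=u(t+h,x)$ solves $L_A u_h=0$ for every $h>0$, and hence so does the difference quotient $h^{-1}(u_h-u)$. Passing to the limit $h\to 0$ inside the weak formulation, justified by $u\in W^2_{1,\loc}$ together with dominated convergence on compactly supported test functions, yields $L_A(\partial_t u)=0$; iterating gives $L_A(\partial_t^k u)=0$ for every $k\in\bbN$.

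For (2), by Theorem \ref{thm:AAM} the conormal gradient $F:=\nabla_A u$ solves the Cauchy--Riemann system $(\CR)_{DB}$ with $B=\widehat{A}$. Reading the first equation weakly gives $\partial_t F=-DBF$ distributionally, and iterating yields $\partial_t^k F=(-DB)^k F$. Applying classical Caccioppoli estimates for $L_A$ to each of the solutions $\partial_t^j u$ supplied by (1) propagates local $L^2$ control to all transversal derivatives of $F$, so $t\mapsto F(t,\cdot)$ lies in $C^\infty(\bbR_+:L^2_\loc(\bbR^n))$. The Whitney averaging inequality is then a reverse-H\"older bound for solutions of $(\CR)_{DB}$ with $t$-independent coefficients (see e.g.\ \cite{AAM10,AM15}): using smoothness in $t$ to write $F(t,\cdot)$ as an average of $F(\tau,\cdot)$ over $\tau$ near $t$, the mean square of $F(t,\cdot)$ on $B(x,c_0 t)$ is controlled by the mean square of $F$ on the full Whitney box $\gO_c(t,x)$.

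For (3), the first $\lesssim_C$ comes from applying the Whitney inequality of (2) to $\partial_t^k \nabla_A u$ at each $(t,x)$ with $t\simeq t'$ and then $L^{i(\mb{p})}$-integrating (or $\ell^{i(\mb{p})}$-summing via the dyadic characterisations in Propositions \ref{prop:slice-discrete} and \ref{prop:Zpqs-seq-equivalence}); the weight $t'^{-r(\mb{p}-k)}$ in the slice norm is comparable to $\tau^{-r(\mb{p}-k)}$ on $\gO_c(t,x)$ when $t'\simeq t$, giving uniformity in $C$. The equality is (1) together with the commutation $\partial_t\nabla_A=\nabla_A\partial_t$, valid because $A$ is $t$-independent. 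For the final $\lesssim$, iterating the first-order Caccioppoli estimate for $(\CR)_{DB}$ yields
\begin{equation*}
  \bariint_{\gO_c(t,x)} |\partial_t^k F|^2 \, ds\, dy \lesssim t^{-2k} \bariint_{\gO_{c'}(t,x)} |F|^2 \, ds\, dy
\end{equation*}
on a slightly enlarged Whitney box, and the factor $t^{-k}$ precisely absorbs the regularity shift from $r(\mb{p})$ to $r(\mb{p}-k)$ after $L^{i(\mb{p})}$-integration. The in-particular $C^\infty(\bbR_+:E^\mb{p})$ assertion is then immediate from (2) combined with the just-proved slice estimate. The main technical obstacle is the iterated Caccioppoli bound in the last step: each single-step estimate is standard for solutions of $(\CR)_{DB}$, but one must iterate carefully so that the enlargement of the Whitney box at each step remains controlled and the scaling accumulates to the correct $t^{-2k}$, which is exactly what is needed to pair with the shift $\mb{p}\mapsto\mb{p}-k$ in the slice/tent/$Z$ norms.
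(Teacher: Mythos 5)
Your proposal is correct and follows essentially the same route as the paper, which itself simply cites $t$-independence for part (1) and defers parts (2)–(3) to Caccioppoli estimates as worked out in \cite[\S 5]{AM15}. Your difference-quotient argument for (1), the Caccioppoli iteration applied to $\partial_t^j u$ to get $C^\infty(\bbR_+:L^2_\loc)$ smoothness and the Whitney-box slice bound in (2), and the combination of that slice bound with the commutation $\partial_t \nabla_A = \nabla_A \partial_t$ and the scaled $t^{-2k}$ Caccioppoli iteration in (3) is precisely the skeleton of the cited proof. One small point of precision: in (2) the passage from smoothness in $t$ to the slice-versus-Whitney-box estimate is really the fundamental theorem of calculus in the $t$-direction (write $F(t,\cdot)=F(\tau,\cdot)-\int_\tau^t \partial_s F$, average in $\tau$, Cauchy--Schwarz, then Caccioppoli on $\partial_s F$), rather than a reverse-H\"older inequality in the usual sense; your phrasing is a touch loose there but the underlying mechanism you invoke is the right one.
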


\begin{proof}
	Property \eqref{itm:td} follows from $t$-independence of the coefficients.
	The remaining statements are consequences of the Caccioppoli inequality, and are proven in \cite[\textsection 5]{AM15} for tent spaces.
	The corresponding $Z$-space statements are proven in the same way.
\end{proof}

\begin{rmk}\label{rmk:Fell}
	By Theorem \ref{thm:AAM}, if $F$ is a solution to a Cauchy--Riemann system $(\CR)_{DB}$, then parts (2) and (3) of Proposition \ref{prop:wksolnprops} hold with $\nabla_A u$ replaced by $F$.
	
	Furthermore, suppose that $G$ solves the anti-Cauchy--Riemann system
\begin{equation*}\label{eqn:antiCR}\index{Cauchy--Riemann system!anti-}
	(a\CR)_{DB} : \left\{ \begin{array}{rl} \partial_t G - DB G = 0 & \text{in $\bbR^{1+n}_+$,} \\ \curl_\parallel G_\parallel = 0 & \text{in $\bbR^{1+n}_+$}\end{array} \right.
\end{equation*}
defined analogously to $(\CR)_{DB}$ but with a sign change.
Then the reflection $F(t) := G(-t)$ solves $(\CR)_{DB}$ on the lower half-space $\bbR^{1+n}_-$.
	By using $X$-spaces associated with the lower half-space rather than the upper half-space, parts (2) and (3) of Proposition \ref{prop:wksolnprops} hold with $\nabla_A u$ replaced by $F$ and with $\bbR_+$ replaced by $\bbR_-$.
	A simple reflection argument then shows that parts (2) and (3) of Proposition \ref{prop:wksolnprops} hold for $G$.
\end{rmk}

As a consequence we obtain the following technical lemma, which is analogous to \cite[Lemma 10.2]{AM15}.

\begin{lem}\label{lem:sgp-in-slice}
	Fix $\mb{p}$ with $i(\mb{p}) < 2$ and $\gq(\mb{p}) < 0$, suppose $M \in \bbN$, and let $f \in {\bbX}_{DB}^{\mb{p}}$.
	Then for all $t > 0$ we have that $(tDB)^M e^{-t[DB]}\gc^{\pm}(DB)f \in E^{\mb{p}}$, with
	\begin{equation*}
		\sup_{t > 0} \nm{(tDB)^M e^{-t[DB]}\gc^{\pm}(DB)f}_{E^\mb{p}(t)} \lesssim \nm{f}_{{\bbX}_{DB}^\mb{p}}.
	\end{equation*}
\end{lem}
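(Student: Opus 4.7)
The plan is to reduce the desired slice-space bound to an $X^{\mb{p}}$-norm estimate on a single space-time function built from $f$ by functional calculus, and then transfer that space-time estimate to a pointwise-in-$t$ slice norm via a Caccioppoli bound for Cauchy--Riemann solutions, plus purely geometric comparisons of Whitney regions.

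First, by boundedness of the spectral projections $\gc^\pm(DB)$ on $\bbX_{DB}^\mb{p}$ (Corollary \ref{cor:Xbhfc}), I may replace $f$ by $\gc^\pm(DB)f \in \bbX_{DB}^{\mb{p}, \pm}$; the two signs being symmetric, I focus on the $+$ case. On $\overline{\mc{R}(DB)}^+$ the bounded operator $e^{-t[DB]}$ acts as $e^{-tDB}$, so setting $\gf(z) := z^M e^{-z} \in \gY_M^\infty \cap H^\infty$ gives
\begin{equation*}
  (tDB)^M e^{-t[DB]}\gc^+(DB) f = \gf_t(DB) f = \bbQ_{\gf, DB} f (t, \cdot).
\end{equation*}
Since $\gq(\mb{p}) < 0 \leq M$ and $i(\mb{p}) < 2$, Proposition \ref{prop:equivalent-quasinorms} places $\gf$ in $\gY(\bbX_{DB}^\mb{p})$, so $G := \bbQ_{\gf, DB} f$ satisfies $\nm{G}_{X^\mb{p}} \simeq \nm{f}_{\bbX_{DB}^\mb{p}}$, and the problem is reduced to proving
\begin{equation*}
  \sup_{t > 0} \nm{G(t, \cdot)}_{E^\mb{p}(t)} \lesssim \nm{G}_{X^\mb{p}}.
\end{equation*}

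Next, I would exploit that $G(t, \cdot) = (-1)^M t^M \partial_t^M F(t, \cdot)$, where $F := C_{DB}^+ f$ solves $(\CR)_{DB}$ by Proposition \ref{prop:sgp-cauchy}. By Theorem \ref{thm:AAM} (applied with $B = \hat{A}$) we may write $F = \nabla_A u$ for some $u$ solving $L_A u = 0$; by Proposition \ref{prop:wksolnprops}(1) and $t$-independence of $A$, $\partial_t^M u$ also solves $L_A u = 0$, so $\partial_t^M F = \nabla_A\partial_t^M u$ again solves $(\CR)_{DB}$. The Caccioppoli bound in Proposition \ref{prop:wksolnprops}(2) (extended to $(\CR)_{DB}$-solutions by Remark \ref{rmk:Fell}), applied to $\partial_t^M F$ and multiplied by $t^{2M}$, together with $s \simeq t$ on $\gO_c(t, x)$, yields
\begin{equation*}
  \barint_{B(x, c_0 t)}^{} |G(t, y)|^2 \, dy \lesssim \bariint_{\gO_c(t, x)}^{} |G(s, y)|^2 \, ds \, dy.
\end{equation*}
Choosing $c_0 = 1$, multiplying by $t^{-2r(\mb{p})}$, raising to the power $i(\mb{p})/2$, and integrating in $x$ then gives the pointwise-in-$t$ estimate
\begin{equation*}
  \nm{G(t, \cdot)}_{E^\mb{p}(t)}^{i(\mb{p})} \lesssim \int_{\bbR^n} \mc{W}_c(\gk^{-r(\mb{p})} G)(t, x)^{i(\mb{p})} \, dx.
\end{equation*}

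Finally I would dominate this by $\nm{G}_{X^\mb{p}}^{i(\mb{p})}$ uniformly in $t$ using only geometric comparisons of Whitney regions. In the tent-space case ($X=T$), a change of aperture gives $\gO_c(t, x) \subset \gG_\gb(x)$ for some $\gb > 0$, so $\mc{W}_c(\gk^{-r(\mb{p})}G)(t, x) \lesssim \mc{A}_\gb(\gk^{-r(\mb{p})}G)(x)$ uniformly in $t$, and the change-of-aperture theorem for tent spaces closes the estimate. In the $Z$-space case ($X=Z$), enlarging the Whitney parameter slightly gives $\gO_c(t, x) \subset \gO_{c'}(s, x)$ for every $s \in (t, 2t)$, so
\begin{equation*}
  \mc{W}_c(\gk^{-r(\mb{p})}G)(t, x) \lesssim \mc{W}_{c'}(\gk^{-r(\mb{p})}G)(s, x)
\end{equation*}
for such $s$; integrating against $ds/s$ over $(t, 2t)$ and then in $x$ identifies the right-hand side with a uniform multiple of $\nm{G}_{Z^\mb{p}}^{i(\mb{p})}$. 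The one mildly delicate point is the middle step---the observation that $\partial_t^M F$ is itself the conormal gradient of a solution of $L_A u = 0$, so that the Caccioppoli bound applies; everything on either side of that is either functional-calculus bookkeeping (packaged into the norm equivalence of the first step) or a Fubini-type geometric comparison.
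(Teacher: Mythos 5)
Your argument is correct and follows essentially the same route as the paper's proof: the slice norm is dominated by the $X^\mb{p}$-norm of the Cauchy extension because the latter solves a Cauchy--Riemann or anti-Cauchy--Riemann system (so that the Caccioppoli-type bound applies), and the $X^\mb{p}$-norm is in turn comparable to $\nm{f}_{\bbX_{DB}^\mb{p}}$ because $[z \mapsto z^M e^{-[z]}] \in \gY(\bbX_{DB}^\mb{p})$ when $i(\mb{p}) < 2$ and $\gq(\mb{p}) < 0$. The only difference is one of presentation: where the paper invokes Proposition~\ref{prop:wksolnprops}(3) together with Remark~\ref{rmk:Fell} in a single step, you re-derive that slice-from-$X^\mb{p}$ estimate by hand from the Caccioppoli bound and geometric comparisons of Whitney regions and cones (and you take an unnecessary detour through Theorem~\ref{thm:AAM}, since Remark~\ref{rmk:Fell} already transfers the Caccioppoli estimate directly to CR solutions). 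One small technical slip to fix: $z \mapsto z^M e^{-z}$ is not in $H^\infty$ on a bisector (it is unbounded on the left half), so you should instead take $\gf(z) := z^M e^{-[z]} \in \gY_M^\infty$, which is nondegenerate, and observe that $\gf_t(DB)$ agrees with $(tDB)^M e^{-tDB}$ on the positive spectral subspace.
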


\begin{proof}
	We estimate
	\begin{align*}
		\sup_{t > 0} \nm{(tDB)^M e^{-t[DB]} \gc^{\pm}(DB)f}_{E^\mb{p}(t)} &\lesssim \nm{t \mapsto (tDB)^M e^{-t[DB]} \gc^{\pm}(DB)f}_{X^\mb{p}} \\
		&\simeq \nm{f}_{{\bbX}_{DB}^\mb{p}}.
	\end{align*}
	The first line comes from Proposition \ref{prop:wksolnprops}, using that $(DB)^M e^{-t[DB]}\gc^{\pm}(DB)f$ solves either $(\CR)_{DB}$ or $(a\CR)_{DB}$.
	The second line is due to the fact that $[z \mapsto z^M e^{-[z]}] \in \gY(\bbX_{DB}^\mb{p})$ when $i(\mb{p}) < 2$ and $\gq(\mb{p}) < 0$.
\end{proof}

\section{Decay of solutions at infinity}\label{sec:dos}\index{elliptic equation!decay of solutions}

In the boundary value problems introduced in Subsection \ref{ssec:fbvp}, we impose the decay condition
\begin{equation}\label{eqn:decaygradu}
	\lim_{t \to \infty} \nabla_\parallel u(t,\cdot) = 0 \qquad \text{in $\mc{Z}^\prime(\bbR^n : \bbC^{mn})$}
\end{equation}
for a solution $u$ to $L_A u$ with $\nabla u$ in $X^\mb{p}$.
This is equivalent to 
\begin{equation}\label{eqn:decayu}
	\lim_{t \to \infty} \langle u(t,\cdot), \gf\rangle = 0 \qquad \text{for all  $\gf \in \mc{S}(\bbR^n : \bbC^{m})$ with $\int x^\alpha \gf(x)\, dx=0$ when $\alpha\ne 0$}
\end{equation}
because the divergence operator is surjective from the set of test functions in \eqref{eqn:decayu} onto $\mc{Z}(\bbR^n : \bbC^{mn})$.
This is a convenient generalisation of the usual pointwise vanishing limit at $\infty$, since solutions to the elliptic equations that we consider may not have pointwise values.
In this section we show that this decay condition is redundant for certain exponents $\mb{p}$ (quantified in terms of $A$).
In fact, our results give decay not only in $\mc{Z}^\prime$, but also in the slice space $E^\infty$ (Lemma \ref{lem:decaylem}) or in $L^2$ (in Lemma \ref{lem:solution-decay}).
Demanding decay in $\mc{Z}^\prime$ is really just an artefact of having identified the classical smoothness spaces $\mb{X}^\mb{p}$ as subspaces of $\mc{Z}^\prime$.

By the `hole filling technique' attributed to Widman (see \cite[page 23]{mG83}) there exists a number $\gl(A) \in (0,n+1)$ such that for all $\gl \in [0,\gl(A))$, for all $(t_0,x_0) \in \bbR^{n+1}_+$ and $0<r<R<\infty$, and for all weak solutions $u$ to $L_A u = 0$, we have
\begin{equation}\label{eqn:DG}
	\dint_{B((t_0,x_0),r)} |\nabla u(t,x)|^2 \, dx \, dt  \lesssim_\gl \left( \frac{r}{R} \right)^\gl \dint_{B((t_0,x_0),R)} |\nabla u(t,x)|^2 \, dx \, dt,
\end{equation}
where $B((t_0,x_0),r)$ and $B((t_0,x_0),R)$ denote open balls in $\bbR^{1+n}$, and where $R$ is such that $B((t_0,x_0),R) \subset \bbR^{1+n}_+$.
These balls can be taken with respect to any norm on $\bbR^{1+n}$, keeping in mind that the implicit constant in \eqref{eqn:DG} will depend on the chosen norm.
By ellipticity we may replace the gradient $\nabla$ with the conormal gradient $\nabla_A$ in \eqref{eqn:DG}.

\begin{lem}\label{lem:decaylem}
	Suppose that the exponent $\mb{p}$ lies in the shaded region pictured in Figure \ref{fig:decaylem}, which depends on $\gl(A)$.
	Let $u$ be a solution to $L_A u = 0$ on $\bbR^{1+n}_+$ such that $\nabla_A u \in X^\mb{p}$.
	Then $\lim_{t \to \infty} \nabla_A u(t,\cdot) = 0$ in $E^\infty$ (and therefore also in $\mc{Z}^\prime$).
\end{lem}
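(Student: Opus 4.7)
The plan is to combine the hole-filling estimate \eqref{eqn:DG} with the slice-space bounds of Proposition \ref{prop:wksolnprops} so as to derive pointwise decay of $L^2$-averages of $\nabla_A u(t,\cdot)$ at any fixed scale as $t \to \infty$. First I would observe that decay in $\mc{Z}^\prime$ follows from decay in $E^\infty$ via the continuous embedding $E^\infty \hookrightarrow \mc{S}^\prime$ of Proposition \ref{prop:slice-distribution}, together with $\mc{Z}^\prime = \mc{S}^\prime/\mc{P}$. Hence the task reduces to proving $\|\nabla_A u(t,\cdot)\|_{E^\infty(t_0)} \to 0$ as $t \to \infty$ for some (any) fixed $t_0 > 0$.

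The core calculation proceeds as follows. Fix $t_0 > 0$ and $t > 2t_0$. Since $F := \nabla_A u$ solves the Cauchy--Riemann system $(\CR)_{DB}$ (Theorem \ref{thm:AAM}), a Caccioppoli-type slice bound (of the same nature as Proposition \ref{prop:wksolnprops}(2), applied at scale $t_0$) yields
\begin{equation*}
  \int_{B(x,t_0)} |F(t,y)|^2 \, dy \lesssim t_0^{-1} \dint_{B((t,x),2t_0)} |F|^2,
\end{equation*}
where the right-hand ball is Euclidean in $\bbR^{1+n}_+$. Applying the hole-filling estimate \eqref{eqn:DG} (transferred to $\nabla_A u$ via ellipticity) from radius $2t_0$ up to radius $t/2$, one gains a factor $(t_0/t)^{\gl}$ for any $\gl \in [0,\gl(A))$. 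The enlarged integral sits inside a Whitney region $\gO_c(t,x)$ of measure $\simeq t^{1+n}$, and is therefore controlled by $t^{1+n}\mc{W}_c(F)(t,x)^2$. Taking suprema in $x$ gives
\begin{equation*}
  \|\nabla_A u(t,\cdot)\|_{E^\infty(t_0)}^2 \lesssim_{t_0}\; t^{1+n-\gl}\, \sup_{x} \mc{W}_c(F)(t,x)^2.
\end{equation*}

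For $\mb{p}$ infinite, Proposition \ref{prop:wksolnprops}(3) yields the pointwise bound $\sup_x \mc{W}_c(F)(t,x) \lesssim t^{r(\mb{p})}\|\nabla_A u\|_{X^{\mb{p}}}$, and the net exponent of $t$ is negative precisely when $2r(\mb{p}) + n + 1 < \gl(A)$. This is the defining condition for the shaded region of Figure \ref{fig:decaylem} in the infinite-exponent case, and decay in $E^\infty$ follows immediately.

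The hard part will be the finite-exponent case, where Proposition \ref{prop:wksolnprops}(3) provides only $L^{i(\mb{p})}$-in-$x$ control of Whitney averages, not a pointwise supremum. I would handle this via the mixed embeddings of Theorem \ref{thm:mixed-emb}: if $\mb{p} \hookrightarrow \mb{q}$ for some $\mb{q}$ lying strictly inside the decay region, then $\nabla_A u \in X^{\mb{q}}$ and the previous argument applies to $\mb{q}$. When no such $\mb{q}$ is available, one trades integrability for supremum in a second step by inserting another Caccioppoli/hole-filling pass at intermediate scales before integrating in $x$, producing a Morrey-type estimate on $\int_{B((t,x),R)}|F|^2$. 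The shaded region is then precisely the set of $\mb{p}$ for which the combined powers of $t$ contributed by scaling, hole-filling, and the finite-exponent trade-off are strictly negative; this is the geometric content of Figure \ref{fig:decaylem}, and once the exponent book-keeping is in hand the $E^\infty$ decay statement (and thus the $\mc{Z}^\prime$ decay) is immediate.
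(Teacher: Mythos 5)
Your overall strategy --- pass from a thin slice at height $t$ to a thickened slab via a Caccioppoli-type estimate, enlarge from a fixed scale to a ball of radius $\sim t$ using the hole-filling inequality \eqref{eqn:DG}, and finally bound the enlarged integral by slice or Whitney-average data --- is the same as the paper's, and your exponent count for the infinite case correctly identifies $r(\mb{p}) < \frac{\gl(A)-(n+1)}{2}$ as the decay condition. The reduction of $\mc{Z}^\prime$-decay to $E^\infty$-decay via slice-space duality is also fine.

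The one substantive issue is the contingency you propose for the finite-exponent case. There is no case where ``no such $\mb{q}$ is available.'' Indeed, if $\mb{q}$ is infinite then $r(\mb{q}) = \gq(\mb{q}) - n\, j(\mb{q})$, and the defining relation of $\mb{p} \hookrightarrow \mb{q}$ forces $\gq(\mb{q}) - n\, j(\mb{q}) = \gq(\mb{p}) - n\, j(\mb{p})$; that is, $r(\mb{q})$ is a constant along the embedding line through $\mb{p}$. Hence the shaded region of Figure~\ref{fig:decaylem} is \emph{exactly} the set of $\mb{p}$ for which there exists an infinite $\mb{q}$ with $\mb{p} \hookrightarrow \mb{q}$ and $r(\mb{q}) < \frac{\gl(A)-(n+1)}{2}$, and for finite $\mb{p}$ one can always take $j(\mb{q}) = 0$, $\gq(\mb{q}) = \gq(\mb{p}) - n\, j(\mb{p})$. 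The paper opens the proof with precisely this observation, which makes the whole argument uniform: fix such a $\mb{q}$, run your Caccioppoli/hole-filling estimate at scale $1$, and bound the resulting
\begin{equation*}
\barint_{t/2}^{3t/2} \nm{\nabla_A u(s,\cdot)}_{E^\mb{q}(s)}^2 \, ds
\end{equation*}
by $\nm{\nabla_A u}_{X^\mb{p}}^2$ using the slice embedding $E^\mb{p}(s) \hookrightarrow E^\mb{q}(s)$ and Proposition~\ref{prop:wksolnprops}. Your proposed ``second Caccioppoli/hole-filling pass producing a Morrey-type estimate'' is therefore never needed; recognising that the figure's defining inequality and the existence of a suitable infinite $\mb{q}$ coincide lets the proof collapse to a single chain of estimates.
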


\begin{rmk}
	The shaded region in Figure \ref{fig:decaylem} is the open half-plane determined by the equation $j(\mb{p}) > \frac{\gq(\mb{p})}{n} - \frac{n+1-\gl(A)}{2n}$.
	Note that $\frac{n+1-\gl(A)}{2n} \geq \frac{1}{2}$ if and only if $\gl(A) \leq 1$; we have pictured the case $\lambda(A) > 1$.
	In Lemma \ref{lem:solution-decay} we handle exponents $\mb{p}$ with $i(\mb{p}) \leq 2$ and $\gq(\mb{p}) < 0$ independently of $\gl(A)$, so this distinction is not important.
\end{rmk}

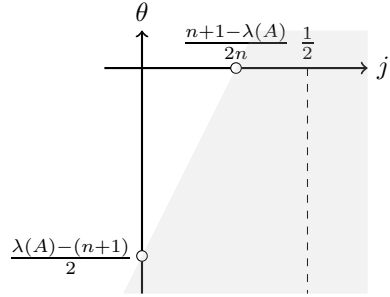
\begin{figure}
\caption{The exponent region in Lemma \ref{lem:decaylem}.}\label{fig:decaylem}
\begin{center}
\begin{tikzpicture}[scale=1]

	
	\draw [thick,->] (-0.5,1) -- (3,1); 
	\draw [thick,->] (0,-2) -- (0,1.5); 
	
	\draw [thin,dashed] (2.2,1) -- (2.2,-2); 
	
	\node [above] at (0,1.5) {$\theta$};
	\node [left] at (0,-1.5) {$\frac{\gl(A) - (n+1)}{2}$};
	\draw [fill=white] (0,-1.5) circle [radius = 2pt];
	\node [above] at (2.2,1) {$\frac12$};
	\node [above] at (1.25,1) {$\frac{n+1-\gl(A)}{2n}$};
	\draw [fill=white] (1.25,1) circle [radius = 2pt];
	
	\node [right] at (3,1) {$j$};
	
	\path [fill=lightgray, opacity = 0.2] (-0.25,-2)--(1.5,1.5)--(3,1.5)--(3,-2)--(-0.25,-2);
	
\end{tikzpicture}
\end{center}
\end{figure}

\begin{proof}[Proof of Lemma \ref{lem:decaylem}]
	The region pictured in Figure \ref{fig:decaylem} is precisely the set of exponents $\mb{p}$ such that there exists an infinite exponent $\mb{q}$ with $\mb{p} \hookrightarrow \mb{q}$ and $r(\mb{q}) < \frac{\gl(A) - (n+1)}{2}$.
	Fix such a $\mb{q}$.
	For all $\gl < \gl(A)$ we can estimate
	\begin{align}
		\nm{ \nabla_A u(t,\cdot) }_{E^\infty_{0}(1)}
		&\simeq \sup_{x \in \bbR^n} \bigg( \int_{B(x,1)} |\nabla_A u(t,y)|^2 \, dy \bigg)^{1/2} \nonumber \\
		&\lesssim \sup_{x \in \bbR^n} \bigg( \int_{t-\frac{1}{2}}^{t+\frac{1}{2}} \int_{B(x,1)} |\nabla_A u(s,y)|^2 \, dy \, ds \bigg)^{1/2} \label{line:propsuse2} \\
		&\lesssim \sup_{x \in \bbR^n} \bigg( t^{-\gl} \int_{t - \frac{t}{2}}^{t + \frac{t}{2}} \int_{B(x,t)} |\nabla_A u(s,y)|^2 \, dy \, ds \bigg)^{1/2} \,  \label{line:DGuse} \\
		&\lesssim t^{\frac{(n+1)-\gl}{2} + r(\mb{q})} \bigg( \barint_{t - \frac{t}{2}}^{t+\frac{t}{2}} \nm{ \nabla_A u(s,\cdot) }_{E^\mb{q}(s)}^2 \, ds \bigg)^{1/2} \nonumber \\
		&\lesssim t^{\frac{(n+1)-\gl}{2} + r(\mb{q})} \nm{ \nabla_A u }_{X^\mb{p}}, \nonumber
	\end{align}
	where \eqref{line:propsuse2} follows from Proposition \ref{prop:wksolnprops}, \eqref{line:DGuse} follows from \eqref{eqn:DG},\footnote{Here we use the balls $B((t,x),r) := (t - r/2, t + r/2) \times B(x,r)$.} and the last line follows from the embedding $E^\mb{p}(s) \hookrightarrow E^\mb{q}(s)$ and another application of Proposition \ref{prop:wksolnprops}.
	For $\gl$ sufficiently close to $\gl(A)$ we have $(n+1-\gl)/2 + r(\mb{q}) < 0$, so we find that $\lim_{t \to \infty} \nabla_A u(t,\cdot) = 0$ in $E^\infty$.
\end{proof}

\begin{rmk}
	It is known that $\gl(A) > n-1$ if and only if $A$ satisfies the De Giorgi--Nash--Moser condition \eqref{eqn:DGNM} of all exponents less than $\ga = (\gl(A) - (n-1))/2$.\index{De Giorgi--Nash--Moser condition}
	In this case we have $\frac{\gl(A) - (n+1)}{2} = \ga - 1$ and $\frac{n+1-\gl(A)}{2n} = \frac{1-\ga}{n}$, and Lemma \ref{lem:decaylem} then holds for the shaded region pictured in Figure \ref{fig:decaylem2}.
	Evidently this region increases as the {De Giorgi}--Nash--Moser exponent $\ga$ increases.
        
        As a concrete example, consider $A$ satisfying the De Giorgi--Nash--Moser condition of some exponent $\alpha$ close to $1$, and let $p \in (1,\infty)$ and $\theta \in (-1,0)$ with $\theta$ close to $0$.
        Then whenever $u$ solves the elliptic equation $L_A u = 0$ on $\bbR^n$ and $\nabla_A u \in T^{(p,\theta)}$, we have that $\lim_{t \to \infty} \nabla_A u(t,\cdot) = 0$ in $E^\infty$ provided that $p$ is not too large.
        However, for $p$ sufficiently large (depending on $\theta$ and $\alpha$), this decay is not guaranteed. 
\end{rmk}

\begin{figure}
\caption{The region in Lemma \ref{lem:decaylem} in the case that $A$ satisfies the De Giorgi--Nash--Moser condition with exponent $\ga$.}\label{fig:decaylem2}
\begin{center}
\begin{tikzpicture}[scale=1]

	
	\draw [thick,->] (-0.5,1) -- (3,1); 
	\draw [thick,->] (0,-2) -- (0,1.5); 
	
	\draw [thin,dashed] (2.2,1) -- (2.2,-2); 
	\draw [thin,dashed] (-0.5,-2) -- (3,-2); 
	
	\node [above] at (0,1.5) {$\theta$};
	\node [left] at (0,-1.5) {$\ga - 1$};
	\draw [fill=white] (0,-1.5) circle [radius = 2pt];
	\node [above] at (2.2,1) {$\frac12$};
	\node [above] at (1.25,1) {$\frac{1-\ga}{n}$};
	\draw [fill=white] (1.25,1) circle [radius = 2pt];
	
	\node [left] at (-0.5,-2) {$-1$} ;
	\node [right] at (3,1) {$j$};
	
	\path [fill=lightgray, opacity = 0.2] (-0.25,-2)--(1.5,1.5)--(3,1.5)--(3,-2)--(-0.25,-2);
	
\end{tikzpicture}
\end{center}
\end{figure}
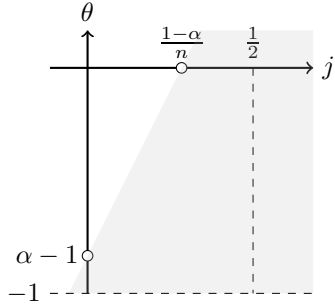

An easier argument can be used to deduce decay in $L^2$ for exponents $\mb{p}$ with $i(\mb{p}) \leq 2$ and $\gq(\mb{p}) < 0$.

\begin{lem}\label{lem:solution-decay} 
	Let $\mb{p} = (p,s)$ with $i(\mb{p}) \leq (0,2]$ and $\gq(\mb{p}) < 0$, and suppose $F \in X^\mb{p}$ solves $(\CR)_{DB}$ or $(a\CR)_{DB}$.
	Then $\lim_{t \to 0} F(t) = 0$ in $L^2$. 
\end{lem}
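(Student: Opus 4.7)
My plan is to reduce the claim to the pointwise bound
\begin{equation*}
\nm{F(t, \cdot)}_{L^2(\bbR^n)} \lesssim t^{\gq(\mb{p}) - n(1/i(\mb{p}) - 1/2)} \nm{F}_{X^\mb{p}} \qquad (t > 0),
\end{equation*}
whose exponent is strictly negative under the stated hypotheses ($i(\mb{p}) \leq 2$ and $\gq(\mb{p}) < 0$); the desired vanishing of $F(t,\cdot)$ in $L^2$ then follows immediately by taking $t$ to the endpoint at which the power vanishes.

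To establish the bound, I would first invoke Remark \ref{rmk:Fell} (together with Theorem \ref{thm:AAM} in the Cauchy--Riemann case, and a $t$-reflection in the anti-Cauchy--Riemann case) in order to apply parts (2)--(3) of Proposition \ref{prop:wksolnprops} directly to $F$. This delivers $F \in C^\infty(\bbR_+ : E^\mb{p})$ together with the uniform slice-space estimate $\sup_{t > 0} \nm{F(t, \cdot)}_{E^\mb{p}(t)} \lesssim \nm{F}_{X^\mb{p}}$. I would then set $\mb{q} := (2, \gq(\mb{p}) - n(1/i(\mb{p}) - 1/2))$; a short check confirms $\mb{p} \hookrightarrow \mb{q}$ (using $i(\mb{p}) \leq 2$), and since $i(\mb{q}) = 2$, Fubini identifies $\nm{\cdot}_{E^\mb{q}(t)}$ with a constant multiple of $t^{-r(\mb{q})} \nm{\cdot}_{L^2}$. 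It therefore suffices to prove the embedding $E^\mb{p}(t) \hookrightarrow E^\mb{q}(t)$ with constant independent of $t$, which I would obtain by factoring the embedding through the retraction/coretraction diagram of Proposition \ref{prop:slice-ret} composed with the ($t$-independent) mixed tent/$Z$-space embedding $X^\mb{p} \hookrightarrow X^\mb{q}$ of Theorem \ref{thm:mixed-emb}.

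One caveat deserves comment. Since $r(\mb{q}) < 0$, the displayed estimate forces $\nm{F(t, \cdot)}_{L^2} \to 0$ as $t \to \infty$ rather than as $t \to 0$. Given the placement of the lemma inside Section \ref{sec:dos} (``Decay of solutions at infinity'') and its companion role alongside Lemma \ref{lem:decaylem}, I read the stated ``$t \to 0$'' as a typographical artefact and take the intended limit to be ``$t \to \infty$''; this is precisely what the plan above delivers. A literal ``$t \to 0$'' reading is incompatible with the hypotheses, since for $\mb{p} = (2, s)$ with $s < 0$ there exist $F \in T^2_s$ solving $(\CR)_{DB}$ whose $L^2$ norm blows up near $t = 0$ like $t^{s}$, so no $L^2$ limit at zero exists, let alone one equal to zero.
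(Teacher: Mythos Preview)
Your proof is correct and follows essentially the same route as the paper: control $\nm{F(t)}_{E^\mb{p}(t)}$ via Proposition \ref{prop:wksolnprops}/Remark \ref{rmk:Fell}, then embed the slice space into $L^2$ and use negativity of the resulting exponent. The paper writes the slice-to-$L^2$ step as $\nm{F(t)}_2 \lesssim \nm{F(t)}_{E^{i(\mb{p})}_0(t)} = t^{\gq(\mb{p})}\nm{F(t)}_{E^\mb{p}(t)}$, invoking $E^{i(\mb{p})}_0(t) \hookrightarrow E^2_0(t) = L^2$; your version, which instead chooses $\mb{q}$ with $i(\mb{q})=2$ and $\mb{p}\hookrightarrow\mb{q}$ so that the retraction argument gives a genuinely $t$-uniform embedding $E^\mb{p}(t)\hookrightarrow E^\mb{q}(t)$, is slightly more careful about tracking the $t$-dependence (and yields the sharper exponent $\gq(\mb{p})+n\gd_{i(\mb{p}),2}$), but the underlying idea is identical. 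You are also right that the stated limit ``$t\to 0$'' is a typo for ``$t\to\infty$'': the paper's own proof concludes with ``Since $\gq(\mb{p})<0$, we have $\lim_{t\to\infty}F(t)=0$ in $L^2$'', and every use of the lemma (e.g.\ in Lemma \ref{lem:smallpsoln}) is for decay at infinity.
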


\begin{proof}
	By Proposition \ref{prop:wksolnprops}, for all $t \in \bbR_+$ we have $\nm{F(t)}_{E^\mb{p}(t)} \lesssim \nm{F}_{X^\mb{p}}$, 
	and so
	\begin{equation*}
		\nm{F(t)}_{2} \lesssim \nm{F(t)}_{E_0^{i(\mb{p})}(t)} 
		= t^{\gq(\mb{p})} \nm{F(t)}_{E^\mb{p}(t)} \lesssim t^{\gq(\mb{p})} \nm{F}_{X^{\mb{p}}}
	\end{equation*}
	using the embedding $E^{i(\mb{p})}_0(t) \hookrightarrow E^2_0(t) = L^2$ (the equality $E_0^2(t) = L^2$ is a consequence of Fubini's theorem).
	Since $\gq(\mb{p}) < 0$, we have
	\begin{equation*}
		\lim_{t \to \infty} F(t) = 0
	\end{equation*}
	in $L^2$.
\end{proof}

\section{Classification and representation theorems}\label{sec:class}

The following classification theorems (Theorems \ref{thm:mainthm-leq2} and \ref{thm:mainthm-gtr2}) for solutions to $(\CR)_{DB}$ (as formulated in Subsection \ref{ssec:fop}) will be proven in the next section.
For the convenience of the reader we state them both here.
We remind the reader that more `concrete' versions of these theorems are stated in Subsection \ref{ssec:csCR} of the introduction, and that a table detailing the possible interpretations of $(X^\mb{p}, \mb{X}^\mb{p})$ is in Section \ref{sec:exponent-space-table}.

\begin{thm}[Classification of solutions to $(\CR)_{DB}$, $i(\mb{p}) \leq 2$]\label{thm:mainthm-leq2}\index{Cauchy--Riemann system!classification of solutions}
	Let $\mb{p}$ be an exponent with $i(\mb{p}) \leq 2$ and $\gq(\mb{p}) < 0$, and fix a completion $\mb{X}$ of ${\bbX}_{DB}^\mb{p}$.
	\begin{enumerate}[(i)]
	\item
	For all $F_0 \in \mb{X}^{+}$, $\mb{C}_{DB}^+ F_0$ solves $(\CR)_{DB}$, and $\nm{ \mb{C}_{DB}^+ F_0 }_{X^\mb{p}} \lesssim \nm{ F_0 }_{\mb{X}}$.
	\item
	Conversely, if $F \in X^\mb{p}$ solves $(\CR)_{DB}$, then there exists a unique $F_0 \in \mb{X}^+$ such that $F = \mb{C}_{DB}^+ F_0$. Furthermore, $\nm{ F_0 }_{\mb{X}} \lesssim \nm{ F }_{X^\mb{p}}$.
	\end{enumerate}
\end{thm}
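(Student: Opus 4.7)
The plan is to handle the two parts separately: part (i) will follow readily from the abstract machinery of Chapter \ref{chap:ahsb}, while part (ii) -- the reconstruction of the trace from a given solution -- constitutes the substantive core of the argument and mirrors the $L^p$ case of Theorem \ref{thm:AMS}, adapted to the weighted tent/$Z$-space setting.

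For part (i), first reduce to $F_0 \in \bbX_{DB}^{\mb{p},+}$ via density together with the continuous extension of $\mb{C}_{DB}^+$ to $\mb{X}$ afforded by Proposition \ref{prop:comp-fc} applied to $\sgp \in H^\infty$. For such $F_0$, Proposition \ref{prop:sgp-continuity} gives that $C_{DB}^+ F_0 \in C^\infty(\bbR_+ : \bbX_{DB}^\mb{p})$ solves the Cauchy equation strongly; the tangential curl-free condition is automatic, since $C_{DB}^+ F_0(t) \in \overline{\mc{R}(DB)} = \overline{\mc{R}(D)}$ for every $t>0$, and elements of $\overline{\mc{R}(D)}$ are precisely those satisfying $\curl_\parallel f_\parallel = 0$. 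The norm equivalence $\nm{C_{DB}^+ F_0}_{X^\mb{p}} \simeq \nm{F_0}_{\bbX_{DB}^{\mb{p},+}}$ is exactly Corollary \ref{cor:cauchy-abstract}, whose hypotheses $i(\mb{p}) \leq 2$ and $\gq(\mb{p}) < 0$ are what we have assumed.

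For part (ii), start with $F \in X^\mb{p}$ solving $(\CR)_{DB}$. Proposition \ref{prop:wksolnprops} (extended via Remark \ref{rmk:Fell}) yields $F \in C^\infty(\bbR_+ : E^\mb{p})$ with pointwise slice-space bounds $\nm{F(t)}_{E^\mb{p}(t)} \lesssim \nm{F}_{X^\mb{p}}$, and Lemma \ref{lem:solution-decay} gives $\lim_{t\to\infty} F(t) = 0$ in $L^2$. For each $\varepsilon > 0$, I would define a tentative trace $F_0^\varepsilon := \gc^+(DB) F(\varepsilon)$ -- with the spectral projection interpreted on slice spaces using the $H^\infty$-realisations from Lemma \ref{lem:sgp-sector-repn} combined with Proposition \ref{prop:slice-bddness} -- and establish the semigroup representation
\begin{equation*}
F(t+\varepsilon) = e^{-t[DB]} F_0^\varepsilon, \qquad t > 0.
\end{equation*}
The $L^2$-decay of $F$ at $+\infty$ forces the negative spectral part $\gc^-(DB) F(\varepsilon)$ to contribute nothing, via uniqueness for the Cauchy problem. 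Passing $\varepsilon \to 0$ in the appropriate topology on $\mb{X}$ (norm when $\mb{p}$ is finite, weak-star when infinite), extract $F_0 := \lim_{\varepsilon \to 0} F_0^\varepsilon \in \mb{X}^+$ with $\nm{F_0}_\mb{X} \lesssim \nm{F}_{X^\mb{p}}$ by the Cauchy characterisation (Corollary \ref{cor:cauchy-abstract}) applied to $F_0^\varepsilon$. Passing to the limit in the displayed representation then yields $F = C_{DB}^+ F_0$, and uniqueness follows from $\lim_{t\to 0^+} C_{DB}^+ F_0(t) = F_0$ in $\mb{X}$ (Proposition \ref{prop:sgp-continuity}).

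The main obstacle is making the displayed semigroup identity rigorous, because $F(\varepsilon)$ lives only in the slice space $E^\mb{p}$ and not in $L^2$. Three intertwined issues must be addressed: (a) defining $\gc^+(DB) F(\varepsilon)$, handled through an $H^\infty$-approximation with strong off-diagonal decay (Lemma \ref{lem:sgp-sector-repn}) that preserves slice-space membership (Proposition \ref{prop:slice-bddness}); (b) ruling out any backward-semigroup component arising from $\gc^-(DB) F(\varepsilon)$ in $F(t+\varepsilon)$ for $t > 0$, which is a Cauchy-problem uniqueness statement driven by the $L^2$-decay at infinity; and (c) controlling the limit $\varepsilon \to 0$ in the abstract completion $\mb{X}$, which I would attack by testing against dense families of $\bbQ_{\gf,DB^*}$-extensions from $\bbX_{DB^*}^{\mb{p}^\prime}$ (Corollary \ref{cor:adapted-space-int-dens}), thereby reducing convergence in $\mb{X}$ to verifiable $L^2$-duality pairings on compactly supported test functions where classical tent/$Z$-space duality applies.
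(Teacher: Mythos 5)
Your part (i) is fine and follows the paper's route: density, the continuity of the Cauchy extension on completions (Proposition \ref{prop:sgp-continuity}), and Corollary \ref{cor:cauchy-abstract}. Your skeleton for part (ii) is also broadly the right one (slice-space regularity of $F(t)$, $L^2$-decay, spectral positivity, semigroup identity, limit at the boundary), but there are two genuine gaps.

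The more serious gap is in the uniform norm bound for your tentative traces $F_0^\varepsilon$. You plan to deduce $\nm{F_0^\varepsilon}_{\mb{X}} \lesssim \nm{F}_{X^\mb{p}}$ by applying Corollary \ref{cor:cauchy-abstract} to $F_0^\varepsilon$, which amounts to $\nm{F_0^\varepsilon}_{\mb{X}} \simeq \nm{C_{DB} F_0^\varepsilon}_{X^\mb{p}} = \nm{F(\cdot + \varepsilon)}_{X^\mb{p}} = \nm{S_\varepsilon F}_{X^\mb{p}}$, and you then need the downward-shift operator to be bounded on $X^\mb{p}$ uniformly in $\varepsilon$. But Proposition \ref{prop:s12shift} only gives this for $i(\mb{p}) \leq 2$ when $\gq(\mb{p}) < -1/2$, not for the full range $\gq(\mb{p}) < 0$ claimed in the theorem. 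For $\gq(\mb{p}) \in [-1/2, 0)$ your argument breaks at exactly this step. The paper's proof (Proposition \ref{prop:leq2-trace}) escapes this by a regularity bump: one writes $\nm{F(t_0)}_{\bbX_{DB}^{\td{\mb{p}}}} \simeq \nm{DBF(t_0)}_{\bbX_{DB}^{\td{\mb{p}}-1}}$ via Corollary \ref{cor:regbump}, then recognizes $C_{DB}(DBF(t_0)) = S_{t_0} DBF = S_{t_0} \partial_t F$ and applies the shift estimate at the exponent $\td{\mb{p}}-1$, for which $\gq(\td{\mb{p}}-1) < -1 < -1/2$ always. This regularity-shifting detour is essential, not cosmetic, and is missing from your plan. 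A secondary issue is your extraction of the limit $F_0 = \lim_{\varepsilon\to0} F_0^\varepsilon$ in $\mb{X}$: when $i(\mb{p}) \leq 1$, $\mb{X}$ is not a dual Banach space, so there is no weak(-star) compactness at hand; the paper handles this by first extracting a limit in a completion at an auxiliary exponent $\td{\mb{p}}$ with $i(\td{\mb{p}}) > 1$ and $\mb{p} \hookrightarrow \td{\mb{p}}$, and only afterwards upgrading membership to $\mb{X}^+$ via a Calder\'on reproducing argument.

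Finally, a point where you are overcomplicating rather than erring: you worry that $F(\varepsilon)$ ``lives only in the slice space $E^\mb{p}$ and not in $L^2$'' and propose to define $\gc^+(DB)F(\varepsilon)$ via $H^\infty$-approximations on slice spaces. But since $i(\mb{p}) \leq 2$, Proposition \ref{prop:slice-embeddings} gives $E^{i(\mb{p})} \hookrightarrow E^2 = L^2$, so $F(\varepsilon) \in L^2$ directly (this is exactly Lemma \ref{lem:smallpsoln} in the paper). The spectral projection and the semigroup identity $F(t_0 + \gt) = e^{-\gt DB}F(t_0)$ can therefore be set up entirely in $\bbH_{DB}^2$, with the $L^2$-decay at infinity giving $F(t_0) \in \overline{\mc{R}(DB)}$. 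Your slice-space $H^\infty$-apparatus (Lemma \ref{lem:sgp-sector-repn}, Proposition \ref{prop:slice-bddness}) is the right tool for $i(\mb{p}) > 2$ (Theorem \ref{thm:mainthm-gtr2}), but is unnecessary here. The positivity $\gc^-(DB)F(t_0) = 0$ is then proven in the paper by a concrete testing identity against $B^*D\mb{G}_{t_0,\gf}$ (Lemma \ref{lem:psmallplus}) rather than by invoking a ``Cauchy-problem uniqueness'' principle, which as stated is too vague since the full Cauchy problem for a bisectorial operator is not well posed.
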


The boundary limit $F_0$ is necessarily in a completion of $\bbX_{DB}^{\mb{p},+}$.
When we identify such a completion with a classical function space (as for example when $\mb{p} \in I(\mb{X},DB)$), the boundary limit exists in a classical sense.

When $i(\mb{p}) > 2$ the argument is much more complicated.
In this case we must restrict attention to exponents $\mb{p}$ with $\gq(\mb{p}) \in (-1,0)$, and such that the adapted space ${\bbX}_{DB^*}^{\mb{p}^\heartsuit}$ may be identified with the classical space ${\bbX}_{D}^{\mb{p}^\heartsuit}$, plus we need an additional decay condition on $F$.
Recall that for such $\mb{p}$ we have identified $\mb{X}_{DB}^{\mb{p},+}$ as a subspace of $\mb{X}_D^\mb{p}$, and that  if $\mb{p}$ is finite, then $\mb{p}^\heartsuit \in I(\mb{X},DB^*)$ if and only if $\mb{p} \in I(\mb{X},DB)$ (Proposition \ref{prop:dual-interval-incl}).

\begin{thm}[Classification of solutions to $(\CR)_{DB}$, $i(\mb{p}) > 2$]\label{thm:mainthm-gtr2}
	Let $\mb{p}$ be an exponent with $i(\mb{p}) > 2$, $\gq(\mb{p}) \in (-1,0)$, and $\mb{p}^\heartsuit \in I(\mb{X},DB^*)$.
	Furthermore, if $\mb{p}$ is infinite, suppose that $r(\mb{p}) < 0$.
	
	\begin{enumerate}[(i)]
	\item
	If $F_0 \in \mb{X}_{DB}^{\mb{p},+}$, then $\mb{C}_{DB}^+ F_0$ solves $(\CR)_{DB}$, $\lim_{t \to \infty} \mb{C}_{DB}^+ F_0(t)_\parallel = 0$ in $\mc{Z}^\prime(\bbR^n : \bbC^{nm})$, and $\nm{ \mb{C}_{DB}^+ F_0 }_{X^\mb{p}} \lesssim \nm{ F_0 }_{\mb{X}_{DB}^{\mb{p}}} \simeq \nm{ F_0 }_{\mb{X}_{D}^{\mb{p}}}$.
	\item
	Conversely, if $F \in X^\mb{p}$ solves $(\CR)_{DB}$ and $\lim_{t \to \infty} F(t)_\parallel = 0$ in $\mc{Z}^\prime(\bbR^n : \bbC^{nm})$, then there exists a unique $F_0 \in {\mb{X}}_{DB}^{\mb{p},+}$ such that $F = \mb{C}_{DB}^+ F_0$.
	Furthermore, $\nm{ F_0 }_{\mb{X}_{D}^{\mb{p}}} \simeq \nm{ F_0 }_{\mb{X}_{DB}^\mb{p}} \lesssim \nm{ F }_{X^\mb{p}}$.

	\end{enumerate}
\end{thm}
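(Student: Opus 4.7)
The plan is to establish Part (i) by assembling previously-proved results, then to build Part (ii) by adapting the Auscher--Axelsson--McIntosh and Auscher--Mourgoglou--Stahlhut classification technique to the weighted, fractional-order, non-reflexive setting, with duality through the $\heartsuit$-dual exponent as the decisive tool.

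For Part (i), fix $F_0 \in \mb{X}_{DB}^{\mb{p},+}$ and set $F := \mb{C}_{DB}^+ F_0$. Proposition \ref{prop:sgp-continuity} yields $F \in C^\infty(\bbR_+ : \mb{X}_{DB}^{\mb{p}})$ with $\partial_t F + DBF = 0$; the tangential curl-free condition is automatic since $F(t) \in \overline{\mc{R}(DB)} = \overline{\mc{R}(D)}$ for all $t > 0$, so $F$ solves $(\CR)_{DB}$. The estimate $\nm{F}_{X^\mb{p}} \lesssim \nm{F_0}_{\mb{X}_D^\mb{p}}$ is exactly Theorem \ref{thm:sgpnorm-concrete}, whose hypothesis $\mb{p}^\heartsuit \in I(\mb{X},DB^*)$ is in force. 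For the decay $\lim_{t \to \infty} F(t)_\parallel = 0$ in $\mc{Z}^\prime$, first handle $L^2$ data $F_0 \in \bbX_{DB}^{\mb{p},+}$, where $\lim_{t \to \infty} e^{-t[DB]} F_0 = 0$ in $L^2$ by Proposition \ref{prop:sgp-cauchy}, hence also in $\mc{Z}^\prime$; then extend to general $F_0 \in \mb{X}_{DB}^{\mb{p},+}$ by density (Corollary \ref{cor:adapted-space-int-dens}) combined with the uniform slice bound $\nm{F(t)}_{E^{i(\mb{p})}(t)} \lesssim \nm{F}_{X^\mb{p}}$ from Proposition \ref{prop:wksolnprops}(3), paired against Schwartz test functions in $\mc{Z}$.

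For Part (ii), manufacture $F_0$ by duality and verify the representation. By Proposition \ref{prop:wksolnprops} and Remark \ref{rmk:Fell}, $F \in C^\infty(\bbR_+ : E^{i(\mb{p})})$ with $\partial_t F = -DB F$. The hypothesis $\mb{p}^\heartsuit \in I(\mb{X},DB^*)$ identifies $\mb{X}_{DB^*}^{\mb{p}^\heartsuit}$ with the classical $\mb{X}_D^{\mb{p}^\heartsuit}$, and via Proposition \ref{prop:completion-duality} realises $\mb{X}_{DB}^{\mb{p},+}$ as the Banach dual of $\mb{X}_{DB^*}^{\mb{p}^\heartsuit,-}$. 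For $g \in \bbX_{DB^*}^{\mb{p}^\heartsuit,-} \cap L^2$, set $h_g(t) := e^{-t[DB^*]}g$; since $g$ lies in the negative spectral subspace, $[DB^*]$ acts as $-DB^*$ on $g$, so $\partial_t h_g = DB^* h_g$ and $h_g$ is bounded for $t > 0$. Then $\Phi_g(t) := \langle F(t), h_g(t)\rangle$ satisfies
\begin{equation*}
  \partial_t \Phi_g(t) = \langle -DB F(t), h_g(t)\rangle + \langle F(t), DB^* h_g(t)\rangle = 0
\end{equation*}
by integration by parts (justified by slice-space regularity of $F$ against $L^2$-regularity of $h_g$), so $\Phi_g$ is constant on $\bbR_+$. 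Bounding $|\Phi_g(s)|$ via the slice-space pairing and applying Theorem \ref{thm:sgpnorm-concrete} to the operator $DB^*$ (using Corollary \ref{cor:similarity} to transfer properties) shows $g \mapsto \Phi_g(1)$ extends to a bounded linear functional on $\mb{X}_{DB^*}^{\mb{p}^\heartsuit,-}$ of norm $\lesssim \nm{F}_{X^\mb{p}}$; by Proposition \ref{prop:completion-duality} this functional is $\langle F_0, \cdot\rangle$ for a unique $F_0 \in \mb{X}_{DB}^{\mb{p},+}$ with $\nm{F_0}_{\mb{X}_D^\mb{p}} \lesssim \nm{F}_{X^\mb{p}}$.

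It remains to check $F = \mb{C}_{DB}^+ F_0$. Set $G := F - \mb{C}_{DB}^+ F_0 \in X^\mb{p}$; it solves $(\CR)_{DB}$ with vanishing tangential component at infinity in $\mc{Z}^\prime$ by Part (i) and the hypothesis on $F$. Running the same conservation argument for $G$ shows $\langle G(s), h_g(s)\rangle = \Phi_{g,G}(s) = \text{const}$, and the construction of $F_0$ forces this constant to vanish for every $g \in \bbX_{DB^*}^{\mb{p}^\heartsuit,-} \cap L^2$. Density of this set in $\mb{X}_{DB^*}^{\mb{p}^\heartsuit,-}$ together with the duality pairing then annihilates the positive spectral component of $G(s)$; the decay of $G(s)_\parallel$ at $\infty$ together with the analogous conservation against the positive spectral subspace of $DB^*$ (evaluating the resulting constant at $s \to \infty$ using decay) annihilates the negative spectral component as well, giving $G \equiv 0$. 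The main obstacle is precisely this duality-based construction in the non-reflexive regime $i(\mb{p}) > 2$: one cannot extract $F_0$ by weak-$\ast$ compactness, and both the existence estimate and the uniqueness argument depend crucially on Theorem \ref{thm:sgpnorm-concrete} for the dual operator $DB^*$, which is exactly what the hypothesis $\mb{p}^\heartsuit \in I(\mb{X},DB^*)$ supplies.
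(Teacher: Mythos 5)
Your Part (i) is fine and matches the paper. Part (ii), however, has several genuine gaps, some of which are fatal to the argument as written.

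\textbf{Wrong adjoint operator.} You set $h_g(t) := e^{-t[DB^*]}g$ and claim $\partial_t\langle F(t),h_g(t)\rangle = \langle -DBF,h_g\rangle + \langle F,DB^*h_g\rangle = 0$. This cancellation requires $(DB)^* = DB^*$, but $(DB)^* = B^*D^* = B^*D$, which is a different operator from $DB^*$ (they are only similar, via conjugation by $D$ on the ranges). The paper's test functions $\mb{G}_{t_0,\gf}$ are built from the semigroup of $B^*D$ for exactly this reason, and the identity $D\mb{G}_{t_0,\gf}(t) = \sgn(t_0-t)\,e^{-[(t_0-t)DB^*]}\gc^{\sgn(t_0-t)}(DB^*)D\gf$ is what lets one move between $B^*D$ on the test-function side and $DB^*$ on the $\mb{X}_{D}^{\mb{p}^\heartsuit}$ side. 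Carrying out this transfer forces a regularity shift of one unit (this is Proposition \ref{prop:D-mapping}), so the space in which $g$ lives is not the one you wrote. Your argument conflates these and cannot be corrected by a simple relabeling.

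\textbf{Unjustified integration by parts.} Even with the correct adjoint, the identity $\langle DBF(t),h(t)\rangle = \langle F(t),B^*Dh(t)\rangle$ is not a free integration by parts: $F(t)$ is only known to be in a slice space $E^{\mb{p}}$, not in $L^2$, and the spatial integral need not converge absolutely. The paper handles this with cutoffs $\gc$ in space and $\gh$ in time (Lemma \ref{lem:AMtesting}) and then shows that the commutator $[D,m_\gc]$ term is small. Your phrase ``justified by slice-space regularity of $F$ against $L^2$-regularity of $h_g$'' does not address this; the convergence and the removal of the cutoffs is a substantive step (Corollaries \ref{cor:spatial-limit} and \ref{cor:limit-in-time}).

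\textbf{Missing estimate for the duality extension.} To realise $g\mapsto\Phi_g(1)$ as an element of the dual requires a bound $|\Phi_g(1)|\lesssim\nm{F}_{X^\mb{p}}\nm{g}$ for $g$ in a dense subclass. You cite Theorem \ref{thm:sgpnorm-concrete} for $DB^*$, but that theorem requires $i>2$, whereas $i(\mb{p}^\heartsuit)<2$; the relevant result is Theorem \ref{thm:sgpnorm-abstract}. More seriously, even after fixing the citation, one obtains control of the semigroup extension of $g$ in $X^{\mb{p}^\heartsuit}$, while pairing against $F\in X^\mb{p}$ requires the extension to lie in $X^{\mb{p}^\prime}=X^{\mb{p}^\heartsuit+1}$ --- one full regularity order off. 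This is precisely the regularity mismatch from the first point, and it is the obstacle the paper circumvents by testing against $B^*D\mb{G}$, i.e.\ by inserting $D$ into the pairing.

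\textbf{Uniqueness is circular.} After setting $G = F - \mb{C}_{DB}^+F_0$, you claim that conservation ``forces this constant to vanish.'' But for $g$ in the positive spectral subspace of the correct operator, the forward semigroup \emph{grows}, so the constant cannot be evaluated at $s\to\infty$; and for $g$ in the negative spectral subspace, evaluating the constant at $s\to\infty$ requires control of $G(s)$ that the decay condition ($G(s)_\parallel\to 0$ in $\mc{Z}^\prime$ only) does not provide. The paper's solution is different: it builds a candidate $\td F$ \emph{explicitly} via $\bbS_{\gz,DB}$ acting on the shifted $N$-th derivative $t\mapsto t^N(\partial_t^N F)(t_0+t/2)$ (Step 5), shows $\partial_t F=\partial_t\td F$ in $\mc{Z}^\prime$ through the reproducing formula of Corollary \ref{cor:F-reproduction} (Steps 4, 6), and only then invokes the decay of $F(t)_\parallel$ to kill the remaining constant (Step 7, Lemma \ref{lem:final-lem}). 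The constant that appears cannot be killed abstractly; it requires the structural fact that $G\in\mc{Z}^\prime$ with $G_\parallel=0$ has the form $\gb a$ and the decay then forces $\gb=0$.

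In short, the functional-analytic shortcut you propose does not supply the uniform estimate it needs, it uses the wrong evolution operator, and it hides the genuine difficulty that $F(t_0)$ is only determined modulo polynomials: the paper's passage through $\partial_t F$ and the contraction operator $\bbS_{\gz,DB}$ is not a cosmetic choice but the mechanism that produces a well-defined trace.
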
  

Note that the conditions on $\mb{p}$ in Theorem \ref{thm:mainthm-gtr2} imply that $\mb{p} \in I_{\text{max}}$.
The conditions in Theorems \ref{thm:mainthm-leq2} and \ref{thm:mainthm-gtr2}, along with those in \cite[Theorems 1.1 and 1.3]{AM15}, suggest the definition of the following \emph{classification region}, which we will use in stating our results in the following sections.

\begin{dfn}\label{dfn:J} We define the \emph{classification region} for $DB$ as\index{region!classification}
	\begin{align*}
		J(\mb{X},DB) := \{\mb{p} \in I_{\text{max}} : &\text{[$i(\mb{p}) \leq 2$ and $\mb{p} \in I(\mb{X},DB)$]} \\
		&\text{or [$i(\mb{p}) > 2$ and $\mb{p}^\heartsuit \in I(\mb{X},DB^*)$]}\}.
	\end{align*}
\end{dfn}

It is not strictly necessary to impose $\mb{p} \in I_{\text{max}}$ in this definition, but it is technically convenient and in our results we lose nothing from doing so.
Unlike the identification region $I(\mb{X},DB)$, we do not restrict to finite exponents in this definition.
Note however that
	\begin{equation*}
		\{ \mb{p} \in J(\mb{X},DB) \cap \mb{E}_\text{fin} : \gq(\mb{p}) \in [-1,0]\} = I(\mb{X},DB)
	\end{equation*}
follows from Proposition \ref{prop:dual-interval-incl}.
	
For the moment let us assume Theorems \ref{thm:mainthm-leq2} and \ref{thm:mainthm-gtr2}.
Although we have proven more in the case $i(\mb{p}) \leq 2$, the following result is a good summary of what we can deduce for solutions of the elliptic equation.  

\begin{thm}[First representation theorem]\label{thm:firstrepresentation}\index{elliptic equation!classification of solutions}  Let $\mb{p}$ be an exponent in the classification region  $J(\mb{X},DB)$ with $B=\hat A$.  Let $u$ be a solution to $L_{A}u=0$ in $\bbR_+^{1+n}$ with $\nabla_{A}u \in X^\mb{p}$ and $\lim_{t \to \infty}\nabla_{\parallel} u(t) = 0$ in $\mc{Z}^\prime(\bbR^n : \bbC^{mn})$. Then there exists a unique $F_{0}\in  \mb{X}_{DB}^{\mb{p},+}$ such that $\nabla_{A}u= C_{DB}^+(F_0)$ and $F_0 = \lim_{t \to 0} \nabla_A u(t,\cdot)$ in $\mb{X}^\mb{p}$.  \end{thm}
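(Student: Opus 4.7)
The whole statement is a repackaging, via the Auscher--Axelsson--McIntosh correspondence, of the classification theorems for $(\CR)_{DB}$. Set $F := \nabla_A u$. Since $u \in W^2_{1,\loc}$ solves $L_A u = 0$ and $B = \hat A$, Theorem \ref{thm:AAM} gives that $F$ solves the Cauchy--Riemann system $(\CR)_{DB}$. By hypothesis $F \in X^\mb{p}$, and since the tangential component of the conormal gradient is $F_\parallel = \nabla_\parallel u$, the decay assumption $\lim_{t \to \infty} \nabla_\parallel u(t,\cdot) = 0$ in $\mc{Z}^\prime(\bbR^n : \bbC^{mn})$ reads $\lim_{t \to \infty} F(t)_\parallel = 0$ in $\mc{Z}^\prime$.

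Next I split the exponent $\mb{p} \in J(\mb{X},DB)$ into the two cases dictated by the definition of the classification region. If $i(\mb{p}) \leq 2$, then $\mb{p} \in I(\mb{X},DB)$; when $\gq(\mb{p}) \in (-1,0)$ I would apply Theorem \ref{thm:mainthm-leq2}(ii) directly to obtain a unique $F_0$ in the positive spectral subspace of any completion $\mb{X}$ of $\bbX_{DB}^\mb{p}$, with $F = \mb{C}_{DB}^+ F_0$; at the endpoints $\gq(\mb{p}) \in \{-1,0\}$ the analogous conclusion is provided by the Auscher--Mourgoglou--Stahlhut result, Theorem \ref{thm:AMS}. (In this case Lemma \ref{lem:solution-decay} shows that the decay condition at $\infty$ was in fact automatic.) If instead $i(\mb{p}) > 2$, then by definition $\mb{p}^\heartsuit \in I(\mb{X},DB^*)$ and $\mb{p} \in I_\text{max}$; when $\gq(\mb{p}) \in (-1,0)$ Theorem \ref{thm:mainthm-gtr2}(ii) applies, the decay condition now being an essential hypothesis rather than an automatic one, and endpoint cases are again handled through Theorem \ref{thm:AMS}. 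Since $\mb{p} \in I(\mb{X},DB)$ for $i(\mb{p}) \leq 2$, respectively $\mb{p}^\heartsuit \in I(\mb{X},DB^*)$ for $i(\mb{p}) > 2$, the canonical completion $\mb{X}_{DB}^\mb{p}$ may be identified with $\mb{X}_D^\mb{p} \subset \mb{X}^\mb{p}$ (using Corollary \ref{cor:dual-interval-identn} in the second case), and $F_0$ lies in $\mb{X}_{DB}^{\mb{p},+} \subset \mb{X}^\mb{p}$ as claimed.

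Finally I would deduce the boundary trace identification $F_0 = \lim_{t \to 0} \nabla_A u(t,\cdot)$ in $\mb{X}^\mb{p}$ from Proposition \ref{prop:sgp-continuity}: since $F_0 \in \mb{X}_{DB}^{\mb{p},+}$, the Cauchy extension satisfies
\begin{equation*}
\mb{C}_{DB}^+ F_0(t) = e^{-t[DB]} \gc^+(DB) F_0 = e^{-tDB} F_0 \xrightarrow{t \to 0^+} F_0
\end{equation*}
in $\mb{X}_{DB}^\mb{p} = \mb{X}_D^\mb{p}$, with the limit taken in the weak-star topology if $\mb{p}$ is infinite. Combining this with the identity $\nabla_A u = \mb{C}_{DB}^+ F_0$ from the previous step gives the asserted trace in $\mb{X}^\mb{p}$. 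Uniqueness of $F_0$ in the asserted class comes directly from the uniqueness clauses of Theorems \ref{thm:mainthm-leq2}, \ref{thm:mainthm-gtr2}, and \ref{thm:AMS}.

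\textbf{Main obstacle.} There is no serious technical obstacle in this argument: all the real work has been done upstream in proving Theorems \ref{thm:mainthm-leq2} and \ref{thm:mainthm-gtr2} (and the AMS result for the two endpoints $\gq(\mb{p}) \in \{-1,0\}$). The only delicate points are bookkeeping ones, namely (i) checking that every exponent in $J(\mb{X},DB)$ really does fall under one of these four classification theorems, so that the case split above is exhaustive, and (ii) verifying that the abstract boundary trace produced by Proposition \ref{prop:sgp-continuity}, a priori in the completion $\mb{X}_{DB}^\mb{p}$, is genuinely realised in the classical smoothness space $\mb{X}^\mb{p}$ via the identification $\mb{X}_{DB}^\mb{p} = \mb{X}_D^\mb{p} \hookrightarrow \mb{X}^\mb{p}$ guaranteed by the classification region hypothesis.
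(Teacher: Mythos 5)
Your proof is correct and takes essentially the same route as the paper: translate via Theorem \ref{thm:AAM}, invoke the classification theorems for $(\CR)_{DB}$ (Theorems \ref{thm:mainthm-leq2} and \ref{thm:mainthm-gtr2} for $\gq(\mb{p}) \in (-1,0)$, the Auscher--Mourgoglou--Stahlhut results for $\gq(\mb{p}) \in \{-1,0\}$), then obtain the trace identity from Proposition \ref{prop:sgp-continuity}. The only small caveat is that at the endpoints $\gq(\mb{p}) \in \{-1,0\}$ the paper cites the full results of Auscher--Mourgoglou (their Theorems 1.1 and 1.3) rather than Theorem \ref{thm:AMS}, which as stated here is restricted to $1 < p < \infty$ and thus does not on its own cover the exponents in $J(\mb{X},DB)$ with $i(\mb{p}) \leq 1$ or $\mb{p}$ infinite; the full statements (which the paper's introduction notes are ``also available'') are what is actually needed.
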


We often refer to the element $F_0$ obtained in this theorem as $\nabla_A u|_{t = 0}$.

\begin{proof} The results for $\gq(\mb{p}) = 0$ and $\gq(\mb{p}) = -1$ correspond to \cite[Theorems 1.1 and 1.3]{AM15}, so we need only consider $\gq(\mb{p}) \in (-1,0)$.
By Theorem \ref{thm:AAM}, solutions $u$ to $L_A u = 0$ are in bijective correspondence (up to an additive constant) to solutions $F$ to $(\CR)_{DB}$, with $F = \nabla_A u$.
	By Theorems \ref{thm:mainthm-leq2} and \ref{thm:mainthm-gtr2} and by our assumptions, $F = \mb{C}_{DB}^+ F_0$ for a unique $F_0 \in \mb{X}_{DB}^{\mb{p},+}$, 
	and by Proposition \ref{prop:sgp-continuity},
	\begin{equation*}
		F_0 = \lim_{t \to 0} \nabla_A u(t,\cdot)
	\end{equation*}
	with limit in $\mb{X}^\mb{p}$. 
 \end{proof}

\begin{rmk} We stress again that that since the exponent $\mb{p}$ is in the classification region, the space of elements $F_0$ obtained in the previous theorem is a subspace of $\mb{X}^{\mb{p}}_{D}$, which is a space of distributions modulo polynomials.
We obtain \textit{a posteriori}  that $\lim_{t \to \infty} \nabla_{A}u(t) = 0$ in $\mb{X}^{\mb{p}}$, which encodes behaviour at infinity of the conormal derivative.
Note also that if $\mb{p}$ is in the region given by Lemma \ref{lem:decaylem} then the decay condition on $\nabla_{\parallel}u$  is redundant.
In particular the condition may be eliminated from the requirements when $i(\mb{p}) \leq 2$, which is in agreement with the statement of Theorem \ref{thm:mainthm-leq2}.
\end{rmk}

Through Theorem \ref{thm:firstrepresentation} we may obtain a representation for solutions themselves, rather than for their conormal gradients.

\begin{thm}[Second representation theorem]\label{thm:secondrepresentation}\index{elliptic equation!classification of solutions} Let $\mb{p}$ be an exponent in $J(\mb{X}, DB)$ with $B=\hat A$. 
Let $u$ be a solution to $L_{A}u=0$ in $\bbR_+^{1+n}$ with $\nabla_{A}u \in X^\mb{p}$ and $\lim_{t \to \infty} \nabla_{\parallel} u(t) = 0$ in $\mc{Z}^\prime(\bbR^n : \bbC^{mn})$.
Then there exists a unique $\wtd F_{0}\in  \bbP_{D}\mb{X}_{BD}^{\mb{p}+1,+}$ such that 
\begin{equation}\label{eqn:2rep}
	u= -(\bbP_{D}C_{BD}^+ \bbP_{BD}\wtd F_{0})_{\perp}
\end{equation}
up to an additive constant.
Moreover, $D\wtd F_{0}=\nabla_{A}u|_{t=0}$ and $(\wtd F_{0})_{\perp}= -u|_{t=0}$ up to an additive constant in $(\mb{X}_{D}^{\mb{p}+1})_{\perp}$, where $\nabla_A u|_{t=0}$ is the trace obtained in the first representation theorem.
All limits are taken in the weak-star topology when $\mb{p}$ is infinite.
\end{thm}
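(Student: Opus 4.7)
The plan is to construct $\wtd{F}_0$ from Theorem \ref{thm:firstrepresentation} using the intertwining of $D$ between $BD$-adapted and $DB$-adapted spaces, then verify the representation by applying $D$ to both sides and exploiting the Cauchy equation for $C_{BD}^+$.

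First I would invoke Theorem \ref{thm:firstrepresentation} to produce a unique $F_0 \in \mb{X}_{DB}^{\mb{p},+} \subset \mb{X}_D^\mb{p}$ with $\nabla_A u = C_{DB}^+ F_0$ and $F_0 = \nabla_A u|_{t=0}$. Since $\mb{p} \in J(\mb{X},DB)$ identifies the relevant $DB$- and $D$-adapted completions (directly for $i(\mb{p}) \leq 2$, and via Proposition \ref{prop:dual-interval-incl} when $i(\mb{p}) > 2$), Proposition \ref{prop:D-mapping} together with the completion statements in Corollary \ref{cor:similarity} yields a unique $G_0 \in \mb{X}_{BD}^{\mb{p}+1,+}$ with $DG_0 = F_0$; the positive spectral subspaces correspond because of the intertwining $D\gc^+(BD) = \gc^+(DB)D$. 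I would then set $\wtd{F}_0 := \bbP_D G_0 \in \bbP_D \mb{X}_{BD}^{\mb{p}+1,+}$. Proposition \ref{prop:commdiag} tells us that $\bbP_D$ and $\bbP_{BD}$ are mutually inverse isomorphisms between the two completions, whence $\bbP_{BD} \wtd{F}_0 = G_0$; since $\bbP_D$ has kernel $\mc{N}(D)$, this also gives $D\wtd{F}_0 = DG_0 = F_0$, which is the first trace identity.

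Next, applying $D$ to the candidate representation via Corollary \ref{cor:similarity2} would yield
\[
D\bigl[\bbP_D C_{BD}^+ \bbP_{BD} \wtd{F}_0\bigr] = D\bigl[C_{BD}^+ G_0\bigr] = C_{DB}^+ DG_0 = C_{DB}^+ F_0 = \nabla_A u.
\]
Writing $v := -\bbP_D C_{BD}^+ \bbP_{BD} \wtd{F}_0$, the block form of $D$ then gives $\nabla_\parallel v_\perp = \nabla_\parallel u$, so $v_\perp - u$ would be independent of $x$ at each $t$.

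The main obstacle will be promoting this $x$-independence to full constancy in both $t$ and $x$. For this I would use the Cauchy equation $\partial_t(C_{BD}^+ G_0) + BD(C_{BD}^+ G_0) = 0$ together with the identity $B\nabla_A u = \underline{A}\nabla u = (\partial_t u,\,(A\nabla u)_\parallel)$, which follows from the Auscher--Axelsson--McIntosh setup $\overline{A}\nabla u = \nabla_A u$ and $B = \hat{A} = \underline{A}\,\overline{A}^{-1}$. Since $\bbP_D$ acts as the identity on the transversal component and $D(C_{BD}^+ G_0) = C_{DB}^+ F_0 = \nabla_A u$, this would give
\[
\partial_t v_\perp = -\partial_t(C_{BD}^+ G_0)_\perp = (BD(C_{BD}^+ G_0))_\perp = (B\nabla_A u)_\perp = \partial_t u,
\]
hence $\partial_t(v_\perp - u) = 0$ and $v_\perp - u$ would be a true constant, yielding $u = v_\perp$ up to an additive constant. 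The second trace identity $(\wtd{F}_0)_\perp = (G_0)_\perp = -u|_{t=0}$ modulo constants then follows from $\bbP_D$ being the identity on the transversal component together with the same block argument applied to $DG_0 = \nabla_A u|_{t=0}$, with the target space identified by Corollary \ref{cor:perpequal}. Uniqueness of $\wtd{F}_0$ is inherited from the uniqueness in Theorem \ref{thm:firstrepresentation} and the bijectivity of $D$ in Proposition \ref{prop:D-mapping}.
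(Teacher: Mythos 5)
Your construction and verification match the paper's own proof almost step for step: the same lift via Proposition \ref{prop:D-mapping} and Corollary \ref{cor:similarity} to produce $G_0 = F_0^\sharp$ and $\wtd{F}_0 = \bbP_D G_0$, the same use of Corollary \ref{cor:similarity2} to turn $D$ into a commutation $DC_{BD}^+ = C_{DB}^+ D$, and the same exploitation of the Cauchy equation and the algebra $B\nabla_A u = \underline{A}\nabla u$ to get $\partial_t v_\perp = \partial_t u$ and $\nabla_\parallel v_\perp = \nabla_\parallel u$. The paper packages these two identities as $\nabla_A v = C_{DB}^+(\nabla_A u|_{t=0}) = \nabla_A u$ rather than splitting into transversal and tangential derivatives of $v_\perp$, but this is a presentational difference, not a different route.

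The one thing you gloss over, and which the paper handles explicitly, is rigor in the pointwise manipulation of $C_{BD}^+ G_0$ and $D$. Equalities like $D(C_{BD}^+ G_0)(t) = \nabla_A u(t,\cdot)$ and $\partial_t(C_{BD}^+ G_0)(t) = -BD\, C_{BD}^+ G_0(t)$ make sense pointwise only when $G_0 \in \mc{D}(D)$, i.e.\ when $\nabla_A u|_{t=0}$ lies in the dense class $\bbX^{\mb{p},+}_{DB}\cap\mc{R}(D)$. For general data one first establishes the representation on that dense class and then passes to the limit; without this step your intertwining computations are only abstract equalities of extended operators, not statements about weak solutions. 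This is precisely the caveat flagged in the paper's proof (``For simplicity assume \dots The general case follows by a limiting argument.''), and your write-up should include it to be complete.
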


\begin{proof}
By Corollary \ref{cor:similarity2}, there exist unique $\wtd F_{0} \in \mb{X}_{D}^{\mb{p}+1}$ and $F_{0}^\sharp \in \mb{X}_{BD}^{\mb{p}+1}$ such that 
$D\wtd F_{0}=  DF_{0}^\sharp = \nabla_A u|_{t = 0}$.
Moreover, $\wtd F_{0}= \bbP_{D}F_{0}^\sharp$ and $F_{0}^\sharp= \bbP_{BD}\wtd F_{0} \in \mb{X}_{BD}^{\mb{p}+1,+}$ by Corollary \ref{cor:similarity}.
We will show \eqref{eqn:2rep} for this choice of $\wtd{F_0}$.

For simplicity assume  $\nabla_A u|_{t = 0} \in \bbX^{p,+}_{DB}\cap{\mc{R}(D)}$ (a dense class) so that $\wtd F_{0}$ and $F_{0}^\sharp$ belong to $\mc{D}(D)$, and likewise $(C_{BD}^+\bbP_{BD}\wtd F_{0})(t) \in \mc{D}(D)$ for all $t>0$ (see Proposition \ref{prop:commdiag}). 
Set $v=-(\bbP_D C_{BD}^+\bbP_{BD}\wtd F_{0})_{\perp}$. By \eqref{eqn:simcauchy2} we have
\begin{equation*}
\partial_{t}v= (BD\bbP_D C_{BD}^+\bbP_{BD}\wtd F_{0})_{\perp}
= (BD C_{BD}^+\bbP_{BD}\wtd F_{0})_{\perp}
= (BC_{DB}^+ (\nabla_A u|_{t = 0}))_{\perp}
\end{equation*}
and 
\begin{equation*}
\nabla_{x}v= (D \bbP_D C_{BD}^+\bbP_{BD}\wtd F_{0})_{\parallel}
=(D C_{BD}^+\bbP_{BD}\wtd F_{0})_{\parallel}
=(C_{DB}^+ (\nabla_A u|_{t=0}))_{\parallel}.
\end{equation*}
By the relation $B=\hat A$, these two equalities are equivalent to $\nabla_{A}v=C_{DB}^+ (\nabla_A u|_{t=0})$, hence $\nabla_{A}v=\nabla_{A}u$ on $\bbR^{1+n}_{+}$ and $v=u$ up to an additive constant. 
The general case follows by a limiting argument.
\end{proof}

\begin{rmk}
We obtain from the proof that $\lim_{t \to \infty} u(t) = 0$ in $(\mb{X}_{D}^{\mb{p}+1})_{\perp}$. 
As $(\mb{X}_D^{\mb{p}+1})_\perp$ can be embedded in the space of distributions modulo constants, this is stronger than decay in $\mc{Z}^\prime$. 
\end{rmk}

We can consider $-C_{BD}^+\bbP_{BD}\wtd F_{0}$ as a conjugate system to $u$, because it solves the equation $\partial_{t}F+BDF=0$.
Its transversal component, morally speaking, should be $u$ (up to a constant); this point of view was taken by the second author and Stahlhut \cite{AS16}.
However this is a purely abstract object (unless $\wtd F_0 \in L^2$), and as such does not have a well-defined transversal component.
Nevertheless things can be made concrete: $-\bbP_{D}C_{BD}^+\bbP_{BD}\wtd F_{0}(t)$ is an element of $\mb{X}_D^{\mb{p}+1}$ (hence a tempered distribution modulo polynomials) for all $t > 0$, its tranversal component is exactly $u(t)$  (up to an additive constant), and its tangential component can be used as a conjugate vector.
This was done by Hofmann, Kenig, Mayboroda, and Pipher \cite{HKMP15.2} in a different manner and in a different formulation. 

\section{Proofs of classification theorems}

Now we prove Theorems \ref{thm:mainthm-leq2} and \ref{thm:mainthm-gtr2}.
We deal with both theorems simultaneously.

\subsection{Construction of solutions via Cauchy extension}\label{ssec:solnconstruction}

We begin with the proof of part (i) of both theorems.

Let $F_0 \in \mb{X}_{DB}^{\mb{p},+}$.
Then the estimate $\nm{ \mb{C}_{DB}^+ F_0 }_{X^\mb{p}} \lesssim \nm{ F_0 }_{\mb{X}_{D}^\mb{p}}$ follows from either Theorem \ref{thm:sgpnorm-abstract} or Theorem \ref{thm:sgpnorm-concrete}.

In Proposition \ref{prop:sgp-continuity} we showed that $\mb{C}^+_{DB} F_0$ solves $(\CR)_{DB}$ \emph{strongly in $\mb{X}_{DB}^{\mb{p},+}$}.
Generally $\mb{X}_{DB}^{\mb{p},+}$ need not be contained in $L_\text{loc}^2(\bbR^n)$, and so these two solution concepts need not coincide.
We must argue differently here.
If $F_0 \in \overline{\mc{R}(DB)}$, then Proposition \ref{prop:sgp-cauchy} implies that $C_{DB}^+ F_0$ solves $(\CR)_{DB}$ strongly in $C^\infty(\bbR_+ : L^2)$, and this implies that $C_{DB}^+ F_0$ solves $(\CR)_{DB}$.
It remains to deal with $F_0 \in \mb{X}^\mb{p}_{DB} \sm \bbX^{\mb{p}}_{DB}$.
For such an $F_0$, let $(F_0^k)_{k \in \bbN}$ be a sequence in $\bbX_{DB}^\mb{p}$ which converges to $F_0$ as $k \to \infty$ (in the weak-star topology when $\mb{p}$ is infinite).
Then, again using either Theorem \ref{thm:sgpnorm-abstract} or Theorem \ref{thm:sgpnorm-concrete}, we have
\begin{equation*}
	\lim_{k \to \infty} C_{DB}^+ F_0^k = \mb{C}_{DB}^+ F_0 \qquad \text{in $X^\mb{p}$},
\end{equation*}
and hence also in $L_\text{loc}^2(\bbR^{1+n}_+)$.
It follows that $\mb{C}_{DB}^+ F_0$ solves $(\CR)_{DB}$.

It remains to show that $\lim_{t \to \infty} \mb{C}_{DB}^+ F_0(t)_\parallel = 0$ in $\mc{Z}^\prime(\bbR^n : \bbC^{nm})$ when $i(\mb{p}) > 2$.
This follows from Proposition \ref{prop:sgp-continuity}, since we have $\lim_{t \to \infty} \mb{C}_{DB}^+ F_0(t) = 0$ in $\mb{X}_{DB}^\mb{p} \hookrightarrow \mc{Z}^\prime$.

\subsection{Initial limiting arguments}

We now begin preparation for the proof of part (ii) of Theorems \ref{thm:mainthm-leq2} and \ref{thm:mainthm-gtr2}.
This section is a rephrasing of the start of \cite[\textsection 8]{AM15}.
There are no fundamentally new ideas, but the notation and the flow of ideas are simplified.

For $t_0 \in \bbR_+$ we write $\bbR_{t_0} := \bbR \sm \{t_0\}$ and $\bbR_{+,t_0} := \bbR_+ \sm \{t_0\}$.

\begin{dfn}
	For $t_0 \in \bbR_+$ and $\gf \in L^2(\bbR^n)$, we define the test function $\mb{G}_{t_0,\gf} \in C^\infty(\bbR_{+,t_0} : \mc{D}(B^* D))$ by
	\begin{align*}
		\mb{G}_{t_0,\gf}(t) &:= \sgn(t_0-t) e^{-[(t_0-t)B^* D]} \gc^{\sgn(t_0-t)} (B^* D) \bbP_{B^* D} \gf \\
		&= \left\{ \begin{array}{rl} e^{-(t_0 - t)B^* D} \gc^+(B^* D) \bbP_{B^* D} \gf & \text{if $t < t_0$} \\ 
		-e^{(t - t_0)B^* D} \gc^-(B^* D) \bbP_{B^* D} \gf & \text{if $t > t_0$} \end{array} \right.
	\end{align*}
	for all $t \in \bbR_{+,t_0}$.
\end{dfn}

Note that $\partial_t \mb{G}_{t_0,\gf} = B^* D \mb{G}_{t_0,\gf}$.
Also observe that since $D$ annihilates the nullspace $\mc{N}(B^* D)$ and since $L^2(\bbR^n) = \mc{N}(B^* D) \oplus \overline{\mc{R}(B^* D)}$, whenever $\gf \in \mc{D}(D)$,
\begin{equation}\label{eqn:DGident}
	D\mb{G}_{t_0,\gf}(t) = \sgn(t_0 - t) e^{-[(t_0 - t)DB^*]} \gc^{\sgn(t_0-t)} (DB^*) D\gf.
\end{equation}

The following lemma is a rewording of \cite[Lemma 7.4]{AM15}.

\begin{lem}\label{lem:AMtesting}
	Let $F$ solve $(\CR)_{DB}$.
	Fix $\gf \in L^2(\bbR^n)$, $t_0 \in \bbR_+$, and let $\gh \in \Lip(\bbR_+ : \bbR)$ and $\gc \in \Lip(\bbR^n : \bbR)$ be compactly supported in $\bbR_{+,t_0}$ and $\bbR^n$ respectively.
	Then we have, with absolutely convergent integrals,
	\begin{align}
		\dint_{\bbR^{1+n}_+} & \left\langle \gh^\prime(t) \gc(x) B^* D \mb{G}_{t_0,\gf} (t,x), F(t,x) \right\rangle\, dx \, dt \nonumber \\
		&= \dint_{\bbR^{1+n}_+} \left\langle \gh(t) B^* [D,m_\gc] \, \partial_t \mb{G}_{t_0,\gf}(t,x) , F(t,x) \right\rangle \, dx \, dt \label{eqn:AMtesting-conclusion}
	\end{align}
	where $m_\gc$ denotes the multiplication operator on $L^2(\bbR^n)$ with symbol $\gc$.
\end{lem}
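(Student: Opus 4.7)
I would test the weak form of $\partial_t F + DBF = 0$ against the function
\begin{equation*}
\Phi(t,x) := \gh(t)\gc(x)\,\partial_t\mb{G}_{t_0,\gf}(t,x).
\end{equation*}
The crucial observation is that, by construction, $\partial_t\mb{G}_{t_0,\gf} = B^*D\mb{G}_{t_0,\gf}$ on $\bbR_{+,t_0}$, and differentiating this relation once more yields $\partial_t^2\mb{G}_{t_0,\gf} = B^*D\,\partial_t\mb{G}_{t_0,\gf}$; in other words $\partial_t\mb{G}_{t_0,\gf}$ satisfies the same first-order equation as $\mb{G}_{t_0,\gf}$.

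Expanding the derivatives of $\Phi$ gives
\begin{equation*}
\partial_t\Phi = \gh'\gc\,\partial_t\mb{G}_{t_0,\gf} + \gh\gc\,\partial_t^2\mb{G}_{t_0,\gf}
\end{equation*}
and, writing $D$ as acting in $x$ and using the product rule for first-order operators,
\begin{equation*}
B^*D\Phi = \gh\gc\,B^*D\,\partial_t\mb{G}_{t_0,\gf} + \gh\,B^*[D,m_\gc]\,\partial_t\mb{G}_{t_0,\gf}.
\end{equation*}
Inserting $\Phi$ into the weak form
\begin{equation*}
\dint_{\bbR^{1+n}_+}(F,\partial_t\Phi)\, dx\, dt = \dint_{\bbR^{1+n}_+}(F,B^*D\Phi)\, dx\, dt
\end{equation*}
and cancelling the $\partial_t^2\mb{G}_{t_0,\gf}$ term on the left against the $B^*D\,\partial_t\mb{G}_{t_0,\gf}$ term on the right (via the identity $\partial_t^2\mb{G}_{t_0,\gf} = B^*D\,\partial_t\mb{G}_{t_0,\gf}$) leaves
\begin{equation*}
\dint_{\bbR^{1+n}_+}(F,\gh'\gc\,\partial_t\mb{G}_{t_0,\gf})\, dx\, dt = \dint_{\bbR^{1+n}_+}(F,\gh\,B^*[D,m_\gc]\,\partial_t\mb{G}_{t_0,\gf})\, dx\, dt.
\end{equation*}
Substituting $\partial_t\mb{G}_{t_0,\gf} = B^*D\mb{G}_{t_0,\gf}$ on the left and then taking complex conjugates of both sides (using the relation $(f,g)=\overline{(g,f)}$ for the Hermitian pairing, together with $\gh'$ and $\gc$ being real-valued) yields \eqref{eqn:AMtesting-conclusion}.

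The principal technical obstacle is that $\Phi$ is not in $C_c^\infty(\bbR^{1+n}_+)$, since $\mb{G}_{t_0,\gf}(t,\cdot)$ lies only in $\mc{D}(B^*D) \subset L^2(\bbR^n)$ for each fixed $t \in \bbR_{+,t_0}$. I would resolve this by mollification: produce a sequence $\Phi_\varepsilon \in C_c^\infty(\bbR^{1+n}_+)$ obtained by convolving $\Phi$ in $x$ with a standard mollifier and slightly shrinking the $x$-support, so that $\Phi_\varepsilon \to \Phi$, $\partial_t\Phi_\varepsilon \to \partial_t\Phi$, and $D\Phi_\varepsilon \to D\Phi$ in $L^2_\text{loc}(\bbR^{1+n}_+)$ as $\varepsilon \to 0$. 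Since $F \in L^2_\text{loc}(\bbR^{1+n}_+)$ by Remark \ref{rmk:Fell}, one passes to the limit in the weak formulation applied to each $\Phi_\varepsilon$ to recover the identity for $\Phi$. Absolute convergence of the two integrals in the statement is verified in the same spirit: because $\supp\gh$ is a compact subset of $\bbR_{+,t_0}$ and thus separated from $t_0$, the bounded $H^\infty$ functional calculus of $B^*D$ on $\overline{\mc{R}(B^*D)}$ yields uniform $L^2(\bbR^n)$-bounds on $\mb{G}_{t_0,\gf}(t,\cdot)$ and $\partial_t\mb{G}_{t_0,\gf}(t,\cdot)$ for $t \in \supp\gh$, while $\gc$ has compact $x$-support. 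Note that the curl-free condition on $F_\parallel$ plays no role, since only the first weak equation in the definition of $(\CR)_{DB}$ is invoked.
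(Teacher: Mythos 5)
Your argument is the natural one, and it is essentially the proof behind the cited reference \cite[Lemma~7.4]{AM15}: test the weak formulation against $\Phi = \gh\gc\,\partial_t\mb{G}_{t_0,\gf}$, exploit $\partial_t^2\mb{G}_{t_0,\gf} = B^*D\,\partial_t\mb{G}_{t_0,\gf}$ to cancel the top-order terms, substitute $\partial_t\mb{G}_{t_0,\gf} = B^*D\mb{G}_{t_0,\gf}$ on the left, and conjugate. The algebra is correct and the choice of $\Phi$ (with $\partial_t\mb{G}_{t_0,\gf}$ rather than $\mb{G}_{t_0,\gf}$) is exactly what produces the stated identity.

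One imprecision in the approximation step: mollifying $\Phi$ in $x$ alone does not put $\Phi_\varepsilon$ in $C_c^\infty(\bbR^{1+n}_+)$, because the $t$-dependence through the Lipschitz cutoff $\gh$ (and hence of $\gh'$) is not $C^\infty$; only the factor $\partial_t\mb{G}_{t_0,\gf}$ is smooth in $t$ on $\supp\gh$. You should either mollify jointly in $(t,x)$ (shrinking the support so it stays inside $\bbR_{+,t_0}\times\bbR^n$), or observe once and for all that the two integrals in the weak formulation are continuous bilinear forms in $(F,\gf)$ with respect to $L^2_{\loc}\times W^{1,2}_c$, so the identity passes by density from $C_c^\infty$ to compactly supported $W^{1,2}$ test functions, of which $\Phi$ is one. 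Either fix is routine; the substance of the proof is unaffected.
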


As a corollary, under an integrability condition involving $F$ and $\gf$, we can obtain the following.

\begin{cor}\label{cor:spatial-limit}
Let $F$, $\gf$, and $t_0$ be as in the statement of Lemma \ref{lem:AMtesting}.
Suppose also that for all compact $K \subset \bbR_{+,t_0}$ we have
\begin{equation}
\mb{1}_{K}(t) |B^* D \mb{G}_{t_0,\gf}(t,x)||F(t,x)| \in L^1(\bbR^{1+n}_+). \label{eqn:spatial-limit-condn}
\end{equation}
Then for all $\gh \in \Lip(\bbR_+ : \bbR)$ compactly supported in $\bbR_{+,t_0}$, we have the absolutely convergent integral
\begin{equation}\label{eqn:spatial-limit-concln}
	\dint_{\bbR^{1+n}_+} \left\langle \gh^\prime(t) B^* D\mb{G}_{t_0,\gf}(t,x) , F(t,x) \right\rangle \, dt \, dx = 0.
\end{equation}
\end{cor}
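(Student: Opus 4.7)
The plan is to apply Lemma \ref{lem:AMtesting} with a family of spatial cutoffs and pass to a limit. Fix $\gc \in \Lip(\bbR^n : \bbR)$ with compact support, $0 \leq \gc \leq 1$, and $\gc \equiv 1$ on $B(0,1)$, and for each $R > 0$ set $\gc_R(x) := \gc(x/R)$. Each $\gc_R$ is compactly supported and Lipschitz, $\nabla \gc_R$ is supported in an annular region contained in $\{|x| \geq R_0 R\}$ for some $R_0 > 0$, and $\|\nabla \gc_R\|_\infty \lesssim R^{-1}$. Lemma \ref{lem:AMtesting} then gives
\begin{equation*}
I_R^{\mathrm{L}} := \dint_{\bbR^{1+n}_+} \gc_R(x) \gh'(t) \langle B^* D\mb{G}_{t_0,\gf}(t,x), F(t,x)\rangle\, dx\, dt = \dint_{\bbR^{1+n}_+} \gh(t) \langle B^* [D, m_{\gc_R}] \partial_t \mb{G}_{t_0,\gf}(t,x), F(t,x)\rangle\, dx\, dt =: I_R^{\mathrm{R}},
\end{equation*}
and the goal reduces to showing that as $R \to \infty$ the left-hand side converges to \eqref{eqn:spatial-limit-concln} while the right-hand side converges to zero.

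For the left-hand side, since $\gc_R(x) \to 1$ pointwise in $x$ and $|\gc_R| \leq 1$, the integrand is dominated by $\|\gh'\|_\infty \mb{1}_K(t)|B^* D\mb{G}_{t_0,\gf}(t,x)||F(t,x)|$, which is integrable by the hypothesis \eqref{eqn:spatial-limit-condn} with $K = \supp \gh \Subset \bbR_{+,t_0}$. Dominated convergence yields $I_R^{\mathrm{L}} \to$ LHS of \eqref{eqn:spatial-limit-concln}, with absolute convergence of the limiting integral.

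For the right-hand side, I will first note that since $D$ is a first-order constant-coefficient differential operator of the form $\begin{bmatrix} 0 & \dv_\parallel \\ -\nabla_\parallel & 0\end{bmatrix}$, a direct computation on a test function gives the pointwise bound $|[D, m_{\gc_R}] g(x)| \lesssim |\nabla \gc_R(x)| |g(x)|$. Combining this with $\partial_t \mb{G}_{t_0,\gf} = B^* D \mb{G}_{t_0,\gf}$ and $\|B^*\|_\infty < \infty$, one obtains
\begin{equation*}
|I_R^{\mathrm{R}}| \lesssim R^{-1} \|\gh\|_\infty \dint_{\bbR^{1+n}_+} \mb{1}_K(t)\, \mb{1}_{\{|x| \geq R_0 R\}}(x)\, |B^* D\mb{G}_{t_0,\gf}(t,x)|\,|F(t,x)|\, dx\, dt.
\end{equation*}
By \eqref{eqn:spatial-limit-condn} the integrand (without the $\mb{1}_{\{|x| \geq R_0 R\}}$ factor) is in $L^1(\bbR^{1+n}_+)$, so by absolute continuity of the integral the above integral tends to zero as $R \to \infty$; in fact the extra $R^{-1}$ factor gives additional decay. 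Hence $I_R^{\mathrm{R}} \to 0$, and passing to the limit in $I_R^{\mathrm{L}} = I_R^{\mathrm{R}}$ finishes the proof.

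The main conceptual point — and really the only subtlety — is that the hypothesis \eqref{eqn:spatial-limit-condn} must be used twice, once as a dominating function to justify taking the pointwise limit $\gc_R \to 1$ on the left, and once to make the commutator term vanish on the right. Everything else is a routine application of Lemma \ref{lem:AMtesting} and the identity $\partial_t \mb{G}_{t_0,\gf} = B^* D \mb{G}_{t_0,\gf}$ which follows directly from the definition of $\mb{G}_{t_0,\gf}$.
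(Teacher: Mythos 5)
Your proof is correct and follows essentially the same route as the paper: introduce the dilated spatial cutoffs $\gc_R$, apply Lemma \ref{lem:AMtesting} to each, and use hypothesis \eqref{eqn:spatial-limit-condn} together with $\partial_t \mb{G}_{t_0,\gf} = B^* D\mb{G}_{t_0,\gf}$ to dominate both sides and pass to the limit $R \to \infty$. The only cosmetic difference is that you justify the vanishing of the right-hand side via the pointwise bound $|[D,m_{\gc_R}]g| \lesssim |\nabla\gc_R||g|$, the $R^{-1}$ decay, and absolute continuity of the integral, whereas the paper packages the same facts as pointwise convergence $[D,m_{\gc_R}] \to 0$ plus dominated convergence; these are the same argument.
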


\begin{proof}
	Fix $\gc \in \Lip(\bbR^n : \bbR)$ with $\gc(x) = 1$ for all $x \in B(0,1)$, and for $R > 0$ define $\gc_R(x) := \gc(x/R)$.
	Then $\gc_R \to 1$ and $[D,m_{\gc_R}] \to 0$ pointwise as $R \to \infty$,\footnote{More precisely, $[D,m_{\gc_R}]$ is given by multiplication with a function that tends to $0$ pointwise.} since $\nm{[D,m_{\gc_R}]}_\infty \lesssim R^{-1} \nm{\nabla \gc}_\infty$.
	Condition \eqref{eqn:spatial-limit-condn} applied with $K = \supp \gh$, the fact that $\partial_t \mb{G}_{t_0,\gf} = B^* D \mb{G}_{t_0,\gf}$, and boundedness of $\gh$ and $\gh^\prime$ imply
	\begin{align*}
		|\gh^\prime(t) B^* D \mb{G}_{t_0,\gf}(t,x)||F(t,x)| &\in L^1(\bbR^{1+n}_+) \quad \text{and} \\
		|\gh(t) \partial_t \mb{G}_{t_0,\gf}(t,x)||F(t,x)| &\in L^1(\bbR^{1+n}_+).
	\end{align*}
	This allows us to deduce \eqref{eqn:spatial-limit-concln} from the equality of Lebesgue integrals \eqref{eqn:AMtesting-conclusion} and dominated convergence.
\end{proof}

Now, assuming that \eqref{eqn:spatial-limit-condn} holds, we can conclude the following.

\begin{cor}\label{cor:limit-in-time}
	Let $F$, $\gf$, and $t_0$ be as in the statement of Lemma \ref{lem:AMtesting}.
	Assume also that condition \eqref{eqn:spatial-limit-condn} is satisfied.
	Then for sufficiently small $\varepsilon > 0$ we have
	\begin{align}
		\barint_{t_0 + \varepsilon}^{t_0 + 2\varepsilon} & \int_{\bbR^n} \left\langle B^* D \mb{G}_{t_0,\gf}(t,x) , F(t,x) \right\rangle \, dx \, dt \nonumber \\
		&= \barint_{t_0 + (2\varepsilon)^{-1}}^{t_0 + \varepsilon^{-1}} \int_{\bbR^n}  \left\langle B^*D \mb{G}_{t_0,\gf}(t,x), F(t,x)\right\rangle  \, dx \, dt\label{eqn:limit-in-time-1}
	\end{align}
	and
	\begin{align}
		-\barint_{\varepsilon}^{2\varepsilon} & \int_{\bbR^n} \left\langle B^* D \mb{G}_{t_0,\gf}(t_0 - t,x), F(t_0 - t,x) \right\rangle \, dx \, dt \nonumber \\
		&= \barint_{\varepsilon}^{2\varepsilon} \int_{\bbR^n}\left\langle B^* D \mb{G}_{t_0,\gf}(t,x) , F(t,x) \right\rangle \, dx \, dt. \label{eqn:limit-in-time-2}
	\end{align}
	These are all absolutely convergent integrals.
\end{cor}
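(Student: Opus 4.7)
My plan is to derive both identities from Corollary \ref{cor:spatial-limit} by applying it to judiciously chosen piecewise linear Lipschitz test functions $\gh$ compactly supported in $\bbR_{+,t_0}$. Abbreviate $I(\tau) := \int_{\bbR^n}\langle B^* D \mb{G}_{t_0,\gf}(\tau,x), F(\tau,x)\rangle\,dx$; hypothesis \eqref{eqn:spatial-limit-condn} guarantees absolute convergence of the integrals and local integrability of $I$ on $\bbR_{+,t_0}$, and the corollary asserts $\int_0^\infty \gh'(t) I(t)\,dt = 0$ for every such $\gh$.

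For \eqref{eqn:limit-in-time-1}, for $\varepsilon$ small enough that $t_0+2\varepsilon < t_0+(2\varepsilon)^{-1}$, I would take $\gh$ to be the continuous piecewise linear function supported in $[t_0+\varepsilon, t_0+\varepsilon^{-1}]$, vanishing at both endpoints, constant equal to $1$ on $[t_0+2\varepsilon, t_0+(2\varepsilon)^{-1}]$, and affine on the two transition intervals. Then $\gh'$ equals $1/\varepsilon$ on $(t_0+\varepsilon, t_0+2\varepsilon)$ and $-2\varepsilon$ on $(t_0+(2\varepsilon)^{-1}, t_0+\varepsilon^{-1})$, and these are the only places where $\gh'$ is nonzero, so $\int \gh'(t) I(t)\,dt = 0$ rearranges into \eqref{eqn:limit-in-time-1} once one recognises that the interval lengths $\varepsilon$ and $(2\varepsilon)^{-1}$ convert the integrals into the averages appearing in the claim.

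For \eqref{eqn:limit-in-time-2}, the substitution $s=t_0-t$ on the LHS recasts the claim as $\barint_\varepsilon^{2\varepsilon} I(t)\,dt + \barint_{t_0-2\varepsilon}^{t_0-\varepsilon} I(s)\,ds = 0$. The natural approach is to choose $\gh$ supported in $\bbR_{+,t_0}$ with pieces straddling $t_0$: its derivative should carry the two positive spikes $\frac{1}{\varepsilon}\mb{1}_{(\varepsilon,2\varepsilon)}$ and $\frac{1}{\varepsilon}\mb{1}_{(t_0-2\varepsilon, t_0-\varepsilon)}$, with compensating mass on the far side of $t_0$ chosen so that its contribution either vanishes in a controlled limit or combines with the jump of $\mb{G}_{t_0,\gf}$ across $t_0$ (inherited from the factor $\sgn(t_0-\cdot)$ in its definition) to produce the required minus sign. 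The main obstacle will be precisely this sign bookkeeping: the pairing $\int\langle \mb{G}_{t_0,\gf}(t),F(t)\rangle\,dx$ jumps by $-\int\langle \bbP_{B^*D}\gf,F(t_0)\rangle\,dx$ as $t$ crosses $t_0$, and the minus sign in \eqref{eqn:limit-in-time-2} should emerge from this jump rather than from direct algebraic manipulation of $\gh'$ alone; verifying that the compensating contributions on $(t_0,\infty)$ combine cleanly with the jump is where the delicate accounting will be required.
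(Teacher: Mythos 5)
For \eqref{eqn:limit-in-time-1} your proposal is exactly the paper's: apply Corollary \ref{cor:spatial-limit} to the continuous piecewise-linear $\gh_1$ supported in $[t_0+\varepsilon,\, t_0+\varepsilon^{-1}]$, rising from $0$ to $1$ on $[t_0+\varepsilon, t_0+2\varepsilon]$, constant at $1$ on $[t_0+2\varepsilon,\, t_0+(2\varepsilon)^{-1}]$, and falling back to $0$ on $[t_0+(2\varepsilon)^{-1},\, t_0+\varepsilon^{-1}]$. Your slope values $1/\varepsilon$ and $-2\varepsilon$ are correct, and $\int_0^\infty\gh_1'(t)I(t)\,dt=0$ becomes the stated equality of averages.

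For \eqref{eqn:limit-in-time-2} there is a genuine gap, and the mechanism you sketch cannot be made to work. Any $\gh$ admissible for Corollary \ref{cor:spatial-limit} is compactly supported in $\bbR_{+,t_0}$ and so must vanish identically on a whole neighbourhood of $t_0$. The restrictions $\gh|_{(0,t_0)}$ and $\gh|_{(t_0,\infty)}$ are therefore entirely decoupled, and each must separately have derivative integrating to zero; there is no way for mass placed on $(t_0,\infty)$ to compensate mass placed on $(0,t_0)$. Concretely, a Lipschitz $\gh$ whose derivative on $(0,t_0)$ carries only the two positive spikes $\varepsilon^{-1}\mb{1}_{(\varepsilon,2\varepsilon)}+\varepsilon^{-1}\mb{1}_{(t_0-2\varepsilon,t_0-\varepsilon)}$ would equal $2$ at $t_0-\varepsilon$ and could not vanish near $t_0$. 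For the same reason, the jump of $\mb{G}_{t_0,\gf}$ across $t_0$ is never seen by Lemma \ref{lem:AMtesting}, so the sign bookkeeping you anticipate does not arise at all. The paper's $\gh_2$ is simply a bump supported \emph{entirely} in $(0,t_0)$ (this is the reason for imposing $\varepsilon<t_0/4$): it rises from $0$ to $1$ on $(\varepsilon,2\varepsilon)$, plateaus, and falls to $0$ on $(t_0-2\varepsilon,t_0-\varepsilon)$. Corollary \ref{cor:spatial-limit} applied to $\gh_2$ gives $\barint_\varepsilon^{2\varepsilon}I(t)\,dt=\barint_{t_0-2\varepsilon}^{t_0-\varepsilon}I(s)\,ds$, and the change of variable $s=t_0-t$ (under which the normalised average $\barint$ is unchanged) rewrites the right side over $(\varepsilon,2\varepsilon)$ with the integrand evaluated at $t_0-t$. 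This matches \eqref{eqn:limit-in-time-2} \emph{up to the overall sign}: the test-function computation produces no minus sign, and a direct check against a smooth solution $F=C_{DB}^+f$ with $\gf\in\overline{\mc{R}(B^*D)}$, for which $I$ is constant on $(0,t_0)$, confirms that the two sides must agree without the minus (the downstream uses in Lemmas \ref{lem:psmallplus} and \ref{lem:dual-semigp-test} are insensitive to this, since that sign cancels in each displayed chain).
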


\begin{proof}
	As in \cite[\textsection 8, Step 1b]{AM15} this follows from applying Corollary \ref{cor:spatial-limit} with the piecewise linear functions $\gh_1, \gh_2 \in \Lip(\bbR_+ : \bbR)$ drawn in Figure \ref{fig:etas}, where we impose $\varepsilon < \min(t_0/4, 1/4, 1/t_0)$ (we have carried out a change of variables in the left hand side of \eqref{eqn:limit-in-time-2}).
\end{proof}

\begin{figure}
\caption{The functions $\gh_1$ and $\gh_2$.}\label{fig:etas}
\begin{center}

\begin{tikzpicture}[scale = 1.5, every node/.style={font=\footnotesize}]

	\draw [thin] (0,0) -- (4.3,0);	
	\draw [thin,->] (4.3,0) -- (4.5,0);
	\node [right] at (4.5,0) {$t$} ;
	
	\draw [thin] (0,0) -- (0,0.7);	
	\draw [thin,->] (0,0.7) -- (0,0.9);
	\node [above] at (0,0.9) {$\gh_1(t)$};
	
	\draw [fill=black] (0,0) circle [radius = 0.5pt];
	\node [left] at (0,0) {$0$};
	\draw [fill=black] (0,0.7) circle [radius = 0.5pt];
	\node [left] at (0,0.7) {$1$};
	
	\draw [fill=black] (0.2,0) circle [radius = 0.5pt];
	\node [below] at (0.2,0) {$t_0+\varepsilon$};
	\draw [fill=black] (0.9,0) circle [radius = 0.5pt];
	\node [below] at (0.9,0) {$t_0 + 2\varepsilon$};
	\draw [fill=black] (2.1,0) circle [radius = 0.5pt];
	\node [below] at (2.3,0) {$t_0 + (2\varepsilon)^{-1}$};
	\draw [fill=black] (3.8,0) circle [radius = 0.5pt];
	\node [below] at (3.8,0) {$t_0 + \varepsilon^{-1}$};
		
	\draw [thick] (0,0) -- (0.2,0) -- (0.9,0.7) -- (2.1,0.7) -- (3.8,0) -- (4.5,0);
	
	\draw [thin] (0,-2) -- (4.3,-2);	
	\draw [thin,->] (3.3,-2) -- (4.5,-2);
	\node [right] at (4.5,-2) {$t$} ;
	
	\draw [thin] (0,-2) -- (0,-1.3);	
	\draw [thin,->] (0,-1.3) -- (0,-1.1);
	\node [above] at (0,-1.1) {$\gh_2(t)$};
	
	\draw [fill=black] (0,-2) circle [radius = 0.5pt];
	\node [left] at (0,-2) {$0$};
	\draw [fill=black] (0,-1.3) circle [radius = 0.5pt];
	\node [left] at (0,-1.3) {$1$};
	
	\draw [fill=black] (0.4,-2) circle [radius = 0.5pt];
	\node [below] at (0.4,-2) {$\varepsilon$};
	\draw [fill=black] (0.8,-2) circle [radius = 0.5pt];
	\node [below] at (0.8,-2) {$2\varepsilon$};
	\draw [fill=black] (2.1,-2) circle [radius = 0.5pt];
	\node [below] at (2.0,-2) {$t_0 - 2\varepsilon$};
	\draw [fill=black] (2.5,-2) circle [radius = 0.5pt];
	\node [below] at (2.8,-2) {$t_0 - \varepsilon$};
	
	\draw [thick] (0,-2) -- (0.4,-2) -- (0.8,-1.3) -- (2.1,-1.3) -- (2.5,-2) -- (4.5,-2);
\end{tikzpicture}
\end{center}
\end{figure}

\subsection{Proof of Theorem \ref{thm:mainthm-leq2}}

Recall that part (i) has already been proven in Subsection \ref{ssec:solnconstruction}; here we prove part (ii).

All of the results in this section are valid for $\mb{p} = (p,s)$ such that $p \leq 2$ and $s < 0$.
We do not `fix' such a $\mb{p}$, however, because in the final step we will invoke prior results with a different choice of $\mb{p}$.

\textbf{Step 1: Verification and application of initial limiting arguments.}

\begin{lem}\label{lem:smallp-limit}
	Let $\gf \in L^2(\bbR^n)$.
	Then $\mb{1}_{K \times \bbR^n} B^* D\mb{G}_{t_0,\gf} \in X^{\mb{p}^\prime}$ for all compact $K \subset \bbR_{+,t_0}$, with
	\begin{equation}\label{eqn:smallp-limit}
		\nm{\mb{1}_{K \times \bbR^n} B^* D\mb{G}_{t_0,\gf}}_{X^{\mb{p}^\prime}} \lesssim \nm{\gf}_2 \dist(K,t_0)^{-1} K_-^{s+n\gd_{p,2}}
	\end{equation}
	where $K_- = \inf(K)$.
\end{lem}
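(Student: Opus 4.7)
The key observation is that $\mb{1}_{K \times \bbR^n} F$ has compact support in time, and the operator $B^* D e^{-sB^* D}\gc^+(B^* D)$ enjoys a uniform $L^2$ operator bound of order $s^{-1}$ via the bounded $H^\infty$ functional calculus, so the estimate should reduce to an elementary embedding of a weighted $T^2$-quasinorm into $X^{\mb{p}'}$. First I would split $K = (K \cap (0,t_0)) \cup (K \cap (t_0,\infty))$ and, by the triangle inequality, reduce to the case where $K$ lies entirely in one of the half-intervals. I focus on $K \subset (0,t_0)$, where for $t \in K$
\begin{equation*}
F(t,\cdot) := B^* D \mb{G}_{t_0,\gf}(t,\cdot) = B^* D e^{-(t_0-t) B^* D}\gc^+(B^* D)\gf,
\end{equation*}
using $\gc^+(B^* D)\bbP_{B^* D} = \gc^+(B^* D)$. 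Since $[z \mapsto z e^{-z}\gc^+(z)]$ is uniformly bounded on every bisector and $B^* D$ has bounded $H^\infty$ functional calculus on $\overline{\mc{R}(B^* D)}$ (the Standard Assumptions verified in Theorem \ref{thm:DB-fundamental-props}),
\begin{equation*}
\nm{F(t,\cdot)}_{L^2} \lesssim (t_0 - t)^{-1} \nm{\gf}_2 \leq d^{-1} \nm{\gf}_2 \qquad (t \in K),
\end{equation*}
where $d := \dist(K,t_0)$.

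The crucial step is then to identify $\mb{q}_0 := (2,\, -s - n\gd_{p,2})$ as an exponent satisfying $\mb{q}_0 \hookrightarrow \mb{p}'$. The verification is uniform across $p \in (0,2]$: for $p > 1$ one has $\mb{p}' = (p', -s)$ with $j(\mb{p}') = 1 - 1/p$, while for $p \leq 1$ one has $\mb{p}' = (\infty, -s;\, n(1/p-1))$ with $j(\mb{p}') = -\alpha/n = 1 - 1/p$ as well. In both cases $\gq(\mb{p}') - \gq(\mb{q}_0) = n\gd_{p,2} = n(j(\mb{p}') - j(\mb{q}_0))$, and $\gq(\mb{q}_0) \geq \gq(\mb{p}')$ since $\gd_{p,2} \leq 0$. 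Theorem \ref{thm:mixed-emb} then yields
\begin{equation*}
\nm{\mb{1}_{K \times \bbR^n} F}_{X^{\mb{p}'}} \lesssim \nm{\mb{1}_{K \times \bbR^n} F}_{T^{\mb{q}_0}}.
\end{equation*}

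The last step is a direct Fubini computation: for any measurable $F$ supported in $K \times \bbR^n$,
\begin{equation*}
\nm{\mb{1}_{K \times \bbR^n} F}_{T^{\mb{q}_0}}^2 \simeq \int_K t^{2(s + n\gd_{p,2}) - 1} \nm{F(t,\cdot)}_{L^2}^2 \, dt.
\end{equation*}
Inserting the $L^2$ bound and using $s + n\gd_{p,2} < 0$ (from $s < 0$ and $\gd_{p,2} \leq 0$), so that
\begin{equation*}
\int_K t^{2(s+n\gd_{p,2})-1}\, dt \leq \int_{K_-}^{\infty} t^{2(s+n\gd_{p,2})-1}\, dt \lesssim K_-^{2(s+n\gd_{p,2})},
\end{equation*}
completes the proof in this case. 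For $K \subset (t_0,\infty)$, the scheme is symmetric upon replacing $\gc^+$ with $\gc^-$ and $(t_0-t)^{-1}$ with $(t-t_0)^{-1} \leq d^{-1}$; since $K_- > t_0$ and the weight $t^{2(s+n\gd_{p,2})-1}$ is decreasing on $K$, the $t$-integral over $K$ is again controlled by $K_-^{2(s+n\gd_{p,2})}$. The main bookkeeping subtlety is the embedding verification in the second step, since one must bridge the finite and infinite dual exponent cases uniformly via the convention $j(\infty,s;\alpha) = -\alpha/n$; once this is clean, the rest is routine.
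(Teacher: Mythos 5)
Your proof is correct and follows essentially the same route as the paper: a uniform $L^2$ operator bound from the bounded $H^\infty$ functional calculus, a Fubini computation of the weighted $L^2$ (i.e.\ $T^{(2,-s-n\gd_{p,2})}$) quasinorm over $K$ using $s + n\gd_{p,2} < 0$, and the embedding $(2,-s-n\gd_{p,2}) \hookrightarrow \mb{p}'$ via Theorem \ref{thm:mixed-emb}. The only difference is presentational: you split $K$ across $t_0$ and treat the two halves separately, whereas the paper handles both at once via the sign function $\sgn(t_0 - t)$ and the projections $\gc^{\sgn(t_0-t)}(B^*D)$; the substance is identical.
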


\begin{proof}
	First note that the estimate
	\begin{equation*}
		\nm{\mb{1}_{K \times \bbR^n} B^* D \mb{G}_{t_0,\gf}}_{X_{-s-n\gd_{p,2}}^2} \lesssim \nm{\gf}_2 \dist(K,t_0)^{-1} K_-^{s+n\gd_{p,2}}
	\end{equation*}
	can be shown by writing
	\begin{align}
		\nm{\mb{1}_{K \times \bbR^n} B^* D \mb{G}_{t_0,\gf}}_{X_{-s-n\gd_{p,2}}^2}
		&= \bigg( \int_{K_-}^{K_+} \nm{t^{s+n\gd_{p,2}} B^* D \mb{G}_{t_0,\gf}}_2^2 \, \frac{dt}{t} \bigg)^{1/2} \nonumber\\
		&\lesssim \nm{\gf}_2 \bigg( \int_{K_-}^{K_+} t^{2(s + n\gd_{p,2})} \dist(K,t_0)^{-2} \, \frac{dt}{t} \bigg)^{1/2} \label{line:fc-dist} \\
		&\lesssim \nm{\gf}_2 \dist(K,t_0)^{-1} K_-^{s+n\gd_{p,2}} \label{line:negativity}.
	\end{align}
	The estimate \eqref{line:fc-dist} follows by writing
	\begin{equation*}
		B^* D \mb{G}_{t_0,\gf} = \frac{\sgn(t_0-t)}{t_0 - t} (t_0-t)B^* D e^{-[(t_0-t)B^* D]} \gc^{\sgn(t_0-t)}(B^* D) \bbP_{B^* D} \gf
	\end{equation*}
	and noting that the operator
	\begin{equation*}
	(t_0 - t)B^* D e^{-[(t_0-t)B^* D]} \gc^{\sgn(t_0-t)}(B^* D) \bbP_{B^* D}
	\end{equation*}
	is bounded on $L^2(\bbR^n)$ uniformly in $t \in \bbR_{+,t_0}$, and that $|(t_0 - t)^{-1}| \lesssim \dist(K,t_0)^{-1}$ for $t \in K$.
	Then \eqref{line:negativity} follows because $s+n\gd_{p,2}$ is negative whenever $s < 0$ and $p < 2$.
	Now use the $X$-space embeddings to write
	\begin{equation*}
		X_{-s-n\gd_{p,2}}^2 \hookrightarrow X^{\mb{p}^\prime},
	\end{equation*}
	from which follows \eqref{eqn:smallp-limit}.
\end{proof}

\begin{cor}\label{cor:intlim-0}
	Let $\gf \in L^2(\bbR^n)$ and suppose that $F \in X^\mb{p}$ solves $(\CR)_{DB}$.
	Then
	\begin{equation}\label{eqn:limit-1-2}
		\lim_{\varepsilon \to 0} \barint_{t_0 + \varepsilon}^{t_0 + 2\varepsilon} \int_{\bbR^n} \left\langle B^* D \mb{G}_{t_0,\gf}(t,x) , F(t,x) \right\rangle \, dx \, dt = 0.
	\end{equation}
\end{cor}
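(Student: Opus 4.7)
The plan is to combine the symmetry identity \eqref{eqn:limit-in-time-1} from Corollary \ref{cor:limit-in-time} with the dual-space estimate from Lemma \ref{lem:smallp-limit} to convert a limit near $t_0$ into a decay estimate at infinity. In particular, I would fix $\varepsilon$ small enough that $K_\varepsilon := [t_0 + (2\varepsilon)^{-1}, t_0 + \varepsilon^{-1}]$ is a compact subset of $\bbR_{+,t_0}$, verify that the integrability hypothesis \eqref{eqn:spatial-limit-condn} of Corollary \ref{cor:limit-in-time} holds for such $K_\varepsilon$, apply \eqref{eqn:limit-in-time-1} to replace the near-$t_0$ average by the far-away average over $K_\varepsilon$, and then show directly that this far-away average vanishes as $\varepsilon \to 0$.

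For the integrability check (step one), I would rewrite
\begin{equation*}
  \dint_{\bbR^{1+n}_+} \mb{1}_{K \times \bbR^n}(t,x) |B^* D \mb{G}_{t_0,\gf}(t,x)||F(t,x)| \, dx \, dt = \dint_{\bbR^{1+n}_+} |t\mb{1}_{K \times \bbR^n} B^* D\mb{G}_{t_0,\gf}||F| \, \frac{dx \, dt}{t}
\end{equation*}
and apply the tent space duality estimate \eqref{eqn:L2ip} of Theorem \ref{thm:ts-duality}. This bounds the integral by $\nm{t \mb{1}_{K \times \bbR^n} B^* D \mb{G}_{t_0,\gf}}_{X^{\mb{p}^\prime}} \nm{F}_{X^{\mb{p}}}$. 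Since $t \leq K_+$ on $K$, this is at most $K_+ \nm{\mb{1}_{K \times \bbR^n} B^* D \mb{G}_{t_0,\gf}}_{X^{\mb{p}^\prime}} \nm{F}_{X^{\mb{p}}}$, which is finite by Lemma \ref{lem:smallp-limit}. Thus condition \eqref{eqn:spatial-limit-condn} holds, and \eqref{eqn:limit-in-time-1} applies to reduce \eqref{eqn:limit-1-2} to showing that
\begin{equation*}
  \lim_{\varepsilon \to 0} \barint_{t_0 + (2\varepsilon)^{-1}}^{t_0 + \varepsilon^{-1}} \int_{\bbR^n} \langle B^* D\mb{G}_{t_0,\gf}(t,x), F(t,x)\rangle \, dx\, dt = 0.
\end{equation*}

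For the final step, I use $|K_\varepsilon| = (2\varepsilon)^{-1}$, $K_{\varepsilon,-} \simeq \varepsilon^{-1}$, and $\dist(K_\varepsilon, t_0) \simeq \varepsilon^{-1}$, together with the same duality/dominate computation as above, to bound the averaged integral by
\begin{equation*}
  |K_\varepsilon|^{-1} K_{\varepsilon,+} \nm{\mb{1}_{K_\varepsilon \times \bbR^n} B^* D\mb{G}_{t_0,\gf}}_{X^{\mb{p}^\prime}} \nm{F}_{X^{\mb{p}}}
  \lesssim \varepsilon \cdot \varepsilon^{-1} \cdot \varepsilon \cdot \varepsilon^{-s - n\gd_{p,2}} \nm{\gf}_2 \nm{F}_{X^\mb{p}}
\end{equation*}
where Lemma \ref{lem:smallp-limit} supplies the last factor in the $X^{\mb{p}^\prime}$ norm. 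The total power of $\varepsilon$ is $1 - s - n\gd_{p,2}$, which is strictly positive: indeed $p \leq 2$ gives $\gd_{p,2} \leq 0$ and $s < 0$ gives $-s > 0$, so $1 - s - n\gd_{p,2} \geq 1$. Hence the right-hand side of \eqref{eqn:limit-in-time-1} tends to $0$ as $\varepsilon \to 0$, which establishes the corollary.

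The main obstacle is essentially bookkeeping: reconciling the measure $dx\, dt$ arising from \eqref{eqn:limit-in-time-1} with the measure $dx\, dt/t$ used by tent space duality, and ensuring that absorbing the factor of $t$ into the dual-side norm does not destroy the favourable $\varepsilon$-power obtained from Lemma \ref{lem:smallp-limit}. Since the lemma already invests the dependence on $\dist(K,t_0)$ and $K_-$ (both comparable to $\varepsilon^{-1}$) with a non-trivial negative power, the only delicate point is verifying that the net exponent $1 - s - n\gd_{p,2}$ remains strictly positive throughout the relevant range of $\mb{p}$, which is immediate from the sign assumptions $p \leq 2$ and $s < 0$.
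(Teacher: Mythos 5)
Your proof is correct and follows essentially the same approach as the paper's: both estimate $\nm{\mb{1}_{K_\varepsilon}B^*D\mb{G}_{t_0,\gf}}_{X^{\mb{p}'}}$ via Lemma \ref{lem:smallp-limit}, use this (and $X$-space duality) to verify the integrability hypothesis \eqref{eqn:spatial-limit-condn}, and then invoke \eqref{eqn:limit-in-time-1} to replace the near-$t_0$ average with the far-away one, which vanishes because the quasinorm decays. Your explicit accounting of the $dt$ versus $dt/t$ measure conversion and the resulting net $\varepsilon$-power $1-s-n\gd_{p,2}$ is slightly more detailed than the paper's terse wording, but it is the same argument.
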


\begin{proof}
	For $\varepsilon > 0$ small the previous lemma yields 
	\begin{equation*}
		\nm{\mb{1}_{[t_0 + (2\varepsilon)^{-1},t_0 + \varepsilon^{-1}] \times \bbR^n} B^* D\mb{G}_{t_0,\gf}}_{X^{\mb{p}^\prime}} \lesssim \nm{\gf}_2 (2\varepsilon) (t_0 + (2\varepsilon)^{-1})^{s+n\gd_{p,2}},
	\end{equation*}
	which decays as $\varepsilon \to 0$ since $s + n\gd_{p,2}$ is negative when $s < 0$ and $p \leq 2$.
	Therefore in particular, by $X$-space duality, condition \eqref{eqn:spatial-limit-condn} is satisfied, and by boundedness of the above quasinorms as $\varepsilon \to 0$ we can take the $\varepsilon \to 0$ limit in \eqref{eqn:limit-in-time-1} to obtain \eqref{eqn:limit-1-2}.
\end{proof}

\textbf{Step 2: Semigroup property of $F$.}

\begin{lem}\label{lem:smallpsoln}
	Suppose $F \in X^{\mb{p}}$ solves $(\CR)_{DB}$.
	Then $F \in C^\infty(\bbR_+ : \bbH_{DB}^2)$, $F(t) \in \mc{D}(DB)$ for all $t > 0$, and $\partial_t F + DBF = 0$ holds strongly in $C^\infty(\bbR_+ : \bbH_{DB}^2)$.
\end{lem}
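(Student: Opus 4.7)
My plan is to use the slice-space regularity already available for $F$ to upgrade the distributional Cauchy--Riemann equation into a strong $L^2$-identity, which is enough since $i(\mb{p}) \leq 2$ gives us $L^2$-valued regularity almost for free.

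First, since $F$ solves $(\CR)_{DB}$ and lies in $X^{\mb{p}}$, Remark \ref{rmk:Fell} and Proposition \ref{prop:wksolnprops}(3) give $F \in C^\infty(\bbR_+ : E^{\mb{p}})$, together with the quantitative slice estimate
\[
\sup_{\substack{t,t^\prime \in \bbR_+ \\ C^{-1} \leq t/t^\prime \leq C}} \nm{\partial_t^k F(t,\cdot)}_{E^{\mb{p}-k}(t^\prime)} \lesssim \nm{F}_{X^{\mb{p}}}, \qquad k \in \bbN.
\]
Because $i(\mb{p}) \leq 2$, the embedding $E^{\mb{p}}(t) \hookrightarrow E^{2}(t) = L^{2}(\bbR^{n})$ from Proposition \ref{prop:slice-embeddings} gives $F \in C^\infty(\bbR_+ : L^{2}(\bbR^{n}))$; in particular $\partial_t^k F(t) \in L^{2}(\bbR^{n})$ at every $t > 0$ for every $k \in \bbN$. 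The tangential curl-free condition built into $(\CR)_{DB}$ forces $F(t) \in \overline{\mc{R}(D)} = \overline{\mc{R}(DB)} = \bbH_{DB}^{2}$ for a.e.\ $t > 0$, hence for all $t > 0$ by continuity, and so $F \in C^\infty(\bbR_+ : \bbH_{DB}^{2})$.

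Second, the weak formulation of $(\CR)_{DB}$, together with self-adjointness of $D$ on $L^{2}(\bbR^{n})$, yields the distributional identity $\partial_t F + DBF = 0$ on $\bbR^{1+n}_{+}$. For each fixed $t > 0$, the first step promotes $\partial_t F(t)$ to a genuine element of $L^{2}(\bbR^{n})$; this forces $DBF(t) = -\partial_t F(t) \in L^{2}(\bbR^{n})$ in the sense of distributions, and closedness of the unbounded operator $DB$ on $L^{2}$ then gives $F(t) \in \mc{D}(DB)$ and the semigroup equation as an identity in $L^{2}(\bbR^{n})$. Since $F$ takes values in $\bbH_{DB}^{2}$ and $DBF(t)$ automatically lies in $\overline{\mc{R}(DB)}$, the equation holds in $\bbH_{DB}^{2}$. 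Iterating the argument, using that $\partial_t^k F$ itself solves $(\CR)_{DB}$ (by $t$-independence of $DB$ and commutation of $\curl_\parallel$ with $\partial_t$) and lies in $X^{\mb{p}-k}$, yields smoothness of every order in $\bbH_{DB}^{2}$.

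The main obstacle---fairly mild compared with what follows---is ensuring the passage from the distributional equation to the strong $L^{2}$-identity. This is routine here because the hypothesis $i(\mb{p}) \leq 2$ makes $L^{2}$-valued regularity available for free from the slice space $E^{\mb{p}}$, and closedness of $DB$ converts the distributional equation into a domain statement automatically. Note in particular that no testing against $B^{*}D\mb{G}_{t_0,\gf}$ is required at this step; the testing identities from Lemmas \ref{lem:AMtesting} and \ref{lem:smallp-limit} will only be needed in the substantially deeper steps that identify $F$ with a Cauchy extension $C_{DB}^{+}F_{0}$.
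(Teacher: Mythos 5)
Your proof is correct, and it follows the paper's strategy for the bulk of the argument (slice-space regularity, $E^{i(\mb{p})}\hookrightarrow E^2 = L^2$ when $i(\mb{p})\leq 2$, the distributional equation plus closedness of $DB$, iteration in $\partial_t$). Where you genuinely diverge is the final step, showing $F(t)\in\overline{\mc{R}(DB)}$. You pass the slicewise curl-free condition $\curl_\parallel F(t)_\parallel=0$ (which does hold for every $t>0$, not just a.e., once $F\in C(\bbR_+:L^2)$, by testing against products $\eta(t)\gf(x)$) through the characterisation $\overline{\mc{R}(D)}=\{f\in L^2 : \curl_\parallel f_\parallel = 0\}$ stated at the start of Chapter~\ref{chap:diffops}, and then use $\overline{\mc{R}(D)}=\overline{\mc{R}(DB)}$. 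This is a clean and perfectly legitimate shortcut. The paper instead invokes Lemma~\ref{lem:solution-decay} to get $F(t_0)\to 0$ in $L^2$ as $t_0\to\infty$ and writes $F(t_0)=-\int_{t_0}^\infty DB F(\gt)\,d\gt\in\overline{\mc{R}(DB)}$ by the fundamental theorem of calculus. That route buys one thing you sacrifice: it only uses closedness of $\overline{\mc{R}(DB)}$ in $L^2$, not the explicit curl-free characterisation of $\overline{\mc{R}(D)}$, and so it would carry over verbatim to the more general first-order operators $D$ alluded to in the remark after Theorem~\ref{thm:DB-fundamental-props}. For the concrete Dirac operator of this paper, your version is shorter and makes the role of the side condition in $(\CR)_{DB}$ transparent; it also avoids checking the integrability in $\gt$ that the FTC argument implicitly requires.

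One small point worth making explicit if you write this up: to conclude $F(t_0)\in\mc{D}(DB)$ from $F(t_0),\,\partial_t F(t_0)\in L^2$ and the distributional identity $\partial_t F+DBF=0$, what you actually need is $BF(t_0)\in L^2$ with $D(BF(t_0))\in L^2$ distributionally (which holds since $B$ is a bounded multiplier and $D(BF(t_0))=-\partial_t F(t_0)$), because $\mc{D}(DB)=\{f\in L^2 : Bf\in\mc{D}(D)\}$. Your phrasing ``closedness of $DB$ converts the distributional equation into a domain statement'' is morally right but this is the precise mechanism.
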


\begin{proof}
	We already have that $F \in C^\infty(\bbR_+ : L^2_\text{loc}(\bbR^n))$ from Proposition \ref{prop:wksolnprops}, and furthermore that $\partial_t F \in X^{\mb{p}-1}$.
	Hence we have $F(t_0),(\partial_t F)(t_0) \in E^{\mb{p}}$ for all $t_0 \in \bbR_+$, and therefore by the slice space containments of Proposition \ref{prop:slice-embeddings} we obtain $F(t_0), (\partial_t F)(t_0) \in L^2$ for all $t_0 \in \bbR_+$.
	Therefore $F(t_0) \in \mc{D}(DB)$ for all $t_0 \in \bbR_+$, and $\partial_t F + DBF = 0$ holds in $L^2$.
	We can iterate this argument by reapplying $\partial_t$, as this preserves the property of solving $(\CR)_{DB}$ as well as the previously stated $L^2$ containments, so we obtain $F \in C^\infty(\bbR_+ : L^2)$.
	
	Now since $\lim_{t_0 \to \infty} F(t_0) = 0$ in $L^2$ (Lemma \ref{lem:solution-decay}), we can write
	\begin{equation*}
		F(t_0) = -\int_{t_0}^\infty (\partial_t F)(\gt) \, d\gt
		= -\int_{t_0}^\infty DB (F(\gt)) \, d\gt \in \overline{\mc{R}(DB)}
	\end{equation*}
	by the fundamental theorem of calculus.
	Therefore $F(t_0) \in {\bbH}_{DB}^2$ for all $t_0$, and since the $\bbH_{DB}^2$-norm is equivalent to the $L^2$-norm when restricted to $\overline{\mc{R}(DB)}$, this completes the proof.
\end{proof}

\begin{lem}\label{lem:psmallplus}
	Suppose that $F \in X^\mb{p}$ solves $(\CR)_{DB}$.
	Then for all $t_0 > 0$ and $\gt \geq 0$ we have $F(t_0) \in \bbH_{DB}^{2,+} = \overline{R(DB)}^+$ and
	\begin{equation}\label{eqn:derivsgp}
		F(t_0 + \gt) = e^{-\gt DB} (F(t_0))
	\end{equation}
	(recall that $e^{-\gt DB}$ is defined on the positive spectral subspace).
\end{lem}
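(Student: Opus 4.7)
The plan is to test $F$ against the auxiliary function $\mb{G}_{t_0,\gf}$ and exploit the jump of $\mb{G}_{t_0,\gf}$ at $t = t_0$ to show that $F(t_0) \in \overline{\mc{R}(DB)}^+$; the semigroup identity will then follow from uniqueness of Cauchy-problem solutions on this subspace. The background ingredients are already in hand: Lemma \ref{lem:smallpsoln} gives $F \in C^\infty(\bbR_+ : \overline{\mc{R}(DB)})$ with $\partial_t F + DBF = 0$ strongly in $L^2$, and Lemma \ref{lem:solution-decay} gives $F(t) \to 0$ in $L^2$ as $t \to \infty$, so the hypotheses of Corollary \ref{cor:intlim-0} are satisfied.

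First I would fix $\gf \in \mc{D}(B^*D)$, so that $\bbP_{B^*D}\gf \in \mc{D}(B^*D) \cap \overline{\mc{R}(B^*D)}$ and the functional calculus yields the factorisation
\[
B^* D \mb{G}_{t_0,\gf}(t) = -e^{-(t-t_0)[B^*D]}\gc^-(B^*D)\,B^*D\,\bbP_{B^*D}\gf \qquad (t > t_0),
\]
which is right-continuous in $L^2$ at $t_0$ with value $-B^* D \gc^-(B^*D)\bbP_{B^*D}\gf$. Combined with the $L^2$-continuity of $F$ at $t_0$ from Lemma \ref{lem:smallpsoln}, the integrand in Corollary \ref{cor:intlim-0} is right-continuous at $t_0$, so its Steklov averages over $[t_0+\varepsilon, t_0 + 2\varepsilon]$ converge to the pointwise value. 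This gives
\[
\langle B^*D\gc^-(B^*D)\bbP_{B^*D}\gf,\ F(t_0)\rangle = 0 \qquad (\gf \in \mc{D}(B^*D)).
\]

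Next I would translate this orthogonality into spectral membership. The set $\{B^*D\gc^-(B^*D)\bbP_{B^*D}\gf : \gf \in \mc{D}(B^*D)\} = \gc^-(B^*D)\,B^*D\,(\mc{D}(B^*D) \cap \overline{\mc{R}(B^*D)})$ is dense in $\overline{\mc{R}(B^*D)}^-$, since $B^*D$ has dense range in $\overline{\mc{R}(B^*D)}$ and $\gc^-(B^*D)$ is surjective onto $\overline{\mc{R}(B^*D)}^-$. Hence $F(t_0) \perp \overline{\mc{R}(B^*D)}^-$ in $L^2$. Using $\gc^-(DB)^* = \gc^-(B^*D)$, together with the fact that $\gc^+\gc^- \equiv 0$ makes the $L^2$-pairing $\overline{\mc{R}(DB)}^\pm \times \overline{\mc{R}(B^*D)}^\mp$ trivial and hence $\overline{\mc{R}(DB)}^- \times \overline{\mc{R}(B^*D)}^-$ non-degenerate, I would conclude $\gc^-(DB)F(t_0) = 0$, i.e.\ $F(t_0) \in \overline{\mc{R}(DB)}^+$.

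For the semigroup identity, applying the preceding argument at each shifted base time gives $F(t_0+\gt) \in \overline{\mc{R}(DB)}^+$ for all $\gt \geq 0$. Both $\gt \mapsto F(t_0+\gt)$ and $\gt \mapsto e^{-\gt DB}F(t_0)$ are thus $C^\infty$ solutions of $\partial_\gt G + DBG = 0$ into $\overline{\mc{R}(DB)}^+$ with common value $F(t_0)$ at $\gt = 0$. Since $e^{-\gt DB}$ is a strongly continuous bounded semigroup on $\overline{\mc{R}(DB)}^+$ (by the bounded $H^\infty$ functional calculus), the identity $\frac{d}{d\gt}\bigl(e^{-(T-\gt)DB}G(\gt)\bigr) = 0$ holds for $0 \leq \gt \leq T$, forcing $F(t_0+T) = e^{-TDB}F(t_0)$. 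I expect the main obstacle to be the density/non-degeneracy step in the third paragraph, which requires careful bookkeeping of the spectral projections of $DB$ and $B^*D$ acting on different closed subspaces of $L^2$; the rest of the argument is a matter of assembling standard tools with the right continuity at $t_0$.
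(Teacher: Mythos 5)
Your proof is correct and establishes the lemma by a route that overlaps with the paper's at the first step but diverges meaningfully at the second.

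For the spectral membership $F(t_0)\in\overline{\mc{R}(DB)}^+$, both arguments test against $B^*D\mb{G}_{t_0,\gf}$ via Corollary \ref{cor:intlim-0} and extract the identity $\langle B^*D\gc^-(B^*D)\bbP_{B^*D}\gf, F(t_0)\rangle = 0$. You then invoke density of $\gc^-(B^*D)B^*D(\mc{D}(B^*D)\cap\overline{\mc{R}(B^*D)})$ in $\overline{\mc{R}(B^*D)}^-$ together with the duality $\gc^-(DB)^* = \gc^-(B^*D)$ and non-degeneracy of the $\overline{\mc{R}(DB)}^-\times\overline{\mc{R}(B^*D)}^-$ pairing; the paper inserts the auxiliary family $\gf=e^{-\gd[B^*D]}\gfv$ and cites a density result from \cite{AM15}. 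Both are valid; yours is slightly more direct and keeps the whole argument internal to standard bisectorial duality, whereas the paper's formulation (via an explicit analytic-semigroup regularisation) is the one that transfers verbatim to the harder $i(\mb{p})>2$ case of Theorem \ref{thm:mainthm-gtr2}, where $F(t_0)$ lives only in a dual space and the $L^2$ annihilator argument is unavailable.

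For the semigroup identity \eqref{eqn:derivsgp}, the two proofs genuinely differ. The paper runs a second testing argument: it defines $I_{t_0,\gf}^{\varepsilon,\gd}$ with $\gf_\gd = e^{-\gd[B^*D]}\gf$, uses \eqref{eqn:limit-in-time-2}, and exploits the algebraic identity $I_{t_0,\gf}^{\varepsilon,\gd+\gt} = I_{t_0+\gd,\gf}^{\varepsilon,\gt}$ to conclude by density in $\bbH_{B^*D}^{2,+}$. You instead apply the membership result at each shifted base time and use the standard ODE uniqueness argument $\tfrac{d}{d\gt}\bigl(e^{-(T-\gt)DB}G(\gt)\bigr) = 0$, which is available because Lemma \ref{lem:smallpsoln} already gives $F\in C^\infty(\bbR_+:\overline{\mc{R}(DB)})$ with $F(t)\in\mc{D}(DB)$ and $\partial_t F + DB F = 0$ holding strongly in $L^2$. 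Your route is more elementary and leans harder on the $L^2$-specific smoothness of $F$; the paper's route stays inside the testing framework, which is what makes it robust enough to adapt (in Section 6.4's Steps 3--5) to exponents where no such strong $L^2$ solution theory applies. Both are complete proofs for the stated lemma.
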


\begin{proof}
	For all $\gf \in L^2(\bbR^n)$, the function $t \mapsto B^* D \mb{G}_{t_0,\gf}(t)$ is smooth in $t \in \bbR_{+,t_0}$ with values in $\bbH^2_{B^* D}$ and with
	\begin{equation*}
		\lim_{t \downarrow t_0} B^* D \mb{G}_{t_0,\gf}(t) = -B^* D \gc^-(B^* D) \bbP_{B^*D} \gf
	\end{equation*}
	in $\bbH_{B^* D}^2$.
	Furthermore, by Lemma \ref{lem:smallpsoln}, $t \mapsto F(t)$ is smooth in $t \in \bbR_+$ with values in $\bbH_{DB}^2$.
	Therefore we may write for all $\gf \in L^2(\bbR^n)$, using \eqref{eqn:limit-1-2} from Corollary \ref{cor:intlim-0},
	\begin{align}
		0 &= \lim_{\varepsilon \to 0} \barint_{t_0 + \varepsilon}^{t_0 + 2\varepsilon} \int_{\bbR^n} \langle B^* D \mb{G}_{t_0,\gf}(t,x), F(t,x) \rangle \, dx \, dt \nonumber\\
		&= \lim_{\varepsilon \to 0} \barint_{t_0 + \varepsilon}^{t_0 + 2\varepsilon} \langle B^* D \mb{G}_{t_0,\gf}(t), F(t) \rangle_{\bbH_{B^* D}^2} \, dt \nonumber\\
		&= -\langle B^* D \gc^- (B^* D) \bbP_{\overline{\mc{R}(B^* D)}} \gf, F(t_0) \rangle_{\bbH_{B^* D}^2}. \label{eqn:2testing}
	\end{align}
	Hence for all $\gfv \in \overline{\mc{R}(B^*D)}$ and all $\gd > 0$, since $e^{-\gd[B^* D]}$ maps $\bbH_{B^* D}^{2,-}$ into itself, applying \eqref{eqn:2testing} to $\gf = e^{-\gd[B^* D]} \gfv$ yields
	\begin{equation}\label{eqn:test:negkill}
		\langle B^* D e^{\gd B^* D} \gc^-(B^* D) \gfv, F(t_0) \rangle_{\bbH_{B^* D}^2} = 0.
	\end{equation}
	The subspace
	\begin{equation*}
		\{B^* D e^{\gd B^* D} \gc^-(B^* D) \gfv : \gfv \in \overline{\mc{R}(B^* D)}\} \subset L^2(\bbR^n)
	\end{equation*}
	is dense in $\bbH_{B^* D}^{2,-}$ (see \cite[p. 28]{AM15}), so \eqref{eqn:test:negkill} and the decomposition $\bbH_{DB}^2 = \bbH_{DB}^{2,+} \oplus \bbH_{DB}^{2,-}$ imply that $F(t_0) \in \bbH_{DB}^{2,+}$.
	
	Now we derive the semigroup equation \eqref{eqn:derivsgp}.
	For all $\gd \geq 0$ and $\gf \in \bbH_{B^* D}^2$, define
	\begin{equation*}
		\gf_\gd := e^{-\gd[B^* D]} \gf
	\end{equation*}
	and
	\begin{equation*}
		I_{t_0,\gf}^{\varepsilon,\gd} := \barint_{\varepsilon}^{2\varepsilon} \langle B^* D \mb{G}_{t_0,\gf_\gd}(t), F(t) \rangle_{\bbH_{B^*D}^2} \, dt.
	\end{equation*}
	By \eqref{eqn:limit-in-time-2}, using the same argument as before to write everything in terms of $\bbH_{B^*D}^2$-duality, we have
	\begin{align*}
		\lim_{\varepsilon \to 0} I_{t_0,\gf}^{\varepsilon,\gd}
		&= -\lim_{\varepsilon \to 0} \barint_{\varepsilon}^{2\varepsilon} \langle B^* D e^{-tB^* D} e^{-\gd B^* D} \gc^+ (B^* D) \gf, F(t_0 - t) \rangle_{\bbH_{B^* D}^2} \, dt \\
		&= -\langle B^* D e^{-\gd B^* D} \gc^+(B^* D)\gf, F(t_0) \rangle_{\bbH_{B^* D}^2}.
	\end{align*}
	Therefore for all $\gt \geq 0$, $\gd \geq 0$, and $\gf \in \bbH_{B^* D}^2$, using $I_{t_0,\gf}^{\varepsilon,\gd+\gt} = I_{t_0 + \gd, \gf}^{\varepsilon,\gt}$, we have
	\begin{align*}
		&\langle B^* D e^{-\gd B^* D} \gc^+(B^* D) \gf, e^{-\gt DB} (F(t_0)) \rangle_{\bbH_{B^* D}^2} \\
		&= \langle B^* D e^{-(\gd+\gt) B^* D} \gc^+(B^* D)\gf, F(t_0) \rangle_{\bbH_{B^* D}^2} \\
		&= -\lim_{\varepsilon \to 0} I_{t_0,\gf}^{\varepsilon,\gd+\gt} \\
		&= -\lim_{\varepsilon \to 0} I_{t_0 + \gt, \gf}^{\varepsilon,\gd} \\
		&= \langle B^* D e^{-\gd B^* D} \gc^+(B^* D) \gf, F(t_0 + \gt) \rangle_{\bbH_{B^* D}^2}.
	\end{align*}
	As before, the subspace $\{B^* D e^{-\gd B^* D} \gc^+(B^* D)\gf : \gf \in \bbH_{B^* D}^2\}$ is dense in $\bbH_{B^* D}^{2,+}$, so by duality we have $F(t_0 + \gt) = e^{-\gt DB} F(t_0)$ in $\bbH_{DB}^{2,+}$ for all $t_0 > 0$ and all $\gt \geq 0$.
	\end{proof}

\textbf{Step 3: Completing the proof.}

\begin{prop}[Existence of boundary trace]\label{prop:leq2-trace}
	Suppose that $F \in X^\mb{p}$ solves $(\CR)_{DB}$, and let $\mb{X}$ be a completion of ${\bbX}_{DB}^{\mb{p}}$.
	Then there exists a unique $F_0 \in \mb{X}^+$ such that $F = \mb{C}_{DB}^+ F_0$.
	Furthermore, $\nm{F_0}_{\mb{X}} \lesssim \nm{F}_{X^\mb{p}}$.
\end{prop}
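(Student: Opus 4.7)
The plan is to construct $F_0$ as the $t_0 \to 0^+$ limit of the traces $F(t_0) \in \bbH_{DB}^{2,+}$ (provided by Lemma \ref{lem:psmallplus}) inside a canonical completion of $\bbX_{DB}^{\mb{p},+}$, and then transfer to the arbitrary completion $\mb{X}$ via Proposition \ref{prop:completion-identification}. The key input is the semigroup identity $F(t_0+t) = e^{-tDB} F(t_0)$ combined with the equation $\partial_t F = -DBF$. First I would fix $N \in \bbN$ with $N \geq \gq(\mb{p}) + (n+1)/2$, set $\gy(z) := z^N e^{-[z]} \in \gY_N^\infty \subset \gY(\bbX_{DB}^\mb{p})$, and iterate the Cauchy equation to obtain
\begin{equation*}
  (\bbQ_{\gy,DB} F(t_0))(t,x) = t^N (DB)^N e^{-tDB} F(t_0)(x) = (-t)^N (\partial_t^N F)(t_0 + t, x).
\end{equation*}
Setting $G(\gt, x) := (-\gt)^N (\partial_\gt^N F)(\gt, x)$ and $M_{t_0}(t) := (t/(t+t_0))^N \in [0,1]$, this reads $(\bbQ_{\gy,DB} F(t_0))(t,x) = M_{t_0}(t)\, G(t+t_0, x)$.

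Next I would establish a uniform-in-$t_0$ bound. By Proposition \ref{prop:wksolnprops}, $\partial_t^N F \in X^{\mb{p}-N}$ with norm controlled by $\|F\|_{X^\mb{p}}$, and multiplication by $\gt^N$ identifies $X^{\mb{p}-N}$ isometrically with $X^\mb{p}$, so $G \in X^\mb{p}$ with $\|G\|_{X^\mb{p}} \lesssim \|F\|_{X^\mb{p}}$. The choice of $N$ makes the exponent $-2\gq(\mb{p}) + 2N - n - 1$ nonnegative, so after the change of variables $\gt = t + t_0$ in the $X^\mb{p}$-integrand the factor $(\gt - t_0)^{-2\gq(\mb{p}) + 2N - n - 1}$ is dominated by $\gt^{-2\gq(\mb{p}) + 2N - n - 1}$ on $\gt > t_0$; combined with $|M_{t_0}| \leq 1$ and the cone inclusion $\gG(x) + t_0 \subset \gG(x)$, this yields $\sup_{t_0 > 0} \|\bbQ_{\gy,DB} F(t_0)\|_{X^\mb{p}} \lesssim \|F\|_{X^\mb{p}}$. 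I would then upgrade to convergence $\bbQ_{\gy,DB} F(t_0) \to G$ in $X^\mb{p}$ (weak-star for infinite $\mb{p}$) via the decomposition
\begin{equation*}
  \bbQ_{\gy,DB} F(t_0) - G = M_{t_0}(S_{t_0} G - G) + (M_{t_0} - 1)G,
\end{equation*}
where $S_{t_0}$ is the downward shift, the second piece vanishing by dominated convergence in the $X^\mb{p}$-integrand and the first by $|M_{t_0}| \leq 1$ together with translation continuity of $X^\mb{p}$ (via density of smooth compactly supported functions for finite $\mb{p}$ and duality against $X^{\mb{p}^\prime}$ for infinite $\mb{p}$). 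Since each $F(t_0) \in \bbX_{DB}^{\mb{p},+}$, the limit $G$ lies in $\gy\mb{X}_{DB}^{\mb{p},+}$ and corresponds via Propositions \ref{prop:completion} and \ref{prop:completion-identification} to a unique $F_0 \in \mb{X}^+$ with $\|F_0\|_{\mb{X}} \simeq \|G\|_{X^\mb{p}} \lesssim \|F\|_{X^\mb{p}}$.

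It remains to identify $F$ with $\mb{C}_{DB}^+ F_0$ and to prove uniqueness. For $t > t_0 > 0$, the semigroup identity $F(t) = e^{-(t-t_0)DB} F(t_0)$ holds in $L^2$; passing to the limit $t_0 \to 0^+$ in $\mb{X}^+$ (using continuity of $\gt \mapsto e^{-\gt DB} F_0$ from Proposition \ref{prop:sgp-continuity}) yields $F(t) = e^{-tDB} F_0 = \mb{C}_{DB}^+ F_0(t)$, and uniqueness follows from $\lim_{t \to 0^+} \mb{C}_{DB}^+ F_0(t) = F_0$ in $\mb{X}^+$ (Proposition \ref{prop:sgp-continuity}). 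The hard part will be the convergence step: while the uniform bound is a clean change-of-variables computation enabled by the large power $N$, upgrading to genuine convergence requires translation continuity in a regime where the plain downward shift $S_{t_0}$ need not be uniformly bounded on $X^\mb{p}$ (namely for $\gq(\mb{p}) \in (-1/2, 0)$), and this must be carried out both in the quasi-Banach range $i(\mb{p}) < 1$ and in the weak-star topology for infinite $\mb{p}$. The extra decay supplied by the factor $M_{t_0}$ is precisely what lets the decomposition above succeed where the shift alone would fail.
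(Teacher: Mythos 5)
Your argument is correct and takes a genuinely different route from the paper's proof. The paper first embeds into an auxiliary Banach exponent $\td{\mb{p}}$ with $i(\td{\mb{p}}) \in (1,2]$ and $\gq(\td{\mb{p}}) < 0$, applies $DB$ once, and uses Proposition \ref{prop:s12shift} for the downward shift on $X^{\td{\mb{p}}-1}$ (valid since $\gq(\td{\mb{p}}-1) < -1$); it then extracts a weak-star convergent subsequence in a completion $\td{\mb{X}}^+$ (this requires $i(\td{\mb{p}}) > 1$), verifies the Cauchy extension by duality testing against $\bbX_{B^*D}^{\td{\mb{p}}^\prime,+}$, and finally pulls $F_0$ back into $\mb{X}^+$ via the identity $F_0 = \mb{S}_{\gf,DB}\mb{Q}_{\sgp,DB}F_0 = \mb{S}_{\gf,DB}F$ with a Calder\'on sibling $\gf$. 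You instead take $N$ large enough that the exponent $-2\gq(\mb{p}) + 2N - n - 1 \geq 0$, so that $\mc{A}\big(\gk^{-\gq(\mb{p})} M_{t_0}S_{t_0}h\big)(x) \leq \mc{A}\big(\gk^{-\gq(\mb{p})}h\big)(x)$ holds pointwise for arbitrary $h$ (a change of variables plus $\gG(x) + t_0 \subset \gG(x)$); this yields the uniform bound with no restriction to $\gq(\mb{p}) < -1/2$ and no auxiliary exponent. You then obtain strong convergence $\bbQ_{\gy,DB}F(t_0) \to G := \gk^N\partial_t^N F$ in $X^\mb{p}$ directly, so the weak-star compactness, duality testing, and Calder\'on sibling steps all disappear. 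Both approaches are valid; yours is cleaner in avoiding the two-stage $\td{\mb{X}}\to\mb{X}$ argument and making $F_0$ explicit, while the paper's uses only first derivatives and relies on already-proved machinery.

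Two small points worth tightening. First, the pointwise cone-inclusion estimate gives the uniform bound only for $X = T$; for $X = Z$ the Whitney region $\gO_c(t,x) + t_0$ is not a Whitney region, so you should deduce the $Z^\mb{p}$ bound from the $T^\mb{q}$ bounds (valid at nearby $\mb{q}$, taking $N$ a bit larger) by real interpolation, exactly as the paper does for Proposition \ref{prop:s12shift}. Second, in the convergence step your phrasing ``$|M_{t_0}| \leq 1$ together with translation continuity'' is imprecise: one cannot estimate $\|M_{t_0}(S_{t_0}G - G)\|_{X^\mb{p}} \leq \|S_{t_0}G - G\|_{X^\mb{p}}$ outright since $S_{t_0}G$ alone need not lie in $X^\mb{p}$. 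The correct argument is the $\varepsilon$-net: pick $G_\varepsilon \in L^2_c$ close to $G$ in $X^\mb{p}$, write
\begin{equation*}
  M_{t_0}S_{t_0}G - M_{t_0}G = \big(M_{t_0}S_{t_0}G_\varepsilon - M_{t_0}G_\varepsilon\big) + M_{t_0}S_{t_0}(G-G_\varepsilon) - M_{t_0}(G-G_\varepsilon),
\end{equation*}
bound the last two terms via the uniform bound on $M_{t_0}S_{t_0}$ (not on $S_{t_0}$), and let $t_0 \to 0$ in the first. You do hint at this with your final remark about $M_{t_0}$ supplying the crucial decay, but the displayed decomposition does not make the use of the combined operator $M_{t_0}S_{t_0}$ explicit. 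Also note that the ``duality against $X^{\mb{p}^\prime}$ for infinite $\mb{p}$'' option you leave open never arises here: the standing hypothesis $i(\mb{p}) \leq 2$ forces $\mb{p}$ to be finite.
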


\begin{proof}
	Fix an exponent $\td{\mb{p}}$ with $i(\td{\mb{p}}) \in (1,2]$ and $\gq(\td{\mb{p}}) < 0$ such that $\mb{p} \hookrightarrow \td{\mb{p}}$ (when $p > 1$ we may take $\td{\mb{p}} = \mb{p}$).
	By Lemma \ref{lem:psmallplus} we have $F(t_0) \in \bbH_{DB}^{2,+} \cap \mc{D}(DB)$ for all $t_0 > 0$.
	We can then estimate
	\begin{align}
		\nm{F(t_0)}_{{\bbX}_{DB}^{\td{\mb{p}}}}
		&\simeq \nm{DB F(t_0)}_{{\bbX}_{DB}^{\td{\mb{p}}-1}} \label{line:regbump} \\
		&= \nm{\gt \mapsto e^{-\gt DB} (DB F)(t_0)}_{X^{\td{\mb{p}}-1}} \label{line:semigpnorm} \\
		&= \nm{\gt \mapsto DB F(t_0 + \gt)}_{X^{\td{\mb{p}}-1}} \label{line:semigrpeqn} \\
		&= \nm{S_{t_0} DB F}_{X^{\td{\mb{p}}-1}} \nonumber \\
		&\lesssim \nm{DBF}_{X^{\td{\mb{p}}-1}} \label{line:shiftusage} \\
		&= \nm{\partial_t F}_{X^{\td{\mb{p}}-1}} \nonumber \\
		&\lesssim \nm{F}_{X^{\td{\mb{p}}}} \label{line:propsuse} \\
		&\lesssim \nm{F}_{X^{\mb{p}}}. \label{line:embusage}
	\end{align}
	The first line \eqref{line:regbump} is from Corollary \ref{cor:regbump}.
	Line \eqref{line:semigpnorm} comes from Theorem \ref{thm:sgpnorm-abstract}.
	Line \eqref{line:semigrpeqn} comes from Lemma \ref{lem:psmallplus}, \eqref{line:shiftusage} comes from Proposition \ref{prop:s12shift} because $i(\td{\mb{p}} - 1) \leq 2$ and $\gq(\td{\mb{p}}-1) \leq -1/2$, \eqref{line:propsuse} comes from Proposition \ref{prop:wksolnprops}, and finally \eqref{line:embusage} follows from $X$-space embeddings by $\mb{p} \hookrightarrow \td{\mb{p}}$.
	Therefore $F(t_0) \in {\bbX}_{DB}^{\td{\mb{p}},+}$ uniformly in $t_0 > 0$.
	
	Let $\td{\mb{X}}$ be a completion of $\bbX_{DB}^{\td{\mb{p}}}$, so that $\mb{X} \hookrightarrow \td{\mb{X}}$.
	Since $\td{\mb{X}}^+$ can be identified with the dual of $\td{\mb{Y}}^+$ for any completion $\td{\mb{Y}}$ of ${\bbX}_{B^* D}^{\td{\mb{p}}^\prime}$, there exists a sequence $t_k \downarrow 0$ and an $F_0 \in \td{\mb{X}}^+$ such that $F(t_k)$ converges weakly to $F_0$ in $\td{\mb{X}}^+$ as $k \to \infty$.
	We thus have for all $\gf \in {\bbX}_{B^* D}^{\td{\mb{p}}^\prime,+}$ and for all $\gt > 0$,
	\begin{align}
		\langle \gf, e^{-\gt DB} F_0 \rangle_{\td{\mb{Y}}}
		&= \langle e^{-\gt B^* D} \gf, F_0 \rangle_{\td{\mb{Y}}} \nonumber \\
		&= \lim_{k \to \infty} \langle e^{-\gt B^* D} \gf, F(t_k) \rangle_{\td{\mb{Y}}} \nonumber\\
		&= \lim_{k \to \infty} \langle \gf, e^{-\gt DB} F(t_k) \rangle_{\td{\mb{Y}}} \nonumber\\
		&= \lim_{k \to \infty} \langle \gf, F(t_k + \gt) \rangle_{\td{\mb{Y}}} \label{line:sgpagain} \\
		&= \langle \gf, F(\gt) \rangle_{\td{\mb{Y}}},\nonumber
	\end{align}
	using Lemma \ref{lem:psmallplus} in \eqref{line:sgpagain}.
	Therefore by density we have $\mb{C}_{DB}^+ F_0 = F$.
	
	It only remains to show that $F_0$ is in $\mb{X}^+$, with the correct quasinorm estimate, and uniquely determined.
	Recall that $\mb{C}_{DB}^+$ is equal to the restriction of $\mb{Q}_{\sgp,DB}$ to the positive spectral subspace ($\mb{Q}_{\sgp,DB}$ is the extension of $\bbQ_{\sgp,DB}$ to the completion $\mb{X}$).
	Let $\gf \in \gY_\infty^\infty$ be a Calder\'on sibling of $\sgp$.
	Then $F_0 = \mb{S}_{\gf,DB} \mb{Q}_{\sgp,DB} F_0 = \mb{S}_{\gf,DB} F$, so by Proposition \ref{prop:completion-identification} we have $F_0 \in \mb{X}$ with $\nm{F_0}_{\mb{X}} \lesssim \nm{F}_{X^\mb{p}}$.
	In fact, since $F(t_0) \in \bbH_{DB}^{2,+}$ for all $t_0 > 0$, we find that $F$ is in the positive spectral subspace $\mb{X}^+$.
	Uniqueness follows by injectivity of $\mb{Q}_{\sgp,DB}$ (Proposition \ref{prop:completion-identification}).
	\end{proof}

This completes the proof of Theorem \ref{thm:mainthm-leq2}.

\subsection{Proof of Theorem \ref{thm:mainthm-gtr2}}

Recall that part (i) has already been proven in Subsection \ref{ssec:solnconstruction}; here we prove part (ii).
Our proof roughly follows that of \cite[Theorem 1.3]{AM15}, arguing via a series of rather technical lemmas.
In this section we continually assume that $\mb{p}$ satisfies the assumptions of Theorem \ref{thm:mainthm-gtr2}.	Most of the lemmas work without assuming $\mb{p}^\heartsuit \in I(\mb{X},DB^*)$, but we gain nothing from dropping this assumption.

\textbf{Step 1: Establishing a good class of test functions.}

We define the following class of test functions for $\bbX_{DB}^\mb{p}$:
\begin{equation*}
	\bbD^\mb{p}(X) := \big\{\gf \in \mc{D}(D) : D\gf \in {\bbX}_{DB^*}^{\mb{p}^\heartsuit}, \; \gc^{\pm}(DB^*) D\gf \in E^{\mb{p}^\heartsuit} \big\}.
\end{equation*}
This is large enough to contain the Schwartz functions and to be stable under the action of various operators, yet it is restrictive enough to let us exploit slice space containments.

\begin{lem}\label{lem:test-schwartz-containment}
	The Schwartz class $\mc{S}(\bbR^n : \bbC^{m(1+n)})$ is contained in $\bbD^\mb{p}(X)$.
      \end{lem}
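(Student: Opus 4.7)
The plan is to verify in turn each of the three conditions defining $\bbD^{\mb{p}}(X)$ for an arbitrary $\gf \in \mc{S}$.

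\textbf{Membership in $\mc{D}(D)$} is immediate since Schwartz functions are smooth with all distributional derivatives in $L^2$.

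\textbf{Membership in $\bbX_{DB^*}^{\mb{p}^\heartsuit}$.} Note that $D\gf \in \mc{S}$ as well, and since $\mc{S}$ embeds continuously into every classical homogeneous smoothness space $\mb{X}^{\mb{q}}$ (this is a standard property of Littlewood--Paley definitions), we have $D\gf \in \mb{X}^{\mb{p}^\heartsuit} \cap L^2$. Moreover, $D\gf \in \mc{R}(D) \subset \overline{\mc{R}(D)}$, so $D\gf = \bbP_D(D\gf)$ and Theorem~\ref{thm:class-space-identn-new} places $D\gf$ in $\bbX_D^{\mb{p}^\heartsuit}$. The hypothesis $\mb{p}^\heartsuit \in I(\mb{X}, DB^*)$ then yields the equivalence of quasinorms $\nm{\cdot}_{\bbX_D^{\mb{p}^\heartsuit}} \simeq \nm{\cdot}_{\bbX_{DB^*}^{\mb{p}^\heartsuit}}$ on $\overline{\mc{R}(D)}$, and so $D\gf \in \bbX_{DB^*}^{\mb{p}^\heartsuit}$ as required.

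\textbf{Membership in $E^{\mb{p}^\heartsuit}$.} This is the main obstacle: $\gc^\pm(DB^*)$ is a discontinuous spectral projection, and $\mc{S}$-structure does not obviously persist under its action. Writing $p := i(\mb{p}^\heartsuit)$ (with $p \leq 2$) and $g^\pm := \gc^\pm(DB^*) D\gf \in \bbX_{DB^*}^{2,\pm}$, my plan is to invoke Lemma~\ref{lem:sgp-sector-repn} to choose $\gf^\pm \in H^\infty$ whose dilates $\gf_s^\pm(DB^*)$ (i)~satisfy off-diagonal estimates of arbitrarily large order, (ii)~agree with $e^{-s[DB^*]}$ on the respective spectral subspace, and (iii)~converge strongly to $I$ in $L^2$ as $s \to 0$. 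This yields $e^{-s[DB^*]}g^\pm = \gf_s^\pm(DB^*) g^\pm \to g^\pm$ in $L^2$, hence along a subsequence almost everywhere. On the other hand, since $D\gf \in \mc{S} \subset E^p$ (Proposition~\ref{prop:slice-distribution}) and $\gf_s^\pm(DB^*)$ is bounded on $E^p$ uniformly in $s$ by Proposition~\ref{prop:slice-bddness}, the decomposition
\begin{equation*}
	\gf_s^\pm(DB^*) D\gf = e^{-s[DB^*]} g^\pm + \gf_s^\pm(DB^*) g^\mp
\end{equation*}
combined with an appropriate choice of $\gf^\pm$ making $\gf_s^\pm(DB^*)$ nearly annihilate the opposite spectral subspace should give uniform $E^p$-bounds on $e^{-s[DB^*]} g^\pm$. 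Fatou's lemma applied to the subsequential a.e.\ limit then yields $\nm{g^\pm}_{E^p} \leq \liminf_{k} \nm{e^{-s_k[DB^*]} g^\pm}_{E^p} < \infty$.

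\textbf{The hard part} is precisely this third step: converting uniform $E^p$-control of an approximation into $E^p$-membership of the boundary value $g^\pm$ itself, which hinges on arranging the off-spectral behaviour of $\gf^\pm$ so that the approximation captures only the desired spectral component up to a controlled error. The other two conditions reduce, once the correct citations are lined up, to straightforward containment checks.
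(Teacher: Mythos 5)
Your first two steps agree with the paper: $\gf \in \mc{D}(D)$ is immediate, and $D\gf \in \bbX_{DB^*}^{\mb{p}^\heartsuit}$ follows from $D\gf \in \mc{S} \cap \overline{\mc{R}(D)}$, Theorem~\ref{thm:class-space-identn-new}, and the standing assumption $\mb{p}^\heartsuit \in I(\mb{X},DB^*)$.

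The third step, however, has a genuine gap, and it is the one you flag as the "hard part". Writing $D\gf = g^+ + g^-$, your decomposition gives
\begin{equation*}
	e^{-s[DB^*]} g^\pm = \gf_s^\pm(DB^*)D\gf - \gf_s^\pm(DB^*) g^\mp,
\end{equation*}
and the first term does indeed have $E^p$-bounds uniform in $s$, since $D\gf \in \mc{S} \subset E^p$ and the $\gf_s^\pm(DB^*)$ are uniformly bounded on $E^p$. But the second term is the problem: Propositions~\ref{prop:slice-bddness} and~\ref{prop:slice-convergence} bound $\gf_s^\pm(DB^*)$ on $E^p$ as an operator, so controlling $\gf_s^\pm(DB^*) g^\mp$ in $E^p$ requires $g^\mp \in E^p$ as \emph{input}, which is precisely the membership you are trying to establish. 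The argument is circular unless $\gf_s^\pm(DB^*)$ genuinely kills the opposite spectral subspace, and this cannot happen for the functions provided by Lemma~\ref{lem:sgp-sector-repn}: that lemma insists $\gf_s^\pm(DB^*) \to I$ strongly in $L^2$ as $s \to 0$, so for any fixed nonzero $g^\mp$ one has $\gf_s^\pm(DB^*)g^\mp \to g^\mp$ rather than $0$. There is no ``appropriate choice'' of $\gf^\pm$ consistent with both the strong-convergence hypothesis and the annihilation property you want. In short, the circularity cannot be removed by tuning the functions of Lemma~\ref{lem:sgp-sector-repn}.

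The paper takes a different route that sidesteps the circularity: instead of approximating $\gc^+(DB^*)D\gf$ by an approximation of the identity and passing to the limit, it reproduces it \emph{exactly}. Taking $\gy(z) = [z]^N e^{-[z]}/N!$ so that $\int_0^\infty \gy(sz)\, ds/s = 1$, one splits the reproducing integral at $s=1$ to obtain
\begin{equation*}
	\gc^+(DB^*) D\gf = \int_0^1 (\gy\gc^+)(sDB^*)\, D\gf\, \frac{ds}{s} + P(DB^*) e^{-DB^*}\gc^+(DB^*) D\gf,
\end{equation*}
with $P$ a polynomial. The second term lies in $E^{\mb{p}^\heartsuit}$ by Lemma~\ref{lem:sgp-in-slice} (using $i(\mb{p}^\heartsuit) < 2$ and $\gq(\mb{p}^\heartsuit) < 0$). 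For the first term, the family $((\gy\gc^+)(sDB^*))_{s>0}$ has off-diagonal estimates of high order by Theorem~\ref{thm-fcode}, so one can verify $E^{\mb{p}^\heartsuit}$-membership directly from the dyadic characterisation (Proposition~\ref{prop:slice-discrete}), summing the off-diagonal tails. The essential point is that neither term requires a priori knowledge of $g^\pm \in E^{\mb{p}^\heartsuit}$, only $D\gf \in E^{\mb{p}^\heartsuit}$ and $D\gf\in \bbX_{DB^*}^{\mb{p}^\heartsuit}$, which are available before the argument starts. This is the mechanism your plan is missing.
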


      This is a modification of the argument of \cite[Lemma 8.10]{AM15}.

\begin{proof}
	Suppose $\gf \in \mc{S}$.
	Then $\gf \in \mc{D}(D)$ and $D\gf \in {\bbX}_{D}^{\mb{p}^\heartsuit} = {\bbX}_{DB^*}^{\mb{p}^\heartsuit}$ by the assumption on $\mb{p}$.
	It remains to show that $\gc^\pm(DB^*) D\gf \in E^{\mb{p}^\heartsuit}$, and this takes some work.
	Since $D\gf \in \mc{S} \subset E^{\mb{p}^\heartsuit}$ (Proposition \ref{prop:slice-distribution}) and since
	\begin{equation*}
		D\gf = \gc^+(DB^*) D\gf + \gc^-(DB^*) D\gf
	\end{equation*}
	it suffices to show that $\gc^+(DB^*) D\gf$ is in $E^{\mb{p}^\heartsuit}$.
	
	Define $\gy \in \gY_N^\infty$, with $N$ large to be chosen later, by
	\begin{equation*}
		\gy(z) := \frac{[z]^N e^{-[z]}}{N!}.
	\end{equation*}
	Then for all $t \in \bbR \sm \{0\}$ we have
	\begin{equation*}
		\int_0^\infty \gy(st) \, \frac{ds}{s}
		= \frac{1}{N!} \int_0^\infty s^N e^{-s} \, \frac{ds}{s} = 1,
	\end{equation*}
	so by holomorphy we have
	\begin{equation*}
		\int_0^\infty \gy(sz) \, \frac{ds}{s} = 1
	\end{equation*}
	for all $z \in S_\gm$.
	By the same argument, along with integration by parts and induction on $N$, for all $z \in S_\gm$ we have
	\begin{equation*}
		\int_1^\infty \gy(sz) \, \frac{ds}{s} = P([z]) e^{-[z]}
	\end{equation*}
	where $P$ is a real polynomial of degree $N-1$.
	Therefore by functional calculus on $\overline{\mc{R}(DB^*)}$ we may write
	\begin{equation*}
		\gc^+(DB^*)D\gf = \int_0^1 (\gy\gc^+)(sDB^*) D\gf \, \frac{ds}{s} + P(DB^*) e^{-DB^*} \gc^+(DB^*) D\gf.
	\end{equation*}
	By Lemma \ref{lem:sgp-in-slice} (using $i(\mb{p}^\heartsuit) < 2$ and $\gq(\mb{p}^\heartsuit) < 0$) we have 
	\begin{equation*}
		P(DB^*) e^{-DB^*} \gc^+(DB^*) D\gf \in E^{\mb{p}^\heartsuit},
	\end{equation*}
	so it suffices to show that
	\begin{equation*}
		\int_0^1 (\gy\gc^+)(sDB^*) D\gf \, \frac{ds}{s} \in E^{\mb{p}^\heartsuit}.
	\end{equation*}
	
	For $f \in L^2(\bbR^n)$ write
	\begin{equation*}
		G(f) := \int_0^1 (\gy\gc^+)(sDB^*) f \, \frac{ds}{s};
	\end{equation*}
	since $\gy\gc^+ \in \gY_+^\infty$ this is defined for all $f \in L^2(\bbR^n)$ (not just $f \in \overline{\mc{R}(DB^*)})$.
	Note that the family $((\gy\gc^+)(sDB^*))_{s > 0}$ satisfies   off-diagonal estimates of order $N$ (Theorem \ref{thm-fcode}).
	For $Q,R \in \mc{D}_0$ (here $\mc{D}_0$ is the set of standard dyadic cubes in $\bbR^n$ with sidelength $1$) with $d(Q,R) \geq 1$ we can estimate
	\begin{align*}
		\nm{G(\mb{1}_R D\gf)}_{L^2(Q)}
		&= \bigg( \int_Q \left| \int_0^1 (\gy \gc^+)(sDB^*) \mb{1}_R D\gf(x) \, \frac{ds}{s} \right|^2 \, dx \bigg)^{1/2} \\
		&\leq \int_0^1 \nm{(\gy\gc^+)(sDB^*) \mb{1}_R D\gf}_{L^2(Q)} \, \frac{ds}{s} \\
		&\lesssim \int_0^1 \bigg( \frac{d(Q,R)}{s} \bigg)^{-N} \, \frac{ds}{s} \nm{\mb{1}_R D\gf}_2 \\
		&\simeq d(Q,R)^{-N} \nm{\mb{1}_R D\gf}_2.
	\end{align*}
	For all other $Q,R \in \mc{D}_0$ we have instead
	\begin{equation*}
		\nm{G(\mb{1}_R D\gf)}_{L^2(Q)} \lesssim \nm{\mb{1}_R D\gf}_2.
	\end{equation*}
	Therefore we have by the discrete characterisation of slice spaces (Proposition \ref{prop:slice-discrete}), writing $R \sim Q$ to mean that $\dist(R,Q) = 0$ and noting that $\dist(R,Q) \geq 1$ if $R \not\sim Q$,
	\begin{align*}
		\nm{G(D\gf)}_{E^{\mb{p}^\heartsuit}}
		&\simeq \bigg( \sum_{Q \in \mc{D}_1} \nm{G(D\gf)}_{L^2(Q)}^{i(\mb{p}^\heartsuit)} \bigg)^{1/i(\mb{p}^\heartsuit)} \\
		&\lesssim \bigg( \sum_{Q \in \mc{D}_1} \bigg( \sum_{R \sim Q} + \sum_{R \not\sim Q} \bigg) \nm{G(\mb{1}_R D\gf)}_{L^2(Q)}^{i(\mb{p}^\heartsuit)} \bigg)^{1/i(\mb{p}^\heartsuit)} \\
		&\lesssim \bigg( \sum_{\substack{Q \in \mc{D}_1 \\ R \sim Q}} \nm{D\gf}_{L^2(R)}^{i(\mb{p}^\heartsuit)} \bigg)^{1/i(\mb{p}^\heartsuit)} + \\
		&\qquad \bigg( \sum_{\substack{Q \in \mc{D}_1 \\ R \not\sim Q}} d(Q,R)^{-Ni(\mb{p}^\heartsuit)} \nm{D\gf}_{L^2(R)}^{i(\mb{p}^\heartsuit)} \bigg)^{1/i(\mb{p}^\heartsuit)} \\
		&=: \mb{I}_1 + \mb{I}_2.
	\end{align*}
	Since the number of cubes $R \in \mc{D}_0$ such that $R \sim Q$ is uniform in $Q$, we have
	\begin{equation*}
		\mb{I}_1 \simeq \bigg( \sum_{R \in \mc{D}_1} \nm{D\gf}_{L^2(R)}^{i(\mb{p}^\heartsuit)} \bigg)^{1/i(\mb{p}^\heartsuit)} \simeq \nm{D\gf}_{E^{\mb{p}^\heartsuit}}.
	\end{equation*}
	To handle $\mb{I}_2$ write
	\begin{align}
		\mb{I}_2 &= \bigg(\sum_{R \in \mc{D}_1} \nm{D\gf}_{L^2(R)}^{i(\mb{p}^\heartsuit)} \sum_{k = 1}^\infty k^{-Ni(\mb{p}^\heartsuit)} |\{Q \in \mc{D}_1 : d(Q,R) = k\}| \bigg)^{1/i(\mb{p}^\heartsuit)} \label{line:cubecount} \\
		&\simeq_{N,\mb{p}^\heartsuit,n} \nm{D\gf}_{E^{\mb{p}^\heartsuit}} \nonumber
	\end{align}
	using that the innermost sum in \eqref{line:cubecount} is independent of $R$ and convergent for $N$ sufficiently large.
	Therefore
	\begin{equation*}
		\nm{G(D\gf)}_{E^{\mb{p}^\heartsuit}} \lesssim \nm{D\gf}_{E^{\mb{p}^\heartsuit}} < \infty
	\end{equation*}
	which shows that $\gc^+(DB^*) D\gf \in E^{\mb{p}^\heartsuit}$ and completes the proof.
\end{proof}

\begin{lem}\label{lem:test-sgp-closure}
	We have the following stability properties of $\bbD^\mb{p}(X)$:
	\begin{enumerate}[(i)]
	\item for all $\gd > 0$, $e^{-\gd[B^* D]} \bbD^\mb{p}(X) \subset \bbD^\mb{p}(X)$,
	\item $\gc^{\pm}(B^* D) \bbD^\mb{p}(X) \subset \bbD^\mb{p}(X)$.
	\end{enumerate}
\end{lem}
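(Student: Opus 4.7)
Given $\gf \in \bbD^{\mb{p}}(X)$, write $\gy_{\gd} := e^{-\gd[B^*D]}\gf$ and $\gy^{\pm} := \gc^{\pm}(B^*D)\gf$, where both operators are extended to $L^2$ via Proposition \ref{prop:secondext} and the convention $\sgp(0)=1$ on $\mc{N}(B^*D)$. The plan is to verify the three defining conditions of $\bbD^\mb{p}(X)$ for $\gy_\gd$ and $\gy^\pm$: (a) membership in $\mc{D}(D)$, (b) $D\gy \in \bbX_{DB^*}^{\mb{p}^{\heartsuit}}$, and (c) $\gc^{\pm}(DB^*)D\gy \in E^{\mb{p}^{\heartsuit}}$. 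The three main ingredients are similarity of functional calculi between $B^*D$ and $DB^*$ from Section \ref{sec:similarity} (which lets us intertwine $D$ past $H^\infty$-calculus operators using $\mc{N}(B^*D) = \mc{N}(D)$), the boundedness of $H^\infty$ functional calculus on adapted spaces (Corollary \ref{cor:Xbhfc}), and the boundedness of the semigroup on slice spaces restricted to spectral subspaces (Corollary \ref{cor:sgp-slice-bddness}).

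For part (i), the first step is to observe that since $\mc{D}(D) = \mc{D}(B^*D)$ is preserved by $e^{-\gd[B^*D]}$, we have $\gy_\gd \in \mc{D}(D)$. Writing $\gf = \bbP_{B^*D}\gf + (I-\bbP_{B^*D})\gf$ and using that $D$ annihilates $\mc{N}(B^*D) = \mc{N}(D)$ together with similarity of functional calculi on $\mc{D}(D) \cap \overline{\mc{R}(B^*D)}$, I would derive the key identity
\begin{equation*}
D\gy_\gd = e^{-\gd[DB^*]}D\gf.
\end{equation*}
Then (b) follows directly from Corollary \ref{cor:Xbhfc} applied to $\sgp_\gd \in H^\infty$. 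For (c), the operators $\gc^{\pm}(DB^*)$ and $e^{-\gd[DB^*]}$ commute via the functional calculus homomorphism, so $\gc^{\pm}(DB^*)D\gy_\gd = e^{-\gd[DB^*]}\big(\gc^{\pm}(DB^*)D\gf\big)$; the bracketed term lies in $\bbX_{DB^*}^{2,\pm} \cap E^{\mb{p}^{\heartsuit}}$ by hypothesis, so Corollary \ref{cor:sgp-slice-bddness} produces the required $E^{\mb{p}^{\heartsuit}}$ membership.

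For part (ii), the same strategy gives $\gy^{\pm} \in \mc{D}(D)$ (since $\gc^{\pm} \in H^\infty$ and the nullspace is preserved), and similarity of functional calculi again yields $D\gy^{\pm} = \gc^{\pm}(DB^*)D\gf$, which lies in $\bbX_{DB^*}^{\mb{p}^{\heartsuit}}$ by Corollary \ref{cor:Xbhfc}. The slice space condition (c) is now particularly clean: by the homomorphism property, $\gc^{\pm'}(DB^*)D\gy^{\pm} = (\gc^{\pm'}\gc^{\pm})(DB^*)D\gf$, which equals either $\gc^{\pm}(DB^*)D\gf \in E^{\mb{p}^{\heartsuit}}$ when the signs agree, or $0$ when they disagree. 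I do not anticipate any genuine obstacle; the only subtle point is bookkeeping in the extensions of $H^\infty$-calculus to all of $L^2$, which is handled uniformly via the $\mc{N}(B^*D) = \mc{N}(D)$ identity combined with Proposition \ref{prop:secondext}.
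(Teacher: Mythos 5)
Your proposal is correct and follows essentially the same route as the paper: the same decomposition $\gf = \bbP_{B^*D}\gf + (I-\bbP_{B^*D})\gf$ to extend $e^{-\gd[B^*D]}$ off $\overline{\mc{R}(B^*D)}$, the same intertwining identity $D(e^{-\gd[B^*D]}\gf) = e^{-\gd[DB^*]}D\gf$ via similarity of functional calculi and $\mc{N}(B^*D)=\mc{N}(D)$, and the same appeals to Corollary \ref{cor:Xbhfc} and Corollary \ref{cor:sgp-slice-bddness}, with the homomorphism property handling the spectral projections in part (ii).
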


\begin{proof}
	The function $[z \mapsto e^{-\gd[z]}]$ is in $H^\infty$ and has a polynomial limit at $0$, so $e^{-\gd[B^*D]} \gf$ may be defined for all $\gf \in \mc{D}(D)$ (not just those in $\overline{\mc{R}(B^* D)}$)\footnote{Although we did not discuss this in Subsection \ref{section:bisectorial-hfc}, this is a standard procedure. The representation \eqref{eqn:sgpinter} is all we need.} 
	as 
	\begin{equation*}
	e^{-\gd[B^* D]} \gf = e^{-\gd[B^* D]} \bbP_{B^* D} \gf + (I-\bbP_{B^* D})  \gf.
	\end{equation*}
	For all such $\gf$, by using the similarity of functional calculi along with the equalities $D(I - \bbP_{B^* D}) \gf=0$ and $D\bbP_{B^* D} \gf=D\gf$, we can write
	\begin{equation}\label{eqn:sgpinter}
		D(e^{-\gd[B^* D]} \gf) = e^{-\gd[DB^*]} D\gf.
	\end{equation}
	Since $D\gf$ is in ${\bbX}_{DB^*}^{\mb{p}^\heartsuit}$, so is $D(e^{-\gd[B^* D]} \gf)$.
	To see the slice space containments of spectral projections, write
	\begin{equation*}
		\gc^\pm (DB^*) D(e^{-\gd[B^* D]} \gf) = e^{-\gd[DB^*]} \gc^\pm (DB^*) D\gf.
	\end{equation*}
	By assumption $\gc^\pm (DB^*) D\gf$ is in $E^{\mb{p}^\heartsuit} \cap {\bbX}_{DB^*}^{\mb{p}^\heartsuit,\pm} \subset E^{\mb{p}^\heartsuit} \cap \bbX_{DB^*}^{2,\pm}$, and by Corollary \ref{cor:sgp-slice-bddness}, $e^{-\gd[DB^*]} \gc^\pm (DB^*) D\gf$ is in $E^{\mb{p}^\heartsuit}$. 
		
	The second part is proven similarly: we have $\gc^\pm(B^* D)\mc{D}(D) \subset \mc{D}(B^* D)$, and by similarity of functional calculi
	\begin{equation*}
		D \gc^\pm (B^* D) \gf_0 = \gc^\pm (DB^*) D\gf_0 \in {\bbX}_{DB^*}^{\mb{p}^\heartsuit}
	\end{equation*}
	and
	\begin{align*}
		\gc^\pm (DB^*) D \gc^\pm (B^* D)\gf &= \gc^\pm (DB^*) D\gf_0 \in E^{\mb{p}^\heartsuit} \\
		\gc^\mp (DB^*) D \gc^\pm (B^* D)\gf &= 0 \in E^{\mb{p}^\heartsuit},
	\end{align*}
        completing the proof.
\end{proof}

\begin{lem}\label{lem:int-slice}
	Suppose that $\gf \in {\bbX}_{B^* D}^{\mb{p}^\prime} \cap \mc{D}(B^* D)$.
	Then for all $t > 0$, $\gc^\pm (DB^*) e^{-t[DB^*]/2} D\gf$ is defined and contained in $E^{\mb{p}^\prime}$.
	Furthermore, $e^{-t[B^* D]/2} \gf \in \bbD^\mb{p}(X)$.
\end{lem}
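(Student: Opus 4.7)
The plan is to reduce the entire lemma to two facts about $D\gf$: that $D\gf$ lies in the adapted space $\bbX_{DB^*}^{\mb{p}^\heartsuit}$, and that the semigroup applied to its spectral projections lies in the slice space $E^{\mb{p}^\heartsuit}$, which coincides with $E^{\mb{p}^\prime}$ since slice spaces depend only on integrability (Proposition \ref{prop:slice-change-of-parameters}) and $i(\mb{p}^\heartsuit) = i(\mb{p}^\prime)$. The similarity of functional calculi expressed in \eqref{eqn:sgpinter} is then the bridge that transfers everything between the $B^* D$ side and the $DB^*$ side.

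First I would establish that $D\gf \in \bbX_{DB^*}^{\mb{p}^\heartsuit}$. Since $\mc{D}(B^*D) = \mc{D}(D)$, we have $\gf \in \mc{D}(D)$ with $\gf \in \bbX_{B^*D}^{\mb{p}^\prime} = \bbX_{B^*D}^{\mb{p}^\heartsuit + 1}$; applying Proposition \ref{prop:D-mapping} to the operator $B^*D$ with exponent $\mb{p}^\heartsuit$ yields $D\gf \in \bbX_{DB^*}^{\mb{p}^\heartsuit}$. Then, for the first assertion of the lemma, I would invoke Lemma \ref{lem:sgp-in-slice} with $DB$ replaced by $DB^*$ and $\mb{p}$ replaced by $\mb{p}^\heartsuit$: the hypotheses hold because $i(\mb{p}^\heartsuit) = i(\mb{p})^\prime < 2$ (as $i(\mb{p}) > 2$) and $\gq(\mb{p}^\heartsuit) = -\gq(\mb{p}) - 1 \in (-1,0)$ (as $\gq(\mb{p}) \in (-1,0)$). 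Taking $M = 0$ gives $e^{-t[DB^*]/2}\gc^\pm(DB^*) D\gf \in E^{\mb{p}^\heartsuit}$, and because $\gc^\pm$ and $[z \mapsto e^{-t[z]/2}]$ are both $H^\infty$-functions of $DB^*$ they commute, giving the desired expression $\gc^\pm(DB^*)e^{-t[DB^*]/2} D\gf \in E^{\mb{p}^\prime}$.

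For the second assertion, I would check the three conditions defining $\bbD^\mb{p}(X)$ for the element $e^{-t[B^*D]/2}\gf$. The domain condition $e^{-t[B^*D]/2}\gf \in \mc{D}(D)$ follows by decomposing $\gf = \bbP_{B^*D}\gf + (I - \bbP_{B^*D})\gf$: the second summand lies in $\mc{N}(B^*D) = \mc{N}(D) \subset \mc{D}(D)$, while the first is in $\mc{D}(D) \cap \overline{\mc{R}(B^*D)}$ by Lemma \ref{lem:proj} and is preserved by the semigroup on the spectral subspace. The regularity condition $D(e^{-t[B^*D]/2}\gf) \in \bbX_{DB^*}^{\mb{p}^\heartsuit}$ is obtained by applying \eqref{eqn:sgpinter} to write this as $e^{-t[DB^*]/2} D\gf$ and then using boundedness of the $H^\infty$ functional calculus (Corollary \ref{cor:Xbhfc}) on $\bbX_{DB^*}^{\mb{p}^\heartsuit}$ applied to $D\gf$. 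Finally, the slice space condition on $\gc^\pm(DB^*) D(e^{-t[B^*D]/2}\gf) = \gc^\pm(DB^*) e^{-t[DB^*]/2} D\gf$ is exactly the content of the first assertion. There is no serious obstacle here; the only care needed is in the exponent bookkeeping and confirming that $\mb{p}^\heartsuit$ meets the hypotheses of Lemma \ref{lem:sgp-in-slice}, after which the assembly is essentially automatic from the abstract machinery already in place.
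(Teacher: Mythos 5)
Your proof is correct and follows essentially the same route as the paper: both pass through Proposition \ref{prop:D-mapping} to get $D\gf \in \bbX_{DB^*}^{\mb{p}^\heartsuit}$, apply Lemma \ref{lem:sgp-in-slice} with $M=0$ to the operator $DB^*$ at exponent $\mb{p}^\heartsuit$, and then use \eqref{eqn:sgpinter} to intertwine the semigroups. You make explicit the hypothesis check for Lemma \ref{lem:sgp-in-slice} and the $\mc{D}(D)$-preservation argument via the $\bbP_{B^*D}$-decomposition, which the paper leaves implicit.
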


\begin{proof}
	Note that $\mc{D}(B^* D) = \mc{D}(D)$.
	Since $D\gf \in {\bbX}_{DB^*}^{\mb{p}^\heartsuit}$ (Proposition \ref{prop:D-mapping}), by Lemma \ref{lem:sgp-in-slice} we find that
	\begin{equation}\label{eqn:sgp-slice}
		\gc^\pm (DB^*) e^{-t[DB^*]/2} D\gf = e^{-t[DB^*]/2} \gc^\pm (DB^*) D\gf \in E^{\mb{p}^\heartsuit} = E^{\mb{p}^\prime}.
	\end{equation}
	
	To see that $e^{-t[B^* D]/2} \gf$ is in $\bbD^\mb{p}(X)$, note that
	\begin{equation*}
		e^{-t[B^* D]/2} \gf \in \mc{D}(B^* D) = \mc{D}(D),
	\end{equation*}
	that
	\begin{equation*}
		D e^{-t[B^* D]/2} \gf  = e^{-t[DB^*]/2} D\gf \in {\bbX}_{DB^*}^{\mb{p}^\heartsuit},
	\end{equation*}
	and that
	\begin{equation*}
		\gc^{\pm}(DB^*) D e^{-t[B^* D]/2} \gf = e^{-t[DB^*]/2} \gc^\pm (DB^*) D\gf \in E^{\mb{p}^\heartsuit}
	\end{equation*}
	by \eqref{eqn:sgp-slice}.
\end{proof}

\textbf{Step 2: Verification and application of the initial limiting arguments.}

\begin{lem}\label{lem:badaperture1}
	Define the operator
	\begin{equation*}
	\td{\mb{G}}_{t_0} : \gf \mapsto \sgn(t_0 - t)e^{-[(t_0 - t)DB^*]} \gc^{\sgn(t_0-t)}(DB^*)\gf.
	\end{equation*}
	Let $K \subset \bbR_{+,t_0}$ be compact.
	Then for all $k \in \bbN$, $\mb{1}_{K \times \bbR^n} \td{\mb{G}}_{t_0}$ is bounded from ${\bbX}_{DB^*}^{\mb{p}^\heartsuit}$ to $X^{\mb{p}^\heartsuit + k}$, and this boundedness is uniform in $K$ provided $K_- > t_0 + 1$.
\end{lem}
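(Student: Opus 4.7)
The strategy is to identify $\mb{1}_{K \times \bbR^n}\td{\mb{G}}_{t_0}\gf$ as a reflected and restricted Cauchy extension of a spectral projection of $\gf$, bound it in the ambient $X^{\mb{p}^\heartsuit}$-norm by means of Theorem~\ref{thm:sgpnorm-abstract}, and then absorb the regularity shift $k \in \bbN$ using the fact that the function is supported in a compact $t$-strip bounded away from $0$ and from $t_0$.

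First, I would split $K = K^{-} \sqcup K^{+}$ with $K^{\pm} := K \cap \{t \gtrless t_0\}$. By symmetric treatment it suffices to handle $K^{+}$ (and under the uniformity hypothesis $K_- > t_0 + 1$ only $K^{+}$ is nonempty). On $K^{+}$, writing $g := \gc^{-}(DB^*)\gf$ and using Corollary~\ref{cor:Xbhfc} to control $\nm{g}_{\bbX_{DB^*}^{\mb{p}^\heartsuit}} \lesssim \nm{\gf}_{\bbX_{DB^*}^{\mb{p}^\heartsuit}}$, I would write
\begin{equation*}
\mb{1}_{K^{+}}(t)\,\td{\mb{G}}_{t_0}\gf(t,\cdot) = -\mb{1}_{K^{+}}(t)\,e^{-(t-t_0)[DB^*]} g,
\end{equation*}
which is the restriction to $K^{+}$ of the reflected Cauchy extension $t\mapsto -C_{DB^*}^{-} g(-(t-t_0))$.

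Second, the hypotheses on $\mb{p}$ force $i(\mb{p}^\heartsuit) < 2$ and $\gq(\mb{p}^\heartsuit) \in (-1,0)$, so Theorem~\ref{thm:sgpnorm-abstract} applies (with $DB^*$ in place of $DB$ and on the negative spectral subspace), giving $\nm{C_{DB^*}^{-} g}_{X^{\mb{p}^\heartsuit}} \simeq \nm{g}_{\bbX_{DB^*}^{\mb{p}^\heartsuit}}$. A direct change of variables $u = t - t_0$ on the Lusin functional (for $T$-spaces), the Carleson functional (for $T^{\infty}$-type spaces), or the Whitney average (for $Z$-spaces) then transfers this to
\begin{equation*}
\nm{\mb{1}_{K^{+}} \td{\mb{G}}_{t_0}\gf}_{X^{\mb{p}^\heartsuit}} \lesssim_{K} \nm{\gf}_{\bbX_{DB^*}^{\mb{p}^\heartsuit}},
\end{equation*}
the constants depending only on the ratios $t/(t-t_0) = 1 + t_0/(t-t_0)$ and on the ball-enlargement $B(x,t) \supset B(x,t-t_0)$, both controlled on $K^{+}$.

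Third, for the upgrade from $\mb{p}^\heartsuit$ to $\mb{p}^\heartsuit + k$: since $F := \mb{1}_{K^{+}}\td{\mb{G}}_{t_0}\gf$ is supported in $[K_-, K_+] \times \bbR^n$, the weight $t^{-r(\mb{p}^\heartsuit)-k}$ defining the $X^{\mb{p}^\heartsuit+k}$-norm differs from the weight $t^{-r(\mb{p}^\heartsuit)}$ by a factor $t^{-k}$, and on the support of $F$ one has $t^{-k} \leq K_-^{-k}$ (for $k \in \bbN$). Hence
\begin{equation*}
\nm{F}_{X^{\mb{p}^\heartsuit + k}} \lesssim K_-^{-k}\, \nm{F}_{X^{\mb{p}^\heartsuit}},
\end{equation*}
which combined with the previous step yields the lemma. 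Under the hypothesis $K_- > t_0 + 1$ we have $K_-^{-k} \leq 1$, and the change-of-variables constant $1 + t_0/(t - t_0)$ is bounded by $1 + t_0$ on $K$; together these give uniformity.

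The main technical obstacle lies in the second step: the $X$-space functionals (Lusin, Carleson, Whitney) are not translation-invariant in the $t$-variable, so the substitution $u = t - t_0$ rescales both the weight and the cone (or Whitney region), and one must check by hand that the resulting distortions are controlled in terms of $K$ and become uniform once $K_- > t_0 + 1$. Everything else is routine: the spectral decomposition and the symmetric treatment of $K^{-}$ versus $K^{+}$ are immediate, the regularity-shift estimate is a weight inequality, and the basic $X^{\mb{p}^\heartsuit}$-bound is just the Cauchy characterisation already established.
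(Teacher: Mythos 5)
Your proposal captures the core mechanism of the paper's proof: after splitting $K$ at $t_0$, each piece of $\mb{1}_{K}\td{\mb{G}}_{t_0}\gf$ is a translated Cauchy extension of a spectral projection of $\gf$, which is controlled in $X^{\mb{p}^\heartsuit}$ by Theorem~\ref{thm:sgpnorm-abstract} (using $i(\mb{p}^\heartsuit)<2$ and $\gq(\mb{p}^\heartsuit)\in(-1,0)$), and the shift from $\mb{p}^\heartsuit$ to $\mb{p}^\heartsuit+k$ is absorbed because the weight only changes by a factor $t^{-k}\le K_-^{-k}$ on the support. The aperture-enlargement idea and the observation that uniformity holds once $K_->t_0+1$ (because the required aperture $1+t_0/(t-t_0)$ is then bounded by $1+t_0$) match the paper's argument exactly.

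The one place where you part ways with the paper is the handling of $Z$-spaces. The paper deliberately proves \emph{only} the tent space case directly and obtains the $Z$-space case by real interpolation, exploiting the openness of the hypotheses on $\mb{p}$. Your plan to perform the change of variables directly on the Whitney average is not as clean as you suggest: unlike the cone $\gG(x)$, the Whitney region $\gO_c(t,x)=(c_1^{-1}t,c_1t)\times B(x,c_0t)$ is \emph{not} simply rescaled under the substitution $u=t-t_0$ (the $u$-interval becomes $(c_1^{-1}t-t_0,c_1t-t_0)$, which is not of the form $(\td{c}_1^{-1}\tau,\td{c}_1\tau)$ with $\tau=t-t_0$ unless one allows the Whitney parameter to vary with $t$), and the lower endpoint can even become negative for $t$ near $t_0$. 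These distortions can be controlled, but it requires extra work that you have elided. Also, your passing reference to the Carleson functional is a red herring here: since $i(\mb{p})>2$ forces $i(\mb{p}^\heartsuit)<2$, the space $X^{\mb{p}^\heartsuit+k}$ is always a finite-exponent tent or $Z$-space and the Carleson variant never arises. Neither point is fatal --- the result is true and you have the right mechanism --- but if you want a self-contained proof you should either adopt the interpolation shortcut for the $Z$-case or spell out the Whitney-region comparison carefully.

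One more small remark: the favourable sign in the weight comparison on $K^+$ is not the sign of $t^{-s}/u^{-s}$ alone (which goes the wrong way, since $s<0$), but of the \emph{combined} factor $(u+t_0)^{-2s-(n+1)}/u^{-2s-(n+1)}$, which is $\le 1$ because $-2s-(n+1)<0$ for $s\in(-1,0)$ and $n\ge 1$. The paper states this explicitly via the exponent $2(\gq(\mb{p})+1)-n-1<0$; your proposal leaves it implicit in the ratio argument, so make sure you verify the sign when you write it out.
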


\begin{proof}
	We will prove the result for tent spaces; the $Z$-space result then follows by real interpolation because the assumption on $\mb{p}$ is open in $(j(\mb{p}),\theta(\mb{p}))$.
	
	Suppose $\gf \in {\bbX}_{DB^*}^{\mb{p}^\heartsuit}$ and write $K = K_0 \cup K_\infty$, where $K_0 \subset (0,t_0)$ and $K_\infty \subset (t_0,\infty)$. 
	For all $x \in \bbR^n$,
	\begin{align*}
		&\mc{A}^2(\gk^{\gq(\mb{p})+1-k} \mb{1}_{K \times \bbR^n} \td{\mb{G}}_{t_0}(\gf))(x)^2 \\
		&= \bigg( \int_{K_0} \int_{B(x,t)} + \int_{K_\infty} \int_{B(x,t)} \bigg) |t^{\gq(\mb{p})+1-k} \td{\mb{G}}_{t_0}(\gf)(t,y)|^2 \, \frac{dy \, dt}{t^{1+n}} \\
		&=: \mb{I}_0 + \mb{I}_\infty.
	\end{align*}
	There exists $\ga > 0$ (depending on $K_0$) such that if $(t_0 - \gt, y) \in (K_0 \times \bbR^n) \cap \gG(x)$, then $(\gt,y) \in \gG^\ga(x)$ (see Figure \ref{fig:badaperture}).
	Thus, using \eqref{eqn:DGident} and that $t_0 - \gt \simeq_K \gt$ when $t_0 - \gt \in K_0$,
	\begin{align*}
		\mb{I}_0 &\leq \dint_{\gG^\ga (x)} \mb{1}_{K_0}(t_0 - \gt) \big| (t_0 - \gt)^{\gq(\mb{p})+1-k} \td{\mb{G}}_{t_0}(\gf)(t_0 - \gt,y) \big|^2 \, \frac{dy \, d\gt}{(t_0 - \gt)^{1+n}} \\
		&\lesssim_{K,k} \dint_{\gG^\ga(x)} \big| \gt^{\gq(\mb{p})+1} e^{-\gt DB^*} \gc^+ (DB^*) \gf(y)\big|^2 \, \frac{dy \, d\gt}{\gt^{1+n}}.
	\end{align*}
	Similarly, there exists $\gb > 0$ such that if $(t_0 + \gs,y) \in (K_1 \times \bbR^n) \cap \gG(x)$, then $(\gs,y) \in \gG^\gb(x)$, and using $2(\gq(\mb{p})+1) - n - 1 < 0$ we have
	\begin{align*}
		\mb{I}_\infty &\leq \dint_{\gG^\gb (x)} \mb{1}_{K_\infty}(t_0 + \gs) \big|(t_0 + \gs)^{\gq(\mb{p})+1-k} e^{\gs DB^*} \gc^- (DB^*) \gf(y) \big|^2 \, \frac{dy \, d\gs}{(t_0+\gs)^{1+n}} \\
		&\leq (K_\infty)_-^{-2k} \dint_{\gG^\gb(x)} (t_0 + \gs)^{2(\gq(\mb{p})+1) - n - 1} \big| e^{\gs DB^*} \gc^- (DB^*) \gf(y) \big|^2 \, dy \, d\gs \\
		&\leq (K_\infty)_-^{-2k} \dint_{\gG^\gb(x)} \gs^{2(\gq(\mb{p})+1) - n - 1} \big| e^{\gs DB^*} \gc^- (DB^*) \gf(y) \big|^2 \, dy \, d\gs \\
		&= (K_\infty)_-^{-2k} \dint_{\gG^\gb(x)} \big|\gs^{\gq(\mb{p})+1} e^{\gs DB^*} \gc^- (DB^*) \gf(y) \big|^2 \, \frac{dy \, d\gs}{\gs^{1+n}}.
	\end{align*}
	Therefore we can estimate
	\begin{align*}
		&\nm{\mb{1}_{K \times \bbR^n} \td{\mb{G}}_{t_0}(\gf)}_{T^{\mb{p}^\heartsuit + k}} \\
		&\lesssim C(K,k)\nm{e^{-tDB^*} \gc^+ (DB^*) \gf}_{T^{\mb{p}^\heartsuit}}
		+ (K_\infty)_-^{-2k}\nm{e^{tDB^*} \gc^- (DB^*) \gf}_{T^{\mb{p}^\heartsuit}} \\
		&\lesssim \nm{\gf}_{{\bbH}_{DB^*}^{\mb{p}^\heartsuit}} \\
		&< \infty
	\end{align*}
	using the semigroup characterisation of the ${\bbH}^{\mb{p}^\heartsuit,\pm}_{DB^*}$ quasinorm (Theorem \ref{thm:sgpnorm-abstract}), which is valid since $i(\mb{p}^\heartsuit) < 2$ and $\gq(\mb{p}^\heartsuit) < 0$.
	Note that if $K_- > t_0 + 1$ then $\mb{I}_0 = 0$, and that the aperture $\gb$ can remain fixed in this argument, which implies the claimed uniformity in $K$ since $K_-^{-2k}$ is bounded in $K_- > t_0 + 1$.
\end{proof}

	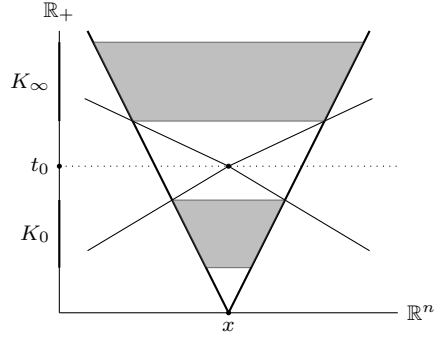
\begin{figure}
\caption{Cones of large aperture, used in Lemma \ref{lem:badaperture1}.}\label{fig:badaperture}
\begin{center}

\begin{tikzpicture}[scale = 1.5, every node/.style={font=\footnotesize}]
	
	\newcommand*{\basept}{1.5};
	\newcommand*{\aperture}{0.5};
	
	\newcommand*{\Kmm}{0.4};   
	\newcommand*{\KmM}{1};   %
	\newcommand*{\To}{1.3};    %
	\newcommand*{\KMm}{1.7};   %
	\newcommand*{\KMM}{2.4};   %
	
	\draw [thin] (0,0) -- (3,0);	
	\node [right] at (3,0) {$\bbR^n$};
	
	\draw [thin] (0,0) -- (0,2.5);	
	\node [above] at (0,2.5) {$\bbR_+$};

	\draw [fill=black] (\basept,0) circle [radius = 0.5pt];
	\node [below] at (\basept,0) {$x$};   
	
	\draw [fill=black] (0,\To) circle [radius = 0.5pt];
	\node [left] at (0,\To) {$t_0$};
	
	\draw [fill = black] (\basept,\To) circle [radius = 0.5pt];
	
	\draw [thick] (0,\Kmm) -- (0,\KmM);
	\node [left] at (${1/2}*(0,\Kmm) + {1/2}*(0,\KmM)$) {$K_0$};
	
	\draw [thick] (0,\KMm) -- (0,\KMM);
	\node [left] at (${1/2}*(0,\KMm) + {1/2}*(0,\KMM)$) {$K_\infty$};

	\draw [thick] (1.5,0) -- ({\basept - (2.5 * \aperture)}, 2.5);
	\draw [thick] (1.5,0) -- ({\basept + (2.5 * \aperture)}, 2.5);
	
	\draw [thin] (${1 - 2.5}*(\basept, \To) + {2.5}*(\basept - \KmM * \aperture,\KmM)$) -- (\basept, \To) -- (${1 - 2.5}*(\basept, \To) + {2.5}*(\basept + \KmM * \aperture,\KmM)$);
	
	\draw [thin] (${1 - 1.5}*(\basept, \To) + {1.5}*(\basept - \KMm * \aperture,\KMm)$) -- (\basept, \To) -- (${1 - 1.5}*(\basept, \To) + {1.5}*(\basept + \KMm * \aperture,\KMm)$);
	
	\draw[fill=gray,opacity=0.5] (\basept - \Kmm * \aperture,\Kmm) -- (\basept + \Kmm * \aperture,\Kmm) -- (\basept + \KmM * \aperture,\KmM) -- (\basept - \KmM * \aperture,\KmM);
	
	\draw[fill=gray,opacity=0.5] (\basept - \KMm * \aperture,\KMm) -- (\basept + \KMm * \aperture,\KMm) -- (\basept + \KMM * \aperture,\KMM) -- (\basept - \KMM * \aperture,\KMM);

	\draw [dotted] (0,1.3) -- (3,1.3);

	\end{tikzpicture}
\end{center}
\end{figure}
	
	\begin{cor}\label{cor:badaperture}
		Let $\gf \in \bbD^\mb{p}(X)$ and $k \in \bbN$.
		Then $\mb{1}_{K \times \bbR^n} D\mb{G}_{t_0,\gf} \in X^{\mb{p}^\heartsuit + k}$ for all compact $K \subset \bbR_{+,t_0}$, with uniform boundedness in $K$ provided $K_- > t_0 + 1$.
	\end{cor}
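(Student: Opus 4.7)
The plan is to reduce Corollary \ref{cor:badaperture} directly to Lemma \ref{lem:badaperture1} by recognising that $D\mb{G}_{t_0,\gf}$ is precisely the operator $\td{\mb{G}}_{t_0}$ applied to $D\gf$, and then exploiting the definition of the test class $\bbD^\mb{p}(X)$.

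First I would recall the identity \eqref{eqn:DGident}, which says that for any $\gf \in \mc{D}(D)$,
\begin{equation*}
	D\mb{G}_{t_0,\gf}(t) = \sgn(t_0 - t) e^{-[(t_0-t)DB^*]} \gc^{\sgn(t_0-t)}(DB^*) D\gf.
\end{equation*}
Comparing this with the definition of $\td{\mb{G}}_{t_0}$ in Lemma \ref{lem:badaperture1}, we see that $D\mb{G}_{t_0,\gf} = \td{\mb{G}}_{t_0}(D\gf)$ as a function on $\bbR_{+,t_0} \times \bbR^n$. This identity uses only the similarity of functional calculi between $BD$ and $DB$ combined with the fact that $D$ annihilates $\mc{N}(B^*D)$, so $D \bbP_{B^* D} = D$ on $\mc{D}(D)$.

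Next, by the very definition of the test class $\bbD^\mb{p}(X)$, the assumption $\gf \in \bbD^\mb{p}(X)$ gives in particular that $\gf \in \mc{D}(D)$ (so the above identity is available) and that $D\gf \in {\bbX}_{DB^*}^{\mb{p}^\heartsuit}$. Therefore $D\gf$ is an admissible input for the operator $\td{\mb{G}}_{t_0}$ considered in Lemma \ref{lem:badaperture1}.

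Applying Lemma \ref{lem:badaperture1} to $D\gf$ with the given $k \in \bbN$ and compact $K \subset \bbR_{+,t_0}$ yields
\begin{equation*}
	\nm{\mb{1}_{K \times \bbR^n} D\mb{G}_{t_0,\gf}}_{X^{\mb{p}^\heartsuit + k}} = \nm{\mb{1}_{K \times \bbR^n} \td{\mb{G}}_{t_0}(D\gf)}_{X^{\mb{p}^\heartsuit + k}} \lesssim \nm{D\gf}_{{\bbX}_{DB^*}^{\mb{p}^\heartsuit}} < \infty,
\end{equation*}
with implicit constant depending on $K$ in general, but uniform in $K$ as soon as $K_- > t_0 + 1$ by the corresponding uniformity statement in Lemma \ref{lem:badaperture1}. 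There is no real obstacle here: once the identity $D\mb{G}_{t_0,\gf} = \td{\mb{G}}_{t_0}(D\gf)$ is in place, the corollary is a one-line consequence of Lemma \ref{lem:badaperture1} and the definition of $\bbD^\mb{p}(X)$. The content of the corollary is really that the test class $\bbD^\mb{p}(X)$ was set up so that $D\gf$ lives in the right space for this reduction to work.
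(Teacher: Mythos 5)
Your proof is correct and follows exactly the same route as the paper's: identify $D\mb{G}_{t_0,\gf} = \td{\mb{G}}_{t_0}(D\gf)$ via \eqref{eqn:DGident}, use $\gf \in \bbD^\mb{p}(X)$ to get $D\gf \in \bbX_{DB^*}^{\mb{p}^\heartsuit}$, and apply Lemma \ref{lem:badaperture1}. The only difference is that you spell out the supporting details (similarity of functional calculi, $D$ annihilating $\mc{N}(B^*D)$, the uniformity in $K$) that the paper leaves implicit.
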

	
	\begin{proof}
		For $\gf \in \bbD^\mb{p}(X)$ we have $D\gf \in {\bbX}_{DB^*}^{\mb{p}^\heartsuit}$ and $D\mb{G}_{t_0,\gf} = \td{\mb{G}}_{t_0}(D\gf)$, so this follows from Lemma \ref{lem:badaperture1}.
	\end{proof}
	
	For $k \in \bbN$, whenever $F \in X^{\mb{p} - k}$ solves $(\CR)_{DB}$ we can invoke Corollary \ref{cor:limit-in-time} when $\gf \in \bbD^\mb{p}(X)$, yielding the equalities \eqref{eqn:limit-in-time-1} and \eqref{eqn:limit-in-time-2} for sufficiently small $\varepsilon > 0$.

\begin{cor}
	Let $\gf \in \bbD^\mb{p}(X)$ and $k \in \bbN$, and suppose that $F \in X^{\mb{p}-k}$ solves $(\CR)_{DB}$.
	Then 
	\begin{equation}\label{eqn:limit-1}
		\lim_{\varepsilon \to 0} \barint_{t_0 + \varepsilon}^{t_0 + 2\varepsilon} \int_{\bbR^n} \left\langle B^* D \mb{G}_{t_0,\gf}(t,x) , F(t,x) \right\rangle \, dx \, dt = 0.
	\end{equation}
\end{cor}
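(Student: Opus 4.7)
The plan is to apply the equality \eqref{eqn:limit-in-time-1} of Corollary \ref{cor:limit-in-time} to replace the small-$t$ barred integral over $(t_0 + \varepsilon, t_0 + 2\varepsilon)$ with the large-$t$ barred integral over $K_\varepsilon := (t_0 + (2\varepsilon)^{-1}, t_0 + \varepsilon^{-1})$, and then show the latter vanishes as $\varepsilon \to 0$ using the uniform bound in Lemma \ref{lem:badaperture1}. The detour is necessary because the short interval near $t_0$ lies outside the regime $K_- > t_0 + 1$ in which Lemma \ref{lem:badaperture1} supplies uniform boundedness, whereas $K_\varepsilon$ falls into this regime as soon as $\varepsilon$ is small enough.

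To invoke Corollary \ref{cor:limit-in-time}, I would first verify the integrability hypothesis \eqref{eqn:spatial-limit-condn} for compact $K \subset \bbR_{+,t_0}$. By Corollary \ref{cor:badaperture} and boundedness of the multiplier $B^*$ on $X$-spaces, $\mb{1}_{K \times \bbR^n}\, B^* D\mb{G}_{t_0,\gf} \in X^{\mb{p}^\heartsuit + k}$. A short exponent computation gives $\mb{p}^\heartsuit + k + 1 = \mb{p}' + k = (\mb{p} - k)'$, so writing $dx\,dt = t\, dx\,dt/t$ and absorbing the weight $t$ into $B^* D\mb{G}_{t_0,\gf}$ puts the pair $(tB^* D\mb{G}_{t_0,\gf}, F)$ into the $L^2$ duality pairing of tent- and $Z$-spaces. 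With absolute convergence this produces
\begin{equation*}
  \int_K \int_{\bbR^n} |\langle B^* D\mb{G}_{t_0,\gf}(t,x), F(t,x)\rangle| \, dx \, dt \lesssim \|\mb{1}_{K \times \bbR^n}\, B^* D\mb{G}_{t_0,\gf}\|_{X^{\mb{p}^\heartsuit + k}}\, \|F\|_{X^{\mb{p}-k}} < \infty,
\end{equation*}
which verifies \eqref{eqn:spatial-limit-condn}, and Corollary \ref{cor:limit-in-time} then supplies \eqref{eqn:limit-in-time-1}.

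It then remains to bound the right-hand side of \eqref{eqn:limit-in-time-1}. For $\varepsilon > 0$ small enough that $(2\varepsilon)^{-1} > 1$ we have $(K_\varepsilon)_- > t_0 + 1$, so Lemma \ref{lem:badaperture1} furnishes a constant $C$ independent of $\varepsilon$ with $\|\mb{1}_{K_\varepsilon \times \bbR^n}\, B^* D\mb{G}_{t_0,\gf}\|_{X^{\mb{p}^\heartsuit + k}} \leq C$. Since $|K_\varepsilon| = (2\varepsilon)^{-1}$, the normalisation of the barred integral contributes a factor $2\varepsilon$, and applying the same duality estimate as above yields
\begin{equation*}
  \left|\barint_{t_0 + (2\varepsilon)^{-1}}^{t_0 + \varepsilon^{-1}} \int_{\bbR^n} \langle B^* D\mb{G}_{t_0,\gf}(t,x), F(t,x)\rangle \, dx\, dt\right| \lesssim 2\varepsilon\, C\, \|F\|_{X^{\mb{p}-k}},
\end{equation*}
which tends to $0$ as $\varepsilon \to 0$, giving the claim.

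The main obstacle is the exponent bookkeeping: confirming that the $\heartsuit$-shift produces exactly the regularity $\mb{p}^\heartsuit + k$ needed for $B^* D\mb{G}_{t_0,\gf}$ to pair against $F \in X^{\mb{p} - k}$ via the measure $dx\, dt$ (rather than $dx\, dt/t$), and verifying that multiplying by $B^* \in L^\infty$ and by the weight $t$ behave as expected on $X^{\mb{p}^\heartsuit + k}$. Once this accounting is in place, the proof is a mechanical combination of Corollary \ref{cor:badaperture}, Lemma \ref{lem:badaperture1}, and Corollary \ref{cor:limit-in-time}, with no further analytic input needed.
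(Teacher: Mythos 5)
Your argument is correct and follows the same route as the paper's: verify \eqref{eqn:spatial-limit-condn} via Corollary \ref{cor:badaperture} so that Corollary \ref{cor:limit-in-time} applies, then use the uniform-in-$K$ bound of Lemma \ref{lem:badaperture1} (for $(K_\varepsilon)_- > t_0+1$, i.e.\ $\varepsilon < 1/2$), the $\heartsuit$-duality identity $(\mb{p}-k)^\prime = \mb{p}^\heartsuit + k + 1$, and the normalising factor $2\varepsilon$ from the barred integral over $K_\varepsilon$ to show the right side of \eqref{eqn:limit-in-time-1} tends to zero. You are slightly more explicit than the paper about absorbing the weight $t$ to convert the measure $dx\,dt$ into the $L^2$ pairing measure $dx\,dt/t$ (the paper achieves the same shift by invoking Corollary \ref{cor:badaperture} at level $k+1$), but the mechanism is identical.
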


\begin{proof}
	For $\varepsilon < 1/2$ we have $t_0+(2\varepsilon)^{-1} > t_0 + 1$, so by Corollary \ref{cor:badaperture} we have
	\begin{equation*}
		\mb{1}_{[t_0 + (2\varepsilon)^{-1}, t_0 + \varepsilon^{-1}] \times \bbR^n} D\mb{G}_{t_0,\gf} \in X^{\mb{p}^\heartsuit+k+1}
	\end{equation*}
	with uniformly bounded quasinorms.
	Since $F \in X^{\mb{p} - k}$, and since $(\mb{p} - k)^\prime = \mb{p}^\heartsuit + k + 1$, absolute convergence of the $X$-space duality integrals implies that condition \eqref{eqn:spatial-limit-condn} is satisfied, and also that 
	\begin{equation*}
		\int_{t_0 + (2\varepsilon)^{-1}}^{t_0 + \varepsilon^{-1}} \int_{\bbR^n} |\langle B^* D \mb{G}_{t_0,\gf}(t,x), F(t,x) \rangle| \, dx \, dt \lesssim 1
	\end{equation*}
	for all $\varepsilon < 1/2$.
	Therefore we can take the limit as $\varepsilon \to 0$ of both sides of \eqref{eqn:limit-in-time-1} using dominated convergence to conclude that the right hand side vanishes.
\end{proof}

\textbf{Step 3: Weak semigroup properties of solutions.}

\begin{lem}\label{lem:dual-semigp-test}
	Suppose that $F \in X^{\mb{p} - k}$ solves $(\CR)_{DB}$ for some $k \in \bbN$.
	When $t_0 > 0$, $\gt \geq 0$, and $\gf \in \bbD^\mb{p}(X)$, we have
	\begin{equation}\label{eqn:dual-semigp-eqn}
		\langle B^* D \gf, F(t_0 + \gt) \rangle_{E^{\mb{p}^\heartsuit}} = \langle B^* e^{-\gt DB^*} \gc^+(DB^*) D\gf, F(t_0) \rangle_{E^{\mb{p}^\heartsuit}}.
	\end{equation}
\end{lem}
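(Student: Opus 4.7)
The plan is to mimic the structure of the proof of Lemma \ref{lem:psmallplus}, but replacing the $\bbH^2_{B^*D}$ duality pairing by the slice space pairing that is available thanks to the carefully chosen class $\bbD^\mb{p}(X)$. For $\gd \geq 0$ set $\gf_\gd := e^{-\gd[B^*D]}\gf$, which by Lemma \ref{lem:test-sgp-closure}(i) still lies in $\bbD^\mb{p}(X)$, and define
\begin{equation*}
I_{t_0,\gf}^{\varepsilon,\gd} := \barint_\varepsilon^{2\varepsilon}\int_{\bbR^n}\langle B^*D\mb{G}_{t_0,\gf_\gd}(t,x), F(t,x)\rangle \, dx\, dt.
\end{equation*}
First I would check the hypothesis \eqref{eqn:spatial-limit-condn} of Corollary \ref{cor:limit-in-time}: for compact $K \subset \bbR_{+,t_0}$, identity \eqref{eqn:DGident} together with Lemma \ref{lem:int-slice} yields $B^*D\mb{G}_{t_0,\gf_\gd}(t) \in E^{\mb{p}^\heartsuit}$ uniformly on $K$, while Proposition \ref{prop:wksolnprops}(3) (applied via Remark \ref{rmk:Fell}) gives $F(t) \in E^{i(\mb{p}^\heartsuit)^\prime}$ uniformly on $K$, so Proposition \ref{prop:slice-duality} and Fubini give $L^1$-integrability. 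Thus \eqref{eqn:limit-in-time-2} applies to $\gf_\gd$.

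Next I would compute the $\varepsilon \to 0$ limit. By \eqref{eqn:DGident} and similarity of functional calculi, for $t \in [\varepsilon, 2\varepsilon]$ one has $D\mb{G}_{t_0,\gf_\gd}(t_0-t) = e^{-(t+\gd)DB^*}\gc^+(DB^*)D\gf$. Since $\gc^+(DB^*)D\gf \in E^{\mb{p}^\heartsuit} \cap \bbX_{DB^*}^{2,+}$ (by definition of $\bbD^\mb{p}(X)$), Corollary \ref{cor:sgp-slice-bddness} gives strong continuity of the Cauchy extension in $E^{\mb{p}^\heartsuit}$, while Proposition \ref{prop:wksolnprops}(3) yields $t \mapsto F(t)$ continuous into $E^\mb{p}(t_0)$. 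Combining these with \eqref{eqn:limit-in-time-2} produces
\begin{equation*}
\lim_{\varepsilon\to 0}I_{t_0,\gf}^{\varepsilon,\gd} = -\langle B^*e^{-\gd DB^*}\gc^+(DB^*)D\gf, F(t_0)\rangle_{E^{\mb{p}^\heartsuit}}.
\end{equation*}
A direct unwinding of the definitions shows $\mb{G}_{t_0,\gf_{\gd+\gt}}(t) = \mb{G}_{t_0+\gt,\gf_\gd}(t)$ on the integration window (both equal $e^{-(t_0+\gt-t+\gd)B^*D}\gc^+(B^*D)\bbP_{B^*D}\gf$), so $I_{t_0,\gf}^{\varepsilon,\gd+\gt} = I_{t_0+\gt,\gf}^{\varepsilon,\gd}$ and hence
\begin{equation*}
\langle B^*e^{-(\gd+\gt)DB^*}\gc^+(DB^*)D\gf, F(t_0)\rangle = \langle B^*e^{-\gd DB^*}\gc^+(DB^*)D\gf, F(t_0+\gt)\rangle.
\end{equation*}
Sending $\gd \to 0^+$ (again by Corollary \ref{cor:sgp-slice-bddness}) gives
\begin{equation*}
\langle B^*e^{-\gt DB^*}\gc^+(DB^*)D\gf, F(t_0)\rangle = \langle B^*\gc^+(DB^*)D\gf, F(t_0+\gt)\rangle.
\end{equation*}

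To finish, I would invoke \eqref{eqn:limit-1} at both $t_0$ and $t_0+\gt$. On the range $t \in (t_0,\infty)$ we have $D\mb{G}_{t_0,\gf}(t_0+s) = -e^{-s[DB^*]}\gc^-(DB^*)D\gf \to -\gc^-(DB^*)D\gf$ in $E^{\mb{p}^\heartsuit}$ as $s \to 0$ (Corollary \ref{cor:sgp-slice-bddness} applied on the negative spectral subspace), so computing the limit in \eqref{eqn:limit-1} as a slice space pairing yields $\langle B^*\gc^-(DB^*)D\gf, F(t_0+\gt)\rangle_{E^{\mb{p}^\heartsuit}} = 0$. Decomposing $D\gf = \gc^+(DB^*)D\gf + \gc^-(DB^*)D\gf$ on the right-hand side then produces the claimed identity.

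The main obstacle will be the careful bookkeeping of the slice space continuity and uniformity required to pass all the limits (both $\varepsilon \to 0$ and $\gd \to 0$) through the pairings. In particular, one must confirm that $e^{-\tau DB^*}\gc^+(DB^*)D\gf$ depends continuously on $\tau \geq 0$ in $E^{\mb{p}^\heartsuit}$ (not merely in $L^2$), which is where the hypothesis $\gc^\pm(DB^*)D\gf \in E^{\mb{p}^\heartsuit}$ built into $\bbD^\mb{p}(X)$ does the essential work via Corollary \ref{cor:sgp-slice-bddness}.
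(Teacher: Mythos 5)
Your proposal reproduces the paper's argument: rewrite the pairings in slice-space duality, compute the $\varepsilon\to 0$ limits of \eqref{eqn:limit-1} and \eqref{eqn:limit-in-time-2} to obtain the vanishing of the negative-spectral part and the weak semigroup identity, then add the two. The algebraic core ($I_{t_0,\gf}^{\varepsilon,\gd+\gt}=I_{t_0+\gt,\gf}^{\varepsilon,\gd}$) is the same observation the paper uses (the paper simply takes $\gd = 0$ directly, so your extra $\gd\to 0^+$ limit at the end is a small detour but valid by the same Corollary~\ref{cor:sgp-slice-bddness} argument). Two small remarks. First, the verification of \eqref{eqn:spatial-limit-condn} is already established, for all $\gf\in\bbD^\mb{p}(X)$ and $F\in X^{\mb{p}-k}$, in the step of the paper that precedes the lemma via Corollary~\ref{cor:badaperture}; your re-derivation is redundant. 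Second, and more substantively, you cite Lemma~\ref{lem:int-slice} to place $B^*D\mb{G}_{t_0,\gf_\gd}(t)$ in $E^{\mb{p}^\heartsuit}$, but that lemma's hypothesis is $\gf\in\bbX_{B^*D}^{\mb{p}^\prime}\cap\mc{D}(B^*D)$, which is not what $\gf_\gd\in\bbD^\mb{p}(X)$ gives you (the latter only controls $D\gf_\gd$, not $\gf_\gd$ itself inside $\overline{\mc{R}(B^*D)}$). The correct reference, and the one the paper uses, is Lemma~\ref{lem:sgp-in-slice} applied with $M=0$ to $f=D\gf_\gd\in\bbX_{DB^*}^{\mb{p}^\heartsuit}$.
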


\begin{proof}
	We need to rewrite the integrals in \eqref{eqn:limit-1} and \eqref{eqn:limit-in-time-2} in terms of duality of slice spaces.
	By Proposition \ref{prop:wksolnprops}, $F(t)$ is in $E^\mb{p}$ for each $t \in \bbR_+$.
	By Lemma \ref{lem:sgp-in-slice}, since $D\gf \in {\bbX}_{DB^*}^{\mb{p}^\heartsuit}$, we have that $B^* D \mb{G}_{t_0,\gf}(t)$ is in $E^{\mb{p}^\heartsuit}$.
	Hence
	\begin{equation*}
		\int_{\bbR^n} \langle B^* D \mb{G}_{t_0,\gf} (t,x), F(t,x) \rangle \, dx = \langle B^* D \mb{G}_{t_0,\gf}(t), F(t) \rangle_{E^{\mb{p}^\heartsuit}}
	\end{equation*}
	by the slice space duality identification of Proposition \ref{prop:slice-duality}.
	Therefore \eqref{eqn:limit-1} and \eqref{eqn:limit-in-time-2} can be rewritten as
	\begin{equation}\label{eqn:limit-1-slicedual}
		\lim_{\varepsilon \to 0} \barint_{t_0 + \varepsilon}^{t_0 + 2\varepsilon} \langle B^* D \mb{G}_{t_0,\gf}(t), F(t) \rangle_{E^{\mb{p}^\heartsuit}} \, dt = 0 
	\end{equation}
	and
	\begin{equation}\label{eqn:limit-in-time-2-slicedual}
		-\lim_{\varepsilon \to 0} \barint_{\varepsilon}^{2\varepsilon} \langle B^* D \mb{G}_{t_0,\gf}(t_0 - t), F(t_0 - t) \rangle_{E^{\mb{p}^\heartsuit}} \, dt = \lim_{\varepsilon \to 0} \barint_{\varepsilon}^{2\varepsilon} \langle B^* D \mb{G}_{t_0,\gf}(t), F(t) \rangle_{E^{\mb{p}^\heartsuit}} \, dt.
	\end{equation}
	
	We evaluate these limits by continuity of the integrands.
	By Proposition \ref{prop:wksolnprops} we have $F \in C^\infty(\bbR_+ : E^\mb{p})$.
	By the definition of $\bbD^\mb{p}(X)$, we have that $B^* D \mb{G}_{t_0,\gf}(t) \in E^{\mb{p}^\heartsuit}$ for all $t \in \bbR_{+,t_0}$, with 
	\begin{equation*}
		\lim_{t \searrow t_0} B^* D \mb{G}_{t_0,\gf}(t) = -B^* D\gc^- (B^* D) \bbP_{B^* D} \gf 
		=  -B^* \gc^- (DB^*) D \gf
	\end{equation*}
	in $E^{\mb{p}^\heartsuit}$ by Corollary \ref{cor:sgp-slice-bddness}.
	Therefore \eqref{eqn:limit-1-slicedual} becomes
	\begin{equation}
		\langle B^* \gc^- (DB^*) D \gf , F(t_0) \rangle_{E^{\mb{p}^\heartsuit}} = 0. \label{eqn:dual-expn-1}
	\end{equation}
	
	Next we will prove
	\begin{equation}
	\langle B^* e^{-\gt DB^*} \gc^+ (DB^*) D \gf, F(t_0) \rangle_{E^{\mb{p}^\heartsuit}}
		= \langle B^* \gc^+ (DB^*) D \gf, F(t_0 + \gt) \rangle_{E^{\mb{p}^\heartsuit}} \label{eqn:dual-expn-2}
	\end{equation}
	by taking the limit of the left hand side of \eqref{eqn:limit-in-time-2-slicedual} and exploiting an algebraic property of the right hand side.	
	Summing \eqref{eqn:dual-expn-1} (at $t_0 + \gt$) and \eqref{eqn:dual-expn-2} will yield \eqref{eqn:dual-semigp-eqn} and complete the proof.
	
	For $\gf \in \bbD^\mb{p}(X)$ and $\gd \geq 0$, define
	\begin{equation*}
		I_{t_0,\gf}^{\varepsilon,\gd} := \barint_\varepsilon^{2\varepsilon} \langle B^* D \mb{G}_{t_0, \gf_\gd}(t), F(t) \rangle_{E^{\mb{p}^\heartsuit}} \, dt
	\end{equation*}
	where $\gf_\gd := e^{-\gd[B^* D]} \gf$.
	By Lemma \ref{lem:test-sgp-closure}, $\gf_\gd$ is in $\bbD^\mb{p}(X)$, and so we can apply \eqref{eqn:limit-in-time-2-slicedual} to get
	\begin{align*}
		\lim_{\varepsilon \to 0} I_{t_0,\gf}^{\varepsilon,\gd}
		&= -\lim_{\varepsilon \to 0} \barint_\varepsilon^{2\varepsilon} \langle B^* D \mb{G}_{t_0,\gf_\gd}(t_0 - t), F(t_0 - t) \rangle_{E^{\mb{p}^\heartsuit}} \, dt \\
		&= -\lim_{\varepsilon \to 0} \barint_\varepsilon^{2\varepsilon} \langle B^* e^{-tDB^*}  e^{-\gd DB^*} \gc^+(DB^*)D\gf (t), F(t) \rangle_{E^{\mb{p}^\heartsuit}} \, dt \\
		&= -\langle B^* e^{-\gd DB^*} \gc^+(DB^*)  D\gf(t_0), F(t_0) \rangle_{E^{\mb{p}^\heartsuit}}
	\end{align*}
	using the same argument as in the previous paragraph to establish the final equality.
	A simple computation shows that $I_{t_0, \gf}^{\varepsilon,\gd} = I_{t_0 + \gd, \gf}^{\varepsilon, 0}$, so we can conclude that
	\begin{align*}
		\langle B^* e^{-\gt DB^*} \gc^+ (DB^*) D \gf, F(t_0) \rangle_{E^{\mb{p}^\heartsuit}}
		&= -\lim_{\varepsilon \to 0} I_{t_0,\gf}^{\varepsilon, \gt} \nonumber \\
		&= - \lim_{\varepsilon \to 0} I_{t_0 + \gt,\gf}^{\varepsilon,0} \nonumber \\
		&= \langle B^* \gc^+ (DB^*) D \gf, F(t_0 + \gt) \rangle_{E^{\mb{p}^\heartsuit}},
	\end{align*}
	completing the proof.
\end{proof}

We can use this lemma, using that $E^\mb{p} \subset \mc{S}^\prime$, to see what happens when we test against Schwartz functions.

\begin{cor}\label{cor:schwartz-testing}
	Let $F$, $t_0$, and $\gt$ be as in Lemma \ref{lem:dual-semigp-test}, and suppose $\gf \in \mc{S}$.
	Then
	\begin{equation}\label{eqn:schwartz-testing}
		-\langle \gf, (\partial_t F)(t_0 + \gt) \rangle_{\mc{S}} = \langle B^* e^{-\gt DB^*} \gc^+(DB^*) D\gf, F(t_0) \rangle_{E^{\mb{p}^\heartsuit}}.
	\end{equation}
\end{cor}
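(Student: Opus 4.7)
The plan is to combine Lemma \ref{lem:dual-semigp-test} with an integration by parts in order to pass from $\langle B^* D\gf, F(t_0+\gt) \rangle$ to $-\langle \gf, \partial_t F(t_0+\gt) \rangle$. First, by Lemma \ref{lem:test-schwartz-containment} every Schwartz function $\gf$ lies in $\bbD^\mb{p}(X)$, so Lemma \ref{lem:dual-semigp-test} applies directly and gives
\begin{equation*}
\langle B^*D\gf, F(t_0+\gt) \rangle_{E^{\mb{p}^\heartsuit}} = \langle B^* e^{-\gt DB^*} \gc^+(DB^*) D\gf, F(t_0) \rangle_{E^{\mb{p}^\heartsuit}},
\end{equation*}
which is exactly the right-hand side of \eqref{eqn:schwartz-testing}. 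It remains to show that the left-hand side above equals $-\langle \gf, \partial_t F(t_0+\gt) \rangle_{\mc{S}}$.

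For the integration by parts, I would check that all the required slice-space containments hold. The functions $\gf, D\gf, B^*D\gf$ are Schwartz (or, in the third case, Schwartz times the $L^\infty$ multiplier $B^*$), hence bounded with rapid decay and so lie in every slice space $E^q$, in particular in $E^{\mb{p}^\heartsuit} = E^{\mb{p}^\prime}$ by Proposition \ref{prop:slice-change-of-parameters}. On the other side, Proposition \ref{prop:wksolnprops} and Remark \ref{rmk:Fell} applied to the solution $F \in X^{\mb{p}-k}$ of $(\CR)_{DB}$ give $F(t_0+\gt), \partial_t F(t_0+\gt) \in E^{\mb{p}-k} = E^\mb{p}$; hence also $BF(t_0+\gt), DBF(t_0+\gt) \in E^\mb{p}$, where the identity $DBF(t_0+\gt) = -\partial_t F(t_0+\gt)$ is used for the second containment. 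Together with the self-adjointness $D^* = D$, the adjointness of the multiplication operator $B$, and Proposition \ref{prop:slice-IBP}, these containments yield
\begin{equation*}
\langle B^*D\gf, F(t_0+\gt) \rangle = \langle D\gf, BF(t_0+\gt) \rangle = \langle \gf, DBF(t_0+\gt) \rangle = -\langle \gf, \partial_t F(t_0+\gt) \rangle,
\end{equation*}
and the slice-space pairing on the right may be identified with the Schwartz pairing since both $\gf \in \mc{S}$ and $\partial_t F(t_0+\gt) \in E^\mb{p}$ are bona fide tempered distributions with consistent pairing (Proposition \ref{prop:slice-distribution}).

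Combining the two displays yields \eqref{eqn:schwartz-testing}. The main point that requires some care is justifying that the identity $\partial_t F(t) + DBF(t) = 0$ holds pointwise in $t$ as an equality in $E^\mb{p}$ rather than merely in a weak distributional sense; this follows since both sides are shown to lie in $E^\mb{p}$ and the weak formulation of $(\CR)_{DB}$ (tested against products $\eta(t)\psi(x)$ with $\eta \in C_c^\infty(\bbR_+)$ and $\psi$ Schwartz) forces their agreement as tempered distributions. Apart from this mild bookkeeping, the argument is a direct assembly of Lemma \ref{lem:dual-semigp-test}, Proposition \ref{prop:slice-IBP}, and the self-adjointness of $D$.
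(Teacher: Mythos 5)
Your proof is correct and follows essentially the same route as the paper's: apply Lemma \ref{lem:dual-semigp-test} (using Lemma \ref{lem:test-schwartz-containment} to get $\mc{S} \subset \bbD^\mb{p}(X)$), then rewrite the left-hand side via slice-space integration by parts (Proposition \ref{prop:slice-IBP}). The paper's proof is terser, but you have correctly supplied the implicit steps: the slice-space containments, the separation of the bounded multiplier $B^*$ (handled by adjointness) from the constant-coefficient operator $D$ (handled by Proposition \ref{prop:slice-IBP}), and the observation that the Cauchy--Riemann identity $\partial_t F + DBF = 0$ holds pointwise in $t$ as an equality in $E^{\mb{p}}$.
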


\begin{proof}
	By Lemma \ref{lem:test-schwartz-containment}, $\mc{S} \subset \bbD^\mb{p}(X)$, so we can apply Lemma \ref{lem:dual-semigp-test} to $\gf$.
	Since $F(t_0 + \gt)$ and $(\partial_t F)(t_0 + \gt)$ are in $E^\mb{p}$, and $\gf$ and $B^* D \gf$ are in $E^{\mb{p}^\heartsuit}$, we can apply integration by parts in slice spaces (Proposition \ref{prop:slice-IBP}) to derive \eqref{eqn:schwartz-testing}.
\end{proof}

\textbf{Step 4: A reproducing formula for $(\partial_t F)(t_0)$ in terms of higher $t$-derivatives.}

\begin{lem}\label{lem:FHs-smoothness}
	Let $t_0 > 0$, $k \in \bbN_+$, and suppose that $F \in X^\mb{p}$ solves $(\CR)_{DB}$.
	Then $(\partial_t^k F)(t_0) \in {\mb{X}}_{D}^\mb{p}$, with
	\begin{equation}\label{eqn:deriv-H-est}
		\nm{(\partial_t^k F)(t_0)}_{{\mb{X}}_{D}^\mb{p}} \lesssim t_0^{-k} \nm{F}_{X^\mb{p}}.
	\end{equation}
\end{lem}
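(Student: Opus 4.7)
The plan is to bound $\|(\partial_t^k F)(t_0)\|_{\mb{X}_D^\mb{p}}$ by testing against Schwartz functions and invoking an iterated form of the weak semigroup identity from Corollary \ref{cor:schwartz-testing}. First I would observe that, since $F$ solves $(\CR)_{DB}$, we have $(\partial_t^k F)(t_0) = (-DB)^k F(t_0) \in \mc{R}(D)$ distributionally, so by Theorem \ref{thm:class-space-identn-new} it represents an element of $\mb{X}_D^\mb{p}$ as soon as we can bound $|\langle \gf, (\partial_t^k F)(t_0)\rangle|$ uniformly over $\gf \in \mc{S}$ suitably normalised. Density of $\mc{S}$ in $\mb{X}^{\mb{p}^\prime}$ (weak-star density in the infinite case) will then upgrade this to the claimed estimate.

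Next I would iterate Corollary \ref{cor:schwartz-testing} in the $\gt$-variable (or equivalently apply it to the solutions $\partial_t^j F \in X^{\mb{p}-j}$ of $(\CR)_{DB}$, whose regularity is provided by Proposition \ref{prop:wksolnprops}(1) and (3)) and use the similarity of functional calculi $(DB^*)^{k-1} D = D (B^*D)^{k-1}$ together with the commutation $D\,f(B^*D) = f(DB^*)\,D$ to obtain, for all $\gf \in \mc{S}$,
\[
  -\langle \gf, (\partial_t^k F)(t_0)\rangle_{\mc{S}} = (-1)^{k-1}\langle (B^*D)^k e^{-(t_0/2) B^*D}\gc^+(B^*D)\gf,\, F(t_0/2)\rangle_{E^{\mb{p}^\heartsuit}}.
\]
The crucial $t_0^{-k}$ factor appears upon writing $(B^*D)^k e^{-(t_0/2)B^*D}\gc^+(B^*D) = (t_0/2)^{-k}\eta_{t_0/2}(B^*D)$, where $\eta(z) = z^k e^{-z}\gc^+(z) \in \gY_+^\infty$, so that Theorem \ref{thm-fcode} gives off-diagonal estimates of arbitrarily large order for the family $(\eta_s(B^*D))_{s>0}$.

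Applying slice space duality (Proposition \ref{prop:slice-duality}) at scale $t_0/2$, together with uniform slice-space boundedness of $\eta_{t_0/2}(B^*D)$ from Proposition \ref{prop:slice-bddness}, and the Caccioppoli-type estimate $\|F(t_0/2)\|_{E^\mb{p}(t_0/2)} \lesssim \|F\|_{X^\mb{p}}$ from Proposition \ref{prop:wksolnprops}(3), I would obtain
\[
  |\langle \gf, (\partial_t^k F)(t_0)\rangle| \lesssim t_0^{-k}\,\|\gf\|_{E^{\mb{p}^\heartsuit}(t_0/2)}\,\|F\|_{X^\mb{p}}.
\]

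The main obstacle will be converting the single-scale slice norm $\|\gf\|_{E^{\mb{p}^\heartsuit}(t_0/2)}$ back to the classical smoothness norm $\|\gf\|_{\mb{X}^{\mb{p}^\prime}}$ uniformly in $t_0$, since the scaling exponent $r(\mb{p}^\heartsuit) = -\gq(\mb{p})-1$ lies in $(-1,0)$ and naive embeddings are not uniform. The fix is to split the tent-/$Z$-space integral defining $\|\cdot\|_{\mb{X}^{\mb{p}^\prime}}$ (via the Littlewood--Paley characterisation of Theorem \ref{thm:BHS-Xspace}) into scales $s \simeq t_0$, $s \ll t_0$, and $s \gg t_0$, use the Calder\'on reproducing formula of Theorem \ref{thm:CRF} to write $\gf = \bbS_{\gy,D}\bbQ_{\varphi,D}\gf$ and transfer scales via Proposition \ref{prop:slice-ret}, with careful bookkeeping of the $t_0$-weights on each piece. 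The infinite-exponent case requires invoking weak-star density and working with the predual pairing.
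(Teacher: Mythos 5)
Your iterated identity
\[
-\langle \gf, (\partial_t^k F)(t_0)\rangle_{\mc{S}} = (-1)^{k-1}\langle (B^*D)^k e^{-(t_0/2) B^*D}\gc^+(B^*D)\gf,\, F(t_0/2)\rangle_{E^{\mb{p}^\heartsuit}}
\]
is correct (it agrees, by similarity $D\gy(B^*D)=\gy(DB^*)D$, with the formula one gets by applying Corollary \ref{cor:schwartz-testing} directly to $\partial_t^{k-1}F$), and the overall strategy of pushing all $k$ derivatives onto the test side is legitimate. But the step where you bound the test factor is where the proof breaks.

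Once you pull the $(t_0/2)^{-k}$ out and invoke Proposition \ref{prop:slice-bddness} to get $\nm{\eta_{t_0/2}(B^*D)\gf}_{E^{\mb{p}^\heartsuit}(t_0/2)} \lesssim \nm{\gf}_{E^{\mb{p}^\heartsuit}(t_0/2)}$, you have discarded exactly the structure that makes the estimate possible. The residual quantity $\nm{\gf}_{E^{\mb{p}^\heartsuit}(t_0/2)}$ is a single-scale seminorm, and there is no uniform-in-$t_0$ bound $\nm{\gf}_{E^{\mb{p}^\heartsuit}(t_0/2)}\lesssim\nm{\gf}_{\mb{X}^{\mb{p}^\prime}}$ when $\gq(\mb{p}^\heartsuit)\ne 0$: a slice norm weights all frequencies equally while a fractional smoothness norm does not. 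The fix you propose — splitting the Littlewood--Paley integral defining $\nm{\gf}_{\mb{X}^{\mb{p}^\prime}}$ into scales and reconstructing $\gf$ by the Calder\'on formula — cannot recover this, because the quantity you need to bound is already a scalar; nothing remains to split. (There is also a $t_0$-power bookkeeping error: you claim to arrive at $t_0^{-k}\|\gf\|_{E^{\mb{p}^\heartsuit}(t_0/2)}\|F\|_{X^\mb{p}}$, but the slice-space duality actually produces $\nm{F(t_0/2)}_{E^{\mb{p}+1}(t_0/2)} = (t_0/2)^{-1}\nm{F(t_0/2)}_{E^{\mb{p}}(t_0/2)}\lesssim t_0^{-1}\nm{F}_{X^\mb{p}}$, which would give $t_0^{-(k+1)}$ with your test-side estimate — another symptom of the same misallocation.)

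The correct tool on the test side is Lemma \ref{lem:sgp-in-slice}, not Proposition \ref{prop:slice-bddness}. Write $(B^*D)^k e^{-s B^*D}\gc^+(B^*D)\gf = B^*(DB^*)^{k-1}e^{-sDB^*}\gc^+(DB^*)D\gf$ by similarity; then, since $i(\mb{p}^\heartsuit)<2$ and $\gq(\mb{p}^\heartsuit)<0$, Lemma \ref{lem:sgp-in-slice} with $M=k-1$ gives
\[
\nm{(B^*D)^k e^{-s B^*D}\gc^+(B^*D)\gf}_{E^{\mb{p}^\heartsuit}(s)} \lesssim s^{-(k-1)}\nm{D\gf}_{\bbX_{DB^*}^{\mb{p}^\heartsuit}} \simeq s^{-(k-1)}\nm{\gf}_{\mb{X}^{\mb{p}^\prime}},
\]
uniformly in $s>0$, with the last step using $\mb{p}^\heartsuit \in I(\mb{X},DB^*)$ and the regularity shift of $D$. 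Paired with $\nm{F(t_0/2)}_{E^{\mb{p}+1}(t_0/2)}\lesssim t_0^{-1}\nm{F}_{X^\mb{p}}$ this yields the required $t_0^{-k}$. Note the $(sDB^*)^{k-1}$ must appear \emph{inside} the slice-norm estimate (so that the semigroup decay, not a generic $L^2$ off-diagonal bound, converts single-scale information into the adapted smoothness norm); Proposition \ref{prop:slice-bddness} does not do this.

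For comparison, the paper's proof is shorter and avoids the iteration entirely: it applies Corollary \ref{cor:schwartz-testing} exactly once, to the solution $\partial_t^{k-1}F\in X^{\mb{p}-(k-1)}$ with $\gt=t_0/2$. The test side then requires only the $M=0$ case of Lemma \ref{lem:sgp-in-slice} (no powers of $DB^*$), and the entire factor $t_0^{-k}$ is produced by re-expressing $\nm{(\partial_t^{k-1}F)(t_0/2)}_{E^{\mb{p}+1}(t_0/2)}$ in terms of $\nm{(\partial_t^{k-1}F)(t_0/2)}_{E^{\mb{p}-(k-1)}(t_0)}$ and then using Proposition \ref{prop:wksolnprops} (3). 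Your version can be repaired, as above, but would then be doing more work to the same end.
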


\begin{proof}
	Suppose $\gf \in \mc{S}$.
	First note that since $\partial_t^{k-1} F$ solves $(\CR)_{DB}$, and is in $X^{\mb{p} - (k-1)}$ by Proposition \ref{prop:wksolnprops}, Corollary \ref{cor:schwartz-testing} yields
	\begin{equation}\label{eqn:schwartz-testing-diff}
		-\langle \gf, (\partial_t^k F)(t_0/2 + \gt) \rangle_{\mc{S}} = \langle B^* e^{-\gt DB^*} \gc^+(DB^*) D\gf, (\partial_t^{k-1} F)(t_0/2) \rangle_{E^{\mb{p}^\heartsuit}}.
	\end{equation}
	Applying this with $\gt = t_0/2$ and using the slice space estimates of Lemma \ref{lem:sgp-in-slice} and Proposition \ref{prop:wksolnprops}, slice space duality, and $\mb{p}^\heartsuit \in I(\mb{X},DB^*)$, we have
	\begin{align*}
		&\left| \int_{\bbR^n} \langle \gf(x), \partial_t^k F(t_0)(x) \rangle \, dx \right| \\
		&\leq \nm{B^* e^{-t_0 DB^* /2} \gc^+(DB^*) D\gf}_{E^{\mb{p}^\heartsuit}(t_0/2)} \nm{(\partial_t^{k-1}F)(t_0/2)}_{E^{\mb{p}+1}(t_0/2)} \\
		&\lesssim \nm{D\gf}_{{\bbX}^{\mb{p}^\heartsuit}_{DB^*}} t_0^{-k} \nm{(\partial_t^{k-1} F)(t_0/2)}_{E^{\mb{p} - (k-1)}(t_0)} \\
		&\lesssim \nm{D\gf}_{{\mb{X}}^{\mb{p}^\heartsuit}_{D}} t_0^{-k} \nm{\partial_t^{k-1} F}_{X^{\mb{p} -(k-1)}} \\
		&\lesssim \nm{\gf}_{{\mb{X}}^{\mb{p}^\prime}} t_0^{-k} \nm{F}_{X^\mb{p}}.
	\end{align*}
	Since $\gf$ was arbitrary, this implies that $(\partial_t^k F)(t_0) \in ({\mb{X}}^{\mb{p}^\prime})^\prime = {\mb{X}}^\mb{p}$ with the norm estimate \eqref{eqn:deriv-H-est}.
	Furthermore, since $\partial_t^k F$ solves $(\CR)_{DB}$, each $(\partial_t^k F)(t_0)$ is in $\overline{\mc{R}(DB)} = \overline{\mc{R}(D)}$, which implies membership in ${\mb{X}}^\mb{p}_D$.
\end{proof}

We recall the following elementary lemma.

\begin{lem}\label{lem:reconstruction}
	Suppose $k \in \bbN_+$ and $g \in C^k(\bbR^+:\bbC)$, with $t^j g^{(j)}(t) \to 0$ as $t \to \infty$ for all integers $0 \leq j \leq k-1$.
	Then for all $t > 0$ we have
	\begin{equation*}
		g(t) = \frac{(-1)^k}{(k-1)!} \int_t^\infty g^{(k)}(\gt)(\gt-t)^{k-1} \, d\gt.
	\end{equation*}
\end{lem}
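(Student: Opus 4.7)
The plan is to prove this by induction on $k$, which is the cleanest route given that the hypotheses on $g$ nest naturally: the hypotheses needed for level $k$ include all those needed for level $k-1$.

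For the base case $k=1$, the hypothesis $t^0 g^{(0)}(t) = g(t) \to 0$ as $t \to \infty$ combined with the fundamental theorem of calculus gives
\begin{equation*}
  g(t) = -\lim_{R \to \infty}\int_t^R g'(\gt)\,d\gt = -\int_t^\infty g'(\gt)\,d\gt,
\end{equation*}
which matches the claimed formula since $(-1)^1/0! = -1$ and $(\gt-t)^0 = 1$.

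For the inductive step, I would assume the formula holds at level $k$ under the stated hypotheses, then suppose $g \in C^{k+1}(\bbR_+:\bbC)$ satisfies $\gt^j g^{(j)}(\gt) \to 0$ for $0 \leq j \leq k$. Since this includes the decay for $0 \leq j \leq k-1$, the inductive hypothesis applies and
\begin{equation*}
  g(t) = \frac{(-1)^k}{(k-1)!}\int_t^\infty g^{(k)}(\gt)(\gt-t)^{k-1}\,d\gt.
\end{equation*}
I would then integrate by parts with $u = g^{(k)}(\gt)$ and $dv = (\gt - t)^{k-1}\,d\gt$, yielding $du = g^{(k+1)}(\gt)\,d\gt$ and $v = (\gt-t)^k/k$. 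The boundary term at $\gt = t$ vanishes because $(\gt-t)^k$ does, and the boundary term as $\gt \to \infty$ requires a brief check: since $(\gt - t)^k \leq \gt^k$ for $\gt \geq t$, we have $|g^{(k)}(\gt)(\gt-t)^k/k| \leq \gt^k |g^{(k)}(\gt)|/k \to 0$ by hypothesis. Dividing through by $k$ and accounting for the sign yields exactly the formula at level $k+1$.

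The argument is essentially routine, and I would not expect any genuine obstacle; the only point deserving attention is justifying that the boundary term at infinity vanishes, which is precisely where the decay hypothesis $\gt^k g^{(k)}(\gt) \to 0$ is used. One should also verify that the integrals appearing are absolutely convergent so that integration by parts is legitimate; for this one can truncate at a finite upper limit $R$, perform the integration by parts on $[t, R]$, and then let $R \to \infty$, invoking the decay hypothesis both to pass the boundary term to $0$ and to justify convergence of the resulting integral against $g^{(k+1)}$.
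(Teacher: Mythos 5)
Your proof is correct, and the induction-plus-integration-by-parts argument is exactly the natural one for this result. The paper itself states the lemma without proof, calling it ``elementary'', so there is no proof in the paper to compare against; the route you take (base case via the fundamental theorem of calculus and the decay of $g$, inductive step via integration by parts on $[t,R]$ with the boundary term at $R$ killed by $\tau^k g^{(k)}(\tau) \to 0$ and the term at $\tau = t$ vanishing because $(\tau - t)^k = 0$, then letting $R \to \infty$) is the standard one and is carried out carefully, including the observation that convergence of the integral against $g^{(k+1)}$ is obtained for free as a byproduct of the limiting process rather than needing to be assumed.
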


\begin{cor}\label{cor:F-reproduction}
	Suppose that $F \in X^\mb{p}$ solves $(\CR)_{DB}$.
	Then for all $t_0 > 0$ and $\gf \in \mc{S}$ we have
	\begin{equation*}
		\langle \gf, (\partial_t F)(t_0) \rangle_{\mc{S}} = \frac{(-1)^k}{(k-1)!} \int_{t_0}^\infty \langle \gf, (\partial_t^{k+1} F)(t) \rangle_{E^{\mb{p}^\prime}} (t-t_0)^{k-1} \, dt.
	\end{equation*}
\end{cor}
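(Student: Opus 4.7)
The strategy is to apply the elementary reconstruction formula of Lemma \ref{lem:reconstruction} to the scalar function $g(t) := \langle \gf, (\partial_t F)(t)\rangle$ (so that $g^{(j)}(t) = \langle \gf, (\partial_t^{j+1} F)(t)\rangle$), interpreting the pairing either as slice-space duality or as classical smoothness-space duality depending on which is most convenient; these interpretations coincide on our objects because $\mc{S} \subset \mc{S}^\prime$ maps consistently into both $\mb{X}^{\mb{p}^\prime}$ and $E^{\mb{p}^\prime}$. I note that the case $k=0$ is vacuous, so I assume $k \geq 1$.

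First I would verify that $g \in C^k(\bbR_+)$. By Proposition \ref{prop:wksolnprops}, $t \mapsto (\partial_t^j F)(t,\cdot)$ is in $C^\infty(\bbR_+ : E^{\mb{p}-j})$ for every $j \geq 0$, and $\gf \in \mc{S} \subset E^{\mb{p}^\prime}$ by Proposition \ref{prop:slice-distribution}. Since $\mc{S}$ embeds continuously into $E^{(\mb{p}-j)^\prime}$ for every $j$ (Proposition \ref{prop:slice-embeddings} together with Proposition \ref{prop:slice-distribution}), slice-space duality (Proposition \ref{prop:slice-duality}) guarantees that differentiation under the pairing is licit and that $g^{(j)}(t) = \langle \gf, (\partial_t^{j+1} F)(t) \rangle$ for all $0 \leq j \leq k$.

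Next I would verify the decay hypothesis $t^j g^{(j)}(t) \to 0$ as $t \to \infty$ for $0 \leq j \leq k-1$. The key input is Lemma \ref{lem:FHs-smoothness} applied to $\partial_t^{j+1} F$ (valid since $j+1 \geq 1$): $(\partial_t^{j+1} F)(t) \in \mb{X}_D^\mb{p}$ with
\[ \nm{(\partial_t^{j+1} F)(t)}_{\mb{X}_D^\mb{p}} \lesssim t^{-(j+1)} \nm{F}_{X^\mb{p}}. \]
Pairing with $\gf \in \mc{S} \subset \mb{X}^{\mb{p}^\prime}$ and using the duality $(\mb{X}^{\mb{p}})^\prime \simeq \mb{X}^{\mb{p}^\prime}$ yields
\[ t^j |g^{(j)}(t)| \lesssim t^j \nm{\gf}_{\mb{X}^{\mb{p}^\prime}} \nm{(\partial_t^{j+1} F)(t)}_{\mb{X}_D^\mb{p}} \lesssim t^{-1} \nm{\gf}_{\mb{X}^{\mb{p}^\prime}} \nm{F}_{X^\mb{p}} \longrightarrow 0, \]
which gives the required decay (and incidentally shows that the integral in the conclusion converges absolutely, since $|g^{(k)}(\gt)| (\gt - t_0)^{k-1}$ is integrable on $(t_0,\infty)$).

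Having verified the hypotheses, applying Lemma \ref{lem:reconstruction} to $g$ at $t_0$ immediately yields
\[ \langle \gf, (\partial_t F)(t_0)\rangle = \frac{(-1)^k}{(k-1)!} \int_{t_0}^\infty \langle \gf, (\partial_t^{k+1} F)(\gt) \rangle (\gt - t_0)^{k-1} \, d\gt. \]
Finally I would rewrite the integrand as a slice-space duality pairing: since $(\partial_t^{k+1} F)(\gt) \in E^\mb{p}$ for each $\gt > 0$ by Proposition \ref{prop:wksolnprops} and $\gf \in E^{\mb{p}^\prime}$, Proposition \ref{prop:slice-duality} ensures that $\langle \gf, (\partial_t^{k+1} F)(\gt)\rangle = \langle \gf, (\partial_t^{k+1} F)(\gt)\rangle_{E^{\mb{p}^\prime}}$, and the proof is complete. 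There is no serious obstacle here: the statement is a mechanical application of the one-variable identity in Lemma \ref{lem:reconstruction}, with the bulk of the work already done in Lemma \ref{lem:FHs-smoothness}, which provides precisely the polynomial decay in $t$ needed for the boundary terms in the reconstruction to vanish.
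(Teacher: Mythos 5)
Your proposal is correct and takes essentially the same approach as the paper: set $g_\gf(t) := \langle \gf, (\partial_t F)(t)\rangle$, use Lemma \ref{lem:FHs-smoothness} to verify smoothness and the decay hypotheses of Lemma \ref{lem:reconstruction}, apply the reconstruction formula, and finally reinterpret the pairing as slice-space duality. Your check that $t^j g^{(j)}(t) \lesssim t^{-1} \to 0$ for \emph{all} $0 \leq j \leq k-1$ is in fact spelled out more carefully than the paper's proof, which only displays a single (apparently mis-typeset) instance of this estimate before invoking Lemma \ref{lem:reconstruction}.
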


\begin{proof}
	By Lemma \ref{lem:FHs-smoothness}, the function $t_0 \mapsto (\partial_{t} F)(t_0)$ is in $C^\infty(\bbR_+ : {\mb{X}}_{D}^\mb{p})$.
	Therefore for all $\gf \in \mc{S}$ the function $g_\gf$ defined by
	\begin{equation*}
		g_\gf(t_0) := \langle \gf, (\partial_t F)(t_0) \rangle_{\mc{S}}
	\end{equation*}
	is in $C^\infty(\bbR_+ : \bbC)$,
	and for $k \in \bbN_+$ we have
	\begin{equation*}
		g_\gf^{(k)}(t_0) = \langle \gf, (\partial_{t}^{k+1} F)(t_0) \rangle_{\mc{S}} = \langle \gf, (\partial_{t}^{k+1} F)(t_0) \rangle_{E^{\mb{p}^\prime}}.
	\end{equation*}
	Furthermore, by the same lemma, we have
	\begin{equation*}
		|t_0^k g_\gf(t_0)|
		= t_0^k \big|\langle \gf,(\partial_{t} F)(t_0) \rangle_{{\mb{X}}_{D}^{p^\prime}}\big| 
		\lesssim_{\gf,F} t_0^{-1},
	\end{equation*}
	so the hypotheses of Lemma \ref{lem:reconstruction} are satisfied, and the result follows.
\end{proof}

\textbf{Step 5: Construction of associated `nice' solutions.}

Since $\mb{p}^\heartsuit \in I(\mb{X},DB^*)$, we have identified ${\mb{X}}_{DB}^\mb{p} := {\mb{X}}_{D}^\mb{p}$ as a completion of ${\bbX}_{DB}^\mb{p}$.
In this step of the proof, given a solution $F \in X^\mb{p}$ of $(\CR)_{DB}$, we will construct distributions modulo polynomials $\td{F}(t_0) \in {\mb{X}}_{D}^\mb{p}$ which satisfy the properties we want to show for $F(t_0)$.
In the remaining steps we will show that $\td{F}(t_0) = F(t_0)$, which will complete the proof.

\begin{lem}\label{lem:derivative-shift-containment}
	Suppose $F \in X^\mb{p}$ solves $(\CR)_{DB}$.
	Then for all $t_0 \in [0,\infty)$ and for sufficiently large $N \in \bbN$ we have
	\begin{equation*}
		\nm{(t,y) \mapsto t^N(\partial_t^N F)(t_0 + t,y)}_{X^{\mb{p}}} \lesssim \nm{F}_{X^\mb{p}}.
	\end{equation*}
\end{lem}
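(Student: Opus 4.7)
The plan is to reduce the claimed bound to two ingredients: the uniform boundedness of the downward shift on $X^{\mb{p}-N}$ (Proposition \ref{prop:s12shift}) and the derivative estimates of Proposition \ref{prop:wksolnprops}(\ref{itm:derivests}). The starting point is an identity obtained by unravelling the definitions of the tent- and $Z$-space quasinorms:
\[
\nm{(t,y)\mapsto t^N (\partial_t^N F)(t_0+t,y)}_{X^\mb{p}} = \nm{S_{t_0}(\partial_t^N F)}_{X^{\mb{p}-N}},
\]
where $S_{t_0}$ is the downward shift with $(S_{t_0}h)(t,y)=h(t_0+t,y)$. This holds because applying the weight $\gk^{-r(\mb{p})}$ to the function $t^N\cdot h(t_0+t,y)$ produces the weight $t^{N-r(\mb{p})}=t^{-r(\mb{p}-N)}$, which is exactly the weight used in the $X^{\mb{p}-N}$-quasinorm, while the inner integration regions (cones for $T$-spaces, Whitney regions for $Z$-spaces) are identical on both sides.

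Next I would apply Proposition \ref{prop:s12shift}(ii) to the space $X^{\mb{p}-N}$. Under the standing hypotheses of Theorem \ref{thm:mainthm-gtr2} one has $i(\mb{p})>2$, so $i(\mb{p}-N)=i(\mb{p})>2$, and $r(\mb{p})<0$: the latter holds because $r(\mb{p})=\gq(\mb{p})\in(-1,0)$ when $\mb{p}$ is finite, and by the separate assumption $r(\mb{p})<0$ when $\mb{p}$ is infinite. Choosing any integer $N>r(\mb{p})+(n+1)/2$ (for instance $N=\lceil (n+1)/2\rceil+1$) gives $r(\mb{p}-N)<-(n+1)/2$, which is precisely the hypothesis of Proposition \ref{prop:s12shift}(ii). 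This yields the uniform bound $\nm{S_{t_0}h}_{X^{\mb{p}-N}}\lesssim \nm{h}_{X^{\mb{p}-N}}$ across all $t_0\in\bbR_+$; at $t_0=0$ the shift is the identity, so this step is trivial there.

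Finally, since $F$ solves $(\CR)_{DB}$, so does $\partial_t^N F$ (by $t$-independence of the coefficients and iteration of $\partial_t F=-DBF$), and Proposition \ref{prop:wksolnprops}(\ref{itm:derivests}) together with Remark \ref{rmk:Fell} yields $\nm{\partial_t^N F}_{X^{\mb{p}-N}}\lesssim \nm{F}_{X^\mb{p}}$. Chaining the three steps gives
\[
\nm{(t,y)\mapsto t^N (\partial_t^N F)(t_0+t,y)}_{X^\mb{p}} \;\lesssim\; \nm{S_{t_0}(\partial_t^N F)}_{X^{\mb{p}-N}} \;\lesssim\; \nm{\partial_t^N F}_{X^{\mb{p}-N}} \;\lesssim\; \nm{F}_{X^\mb{p}},
\]
with implicit constants independent of $t_0\in[0,\infty)$. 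There is no serious obstacle in this argument; the only point that requires care is verifying the weight-matching identity of the first step, which is a direct computation from the definitions of the $X^\mb{p}$-quasinorms.
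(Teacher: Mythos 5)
Your proof is correct and takes the same route as the paper, which simply cites Propositions \ref{prop:s12shift} and \ref{prop:wksolnprops} as giving the result "immediately"; you have correctly supplied the bridging identity $\nm{t^N (\partial_t^N F)(t_0+\cdot,\cdot)}_{X^\mb{p}} = \nm{S_{t_0}(\partial_t^N F)}_{X^{\mb{p}-N}}$ and verified the hypotheses of Proposition~\ref{prop:s12shift}(ii) for $N$ large, using the standing assumption $i(\mb{p})>2$ and $r(\mb{p})<0$ from Theorem~\ref{thm:mainthm-gtr2}.
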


\begin{proof}
	This is an immediate corollary of Propositions \ref{prop:s12shift} and \ref{prop:wksolnprops}.
\end{proof}

Let $F \in X^\mb{p}$ solve $(\CR)_{DB}$.
For $N \in \bbN$ large enough that Lemma \ref{lem:derivative-shift-containment} applies, define $\gz \in \gY_1^\infty$ by
\begin{equation*}
	\gz(z) := c_N z e^{-[z]/2}
\end{equation*}
where $c_N = (-1)^{N+1}/N!$.
For $k \in \bbN$ define $\gc_k := \mb{1}_{[k^{-1},k] \times B(0,k)}$, and for all $t_0 \geq 0$ define
\begin{equation}\label{eqn:nice-soln}
	\td{F}_k(t_0) := \bbS_{\gz,DB}\bigg[t \mapsto \gc_k t^N (\partial_t^N F)\bigg(t_0 + \frac{t}{2}\bigg)\bigg].
\end{equation}
This is well-defined since the function $t \mapsto \gc_k t^N (\partial_t^N F)(t_0 + (t/2))$ is in $X^2$.
Lemma \ref{lem:derivative-shift-containment} and Proposition \ref{prop:Xss-charn} (using $\gz \in \gY_1^\infty \subset \gY_{-\gq(\mb{p})+}^{(\gq(\mb{p}) + n|\frac{1}{2} - j(\mb{p}))+}$, which requires $\gq(\mb{p}) > -1$) imply that each $\td{F}_k(t_0)$ is in ${\bbX}_{DB}^\mb{p}$, with
\begin{align*}
	\nm{\td{F}_k(t_0)}_{{\bbX}_{DB}^\mb{p}} &\lesssim \bigg\lVert t \mapsto \gc_k t^N(\partial_t^N F)\bigg(t_0 + \frac{t}{2} \bigg) \bigg\rVert_{X^\mb{p}} \ \\
	&\lesssim \nm{F}_{X^\mb{p}}.
\end{align*}
The functions $[t \mapsto \gc_k t^N(\partial_t^N F)\left(t_0 + \frac{t}{2}\right)]$ converge to $[t \mapsto t^N(\partial_t^N F)\left(t_0 + \frac{t}{2} \right)]$ in $X^{\mb{p}}$ as $k \to \infty$, so we can define $\td{F}(t_0) \in \mb{X}_{DB}^\mb{p}$ by
\begin{equation*}
	\td{F}(t_0) := \mb{S}_{\gz,DB}\bigg[t \mapsto t^N (\partial_t^N F)\bigg(t_0 + \frac{t}{2}\bigg)\bigg].
\end{equation*}
This satisfies the norm estimate
\begin{equation}\label{eqn:good-norm-est}
	\nm{\td{F}(t_0)}_{{\mb{X}}_{DB}^\mb{p}} \lesssim \nm{F}_{X^\mb{p}}.
\end{equation}

\begin{lem}\label{lem:test-dual-identn}
		Let $t_0 \geq 0$.
		Suppose $F \in X^\mb{p}$ solves $(\CR)_{DB}$ and define $\td{F}(t_0) \in {\mb{X}}_{DB}^\mb{p}$ as in the previous paragraphs.
		Suppose also that $\gfv \in {\bbX}_{B^* D}^{\mb{p}^\prime} \cap \mc{D}(B^* D)$.
	Then
	\begin{align}
          \langle \gfv, &\td{F}(t_0) \rangle_{{\mb{X}}_{B^*D}^{\mb{p}^\prime}} \nonumber \\       
		&= -c_N \int_0^\infty \left\langle t B^* e^{-\frac{t}{2} [DB^*]} D\gfv, t^N (\partial_t^N F)\left(t_0 + \frac{t}{2}\right)\right\rangle_{E^{\mb{p}^\prime}} \, \frac{dt}{t}. \label{eqn:test-dual-identn}
	\end{align}
\end{lem}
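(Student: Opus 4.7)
\textbf{Proof proposal for Lemma \ref{lem:test-dual-identn}.} The plan is to first establish the identity for each approximant $\td{F}_k(t_0)$, which lives in the pre-space $\bbX_{DB}^\mb{p} \subset L^2$, and then pass to the limit $k \to \infty$ on both sides. The advantage of working with $\td{F}_k(t_0)$ first is that $G_k(t) := \gc_k(t) t^N (\partial_t^N F)(t_0 + t/2)$ belongs to $L_c^2(\bbR_+ : L^2)$, so by Proposition \ref{prop:Srep} we have the absolutely convergent integral representation $\td{F}_k(t_0) = \int_0^\infty \gz_t(DB) G_k(t) \, dt/t$ in $L^2$.

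The first main step is a purely algebraic manipulation on the approximants. Taking the $L^2$ inner product with $\gfv \in \bbX_{B^* D}^{\mb{p}^\prime} \cap \mc{D}(B^* D) \subset L^2$ and moving the operator to the other side using $\gz_t(DB)^* = \td{\gz}_t(B^* D) = \gz_t(B^* D)$ (since the coefficient $c_N$ is real, $\td{\gz} = \gz$), we obtain
\begin{equation*}
\langle \gfv, \td{F}_k(t_0) \rangle_{L^2} = \int_0^\infty \langle c_N (tB^* D) e^{-(t/2)[B^* D]} \gfv, G_k(t) \rangle_{L^2} \, \frac{dt}{t}.
\end{equation*}
Since $\gfv \in \mc{D}(B^* D) = \mc{D}(D)$, Corollary \ref{cor:similarity} applied to $\gf(z) = e^{-[z]/2} \in H^\infty$ yields $D e^{-(t/2)[B^* D]} \gfv = e^{-(t/2)[DB^*]} D\gfv$, and hence $(tB^* D) e^{-(t/2)[B^* D]} \gfv = tB^* e^{-(t/2)[DB^*]} D\gfv$. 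This rewrites the right hand side in the claimed kernel form (up to the sign of $c_N$, which will be tracked through the computation).

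The second main step is the passage to the limit $k \to \infty$, and this is where the real work lies. The left hand side converges because $\td{F}_k(t_0) \to \td{F}(t_0)$ in $\mb{X}_{DB}^\mb{p}$ by construction (from the convergence $G_k \to G := [t \mapsto t^N(\partial_t^N F)(t_0 + t/2)]$ in $X^\mb{p}$, combined with boundedness of the extended contraction $\map{\mb{S}_{\gz,DB}}{X^\mb{p}}{\mb{X}_{DB}^\mb{p}}$ from Propositions \ref{prop:Xss-charn} and \ref{prop:completion-identification}) and the pairing $\langle \gfv, \cdot \rangle$ extends continuously to $\mb{X}_{DB}^\mb{p}$ by Proposition \ref{prop:completion-duality}. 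For the right hand side, since $G_k \to G$ in $X^\mb{p}$, by the $X^\mb{p}$--$X^{\mb{p}^\prime}$ duality (Theorem \ref{thm:ts-duality} and Proposition \ref{prop:Z-duality-reflexive} or its non-reflexive counterpart), it suffices to verify that
\begin{equation*}
\gF(t,x) := tB^* e^{-(t/2)[DB^*]} D\gfv(x)
\end{equation*}
belongs to $X^{\mb{p}^\prime}$.

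The main obstacle is precisely this membership $\gF \in X^{\mb{p}^\prime}$, and here is how I plan to handle it. By Proposition \ref{prop:D-mapping}, $D\gfv \in \bbX_{DB^*}^{\mb{p}^\heartsuit}$. The hypotheses $i(\mb{p}) > 2$ and $\gq(\mb{p}) \in (-1,0)$ translate to $i(\mb{p}^\heartsuit) < 2$ and $\gq(\mb{p}^\heartsuit) \in (-1,0) \subset (-\infty, 0)$, placing $\mb{p}^\heartsuit$ in the range covered by the abstract semigroup characterisation of Theorem \ref{thm:sgpnorm-abstract} applied to $DB^*$. This gives $\nm{t \mapsto e^{-t[DB^*]/2} D\gfv}_{X^{\mb{p}^\heartsuit}} \lesssim \nm{D\gfv}_{\bbX_{DB^*}^{\mb{p}^\heartsuit}}$ (after an innocuous dilation in $t$). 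Multiplying by $t$ shifts the regularity index by $+1$ (i.e. $\gk^1$ maps $X^{\mb{p}^\heartsuit}$ into $X^{\mb{p}^\heartsuit + 1} = X^{\mb{p}^\prime}$ isometrically), and boundedness of $B^*$ on $L^2$ preserves the quasinorm fibrewise; thus $\nm{\gF}_{X^{\mb{p}^\prime}} \lesssim \nm{B^*}_\infty \nm{D\gfv}_{\bbX_{DB^*}^{\mb{p}^\heartsuit}} < \infty$. With this bound in hand, the limit passage on the right hand side is immediate, and combining the two limits gives the stated identity.
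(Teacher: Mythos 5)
Your proposal is correct and follows essentially the same route as the paper: establish the identity for the $L^2$ cut-off approximants $\td{F}_k(t_0)$ via the adjoint relation $\bbS_{\gz,DB}^* = \bbQ_{\td{\gz},B^*D}$ and similarity of functional calculi, then pass to the limit $k \to \infty$ using tent/$Z$-space duality once you know the tester $\gF(t) = tB^*e^{-t[DB^*]/2}D\gfv$ lies in $X^{\mb{p}^\prime}$. The one genuine (if minor) variation is how you justify $\gF \in X^{\mb{p}^\prime}$: you route through $D\gfv \in \bbX_{DB^*}^{\mb{p}^\heartsuit}$ (Proposition \ref{prop:D-mapping}), the abstract semigroup characterisation at $\mb{p}^\heartsuit$ (Theorem \ref{thm:sgpnorm-abstract}, applicable since $i(\mb{p}^\heartsuit) < 2$ and $\gq(\mb{p}^\heartsuit) < 0$), and the $\gk^1$-shift $X^{\mb{p}^\heartsuit} \to X^{\mb{p}^\prime}$; the paper instead gets the same bound directly from $\gz \in \gY_1^\infty \subset \gY(\bbX_{B^*D}^{\mb{p}^\prime})$ via Proposition \ref{prop:equivalent-quasinorms}. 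Both give $\nm{\gF}_{X^{\mb{p}^\prime}} \lesssim \nm{\gfv}_{\bbX_{B^*D}^{\mb{p}^\prime}}$, so either works. The only thing you leave implicit is the final conversion of the absolutely convergent double integral on $\bbR^{1+n}_+$ into the iterated integral of $E^{\mb{p}^\prime}$-pairings in the statement: this requires the fibrewise slice-space memberships $tB^*e^{-t[DB^*]/2}D\gfv \in E^{\mb{p}^\prime}$ (via Lemma \ref{lem:int-slice}) and $(\partial_t^N F)(t_0 + t/2) \in E^{\mb{p}-N}(t)$ (via Proposition \ref{prop:wksolnprops} and Lemma \ref{lem:derivative-shift-containment}), which the paper verifies up front; you should add a sentence to this effect, since absolute convergence of the double integral alone only gives finiteness of the inner $x$-integral for almost every $t$, not the stated slice-space interpretation.
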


\begin{proof}
	First we show that the $E^{\mb{p}^\prime}$ duality pairing \eqref{eqn:test-dual-identn} makes sense.
	Since $\gfv \in {\bbX}_{B^* D}^{\mb{p}^\prime} \cap \mc{D}(B^* D)$, Lemma \ref{lem:int-slice} yields $e^{-t[DB^*]/2} D\gfv \in E^{\mb{p}^\prime}$.
	Since each $t B^*$ is a bounded operator on $E^{\mb{p}^\prime}$ we have $t B^* e^{-t[DB^*]/2} D\gfv \in E^{\mb{p}^\prime}$.
	On the other hand, since $t\mapsto (\partial_t ^N F)(t_0 + t/2)$ solves $(\CR)_{DB}$, by Proposition \ref{prop:wksolnprops} and Lemma \ref{lem:derivative-shift-containment} we have
	\begin{equation*}
		\nm{(\partial_t^N F)(t_0 + t/2)}_{E^{\mb{p}-N}(t)}
		\lesssim \nm{t \mapsto (\partial_t^N F)(t_0 + t/2)}_{X^{\mb{p} - N}} 
		\lesssim \nm{F}_{X^\mb{p}}
	\end{equation*}
	for all $t > 0$.
	Therefore the slice space dual pairing in \eqref{eqn:test-dual-identn} is meaningful.
	
	Now write
	\begin{align}
		&\langle \gfv, \td{F}(t_0) \rangle_{{\mb{X}}_{B^*D}^{\mb{p}^\prime}} \\
		&= \lim_{k \to \infty} \langle \gfv, \td{F}_k(t_0) \rangle_{{\bbX}_{B^* D}^{\mb{p}^\prime}} \nonumber\\
		&= \lim_{k \to \infty} \langle \bbQ_{\td{\gz}, B^* D} \gfv, [t \mapsto \gc_k t^N (\partial_t^N F)(t_0 + t/2)] \rangle_{X^\mb{p}} \nonumber\\
		&= \langle \bbQ_{\wtd{\gz}, B^* D} \gfv, [t \mapsto t^N (\partial_t^N F)(t_0 + t/2)] \rangle_{X^\mb{p}}\nonumber\\
		&= -c_N \dint_{\bbR^{1+n}_+} \big(t (B^*D e^{-t[B^* D]/2} \gfv)(x) , t^N (\partial_t^N F)(t_0 + t/2,x)\big) \, dx \, \frac{dt}{t} \nonumber\\
		&= -c_N \int_0^\infty \langle t B^* e^{-t[DB^*]/2} D\gfv, t^N (\partial_t^N F)(t_0 + t/2) \rangle_{E^{\mb{p}^\prime}} \, \frac{dt}{t} \nonumber
	\end{align}
	using $\wtd{\gz} = \gz$ and the slice space containments from the previous paragraph.
\end{proof}

Now we show that the distributions modulo polynomials $(\td{F}(t_0))_{t_0 \geq 0}$ are in fact given by the Cauchy operator applied to $\td{F}(0)$.
\begin{prop}\label{prop:f-semigp-equation}
	Let $F \in X^\mb{p}$ solve $(\CR)_{DB}$ and define $\td{F}$ as above.
	Then for all $t_0 \geq 0$ we have
	\begin{equation*}
		\td{F}(t_0) = e^{-t_0 [DB]} \gc^+(DB) \td{F}(0).
	\end{equation*}
	In particular, $\td{F}(0) \in {\mb{X}}_{DB}^{\mb{p},+}$, and so $\td{F} = \mb{C}_{DB}^+ (\td{F}(0))$.
\end{prop}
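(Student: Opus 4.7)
The plan is to test the claimed identity against the dense class $\bbX_{B^*D}^{\mb{p}'}\cap\mc{D}(B^*D)$, for which Lemma \ref{lem:test-dual-identn} already gives us a concrete integral formula. For such a $\gfv$,
\begin{equation*}
\langle \gfv,\td F(t_0)\rangle = -c_N\int_0^\infty \langle tB^* e^{-t[DB^*]/2}D\gfv,\, t^N(\partial_t^N F)(t_0+t/2)\rangle_{E^{\mb{p}'}}\,\frac{dt}{t},
\end{equation*}
so the task is to rewrite each pointwise slice-space pairing in a form recognisable as a similar integral evaluated at $\td F(0)$.

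The pointwise rewrite is supplied by Lemma \ref{lem:dual-semigp-test} applied to the solution $\partial_t^N F\in X^{\mb{p}-N}$. For fixed $t>0$ I would set $\psi_t:=t\,e^{-t[B^*D]/2}\gfv$. Linearity and Lemma \ref{lem:int-slice} show $\psi_t\in\bbD^\mb{p}(X)$, and similarity of functional calculi yields
\begin{equation*}
B^*D\psi_t=tB^* e^{-t[DB^*]/2}D\gfv,\qquad D\psi_t=te^{-t[DB^*]/2}D\gfv.
\end{equation*}
Lemma \ref{lem:dual-semigp-test} with base point $t/2$ and shift $t_0$ then gives
\begin{equation*}
\langle tB^* e^{-t[DB^*]/2}D\gfv,(\partial_t^N F)(t_0+t/2)\rangle = \langle tB^* e^{-(t+2t_0)[DB^*]/2}\gc^+(DB^*)D\gfv,(\partial_t^N F)(t/2)\rangle,
\end{equation*}
after combining the two positive-spectral semigroup factors on the right.

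Multiplying by $-c_N t^N$ and integrating $dt/t$, Lemma \ref{lem:test-dual-identn} identifies the left hand side as $\langle\gfv,\td F(t_0)\rangle$. For the right hand side, I would apply Lemma \ref{lem:test-dual-identn} at parameter $0$ to the test function $e^{-t_0[B^*D]}\gc^+(B^*D)\gfv$ (which stays in $\bbX_{B^*D}^{\mb{p}'}\cap\mc{D}(B^*D)$ by boundedness of the $H^\infty$ functional calculus), using the similarity relation $De^{-t_0[B^*D]}\gc^+(B^*D)=e^{-t_0[DB^*]}\gc^+(DB^*)D$ to see that the resulting integral matches term for term. Thus
\begin{equation*}
\langle\gfv,\td F(t_0)\rangle = \langle e^{-t_0[B^*D]}\gc^+(B^*D)\gfv,\td F(0)\rangle = \langle\gfv,e^{-t_0[DB]}\gc^+(DB)\td F(0)\rangle,
\end{equation*}
where in the last step I dualise using Proposition \ref{prop:comp-fc}.

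By density of the test class (strong when $\mb{p}'$ is finite, weak-star otherwise, via Lemma \ref{lem:density}) I conclude $\td F(t_0)=e^{-t_0[DB]}\gc^+(DB)\td F(0)$ in $\mb{X}_{DB}^\mb{p}$. Setting $t_0=0$ gives $\td F(0)=\gc^+(DB)\td F(0)\in\mb{X}_{DB}^{\mb{p},+}$, and $\td F=\mb{C}_{DB}^+(\td F(0))$ by the definition of the extended Cauchy operator. The main technical obstacle is checking that the pointwise Lemma \ref{lem:dual-semigp-test} identity may legally be integrated against $dt/t$: this Fubini-type interchange is controlled by the absolute-convergence estimates already built into the proofs of Lemmas \ref{lem:test-dual-identn} and \ref{lem:int-slice}, together with the uniform $X^\mb{p}$-bound on $t^N(\partial_t^N F)(t_0+t/2)$ from Lemma \ref{lem:derivative-shift-containment}.
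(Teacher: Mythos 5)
Your proof is correct and uses essentially the same chain of identities as the paper's, only read in the opposite direction: the paper starts from $\langle \gfv, e^{-t_0[DB]}\gc^+(DB)\td{F}(0)\rangle$, dualises, applies Lemma~\ref{lem:test-dual-identn}, commutes the semigroup factor through via similarity of functional calculi, applies Lemma~\ref{lem:dual-semigp-test}, and re-recognises $\langle\gfv,\td{F}(t_0)\rangle$; you start from $\langle\gfv,\td{F}(t_0)\rangle$ and run the same lemmas the other way. All the necessary stability checks (that $\psi_t\in\bbD^\mb{p}(X)$ via Lemma~\ref{lem:int-slice}, that $e^{-t_0[B^*D]}\gc^+(B^*D)\gfv$ stays in the test class, and that integration against $dt/t$ is justified by the absolute-convergence estimates of Lemma~\ref{lem:test-dual-identn} and Lemma~\ref{lem:derivative-shift-containment}) are present, so the argument stands.
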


\begin{proof}
	Since $\td{F}(t_0) \in {\mb{X}}_{DB}^\mb{p}$ and since ${\bbX}_{B^* D}^{\mb{p}^\prime} \cap \mc{D}(B^* D)$ is dense in ${\mb{X}}_{B^* D}^{\mb{p}^\prime}$ (Corollary \ref{cor:adapted-space-int-dens} and density of $\mc{D}(B^* D)$ in $\bbX_{B^* D}^2$), it suffices to test against $\gfv \in {\bbX}_{B^* D}^{\mb{p}^\prime} \cap \mc{D}(B^* D)$.
	For all such $\gfv$ write
	\begin{align*}
		&\langle \gfv, e^{-t_0 [DB]} \gc^+(DB) \td{F}(0) \rangle_{{\mb{X}}_{B^* D}^{\mb{p}^\prime}} \\
		&= \langle e^{-t_0 [B^* D]} \gc^+(B^* D) \gfv, \td{F}(0) \rangle_{{\mb{X}}_{B^*D}^{\mb{p}^\prime}} \\
		&= -c_N \int_0^\infty \big \langle t B^* e^{-\frac{t}{2}[DB^*]} D\big( e^{-t_0 [B^* D]} \gc^+(B^* D)\gfv \big), t^N (\partial_t^N F)(t/2)\big\rangle_{E^{\mb{p}^\prime}} \, \frac{dt}{t} \\
		&= -c_N \int_0^\infty \big\langle t B^* e^{-t_0 [DB^*]} \gc^+(DB^*) D\big( e^{-\frac{t}{2}[B^* D]}\gfv\big), t^N (\partial_t^N F)(t/2)\big\rangle_{E^{\mb{p}^\prime}} \, \frac{dt}{t} \\
		&= -c_N \int_0^\infty \big\langle t B^*D \big( e^{-\frac{t}{2} [B^*D]}\gfv\big), t^N (\partial_t^N F)(t_0 + t/2) \big\rangle_{E^{\mb{p}^\prime}} \, \frac{dt}{t} \\
		&= -c_N \int_0^\infty \big\langle t B^* e^{-\frac{t}{2}[DB^*]}D\gfv, t^N (\partial_t^N F)(t_0 + t/2 )\big\rangle_{E^{\mb{p}^\prime}} \, \frac{dt}{t} \\
		&= \langle \gfv, \td{F}(t_0) \rangle_{{\mb{X}}_{B^*D}^{\mb{p}^\prime}}.
	\end{align*}
	In the third line we used that $e^{-t_0 [B^*D]} \gc^+(B^*D)$ maps ${\bbX}_{B^* D}^{\mb{p}^\prime} \cap \mc{D}(B^* D)$ into itself, and the representation \eqref{eqn:test-dual-identn}.
	In the fifth line we used Lemma \ref{lem:dual-semigp-test}, which is valid since $e^{-t[B^*D]/2}\gfv \in \bbD^\mb{p}(X)$ (Lemma \ref{lem:int-slice}) and since $[t \mapsto (\partial_t F)(t/2)] \in X^{\mb{p} - 1}$ solves $(\CR)_{DB}$.
	We use the representation \eqref{eqn:test-dual-identn} once more in the last line.
\end{proof}

This immediately implies the following corollary.

\begin{cor}\label{cor:f-semigp-equation-I}
	Let $F \in X^\mb{p}$ solve $(\CR)_{DB}$.
	Then $\td{F}(0) \in {\mb{X}}_{D}^{\mb{p},+}$, $\td{F} = \mb{C}_{DB}^+(\td{F}(0))$, and $\td{F} \in X^\mb{p}$.
\end{cor}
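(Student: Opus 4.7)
The plan is to observe that Proposition \ref{prop:f-semigp-equation} already delivers almost everything required, so the proof is a matter of reading off the consequences together with one norm bound.

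First, I would note that under the standing hypotheses of Theorem \ref{thm:mainthm-gtr2}, namely $\mb{p}^\heartsuit \in I(\mb{X},DB^*)$, Corollary \ref{cor:dual-interval-identn} (in its finite-exponent form) and Proposition \ref{prop:dual-interval-incl} for infinite $\mb{p}$ give the identification ${\mb{X}}_{DB}^\mb{p} = {\mb{X}}_{D}^\mb{p}$ as completions of $\bbX_{DB}^\mb{p}$ inside $\mc{Z}^\prime$, with equivalent quasinorms. In particular the spectral subspace ${\mb{X}}_{DB}^{\mb{p},+}$ is embedded in ${\mb{X}}_D^\mb{p}$, and we may write ${\mb{X}}_D^{\mb{p},+}$ for this subspace.

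Next, Proposition \ref{prop:f-semigp-equation} asserts directly that $\td{F}(0) \in {\mb{X}}_{DB}^{\mb{p},+}$ and $\td{F} = \mb{C}_{DB}^+(\td{F}(0))$. Combined with the preceding identification, this already yields $\td{F}(0) \in {\mb{X}}_D^{\mb{p},+}$ and the semigroup representation.

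It remains to verify $\td{F} \in X^\mb{p}$. For this I would invoke Theorem \ref{thm:sgpnorm-concrete}, which applies since $\mb{p}$ is in the range of the hypothesis (with $i(\mb{p}) > 2$, $\gq(\mb{p}) \in (-1,0)$, and $\mb{p}^\heartsuit \in I(\mb{X},DB^*)$); if $i(\mb{p}) \leq 2$ one would instead invoke Corollary \ref{cor:cauchy-abstract}. Either way, applied to $\td{F}(0) \in {\mb{X}}_{DB}^{\mb{p},+}$, one obtains
\begin{equation*}
\nm{\td{F}}_{X^\mb{p}} = \nm{\mb{C}_{DB}^+(\td{F}(0))}_{X^\mb{p}} \lesssim \nm{\td{F}(0)}_{{\mb{X}}_D^\mb{p}} \simeq \nm{\td{F}(0)}_{{\mb{X}}_{DB}^\mb{p}}.
\end{equation*}
Combining this with the uniform estimate \eqref{eqn:good-norm-est}, which was established as part of the construction of $\td{F}(0)$ via $\bbS_{\gz,DB}$, gives $\nm{\td{F}}_{X^\mb{p}} \lesssim \nm{F}_{X^\mb{p}}$, so in particular $\td{F} \in X^\mb{p}$.

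There is no real obstacle here; the only point to be careful about is ensuring that Theorem \ref{thm:sgpnorm-concrete} is cited in the correct regime and that the identification ${\mb{X}}_{DB}^\mb{p} = {\mb{X}}_D^\mb{p}$ is genuinely available (which is why $\mb{p}^\heartsuit \in I(\mb{X},DB^*)$ is built into the hypothesis of Theorem \ref{thm:mainthm-gtr2}). The heavier work, namely the computation of the weak semigroup identity for $\td{F}$, has already been absorbed into the proof of Proposition \ref{prop:f-semigp-equation}.
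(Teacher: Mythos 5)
Your proposal is essentially correct and follows the same route as the paper: the first two assertions are read off from Proposition \ref{prop:f-semigp-equation}, and the membership $\td{F} \in X^\mb{p}$ follows from Theorem \ref{thm:sgpnorm-concrete}, applied to $\td{F}(0) \in \mb{X}_{DB}^{\mb{p},+}$. The extra norm bound you record via \eqref{eqn:good-norm-est} is valid and is in fact what the final step of the proof of Theorem \ref{thm:mainthm-gtr2} uses.

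One small caution. The step in which you "combine with the preceding identification" to pass from $\td{F}(0) \in \mb{X}_{DB}^{\mb{p},+}$ to $\td{F}(0) \in \mb{X}_{D}^{\mb{p},+}$ is not valid as written. The identification $\mb{X}_{DB}^{\mb{p}} = \mb{X}_D^{\mb{p}}$ is an isomorphism of the full spaces (the extension of the identity map), but it does not carry the $\gc^+(DB)$-spectral subspace to the $\gc^+(D)$-spectral subspace; as the paper warns explicitly after Theorem \ref{thm:identn-openness}, these are in general distinct subspaces of $\mb{X}_D^\mb{p}$. What is actually needed here, and what Theorem \ref{thm:mainthm-gtr2} states, is $\td{F}(0) \in \mb{X}_{DB}^{\mb{p},+}$, which Proposition \ref{prop:f-semigp-equation} gives directly; the notation $\mb{X}_D^{\mb{p},+}$ in the corollary statement reads as a slip for $\mb{X}_{DB}^{\mb{p},+}$. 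Finally, the remark about invoking Corollary \ref{cor:cauchy-abstract} for $i(\mb{p}) \leq 2$ is moot: Theorem \ref{thm:mainthm-gtr2} has $i(\mb{p}) > 2$ as a standing hypothesis, so only Theorem \ref{thm:sgpnorm-concrete} is ever needed in this corollary.
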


\begin{proof}
	All we need to show is that $\td{F}$ is in $X^\mb{p}$.
	This follows from Theorem \ref{thm:sgpnorm-concrete}.
\end{proof}

\textbf{Step 6: Equality of $\partial_t F$ and $\partial_t \td{F}$.}

By Corollary \ref{cor:f-semigp-equation-I} and Proposition \ref{prop:sgp-continuity}, for $F \in X^\mb{p}$ which solves $(\CR)_{DB}$, the function $t_0 \mapsto \td{F}(t_0)$ is in $C^\infty(\bbR_+ : {\mb{X}}_{D}^\mb{p})$.
Therefore we can consider $(\partial_t \td{F})(t_0) \in {\mb{X}}_{D}^\mb{p}$ as a distribution modulo polynomials.

\begin{lem}\label{lem:schwartz-dual-identn}
	Let $F \in X^\mb{p}$ solve $(\CR)_{DB}$.
	Then for all $t_0 > 0$ we have $(\partial_t F)(t_0) = (\partial_t \td{F})(t_0)$ in $\mc{Z}^\prime$.
\end{lem}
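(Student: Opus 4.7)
Since $(\partial_t F)(t_0)$ lies in $\mb{X}_D^{\mb{p}-1}$ by Lemma \ref{lem:FHs-smoothness}, and $(\partial_t \td F)(t_0)$ lies in the same space because $\td F = \mb{C}_{DB}^+(\td F(0))$ is smooth into $\mb{X}_D^\mb{p}$ by Corollary \ref{cor:f-semigp-equation-I} and Proposition \ref{prop:sgp-continuity}, both elements embed into $\mc{Z}^\prime$; it therefore suffices to test against $\gf \in \mc{S}$. The key observation is that $\td F \in X^\mb{p}$ also solves $(\CR)_{DB}$ by Corollary \ref{cor:f-semigp-equation-I}, so all the preceding lemmas of this section apply equally to $\td F$. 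In particular, applying Corollary \ref{cor:F-reproduction} to both $F$ and $\td F$ with $k = N$, where $N$ is the integer fixed in the construction of $\td F$, reduces the claim to showing
\begin{equation*}
\int_{t_0}^\infty \langle \gf,(\partial_t^{N+1}F)(t) - (\partial_t^{N+1}\td F)(t)\rangle_{E^{\mb{p}^\prime}}(t-t_0)^{N-1}\,dt = 0.
\end{equation*}

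I would then exploit the self-consistency of the $\td F$-construction at arbitrary base points: by Proposition \ref{prop:f-semigp-equation}, for each $t \geq 0$,
\begin{equation*}
\td F(t) = \mb{S}_{\gz,DB}\bigl[s \mapsto s^N(\partial_t^N F)(t+s/2)\bigr],
\end{equation*}
because both sides coincide with $e^{-t[DB]}\gc^+(DB)\td F(0)$. Differentiating this identity $N+1$ times in $t$, an interchange justified by Lemma \ref{lem:derivative-shift-containment} and dominated convergence in $X^\mb{p}$ which allow $\partial_t$ to pass through $\mb{S}_{\gz,DB}$, yields
\begin{equation*}
(\partial_t^{N+1}\td F)(t) = \mb{S}_{\gz,DB}\bigl[s \mapsto s^N(\partial_t^{2N+1}F)(t+s/2)\bigr].
\end{equation*}
Substituting into the difference integral, unfolding $\mb{S}_{\gz,DB}$ via Proposition \ref{prop:Srep}, passing $\gz_s(DB)$ to its adjoint action $\gz_s(B^*D)$ on $\gf$ (legitimate since $\gf \in \mc{S} \subset \bbD^\mb{p}(X)$ by Lemma \ref{lem:test-schwartz-containment}), applying Fubini, and changing variables $u = t + s/2$, one rewrites the $\td F$-contribution as an iterated integral in which the inner $u$-integral is precisely Corollary \ref{cor:F-reproduction} applied to $\partial_t^N F \in X^{\mb{p}-N}$ at base point $t_0 + s/2$ with test function $\gz_s(B^*D)\gf$. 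After this identification, the reduction collapses to exactly the $F$-contribution, cancelling the difference.

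The main obstacle will be the rigorous bookkeeping of the multiple interchanges — differentiation through $\mb{S}_{\gz,DB}$, the Fubini swap, and the adjoint passage of $\gz_s(DB)$ across the inner product — together with the final algebraic collapse of the $s$-integral. All of these rest on the uniform slice-space and $X^\mb{p}$-estimates for $(\partial_t^k F)(u)$ at every order $k \leq 2N+1$ supplied by Proposition \ref{prop:wksolnprops} and Lemma \ref{lem:derivative-shift-containment}, combined with the decay of $\gz \in \gY_1^\infty$ and the fact established in Corollary \ref{cor:f-semigp-equation-I} that $\td F(0)$ lies in the positive spectral subspace, which makes the semigroup-type manipulations on $\td F$ rigorous.
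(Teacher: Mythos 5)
Your reduction to the claim that
\begin{equation*}
\int_{t_0}^\infty \big\langle \gf,(\partial_t^{N+1}F)(t)-(\partial_t^{N+1}\td F)(t)\big\rangle_{E^{\mb{p}^\prime}}(t-t_0)^{N-1}\,dt=0
\end{equation*}
via Corollary \ref{cor:F-reproduction} with $k=N$ applied to both $F$ and $\td F$ is fine, as is the reading of $\td F(t)=\mb{S}_{\gz,DB}[s\mapsto s^N(\partial_t^N F)(t+s/2)]$ for all $t\geq 0$. The problem is the final step: the unfolding of $\mb{S}_{\gz,DB}$, Fubini, the change of variables $u=t+s/2$, and the application of Corollary \ref{cor:F-reproduction} to $\partial_t^N F$ at base point $t_0+s/2$ do not collapse the $\td F$-contribution to the $F$-contribution. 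If you track the computation through, you obtain $(-1)^N(N-1)!\int_0^\infty s^N\langle\gz_s(B^*D)\gf,(\partial_t^{N+1}F)(t_0+s/2)\rangle\,\frac{ds}{s}$, and this is, up to unwinding $\bbS_{\gz,DB}$ again, exactly $(-1)^N(N-1)!\langle\gf,(\partial_t\td F)(t_0)\rangle$. In other words you have merely undone the manipulations you performed and recovered the $\td F$-contribution: the argument is circular and establishes the tautology $\langle\gf,(\partial_t\td F)(t_0)\rangle=\langle\gf,(\partial_t\td F)(t_0)\rangle$.

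What is missing is the \emph{weak semigroup property} established in Lemma \ref{lem:dual-semigp-test}, namely
\begin{equation*}
\big\langle B^* e^{-\gt DB^*}\gc^+(DB^*)D\gf,\ G(t_0)\big\rangle_{E^{\mb{p}^\heartsuit}}=\big\langle B^* D\gf,\ G(t_0+\gt)\big\rangle_{E^{\mb{p}^\heartsuit}}
\end{equation*}
for solutions $G$ of $(\CR)_{DB}$ with appropriate integrability. This is the one non-algebraic ingredient: applied to $G=\partial_t^{N+1}F$ with $\gt=t/2$, it converts the awkward shift $t_0+t/2$ (inherent in the definition of $\td F$) into the clean shift $t_0+t$, and moves the semigroup factor $e^{-t[DB^*]/2}$ out of the test function. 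After that, an integration by parts in slice spaces and a change of variables produce $\frac{(-1)^{N+1}}{N!}\int_{t_0}^\infty\langle\gf,(\partial_t^{N+2}F)(t)\rangle(t-t_0)^N\,dt$, which by Corollary \ref{cor:F-reproduction} with $k=N+1$ (not $k=N$) equals $\langle\gf,(\partial_t F)(t_0)\rangle$. Your "algebraic collapse" would indeed happen if one were allowed to assume $F(t_0+s)=e^{-sDB}F(t_0)$ in the strong sense, which is essentially the content of the first representation theorem one is in the process of proving; the weak semigroup property, proved independently via testing against the class $\bbD^{\mb{p}}(X)$, is precisely what substitutes for this and breaks the circularity.

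A secondary, more minor issue: you invoke Corollary \ref{cor:F-reproduction} for $\partial_t^N F\in X^{\mb{p}-N}$ with the non-Schwartz test function $\gz_s(B^*D)\gf$. The corollary is stated only for $\gf\in\mc{S}$ and for the fixed exponent $\mb{p}$ of the section, and its proof goes through Lemma \ref{lem:FHs-smoothness} and Corollary \ref{cor:schwartz-testing}, whose hypotheses do not automatically persist for $\mb{p}-N$ or for general test functions in $E^{\mb{p}^\prime}$. One can salvage this by applying Lemma \ref{lem:reconstruction} directly and checking the required growth estimates via slice-space bounds, but this requires its own justification. Even granting this, the circularity above is the decisive obstruction.
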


\begin{proof} 
	Fix $\gf \in \mc{Z}$.
	For all $k \in \bbN$ we have already computed (using that everything is in $L^2$)
	\begin{align}
		&\langle \gf, \td{F}_k(t_0) \rangle_{\mc{Z}} \nonumber \\
		&= -c_N \dint_{\bbR^{1+n}_+} \big(t (B^* e^{-t[DB^*]/2} D\gf)(x) , \gc_k t^N (\partial_t^N F)(t_0 + t/2,x)\big) \, dx \, \frac{dt}{t}. \label{eqn:schwartz-comp}
	\end{align}
	Since $\gf \in \mc{Z}$ we have $D\gf \in {\mb{X}}_{D}^{\mb{p}^\prime}$, so for each $t > 0$ we may apply the (extended operator) $e^{-t[DB^*]/2}$ to $D\gf$.
	We then have
	\begin{align}
		&\nm{t \mapsto t B^* e^{-t[DB^*]/2} D\gf}_{X^{\mb{p}^\prime}} \nonumber\\
		&= \nm{t \mapsto B^* e^{-t[DB^*]/2} D\gf}_{X^{\mb{p}^\heartsuit}} \nonumber\\
		&\lesssim \nm{D\gf}_{{\mb{X}}_{D}^{\mb{p}^\heartsuit}} \label{eqn:rmk-control} \\
		&\simeq \nm{\gf}_{{\mb{X}}^{\mb{p}^\prime}} < \infty, \nonumber
	\end{align}
	where \eqref{eqn:rmk-control} follows from Proposition \ref{prop:equivalent-quasinorms} since $[z \mapsto z e^{-[z]/2}] \in \gY({\bbX}_{D}^{\mb{p}^\heartsuit})$ (here we use $i(\mb{p}^\heartsuit) < 2$ and $\gq(\mb{p}^\heartsuit) < 0$).
	Since $[t \mapsto t^N \partial_t^N F(t_0 + t/2)] \in X^\mb{p}$ (Lemma \ref{lem:derivative-shift-containment}), the integral \eqref{eqn:schwartz-comp} is uniformly bounded in $k$ and so we can take the limit
	\begin{align*}
		\langle \gf, \td{F}(t_0) \rangle_{\mc{Z}}
		&= \lim_{k \to \infty} \langle \gf, \td{F}_k(t_0) \rangle_{\mc{Z}} \\
		&= -c_N \dint_{\bbR^{1+n}_+} \big(t(B^* e^{-t[DB^*]/2} D\gf)(x) , t^N (\partial_t^N F)(t_0 + t/2,x)\big) \, dx \, \frac{dt}{t}
	\end{align*}
	by dominated convergence.
	Using dominated convergence again, we can take the derivative:
	\begin{align*}
		&\langle \gf,(\partial_{t}\td{F})(t_0) \rangle_{\mc{Z}} \\
		&= \partial_{t} \langle \gf,\td{F}(t_0) \rangle_{\mc{Z}} \\
		&= -c_N \dint_{\bbR^{1+n}_+} \big(t (B^* e^{-t[DB^*]/2} D\gf)(x) , t^N (\partial_t^{N+1} F)(t_0 + t/2,x)\big) \, dx \, \frac{dt}{t} \\
		&= -c_N \int_0^\infty \big\langle t B^* e^{-t[DB^*]/2} D\gf, t^N (\partial_t^{N+1} F)(t_0 + t/2) \big\rangle_{E^{\mb{p}^\prime}} \, \frac{dt}{t}
	\end{align*}
	using that $\gf \in \bbD^\mb{p}(X)$ (Lemma \ref{lem:test-schwartz-containment}) to conclude that the slice space duality pairing is meaningful as in the proof of Lemma \ref{lem:test-dual-identn}.
	
	Now we rearrange:
	\begin{align}
		&\big\langle t B^* e^{-t[DB^*]/2} D\gf, t^N (\partial_t^{N+1} F)(t_0 + t/2) \big\rangle_{E^{\mb{p}^\prime}} \nonumber\\
		&=\big\langle t B^*D \big( e^{-t[B^*D]/2} \gf \big), t^N (\partial_t^{N+1} F)(t_0 + t/2) \big\rangle_{E^{\mb{p}^\prime}} \label{line:1}\\
		&=\big\langle t B^* \gc^+(DB^*) D e^{-t[B^*D]/2} \gf, t^N (\partial_t^{N+1} F)(t_0 + t/2) \big\rangle_{E^{\mb{p}^\prime}} \label{line:2}\\
		&= t^{N+1} \big\langle B^* e^{-t[DB^*]/2} \gc^+(DB^*) D\gf, (\partial_{t}^{N+1} F)(t_0 + t/2) \big\rangle_{E^{\mb{p}^\prime}} \label{line:3}\\
		&= t^{N+1} \big\langle B^* D\gf, (\partial_{t}^{N+1} F)(t_0 + t)\big\rangle_{E^{\mb{p}^\prime}} \label{line:5}\\
		&= -t^{N+1} \big\langle \gf,(\partial_{t}^{N+2} F)(t_0 + t)\big\rangle_{E^{\mb{p}^\prime}}. \label{line:6}
	\end{align}
	The first line \eqref{line:1} uses that $\gf \in \mc{D}(D) = \mc{D}(B^* D)$, \eqref{line:2} uses \eqref{eqn:dual-expn-1} and the fact that $e^{-t[B^* D]/2} \gf$ is in $\mc{D}^\mb{p}(X)$ (Lemma \ref{lem:test-sgp-closure}), \eqref{line:3} is just similarity of functional calculi and rearrangement, \eqref{line:5} uses the weak semigroup property \eqref{eqn:dual-semigp-eqn}, and \eqref{line:6} finishes with integration by parts in slice spaces (Proposition \ref{prop:slice-IBP}) and $(\CR)_{DB}$.
	
	Therefore we have
	\begin{align*}
		\langle \gf, (\partial_{t}\td{F})(t_0) \rangle_{\mc{Z}}
		&= c_N \int_0^\infty \left\langle \gf,(\partial_{t}^{N+2} F)(t_0 + t)\right\rangle_{E^{\mb{p}^\prime}} t^{N+1} \, \frac{dt}{t} \\
		&= \frac{(-1)^{N+1}}{N!} \int_{t_0}^\infty \left\langle \gf, (\partial_t^{N+2} F)(t) \right\rangle_{E^{\mb{p}^\prime}} (t-t_0)^N \, dt.
	\end{align*}
	
	Finally, applying Corollary \ref{cor:F-reproduction} with $k = N+1$, we get
	\begin{equation*}
		\langle \gf, (\partial_t F)(t_0) \rangle_\mc{Z} = \langle \gf, (\partial_t \td{F})(t_0) \rangle_{\mc{Z}}
	\end{equation*}
	for all $\gf \in \mc{Z}$ and all $t_0 > 0$.
	Therefore we have $(\partial_t F)(t_0) = (\partial_t \td{F})(t_0)$ in $\mc{Z}^\prime$ for all $t_0 > 0$ as claimed.
\end{proof}

\textbf{Step 7: Completing the proof.}

\begin{lem}\label{lem:final-lem}
	Let $F \in X^\mb{p}$ solve $(\CR)_{DB}$ with $\lim_{t \to \infty} F(t)_\parallel = 0$ in $\mc{Z}^\prime(\bbR^n : \bbC^{nm})$.
	Then $F = \td{F}$.
\end{lem}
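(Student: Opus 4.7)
The plan is to show that $H := F - \td{F}$ vanishes identically. By Corollary \ref{cor:f-semigp-equation-I}, $\td{F} \in X^\mb{p}$ and solves $(\CR)_{DB}$, so $H \in X^\mb{p}$ solves $(\CR)_{DB}$ as well. Applying Lemma \ref{lem:schwartz-dual-identn} gives $(\partial_t H)(t) = 0$ in $\mc{Z}^\prime$ for every $t > 0$; integrating in $t$ (using Proposition \ref{prop:wksolnprops} so that $H$ is smooth in $t$ valued in $L^2_\mathrm{loc}$) yields $H(t) \equiv C$ in $\mc{Z}^\prime$ for some $t$-independent $C \in \mc{Z}^\prime$. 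Proposition \ref{prop:sgp-continuity} applied to $\td{F} = \mb{C}_{DB}^+(\td{F}(0))$ gives $\td{F}(t) \to 0$ in $\mb{X}_{DB}^\mb{p} \hookrightarrow \mc{Z}^\prime$ as $t \to \infty$, and the hypothesis $F(t)_\parallel \to 0$ in $\mc{Z}^\prime$ then forces $C_\parallel = 0$. So $H(t, \cdot)_\parallel$ is a polynomial in $x$ for every $t$.

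Next I would upgrade this to $H_\parallel \equiv 0$. Proposition \ref{prop:wksolnprops} (and Remark \ref{rmk:Fell}) gives $H(t, \cdot) \in E^\mb{p}$ for each $t$, and a direct inspection of the slice-space quasinorm shows that $E^\mb{p}$ contains no nonzero polynomials when $i(\mb{p}) < \infty$ and only constants in $x$ when $i(\mb{p}) = \infty$. In the finite case we are done immediately. In the infinite case, write $H_\parallel(t, x) = c(t)$; the second Dirac equation $\partial_t H_\parallel = \nabla_\parallel(BH)_\perp$ forces $c'(t) \equiv 0$, for otherwise $(BH)_\perp$ would be affine in $x$, and via ellipticity of $B_{\perp\perp}$ the function $H_\perp$ would have linear growth in $x$, contradicting $H_\perp \in E^\infty$. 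Hence $c$ is constant in $t$, and the assumption $r(\mb{p}) < 0$ combined with a direct calculation of the $X^\mb{p}$ quasinorm of a constant function (which diverges as the base radius $r_B \to \infty$) forces $c = 0$.

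Finally, with $H_\parallel \equiv 0$ the second Dirac equation reduces to $\nabla_\parallel(B_{\perp\perp} H_\perp) = 0$, so $H_\perp(t, x) = B_{\perp\perp}(x)^{-1} d(t)$ for some $d$. Substituting into the first Dirac equation $\partial_t H_\perp + \dv_\parallel(B_{\parallel\perp} H_\perp) = 0$ yields a separated relation linking $d$, $d'$ and a spatial function built from $B$; combining with $\partial_t H_\perp(t, \cdot) = 0$ in $\mc{Z}^\prime$ (again from Lemma \ref{lem:schwartz-dual-identn}), the ellipticity of $B$, and the $X^\mb{p}$ quasinorm of $H_\perp$ under $r(\mb{p}) < 0$, we force $d \equiv 0$, hence $H = 0$ and $F = \td{F}$. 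The main obstacle is eliminating the polynomial ambiguity inherent in working modulo polynomials in $\mc{Z}^\prime$, especially for infinite $\mb{p}$ where $E^\mb{p}$ admits nonzero constant-in-$x$ elements; the transversal/tangential structure of $(\CR)_{DB}$ together with ellipticity of $B$ and the hypothesis $r(\mb{p}) < 0$ are exactly what is needed to resolve this.
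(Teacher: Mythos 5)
Your overall strategy is the same as the paper's: apply Lemma \ref{lem:schwartz-dual-identn} to get $\partial_t H = 0$ in $\mc{Z}^\prime$ so that $H(t) \equiv C$ modulo polynomials, use decay of $\td{F}(t)$ and the hypothesis on $F(t)_\parallel$ to kill $C_\parallel$ in $\mc{Z}^\prime$, and then combine the slice-space containment $H(t)\in E^{\mb{p}}$ from Proposition \ref{prop:wksolnprops} with the algebraic structure of $\mc{N}(DB)$ and the $X^{\mb{p}}$ estimates (where $r(\mb{p})<0$ is used) to finish. Where the paper simply cites the structural argument of \cite[Step 5, p.~50]{AM15} to write the residual as $G = \beta a$ modulo polynomials and then kills $\beta$ with the slice/tent-space estimate, you try to rederive the structure directly from the two components of the Dirac system. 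Your handling of the middle step (forcing $c'(t) \equiv 0$ by the linear-growth-versus-$E^\infty$ contradiction) is correct and self-contained.

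The gap is in the final paragraph. You write that substituting $H_\perp(t,x) = B_{\perp\perp}(x)^{-1} d(t)$ into the first Dirac equation ``yields a separated relation\dots we force $d \equiv 0$,'' but this is asserted rather than shown, and the assertion is genuinely delicate. The argument you used to kill $c'$ does not transfer: $\partial_t H_\perp(t) = B_{\perp\perp}^{-1} d'(t)$ being zero in $\mc{Z}^\prime$ (i.e.\ a polynomial in $x$) does \emph{not} force $d'(t)=0$, because if $B_{\perp\perp}$ happens to be a constant matrix then $B_{\perp\perp}^{-1}d'(t)$ is already constant in $x$ for any $d'$. Likewise $\dv_\parallel$ (unlike $\nabla_\parallel$) has a large kernel on vector fields, so the first Dirac equation by itself gives no growth contradiction. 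What one actually needs here is to also apply Proposition \ref{prop:wksolnprops} to $\partial_t H$ (which solves $(\CR)_{DB}$ and lies in $X^{\mb{p}-1}$) so that $\partial_t H(t) \in E^{i(\mb{p})}$, and then carry the first Dirac equation and the $X^{\mb{p}}$ norm through the resulting bootstrap --- that is precisely the content of the argument in \cite{AM15} that the paper invokes. As written, your final step leaves that work undone; the rest of your plan is sound and faithful to the paper's.
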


\begin{proof}
	By Lemma \ref{lem:schwartz-dual-identn} we have $\partial_t F = \partial_t \td{F}$ in $\mc{Z}^\prime$, so there exists $G \in \mc{Z}^\prime$ such that $F(t_0) = G + \td{F}(t_0)$ for all $t_0 \in \bbR_+$.
	Since $\lim_{t_0 \to \infty} \td{F}(t_0) = 0$ in $\mb{X}_D^{\mb{p}}$ (Proposition \ref{prop:sgp-continuity}, using the weak-star topology when $\mb{p}$ is infinite) and hence also in $\mc{Z}^\prime$, we find that $G_\parallel = 0$.
	Following the argument of \cite[Step 5, page 50]{AM15}, we find that $G = \gb a$ modulo polynomials,
	where $a$ is invertible in $L^\infty$ and $\gb \in \bbC^m$.
	To complete the proof it suffices to show that $\gb = 0$.

	Note that the constant function $[t \mapsto G = F(t) - \td{F}(t)]$ is in $X^\mb{p}$.
	If $\mb{p}$ is finite, then $G \in E^\mb{p}$ (since $[t \mapsto G]$ solves $(\CR)_{DB}$), and this forces $\gb = 0$.
	If $\mb{p}$ is infinite, then first we note that if $\gb \neq 0$ then $[t \mapsto G] \notin T^\infty_{-1;\td{\ga}}$ for all $\td{\ga} \in [0,1)$: this follows from estimating
	\begin{align*}
		\nm{t \mapsto G}_{T^\infty_{-1;\td{\ga}}} &\gtrsim \frac{1}{R^{\td{\ga}}} \bigg( \frac{1}{R^n} \int_{B_R} \int_0^R |t^{-1} G(y,t)|^2 \, \frac{dt}{t} \, dy \bigg)^{1/2} \\
		&\simeq R^{-\td{\ga} - \frac{n}{2}} \bigg( \int_{B_R} R^{-2} |\gb|^2 |a(y)|^2 \, dy \bigg)^{1/2} \\
		&\geq R^{-\td{\ga} - 1} |\gb| \nm{a^{-1}}_\infty^{-1}
	\end{align*}
	for all balls $B_R$ of radius $R > 0$ in $\bbR^n$, and then taking $R \to 0$.
	Since $\gq(\mb{p}) > -1$ we have
	\begin{equation*}
		G \in X^\mb{p} \hookrightarrow T^\infty_{-1;1 + r(\mb{p})},
	\end{equation*}
	and since $r(\mb{p}) < 0$ (this is the only time we use this hypothesis), we must have $\gb = 0$.
	This completes the proof.
\end{proof}

Therefore, by Corollary \ref{cor:f-semigp-equation-I}, under the assumptions of Theorem \ref{thm:mainthm-gtr2}, we have that $F = \td{F} = \mb{C}_{DB}^+ (\td{F}(0))$, with $\td{F}(0) \in {\mb{X}}_{D}^{\mb{p},+}$ such that $\nm{ \td{F}(0) }_{\mb{X}_D^{\mb{p},+}} \lesssim \nm{ F }_{X^\mb{p}}$ (by \eqref{eqn:good-norm-est}).
Furthermore, if $f \in \mb{X}_{DB}^{\mb{p},+}$ and $F = \mb{C}_{DB}^+ f$, then by Proposition \ref{prop:sgp-continuity} we have
\begin{equation*}
	f = \lim_{t \to 0} \mb{C}_{DB}^+ F(t) = \td{F}(0)
\end{equation*}
with limit in $\mb{X}_{DB}^{\mb{p}}$.
This completes the proof of Theorem \ref{thm:mainthm-gtr2}.

\section{Interpolation of solution spaces}\label{sec:solspace-interpolation}\index{interpolation!of solution spaces}

As an interesting aside, we can now show that certain solution spaces for the equation $L_A u = 0$ form interpolation scales.
In the context of differential operators with constant coefficients on bounded Lipschitz domains, similar results were proven by Kalton, Mayboroda, and Mitrea \cite[Theorem 1.5]{KMM07}.
One major difference is that their `ambient spaces' are Triebel--Lizorkin or Besov spaces rather than tent or $Z$-spaces, which is possible since their differential operators have constant (hence very regular) coefficients.

For $\mb{p} \in I_{\text{max}}$, define\footnote{This definition is not totally rigorous, since $F(t)$ need not be defined for all $t > 0$ for arbitrary $F \in X^\mb{p}$. However we will only consider functions for which $F(t)$ is always defined. In particular the sets $S_{A,X}^\mb{p}$ defined below are well-defined.}\index{space!solution}
\begin{equation*}
	\wtd{X^{\mb{p}}} := \left\{ 
	\begin{array}{ll} 
		\left\{F \in X^\mb{p} : \text{$\lim_{t \to \infty} F(t)_\parallel = 0$ in $\mc{Z}^\prime(\bbR^n)$} \right\} & (\gq(\mb{p}) < 0), \\
		\left\{ F : N_*(F) \in L^{i(\mb{p})} \right\} & (\gq(\mb{p}) = 0).
	\end{array} \right.
\end{equation*}
where $N_*(F)$ is defined in \eqref{eqn:KPMF}.
We set $\nm{ F }_{\wtd{X^\mb{p}}}$ to be $\nm{ F }_{X^\mb{p}}$ or $\nm{ N_* F }_{L^{i(\mb{p})}}$ accordingly, and define 
\begin{equation*}\index{elliptic equation!solution space}
	S_{A,X}^\mb{p} := \left\{u \in W_{1,\text{loc}}^2(\bbR^n) : L_A u = 0, \; \nabla_A u \in \wtd{X^\mb{p}}\right\},
\end{equation*}
with $\nm{u}_{S_{A,X}^\mb{p}} := \nm {\nabla_A u}_{\wtd{X^\mb{p}}}$.

Recall that the classification regions $J(\mb{X},DB)$ were defined in Definition \ref{dfn:J}.

\begin{thm}[Interpolation of solution spaces]\label{thm:solspace-interpolation} 
	Let $\mb{p}_0$ and $\mb{p}_1$ be exponents, fix $\gh \in (0,1)$, and set $\mb{p}_\gh := [\mb{p}_0,\mb{p}_1]_\gh$.
	Let $B = \hat{A}$.
	\begin{enumerate}[(i)]
		\item
		Suppose $\mb{p}_0,\mb{p}_1 \in J(\mb{H},DB)$, with $\mb{p}_0$ finite and $j(\mb{p}_1) \geq 0$.
		Then
		\begin{equation*}
			[S_{A,T}^{\mb{p}_0}, S_{A,T}^{\mb{p}_1}]_\gh = S_{A,T}^{\mb{p}_\gh}.
		\end{equation*}
				
		\item
		Suppose $\mb{p}_0,\mb{p}_1 \in J(\mb{B},DB)$, with $\mb{p}_0$ finite.
		Then
		\begin{equation*}
			[S_{A,Z}^{\mb{p}_0}, S_{A,Z}^{\mb{p}_1}]_\gh = S_{A,Z}^{\mb{p}_\gh}.
		\end{equation*}
		
		\item
		Suppose $\mb{p}_0,\mb{p}_1 \in J(\mb{X},DB)$, with $\gq(\mb{p}_0) \neq \gq(\mb{p}_1)$.
		Then
		\begin{equation*}
			(S_{A,X}^{\mb{p}_0}, S_{A,X}^{\mb{p}_1})_{\gh,i(\mb{p}_\gh)} = S_{A,Z}^{\mb{p}_\gh}.
		\end{equation*}
	\end{enumerate}
\end{thm}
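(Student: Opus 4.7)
The plan is to reduce the interpolation of solution spaces $S_{A,X}^\mb{p}$ to the interpolation of the adapted positive spectral subspaces $\mb{X}_{DB}^{\mb{p},+}$ via the trace isomorphism furnished by the classification theorems, and then to apply the interpolation theorem for canonical completions (Theorem \ref{thm:completed-interpolation}).

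First, I would use Theorem \ref{thm:AAM} together with Theorem \ref{thm:firstrepresentation} (for $\gq(\mb{p}) \in (-1,0)$) and the endpoint results \cite[Theorems 1.1 and 1.3]{AM15} (for $\gq(\mb{p}) \in \{-1,0\}$) to deduce that for every $\mb{p} \in J(\mb{X},DB)$ the trace map
\[
\tau_\mb{p} \colon S_{A,X}^\mb{p} \to \mb{X}_{DB}^{\mb{p},+}, \qquad u \mapsto \nabla_A u|_{t=0}
\]
is a bounded bijection with bounded inverse $\sigma_\mb{p}$ defined by $\nabla_A \sigma_\mb{p}(f) = \mb{C}_{DB}^+ f$ (the quasinorm equivalence $\nm{u}_{S_{A,X}^\mb{p}} \simeq \nm{\tau_\mb{p}(u)}_{\mb{X}_{DB}^{\mb{p},+}}$ is exactly the content of those theorems). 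A key consistency point is that the trace is a limit in $L^2_{\mathrm{loc}}$ (and hence in $\mc{Z}^\prime$), so if $u \in S_{A,X}^{\mb{p}_0} \cap S_{A,X}^{\mb{p}_1}$ then $\tau_{\mb{p}_0}(u) = \tau_{\mb{p}_1}(u)$; this is what makes $\{\tau_\mb{p}\}$ into a compatible family across exponents.

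Second, for $\mb{p} \in J(\mb{X},DB)$ one has the identification $\mb{X}_{DB}^\mb{p} = \mb{X}_D^\mb{p}$, so by Proposition \ref{prop:comp-fc} the spectral projection $\gc^+(DB)$ is a bounded projection on $\mb{X}_D^\mb{p}$, exhibiting $\mb{X}_{DB}^{\mb{p},+}$ as a complemented closed subspace of $\mb{X}_D^\mb{p}$. Choosing a Calder\'on sibling $\gy \in \gY_\infty^\infty$, Proposition \ref{prop:completion-identification} gives an isomorphism $\mb{X}_{DB}^{\mb{p},+} \cong \gy\mb{X}_{DB}^{\mb{p},+}$ via $\bbQ_{\gy,DB}$ that is compatible across $\mb{p}$. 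I would then invoke Theorem \ref{thm:completed-interpolation} (using Remark \ref{rmk:interpolation-extn} for the spectral subspace version) to obtain, under the respective hypotheses of (i), (ii), (iii), the identities
\[
[\gy\mb{H}_{DB}^{\mb{p}_0,+},\gy\mb{H}_{DB}^{\mb{p}_1,+}]_\gh = \gy\mb{H}_{DB}^{\mb{p}_\gh,+}, \quad [\gy\mb{B}_{DB}^{\mb{p}_0,+},\gy\mb{B}_{DB}^{\mb{p}_1,+}]_\gh = \gy\mb{B}_{DB}^{\mb{p}_\gh,+},
\]
\[
(\gy\mb{X}_{DB}^{\mb{p}_0,+},\gy\mb{X}_{DB}^{\mb{p}_1,+})_{\gh,i(\mb{p}_\gh)} = \gy\mb{B}_{DB}^{\mb{p}_\gh,+}.
\]
Transferring via $\bbQ_{\gy,DB}$ and then through $\sigma_\mb{p}$ yields the corresponding identities for solution spaces, by the retraction/coretraction principle applied to $\tau_\mb{p}$ and $\sigma_\mb{p}$ with $\sigma_\mb{p}\tau_\mb{p}=\mathrm{id}$.

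The main obstacle will be the compatibility of identifications at the endpoint $\gq(\mb{p})=0$, where the norm on $S_{A,T}^\mb{p}$ is defined using the non-tangential maximal function $N_*(\nabla u)$ rather than a direct tent-space norm on $\nabla u$. One needs to verify that the family $\{\tau_\mb{p}\}$ fits into a genuine interpolation couple structure, i.e.\ that the intersection $S_{A,X}^{\mb{p}_0}\cap S_{A,X}^{\mb{p}_1}$ sits inside a common ambient topological vector space (distributions modulo constants in the transversal variable) and that $\tau_{\mb{p}_0}$, $\tau_{\mb{p}_1}$ agree there. This is where the representation $\nabla_A u = \mb{C}_{DB}^+ \tau_\mb{p}(u)$ must be shown to hold in a uniform way: the semigroup $\mb{C}_{DB}^+$ acts independently of $\mb{p}$ on the boundary datum, so the ambient identification of solutions with semigroup evolutions is what reconciles the two different types of interior estimates. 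For the real interpolation identity (iii), one would additionally need to check that the $K$-functional of $(S_{A,X}^{\mb{p}_0}, S_{A,X}^{\mb{p}_1})$ is controlled by (and controls) that of $(\mb{X}_{DB}^{\mb{p}_0,+}, \mb{X}_{DB}^{\mb{p}_1,+})$, which again follows once the identification via $\tau_\mb{p}$ is established as an isomorphism of couples rather than of individual spaces.
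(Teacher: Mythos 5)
Your reduction of the interpolation problem to the positive spectral subspaces $\mb{X}_{DB}^{\mb{p},+}$ via the trace/extension isomorphism from the classification theorems, followed by the completed interpolation theorem and a retraction/coretraction-type transfer, matches the paper's argument in structure. One mild difference worth noting: the paper uses the auxiliary function $\sgp$ rather than a generic $\gy \in \gY_\infty^\infty$, precisely because $\mb{Q}_{\sgp,DB}$ \emph{is} the map $f \mapsto \nabla_A u$ and so $\sgp\mb{X}_{DB}^{\mb{p},+}$ sits inside $X^{\mb{p}}$ as the space of conormal gradients of solutions, making the retract structure immediate; the price is that $\sgp \notin \gY_\infty^\infty$, which is why the paper leans on Remark \ref{rmk:interpolation-extn} and on Theorems \ref{thm:sgpnorm-abstract} and \ref{thm:sgpnorm-concrete}. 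Your choice of $\gy \in \gY_\infty^\infty$ avoids that remark at the cost of the extra composition $\bbQ_{\gy,DB}\circ\tau_{\mb{p}}$ before applying Theorem \ref{thm:completed-interpolation}.

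There is, however, one genuine gap. After transferring the interpolation identity through the isomorphism of couples, you conclude that $[S_{A,X}^{\mb{p}_0}, S_{A,X}^{\mb{p}_1}]_\gh$ is the image of $\mb{X}_{DB}^{\mb{p}_\gh,+}$ under the Cauchy/extension operator. But to equate this image with $S_{A,X}^{\mb{p}_\gh}$ (with its defining estimate) you must invoke the classification theorem at the intermediate exponent, which requires $\mb{p}_\gh \in J(\mb{X},DB)$. That the classification region is stable under the interpolations being performed is not automatic from the hypotheses; the paper obtains it by combining Theorem \ref{thm:inclusion-interpolation} (interpolation of inclusion regions $i(\mb{X},A_0,A_1)$) with Proposition \ref{prop:dual-interval-incl} ($\heartsuit$-duality of inclusion regions), using the particular restrictions on $\mb{p}_0$, $\mb{p}_1$ in each part of the statement. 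Without this step your argument shows only that the interpolated solution space is isomorphic to a completion of $\bbX_{DB}^{\mb{p}_\gh,+}$, not that this completion is realised as the solution space $S_{A,X}^{\mb{p}_\gh}$.
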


The conditions in part (i) exclude the possibility that $S_{A,T}^{\mb{p}_\eta}$ corresponds to a $BMO$-type space.

\begin{proof}
	For $\mb{p} \in J(\mb{X},DB)$, by Theorem \ref{thm:AAM} and Theorems \ref{thm:mainthm-leq2} and \ref{thm:mainthm-gtr2} (or the corresponding theorems of the second author and Mourgoglou when $\gq(\mb{p}) \in \{-1,0\}$ \cite[Theorems 1.1 and 1.3]{AM15}), and taking into account the decay results of Section \ref{sec:dos}, we have that the operator $\mb{Q}_{\sgp,DB}$ is bounded and invertible from $\mb{X}_{DB}^{\mb{p},+}$ into $S_{A,X}^\mb{p}$.
	By Theorems \ref{thm:sgpnorm-abstract} and \ref{thm:sgpnorm-concrete} (see also Remarks \ref{rmk:interpolation-extn} and \ref{rmk:cauchy-reverse}) we have
	\begin{equation*}
		\nm{f}_{\bbX_{DB}^{\mb{p},+}} \simeq \nm{\mb{Q}_{\sgp,DB} f}_{X^{\mb{p}}}
	\end{equation*}
	for all $f \in \bbX_{DB}^{\mb{p},+}$.
	Since the regions $J(\mb{X},DB)$ are closed under interpolation given the restrictions on $\mb{p}_0$ and $\mb{p}_1$ that we have made (this can be shown by combining Theorem \ref{thm:inclusion-interpolation} and Proposition \ref{prop:dual-interval-incl}), the theorem follows from the interpolation theorem for the canonical completions $\sgp \mb{X}_{DB}^{\mb{p},+}$ (Theorem \ref{thm:completed-interpolation}; see Remark \ref{rmk:interpolation-extn} to justify use of the auxiliary function $\sgp$ in this theorem), along with the usual retraction/coretraction interpolation theorem \cite[\textsection 1.2.4]{hT78}.
\end{proof}

We should illustrate this result in a concrete situation, as the current level of abstraction is quite high.
Let $\mb{p}_0=(2,0)$ and $\mb{p}_1=(\infty, s, 0)$, and consider case $(i)$ of the theorem.
The corresponding solution space norms are
\begin{align*}
\nm{f}_{S_{A,T}}^{\mb{p}_0} &= \nm{N_*(f)}_{L^2}, \\
\nm{f}_{S_{A,T}}^{\mb{p}_1} &= \nm{f}_{T^\infty_{s;0}} = \nm{\mc{C}_0 (\gk^{-s} f)}_{L^\infty}, \qquad \text{and} \\
\nm{f}_{S_{A,T}}^{\mb{p}_\gq} &= \nm{f}_{T^p_{\gq s}} = \|\mc{A} (\gk^{-\gq s} f)\|_{p}.
\end{align*}
with $f=\nabla_A u$ and $p = 2/(1-\gq)$.
The interpolation for generic $f$ looks unfamiliar, and is even more surprising when $f$ is the conormal gradient (or equivalently, the gradient) of a solution.
This can be understood by seeing that the interpolation theorems for generic function spaces $X^\mb{p}$ carry over to the solution spaces $S_{A,X}^\mb{p}$ with exponents in the classification region. 

\section{Boundary behaviour of solutions}\label{sec:Wavglims}\index{elliptic equation!boundary behaviour of solutions}

In this section we establish boundary behaviour of solutions to $L_A u = 0$.
What we are looking for is regularity up to the boundary in a classical sense, if possible.
Otherwise, we look for almost everywhere non-tangential convergence at the boundary.
Since solutions to $L_A u = 0$ need not have pointwise values, we use averages on Whitney regions approaching the boundary.
We prove results only for exponents in the classification region of Section \ref{sec:class}, although this restriction is not always necessary (see for example \cite{KP93}).
This is a natural restriction when using our Cauchy--Riemann approach, and also simplifies part of the argument.

Recall that the spaces $\wtd{X^\mb{p}}$ were defined in the previous section.

\begin{thm}\label{thm:Wavgnt}
	Let $B = \hat{A}$ and let $\mb{p} \in J(\mb{X},DB)$ with $r(\mb{p}) \in (-1,0)$.
	Let $u$ solve $L_A u = 0$, with $\nabla_A u \in \wtd{X^\mb{p}}$.
	\begin{enumerate}[(i)]
		\item
		Suppose $\mb{p}$ is finite.
		Then there exists $v \in \mb{X}^{\mb{p}+1}$ such that
		\begin{equation}\label{eqn:desired-limit}
			\lim_{t \to 0} \bariint_{\gO(t,x)}^{} u(\gt,\gx) \, d\gx \, d\gt  = v(x) \qquad \text{(a.e. $x \in \bbR^n$)}
		\end{equation}
		with $\lim_{t \to 0} \nabla_\parallel u(t,\cdot) = \nabla_\parallel v$ in $\mc{Z}^\prime$.		
		\item
		Suppose $\mb{p}$ is infinite.
		Then $u \in \dot{\gL}_{1 + r(\mb{p})}(\overline{\bbR^{1+n}_+})$ (i.e. $u$ has a $(1+r(\mb{p}))$-H\"older continuous extension to the closed upper half-space $\overline{\bbR^{1+n}_+}$).
	\end{enumerate}
\end{thm}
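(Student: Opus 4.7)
The boundary trace $v$ is constructed via the representation theorems. Given $\nabla_A u \in \wtd{X^\mb{p}}$, the first representation theorem (Theorem \ref{thm:firstrepresentation}) yields $\nabla_A u|_{t=0} \in \mb{X}_{DB}^{\mb{p},+}$, and the second (Theorem \ref{thm:secondrepresentation}) produces $\wtd F_0 \in \bbP_D \mb{X}_{BD}^{\mb{p}+1,+}$ with $D\wtd F_0 = \nabla_A u|_{t=0}$ and
\begin{equation*}
  u = -(\bbP_D C_{BD}^+ \bbP_{BD}\wtd F_0)_\perp + c
\end{equation*}
for some constant $c \in \bbC^m$. Set $v := -(\wtd F_0)_\perp + c$; then $v \in (\mb{X}_D^{\mb{p}+1})_\perp = \mb{X}^{\mb{p}+1}$ by Corollary \ref{cor:perpequal}. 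The distributional tangential limit $\lim_{t \to 0}\nabla_\parallel u(t,\cdot) = \nabla_\parallel v$ in $\mc{Z}^\prime$ required in part~(i) follows immediately by applying Proposition \ref{prop:sgp-continuity} to $\nabla_A u = C_{DB}^+(D\wtd F_0)$ (giving convergence in $\mb{X}_{DB}^\mb{p} \hookrightarrow \mc{Z}^\prime$) together with the block form of $D$, which identifies the tangential component of $D\wtd F_0$ as $-\nabla_\parallel(\wtd F_0)_\perp = \nabla_\parallel v$.

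\textbf{Part (i).} The remaining task is the a.e.\ pointwise convergence of the Whitney averages to $v$. My plan follows the density-plus-maximal-inequality scheme used for the endpoints $\gq(\mb{p}) \in \{-1,0\}$ in \cite[Corollaries 1.2 and 1.4]{AM15}. First, by Corollary \ref{cor:adapted-space-int-dens} I approximate $\wtd F_0$ in $\mb{X}_D^{\mb{p}+1}$ by data $\wtd F_0^{(k)} \in \bbX_D^{\mb{p}+1} \cap \bbX_D^{(2,1)}$; for such regular data, the associated solution $u^{(k)}$ has a classical $H^{1/2}_{\loc}$-type trace and the Whitney-average convergence follows from standard Lebesgue differentiation. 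To pass from the dense class to the general case, I would prove a weak-type maximal estimate
\begin{equation*}
  \bigl| \{ x \in \bbR^n : \sup_{t>0} |W_u(t,x) - v(x)| > \gl \} \bigr|^{1/i(\mb{p})} \lesssim \gl^{-1} \nm{v}_{\mb{X}^{\mb{p}+1}},
\end{equation*}
derived from the identity $u(t) - v = -\bigl(\bbP_D (C_{BD}^+ \bbP_{BD} \wtd F_0)(t) - \wtd F_0\bigr)_\perp$ combined with the uniform slice-space boundedness and convergence results (Propositions \ref{prop:slice-bddness} and \ref{prop:slice-convergence}, Lemma \ref{lem:sgp-sector-repn}, Corollary \ref{cor:sgp-slice-bddness}) and the local Caccioppoli--Poincar\'e information of Proposition \ref{prop:wksolnprops}. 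The hardest step is establishing this maximal inequality in the quasi-Banach regime and at the Whitney-average level (rather than for single slices): it requires decomposing $e^{-t[BD]}$ via the bounded-$H^\infty$ spectral representatives of Lemma \ref{lem:sgp-sector-repn} so that off-diagonal decay replaces unavailable pointwise kernel control, and then transferring those decay estimates through slice spaces to a weak-type bound compatible with the quasinorm structure of $\mb{X}^{\mb{p}+1}$.

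\textbf{Part (ii).} Set $\ga := 1+r(\mb{p}) \in (0,1)$. In every admissible case for $\mb{X}^{\mb{p}+1}$ (either $\dot{\BMO}_\ga$, $\dot{\gL}_\ga$, or $\dot B^{\infty,\infty}_\ga$ with $\ga \in (0,1)$) the trace $v$ represents or embeds into $\dot\gL_\ga(\bbR^n)$ by the Campanato--Meyers identification. For interior H\"older regularity, the Carleson-type hypothesis $\nabla_A u \in \wtd{X^\mb{p}}$ translates, via the definition \eqref{eqn:wts-norm-carl-intro} (or the $Z^\infty$ quasinorm) and Proposition \ref{prop:wksolnprops}, into the Morrey--Campanato bound
\begin{equation*}
  r^{-n-2\ga}\, \dint_{B((t_0,x_0),r)} |\nabla u(\gt,y)|^2 \, dy \, d\gt \lesssim \nm{\nabla_A u}_{\wtd{X^\mb{p}}}^2
\end{equation*}
on all Whitney balls $B((t_0,x_0),r) \subset \bbR^{1+n}_+$, and Morrey's lemma then yields $u \in \dot\gL_\ga$ locally inside $\bbR^{1+n}_+$. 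To extend to $\overline{\bbR^{1+n}_+}$, I would control $|u(t,x)-v(x')|$ for $(t,x) \in \bbR^{1+n}_+$ and $x' \in \bbR^n$ by integrating $\nabla u$ along a mixed vertical-and-horizontal path from $(t,x)$ down to $(\varepsilon,x')$ and letting $\varepsilon \to 0$, bounding the vertical contribution by the Carleson estimate above and handling the horizontal/boundary contribution using $v \in \dot\gL_\ga$ together with the tangential limit from the preliminary step. The main obstacle is verifying that the resulting continuous extension actually equals $v$ on $\partial \bbR^{1+n}_+$ rather than some other $\dot\gL_\ga$-function differing by a constant: this requires invoking the Cauchy-semigroup representation of the preliminary step together with Corollary \ref{cor:sgp-slice-bddness} to match the pointwise boundary values of the extension with those of $v$ in the appropriate slice-space sense.
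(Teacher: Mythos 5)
Your proposal diverges from the paper's argument in a way that leaves the central step of part (i) unestablished. You construct $v$ from the second representation theorem and then attempt to prove a.e.\ convergence of Whitney averages via density in regular data plus a weak-type maximal inequality, which you flag as ``the hardest step'' and do not actually carry out. That is a genuine gap: the weak-type estimate you write down is precisely the kind of result the paper's proof is designed to avoid needing, and establishing it at the Whitney-average level, in the quasi-Banach range $i(\mb{p})<1$, would itself be a nontrivial project for which you have not offered a concrete argument.

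The paper observes instead that the a.e.\ convergence of Whitney averages holds \emph{generically} for any $u \in L^2_{\loc}$ with $\nabla u \in X^\mb{p}$ — no PDE and no representation theorem needed. Writing $h_x(t)$ for the Whitney average at scale $t$ centred at $x$, Lemma \ref{lem:poincare-cons} (a Poincar\'e inequality) gives
\begin{equation*}
	\bigl| h_x(t) - h_x(\gd t) \bigr| \lesssim_\gd (t-\gd t)^{1+\gq(\mb{p})}\, g(x), \qquad g := V_{\td c}\bigl(\gk^{-\gq(\mb{p})}\nabla u\bigr),
\end{equation*}
Lemma \ref{lem:whit-mf-control} shows $\nm{g}_{L^{i(\mb{p})}} \lesssim \nm{\nabla u}_{X^\mb{p}}$, hence $g(x)<\infty$ a.e., and the elementary Lemma \ref{lem:limit} then gives existence of $\lim_{t \to 0} h_x(t)$ at a.e.\ $x$. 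This is Proposition \ref{prop:X-grad-lims}, which also delivers the distributional limits \eqref{eqn:u-bdy-sch} and \eqref{eqn:u-bdy-sch-grad}. Only \emph{after} this does the paper invoke Theorems \ref{thm:mainthm-leq2} and \ref{thm:mainthm-gtr2}, to identify $\nabla_\parallel v$ with $(\nabla_A u|_{t=0})_\parallel \in \mb{X}^\mb{p}$ and hence conclude $v \in \mb{X}^{\mb{p}+1}$. Your construction reverses this logic: by defining $v$ from the representation you force yourself into exactly the maximal-inequality and trace-consistency machinery the paper's argument is built to circumvent.

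For part (ii) your Morrey--Campanato outline is close in spirit, but the step you call ``the main obstacle'' — matching the boundary values of the constructed H\"older extension with $v$ — is again left unresolved. The paper sidesteps it by verifying Campanato's criterion on cylinders $C(x,a,R) \subset \overline{\bbR^{1+n}_+}$ directly (estimate \eqref{eqn:campanato}), splitting into interior cylinders (treated by translating $u$ by $t_0$ and applying Proposition \ref{prop:Z-trace} to $u(\cdot+t_0)$, with the uniform $Z^\mb{p}$ bound on the translates coming from the classification theorem) and boundary-touching cylinders (treated by comparing $u$ to the pointwise trace $v$ via the convergence rate \eqref{eqn:limit-rate} and the H\"older bound \eqref{eqn:v-holder}); no path integration and no separate boundary-matching argument is needed.
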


The proof is best presented as a series of lemmas.

\begin{lem}\label{lem:poincare-cons}
	Suppose that $t > 0$, $\gd \in (0,1)$, and let $c$ be a Whitney parameter.
	Then there is a Whitney parameter $\td{c}$ such that for all $u$ with $\nabla u \in L^2_\text{loc}(\bbR^{1+n}_+)$ and for all $x \in \bbR^n$ we have
	\begin{equation*}
		\bigg| \bariint_{\gO_c(t,x)}^{} u - \bariint_{\gO_c(\gd t,x)}^{} u \bigg| \lesssim_\gd t \bigg( \bariint_{\gO_{\td{c}}(t,x)}^{} |\nabla u|^2 \bigg)^{1/2}
	\end{equation*}
	with implicit constant independent of $t$. 
\end{lem}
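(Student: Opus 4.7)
The plan is to reduce this to a standard $L^2$ Poincaré inequality on a convex cylinder. First I would choose $\td{c}$ so that $\gO_{\td{c}}(t,x)$ contains both $\gO_c(t,x)$ and $\gO_c(\gd t, x)$. Since
\begin{equation*}
  \gO_c(t,x) \cup \gO_c(\gd t, x) \subset \big( c_1^{-1} \gd t,\, c_1 t \big) \times B(x, c_0 t),
\end{equation*}
it suffices to take $\td{c}_0 := c_0$ and $\td{c}_1 := c_1/\gd$, which satisfies $\td{c}_1 > 3/2$ (as required by the definition of Whitney parameter) since $c_1 > 3/2$ and $\gd < 1$. Note that $\gO_{\td{c}}(t,x)$ is the Cartesian product of an interval and a Euclidean ball, hence convex, with diameter comparable to $t$ (with constants depending on $\td{c}$, hence on $c$ and $\gd$).

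Next I would apply the standard $L^2$-Poincaré inequality on the convex domain $\gO_{\td{c}}(t,x)$: writing $u_\gO := \bariint_\gO^{} u$ for a set $\gO$, we have
\begin{equation*}
  \bariint_{\gO_{\td{c}}(t,x)}^{} |u - u_{\gO_{\td{c}}(t,x)}|^2 \lesssim_{c,\gd} t^2 \bariint_{\gO_{\td{c}}(t,x)}^{} |\nabla u|^2,
\end{equation*}
with implicit constant independent of $t$ and $x$ by translation and scaling invariance.

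Finally I would combine this with Cauchy--Schwarz. Since $|\gO_{\td{c}}(t,x)| / |\gO_c(t,x)| \lesssim_{c,\gd} 1$ (and likewise with $\gd t$ in place of $t$), we get
\begin{equation*}
  \bigg| \bariint_{\gO_c(t,x)}^{} u - u_{\gO_{\td{c}}(t,x)} \bigg|
  \leq \bariint_{\gO_c(t,x)}^{} |u - u_{\gO_{\td{c}}(t,x)}|
  \lesssim_{c,\gd} t \bigg( \bariint_{\gO_{\td{c}}(t,x)}^{} |\nabla u|^2 \bigg)^{1/2},
\end{equation*}
and the same estimate holds with $\gO_c(\gd t,x)$ in place of $\gO_c(t,x)$. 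The triangle inequality then yields the desired bound.

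There is no serious obstacle here; the only mild point to verify is that the enlarged Whitney region is convex (so the classical Poincaré inequality applies directly without needing to chain balls or invoke a John-domain argument) and that both of the original regions embed in it with comparable volume. Both are immediate from the product structure of the Whitney regions once $\td{c}$ is chosen as above.
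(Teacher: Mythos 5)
Your proof is correct and follows essentially the same route as the paper: choose $\td{c} = (c_0, c_1/\gd)$ so that both Whitney regions lie inside $\gO_{\td{c}}(t,x)$, apply the Poincar\'e inequality (with constant controlled by scaling) on $\gO_{\td{c}}(t,x)$, and reduce the averages over the smaller regions to the average over the enlarged region using the volume comparability $|\gO_{\td{c}}(t,x)|/|\gO_c(t,x)| \lesssim_{c,\gd} 1$. The only cosmetic difference is whether the triangle inequality against $u_{\gO_{\td{c}}(t,x)}$ is invoked at the start or the end; the underlying steps are identical.
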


\begin{proof}
	A straightforward computation shows that, writing $c = (c_0,c_1)$, if we define $\td{c} := (c_0,c_1/\gd)$ then we have
	\begin{equation*}
		\gO_c(t,x) \cup \gO_c(\gd t,x) \subset \gO_{\td{c}}(t,x)
	\end{equation*}
	for all $(t,x) \in \bbR^{1+n}_+$.
	We then have
	\begin{align}
		\bigg| \bariint_{\gO_c(t,x)}^{} u - \bariint_{\gO_c(\gd t,x)}^{} u   \bigg|
		&\leq \bigg| \bariint_{\gO_c(t,x)}^{} u - \bariint_{\gO_{\td{c}} (t,x)}^{} u \bigg| + \bigg| \bariint_{\gO_{\td{c}}(t,x)}^{} u - \bariint_{\gO_c(\gd t,x)}^{} u \bigg| \nonumber \\
		&\lesssim_\gd \bariint_{\gO_{\td{c}}(t,x)}^{} \bigg| u - \bariint_{\gO_{\td{c}}(t,x)}^{} u \bigg| \label{line:containments} \\
		&\leq \bigg( \bariint_{\gO_{\td{c}}(t,x)}^{} \bigg| u - \bariint_{\gO_{\td{c}}(t,x)}^{} u \bigg|^2 \bigg)^{1/2} \nonumber \\
		&\lesssim t \bigg( \bariint_{\gO_{\td{c}}(t,x)}^{} |\nabla u|^2 \bigg)^{1/2}, \label{line:poincare}
	\end{align}
	using in \eqref{line:containments} that
	\begin{equation*}
		\bigg| \barint_A^{} u - \barint_B^{} u \bigg| \leq \frac{|B|}{|A|} \barint_B^{} \bigg| u - \barint_B^{} u \bigg|
	\end{equation*}
	when $A \subseteq B$, and the Poincar\'e inequality in \eqref{line:poincare}.
\end{proof}

Let $c$ be a Whitney parameter.
For all $f \in L^0(\bbR^{1+n}_+)$ we define a `vertical Whitney maximal function' $V^\mb{p}_c(f)$ on $\bbR^n$ by
\begin{equation*}
	V_c(f)(x) := \sup_{t > 0} \mc{W}_c f(t,x).
\end{equation*}

\begin{lem}\label{lem:whit-mf-control}
	Let $\mb{p}$ be a finite exponent and suppose $f \in L^0(\bbR^{1+n}_+)$.
	Let $c$ be a Whitney parameter.
	Then
	\begin{equation}\label{eqn:maxest}
		\nm{V_c(\gk^{-\gq(\mb{p})} f)}_{L^{i(\mb{p})}(\bbR^n)} \lesssim \nm{f}_{X^\mb{p}}.
	\end{equation}
\end{lem}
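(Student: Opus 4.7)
Set $g := \gk^{-\gq(\mb{p})} f$. I will establish a pointwise bound of $V_c(g)$ by the Lusin area function $\mc{A}_{c_0 c_1}(g)$, which handles the tent-space case immediately, and then handle the $Z$-space case by a separate dyadic argument.

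First, since $\tau \simeq_c t_0$ uniformly on $\gO_c(t_0, x_0)$ and $|\gO_c(t_0, x_0)| \simeq_c t_0^{n+1}$, the two measures $d\xi \, d\tau$ and $\tau^{n+1} \cdot |\gO_c(t_0, x_0)|^{-1} d\xi \, d\tau$ agree up to $c$-dependent constants on $\gO_c(t_0, x_0)$, so
\begin{equation*}
	\mc{W}_c(g)(t_0, x_0)^2 \simeq_c \int_{\gO_c(t_0, x_0)} |g(\tau, \xi)|^2 \, \frac{d\xi \, d\tau}{\tau^{n+1}}.
\end{equation*}
A direct geometric calculation shows that $\bigcup_{t_0 > 0} \gO_c(t_0, x_0) \subset \gG_{c_0 c_1}(x_0)$: a point $(\tau,\xi)$ belongs to $\gO_c(t_0,x_0)$ precisely when $t_0 \in (\tau/c_1, c_1\tau)$ and $|\xi - x_0| < c_0 t_0$, so the union over admissible $t_0$ forces $|\xi - x_0| < c_0 c_1 \tau$. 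Since the integrand is non-negative, taking the supremum over $t_0$ and enlarging the region of integration gives
\begin{equation*}
	V_c(g)(x_0)^2 \lesssim_c \int_{\gG_{c_0 c_1}(x_0)} |g(\tau, \xi)|^2 \, \frac{d\xi \, d\tau}{\tau^{n+1}} = \mc{A}_{c_0 c_1}(g)(x_0)^2.
\end{equation*}
For the tent-space case this concludes the proof: the change-of-aperture theorem yields $\nm{V_c(g)}_{L^{i(\mb{p})}} \lesssim \nm{\mc{A}_{c_0 c_1}(g)}_{L^{i(\mb{p})}} \simeq \nm{\mc{A}(g)}_{L^{i(\mb{p})}} = \nm{f}_{T^\mb{p}}$.

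For the $Z$-space case the pointwise bound is too crude (since $Z^\mb{p}$ can be strictly smaller than $T^\mb{p}$, for example when $i(\mb{p}) > 2$). Instead, I will compare $V_c(g)$ with the $Z$-space integrand $\mc{W}_{\tilde c}(g)$ directly on a dyadic scale. For each $t > 0$ there is a unique $k \in \bbZ$ with $t \in [2^k, 2^{k+1})$, and after a first enlargement $c \rightsquigarrow \td c$ the inclusion $\gO_c(t, x) \subset \gO_{\td c}(2^k, x)$ holds with comparable volumes, so $\mc{W}_c(g)(t, x) \lesssim \mc{W}_{\td c}(g)(2^k, x)$; hence
\begin{equation*}
	V_c(g)(x)^{i(\mb{p})} \leq \sup_{k \in \bbZ} \mc{W}_{\td c}(g)(2^k, x)^{i(\mb{p})} \leq \sum_{k \in \bbZ} \mc{W}_{\td c}(g)(2^k, x)^{i(\mb{p})}.
\end{equation*}
A second enlargement $\td c \rightsquigarrow \tilde{\td c}$ gives the reverse comparison $\mc{W}_{\td c}(g)(2^k, x) \lesssim \mc{W}_{\tilde{\td c}}(g)(t, x)$ whenever $t \in [2^k, 2^{k+1}]$; averaging in $t$ and summing over $k$ yields
\begin{equation*}
	V_c(g)(x)^{i(\mb{p})} \lesssim \int_0^\infty \mc{W}_{\tilde{\td c}}(g)(t, x)^{i(\mb{p})} \, \frac{dt}{t}.
\end{equation*}
Integrating over $x$ and invoking the independence of the $Z$-norm on the Whitney parameter (from the discussion following Theorem \ref{thm:ts-rint}) gives $\nm{V_c(g)}_{L^{i(\mb{p})}}^{i(\mb{p})} \lesssim \nm{g}_{Z^{i(\mb{p})}_0}^{i(\mb{p})} = \nm{f}_{Z^\mb{p}}^{i(\mb{p})}$.

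\textbf{Expected obstacles.} The most delicate step is the first one, namely tracking Whitney-parameter dependence in the comparison $\mc{W}_c(g)(t_0,x_0)^2 \simeq_c \int_{\gO_c} |g|^2 \tau^{-n-1}$ and verifying the geometric inclusion $\bigcup_{t_0} \gO_c(t_0, x_0) \subset \gG_{c_0 c_1}(x_0)$ with the correct aperture $c_0 c_1$. The dyadic argument in the $Z$-space case is routine but requires two successive enlargements of the Whitney parameter, and one must be careful that the final parameter $\tilde{\td c}$ still satisfies $\tilde{\td c}_1 > 3/2$ so that the $Z^\mb{p}_{\tilde{\td c}}$-norm is well-defined and equivalent to $\nm{\cdot}_{Z^\mb{p}}$.
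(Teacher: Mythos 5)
Your proof is correct and follows essentially the same route as the paper's: for the tent-space case, both arguments bound the Whitney average pointwise by a Lusin operator of aperture $c_0 c_1$ via the inclusion $\gO_c(t,x) \subset \gG_{c_0 c_1}(x)$ and then invoke change of aperture; for the $Z$-space case, both exploit enlargement of the Whitney parameter to compare $\mc{W}_c(g)(t,x)$ with Whitney averages at nearby $t'$ and then integrate in $dt'/t'$. The only cosmetic difference is that you discretize the $t$-variable dyadically (passing through $\sup_k \leq \sum_k$) whereas the paper averages $\mc{W}_c(g)(t,x)^{i(\mb{p})}$ over the continuous interval $t' \in [(1-\varepsilon)t, (1+\varepsilon)t]$ after a single parameter enlargement; both lead to the same conclusion with the same amount of work.
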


\begin{proof}
	First note that $\gG_{c_0 c_1}(x) \supset \gO_c(t,x)$ for all $(t,x) \in \bbR^{1+n}_+$, so we have
	\begin{equation*}
		\bigg(\bariint_{\gO_c(t,x)}^{} |\gt^{-\gq(\mb{p})} f(\gt,\gx)|^2 \, d\gt \, d\gx \bigg)^{1/2} \lesssim \bigg( \dint_{\gG_{c_0 c_1}(x)} |\gt^{-\gq(\mb{p})} f(\gt,\gx)|^2 \, \frac{d\gt \, d\gx}{\gt^{n+1}} \bigg)^{1/2}.
	\end{equation*}
	Taking the supremum in $t > 0$ and then integrating in $x$ yields \eqref{eqn:maxest} with $X = T$.
	
	Next, for all $\varepsilon \in (0,1)$ and for $\td{c}_1 \geq c_1 \max(1+\varepsilon, (1-\varepsilon)^{-1})$, we have
	\begin{equation*}
		\gO_c(t,x) \subset \gO_{\td{c}}(t^\prime,x)
	\end{equation*}
	for all $t^\prime \in [(1-\varepsilon)t, (1+\varepsilon)t]$.
	Therefore
	\begin{align*}
		\bigg( &\bariint_{\gO_c(t,x)}^{} |\gt^{-\gq(\mb{p})} f(\gt,\gx)|^2 \, d\gt \, d\gx \bigg)^{i(\mb{p})/2} \\
		&\lesssim \barint_{(1-\varepsilon)t}^{(1+\varepsilon)t} \bigg( \bariint_{\gO_{\td{c}}(t^\prime, x)}^{} |\gt^{-\gq(\mb{p})} f(\gt,\gx)|^2 \, d\gt \, d\gx \bigg)^{i(\mb{p})/2} \, dt^\prime \\
		&\lesssim \int_0^\infty \bigg( \bariint_{\gO_{\td{c}}(t^\prime,x)}^{} |\gt^{-\gq(\mb{p})} |f(\gt,\gx)|^2 \, d\gt \, d\gx \bigg)^{i(\mb{p})/2} \, \frac{dt^\prime}{t^\prime}.
	\end{align*}
	Integrating in $x$ then yields \eqref{eqn:maxest} with $X = Z$.
\end{proof}

The following lemma is not too difficult to show, so we omit the proof.

\begin{lem}\label{lem:limit}
	Suppose $\map{h}{(0,\infty)}{\bbR}$ is a function satisfying
	\begin{equation*}
		|h(t) - h(s)| \lesssim_\gd |t-s|^\ga
	\end{equation*}
	whenever $\gd \in (0,1)$ and $|t-s| < \gd t$.
	Then the limit $\lim_{t \to 0} h(t)$ exists, and
	\begin{equation}\label{eqn:limit-rate}
		|h(t) - \lim_{t \to 0} h(t)| \lesssim |t|^\ga.
	\end{equation}
\end{lem}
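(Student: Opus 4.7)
The plan is to prove Lemma \ref{lem:limit} by a dyadic chaining argument. Fix a value $\delta_0 \in (1/2, 1)$, say $\delta_0 = 3/4$, so that the hypothesis provides a single constant $C = C_{\delta_0}$ with $|h(t) - h(s)| \leq C |t-s|^\ga$ whenever $|t-s| < \delta_0 t$. For $t > 0$, set $t_k := 2^{-k} t$. Since $|t_k - t_{k+1}| = t_k / 2 < \delta_0 t_k$, the hypothesis gives
\begin{equation*}
  |h(t_k) - h(t_{k+1})| \leq C (2^{-k-1} t)^\ga,
\end{equation*}
and summing the resulting geometric series (convergent since $\ga > 0$) shows that $(h(t_k))_{k \geq 0}$ is Cauchy, with $|h(t) - h(t_k)| \lesssim t^\ga$ uniformly in $k$.

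Next I would upgrade this to a two-point estimate $|h(t) - h(s)| \lesssim t^\ga$ valid for all $0 < s \leq t$. Given such $s,t$, pick $N \in \bbN$ with $2^{-N-1} t \leq s < 2^{-N} t$ and form the chain $t > t/2 > \cdots > 2^{-N} t > s$. Each consecutive gap in the dyadic portion has length $t_k/2$, and the final gap $|2^{-N} t - s|$ is at most $2^{-N-1} t = (1/2) \cdot 2^{-N} t$, so the hypothesis applies with $\delta_0$ at each step. Summing the resulting bounds and collapsing the geometric series again produces $|h(t) - h(s)| \lesssim t^\ga$.

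From this two-point estimate, the Cauchy criterion as $t \to 0^+$ is immediate: for $\varepsilon > 0$, choose $T$ so that the implicit constant times $T^\ga$ is less than $\varepsilon/2$; then for $0 < s, t \leq T$, applying the bound to the larger of the two gives $|h(t) - h(s)| < \varepsilon$. Hence $L := \lim_{t \to 0^+} h(t)$ exists. Fixing $t$ and letting $s \to 0^+$ in $|h(t) - h(s)| \lesssim t^\ga$ yields the announced rate \eqref{eqn:limit-rate}.

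There is no substantive obstacle here; the lemma is real-analytic folklore. The only point requiring any care is to fix $\delta_0$ strictly between $1/2$ and $1$ once and for all, so that every step of the chain satisfies the hypothesis with the same constant $C_{\delta_0}$, rather than accumulating a $\delta$-dependence as the chain is refined.
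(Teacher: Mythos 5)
The paper explicitly omits a proof of this lemma, labelling it as elementary, so there is nothing to compare against. Your dyadic chaining argument is correct and is exactly the expected argument: fixing $\delta_0 \in (1/2,1)$ once and for all so that a single constant controls every step of the chain (since consecutive dyadic ratios are $1/2 < \delta_0$), summing the geometric series using $\alpha > 0$, extending to a uniform two-point bound $|h(t)-h(s)| \lesssim t^\alpha$ for $0 < s \leq t$, and then reading off both the Cauchy property and the rate \eqref{eqn:limit-rate} by letting $s \to 0$. The closing remark about avoiding $\delta$-dependent constants accumulating along the chain correctly identifies the one genuine pitfall.
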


We can apply the lemmas above to prove the following limit result.
Note that we do not need to assume $u$ solves any elliptic equation to establish this.

\begin{prop}\label{prop:X-grad-lims}
	Let $\mb{p}$ be a finite exponent with $\gq(\mb{p}) > -1$, and with $\mb{p} \in I_{\text{max}}$ if $i(\mb{p}) \leq 1$.
	If $u \in L^2_\text{loc}(\bbR^{1+n}_+)$ and $\nabla u \in X^\mb{p}$, then there exists a unique $v \in L^0(\bbR^n) \cap \mc{S}^\prime(\bbR^n)$ such that
	\begin{equation}\label{eqn:u-bdy-lim}
		\lim_{t \to 0} \bariint_{\gO_c(t,x)}^{} u = v(x) \qquad \text{a.e. $x \in \bbR^n$}.
	\end{equation}
	Moreover,
	\begin{equation}\label{eqn:u-bdy-sch}
		\lim_{\gt \to 0} \barint_\gt^{2\gt} u(t,\cdot) \, dt = v \qquad \text{in $\mc{S}^\prime$}
	\end{equation}
	and likewise
	\begin{equation}\label{eqn:u-bdy-sch-grad}
		\lim_{\gt \to 0} \barint_\gt^{2\gt} \nabla_\parallel u(t,\cdot) \, dt = \nabla_\parallel v \quad \text{in $\mc{S}^\prime$}.
	\end{equation}
\end{prop}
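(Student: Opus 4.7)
My plan is to first establish almost everywhere convergence of the Whitney averages $h_x(t) := \bariint_{\gO_c(t,x)}^{} u$ as $t \to 0^+$. The key identity comes from combining Lemma \ref{lem:poincare-cons} with Lemma \ref{lem:whit-mf-control} applied to $\gk^{-\gq(\mb{p})} \nabla u \in X^\mb{p}$: for any fixed $\gd \in (0,1)$, there is a Whitney parameter $\td{c}$ such that
\begin{equation*}
	|h_x(t) - h_x(\gd t)| \lesssim_\gd t^{\gq(\mb{p})+1} V(x), \qquad V(x) := V_{\td{c}}(\gk^{-\gq(\mb{p})} \nabla u)(x),
\end{equation*}
with $V \in L^{i(\mb{p})}(\bbR^n)$, so $V(x) < \infty$ for a.e.\ $x$. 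Since $\gq(\mb{p})+1 > 0$, summing the dyadic differences $h_x(2^{-k}t_0) - h_x(2^{-k-1}t_0)$ over $k$ shows that $(h_x(2^{-k}t_0))_k$ is Cauchy for a.e.\ $x$. To upgrade from dyadic to continuous convergence, I would apply Lemma \ref{lem:limit} with $\ga = \gq(\mb{p})+1$: its hypothesis is verified by the bound above with $\gd = s/t \in (1-\gd_0, 1+\gd_0)$ (for some small $\gd_0$) and yields both the existence of $v(x) := \lim_{t \to 0^+} h_x(t)$ and the convergence rate $|h_x(t) - v(x)| \lesssim t^{\gq(\mb{p})+1} V(x)$. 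Measurability of $v$ is automatic, and uniqueness is immediate in $L^0$.

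To verify \eqref{eqn:u-bdy-sch-grad}, I would exploit the slice space machinery: since $\nabla_\parallel u \in X^\mb{p}$, the retraction $\gp_{\gt,2}(\nabla_\parallel u) = \barint_\gt^{2\gt} \nabla_\parallel u(t,\cdot)\, dt$ lies in $E^{\mb{p}+1}$ uniformly in small $\gt$ by Proposition \ref{prop:slice-ret}, and by Proposition \ref{prop:slice-distribution} this sits inside $\mc{S}^\prime$. A Fubini/integration-by-parts argument based on Proposition \ref{prop:slice-IBP} allows one to compute $\nabla_\parallel$ of the vertical averages of $u$ as vertical averages of $\nabla_\parallel u$, and combined with the pointwise a.e.\ convergence of Whitney averages already established one identifies the distributional limit as $\nabla_\parallel v$. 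This gives \eqref{eqn:u-bdy-sch-grad} and simultaneously shows that $\nabla_\parallel v \in \mc{S}^\prime$, so $v$ is determined in $\mc{S}^\prime$ modulo constants. The identification \eqref{eqn:u-bdy-sch} then follows by pairing $\barint_\gt^{2\gt} u(t,\cdot)\, dt$ against Schwartz test functions, using the a.e.\ pointwise limit together with dominated convergence (with the dominating function supplied by the maximal estimate \eqref{eqn:maxest} applied to $u$ at a single positive scale).

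The main obstacle will be the quasi-Banach regime $i(\mb{p}) \leq 1$, where the hypothesis $\mb{p} \in I_{\text{max}}$ is required. This condition amounts to $n\, j(\mb{p}) + \gq(\mb{p}) > 0$, which is precisely the Sobolev-type scaling guaranteeing that a function whose gradient lies in $X^\mb{p}$ determines a genuine element of $\mc{S}^\prime$ (and not merely an element modulo high-order polynomials). In this range the duality arguments used freely for $i(\mb{p}) > 1$ to pair slice space representatives with Schwartz functions are unavailable, and I would instead work with atomic decompositions and the dyadic characterisation of Proposition \ref{prop:Zpqs-seq-equivalence} to control pairings against polynomial-vanishing test functions. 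Apart from this point, the remaining arguments—matching the pointwise limit with the distributional one, and verifying uniformity in the vertical averaging—are essentially classical.
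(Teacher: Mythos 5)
Your a.e.\ convergence argument (Whitney averaging via Lemma \ref{lem:poincare-cons}, the vertical Whitney maximal function $V$ controlled in $L^{i(\mb{p})}$ by Lemma \ref{lem:whit-mf-control}, and the limit criterion Lemma \ref{lem:limit}) agrees with the paper's and is correct. But your treatment of the distributional limits has two genuine gaps.

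First, you never establish that $v \in \mc{S}^\prime$. You propose to deduce it from $\nabla_\parallel v \in \mc{S}^\prime$ "so $v$ is determined modulo constants", but this is circular: $\nabla_\parallel v$ only makes sense as a distribution once you already know $v$ is a distribution (it is not determined by the a.e.\ pointwise limit alone, since a measurable function need not be tempered). The paper resolves this by proving the H\"older-type estimate $|v(x) - v(y)| \lesssim |x-y|^{1+\gq(\mb{p})}(g(x)+\overline{g}(x)+g(y))$ with $g,\overline{g} \in L^{i(\mb{p})}$, which directly controls the growth of $|v(x)|$ and hence yields integrability of $v$ against Schwartz functions. This estimate is also the workhorse for the dominated convergence in \eqref{eqn:u-bdy-sch}; without it the argument does not close.

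Second, your proposed dominating function for \eqref{eqn:u-bdy-sch}---"the maximal estimate \eqref{eqn:maxest} applied to $u$ at a single positive scale"---is not available: \eqref{eqn:maxest} applies to functions in $X^\mb{p}$, and only $\nabla u$ (not $u$) is assumed to lie there. The paper instead estimates the pairing $\barint_\gt^{2\gt}|(u(t,\cdot)-v,\gf)|\,dt$ by splitting into three terms, each bounded by $\gt^{1+\gq(\mb{p})}\nm{g}_{L^{i(\mb{p})}}$ using the Poincar\'e-type bound, the limit rate \eqref{eqn:limit-rate}, and the H\"older estimate above together with the Hardy--Littlewood maximal function. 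Relatedly, the logical order in the paper is the opposite of yours: \eqref{eqn:u-bdy-sch} is proved first, and \eqref{eqn:u-bdy-sch-grad} follows immediately by testing against divergences $\gf = \dv\gy$ with $\gy \in \mc{S}$; attempting to prove \eqref{eqn:u-bdy-sch-grad} first forces you into the circularity noted above. Finally, for $i(\mb{p})\le 1$ the paper simply uses the embedding $X^\mb{p}\hookrightarrow X^\mb{q}$ (Theorem \ref{thm:mixed-emb}) to reduce to some $\mb{q}$ with $i(\mb{q})>1$ and $\gq(\mb{q})>-1$; your plan to rebuild the argument via atomic decompositions and the dyadic characterisation is unnecessarily heavy and would require significant new estimates.
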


\begin{proof}
	Write
	\begin{equation*}
		h_x(t) := \bariint_{\gO_c(t,x)}^{} u.
	\end{equation*}
	By Lemma \ref{lem:poincare-cons}, for all $(t,x) \in \bbR^{1+n}_+$ and for all $\gd \in (0,1)$ we have
	\begin{align*}
		| h_x(t) - h_x(\gd t) | &\lesssim_\gd t\bigg( \bariint_{\gO_{\td{c}}(t,x)}^{} |\nabla u|^2 \bigg)^{1/2} \\
		&\simeq_\gd (t - \gd t)^{1 + \gq(\mb{p})} \bigg( \bariint_{\gO_{\td{c}}(t,x)}^{} | \gt^{-\gq(\mb{p})} \nabla u(\gt,\gx) |^2 \, d\gt \, d\gx \bigg)^{1/2} \\
		&\leq (t - \gd t)^{1 + \gq(\mb{p})} g(x)
	\end{align*}
	with $\td{c}$ coming from Lemma \ref{lem:poincare-cons} and with $g := V_{\td{c}}(\gk^{-\gq(\mb{p})} \nabla u)$.
	By Lemma \ref{lem:whit-mf-control} we find that $g \in L^{i(\mb{p})}(\bbR^n)$, and therefore $g(x)$ is finite for almost every $x \in \bbR^n$.
	Since $1 + \gq(\mb{p}) > 0$, for almost every $x$ we can apply Lemma \ref{lem:limit} to conclude that $\lim_{t \to 0} h_x(t)$ exists.
	Since the function $x \mapsto h_x(t)$ is measurable for each $t > 0$, the almost-everywhere defined limit
	\begin{equation*}
		v \colon x \mapsto \lim_{t \to 0} h_x(t)
	\end{equation*}
	is measurable, and by construction $v$ is the unique function satisfying \eqref{eqn:u-bdy-lim}.
	
	Next we will show that $v \in \mc{S}^\prime(\bbR^n)$.
	This will follow from the estimate
	\begin{equation}\label{eqn:v-g-est}
		|v(x) - v(y)| \lesssim |x-y|^{1 + \gq(\mb{p})} (g(x) + \overline{g}(x) + g(y))
	\end{equation}
	for all $x,y \in \bbR^n$ with $\overline{g}$ defined in the same way as $g$ but with a different choice of Whitney parameter.
	Indeed, assuming \eqref{eqn:v-g-est}, for all $x \in \bbR^n$ and all $y \in \bbR^n$ such that $|v(y)| < \infty$,
	\begin{equation*}
		|v(x)| \lesssim |v(y)| + |x-y|^{1 + \gq(\mb{p})} (g(x) + \overline{g}(x) + g(y)),
	\end{equation*}
	and if $i(\mb{p}) > 1$ then for all $\gf \in \mc{S}(\bbR^n)$ we will have
	\begin{equation*}
		\int_{\bbR^n} |v(x)||\gf(x)| \, dx < \infty
	\end{equation*}
	since $g, \overline{g} \in L^{i(\mb{p})}(\bbR^n)$ and $\gf \in L^{i(\mb{p})^\prime}(\bbR^n)$.
	If $i(\mb{p}) \leq 1$ then since $\mb{p} \in I_{\text{max}}$ (by assumption) there exists $\mb{q}$ with $\gq(\mb{q}) > -1$ and $i(\mb{q}) > 1$ such that $\mb{p} \hookrightarrow \mb{q}$, so that $\nabla u \in X^\mb{q}$, and we can then argue with $\mb{q}$ replacing $\mb{p}$.
	
	We now prove \eqref{eqn:v-g-est}.
	Let $R = |x-y|$ and let $c^\prime = (c_0+1,c_1)$.
	Then we have $\gO_c(x,R) \cup \gO_c(y,R) \subset \gO_{c^\prime}(x,R)$, and so arguing as in the proof of Lemma \ref{lem:poincare-cons} we have
	\begin{align*}
		|v(x) - v(y)| &\leq \bigg| v(x) - \bariint_{\gO_c(x,R)}^{} u \bigg| + \bigg| \bariint_{\gO_c(x,R)}^{} u - \bariint_{\gO_c(y,R)}^{} u \bigg| + \bigg| \bariint_{\gO_c(y,R)}^{} u - v(y) \bigg| \\
		&\lesssim R^{1 + \gq(\mb{p})}(g(x) + \overline{g}(x) + g(y))
	\end{align*}
	where $\overline{g}$ is defined using the Whitney parameter $c^\prime$ in place of $\td{c}$.
	
	It remains to prove the limits \eqref{eqn:u-bdy-sch} and \eqref{eqn:u-bdy-sch-grad}.
	By the same embedding argument as before, it suffices to prove this for $\mb{p}$ such that $i(\mb{p}) > 1$.
	Let $\gf \in \mc{S}^\prime(\bbR^n)$ and $\gt > 0$.
	Then we have
	\begin{align*}
		&\barint_{\gt}^{2\gt} |(u(t,\cdot) - v(\cdot), \gf)| \, dt \\
		&\leq \int_{\bbR^n} \barint_{\gt}^{2\gt} \barint_{B(x,c_0 \gt)}^{} |u(t,y) - v(y)| |\gf(y)| \, dy \, dt \, dx \\
		&\leq \int_{\bbR^n} \barint_{\gt}^{2\gt} \barint_{B(x,c_0 \gt)}^{} \bigg| u(t,y) - \bariint_{\gO_c(\gt,x)}^{} u(s,z) \, ds \, dz \bigg| |\gf(y)| \, dy \, dt \, dx \\
		&\quad + \int_{\bbR^n} \barint_{\gt}^{2\gt} \barint_{B(x,c_0 \gt)}^{} |v(x) - v(y)| |\gf(y)| \, dy \, dt \, dx \\
		&\quad + \int_{\bbR^n} \barint_{\gt}^{2\gt} \barint_{B(x,c_0 \gt)}^{} \bigg| \bariint_{\gO_c(\gt,x)}^{} u(s,z) \, ds \, dz - v(x) \bigg| |\gf(y)| \, dy \, dt \, dx \\
		&=: \mb{I}_1 + \mb{I}_2 + \mb{I}_3. 
	\end{align*}
	We estimate these three terms individually.
	First, for $\gt < 1$, with $c = (c_0,2)$,
	\begin{align*}
		\mb{I}_1 &\lesssim \int_{\bbR^n} \bigg( \bariint_{\gO_c(\gt,x)}^{} \bigg| u - \bariint_{\gO_c(\gt,x)}^{} u \bigg| \bigg) \sup_{y \in B(x,c_0 \gt)} |\gf(y)| \, dx \\
		&\lesssim \gt^{1 + \gq(\mb{p})} \int_{\bbR^n}  g(x) \sup_{y \in B(x,c_0)} |\gf(y)| \, dx \\
		&\lesssim_\gf \gt^{1+ \gq(\mb{p})} \nm{g}_{L^{i(\mb{p})}}
	\end{align*}
	since the function $x \mapsto \sup_{y \in B(x,c_0)} |\gf(y)|$ is in $L^{i(\mb{p})^\prime}$.
	In the same way we can prove
	\begin{equation*}
		\mb{I}_3 \lesssim  \gt^{1 + \gq(\mb{p})} \nm{g}_{L^{i(\mb{p})}}
	\end{equation*}
	for $\gt < 1$ by using \eqref{eqn:limit-rate}.
	Finally, using \eqref{eqn:v-g-est}, for $\gt < 1$ we have
	\begin{align*}
		\mb{I}_2 &\leq \int_{\bbR^n} \barint_{B(x,c_0 \gt)}^{} |v(x) - v(y)|  \, dy \, \sup_{z \in B(x,c)} |\gf(z)| dx \\
		&\lesssim \gt^{1 + \gq(\mb{p})} \int_{\bbR^n} \barint_{B(x,c_0 \gt)}^{} g(x) + g(y) \, dy \, \sup_{z \in B(x,c)} |\gf(z)| \, dx \\
		&\lesssim \gt^{1 + \gq(\mb{p})} \int_{\bbR^n} \left( g(x) + \mc{M}g(x) \right) \sup_{z \in B(x,c)} |\gf(z)| \, dx \\
		&\lesssim_\gf \gt^{1 + \gq(\mb{p})} \nm{g}_{L^{i(\mb{p})}} 
	\end{align*}
	where $\mc{M}$ is the Hardy--Littlewood maximal operator (which is bounded on $L^{i(\mb{p})}$).
	Since $1 + \gq(\mb{p}) > 0$ we therefore have
	\begin{equation*}
		\lim_{\gt \to 0} \barint_{\gt}^{2\gt} |(u(t,\cdot) - v(\cdot), \gf)| \, dt = 0,
	\end{equation*}
	which proves \eqref{eqn:u-bdy-sch}.
	The limit \eqref{eqn:u-bdy-sch-grad} is then proven by taking $\gy \in \mc{S}(\bbR^n : \bbC^n)$ and applying the previous argument to $\gf = \dv \gy$.
	\end{proof}
	
	These are all the results we need for the case of finite exponents.
	We will prove Theorem \ref{thm:Wavgnt} after presenting the results required for infinite exponents (which are simpler).
	
	\begin{prop}\label{prop:Z-trace}
		Let $\mb{p}$ be infinite. Then for all $u \in L_{\loc}^2(\bbR^{1+n}_+)$ we have
		\begin{equation}\label{eqn:Z-inf}
			\bariint_{\gO_c(x,R)}^{} \bigg| u - \bariint_{\gO_c(x,R)}^{} u \bigg|^2 \lesssim R^{2 + 2r(\mb{p})} \nm{\nabla u}_{Z^\mb{p}}^2.
		\end{equation}
		Moreover, if $r(\mb{p}) \in (-1,0]$, then for all $x \in \bbR^n$ the limit
		\begin{equation*}
			v(x) := \lim_{R \to 0} \bariint_{\gO_c(x,R)}^{} u
		\end{equation*}
		exists and satisfies
		\begin{equation}\label{eqn:v-holder}
			|v(x) - v(y)| \lesssim |x-y|^{1 + r(\mb{p})}.
		\end{equation}
	\end{prop}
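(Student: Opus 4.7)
The plan is to prove the three claims in order, each one following from the previous together with the uniform Whitney-average control that is peculiar to $Z$-spaces with an infinite exponent.

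First, for \eqref{eqn:Z-inf}, I would use Corollary \ref{cor:Zinfty-sup}, which writes the $Z^{\mb{p}}$-quasinorm for infinite $\mb{p}$ as an honest supremum of Whitney $L^2$-averages. Since $\tau \simeq R$ on $\gO_c(x,R)$, this gives the pointwise inequality
\begin{equation*}
\bigg(\bariint_{\gO_c(x,R)}^{} |\nabla u|^2 \bigg)^{1/2} \lesssim R^{r(\mb{p})} \nm{\nabla u}_{Z^\mb{p}}.
\end{equation*}
Combining this with the $L^2$-Poincar\'e inequality on the Whitney region $\gO_c(x,R)$ (of diameter $\simeq R$) then yields \eqref{eqn:Z-inf}.

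Second, for existence of the limit, I would set $h_x(R) := \bariint_{\gO_c(x,R)} u$ and apply Lemma \ref{lem:poincare-cons}: for any $\gd \in (0,1)$ there is a Whitney parameter $\td{c}$ with
\begin{equation*}
|h_x(R) - h_x(\gd R)| \lesssim_\gd R \bigg(\bariint_{\gO_{\td{c}}(x,R)}^{} |\nabla u|^2\bigg)^{1/2} \lesssim_\gd R^{1+r(\mb{p})} \nm{\nabla u}_{Z^\mb{p}},
\end{equation*}
using the same Whitney-average control as above. Since $1 + r(\mb{p}) > 0$, Lemma \ref{lem:limit} applies and gives existence of $v(x) = \lim_{R \to 0} h_x(R)$ together with the rate
\begin{equation*}
|h_x(R) - v(x)| \lesssim R^{1+r(\mb{p})} \nm{\nabla u}_{Z^\mb{p}}.
\end{equation*}

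Third, for the H\"older estimate \eqref{eqn:v-holder}, I would mimic the derivation of \eqref{eqn:v-g-est} in the proof of Proposition \ref{prop:X-grad-lims}, but now the terms $g(x)$ and $\overline{g}(x)$ are replaced by the uniform constant $\nm{\nabla u}_{Z^\mb{p}}$. Setting $R = |x-y|$ and choosing $c' = (c_0 + 1, c_1)$ so that $\gO_c(x,R) \cup \gO_c(y,R) \subset \gO_{c'}(x,R)$, split
\begin{equation*}
|v(x) - v(y)| \leq \bigg| v(x) - \bariint_{\gO_c(x,R)} u \bigg| + \bigg| \bariint_{\gO_c(x,R)} u - \bariint_{\gO_c(y,R)} u \bigg| + \bigg| \bariint_{\gO_c(y,R)} u - v(y) \bigg|.
\end{equation*}
The outer two terms are each $\lesssim R^{1+r(\mb{p})} \nm{\nabla u}_{Z^\mb{p}}$ by the previous step, and the middle difference is bounded by $\bariint_{\gO_{c'}(x,R)} |u - \bariint_{\gO_{c'}(x,R)} u|$, which by Cauchy--Schwarz and \eqref{eqn:Z-inf} (applied with the enlarged parameter $c'$) is again $\lesssim R^{1+r(\mb{p})} \nm{\nabla u}_{Z^\mb{p}}$.

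I expect no serious obstacle here; everything reduces to Lemma \ref{lem:poincare-cons}, the Poincar\'e inequality, and the crucial fact that for infinite exponents the $Z^\mb{p}$-norm directly controls every Whitney average uniformly. The only mild subtlety is keeping the Whitney parameters consistent across the three steps, but this is just bookkeeping.
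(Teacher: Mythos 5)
Your proposal is correct and follows essentially the same route as the paper: the paper simply remarks that \eqref{eqn:Z-inf} follows from the Poincar\'e inequality together with Corollary \ref{cor:Zinfty-sup}, and that the limit and H\"older estimate are proven as in Proposition \ref{prop:X-grad-lims} (via Lemmas \ref{lem:poincare-cons} and \ref{lem:limit}) but with the maximal function $g$ replaced by the constant $\nm{\nabla u}_{Z^\mb{p}}$. Your three steps are precisely that argument spelled out, with the bookkeeping of Whitney parameters handled via the parameter-independence of the $Z^\mb{p}$-norm, as the paper intends.
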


	\begin{proof}
		The estimate \eqref{eqn:Z-inf} follows from the Poincar\'e inequality and the supremum definition of the $Z^\mb{p}$ norm (see Corollary \ref{cor:Zinfty-sup}).
		The remaining statements are then proven exactly as in Proposition \ref{prop:X-grad-lims} (using Lemma \ref{lem:limit}, without the additional complication of the function $g$).
	\end{proof}
	
	\begin{rmk}
		Proposition \ref{prop:Z-trace} shows that having a boundary trace in the H\"older space $\dot{\gL}^{1 + r(\mb{p})}(\bbR^n)$ is a generic property\footnote{That is, it holds for all $u$ in this space, without requiring that $u$ solves an elliptic equation.} of the space $\{u \in L^2_\text{loc} : \nabla u \in Z^\mb{p}\}$ when $\mb{p}$ is infinite and $r(\mb{p}) \in (-1,0]$.
		This was already observed by Barton and Mayboroda \cite[Theorem 6.3]{BM16}.
	\end{rmk}
	
	\begin{proof}[Proof of Theorem \ref{thm:Wavgnt}]
		First suppose $\mb{p}$ is finite.
		Proposition \ref{prop:X-grad-lims} furnishes a distribution $v \in L^0 \cap \mc{Z}^\prime$ satisfying \eqref{eqn:desired-limit}.
		Furthermore, Theorems \ref{thm:AAM}, \ref{thm:mainthm-leq2} and \ref{thm:mainthm-gtr2} imply that there exists $f =: \nabla_A u|_{t = 0} \in \mb{X}_D^\mb{p} \subset \mc{Z}^\prime$ such that $\nabla_A u(t,\cdot) \to \nabla_A u|_{t = 0}$ as $t \to 0$ in $\mb{X}_{DB}^{\mb{p},+} \subset \mb{X}_D^{\mb{p}}$, hence also in $\mc{Z}^\prime$.
		Along with the limit \eqref{eqn:u-bdy-sch-grad}, this yields
		\begin{equation*}
			\nabla_\parallel u|_{t = 0} = (\nabla_A u|_{t = 0})_\parallel = \nabla_\parallel v
		\end{equation*}
		which proves that $\nabla_\parallel v \in \mb{X}^\mb{p}$ (and therefore $v \in \mb{X}^{\mb{p} + 1}$), and $\lim_{t \to 0} \nabla_\parallel u(t,\cdot) = \nabla_\parallel v$ in $\mc{Z}^\prime$.
		
		Now suppose $\mb{p}$ is infinite. 
		We will show that for every cylinder $C(x,a,R)$ in $\bbR^{1+n}_+$ of the form $(a,a+R) \times B(x,R)$, we have the estimate
		\begin{equation}\label{eqn:campanato}
			\bariint_{C(x,a,R)}^{} \bigg| u(\gt,y) - \bariint_{C(x,a,R)}^{} u \bigg|^2 \, d\gt \, dy \lesssim R^{2(1 + r(\mb{p}))}.
		\end{equation}
		By Campanato's characterisation of the H\"older space $\dot{\gL}_{1 + r(\mb{p})}(\overline{\bbR^{1+n}_+})$ \cite[Teorema I.2]{sC63} (which uses that $1 + r(\mb{p}) \in (0,1)$), this will complete the proof of Theorem \ref{thm:Wavgnt} when $\mb{p}$ is infinite.
		
		By Theorems \ref{thm:AAM} and \ref{thm:mainthm-gtr2}, we know that for all $t \geq 0$, $t_0 > 0$ the function $t \mapsto \nabla_A u(t + t_0,\cdot)$ is in $Z^\mb{p}$ (using the embedding $T^\mb{p} \hookrightarrow Z^\mb{p}$ for infinite exponents from Lemma \ref{lem:TZ-fixedexp-emb}) with
		\begin{equation*}
			\nm{ t \mapsto \nabla_A u(t + t_0) }_{Z^\mb{p}} \lesssim \nm{ \nabla_A u(t_0) }_{\mb{X}_D^\mb{p}} \lesssim 1.
		\end{equation*}
		Therefore, using Proposition \ref{prop:Z-trace} for $u(\cdot + t_0)$ and translating, we get
		\begin{equation}\label{eqn:precampanato}
			\bariint_{(t_0,0) + \gO_c(x,R)}^{} \bigg| u - \bariint_{(t_0,0) + \gO_c(x,R)}^{} u \bigg|^2 \lesssim R^{2(1+r(\mb{p}))}.
		\end{equation}
		
		Now consider a cylinder $C(x,a,R)$ as above.
		If $a \geq R$, then $C(x,a,R) \subset (R-a,0) \times \gO_c(x,R)$, and in this case \eqref{eqn:campanato} follows from \eqref{eqn:precampanato}.
		To prove \eqref{eqn:campanato} for $a < R$, it suffices to take $a = 0$.
		Let $v$ be the boundary trace of $u$ given in Proposition \ref{prop:Z-trace}.
		Then we have
		\begin{align*}
			\bariint_{C(x,0,R)}^{} \bigg| u - \bariint_{C(x,0,R)}^{} u \bigg|^2
			&\lesssim \bariint_{C(x,0,R)}^{} | u - v(x) |^2 + \bariint_{C(x,0,R)}^{} \bigg| v(x) - \bariint_{C(x,0,R)}^{} u \bigg| \\
			&\lesssim \bariint_{C(x,0,R)}^{} | u - v(x) |^2 + R^{2(1 + r(\mb{p}))}
		\end{align*}
		using \eqref{eqn:limit-rate}.
		To handle the remaining summand, estimate
		\begin{align}
			&\bariint_{C(x,0,R)}^{} \left| u(\gt,y) - v(x) \right|^2 \, d\gt \, dy \nonumber \\
			&\lesssim \bariint_{C(x,0,2R)}^{} \bariint_{\gO_c(\gs,z)}^{} \left| u(\gt,y) - v(x) \right|^2 \, d\gt \, dy \, d\gs \, dz \nonumber \\
			&\lesssim \bariint_{C(x,0,2R)}^{} \bigg( \bariint_{\gO_c(\gs,z)}^{} | u(\gt,y) - v(z) |^2 \, d\gt \, dy \bigg) + \big( | v(z) - v(x) |^2 \big) \, d\gs \, dz \nonumber \\
			&\lesssim \bariint_{C(x,0,2R)}^{} \gs^{2(1+r(\mb{p}))} + |z-x|^{2(1+r(\mb{p}))} \, dz \, d\gs \label{line:limit-rate-use} \\
			&\lesssim R^{2(1 + r(\mb{p}))}
		\end{align}
		using \eqref{eqn:limit-rate} and \eqref{eqn:v-holder} in \eqref{line:limit-rate-use}, and in the last line using that $1 + r(\mb{p}) \geq 0$. 
		This proves \eqref{eqn:campanato}, and thus completes the proof of Theorem \ref{thm:Wavgnt}.
	\end{proof}

\chapter{Applications to Boundary Value Problems}\label{chap:wp}

\section{Characterisation of well-posedness and corollaries}\label{sec:wpc}

First let us put the boundary value problems given in the introduction (Subsection \ref{ssec:fbvp}) in a more convenient form.
Fix $m \in \bbN$ and let $\mb{p}$ be an exponent.
By making use of the transversal/tangential splitting $\bbC^{m(1+n)} = \bbC^{m} \oplus \bbC^{mn}$, we can write
\begin{align*}
	\mb{X}^\mb{p}_D(\bbR^n : \bbC^{m(1+n)}) &= \bbP_D (\mb{X}^\mb{p}(\bbR^n : \bbC^{m(1+n)})) \\
	&= (\mb{X}^\mb{p} \cap D\mc{Z}^\prime)(\bbR^n:\bbC^{m(1+n)}) \\
	&= \mb{X}^\mb{p}(\bbR^n : \bbC^m) \oplus (\mb{X}^\mb{p}(\bbR^n : \bbC^{mn}) \cap \nabla_\parallel \mc{Z}^\prime(\bbR^n : \bbC^{mn})) \\
	&=: \mb{X}_\perp^\mb{p} \oplus \mb{X}_\parallel^\mb{p}.
\end{align*}

\begin{dfn}\label{dfn:bvps}\index{boundary value problem}
For all exponents $\mb{p} \in I_{\text{max}}$ we define the \emph{Regularity problem}\index{boundary value problem!Regularity}
\begin{equation*}
	(R_{\mb{X}})_{A}^{\mb{p}} : \left\{ \begin{array}{l} 	L_A u = 0 \quad \text{in $\bbR_+^{1+n}$}, \\
											\lim_{t \to 0} \nabla_\parallel u(t,\cdot) = f \in \mb{X}^{\mb{p}}_\parallel, \\
											\nm{ \nabla u }_{\wtd{X^\mb{p}}} \lesssim \nm{ f }_{\mb{X}^{\mb{p}}}, \\
	\end{array} \right.
\end{equation*}
and the \emph{Neumann problem}\index{boundary value problem!Neumann}
\begin{equation*}
	(N_{\mb{X}})_{A}^{\mb{p}} : \left\{ \begin{array}{l} 	L_A u = 0 \quad \text{in $\bbR_+^{1+n}$}, \\
											\lim_{t \to 0} \partial_{\gn_A} u(t,\cdot) = f \in \mb{X}^{\mb{p}}_\perp, \\
											\nm{ \nabla u }_{\wtd{X^\mb{p}}} \lesssim \nm{ f }_{\mb{X}^{\mb{p}}}. \\
	\end{array} \right.
\end{equation*}
	The spaces $\wtd{X^\mb{p}}$ (which are just the spaces $X^\mb{p}$ with an additional decay condition at infinity) were defined in Section \ref{sec:solspace-interpolation}. 
	By $\lim_{t \to 0} \nabla_\parallel u(t,\cdot) = f \in \mb{X}^{\mb{p}}_\parallel$ we mean that $f \in \mb{X}^\mb{p}_\parallel$ and that the limit is in the $\mb{X}_\parallel^\mb{p}$ topology, and likewise for the limit in the Neumann problem.
        These limits are imposed in the weak-star topology when $\mb{p}$ is infinite.
	We say that such a problem is \emph{well-posed}\index{well-posedness} if for all boundary data $f$ there exists a unique $u$ (up to additive constant) satisfying the conditions of the problem.
\end{dfn}

We denote these problems simultaneously by $(P_{\mb{X}})_A^\mb{p}$, with $P$ standing for either $R$ or $N$.
We define the \emph{well-posedness region}\index{region!well-posedness}
\begin{equation*}
	\wellp(P_\mb{X})_A := \{\mb{p} \in I_{\text{max}} : \text{$(P_\mb{X})_A^\mb{p}$ is well-posed}\}.
\end{equation*}
\begin{rmk}
The problems $(R_{\mb{X}})_A^\mb{p}$ and $(N_{\mb{X}})_A^\mb{p}$ include all Regularity and Neumann problems introduced in Subsection \ref{ssec:fbvp}.
The definition above is much more concise (but, initially, much less clear).
\end{rmk}

\begin{rmk}
We stress that in the case of Hardy--Sobolev spaces, there is no \emph{a priori} connection between the interior control and the boundary condition.
In contrast, in the case of Besov spaces, they are connected by the trace theorem of Barton and Mayboroda \cite[Theorem 3.9]{BM16}.
\end{rmk}

\begin{rmk}
The boundary condition in $(R_{\mb{X}})_A^\mb{p}$ is equivalent to the Dirichlet condition
\begin{equation*}
	\lim_{t \to 0} u(t,\cdot) = g \in \mb{X}_\perp^{\mb{p}+1}\\ 
\end{equation*}
where $\nabla_\parallel g = f$, since $\nabla_\parallel$ is an isomorphism from $\mb{X}_\perp^{\mb{p}+1}$ onto $\mb{X}_\parallel^\mb{p}$.
Therefore $(R_{\mb{X}})_A^\mb{p}$ could be thought of as a Dirichlet problem $(D_{\mb{X}})_A^{\mb{p}+1}$.\index{boundary value problem!Dirichlet}
The limit above is taken in the topology of $\mb{X}_\perp^{\mb{p}+1}$, which we defined as a subspace of $\mc{Z}^\prime$, but which also embeds in the space of Schwartz distributions modulo constants when $\gq(\mb{p}) \in [-1,0]$.
When $\mb{p}$ is finite, we can be more precise.   
When $\theta(\mb{p})=-1$, $ \mb{X}_\perp^{\mb{p}+1}$ also embeds in the space of Schwartz distributions as the Lebesgue space $L^{i(\mb{p})}$.
In this case, there is also convergence in $L^{i(\mb{p})}$ having fixed the constant (recall that $\nabla u \in X^{\mb{p}}$ so $u$ is determined up to a constant), see \cite[Corollary 1.4]{AM15}.
The case $\theta(\mb{p})=0$ is also treated in \cite[Corollary 1.2]{AM15}.
In the intermediate case, we can proceed similarly.
First, Theorems \ref{thm:stein} and \ref{thm:besov-difference} show that when $\mb{p}$ is finite these spaces can be realised as contained (not embedded) in $L^1_{\loc}$.
We can then impose the limit in $L^1_{\loc}$, which fixes the floating constant in the determination of $u$ in terms of $g$. 
It can be shown that this limit holds \emph{a priori} in $L^1_{\loc}$.
We omit further details.
\end{rmk}

We use the first representation theorem (Theorem \ref{thm:firstrepresentation}) to characterise the well-posedness of $(R_{\mb{X}})_A^\mb{p}$ and $(N_{\mb{X}})_A^\mb{p}$ for $\mb{p}$ in the classification region $J(\mb{X},DB)$ (see Definition \ref{dfn:J}).
Let $N_\perp$ and $N_\parallel$ denote the projections from $\mb{X}_D^\mb{p}(\bbR^n : \bbC^{m(1+n)})$ onto $\mb{X}_\perp^\mb{p}$ and $\mb{X}_\parallel^\mb{p}$ respectively.
For $\mb{p} \in J(\mb{X},DB)$ we have defined $\mb{X}^{\mb{p},+}_{DB}(\bbR^n : \bbC^{m(1+n)})$ as a subspace of $\mb{X}^\mb{p}_D(\bbR^n : \bbC^{m(1+n)})$, and through this identification we define 
\begin{equation*}
\map{N_{\mb{X},DB,\parallel}^\mb{p}}{\mb{X}^{\mb{p},+}_{DB}}{\mb{X}_{\parallel}^\mb{p}} \qquad \text{and} \qquad \map{N_{\mb{X},DB,\perp}^\mb{p}}{\mb{X}^{\mb{p},+}_{DB}}{\mb{X}_{\perp}^\mb{p}}
\end{equation*}
as the restrictions of $N_\perp$ and $N_\parallel$ respectively.

\begin{thm}[Characterisation of well-posedness]\label{thm:WP-char}\index{well-posedness!characterisation}
	Let $B = \hat{A}$, and suppose that $\mb{p} \in J(\mb{X},DB)$.
	Then $(R_{\mb{X}})_A^\mb{p}$ (resp. $(N_{\mb{X}})_A^\mb{p}$) is well-posed if and only if $N_{\mb{X},DB,\parallel}^\mb{p}$ (resp. $N_{\mb{X},DB,\perp}^\mb{p}$) is an isomorphism.
\end{thm}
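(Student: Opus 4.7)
The plan is to translate each boundary value problem into a statement about the boundary projections by invoking the first representation theorem (Theorem \ref{thm:firstrepresentation}) together with the Auscher--Axelsson--McIntosh correspondence (Theorem \ref{thm:AAM}). Explicitly, since $\mb{p} \in J(\mb{X},DB)$, Theorem \ref{thm:firstrepresentation} provides a bijection, modulo additive constants in $u$, between the class of weak solutions $u$ of $L_A u = 0$ with $\nabla u \in \wtd{X^\mb{p}}$ and the spectral subspace $\mb{X}_{DB}^{\mb{p},+}$, given by $u \leftrightarrow F_0$ where $\nabla_A u = \mb{C}_{DB}^+(F_0)$ and $F_0 = \lim_{t \to 0} \nabla_A u(t,\cdot)$ in $\mb{X}^\mb{p}_{D}$ (weak-star topology when $\mb{p}$ is infinite). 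Under this correspondence, transversal/tangential decomposition of $F_0 \in \mb{X}_D^\mb{p}$ yields $\lim_{t \to 0} \nabla_\parallel u(t,\cdot) = N_\parallel F_0$ and $\lim_{t \to 0} \partial_{\nu_A} u(t,\cdot) = N_\perp F_0$, so the Regularity (resp.\ Neumann) boundary condition amounts exactly to prescribing $N_{\mb{X},DB,\parallel}^\mb{p}(F_0) = f$ (resp.\ $N_{\mb{X},DB,\perp}^\mb{p}(F_0) = f$).

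Next I would verify that the implicit norm control from the definition of well-posedness is equivalent to the isomorphism property for the projection. The key input here is the two-sided estimate $\nm{\nabla_A u}_{X^\mb{p}} \simeq \nm{F_0}_{\mb{X}_{DB}^\mb{p}}$ obtained by combining parts (i) and (ii) of Theorems \ref{thm:mainthm-leq2} and \ref{thm:mainthm-gtr2}, together with the identification $\mb{X}_{DB}^\mb{p} = \mb{X}_D^\mb{p}$ that holds throughout $J(\mb{X},DB)$ (by Theorem \ref{thm:class-space-identn-new} and Corollary \ref{cor:dual-interval-identn}). Boundedness of $N_{\mb{X},DB,\parallel}^\mb{p}$ and $N_{\mb{X},DB,\perp}^\mb{p}$ is then automatic from boundedness of the transversal/tangential projections $N_\perp, N_\parallel$ on $\mb{X}_D^\mb{p}$, while boundedness of the inverse is equivalent to the estimate $\nm{F_0}_{\mb{X}_{DB}^\mb{p}} \lesssim \nm{f}_{\mb{X}^\mb{p}}$, which via the two-sided norm equivalence is nothing but the implicit estimate $\nm{\nabla u}_{\wtd{X^\mb{p}}} \lesssim \nm{f}_{\mb{X}^\mb{p}}$ appearing in the definition of $(R_\mb{X})_A^\mb{p}$ (resp.\ $(N_\mb{X})_A^\mb{p}$).

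Putting these pieces together, the equivalence is immediate. If $N_{\mb{X},DB,\parallel}^\mb{p}$ is an isomorphism, then given any $f \in \mb{X}_\parallel^\mb{p}$ there is a unique $F_0 \in \mb{X}_{DB}^{\mb{p},+}$ with $(F_0)_\parallel = f$ and $\nm{F_0}_{\mb{X}_{DB}^\mb{p}} \simeq \nm{f}_{\mb{X}^\mb{p}}$; then $\mb{C}_{DB}^+(F_0)$ solves $(\CR)_{DB}$ with the required interior and decay estimates, and via Theorem \ref{thm:AAM} we obtain a solution $u$ (unique up to an additive constant) of $L_A u = 0$ satisfying all the conditions of $(R_\mb{X})_A^\mb{p}$. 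Conversely, well-posedness of $(R_\mb{X})_A^\mb{p}$ produces a solution map $f \mapsto u$ whose composition with the trace $u \mapsto \nabla_A u|_{t=0} = F_0$ provides a bounded two-sided inverse to $N_{\mb{X},DB,\parallel}^\mb{p}$. The argument for the Neumann problem is verbatim the same with $N_\parallel$ replaced by $N_\perp$.

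The hard part is not any single step but rather marshalling the many prior results and keeping the topological/completion issues straight, especially in the infinite exponent case, where the boundary limit must be interpreted in the weak-star topology on $\mb{X}_D^\mb{p}$ and where the identification of $\mb{X}_{DB}^{\mb{p},+}$ as a subspace of $\mb{X}_D^\mb{p}$ relies on $\mb{p}^\heartsuit \in I(\mb{X},DB^*)$ via $\heartsuit$-duality rather than on $\mb{p}$ itself being in $I(\mb{X},DB)$. All the analytic work has been absorbed into the representation theorems; the role of the present argument is essentially bookkeeping, converting a PDE problem into a linear-algebraic one on the spectral subspace $\mb{X}_{DB}^{\mb{p},+}$.
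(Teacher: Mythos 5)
Your proof is correct and follows essentially the same route as the paper: apply Theorem \ref{thm:firstrepresentation} to set up the bijection $u \leftrightarrow F_0$, observe that the boundary conditions are precisely $N_\bullet F_0 = f$, and read off the isomorphism criterion. The paper's own proof is terser (it also separately defers the $\gq(\mb{p}) \in \{-1,0\}$ endpoints to the earlier results of Auscher--Mourgoglou before applying the representation theorem), but the argument is the same; your version merely spells out the norm-equivalence bookkeeping that the paper summarises with ``the result follows.''
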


\begin{proof}
	The results for $\gq(\mb{p}) = 0$ and $\gq(\mb{p}) = -1$ correspond to \cite[Theorems 1.5 and 1.6]{AM15}, so we need only consider $\gq(\mb{p}) \in (-1,0)$.
	Theorem \ref{thm:firstrepresentation} tells us that $\nabla_{A}u= C_{DB}^+F_{0}$ for a unique $F_{0}\in  \mb{X}_{DB}^{\mb{p},+}$, with $F_0 = \lim_{t \to 0} \nabla_A u(t,\cdot)$ in $\mb{X}^\mb{p}$ (here is where we use that $\mb{p} \in J(\mb{X},DB)$).
	Since $\nabla_A = \left[ \partial_{\gn_A}, \nabla_\parallel \right]$, the boundary conditions for $(R_{\mb{X}})_A^\mb{p}$ and $(N_{\mb{X}})_A^\mb{p}$ can be rewritten as
	\begin{align*}
		N_{\parallel} (F_0) &= f \in \mb{X}_{\parallel}^{\mb{p}} \quad \text{and} \\
		N_{\perp} (F_0) &= f \in \mb{X}_{\perp}^{\mb{p}}
	\end{align*}
	respectively.
	The result follows.
\end{proof}

Define the \emph{energy exponent} $\mb{e} = (2,-1/2)$.\index{exponent!energy}
For all $A$, the Lax--Milgram theorem guarantees well-posedness of the problems $(R_{\mb{X}})_A^\mb{e}$ and $(N_{\mb{X}})_A^\mb{e}$ (see \cite[Theorems 3.2 and 3.3]{AMM13}).

\begin{dfn}
	Suppose $\mb{p}$ and $\mb{q}$ are exponents with $\gq(\mb{p}),\gq(\mb{q}) \in [-1,0]$.
	We say that the boundary value problems $(P_\mb{X})_A^\mb{p}, (P_\mb{X})_A^\mb{q}$ are \emph{mutually well-posed}\index{well-posedness!mutual} if they are both well-posed, and if for all boundary data $f \in \mb{X}_\bullet^\mb{p} \cap \mb{X}_\bullet^\mb{q}$ (where $\bullet$ is either $\parallel$ or $\perp$ depending on the choice of boundary condition) the solutions to $(P_\mb{X})_A^\mb{p}$ and $(P_\mb{X})_A^\mb{q}$ with boundary data $f$ are equal.
	We say that $(P_\mb{X})_A^\mb{p}$ is \emph{compatibly well-posed}\index{well-posedness!compatible} if $(P_\mb{X})_A^\mb{p}$ and $(P_\mb{X})_A^\mb{e}$ (where $\mb{e}$ is the energy exponent defined above) are mutually well-posed.
\end{dfn}

If $B = \hat{A}$ and $\mb{p},\mb{q} \in J(\mb{X},DB)$, then Theorem \ref{thm:WP-char} says that $(P_\mb{X})_A^\mb{p}$ and $(P_\mb{X})_A^\mb{q}$ are mutually well-posed if and only if $N_{\mb{X},DB,\bullet}^\mb{p}$ and $N_{\mb{X},DB,\bullet}^\mb{q}$ are isomorphisms and $(N_{\mb{X},DB,\bullet}^\mb{p})^{-1} = (N_{\mb{X},DB,\bullet}^\mb{q})^{-1}$ on $\mb{X}_\bullet^\mb{p} \cap \mb{X}_\bullet^\mb{q}$.
Evidently mutual well-posedness is transitive: if $(P_\mb{X})_A^\mb{p}$ and $(P_\mb{X})_A^\mb{q}$ are mutually well-posed, and if $(P_\mb{X})_A^\mb{q}$ and $(P_\mb{X})_A^\mb{r}$ are mutually well-posed, then $(P_\mb{X})_A^\mb{p}$ and $(P_\mb{X})_A^\mb{r}$ are mutually well-posed.

For finite exponents in the classification region we can interpolate mutual well-posedness; `mutuality' is required in order to interpolate invertibility.

\begin{thm}[Interpolation of mutual well-posedness]\label{thm:cwp-int}\index{interpolation!of mutual well-posedness}
	Let $B = \hat{A}$, and suppose $\mb{p}, \mb{q} \in J(\mb{X},DB) \cap \mb{E}_{\text{fin}}$.
	If $(P_\mb{X})_A^\mb{p}$ and $(P_\mb{X})_A^\mb{q}$ are mutually well-posed, then the problems $((P_\mb{X})_A^{[\mb{p},\mb{q}]_\gh})_{\gh \in [0,1]}$ are pairwise mutually well-posed.
	Furthermore, if $\gq(\mb{p}) \neq \gq(\mb{q})$ and $\mb{X} = \mb{H}$, then the problems $((P_\mb{B})_A^{[\mb{p},\mb{q}]_\gh})_{\gh \in (0,1)}$, $(P_\mb{H})_A^\mb{p}$ and $(P_\mb{H})_A^\mb{q}$ are pairwise mutually well-posed.
\end{thm}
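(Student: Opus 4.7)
The strategy is to translate mutual well-posedness into invertibility of boundary projection operators via Theorem~\ref{thm:WP-char}, and then apply the standard principle that invertibility on consistent interpolation couples is preserved across the scale. The mutual well-posedness of $(P_\mb{X})_A^\mb{p}$ and $(P_\mb{X})_A^\mb{q}$ says that $N_{\mb{X},DB,\bullet}^\mb{p}$ and $N_{\mb{X},DB,\bullet}^\mb{q}$ (with $\bullet = \parallel$ for the Regularity problem, $\bullet = \perp$ for the Neumann problem) are isomorphisms whose inverses agree on $\mb{X}_\bullet^\mb{p} \cap \mb{X}_\bullet^\mb{q}$. I would first observe that these inverses glue together into a single bounded operator $S$ from $\mb{X}_\bullet^\mb{p} + \mb{X}_\bullet^\mb{q}$ to $\mb{X}_{DB}^{\mb{p},+} + \mb{X}_{DB}^{\mb{q},+}$, and that $N_\bullet$ itself is just the restriction to the $DB$-adapted positive spectral subspace of a single projection on $\mb{X}_D^\cdot$ determined by the splitting $\bbC^{m(1+n)} = \bbC^m \oplus \bbC^{mn}$. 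This reduces the theorem to verifying that $S$ interpolates correctly between the endpoints.

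Next I would check that the classification region is stable under the relevant interpolation, which is essential so that Theorem~\ref{thm:WP-char} applies at intermediate exponents. For complex interpolation between exponents of the same type (first part of the theorem), stability of $J(\mb{X},DB) \cap \mb{E}_\text{fin}$ follows from Theorem~\ref{thm:inclusion-interpolation} together with Proposition~\ref{prop:dual-interval-incl} and the definition of the classification region. For the real interpolation part, the hypothesis $\gq(\mb{p}) \neq \gq(\mb{q})$ turns real interpolants of $\mb{H}$-type spaces into $\mb{B}$-type spaces, both on the abstract side via Theorem~\ref{thm:completed-interpolation}(iii) and on the classical side via Theorem~\ref{thm:int-clas}, and Theorem~\ref{thm:inclusion-interpolation} again places $[\mb{p},\mb{q}]_\gh$ in $J(\mb{B},DB)$ for $\gh \in (0,1)$.

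With the scales in place, Theorems~\ref{thm:completed-interpolation} and~\ref{thm:int-clas} guarantee that $S$ interpolates between $\mb{X}_\bullet^\mb{p} \to \mb{X}_{DB}^{\mb{p},+}$ and $\mb{X}_\bullet^\mb{q} \to \mb{X}_{DB}^{\mb{q},+}$ to give a bounded inverse of $N_{\mb{X},DB,\bullet}^{[\mb{p},\mb{q}]_\gh}$ at every intermediate exponent. Applying this in the complex setting yields the first conclusion; applying it in the real setting (endpoints $\mb{H}$-type Hardy--Sobolev, intermediate $\mb{B}$-type Besov) yields the second. Pairwise mutual well-posedness then comes for free: the inverses at all exponents along the scale are restrictions of the single operator $S$ on the sum, so any two of them automatically agree on the intersection of their domains.

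The main obstacle is technical rather than conceptual: bookkeeping the various completions and identifications. To speak meaningfully of an interpolation scale of operators $N_{\mb{X},DB,\bullet}^{[\mb{p},\mb{q}]_\gh}$, one must verify that $\mb{X}_{DB}^{[\mb{p},\mb{q}]_\gh,+}$ is consistently identified as a subspace of $\mb{X}_D^{[\mb{p},\mb{q}]_\gh}$ throughout the scale, which is exactly why one needs $[\mb{p},\mb{q}]_\gh$ in the appropriate identification region (for complex interpolation) or in $I(\mb{B},DB)$ (for real interpolation). Once this compatibility is set up, no new analytic ideas are required beyond the interpolation and identification results already collected in Chapters~\ref{chap:fs-prelims}--\ref{chap:diffops}.
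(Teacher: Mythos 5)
Your proposal is correct and follows essentially the same path as the paper's proof: translate mutual well-posedness into invertibility of $N_{\mb{X},DB,\bullet}$ via Theorem~\ref{thm:WP-char}, glue the two inverses on the sum using density of the intersection, interpolate that single operator, and use the fact that the intermediate operators are restrictions of the same operator to conclude pairwise mutuality. One small point you gloss over (and the paper handles explicitly): interpolation only shows the glued operator $S$ restricts to a \emph{bounded} map $\mb{X}_\bullet^{[\mb{p},\mb{q}]_\gh} \to \mb{X}_{DB}^{[\mb{p},\mb{q}]_\gh,+}$; to see it is a two-sided inverse of $N_{\mb{X},DB,\bullet}^{[\mb{p},\mb{q}]_\gh}$, one must additionally check that the compositions $S \circ N$ and $N \circ S$ equal the identity on dense subspaces (which follows since both $S$ and $N^{[\mb{p},\mb{q}]_\gh}$ agree with their endpoint versions on the relevant intersections).
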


\begin{proof}
	We use the interpolation result for smoothness spaces, Theorem \ref{thm:int-clas}.
	By the previous discussion, we have
	\begin{equation*}
	(N_{\mb{X},DB,\bullet}^\mb{p})^{-1} = (N_{\mb{X},DB,\bullet}^\mb{q})^{-1}
	\end{equation*}
	on the intersection $\mb{X}_\bullet^\mb{p} \cap \mb{X}_\bullet^\mb{q}$.
	Since this intersection is dense in $\mb{X}_\bullet^\mb{p}$ and $\mb{X}_\bullet^\mb{q}$ (here is where we use finiteness of $\mb{p}$ and $\mb{q}$), we have a well-defined operator
	\begin{equation*}
		\map{\wtd{N}}{\mb{X}_\bullet^\mb{p} + \mb{X}_\bullet^\mb{q}}{\mb{X}_{DB}^{\mb{p},+} + \mb{X}_{DB}^{\mb{q},+}}
	\end{equation*}
	which restricts to $(N_{\mb{X},DB,\bullet}^\mb{p})^{-1}$ and $(N_{\mb{X},DB,\bullet}^{\mb{q}})^{-1}$ on $\mb{X}_\bullet^\mb{p}$ and $\mb{X}_\bullet^\mb{q}$ respectively.
	By complex interpolation, for each $\gh \in [0,1]$, $\wtd{N}$ restricts to a bounded operator $\map{\wtd{N}_\gh}{\mb{X}_\bullet^{[\mb{p},\mb{q}]_{\gh}}}{\mb{X}_{DB}^{[\mb{p},\mb{q}]_\gh,+}}$.
	Since $\wtd{N}_\gh$ is equal to $(N_{\mb{X},DB,\bullet}^\mb{p})^{-1}$ on $\mb{X}_\bullet^{[\mb{p},\mb{q}]_\gh} \cap \mb{X}_\bullet^\mb{p}$, and since $N_{\mb{X},DB,\bullet}^\mb{[\mb{p},\mb{q}]_\gh}$ is equal to $N_{\mb{X},DB,\bullet}^\mb{p}$ on $\mb{X}_{DB}^{[\mb{p},\mb{q}]_\gh,+} \cap \mb{X}_{DB}^{\mb{p},+}$, we find that $\wtd{N}_\gh$ is the inverse of $N_{\mb{X},DB,\bullet}^{[\mb{p},\mb{q}]_\gh}$.
	Therefore $N_{\mb{X},DB,\bullet}^{[\mb{p},\mb{q}]_\gh}$ is an isomorphism, and since $[\mb{p},\mb{q}]_\gh$ satisfies the assumptions of Theorem \ref{thm:WP-char} (by Theorem \ref{thm:inclusion-interpolation}), $(P_\mb{X})_A^{[\mb{p},\mb{q}]_\gh}$ is well-posed.
	Furthermore, since $\wtd{N}_\gh = (N_{\mb{X},DB,\bullet}^\mb{p})^{-1}$ on $\mb{X}_\bullet^{[\mb{p},\mb{q}]_\gh} \cap \mb{X}_\bullet^\mb{p}$, the problems $(P_\mb{X})_A^{[\mb{p},\mb{q}]_\gh}$ and $(P_\mb{X})_A^{\mb{p}}$ are mutually well-posed.
	Transitivity of mutual well-posedness completes the proof of the first statement.
	When $\mb{X} = \mb{H}$ and $\gq(\mb{p}) \neq \gq(\mb{q})$, applying real interpolation with the same argument yields the second statement.
\end{proof}

Well-posedness can be extrapolated, preserving mutuality, within the classification region $J(\mb{X},DB)$, however we must restrict to exponents $\mb{p}$ with $\gq(\mb{p}) \in (-1,0)$.
This is a consequence of {\v{S}}ne{\u\i}berg's extrapolation theorem \cite{iS74}; the proof uses the same argument as that of Theorem \ref{thm:identn-openness}.

\begin{thm}[Extrapolation of well-posedness]\label{thm:wp-ext}\index{extrapolation!of well-posedness}
	Let $B = \hat{A}$, and suppose $\mb{p} \in J(\mb{X},DB)$ with $\gq(\mb{p}) \in (-1,0)$.
	If $\mb{X} = \mb{H}$ then further assume that $j(\mb{p}) \neq 0$, and if $\mb{X} = \mb{B}$ then assume $i(\mb{p}) > 1$.
	Suppose that $(P_\mb{X})_A^\mb{p}$ is well-posed.
	Then there exists a $(j,\gq)$-neighbourhood $O_\mb{p}$ of $\mb{p}$ such that for all $\mb{q} \in O_\mb{p}$ with $\theta(\mb{q}) \leq 0$, $(P_\mb{X})_A^{\mb{q}}$ and $(P_\mb{X})_A^\mb{p}$ are mutually well-posed.
\end{thm}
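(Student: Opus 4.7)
The plan is to mirror the proof of Theorem \ref{thm:identn-openness}, but now applied to the boundary-projection operator $N_{\mb{X},DB,\bullet}^{\mb{p}}$ (with $\bullet \in \{\parallel, \perp\}$ depending on whether we are treating the Regularity or Neumann problem) instead of to $\mb{Q}_{\gf,D}\mb{S}_{\gy,DB}$. The strategy is to exhibit $\mb{p}$ as the midpoint of a complex interpolation segment $[\mb{q}_0, \mb{q}_1]_{1/2}$ lying in $J(\mb{X},DB)$, realise $N_{\mb{X},DB,\bullet}^{\mb{q}}$ as a holomorphic family between interpolation scales of canonical completions, and then invoke the Kalton--Mitrea version of \v{S}ne\u{\i}berg's extrapolation theorem to propagate invertibility at $\mb{p}$ to all nearby exponents.

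First I would use Theorem \ref{thm:identn-openness} (together with Proposition \ref{prop:dual-interval-incl} when $i(\mb{p}) > 2$, so that $\mb{p}^\heartsuit \in I(\mb{X},DB^*)$ is likewise an interior point) to produce a small ball $B_\mb{p}$ about $\mb{p}$ in the $(j,\gq)$-plane, inside $J(\mb{X},DB) \cap \{\gq \in (-1,0)\}$, on which $\mb{X}_{DB}^\mb{q} \simeq \mb{X}_D^\mb{q}$. For $\mb{q}$ in this ball set $\mb{q}_0 := \mb{q}$ and $\mb{q}_1 := [\mb{p},\mb{q}]_{-1}$, so that $\mb{p} = [\mb{q}_0, \mb{q}_1]_{1/2}$ by Lemma \ref{lem:extid}; shrinking $B_\mb{p}$ if necessary, both $\mb{q}_0, \mb{q}_1$ lie in $J(\mb{X},DB)$ with $\gq \in (-1,0)$, and they remain in the regime where Theorem \ref{thm:int-clas} and Theorem \ref{thm:completed-interpolation} identify the complex interpolants $[\mb{X}^{\mb{q}_0}_\bullet, \mb{X}^{\mb{q}_1}_\bullet]_{1/2}$ and $[\mb{X}^{\mb{q}_0,+}_{DB}, \mb{X}^{\mb{q}_1,+}_{DB}]_{1/2}$ with $\mb{X}^{\mb{p}}_\bullet$ and $\mb{X}^{\mb{p},+}_{DB}$ respectively. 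It is precisely at this point that the hypotheses $j(\mb{p}) \neq 0$ (for $\mb{X} = \mb{H}$) and $i(\mb{p}) > 1$ (for $\mb{X} = \mb{B}$) are needed, to guarantee that the interpolation identifications in Theorem \ref{thm:int-clas} apply around $\mb{p}$.

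Next, the projections $N_{\mb{X},DB,\bullet}^{\mb{q}_0}$ and $N_{\mb{X},DB,\bullet}^{\mb{q}_1}$ are mutually consistent (both are restrictions of the tangential or transversal projection of $\mb{X}_D^\mb{q}$) and bounded. They therefore define a single bounded operator on the sum space $\mb{X}_{DB}^{\mb{q}_0,+} + \mb{X}_{DB}^{\mb{q}_1,+}$, whose restrictions to the complex interpolants agree with $N_{\mb{X},DB,\bullet}^{[\mb{q}_0,\mb{q}_1]_\eta}$ for every $\eta \in [0,1]$. By Theorem \ref{thm:WP-char} the hypothesis that $(P_\mb{X})_A^\mb{p}$ is well-posed means $N_{\mb{X},DB,\bullet}^{\mb{p}}$ is an isomorphism at $\eta = 1/2$. \v{S}ne\u{\i}berg's theorem \cite[Theorem 2.7]{KM98} then furnishes $\varepsilon > 0$ such that $N_{\mb{X},DB,\bullet}^{[\mb{q}_0,\mb{q}_1]_{1/2 + \gn}}$ is an isomorphism for all $\gn \in (-\varepsilon, \varepsilon)$, and by Remark \ref{rmk:eps-indeps} this $\varepsilon$ can be chosen uniform in $\mb{q} \in B_\mb{p}$. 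Since the segments $\{[\mb{q}_0,\mb{q}_1]_{1/2+\gn} : \mb{q} \in B_\mb{p}, \, |\gn| < \varepsilon\}$ sweep out a two-dimensional $(j,\gq)$-neighbourhood of $\mb{p}$, intersecting this neighbourhood with $\{\gq \leq 0\}$ gives the desired $O_\mb{p}$, and by Theorem \ref{thm:WP-char} the problem $(P_\mb{X})_A^\mb{r}$ is well-posed for every $\mb{r} \in O_\mb{p}$.

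Finally, to promote individual well-posedness to \emph{mutual} well-posedness, I would observe that the inverses $(N_{\mb{X},DB,\bullet}^\mb{r})^{-1}$ for $\mb{r} \in O_\mb{p}$ are obtained by restriction from a single bounded operator defined on the ambient interpolation scale, so they automatically coincide on $\mb{X}^{\mb{p}}_\bullet \cap \mb{X}^{\mb{r}}_\bullet$ with $(N_{\mb{X},DB,\bullet}^\mb{p})^{-1}$; this is exactly the criterion for mutual well-posedness recorded after Theorem \ref{thm:WP-char}. The main obstacle I expect is the uniformity claim in \v{S}ne\u{\i}berg's theorem: one must show that the constant $\varepsilon$ does not collapse as $\mb{q}$ ranges over $B_\mb{p}$, which requires controlling both the operator norms of $N_{\mb{X},DB,\bullet}^{\mb{q}_i}$ and the constants in the interpolation identifications $[\mb{X}^{\mb{q}_0}_\bullet, \mb{X}^{\mb{q}_1}_\bullet]_{1/2+\gn} = \mb{X}^{[\mb{q}_0,\mb{q}_1]_{1/2+\gn}}_\bullet$ uniformly over $\mb{q}$. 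As in the proof of Theorem \ref{thm:identn-openness}, this is exactly what Remark \ref{rmk:eps-indeps} buys us after restricting to the small ball $B_\mb{p}$, so no new idea is needed beyond the observation that both types of constants depend continuously on $\mb{q}$.
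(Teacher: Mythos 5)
Your proposal is correct and follows the same route the paper indicates: the paper's remark states explicitly that the proof ``uses the same argument as that of Theorem~\ref{thm:identn-openness}'', and you have mirrored that argument carefully, replacing $\mb{Q}_{\gf,D}\mb{S}_{\gy,DB}$ by the boundary projection $N_{\mb{X},DB,\bullet}^{\mb{q}}$ and using {\v{S}}ne{\u\i}berg's theorem to extrapolate invertibility from $\mb{p}$ along each midpoint segment. The one small imprecision is in the mutual well-posedness step: the inverses $(N_{\mb{X},DB,\bullet}^{\mb{r}})^{-1}$ for $\mb{r} \in O_{\mb{p}}$ are not literally restrictions of a single operator defined on the whole two-dimensional neighbourhood, but rather the Kalton--Mitrea/{\v{S}}ne{\u\i}berg construction builds the inverse at $\mb{r}$ on each segment through $\mb{p}$ by a Neumann series involving $(N_{\mb{X},DB,\bullet}^{\mb{p}})^{-1}$, forcing agreement on $\mb{X}_\bullet^{\mb{p}} \cap \mb{X}_\bullet^{\mb{r}}$ segment by segment---the same mechanism the paper invokes for Theorem~\ref{thm:wp-pert}, and exactly what the criterion for mutual well-posedness with $\mb{p}$ requires, so your conclusion stands.
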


\begin{rmk}
	For $\mb{X} = \mb{H}$, the restriction $j(\mb{p}) \neq 0$ rules out $\BMO$-Sobolev spaces, which are not in the interior of any of our complex interpolation scales.
	Also with $\mb{X} = \mb{H}$, a corresponding result is true for $\gq(\mb{p}) \in \{-1,0\}$, but well-posedness is only obtained for $\mb{q}$ near $\mb{p}$ with $\gq(\mb{q}) = \gq(\mb{p})$.
	The proof uses the same argument.
	For $\mb{X} = \mb{B}$, the restriction $i(\mb{p}) > 1$ is due to the lack of a complex interpolation theorem for quasi-Banach $Z$-spaces.
\end{rmk}

This leads to an interesting corollary: a topological characterisation of mutual well-posedness.

\begin{cor}\index{well-posedness!mutual!topological characterisation}
	Let $B = \hat{A}$, and let $\mb{p},\mb{q} \in J(\mb{X},DB) \cap \mb{E}_{\text{fin}}$ with $\gq(\mb{p}),\gq(\mb{q}) \in (-1,0)$.
	Then $(P_\mb{X})_A^\mb{p}$ and $(P_\mb{X})_A^\mb{q}$ are mutually well-posed if and only if $\mb{p}$ and $\mb{q}$ are in the same connected component of $\wellp(P_\mb{X})_A \cap J(\mb{X},DB)$ (the set of all exponents in the classification region for which $(P_{\mb{X}})_A$ is well-posed).
\end{cor}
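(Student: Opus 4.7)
The plan is to prove the two implications separately, relying on the interpolation theorem for mutual well-posedness (Theorem \ref{thm:cwp-int}) in one direction and the extrapolation theorem (Theorem \ref{thm:wp-ext}) in the other. Throughout, I will use that mutual well-posedness is transitive (a direct consequence of the inverse characterisation through Theorem \ref{thm:WP-char}) and that $\wellp(P_\mb{X})_A \cap J(\mb{X},DB) \cap \{\gq \in (-1,0)\}$ is an open subset of the $(j,\gq)$-plane, which follows by combining Theorems \ref{thm:identn-openness} and \ref{thm:wp-ext}. Openness makes connected components coincide with path components.

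For the forward direction, I would assume $(P_\mb{X})_A^\mb{p}$ and $(P_\mb{X})_A^\mb{q}$ are mutually well-posed and consider the straight line segment $\gh \mapsto [\mb{p},\mb{q}]_\gh$ in the $(j,\gq)$-plane. The interval $(-1,0)$ is convex so $\gq([\mb{p},\mb{q}]_\gh) \in (-1,0)$ for every $\gh \in [0,1]$; Theorem \ref{thm:inclusion-interpolation} keeps the interpolants inside $J(\mb{X},DB)$; and Theorem \ref{thm:cwp-int} guarantees well-posedness of the intermediate problems. Hence the entire segment lies in $\wellp(P_\mb{X})_A \cap J(\mb{X},DB)$, exhibiting $\mb{p}$ and $\mb{q}$ as endpoints of a path inside a single connected component.

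For the reverse direction, I would suppose $\mb{p}$ and $\mb{q}$ lie in the same connected, hence path-connected, component and pick a continuous path $\map{\gamma}{[0,1]}{\wellp(P_\mb{X})_A \cap J(\mb{X},DB)}$ joining them. Theorem \ref{thm:wp-ext} supplies, for each $t \in [0,1]$, an open neighbourhood $O_{\gamma(t)}$ in which every problem is mutually well-posed with $(P_\mb{X})_A^{\gamma(t)}$. By compactness of $\gamma([0,1])$, I would extract a finite subdivision $0 = t_0 < t_1 < \cdots < t_k = 1$ with $\gamma(t_i), \gamma(t_{i+1}) \in O_{\gamma(t_i)}$ for each $i$, and then chain transitivity through the intermediate exponents $\gamma(t_1),\ldots,\gamma(t_{k-1})$ to conclude that $(P_\mb{X})_A^\mb{p}$ and $(P_\mb{X})_A^\mb{q}$ are mutually well-posed.

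The main obstacle is the scope of Theorem \ref{thm:wp-ext}: for $\mb{X} = \mb{B}$ it is only stated under the extra hypothesis $i(\mb{r}) > 1$, and a priori the chosen path may pass through exponents with $i(\mb{r}) \leq 1$. To handle this, I would either restrict the statement of the corollary (implicitly) to the relevant open subregion on which extrapolation applies, or note that in the excluded quasi-Banach range one can argue separately by choosing the path within the subcomponent where the hypotheses hold. A secondary, minor concern is verifying that the open set in which we work is genuinely locally path-connected, but this is automatic since we are in an open subset of $\bbR^2$.
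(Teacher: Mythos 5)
Your approach is essentially the one taken in the paper: interpolation (Theorem \ref{thm:cwp-int}) gives the forward implication along the line segment $[\mb{p},\mb{q}]_\gh$, and extrapolation (Theorem \ref{thm:wp-ext}) together with transitivity of mutual well-posedness gives the reverse. The only structural difference is topological phrasing: you use path-connectedness, compactness of a joining path, and a chain of neighbourhoods $O_{\gamma(t_i)}$, whereas the paper argues that the equivalence class of $\mb{p}$ under mutual well-posedness and its complement in $\wellp(P_\mb{X})_A \cap J(\mb{X},DB)$ are both open, so this class is a union of connected components. The two arguments are equivalent for open subsets of the plane, so this is a cosmetic difference rather than a genuinely different route.

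You are right to be uneasy about the Besov range $i(\mb{r}) \leq 1$, and this is the one place where neither your proposal nor the paper's sketch is fully watertight. The paper's openness claim for the $\mb{B}$ case lists the set
\begin{equation*}
\{\mb{r} \in J(\mb{B},DB) : \text{$(P_\mb{B})_A^\mb{r}$ and $(P_\mb{B})_A^\mb{p}$ mutually well-posed, $\gq(\mb{r}) \in (-1,0)$}\}
\end{equation*}
without a condition $i(\mb{r}) > 1$, yet Theorem \ref{thm:wp-ext} does not give a neighbourhood of mutual well-posedness at an exponent $\mb{r}$ with $i(\mb{r}) \leq 1$ (the underlying obstruction is the absence of a \v{S}ne\u{\i}berg-type extrapolation in the quasi-Banach $Z$-space range). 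So the paper has the same gap that you flagged, and its proof of clopenness, as written, breaks down at such points. Your second proposed remedy (re-routing the path through the subregion where the hypotheses hold) is circular, since it only shows that $\mb{p},\mb{q}$ are mutually well-posed when they lie in the same component of the \emph{restricted} open set $\{\gq \in (-1,0),\ i > 1\}$, which is a strictly weaker conclusion than being in the same component of $\wellp(P_\mb{B})_A \cap J(\mb{B},DB)$. Your first remedy (restricting the statement) is honest but changes the claim. The same issue arises more mildly in the $\mb{H}$ case at the boundary lines $\gq \in \{-1,0\}$, where extrapolation only moves along those lines. In short, the corollary as stated holds cleanly for connected components of the open subregion where Theorem \ref{thm:wp-ext} applies; for the unrestricted $\wellp \cap J$ the argument needs an additional device, and you were correct to notice that this is unaccounted for.
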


\begin{proof}
	If $(P_\mb{X})_A^\mb{p}$ and $(P_\mb{X})_A^\mb{q}$ are mutually well-posed, then Theorem \ref{thm:cwp-int} (interpolation of mutual well-posedness) implies that $\mb{p}$ and $\mb{q}$ lie in the same connected component of $\wellp(P_\mb{X})_A \cap J(\mb{X},DB)$.
	On the other hand, Theorem \ref{thm:wp-ext} (extrapolation of well-posedness) shows that the sets
	\begin{align*}
		&\{\mb{r} \in J(\mb{H},DB) : \text{$(P_\mb{H})_A^\mb{r}$ and $(P_\mb{H})_A^\mb{p}$ mutually well-posed, $j(\mb{r}) \neq 0$, $\gq(\mb{r}) \in (-1,0)$}\}, \\
		&\{\mb{r} \in J(\mb{B},DB) : \text{$(P_\mb{B})_A^\mb{r}$ and $(P_\mb{B})_A^\mb{p}$ mutually well-posed, $\gq(\mb{r}) \in (-1,0)$}\},
	\end{align*}
	and their complements in $\wellp(P_\mb{X})_A \cap J(\mb{X},DB)$, are both open.
	Therefore if $(P_\mb{X})_A^\mb{p}$ and $(P_\mb{X})_A^\mb{q}$ are not mutually well-posed, then $\mb{p}$ and $\mb{q}$ lie in different connected components of $\wellp(P_\mb{X})_A \cap J(\mb{X},DB)$.
\end{proof}

Now we present a $\heartsuit$-duality principle for well-posedness.

\begin{thm}[$\heartsuit$-duality of well-posedness]\label{thm:wp-duality}\index{duality!of well-posedness}
	Let $B = \hat{A}$, and suppose that $\mb{p},\mb{q} \in J(\mb{X},DB) \cap \mb{E}_{\text{fin}}$.
	If $(P_\mb{X})_A^{\mb{p}}$ and $(P_\mb{X})_A^\mb{q}$ are mutually well-posed, then $(P_\mb{X})_{A^*}^{\mb{p}^\heartsuit}$ and $(P_\mb{X})_{A^*}^{\mb{q}^\heartsuit}$ are mutually well-posed.
\end{thm}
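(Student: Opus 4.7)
The plan is to reduce $\heartsuit$-duality of well-posedness to a chain of equivalences between isomorphism conditions on restrictions of spectral projections, then exploit $L^2$ duality, the $D$-intertwining of Corollary \ref{cor:similarity}, and the $N$-conjugation similarity of Remark \ref{rmk:Bduals} (with $N = \operatorname{diag}(I,-I)$). By Theorem \ref{thm:WP-char}, the hypothesis of mutual well-posedness of $(R_\mb{X})_A^\mb{p}, (R_\mb{X})_A^\mb{q}$ says that $T_\mb{r} := N_\parallel|_{\mb{X}_{DB}^{\mb{r},+}}\colon \mb{X}_{DB}^{\mb{r},+} \to \mb{X}_\parallel^\mb{r}$ is an isomorphism for $\mb{r} \in \{\mb{p},\mb{q}\}$, with $T_\mb{p}^{-1}, T_\mb{q}^{-1}$ agreeing on $\mb{X}_\parallel^\mb{p} \cap \mb{X}_\parallel^\mb{q}$. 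Writing $\wtd B := \widehat{A^*} = NB^*N$, the goal becomes the same statement with $A, B, \mb{r}$ replaced by $A^*, \wtd B, \mb{r}^\heartsuit$. Since $\gc^+(DB)^* = \gc^+(B^*D)$ on $L^2$, an extension of Proposition \ref{prop:completion-duality} should identify $(\mb{X}_{DB}^{\mb{r},+})^*$ with $\mb{X}_{B^*D}^{\mb{r}',+}$ under the $L^2$ pairing, and a direct computation gives $T_\mb{r}^* = \gc^+(B^*D)|_{\mb{X}_\parallel^{\mb{r}'}}$. Hence $T_\mb{r}$ is an iso iff $\gc^+(B^*D)\colon \mb{X}_\parallel^{\mb{r}'} \to \mb{X}_{B^*D}^{\mb{r}',+}$ is.

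Next I will apply Corollary \ref{cor:similarity}: $D$ is an iso $\mb{X}_{B^*D}^{\mb{r}',+} \to \mb{X}_{DB^*}^{\mb{r}^\heartsuit,+}$ (using $\mb{r}' = \mb{r}^\heartsuit + 1$) intertwining spectral projections via $D\gc^+(B^*D) = \gc^+(DB^*) D$. Restricted to purely tangential functions $D$ acts as $\dv_\parallel$, and since $\dv_\parallel \circ \nabla_\parallel = \gD$ is an iso between the relevant homogeneous smoothness spaces, $\dv_\parallel\colon \mb{X}_\parallel^{\mb{r}'} \to \mb{X}_\perp^{\mb{r}^\heartsuit}$ is iso. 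Composing, the condition becomes $\gc^+(DB^*)\colon \mb{X}_\perp^{\mb{r}^\heartsuit} \to \mb{X}_{DB^*}^{\mb{r}^\heartsuit,+}$ iso. A direct graph argument — given $g \in \mb{X}_\parallel^{\mb{r}^\heartsuit}$, pick $h \in \mb{X}_\perp^{\mb{r}^\heartsuit}$ with $\gc^+(DB^*) h = \gc^+(DB^*) g$; then $g - h \in \mb{X}_{DB^*}^{\mb{r}^\heartsuit,-}$ has tangential part $g$, and injectivity is immediate — converts this into $N_\parallel\colon \mb{X}_{DB^*}^{\mb{r}^\heartsuit,-} \to \mb{X}_\parallel^{\mb{r}^\heartsuit}$ iso. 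Finally, $N$-conjugation identifies $\mb{X}_{DB^*}^{\mb{r}^\heartsuit,-}$ with $\mb{X}_{D\wtd B}^{\mb{r}^\heartsuit,+}$ (the swap of spectral subspaces coming from $DB^* = -ND\wtd B N$), and $N_\parallel \circ N = -N_\parallel$ transfers the iso to $N_\parallel\colon \mb{X}_{D\wtd B}^{\mb{r}^\heartsuit,+} \to \mb{X}_\parallel^{\mb{r}^\heartsuit}$ — exactly the well-posedness criterion of Theorem \ref{thm:WP-char} for $(R_\mb{X})_{A^*}^{\mb{r}^\heartsuit}$.

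The Neumann case runs through the same chain with $N_\parallel, N_\perp$ interchanged: now $D|_{\mb{X}_\perp^{\mb{r}'}} = -\nabla_\parallel\colon \mb{X}_\perp^{\mb{r}'} \to \mb{X}_\parallel^{\mb{r}^\heartsuit}$ is immediately iso, and $N_\perp \circ N = N_\perp$ removes any sign issue at the final step. Mutuality propagates through every step because the duality, $D$-intertwining, graph construction, and $N$-conjugation are all canonical, natural in the exponent, and compatible with inclusions of intersection spaces; reading the inverses off explicitly from the chain shows they agree on $\mb{X}_\parallel^{\mb{p}^\heartsuit} \cap \mb{X}_\parallel^{\mb{q}^\heartsuit}$ (resp.\ transversal) whenever the originals agree on $\mb{X}_\parallel^\mb{p} \cap \mb{X}_\parallel^\mb{q}$. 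The main obstacle will be the bookkeeping: extending Proposition \ref{prop:completion-duality} to identify $(\mb{X}_{DB}^{\mb{r},+})^*$ with $\mb{X}_{B^*D}^{\mb{r}',+}$ at the level of full completions, and then carefully tracking the tangential/transversal components through the sign flip $N_\parallel N = -N_\parallel$ and the subspace swap $\mb{X}_{DB^*}^{\mb{r}^\heartsuit,\pm} = \mb{X}_{D\wtd B}^{\mb{r}^\heartsuit,\mp}$ without introducing accidental mismatches.
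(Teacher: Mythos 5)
Your route is genuinely different from the paper's, though both pass through the same underlying combinatorics. The paper introduces the twisted pairing $\langle f,g\rangle^\heartsuit := \langle f, ND^{-1}g\rangle_{\bbX_{DB}^\mb{p}}$ right at the start, which directly pairs $\bbX_{DB}^\mb{p}$ with $\mc{R}(D)\cap \bbX_{D\td{B}}^{\mb{p}^\heartsuit}$ --- so everything stays in the $\overline{\mc{R}(D)}$ world, the dual of $\mb{X}_\parallel^\mb{p}$ is immediately $\mb{X}_\perp^{\mb{p}^\heartsuit}$ (the $D^{-1}$ swaps components and shifts regularity), and the operator $\td{B}=\widehat{A^*}$ appears automatically, after which it cites the graph-type argument of \cite[Theorem 1.6]{AM15}. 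You instead use the plain $L^2$ pairing, incurring the $B^*D$-adapted spaces, then push with $D$ (Corollary \ref{cor:similarity}) to get to $DB^*$, run a graph argument, and finally conjugate by $N$ to land on $D\td{B}$. This is a correct and illuminating unbundling of what the twisted pairing does in one stroke --- your graph argument is the same one that underlies the \cite{AM15} step, and your sign analysis in the $N$-conjugation ($N_\parallel N = -N_\parallel$ for the Regularity case, $N_\perp N = N_\perp$ for Neumann, spectral swap $\gc^\pm(DB^*) = N\gc^\mp(D\td{B})N$) is consistent with Remark \ref{rmk:Bduals}. What you give up is that your intermediate objects $\mb{X}_{B^*D}^{\mb{r}',+}$ live inside completions of $\overline{\mc{R}(B^*D)}$, which is \emph{not} $\overline{\mc{R}(D)}$, so the formula $T_\mb{r}^* = \gc^+(B^*D)|_{\mb{X}_\parallel^{\mb{r}'}}$ is only correct once one writes the suppressed projections $\gc^+(B^*D)\bbP_{B^*D}$ (and dually $\bbP_D$) à la Section \ref{sec:further}, and verifies that the $D$-intertwining of Corollary \ref{cor:similarity2} is invoked at the right moment to cancel them. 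You flag this in your last paragraph as the main obstacle, and it genuinely is: the paper's $ND^{-1}$ pairing exists precisely to avoid ever leaving $\overline{\mc{R}(D)}$. Finally, the claim that ``mutuality propagates through every step'' is stated but not argued; it is plausible because each link in your chain is an explicit bounded map natural in the exponent, but for a complete proof you would need to unwind the inverses along the chain on intersection spaces and check agreement, which is a nontrivial amount of bookkeeping in its own right (the paper, to be fair, also defers this to \cite{AM15}).
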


\begin{rmk}
  If $i(\mb{p}) \in (1,\infty)$, then this statement is an equivalence.
  Furthermore, if $(P_\mb{X})_A^{\mb{p}}$ is \emph{compatibly} well-posed, then $(P_\mb{X})_{A^*}^{\mb{p}^\heartsuit}$ is also compatibly well-posed, since $\mb{e}^\heartsuit = \mb{e}$.
\end{rmk}

We point out the case where $\mb{p} = (1,s)$ with $s \in (-1,0]$: in this case the result says that well-posedness of a problem with coefficients $A$ and boundary data in the Hardy--Sobolev space $\dot{H}^1_s$ (resp. the Besov space $\dot{B}^{1,1}_s)$ implies well-posedness of the corresponding problem for $A^*$ with boundary data in the image of $\BMO$--Sobolev space $\dot{BMO}_{-s}$ (resp. the H\"older space $\dot{\gL}_{-s}$) under $D$.
We elaborate on this in Remark \ref{rmk:VMO} below.

\begin{proof}[Proof of Theorem \ref{thm:wp-duality}]
We will be sketchy because all the important details of this argument have already been done by the second author, Mourgoglou, and Stahlhut (see \cite[\textsection 12.2]{AS16} and \cite[\textsection 13]{AM15}).
Recall from Remark \ref{rmk:Bduals} that $\widehat{A^*} = NB^*N =: \td{B}$.
When $\mb{p}$ is finite, the pairing
\begin{equation*}
	\langle f,g \rangle_{\bbX^\mb{p}_{DB}}^N := \langle f, Ng \rangle_{\bbX^\mb{p}_{DB}}
\end{equation*}
is a duality pairing between $\bbX^\mb{p}_{DB}$ and $\bbX^{\mb{p}^\prime}_{\td{B} D}$.
By Proposition \ref{prop:D-mapping} we have that $\nm{Dg}_{\bbX_{D\td{B}}^{\mb{p}^\heartsuit}} \simeq \nm{g}_{\bbX_{\td{B}D}^{\mb{p}^\prime}}$ whenever $g \in \mc{D}(D) \cap \bbX^{\mb{p}^\prime}_{\td{B}D}$, and so the pairing
\begin{equation}\label{eqn:heartpairing}
	\langle f,g \rangle_{\bbX_{DB}^\mb{p}}^\heartsuit := \langle f, ND^{-1} g \rangle_{\bbX_{DB}^\mb{p}}
\end{equation}
is a duality pairing between $\bbX^\mb{p}_{DB}$ and $\mc{R}(D) \cap \bbX^{\mb{p}^\heartsuit}_{D\td{B}}$.
Since $\mb{p} \in J(\mb{X},DB)$, we have identified $\mb{X}_D^\mb{p} = \mb{X}_{DB}^\mb{p}$ and $\mb{X}_D^{\mb{p}^\heartsuit} = \mb{X}_{D\td{B}}^{\mb{p}^\heartsuit}$ as completions of $\bbX_{DB}^\mb{p}$ and $\bbX_{D\td{B}}^{\mb{p}^\heartsuit}$ respectively (using a simple modification of Proposition \ref{prop:dual-interval-incl} to make the second identification), and by density the pairing \eqref{eqn:heartpairing} extends to a duality pairing between $\mb{X}_{DB}^\mb{p}$ and $\mb{X}_{D\td{B}}^{\mb{p}^\heartsuit}$.
As in the proof of \cite[Lemma 13.3]{AM15}, this pairing realises $\mb{X}_{D\td{B}}^{\mb{p}^\heartsuit, \mp}$ as the dual of $\mb{X}_{DB}^{\mb{p},\pm}$, $\mb{X}_\perp^{\mb{p}^\heartsuit}$ as the dual of $\mb{X}_\parallel^\mb{p}$, and $\mb{X}_\parallel^{\mb{p}^\heartsuit}$ as the dual of $\mb{X}_\perp^{\mb{p}}$.
The remainder of the argument precisely follows the proof of \cite[Theorem 1.6]{AM15}.
\end{proof} 

\begin{rmk}\label{rmk:VMO}
	The restriction to finite exponents in Theorem \ref{thm:wp-duality} appears because the duality $\mb{X}^{\mb{p}^\prime} \simeq (\mb{X}^{\mb{p}})^\prime$ only holds when $\mb{p}$ is finite.
	When $\mb{p} = (\infty,s;0)$, for example, we do not have $(\dot{\BMO}_s)^\prime \simeq \dot{H}_{-s}^1$.
	However, we do have
	\begin{equation*}
		\VMO^\prime \simeq H^1,
	\end{equation*}
	where $\VMO$, the space of functions of \emph{vanishing mean oscillation},\index{space!VMO@$\VMO$} is the norm closure of $C_c^\infty$ in $\BMO$.\footnote{This is the $\VMO$ space of Coifman and Weiss \cite[\textsection 4]{CW77} rather than that first introduced by Sarason \cite{dS75}.}
	In this long remark, we will sketch how `fractional $\VMO$' spaces could be included in our results; for lack of concrete applications, we will keep this at the level of a sketch.
	Given our omission of proofs, the reader should consider this as an informed conjecture.
		
	For $s \in \bbR$, define $\dot{\VMO}_s$ to be the norm closure of $C_c^\infty$ in $\dot{\BMO}_s$.
	Then we have the duality
	\begin{equation*}
		\dot{\VMO}_s^\prime \simeq \dot{H}_{-s}^1.
	\end{equation*}
	For an operator $A$ satisfying the Standard Assumptions, we may define $A$-adapted fractional pre-$\VMO$ spaces $\bbVMO_{A,s}$ analogously to the spaces $\bbH_A^{(\infty,s;0)}$ by using \emph{vanishing tent spaces} $vT^\infty_{s;0}$.\index{space!tent!vanishing}
	These may be defined as the norm closure of $T^2 \cap T^\infty_{s;0}$ in $T^\infty_{s;0}$ (the weak-star closure returns $T^\infty_{s;0}$), and their properties have been established for example by Jiang and Yang \cite{JY11} when $s = 0$.
	In particular the duality
	\begin{equation*}
		(vT_{s;0}^\infty)^\prime \simeq T_{-s;0}^1
	\end{equation*}
	holds for all $s \in \bbR$.
	
	For auxiliary functions $\gy \in \gY_\infty^\infty$, define canonical completions $\gy \mb{VMO}_{DB,s}$ as the closures of $\bbQ_{\gy,DB} \bbVMO_{DB,s}$ in $vT_{s;0}^\infty$ (or equivalently, as the norm closures of $\bbQ_{\gy,DB} \bbVMO_{DB,s}$ in $T_{s;0}^\infty$, or even as the norm closures of $\bbQ_{\gy,DB} \bbH_{DB}^{(\infty,s;0)}$ in $T_{s;0}^\infty$).
	Then the duality
	\begin{equation*}
		(\gy \mb{VMO}_{DB,s})^\prime \simeq \td{\gy} \mb{H}_{B^* D}^{(1,-s;0)}
	\end{equation*}
	follows from the arguments of Section \ref{sec:completions}.
	
	One can follow the arguments of Theorem \ref{thm:mainthm-gtr2} and characterise solutions to $(\CR)_{DB}$ in $vT_{s;0}^\infty$ (with decay at infinity) as generalised Cauchy extensions of functions in $\mb{VMO}_{s,DB}^+$ when $(\infty,s;0)^\heartsuit = (1,-s;0) \in I(\mb{H},DB^*)$.
	As in this section, one can then characterise well-posedness of Regularity and Neumann problems with $\dot{\VMO}_s$ boundary data in terms of a corresponding map $N_{\mb{VMO},DB,\bullet}$ being an isomorphism.
	
	The $\heartsuit$-duality argument of Theorem \ref{thm:wp-duality} then says that for coefficients $A$, when $(1,-s;0) \in I(\mb{H},D\hat{A^*})$, well-posedness of a Regularity or Neumann problem for $L_A$ with data in $\dot{\VMO}_s$ implies well-posedness of the adjoint problem (i.e. that for $L_{A^*}$) with data in $\dot{H}^1_{-s}$.
	Furthermore, Theorem \ref{thm:wp-duality} applied to $L_{A^*}$ for the exponent $(1,-s;0)$ then implies that the original problem is, in fact, well-posed with data in $\dot{\BMO}_s$.
	This is \emph{a priori} stronger, so we deduce that well-posedness (of a problem) with data in $\dot{\BMO}_s$ is equivalent to well-posedness with data in $\dot{\VMO}_s$.
	Thus we can generalise observations made by the second author and Mourgoglou \cite{AM14} (where $s = 0$), which followed the observation of Dindos, Kenig, and Pipher \cite{DKP11} that $\BMO$ solvability for real coefficient equations is equivalent to $\VMO$ solvability.
\end{rmk}

\section{Regions of well-posedness for certain classes of coefficients}\label{sec:regproblem}\index{region!well-posedness!for specific coefficients}

The results above show that from well-posedness of a boundary value problem for an exponent $\mb{p} \in J(\mb{X},DB)$ with $B = \hat{A}$, we may deduce well-posedness for a larger range of exponents by $\heartsuit$-duality, interpolation, and extrapolation.
As an application of this principle we consider various classes of coefficients for which we already have some information on well-posedness.

\subsection{Real coefficients, $m=1$}\label{ssec:realcoeff}

First we consider the regularity problems $(R_\mb{X})_{A}^\mb{p}$ in the real scalar case: this is a good way of exhibiting the main features of our theory, and for the case of Hardy--Sobolev boundary data these results seem to be new.

Suppose that $m = 1$ (so that $L_A u = 0$ is a single equation rather than a system) and that the entries of $A$ are real.
In this setting, there exists a number $\ga \in (0,1]$ such that for every Euclidean ball $B = B(X_0,2r)$ in $\bbR^{1+n}_+$ and every solution $u$ to $L_A u = 0$ in $B$, we have
\begin{equation}\label{eqn:DGNM}\index{De Giorgi--Nash--Moser condition}
	|u(X) - u(X^\prime)| \lesssim \bigg( \frac{|X - X^\prime|}{r} \bigg)^\ga \bigg( \bariint_{B}^{} |u|^2 \bigg)^{1/2}
\end{equation}
for all $X,X^\prime$ in the smaller ball $B(X_0,r)$.
In this case say that the coefficients $A$ satisfy the \emph{De Giorgi--Nash--Moser condition of exponent $\ga$}.
The adjoint matrix $A^*$ will also satisfy a De Giorgi--Nash--Moser condition of (possibly different) exponent $\ga^*$.

The second author and Stahlhut \cite[Corollary 13.3]{AS16} show that in this case\footnote{In fact, a weaker assumption is needed there, and the parameters $\alpha,\alpha^*$ are not necessarily those from \eqref{eqn:DGNM}. Still, such parameters exist.} we have 
\begin{equation*}
	\left(\frac{n}{n+\ga}, p_+(DB)\right) \subset I_0(\mb{H},DB), \qquad
	\left(\frac{n}{n + \ga^*},p_+(D\td{B})\right) \subset I_0(\mb{H},D\td{B}),
\end{equation*}
where $\td{B} = \hat{A^*}$ (note that $\td{B} \neq B^*$) and where $p_+(DB), p_+(D\td{B}) > 2$.
Therefore by $\heartsuit$-duality (see Proposition \ref{prop:dual-interval-incl} and Remark \ref{rmk:Bduals}), we have
\begin{equation*}
(p^+(D\td{B})^\prime,\infty) \subset I_{-1}(\mb{H},DB), \qquad
(p^+(DB)^\prime,\infty) \subset I_{-1}(\mb{H},D\td{B}).
\end{equation*}
By interpolation (Theorem \ref{thm:inclusion-interpolation}) we then have that $I(\mb{H},DB)$ contains the region pictured in Figure \ref{fig:realrange}, and $I(\mb{B},DB)$ contains the interior of that region.
The point $\mb{x}_A$ here is defined as the pictured intersection, which is a function of $n$, $\ga$, and $p^+(D\td{B})$ that we need not compute explicitly.

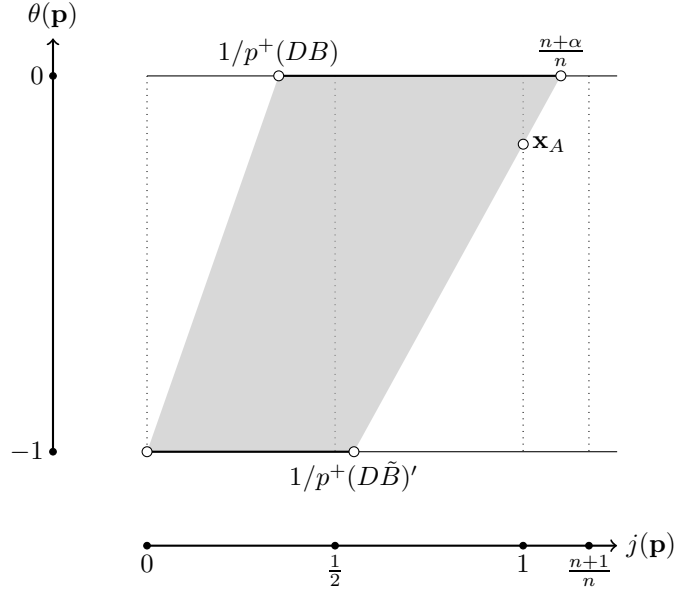
\begin{figure}
\caption{Exponents $\mb{p} \in I(\mb{H},DB)$, when $m=1$ and $A$ is real, with $B = \hat{A}$.}\label{fig:realrange}
\begin{center}
\begin{tikzpicture}[scale=2.5]

	\newcommand*{\reg}{0.6}

	\coordinate (P00) at (1.7,2);
	\coordinate (P01) at (3.2,2);
	\coordinate (P10) at (1,0);
	\coordinate (P11) at (2.1,0);
	\coordinate (P2) at (1,0.6);
	\coordinate (P22) at (3, 1.8/1.1);

	\draw [thin] (1,2) -- (3.5,2); 
	\draw [thin] (1,0) -- (3.5,0); 
	
	\draw [thick,->] (1,-0.5) -- (3.5,-0.5);
	\draw [fill=black] (2,-0.5) circle [radius = .5pt];
	\node [below] at (2,-0.5) {$\frac{1}{2}$};
	\draw [fill=black] (3,-0.5) circle [radius = .5pt];
	\node [below] at (3,-0.5) {$1$};
	\draw [fill=black] (3.35,-0.5) circle [radius = .5pt];
	\node [below] at (3.35,-0.5) {$\frac{n+1}{n}$};
	\node [right] at (3.5,-0.5) {$j(\mb{p})$};
	\draw [fill=black] (1,-0.5) circle [radius = .5pt];
	\node [below] at (1,-0.5) {$0$};
	
	\draw [thick,->] (0.5,0) -- (0.5,2.2);
	\node [above] at (0.5,2.2) {$\gq(\mb{p})$};
	\draw [fill=black] (0.5,2) circle [radius = .5pt];
	\node [left] at (0.5,2) {$0$};
	\draw [fill=black] (0.5,0) circle [radius = .5pt];
	\node [left] at (0.5,0) {$-1$};

	\draw [thin,dotted] (1,2) -- (1,0); 
	\draw [thin,dotted] (2,2) -- (2,0); 
	\draw [thin,dotted] (3,2) -- (3,0); 
	\draw [thin,dotted] (3.35,2) -- (3.35,0); 
	
	\path [fill=lightgray, opacity = 0.6] (P00)--(P01)--(P11)--(P10)--(P00);
	
	\draw [thick] (P00) -- (P01);
	\draw [thick] (P10) -- (P11);
	\draw [fill=white] (P00) circle [radius = .75pt];
	\node [above] at (P00) {$1/p^+(DB)$};
	\draw [fill=white] (P01) circle [radius = .75pt];
	\node [above] at (P01) {$\frac{n+\ga}{n}$};
	\draw [fill=white] (P10) circle [radius = .75pt];
	\draw [fill=white] (P11) circle [radius = .75pt];
	\node [below] at (P11) {$1/p^+(D\td{B})^\prime$};
	\draw [fill=white] (P22) circle [radius = .75pt];
	\node [right] at (P22) {$\mb{x}_{A}$};	
\end{tikzpicture}
\end{center}
\end{figure}

There is also a corresponding diagram for $\td{B}$ that we have not pictured, including a corresponding exponent $\mb{x}_{A^*}$.
By applying $\heartsuit$-duality to the exponents $\mb{p} \in I(\mb{H},D\td{B})$ with $i(\mb{p}) \in (1,2)$, and another application of interpolation, we can increase these ranges to that pictured in Figure \ref{fig:realrange2}.

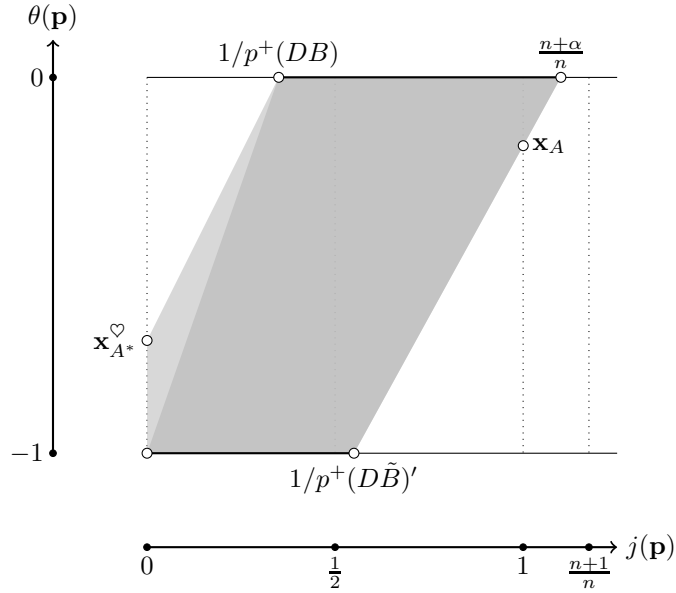
\begin{figure}
\caption{More exponents $\mb{p} \in I(\mb{H},DB)$, when $m=1$ and $A$ is real, with $B = \hat{A}$. The dark shaded region corresponds to Figure \ref{fig:realrange}.}\label{fig:realrange2}
\begin{center}
\begin{tikzpicture}[scale=2.5]

	\newcommand*{\reg}{0.6}

	\coordinate (P00) at (1.7,2);
	\coordinate (P01) at (3.2,2);
	\coordinate (P10) at (1,0);
	\coordinate (P11) at (2.1,0);
	\coordinate (P2) at (1,0.6);
	\coordinate (P22) at (3, 1.8/1.1);

	\draw [thin] (1,2) -- (3.5,2); 
	\draw [thin] (1,0) -- (3.5,0); 
	
	\draw [thick,->] (1,-0.5) -- (3.5,-0.5);
	\draw [fill=black] (2,-0.5) circle [radius = .5pt];
	\node [below] at (2,-0.5) {$\frac{1}{2}$};
	\draw [fill=black] (3,-0.5) circle [radius = .5pt];
	\node [below] at (3,-0.5) {$1$};
	\draw [fill=black] (3.35,-0.5) circle [radius = .5pt];
	\node [below] at (3.35,-0.5) {$\frac{n+1}{n}$};
	\node [right] at (3.5,-0.5) {$j(\mb{p})$};
	\draw [fill=black] (1,-0.5) circle [radius = .5pt];
	\node [below] at (1,-0.5) {$0$};
	
	\draw [thick,->] (0.5,0) -- (0.5,2.2);
	\node [above] at (0.5,2.2) {$\gq(\mb{p})$};
	\draw [fill=black] (0.5,2) circle [radius = .5pt];
	\node [left] at (0.5,2) {$0$};
	\draw [fill=black] (0.5,0) circle [radius = .5pt];
	\node [left] at (0.5,0) {$-1$};

	\draw [thin,dotted] (1,2) -- (1,0); 
	\draw [thin,dotted] (2,2) -- (2,0); 
	\draw [thin,dotted] (3,2) -- (3,0); 
	\draw [thin,dotted] (3.35,2) -- (3.35,0); 
	
	\path [fill=lightgray, opacity = 0.6] (P00)--(P01)--(P11)--(P10)--(P2)--(P00);
	\path [fill=lightgray, opacity = 0.8] (P00)--(P01)--(P11)--(P10)--(P00);
	
	\draw [thick] (P00) -- (P01);
	\draw [thick] (P10) -- (P11);
	\draw [fill=white] (P00) circle [radius = .75pt];
	\node [above] at (P00) {$1/p^+(DB)$};
	\draw [fill=white] (P01) circle [radius = .75pt];
	\node [above] at (P01) {$\frac{n+\ga}{n}$};
	\draw [fill=white] (P10) circle [radius = .75pt];
	\draw [fill=white] (P11) circle [radius = .75pt];
	\node [below] at (P11) {$1/p^+(D\td{B})^\prime$};
	\draw [fill=white] (P2) circle [radius = .75pt];
	\node [left] at (P2) {$\mb{x}_{A^*}^\heartsuit$};
	\draw [fill=white] (P22) circle [radius = .75pt];
	\node [right] at (P22) {$\mb{x}_{A}$};	
\end{tikzpicture}
\end{center}
\end{figure}

It has been shown that there exist $p_R(A) > 1$ (possibly small) and $0 < \ga^\sharp \leq \min(\ga,\ga^*)$ such that the Regularity problem $(R_\mb{H})_{A}^{(p,0)}$ is compatibly well-posed for all $p \in (n/(n+\ga^\sharp),p_R(A)]$, and likewise for $A^*$ (we can choose $\ga^\sharp \leq \min(\ga,\ga^*)$ which works for both $A$ and $A^*$).\footnote{The $p_R(A)$ endpoint of this result is due to Kenig and Rule in dimension $n+1 = 2$ \cite[Theorem 1.4]{KR09} and Hofmann, Kenig, Mayboroda, and Pipher in dimension $n + 1 \geq 3$ \cite[Corollary 1.2]{HKMP15.2}. The other endpoint is an extrapolation result of the second author and Mourgoglou \cite[\textsection 10.1]{AM14}.}
By the results of the previous paragraph, we have $(p,0) \in I_0(\mb{H},DB) \cap I_0(\mb{H},D\td{B})$ for all such $p$,\footnote{It is possible that $p > p_+(DB)$ or $p > p_+(D\td{B})$, in which case we have to restrict to small $p$.} and so we may apply $\heartsuit$-duality and interpolation as in the previous argument to deduce compatible well-posedness of $(R_\mb{H})_{A}^\mb{p}$ for $\mb{p}$ in the region pictured in Figure \ref{fig:realWP}, and of $(R_\mb{B})_{A}^\mb{p}$ in the interior of this region.\footnote{We can also deduce results for $BMO$-Sobolev spaces, which correspond to the unpictured $j(\mb{p}) = 0$ range.}

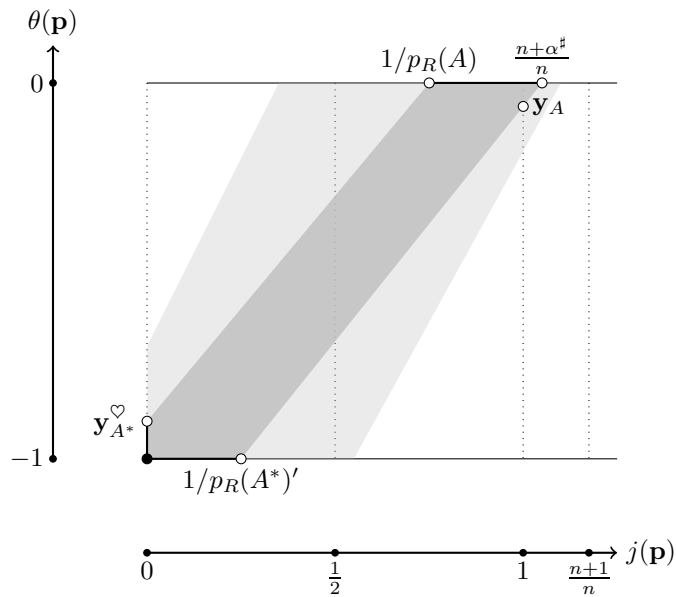
\begin{figure}
\caption{Exponents $\mb{p}$ for which $(R_\mb{H})_{A}^\mb{p}$ is compatibly well-posed (the dark shaded region). The light shaded region is from Figure \ref{fig:realrange2}.}\label{fig:realWP}
\begin{center}
\begin{tikzpicture}[scale=2.5]

	\newcommand*{\reg}{0.6}

	\coordinate (P00) at (1.7,2);
	\coordinate (P01) at (3.2,2);
	\coordinate (P10) at (1,0);
	\coordinate (P11) at (2.1,0);
	\coordinate (P2) at (1,0.6);
	\coordinate (P22) at (3, 1.8/1.1);
	
	\coordinate (R00) at (2.5,2);
	\coordinate (R01) at (3.1,2);
	\coordinate (R10) at (1,0);
	\coordinate (R11) at (1.5,0);
	\coordinate (R2) at (1,0.2);
	\coordinate (R22) at (3,3/1.6);

	\draw [thin] (1,2) -- (3.5,2); 
	\draw [thin] (1,0) -- (3.5,0); 
	
	\draw [thick,->] (1,-0.5) -- (3.5,-0.5);
	\draw [fill=black] (2,-0.5) circle [radius = .5pt];
	\node [below] at (2,-0.5) {$\frac{1}{2}$};
	\draw [fill=black] (3,-0.5) circle [radius = .5pt];
	\node [below] at (3,-0.5) {$1$};
	\draw [fill=black] (3.35,-0.5) circle [radius = .5pt];
	\node [below] at (3.35,-0.5) {$\frac{n+1}{n}$};
	\node [right] at (3.5,-0.5) {$j(\mb{p})$};
	\draw [fill=black] (1,-0.5) circle [radius = .5pt];
	\node [below] at (1,-0.5) {$0$};
	
	\draw [thick,->] (0.5,0) -- (0.5,2.2);
	\node [above] at (0.5,2.2) {$\gq(\mb{p})$};
	\draw [fill=black] (0.5,2) circle [radius = .5pt];
	\node [left] at (0.5,2) {$0$};
	\draw [fill=black] (0.5,0) circle [radius = .5pt];
	\node [left] at (0.5,0) {$-1$};

	\draw [thin,dotted] (1,2) -- (1,0); 
	\draw [thin,dotted] (2,2) -- (2,0); 
	\draw [thin,dotted] (3,2) -- (3,0); 
	\draw [thin,dotted] (3.35,2) -- (3.35,0); 
	
	\path [fill=lightgray, opacity = 0.3] (P00)--(P01)--(P11)--(P10)--(P2)--(P00);
	\path [fill=lightgray, opacity = 0.8] (R00)--(R01)--(R11)--(R10)--(R2)--(R00);
	
	\draw [thick] (R00) -- (R01);
	\draw [thick] (R10) -- (R11);
	\draw [thick] (1,0) -- (R2);
	\draw [fill=white] (R00) circle [radius = .75pt];
	\node [above] at (R00) {$1/p_R(A)$};
	\draw [fill=white] (R01) circle [radius = .75pt];
	\node [above] at (R01) {$\frac{n+\ga^\sharp}{n}$};
	\draw [fill=black] (R10) circle [radius = .75pt];
	\draw [fill=white] (R11) circle [radius = .75pt];
	\node [below] at (R11) {$1/p_R(A^*)^\prime$};
	\draw [fill=white] (R2) circle [radius = .75pt];
	\node [left] at (R2) {$\mb{y}_{A^*}^\heartsuit$};
	\draw [fill=white] (R22) circle [radius = .75pt];
	\node [right] at (R22) {$\mb{y}_A$};	
\end{tikzpicture}
\end{center}
\end{figure}

We can expand this region slightly for Besov spaces: applying $\heartsuit$-duality to compatible well-posedness of $(R_\mb{B})_{A^*}^\mb{p}$ for $\mb{p}$ in the open triangle with vertices $\mb{y}_{A^*}$ (defined pictorially in Figure \ref{fig:realWP}), $(n+\ga^\sharp/n, 0)$, and $(1,0)$, we find that $(R_\mb{B})_{A}^{(\infty,\ga;0)}$ is compatibly well-posed for all $\ga \in (-1,-1-\ga^\sharp)$. Therefore (after another iteration of interpolation) we have well-posedness of $(R_\mb{B})_A^\mb{p}$ for all $\mb{p}$ in the shaded region of Figure \ref{fig:realWPB}.
This is the same region obtained by Barton and Mayboroda for compatible well-posedness of $(R_\mb{B})_A^\mb{p}$ in this setting \cite[Figure 3.5]{BM16}.\footnote{The only difference is in the light shaded `region of applicability': ours depends on exponent $p^+(DB)$ from \cite{AS16}, while that in Barton--Mayboroda is in terms of the exponent appearing in Meyers' theorem \cite[Theorem 2]{nM63} (see also \cite[Lemma 2.12]{BM16}). It is not clear whether there is any relationship between these exponents.}
To recover the result of \cite[Corollary 3.24]{BM16}, one need only apply Lemma \ref{lem:decaylem} (which is valid for the region of $\mb{p}$ pictured in Figure \ref{fig:realWPB}, see Figure \ref{fig:decaylem2}) to remove the decay assumption at infinity from $(R_\mb{B})_{A}^{\mb{p}}$, and the trace theorem \cite[Theorem 6.3]{BM16} to replace our boundary condition with a trace condition.

In the case of dimension $n+1=2$, Neumann problems can be transposed to Regularity problems using the conjugate equation. This idea goes back to Morrey \cite{Mo66}. 
Thus the same results hold for Neumann problems in this case.

In the case that $A$ is symmetric and $n \geq 1$, in addition to the above assumptions, results of Kenig and Pipher \cite{KP93} imply that we have the additional information $p_R(A)  > 2$, and regions of compatible well-posedness of $(R_\mb{X})_A^\mb{p}$ can be expanded accordingly.
Note that the results there are proven in the context of the unit ball instead of the upper-half space, but similar arguments can be used.
An argument showing well-posedness at $p=2$ in our context has been given by the second author, Axelsson and McIntosh \cite[Theorem 2.2]{AAM10}.
From this, {\v{S}}ne{\u\i}berg's extrapolation gives $p_R(A)  > 2$.
The corresponding Neumann problems $(N_\mb{X})_A^\mb{p}$ are well-posed for an analogous range of $\mb{p}$ by the same arguments.

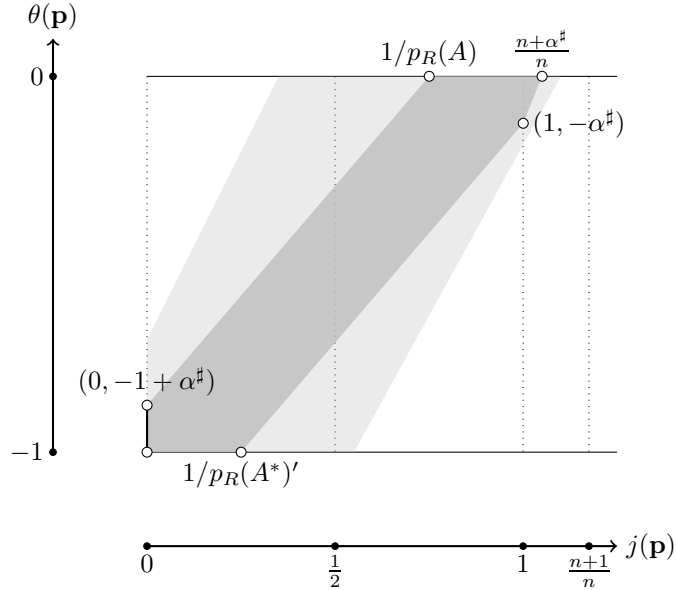
\begin{figure}\
\caption{Exponents $\mb{p}$ for which $(R_\mb{B})_{A}^\mb{p}$ is compatibly well-posed (the dark shaded region); this includes no exponents with $\gq(\mb{p})=0$ or $\gq(\mb{p})=-1$. The light shaded region is from Figure \ref{fig:realrange2}.}\label{fig:realWPB}
\begin{center}
\begin{tikzpicture}[scale=2.5]

	\newcommand*{\reg}{0.6}

	\coordinate (P00) at (1.7,2);
	\coordinate (P01) at (3.2,2);
	\coordinate (P10) at (1,0);
	\coordinate (P11) at (2.1,0);
	\coordinate (P2) at (1,0.6);
	\coordinate (P22) at (3, 1.8/1.1);
	
	\coordinate (R00) at (2.5,2);
	\coordinate (R01) at (3.1,2);
	\coordinate (R10) at (1,0);
	\coordinate (R11) at (1.5,0);
	\coordinate (R2) at (1,0.25);
	\coordinate (R22) at (3,1.75);

	\draw [thin] (1,2) -- (3.5,2); 
	\draw [thin] (1,0) -- (3.5,0); 
	
	\draw [thick,->] (1,-0.5) -- (3.5,-0.5);
	\draw [fill=black] (2,-0.5) circle [radius = .5pt];
	\node [below] at (2,-0.5) {$\frac{1}{2}$};
	\draw [fill=black] (3,-0.5) circle [radius = .5pt];
	\node [below] at (3,-0.5) {$1$};
	\draw [fill=black] (3.35,-0.5) circle [radius = .5pt];
	\node [below] at (3.35,-0.5) {$\frac{n+1}{n}$};
	\node [right] at (3.5,-0.5) {$j(\mb{p})$};
	\draw [fill=black] (1,-0.5) circle [radius = .5pt];
	\node [below] at (1,-0.5) {$0$};
	
	\draw [thick,->] (0.5,0) -- (0.5,2.2);
	\node [above] at (0.5,2.2) {$\gq(\mb{p})$};
	\draw [fill=black] (0.5,2) circle [radius = .5pt];
	\node [left] at (0.5,2) {$0$};
	\draw [fill=black] (0.5,0) circle [radius = .5pt];
	\node [left] at (0.5,0) {$-1$};

	\draw [thin,dotted] (1,2) -- (1,0); 
	\draw [thin,dotted] (2,2) -- (2,0); 
	\draw [thin,dotted] (3,2) -- (3,0); 
	\draw [thin,dotted] (3.35,2) -- (3.35,0); 
	
	\path [fill=lightgray, opacity = 0.3] (P00)--(P01)--(P11)--(P10)--(P2)--(P00);
	\path [fill=lightgray, opacity = 0.8] (R00)--(R01)--(R22)--(R11)--(R10)--(R2)--(R00);
	
	\draw [thick] (R10) -- (R2);
	\draw [fill=white] (R00) circle [radius = .75pt];
	\node [above] at (R00) {$1/p_R(A)$};
	\draw [fill=white] (R01) circle [radius = .75pt];
	\node [above] at (R01) {$\frac{n+\ga^\sharp}{n}$};
	\draw [fill=white] (R10) circle [radius = .75pt];
	\draw [fill=white] (R11) circle [radius = .75pt];
	\node [below] at (R11) {$1/p_R(A^*)^\prime$};
	\draw [fill=white] (R2) circle [radius = .75pt];
	\node [above] at (R2) {$(0,-1+\ga^\sharp)$};
	\draw [fill=white] (R22) circle [radius = .75pt];
	\node [right] at (R22) {$(1,-\ga^\sharp)$};	
\end{tikzpicture}
\end{center}
\end{figure}

\subsection{Constant coefficients}

Suppose that $A$ is constant.
By results of the second author and Stahlhut \cite[\textsection 3.4 and Theorem 5.1]{AS16}, we have $I_0(\mb{H},DB) = (n/(n+1),\infty)$, and therefore by $\heartsuit$-duality and interpolation we have
\begin{align*}
 	J(\mb{H},DB) &= I_\text{max}, \\
	J(\mb{B},DB) &= I_\text{max} \cap \{\mb{p} \in \mb{E} : \gq(\mb{p}) \in (-1,0)\}.
\end{align*}
Furthermore, it was observed by the second author and Mourgoglou \cite[\textsection 14.1]{AM15} that the Regularity and Neumann problems $(P_\mb{H})_A^\mb{p}$ are compatibly well-posed for all $\mb{p} \in I_0(\mb{H},DB)$.
By $\heartsuit$-duality and interpolation (arguing as we did for real symmetric scalar equations), we thus have compatible well-posedness of $(P_\mb{H})_A^\mb{p}$ for all $\mb{p} \in I_\text{max}$, and of $(P_\mb{B})_A^\mb{p}$ for all $\mb{p} \in I_\text{max}$ with $\gq(\mb{p}) \in (-1,0)$.
We note that these results may be more simply obtained by Fourier multiplier theory; our framework is overkill for constant coefficients.

\subsection{Self-adjoint coefficients}

Suppose that $A$ is self-adjoint, $A = A^*$, but not necessarily real and with $m=1$.
Then \cite[Theorem 2.2]{AAM10} shows that both Regularity and Neumann problems $(P_{\mb{H}})^{(2,0)}_A$ are compatibly well-posed, and by $\heartsuit$-duality $(P_{\mb{H}})^{(2,-1)}_A$ is also compatibly well-posed.
Interpolation then gives compatible well-posedness of $(P_{\mb{H}})_A^{(2,s)}$ and $(P_{\mb{B}})_A^{(2,s)}$ for all $s \in (-1,0)$, and extrapolation (via Theorem \ref{thm:wp-ext}) gives compatible well-posedness for exponents $\mb{p}$ sufficiently close to this line.

\subsection{Diagonal block coefficients}\label{subsec:block}

Suppose that $A$ has the form
\begin{equation*}
	A = \begin{bmatrix} A_{\perp \perp} & 0 \\ 0 & A_{\parallel \parallel} \end{bmatrix}.
\end{equation*}
By results of the second author and Mourgoglou \cite[\textsection 14.2]{AM15}, the problems $(R_\mb{H})_A^\mb{p}$ and $(N_\mb{H})_A^\mb{p}$ are compatibly well-posed for all $\mb{p} \in I_0(\mb{H},DB)$.
By $\heartsuit$-duality, we also have compatible well-posedness of these problems for all $\mb{p} \in I_0(\mb{H},DB^*)^\heartsuit$.
Interpolating between the intervals $I_0(\mb{H},DB)$ and $I_0(\mb{H},DB^*)^\heartsuit \cap \mb{E}_\text{fin}$ then yields a set of exponents $\mb{p}$ such that the problems $(P_\mb{H})_A^\mb{p}$ and $(P_\mb{B})_A^\mb{p}$ are all compatibly well-posed.

\subsection{Block triangular coefficients}\label{subsec:trian}

Suppose that $A$ has the lower triangular block form
\begin{equation*}
	A = \begin{bmatrix} A_{\perp \perp} & 0 \\ A_{\parallel \perp} & A_{\parallel \parallel} \end{bmatrix}.
\end{equation*}
Then results of the second author with McIntosh and Mourgoglou \cite[Theorem 6.2]{AMM13} imply that the Neumann problem $(N_\mb{H})_A^{(2,0)}$ is compatibly\footnote{The compatibility is not explicitly detailed there, but the argument is not too difficult. See \cite[Theorem 2.32]{AEN07}. Likewise for the Regularity problem below.} well-posed.
On the other hand, if $A$ has the upper triangular form
\begin{equation*}
	A = \begin{bmatrix} A_{\perp \perp} & A_{\perp \parallel} \\ 0 & A_{\parallel \parallel} \end{bmatrix}
\end{equation*}
then by \cite[Theorem 6.4]{AMM13} the Regularity problem $(R_{\mb{H}})_A^{(2,0)}$ is compatibly well-posed.
Since taking adjoints interchanges lower triangular and upper triangular block form matrices, we find by $\heartsuit$-duality that $(R_\mb{H})_A^{(2,-1)}$ is compatibly well-posed when $A$ is lower triangular, and $(N_{\mb{H}})_A^{(2,-1)}$ is compatibly well-posed when $A$ is upper triangular.
As in the previous examples we can use interpolation and extrapolation to deduce compatible well-posedness of these problems for larger open regions of exponents.

\subsection{An example with incompatible well-posedness}\index{well-posedness!incompatible}

Let $m=n=1$.
Since we are working in the plane, results of the second author and Stahlhut \cite[Proposition 3.11 and Theorem 5.1]{AS16}, along with the usual $\heartsuit$-duality and interpolation arguments, yield $J(\mb{H},DB) = I_\text{max}$ and $J(\mb{B},DB) = I_\text{max} \cap \{\gq(\mb{p}) \in (-1,0)\}$.

For all $k \in \bbR$ define the coefficient matrix
\begin{equation*}
	A_k(x) := \begin{bmatrix} 1 & k\sgn(x) \\ -k\sgn(x) & 1 \end{bmatrix}.
\end{equation*}
The following well-posedness result was proven by Axelsson \cite[Theorem 1.2]{aA10}.

\begin{thm}[Axelsson]
	Suppose $p \in (1/2,\infty)$. Then
	\begin{itemize}
		\item the Regularity problem $(R_\mb{H})_{A_k}^{(p,0)}$ is well-posed if $k \neq -\tan(\gp/2p)$, and
		\item the Neumann problem $(N_\mb{H})_{A_k}^{(p,0)}$ is well-posed if $k \neq \tan(\gp/2p)$.
	\end{itemize}
\end{thm}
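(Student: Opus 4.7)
The plan is to apply the characterisation of well-posedness from Theorem \ref{thm:WP-char}. Since $n = 1$, the identification region satisfies $J(\mb{H}, DB) = I_\text{max}$ (via \cite[Proposition 3.11 and Theorem 5.1]{AS16} together with $\heartsuit$-duality and interpolation), so every exponent $\mb{p} = (p, 0)$ with $p \in (1/2, \infty)$ lies in $J(\mb{H}, DB)$. Consequently $(R_\mb{H})_{A_k}^{(p,0)}$ is well-posed if and only if the projection $N_{\mb{H}, DB, \parallel}^{(p,0)} : \mb{H}_{DB}^{(p,0),+} \to \mb{H}_\parallel^{(p,0)}$ is an isomorphism, and analogously $(N_\mb{H})_{A_k}^{(p,0)}$ is governed by $N_{\mb{H}, DB, \perp}^{(p,0)}$. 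The task therefore reduces to deciding, as a function of $k$, whether these boundary projections are boundedly invertible on the relevant Hardy-type space on $\bbR$.

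The special structure of $A_k$ makes this reduction explicit. Because $k\sgn(x)$ is locally constant on $\bbR \setminus \{0\}$, a direct computation shows that $L_{A_k} u = \partial_t^2 u + \partial_x^2 u$ on each open quadrant $Q_\pm := \{(t,x) : t > 0,\ \pm x > 0\}$, and the cross terms cancel. A solution of $L_{A_k} u = 0$ on $\bbR^{1+1}_+$ is therefore a pair of harmonic functions on $Q_+$ and $Q_-$, continuous across $\{x = 0\}$, satisfying the conormal-flux transmission condition
\begin{equation*}
-k\partial_t u(t, 0^+) + \partial_x u(t, 0^+) = k\partial_t u(t, 0^-) + \partial_x u(t, 0^-).
\end{equation*}
Representing each restriction $u|_{Q_\pm}$ as the Poisson extension of its boundary datum on the corresponding half-line, one expresses $\nabla_{A_k} u|_{t=0}$ in terms of Hilbert transforms on each half-line; the transmission condition then becomes a linear coupling of these two Hilbert transforms across the origin.

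Unfolding each quadrant to the upper half-plane via a conformal map such as $z \mapsto z^2$ turns the coupled Hilbert-transform representation into a single scalar Mellin-type operator on $\bbR$. In this model, the restricted projections $N_{\mb{H}, DB, \bullet}^{(p,0)}$ take the form $a(k) I + b(k)\, T$ acting on the appropriate Hardy space, where $T$ is a Mellin (equivalently, a Fourier-in-$\log|x|$) multiplier with an explicit bounded symbol, and $a(k), b(k)$ are rational functions of $k$. The $L^p$-invertibility of such a Mellin multiplier is governed by the non-vanishing of its symbol on the critical line dual to $1/p$; a direct computation with the resulting rational symbol shows that this fails precisely at $k = -\tan(\pi/(2p))$ in the Regularity case and at $k = \tan(\pi/(2p))$ in the Neumann case.

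The main obstacle is the third step: correctly translating the transmission-coupled Hilbert-transform representation of $\nabla_{A_k} u|_{t=0}$ into a clean Mellin symbol, and matching the resulting Hardy-space description of boundary traces with the abstract positive spectral subspace $\mb{H}_{DB}^{(p,0),+}$ produced by the $H^\infty$-functional calculus of $DB$. This requires careful bookkeeping of signs and branches under the conformal doubling, and verification that the spectral splitting associated with $DB$ matches the Hardy-space splitting on each half-plane. Once this identification is in place, the invertibility criterion reduces to locating the zeros of an explicit rational function on the critical line, which yields exactly Axelsson's thresholds $k = \mp \tan(\pi/(2p))$.
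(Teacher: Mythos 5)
The theorem you are asked to prove is not proved in this monograph: the authors simply quote it from Axelsson's paper \cite[Theorem 1.2]{aA10}, so there is no ``paper's proof'' to compare against other than the citation.

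Your reduction step is fine and matches the framework: for $n=1$ the whole line $\gq(\mb{p})=0$, $p\in(1/2,\infty)$ lies in $J(\mb{H},DB)$, so Theorem \ref{thm:WP-char} is available and well-posedness is equivalent to invertibility of $N^{(p,0)}_{\mb{H},DB,\parallel}$ (resp.\ $N^{(p,0)}_{\mb{H},DB,\perp}$). Your observation that $L_{A_k}$ reduces to the Laplacian on each quadrant, with a single $k$-dependent transmission condition across $\{x=0\}$, is also correct --- a short calculation with $A_k\nabla u$ confirms both the interior harmonicity and the flux-matching condition you write down.

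The genuine gap is that the whole of the heart of the proof is only announced, not carried out. You explicitly defer the decisive steps: (a) turning the transmission-coupled Hilbert transforms into a single Mellin multiplier on $\bbR$, (b) identifying this model with the abstract positive spectral subspace $\mb{H}_{DB}^{(p,0),+}$ under the conformal doubling $z\mapsto z^2$, including checking that the orientation/branch choices match the sign $\gc^+(DB)$, and (c) computing the resulting rational symbol and locating its zero on the critical line $\operatorname{Re}\,s=1/p$ to obtain the threshold $k=\mp\tan(\pi/2p)$. As written these three steps are asserted, not verified, and the theorem is precisely the claim that the threshold is this specific tangent value; a proof that ends with ``a direct computation \dots\ shows'' without displaying that computation does not establish that. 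To close the argument you would need to write out the symbol of the operator $a(k)I+b(k)T$ explicitly (or, equivalently, compute $\sgn(DB)$ directly for $B=\hat{A_k}$, which is what Axelsson does), and exhibit the unique $k$ for which it is not invertible on $H^p$ of the half-line. Until that computation is shown, this is a program for a proof rather than a proof.
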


However, results of Kenig, Koch, Pipher, and Toro \cite{KKPT00} and of Kenig and Rule \cite{KR09} (see \cite[Theorem 1.1]{aA10}) show that $(R_\mb{H})_{A_k}^{(p,0)}$ (resp. $(N_{\mb{H}})_{A_k}^{(p,0)}$ ) is not \emph{compatibly} well-posed when $k < -\tan(\gp/2p)$ (resp. $k > \tan(\gp/2p)$).
We will use our theory to deduce some results for more general exponents.
Let us consider only Regularity problems, as Neumann problems can be handled by the same arguments.

Rewriting the conditions above in terms of $p$, we find that $(R_{\mb{H}})_{A_k}^{(p,0)}$ is compatibly well-posed for all $p \in (1/2,\infty)$ when $k > 0$, and for all $p \in (\frac{1}{2},\frac{\gp}{2\arctan(-k)})$ when $k < 0$.
Furthermore, when $k < 0$, $(R_{\mb{H}})_{A_k}^{(p,0)}$ is (incompatibly) well-posed for all $p > \frac{\gp}{2\arctan(-k)}$.
Note that $\frac{\gp}{2\arctan(-k)} > 1$.

Let us fix $k < 0$ for convenience.
Since $A_k^* = A_{-k}$, $\heartsuit$-duality tells us that $(R_{\mb{H}})_{A_k}^{(p,-1)}$ is compatibly well-posed for all $p \in (1,\infty)$.
Therefore, by two iterations of $\heartsuit$-duality and interpolation, as we carried out in Subsection \ref{ssec:realcoeff}, we deduce that $(R_{\mb{H}})_{A_k}^{\mb{p}}$ and $(R_{\mb{H}})_{A_{-k}}^{\mb{p}}$ are compatibly well-posed for the exponents $\mb{p}$ pictured in Figures \ref{fig:rosen1} and \ref{fig:rosen2}.
Real interpolation then yields compatible well-posedness for the problems $(R_{\mb{B}})_{A_k}^{\mb{p}}$ for $\mb{p}$ in the interior of this region.
One can argue similarly to find regions of well-posedness for Neumann problems.

\begin{figure}
\caption{Exponents $\mb{p}$ for which $(R_\mb{H})_{A_k}^\mb{p}$ is compatibly well-posed (the dark shaded region), with $k < 0$. The thick dashed line consists of exponents $\mb{p}$ such that $(R_\mb{H})_{A_k}^\mb{p}$ is well-posed, but not compatibly well-posed. The exponent $\mb{x}_{-k}$ is defined geometrically in Figure \ref{fig:rosen2}. The region $j(\mb{p}) > 1$ is not to scale.}\label{fig:rosen1}
\begin{center}
\begin{tikzpicture}[scale=2.5]


	\coordinate (P00) at (1,2);
	\coordinate (P01) at (4,2);
	\coordinate (P10) at (1,0);
	\coordinate (P11) at (3,0);
	
	\coordinate (R00) at (2.9,2);
	\coordinate (R01) at (4,2);
	\coordinate (R10) at (1,0);
	\coordinate (R11) at (3,0);
	\coordinate (X) at (1,0.8);

	
	\draw [thick,->] (1,-0.5) -- (4.2,-0.5);
	\draw [fill=black] (2,-0.5) circle [radius = .5pt];
	\node [below] at (2,-0.5) {$\frac{1}{2}$};
	\draw [fill=black] (3,-0.5) circle [radius = .5pt];
	\node [below] at (3,-0.5) {$1$};
	\draw [fill=black] (4,-0.5) circle [radius = .5pt];
	\node [below] at (4,-0.5) {$2$};
	\node [right] at (4.2,-0.5) {$j(\mb{p})$};
	\draw [fill=black] (1,-0.5) circle [radius = .5pt];
	\node [below] at (1,-0.5) {$0$};
	
	\draw [thick,->] (0.5,0) -- (0.5,2.2);
	\node [above] at (0.5,2.2) {$\gq(\mb{p})$};
	\draw [fill=black] (0.5,2) circle [radius = .5pt];
	\node [left] at (0.5,2) {$0$};
	\draw [fill=black] (0.5,0) circle [radius = .5pt];
	\node [left] at (0.5,0) {$-1$};
	\draw [fill=black] (0.5,1) circle [radius = .5pt];
	\node [left] at (0.5,1) {$-1/2$};

	\draw [thin,dotted] (1,2) -- (1,0); 
	\draw [thin,dotted] (2,2) -- (2,0); 
	\draw [thin,dotted] (3,2) -- (3,0); 
	\draw [thin,dotted] (4,2) -- (4,0); 
	
	\path [fill=lightgray, opacity = 0.8] (R00)--(R01)--(R11)--(R10)--(X)--(R00);
	
	\draw [thick,dashed] (1,2) -- (R00);
	
	\draw [thick] (R10) -- (X);
	\draw [thick] (R10) -- (R11);
	\draw [thick] (R00) -- (R01);

	\draw [fill=white] (X) circle [radius = .75pt];
	\node [left] at (X) {$\mb{x}_{-k}^\heartsuit$};
	
	\draw [fill=white] (R00) circle [radius = .75pt];
	\node [above] at (R00) {$(\frac{\gp}{2\arctan(-k)},1)$};
	\draw [fill=white] (R01) circle [radius = .75pt];
	\draw [fill=black] (R10) circle [radius = .75pt];
	\draw [fill=white] (R11) circle [radius = .75pt];
	\draw [fill=white] (1,2) circle [radius = .75pt];
	
\end{tikzpicture}
\end{center}
\end{figure}
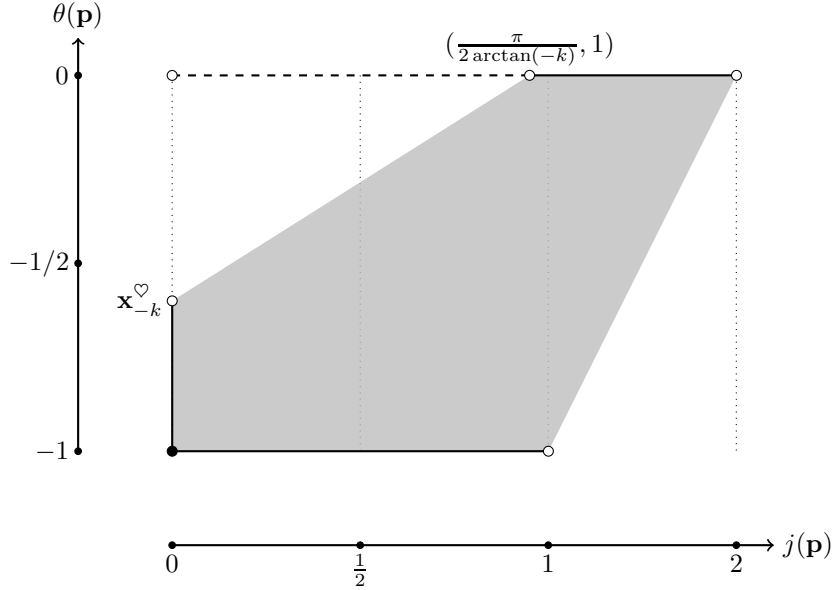

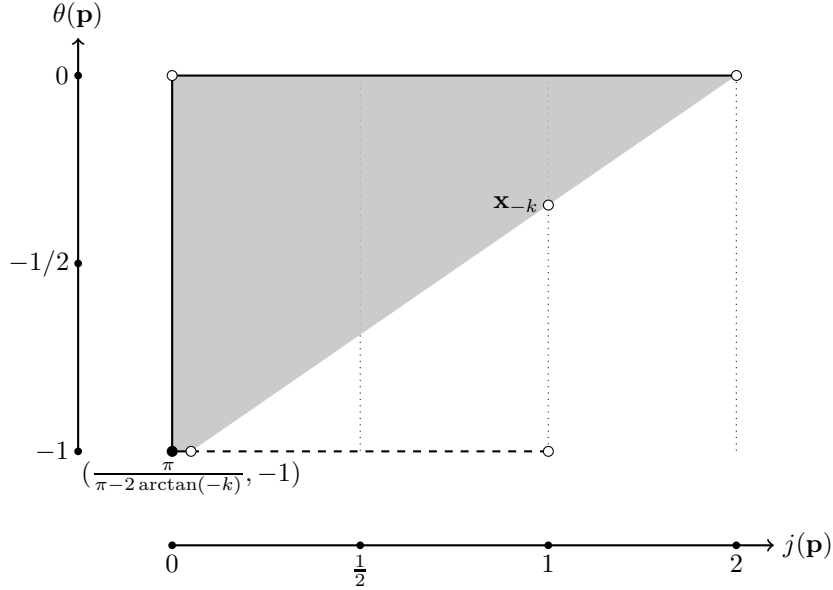
\begin{figure}
\caption{Exponents $\mb{p}$ for which $(R_\mb{H})_{A_{-k}}^\mb{p}$ is compatibly well-posed (the dark shaded region), with $k < 0$. The thick dashed line consists of exponents $\mb{p}$ such that $(R_\mb{H})_{A_{-k}}^\mb{p}$ is well-posed, but not compatibly well-posed. The exponent $\mb{x}_{-k}$ is defined as the pictured intersection. The region $j(\mb{p}) > 1$ is not to scale.}\label{fig:rosen2}
\begin{center}
\begin{tikzpicture}[scale=2.5]


	\coordinate (P00) at (1,2);
	\coordinate (P01) at (4,2);
	\coordinate (P10) at (1,0);
	\coordinate (P11) at (3,0);
	
	\coordinate (R01) at (4,2);
	\coordinate (R10) at (1,0);
	\coordinate (R11) at (3,0);
	\coordinate (X) at (3,3.8/2.9);
	\coordinate (Y) at (1.1,0);
	
	
	\draw [thick,->] (1,-0.5) -- (4.2,-0.5);
	\draw [fill=black] (2,-0.5) circle [radius = .5pt];
	\node [below] at (2,-0.5) {$\frac{1}{2}$};
	\draw [fill=black] (3,-0.5) circle [radius = .5pt];
	\node [below] at (3,-0.5) {$1$};
	\draw [fill=black] (4,-0.5) circle [radius = .5pt];
	\node [below] at (4,-0.5) {$2$};
	\node [right] at (4.2,-0.5) {$j(\mb{p})$};
	\draw [fill=black] (1,-0.5) circle [radius = .5pt];
	\node [below] at (1,-0.5) {$0$};
	
	\draw [thick,->] (0.5,0) -- (0.5,2.2);
	\node [above] at (0.5,2.2) {$\gq(\mb{p})$};
	\draw [fill=black] (0.5,2) circle [radius = .5pt];
	\node [left] at (0.5,2) {$0$};
	\draw [fill=black] (0.5,0) circle [radius = .5pt];
	\node [left] at (0.5,0) {$-1$};
	\draw [fill=black] (0.5,1) circle [radius = .5pt];
	\node [left] at (0.5,1) {$-1/2$};

	\draw [thin,dotted] (1,2) -- (1,0); 
	\draw [thin,dotted] (2,2) -- (2,0); 
	\draw [thin,dotted] (3,2) -- (3,0); 
	\draw [thin,dotted] (4,2) -- (4,0); 
	
	\path [fill=lightgray, opacity = 0.8] (1,0)--(Y)--(4,2)--(1,2)--(1,0);
	
	\draw [thick,dashed] (Y) -- (3,0);
	
	\draw [thick] (1,0) -- (1,2);
	\draw [thick] (1,0) -- (Y);
	\draw [thick] (1,2) -- (4,2);

	\draw [fill=white] (X) circle [radius = .75pt];
	\node [left] at (X) {$\mb{x}_{-k}$};
	
	\draw [fill=white] (Y) circle [radius = .75pt];
	\node [below] at (Y) {$(\frac{\gp}{\gp - 2\arctan(-k)},-1)$};
	\draw [fill=white] (R01) circle [radius = .75pt];
	\draw [fill=black] (R10) circle [radius = .75pt];
	\draw [fill=white] (R11) circle [radius = .75pt];
	\draw [fill=white] (1,2) circle [radius = .75pt];
	
\end{tikzpicture}
\end{center}
\end{figure}

\begin{rmk}
We would like to extrapolate and find more exponents for which $(R_{\mb{H}})_{A_k}^{\mb{p}}$ is well-posed (but not compatibly well-posed).
However, our extrapolation theorem (Theorem \ref{thm:wp-ext}) only holds for exponents $\mb{p}$ with $\gq(\mb{p}) \in (-1,0)$, and we would need to extrapolate from exponents with $\gq(\mb{p}) = 0$.
We conjecture that there is a region of exponents $\mb{p}$, \emph{relatively open with respect to $I_{\text{max}}$}, such that $(R_{\mb{H}})_{A_k}^{\mb{p}}$ is incompatibly well-posed.
Such a region is pictured in Figure \ref{fig:rosen3}.
\end{rmk}

\begin{figure}
\caption{A \emph{conjectural} extrapolation of Figure \ref{fig:rosen1}. Two disjoint mutual well-posedness classes for $(R_\mb{H})_{A_k}^\mb{p}$, with $k < 0$. Again, the region $j(\mb{p}) > 1$ is not to scale.}\label{fig:rosen3}
\begin{center}
\begin{tikzpicture}[scale=2.5]


	\coordinate (P00) at (1,2);
	\coordinate (P01) at (4,2);
	\coordinate (P10) at (1,0);
	\coordinate (P11) at (3,0);
	
	\coordinate (R00) at (2.9,2);
	\coordinate (R01) at (4,2);
	\coordinate (R10) at (1,0);
	\coordinate (R11) at (3,0);
	\coordinate (X) at (1,0.8);

	
	\draw [thick,->] (1,-0.5) -- (4.2,-0.5);
	\draw [fill=black] (2,-0.5) circle [radius = .5pt];
	\node [below] at (2,-0.5) {$\frac{1}{2}$};
	\draw [fill=black] (3,-0.5) circle [radius = .5pt];
	\node [below] at (3,-0.5) {$1$};
	\draw [fill=black] (4,-0.5) circle [radius = .5pt];
	\node [below] at (4,-0.5) {$2$};
	\node [right] at (4.2,-0.5) {$j(\mb{p})$};
	\draw [fill=black] (1,-0.5) circle [radius = .5pt];
	\node [below] at (1,-0.5) {$0$};
	
	\draw [thick,->] (0.5,0) -- (0.5,2.2);
	\node [above] at (0.5,2.2) {$\gq(\mb{p})$};
	\draw [fill=black] (0.5,2) circle [radius = .5pt];
	\node [left] at (0.5,2) {$0$};
	\draw [fill=black] (0.5,0) circle [radius = .5pt];
	\node [left] at (0.5,0) {$-1$};
	\draw [fill=black] (0.5,1) circle [radius = .5pt];
	\node [left] at (0.5,1) {$-1/2$};

	\draw [thin,dotted] (1,2) -- (1,0); 
	\draw [thin,dotted] (2,2) -- (2,0); 
	\draw [thin,dotted] (3,2) -- (3,0); 
	\draw [thin,dotted] (4,2) -- (4,0); 
	
	\path [fill=lightgray, opacity = 0.8] (R00)--(R01)--(R11)--(R10)--(X)--(R00);
	
	\draw [thick,dashed] (1,2) -- (R00);
	
	\draw [thick] (R10) -- (X);
	\draw [thick] (R10) -- (R11);
	\draw [thick] (R00) -- (R01);

	\draw [fill=white] (X) circle [radius = .75pt];
	\node [left] at (X) {$\mb{x}_{-k}^\heartsuit$};
	
	\draw [fill=white] (R00) circle [radius = .75pt];
	\node [above] at (R00) {$(\frac{\gp}{2\arctan(-k)},1)$};
	\draw [fill=white] (R01) circle [radius = .75pt];
	\draw [fill=black] (R10) circle [radius = .75pt];
	\draw [fill=white] (R11) circle [radius = .75pt];
	\draw [fill=white] (1,2) circle [radius = .75pt];
	
	\fill [lightgray, opacity= 0.3] (1,2) to[bend right] (R00) -- (1,2);
	\node [below] at (2,1.95) {?};
	
\end{tikzpicture}
\end{center}
\end{figure}
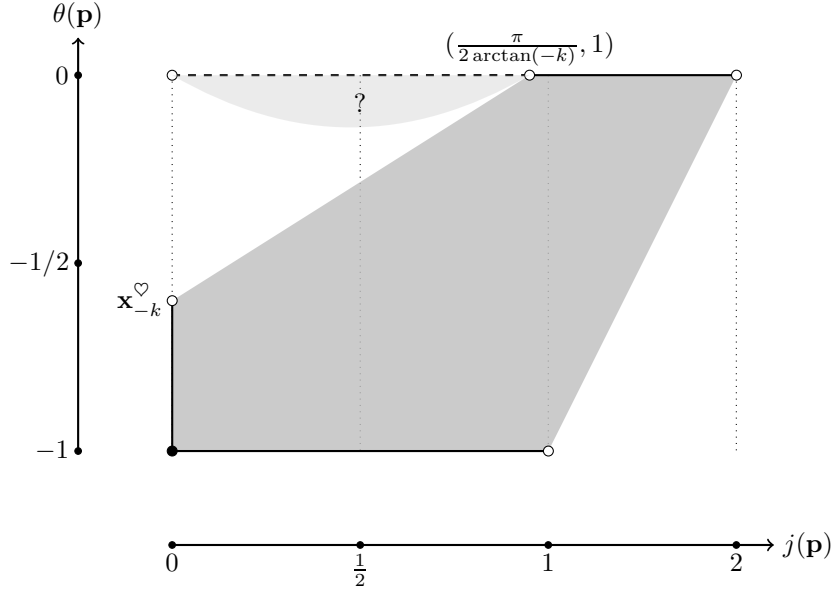

\section{Stability of well-posedness under perturbation of coefficients}\label{sec:wp-pert}\index{well-posedness!stability in coefficients}

In this section, we turn to studying stability in the coefficients of well-posedness, that is, how well-posedness results are affected by perturbing the coefficients of the system $L_A u = 0$ in $L^\infty$.

\begin{thm}[Stability of well-posedness in coefficients]\label{thm:wp-pert}
	Suppose that $A_0$ and $A$ are coefficients, let $B_0 = \widehat{A_0}$ and $B = \hat{A}$, and suppose $\mb{p} \in J(\mb{X},DB)$ for all $B$ with $\nm{B - B_0}_\infty$ sufficiently small, with uniform implicit constants in $B$.
	If $\mb{p} \in \wellp(P_\mb{X})_{A_0}$, and if $\nm{A_0 - A}_\infty$ is sufficiently small, then $\mb{p} \in \wellp(P_{\mb{X}})_{A}$.
\end{thm}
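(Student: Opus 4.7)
The plan is to reduce the problem, via Theorem \ref{thm:WP-char}, to a perturbation question about a single linear isomorphism on a fixed Banach space, and then to invoke openness of the invertible operators. The uniform assumption $\mb{p}\in J(\mb{X},DB)$ for all $B$ near $B_0$ is crucial: it lets us identify $\mb{X}_{DB}^\mb{p}=\mb{X}_D^\mb{p}$ (with uniformly equivalent quasinorms) for all such $B$, and thus express the spectral subspaces $\mb{X}_{DB}^{\mb{p},+}=\gc^+(DB)\mb{X}_D^\mb{p}$ as images of projections of a \emph{fixed} space. Under this identification, Theorem \ref{thm:WP-char} says that $(R_{\mb{X}})_A^{\mb{p}}$ is well-posed precisely when $\mb{X}_D^\mb{p}$ decomposes as a topological direct sum $\gc^+(DB)\mb{X}_D^\mb{p}\oplus\mb{X}_\perp^\mb{p}$, and similarly for $(N_{\mb{X}})_A^{\mb{p}}$ with $\mb{X}_\parallel^\mb{p}$ in the complementary position.

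The next step is to encode this decomposition as invertibility of a single operator. For the Regularity case I would introduce
\begin{equation*}
T_B:=N_\parallel\gc^+(DB)+N_\perp\gc^-(DB)\in\mc{L}(\mb{X}_D^\mb{p}),
\end{equation*}
which acts as $N_\parallel$ on $\gc^+(DB)\mb{X}_D^\mb{p}$ and as $N_\perp$ on $\gc^-(DB)\mb{X}_D^\mb{p}$, so that $T_B$ is invertible exactly when $\mb{X}_D^\mb{p}=\gc^+(DB)\mb{X}_D^\mb{p}\oplus\mb{X}_\perp^\mb{p}$, equivalently when $N_{\mb{X},DB,\parallel}^\mb{p}$ is an isomorphism. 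The analogous operator with $\parallel$ and $\perp$ swapped encodes well-posedness of the Neumann problem. Thus the theorem will follow once we know that $B\mapsto T_B$ is continuous in the operator norm on $\mb{X}_D^\mb{p}$, together with the hypothesis that $T_{B_0}$ is invertible.

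The main obstacle is establishing this continuity of $B\mapsto \gc^+(DB)$ on $\mb{X}_D^\mb{p}$. At the Hilbert space level $L^2$ this is well known, by approximating $\gc^+$ by functions in $\gY_+^+$ and using the resolvent identity
\begin{equation*}
(z-DB)^{-1}-(z-DB_0)^{-1}=(z-DB)^{-1}D(B-B_0)(z-DB_0)^{-1}
\end{equation*}
inside the Cauchy-type representation of $\gc^+(DB)-\gc^+(DB_0)$, yielding an estimate linear in $\nm{B-B_0}_\infty$. The work is to transfer this Lipschitz bound to $\mb{X}_D^\mb{p}$. For this I would revisit the proofs in Chapters \ref{chap:otp} and \ref{chap:ahsb} that give $\mb{X}$-space boundedness of $\gh(DB)$ (in particular Theorem \ref{thm:QS-compn-bddness} and Proposition \ref{prop:comp-fc}) and check that all implicit constants remain uniform in $B$ as long as $\nm{B-B_0}_\infty$ is small, using the uniform off-diagonal estimates and uniform bounded $H^\infty$ functional calculus guaranteed by the Axelsson--Keith--McIntosh theory in a small $L^\infty$-neighbourhood of $B_0$. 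Combined with the uniform identification $\mb{X}_{DB}^\mb{p}=\mb{X}_D^\mb{p}$ from the hypothesis, this upgrades the $L^2$ Lipschitz bound to
\begin{equation*}
\nm{\gc^+(DB)-\gc^+(DB_0)}_{\mc{L}(\mb{X}_D^\mb{p})}\lesssim \nm{B-B_0}_\infty.
\end{equation*}

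With this in hand the proof concludes cleanly. Since $T_{B_0}$ is invertible by hypothesis and $B\mapsto T_B$ is Lipschitz in operator norm on $\mb{X}_D^\mb{p}$, openness of the set of invertible elements of $\mc{L}(\mb{X}_D^\mb{p})$ gives invertibility of $T_B$ for $\nm{B-B_0}_\infty$ sufficiently small. The map $A\mapsto \hat{A}$ is smooth in $L^\infty$ under our accretivity assumption (Section \ref{ssec:fop}), so smallness of $\nm{A-A_0}_\infty$ implies smallness of $\nm{B-B_0}_\infty$, and we conclude via Theorem \ref{thm:WP-char} that $(P_\mb{X})_A^\mb{p}$ is well-posed. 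The Neumann case is identical after the obvious swap of $N_\parallel$ and $N_\perp$.
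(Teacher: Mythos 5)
Your reduction to Theorem \ref{thm:WP-char} and your outline of the Lipschitz bound
$\nm{\gc^+(DB)-\gc^+(DB_0)}_{\mc{L}(\mb{X}_D^\mb{p})}\lesssim \nm{B-B_0}_\infty$
are both sound and match what the paper does. The gap is in the encoding step.

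The operator $T_B:=N_\parallel\gc^+(DB)+N_\perp\gc^-(DB)$ does \emph{not} have invertibility equivalent to well-posedness of $(R_\mb{X})_A^\mb{p}$. Since $T_B$ acts as $N_\parallel$ on $\gc^+(DB)\mb{X}_D^\mb{p}$ and as $N_\perp$ on $\gc^-(DB)\mb{X}_D^\mb{p}$, and since $\mb{X}_\parallel^\mb{p}\cap\mb{X}_\perp^\mb{p}=\{0\}$ and $\mb{X}_\parallel^\mb{p}+\mb{X}_\perp^\mb{p}=\mb{X}_D^\mb{p}$, invertibility of $T_B$ forces \emph{both} the restriction $N_\parallel|_{\gc^+(DB)\mb{X}_D^\mb{p}}$ \emph{and} the restriction $N_\perp|_{\gc^-(DB)\mb{X}_D^\mb{p}}$ to be isomorphisms. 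The first is well-posedness of $(R_\mb{X})_A^\mb{p}$ on the upper half-space; the second is well-posedness of the Neumann problem on the lower half-space (negative spectral subspace). The hypothesis of the theorem gives you only one of these for $A_0$, so you cannot assert that $T_{B_0}$ is invertible, and the openness-of-invertibles argument never gets off the ground. (In finite dimensions one can already see that $V_1\oplus W_2=X$ does not imply $V_2\oplus W_1=X$ for complementary pairs $V_1\oplus V_2=X=W_1\oplus W_2$, so there is no hidden equivalence to rescue this.)

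What the paper does instead is stay with the operator $N_{\mb{X},DB,\bullet}^\mb{p}$ itself (defined on the \emph{moving} domain $\mb{X}_{DB}^{\mb{p},+}$), and use $\gc^+(DB_0)\gc^+(DB)$ as an isomorphism between $\mb{X}_{DB}^{\mb{p},+}$ and the \emph{fixed} domain $\mb{X}_{DB_0}^{\mb{p},+}$, invertible by Neumann series thanks to \eqref{eqn:lip}. One then writes
\begin{equation*}
N_{\mb{X},DB,\bullet}^\mb{p}=N_\bullet\gc^+(DB_0)\gc^+(DB)+N_\bullet\bigl(\gc^+(DB)-\gc^+(DB_0)\bigr),
\end{equation*}
recognises the first summand as $N_{\mb{X},DB_0,\bullet}^\mb{p}$ composed with that isomorphism (hence itself an isomorphism by hypothesis), and absorbs the second summand by another Neumann series. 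This factorisation is the step your proposal is missing; your $T_B$ overshoots by simultaneously testing an independent problem that the hypothesis says nothing about.
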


\begin{rmk}
	Note that in this theorem we make a rather strong \emph{a priori} assumption that $\mb{p}$ is in the classification region $J(\mb{X},DB)$ for all $B$ sufficiently close to $B_0$ (which includes $B = B_0$ in particular), and we also assume that the implicit constants in the norm equivalence $\bbX_{DB}^\mb{p} = \bbX_D^\mb{p}$ are uniform in $B$.
	We know that this assumption holds for certain $\mb{p}$ in certain cases; we return to this point in Remark \ref{rmk:pertn}.
\end{rmk}

\begin{proof}[Proof of Theorem \ref{thm:wp-pert}]
	By Theorem \ref{thm:WP-char} it suffices to show that
	\begin{equation*}
		\map{N_{\mb{X},DB,\bullet}^\mb{p}}{\mb{X}_{DB}^{\mb{p},+}}{\mb{X}_\bullet^\mb{p}}
	\end{equation*}
	is an isomorphism, where $\bullet$ denotes either $\perp$ or $\parallel$ depending on the boundary value problem in question.
	Recall that $N_{\mb{X},DB,\bullet}^\mb{p}$ is the restriction of $\map{N_\bullet}{\mb{X}_D^\mb{p}}{\mb{X}^\mb{p}_\bullet}$ to $\mb{X}_{DB}^{\mb{p},+} \subset \mb{X}_D^\mb{p}$, which is defined provided that $B$ is sufficiently close to $B_0$ thanks to our assumptions.
	Furthermore, since $DB$ has bounded $H^\infty$ functional calculus on $\mb{X}^\mb{p}_D$ for all $B$ sufficiently close to $B_0$, uniformly in $B$ (using $\mb{p} \in I(\mb{X},DB)$ and Proposition \ref{prop:comp-fc}, along with the assumptions), we get
	\begin{equation}\label{eqn:lip}
		\nm{\gc^+(DB) - \gc^+(DB_0)}_{\mc{L}(\mb{X}^\mb{p}_D)} \lesssim \nm{B - B_0}_\infty 
	\end{equation}
	by the results of Axelsson, Keith, and McIntosh \cite[\textsection 6]{AKM06}.\footnote{More precisely, this gives the corresponding estimate on $\overline{\mc{R}(D)}$. Proposition \ref{prop:comp-fc} then extends the estimate to $\mb{X}^\mb{p}_D$.}
	
	Write 
	\begin{equation*}
		N_{\mb{X},DB,\bullet}^\mb{p} = N_\bullet \gc^+(DB_0) \gc^+(DB) + N_\bullet (\gc^+(DB)- \gc^+(DB_0)).
	\end{equation*}
	Note that
	\begin{align*}
		\gc^+(DB_0) \gc^+(DB)
		&= (\gc^+(DB_0) - \gc^+(DB) + \gc^+(DB))\gc^+(DB) \\
		&= I - (\gc^+(DB) - \gc^+(DB_0))
	\end{align*}
	on $\mb{X}^{\mb{p},+}_{DB}$, so by the estimate \eqref{eqn:lip}, $\map{\gc^+(DB_0) \gc^+(DB)}{\mb{X}_{DB}^{\mb{p},+}}{\mb{X}_{DB_0}^{\mb{p},+}}$ is invertible by a Neumann series if $\nm{B - B_0}_\infty$ is sufficiently small.
	Since $\map{N_{\mb{X},DB_0,\bullet}^\mb{p}}{\mb{X}^{\mb{p},+}_{DB_0}}{\mb{X}^\mb{p}_\bullet}$ is an isomorphism (by Theorem \ref{thm:WP-char} and the assumptions), the composition
	\begin{equation*}
		\map{M := N_\bullet \gc^+(DB_0) \gc^+(DB) = N_{\mb{X},DB_0,\bullet}^\mb{p} \gc^+(DB_0) \gc^+(DB)}{\mb{X}^{\mb{p},+}_{DB}}{\mb{X}^\mb{p}_\bullet}
	\end{equation*}
	is also an isomorphism.
	One can then write
	\begin{equation*}
		N_{\mb{X},DB,\bullet}^\mb{p} = M \left(I + M^{-1} N_\bullet (\gc^+(DB)- \gc^+(DB_0)) \right)
	\end{equation*}
	and note that the second factor is also invertible by a Neumann series if $\nm{B - B_0}_\infty$ is sufficiently small.
	Hence $N_{\mb{X},DB,\bullet}^{\mb{p}}$ is an isomorphism, and we are done.
\end{proof}

\begin{rmk}
	The proof above shows that the inverse of $N_{\mb{X},DB,\bullet}^\mb{p}$ is given by an expression coming from multiple Neumann series involving the operators $(N_{\mb{X},DB_0,\bullet}^\mb{p})^{-1}$ and $\gc^+(DB) - \gc^+(DB_0)$.
	Therefore, if the assumptions of Theorem \ref{thm:wp-pert} are satisfied for two exponents $\mb{p},\mb{q}$, and if $(P_\mb{X})_{A_0}^\mb{p}$ and $(P_\mb{X})_{A_0}^\mb{q}$ are mutually well-posed, it follows that $(P_\mb{X})_A^\mb{p}$ and $(P_\mb{X})_A^{\mb{q}}$ remain mutually well-posed for $\nm{A - A_0}_\infty$ sufficiently small.
\end{rmk}

\begin{rmk}\label{rmk:pertn}
	There are three situations in which we can guarantee that the hypotheses of Theorem \ref{thm:wp-pert} are satisfied.
	
	First, for any coefficients $A$, the hypotheses are satisfied provided that $\mb{p} \in I_\text{min}$ (see Theorem \ref{thm:identn-etc}; in the Besov space case we also require $\gq(\mb{p}) \in (-1,0)$).
	The only thing which is not obvious here is uniformity of implicit constants: when $\gq(\mb{p}) = 0$ this is contained in the proof of \cite[Proposition 7.1]{AS16}, and the general case follows by $\heartsuit$-duality and interpolation (as in the derivation of the region $I_\text{min}$).
	
	Second: whenever $n=1$ the hypotheses are satisfied for all coefficients and for all $\mb{p} \in I_{\text{max}}$ (with $\gq(\mb{p}) \in (-1,0)$ in the Besov space case): this follows from \cite[Proposition 3.11 and Theorem 5.1]{AS16}, $\heartsuit$-duality, and interpolation.
		
	Finally, write $A$ in the transversal/tangential decomposition as in \eqref{eqn:A}, and suppose that $A_{\parallel \parallel}^*$ satisfies the De Giorgi--Nash--Moser condition\index{De Giorgi--Nash--Moser condition} of exponent $\ga$ in dimension $n$ (see \eqref{eqn:DGNM}).
	It is shown in \cite[\textsection 13]{AS16} that the hypotheses of Theorem \ref{thm:wp-pert} are satisfied for the exponent $(p,0)$ whenever $p \in (n/(n+\ga),p_+(DB))$, where $p_+(DB) > 2$.
	If in addition $A_{\parallel \parallel}$ satisfies the De Giorgi--Nash--Moser condition of exponent $\ga$, then it follows by $\heartsuit$-duality and interpolation that the hypotheses of Theorem \ref{thm:wp-pert} hold for all $\mb{p}$ in the region pictured in Figure \ref{fig:realrange2} (in the Besov space case, we again require $\gq(\mb{p}) \in (-1,0)$).
	
\end{rmk}

\section{The method of layer potentials}\label{sec:lps}\index{layer potential}
 
We conclude the monograph by explaining how the first-order approach relates to the method of layer potentials.
In solving boundary value problems for $L_{A}$, it has been a standing question whether solutions constructed by different methods agree, and in particular whether solutions are always given by layer potentials when their defining integrals exist. 
A consequence of Ros\'en's identification of the layer potentials in terms of Cauchy operators for $DB$ having assumed the De Giorgi--Nash--Moser condition on $A$ and $A^*$,\index{De Giorgi--Nash--Moser condition} is that solutions constructed via Cauchy operators coincide with solutions constructed by the method of layer potentials, and satisfy a layer potential representation
\begin{equation}\label{eqn:lprepn}
u(t,x)=\mc{S}_{t}(\partial_{\gn_A} u|_{t=0})(x) -  \mc{D}_{t}(u|_{t=0})(x)
\end{equation}
in some appropriate sense (modulo constants) \cite{aR13}. 
At the time, Ros\'en's identification applied when $X^{\mb{p}}= L^2_{\theta}$ for  $-1\le \theta \le 0$.
As a consequence of our results, we obtain a third representation theorem for exponents in the classification region.
We do not yet know what happens outside the classification region. 

\begin{thm}[Layer potential representation]\label{thm:layer}\index{elliptic equation!representation of solutions!by layer potentials} For $\mb{p} \in J(\mb{X},DB)$, any solution $u$ of $L_{A}u = 0$ with the interior control $\nabla u \in \wtd{X^{\mb{p}}}$ is given by the layer potential representation \eqref{eqn:lprepn} modulo constants. 
\end{thm}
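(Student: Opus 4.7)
The plan is to deduce the layer potential representation from the second representation theorem (Theorem \ref{thm:secondrepresentation}) together with Ros\'en's abstract identification of the classical single and double layer potentials in terms of Cauchy operators for $DB$, established in \cite{aR13} for $L^2_\theta$ boundary data. The key idea is that in the first-order framework, both sides of \eqref{eqn:lprepn} may be interpreted through the Cauchy extension $C_{BD}^+$, and once this identification is made in a dense subspace (the $L^2_{-1/2}$-energy class) it extends by continuity to all exponents in the classification region.

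First, apply Theorem \ref{thm:secondrepresentation} to write $u = -(\bbP_{D} C_{BD}^+ \bbP_{BD} \wtd F_{0})_{\perp}$ modulo constants, where $\wtd F_0 \in \mb{X}_{D}^{\mb{p}+1}$ satisfies $D\wtd F_{0} = \nabla_{A}u|_{t=0}$ and $(\wtd F_{0})_{\perp} = -u|_{t=0}$ up to a constant. Decompose $\wtd F_0 = \wtd F_0^{(\perp)} + \wtd F_0^{(\parallel)}$ according to the transversal/tangential splitting of $\bbC^{m(1+n)}$, where $\wtd F_0^{(\perp)}$ carries the Dirichlet datum $-u|_{t=0}$ in its transversal component (and zero tangential part), while $\wtd F_0^{(\parallel)}$ carries $\nabla_\parallel u|_{t=0}$ in its tangential component. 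Note that since $D\wtd F_0^{(\parallel)}$ has transversal component $\dv_\parallel \wtd F_0^{(\parallel)} = \partial_{\gn_A}u|_{t=0}$ and vanishing tangential component, $\wtd F_0^{(\parallel)}$ is in turn determined (modulo $\mc{N}(D)$) by the Neumann datum $\partial_{\gn_A} u|_{t=0}$. By linearity of all operators involved,
\begin{equation*}
u(t,x) = -(\bbP_{D} C_{BD}^+ \bbP_{BD} \wtd F_{0}^{(\perp)})_{\perp}(t,x) - (\bbP_{D} C_{BD}^+ \bbP_{BD} \wtd F_{0}^{(\parallel)})_{\perp}(t,x).
\end{equation*}

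Second, we claim that the two terms on the right-hand side coincide with $-\mc{D}_t(u|_{t=0})(x)$ and $\mc{S}_t(\partial_{\gn_A}u|_{t=0})(x)$ respectively, in the generalised sense in which the layer potentials are defined within our framework. On the dense class of boundary data lying in $\mb{H}_{DB}^{2,+}$ (using that $\bbX_{DB}^{\mb{p},+} \cap \mb{H}_{DB}^{2,+}$ is dense in $\bbX_{DB}^{\mb{p},+}$, by Corollary \ref{cor:adapted-space-int-dens}, or weak-star dense if $\mb{p}$ is infinite), this claim is exactly Ros\'en's identification \cite[Theorem 3.1 and Section 4]{aR13}, which expresses the classical layer potentials in terms of projections of $C_{BD}^+$-extensions of suitable transversal and tangential components. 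We therefore \emph{define} the generalised layer potentials on the full space $\mb{X}_{DB}^{\mb{p},+}$ by these Cauchy operator formulas, and on the dense $L^2$-subclass they coincide with the usual operators.

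The main obstacle is verifying that this extension is well-defined and continuous in the target topology (the one in which $u$ is known to satisfy \eqref{eqn:lprepn}). This reduces, via the isomorphism $D \colon \mb{X}_{BD}^{\mb{p}+1,+} \to \mb{X}_{DB}^{\mb{p},+} \cap \mc{R}(D)$ of Proposition \ref{prop:D-mapping} and the intertwining of $C_{BD}^+$ and $C_{DB}^+$ via $D$ (Corollary \ref{cor:similarity2}), to continuity of $\mb{Q}_{\sgp,DB}$ from $\mb{X}_{DB}^{\mb{p},+}$ into $X^{\mb{p}}$. This is precisely the content of Theorems \ref{thm:sgpnorm-abstract} and \ref{thm:sgpnorm-concrete}, which require $\mb{p} \in J(\mb{X},DB)$, and of the first representation theorem (Theorem \ref{thm:firstrepresentation}) for uniqueness of the boundary traces $u|_{t=0}$ and $\partial_{\gn_A} u|_{t=0}$. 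Combining these ingredients with the $L^2$ identification yields \eqref{eqn:lprepn} in the quotient by constants for any $u$ as in the theorem statement.
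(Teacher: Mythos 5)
Your argument is correct, and it is a close variant of the paper's proof. The paper starts from the first representation theorem (Theorem \ref{thm:firstrepresentation}): it writes $\nabla_A u(t,\cdot) = e^{-tDB}\gc^+(DB)h$ with $h = \nabla_A u|_{t=0} \in \mb{X}_{DB}^{\mb{p},+}$, decomposes $h = \begin{bmatrix} h_\perp \\ 0 \end{bmatrix} + \begin{bmatrix} 0 \\ h_\parallel \end{bmatrix}$, identifies the first piece as $\nabla_A\mc{S}_t(\partial_{\gn_A}u|_{t=0})$ by \eqref{eqn:gradst} and the second as $-\nabla_A\mc{D}_t(u|_{t=0})$ via the $BD$ formula, and passes from the $\nabla_A$-level to the $u$-level by uniqueness up to constants. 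You instead start from the second representation theorem and decompose $\wtd F_0$ rather than $h = D\wtd F_0$; the two routes are related by the intertwining $DC_{BD}\bbP_{BD} = C_{DB}D$ of Corollary \ref{cor:similarity2}, and the core idea — the transversal/tangential splitting of the boundary trace — is the same. Your route is arguably marginally cleaner since it works directly with $u$ and avoids the final integration step.

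Two small points. First, the claim that $\wtd F_0^{(\parallel)}$ "carries $\nabla_\parallel u|_{t=0}$ in its tangential component" is inaccurate: $(\wtd F_0)_\parallel$ is a $D$-potential for the Neumann datum, satisfying $\dv_\parallel(\wtd F_0)_\parallel = \partial_{\gn_A}u|_{t=0}$, rather than being equal to $\nabla_\parallel u|_{t=0}$; you do correct this in the following sentence, so the conclusion is unaffected. Second, the appeal to Ros\'en's $L^2$ identification plus a density argument is not needed for the theorem as stated: the generalised layer potentials appearing in \eqref{eqn:lprepn} are \emph{defined} by the Cauchy operator formulas \eqref{eqn:lps-cops}, \eqref{eqn:single}, \eqref{eqn:DLP-extended}, and their boundedness has already been established in \eqref{eqn:lp1}--\eqref{eqn:nabD}. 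Ros\'en's theorem is only required to reconcile these generalised operators with the classical singular integral operators under De Giorgi--Nash--Moser assumptions; the theorem at hand follows from the formula manipulation alone.
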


We shall prove this theorem (and more) later in the section. In our general setting the operators in \eqref{eqn:lprepn} are generalised layer potential operators which we define below.
When $A$ and $A^*$ satisfy the De Giorgi--Nash--Moser condition, these are the more familiar integral operators that we review next.
We will also explain what we mean by `the method of layer potentials'.
This method has a long history in the case of the Laplace equation, or of equations with smooth coefficients.
In the case of elliptic equations $L_{A}u=0$ with non-smooth coefficients, solvability by means of layer potentials was first successfully developed in \cite{AAAHK11}.
See the references therein for historical background.

Suppose, for the moment, that $A$ and $A^*$ both satisfy the De Giorgi--Nash--Moser condition \eqref{eqn:DGNM} of some exponent.
Then for all $(t,x) \in \bbR^{1+n}$ there exists a fundamental solution $\gG_{(t,x)}$ for $L_{A^*}$ in $\bbR^{1+n}$ with pole at $(t,x)$.\footnote{Fundamental solutions were constructed in dimension $n+1 \geq 3$ by Hofmann and Kim \cite{HK07} at least for systems having pointwise accretivity bounds, and in dimension $n+1 \geq 2$ and accretivity in our sense  by Ros\'en \cite{aR13}.}
The fundamental solution $\tgG_{(t,x)}= (\tgG_{(t,x)}^{\, \beta, \alpha})$ is a $\bbC^{m\times m}$-valued locally integrable function on $\bbR^{1+n}$ satisfying
\begin{equation*}
	\dv A^* \nabla \tgG_{(t,x)} = \gd_{(t,x)}\mb{1} \qquad \text{in $\bbR^{1+n}$}
\end{equation*}
in the usual weak sense, where $\gd_{(t,x)}$ is the Dirac mass at $(t,x)$ and $\mb{1}$ is the identity matrix.
The assumption on $A^*$ guarantees existence and uniqueness of fundamental solutions, with various decay estimates.
A formal application of Green's formula tells us that
\begin{equation*}
u^\alpha(t,x)= \sum_{\beta=1}^m\int_{\bbR^n} \overline{{\tgG_{(t,x)}^{\, \beta, \alpha}(0,y)}} \, \partial_{\gn_A} u^\beta(0,y) dy -   \sum_{\beta=1}^m
\int_{\bbR^n} \overline{ \partial_{\gn_{A^*}} \tgG_{(t,x)}^{\, \beta, \alpha}(0,y)} \,  u^\beta (0,y)  \, dy
\end{equation*}
when $\alpha=1,\ldots,m$,
with our convention of the normal vector pointing in the $t$-direction (that is, inward for the upper half-space). 

For a (reasonable) function $\map{f}{\bbR^n}{\bbC^m}$ and for $(t,x) \in \bbR^{1+n}$, with $t\ne 0$, one is led to define the \emph{double layer potential} $\mc{D}_t f$ of $f$ by\index{layer potential!double}
\begin{equation*}
	\mc{D}_t f(x)^\alpha:= \int_{\bbR^n} \left( \partial_{\gn_{A^*}} \tgG^{\, \cdot , \alpha}_{(t,x)}(0,y), f(y) \right) \, dy \qquad (\alpha = 1,\ldots,m),
\end{equation*}
and the \emph{single layer potential} $\mc{S}_t f$ of $f$ by\index{layer potential!single}
\begin{equation*}
	\mc{S}_t f(x)^\alpha := \int_{\bbR^n} (\tgG^{\, \cdot, \alpha}_{(t,x)}(0,y), f(y)) \, dy  \qquad (\alpha = 1,\ldots,m).
\end{equation*}
Here,  $(z, \zeta)=\bar z \cdot \zeta$ for  $z,\zeta \in \bbC^m$.
The first question is whether these operators extend boundedly to spaces of boundary data: on $L^2(\bbR^n :  \bbC^m)$ for $\mc{D}_t$, and from $L^2(\bbR^n :  \bbC^m)$ into $L^2_{1}(\bbR^n :  \bbC^m)$ for $ \mc{S}_t$ (uniformly in $t>0$).
This was asked by Hofmann in \cite{H10}.
Another formal observation is that $ (\tgG_{(t,x)}(s,y))^*= \gG_{(s,y)}(t,x)$, where $\gG_{(s,y)}$ is the fundamental solution for $L_{A}$ with pole at $(s,y)$.
Note that $\gG_{(0,y)}$ is a solution to the equation $L_{A}u=0$ away from the pole $(0,y)$, so that the integrals `defining' $\mc{D}_t$ and $\mc{S}_t$ may be used to construct solutions.
Proving the invertibility of such operators is one way to solve boundary value problems (there are other ways; see below).
More precisely, one can solve Dirichlet problems for $L_A$ in $\bbR^{1+n}_+$ with boundary data $\gf$ by solving the double layer equation
\begin{equation*}
	\lim_{t \searrow 0} \mc{D}_t f = \gf,
\end{equation*}
and likewise one can solve Neumann problems for $L_A$ in $\bbR^{1+n}_+$ with boundary data $\gf$ by solving the single layer equation
\begin{equation*}
	\lim_{t \searrow 0} \partial_{\gn_A} \mc{S}_t f = \gf.
\end{equation*}
The corresponding solutions $u$ are then given by $u(t,x) = \mc{D}_t f(x)$ and $u(t,x) = \mc{S}_t f(x)$ respectively. 

It was shown by Ros\'en \cite{aR13} that these layer potential operators fall within the scope of the first-order framework, hence answering Hofmann's question on layer potential boundedness in the positive.  
Keeping the De Giorgi--Nash--Moser assumption on  $A$ and $A^*$, and writing $B = \hat{A}$ as usual, for all $f$ in a dense class in  $L^2(\bbR^n : \bbC^m)$ and $t \in \bbR \sm \{0\}$, Ros\'en shows that
\begin{equation}\label{eqn:dt}
	\mc{D}_t f = - \sgn(t) \left( \bbP_D e^{-t BD} \gc^{\sgn(t)}(BD)\bbP_{BD} \begin{bmatrix} f \\ 0 \end{bmatrix} \right)_{\perp}
\end{equation}
and
\begin{equation}\label{eqn:gradst}
	\nabla_A \mc{S}_t f = \sgn(t)  e^{-t DB} \gc^{\sgn(t)}(DB) \begin{bmatrix} f \\ 0 \end{bmatrix},
\end{equation}
where the vectors $\begin{bmatrix} f \\0 \end{bmatrix}$ are written with respect to the transversal/tangential splitting. 
In terms of Cauchy operators, on $\bbR^{1+n}_\pm$ this amounts to\index{layer potential!representation by Cauchy operator}
\begin{equation}\label{eqn:lps-cops}
	\mc{D} f = \mp \left( \bbP_D C_{BD}^\pm \bbP_{BD} \begin{bmatrix} f \\ 0 \end{bmatrix}\right)_\perp, \qquad \nabla_A \mc{S} f = \pm  C_{DB}^\pm  \begin{bmatrix} f \\ 0 \end{bmatrix} .
\end{equation}

Remark that vectors in the form $\begin{bmatrix} f \\0 \end{bmatrix}$ belong to $\overline{\mc{R}(D)}$ but not to $\overline{\mc{R}(BD)}$, hence the presence of the projection $\bbP_{BD}$.\footnote{This projection, and $\bbP_D$, were systematically forgotten in \cite{AS16} by the abuse of notation explained in section \ref{sec:further}, as this does not affect the definition in $L^2$. The stated results for the extensions there were correct. There is also a typo there: the four formulae (81-84) are written with $e^{+t...}$
instead of $e^{-t...}$ when $t<0$.} 

Now, we reverse the point of view.
The right hand sides of the expressions in \eqref{eqn:lps-cops} are defined for all coefficients $A$, whether or not the De Giorgi--Nash--Moser assumptions are satisfied.
Thus we may define abstract double and single layer potentials. 

We first define the abstract double layer potential. 
We need only consider $f$ in a dense class. 
For $f \in L^2(\bbR^n: \bbC^m)$, \eqref{eqn:dt} is well-defined by the functional calculus of the bisectorial operator $BD$. 
When $f$ belongs to  the classical Sobolev space $W^2_{1}(\bbR^n: \bbC^m)$, $\begin{bmatrix} f \\0 \end{bmatrix} \in \bbX^{\mb{p}+1}_{D}$  with $\mb{p}=(2,0)$ (Hardy-Sobolev and Besov spaces are the same in this case).
It follows from Proposition \ref{prop:commdiag} and Corollary \ref{cor:perpequal} that \eqref{eqn:dt} is well-defined as an element of $ (\bbX^{\mb{p}+1}_{BD} \cap \mc{D}(D))_{\perp}= (\bbX^{\mb{p}+1}_{D} \cap \mc{D}(D))_{\perp}$.
Furthermore, by the calculations in the proof of Theorem \ref{thm:secondrepresentation}, we have that $\mc{D} f$ is a weak solution of the equation $L_{A}u=0$ on each half-space. 

Next, we turn to the abstract single layer potential.
Assume $f\in L^2(\bbR^n: \bbC^m)$.
Then $h=\begin{bmatrix} f \\ 0 \end{bmatrix} \in \overline{\mc{R}(D)}$, and we can define
 \begin{equation}\label{eqn:single}
	\mc{S} f = \mp \left( D^{-1}C_{DB}^\pm \begin{bmatrix} f \\ 0 \end{bmatrix}\right)_\perp
\end{equation} 
as an element of $L^\infty(\bbR^\pm : \mb{X}^{(2,1)}_{\perp})$, where $D^{-1}$ is the inverse of the isomorphism $\map{D}{\mb{X}^{(2,1)}_{D}}{\overline{\mc{R}(D)}}$ (a nice Fourier multiplier).
By Corollary \ref{cor:similarity2}, we then have
\begin{equation*}
	C_{DB}h= DC_{BD}\bbP_{BD}D^{-1}h= D\bbP_{D}C_{BD}\bbP_{BD}D^{-1}h.
\end{equation*}
	Applying $D^{-1}$ and mapping into $\mb{X}^{(2,1)}_{D}$ yields
	\begin{equation*}
	D^{-1}C_{DB} h=\bbP_{D}C_{BD}\bbP_{BD}D^{-1}h,
	\end{equation*}
	hence
	\begin{equation*}
	(D^{-1}C_{DB} h)_{\perp}= (\bbP_{D}C_{BD}\bbP_{BD}D^{-1}h)_{\perp}= (C_{BD}\bbP_{BD}D^{-1}h)_{\perp},
	\end{equation*}
	where the last equality follows from Corollary \ref{cor:perpequal}.
	Taking the sign function into account, this shows that 
	$$\mc{S} f=  \mp (C_{BD}^\pm\bbP_{BD}D^{-1}h)_{\perp}$$
	on $\bbR^\pm$.
	It follows by the calculation of Theorem \ref{thm:secondrepresentation} that $\mc{S} f$ is also a solution to $L_{A}u=0$ in each half-space and  that $\nabla_{A}\mc{S} f$ satisfies the equality in \eqref{eqn:lps-cops}.  
	
Having defined the layer potentials abstractly on dense subspaces, the results of Section \ref{sec:similarity} yield bounded extensions of $\mc{S}$ and $\mc{D}$ on the classical smoothness spaces $\mb{X}^\mb{p}$  when $\mb{X}_{DB}^\mb{p}=\mb{X}_{D}^\mb{p}$.
More precisely, we can compute them as
\begin{equation*}
	\mc{S} f = \mp \left( D^{-1}C_{DB}^\pm \begin{bmatrix} f \\ 0 \end{bmatrix}\right)_\perp \qquad (f \in \mb{X}^\mb{p})
\end{equation*}
and
\begin{equation}\label{eqn:DLP-extended}
	\mc{D} f = \pm \left( \bbP_D C_{BD}^+ \bbP_{BD} \begin{bmatrix} f \\ 0 \end{bmatrix} \right)_\perp \qquad (f \in \mb{X}^{\mb{p}+1})
\end{equation}
using the appropriate extensions from Section \ref{sec:similarity} and \ref{sec:proj-BD}.
Note that it is necessary to use $\bbP_D$ in \eqref{eqn:DLP-extended} for arbitrary $f \in \mb{X}^{\mb{p}+1}$, although this does not matter for $f \in L^2$.
Thus for $\mb{p}\in J(\mb{X},DB)$, we get the uniform bounds
\begin{equation}\label{eqn:lp1}
	\sup_{t \neq 0} \nm{\nabla_A \mc{S}_t f}_{\mb{X}^\mb{p}} + \nm{\mc{S}_t f}_{\mb{X}^{\mb{p}+1}} \lesssim \nm{f}_{\mb{X}^\mb{p}}
\end{equation}
and
\begin{equation}\label{eqn:lp2}
	\sup_{t \neq 0} \nm{\nabla_A \mc{D}_t f}_{\mb{X}^\mb{p}} + \nm{\mc{D}_t f}_{\mb{X}^{\mb{p}+1}} \lesssim \nm{ f }_{\mb{X}^{\mb{p}+1}}.
\end{equation}
Compare these bounds with those of Barton and Mayboroda \cite[Chapter 3]{BM16}, in particular (3.11-16).
The inequalities \eqref{eqn:lp1} and \eqref{eqn:lp2} encapsulate a number of concrete inequalities; see the table in Section \ref{sec:exponent-space-table}.
 
We also obtain limits for these operators as $t \to 0^\pm$ (in $\mb{X}^\mb{p}$ or $\mb{X}^{\mb{p}+1}$ accordingly, and in the strong or the weak-star topology depending on whether $\mb{p}$ is finite).
In particular we can also recover the jump relations\index{layer potential!jump relations} with this formalism.
The equation
\begin{equation}
\label{eq:jumpst}
\nabla_{A}\mc{S}_{0+}f -\nabla_{A}\mc{S}_{0-}f= (\gc^+(DB)+ \gc^-(DB))  \begin{bmatrix} f \\ 0\end{bmatrix} =  \begin{bmatrix} f \\ 0\end{bmatrix}
\end{equation}
 encodes both the jump relation of the conormal derivative of $\mc{S}_{t}$ and the continuity of $\mc{S}_{t}$ across the boundary. 
Here we used that $\gc^+(DB)+ \gc^-(DB)=I$ on $\mb{X}^\mb{p}_{D} $, and that $\begin{bmatrix} f \\ 0\end{bmatrix} \in \mb{X}^\mb{p}_D$ when $f\in \mb{X}^\mb{p}$.
For the double layer operator, we have 
\begin{align}
\mc{D}_{0+}f -\mc{D}_{0-}f &= -\bigg(\bbP_D (\gc^+(BD)+ \gc^-(BD)) \bbP_{BD} \begin{bmatrix}  f \\ 0\end{bmatrix}\bigg)_{\perp} \label{eq:jumpdt} \\
	&= -\bigg(\bbP_D \bbP_{BD}  \begin{bmatrix} f \\ 0\end{bmatrix}\bigg)_{\perp }=  -f. \nonumber
\end{align}
This time we used that $\gc^+(BD)+ \gc^-(BD)=\bbP_{BD}$ on $\mb{X}^{\mb{p}+1}_{BD}$  and that \eqref{eq:rect} applies to $\begin{bmatrix}  f \\ 0\end{bmatrix} \in \mb{X}^{\mb{p}+1}_{D}$ when $f\in \mb{X}^{\mb{p}+1}$. 

For all exponents $\mb{p} \in J(\mb{X},DB)$, we have proven the tent/$Z$-space estimates  
\begin{equation*}
	\nm{ C_{DB}^+ h }_{X^\mb{p}} \lesssim \nm{ h }_{\bbX_D^\mb{p}}
\end{equation*}
(see Theorems \ref{thm:sgpnorm-abstract} and \ref{thm:sgpnorm-concrete} and those in \cite[\textsection 12.3]{AS16}.)
Applied to  $h=\begin{bmatrix} f \\ 0 \end{bmatrix}$ and taking extensions, these immediately yield  
\begin{equation}\label{eqn:nabS}
	\nm{ \nabla \mc{S}f }_{X^\mb{p}} \lesssim \nm{ f }_{\mb{X}^\mb{p}}
\end{equation}
and

\begin{equation}\label{eqn:nabD}
	\nm{ \nabla \mc{D}f }_{X^\mb{p}} \lesssim \nm{f}_{\mb{X}^{\mb{p}+1}}.
\end{equation}
Similar bounds for layer potentials on the lower half-space corresponding to \eqref{eqn:nabS} and \eqref{eqn:nabD} can also be derived.
Compare these results with those of Barton and Mayboroda \cite[Theorem 3.1]{BM16} concerning $\mc{S}$ and $\mc{D}$:  again, we recover their Besov space results (without the De Giorgi--Nash--Moser assumption that they use) and we obtain new Hardy--Sobolev space results.

With these results at hand, the proof of Theorem \ref{thm:layer} is easy.
We mostly reproduce calculations of \cite[Lemma 8.1]{AM14}.
By Theorem \ref{thm:firstrepresentation} and the assumption on $\mb{p}$, we have the semigroup representation for the conormal gradient $\nabla_{A}u(t,x)= e^{-tDB}\gc^+(DB)h(x)$,  with $h\in \mb{X}^{\mb{p}, +}_{DB}$ being the trace of $\nabla_{A}u$ at the boundary.
Thus $h_{\perp}=\partial_{\gn_A} u|_{t=0}$ and $h_{\parallel}= \nabla_{\parallel}u|_{t=0}$.
Write
\begin{equation*}
	e^{-tDB}\gc^+(DB)h= e^{-t DB} \gc^{+}(DB) \begin{bmatrix} h_{\perp} \\ 0 \end{bmatrix} +  
e^{-t DB} \gc^{+}(DB) \begin{bmatrix}0\\ h_{\parallel}  \end{bmatrix}
\end{equation*}
and, writing $u|_{t=0}=f$,   we know that 
$$ 
e^{-t DB} \gc^{+}(DB) \begin{bmatrix}0\\ h_{\parallel}  \end{bmatrix}= \nabla_{A}\bigg( \bbP_D e^{-tBD} \gc^+(BD)\bbP_{BD} \begin{bmatrix} f \\ 0\end{bmatrix}\bigg)_{\perp}
$$
as in the proof of Theorem \ref{thm:secondrepresentation}.
The second term is precisely $-\nabla_{A}\mc{D}_{t}f$.

Putting everything together, this gives us \eqref{eqn:lprepn}. 

Let us make some comments on the use of invertibility of layer potentials to solve  boundary value problems.
As mentioned, one can use invertibility of the operator $\mc{D}_{0+}$  for  Dirichlet problems and  that of the  operator $\partial_{\gn_A} \mc{S}_{0+}$  for  Neumann problems.
This is obtained (for example) for self-adjoint real equations ($m=1$) for $\mb{p}=(2,0)$, i.e. spaces of $L^2$ Dirichlet/Neumann data, in \cite[Theorem 1.13]{AAAHK11}, and for the lower half-space as well (looking at the $0^-$ limits).
Prior to this was the work of Verchota for Laplace's equation on Lipschitz domains \cite{Ver84}. 

Such invertibility is not necessary for well-posedness as characterised in Theorem \ref{thm:WP-char}.
But this is not the only way to prove well-posedness via layer potentials.
For regularity problems, one can use invertibility of the layer potential $\mc{S}_{0}$, as is done in \cite[Theorem 1.13]{AAAHK11} for real symmetric equations with $L^2$ regularity data, in \cite{HKMP15.2} for real equations with $L^p$ regularity data (for some $p>1$), and in \cite{AM14} for $H^p$ regularity data for some $p<1$.
For Neumann problems, one can use invertibility of $\partial_{\gn_A} \mc{D}_0$.
We give a quick sketch and refer to the proof of \cite[Theorem 1.11]{AM14} for details. 
The operator $\sgn(DB)$, written in matrix form according to the transversal/tangential decomposition, is
\begin{equation*}
	\sgn(DB) = \begin{bmatrix} \cdot & -2T \\2 \nabla_x \mc{S}_0 & \cdot \end{bmatrix}.
\end{equation*}
The operator $T$ will be discussed below.
The operator $\mc{S}_0$ is the single layer potential at $t=0$, given by
\begin{equation*}
	\mc{S}_0 f := \lim_{t \to 0} \mc{S}_t f,
\end{equation*}
recalling that \eqref{eq:jumpst} shows that the limits $t\to 0^\pm$ are the same for this operator.
For $\mb{p} \in J(\mb{X},DB)$, well-posedness of $(R_\mb{X})_A^\mb{p}$ on both half-spaces   is equivalent to invertibility of $\map{\nabla_x \mc{S}_0}{\mb{X}^\mb{p}_\perp}{\mb{X}^\mb{p}_\parallel}$ (a multiple of the bottom left component of $\sgn(DB)$), or equivalently, invertibility of the single layer potential $\mc{S}_0$ from $\mb{X}^\mb{p}$ (which is the same here as $\mb{X}_\perp^\mb{p}$) onto $\mb{X}^{\mb{p}+1}$ as in \cite[Theorem 1.11]{AM14}. 
Then, for $f \in \mb{X}^{\mb{p}+1}$, the solutions of the regularity problems with data $\nabla_{\parallel}f$ (that is, $f$ is the Dirichlet data)  on the upper and lower half-spaces are respectively given by 
\begin{equation*}
u^\pm= \mc{S}_t \mc{S}_0^{-1}f \qquad (t\in \bbR^\pm).
\end{equation*}
There is also a notion of mutual well-posedness on \emph{both} half-spaces, given by the conjunction of mutual well-posedness on each half-space separately.
As we did before, this is akin to demanding the agreement of inverses of the single layer potential for different $\mb{p}$.
We leave this to the reader.  

There are corresponding statements for  Neumann problems, using the invertibility of the top right component $-2T$ of $\sgn(DB)$. A calculation shows that 
 $T(\nabla f) = \partial_{\gn_A} \mc{D}_0 f$ (there is no jump here) and so invertibility of $T$ corresponds to invertibility of the conormal derivative of the double layer potential $\map{\partial_{\gn_A}\mc{D}_0}{\mb{X}^{\mb{p}+1}}{\mb{X}^\mb{p}}$ (again the latter space should be thought of as the transversal component).
The solutions to the Neumann problems in the upper and lower half-spaces with Neumann data $g$  are given respectively by 
\begin{equation*}
u^{\pm}= \mc{D}_{t}(\partial_{\nu_{A}}\mc{D}_{0})^{-1}g \qquad (t\in \bbR^\pm).
\end{equation*}
We remark that for the block diagonal case (see Section \ref{subsec:block}) both $\mc{S}_0$ and $\partial_{\gn_A} \mc{D}_0$ are invertible in the range where well-posedness was established: this is because $\sgn(DB)$ is an involution with zero diagonal entries (and this is because $A$ is diagonal if and only if $\hat{A}$ is diagonal, and then in this case $\sgn(DB) = DB[DB]^{-1}$ has zero diagonal entries).

For the triangular block form, it is either the invertibility of $\partial_{\gn_A} \mc{D}_0$ or that of $\mc{S}_0$ that holds, depending on the boundary value problem and on the value of $\mb{p}$ in the range of well-posedness. The results of \cite{AMM13} show this explicitly for the Regularity and Neumann problems with $L^2$ data.

We mention without proof that our results on stability in the coefficients (see Section \ref{sec:wp-pert}) allow one to perturb invertibility of boundary layer operators.
This was already observed in \cite{AAAHK11} in the following form: invertibility of \emph{a collection of} boundary layer operators is stable under perturbation.
Here, we observe that invertibility is preserved for each individual boundary layer operator, which is a little stronger.

In conclusion, we see that there are many different ways to obtain well-posedness (compatible or not) for  boundary value problems associated with $L_{A}u=0$ under our current assumptions on $A$.
The first order method essentially covers all results in the literature except for that in \cite{HKMP15}, concerning a Dirichlet problem with data in $L^p$ for large $p$, which uses harmonic measure methods for real equations.
We still lack the understanding to establish well-posedness for more general coefficients, with any currently available approach.


\backmatter
\bibliographystyle{amsplain}
\bibliography{amenta-auscher-bib}

\printindex

\end{document}